\documentclass[openany]{amsbook}
\usepackage{amsmath,amssymb,hyperref}
\usepackage[all]{xy}

\textwidth=480pt
\textheight=680pt
\topmargin=-25pt
\oddsidemargin=-5pt
\evensidemargin=-5pt

\newcommand{\IN}{\mathbb N}
\newcommand{\II}{\mathbb I}
\newcommand{\IR}{\mathbb R}

\newcommand{\w}{\omega}

\newcommand{\F}{\mathcal F}
\newcommand{\U}{\mathcal U}
\newcommand{\V}{\mathcal V}
\newcommand{\W}{\mathcal W}
\newcommand{\K}{\mathcal K}
\newcommand{\A}{\mathcal A}
\newcommand{\C}{\mathcal C}

\newcommand{\e}{\varepsilon}

\newcommand{\cl}{\mathrm{cl}}
\newcommand{\pr}{\mathrm{pr}}
\newcommand{\Ra}{\Rightarrow}

\newcommand{\supp}{\mathrm{supp}}
\newcommand{\Supp}{\mathrm{Supp}}
\newcommand{\cs}{\mathrm{cs}}

\newcommand{\id}{\mathrm{id}}
\newcommand{\Id}{\mathrm{Id}}
\newcommand{\Top}{\mathbf{Top}}
\newcommand{\TEA}{\mathbf{T}\!E\!\mathbf{A}}
\newcommand{\Tych}{\mathbf{Tych}}

\newcommand{\T}{\mathbf{T}}

\newcommand{\cf}{\mathrm{cf}}

\newcommand{\HM}{\mathsf{HM}}

\newcommand{\Cld}{\mathsf{Cld}}
\newcommand{\Fin}{\mathsf{F\kern-0.3pt in}}
\newcommand{\Clop}{\mathsf{Clop}}
\newcommand{\Comp}{\mathsf{Comp}}
\newcommand{\LCS}{\mathbf{LCS}}
\newcommand{\diam}{\mathrm{diam}}
\newcommand{\Law}{\mathsf{Law}}
\newcommand{\Sem}{\mathsf{Sem}}
\newcommand{\Sl}{\mathsf{SL}}
\newcommand{\IS}{\mathsf{IS}}
\newcommand{\CS}{\mathsf{CS}}
\newcommand{\ICS}{\mathsf{ICS}}
\newcommand{\IAS}{\mathsf{IAS}}
\newcommand{\FA}{\mathsf{AG}}
\newcommand{\FG}{\mathsf{G}}
\newcommand{\PA}{\mathsf{PA}}
\newcommand{\PG}{\mathsf{PG}}
\newcommand{\Lin}{\mathsf{Lin}}
\newcommand{\Lc}{\mathsf{Lc}}

\newtheorem{itheorem}{Theorem}[chapter]
\newtheorem{iproblem}[itheorem]{Problem}
\newtheorem{iprop}[itheorem]{Proposition}

\newtheorem{theorem}{Theorem}
\numberwithin{theorem}{section}
\newtheorem{proposition}[theorem]{Proposition}
\newtheorem{corollary}[theorem]{Corollary}
\newtheorem{lemma}[theorem]{Lemma}
\newtheorem{claim}[theorem]{Claim}
\newtheorem{example}[theorem]{Example}

\theoremstyle{definition}
\newtheorem{remark}[theorem]{Remark}
\newtheorem{definition}[theorem]{Definition}

\newtheorem{idef}[itheorem]{Definition}
\newtheorem{problem}[theorem]{Problem}
\makeindex







\begin{document}

\keywords{$k$-space, Ascoli space, fan, strong fan, function space, Banach space, weak topology, rectifiable space, functor, category, free topological algebra, free topological group.}
\subjclass[2010]{Primary 54D50; 54E20; 54H10; 46B20;\\ Secondary  08B20; 22A05; 22A15; 22A30; 54B30; 54C35; 54D55; 54E18; 54G10; 54G12; 54H11}


\title[Fans and their generalization]{Fans and their applications in\\ General Topology, Functional Analysis\\ and Topological Algebra}
\author{Taras Banakh}
\address{Ivan Franko National University of Lviv (Ukraine)}
 \author{}
 \address{Jan Kochanowski University in Kielce (Poland)}
\email{t.o.banakh@gmail.com}

\maketitle
\tableofcontents

\begin{abstract}
A family $(F_\alpha)_{\alpha\in\lambda}$ of closed subsets of a topological space $X$ is called a (strict) $\Cld$-fan in $X$ if this family is (strictly) compact-finite but not locally finite in $X$. Applications of (strict) $\Cld$-fans are based on a simple observation that $k$-spaces contain no $\Cld$-fan and Ascoli spaces contain no strict $\Cld$-fan. In this paper we develop the machinery of (strict) fans and apply it to detecting the $k$-space and Ascoli properties in spaces that naturally appear in General Topology, Functional Analysis, and Topological Algebra. In particular, we detect (generalized) metric spaces $X$ whose functor-spaces, functions spaces, free (para)topological (abelian) groups, free (locally convex) linear topological spaces, free (Lawson) topological semilattices, and free
(para)topological (Clifford, Abelian) inverse semigroups are $k$-spaces or Ascoli spaces.
\end{abstract}

\chapter{Introduction and survey of main results}


In this paper we isolate one property of topological spaces (namely, the presence of a strong $\Fin$-fan) which is responsible for the failure of the $k$-space (more precisely, the Ascoli) property in many natural spaces such as (free) topological groups, function spaces, or Banach spaces with the weak topology. Let us recall that a topological space $X$ is a \index{topological space!$k$-space}{\em $k$-space} if each $k$-closed subset of $X$ is closed. A subset $A\subset X$ is called \index{subset!$k$-closed}{\em $k$-closed} if for every  compact subset $K\subset X$ the intersection $A\cap K$ is closed in $K$.

\begin{idef}\label{d:fans}
Let $\lambda$ be a cardinal. An indexed family $(X_\alpha)_{\alpha\in \lambda}$ of subsets of a topological space $X$ is called
\begin{itemize}
\item \index{family of sets!locally finite}{\em locally finite} if any point $x\in X$ has a neighborhood $O_x\subset X$ such that the set $\{\alpha\in\lambda:O_x\cap X_\alpha\ne\emptyset\}$ is finite;
\item \index{family of sets!compact-finite} {\em compact-finite} (resp. {\em compact-countable}) in $X$ if for each compact subset $K\subset X$ the set $\{\alpha\in\lambda:K\cap X_\alpha\ne\emptyset\}$ is finite (resp. at most countable);
\item  \index{family of sets!strongly compact-finite}{\em strongly compact-finite} in $X$ if each set $X_\alpha$ has an $\IR$-open neighborhood $U_\alpha\subset X$ such that the family $(U_\alpha)_{\alpha\in A}$ is compact-finite in $X$;
\item  \index{family of sets!strictly compact-finite}{\em strictly compact-finite} in $X$ if each set $X_\alpha$ has a functional neighborhood $U_\alpha\subset X$ such that the family $(U_\alpha)_{\alpha\in A}$ is compact-finite in $X$;
\end{itemize}
\end{idef}

Now we explain some undefined notions appearing in this definition.

A subset $U$ of a topological space $X$ is called \index{subset!$\IR$-open} {\em $\IR$-open} if for each point $x\in U$ there is a continuous function $f:X\to [0,1]$ such that $f(x)=1$ and $f(X\setminus U)\subset\{0\}$. It is clear that each $\IR$-open set is open. The converse is true for open subsets of Tychonoff spaces.

A subset $U$ of a topological space $X$ is called a \index{functional neighborhood}{\em functional neighborhood} of a set $A\subset X$ if there is a continuous function $f:X\to[0,1]$ such that $f(A)\subset \{0\}$ and $f(X\setminus U)\subset\{1\}$. It is clear that each functional neighborhood $U$ of a set $A$ in a topological space $X$ contains an $\IR$-open neighborhood of the closure $\bar A$ of $A$ in $X$. The converse is true if $\bar A$ is compact. In a normal space $X$ each neighborhood of a closed subset $A\subset X$ is functional.

Now we are able to introduce the notion of a fan, the principal notion of this paper.

\begin{idef}\label{d:fan}
Let $X$ be a topological space and $\lambda$ be a cardinal. An indexed family $(F_\alpha)_{\alpha\in\lambda}$ of subsets of $X$ is called a \index{fan}{\em fan}  (more precisely, a \index{fan!$\lambda$-fan}{\em $\lambda$-fan}) in $X$ if this family is compact-finite but not locally finite in $X$. A fan $(F_\alpha)_{\alpha\in\lambda}$ is called \index{fan!strong}\index{fan!strict}{\em strong} (resp. {\em strict}) if each set $F_\alpha$ has a (functional) $\IR$-open neighborhood $U_\alpha\subset X$ such that the family $(U_\alpha)_{\alpha\in\lambda}$ is compact-finite.
\end{idef}
Since each strictly compact-finite family is strongly compact-finite, we get the following implications:
$$\mbox{strict fan $\Ra$ strong fan $\Ra$ fan}.$$

If all sets $F_\alpha$ of a $\lambda$-fan $(F_\alpha)_{\alpha\in\lambda}$ belong to some fixed family $\F$ of subsets of $X$, then the fan will be called an\index{fan!$\F$-fan} {\em $\F$-fan} (more precisely, an \index{fan!$\F^\lambda$-fan}{\em $\F^\lambda$-fan}) in $X$.

Two instances of the family $\F$ will be especially important for our purposes: the family $\Fin$ of  finite subsets of $X$ and the family $\Cld$ of closed subsets of $X$. In these two cases {\em $\F$-fans} will be called \index{fan!$\Fin$-fan}\index{fan!$\Cld$-fan}{\em $\Fin$-fans} and {\em $\Cld$-fans}, respectively. Since each finite subset of a $T_1$-space is closed, each $\Fin$-fan in a $T_1$-space is a $\Cld$-fan. Taking into account that each $\IR$-open neighborhood of a finite subset $F\subset X$ is a functional neighborhood of $F$ in $X$, we conclude that an $\Fin$-fan is strong if and only it is strict.
So, in $T_1$-spaces we get the implications:
$$\xymatrix{
\mbox{strict $\Fin$-fan}\ar@{<=>}[r]\ar@{=>}[d]&
\mbox{strong $\Fin$-fan}\ar@{=>}[r]\ar@{=>}[d]&
\mbox{$\Fin$-fan}\ar@{=>}[d]\\
\mbox{strict $\Cld$-fan}\ar@{=>}[r]&
\mbox{strong $\Cld$-fan}\ar@{=>}[r]&
\mbox{$\Cld$-fan}.
}
$$

Applications of $\Cld$-fans to $k$-spaces are based on the following simple observation.

\begin{iprop}\label{k-no-Cld-fan} A $k$-space contains no $\Cld$-fans and no $\Fin$-fans.
\end{iprop}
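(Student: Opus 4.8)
The plan is to argue by contradiction, handling the two cases together since the argument is essentially the same. Suppose $(F_\alpha)_{\alpha\in\lambda}$ is a $\Cld$-fan (or a $\Fin$-fan) in a $k$-space $X$; thus the family is compact-finite but fails to be locally finite. The failure of local finiteness hands us a witness point $x\in X$ with the property that every neighborhood of $x$ meets infinitely many of the sets $F_\alpha$. The first thing I would record is a cheap but crucial consequence of compact-finiteness applied to the simplest possible compact set: since the singleton $\{x\}$ is compact, the index set $\{\alpha\in\lambda:x\in F_\alpha\}=\{\alpha:\{x\}\cap F_\alpha\ne\emptyset\}$ is finite.

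With this in hand I would set $B=\bigcup\{F_\alpha:x\notin F_\alpha\}$ and claim that $B$ is a $k$-closed set that is not closed, contradicting the $k$-space hypothesis. That $x\notin B$ is immediate from the definition. To see $x\in\overline B$, take any neighborhood $U$ of $x$; it meets infinitely many $F_\alpha$, and after discarding the finitely many indices with $x\in F_\alpha$ there remain infinitely many indices $\alpha$ with $x\notin F_\alpha$ and $U\cap F_\alpha\ne\emptyset$, whence $U\cap B\ne\emptyset$. Thus $x\in\overline B\setminus B$, so $B$ is not closed. On the other hand $B$ is $k$-closed: for every compact $K\subset X$ compact-finiteness leaves only finitely many indices $\alpha$ with $F_\alpha\cap K\ne\emptyset$, so $B\cap K$ is a finite union of the sets $F_\alpha\cap K$; for a $\Cld$-fan each $F_\alpha\cap K$ is closed in $K$, hence the finite union $B\cap K$ is closed in $K$. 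Since $X$ is a $k$-space, the $k$-closed set $B$ must be closed, contradicting $x\in\overline B\setminus B$.

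For the $\Fin$-fan case the only line that changes is the verification that $B\cap K$ is closed in $K$: here $B\cap K$ is a finite set (a finite union of finite sets), and this is closed in $K$ as soon as points of $K$ are closed, i.e. in the $T_1$ setting, where in any case every $\Fin$-fan is already a $\Cld$-fan and the first argument applies verbatim. I expect the only genuinely delicate point to be the step $x\in\overline B$: the naive choice is to work with the full union $\bigcup_{\alpha}F_\alpha$, but that set may contain $x$ and hence fail to witness non-closedness, so one is forced to delete the indices with $x\in F_\alpha$—and it is precisely the singleton-compactness observation that guarantees only finitely many indices are deleted, leaving every neighborhood of $x$ still meeting $B$.
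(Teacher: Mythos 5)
Your proof is correct and takes essentially the same route as the paper's: both use compactness of the singleton $\{x\}$ to isolate the finitely many indices $\alpha$ with $x\in F_\alpha$, form the union of the remaining sets, and use compact-finiteness to check that this union meets every compact set in a finite (hence closed) union of closed sets, so the $k$-space property finishes the argument. The only difference is presentational — the paper concludes directly that this union is closed, so its complement witnesses local finiteness at $x$, while you run the same computation as a contradiction ($B$ is $k$-closed yet $x\in\overline B\setminus B$); your explicit remark that the $\Fin$-fan case reduces to the $\Cld$-fan case in the $T_1$ setting matches the paper's (implicit) treatment.
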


\begin{proof} It suffices to check that each compact-finite family $(F_\alpha)_{\alpha\in\lambda}$ of closed subsets of a $k$-space $X$ is locally finite at each point $x\in X$. Since the singleton $\{x\}$ is compact, the set $\Lambda=\{\alpha\in\lambda:x\in F_\alpha\}$ is finite. We claim that the union $F=\bigcup\{F_\alpha:\alpha\in \lambda\setminus \Lambda\}$ is closed in $X$. This follows from the fact that for every  compact subset $K\subset X$ the intersection $K\cap F=\bigcup\{K\cap F_\alpha:\alpha\in\lambda\setminus\Lambda,\;K\cap F_\alpha\ne\emptyset\}$ is closed in $K$  (as the finite union of closed sets). Then $X\setminus F$ is an open neighborhood of the point $x$ witnessing that the family $(F_\alpha)_{\alpha\in \lambda}$ is locally finite at $x$.
\end{proof}

This proposition suggests that topological spaces containing no $\Cld$-fans of various sorts can be considered as generalizations of $k$-spaces.

It turns out that spaces containing no strict $\Cld$-fan spaces are related to another two generalizations of $k$-spaces: $k_\IR$-spaces and Ascoli spaces.

A topological space $X$ is called a \index{topological space!$k_\IR$-space}{\em $k_\IR$-space} if each $k$-continuous real-valued function on $X$ is continuous. A function $f:X\to\IR$ is \index{map!$k$-continuous}{\em $k$-continuous} if for every compact subset $K\subset X$ the restriction $f|K$ is continuous. It is clear that each $k$-space is a $k_\IR$-space. The converse is not true even in the class of $\aleph_0$-spaces (see \cite{Mi73}, \cite{Bor81}).

A Tychonoff space $X$ is \index{topological space!Ascoli}{\em Ascoli} if each compact subset $K\subset C_k(X)$ is \index{subset!evenly continuous}{\em evenly continuous} in the sense that the evaluation map $K\times X\to\IR$, $(f,x)\mapsto f(x)$ is continuous.
Here by $C_k(X)$ we denote the space of continuous real-valued functions on $X$, endowed with the compact-open topology. Ascoli spaces were introduced in \cite{BG} and studied in \cite{Gab2} and \cite{GKP}. The classical Ascoli Theorem \cite[3.4.20]{En} implies that each $k$-space is Ascoli. By \cite{Nob69}, each Tychonoff $k_\IR$-space is Ascoli.
In Corollary~\ref{c:A->noFan} we shall prove that Ascoli spaces contain no strict $\Cld$-fans.

The following diagram describes the relations of various classes of $T_1$-spaces related to $k$-spaces. In this diagram $\lambda$ stands for an infinite cardinal.
{
$$
\xymatrix{
\genfrac{}{}{0pt}{}{\mbox{Fr\'echet-}}{\mbox{Urysohn}}\ar@{=>}[r]&\mbox{sequential}\ar@{=>}[d]&\mbox{no $\Fin$-fan}\ar@{=>}[r]
&\mbox{no $\Fin^\lambda$-fan}\ar@{=>}[r]&
\mbox{no $\Fin^\w$-fan}\\
\genfrac{}{}{0pt}{}{\mbox{Tychonoff}}{\mbox{$k$-space}}
\ar@{=>}[r]\ar@{=>}[dd]&\mbox{$k$-space}\ar@{=>}[dd]\ar@{=>}[r]
&\mbox{no $\Cld$-fan}\ar@{=>}[r]\ar@{=>}[d]\ar@{=>}[u]
&\mbox{no $\Cld^\lambda$-fan}\ar@{=>}[r]\ar@{=>}[d]\ar@{=>}[u]&
\mbox{no $\Cld^\w$-fan}\ar@{=>}[d]\ar@{=>}[u]\\
&&
\genfrac{}{}{0pt}{}{\mbox{no strong}}{\mbox{$\Cld$-fan}}
\ar@{=>}[d]\ar@{=>}[r]&
\genfrac{}{}{0pt}{}{\mbox{no strong}}{\mbox{$\Cld^\lambda$-fan}}
\ar@{=>}[r]\ar@{=>}[d]&
\genfrac{}{}{0pt}{}{\mbox{no strong}}{\mbox{$\Cld^\w$-fan}}
\ar@{=>}[d]\\
\genfrac{}{}{0pt}{}{\mbox{Tychonoff}}{\mbox{$k_\IR$-space}}
\ar@{=>}[r]&\mbox{Ascoli}\ar@{=>}[r]&
\genfrac{}{}{0pt}{}{\mbox{no strict}}{\mbox{$\Cld$-fan}}
\ar@{=>}[r]\ar@{=>}[d]&
\genfrac{}{}{0pt}{}{\mbox{no strict}}{\mbox{$\Cld^\lambda$-fan}}
\ar@{=>}[r]\ar@{=>}[d]&
\genfrac{}{}{0pt}{}{\mbox{no strict}}{\mbox{$\Cld^\w$-fan}}
\ar@{=>}[d]\\
&&
\genfrac{}{}{0pt}{}{\mbox{no strict}}{\mbox{$\Fin$-fan}}
\ar@{=>}[r]\ar@{<=>}[d]&
\genfrac{}{}{0pt}{}{\mbox{no strict}}{\mbox{$\Fin^\lambda$-fan}}
\ar@{=>}[r]\ar@{<=>}[d]&
\genfrac{}{}{0pt}{}{\mbox{no strict}}{\mbox{$\Fin^\w$-fan}}
\ar@{<=>}[d]\\
&&
\genfrac{}{}{0pt}{}{\mbox{no strong}}{\mbox{$\Fin$-fan}}
\ar@{=>}[r]&
\genfrac{}{}{0pt}{}{\mbox{no strong}}{\mbox{$\Fin^\lambda$-fan}}
\ar@{=>}[r]&
\genfrac{}{}{0pt}{}{\mbox{no strong}}{\mbox{$\Fin^\w$-fan.}}
}
$$
}
We recall that a topological space $X$ is
\begin{itemize}\itemsep=0pt\parskip=0pt
\item  \index{topological space!sequential}{\em sequential} if for each non-closed subset $A\subset X$ some sequence of points of $A$ converges to a point $x\in X\setminus A$;
\item \index{topological space!Fr\'echet-Urysohn}{\em Fr\'echet-Urysohn} if for each subset $A\subset X$ and point $a\in \bar A$ there exists a sequence $\{a_n\}_{n\in\w}\subset A$ that converges to $a$.
\end{itemize}

Looking at the above diagram, we can see that among the considered classes of spaces the largest one is the class of spaces containing no strict (=strong) $\Fin^\w$-fans. In Proposition~\ref{p:Fin-fan-char} we observe that a topological space $X$ contains a (strong) $\Fin^\lambda$-fan if and only if $X$ contains a non-closed subset $A\subset X$ of cardinality $|A|\le\lambda$, which is (strongly) compact-finite in the sense that for each compact subset $K\subset X$ the intersection $K\cap A$ is finite (resp. each point $a\in A$ has an $\IR$-open neighborhood $U_a\subset X$ such that the family $(U_a)_{a\in A}$ is compact-finite in $X$).

Looking for possible applications of $\Fin$-fans, we discovered that many known results concerning sequential, Ascoli or $k$-spaces can be generalized to spaces possessing no $\Cld$-fans or no strong $\Fin^\w$-fans.
\smallskip

In Chapter~\ref{ch:GT} we find some examples of this phenomenon in General Topology. It is easy to see that a $k$-space $X$ is discrete if and only if it is {\em $k$-discrete} in the sense that each compact subset of $X$ is finite. It turns out that the $k$-space property in this  characterization can be replaced by the absence of strict $\Cld$-fans (see Theorem~\ref{t:disc-char}).

\begin{itheorem}
 For a Tychonoff space $X$ the following conditions are equivalent:
 \begin{enumerate}
 \item[\textup{1)}] $X$ is discrete;
 \item[\textup{2)}] $X$ is zero-dimensional, $k$-discrete, countably-tight and contains no $\Clop^\w$-fans;
 \item[\textup{3)}] $X$ is zero-dimensional, $k$-discrete, and contains no $\Clop$-fans;
 \item[\textup{4)}] $X$ is $k$-discrete and contains no strict $\Cld^\w$-fans and no $\Clop$-fans.
 \end{enumerate}
  \end{itheorem}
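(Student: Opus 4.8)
The plan is to prove $1)\Rightarrow 2),3),4)$ (all immediate) together with $3)\Rightarrow 1)$, $2)\Rightarrow 1)$, and $4)\Rightarrow 3)$, the last implication reducing condition $4)$ to the already-treated condition $3)$. The one elementary tool I would isolate first is an \emph{accumulation lemma}: if $(F_\alpha)_\alpha$ is a family of closed subsets of a space and $x_0\notin\bigcup_\alpha F_\alpha$ while $x_0\in\overline{\bigcup_\alpha F_\alpha}$, then $(F_\alpha)$ is not locally finite at $x_0$. Indeed, if a neighbourhood $W\ni x_0$ met only the $F_\alpha$ with $\alpha$ in some finite set $S$, then $W\setminus\bigcup_{\alpha\in S}F_\alpha$ would be a neighbourhood of $x_0$ (a finite union of closed sets is removed, and $x_0$ lies in none of them) disjoint from $\bigcup_\alpha F_\alpha$, a contradiction. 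I would also record once that in a $k$-discrete space every point-finite (in particular, pairwise disjoint) family of open sets is compact-finite, since a compact set is finite and meets only finitely many members; this is how compact-finiteness will always be checked.

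The implication $1)\Rightarrow$ the rest is routine: a discrete space is zero-dimensional, $k$-discrete and countably tight, and in it every compact-finite family is locally finite (the neighbourhood $\{x\}$ meets $F_\alpha$ only when $x\in F_\alpha$, and the compact set $\{x\}$ forces this finitely often), so $X$ has no fans of any sort; alternatively one cites Proposition~\ref{k-no-Cld-fan}. For $3)\Rightarrow 1)$ and $2)\Rightarrow 1)$ I argue by contraposition with the same template. For $3)$, from a non-isolated $x_0$ choose, by Zorn's Lemma, a maximal pairwise disjoint family $\mathcal F$ of nonempty clopen sets avoiding $x_0$; maximality together with zero-dimensionality forces $x_0\in\overline{\bigcup\mathcal F}$ (otherwise a clopen $W\ni x_0$ disjoint from $\bigcup\mathcal F$ would contain a nonempty clopen $V\not\ni x_0$ that could be adjoined), so the accumulation lemma makes $\mathcal F$ non-locally-finite (hence infinite) and $k$-discreteness makes it compact-finite, i.e.\ a $\Clop$-fan. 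For $2)$, countable tightness supplies a countable $A\subseteq X\setminus\{x_0\}$ with $x_0\in\overline A$; picking clopen $U_n\ni a_n$ with $x_0\notin U_n$ and \emph{disjointifying} via $F_n=U_n\setminus\bigcup_{m<n}U_m$ gives pairwise disjoint clopen sets with $\bigcup_nF_n=\bigcup_nU_n\supseteq A$, whence $x_0\in\overline{\bigcup_nF_n}\setminus\bigcup_nF_n$, infinitely many $F_n$ are nonempty, and the accumulation lemma yields a $\Clop^\w$-fan.

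The real work is $4)\Rightarrow 3)$, which I expect to be the main obstacle: one must show that a Tychonoff $k$-discrete space with no strict $\Cld^\w$-fan is automatically zero-dimensional (after which $3)$ holds, as $4)$ also forbids $\Clop$-fans). The key claim is that \emph{every} continuous $f\colon X\to\IR$ is locally constant. If not, then at some point $p$ one has $p\in\overline{\{x:f(x)\ne f(p)\}}$; replacing $f$ by $h=\min(|f-f(p)|,1)$ gives $h(p)=0$ and $p\in\overline{\{h>0\}}$. Setting, for $n\ge1$,
$$F_n=h^{-1}\bigl([2^{-n-1},2^{-n}]\bigr),\qquad U_n=h^{-1}\bigl((2^{-n-2},2^{-n+1})\bigr),$$
each $U_n$ is a functional neighbourhood of the closed set $F_n$ (through a map $\varphi_n\circ h$ with $\varphi_n$ vanishing on $[2^{-n-1},2^{-n}]$ and equal to $1$ off $(2^{-n-2},2^{-n+1})$), and $(U_n)$ is point-finite because any value $t>0$ lies in $(2^{-n-2},2^{-n+1})$ for at most three indices $n$; by $k$-discreteness $(U_n)$ is compact-finite. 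Since $h(p)=0$ we have $p\notin F_n$, while every neighbourhood of $p$ meets $h^{-1}((0,1/2))\subseteq\bigcup_nF_n$, so $p\in\overline{\bigcup_nF_n}$ and the accumulation lemma makes $(F_n)$ non-locally-finite at $p$. Thus $(F_n)$ is a strict $\Cld^\w$-fan, a contradiction. Hence every continuous function is locally constant, so each $f^{-1}(c)$ is clopen; given a point $p$ and open $V\ni p$, a Tychonoff function $f$ separating $p$ from $X\setminus V$ has $f^{-1}(0)$ clopen with $p\in f^{-1}(0)\subseteq V$, proving zero-dimensionality.

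The only delicate points I foresee lie in that last paragraph: choosing the dyadic windows so that the neighbourhoods overlap boundedly (which is precisely what yields both point-finiteness and strictness simultaneously), and observing that infinitely many $F_n$ must be nonempty — which is automatic, since a finite union of closed sets cannot have $p$ in its closure without already containing $p$. Everything else is bookkeeping built on the accumulation lemma and the compact-finiteness remark from the first paragraph.
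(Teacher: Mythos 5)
Your proof is correct and follows essentially the same route as the paper: for $3)\Ra 1)$ the identical Zorn-lemma argument with a maximal disjoint family of nonempty clopen sets missing a non-isolated point, and for $4)\Ra 3)$ the identical dyadic-window construction of a strict $\Cld^\w$-fan from a continuous real-valued function, with point-finiteness of the windows plus $k$-discreteness giving compact-finiteness. The only differences are organizational: the paper handles condition 2) by reducing $\Clop$-fans to $\Clop^\w$-fans via countable tightness (Corollary~\ref{c:tight}), and derives $4)\Ra 3)$ by first showing that $X$ is a $P$-space (Proposition~\ref{P-space}, whose proof is exactly your dyadic-window argument applied to a function separating a point from a non-closed $F_\sigma$-set), whereas you inline both steps --- proving $2)\Ra 1)$ directly with a disjointified countable clopen family, and phrasing the $P$-space content as ``every continuous real-valued function is locally constant,'' which suffices for zero-dimensionality.
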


By a $\Clop$-fan we understand a fan consisting of closed-and-open sets.
\smallskip

To formulate our next results, we need to recall some information on generalized metric spaces, in particular, those defined via $k$-networks.

A family $\mathcal N$ of subsets of a topological space $X$ is called a \index{network}{\em network} if for each point $x\in X$ and neighborhood $O_x\subset X$ of $x$ there is a set $N\in\mathcal N$ with $x\in N\subset O_x$.

A regular space $X$ is called
\begin{itemize}
\item \index{topological space!cosmic}{\em cosmic} if $X$ has a countable network;
\item a \index{topological space!$\sigma$-space}{\em $\sigma$-space} if $X$ has a network that can be written as a countable union of locally finite families;
\item \index{topological space!submetrizable}{\em submetrizable} if $X$ admits a continuous metric.
\end{itemize}
By \cite[4.9]{Grue}, cosmic spaces can be equivalently defined as  continuous images of separable metrizable spaces.

A family $\mathcal N$ of (closed) subsets of a topological space $X$ is called a \index{network!closed $k$-network}({\em closed}) {\em $k$-network} if for each open set $U\subset X$ and a compact subset $K\subset U$ there exists a finite subfamily $\F\subset\mathcal N$ such that $K\subset\bigcup\F\subset U$.
It is clear that each base of the topology is a $k$-network and each $k$-network is a network.

A regular space $X$ is called
\begin{itemize}
\item an \index{topological space!$\aleph_0$-space}{\em $\aleph_0$-space} if $X$ has a countable $k$-network;
\item an \index{topological space!$\aleph$-space}{\em $\aleph$-space} if $X$ has a $k$-network that can be written as a countable union of locally finite families;
\item \index{topological space!$k^*$-metrizable}{\em $k^*$-metrizable} if $X$ has a $k$-network that can be written as a countable union of compact-finite families;
\item an \index{topological space!$\aleph_k$-space}{\em $\aleph_k$-space} if $X$ has a compact-countable $k$-network;
\item an \index{topological space!$\bar\aleph_k$-space}{\em $\bar\aleph_k$-space} if $X$ has a compact-countable closed $k$-network.\end{itemize}
$\aleph_0$-Spaces were introduced by Michael \cite{Mi66} and $\aleph$-spaces by O'Meara \cite{O'M}. By \cite[\S11]{Grue}, $\aleph_0$-spaces can be equivalently defined as images of metrizable separable spaces under continuous compact-covering maps. $k^*$-Metrizable spaces were introduced and studied in \cite{BBK}. By Theorem~6.4 \cite{BBK}, a regular space $X$ is $k^*$-metrizable if and only if $X$ is the image of a metrizable space $Z$ under a continuous map $f:X\to Z$ that has a (not necessarily continuous) section $s:X\to Z$ which preserves precompact sets in the sense that for each compact set $K\subset X$ the set $s(K)$ has compact closure in $Z$. The classes of $\aleph_k$-spaces and $\bar\aleph_k$ seem to be new.
These classes of generalized metric spaces relate as follows:
{
$$\xymatrix{
\mbox{cosmic $k_\w$-space}\ar@{=>}[d]\ar@{=>}[r]&\mbox{cosmic}\ar@{=>}[r]&\mbox{$\sigma$-space}\ar@{=>}[r]&\mbox{submetrizable}\\
\mbox{cosmic hemicompact}\ar@{=>}[r]&\mbox{$\aleph_0$-space}\ar@{=>}[r]\ar@{=>}[u]&\mbox{$\aleph$-space}\ar@{=>}[r]\ar@{=>}[d]\ar@{=>}[u]
&\mbox{$\bar\aleph_k$-space}\ar@{=>}[d]\\
&\mbox{metrizable}\ar@{=>}[ru]
&\mbox{$k^*$-metrizable}\ar@{=>}[r]&\mbox{$\aleph_k$-space}.
}
$$
}
We recall that a topological space $X$ is a \index{topological space!$k_\w$-space}{\em $k_\w$-space} if there exists a countable cover $\K$ of $X$ by compact subsets such that a subset $F\subset X$ is closed in $X$ if and only if for every $K\in\K$ the intersection $F\cap K$ is closed in $K$. Each $k_\w$-space is \index{topological space!hemicompact}{\em hemicompact}. The latter means that $X$ admits a countable cover $\K$ by compact subsets such that each compact subset $C\subset X$ is contained in some set $K\in\K$.

A topological space $X$ is called \index{topological space!$\mu$-complete}{\em $\mu$-complete} if each bounded closed subset of $X$ is compact. A subset $B$ of a topological space $X$ is called \index{subset!bounded}{\em bounded} if every locally finite family of open sets in $X$ that meet $B$ is finite. It can be shown that a subset $B$ of a Tychonoff space $X$ is bounded if and only if  for each continuous function $f:X\to\IR$ the set $f(B)$ is bounded in $\IR$. The class of $\mu$-complete spaces contains all Diuedonn\'e complete spaces and consequently all paracompact spaces and all submetrizable Tychonoff spaces (see \cite[6.9.7, 6.10.8]{AT}).
\smallskip

In Section~\ref{s:prod} we reprove and generalize an old result of Tanaka \cite{Tan76} on the $k$-space property of products and exploiting $\Fin^\w$-fans obtain the following characterization (in Corollary~\ref{c:tanaka-gen}).

\begin{itheorem} For $k^*$-metrizable spaces $X,Y$ the product $X\times Y$ is a $k$-space if and only if one of the following conditions holds:
\begin{enumerate}
\item[\textup{1)}] $X$ and $Y$ are metrizable;
\item[\textup{2)}] $X$ and $Y$ are topological sums of cosmic $k_\w$-spaces;
\item[\textup{3)}] $X$ and $Y$ are $k$-spaces and one of them is metrizable and locally compact.
\end{enumerate}
\end{itheorem}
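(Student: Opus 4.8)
The plan is to prove both implications, with the ``if'' part resting on classical permanence properties of $k$-spaces and the ``only if'' part established by contraposition through the construction of a $\Fin^\w$-fan, whose existence is forbidden in $k$-spaces by Proposition~\ref{k-no-Cld-fan}.

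For the sufficiency I would treat the three cases separately. In case (1) the product of two metrizable spaces is metrizable, hence a $k$-space. In case (2) I would write $X=\bigsqcup_i X_i$ and $Y=\bigsqcup_j Y_j$ as topological sums of cosmic $k_\w$-spaces, so that $X\times Y=\bigsqcup_{i,j}X_i\times Y_j$; since a product of two cosmic $k_\w$-spaces is again a cosmic $k_\w$-space (the product of $k_\w$-spaces is $k_\w$, and a product of countable networks is a countable network) and a $k_\w$-space is a $k$-space, while a topological sum of $k$-spaces is a $k$-space, the product $X\times Y$ is a $k$-space. In case (3) I would invoke the classical fact that the product of a $k$-space with a locally compact space is a $k$-space.

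For the necessity, assume $X\times Y$ is a $k$-space. First I note that $X$ and $Y$ are $k$-spaces, being homeomorphic to the closed subspaces $X\times\{y_0\}$ and $\{x_0\}\times Y$ of the $k$-space $X\times Y$. Thus $X$ and $Y$ are $k^*$-metrizable $k$-spaces, and I may apply the structure theory of such spaces developed earlier in this section: each $k^*$-metrizable $k$-space is either metrizable, or a topological sum of cosmic $k_\w$-spaces, or else carries at some point a non-metrizable/non-locally-compact configuration witnessed by a countable family of non-trivial convergent sequences (a closed copy of the sequential fan at a fan vertex, resp.\ a sequence escaping every compact neighbourhood). Arguing by contraposition, I assume that none of (1)--(3) holds: neither factor is locally compact metrizable, the pair is not jointly metrizable, and the pair is not a pair of topological sums of cosmic $k_\w$-spaces. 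Using the structure theory I extract from $X$ a vertex $x_0$ with a fan of sequences $(x^m_n)_{n\in\w}$, $x^m_n\to x_0$, and from $Y$ a point $y_0$ together with a sequence $y_m\to y_0$ that is not absorbed by any compact subset; then I assemble a countable set $A=\{(x^m_{n(m,k)},y_m):m,k\in\w\}\subset X\times Y$ with indices chosen so that $A$ is compact-finite (each compact $K\subset X\times Y$ has compact projections, so $\pr_Y(K)$ meets only finitely many $y_m$ and each relevant sequence meets $\pr_X(K)$ finitely) while $(x_0,y_0)\in\overline A\setminus A$. By Proposition~\ref{p:Fin-fan-char} this non-closed compact-finite countable set is a $\Fin^\w$-fan, contradicting Proposition~\ref{k-no-Cld-fan}.

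The main obstacle is precisely the fan construction in the last step, and it is genuinely delicate because the hypotheses must be used in full: the product of two sequential fans is a $k_\w$-space and hence contains no fan at all, so a naive ``fan vertex times escaping sequence'' set need not be compact-finite. The real work is therefore to show that failing all of (1)--(3) forces the bad points of at least one factor to be spread out beyond a single cosmic $k_\w$-piece --- either uncountably many fan vertices, or a non-hemicompact accumulation of non-locally-compact points --- and to use this spreading to guarantee, via a careful diagonal choice of the indices $n(m,k)$, that every compact subset of the product catches only finitely many points of $A$. Verifying compact-finiteness while preserving the limit point $(x_0,y_0)$, and checking that the excluded cases are exactly those in which the $k_\w$-structure of the factors rescues the product, is the crux of the argument; the remaining bookkeeping reduces to the permanence facts recalled above and to Proposition~\ref{p:Fin-fan-char}.
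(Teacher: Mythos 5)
Your sufficiency argument is fine, but the necessity argument has a fatal structural flaw in the key step, and it cannot be repaired by any choice of the indices $n(m,k)$. In your set $A=\{(x^m_{n(m,k)},y_m):m,k\in\w\}$ the second coordinates run along a single convergent sequence $y_m\to y_0$, so every ``column'' $C_m=\{(x^m_{n(m,k)},y_m):k\in\w\}$ is contained in the compact set $\bar S_m\times\{y_m\}$, where $S_m=\{x^m_n\}_{n\in\w}$ is the $m$-th convergent sequence of your fan in $X$. Hence if even one column is infinite, $A$ meets a compact set in an infinite set and is not compact-finite. If instead all columns are finite, write $F_m=\{x^m_{n(m,k)}:k\}$; these are finite selections from the compact-finite family $(S_m)_{m\in\w}$, so $(F_m)_{m\in\w}$ is compact-finite in $X$, and if it were not locally finite at $x_0$ it would be a $\Fin^\w$-fan in $X$ itself (Proposition~\ref{p:Fin-fan-char}) --- impossible, since $X$ is a $k$-space (Proposition~\ref{k-no-Cld-fan}). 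So $(F_m)$ is locally finite at $x_0$, and after shrinking one finds a neighborhood $U$ of $x_0$ missing $\bigcup_m F_m$; then $U\times Y$ misses $A$, so $(x_0,y_0)\notin\bar A$. Either way $A$ is not a $\Fin^\w$-fan: infinite columns destroy compact-finiteness, finite columns destroy accumulation. Your own remark that the product of two fans is a $k_\w$-space is precisely the symptom; the ``careful diagonal choice'' you defer to does not exist. A related error: your parenthetical check ``$\pr_Y(K)$ meets only finitely many $y_m$'' is false, since $\{y_m\}_{m\in\w}\cup\{y_0\}$ is itself compact.

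The missing idea is that the two factors must contribute \emph{different} kinds of pathology, and the one you extract from $Y$ is wrong. What the structure theory actually gives (Proposition~\ref{p:decomp1}, via Lemma~\ref{l:b-nw}) when $Y$ is not a ($k$-homeomorph of a) topological sum of cosmic $k_\w$-spaces is a $D_\w$-cofan: a sequence $(D_m)_{m\in\w}$ of \emph{infinite compact-finite} sets converging to a point $y_0$ --- not a convergent sequence of points, which (as noted) is relatively compact and carries no usable non-compactness. The paper's proof (Corollary~\ref{c:prod1}) then reduces the failure of (1)--(3) to: one factor, say $X$, is not $k$-metrizable, hence contains an $\ddot S^\w$-fan $(S_n)$ by Proposition~\ref{p:k-metr}, while the other, $Y$, contains a $D_\w$-cofan $(D_n)$; Theorem~\ref{t:product1} pairs them diagonally, $z_{n,m}=(x_{n,m},y_{n,m})$ with $y_{n,m}\in D_n$ and $x_{n,m}\in S_n$. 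Compact-finiteness then does hold: a compact rectangle $K_X\times K_Y$ meets only finitely many $S_n$ (the family $(S_n)$ is compact-finite in $X$), and for each such $n$ only finitely many points of the compact-finite set $D_n$, so only finitely many $z_{n,m}$ are caught --- exactly the mechanism your pairing lacks, because in your $A$ nothing plays the role of the infinite compact-finite sets $D_n$.
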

\smallskip

In Section~\ref{s:rectif} we study the topological structure of rectifiable spaces containing no (strong) $\Fin^\w$-fans. We recall that a topological space  $X$ is \index{topological space!rectifiable}{\em rectifiable} if there exists a homeomorphism $h:X\times X\to X\times X$ such that $h(\Delta_X)=X\times\{e\}$ for some point $e\in X$ and $h(\{x\}\times X)=\{x\}\times X$ for all $x\in X$.  Here by $\Delta_X=\{(x,y)\in X\times X:x=y\}$ we denote the diagonal of $X\times X$. Rectifiable spaces were introduced by Arhangel'skii as non-associative generalizations of topological groups (see \cite{Ar02}, \cite{Gul}). Corollary~\ref{c:rect-aleph} implies the following theorem.

\begin{itheorem}\label{it:rec1} If a rectifiable Tychonoff $\aleph$-$k_\IR$-space $X$ contains no strong $\Fin^\w$-fan, then $X$ is either metrizable or is a topological sum of cosmic $k_\w$-spaces.
\end{itheorem}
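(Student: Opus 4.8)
The plan is to exploit the homogeneity that rectifiability provides in order to reduce the whole problem to the local structure at the distinguished point $e$, and then to split into two cases according to the character $\chi(X,e)$. First I would record the standing consequences of the hypotheses: being rectifiable, $X$ is topologically homogeneous, so first-countability and the presence of a compact (resp. $\sigma$-compact) neighbourhood at one point are equivalent to the same at every point; being an $\aleph$-space, $X$ is a $\sigma$-space, hence submetrizable (it carries a continuous metric) and all its compact subsets are metrizable; and being a Tychonoff $k_\IR$-space, $X$ is Ascoli, so by Corollary~\ref{c:A->noFan} and the diagram it contains no strong (equivalently strict) $\Fin$-fan of \emph{any} length, which in particular re-delivers the hypothesis that $X$ has no strong $\Fin^\w$-fan.

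If $\chi(X,e)=\aleph_0$, then $X$ is first-countable, and a first-countable $\aleph$-space is metrizable (refine the $\sigma$-locally finite $k$-network inside a countable neighbourhood base to obtain a $\sigma$-locally finite base and apply Nagata--Smirnov); this yields the first alternative. So from now on I would assume $\chi(X,e)>\aleph_0$, so that by homogeneity $X$ is nowhere first-countable, and the goal becomes to show that $X$ is a topological sum of cosmic $k_\w$-spaces.

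The heart of the argument is to produce a clopen neighbourhood $H\ni e$ that is a $k_\w$-space. I would fix a $\sigma$-locally finite $k$-network $\N=\bigcup_{n\in\w}\N_n$; since every compact set is metrizable and meets each locally finite $\N_n$ in a finite subfamily, each compactum meets only countably many members of $\N$. The plan is to construct an increasing sequence of compacta $K_0\subseteq K_1\subseteq\cdots$ with $e\in K_0$ whose union $H$ is clopen and hemicompact, and then to observe that for a hemicompact space the $k_\w$ property is equivalent to being a $k$-space (every compactum lies in some $K_n$, so $k$-closedness of a set is exactly closedness of its traces on the $K_n$); the $k_\IR$ property is what I would use to pass from the $\sigma$-compact structure to this weak-topology (hence $k$-space) conclusion. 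The absence of strong $\Fin^\w$-fans is precisely the tool driving \emph{both} the hemicompactness of $H$ and the weak-topology property: if hemicompactness failed, or if some $k$-closed non-closed set $A\subseteq H$ existed, one could select points $a_n$ escaping the $K_n$ (resp. approaching a point of $\overline A\setminus A$), and use the closed $\sigma$-locally finite $k$-network together with submetrizability to equip the $a_n$ with a compact-finite family of $\IR$-open neighbourhoods, assembling a non-closed strongly compact-finite countable set, i.e. a strong $\Fin^\w$-fan in the sense of Proposition~\ref{p:Fin-fan-char}, contradicting the hypothesis. Once $H$ is a hemicompact $\aleph$-space, its $k$-network $\N$ restricted to $H$ is countable (each metrizable $K_n$ meets only finitely many members of each $\N_m$), so $H$ is an $\aleph_0$-space, in particular cosmic; thus $H$ is a cosmic $k_\w$-space.

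Finally I would globalize via the quasigroup structure supplied by the rectification homeomorphism $h$: its ``division'' operation moves the open set $H$ onto pairwise disjoint clopen copies that partition $X$, exactly as the cosets of an open subgroup partition a topological group. Since the translates are clopen, mutually homeomorphic to the cosmic $k_\w$-space $H$, and form a discrete family, $X$ is their topological sum, which is the second alternative. The main obstacle is the construction of the previous paragraph, namely extracting hemicompactness and the $k$-space property of $H$ from the mere absence of strong $\Fin^\w$-fans; the delicate point is not merely to find a non-closed compact-finite countable set but to equip it with a genuinely \emph{compact-finite family of $\IR$-open neighbourhoods}, and this is exactly where the submetrizability and the closed $\sigma$-locally finite $k$-network of an $\aleph$-space must be used in an essential way.
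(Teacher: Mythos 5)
Your reduction to the distinguished point via homogeneity and the split into the first-countable and non-first-countable cases is a legitimate outline, and the first case is essentially sound: first countability gives countable fan-tightness, so metrizability follows from the paper's own Corollary~\ref{c:fan-metrization} (your Nagata--Smirnov ``refinement'' of a $k$-network into a base is not an argument as stated, but the claim it is meant to justify is true). The genuine gap is in the second case. The heart of your plan --- that the absence of strong $\Fin^\w$-fans forces a clopen hemicompact $k$-space neighbourhood $H$ of $e$ --- is never actually derived, and the mechanism you indicate cannot work: points $a_n$ ``escaping the $K_n$'' (or approaching a point of $\overline A\setminus A$) need not form a compact-finite set at all, let alone a strongly compact-finite non-closed one, so no strong $\Fin^\w$-fan arises from them; compare the metric cofan $M$, which is metrizable (hence contains no fans whatsoever) yet is not hemicompact. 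The implication you want is simply not a direct consequence of fan-freeness; crucially, your case-2 derivation of the structure of $H$ makes no use of rectifiability, and without it the conclusion is out of reach. What the paper does instead is: (i) since $X$ is $k^*$-metrizable (being an $\aleph$-space) and not $k$-metrizable (for a Tychonoff $k_\IR$-space, $k$-metrizability implies metrizability, excluded in this case), Proposition~\ref{p:k-metr} yields an $\ddot S^\w$-fan, which is strong by Corollary~\ref{c:fan-in-aleph-space}; (ii) Theorem~\ref{t:rec-fan} --- this is exactly where the rectifiable operations are used, to mix a strong $D_\w$-cofan with a strong $\bar S^\w$-fan into a strong $\Fin^\w$-fan --- shows that $X$ then contains no strong $D_\w$-cofan; and (iii) Proposition~\ref{p:decomp1} (the tree/K\"onig argument of Lemma~\ref{l:b-nw} plus the chain-equivalence classes of a compact-countable $k$-network) converts the absence of strong $D_\w$-cofans into a decomposition of $X$ into a $k$-sum of cosmic hemicompact pieces, which the $k_\IR$-property upgrades to a topological sum of cosmic $k_\w$-spaces.

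Your final globalization step is also broken. A rectifiable space carries a lop, not a group, structure: translates $a\cdot H$ of an open set $H$ do not define an equivalence relation, since without associativity two translates can overlap without coinciding, and $H$ is not a sub-lop, so there is no analogue of the coset partition. Even the weaker statement ``every point has a clopen cosmic $k_\w$ neighbourhood'' does not obviously give the sum decomposition: the chain-connected components of such a cover are clopen, but a component glued from uncountably many overlapping pieces need not be Lindel\"of, hence need not be a cosmic $k_\w$-space. In the paper the clopen decomposition comes directly out of the $k$-network equivalence relation in Proposition~\ref{p:decomp1}, not out of homogeneity or translation.
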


This result can be compared with a recent result of Banakh and Repov\v s \cite{BR} on the structure of sequential rectifiable spaces with countable $\cs^*$-character. We recall \cite{BZ04} that a topological space $X$ has {\em countable $\cs^*$-character} if for each point $x\in X$ there exists a countable family $\mathcal N_x$ of subsets of $X$ such that for any neighborhood $O_x\subset X$ and any convergent sequence $S\subset X$ with $x=\lim S$ there is a set $N\in\mathcal N_x$ such that $N\subset O_x$ and the intersection $S\cap N$ is infinite.

It is easy to see that each $\bar\aleph_k$-space has countable $\cs^*$-character. On the other hand, the sequence fan $V_{\w_1}=\big((\w{+}1)\times\aleph_1\big)/\big(\{\w\}\times\aleph_1\big)$ with $\w_1$ many spikes is an example of a sequential $\aleph_k$-space with uncountable $\cs^*$-character.  For topological groups the following theorem \cite{BR} was proved in \cite{BZ04}.

\begin{itheorem}[Banakh, Repov\v s]\label{it:rec2} Any sequential rectifiable space $X$ with countable $\cs^*$-character is either metrizable or a topological sum of cosmic $k_\w$-spaces.
\end{itheorem}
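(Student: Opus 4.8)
The plan is to split along the character of $X$ at the neutral element $e$, exploiting the fact that every rectifiable space is topologically homogeneous, so that the local structure of $X$ at $e$ is reproduced at every point. The rectifiability homeomorphism $h$ furnishes continuous operations $p,q\colon X\times X\to X$ with $q(x,x)=e$, $p(x,e)=x$ and $p(x,q(x,y))=y=q(x,p(x,y))$, which play the role of a (non-associative) multiplication and division with unit $e$. If $X$ is first-countable, then, being a first-countable Tychonoff rectifiable space, it is metrizable (the rectifiable analogue of the Birkhoff--Kakutani theorem, see \cite{Ar02,Gul}), and we land in the first alternative. Hence the entire argument is devoted to the case when $X$ is \emph{not} first-countable; here the goal is the decomposition into cosmic $k_\w$-summands.

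To produce a candidate summand I would use the countable $\cs^*$-character to fix, at the point $e$, a countable witnessing family $\mathcal N_e$, and arrange (replacing its members by ranges of convergent sequences together with their limit $e$) that each $N\in\mathcal N_e$ is a \emph{convergent sequence}, in particular a metrizable compactum. Let $H$ be the smallest subset of $X$ containing $\bigcup\mathcal N_e$ and closed under $p$ and $q$; concretely $H=\bigcup_{k\in\w}C_k$ where $C_0=\bigcup\mathcal N_e\cup\{e\}$ and $C_{k+1}=C_k\cup p(C_k\times C_k)\cup q(C_k\times C_k)$. Since $p$ and $q$ are continuous, each $C_k$ is a continuous image of finite powers of the cosmic space $C_0$, so each $C_k$, and hence $H=\bigcup_k C_k$, is cosmic; in particular every compact subset of $H$ is metrizable. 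The substance of the proof is now concentrated in two assertions: that $H$ is a $k_\w$-space and that $H$ is open in $X$.

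Openness I would extract from sequentiality by showing that $X\setminus H$ is sequentially closed: if $x_n\to x_\infty\in H$, then translating the sequence to $e$ via $y\mapsto q(x_\infty,y)$ and applying the $\cs^*$-network $\mathcal N_e$ pins a subsequence inside some $N\subset H$, whence cofinally many $x_n$ lie in $H$. Granting openness, $H$ is clopen, and the left translations $y\mapsto p(a,y)$ (homeomorphisms sending $e$ to $a$) carry $H$ onto clopen homeomorphic copies covering $X$; checking that these copies partition $X$ reduces to showing that $H$ behaves like a subloop, and homogeneity then exhibits $X$ as a topological sum of copies of $H$. Together with the $k_\w$ and cosmic properties of $H$ this is exactly the desired conclusion.

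The main obstacle is genuinely the conjunction of the two assertions about $H$, neither of which is formal. A $\sigma$-compact $k$-space need not be $k_\w$ — the rationals $\mathbb{Q}$ form a first-countable, hence sequential, $\sigma$-compact space that is not hemicompact — so $k_\w$-ness cannot follow from the construction alone; and the sequential-closedness computation above only delivers a subsequence in $H$ rather than an eventual tail, i.e.\ it uses $\mathcal N_e$ as a $\cs^*$-network and not as a $\cs$-network. Both gaps must be closed by feeding in the \emph{failure} of first countability: a sequential, non-first-countable space with countable $\cs^*$-character contains, near $e$, a copy of the sequential fan $S_\w$, and the interaction of this fan with the countable family $\mathcal N_e$ should force the generated part $H$ to be hemicompact (hence $k_\w$) and upgrade the $\cs^*$-capturing to eventual containment. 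Carrying this analysis out through the non-associative operations $p,q$, rather than a genuine group multiplication, is where the argument departs from, and is technically heavier than, the topological-group case established in \cite{BZ04}.
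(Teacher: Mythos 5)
First, a point of comparison: the paper itself contains \emph{no} proof of this statement --- it is quoted from the external reference \cite{BR} (and, for topological groups, from \cite{BZ04}) --- so your proposal can only be judged on its own merits. Judged that way, it is a strategy outline whose decisive steps are missing. The opening dichotomy (first countable $\Rightarrow$ metrizable, by Gulko's metrization theorem for rectifiable spaces \cite{Gul}) and the idea of generating a clopen piece $H$ from countable data at $e$ via the operations $p,q$ do match the general line of \cite{BZ04}; moreover your openness argument is actually fine as written --- your worry about ``subsequence versus tail'' is unfounded, since to show $X\setminus H$ is sequentially closed it suffices to produce \emph{one} subsequence of $(x_n)$ inside $H$ when all $x_n$ were assumed to lie in $X\setminus H$. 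But the two claims you yourself call ``the substance of the proof'' are never established. The $k_\w$-property of $H$ is the heart of the matter: your construction only makes $H$ $\sigma$-compact, and, as your own example of $\mathbb{Q}$ shows, that is strictly weaker than $k_\w$. The sentence ``the interaction of this fan with the countable family $\mathcal N_e$ \emph{should} force the generated part $H$ to be hemicompact'' is precisely the hard combinatorial lemma that \cite{BZ04} proves for groups; deferring it to the word ``should'' leaves a hole where the core of the proof ought to be. There is also an unjustified reduction at the very start: one cannot in general replace the members of a countable $\cs^*$-network by convergent sequences while retaining the capturing property (a single member $N$ may capture uncountably many ``directions'' of convergence, and no one convergent subsequence of $N$ serves them all); without this reduction the set $C_0=\bigcup\mathcal N_e$ need not be cosmic, so even the cosmicity of $H$ collapses.

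Second, the final step --- ``homogeneity then exhibits $X$ as a topological sum of copies of $H$'' --- conceals a genuinely missing idea rather than a routine verification. In a topological group the cosets of an open subgroup partition the group; that is exactly where associativity and inverses enter, and it is why the group case closes smoothly in \cite{BZ04}. In a rectifiable space the translates $p(a,H)$ are indeed clopen homeomorphic copies of $H$ containing $a$, but two such translates can overlap without coinciding, so they need not partition $X$: ``$H$ behaves like a subloop'' is not a checkable reduction, and left cosets of a subloop of a loop do not partition the loop in general. Nor can you retreat to the weaker local statement that every point has a clopen cosmic $k_\w$ neighborhood: that does not imply the global sum decomposition. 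For instance, in $\w_1$ with the order topology every point has a clopen compact metrizable neighborhood, yet $\w_1$ is not a topological sum of cosmic $k_\w$-spaces (each summand of such a sum is Lindel\"of, while every clopen partition of $\w_1$ has an unbounded, hence non-Lindel\"of, piece). So after constructing $H$ you would still need a new global argument to replace the coset trick --- this is the second place where \cite{BR} must, and your proposal does not, do real work.
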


We do not know if Theorems~\ref{it:rec1} and \ref{it:rec2} can be unified (at least for topological groups).

\begin{iproblem} Assume that a rectifiable space (or topological group) $X$ is a $k_\IR$-space with countable $\cs^*$-character contains no (strong) $\Fin^\w$-fans. Is $X$ metrizable or a topological sum of cosmic $k_\w$-spaces?
\end{iproblem}
\smallskip

In Chapter~\ref{ch:Funct} we detect spaces without strong $\Fin^\w$-fans among spaces $C_k(X)$ of continuous real-valued functions, endowed with the compact-open topology.
 According to a result of Pol \cite{Pol74}, for a metrizable separable space $X$ the function space $C_k(X)$ is a $k$-space if and only if the space $X$ is locally compact. This characterization was generalized by Gabriyelyan, K\c akol, and Plebanek \cite{GKP} who proved that a metrizable space $X$ is locally compact if and only if the function space $C_k(X)$ is Ascoli. We shall extend this characterization in two directions and prove the following characterization (see, Corollary~\ref{c:aleph0Ck}).

\begin{itheorem} For an $\aleph_0$-space $X$ the following conditions are equivalent:
\begin{enumerate}
\item[\textup{1)}] $C_k(X)$ is metrizable;
\item[\textup{2)}] $C_k(X)$ contains no strong $\Fin^\w$-fan.
\item[\textup{3)}] $X$ is hemicompact;
\end{enumerate}
\end{itheorem}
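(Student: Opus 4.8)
The plan is to prove the cycle $(3)\Ra(1)\Ra(2)\Ra(3)$, of which the first two implications are classical and the last one carries the whole weight. For $(3)\Ra(1)$ I would use the standard fact that a topological vector space is metrizable as soon as it is first countable (Birkhoff--Kakutani): if $X$ is hemicompact with a fundamental sequence $(K_n)_{n\in\w}$ of compacta, then, writing $[K,\e]=\{f:\sup_{x\in K}|f(x)|<\e\}$, the countable family $\{[K_n,1/m]:n,m\in\w\}$ is a neighbourhood base at the zero function, so $C_k(X)$ is first countable and hence metrizable. For $(1)\Ra(2)$ I would simply note that a metrizable space is a $k$-space, so by Proposition~\ref{k-no-Cld-fan} it contains no $\Fin$-fans, and a fortiori no strong $\Fin^\w$-fan.

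Everything then reduces to $(2)\Ra(3)$, which I would prove contrapositively: assuming the $\aleph_0$-space $X$ is \emph{not} hemicompact, I would build a strong $\Fin^\w$-fan in $C_k(X)$. Two preliminary reductions organise the work. First, since $X$ is an $\aleph_0$-space, so is $C_k(X)$ (Michael \cite{Mi66}); in particular $C_k(X)$ is cosmic, hence Tychonoff (so its open sets coincide with its $\IR$-open sets) and of countable tightness, and all of its compact subsets are metrizable. By Proposition~\ref{p:Fin-fan-char} it therefore suffices to produce a countable, non-closed, strongly compact-finite subset $A\subset C_k(X)$; the witnessing $\IR$-open neighbourhoods can be taken of the form $U_{b}=\{f:\sup_{S}|f-b|<\tfrac12\}$ for a suitable compact $S$, and the resulting fan is then automatically \emph{strong} (equivalently \emph{strict}, as holds for every $\Fin$-fan in a $T_1$-space). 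Second, a cosmic space is Lindel\"of, and a locally compact Lindel\"of space is $\sigma$-compact, hence hemicompact; so the failure of hemicompactness forces a point $x_0\in X$ with no compact neighbourhood, which (working with a continuous metric $\rho$ on the submetrizable space $X$) supplies, inside each ball $B_\rho(x_0,1/n)$, an infinite closed-and-discrete-in-$X$ set of ``escape'' points to build bump functions from.

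The construction of $A=\{b_i:i\in\w\}$ should arrange continuous functions, each equal to $1$ at some point and small on a designated compact set, so that two properties hold. (P1) For every compact $K\subset X$ and $\e>0$ some $b_i$ satisfies $\sup_K|b_i|<\e$; then every basic neighbourhood $[K,\e]$ of $0$ meets $A$, so $0\in\overline A\setminus A$ and $A$ is non-closed. (P2) No infinite subfamily of $A$ is relatively compact; since compacta in $C_k(X)$ are metrizable, a relatively compact infinite subfamily would contain a sequence of distinct $b_i$ converging uniformly on compacta, so it is enough to guarantee that every infinite subfamily, restricted to some compact set, fails to be uniformly Cauchy. Property (P2) is exactly compact-finiteness of $A$ (a compact $\mathcal K\subset C_k(X)$ meeting $A$ infinitely would furnish such a convergent subsequence), and together with (P1) and the neighbourhoods $U_{b_i}$ it makes $A$ a strong $\Fin^\w$-fan, completing $(2)\Ra(3)$.

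\textbf{The main obstacle} is reconciling (P1) with (P2): clustering at $0$ pushes the ``large parts'' of the $b_i$ to escape every compact set, whereas a non-convergence pattern witnessing (P2) lives on some compact set, and if a \emph{single} compact set $D$ witnessed non-Cauchyness for all of $A$, then $[D,1]$ would block the clustering and (P1) would fail. The accumulation of $A$ at $0$ must therefore be genuinely \emph{non-sequential}, and this is precisely where non-hemicompactness is indispensable: if $X$ were hemicompact, with a cofinal increasing sequence $(K_n)$ of compacta, then any family clustering at $0$ would contain, for each $n$, a function small on $K_n$, and these would converge to $0$ uniformly on compacta, yielding a relatively compact infinite subfamily and destroying (P2). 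The hard part will thus be the combinatorial assignment that distributes the ``non-Cauchy-witnessing'' compacta of the various infinite subfamilies across a \emph{cofinal} family of compacta of $X$, so that each individual compact set constrains only finitely many of the $b_i$ (securing (P1)) while every infinite subfamily still spikes on some compact set (securing (P2)). Carrying out this bookkeeping inside the countably tight cosmic space $C_k(X)$ — using the absence of a countable cofinal family of compacta in $X$ to defeat every potential convergent sequence while preserving $0\in\overline A$ — is the crux of the argument.
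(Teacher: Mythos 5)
Your outer structure is sound and matches the paper on the easy arcs: $(3)\Ra(1)$ is Arens' theorem and $(1)\Ra(2)$ follows since metrizable spaces are $k$-spaces (Proposition~\ref{k-no-Cld-fan}). Your reduction of $(2)\Ra(3)$ is also legitimate: by Proposition~\ref{p:Fin-fan-char} it suffices to produce a countable non-closed compact-finite set $A\subset C_k(X)$, and strongness is indeed automatic — though the clean reason is that $C_k(X)$ is an $\aleph_0$-space by Michael's theorem, hence a Tychonoff $\aleph$-space, so Corollary~\ref{c:fan-in-aleph-space} applies, rather than any particular choice of basic neighbourhoods. The problem is that your proof ends exactly where the theorem begins: the set $A$ satisfying (P1) and (P2) is never constructed. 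You explicitly label the needed ``combinatorial assignment'' as ``the crux of the argument'' and leave it as a desideratum; but that assignment \emph{is} the implication $(2)\Ra(3)$, so what you have is a correct reformulation of the problem, not a proof of it.

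Worse, the one concrete preparatory step you do commit to — localizing at a point $x_0\in X$ with no compact neighbourhood and harvesting bump functions from infinite closed discrete ``escape'' sets inside the balls $B_\rho(x_0,1/n)$ — rests on a hypothesis strictly weaker than non-hemicompactness and provably cannot suffice. The Fr\'echet-Urysohn fan $V$ of the paper is a countable cosmic $k_\w$-space, hence a hemicompact $\aleph_0$-space, so $C_k(V)$ is metrizable and contains no fans at all; yet the apex of $V$ has no compact neighbourhood, and every neighbourhood of the apex even contains an infinite closed discrete set of isolated points of $V$ — all the local data your template consumes is available there. So non-hemicompactness must enter structurally, and the paper shows how: represent the $\aleph_0$-space $X$ as the image of a separable metric space $Z$ under a compact-covering map; if $X$ is not hemicompact, some point $z\in Z$ has all its neighbourhoods mapped onto unbounded subsets of $X$ (Proposition~\ref{p:XD->s+h+kw}), and first countability of $Z$ at $z$ converts this into a strong $D_\w$-cofan in $X$, i.e.\ a \emph{sequence} of infinite strongly compact-finite sets converging to a point (Lemma~\ref{l:D-cof} with Proposition~\ref{p:unbound-str-c-f}); this convergence structure is precisely what a single bad point of $X$ (as in $V$) does not provide. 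The dualities of Propositions~\ref{p:XD->FS} and \ref{p:XI->FD} then transplant the cofan into a strong $\dot S^\w$-fan and a strong $D_\w$-cofan inside $C_k(X)$, and the topological-group Theorem~\ref{t:rec-fan} (via Corollary~\ref{c:g->fan}) multiplies the two together into the required strong $\Fin^\w$-fan. This three-step machinery is exactly the bookkeeping your proposal postpones.
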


Another result describing the structure of the function spaces $C_k(X)$ is proved in Theorem~\ref{t:Ck-char}:

\begin{itheorem} For a Tychonoff $\mu$-complete $\bar\aleph_k$-$k_\IR$-space $X$ the following conditions are equivalent:
\begin{enumerate}
\item[\textup{1)}] $C_k(X)$ is a $k_\IR$-space.
\item[\textup{2)}] $C_k(X)$ is Ascoli.
\item[\textup{3)}] $C_k(X)$ contains no strong $\Fin^\w$-fan.
\item[\textup{4)}] $C_k(X)$ is homeomorphic to $\IR^\kappa$ for some cardinal $\kappa$.
\item[\textup{5)}] $X$ is a topological sum of cosmic $k_\w$-spaces.
\end{enumerate}
\end{itheorem}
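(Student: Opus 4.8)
The plan is to establish the cycle $(5)\Ra(4)\Ra(1)\Ra(2)\Ra(3)\Ra(5)$, keeping $(3)\Ra(5)$ for last as the only substantial step. The three implications $(4)\Ra(1)\Ra(2)\Ra(3)$ are soft. For $(2)\Ra(3)$ I would use Corollary~\ref{c:A->noFan}: an Ascoli space has no strict $\Cld$-fan, and since in the $T_1$-space $C_k(X)$ a strict $\Fin^\w$-fan is a strict $\Cld^\w$-fan while for $\Fin$-fans ``strict'' and ``strong'' coincide, $(2)$ yields $(3)$. For $(1)\Ra(2)$ I would note that $C_k(X)$ is a Tychonoff (indeed locally convex) space, so by \cite{Nob69} a Tychonoff $k_\IR$-space is Ascoli. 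Finally $(4)\Ra(1)$ rests on the classical fact that every power $\IR^\kappa$ is a $k_\IR$-space.

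For $(5)\Ra(4)$, write $X=\bigoplus_\alpha X_\alpha$ with each $X_\alpha$ a cosmic $k_\w$-space. As the summands are clopen and each compactum of a topological sum meets only finitely many of them, $C_k(X)$ is canonically homeomorphic to $\prod_\alpha C_k(X_\alpha)$. For a cosmic $k_\w$-space $X_\alpha$ the space $C_k(X_\alpha)$ is a \emph{separable Fr\'echet space}: metrizable because $X_\alpha$ is hemicompact, complete because $X_\alpha$ is a $k_\w$-space, and separable because $X_\alpha$ is cosmic. By the Anderson--Kadec theorem an infinite-dimensional separable Fr\'echet space is homeomorphic to $\IR^\w$, so each $C_k(X_\alpha)\cong\IR^{\kappa_\alpha}$ with $\kappa_\alpha\le\w$ (the finite-dimensional case being trivial), whence $C_k(X)\cong\prod_\alpha\IR^{\kappa_\alpha}\cong\IR^\kappa$ for $\kappa=\sum_\alpha\kappa_\alpha$.

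The heart of the theorem is $(3)\Ra(5)$. Here I would decompose $X$ along the equivalence relation generated by ``lying in a common compact subset'', obtaining components $(X_\alpha)_\alpha$, and prove that under $(3)$ each $X_\alpha$ is a clopen cosmic $k_\w$-space with $X=\bigoplus_\alpha X_\alpha$. The argument splits into: (i) each component is $\sigma$-compact; (ii) each component is clopen; (iii) a $\sigma$-compact clopen piece of a $\bar\aleph_k$-space carries a countable closed $k$-network, hence is an $\aleph_0$-space, and in particular cosmic; (iv) such a component, being a clopen subspace, inherits $(3)$, so by the preceding $\aleph_0$-space theorem it is hemicompact; and (v) a cosmic hemicompact Tychonoff $k_\IR$-space is a $k_\w$-space --- this last upgrade I would get from submetrizability of cosmic spaces (their compacta are metrizable), which forces the $k_\w$-colimit topology to be Tychonoff and hence to coincide with the original one.

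The main obstacle is $(i)$ and $(ii)$, the $\sigma$-compactness and clopenness of the components, and this is exactly where hypothesis $(3)$ is consumed. I would argue by contraposition: from any failure of $\sigma$-compactness or of clopenness I expect to extract a sequence of compacta (or points) in $X$ that is compact-finite but not locally finite, and then, imitating the construction behind the $\aleph_0$-space theorem, to build functions $f_n\in C_k(X)$ that peak near the $n$-th witness while remaining uniformly small on every fixed compactum --- a sequence clustering at a point of $C_k(X)$ yet forming a strong $\Fin^\w$-fan, contradicting $(3)$. The hemicompactness step $(iv)$ is cleaner, since once a component $Y$ is known to be an $\aleph_0$-space I can feed a strong $\Fin^\w$-fan of $C_k(Y)$ into $C_k(X)$ through the splitting $C_k(X)\cong C_k(Y)\times C_k(X\setminus Y)$ afforded by clopenness. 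I expect the genuinely delicate points to be the bookkeeping that makes each component an honest $\aleph_0$-space (so the $\aleph_0$-space theorem applies), the verification that the constructed family is \emph{strongly} compact-finite in $C_k(X)$, and the role of $\mu$-completeness (identifying bounded sets with relatively compact ones) in keeping the compacta-exhaustion of each component countable.
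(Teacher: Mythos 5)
Your cycle and the soft implications are exactly the paper's: $(4)\Ra(1)$ and $(1)\Ra(2)$ via Noble, $(2)\Ra(3)$ via Corollary~\ref{c:A->noFan}, and $(5)\Ra(4)$ via the splitting $C_k(\bigoplus_\alpha X_\alpha)\cong\prod_\alpha C_k(X_\alpha)$ into separable Fr\'echet spaces plus Anderson--Kadec. The problem is $(3)\Ra(5)$, where your argument breaks at its very first step: the decomposition is the wrong one. The equivalence relation generated by ``lying in a common compact subset'' is far too coarse. Take $X=\bigoplus_{\alpha\in\w_1}S_\alpha$, the topological sum of uncountably many compact convergent sequences. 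This $X$ is Tychonoff, $\mu$-complete, a $\bar\aleph_k$-space and a $k$-space (hence $k_\IR$), and it satisfies $(5)$ and therefore also $(3)$. Yet any two points of $X$ lie in a common compact set (the union of the two convergent sequences containing them is compact), so your relation has a single equivalence class, namely $X$ itself --- which is not $\sigma$-compact (not even Lindel\"of), not cosmic, not hemicompact, and not a $k_\w$-space. So your claims (i), (iii), (iv), (v) are all false for this component even though hypothesis $(3)$ holds; in particular no contrapositive fan-construction can ever prove (i), because there is no strong $\Fin^\w$-fan in $C_k(X)$ to be found, while $\sigma$-compactness of the component genuinely fails.

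What makes the paper's argument work is countability control that your relation lacks. Since $X$ is $\bar\aleph_k$, it has a compact-countable $k$-network, and the chain relation is run on the \emph{members of that network}, not on points via arbitrary compacta; compact-countability forces every chain-equivalence class to be countable, so each class-union is a cosmic hemicompact piece. Before this, hypothesis $(3)$ is consumed quite differently from your sketch: one shows $X$ has no strong $D_\w$-cofan, because such a cofan would give a strong $\dot S^\w$-fan in $C_k(X)$ (Proposition~\ref{p:XD->FS}) and a strong $D_\w$-cofan in $C_k(X)$ (Proposition~\ref{p:XI->FD}), which combine in the topological group $C_k(X)$ into a strong $\Fin^\w$-fan (Corollary~\ref{c:g->fan}), contradicting $(3)$; the absence of strong $D_\w$-cofans then powers the K\"onig's-lemma argument of Lemma~\ref{l:b-nw}, producing a compact-countable $k$-network consisting of bounded, hence (by $\mu$-completeness) compact, sets to which the chain decomposition of Proposition~\ref{p:decomp1} applies. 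Finally, note that the resulting pieces are a priori only \emph{compact-clopen}, not clopen: one first concludes that the $k$-modification $kX$ is a topological sum of cosmic $k_\w$-spaces, and only then does the $k_\IR$ hypothesis (both topologies being Tychonoff with the same continuous real functions) identify $X$ with $kX$. Your plan inverts this order by trying to prove clopenness of the pieces directly from $(3)$, which is exactly the step that cannot be carried out before the $k$-modification argument.
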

\smallskip

Our next result illustrating the importance of strong $\Fin^\w$-fan generalizes a recent result of Gruenhage, Tsaban and Zdomskyy \cite{GTZ} who proved that for a metrizable separable space $X$ the function space $C_k(X,2)=\big\{f\in C_k(X):f(X)\subset\{0,1\}\big\}$ is sequential if and only if $X$ is locally compact or the set $X'$ of non-isolated points of $X$ is compact. The sequentiality of $C_k(X,2)$ in this characterization was replaced by the Ascoli property by Pol \cite{Pol2} and Gabriyelyan \cite{Gab}. We generalize this characterization in two directions:

\begin{itheorem} For a zero-dimensional $\aleph_0$-space $X$ the following conditions are equivalent:
\begin{enumerate}
\item[\textup{1)}] $C_k(X,2)$ is metrizable or a cosmic $k_\w$-space;
\item[\textup{2)}] $C_k(X,2)$ is sequential;
\item[\textup{3)}] $C_k(X,2)$ is contains no strong $\Fin^\w$-fan;
\item[\textup{4)}] $X$ is either hemicompact or metrizable with compact set $X'$ of non-isolated points.
\end{enumerate}
\end{itheorem}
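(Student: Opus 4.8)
The plan is to prove the cycle of implications $1)\Ra 2)\Ra 3)\Ra 4)\Ra 1)$.

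First I would dispatch the easy arc $1)\Ra 2)\Ra 3)$. If $C_k(X,2)$ is metrizable it is Fr\'echet--Urysohn, hence sequential; if it is a cosmic $k_\w$-space, then the compacta in its defining cover, being compact subsets of a cosmic space, are metrizable, and a $k_\w$-space with metrizable compacta is sequential, so $1)\Ra 2)$. The implication $2)\Ra 3)$ is then immediate from the scheme in the introduction: a sequential space is a $k$-space, a $k$-space contains no $\Cld$-fan by Proposition~\ref{k-no-Cld-fan}, and the absence of a $\Cld$-fan forces the absence of a strong $\Fin^\w$-fan through the chain of implications recorded in the diagram.

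For $4)\Ra 1)$ I would treat the two alternatives of $4)$ separately. If $X$ is hemicompact, then by the earlier characterization of metrizability of function spaces over $\aleph_0$-spaces (Corollary~\ref{c:aleph0Ck}) the space $C_k(X)$ is metrizable, so its subspace $C_k(X,2)$ is metrizable, giving the first alternative of $1)$. If instead $X$ is metrizable with $X'$ compact, I would write $X=X'\sqcup D$, where $X'$ is compact and $D$ is the set of isolated points, which is countable since $X$ is separable metrizable. Because $X$ is an $\aleph_0$-space, $C_k(X)$ is an $\aleph_0$-space, so $C_k(X,2)$ is cosmic; moreover the second-countable compactum $X'$ has a countable clopen algebra $C(X',2)$, and each function is determined by its (finitely varying) restriction to $X'$ together with a closed-discrete, hence clopen, subset of $D$ on which it differs from a fixed extension. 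Turning this bookkeeping into an explicit countable cofinal family of compacta, so as to exhibit $C_k(X,2)$ as a $k_\w$-space, is the most delicate step of this implication.

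The heart of the theorem is $3)\Ra 4)$, which I would prove in contrapositive form: assuming $X$ is neither hemicompact nor metrizable-with-$X'$-compact, I construct a strong $\Fin^\w$-fan in $C_k(X,2)$. By the characterization of strong $\Fin^\w$-fans (Proposition~\ref{p:Fin-fan-char}) it suffices to produce a countable, non-closed, strongly compact-finite set $\{f_n\}\subset C_k(X,2)$; since $C_k(X,2)$ is zero-dimensional, clopen neighborhoods are automatically $\IR$-open, so strongness comes for free once the neighborhoods are chosen compact-finite. The $f_n$ will be $\{0,1\}$-valued, i.e.\ clopen supports $S_n=f_n^{-1}(1)\subset X$, arranged to accumulate to some $f\in C_k(X,2)\setminus\{f_n\}$ while no compact subfamily is infinite. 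I would use two hooks: if $X$ is not metrizable then, being an $\aleph_0$-space, it fails first countability at some point $x_0$, and the uncountable character at $x_0$ gives the room to spread the supports $S_n$; if $X$ is metrizable but $X'$ is not compact, then $X'$ carries an infinite closed discrete set (or a point of non-local-compactness) along which the supports can be dispersed.

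The main obstacle is precisely this fan construction, and what makes it subtle is that non-hemicompactness \emph{alone} does not suffice: by Corollary~\ref{c:aleph0Ck} the real function space $C_k(X)$ already contains a strong $\Fin^\w$-fan whenever $X$ is not hemicompact, yet for metrizable $X$ with $X'$ compact the space $C_k(X,2)$ is a sequential $k_\w$-space with no such fan. Hence the construction must genuinely exploit the extra failure (non-metrizability, or non-compactness of $X'$). The delicate verification is that the chosen supports $(S_n)$ are compact-finite in $X$, so that the accumulation point $f$ lies outside the sequence, while the associated clopen neighborhoods stay compact-finite in $C_k(X,2)$, making the family a \emph{strong} fan. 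I expect the proof that no convergent subsequence can be extracted---equivalently that $\{f_n\}$ meets every compact subset of $C_k(X,2)$ in a finite set---to rest on the Ascoli-type description of compacta, namely that $\mathcal H\subset C_k(X,2)$ is relatively compact exactly when $\mathcal H|K$ is finite for every compact $K\subset X$.
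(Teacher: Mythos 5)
Your treatment of $1)\Ra 2)\Ra 3)$ and of the hemicompact half of $4)\Ra 1)$ is fine, but the two places where the theorem actually lives are not carried out, and the central one is a genuine gap. For $3)\Ra 4)$ you only state a plan: reduce via Proposition~\ref{p:Fin-fan-char} to producing a countable non-closed strongly compact-finite set in $C_k(X,2)$, and then ``spread the supports'' using either a point of uncountable character (when $X$ is not metrizable) or an infinite closed discrete subset of $X'$ (when $X'$ is not compact). No construction is given, and these hooks do not by themselves yield one: you correctly observe that non-hemicompactness alone cannot suffice and that it must be combined with the second failure, but you never say how the combination is to be effected, and that combination is precisely the hard content of the theorem. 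The paper's proof (Theorem~\ref{t:C2-aleph0}) proceeds by an entirely different mechanism, splitting on whether $X$ contains a strong $D_\w$-cofan. If it does not, $X$ is hemicompact by Proposition~\ref{p:XD->s+h+kw}. If it does, then by Proposition~\ref{p:XD->FS} (zero-dimensionality placing the fan inside $C_k(X,2)$) the function space contains a strong $\dot S^\w$-fan; since $C_k(X,2)$ is a topological group with no strong $\Fin^\w$-fan, Corollary~\ref{c:g->fan} (i.e.\ Theorem~\ref{t:rec-fan}) forbids $C_k(X,2)$ to contain a strong $D_\w$-cofan; the contrapositives of Propositions~\ref{p:XS->FD} and \ref{p:XCld->FD} then show that $X$ has no strong $S^\w$-semifan and no $\Clop^\w$-fan, and Corollary~\ref{c:k*-withoutSw} yields that $X$ is metrizable with $X'$ compact (hence Polish). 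So the witnessing fan is never built ``by hand'' in $C_k(X,2)$ from character or discreteness data; it is produced by the cofan/fan duality between $X$ and $C_k(X,2)$ together with the group-theoretic combination theorem. Without a substitute for this machinery your plan stalls exactly at the step you yourself flag as delicate.

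There is a second, smaller gap in $4)\Ra 1)$: for metrizable $X$ with $X'$ compact, the conclusion that $C_k(X,2)$ is a cosmic $k_\w$-space is a nontrivial theorem of Gruenhage--Tsaban--Zdomskyy, which the paper simply cites as \cite[3.11]{GTZ}. Your proposed bookkeeping does not work as stated: the set of $f\in C_k(X,2)$ with a fixed restriction to $X'$ is in general an infinite, uniformly discrete, closed subset of $C_k(X,2)$ (already for the metric fan $M$, the characteristic functions of the spikes all restrict to $0$ on $X'=\{(0,0)\}$ and form a closed discrete set), so these fibers cannot serve as members of a cofinal family of compacta; the compacta must be carved out by additional equicontinuity-type constraints, which is exactly the content of the cited result. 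Either invoke that theorem explicitly or reprove it; the present sketch does neither.
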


Another theorem describing the topological structure of the function spaces $C_k(X,2)$ unifies Theorems~\ref{t:C2-kR}--\ref{t:C2-FU} proved in Section~\ref{s:Funct}. For metrizable spaces this theorem was proved by Gabriyelyan  in \cite{Gab}.

\begin{itheorem} For a $\mu$-complete $\bar\aleph_k$-$k_\IR$-space $X$ its function space $C_k(X,2)$ is:
\begin{enumerate}
\item[\textup{1)}] metrizable iff $C_k(X,2)$ is Fr\'echet-Urysohn iff $X$ is a cosmic $k_\w$-space;
\item[\textup{2)}] sequential iff $X$ is either a cosmic $k_\w$-space or a Polish space with compact set $X'$ of non-isolated points;
\item[\textup{3)}] a $k$-space iff $X$ is either a topological sum $K\oplus D$ of a cosmic $k_\w$-space $K$ and a discrete space $D$ or $X$ is metrizable and has compact set $X'$ of non-isolated points;
\item[\textup{4)}] a $k_\IR$-space iff $C_k(X,2)$ is Ascoli iff $C_k(X,2)$ contains no strong $\Fin^\w$-fans iff $X$ is either a topological sum of cosmic $k_\w$-spaces or $X$ is metrizable and has compact set $X'$ of non-isolated points.
\end{enumerate}
\end{itheorem}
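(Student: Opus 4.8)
The plan is to read the four items off the already-established Theorems~\ref{t:C2-kR}--\ref{t:C2-FU}, arranging them along the descending chain of topological properties
$$\mbox{metrizable}\Ra\mbox{Fr\'echet-Urysohn}\Ra\mbox{sequential}\Ra k\mbox{-space}\Ra k_\IR\mbox{-space},$$
together with the implications $k\mbox{-space}\Ra\mbox{Ascoli}\Ra\mbox{no strong }\Fin^\w\mbox{-fan}$ coming from Corollary~\ref{c:A->noFan} and the diagram in the Introduction. First I would verify that the standing hypotheses (that $X$ is a $\mu$-complete $\bar\aleph_k$-$k_\IR$-space) are exactly the hypotheses under which each of the component theorems was proved, so that all four may be invoked simultaneously.

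For the assembly itself I would treat item~\textup{(4)} as the base of the ladder: by Theorem~\ref{t:C2-kR} the three weakest properties ($k_\IR$-space, Ascoli, no strong $\Fin^\w$-fan) are equivalent and pin $X$ down to being a topological sum of cosmic $k_\w$-spaces or a metrizable space with compact set $X'$ of non-isolated points. Items~\textup{(3)}, \textup{(2)}, \textup{(1)} then strengthen the property and, correspondingly, refine the structural alternative, each being supplied by the relevant theorem in the range \ref{t:C2-kR}--\ref{t:C2-FU}. The remaining bookkeeping is to check that the five structural alternatives nest in the expected way, namely
$$\mbox{cosmic }k_\w\Ra K\oplus D\Ra\mbox{(topological sum of cosmic }k_\w\mbox{-spaces)},$$
and that Polish with compact $X'$ implies metrizable with compact $X'$. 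These are elementary, since a discrete space is a topological sum of (compact metrizable, hence cosmic $k_\w$) singletons and a cosmic $k_\w$-space is the sum of a single summand. Read against the property implications above, this nesting shows that the ladder of equivalences is internally consistent: the stronger the property forced on $C_k(X,2)$, the stronger the structural alternative forced on $X$.

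The genuine mathematical content sits inside the cited component theorems, whose hard (necessity) directions produce a strong $\Fin^\w$-fan in $C_k(X,2)$ — or a nontrivial convergent sequence, or a non-$k_\IR$ witness — whenever $X$ fails the stated structural condition; this is where the fan machinery does its work, and it is what I expect to be the main obstacle if one were to prove the statement from scratch rather than by assembly. Within the unification two points deserve care. First, the boundary between items~\textup{(2)} and \textup{(3)}: sequentiality upgrades ``metrizable'' to ``Polish'' (separability via countable tightness of a sequential space, completeness via $\mu$-completeness) and upgrades a topological sum of cosmic $k_\w$-spaces to the single summand $K\oplus D$, so I must make sure these refinements are exactly those delivered by the sequential- and $k$-space component theorems. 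Second, the equivalence ``metrizable iff Fr\'echet-Urysohn'' in item~\textup{(1)} uses that $C_k(X,2)$ is a topological (Boolean) group, for which first countability, metrizability, and — under the present hypotheses — the Fr\'echet-Urysohn property coincide; here I would invoke Theorem~\ref{t:C2-FU} for the nontrivial implication from Fr\'echet-Urysohn back to $X$ being a cosmic $k_\w$-space.
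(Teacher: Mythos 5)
Your proposal is correct and is essentially the paper's own proof: the paper introduces this theorem precisely as the statement that ``unifies Theorems~\ref{t:C2-kR}--\ref{t:C2-FU}'', and your assembly along the ladder metrizable $\Ra$ Fr\'echet-Urysohn $\Ra$ sequential $\Ra$ $k$-space $\Ra$ $k_\IR$-space, together with the nesting of the structural alternatives, is exactly that unification. The one point your hypothesis-check must actually catch: Theorems~\ref{t:C2-kR}--\ref{t:C2-FU} are all stated for \emph{zero-dimensional} $\mu$-complete $\bar\aleph_k$-$k_\IR$-spaces, a hypothesis the statement above silently omits and which cannot be dropped --- for a connected non-separable metrizable space $X$ (e.g.\ the hedgehog $J(\w_1)$) the space $C_k(X,2)$ is a two-point discrete space, hence metrizable, while $X$ is neither a cosmic $k_\w$-space nor metrizable with compact set $X'$ of non-isolated points --- so zero-dimensionality must be read into the statement before the component theorems can be invoked.
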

\smallskip

Next, we detect strong $\Fin^\w$-fans in locally convex spaces endowed with the weak topology.
For a subset $B$ of a locally convex space $X$ by $B_w=(B,w)$ we denote the set $B$ endowed with the weak topology. The closed unit ball of the Banach space $X$ endowed with the weak topology will be called the {\em weak unit ball} of $X$.

By \cite[1.5]{GKP}, for a normed space $X$ the space $(X,w)$ is Ascoli  if and only if $X$ is finite-dimensional. The following theorem proved in Section~\ref{s:weak-lc} generalizes this result in two directions and answers a question posed in \cite{GKP}.

\begin{itheorem} For a locally convex linear metric space $X$ the following conditions are equivalent:
\begin{enumerate}
\item[\textup{1)}] $(X,w)$ is metrizable.
\item[\textup{2)}] $(X,w)$ is Ascoli.
\item[\textup{3)}] $(X,w)$ contains no strong $\Fin^\w$-fan.
\item[\textup{4)}] The weak topology on $X$ coincides with the original topology of $X$.
\item[\textup{5)}] The dual space $X^*$ has at most countable Hamel basis.
\end{enumerate}
\end{itheorem}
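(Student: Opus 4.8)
The plan is to run the cycle $4)\Ra 1)\Ra 2)\Ra 3)\Ra 4)$ together with the separate equivalence $1)\Leftrightarrow 5)$, so that the only substantial work is the construction of a fan. The implication $4)\Ra 1)$ is immediate: the original topology of $X$ is metrizable, so if it coincides with the weak topology then $(X,w)$ is metrizable. For $1)\Ra 2)$ recall that a metrizable space is first countable, hence sequential, hence a $k$-space, and every $k$-space is Ascoli. For $2)\Ra 3)$ I would quote Corollary~\ref{c:A->noFan}: an Ascoli space contains no strict $\Cld$-fan, hence no strict $\Cld^\w$-fan, hence no strict $\Fin^\w$-fan; and for $\Fin$-fans strictness coincides with strongness, so $(X,w)$ contains no strong $\Fin^\w$-fan. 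Since $(X,w)$ is Tychonoff, Proposition~\ref{p:Fin-fan-char} lets me restate the remaining implication $3)\Ra 4)$ as its contrapositive: if the weak and the original topologies differ, then $(X,w)$ contains a non-closed countable strongly compact-finite set.

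For $1)\Leftrightarrow 5)$ I argue through the dual pairing. If $(X,w)$ is metrizable it has a countable base at $0$, which can be refined to use only countably many functionals $\{f_n\}\subset X^*$; then $\sigma(X,X^*)=\sigma(X,Y)$ with $Y=\operatorname{span}\{f_n\}$. Since the dual of $(X,\sigma(X,Y))$ is exactly $Y$, while the dual of $(X,\sigma(X,X^*))$ is $X^*$, coincidence of these topologies forces $X^*=Y$, a space of countable Hamel dimension. Conversely a countable Hamel basis $\{\varphi_n\}$ generates the weak topology via the seminorms $|\varphi_n|$, giving a countable base at $0$, and as $X^*$ separates points (Hahn--Banach) $(X,w)$ is metrizable. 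It then remains to prove $5)\Ra 4)$ (the converse following from $4)\Ra 1)\Ra 5)$). Fix a decreasing base $(U_k)_{k\in\w}$ of closed absolutely convex neighbourhoods of $0$, so that $X^*=\bigcup_k U_k^\circ$ with each polar $U_k^\circ$ weak$^*$-compact. Writing $X^*=\bigcup_N E_N$ as an increasing union of the finite-dimensional (hence weak$^*$-closed) subspaces spanned by an enumerated Hamel basis, each $U_k^\circ$ is a compact Hausdorff, hence Baire, space covered by the closed sets $U_k^\circ\cap E_N$; so some $U_k^\circ\cap E_N$ has nonempty interior in $U_k^\circ$. A short convexity argument (running a segment from such an interior point towards a hypothetical point of $U_k^\circ\setminus E_N$) then gives $U_k^\circ\subseteq E_N$. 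Thus every $U_k^\circ$ is finite dimensional, $U_k=U_k^{\circ\circ}$ is cut out by finitely many functionals and hence weakly open, and the two topologies coincide.

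The crux is the contrapositive $\neg 4)\Ra\neg 3)$. If the weak topology differs from the original, the previous step shows that some polar $U_k^\circ$ is infinite dimensional; equivalently the continuous seminorm $q$ with unit ball $U_k$ has an infinite-dimensional local Banach space $X_q$ (whose dual unit ball is $U_k^\circ$). I would build the fan inside $X_q$ and read it back in $X$. Choose a bounded biorthogonal system $(e_j,f_j)$ with $q(e_j)=1$, $f_j\in U_k^\circ$ and $f_i(e_j)=\delta_{ij}$, and look for a countable family of difference vectors $a=c\,(e_p-e_r)$ together with the weakly open neighbourhoods $\{x:f_p(x)>c-\tfrac14,\ f_r(x)<-c+\tfrac14\}$. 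Accumulation of such a family at $0$ rests on cancellation: for a finite $G\subset X^*$ the bounded scalars $(g(e_j))_j$, $g\in G$, admit by pigeonhole indices $p\ne r$ with $g(e_p)\approx g(e_r)$ to within any tolerance, so $a=c(e_p-e_r)$ can be made weakly as small as we wish while $q(a)$ stays large. Compact-finiteness of the neighbourhoods rests on the opposite phenomenon: a weakly compact set is $q$-bounded, and the two coordinate functionals $f_p,f_r$ separate it from $a$ once the level $c$ (or the coordinates) is large.

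The main obstacle is to satisfy both requirements with one countable family, since they pull in opposite directions: weak accumulation wants the difference vectors to be weakly negligible, while strong compact-finiteness wants them to escape in $q$. The resolution depends on how large the weakly compact subsets of $X_q$ are. In the reflexive-type case (modelled on $a_m=c_m(e_{p_m}-e_{q_m})$ with distinct escaping coordinates and a divergence condition such as $\sum_m c_m^{-2}=\infty$, as for $X_q=\ell^2$) a single escaping sequence already accumulates at $0$, the dual estimate bounding how well one functional can see all the $a_m$. In the $\ell^1$-type (Schur) case weakly compact sets are $q$-compact with uniformly small tails and the normalized $e_j$ are no longer weakly null, so one must instead spread the difference vectors over infinite coordinate blocks $B_n$, taking $a=\tfrac n2(e_p-e_r)$ with $p,r\in B_n$: pigeonhole inside each block supplies the accumulation, and the tail-smallness on compacta supplies compact-finiteness. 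Carrying out this analysis, and checking that the resulting set is non-closed and strongly compact-finite (whence a strong $\Fin^\w$-fan by Proposition~\ref{p:Fin-fan-char}), is the technical heart of $3)\Ra 4)$ and closes the cycle.
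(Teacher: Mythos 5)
Your global structure is sound, and two of its three blocks are essentially correct: the cycle $4)\Ra1)\Ra2)\Ra3)$ works exactly as you say (and the paper does the same, quoting Corollary~\ref{c:A->noFan}), while your treatment of $5)$ is a genuinely different and valid route. The paper proves $(4)\Ra(5)$ directly (a countable base of the coinciding topologies is refined by finitely many functionals, whose union spans $X^*$), whereas you prove $5)\Ra4)$ by Alaoglu plus a Baire-category/convexity argument on the polars $U_k^\circ$ inside the finite-dimensional pieces $E_N$, and $1)\Leftrightarrow5)$ by duality of $\sigma(X,Y)$-topologies. That detour through the bipolar theorem is longer than the paper's argument but correct, and it has the side benefit of isolating the statement ``if every polar $U_k^\circ$ is finite-dimensional then the topologies coincide,'' which you then reuse.

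The genuine gap is the implication $3)\Ra4)$, which you yourself label the ``technical heart'' and do not carry out. Two things go wrong. First, your proposed dichotomy for the local Banach space $X_q$ --- a ``reflexive-type'' case versus an ``$\ell_1$-type (Schur)'' case --- is not exhaustive: a general infinite-dimensional Banach space (e.g.\ $c_0$, or $\ell_1\oplus\ell_2$, where weakly compact sets are neither ``small'' in the reflexive sense nor norm-compact with uniformly small tails) falls under neither description, so the pigeonhole-over-blocks construction and the divergence-condition construction each fail to cover the general situation. Second, even within each case the two requirements you identify (weak accumulation of the difference vectors at $0$ versus escape of their weakly open neighborhoods from every weakly compact set) are only asserted to be reconcilable; no single countable family satisfying both is exhibited, and the needed uniform biorthogonal system in $X_q$ is also left unconstructed. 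The paper bypasses all of this with one citation: by Theorem~1.2 of \cite{GKZ} the space $(X,w)$ has \emph{countable fan tightness}. Given an open convex neighborhood $U$ of zero that is not a weak neighborhood, each set $X\setminus n\bar U$ is weakly open (as $n\bar U$ is convex and closed, hence weakly closed) and has $0$ in its weak closure; fan tightness yields finite sets $F_n\subset X\setminus n\bar U$ with $0\in\overline{\bigcup_nF_n}$, and since every weakly compact set is bounded in $X$ and therefore absorbed by $U$, the family $(X\setminus n\bar U)_{n\in\w}$ itself is compact-finite and witnesses that $(F_n)_{n\in\w}$ is a strong $\Fin^\w$-fan. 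If you want to keep your hands-on approach, you would need either to prove countable fan tightness of $(X,w)$ (which is where the real content lies, via embedding $(X,w)$ into $C_p$ of the weak-star compact dual ball and using the Hurewicz-type characterization of fan tightness), or to find a construction valid for arbitrary infinite-dimensional $X_q$; as written, the proof does not close.
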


For bounded subsets of Banach spaces the situation changes dramatically: according to \cite[1.9]{GKP}, the weak unit ball $B_w$ of a Banach space $X$ is Ascoli if and only if $X$ contains no isomorphic copy of the Banach space $\ell_1$. In the following theorem we generalize this result to spaces containing no strong $\Fin^\w$-fan.

\begin{itheorem} For a Banach space $X$ the following conditions are equivalent:
\begin{enumerate}
\item[\textup{1)}] The weak unit ball $B_w$ of $X$ is Fr\'echet-Urysohn.
\item[\textup{2)}] $B_w$ contains no strong $\Fin^\w$-fan.
\item[\textup{3)}] $X$ contains no isomorphic copy of the Banach space $\ell_1$.
\end{enumerate}
\end{itheorem}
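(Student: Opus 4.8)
The plan is to establish the equivalences by the cycle $(1)\Rightarrow(2)\Rightarrow(3)\Rightarrow(1)$. The implication $(1)\Rightarrow(2)$ is immediate from the general theory: a Fr\'echet-Urysohn space is sequential, hence a $k$-space, and by Proposition~\ref{k-no-Cld-fan} a $k$-space contains no $\Fin$-fan at all, in particular no strong $\Fin^\w$-fan.

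For $(3)\Rightarrow(1)$ I would argue that the absence of $\ell_1$ forces the weak topology on the ball to be controlled by sequences, via the Bourgain--Fremlin--Talagrand theory. Given $A\subseteq B_w$ and a point $p\in\overline{A}^{\,w}$, first apply Kaplansky's theorem (the weak topology of a Banach space has countable tightness) to obtain a countable $C\subseteq A$ with $p\in\overline{C}^{\,w}$, and pass to the separable subspace $Y=\overline{\mathrm{span}}(C\cup\{p\})$, which again contains no copy of $\ell_1$. Identifying $Y$ isometrically with a subspace of $C(K)$ for the compact metrizable space $K=(B_{Y^*},w^*)$, the weak topology on bounded subsets of $Y$ coincides with the topology of pointwise convergence on $K$. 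By the Odell--Rosenthal theorem every element of the $w^*$-closure of $C$ in $Y^{**}$ is a Baire-class-one function on $K$, so this closure is a Rosenthal compactum; by the Bourgain--Fremlin--Talagrand theorem such a compactum is Fr\'echet-Urysohn. Hence there is a sequence $(c_n)$ in $C$ with $c_n\to p$ pointwise on $K$, which (as $p\in Y\subseteq X$) is exactly weak convergence $c_n\to p$ in $X$. Thus $B_w$ is Fr\'echet-Urysohn.

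The remaining implication $(2)\Rightarrow(3)$ I would prove contrapositively: assuming $\ell_1$ embeds isomorphically in $X$, I will exhibit a strong $\Fin^\w$-fan in $B_w$. Fix a bounded sequence $(x_n)$ in $B_X$ equivalent to the unit vector basis of $\ell_1$ together with biorthogonal functionals $(g_n)\subseteq X^*$. By Proposition~\ref{p:Fin-fan-char} it suffices to produce a countable subset $A\subseteq B_w$ that is not weakly closed yet is strongly compact-finite. I would build $A$ from suitably normalized finite blocks (signed averages) of the vectors $x_n$, arranging two competing features simultaneously: (a) a prescribed point $p$ lies in the weak closure of $A$, achieved by exploiting the freedom in the signs to make any prescribed finite family of functionals simultaneously small on appropriate members of $A$; and (b) no infinite subfamily of $A$ is relatively weakly compact, guaranteed because, by Eberlein--\v Smulian together with Rosenthal's $\ell_1$-theorem, every infinite subfamily of $A$ contains a sequence equivalent to the $\ell_1$-basis and therefore with no weakly convergent subsequence. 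Property (b) forces $A$ to meet every weakly compact subset of $B_w$ in a finite set, and the last step is to thicken each $a\in A$ to an $\IR$-open (weakly open) neighborhood $U_a$, cut out by finitely many of the functionals $g_n$ and narrow enough that the enlarged family $(U_a)_{a\in A}$ remains compact-finite; this upgrades the fan to a strong one.

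The main obstacle is precisely this construction: features (a) and (b) pull in opposite directions, since clustering at $p$ pushes the members of $A$ to be weakly small while compact-finiteness requires them to retain full $\ell_1$-spread, so the blocks, their supports and their signs must be chosen with care through a gliding-hump/independence argument (of the kind used in the proof of \cite[1.9]{GKP} that $B_w$ is not Ascoli), and the neighborhoods $U_a$ must be shrunk enough to preserve compact-finiteness while still witnessing non-closedness. Once the strong $\Fin^\w$-fan is produced, $(2)$ fails, closing the cycle. (Note that $(3)\Rightarrow(2)$ already follows for free from \cite[1.9]{GKP} and Corollary~\ref{c:A->noFan} through Ascoli $\Rightarrow$ no strict $\Cld$-fan $\Rightarrow$ no strong $\Fin^\w$-fan, but this alone does not establish the equivalence.)
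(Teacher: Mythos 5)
Your overall architecture is sound, and two of the three implications are correct: $(1)\Rightarrow(2)$ is exactly the paper's (trivial) argument, and your route for $(3)\Rightarrow(1)$ --- Kaplansky's countable tightness, passing to a separable subspace, Odell--Rosenthal to conclude that the $w^*$-closure consists of Baire-class-one functions, and Fr\'echet-Urysohnness of Rosenthal compacta --- is essentially the same as the paper's chain through its condition ``$\overline S_{w^*}$ is Rosenthal compact'', which the paper also proves by following the Odell--Rosenthal argument on a separable subspace.

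The genuine gap is in $(2)\Rightarrow(3)$, which is the heart of the theorem, and it is precisely the step you flag as ``the main obstacle'' and then do not resolve. Producing a compact-finite non-closed countable set $A$ from an $\ell_1$-basic sequence is indeed the easy half (differences such as $\tfrac12 e_n-\tfrac12 e_m$, or disjoint signed averages, remain $\ell_1$-spread, and a pigeonhole argument on any finite family of functionals makes $0$ a weak cluster point); the entire difficulty is in making the fan \emph{strong}, i.e.\ in finding open neighborhoods $U_a$ of the points $a\in A$ whose family is still compact-finite. Your plan to cut these neighborhoods out by finitely many arbitrary biorthogonal (Hahn--Banach) functionals $g_n\in X^*$ does not work as stated. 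To rule out a weakly compact set $K$ meeting infinitely many $U_a$, one passes (by angelicity of Eberlein compacta) to a weakly convergent sequence $x_k\to x_\infty$ of points of $K$ lying in these neighborhoods and must then reach a contradiction; the contradiction available here is with the Dunford--Pettis property, and it requires that the differences of the chosen functionals form a \emph{weakly null} sequence in the dual. Arbitrary extensions $g_n$ carry no such control: boundedness, or weak$^*$ behaviour on $\overline{\mathrm{span}}(x_n)$, is not enough, and $X$ itself need not have the Dunford--Pettis property in any case. This is exactly why the paper embeds $X$ into $\ell_\infty(\Gamma)$, uses the proof of the Josefson--Nissenzweig theorem to extend the $c_0$-coordinate functionals to $\bar e_n^*\in\ell_\infty(\Gamma)^*$, invokes the Grothendieck property of $\ell_\infty(\Gamma)$ to upgrade weak$^*$-nullity of $(\bar e_n^*)$ to weak nullity, and finally applies the Dunford--Pettis property of $\ell_\infty(\Gamma)=C(\beta\Gamma)$ to the two weakly null sequences $(a_{n,m}^*)$ and $(x_{n,m}-x_\infty)$. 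None of this machinery, nor any substitute for it, appears in your proposal, so the implication $(2)\Rightarrow(3)$ remains unproved.
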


This theorem will be proved in Section~\ref{s:Banach} (using different ideas than those in \cite{GKP}).
\smallskip

Now we survey some applications of $\Fin$-fans in Topological Algebra.
According to an old result of Arhangelskii, Okunev and Pestov \cite{AOP}, the free topological group $F(X)$ over a metrizable space $X$ is a $k$-space if and only if $X$ either discrete or a $k_\w$-space. We shall generalize this result of Arhangelskii, Okunev and Pestov in three directions.
First, instead of free topological groups we consider free universal algebras in general varieties of topologized universal algebras (for more details, see Chapter~\ref{ch:magma}). Second, we weaken the $k$-space property of the free universal algebra to the absence of (strong) $\Fin^\lambda$-fans. And third, our characterizations concern not only metrizable spaces, but some generalized metric spaces (including all $\bar\aleph_k$-spaces).
\smallskip

Applying the general results (proved in Chapters~\ref{ch:algebra} and \ref{ch:free}) to some concrete free constructions, we obtain the following ten characterizations. The first of them generalizes the mentioned result of Arhangelskii, Okunev and Pestov \cite{AOP}.

\begin{itheorem}\label{it:FG} For a $\mu$-complete Tychonoff $k_\IR$-space $X$ the following conditions are equivalent:
\begin{enumerate}
\item[\textup{1)}] $X$ is either discrete of a cosmic $k_\w$-space.
\item[\textup{2)}] The free topological group $\FG(X)$ of $X$ is either discrete or a cosmic (countable) $k_\w$-space.
\item[\textup{3)}] $X$ is an $\aleph_k$-space and $\FG(X)$ is a $k$-space.
\item[\textup{4)}] $X$ is a $\bar \aleph_k$-space and $\FG(X)$ is Ascoli.
\item[\textup{5)}] $X$ is a $\bar\aleph_k$-space and $\FG(X)$ is contains no strong $\Fin^{\w_1}$-fan.
\item[\textup{6)}] $X$ is an $\aleph_k$-space and $\FG(X)$ contains  no strong $\Fin^{\w}$-fan and no $\Fin^{\w\!{}_1}$-fan.
\end{enumerate}
\end{itheorem}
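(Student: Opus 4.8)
The plan is to treat condition (1) as a hub: I would establish $(1)\Leftrightarrow(2)$ and $(1)\Ra(3),(4),(5),(6)$ by soft arguments, reduce each of the reverse implications through the implication diagram to just two ``fan-detecting'' steps $(6)\Ra(1)$ and $(5)\Ra(1)$, and concentrate the real work there. Concretely, the closed loops are $(1)\Leftrightarrow(2)$, $\;(1)\Ra(3)\Ra(6)\Ra(1)$, and $(1)\Ra(4)\Ra(5)\Ra(1)$, so once the hub implications and the two fan-detecting steps are in place the whole equivalence follows.

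For the soft half I would first invoke the preservation statement $(1)\Ra(2)$, supplied by the general free-construction results of Chapter~\ref{ch:free}: $\FG(X)$ is discrete when $X$ is discrete and is a cosmic $k_\w$-space when $X$ is a cosmic $k_\w$-space. The converse $(2)\Ra(1)$ is immediate because $X$ embeds into $\FG(X)$ as a closed subspace, and each of ``discrete'', ``cosmic'' and ``$k_\w$'' is inherited by closed subspaces; reading off the diagram of generalized metric spaces, a discrete or cosmic $k_\w$-space is moreover an $\aleph_0$-space, hence both an $\aleph_k$- and a $\bar\aleph_k$-space. This yields $(1)\Ra(3),(4),(5),(6)$ at once: a discrete space and a cosmic $k_\w$-space are $k$-spaces, hence Ascoli by the classical Ascoli theorem, and a $k$-space contains no $\Fin$-fan at all by Proposition~\ref{k-no-Cld-fan}, in particular no strong $\Fin^\w$-fan and no $\Fin^{\w_1}$-fan. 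Inside the block $(3)$--$(6)$ the reductions come straight from the implication diagram: $(3)\Ra(6)$ since a $k$-space has no $\Fin$-fan while the hypothesis $\aleph_k$ is carried along, and $(4)\Ra(5)$ since by Corollary~\ref{c:A->noFan} an Ascoli space has no strict (equivalently strong) $\Fin^{\w_1}$-fan while $\bar\aleph_k$ is carried along.

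The core of the theorem is therefore the two reverse implications $(6)\Ra(1)$ and $(5)\Ra(1)$, which I would prove in contrapositive form and which are the specializations to the variety of groups of the fan-construction lemmas of Chapter~\ref{ch:free}. Suppose $X$ is $\mu$-complete, a $k_\IR$-space, carries a compact-countable $k$-network, yet is neither discrete nor a cosmic $k_\w$-space. Failure of discreteness yields a non-isolated point, i.e.\ a nontrivial convergent structure, while failure of the $k_\w$ property (equivalently, under $\mu$-completeness, of hemicompactness) yields an uncountable or otherwise unbounded family of compacta that no countable cofinal family from the $k$-network can absorb. Feeding these two data into the length filtration $\FG(X)=\bigcup_n F_n(X)$ and using the compact-covering description of the compact subsets of a free topological group, I would manufacture a family of reduced words $(w_\alpha)$ clustering at the identity $e$: compact-finiteness holds because every compact subset of $\FG(X)$ has bounded word-length and projects into a compact subset of a finite power of $X\sqcup X^{-1}$, against which only finitely many $w_\alpha$ survive; non-local-finiteness at $e$ holds because every neighbourhood of $e$ meets words of unbounded length, reflecting the non-$k_\w$ data. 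Under the weaker $\aleph_k$ hypothesis this produces either a strong $\Fin^\w$-fan (when the witnessing data live on a single convergent sequence) or a plain $\Fin^{\w_1}$-fan (when they are spread over an uncountable set), contradicting (6); under the stronger $\bar\aleph_k$ hypothesis the \emph{closed} $k$-network lets one thicken each word to a functional neighbourhood while keeping the family compact-finite, upgrading the $\w_1$-sized fan to a strong $\Fin^{\w_1}$-fan and contradicting (5).

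The main obstacle is exactly this construction, and within it the verification of compact-finiteness of $(w_\alpha)$: here one must use the structure theory of compacta in $\FG(X)$ (boundedness of length and compact-covering behaviour of the word maps onto $F_n(X)$) together with $\mu$-completeness of $X$, which forces bounded sets to be compact and thereby converts the failure of hemicompactness into a genuine compact-finite-but-not-locally-finite family. The delicate point separating (5) from (6) is the upgrade to a \emph{strong} fan of size $\w_1$ in the $\bar\aleph_k$ case: it requires the network to consist of closed sets, so that the separating functions on $X$ can be transported to functional $\IR$-open neighbourhoods of the words $w_\alpha$ in $\FG(X)$.
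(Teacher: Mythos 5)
Your skeleton --- hub at (1), soft implications via Proposition~\ref{k-no-Cld-fan} and Corollary~\ref{c:A->noFan}, and two contrapositive fan-detecting steps --- is indeed the paper's skeleton, and your soft half is essentially correct (one slip: an uncountable discrete space is not an $\aleph_0$-space, but it is metrizable, hence a $\bar\aleph_k$-space, which is all you need). The genuine gaps are in the hard half. The most serious: you never say what the words $w_\alpha$ are, and so you miss the conjugation mechanism, which is exactly where non-commutativity enters --- the only thing separating this theorem from the free Abelian case (Theorem~\ref{it:FA}), whose conclusion is the weaker ``topological sum of a discrete space and a cosmic $k_\w$-space''. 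In the paper's construction (Lemma~\ref{l:long}, fed by Proposition~\ref{p:mixing->TA} and Lemma~\ref{l:mixing*}, using that the variety of groups contains a non-Abelian group), the $\w_1$-fan consists of the finite sets of words $\big(x_\alpha(s^{-1}s_n)x_\alpha^{-1}\big)\big(x_\beta(s^{-1}s_n)x_\beta^{-1}\big)$, $n\in A_\alpha\cap A_\beta$ (identifying $X$ with its image in $\FG(X)$), where $s_n\to s$ is a convergent sequence, $\{x_\alpha\}_{\alpha\in\w_1}$ is an uncountable strongly compact-finite set, and $(A_\alpha)_{\alpha\in\w_1}$ is an almost disjoint family. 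Conjugation keeps each word near $e$, so non-local finiteness at $e$ follows from continuity of multiplication plus the pigeonhole/Lemma~\ref{l:ad} argument --- not from your heuristic that ``every neighbourhood of $e$ meets words of unbounded length'' --- while non-commutativity prevents cancellation and keeps $\{x_\alpha,x_\beta\}$ in the support, which is what makes the family compact-finite against compacta of bounded length and bounded support. In an Abelian group these words reduce to $2(s^{-1}s_n)$, the supports collapse, and the construction (and the theorem) fails; without locating this mechanism the compact-finiteness claim has nothing to support it.

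Second, your dichotomy logic is flawed, and one construction cannot replace the two the paper needs. ``Failure of discreteness yields a non-isolated point, i.e.\ a nontrivial convergent structure'' is false as stated; the correct route is: not discrete and $k_\IR$ $\Ra$ not $k$-discrete $\Ra$ an infinite compact set exists $\Ra$ (being metrizable, since $X$ is an $\aleph_k$-space) a convergent sequence exists. More importantly, the paper's argument has two independent parts: (i) a strong $D_\w$-cofan in $X$ yields a strong $\Fin^\w$-fan in $\FG(X)$ (Lemma~\ref{l:xyzs}, again via conjugation), so hypotheses (5)/(6) forbid cofans, after which Theorem~\ref{t:Dcofan}/Proposition~\ref{p:decomp1} --- a substantial structure theorem resting on Lemma~\ref{l:b-nw} and K\"onig's lemma --- shows $X$ is a $k$-sum of hemicompact spaces; and (ii) the $\w_1$-fan above shows $X$ has no uncountable strongly compact-finite set, i.e.\ is $\w_1$-bounded, which is what collapses that $k$-sum to a single cosmic $k_\w$ piece, the $k_\IR$ property finally removing the $k$-modification. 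Your single construction ``reflecting the non-$k_\w$ data'' addresses neither (i) nor the structure theory behind it. Finally, the $\bar\aleph_k$ hypothesis is not what ``upgrades the $\w_1$-fan to a strong one'': strongness of fans in $\FG(X)$ comes from strong compact-finiteness in $X$ together with lower semicontinuity of the support map (Lemmas~\ref{l:supp-Ropen} and \ref{l:cont-supp}), whereas closedness of the $k$-network is used in the structure theory of $X$ itself (Theorem~\ref{t:Dcofan}(1) versus (2) and Proposition~\ref{p:a+s->ba}), which is precisely why condition (6), with only $\aleph_k$, must additionally forbid $\Fin^{\w_1}$-fans.
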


A bit different characterization holds for free Abelian topological groups.
For metrizable spaces the equivalence of the conditions (1)--(3) was proved by Arhangelskii, Okunev and Pestov \cite{AOP}.

\begin{itheorem}\label{it:FA} For a Tychonoff $\mu$-complete $k_\IR$-space $X$ the following conditions are equivalent:
\begin{enumerate}
\item[\textup{1)}] $X$ is a topological sum $D\oplus K$ of a discrete space $D$ and a cosmic $k_\w$-space $K$.
\item[\textup{2)}] The free topological Abelian group $\FA(X)$ of $X$ is the product $D\times K$ of a discrete space $D$ and a cosmic $k_\w$-space $K$.
\item[\textup{3)}] $X$ is an $\aleph_k$-space and $\FA(X)$ is a $k$-space.
\item[\textup{4)}] $X$ is an $\bar\aleph_k$-space and $\FA(X)$ is Ascoli.
\item[\textup{5)}] $X$ is an $\bar\aleph_k$-space and $\FA(X)$ contains no strict $\Fin^{\w_1}$-fans.
\item[\textup{6)}] $X$ is an $\aleph_k$-space and $\FA(X)$ contains no strict $\Fin^\w$-fan and no $\Fin^{\w_1}$-fan.
\end{enumerate}
\end{itheorem}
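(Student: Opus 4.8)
The plan is to organize the six conditions into two short cycles hinged on the structural condition (1), isolating the two genuinely hard implications. First I would prove $(1)\Leftrightarrow(2)$ together with the forward implications $(1)\Ra(3),(4),(5),(6)$; then I would observe that $(3)\Ra(6)$ and $(4)\Ra(5)$ are immediate from the fan diagram, so that everything reduces to the two implications $(5)\Ra(1)$ and $(6)\Ra(1)$. These last two carry all the weight and will be deduced from the general theory of free topological algebras developed in Chapter~\ref{ch:free}.

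For the bookkeeping implications: since $\FA$ is a left adjoint to the forgetful functor from topological Abelian groups to Tychonoff spaces, it sends finite coproducts to finite products, and in topological Abelian groups finite coproduct coincides with finite product; hence $\FA(D\oplus K)\cong\FA(D)\times\FA(K)$. As $\FA(D)$ is the discrete free Abelian group on a discrete space and $\FA(K)$ is a cosmic $k_\w$-space for a cosmic $k_\w$-space $K$, we obtain $(1)\Ra(2)$. Conversely $D\times K$ (with $D$ discrete and $K$ a cosmic $k_\w$-space) is a topological sum of copies of $K$, hence a $k$-space, and it is an $\aleph_k$-space; since $X$ embeds in $\FA(X)$ and the $\aleph_k$-property is hereditary, $(2)\Ra(3)$. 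The same splitting shows $X=D\oplus K$ is simultaneously an $\aleph_k$- and a $\bar\aleph_k$-space and that $\FA(X)$ is a $k$-space, whence Ascoli by the classical Ascoli theorem \cite[3.4.20]{En}; this yields $(1)\Ra(3),(4),(5),(6)$ once we recall (Proposition~\ref{k-no-Cld-fan}) that a $k$-space has no $\Fin$-fans at all. Finally, a $k$-space contains no $\Fin^{\w_1}$-fan and no strict $\Fin^\w$-fan, giving $(3)\Ra(6)$; and an Ascoli space contains no strict $\Cld$-fan (Corollary~\ref{c:A->noFan}), hence no strict $\Cld^{\w_1}$-fan and in particular no strict $\Fin^{\w_1}$-fan, giving $(4)\Ra(5)$.

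It remains to prove $(5)\Ra(1)$ and $(6)\Ra(1)$, which I would derive from the general characterization in Chapter~\ref{ch:free} by contraposition: assuming $X$ is a $\mu$-complete Tychonoff $k_\IR$-space that is an $\aleph_k$-space (resp.\ a $\bar\aleph_k$-space) but is \emph{not} a topological sum $D\oplus K$, I would construct a strict $\Fin$-fan in $\FA(X)$ of the appropriate cardinality. The construction exploits the word-length stratification $\FA(X)=\bigcup_n A_n(X)$ and the fact that, for such $X$, every compact subset of $\FA(X)$ lies in some $A_n(X)$ and has support with compact closure in $X$; this makes compact-finiteness easy to arrange. The failure of the splitting means the non-isolated points of $X$ cannot be gathered into a single clopen cosmic $k_\w$-summand: either there is a non-isolated point lacking a hemicompact neighborhood (producing, via $\mu$-completeness, a countable escaping configuration and a strict $\Fin^\w$-fan), or the non-isolated points are spread out in a way that is compact-countable but not locally finite (producing, of size $\w_1$, a strict $\Fin^{\w_1}$-fan). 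In each case the fan comes from the set $A$ of length-two words $x_{\alpha,n}-x_{\alpha}$ with $x_{\alpha,n}\to x_\alpha$ in $X$: these accumulate at $0\in\FA(X)$ so $A$ is non-closed, their supports are controlled so the family $(\{a\})_{a\in A}$ is strictly compact-finite, yet local finiteness fails at $0$. By Proposition~\ref{p:Fin-fan-char} this non-closed strictly compact-finite set is exactly a strict $\Fin$-fan, contradicting (5) or (6); this generalizes the metrizable argument of \cite{AOP}.

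The main obstacle is precisely this fan construction, and it splits into two technical points. The first is quantitative control of compact subsets of $\FA(X)$: one must show that the $\aleph_k$- (resp.\ $\bar\aleph_k$-) hypothesis together with $\mu$-completeness forces compact sets in $\FA(X)$ to have supports of compact closure and bounded length, so that the chosen words genuinely form a compact-finite family. The second, harder point is extracting the correct combinatorial configuration in $X$ from the \emph{mere} negation of ``$X=D\oplus K$''; here the $k_\IR$-assumption is essential, since it is what converts the absence of fans into the continuity of the retraction onto the would-be cosmic $k_\w$-summand, thereby producing the splitting. Calibrating the cardinality ($\w$ versus $\w_1$) to match conditions (6) and (5) respectively — and checking that the neighborhoods $U_\alpha$ witnessing strictness can be chosen functional — is where the two cases genuinely diverge and constitutes the most delicate part of the argument.
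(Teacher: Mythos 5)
Your organization of the theorem is fine, and all of your bookkeeping implications are correct and essentially the paper's own: your coproduct-to-product argument for $(1)\Ra(2)$ is Proposition~\ref{p:product-abelian} (plus Proposition~\ref{p:FKX-disc} and Theorem~\ref{t:kw}), the closed embedding $\delta_X:X\to\FA(X)$ needed for $(2)\Ra(3),(4)$ is Corollary~\ref{c:delta-closed}, and the passage from $k$-space/Ascoli to the absence of fans is Proposition~\ref{k-no-Cld-fan} and Corollary~\ref{c:A->noFan}. So, as you say, everything rests on $(5)\Ra(1)$ and $(6)\Ra(1)$. The problem is that the construction you propose for these cannot work.

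The concrete gap is your claim that ``in each case the fan comes from the set $A$ of length-two words $x_{\alpha,n}-x_\alpha$ with $x_{\alpha,n}\to x_\alpha$.'' Test it on the metric cofan $M=\{(0,0)\}\cup\{(\tfrac1n,\tfrac1{nm}):n,m\in\IN\}$: this space is separable metrizable (hence Tychonoff, $\mu$-complete, $k_\IR$, $\aleph_k$ and $\bar\aleph_k$), it has the single non-isolated point $(0,0)$, and it is not a topological sum $D\oplus K$, since a clopen metrizable $k_\w$-summand containing $(0,0)$ would be locally compact at $(0,0)$, whereas every neighborhood of $(0,0)$ in $M$ contains an infinite closed discrete set. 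So the theorem forces a strict $\Fin^\w$-fan (or a $\Fin^{\w_1}$-fan) into $\FA(M)$, and your construction must produce it. But in $M$ the only admissible limit $x_\alpha$ is $(0,0)$, so all your words have the form $w_i=\delta_M(a_i)-\delta_M(0,0)$ with $a_i\in M\setminus\{(0,0)\}$, and no such family is ever a fan: since $y\mapsto\delta_M(y)-\delta_M(0,0)$ is a closed embedding of $M$ into $\FA(M)$ (Corollary~\ref{c:delta-closed} composed with a translation), either $\{a_i\}$ is closed and discrete in $M$, in which case $(\{w_i\})_i$ is closed discrete, hence locally finite, in $\FA(M)$; or some compact $K\subset M$ contains infinitely many $a_i$, in which case the compact set $\delta_M(K)-\delta_M(0,0)$ shows $(\{w_i\})_i$ is not compact-finite. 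Note also that your own key tool, the control of compact sets in $\FA(X)$ (words of bounded length with bounded support, Theorem~\ref{t:d+HM+kw}), gives no leverage for words of length two: it only bites when the candidate fan has \emph{unbounded} word length. That is exactly what the paper's proof supplies: for the countable case it uses the $(1+n)$-ary operations $p_n(x,y_1,\dots,y_n)=\delta_X(x)+\sum_{i=1}^n\bigl(\delta_X(y_i)-\delta_X(s)\bigr)$ of unbounded arity (the type $x(s^*y_i)^{<\w}$ structure, Proposition~\ref{p:mixing->TA}(5) and Lemma~\ref{l:xyy_w}), built over a strong $D_\w$-cofan; then the contrapositive plus Theorem~\ref{t:Dcofan} shows $kX$ is a topological sum of cosmic $k_\w$-spaces, the $k_\IR$-hypothesis identifies $X$ with $kX$ (not, as you suggest, by producing a continuous retraction), and finally the length-two and length-four constructions over an $S^{\w_1}$-semifan (Lemmas~\ref{l:x.y} and~\ref{l:xy.zs}) — which is where your words legitimately appear, because there the limits $x_\alpha$ are genuinely many — force all but countably many summands to be discrete (Theorem~\ref{t:xsy+xy.zs}, Corollary~\ref{c:xy.zs+xsy_w}). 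Without the unbounded-length construction your argument has no way to handle spaces like $M$, so the two ``hard'' implications remain unproved.
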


Theorems~\ref{it:FG} and \ref{it:FA} characterize metrizable spaces with Ascoli free topological (Abelian) groups and thus answer Question 6.10 of \cite{GKP}.
\smallskip

The general methods elaborated in this paper work quite well also for free (Abelian) paratopological groups. The following two characterizations answer Problems~7.4.3 and 7.4.4 posed by Arhangelskii and Tkachenko in \cite{AT}.

\begin{itheorem} For a $\mu$-complete Tychonoff $k_\IR$-space $X$ following conditions are equivalent:
\begin{enumerate}
\item[\textup{1)}] $X$ is either discrete or a countable $k_\w$-space.
\item[\textup{2)}] The free paratopological group $\PG(X)$ of $X$ is either discrete or a countable $k_\w$-space.
\item[\textup{3)}] $X$ is an $\aleph_k$-space and $\PG(X)$ contains no strong $\Fin^\w$-fan and no  $\Fin^{\w_1}$-fan.
\item[\textup{4)}] $X$ is an $\aleph_k$-space and $\PG(X)$ is a $k$-space.
\end{enumerate}
\end{itheorem}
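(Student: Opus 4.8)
The plan is to establish the cyclic chain of implications $(1)\Ra(2)\Ra(4)\Ra(3)\Ra(1)$, in which the first three links are routine and the closing link $(3)\Ra(1)$ carries essentially all the content. For $(1)\Ra(2)$ I would treat the two alternatives separately. If $X$ is discrete, then the discrete topology is already a paratopological group topology on the abstract free group over $X$ inducing the discrete topology on $X$; being the finest possible topology on that group, it must coincide with the topology of $\PG(X)$, so $\PG(X)$ is discrete. If $X$ is a countable $k_\w$-space, I would invoke the general preservation results of Chapter~\ref{ch:free}, by which the free-paratopological-group functor sends $k_\w$-spaces to $k_\w$-spaces (the $k_\w$-decomposition being assembled from reduced words of bounded length over the compacta covering $X$); since the underlying free group of a countable set is countable, $\PG(X)$ is then a countable $k_\w$-space.

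The step $(2)\Ra(4)$ is immediate: discrete spaces and countable $k_\w$-spaces are $k$-spaces, and since $X$ embeds as a subspace of $\PG(X)$ and the class of $\aleph_k$-spaces is closed under subspaces (a compact-countable $k$-network restricts to one on $X$, because compacta of $X$ stay compact in $\PG(X)$), the space $X$ is an $\aleph_k$-space. For $(4)\Ra(3)$ I would simply quote Proposition~\ref{k-no-Cld-fan}: a $k$-space contains no $\Fin$-fans at all, in particular no $\Fin^{\w_1}$-fan and no $\Fin^\w$-fan, hence a fortiori no strong $\Fin^\w$-fan; the hypothesis that $X$ is an $\aleph_k$-space is carried over unchanged.

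The heart of the proof is $(3)\Ra(1)$, which I would derive from the general fan-detection machinery for free topological algebras developed in Chapters~\ref{ch:algebra} and~\ref{ch:free}, specialized to the variety of paratopological groups. Arguing contrapositively, I would assume that $X$ is an $\aleph_k$-space which is neither discrete nor a countable $k_\w$-space and then manufacture a forbidden fan in $\PG(X)$. Exploiting that $X$ is a $\mu$-complete $k_\IR$-space, I would first extract structural information: a non-discrete space of this type carries a nontrivial convergent sequence, and failure of the countable-$k_\w$ property splits into two regimes matched to the two cardinal thresholds. If $X$ is not hemicompact, an increasing exhaustion by compacta together with suitably chosen escaping points gives a strongly compact-finite countable non-closed subset of $\PG(X)$, that is, by Proposition~\ref{p:Fin-fan-char}, a strong $\Fin^\w$-fan. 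If instead $X$ is uncountable, the asymmetry of the paratopological group—where $X$ and $X^{-1}$ carry genuinely different topologies because inversion need not be continuous—lets one spread a convergent sequence over uncountably many translates and build an $\w_1$-indexed compact-finite, non-locally-finite family, i.e. a $\Fin^{\w_1}$-fan. Either outcome contradicts~(3).

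The main obstacle is precisely this last construction, and especially the sharp cardinal bookkeeping that separates the paratopological case from the topological one: for free topological groups an uncountable cosmic $k_\w$-space such as $[0,1]$ is harmless, whereas here the absence of a continuous inversion forces $X$ to be genuinely countable. Verifying that the variety of paratopological groups meets the hypotheses of the general theorems—so that $X$ embeds, reduced-word-length data are available, and a fan exhibited inside $\PG(X)$ reflects back to a structural constraint on $X$—is where the real care is required; the $\mu$-completeness assumption enters at this point to guarantee that non-hemicompactness of $X$ yields honestly non-compact, unbounded witnesses on which the fan construction can be run.
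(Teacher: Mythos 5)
Your outer skeleton (the cycle $(1)\Ra(2)\Ra(4)\Ra(3)\Ra(1)$) matches the paper's, and the links $(2)\Ra(4)$ and $(4)\Ra(3)$ are fine. But two of your steps contain genuine errors. First, in $(1)\Ra(2)$ you justify the countable $k_\w$ case by the claim that the functor $\PG$ ``sends $k_\w$-spaces to $k_\w$-spaces'', with the $k_\w$-decomposition built from reduced words of bounded length over compacta of $X$. That claim is false, and its failure is the whole point of this theorem: by the equivalence $(1)\Leftrightarrow(2)$ applied to $X=[0,1]$, the space $\PG([0,1])$ is \emph{not} a $k_\w$-space (not even a $k$-space). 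Your decomposition breaks because inversion in a paratopological group is discontinuous, so the set of reduced words of length at most $n$ in letters from $B\cup B^{-1}$ (with $B\subset X$ compact) need not be compact. The paper's Lemma~\ref{l:para-kw} uses countability of $X$ essentially: it exhausts the countable group by finite sets $D_n$ and takes the compacta $A_n^n[D_n\cup\delta_X(B_n)]$, iterating only the \emph{continuous} operations. Your conclusion for countable $X$ is true, but not for your reason.

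Second, and more seriously, your closing implication $(3)\Ra(1)$ is not a proof. In Case A you claim that non-hemicompactness of $X$ yields, via ``an increasing exhaustion by compacta with escaping points'', a countable non-closed strongly compact-finite subset of $\PG(X)$. But a non-hemicompact $\aleph_k$-space need not admit any such configuration: for $X=\bigoplus_{\alpha\in\w_1}[0,1]$ (metrizable, locally compact, $\mu$-complete, not hemicompact) there is no $D_\w$-cofan at all---an infinite compact-finite set cannot converge to a point having a compact neighborhood---so your construction has no starting point; for such $X$ the obstruction the paper exhibits is a $\Fin^{\w_1}$-fan coming from Lemma~\ref{l:long} (convergent sequence plus uncountable strongly compact-finite set), not a strong $\Fin^\w$-fan built from escaping points. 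In Case B you assert that uncountability of $X$ lets one ``spread a convergent sequence over uncountably many translates'' into a compact-finite, non-locally-finite family. Compact-finiteness is precisely the hard part: in $\PG(X)$ it is extracted from the structural description of compact subsets of free paratopological algebras (Theorem~\ref{t:super}, resting on $k_\w^C$-superstability), which is the technical heart of the paratopological case and appears nowhere in your argument. Moreover, for $X=[0,1]$---uncountable, but with no uncountable compact-finite subset---translates of a single convergent sequence cannot work; the paper instead takes an uncountable \emph{disjoint} family of convergent sequences $(\bar S_\alpha)_{\alpha\in\w_1}$ and the four-letter words $x_\alpha^{-1}x_{\alpha,n}x_\beta^{-1}x_{\beta,n}$, verifying compact-finiteness through the supermixing and superboundedness machinery (Proposition~\ref{p:supermix-alg}, Lemma~\ref{l:xy..zs}, Theorem~\ref{t:super}). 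Without these ingredients, and without the final $k_\IR$-upgrade from ``$k$-discrete or countable and hemicompact'' (Theorem~\ref{t:supermix-long}) to ``discrete or countable $k_\w$'', your $(3)\Ra(1)$ is a restatement of what must be proved rather than a derivation of it.
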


\begin{itheorem} For a $\mu$-complete Tychonoff $k_\IR$-space $X$ the following conditions are equivalent:
\begin{enumerate}
\item[\textup{1)}] $X$ is a topological sum $K\oplus D$ of a countable $k_\w$-space $K$ and a discrete space $D$.
\item[\textup{2)}] The free paratopological Abelian group $\PA(X)$ of $X$ is a product $K\times D$ of a countable $k_\w$-space $K$ and a discrete space $D$.
\item[\textup{3)}] $X$ is an $\aleph_k$-space and $\PA(X)$ contains no strong $\Fin^\w$-fan and no $\Fin^{\w_1}$-fan.
\item[\textup{4)}] $X$ is an $\aleph_k$-space and $\PA(X)$ is a $k$-space.
\end{enumerate}
\end{itheorem}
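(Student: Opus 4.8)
The plan is to realise the theorem as the instance, for the variety of paratopological abelian groups, of the general characterizations established in Chapters~\ref{ch:algebra} and \ref{ch:free}; the logical skeleton is the cycle $(1)\Ra(2)\Ra(4)\Ra(3)\Ra(1)$, of which only the last step carries real content. For $(1)\Ra(2)$ I would use the computation of the free functor on topological sums from Chapter~\ref{ch:free}, which identifies $\PA(K\oplus D)$ with the topological product $\PA(K)\times\PA(D)$; here $\PA(D)$ is the free abelian group on the discrete space $D$, hence discrete, and $\PA(K)$ is a countable $k_\w$-space because $K$ is countable (so $\PA(K)$ has countably many elements) and the free paratopological abelian group over a $k_\w$-space is again $k_\w$. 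For $(2)\Ra(4)$, the product $K'\times D'$ of a countable $k_\w$-space $K'$ and a discrete $D'$ is the topological sum $\coprod_{d\in D'}K'$ of $k_\w$-spaces, hence a $k$-space; moreover it is an $\aleph_k$-space (a compact set meets only finitely many copies of $K'$, so the union of the countable $k$-networks of the copies is compact-countable), and since the canonical map $X\to\PA(X)$ is an embedding and the $\aleph_k$ property is hereditary, $X$ is an $\aleph_k$-space. Finally $(4)\Ra(3)$ is immediate from Proposition~\ref{k-no-Cld-fan}: a $k$-space contains no $\Fin$-fans, hence neither a strong $\Fin^\w$-fan nor a $\Fin^{\w_1}$-fan, while the clause ``$X$ is an $\aleph_k$-space'' is carried over unchanged.

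The substance is $(3)\Ra(1)$, where I expect the main obstacle. I would argue contrapositively: assuming $X$ is not a topological sum of a countable $k_\w$-space and a discrete space, I produce in $\PA(X)$ either a strong $\Fin^\w$-fan or a $\Fin^{\w_1}$-fan. Let $X'$ be the set of non-isolated points and $Z=\overline{X'}$; since $X$ is an $\aleph_k$-space, $Z$ carries a compact-countable $k$-network and its compact subsets are metrizable. There are three failure modes. If $X'$ contains an uncountable subfamily $(x_\alpha)_{\alpha\in\w_1}$ that is compact-finite in $X$, then taking non-trivial convergent sequences $S_\alpha\to x_\alpha$ and setting $F_\alpha=\{s-x_\alpha:s\in S_\alpha\}\subset\PA(X)$ gives a family accumulating at the neutral element (not locally finite) whose members have two-point supports; since compact subsets of $\PA(X)$ have compact bounded-length support, compact-finiteness of $(x_\alpha)$ forces each to meet only finitely many $F_\alpha$, so $(F_\alpha)$ is a $\Fin^{\w_1}$-fan. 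If instead $Z$ is compact but uncountable (hence an uncountable compact metric space), I invoke the paratopological-specific construction: because inversion need not be continuous, the positive word-length filtration of $\PA(Z)$ still yields a $\Fin^{\w_1}$-fan --- this is exactly where $\PA$ behaves worse than the topological $\FA$, for which $[0,1]$ is harmless. If $Z$ is countable but not hemicompact, I reproduce the hemicompactness obstruction used for function spaces, turning an exhaustion of $Z$ by compacta with no absorbing member into a strong $\Fin^\w$-fan, the witnessing $\IR$-open neighbourhoods coming from Urysohn functions on the Tychonoff space $X$. Once all three modes are excluded, $Z$ is a countable $k_\w$-space, and I upgrade this to a clopen decomposition $X=K\oplus D$ with $X'\subseteq K$ using regularity, $\mu$-completeness, and the compact-countable $k$-network to separate $Z$ from the isolated remainder.

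The hard part will be the fan constructions, especially the compact case $Z=\overline{X'}$ uncountable, which has no analogue for free \emph{topological} abelian groups and exploits the one-sidedness of the paratopological topology, together with the verification --- via the compact-countable $k$-network --- that the constructed families are genuinely compact-finite and hence fans; the final passage from ``$Z$ is a countable $k_\w$-space'' to an honest clopen topological-sum decomposition is also delicate. The standing hypotheses feed in precisely here: $\mu$-completeness converts bounded non-compact sets into the compacta that drive the fans, and the $k_\IR$ assumption lets one move between the $k$-space, Ascoli and no-strong-$\Fin^\w$-fan rows of the diagram. This also explains why the paratopological non-discrete factor must be a genuinely \emph{countable} $k_\w$-space rather than merely a cosmic one, as in the free topological (abelian) group case.
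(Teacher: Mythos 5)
Your outer cycle $(1)\Ra(2)\Ra(4)\Ra(3)\Ra(1)$ and the treatment of the three easy implications agree with the paper (which proves $(1)\Ra(2)$ via Propositions~\ref{p:product-abelian}, \ref{p:FKX-disc} and Lemma~\ref{l:para-kw}, and the descents to (3),(4) via closedness of $\delta_X(X)$ and Proposition~\ref{k-no-Cld-fan}). The problem is $(3)\Ra(1)$, where your case analysis has a genuine hole: the three failure modes are not exhaustive. Take $X=\IR$. Then $X'=Z=\IR$, which is neither compact (so mode (b) is silent) nor countable (so mode (c) is silent), and every compact-finite subset of $\IR$ is closed and discrete, hence countable (so mode (a) is silent). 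Yet $\IR$ is not a topological sum of a countable $k_\w$-space and a discrete space, so your argument produces no forbidden fan exactly in the case that forces the word ``countable'' into the statement. The paper's corresponding step is Lemma~\ref{l:xy..zs} together with Corollary~\ref{c:cosmic-countable}: any \emph{uncountable cosmic} subspace (compact or not) yields an uncountable \emph{disjoint family of compact convergent sequences}, and this, via the superalgebra structure of type $(x^*y)(z^*s)$ on $\PA(X)$, produces a $\Fin^{\w_1}$-fan. That superalgebra property is itself the hard paratopological ingredient: it rests on Theorem~\ref{t:super}, describing compact subsets of free paratopological algebras over $\mu_s$-complete spaces (only finitely many ``inverted'' letters occur), which is what your phrase ``positive word-length filtration'' gestures at but never constructs.

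Two further gaps inside mode (a). First, the sets $F_\alpha=\{s-x_\alpha:s\in S_\alpha\}$ are \emph{infinite} (whole translated convergent sequences), so $(F_\alpha)_{\alpha\in\w_1}$ is an $S^{\w_1}$-fan, not a $\Fin^{\w_1}$-fan; condition (3) does not forbid $S^{\w_1}$-fans, so no contradiction follows. The repair is the almost-disjoint-family device the paper uses throughout (Lemmas~\ref{l:ad}, \ref{l:xy.zs}, \ref{l:x.y}): fix almost disjoint infinite sets $A_\alpha\subset\w$ and pass to the \emph{finite} sets $D_{\alpha,\beta}=\{(s_{\alpha,n}-x_\alpha)+(s_{\beta,n}-x_\beta):n\in A_\alpha\cap A_\beta\}$, whose non-local-finiteness then needs a pigeonhole argument. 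Second, a non-isolated point of a Tychonoff $\aleph_k$-space need not be the limit of any non-trivial convergent sequence (e.g.\ a countable $k$-discrete space with one non-isolated point), so the sequences $S_\alpha\to x_\alpha$ you start from may not exist; the paper's dichotomies are therefore formulated in terms of $D_\w$-cofans, $S^{\w_1}$-semifans and compact-finite sets rather than in terms of $X'$ and $\overline{X'}$. Finally, the closing step from ``$Z$ is a countable $k_\w$-space'' to a clopen decomposition $X=K\oplus D$ is not a routine separation argument: in the paper it is the content of Theorem~\ref{t:Dcofan} (bounded-set $k$-networks, the tree/K\"onig argument of Lemma~\ref{l:b-nw}, and the hemicompact equivalence-class decomposition of Proposition~\ref{p:decomp1}), applied before, not after, the countability step.
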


\smallskip

Besides free (para)topological groups, in Chapter~\ref{ch:magma} we consider free constructions in many other varieties of topological semigroups and $I$-semigroups. By a \index{topological semigroup}{\em topological semigroup} we understand a topological space $X$ endowed with a continuous associative binary operation $\cdot\colon X\times X\to X$.
The following characterization is proved in Theorem~\ref{t:Sem}.

\begin{itheorem} For every Tychonoff space $X$ the following conditions are equivalent:
\begin{enumerate}
\item[\textup{1)}] $X$ either is metrizable or is a topological sum of cosmic $k_\w$-spaces.
\item[\textup{2)}] The free topological semigroup $\Sem(X)$ of $X$ either is metrizable or is a topological sum of cosmic $k_\w$-spaces.
\item[\textup{3)}] $X$ is $k^*$-metrizable and $\Sem(X)$ is a $k$-space;
\item[\textup{4)}] $X$ is an $\aleph$-space and $\Sem(X)$ contains no strong $\Fin^\w$-fan.
\item[\textup{5)}] $X$ is $k^*$-metrizable and $\Sem(X)$ contains no strong $\Fin^\w$-fan and no $\Fin^{\w_1}$-fan.
\end{enumerate}
\end{itheorem}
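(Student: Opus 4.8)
The plan is to use $(1)$ as a hub, proving $(1)\Rightarrow(k)$ for $k\in\{2,3,4,5\}$ and $(k)\Rightarrow(1)$ for each. Everything rests on the topological identification $\Sem(X)\cong\bigsqcup_{n\ge 1}X^n$, the disjoint union of the finite powers of $X$ (the free topological semigroup carries exactly this topology; see Chapter~\ref{ch:magma}). Each power $X^n$ is then a \emph{clopen} subspace of $\Sem(X)$, so topological properties respect this decomposition: $\Sem(X)$ is a $k$-space if and only if every $X^n$ is, and — what I shall actually use — any fan living in the clopen subspace $X^2=X\times X$ is, verbatim, a fan of the same sort (plain, strong, strict; $\w$- or $\w_1$-indexed) in $\Sem(X)$. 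Consequently the absence of fans in $\Sem(X)$ descends to $X\times X$, and the whole analysis can be funnelled through the square.

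First I would dispatch the forward implications from $(1)$. If $X$ is metrizable then every $X^n$ is metrizable and the countable sum $\Sem(X)=\bigsqcup_n X^n$ is again metrizable; if $X=\bigoplus_i K_i$ is a topological sum of cosmic $k_\w$-spaces, then each $X^n=\bigoplus_{(i_1,\dots,i_n)}K_{i_1}\times\cdots\times K_{i_n}$ is such a sum as well (finite products and topological sums of cosmic $k_\w$-spaces stay cosmic $k_\w$-sums), whence $\Sem(X)$ is such a sum; this proves $(2)$. In both cases every $X^n$, and hence $\Sem(X)$, is a $k$-space, so Proposition~\ref{k-no-Cld-fan} removes all $\Cld$- and $\Fin$-fans, giving the fan-free clauses of $(3)$–$(5)$; the generalized-metric clauses hold because metrizable spaces and cosmic $k_\w$-sums are $\aleph$-spaces and a fortiori $k^*$-metrizable. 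The converse $(2)\Rightarrow(1)$ is immediate: $X=X^1$ sits in $\Sem(X)$ as a clopen subspace, and both ``metrizable'' and ``topological sum of cosmic $k_\w$-spaces'' pass to clopen subspaces.

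The implication $(3)\Rightarrow(1)$ is then a single application of the product theorem. Since $\Sem(X)$ is a $k$-space and $X\times X$ is clopen in it, $X\times X$ is a $k$-space; as $X$ is $k^*$-metrizable, Corollary~\ref{c:tanaka-gen} applied with $Y=X$ forces $X$ into one of its three alternatives, each of which is an instance of $(1)$.

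The real work is $(4)\Rightarrow(1)$ and $(5)\Rightarrow(1)$, and here I would invoke the fan-sensitive refinement of the product theory developed in Section~\ref{s:prod}. By the clopen-descent above, under $(5)$ the square $X\times X$ has neither a strong $\Fin^\w$-fan nor a $\Fin^{\w_1}$-fan; the product-fan theorem, which identifies precisely these two obstructions as the only ways a $k^*$-metrizable product can fail to be a $k$-space, then returns $(1)$. Under $(4)$ the hypothesis on $X$ is strengthened to an $\aleph$-space while only a strong $\Fin^\w$-fan is forbidden; the point is that the $\sigma$-locally-finite $k$-network of an $\aleph$-space forces any product obstruction to appear already at the countable level, as a \emph{strong} $\Fin^\w$-fan, so that forbidding it alone suffices and the $\Fin^{\w_1}$-clause becomes unnecessary. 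The main obstacle is exactly this product-fan theorem: its proof is a contrapositive construction — starting from a $k^*$-metrizable (respectively $\aleph$-) space $X$ that is neither metrizable nor a topological sum of cosmic $k_\w$-spaces, one must build an explicit compact-finite, non-locally-finite family of finite subsets of $X\times X$ by crossing a nontrivially convergent configuration in one factor with a suitably spread discrete family in the other, and then verify its strength and pin down its cardinality ($\w$ versus $\w_1$) using the compact-finite $k$-network supplied by $k^*$-metrizability (respectively the locally finite one supplied by the $\aleph$-property). Granting that construction, the cycle closes and all five conditions are equivalent.
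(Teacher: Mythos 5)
Your route is genuinely different from the paper's. The paper obtains this theorem as a two-line corollary of its free-algebra machinery: $\Sem$ is the free-algebra functor of the $d{+}\HM{+}k_\w$-stable variety of topological semigroups, Lemma~\ref{l:IS-mixing} supplies the $x{\cdot}y$-mixing algebras, and Theorem~\ref{t:dotE} does the work. You instead exploit the concrete clopen decomposition $\Sem(X)=\bigoplus_{n\in\IN}X^n$ and funnel everything through $X^2$ and the product theory of Section~\ref{s:prod}. Much of this is correct and arguably cleaner than the paper: the clopen descent of (strong) fans and of the $k$-space property, the implications $(1)\Leftrightarrow(2)$, $(1)\Rightarrow(3),(4),(5)$, and $(3)\Rightarrow(1)$ via Corollary~\ref{c:tanaka-gen} are all fine; and reducing to the square loses nothing, since the paper's own engine, Lemma~\ref{l:type-xy} specialized to $\Sem$, builds its fan out of the points $\delta_X(x)\cdot\delta_X(y)=(x,y)\in X^2$, i.e.\ it \emph{is} Theorem~\ref{t:product1} in disguise.

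The genuine gap is the final step of $(4)\Rightarrow(1)$ and $(5)\Rightarrow(1)$: the product-fan results do not ``return $(1)$''. Corollaries~\ref{c:prod1} and \ref{c:prod2} conclude only that $X$ is \emph{$k$-metrizable}, or \emph{$k$-homeomorphic} to a topological sum of cosmic $k_\w$-spaces, or \emph{$k$-homeomorphic} to a metrizable locally compact space --- assertions about the $k$-modification $kX$, not about $X$. In $(3)\Rightarrow(1)$ the upgrade from $kX$ to $X$ was free because $\Sem(X)$, hence its clopen subspace $X$, was a $k$-space; under $(4)$ and $(5)$ nothing supplies this, and the gap cannot be argued away. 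Indeed, the paper's own Example of a cosmic hemicompact non-$k$-space, $X=\{\infty\}\cup(\II\times\IN)$ with the measure-defined topology at $\infty$, is an $\aleph_0$-space (hence an $\aleph$-space and $k^*$-metrizable) containing no $\Fin$-fans, and its fan-freeness propagates to every power: given a compact-finite family of finite subsets of $X^n$, each of its points can be excluded through the coordinate of maximal slice index, and the points one must then delete from any single slice of any single coordinate form a \emph{finite} set, so there is a measure-one neighborhood of $(\infty,\dots,\infty)$ (and, by easier arguments, of every other point) meeting only finitely many members. Hence no $X^n$, and therefore neither $\Sem(X)=\bigoplus_nX^n$, contains a $\Fin$-fan of any cardinality, so this $X$ satisfies $(4)$ and $(5)$ while failing $(1)$. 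To be fair, the paper's own proof has the identical defect --- Theorem~\ref{t:dotE}, which it cites, also concludes only about $kX$ (and assumes $\mu$-completeness besides) --- so what both arguments actually deliver is that $kX$ is metrizable or a topological sum of cosmic $k_\w$-spaces, not statement $(1)$ as written.

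A smaller, fixable slip: under $(5)$ you cannot invoke Corollary~\ref{c:prod1} directly, since it requires the absence of \emph{plain} $\Fin^\w$-fans while $(5)$ forbids only strong ones at the countable level. The repair is not your remark about ``pinning down the cardinality'' of the constructed fan, but the padding observation implicit in the proof of Proposition~\ref{p:Fin-fan-char}: a $\Fin^\w$-fan extended by $\w_1$ many empty sets is a $\Fin^{\w_1}$-fan, so ``no $\Fin^{\w_1}$-fan'' already implies ``no $\Fin^\w$-fan'', after which Corollary~\ref{c:prod1} applies.
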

\smallskip

A topological semigroup $X$ is called a \index{topological semilattice}{\em topological semilattice} if the binary operation on $X$ is commutative and idempotent (in the sense that $xx=x$ for all $x\in X$). The following characterization of the $k$-space property in free topological semilattices is proved in Theorem~\ref{t:SL}.

\begin{itheorem}
 For every $\mu$-complete Tychonoff $k_\IR$-space $X$ the following conditions are equivalent:
\begin{enumerate}
\item[\textup{1)}] The space $X$ is a topological sum of cosmic $k_\w$-spaces;
\item[\textup{2)}] The free topological semilattice $\Sl(X)$ of $X$ is a topological sum of cosmic $k_\w$-spaces;
\item[\textup{3)}] $X$ is a $\bar\aleph_k$-space and $\Sl(X)$ is a $k$-space;
\item[\textup{4)}] $X$ is a $\bar\aleph_k$-space and $\Sl(X)$ contains no strong $\Fin^\w$-fan.
\end{enumerate}
\end{itheorem}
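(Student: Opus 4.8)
The plan is to prove the cycle of implications $(1)\Ra(2)\Ra(3)\Ra(4)\Ra(1)$, with the last implication carrying essentially all the weight.

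\emph{The implication $(1)\Ra(2)$.} First I would analyse the free topological semilattice functor on topological sums. Writing $X=\bigoplus_{i\in I}X_i$ as a topological sum of cosmic $k_\w$-spaces and identifying $\Sl(X)$ with the nonempty finite subsets of $X$ under union, the collapsing map $X\to I$ onto the discrete space $I$ (continuous since each $X_i$ is clopen) induces a continuous homomorphism $\Sl(X)\to\Sl(I)$ onto the \emph{discrete} semilattice $[I]^{<\w}\setminus\{\emptyset\}$, whose fibre over $J$ is the clopen set $\prod_{i\in J}\Sl(X_i)$. This exhibits $\Sl(X)$ as the topological sum $\bigoplus_{J}\prod_{i\in J}\Sl(X_i)$. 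Since the symmetric powers of a cosmic $k_\w$-space are again cosmic $k_\w$-spaces (they are Hausdorff quotients of the finite powers) and $\Sl(X_i)$ is their countable colimit, each $\Sl(X_i)$ is a cosmic $k_\w$-space, and finite products of such spaces are cosmic $k_\w$-spaces; hence every summand is a cosmic $k_\w$-space, giving $(2)$. The one delicate point is that for a cosmic $k_\w$-space the free topological semilattice topology coincides with the colimit topology of its symmetric powers — this is exactly where the $k_\w$ hypothesis enters, and I would quote the corresponding general fact from Chapter~\ref{ch:free}.

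\emph{The implications $(2)\Ra(3)$ and $(3)\Ra(4)$.} A topological sum of cosmic $k_\w$-spaces is a topological sum of $k$-spaces, hence a $k$-space, so $\Sl(X)$ is a $k$-space. Moreover $X$ embeds as the closed subspace of singletons in $\Sl(X)$ (a convergent net of singletons has a singleton limit, as $X$ is Hausdorff), and a topological sum of cosmic $k_\w$-spaces is a $\bar\aleph_k$-space: each summand is a $\bar\aleph_k$-space by the diagram $(\text{cosmic }k_\w\Ra\aleph_0\Ra\aleph\Ra\bar\aleph_k)$, and the union of their compact-countable closed $k$-networks is again one, since each compact set meets only finitely many summands. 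As the $\bar\aleph_k$-property passes to closed subspaces, $X$ is a $\bar\aleph_k$-space, which completes $(3)$. The implication $(3)\Ra(4)$ is immediate: by Proposition~\ref{k-no-Cld-fan} a $k$-space contains no $\Fin$-fan, in particular no $\Fin^\w$-fan and a fortiori no strong $\Fin^\w$-fan.

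\emph{The implication $(4)\Ra(1)$, the hard direction.} I would argue by contraposition: assuming $X$ is a $\mu$-complete Tychonoff $\bar\aleph_k$-$k_\IR$-space that is \emph{not} a topological sum of cosmic $k_\w$-spaces, I must produce a strong $\Fin^\w$-fan in $\Sl(X)$. By the structure theory of $\bar\aleph_k$-spaces developed earlier, such an $X$ is not ``locally $k_\w$'': there is a point $x_0\in X$ near which $X$ carries a closed copy of a non-hemicompact configuration, say a double array $(x_{n,j})$ with $x_{n,j}\to x_0$ as $n\to\infty$ that escapes every compact set. The candidate fan is the family of finite sets $a_n=\{x_{n,1},\dots,x_{n,n}\}\in\Sl(X)$ of \emph{unbounded} cardinality. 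Since compact subsets of $\Sl(X)$ have uniformly bounded cardinality (a standard property of free topological semilattices), and $|a_n|=n$, the family $\{a_n\}_{n\in\w}$ is compact-finite. The essential point — and precisely where non-decomposability is used — is that when $X$ fails to be locally $k_\w$ the free topological semilattice topology on $\Sl(X)$ is strictly coarser than the colimit topology of the symmetric powers, and in this coarser topology $\{a_n\}_{n\in\w}$ is \emph{not} closed; by Proposition~\ref{p:Fin-fan-char} this already yields a $\Fin^\w$-fan.

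It remains to upgrade this fan to a strong one, and I expect this to be the main obstacle. Since for $\Fin$-fans ``strong'' and ``strict'' coincide, I must attach to each $a_n$ a functional neighbourhood in $\Sl(X)$ so that the resulting family is compact-finite. Using that $X$ is Tychonoff I can choose functional neighbourhoods of the finite sets $\{x_{n,1},\dots,x_{n,n}\}$ in $X$, propagate them through the semilattice operation to functional neighbourhoods of $a_n$ in $\Sl(X)$, and then invoke the $\bar\aleph_k$- and $\mu$-completeness hypotheses to keep the chosen neighbourhoods compact-finite. Controlling $\IR$-open neighbourhoods in the quotient $\Sl(X)$ while simultaneously preserving compact-finiteness is the delicate part; rather than carry it out by hand, I would appeal to the general criterion for the presence of strong $\Fin^\w$-fans in free topological algebras established in Chapters~\ref{ch:algebra} and~\ref{ch:free}, of which the present statement is the instance for the variety of topological semilattices. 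This closes $(4)\Ra(1)$ and, together with the preceding implications, the whole cycle.
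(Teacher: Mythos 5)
Your implications $(1)\Ra(2)\Ra(3)\Ra(4)$ are essentially sound, though in $(1)\Ra(2)$ your identification of each clopen fibre of $\Sl(X)\to\Sl(I)$ with the \emph{product} $\prod_{i\in J}\Sl(X_i)$ is an extra unproven claim (free topologies on products are delicate); it is avoidable, since each fibre is a closed subspace of $\Sl\bigl(\bigoplus_{i\in J}X_i;X\bigr)$, which is cosmic $k_\w$ by Theorem~\ref{t:kw} --- and this is also closer to the paper's own route, which gives each element of $\Sl(X)$ a clopen cosmic $k_\w$-neighborhood via a homomorphism to the two-element semilattice. The genuine gap is in $(4)\Ra(1)$, at the step where you assert that the family $a_n=\{x_{n,1},\dots,x_{n,n}\}$ is not closed in $\Sl(X)$. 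Compact-finiteness is fine (compact subsets of $\Sl(X)$ have uniformly bounded support cardinality, Theorem~\ref{t:d+HM+kw}), but non-local-finiteness is precisely the delicate point, and for your family it is unproven and in general fails. The free topology on $\Sl(X)$ is \emph{finer} than the Vietoris topology, so the obvious Vietoris convergence $a_n\to\{x_0\}$ gives nothing. Trying to prove it directly, one would use continuity of the semilattice operation to produce nested neighborhoods $U_1\supset U_2\supset\cdots$ of $x_0$ in $X$ such that every finite set $\{u_1,\dots,u_n\}$ with $u_j\in U_j$ lies in a prescribed neighborhood of $\{x_0\}$ in $\Sl(X)$; to catch $a_n$ one then needs infinitely many $n$ with $D_n\subset U_n$. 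This is a \emph{diagonal} condition linking the row index to the neighborhood index, and the convergence $D_n\to x_0$ (for each $k$ there is $N_k$ with $D_n\subset U_k$ for all $n\ge N_k$) does not yield it: if $N_k$ grows fast enough, $D_n\subset U_n$ never holds, and your family can well be closed, discrete and locally finite in $\Sl(X)$.

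The paper's construction (Lemma~\ref{l:xyy_w}, instantiated for semilattices via Lemma~\ref{l:FK-xxy_w}) is designed exactly to dodge this obstruction: its fan has \emph{two} indices, with elements $w_{n,m}=\{x_{n,m}\}\cup\{s\}\cup\{y_{1,m},\dots,y_{n,m}\}$, where $s$ is the limit point of the cofan, $x_{n,m}\in D_n$, and for each fixed $k$ the points $y_{k,m}$ run along a single convergent sequence with $y_{k,m}\to s$ as $m\to\infty$. Given a neighborhood $U$ of $\{s\}$ one first fixes \emph{one} row index $n$ with $D_n\subset U_0$, and then lets the free index $m\to\infty$ to push all remaining coordinates into the respective $U_k$'s; it is this second, free index that forces non-local-finiteness at $\{s\}$. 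Your family has no free index, since all points of $a_n$ come from the single row $D_n$. Finally, note that your closing appeal to ``the general criterion for the presence of strong $\Fin^\w$-fans in free topological algebras, of which the present statement is the instance'' is circular: that criterion \emph{is} the implication $(4)\Ra(1)$ you are trying to prove, so the fan construction cannot be outsourced to it.
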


A topological semilattice $X$ is called \index{topological semilattice!Lawson}{\em Lawson} if open subsemilattices form a base of the topology in $X$. The free Lawson semilattice $\Law(X)$ over a topological space $X$ can be identified with the hyperspace of all non-empty finite subsets of $X$, endowed with the Vietoris topology and the semilattice operation of union. The following characterization of the $k$-space property in free Lawson semilattices is proved in Theorem~\ref{t:Law}.

\begin{itheorem} For a functionally Hausdorff space $X$ the following conditions are equivalent:
\begin{enumerate}
\item[\textup{1)}] $X$ is metrizable.
\item[\textup{2)}] $\Law(X)$ is metrizable.
\item[\textup{3)}] $X$ is a $k^*$-metrizable $k$-space and $\Law(X)$ contains no $\Fin^\w$-fan.
\item[\textup{4)}] $X$ is a Tychonoff sequential $\aleph$-space and $\Law(X)$ contains no strong $\Fin^\w$-fan.
\item[\textup{5)}] $X$ is $k^*$-metrizable and $\Law(X)$ is a $k$-space.
\end{enumerate}
\end{itheorem}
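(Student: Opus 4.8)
The plan is to prove the five conditions equivalent through the metrizability core, splitting the work as $(1)\Leftrightarrow(2)$ together with the two cycles $(1)\Ra(5)\Ra(3)\Ra(1)$ and $(1)\Ra(4)\Ra(1)$. Arranged this way, the only substantial arrows are $(3)\Ra(1)$ and $(4)\Ra(1)$; everything else is either soft or classical. For $(1)\Ra(2)$ I would invoke the classical fact that the hyperspace of non-empty finite subsets of a metric space, with the Vietoris topology, is metrizable by the Hausdorff metric, so $\Law(X)$ is metrizable. For $(2)\Ra(1)$ I would use that $x\mapsto\{x\}$ embeds $X$ onto the set of singletons, which is closed in $\Law(X)$ (it is the upper Vietoris set $\langle X\rangle$ intersected with the diagonal condition), and metrizability is hereditary.

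From $(1)$ the structural halves of $(3)$, $(4)$, $(5)$ are immediate: a metrizable space is a Tychonoff sequential $k$-space, an $\aleph$-space and $k^*$-metrizable, while by $(1)\Ra(2)$ the space $\Law(X)$ is metrizable, hence a $k$-space and Fréchet--Urysohn, so it contains neither a $\Fin^\w$-fan nor a strong one. This yields $(1)\Ra(3),(4),(5)$ simultaneously. The cheap reduction is $(5)\Ra(3)$: if $\Law(X)$ is a $k$-space then by Proposition~\ref{k-no-Cld-fan} it contains no $\Fin^\w$-fan, and since $X$ is the closed subspace of singletons in $\Law(X)$, $X$ is itself a $k$-space; with the standing $k^*$-metrizability this is exactly $(3)$. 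Thus everything collapses to the two metrization implications $(3)\Ra(1)$ and $(4)\Ra(1)$.

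I would treat these two uniformly by contraposition. Each hypothesis places $X$ in a class of generalized metric spaces (a $k^*$-metrizable $k$-space, respectively a sequential $\aleph$-space) in which metrizability is equivalent to first countability (equivalently Fréchet--Urysohn, or countable $\cs^*$-character). So, assuming $X$ is \emph{not} metrizable, $X$ fails to be first countable and therefore contains a closed copy $Y\subset X$ of one of the canonical non-metrizable test spaces — the sequential fan $S_\w$ or the Arens space $S_2$. Because $\Law$ is a functor, the closed embedding $Y\hookrightarrow X$ induces a closed embedding $\Law(Y)\hookrightarrow\Law(X)$, whose image is the closed set $\{F\in\Law(X):F\subseteq Y\}$ carrying the Vietoris topology of $\Law(Y)$. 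A (strong) $\Fin^\w$-fan in $\Law(Y)$ then transfers to a (strong) $\Fin^\w$-fan in $\Law(X)$ (compact-finiteness passes through a closed embedding, and non-local-finiteness is inherited), contradicting the fan-free hypotheses of $(3)$ and $(4)$.

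The hard part is the construction of a (strong) $\Fin^\w$-fan inside the Vietoris hyperspace $\Law(Y)$ of the test space, that is, a countable, non-closed, (strongly) compact-finite family of finite subsets of $Y$. The delicacy is entirely in compact-finiteness: it must be checked against \emph{all} compact subsets of $\Law(Y)$, and finite-set members may grow in cardinality while staying closed-discrete, so the obvious ``spread over infinitely many spikes'' candidates are typically closed (hence not fans), while the families that do accumulate at the singular point tend to be captured by a single compact set. I expect the usable fan to combine the non-first-countability of $Y$ at its singular point with an anti-domination (diagonal) choice of levels, so that no neighbourhood base at the accumulation point is reached by any compact set; non-closedness and compact-finiteness would then be read off directly from the basic Vietoris neighbourhoods. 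For the \emph{strong} version required in $(4)$ I would, in addition, attach to each member an $\IR$-open neighbourhood — available because $X$ is functionally Hausdorff, so the relevant points of $\Law(Y)$ are functionally separated — and verify that the family of these neighbourhoods remains compact-finite and survives the closed embedding into $\Law(X)$. These hyperspace verifications, rather than the categorical or metrization steps, are where the real work lies.
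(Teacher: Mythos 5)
Your soft implications are fine, and your closure $(5)\Ra(3)$ (the singletons $\delta_X(X)$ form a closed copy of $X$ in $\Law(X)$, so Proposition~\ref{k-no-Cld-fan} plus heredity of the $k$-property to closed subspaces gives $(3)$) is correct and arguably cleaner than the route the paper takes. But the proposal has a genuine gap precisely where you defer "the real work": you never construct the (strong) $\Fin^\w$-fan in the hyperspace, and this construction is the entire content of the hard implications. The paper's proof of $(3)\Ra(1)$ and $(4)\Ra(1)$ rests on two ingredients you would have to supply: Theorem~\ref{t:Sfan} (a $k^*$-metrizable space, resp.\ Tychonoff $\aleph$-space, which is not $k$-metrizable contains a $\ddot S^\w$-fan, resp.\ a strong one, strongness coming for free from Corollary~\ref{c:fan-in-aleph-space}), and, crucially, Lemma~\ref{l:Law-fan}, which converts a (strong) $\bar S^\w$-fan $(S_n)_{n\in\w}$ of $X$ into a (strong) $\Fin^\w$-fan of $\Law(X)$ by an explicit device: fix an auxiliary convergent sequence $\{x_k\}_{k\in\w}$ and take the finite sets $w_{n,m}=\{y_{n,m}\}\cup\{x_k:n\le k\le n+m\}$, whose growing tails force compact-finiteness (via Lemma~\ref{l:comp-un}: the union of a compact subset of $\Law(X)$ is compact in $X$), while the points $y_{n,m}$ drawn from the fan force non-local-finiteness at a doubleton $\{x,y\}$. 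Your heuristic about an ``anti-domination diagonal choice'' gestures at this but is not a proof; without this lemma the theorem is unproved.

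Second, your transfer step for $(4)$ does not merely need verification --- it invokes a principle that is false in general: a \emph{strong} $\Fin^\w$-fan in a closed subspace need not remain strong in the ambient space, since the witnessing $\IR$-open neighbourhoods need not extend so that their family stays compact-finite. This is exactly Example~\ref{Cld-not-hereditary} of the paper, and the available repairs (Propositions~\ref{p:Ck-embed1}, \ref{p:Ck-embed2}, Corollary~\ref{c:s-sfan1}) require $C_k$-embeddedness or stratifiability of the ambient space, none of which you have for $\Law(Y)\subset\Law(X)$ under hypothesis $(4)$. (Plain fans do pass through closed embeddings, so your scheme could be salvaged for $(3)$, modulo the missing construction.) The paper avoids the problem by never working inside $\Law(Y)$: the fan members lie there, but their $\IR$-open neighbourhoods are Vietoris sets $\langle V_n,U_n,\dots,U_{n+m}\rangle\cap\Law(X)$ built from $\IR$-open subsets of $X$ itself, produced from the functionally Hausdorff hypothesis via the coarser Tychonoff topology that $X$ inherits from $\beta X$. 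If you redo your construction with neighbourhoods living in $\Law(X)$ from the start, it essentially becomes Lemma~\ref{l:Law-fan}; as written, it does not close.
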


Topological groups and topological semilattices belong to the class of topological $I$-semigroups. Those are topological semigroups $X$ endowed with a continuous unary operation $*:X\to X$ such that $(x^*)^*=x$, $xx^*x=x$ for all $x\in X$. \index{topological $I$-semigroup}A topological $I$-semigroup is called a \index{topological inverse semigroup}\index{topological Clifford semigroup}{\em topological inverse semigroup} (resp. a {\em topological Clifford semigroup}) if $(xy)^*=y^*x^*$ (resp. $x^*x=xx^*$) for every $x,y\in X$. The following characterization generalizes some results proved in \cite{BGG}.

\begin{itheorem} Let $X$ be a $\mu$-complete Tychonoff $k_\IR$-space and $FS(X)$ be the free topological inverse semigroup (resp. free topological Clifford semigroup, free topological inverse Clifford semigroup, free topological inverse commutative semigroup) of $X$. Then the following conditions are equivalent:
\begin{enumerate}
\item[\textup{1)}] $X$ is a topological sum of cosmic $k_\w$-spaces.
\item[\textup{2)}] $FS(X)$ is a topological sum of cosmic $k_\w$-spaces.
\item[\textup{3)}] $X$ is a $\bar\aleph_k$-space and $FS(X)$ is a $k$-space.
\item[\textup{4)}] $X$ is a $\bar\aleph_k$-space and $FS(X)$ contains no strong $\Fin^\w$-fan.
\end{enumerate}
\end{itheorem}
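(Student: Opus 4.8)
The plan is to run the cycle $(1)\Ra(2)\Ra(3)\Ra(4)\Ra(1)$, following the pattern already used for free topological semigroups and semilattices (Theorems~\ref{t:Sem} and \ref{t:SL}), and to treat the four varieties of topological $I$-semigroups simultaneously. Each of the four constructions $FS(X)$ is the free algebra of $X$ in a variety of topological $I$-semigroups defined by identities, and the whole argument uses only three structural features of such free algebras, established in general in Chapters~\ref{ch:algebra} and \ref{ch:free}: the generating space $X$ embeds into $FS(X)$ as a closed, $\IR$-embedded subspace; every compact subset of $FS(X)$ is confined to finitely many word-lengths over a compact part of $X$; and $FS(K)$ is a cosmic $k_\w$-space whenever $K$ is. The first implication $(1)\Ra(2)$ is then the positive preservation statement that the free-algebra functor sends a topological sum $\bigoplus_i X_i$ of cosmic $k_\w$-spaces to a topological sum of cosmic $k_\w$-spaces; locally this is the third feature above (with $FS(K)=\bigcup_n FS_n(K)$ exhausted by a countable tower of compacta and inheriting a countable network from $K$), and globally it is the general decomposition theorem of Chapter~\ref{ch:free}.

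The links $(2)\Ra(3)$ and $(3)\Ra(4)$ are routine. If $FS(X)$ is a topological sum of cosmic $k_\w$-spaces then it is a $k$-space, since each cosmic $k_\w$-space is a $k$-space and the $k$-space property is preserved by topological sums; this gives the second half of $(3)$. The first half holds because $X$ is a closed subspace of $FS(X)$, hence a closed subspace of a topological sum of cosmic $k_\w$-spaces, and such sums are $\bar\aleph_k$-spaces (by the diagram of generalized metric spaces) while the class of $\bar\aleph_k$-spaces is closed under closed subspaces. Finally $(3)\Ra(4)$ is immediate from Proposition~\ref{k-no-Cld-fan}: a $k$-space contains no $\Fin$-fans and so, a fortiori, no strong $\Fin^\w$-fan, the $\bar\aleph_k$ hypothesis being common to $(3)$ and $(4)$.

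The substantial implication is $(4)\Ra(1)$. Assuming that $X$ is a $\mu$-complete $\bar\aleph_k$-$k_\IR$-space and that $FS(X)$ contains no strong $\Fin^\w$-fan, I would first descend to $X$ by showing that $X$ itself contains no strong $\Fin^\w$-fan, and then invoke the structure theorem of Chapter~\ref{ch:GT} which, for $\mu$-complete $\bar\aleph_k$-$k_\IR$-spaces, identifies the absence of a strong $\Fin^\w$-fan with being a topological sum of cosmic $k_\w$-spaces; this yields $(1)$. For the descent I argue that a strong $\Fin^\w$-fan $(F_n)_{n\in\w}$ in $X$ would induce one in $FS(X)$: because $X$ is closed in $FS(X)$, failure of local finiteness at a point of $X$ persists in $FS(X)$; and because $X$ is $\IR$-embedded while every compactum of $FS(X)$ meets $X$ in a compactum, the $\IR$-open neighborhoods witnessing the fan in $X$ extend to $\IR$-open neighborhoods in $FS(X)$ whose family remains compact-finite there.

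The main obstacle is precisely the last step: preserving strong (equivalently strict) compact-finiteness when the witnessing neighborhoods are pushed from $X$ up into the far larger space $FS(X)$. Controlling this requires the support and grading description of the compact subsets of a free $I$-semigroup --- that each such compactum lives in finitely many levels over a compact subset of $X$ --- so that compact-finiteness in $FS(X)$ is reduced to the compact-finiteness already granted in $X$. This is the one place where the detailed free-algebra machinery of Chapter~\ref{ch:free} is essential; the four varieties enter only through the routine verification that each of them supports this description, after which the argument is uniform.
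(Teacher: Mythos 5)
Your reductions $(2)\Ra(3)\Ra(4)$ are fine and agree with the paper (closedness of $\delta_X(X)$ in $FS(X)$ via Corollary~\ref{c:delta-closed}, then Proposition~\ref{k-no-Cld-fan}). The fatal problem is $(4)\Ra(1)$. The ``structure theorem of Chapter~\ref{ch:GT}'' you invoke --- that for $\mu$-complete $\bar\aleph_k$-$k_\IR$-spaces the absence of strong $\Fin^\w$-fans is equivalent to being a topological sum of cosmic $k_\w$-spaces --- does not exist and is false. Indeed, by Proposition~\ref{kR-no-Cld-fan} every Tychonoff $k_\IR$-space contains no strict $\Cld$-fan, hence no strict (equivalently, strong) $\Fin^\w$-fan; so for the spaces covered by this theorem the property you descend to $X$ holds \emph{automatically} and carries no information whatsoever. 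Concretely, the metric cofan $M=\{(0,0)\}\cup\{(1/n,1/(nm)):n,m\in\IN\}$ (or any non-locally compact Polish space) is a $\mu$-complete metrizable --- hence $\bar\aleph_k$-$k_\IR$ --- space without strong $\Fin^\w$-fans, yet it is not a topological sum of cosmic $k_\w$-spaces, since metrizable $k_\w$-spaces are locally compact. The real content of $(4)\Ra(1)$ is the opposite mechanism: one must \emph{manufacture} a strong $\Fin^\w$-fan in $FS(X)$ out of data in $X$ that is not a fan in $X$, namely out of a strong $D_\w$-cofan (which $M$ does possess). This is what the paper does: each of the four varieties contains a non-trivial inverse semigroup, some power of which is $x(s^*y_i)^{<\w}$-mixing (Lemmas~\ref{l:IS-mixing} and \ref{l:mixing*}), so $FS(X)$ is a topological algebra of type $x(s^*y_i)^{<\w}$ (Proposition~\ref{p:mixing->TA}); Lemma~\ref{l:xyy_w} (via Corollary~\ref{c:xyy_w}) then converts any strong $D_\w$-cofan of $X$ into a strong $\Fin^\w$-fan of $FS(X)$; thus condition (4) forbids strong $D_\w$-cofans in $X$, and Proposition~\ref{p:decomp1} combined with the $k_\IR$-property yields (1). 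This is Theorem~\ref{t:IS-gen}, packaged as Theorems~\ref{t:FK-IS} and \ref{t:IS+}. The ``main obstacle'' you single out --- pushing witnessing neighborhoods of fans from $X$ up into $FS(X)$ --- is a red herring: even granting that transfer in full, your argument proves nothing.

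Your $(1)\Ra(2)$ has a related defect: it appeals to a ``general decomposition theorem of Chapter~\ref{ch:free}'' that likewise does not exist, and cannot exist, because free functors do not preserve topological sums in general --- for free Abelian topological groups one gets \emph{products} (Proposition~\ref{p:product-abelian}, Theorem~\ref{t:FA}), and for free topological groups condition (1) must take an entirely different shape (Theorem~\ref{t:FG}). The implication is special to varieties containing idempotents: by $d$-stability and the presence of a non-trivial semilattice, the discrete two-point semilattice $\{0,1\}$ lies in the variety; given $a\in FS(X)$, choose a clopen cosmic $k_\w$-subspace $A\subset X$ containing $\supp(a)$ (a finite union of summands), and the homomorphism $\bar f:FS(X)\to\{0,1\}$ induced by $f:X\to\{0,1\}$ with $f^{-1}(1)=A$ exhibits $FS(A;X)=\bar f^{-1}(1)$ as a \emph{clopen} subset, which is a cosmic $k_\w$-space by Theorem~\ref{t:kw}. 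It is exactly this openness of the sets $FS(A;X)$ --- absent from your sketch, where they are only closed --- that makes $FS(X)$ split as a topological sum; see the proof of Theorem~\ref{t:i-magma}.
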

\smallskip

The $k$-space property in free topological linear spaces is studied in Section~\ref{s:Lin} where the following characterization is proved.

\begin{itheorem} For a Tychonoff $\mu$-complete $k_\IR$-space $X$ the following conditions are equivalent:
\begin{enumerate}
\item[\textup{1)}] $X$ is a cosmic $k_\w$-space.
\item[\textup{2)}] The free linear topological space $\Lin (X)$ of $X$ is a cosmic $k_\w$-space.
\item[\textup{3)}]  $X$ is an $\aleph_k$-space and $\Lin (X)$ is a $k$-space.
\item[\textup{4)}]  $X$ is an $\bar\aleph_k$-space and $\Lin (X)$ is Ascoli.
\item[\textup{5)}] $X$ is a $\bar\aleph_k$-space and $\Lin (X)$ contains no strong $\Fin^{\w_1}$-fan.
\item[\textup{6)}] $X$ is an $\aleph_k$-space and $\Lin (X)$ contains no $\Fin^{\w_1}$-fan.
\end{enumerate}
\end{itheorem}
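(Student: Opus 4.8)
The free linear topological space $\Lin(X)$ has as underlying vector space the free real vector space over the \emph{set} $X$ (its elements are the finitely supported formal combinations $\sum_{i=1}^n c_ix_i$, $x_i\in X$, $c_i\in\IR$), carrying the finest linear topology for which the canonical map $X\to\Lin(X)$ is continuous; under our hypotheses this map is a closed topological embedding, so $X$ is a closed subspace of $\Lin(X)$. The plan is to route all six conditions through~(1): I would prove $(1)\Leftrightarrow(2)$ directly, deduce $(1)\Ra(3)$, $(1)\Ra(4)$, $(3)\Ra(6)$ and $(4)\Ra(5)$ from the implication diagram of the Introduction, and concentrate the real work in the two converses $(6)\Ra(1)$ and $(5)\Ra(1)$, which close the two cycles $(1)\Ra(3)\Ra(6)\Ra(1)$ and $(1)\Ra(4)\Ra(5)\Ra(1)$.

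For $(1)\Ra(2)$ I would invoke the structure theory of free topologized algebras from Chapters~\ref{ch:algebra} and~\ref{ch:free}, specialized to the variety of linear topological spaces. Writing a cosmic $k_\w$-space $X$ as $\bigcup_n K_n$ with $K_n$ compact and $X$ carrying the resulting $k_\w$-topology, the compacta $L_n=\{\sum_{i=1}^n c_ix_i:\sum_i|c_i|\le n,\ x_i\in K_n\}$ are continuous images of $([-n,n]\times K_n)^n$, cover $\Lin(X)$, and (by the general machinery) induce the topology of $\Lin(X)$, so $\Lin(X)$ is a $k_\w$-space; taking finite rational-coefficient combinations of members of a countable network of $X$ produces a countable network of $\Lin(X)$, whence $\Lin(X)$ is a cosmic $k_\w$-space. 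The converse $(2)\Ra(1)$ is immediate, since $X$ is a closed subspace of $\Lin(X)$ and both cosmicity and the $k_\w$-property pass to closed subspaces. Because a cosmic $k_\w$-space is at once an $\aleph_k$-space, a $\bar\aleph_k$-space and a $k$-space, and because $k$-spaces are Ascoli, $(1)\Ra(2)$ already yields $(1)\Ra(3)$ and $(1)\Ra(4)$.

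The two descending steps are read off from the implication diagram of the Introduction: by Proposition~\ref{k-no-Cld-fan} a $k$-space contains no $\Fin$-fan, hence no $\Fin^{\w_1}$-fan, giving $(3)\Ra(6)$; and by Corollary~\ref{c:A->noFan} an Ascoli space contains no strict $\Cld$-fan, hence (since for $\Fin$-fans strictness and strongness coincide) no strong $\Fin$-fan and \emph{a fortiori} no strong $\Fin^{\w_1}$-fan, giving $(4)\Ra(5)$. This reduces the theorem to $(6)\Ra(1)$ and $(5)\Ra(1)$.

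Both converses I would establish by contraposition, and this is where the main obstacle lies. Assume $X$ is a Tychonoff $\mu$-complete $k_\IR$-space that is an $\aleph_k$-space (resp.\ a $\bar\aleph_k$-space) but is \emph{not} a cosmic $k_\w$-space; the task is to exhibit a $\Fin^{\w_1}$-fan (resp.\ a \emph{strong} $\Fin^{\w_1}$-fan) in $\Lin(X)$. The strategy is, first, to convert the failure of the cosmic $k_\w$-property into a concrete $\omega_1$-sized witness supplied by the generalized-metric analysis of Chapters~\ref{ch:algebra}--\ref{ch:free}---typically an uncountable compact-finite but not locally finite family in $X$, or a closed copy of a canonical non-$k_\w$ space such as the sequence fan $V_{\w_1}$---and, second, to transport it into $\Lin(X)$, possibly only after passing to difference elements $x_\alpha-a_n$ living in the degree-$\ge 2$ part. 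Verifying compact-finiteness of the transported family requires the structural description of compact subsets of $\Lin(X)$ (each lies in some $L_n$, i.e.\ has bounded length and bounded coefficients over a compact subset of $X$), which forces a family coming from a compact-finite family in $X$ to meet every compactum of $\Lin(X)$ finitely, while non-local-finiteness is arranged at $0$ or at the chosen accumulation point. The genuinely hard point is the \emph{strong} refinement needed for $(5)\Ra(1)$: one must cover the sets $F_\alpha$ by $\IR$-open neighborhoods whose family is still compact-finite, and it is precisely here that the \emph{closed} compact-countable $k$-network of the $\bar\aleph_k$-space $X$, together with $\mu$-completeness, the $k_\IR$-property and Tychonoff separation, is used to produce the separating continuous functions on $\Lin(X)$ via the extension machinery of the earlier chapters; in the weaker $\aleph_k$-setting of $(6)$ only a plain fan is demanded, so no such $\IR$-open envelopes are needed. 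The resulting fan contradicts $(6)$ (resp.\ $(5)$), completing both converses and hence the whole cycle.
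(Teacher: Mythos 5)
Your outer skeleton ($(1)\Leftrightarrow(2)$ via the $k_\w$-structure of free objects and the closed embedding $\delta_X:X\to\Lin(X)$, the downward implications read off the fan diagram, all real work in $(6)\Ra(1)$ and $(5)\Ra(1)$) agrees with the paper, and your argument for $(1)\Leftrightarrow(2)$ is essentially Theorem~\ref{t:kw}. The gap is in the two converses. You plan a contraposition that converts ``$X$ is not a cosmic $k_\w$-space'' into an $\w_1$-sized witness \emph{inside} $X$ --- ``an uncountable compact-finite but not locally finite family in $X$, or a closed copy of $V_{\w_1}$'' --- and then transports it into $\Lin(X)$. But such witnesses need not exist: take $X=\mathbb{Q}$ or the metric cofan $M$. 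These are separable metrizable, hence $\mu$-complete Tychonoff $\bar\aleph_k$-$k_\IR$-spaces satisfying all the hypotheses on $X$ in (5) and (6), and they are not cosmic $k_\w$-spaces; yet, being metrizable, they contain no $\Fin$-fans at all (metrizable $\Ra$ $k$-space $\Ra$ no $\Cld$-fan), no uncountable compact-finite non-locally-finite families, and no closed copies of $V_{\w_1}$. For such $X$ the obstruction that the theorem promises in $\Lin(X)$ is a \emph{countable} strong $\Fin^\w$-fan (hence, after padding, a strong $\Fin^{\w_1}$-fan), manufactured not from any fan in $X$ but from a $D_\w$-cofan of $X$ --- which exists precisely because $X$ fails local compactness --- using the linear operations, via elements of the form $\delta_X(x)+\sum_{i\le n}\bigl(\delta_X(y_i)-\delta_X(s)\bigr)$; this is Lemma~\ref{l:xyy_w} applied through Lemma~\ref{l:V-algebra}, and it needs the strong boundedness of $\Lin$ to verify compact-finiteness. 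Your phrase about ``difference elements $x_\alpha-a_n$'' gestures toward this, but your named witnesses leave the argument with nothing to work from in exactly the hardest (metrizable, non-locally-compact) case, so the contraposition as described fails.

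The paper's converses accordingly do not argue by contraposition from ``not cosmic $k_\w$'' at all; they run through a positive structure theorem. Since $\Lin(X)$ is a topological algebra of types $x{\cdot}y$, $x(s^*y_i)^{<\w}$ and $(x^*y)(z^*s)$ (Lemma~\ref{l:V-algebra}), the absence of a (strong) $\Fin^{\w_1}$-fan --- hence of a (strong) $\Fin^\w$-fan --- in $\Lin(X)$ forces the absence of a (strong) $D_\w$-cofan in $X$, and then Theorem~\ref{t:Dcofan} shows that ($X$ in the $\bar\aleph_k$ case, the $k$-modification of $X$ in the $\aleph_k$ case) is a topological sum $K\oplus D$ of a cosmic $k_\w$-space and a discrete space; this is Corollary~\ref{c:xy.zs+xsy_w}. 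The $\w_1$-level hypotheses enter only afterwards, to bound the uncountable data: Lemma~\ref{l:xy.zs} caps the number of non-discrete summands, and Lemma~\ref{l:V-discrete} shows $D$ is countable, because an uncountable clopen discrete $D\subset X$ is strongly compact-finite and generates a strong $\Fin^{\w_1}$-fan in $\Lin(X)$ from the measures $\tfrac1n\bigl(\delta_X(x_\alpha)+\delta_X(x_\beta)\bigr)$ indexed by an almost disjoint family. Also note that where you locate the strong/plain distinction is off: the $\IR$-open envelopes in the functor-space come from lower semicontinuity of the support map (Lemma~\ref{l:supp-Ropen}, $\II$-regularity) together with Lemma~\ref{l:cont-supp}, not from extending functions off a closed $k$-network of $X$; the $\bar\aleph_k$ versus $\aleph_k$ hypothesis is instead what decides whether the structure theorem applies to $X$ itself or only to its $k$-modification. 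Without these two steps --- the $D_\w$-cofan mechanism and the two $\w_1$-bounding lemmas --- the implications $(5)\Ra(1)$ and $(6)\Ra(1)$ remain unproved.
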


Finally, we discuss a characterization of topological spaces $X$ whose free locally convex space $\Lc(X)$ is a $k$-space. In \cite{Gab2} Gabriyelyan  proved that a Tychonoff space $X$ is discrete and countable if and only if its free locally convex space $L(X)$ is a $k$-space. The following theorem generalizes this characterization of Gabriyelyan and partly answers Question 3.6 in\cite{Gab14} and Question 6.9 in \cite{GKP}.

\begin{itheorem} For a Tychonoff space $X$ the following conditions are equivalent:
\begin{enumerate}
\item[\textup{1)}] $X$ is discrete and countable.
\item[\textup{2)}] The free locally convex space $\Lc(X)$ is a cosmic $k_\w$-space.
\item[\textup{3)}] $\Lc(X)$ is $k$-space.
\item[\textup{4)}] $\Lc(X)$ and $X$ are Ascoli.
\item[\textup{5)}] $X$ is $\mu$-complete and $\Lc(X)$ contains no strict $\Cld$-fan.
\item[\textup{6)}] $X$ is $\mu$-complete and contains no $\Clop$-fan and $\Lc(X)$ contains no strict $\Cld^\w$-fan.
\end{enumerate}
\end{itheorem}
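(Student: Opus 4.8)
The plan is to prove the cycle $(1)\Ra(2)\Ra(3)\Ra(4)\Ra(5)\Ra(6)\Ra(1)$, with almost all of the substance in the last step. Throughout I rely on the standard description of $\Lc(X)$: its points are the finitely supported measures $\mu=\sum_i a_i\delta_{x_i}$ on $X$, the assignment $x\mapsto\delta_x$ embeds $X$ as a closed, linearly independent subset, and every continuous map from $X$ into a locally convex space extends uniquely to a continuous linear operator on $\Lc(X)$; in particular each $f\in C(X)$ extends to a continuous linear functional $\tilde f$ on $\Lc(X)$. The one technical fact driving everything is the characterization of the compact subsets of $\Lc(X)$: a compact $K\subset\Lc(X)$ has uniformly bounded total variation and all its supports inside a fixed bounded subset of $X$, which has compact closure once $X$ is $\mu$-complete. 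I would isolate this as a preliminary lemma, since it is exactly what transfers compact-finiteness between $X$ and $\Lc(X)$ (for compact $K\subset\Lc(X)$ the trace $K\cap X$ is compact, and conversely a family indexed by supports in a compact $C\subset X$ inherits finiteness from $C$).

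The downward implications are quick. For $(1)\Ra(2)$ one identifies $\Lc(X)$ over a countable discrete $X$ with the countable locally convex direct sum $\bigoplus_{n\in\w}\IR$, which is a cosmic $k_\w$-space (the inductive limit of the subspaces $\IR^n$, with a countable network). Then $(2)\Ra(3)$ is $k_\w\Ra k$, and for $(3)\Ra(4)$ a $k$-space is Ascoli, while $X$, being closed in the $k$-space $\Lc(X)$, is itself a $k$-space and hence Ascoli. For the middle steps I use Corollary~\ref{c:A->noFan} together with the transfer lemma. In $(4)\Ra(5)$ the Ascoli property of $\Lc(X)$ already gives no strict $\Cld$-fan, and $\mu$-completeness of $X$ follows by contraposition: a bounded closed non-compact $B\subset X$ yields, via measures supported on $B$ with growing mass, a strict $\Cld$-fan in $\Lc(X)$. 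In $(5)\Ra(6)$ only the absence of $\Clop$-fans in $X$ must be checked; a $\Clop$-fan $(C_\alpha)$ lifts to a strict $\Cld$-fan in $\Lc(X)$, using that the $C_\alpha$ stay closed and compact-finite and that the linear extensions of the clopen indicators give functional neighborhoods $U_\alpha=\{\mu:\widetilde{\chi_{C_\alpha}}(\mu)>1/2\}$, compact-finite because a compact set has supports in a compact $C\subset X$ meeting only finitely many $C_\alpha$.

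The heart is $(6)\Ra(1)$, which I route through the earlier characterization of discreteness (the first Theorem of the Introduction): since $\Clop$-fans in $X$ are excluded by hypothesis, it suffices to show $X$ is $k$-discrete and carries no strict $\Cld^\w$-fan, the latter descending from $\Lc(X)$ to the closed subspace $X$ by the transfer lemma. For $k$-discreteness I argue by contradiction: an infinite compact $K\subset X$ has a non-isolated point, and I build a countable family $(F_n)$ of closed sets in $\Lc(X)$ from finitely supported measures spread along $K$ near that point, with vanishing mass but $n$ support points per element, so that it is compact-finite yet not locally finite at $0$. Once $X$ is discrete, $\Lc(X)=\bigoplus_{x\in X}\IR$, and if $X$ were uncountable I exhibit the explicit strict $\Cld^\w$-fan $F_n=\{\tfrac1{n^2}(\delta_{x_1}+\dots+\delta_{x_n}):x_1,\dots,x_n\in X\text{ distinct}\}$: it is compact-finite because each element occupies $n$ coordinates, while it fails to be locally finite at $0$ since any assignment of positive weights $(\e_x)_{x\in X}$ has an uncountable level set on which the weights are bounded below—this is precisely where uncountability is used. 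Hence $X$ is countable, giving $(1)$.

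The main obstacle is the verification of strictness in these constructions, that is, producing around each $F_n$ an honest functional neighborhood in $\Lc(X)$ whose family remains compact-finite; this is where the compact-support characterization and the $\mu$-completeness of $X$ do the real work, and where transferring non-indicator functional neighborhoods from $X$ up to $\Lc(X)$ (needed for the descent of the $\Cld^\w$-fan property in $(6)\Ra(1)$) demands the most care.
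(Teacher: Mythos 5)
Your overall skeleton --- the cycle, the transfer of strict fans between $X$ and $\Lc(X)$ via the compact-support characterization and the extension operator, and the reduction of discreteness to Theorem~\ref{t:disc-char} --- matches the paper's strategy, but the $k$-discreteness step inside $(6)\Ra(1)$ has a genuine gap, and it is exactly where the paper has to work hardest. First, an infinite compact set $K\subset X$ need not contain any nontrivial convergent sequence (take $K=\beta\w$), so there is no way to choose ``measures spread along $K$ near a non-isolated point'' that accumulate at $0$: in $\Lc(X)$ the accumulation exploited in Lemma~\ref{l:L-Fan-in-sequence} comes from the measures $\nu_{n,m}=n(\delta_{x_m}-\delta_x)$, which tend to $0$ as $m\to\infty$ only because $x_m\to x$ and compact subsets of the Banach space $C_k(\bar S)$ are bounded and equicontinuous; without a convergent sequence nothing converges. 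Second, your proposed combination is internally inconsistent: measures of vanishing total mass supported in the fixed compact $K$ converge to $0$ in $\Lc(X)$ (since $\sup_{f\in Q}|\mu(f)|\le\|\mu\|\cdot\sup_{f\in Q}\|f\|\to 0$ for every compact $Q\subset C_k(K)$), so picking one element from each $F_n$ yields a convergent sequence whose union with $\{0\}$ is a compact set meeting every $F_n$ --- the family cannot be compact-finite, no matter how many support points each element has. In $\Lc$, compact-finiteness is bought by \emph{growing} norms (Lemma~\ref{l:L-b}) and non-local-finiteness by convergence of the supporting points; small mass gives you neither simultaneously. The paper escapes this trap by a quotient mechanism for which you have no substitute: in Theorem~\ref{t:discrete2} the infinite compactum is mapped onto an infinite \emph{metrizable} compactum, the fibers are collapsed by a compact quotient map $q$, the fan is built in $\Lc(X/E)$ (Lemma~\ref{l:L-Fan-in-sequence} together with Corollaries~\ref{c:remain1} and \ref{c:remain2}), and is then pulled back to $\Lc(X)$ along $\Lc q$ using that $\Lc$ is quotient-to-open (Proposition~\ref{p:L-prop}) and Proposition~\ref{p:quot-fan}.

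The step $(4)\Ra(5)$ is also unproved. From a closed bounded non-compact $B\subset X$ you propose measures ``with growing mass'' on $B$, but growing mass only gives compact-finiteness (again Lemma~\ref{l:L-b}); such measures escape to infinity, and there is no reason the family should fail to be locally finite, so you get no fan and no contradiction with the Ascoli property. The paper never proves ``Ascoli $\Ra$ $\mu$-complete'' by any such direct argument: in Theorem~\ref{t:Lc} $\mu$-completeness is carried as an explicit hypothesis in all the fan conditions, and the Ascoli conditions are tied back to countable discreteness through the $k$-space equivalences imported from Gabriyelyan \cite{Gab14}; if you insist on your cycle you need a genuinely new idea at this point, or you must restructure the implications so that $\mu$-completeness is never extracted from the Ascoli hypothesis alone. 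On the positive side, your explicit fan $F_n=\bigl\{n^{-2}(\delta_{x_1}+\dots+\delta_{x_n}):x_1,\dots,x_n\ \text{distinct}\bigr\}$ for uncountable discrete $X$ is sound: each $F_n$ is closed, the family is compact-finite because compact subsets of the locally convex direct sum lie in finite-dimensional subspaces, it is not locally finite at $0$ by your pigeonhole argument, and it can be made strict using the continuous functions $\mu\mapsto\sum_{x}\min\bigl(1,(2n^2|\mu(x)|-1)^+\bigr)$. This is worth keeping: condition $(6)$ of the statement forbids only \emph{countable} strict $\Cld^\w$-fans, whereas the paper's own Lemma~\ref{l:L-discrete} produces an $\w_1$-indexed $\Fin$-fan and thus covers the uncountable discrete case only through the stronger hypotheses of Theorem~\ref{t:Lc}.
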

\medskip

In fact, the results on the $k$-space property in free objects are corollaries of more general results of four various levels of generality. The most general (fourth) level is considered in Chapters~\ref{ch:functor} and \ref{ch:algebra} where we study functors with finite supports in some categories of topological spaces. Culmination results are obtained in Section~\ref{s:monad} where we construct (strong) $\Fin$-fans in functor-spaces $FX$ of monadic functors $F$. A typical result is the following theorem unifying some results proved in Theorems~\ref{t:F-xy}, \ref{t:Functor2} and \ref{t:Functor3}.

\begin{itheorem} Assume that a functor $F:\Top_{3\frac12}\to\Top_{3\frac12}$ in the category of Tychonoff spaces is monomorphic, bounded, $\II$-regular, has finite supports and can be completed to a monad $(F,\delta,\mu)$ that preserves disjoint supports. Let $X$ be a $\mu$-complete Tychonoff $\aleph$-$k_\IR$-space whose functor-space $FX$ contains no strong $\Fin^\w$-fans.
\begin{enumerate}
\item[\textup{1)}] If $F(n)\ne F_1(n)$ for some $n\in\w$, then $X$ is either metrizable or a topological sum of cosmic $k_\w$-spaces.
\item[\textup{2)}] If the functor $F$ does not preserve preimages, then $X$ is either metrizable with compact set $X'$ of non-isolated points or $X$ is a topological sum of a cosmic $k_\w$-space and a discrete space.
\item[\textup{3)}] If $F$ strongly fails to preserve preimages, then $X$ is either discrete or a cosmic $k_\w$-space.
\end{enumerate}
\end{itheorem}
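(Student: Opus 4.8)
The plan is to obtain the three items as the specializations of the structural results cited in the statement, Theorems~\ref{t:F-xy}, \ref{t:Functor2} and \ref{t:Functor3}, organized around a single contrapositive scheme. Fix a functor $F$ subject to the listed hypotheses and a space $X$ as described, and suppose towards a contradiction that $X$ fails the dichotomy asserted in the relevant item. In every case the objective is to exhibit a strong $\Fin^\w$-fan inside the functor-space $FX$, contradicting the standing assumption on $FX$. By Proposition~\ref{p:Fin-fan-char} (with $\lambda=\w$), producing such a fan is equivalent to producing a countable, non-closed, \emph{strongly} compact-finite subset of $FX$; this is the concrete object I would construct by hand.

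The engine that converts topological defects of $X$ into fans in $FX$ is the monadic structure together with the four algebraic hypotheses. Monomorphy and finite supports let me regard each element of $FX$ as a finitely supported ``word'' over $X$ and to control the supports of multiplication images $\mu$; $\II$-regularity and boundedness guarantee that the unit and multiplication maps from finite powers of $X$, and from $FX$ itself, land continuously in $FX$, so that the gadgets assembled from words are genuine points of $FX$. The decisive ingredient is that the monad preserves disjoint supports: this forces the supports of points of any compact $K\subset FX$ to be confined to a single compact subset of $X$, which is precisely what makes a family of words built over a compact-finite family in $X$ itself compact-finite in $FX$. The degree to which $F$ fails to preserve preimages then dictates how economical the defect in $X$ may be: the presence of an operation of arity at least two (item (1), signalled by $F(n)\ne F_1(n)$) already amplifies a modest defect, whereas the stronger failure (item (2)) and the strong failure (item (3)) amplify progressively weaker defects, yielding the progressively sharper conclusions.

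To locate the defect I would exploit the generalized-metric structure of $X$. As a $\mu$-complete Tychonoff $\aleph$-$k_\IR$-space, $X$ carries a $\sigma$-locally-finite $k$-network, and the $k_\IR$ and $\mu$-completeness hypotheses let me split $X$ along its locally compact part. The dichotomy to run is: either $X$ is metrizable or a topological sum of cosmic $k_\w$-spaces (respectively the sharper alternatives in items (2) and (3)), or $X$ contains a concrete test configuration, roughly a non-trivial convergent sequence attached to an uncountable discrete family, or a point of non-local-compactness accumulated by a copy of the sequence fan $V_{\w_1}$. In the first branch there is nothing to prove; in the second I feed the configuration into the functor engine of the previous paragraph to manufacture the forbidden strong $\Fin^\w$-fan in $FX$. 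Matching each failure level of $F$ to the correspondingly weaker test configuration is exactly what aligns items (1)--(3) with the three cited theorems.

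I expect the main obstacle to be the compact-finiteness half of the construction. Non-local-finiteness of the resulting family is the easy direction, as it merely reflects the non-local-compactness defect of $X$ and can be read off from the support data. Compact-finiteness, by contrast, requires showing that an arbitrary compact $K\subset FX$ meets only finitely many fan members, and here the monad axioms must be used quantitatively: one bounds the supports of points of $K$ by a compact subset of $X$ via preservation of disjoint supports, and then closes the estimate using compact-finiteness of the underlying family in $X$ together with finiteness of supports. Upgrading ``compact-finite'' to ``strongly compact-finite'', that is, equipping each fan member with an $\IR$-open neighbourhood that keeps the whole family compact-finite, is the delicate final step; this is where $\II$-regularity and boundedness do the real work, since they allow me to transport the $\IR$-open separating functions available on the Tychonoff space $X$ into $\IR$-open neighbourhoods of the corresponding words in $FX$.
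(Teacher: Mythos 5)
Your overall architecture --- argue by contrapositive, convert a structural defect of $X$ into a strong $\Fin^\w$-fan in $FX$, and use the $\aleph$-space/$k_\IR$/$\mu$-completeness hypotheses to show that failure of each dichotomy yields such a defect --- is indeed the paper's strategy (Proposition~\ref{p:F->structure} feeding Lemmas~\ref{l:type-xy}, \ref{l:xyz}, \ref{l:xy.zs}, \ref{l:long}, then Theorems~\ref{t:Sfan}, \ref{t:Dcofan}, \ref{t:Ssemifan}). But your plan misassigns the roles of the key hypotheses in a way that would block execution. You claim that preservation of disjoint supports is what confines the supports of a compact $K\subset FX$ to a compact subset of $X$; that is the job of \emph{boundedness} of $F$ (Proposition~\ref{p:F-bounded}), upgraded via $\mu$-completeness and packaged in Lemma~\ref{l:cont-supp}. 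The actual role of disjoint-support preservation is entirely different and indispensable: when the canonical operations are composed through the monad multiplication (e.g.\ $\dot a_{FX}=\mu_X\circ a_{FX}$ evaluated on $\delta_X(x)$ and $a_X(\vec c,y,z)$ with pairwise disjoint supports), it yields the \emph{lower} bounds $\supp\big(p(x,y,\dots)\big)\supset\{x,y,\dots\}$ required in Definitions~\ref{d:type-xy}--\ref{d:long}. Without these lower bounds your compact-finiteness argument does not close: knowing $K\subset F(B;X)$ for a compact $B$ (Theorem~\ref{t:supp}) only helps if a fan member meeting $K$ must have support meeting the sets of the underlying compact-finite family in $X$. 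In your plan the disjoint-supports hypothesis is left with nothing to do, and the step it is needed for is unjustified. (Likewise, $\II$-regularity does not give continuity of the unit or multiplication; its role is lower semicontinuity of the support map, Lemma~\ref{l:supp-Ropen}, which supplies the $\IR$-open neighbourhoods making fans strong --- as your final sentence correctly guesses --- while continuity of the operations must come from continuity of the functor, an assumption Proposition~\ref{p:F->structure} makes explicitly.)

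The more serious gap is that you give no mechanism converting ``does not preserve preimages'' and ``strongly fails to preserve preimages'' into usable data, so the three different conclusions in items (1)--(3) are asserted (``progressively weaker defects'') rather than derived. The paper's mechanism is concrete: failure to preserve preimages yields, by Corollary~\ref{c:F-preim}, an element $a\in F(n+1)\setminus F_n(n+1)$ with $\supp(Fr^{n+1}_n(a))\subset n-1$ (resp.\ $\subset n-2$ in the strong case, Definition~\ref{d:stong-preim}), and pushing $a$ through the monad multiplication produces operations with degeneration identities such as $p(x,y,y)=p(x,x,x)$, $p(x,x,z,z)=p(z,z,z,z)$, or $p(x,y,s)=p(s,s,s)$. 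These identities are the whole point: they let the constructed family in $FX$ accumulate at a single point $p(x,\dots,x)$ even when it is built from an $S^\w$-semifan whose limit points do not converge (Lemma~\ref{l:xyz}) or from just a convergent sequence plus an uncountable compact-finite set (Lemma~\ref{l:long}), which is exactly why item (2) tolerates ``metrizable with compact $X'$'' while item (3) forces ``discrete or cosmic $k_\w$''. Relatedly, your remark that non-local-finiteness ``can be read off from the support data'' is backwards: support data (upper bounds from boundedness, lower bounds from disjoint supports) governs compact-finiteness, whereas non-local-finiteness at the limit point is precisely where the continuity of the operations together with these degeneration identities is used.
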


 These general results on functors are applied in Chapter~\ref{ch:free} devoted to studying free universal algebras in varieties of topologized universal algebras. These general results (of the third level) are then applied in the final Chapter~\ref{ch:magma} to construct $\Fin$-fans in free objects in varieties of topological magmas (= topological spaces with a continuous binary operation) and paratopological  $*$-magmas (= topological spaces with an unary operation and a  continuous binary operation). Finally, the general results (of second generality level) on free objects in varieties of topological magmas and paratopological $*$-magmas and applied to free universal algebras in concrete varieties (topological semigroups, [para]topological (abelian) groups, topological semilattices, (locally convex) linear topological spaces, etc.).

\chapter{Some notation and general tools}

In this chapter we discuss some notation and general tools that will be used throughout the paper.

\section{Some standard notations} By $\w$ we denote the set of finite ordinals and by $\IN=\w\setminus\{0\}$ the set of natural numbers; $\IR$ denotes the real line, $\IR_+$ the closed half-line $[0,\infty)$ and $\II$ denotes the closed unit interval $[0,1]$. Each finite ordinal $n\in\w$ is identified with the set $\{0,\dots,n-1\}$ of smaller ordinals.

For a set $X$ by $[X]^{<\w}$ we denote the family of all finite subsets of $X$.  On the other hand, $X^{<\w}$ denotes the set $\bigcup_{n\in\w}X^n$ of finite sequences of points of the set $X$. 

For a subset $A$ of a topological space $X$ its closure in $X$ is denoted by $\bar A$. A subset $A$ of a topological space is called \index{subset!clopen}{\em clopen} if it is closed and open in $X$. A topological space is \index{topological space!zero-dimensional}{\em zero-dimensional} if clopen subsets form a base of the topology of $X$.
A topological space $X$ is {\em Tychonoff} if $X$ is homeomorphic to a subspace of a Tychonoff cube $\II^\kappa$.

For a topological space $X$ by $C_k(X)$ we denote the space of continuous real-valued functions on $X$, endowed with the compact-open topology, see \cite[\S3.4]{En} for more details.

A countable subset $S$ of a topological space $X$ is called \index{convergent sequence} {\em a convergent sequence} if its closure $\bar S$ in $X$ is compact and has a unique non-isolated point $x$, which is called the {\em limit} of $S$ and is denoted by $\lim S$. A standard example of a convergent sequence is the space $\w+1=\w\cup\{\w\}$ endowed with the order topology.

We shall say that a sequence $(A_n)_{n\in\w}$ of subsets of a topological space $X$ {\em converges} to a point $x\in X$ if each neighborhood $O_x\subset X$ of $x$ contains all but finite many sets $A_n$, $n\in\w$. In this case the point $x$ is called the {\em limit} of the sequence $(A_n)_{n\in\w}$.

We say that a subset $A$ of a topological space $X$ is \index{subset!sequentially compact}{\em sequentially compact} in $X$ if every sequence $\{a_n\}_{n\in\w}\subset A$ contains a subsequence $(a_{n_k})_{k\in\w}$ that converges in $X$.

\section{Almost disjoint families of sets}

An indexed family of sets $(A_\alpha)_{\alpha\in\lambda}$ is called \index{family of sets!almost disjoint}{\em almost disjoint} if for any distinct indices $\alpha,\beta$ the intersection $A_\alpha\cap A_\beta$ is finite.
It is well-known \cite[II.1.3]{Kunen} that for any  infinite set $X$ there exists an almost disjoint family $(A_\alpha)_{\alpha\in\mathfrak c}$ consisting of continuum many infinite subsets of $X$.

We shall often exploit the following property of almost disjoint families on countable sets.

\begin{lemma}\label{l:ad} For any almost disjoint family $(A_\alpha)_{\alpha\in\w_1}$ of infinite subsets of a countable set $X$ and any finite subset $F\subset X$ the set $\Lambda=\{(\alpha,\beta)\in\w_1\times\w_1:\alpha\ne\beta$ and $A_\alpha\cap A_\beta\not\subset F\}$ is uncountable.
 \end{lemma}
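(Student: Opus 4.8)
The plan is to argue by contradiction, using nothing more than the countability of $X$. Suppose, contrary to the claim, that the set $\Lambda$ is at most countable. First I would collect into a single set
$$S=\{\alpha\in\w_1:\exists\beta\ (\alpha,\beta)\in\Lambda\ \text{or}\ (\beta,\alpha)\in\Lambda\}$$
all ordinals occurring as a coordinate of some pair in $\Lambda$. Since each element of $\Lambda$ contributes at most two ordinals to $S$, the assumed countability of $\Lambda$ gives that $S$ is at most countable, and hence the complement $\w_1\setminus S$ is uncountable. By the very choice of $S$, for any two distinct ordinals $\alpha,\beta\in\w_1\setminus S$ the pair $(\alpha,\beta)$ cannot lie in $\Lambda$; recalling the definition of $\Lambda$ and that $\alpha\ne\beta$, this forces $A_\alpha\cap A_\beta\subset F$.

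The second step is to turn this co-countable subfamily into an honest pairwise disjoint family. For each $\alpha\in\w_1\setminus S$ put $B_\alpha=A_\alpha\setminus F$. Since $A_\alpha$ is infinite and $F$ is finite, every $B_\alpha$ is infinite, and in particular nonempty. For distinct $\alpha,\beta\in\w_1\setminus S$ the inclusion just obtained yields
$$B_\alpha\cap B_\beta=(A_\alpha\cap A_\beta)\setminus F=\emptyset.$$
Thus $(B_\alpha)_{\alpha\in\w_1\setminus S}$ is an uncountable family of pairwise disjoint nonempty subsets of $X$; choosing a point $x_\alpha\in B_\alpha$ for each $\alpha$ produces an injection of the uncountable set $\w_1\setminus S$ into $X$, which contradicts the countability of $X$. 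This contradiction shows that $\Lambda$ is uncountable.

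I do not expect a serious obstacle here: the argument is a clean counting contradiction, and the only point requiring a little care is the reduction from "few bad pairs" to a pairwise disjoint subfamily, which is precisely where deleting the finite set $F$ is needed. It is perhaps worth remarking that the almost disjointness hypothesis is not actually used anywhere in this proof — the conclusion holds for an arbitrary $\w_1$-indexed family of infinite subsets of a countable set, the real content being simply that a countable set admits no uncountable family of pairwise disjoint nonempty subsets. If one preferred a direct (non-contradiction) formulation, the same idea shows that for \emph{every} countable $T\subset\w_1$ there exist distinct $\alpha,\beta\in\w_1\setminus T$ with $A_\alpha\cap A_\beta\not\subset F$, from which uncountability of $\Lambda$ follows immediately.
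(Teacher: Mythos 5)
Your proof is correct and follows essentially the same route as the paper: assume $\Lambda$ is countable, pass to a co-countable set of indices outside which all pairwise intersections lie in $F$, and derive a contradiction from the resulting uncountable disjoint family $(A_\alpha\setminus F)$ of nonempty subsets of the countable set $X$. Your side remark that almost disjointness is never actually used is also accurate — the paper's own proof does not use it either.
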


\begin{proof} Assuming that $\Lambda$ is countable, we can find a countable subset $C\subset\w_1$ such that $\Lambda\subset C\times C$ and conclude that $(A_\alpha\setminus F)_{\w_1\setminus C}$ is an uncountable disjoint family of infinite subsets of $X$, which is not possible.
\end{proof}

\section{Functionally Hausdorff spaces}

A topological space $X$ is called \index{topological space!functionally Hausdorff}{\em functionally Hausdorff} if continuous functions  into the unit interval $\II=[0,1]$ separate points of $X$.

Given a topological space $X$ let $C(X,\II)$ denote the family of all continuous maps from $X$ to the unit interval $\II$. Consider the Tychonoff cube $\II^{C(X,\II)}$ and the map $\delta:X\to \II^{C(X,\II)}$ assigning to each point $x\in X$ the functional  $\delta_x:C(X,\II)\to\II$, $\delta_x:f\mapsto f(x)$. The closure $\beta X$ of the set $\delta(X)$ in $\II^{C(X,\II)}$ is called the \index{Stone-\v Cech compactification}{\em Stone-\v Cech compactification} of $X$ and the map $\delta:X\to \beta X$ is called the {\em canonical map} of $X$ into its Stone-\v Cech compactification. Observe that a topological space $X$ is functionally Hausdorff if and only if the canonical map $\delta:X\to\beta X$ is injective.

The Stone-\v Cech compactification can be used to prove the following extension property of functionally Hausdorff spaces.

\begin{lemma}\label{l:fH-ext} Any continuous map $f:K\to\IR$ defined on a compact subspace $K$ of a functionally Hausdorff space $X$ can be extended to a continuous map $\bar f :X\to\IR$.
\end{lemma}

\begin{proof} Consider the injective canonical map $\delta:X\to\beta X$ and observe that $e=\delta|K$ is a homeomorphism of the compact set $K$ onto the closed subset $\delta(K)$ of the compact Hausdorff space $\beta(X)$. By the normality of $\beta(X)$, the continuous map $g:\delta(K)\to\IR$, $g:y\mapsto f(e^{-1}(y))$, extends to a continuous map $\bar g:\beta(X)\to\IR$. Then the map $\bar f=\bar g\circ \delta:X\to\IR$ is  the required continuous extension of $f$.
\end{proof}

Sometimes we shall need the following lemma.

\begin{lemma}\label{l:fH-inj} For any continuous map $f:X\to \IR$ on a functionally Hausdorff space $X$ and any countable set $A\subset X$ there is a map $g:X\to [0,1]$ such that for the map $h=f+g:X\to\IR$ the restriction $h|A$ is injective.
\end{lemma}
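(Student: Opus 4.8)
The plan is to enumerate $A=\{a_n:n\in\w\}$ (assuming, without loss of generality, that $A$ is infinite; the finite case only shortens the argument) and to construct $g$ as a uniformly convergent series $g=\sum_{n\ge 1}g_n$ of continuous functions $g_n\colon X\to[0,2^{-n}]$. The role of the $n$-th summand will be to move the value of $h$ at $a_n$ away from its values at the earlier points $a_0,\dots,a_{n-1}$, while leaving those earlier values untouched. Since $\sum_{n\ge1}2^{-n}=1$, the resulting $g$ takes values in $[0,1]$, and the uniform estimate guarantees its continuity, so that $h=f+g$ is a well-defined continuous function; separating $a_n$ from all earlier points for every $n$ then yields injectivity of $h|A$.

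The key construction, carried out by induction on $n$, produces for each $n\ge1$ a continuous function $\phi_n\colon X\to[0,1]$ with $\phi_n(a_i)=0$ for all $i<n$ and $\phi_n(a_n)=1$. Here functional Hausdorffness enters: for each $i<n$ there is a continuous $f_i\colon X\to\II$ with $f_i(a_i)\ne f_i(a_n)$, and after an affine rescaling followed by truncation into $[0,1]$ I may assume $f_i(a_i)=0$ and $f_i(a_n)=1$; the finite product $\phi_n=\prod_{i<n}f_i$ then has the required values. Writing $h_{n-1}=f+\sum_{1\le m<n}g_m$ for the partial sum already constructed, I set $g_n=c_n\phi_n$ and choose the constant $c_n\in[0,2^{-n}]$ so that $h_{n-1}(a_n)+c_n\ne h_{n-1}(a_i)$ for all $i<n$; this is possible because only finitely many values of $c_n$ are forbidden.

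The \emph{decisive point} to verify is that separations achieved at earlier stages are never destroyed by later summands, and this is exactly what the vanishing condition $\phi_m(a_i)=0$ for $i<m$ secures: for a fixed index $i$ one has $g_m(a_i)=c_m\phi_m(a_i)=0$ for every $m>i$, so the value $h(a_i)=f(a_i)+\sum_m g_m(a_i)$ stabilizes after stage $i$ and equals $h_{n-1}(a_i)$ for every $n>i$; likewise $h(a_n)=h_{n-1}(a_n)+c_n$. Hence the inequality arranged at stage $n$ reads precisely $h(a_n)\ne h(a_i)$ for all $i<n$, and ranging over all $n$ gives injectivity of $h|A$. I expect this bookkeeping — confirming that once-separated values remain frozen under all subsequent perturbations, together with the uniform bound $\|g_m\|_\infty\le 2^{-m}$ ensuring continuity of the sum — to be the only genuinely delicate part, since the separation at each single stage is immediate from functional Hausdorffness.
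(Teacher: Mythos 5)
Your proof is correct, but it follows a genuinely different route from the paper's. The paper first uses functional Hausdorffness to produce a single continuous map $\xi\colon X\to\II^\w$ whose restriction $\xi|A$ is injective, and then runs a Baire category argument: in the completely metrizable (hence Baire) function space $C_k(\II^\w,\II)$, for each pair of distinct points $a,b\in A$ the set $U_{a,b}=\{g\in C_k(\II^\w,\II): f(a)+g\circ\xi(a)\ne f(b)+g\circ\xi(b)\}$ is open and dense, so the countable intersection $\bigcap_{a\ne b}U_{a,b}$ is dense, and any $g$ in it makes $h=f+g\circ\xi$ injective on $A$. You instead construct the perturbation explicitly as a uniformly convergent series $g=\sum_{n\ge1}c_n\phi_n$, where each $\phi_n$ (a finite product of separating functions) vanishes at $a_0,\dots,a_{n-1}$ and equals $1$ at $a_n$, and each scalar $c_n\in[0,2^{-n}]$ dodges the finitely many forbidden values at stage $n$; your ``freezing'' observation ($g_m(a_i)=0$ whenever $m>i$, so $h(a_i)$ stabilizes at stage $i$) is precisely what makes the induction close up, and it is correctly verified. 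Your argument is more elementary: it avoids both the embedding into $\II^\w$ and the Baire category theorem, needing only the Weierstrass $M$-test for continuity of $g$. What the paper's genericity argument buys in exchange is that it dispatches all pairs simultaneously with no bookkeeping about preserving earlier separations, and it shows that ``most'' small perturbations work rather than exhibiting one. Both proofs use functional Hausdorffness in the same essential way — to separate countably many pairs of points of $A$ by $[0,1]$-valued continuous maps — and both yield a \emph{continuous} $g$, which is what the lemma's later applications (e.g.\ Lemma~\ref{l:Runbound}) actually require.
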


\begin{proof} Since $X$ is functionally Hausdorff, there exists a continuous map $\xi:X\to\II^\w$ whose restriction $\xi|A$ is injective.  Now consider the function space $C_k(\II^\w,\II)$ endowed with the compact-open topology. The space $C_k(\II^\w,\II)$ is  completely metrizable by the metric $d(g,h)=\max_{x\in\II^\w}|g(x)-h(x)|$ and hence $C_k(\II^\w,\II)$ is a Baire space. For every distinct points $a,b\in A$ consider the open dense subspace
$U_{a,b}=\{g\in C_k(\II^\w,\II):f(a)+g\circ \xi(a)\ne f(b)+g\circ\xi(b)\}$. Since the function space $C_k(\II^\w,\II)$ is Baire, the intersection $G=\bigcap\{U_{a,b}:a,b\in A,\;a\ne b\}$ is dense in $C_k(\II^\w,\II)$ and hence it contains some function $g$ such that for the function $h=f+g\circ\xi$ the restriction $h|A$ is injective.
\end{proof}

\section{Bounded sets in topological spaces}

Given a cardinal $\kappa$, we say that a subset $B$ of a topological space $X$ is \index{subset!$\kappa$-bounded}{\em $\kappa$-bounded} in $X$ if each locally finite family $\U$ of open subsets of $X$ meeting $B$ has cardinality $|\U|<\w_1$. $\w$-Bounded sets are called \index{subset!bounded}{\em bounded}. It can be shown that a subset $B$ of a Tychonoff space $X$ is bounded in $X$ if and only if for any continuous function $f:X\to \IR_+$ the image $f(B)$ is bounded in $\IR_+:=[0,\infty)$. A Tychonoff space $X$ is called \index{topological space!pseudocompact}{\em pseudocompact} if $X$ is bounded in $X$.

We shall often exploit the following known fact (see \cite[7.5.1]{AT}).

\begin{lemma}\label{l:Runbound} If a countable subset $B\subset X$ of a Tychonoff space $X$ is unbounded in $X$, then there exists a continuous map $f:X\to\IR_+$ such that the restriction $f|B$ is injective and $f(B)$ is unbounded in $\IR_+$.
\end{lemma}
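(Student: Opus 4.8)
The plan is to obtain the desired map in two moves: first produce a continuous function witnessing the unboundedness of $B$, and then perturb it by a bounded amount to achieve injectivity on $B$ without destroying the unboundedness.

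First I would invoke the characterization of bounded sets recalled just before the lemma: for a Tychonoff space, $B$ is bounded if and only if $f(B)$ is bounded for every continuous $f:X\to\IR_+$. Since $B$ is unbounded, this yields a continuous function $f:X\to\IR_+$ for which $f(B)$ is unbounded in $\IR_+$. Next, I would use that every Tychonoff space is functionally Hausdorff (continuous functions into $\II$ separate points of $X$), so that Lemma~\ref{l:fH-inj} becomes applicable. Taking the countable set $A:=B$ in that lemma, I obtain a continuous map $g:X\to[0,1]$ such that the sum $h:=f+g:X\to\IR$ has injective restriction $h|B$.

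Finally, I would verify that $h$ is as required. Since $f\ge 0$ and $g\ge 0$, we have $h\ge 0$, so $h$ maps $X$ into $\IR_+$; it is continuous as a sum of two continuous maps, and $h|B$ is injective by the choice of $g$. Because $g$ takes values in $[0,1]$, the values of $h$ on $B$ differ from those of $f$ by at most $1$, so the unboundedness of $f(B)$ forces $h(B)$ to be unbounded as well. Hence $h$ is the desired function.

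As for difficulty, essentially all of the real content is already packaged in Lemma~\ref{l:fH-inj} together with the cited characterization of boundedness, so I do not expect a serious obstacle. The only point deserving any care is that the perturbation used to gain injectivity be bounded, which is guaranteed since $g$ ranges in $[0,1]$; this is exactly what allows the unboundedness of $f(B)$ to survive the passage to $h=f+g$. Once this observation is made, the argument is immediate.
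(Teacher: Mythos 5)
Your proof is correct, and its second half coincides with the paper's: both apply Lemma~\ref{l:fH-inj} (legitimately, since Tychonoff implies functionally Hausdorff) to perturb $f$ by a continuous $g:X\to[0,1]$, and both note that since $h=f+g\ge f$, the unboundedness of $f(B)$ passes to $h(B)$. The difference lies in the first half. You obtain the unbounded function $f$ by citing the characterization stated just before the lemma (a subset of a Tychonoff space is bounded iff all its continuous images in $\IR_+$ are bounded), whereas the paper works directly from the definition of unboundedness: it takes an infinite locally finite family $\U=\{U_n\}_{n\in\w}$ of open sets meeting $B$, picks $b_n\in B\cap U_n$, builds bump functions $f_n:X\to[0,n]$ with $f_n(b_n)=n$ and $f_n(X\setminus U_n)\subset\{0\}$, and sums them (the sum being continuous by local finiteness) to get $f$ unbounded on $B$. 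What your route buys is brevity; what the paper's route buys is self-containedness, and this matters here because the cited characterization is only asserted in the text (``It can be shown that\dots'') and never proved elsewhere --- its hard direction \emph{is} precisely the sum-of-bumps construction that the paper carries out inside this proof. So your argument is logically sound given the paper's stated background, but unwinding your citation would reproduce exactly the construction you skipped; essentially all of the lemma's content beyond Lemma~\ref{l:fH-inj} sits in that one step.
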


\begin{proof} Since $B$ is unbounded, there exists an infinite locally finite family $\U$ of open subsets of $X$ that meet $B$. We can assume that the family $\U$ is countable and hence can be enumerated as $\U=\{U_n\}_{n\in\w}$. For every $n\in\w$ choose a point $b_n\in B\cap U_n$ and using the Tychonoff property of $X$, choose a continuous function  $f_n:X\to [0,n]$ such that $f_n(b_n)=n$ and $f_n(X\setminus U_n)\subset \{0\}$. Taking into account that the family $\U$ is locally finite, we conclude that the function $f=\sum_{n\in\w}f_n:X\to\IR_+$ is well-defined, continuous, and unbounded on the set $B$.  By Lemma~\ref{l:fH-inj}, there exists a continuous function $g:X\to[0,1]$ such that the function $h=g+f:X\to\IR_+$ has injective restriction $h|B$. Since $h\ge f$ the set $h(B)$ is unbounded in $\IR_+$.
\end{proof}

A topological space $X$ is called
\index{topological space!$\mu$-complete}\index{topological space!$\mu_s$-complete}{\em $\mu$-complete} (resp. {\em $\mu_s$-complete}) if each closed bounded subset of $X$ is compact (resp. sequentially compact).
We recall that a topological space $X$ is \index{topological space!sequentially compact}{\em sequentially compact} if each sequence in $X$ contains a convergent subsequence.
It is clear that a $\mu$-complete space is $\mu_s$-complete if each compact subset of $X$ is sequentially compact.

By \cite[6.9.7]{AT}, the class of $\mu$-complete spaces includes all Dieudonn\'e complete spaces and hence all paracompact spaces \cite[8.5.13]{En} and all submetrizable Tychonoff spaces (see \cite[6.10.8]{AT}).

\section{$k_\w$-Spaces}

An important role in our results is due to $k_\w$-spaces. So, here we recall the necessary information on such spaces.

By definition, a topological space $X$ is a \index{topological space!$k_\w$-space}{\em $k_\w$-space} if $X$ has a countable cover $\K$ by compact sets such that a subset $U\subset X$ is open if and only if for every $K\in\K$ the intersection $U\cap K$ is open in $K$.
Enumerating the countable family $\K$ as $\K=\{K_n\}_{n\in\w}$ and replacing each compact set $K_n$ by the union $\bigcup_{i\le n}K_i$, we see that a topological space $X$ is a $k_\w$-space if and only if there exists an increasing sequence $(K_n)_{n\in\w}$ of compact sets in $X$ such that $X=\bigcup_{n\in\w}K_n$ and a subset $A\subset X$ is closed in $X$ if and only if $A\cap K_n$ is closed in $K_n$. In this case we shall say that the sequence $(K_n)_{n\in\w}$ {\em generates the $k_\w$-topology} of $X$ and call this sequence a \index{$k_\w$-sequence}{\em $k_\w$-sequence} in $X$.

Each increasing sequence $(K_n)_{n\in\w}$ of compact sets in a  topological space $X$ generates a $k_\w$-topology $\tau$ on its union $Y=\bigcup_{k\in\w}K_n$: the topology $\tau$ consists of open sets $U\subset Y$ such that for every $n\in\w$ the intersection $K_n\cap U$ is open in $K_n$. If all compact spaces $K_n$ are Hausdorff, then the $k_\w$-topology on $Y$ is Hausdorff and moreover normal (since $Y$ is $\sigma$-compact and hence Lindel\"of).

It is known that each $k_\w$-sequence $(K_n)_{n\in\w}$ in a topological space $X$ {\em swallows compact subsets} of the space in the sense that each compact subset $K\subset X$ is contained in some set $K_n$. More generally, this fact holds for bounded subsets of $X$. 

\begin{lemma}\label{l:bound-kw} Let $(K_n)_{n\in\w}$ be a $k_\w$-sequence in a Hausdorff $k_\w$-space $X$. Then each bounded subset $B\subset X$ is contained in some set $K_n$.
\end{lemma}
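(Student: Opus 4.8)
The plan is to argue by contradiction, the key point being to extract from an unbounded $B$ an infinite closed discrete subset and then produce a continuous function unbounded on it. So suppose $B$ is not contained in any member of the $k_\w$-sequence. Then for every $n\in\w$ I can choose a point $b_n\in B\setminus K_n$, and I put $D=\{b_n:n\in\w\}$. The heart of the argument is to show that $D$ is an infinite closed and discrete subset of $X$; once this is done, manufacturing a witness against boundedness is routine.

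To establish that $D$ is closed and discrete I combine the fact that $(K_n)_{n\in\w}$ is increasing with the $k_\w$-characterization of closed sets. For $m\ge n$ we have $K_n\subset K_m$, and $b_m\notin K_m$ forces $b_m\notin K_n$; hence $b_m\in K_n$ can happen only when $m<n$, so $D\cap K_n\subset\{b_0,\dots,b_{n-1}\}$ is finite for every $n$. As $X$ is Hausdorff, finite sets are closed, so every subset $E\subset D$ meets each $K_n$ in a finite, hence closed, set; by the defining property of the $k_\w$-topology, $E$ is then closed in $X$. Taking $E=D$ shows $D$ is closed, and applying this to the complements $D\setminus\{d\}$ shows every point of $D$ is isolated in $D$, so $D$ carries the discrete subspace topology. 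Finally $D$ is infinite, because each $b_N$ lies in $D\setminus K_N$, so $D$ is contained in no $K_N$ and is therefore not compact (every compact set is swallowed by the $k_\w$-sequence), whereas a finite set would be compact.

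It remains to build a continuous real-valued function unbounded on $D$, and therefore on $B\supset D$. Since $X$ is a Hausdorff $k_\w$-space it is normal, as recorded above. Enumerating $D$ as a sequence of distinct points $\{d_k:k\in\w\}$, I define $g:D\to\IR$ by $g(d_k)=k$; because $D$ is discrete, $g$ is continuous on the closed set $D$, so by the Tietze extension theorem it extends to a continuous map $\bar g:X\to\IR$. Then $\bar g(B)\supset\bar g(D)=\w$ is unbounded in $\IR$, contradicting the boundedness of $B$. Equivalently, the preimages $\bar g^{-1}\bigl((k-\tfrac12,k+\tfrac12)\bigr)$ form an infinite locally finite family of open sets each meeting $B$, which directly violates the definition of boundedness. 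This contradiction yields $B\subset K_n$ for some $n$.

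I expect the only delicate step to be the closed-discreteness of $D$—in particular the bookkeeping showing each $D\cap K_n$ is finite, where the monotonicity of the $k_\w$-sequence is used essentially—while the normality needed for the Tietze extension comes for free from the structure of Hausdorff $k_\w$-spaces already discussed.
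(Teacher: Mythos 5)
Your proof is correct, but it takes a genuinely different route from the paper's. Both arguments begin identically, choosing $b_n\in B\setminus K_n$ for every $n$, and both use the $k_\w$-property as the key closedness certificate; after that they diverge. The paper stays elementary: using only Hausdorff separation of the point from the compact set, it picks open sets $U_n\ni b_n$ with $\overline{U}_n\cap K_n=\emptyset$ and checks directly that $(U_n)_{n\in\w}$ is locally finite (each tail union $\bigcup_{k\ge n}\overline{U}_k$ meets every $K_m$ in a finite union of closed sets, hence is closed), which immediately exhibits an infinite locally finite family of open sets meeting $B$. You instead show that $D=\{b_n\}_{n\in\w}$ is an infinite closed discrete set (every subset of $D$ is closed, by the same $k_\w$-argument), then pass through normality of Hausdorff $k_\w$-spaces and the Tietze extension theorem to build a continuous function unbounded on $D$, and finally pull back a locally finite family of intervals. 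Your version is more modular --- the closed-discreteness of $D$ and the functional witness are reusable, and your last step rightly avoids the Tychonoff characterization of boundedness by exhibiting the locally finite family explicitly --- but it leans on two facts the paper records without proof (normality of Hausdorff $k_\w$-spaces, and the swallowing of compact sets by the $k_\w$-sequence), whereas the paper's argument is self-contained given Hausdorffness alone. One small simplification available to you: infiniteness of $D$ needs no appeal to compact swallowing, since if $D$ were finite it would lie in some $K_N$ (the sequence is increasing and covers $X$), contradicting $b_N\notin K_N$.
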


\begin{proof} Assuming that for every $n\in\w$ a set $B\subset X$ contains a point $x_n\in B\setminus K_n$, we shall prove that $B$ is not bounded in $X$. Using the Hausdorff property of $X$, we can choose an open neighborhood $U_n\subset X$ of $x_n$ whose closure $\overline{U}_n$ does not intersect the compact subset $K_n$. We claim that the family $(U_n)_{n\in\w}$ locally finite in $X$. Given any point $x\in X$, find $n\in\w$ with $x\in K_n$ and consider the set $F=\bigcup_{k\ge n} \overline U_n$. It follows that for every $m\in\w$ the intersection $F\cap K_m=\bigcup_{n\le k\le m}K_m\cap \overline{U}_n$ is closed in $K_m$, which implies that $F$ is closed in $X$ and $X\setminus F$ is an open neighborhood of $x$ meeting only finitely many sets $U_k$, $k\in\w$. Therefore the family $(U_n)_{n\in\w}$ is locally finite in $X$. On the other hand, each set $U_n$ meets the set $B$, which implies that $B$ is not bounded in $X$.
\end{proof}

Another important product property of $k_\w$-spaces is described by the following (known folklore) lemma.

\begin{lemma}\label{l:kw-product} The product $X\times Y$ of any Hausdorff $k_\w$-spaces $X,Y$ is a $k_\w$-space. Moreover, for any $k_\w$-sequences $(X_n)_{n\in\w}$ and $(Y_n)_{n\in\w}$ in the spaces $X,Y$, respectively,  $(X_n\times Y_n)_{n\in\w}$ is a $k_\w$-sequence for the $k_\w$-spaces $X\times Y$.
\end{lemma}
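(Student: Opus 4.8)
The plan is to prove the ``moreover'' assertion, since the first statement follows from it at once: by definition a space that admits a $k_\w$-sequence is a $k_\w$-space. So I concentrate on showing that $(X_n\times Y_n)_{n\in\w}$ is a $k_\w$-sequence for the product $X\times Y$ equipped with its usual product topology. First I would dispose of the routine points: each $X_n\times Y_n$ is compact as a product of compact sets, the sequence is increasing because both $(X_n)_{n\in\w}$ and $(Y_n)_{n\in\w}$ are, and $\bigcup_{n\in\w}(X_n\times Y_n)=X\times Y$ because any $(x,y)$ lies in $X_m\times Y_m$ for all sufficiently large $m$. It then remains to verify the coherence property: a set $A\subseteq X\times Y$ is closed in the product topology if and only if $A\cap(X_n\times Y_n)$ is closed in $X_n\times Y_n$ for every $n$. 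The forward implication is automatic, since the subspace topology induced by $X\times Y$ on $X_n\times Y_n$ is exactly the product topology of the subspaces $X_n$ and $Y_n$. All the content lies in the converse.

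For the converse I would argue on complements. Assume $A\cap(X_n\times Y_n)$ is closed in $X_n\times Y_n$ for all $n$, fix a point $(a,b)\notin A$, and aim to produce a basic product-open neighborhood of $(a,b)$ missing $A$. Choosing $n_0$ with $a\in X_{n_0}$ and $b\in Y_{n_0}$, I would construct by induction, for $n\ge n_0$, open sets $G_n\ni a$ in $X_n$ and $H_n\ni b$ in $Y_n$ such that $\cl_{X_n}G_n\times\cl_{Y_n}H_n$ is disjoint from $A$, together with the nesting conditions $\cl_{X_n}G_n\subseteq G_{n+1}$ and $\cl_{Y_n}H_n\subseteq H_{n+1}$. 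The base step uses only that $(a,b)$ lies in the open complement (in $X_{n_0}\times Y_{n_0}$) of the closed set $A\cap(X_{n_0}\times Y_{n_0})$, together with the regularity of the compact Hausdorff factors $X_{n_0},Y_{n_0}$. The inductive step is the crucial one: the compact rectangle $\cl_{X_n}G_n\times\cl_{Y_n}H_n$ sits inside the open set $(X_{n+1}\times Y_{n+1})\setminus A$, so by the generalized tube lemma for products of compact sets there are open $U\supseteq\cl_{X_n}G_n$ and $V\supseteq\cl_{Y_n}H_n$ with $U\times V$ disjoint from $A$; shrinking by normality of the compact Hausdorff spaces $X_{n+1},Y_{n+1}$ gives $G_{n+1},H_{n+1}$ whose closures lie inside $U,V$, which simultaneously secures disjointness from $A$ and the nesting $\cl_{X_n}G_n\subseteq G_{n+1}$, $\cl_{Y_n}H_n\subseteq H_{n+1}$.

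Finally I would set $G=\bigcup_{n\ge n_0}G_n\subseteq X$ and $H=\bigcup_{n\ge n_0}H_n\subseteq Y$ and check that $G\times H$ is the desired neighborhood. That $G$ is open in $X$ follows from the $k_\w$-property of $X$: for each $m$ one has $G\cap X_m=\bigcup_{n\ge\max(m,n_0)}(G_n\cap X_m)$ (the lower terms are absorbed because the nesting makes $(G_n)$ increasing), and each $G_n\cap X_m$ with $n\ge m$ is open in $X_m$ since $X_m$ is a subspace of $X_n$; hence $G\cap X_m$ is open in $X_m$ for every $m$, so $G$ is open in $X$, and symmetrically $H$ is open in $Y$. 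Since $(G_n)$ and $(H_n)$ are increasing, any $(x,y)\in G\times H$ lies in some $G_N\times H_N\subseteq\cl_{X_N}G_N\times\cl_{Y_N}H_N$, which is disjoint from $A$; thus $G\times H$ misses $A$ while containing $(a,b)$. This exhibits the complement of $A$ as product-open, proving $A$ closed and finishing the converse. I expect the inductive construction to be the main obstacle — arranging the tube lemma and the regular shrinking to cooperate so that both disjointness from $A$ and the nesting of closures survive at every level; the verification that the unions $G,H$ are open is then a straightforward appeal to the $k_\w$-definition of $X$ and $Y$.
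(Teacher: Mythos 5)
Your proof is correct and follows essentially the same route as the paper's: both establish that the $k_\w$-topology generated by $(X_n\times Y_n)_{n\in\w}$ coincides with the product topology, with the easy direction handled by restricting sub-basic open sets and the hard direction handled by an inductive construction of open boxes with nested closures ($\bar U_{m-1}\times\bar V_{m-1}\subset U_m\times V_m$) whose unions are open by the $k_\w$-property of the factors. The only difference is presentational — you argue with complements of closed sets and make explicit the tube lemma and the normal/regular shrinking that the paper's phrase ``inductively construct'' leaves implicit.
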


\begin{proof} We need to prove that the $k_\w$-topology $\tau_\w$ generated by the sequence $(X_n\times Y_n)_{n\in\w}$ coincides with the product topology $\tau$ on $X\times Y$. To see that $\tau\subset\tau_\w$, observe that for every open set $U\subset X$ the set $U\times Y$ is $\tau_\w$-open since for every $n\in\w$ the intersection $(U\times Y)\times (X_n\times Y_n)=(U\cap X_n)\times Y_n$ is open in $X_n\times Y_n$. By analogy we can prove that for every open set $V\subset Y$ the set $X\times V$ belongs to the topology $\tau_\w$. Now the inclusion $\tau\subset\tau_\w$ follows from the fact that the topology $\tau$ has a sub-base in $\tau_\w$.

The proof of the inclusion $\tau_\w\subset\tau$ is more complicated. Fix a point $(x,y)\in X\times Y$ and an open neighborhood $W\in\tau_\w$ of $(x,y)$. Find $n\in\w$ such that $(x,y)\in X_n\times Y_n$. Since $W\cap (X_n\times Y_n)$ is an open neighborhood of $(x,y)$ in the compact Hausdorff space $X_n\times Y_n$, there are open sets $U_n\subset X_n$ and $V_n\subset Y_n$  such that
$(x,y)\in U_n\times V_n\subset\bar U_n\times \bar V_n\subset W\cap (X_n\times Y_n)$. Further we proceed by induction. Namely, for every $m>n$ we inductively construct open subsets  $U_m\subset X_m$ and $V_m\subset Y_m$ such that $\bar U_{m-1}\times \bar V_{m-1}\subset U_m\times V_m\subset W\cap (X_m\times Y_m)$.

Then $U=\bigcup_{m\ge n}U_n$ and $V=\bigcup_{m\ge n}$ are open sets in the $k_\w$-spaces $X$ and $Y$ such that $(x,y)\subset U\times V\subset W$, which means that $\tau_\w\subset\tau$ and implies the desired equality $\tau_\w=\tau$.
\end{proof}

\section{$k$-Spaces, $k$-homeomorphisms and $k$-metrizable spaces}

A topological space $X$ is a \index{topological space!$k$-space}{\em $k$-space} if a subset $U\subset X$ is open if and only if for every compact subset $K\subset X$ the intersection $U\cap K$ is open in $K$. It is clear that each $k_\w$-space is a $k$-space. In contrast to $k_\w$-spaces, the product of two  $k$-spaces need not be a $k$-space (see \cite{Tan76}).

A function $f:X\to Y$ between topological spaces is called \index{map!$k$-continuous}{\em $k$-continuous} if for every compact subset $K\subset X$ the restriction $f|K$ is continuous. It follows that each $k$-continuous map defined on a $k$-space is continuous.

A map $f:X\to Y$ between two topological spaces is called a \index{$k$-homeomorphism}{\em $k$-homeomorphism} if $f$ is bijective and both maps $f$ and $f^{-1}$ are $k$-continuous. Two topological spaces $X,Y$ are called \index{topological spaces!$k$-homeomorphic}{\em $k$-homeomorphic} if there exists a $k$-homeomorphism $f:X\to Y$.

By the \index{topological space!$k$-modification of}{\em $k$-modification} $kX$ of a topological space $X$ we understand the set $X$ endowed with the topology consisting of all sets $U\subset X$ such that for every compact subset $K\subset X$ the intersection $U\cap K$ is relatively open in $U$. It follows that $kX$ is a $k$-space and the identity map $\id:kX\to X$ is a continuous $k$-homeomorphism.

A topological space $X$ is called \index{topological space!$k$-metrizable}{\em $k$-metrizable} \index{topological space!$k$-discrete}({\em $k$-discrete}) if it is $k$-homeomorphic to a metrizable (discrete) space. It is clear that a topological space is metrizable if and only if it is a $k$-metrizable $k$-space. It is easy to see that a space is $k$-metrizable (resp. $k$-discrete) if and only if its $k$-modification is metrizable (resp. discrete). Observe also that a space $X$ is $k$-discrete if and only if it contains no infinite compact subset.

In general the $k$-modification of a Tychonoff space is not regular \cite{Mi73}.

\begin{lemma}\label{l:kmod=normal} If Hausdorff topological space $X$ contains a compact subset $K\subset X$ with $k$-discrete complement $X\setminus K$, then the  $k$-modification $kX$ of $X$ is normal and has compact set of non-isolated points.
\end{lemma}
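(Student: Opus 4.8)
The plan is to analyze the $k$-modification $kX$ by splitting $X$ into the compact set $K$ and its $k$-discrete complement $D=X\setminus K$, and to show that inside $kX$ the set $D$ becomes an open discrete subspace attached to the unchanged compact Hausdorff set $K$. First I would record that the $kX$-subspace topology on $K$ coincides with its original subspace topology: by definition every $kX$-open set $U$ satisfies that $U\cap K$ is open in $K$ (take the witnessing compact set to be $K$ itself), while every original open set of $X$ is already $kX$-open; hence the two subspace topologies on $K$ agree, and in particular $K$ is compact in $kX$.

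The crux of the argument is to show that every point $x\in D=X\setminus K$ is isolated in $kX$. Since $X$ is Hausdorff and $K$ is compact, a standard separation argument produces an open set $U\ni x$ in $X$ with $\bar U\cap K=\emptyset$, that is $\bar U\subseteq X\setminus K$. Now for an arbitrary compact set $C\subseteq X$ with $x\in C$, the intersection $C\cap\bar U$ is a compact subset of the $k$-discrete space $X\setminus K$ and is therefore finite. Consequently $C\cap U$ is a finite (hence discrete, as $C$ is Hausdorff) neighborhood of $x$ in $C$, so $\{x\}$ is open in $C$. As $C$ was an arbitrary compact set containing $x$, the singleton $\{x\}$ is open in $kX$, i.e. $x$ is isolated. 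I expect this isolation step to be the main obstacle, since it is the point where all the hypotheses (Hausdorffness, compactness of $K$, and $k$-discreteness of the complement) are used together.

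From here the conclusions follow structurally. The set of non-isolated points of any space is closed; by the previous paragraph it is contained in $K$, and since $K$ is compact in $kX$, this closed subset of $K$ is compact, which yields the compact set of non-isolated points. For normality, I would first check that $kX$ is Hausdorff: isolated points of $D$ are trivially separated from everything, while two points of $K$, or a point of $K$ and a point of $D$, are separated by original Hausdorff open sets of $X$, which are $kX$-open. Thus the compact set $K$ is closed in $kX$. Given disjoint closed sets $A,B\subseteq kX$, the traces $A\cap K$ and $B\cap K$ are disjoint compact sets in the Hausdorff space $kX$ and can be enclosed in disjoint $kX$-open sets $U\supseteq A\cap K$ and $V\supseteq B\cap K$. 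Finally I would assemble the separating neighborhoods
$$O_A=\big(U\cup(A\cap D)\big)\setminus B,\qquad O_B=\big(V\cup(B\cap D)\big)\setminus A,$$
which are open because $A\cap D$ and $B\cap D$ consist of isolated points and $A,B$ are closed; a direct check using $U\cap V=\emptyset$ and $A\cap B=\emptyset$ shows $A\subseteq O_A$, $B\subseteq O_B$ and $O_A\cap O_B=\emptyset$, establishing normality of $kX$.
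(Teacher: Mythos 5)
Your proof is correct and follows essentially the same route as the paper: the intended argument there is likewise that points of $X\setminus K$ become isolated in $kX$ (via the Hausdorff separation of $x$ from the compact set $K$ and the $k$-discreteness of $X\setminus K$), so that $kX$ is Hausdorff with its set of non-isolated points a closed subset of the compact set $K$. The only cosmetic difference is the normality endgame: where you separate the compact traces $A\cap K$ and $B\cap K$ directly and then adjoin the isolated points, the paper delegates this step to Lemma~\ref{l:iso-norm} (every Hausdorff space with compact set of non-isolated points is normal), whose proof uses exactly the same mechanism of covering the compact core and absorbing the isolated remainder into the open separating sets.
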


The normality of $kX$ follows from the following simple lemma.

\begin{lemma}\label{l:iso-norm} Each Hausdorff space $X$ with compact set $X'$ of non-isolated points is normal.
\end{lemma}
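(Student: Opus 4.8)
The plan is to exploit the fact that all points of $X$ outside the compact set $X'$ are isolated, so the only genuine topological complexity is concentrated on $X'$, where the compact Hausdorff (hence normal) structure does the separating work; the isolated points can then be absorbed by hand.

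First I would record two elementary facts. Since every isolated point forms an open singleton, the set $X\setminus X'$ of isolated points is open, so $X'$ is closed; being compact and living inside a Hausdorff space, $X'$ is a compact Hausdorff space. Moreover, because every point of the discrete open set $X\setminus X'$ is isolated, \emph{every} subset of $X\setminus X'$ is open in $X$; in particular any collection of isolated points is an open subset of $X$.

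To verify normality I would take two disjoint closed sets $A,B\subset X$ and split each along $X'$: set $A_0=A\cap X'$, $A_1=A\setminus X'$ and $B_0=B\cap X'$, $B_1=B\setminus X'$. Here $A_0$ and $B_0$ are closed subsets of the compact set $X'$, hence disjoint compact subsets of the Hausdorff space $X$, while $A_1$ and $B_1$ consist of isolated points and are therefore open in $X$. Using the standard fact that disjoint compact subsets of a Hausdorff space admit disjoint open neighborhoods, I choose disjoint open sets $U_0\supset A_0$ and $V_0\supset B_0$ in $X$.

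The concluding step is to enlarge these neighborhoods to swallow the isolated points without destroying disjointness, and this is the one place needing care. I would put $U=(U_0\cup A_1)\setminus B$ and $V=(V_0\cup B_1)\setminus A$. Both sets are open, since $U_0\cup A_1$ and $V_0\cup B_1$ are open and $A,B$ are closed. Since $A_0\subset U_0$ and $A_1$ are both disjoint from $B$, we get $A\subset U$, and symmetrically $B\subset V$. For disjointness, any $z\in U\cap V$ would satisfy $z\notin A\cup B$; then $z\notin A_1$ forces $z\in U_0$ and $z\notin B_1$ forces $z\in V_0$, contradicting $U_0\cap V_0=\emptyset$. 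Thus $U$ and $V$ separate $A$ and $B$, and $X$ is normal. The key trick, and the main obstacle to a naive argument, is precisely this last manoeuvre: subtracting the closed sets $B$ and $A$ removes exactly the isolated points that could otherwise create an overlap, while the compact cores $A_0,B_0$ remain separated by $U_0,V_0$.
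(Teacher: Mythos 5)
Your proof is correct, and it takes a genuinely different route from the one in the paper. You separate the compact cores $A\cap X'$ and $B\cap X'$ in one shot, invoking the standard lemma that disjoint compact subsets of a Hausdorff space have disjoint open neighborhoods, and then absorb the isolated remainders $A\setminus X'$ and $B\setminus X'$ by the symmetric manoeuvre $U=(U_0\cup A_1)\setminus B$, $V=(V_0\cup B_1)\setminus A$; the subtraction of the opposite closed set is exactly what restores disjointness, and your verification of this is complete. The paper instead replays the classical two-stage argument for normality of compact Hausdorff spaces from scratch: it first proves $X$ is regular (separating a point from a closed set $F$ by a finite-subcover argument over the compact set $F\cap X'$, taking $O_F=F\cup\bigcup_{y\in E}V_y$ as the neighborhood of $F$ — open because the points of $F$ outside $X'$ are isolated), and then upgrades regularity to normality by a second finite-subcover argument over $A\cap X'$, again absorbing isolated points by unioning with $A$ and subtracting $A$ on the other side. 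The two approaches use the same underlying trick (isolated points make $F\cup(\text{open set covering }F\cap X')$ open, and subtracting a closed set preserves openness), but yours is shorter and more modular because it outsources the compactness work to a single standard lemma, while the paper's is self-contained, deriving everything directly from the Hausdorff axiom without citing the compact-separation fact.
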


\begin{proof} The proof repeats the well-known proof of the normality of compact Hausdorff spaces. First we show that the space $X$ is regular. Fix a closed set $F\subset X$ and a point $x\in X\setminus F$. For every point $y\in F\cap X'$ the Hausdorff property of $X$ yields two open disjoint sets $U_y,V_y\subset X$ such that $x\in U_y$ and $y\in V_y$. By the compactness of the set $F\cap X'$ the open cover $\{V_y:y\in F\cap X'\}$ has a finite subcover $\{V_y:y\in E\}$. Here $E$ is a finite subset of $F\cap X'$. Then $O_F=F\cup\bigcup_{y\in E}V_y$ is an open neighborhood of $F$, disjoint with the open neighborhood $U_x=\bigcap_{y\in E}U_y\setminus F$ of $x$.

Now we can prove that $X$ is normal. Fix any two closed disjoint sets $A,B\subset X$. For any point $a\in A\cap X'$ by the regularity of $X$, choose two disjoint open sets $V_a,U_a\subset X$ such that $a\in V_a$ and $B\subset U_a$. By the compactness of $A\cap X'$ the open cover $\{V_a:a\in A\cap X'\}$ of  $A\cap X'$ has a finite subcover $\{V_a:a\in E\}$ (here $E$ is a finite subset of $A\cap X'$). Then $V_A=A\cup\bigcup_{a\in E}V_a$ is an open neighborhood of the set $A$, which is disjoint with the open neighborhood $\bigcap_{a\in E}U_a\setminus A$ of $B$.
\end{proof}

\section{Hemicompact spaces and $k$-sums of hemicompact spaces}

\index{topological space!hemicompact} We recall that a topological space $X$ is {\em hemicompact} if there exists a countable family $\K$ of compact subsets of $X$ such that each compact subset of $X$ is contained in some set $K\in\K$. If the space $X$ is cosmic, then each compact subset $K\in\K$ has countable network and hence has a countable base
$\mathcal B_K$. Then $\bigcup_{K\in\K}\mathcal B_K$ is a countable $k$-network in $X$, which means that $X$ is an $\aleph_0$-space.
Therefore a hemicompact space $X$ is an $\aleph_0$-space if and only if $X$ is cosmic. Observe that a topological space $X$ is hemicompact if and only if $X$ is $k$-homeo\-morphic to a $k_\w$-space.

We shall say that a topological space $X$ is a \index{topological space!$k$-sum of hemicompact spaces}{\em $k$-sum of (cosmic) hemicompact spaces} if $X$ can be written as the union $\bigcup_{\alpha\in\lambda}X_\alpha$ of a compact-clopen disjoint family of (cosmic) hemicompact subspaces of $X$. A family \index{family of sets!compact-clopen} $(X_\alpha)_{\alpha\in\lambda}$ of subsets of $X$ is called {\em compact-clopen} if for each compact set $K\subset X$ and each $\alpha\in\lambda$ the intersection $K\cap X_\alpha$ is clopen in $K$.

\begin{lemma}\label{l:ts-kw} A Hausdorff space $X$ is a $k$-sum of (cosmic) hemicompact spaces if and only if $X$ is $k$-homeomorphic to a topological sum of (cosmic) $k_\w$-spaces.
\end{lemma}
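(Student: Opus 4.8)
The plan is to prove both implications through the $k$-modification, exploiting the fact that in a topological sum (and, more generally, across a compact-clopen family) every compact set is supported on finitely many pieces.

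For the implication ``$\Rightarrow$'', suppose $X=\bigcup_{\alpha\in\lambda}X_\alpha$ with $(X_\alpha)_{\alpha\in\lambda}$ a compact-clopen disjoint family of (cosmic) hemicompact subspaces. For each $\alpha$ pick a countable family of compact sets swallowing the compact subsets of $X_\alpha$; replacing it by an increasing sequence $(K^\alpha_n)_{n\in\w}$ as usual, the $k$-modification $Y_\alpha:=kX_\alpha$ is exactly the $k_\w$-space generated by $(K^\alpha_n)_{n\in\w}$, and it is Hausdorff because the $K^\alpha_n$ are compact Hausdorff. I then take the topological sum $Y=\bigoplus_{\alpha\in\lambda}Y_\alpha$ and the bijection $\varphi\colon X\to Y$ which is the identity on points (sending $x\in X_\alpha$ to the same point viewed in $Y_\alpha$). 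To check $\varphi$ is a $k$-homeomorphism I look at compacta on both sides. Given compact $K\subset X$, the sets $K\cap X_\alpha$ are disjoint and clopen in $K$, so the compactness of $K$ forces all but finitely many to be empty; each nonempty $K\cap X_\alpha$ is compact in $X_\alpha$. Since the $k$-modification leaves compact sets and their subspace topologies unchanged (this is immediate from the defining condition, taking the compact set itself as a test set), $\varphi$ restricted to each clopen piece $K\cap X_\alpha$ is a homeomorphism into the clopen summand $Y_\alpha$; as there are finitely many pieces, $\varphi|K$ is continuous. Symmetrically, a compact $L\subset Y$ lies in a finite subsum and meets each $Y_\alpha$ in a compact set carrying the $X_\alpha$-topology, giving continuity of $\varphi^{-1}|L$. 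Thus $\varphi$ is a $k$-homeomorphism onto a topological sum of $k_\w$-spaces.

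For the cosmic refinement of ``$\Rightarrow$'', each swallowing $K^\alpha_n$ is a compact subspace of a cosmic space, hence cosmic compact, hence metrizable; taking countable bases $\mathcal B_{K^\alpha_n}$ and forming $\bigcup_{n\in\w}\mathcal B_{K^\alpha_n}$ yields a countable network for $Y_\alpha$, which together with the normality of the $k_\w$-space $Y_\alpha$ exhibits it as a cosmic $k_\w$-space. Conversely, for ``$\Leftarrow$'', let $h\colon X\to\bigoplus_{\alpha\in\lambda}Y_\alpha=:Y$ be a $k$-homeomorphism with each $Y_\alpha$ a (cosmic) $k_\w$-space, and put $X_\alpha:=h^{-1}(Y_\alpha)$; these partition $X$. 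For compact $K\subset X$, $h(K)$ is compact in $Y$ and so meets each clopen summand $Y_\alpha$ in a clopen subset of $h(K)$; pulling this back along the continuous map $h|K$ shows $K\cap X_\alpha$ is clopen in $K$, so $(X_\alpha)_{\alpha\in\lambda}$ is compact-clopen. Restricting, $h|X_\alpha\colon X_\alpha\to Y_\alpha$ is again a $k$-homeomorphism (a compact subset of $X_\alpha$ is compact in $X$, and conversely), so $X_\alpha$ is $k$-homeomorphic to the $k_\w$-space $Y_\alpha$ and therefore hemicompact by the observation recorded above. In the cosmic case $h$ induces a homeomorphism $kX_\alpha\cong kY_\alpha=Y_\alpha$, so a countable network of $Y_\alpha$ transports to $kX_\alpha$ and, being a network for the finer $k$-modification topology, remains a network for the coarser subspace $X_\alpha$; hence $X_\alpha$ carries a countable network and is a cosmic hemicompact space.

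The main obstacle is the verification that the canonical identity bijection is a genuine $k$-homeomorphism in both directions. Everything rests on two points that deserve care: first, that a compact set partitioned by a compact-clopen family (or lying in a topological sum) splits into \emph{finitely many} compact clopen pieces, which is what reduces $k$-continuity to a finite check on each summand; and second, that the $k$-modification preserves compact sets together with their subspace topologies, which is exactly what makes the re-topologized identity continuous on each piece. The remaining delicate point is the cosmic bookkeeping, namely transferring a countable network among $X_\alpha$, its $k$-modification $kX_\alpha$, and $Y_\alpha$; this is routine once one observes that networks pass freely to coarser topologies and between a space and its $k$-modification.
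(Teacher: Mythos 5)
Your proposal is correct and takes essentially the same route as the paper's own proof: the paper likewise passes to the $k$-modification of each piece for the ``only if'' half (the topology $\tau_\alpha$ it introduces is exactly $kX_\alpha$, which by hemicompactness is the $k_\omega$-topology generated by a swallowing sequence) and takes the preimages $X_\alpha=h^{-1}(Y_\alpha)$ for the ``if'' half, but it leaves every verification to the reader (``It can be shown that\dots''), and the details you supply are precisely the right ones --- that a compact set meets only finitely many members of a disjoint compact-clopen family, and that $k$-modification preserves compact sets together with their subspace topologies. The one caveat, inherited from (and present even more silently in) the paper's own proof, is that in the cosmic case of the ``if'' direction you establish a countable network for each $X_\alpha$ but not its regularity, which the paper's definition of ``cosmic'' demands; for a merely Hausdorff $X$ this does not follow from being a continuous bijective image of the cosmic space $Y_\alpha$, although in all of the paper's applications $X$ is regular, so each subspace $X_\alpha$ is regular and the gap is harmless there.
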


\begin{proof} To prove the ``only if'' part, assume that $X=\bigcup_{\alpha\in\lambda}X_\alpha$ is a $k$-sum of (cosmic) hemicompact spaces $X_\alpha$. For every $\alpha\in\lambda$ consider the topology $\tau_\alpha$ on $X_\alpha$ consisting of all subsets $U\subset X_\alpha$ such that for every compact subset $K\subset X_\alpha$ the set $U\cap K$ is open in $K$. It can be shown that the space $(X_\alpha,\tau_\alpha)$ is a $k_\w$-space and the identity map $X\to \bigoplus_{\alpha\in\lambda}(X_\alpha,\tau_\alpha)$ is a $k$-homeomorphism.

To prove the ``if'' part, assume that $h:X\to Y$ is a $k$-homeomorphism of $X$ onto a topological sum $Y=\oplus_{\alpha\in\lambda}Y_\alpha$ of $k_\omega$-spaces. Then $X=\bigcup_{\alpha\in\lambda}h^{-1}(Y_\alpha)$ is a $k$-sum of the hemicompact subspaces $X_\alpha=h^{-1}(Y_\alpha)$
 of $X$.
 \end{proof}

\section{Compact-finite sets in topological spaces}

A subset $A$ of a topological space $X$ is called \index{subset!compact-finite}{\em compact-finite in} $X$ if the family of singletons $(\{a\})_{a\in A}$ is compact-finite in $X$. If this family is (strictly) strongly compact-finite, then the set $A$ with be called \index{subset!strongly compact-finite}\index{subset!strictly compact-finite} ({\em strictly}) {\em strongly compact-finite}. In a $k$-space each compact-finite subset is closed and discrete.

\begin{proposition}\label{p:unbound-str-c-f} Each unbounded subset $A$ of a Tychonoff space $X$ contains a strictly compact-finite closed discrete infinite subset $D\subset A$.
\end{proposition}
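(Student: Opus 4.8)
The plan is to exploit the unboundedness of $A$ to produce a countable subset of $A$ on which some continuous function is both injective and has unbounded image, and then to use the injectivity to carve out a discrete set whose points can be separated by a compact-finite family of functional neighborhoods. Since $A$ is unbounded in the Tychonoff space $X$, some countable subset of $A$ is already unbounded (an unbounded set cannot have all of its countable subsets bounded, because boundedness is witnessed by a countable locally finite family of open sets). So first I would pass to a countable unbounded subset $B\subset A$ and apply Lemma~\ref{l:Runbound} to obtain a continuous map $f:X\to\IR_+$ such that $f|B$ is injective and $f(B)$ is unbounded in $\IR_+$.

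Next I would extract from $B$ a countably infinite subset $D=\{d_n\}_{n\in\w}$ whose $f$-values $f(d_n)$ form a strictly increasing sequence tending to $+\infty$; this is possible precisely because $f(B)$ is unbounded and $f|B$ is injective. The point of arranging $f(d_n)\to\infty$ with gaps is that the real values $\{f(d_n)\}$ are a closed discrete set in $\IR_+$, and they are separated from one another by disjoint open intervals $I_n=(f(d_n)-\tfrac13,f(d_n)+\tfrac13)$ once we thin the sequence so consecutive values differ by more than $1$. Pulling these back, $U_n=f^{-1}(I_n)$ is an $\IR$-open (indeed functional) neighborhood of $d_n$, because on each $U_n$ one can build a continuous $[0,1]$-valued bump function that is $0$ on $X\setminus U_n$ and $1$ at $d_n$ by composing $f$ with a suitable tent function on $\IR$. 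Thus each singleton $\{d_n\}$ gets a functional neighborhood $U_n$, which is exactly the data required for strict strong compact-finiteness.

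The main obstacle, and the step I would spend the most care on, is verifying that the family $(U_n)_{n\in\w}$ is compact-finite in $X$. The key is that $f$ is continuous, so for any compact $K\subset X$ the image $f(K)$ is compact, hence bounded, in $\IR_+$; therefore $f(K)\subset[0,M]$ for some $M$, and $f(K)$ meets only finitely many of the disjoint intervals $I_n$ (those with $f(d_n)-\tfrac13\le M$). Consequently $K\cap U_n=K\cap f^{-1}(I_n)=\emptyset$ for all but finitely many $n$, which is precisely compact-finiteness of $(U_n)$. This simultaneously shows that $D$ itself is compact-finite (each compact $K$ meets $\{d_n\}$ for only finitely many $n$) and, because distinct points of $D$ lie in disjoint $U_n$, that $D$ is closed and discrete in $X$. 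I would close by noting that compact-finiteness of the functional neighborhoods is by definition strict strong compact-finiteness of $D$, so $D\subset A$ is the desired strictly compact-finite closed discrete infinite subset.
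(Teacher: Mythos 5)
Your construction is essentially the paper's: both proofs produce a continuous $f:X\to\IR_+$ whose values on chosen points of $A$ run off to infinity, take preimages of subsets of the reals as the functional neighborhoods, and deduce compact-finiteness from the fact that a compact set has bounded image under $f$. The differences are minor: the paper picks $h$ with $h(A)$ unbounded directly (using the characterization of bounded sets in Tychonoff spaces), chooses $a_n\in A$ with $h(a_n)>n$, and uses the nested tails $U_n=h^{-1}\big((n,\infty)\big)$, which form a locally finite family; you instead pass first to a countable unbounded subset $B\subset A$ in order to invoke Lemma~\ref{l:Runbound}, and you use disjoint interval preimages. The injectivity of $f|B$ supplied by Lemma~\ref{l:Runbound} is never actually used for anything essential --- unboundedness of $f(B)$ alone lets you choose $d_n$ with $f(d_n)$ strictly increasing to $\infty$ --- so the detour through the countable subset is harmless but superfluous.

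One justification in your final paragraph is wrong as stated: ``because distinct points of $D$ lie in disjoint $U_n$, $D$ is closed and discrete in $X$.'' Pairwise disjoint neighborhoods give discreteness of $D$ as a subspace, but not closedness in $X$: the set $\{1/n:n\in\IN\}$ in $\IR$ has pairwise disjoint neighborhoods of its points yet is not closed. Nor does compact-finiteness of $(U_n)$ rescue this step --- non-closed compact-finite sets are precisely what $\Fin$-fans are built from, cf.\ Proposition~\ref{p:Fin-fan-char}. The correct reason is the one the paper gives, and your construction already supplies it: since $f(d_n)\to\infty$, the set $D$ has no accumulation point in $X$, because any $x\in X$ has the neighborhood $f^{-1}\big([0,f(x)+1)\big)$, which contains only the finitely many $d_n$ with $f(d_n)<f(x)+1$; hence $D$ is closed and discrete. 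With this one-line repair your proof is complete.
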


\begin{proof} Choose a continuous function $h:X\to \IR_+$ such that $h(A)$ is an unbounded subset of $\IR_+$. For every $n\in\w$ choose a point $a_n\in A$ such that $h(a_n)>n$ and consider the functional neighborhood $U_n=h^{-1}\big((n,\infty)\big)$ of $a_n$. The family $(U_n)_{n\in\w}$ is locally finite and hence compact-finite in $X$, witnessing that the set $D=\{a_n\}_{n\in\w}$ is strictly compact-finite. Since $\lim_{n\to\infty}h(a_n)=\infty$ the set $D$ has no accumulation point in $X$ and hence is closed and discrete in $X$.
\end{proof}

The presence or absence (strict or strong) $\Fin^\lambda$-fans can be detected using (strongly or strictly) compact-finite sets.

\begin{proposition}\label{p:Fin-fan-char} For an infinite cardinal $\lambda$, a topological $T_1$-space $X$ contains a (strict, strong) $\Fin^\lambda$-fan if and only if $X$ contains a non-closed (strictly, strongly) compact-finite subset $D\subset X$ of cardinality $|D|\le\lambda$.
\end{proposition}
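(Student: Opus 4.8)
The plan is to prove both implications by passing between a fan $(F_\alpha)_{\alpha\in\lambda}$ and the single set $D=\bigcup_{\alpha\in\lambda}F_\alpha$ obtained by forgetting the indexing.

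For the easy direction I would start from a non-closed compact-finite set $D$ with $|D|\le\lambda$. Since $X$ is $T_1$, a finite set is closed, so $D$ is infinite; I would fix an injective enumeration $D=\{x_\alpha:\alpha<\kappa\}$ with $\w\le\kappa\le\lambda$ and set $F_\alpha=\{x_\alpha\}$ for $\alpha<\kappa$ and $F_\alpha=\emptyset$ for $\kappa\le\alpha<\lambda$. The family $(F_\alpha)_{\alpha\in\lambda}$ is compact-finite because $(\{x_\alpha\})_{\alpha<\kappa}$ is and empty sets change nothing; it fails to be locally finite, for otherwise the family of closed singletons would be a locally finite family of closed sets with closed union $D$. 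Hence $(F_\alpha)_{\alpha\in\lambda}$ is a $\Fin^\lambda$-fan. If $D$ is strongly (strictly) compact-finite, I would reuse the $\IR$-open (functional) neighborhoods $U_{x_\alpha}$ and take $U_\alpha=\emptyset$ for $\alpha\ge\kappa$ to see the fan is strong (strict); here one only needs that $\emptyset$ is both an $\IR$-open and a functional neighborhood of $\emptyset$.

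For the converse I would set $D=\bigcup_{\alpha\in\lambda}F_\alpha$. Then $|D|\le\lambda\cdot\aleph_0=\lambda$ as $\lambda$ is infinite and each $F_\alpha$ is finite, and $D$ is compact-finite since for compact $K$ only finitely many $F_\alpha$ meet $K$ and each $F_\alpha\cap K$ is finite. The hard part will be non-closedness, because compact-finiteness alone does not force a set to be non-closed (in a $k$-space every compact-finite set is closed), so I must genuinely use the failure of local finiteness. I would take a point $x$ every neighborhood of which meets infinitely many $F_\alpha$; the key observation is that, $\{x\}$ being compact, compact-finiteness makes $\Lambda=\{\alpha:x\in F_\alpha\}$ finite. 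Consequently every neighborhood of $x$ meets $F_\alpha$ for infinitely many $\alpha\notin\Lambda$, and each witnessing point lies in $D\setminus\{x\}$, so $x\in\overline{D\setminus\{x\}}\setminus(D\setminus\{x\})$ and $D_0:=D\setminus\{x\}$ is a non-closed compact-finite set of cardinality $\le\lambda$.

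Finally I would transfer the strong/strict decorations to $D_0$. For each $a\in D_0$ I would pick an index $\alpha(a)$ with $a\in F_{\alpha(a)}$ and put $V_a=U_{\alpha(a)}$, where $U_\alpha$ is the neighborhood of $F_\alpha$ supplied by the fan. Then $V_a$ is an $\IR$-open neighborhood of $a$, and in the strict case it is even a functional neighborhood of $\{a\}$, since the function exhibiting $U_{\alpha(a)}$ as a functional neighborhood of $F_{\alpha(a)}$ already vanishes at $a\in F_{\alpha(a)}$. To check that $(V_a)_{a\in D_0}$ is compact-finite I would note that for compact $K$ the set $\{\alpha:K\cap U_\alpha\ne\emptyset\}$ is finite, say $\{\alpha_1,\dots,\alpha_m\}$, so $K\cap V_a\ne\emptyset$ forces $\alpha(a)\in\{\alpha_1,\dots,\alpha_m\}$ and hence $a\in\bigcup_{i\le m}F_{\alpha_i}$, a finite set. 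This shows $D_0$ is strongly (strictly) compact-finite and completes the argument.
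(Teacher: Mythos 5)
Your proof is correct and takes essentially the same route as the paper's: the paper likewise builds the fan from the family of singletons padded by empty sets in one direction, and in the other direction uses the finiteness of $\Lambda=\{\alpha:x\in F_\alpha\}$ at a point $x$ of non-local finiteness to produce a non-closed union, transferring the $\IR$-open (functional) neighborhoods $U_\alpha$ exactly as you do. The only cosmetic difference is that you remove just the point $x$ from $\bigcup_{\alpha\in\lambda}F_\alpha$, whereas the paper discards all sets $F_\alpha$ with $\alpha\in\Lambda$; both variants work.
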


\begin{proof} To prove the ``only if'' part, assume that $(F_\alpha)_{\alpha\in\lambda}$ is a (strong, strict) $\Fin^\lambda$-fan in $X$. Let $x\in X$ be a point at which the fan is not locally finite and let $\Lambda=\{\alpha\in\lambda:x\in F_\alpha\}$.

Since each set $F_\alpha$ is finite, the union $F=\bigcup_{\alpha\in\lambda\setminus\Lambda}F_\alpha$ has cardinality $|F|\le\lambda$ and is not closed in $X$ (as $x\in \bar F\setminus F$).
If the fan $(F_\alpha)_{\alpha\in\lambda}$ is strictly or strongly compact-finite, then so is the set $F$.
\smallskip

To prove the ``if'' part, assume that $F$ is a (strongly, strictly) closed non-closed subsets of cardinality $|F|\le\lambda$. Fix any bijective map $f:|F|\to F$ and for every ordinal $\alpha<\lambda$ put $F_\alpha=\{f(\alpha)\}$ if $\alpha<|F|$ and $F_\alpha=\emptyset$ otherwise. It can be shown that $(F_\alpha)_{\alpha\in\lambda}$ is a (strong, strict) $\Fin^\lambda$-fan in $X$.
\end{proof}

\section{(Strongly) compact-finite families in $\aleph$-spaces}

We recall that a regular $T_1$-space $X$ is an \index{topological space!$\aleph_k$-space}{\em $\aleph$-space} if $X$ has a $\sigma$-locally finite $k$-network $\mathcal N$. The latter means that the $k$-network $\mathcal N$ can be written as the countable union $\mathcal N=\bigcup_{i\in\w}\mathcal N_i$ of locally finite families $\mathcal N_i$, $i\in\IN$.

\begin{proposition}\label{p:aleph-nice-k-network} Each $\aleph$-space $X$ has a $\sigma$-locally finite $k$-network, which is closed under finite intersections and consists of closed subsets of $X$.
\end{proposition}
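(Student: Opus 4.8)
The plan is to start from a given $\sigma$-locally finite $k$-network $\mathcal N=\bigcup_{i\in\w}\mathcal N_i$ on the $\aleph$-space $X$, with each $\mathcal N_i$ locally finite, and to modify it in two steps so that it becomes a family of closed sets which is closed under finite intersections, while preserving both the $k$-network property and $\sigma$-local finiteness.

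First I would replace every member of $\mathcal N$ by its closure. Put $\mathcal M_i=\{\bar N:N\in\mathcal N_i\}$ and $\mathcal M=\bigcup_{i\in\w}\mathcal M_i$. Since each point has an open neighborhood meeting only finitely many sets of $\mathcal N_i$, and an open set meets $\bar N$ only if it meets $N$, each family $\mathcal M_i$ is again locally finite, so $\mathcal M$ is $\sigma$-locally finite. To see that $\mathcal M$ remains a $k$-network, fix a compact set $K$ contained in an open set $U$. Using the regularity of the $\aleph$-space $X$, choose an open set $V$ with $K\subset V\subset\bar V\subset U$, and apply the $k$-network property of $\mathcal N$ to obtain a finite subfamily $\mathcal F_0\subset\mathcal N$ with $K\subset\bigcup\mathcal F_0\subset V$. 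Then $\mathcal F=\{\bar N:N\in\mathcal F_0\}\subset\mathcal M$ satisfies $K\subset\bigcup\mathcal F_0\subset\bigcup\mathcal F\subset\bar V\subset U$, because $N\subset V$ implies $\bar N\subset\bar V$.

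Second I would close $\mathcal M$ under finite intersections. Let $\mathcal N'$ consist of all sets $N_1\cap\dots\cap N_n$ with $n\ge 1$ and $N_j\in\mathcal M$. Being intersections of closed sets, all members of $\mathcal N'$ are closed, and $\mathcal N'$ is by construction closed under finite intersections. Since $\mathcal M\subset\mathcal N'$ and enlarging a $k$-network cannot destroy the $k$-network property (for a given $K\subset U$ the witnessing finite subfamily may still be taken inside $\mathcal M$), the family $\mathcal N'$ is again a $k$-network. It remains only to verify that $\mathcal N'$ is $\sigma$-locally finite.

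For this last point I would group the intersections by their index pattern. For each finite sequence $(i_1,\dots,i_n)\in\w^{<\w}$ set
\[
\mathcal M_{(i_1,\dots,i_n)}=\Big\{N_1\cap\dots\cap N_n: N_j\in\mathcal M_{i_j}\text{ for }j=1,\dots,n\Big\}.
\]
Given a point $x$, choose for each $j\le n$ an open neighborhood $O_j$ of $x$ meeting only finitely many members of $\mathcal M_{i_j}$; then $O=\bigcap_{j=1}^n O_j$ meets a set $N_1\cap\dots\cap N_n$ only if every factor $N_j$ meets $O$, so only finitely many members of $\mathcal M_{(i_1,\dots,i_n)}$ meet $O$, and this family is locally finite. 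Since $\w^{<\w}$ is countable and $\mathcal N'=\bigcup_{(i_1,\dots,i_n)\in\w^{<\w}}\mathcal M_{(i_1,\dots,i_n)}$, the family $\mathcal N'$ is a countable union of locally finite families, as required. The main (though modest) obstacle is precisely this counting step: one must check that intersecting members drawn from several different locally finite levels does not destroy local finiteness, which succeeds because a single point controls each level separately and there are only countably many finite index patterns to combine.
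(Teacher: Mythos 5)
Your proof is correct and follows essentially the same route as the paper's: replace each member of the given $\sigma$-locally finite $k$-network by its closure (using regularity to preserve the $k$-network property), then adjoin all finite intersections, grouped into countably many locally finite subfamilies indexed by finite patterns of levels. The only cosmetic difference is that the paper first closes each level $\mathcal N_i$ under finite intersections and then intersects across distinct levels indexed by $F\in[\omega]^{<\omega}$, whereas you index by sequences in $\omega^{<\omega}$ allowing repeated levels; the local-finiteness counting argument is identical.
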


\begin{proof}  Fix a $\sigma$-locally finite $k$-network $\mathcal A=\bigcup_{i\in\w}\A_i$ in $X$. Replacing each set $A\in\A$ by its closure, we can assume that the family $\A$ consists of closed subsets of $X$.

Replacing each locally finite subfamily $\A_i$ by the locally finite family $\{\bigcap\F:\F\in[\A_i]^{<\w}\}$, we can assume that each $\A_i$ is closed under finite intersections. Observe that for any finite subset $F\subset \w$ the family $\A_F=\{\bigcap_{i\in F}A_i:(A_i)_{i\in F}\in\prod_{i\in F}\A_i\}$ is locally finite. Consequently, the family $\bigcup_{F\in[\w]^{<\w}}\A_F$ is a $\sigma$-locally finite $k$-network, closed under finite intersections and consisting of closed subsets of $X$.
\end{proof}

\begin{proposition}\label{p:aleph-strong-fan} Each compact-finite family $(X_n)_{n\in\w}$ of subsets of a Tychonoff $\aleph$-space $X$ is strongly compact-finite.
\end{proposition}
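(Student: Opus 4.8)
The plan is to produce, for each $n\in\w$, an open neighbourhood $U_n\supseteq X_n$ such that $(U_n)_{n\in\w}$ is still compact-finite. Since $X$ is Tychonoff, every open set is $\IR$-open (as noted after Definition~\ref{d:fans}), so these $U_n$ are automatically $\IR$-open and hence witness that $(X_n)_{n\in\w}$ is strongly compact-finite. First I would invoke Proposition~\ref{p:aleph-nice-k-network} to fix a $\sigma$-locally finite closed $k$-network $\mathcal{N}=\bigcup_{i\in\w}\mathcal{N}_i$ that is closed under finite intersections, with each $\mathcal{N}_i$ locally finite. I would also record the standard fact that compact subsets of an $\aleph$-space are metrizable (their traces of $\mathcal{N}$ form a countable network on a compact Hausdorff space); this lets me extract convergent sequences inside any compact $K\subset X$, and it reduces compact-finiteness of a family to the requirement that it meet every convergent sequence together with its limit in only finitely many members.

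For the construction I would \emph{swell} the network. As an $\aleph$-space $X$ is paracompact, hence normal, so each locally finite family $\mathcal{N}_i$ of closed sets expands to a locally finite family $(G_N)_{N\in\mathcal{N}_i}$ of open sets with $N\subseteq G_N$. To keep the members that will actually be used small, for each point $x\in X_n$ I would exploit that the (countable, decreasing under finite intersections) subfamily $\{N\in\mathcal{N}:x\in N\}$ is a point-network at $x$, choosing the network elements $N\ni x$ as small as needed. I would then set $U_n=\bigcup\{\,G_N: N\in\mathcal{N},\ N\cap X_n\neq\emptyset\,\}$. Because $\mathcal{N}$ is a network, every point of $X_n$ lies in some such $N$, so $X_n\subseteq U_n$, and $U_n$ is open as a union of open sets.

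The heart of the proof, and the step I expect to be the main obstacle, is verifying that $(U_n)_{n\in\w}$ is compact-finite. I would argue by contradiction: if a compact (hence metrizable) $K$ meets $U_n$ for infinitely many $n$, choose $z_n\in K\cap U_n$, so that $z_n\in G_{N_n}$ with $N_n\cap X_n\neq\emptyset$, and pick $w_n\in N_n\cap X_n$; passing to a subsequence, $z_{n_k}\to z^*\in K$. The two genuine difficulties are (a) that a single network element $N$ may meet infinitely many $X_n$ while $G_N$ meets $K$, which would break the argument, and (b) upgrading the mere clustering of the auxiliary points $w_{n_k}$ to honest convergence in $X$ rather than only inside $K$. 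Both are addressed by the smallness built into the construction together with the level-wise local finiteness of $(G_N)_{N\in\mathcal{N}_i}$: a neighbourhood of $z^*$ meets only finitely many $G_N$ at each level, which (after replacing the $N_{n_k}$ by arbitrarily small network elements around $z^*$, available since $\mathcal{N}$ is a $k$-network closed under finite intersections) forces $w_{n_k}\to z^*$. Then $\{z^*\}\cup\{w_{n_k}:k\in\w\}$ is a convergent sequence, hence compact, meeting infinitely many of the $X_n$, contradicting the compact-finiteness of $(X_n)_{n\in\w}$ and completing the proof.
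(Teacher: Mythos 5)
Your overall strategy (expand each $X_n$ to an open set $U_n$ and verify that $(U_n)_{n\in\w}$ is compact-finite, using that open sets in Tychonoff spaces are $\IR$-open) is the same as the paper's, but your construction of $U_n$ has a fatal defect, and it is precisely your worry (a). You set $U_n=\bigcup\{G_N: N\in\mathcal{N},\ N\cap X_n\ne\emptyset\}$, a union over \emph{all} network elements meeting $X_n$, and nothing in Proposition~\ref{p:aleph-nice-k-network} prevents $\mathcal{N}$ from containing elements that meet infinitely many of the $X_n$. Concretely, take $X=\IR$ and $X_n=\{n\}$ (a locally finite, hence compact-finite family), and let $\mathcal{N}$ consist of all closed intervals and closed rays with rational endpoints, together with $\IR$ and $\emptyset$: this family is countable (hence automatically $\sigma$-locally finite), consists of closed sets, is closed under finite intersections, and is a $k$-network. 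The element $N=[0,\infty)$ meets every $X_n$, so $G_N\subset U_n$ for all $n$, and the singleton $K=\{0\}$ then meets every $U_n$; compact-finiteness fails. Your proposed remedy --- choosing the elements $N\ni x$ ``as small as needed'' --- never actually enters the definition of $U_n$, and there is no evident way to implement it, since the choice of small elements would have to be coordinated across all $n$ simultaneously against all compact sets; that coordination is exactly the difficulty the proof must overcome. The paper does something genuinely different at this point: it forms $F_n=\bigcup\{A\in\bigcup_{i\le n}\A_i:\ A\cap X_n=\emptyset\}$, the union of all \emph{low-level} network elements \emph{missing} $X_n$ (closed, because the truncated family $\bigcup_{i\le n}\A_i$ is locally finite), and puts $V_n=X\setminus F_n$. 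This $V_n$ contains $X_n$ automatically, its smallness is enforced by subtraction rather than by selection, and compact-finiteness of $(V_n)_{n\in\w}$ is then deduced from the $k$-network property: inside the countable subfamily of $\A$ meeting a given compact set $K$ one builds a decreasing sequence of finite covers of $K$ converging to $K$, one of which consists of low-level elements meeting only finitely many $X_n$, and all but finitely many $V_n$ then miss $K$.

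A second, independent gap is the claim ``as an $\aleph$-space $X$ is paracompact, hence normal.'' This is unjustified: a Tychonoff $\aleph$-space need not be normal, and the paper itself is careful about exactly this point --- in Corollary~\ref{c:ct-reduct} normality of the $\aleph$-space is an \emph{additional} hypothesis (needed to upgrade strong fans to strict ones), which would be redundant if it followed from the $\aleph$-property. Your swelling step, which expands each locally finite closed family $\mathcal{N}_i$ to a locally finite open family $(G_N)_{N\in\mathcal{N}_i}$, requires normality, so even apart from the first gap your construction is not available in the stated generality. Note that the paper's proof uses the Tychonoff hypothesis exactly once and for the weakest possible purpose --- to conclude that the open sets $V_n$ are $\IR$-open --- and needs neither normality, nor metrizability of compact subsets, nor any argument with convergent sequences.
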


\begin{proof} By Proposition~\ref{p:aleph-nice-k-network}, the $\aleph$-space $X$ has a $\sigma$-locally finite network $\mathcal A=\bigcup_{i\in\w}\mathcal A_i$ which is closed under finite intersections and consists of closed subsets of $X$.

Let $(X_n)_{n\in \w}$ be a compact-finite family of subsets of $X$. For every $n\in\w$ consider the set $F_n=\bigcup\{A\in\bigcup_{i\le n}\A_i:A\cap X_n=\emptyset\}$ of $X$. The set $F_n$ is closed in $X$, being the union of a locally finite family of closed subsets of $X$.

Since the set $F_n$ is disjoint with $X_n$, its complement $V_n=X\setminus F_n$ is an open neighborhood of $X_n$.
Since the space $X$ is Tychonoff, the open set $V_n$ is $\IR$-open in $X$.

 We claim that the family $(V_n)_{n\in\w}$ is compact-finite in $X$. Given any compact subset $K\subset X$, consider the subfamily $\mathcal A_K=\{A\in\mathcal A:K\cap A\ne\emptyset\}$ and observe that it is at most countable (because $\A$ is $\sigma$-locally finite). Let $[\A_K]^{<\w}$ be the family of finite subsets of $\A_K$ and $\mathcal B_K=\{\F\in[\A_K]^{<\w}:K\subset\bigcup\F\}$.

The family $\mathcal B_K$, being countable, can be enumerated as $\mathcal B_K=\{\F_n\}_{n\in\w}$. For every $n\in\w$ consider the set $B_n=\bigcap_{i\le n}\bigcup\F_i$. It follows that $B_n$ can be written as the finite union of finite intersections of sets of the family $\A_K$. Since the family $\mathcal A$ is closed under finite intersections, we conclude that $B_n=\bigcup\F_m$ for some $m\in\w$. We claim that the sequence $(B_n)_{n\in\w}$ tends to $K$ in the sense that each neighborhood $O_K\subset X$ of $X$ contains all but finitely many sets $B_n$. Indeed, since $\mathcal A$ is a $k$-network, there is a finite subfamily $\F\subset\mathcal A_K$ such that $K\subset\bigcup\F\subset O_K$ and hence $\F=\F_n$ for some $n\in\w$. Then for every $m\ge n$, we get $B_m\subset \bigcup\F_n=\bigcup\F\subset O_K$. So, the sequence $(B_n)_{n\in\w}$ converges to $K$.

Next, we show that for some $k\in\w$ the set $\{n\in\w:B_k\cap X_n\ne\emptyset\}$ is finite. Assuming the opposite, we could construct an increasing number sequence $(n_k)_{k\in\w}$ and a sequence of points $b_k\in B_k\cap X_{n_k}$, $k\in\w$. The convergence of the sequence $(B_k)_{k\in\w}$ to $K$ implies that the set $B=K\cup\{b_k\}_{k\in\w}$ is compact. This compact set meets each set $X_{n_k}$, $k\in\w$, which is not possible as the family $(X_n)_{n\in\w}$ is compact-finite in $X$. This contradiction implies that for some $k\in\w$ the set $\Omega=\{n\in\w:B_k\cap X_n\ne\emptyset\}$ is finite.
It follows that $B_k=\bigcup\F_l$ for some $l\in\w$ and $\F_l\subset \bigcup_{i\le m}\A_i$ for some $m\in\w$. Then the set $\{n\in\w:K\cap V_n\ne\emptyset\}\subset \{n\in \w:B_k\not\subset F_n\}\subset \{0,\dots,m\}\cup\Omega$ is finite, which means that the family $(V_n)_{n\in\w}$ is compact-finite, and the sequence $(X_n)_{n\in\w}$ is strongly compact-finite in $X$.
\end{proof}

\begin{corollary}\label{c:fan-in-aleph-space} Each countable fan $(X_n)_{n\in\w}$ in a Tychonoff $\aleph$-space $X$ is strong.
\end{corollary}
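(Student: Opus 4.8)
The plan is to observe that the corollary is an immediate specialization of Proposition~\ref{p:aleph-strong-fan}, once the definitions are unwound. First I would recall that, by Definition~\ref{d:fan}, a countable fan $(X_n)_{n\in\w}$ in $X$ is precisely an indexed family of subsets of $X$ that is compact-finite but fails to be locally finite, and that such a fan is called \emph{strong} exactly when each $X_n$ admits an $\IR$-open neighborhood $U_n\subset X$ with the family $(U_n)_{n\in\w}$ compact-finite. Comparing this with the definition of a strongly compact-finite family in Definition~\ref{d:fans}, I would note that ``the fan $(X_n)_{n\in\w}$ is strong'' means nothing more nor less than ``the family $(X_n)_{n\in\w}$ is strongly compact-finite.''

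With this translation in hand, the argument is a one-step deduction. Since $(X_n)_{n\in\w}$ is a fan, it is in particular a compact-finite family of subsets of the Tychonoff $\aleph$-space $X$. Proposition~\ref{p:aleph-strong-fan} asserts that every such family is strongly compact-finite, so I would simply apply it to $(X_n)_{n\in\w}$ and conclude that each $X_n$ has an $\IR$-open neighborhood $U_n$ making $(U_n)_{n\in\w}$ compact-finite. By the equivalence recorded in the first step, this says exactly that the fan is strong, which is the assertion to be proved.

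There is essentially no obstacle to overcome at this level: the entire substance of the result lies in Proposition~\ref{p:aleph-strong-fan}, whose proof does the real work of manufacturing the neighborhoods $U_n$ from a $\sigma$-locally finite closed $k$-network that is closed under finite intersections. The only point that requires care is matching the two definitions verbatim---in particular, checking that the ``not locally finite'' clause in the definition of a fan plays no role in the strong property, so that the compact-finiteness of $(X_n)_{n\in\w}$ is precisely the input that Proposition~\ref{p:aleph-strong-fan} requires.
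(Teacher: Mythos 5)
Your proposal is correct and is exactly how the paper obtains this corollary: it is stated as an immediate consequence of Proposition~\ref{p:aleph-strong-fan}, since a countable fan is in particular a compact-finite family $(X_n)_{n\in\w}$, and being a strong fan is by definition the same as being a fan that is strongly compact-finite. Your careful matching of Definition~\ref{d:fan} with Definition~\ref{d:fans}, including the observation that the ``not locally finite'' clause is irrelevant to strongness, is precisely the (unwritten) content of the paper's deduction.
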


For a topological space $X$ its \index{tightness}{\em tightness} $t(X)$ is defined as the smallest cardinal $\lambda$ such that for each subset $A\subset X$ and point $a\in\bar A$ there is a subset $B\subset X$ of cardinality $|B|\le\lambda$ such that $x\in \bar B$. Spaces with countable tightness are called \index{topological space!countably tight}{\em countably tight}.

\begin{lemma} For any $\Cld$-fan $(F_\alpha)_{\alpha\in\lambda}$ in $X$ there is a subset $\Lambda\subset\lambda$ of cardinality $|\Lambda|\le t(X)$ such that $(F_\alpha)_{\alpha\in\Lambda}$ is a $\Cld$-fan in $X$.
\end{lemma}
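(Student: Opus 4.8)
The plan is to localize the failure of local finiteness at a single point and then use tightness to thin the index set while keeping that failure intact. First I would fix a point $x\in X$ at which the family $(F_\alpha)_{\alpha\in\lambda}$ is not locally finite; by definition this means that every neighborhood of $x$ meets $F_\alpha$ for infinitely many indices $\alpha$. Since the singleton $\{x\}$ is compact and the family is compact-finite, the set $\Lambda_0=\{\alpha\in\lambda:x\in F_\alpha\}$ is finite. Discarding it costs nothing: every neighborhood of $x$ still meets $F_\alpha$ for infinitely many $\alpha\in\lambda\setminus\Lambda_0$, and in particular meets $A:=\bigcup_{\alpha\in\lambda\setminus\Lambda_0}F_\alpha$. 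Thus $x\in\bar A$, whereas $x\notin A$ because $x\notin F_\alpha$ for every $\alpha\notin\Lambda_0$.

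The second step converts this closure condition into a small index set via tightness. Applying the definition of $t(X)$ to the set $A$ and the point $x\in\bar A$, I obtain $B\subset A$ with $|B|\le t(X)$ and $x\in\bar B$. For each $b\in B$ I choose an index $\alpha_b\in\lambda\setminus\Lambda_0$ with $b\in F_{\alpha_b}$, and set $\Lambda=\{\alpha_b:b\in B\}\subset\lambda$. Then $|\Lambda|\le|B|\le t(X)$, and since $B\subset\bigcup_{\alpha\in\Lambda}F_\alpha$ we get $x\in\bar B\subset\overline{\bigcup_{\alpha\in\Lambda}F_\alpha}$.

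It remains to verify that $(F_\alpha)_{\alpha\in\Lambda}$ is again a $\Cld$-fan. Compact-finiteness is inherited automatically by any subfamily, so the only real point, and the step I expect to demand the most care, is checking that the thinned family is still \emph{not} locally finite at $x$. I would argue by contradiction: if some neighborhood $O_x$ met only finitely many sets $F_{\alpha_1},\dots,F_{\alpha_k}$ with $\alpha_i\in\Lambda$, then, using crucially that each $\alpha_i\notin\Lambda_0$ (so $x\notin F_{\alpha_i}$) and that the $F_{\alpha_i}$ are closed, the set $O_x\setminus(F_{\alpha_1}\cup\cdots\cup F_{\alpha_k})$ would be a neighborhood of $x$ disjoint from $\bigcup_{\alpha\in\Lambda}F_\alpha$, contradicting $x\in\overline{\bigcup_{\alpha\in\Lambda}F_\alpha}$. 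This is precisely where excluding the finite set $\Lambda_0$ in the first step earns its keep. As a by-product $\Lambda$ is necessarily infinite, since a finite family of closed sets is always locally finite; hence $(F_\alpha)_{\alpha\in\Lambda}$ is a genuine $\Cld$-fan of the required cardinality.
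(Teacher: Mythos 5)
Your proof is correct and follows essentially the same route as the paper's: fix a point of non-local-finiteness, discard the finite set of indices $\alpha$ with $x\in F_\alpha$, apply tightness to the union of the remaining sets, and select one index per point of the small dense-at-$x$ subset. Your final contradiction argument (using closedness of the $F_\alpha$ and $x\notin F_\alpha$) just spells out the step the paper leaves implicit in "It follows that the family $(F_\alpha)_{\alpha\in B}$ is not locally finite at $x$."
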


\begin{proof} By Definition~\ref{d:fan}, the $\Cld$-fan $(F_\alpha)_{\alpha\in\lambda}$ is not locally finite at some point $x\in X$. Since this family is compact-finite, the set $\Lambda=\{\alpha\in\lambda:x\in F_\alpha\}$ is finite.
Consider the union $F=\bigcup_{\alpha\in\lambda\setminus\Lambda}F_\alpha$ and observe that $x\in\bar F\setminus F$. Choose a subset $E\subset F$ of cardinality $|E|\le t(X)$ such that $x\in\bar E$ and find a subset $B\subset\lambda\setminus\Lambda$ of cardinality $|B|\le |E|\le t(X)$ such that $E\subset\bigcup_{\alpha\in B}F_\alpha$. It follows that the family $(F_\alpha)_{\alpha\in B}$ is not locally finite at $x$ and hence is a $\Cld$-fan in $X$.
\end{proof}

\begin{corollary}\label{c:tight} Let $X$ be a topological space and $\F$ be a family of closed subsets of $X$. The space $X$ contains a (strong, strict) $\F$-fan if and only if $X$ contains a (strong, strict) $\F^{t(X)}$-fan.
\end{corollary}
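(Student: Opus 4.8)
The plan is to read the corollary off from the preceding Lemma, supplementing it only with the observations that passing to a subfamily preserves strongness and strictness, and that the index set can afterwards be inflated up to $t(X)$ by inserting empty sets.

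First I would dispose of the backward implication, which is immediate: an $\F^{t(X)}$-fan is by definition already an $\F$-fan, and if it happens to be strong or strict then it is a strong or strict $\F$-fan. Hence the existence of a (strong, strict) $\F^{t(X)}$-fan trivially yields a (strong, strict) $\F$-fan, with no work at all.

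For the forward implication I would start from a (strong, strict) $\F$-fan $(F_\alpha)_{\alpha\in\lambda}$. Since every member of $\F$ is closed, this is in particular a $\Cld$-fan, so the preceding Lemma supplies a subset $B\subseteq\lambda$ with $|B|\le t(X)$ for which $(F_\alpha)_{\alpha\in B}$ is again a fan; its members still lie in $\F$, so it is an $\F$-fan. The strong/strict case then needs just one extra line: if $(U_\alpha)_{\alpha\in\lambda}$ is the compact-finite family of $\IR$-open (resp. functional) neighborhoods witnessing the property, then the subfamily $(U_\alpha)_{\alpha\in B}$ is contained in it coordinatewise and hence is again compact-finite, witnessing the same property for the subfan. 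This reduces matters to a (strong, strict) $\F$-fan whose index set has cardinality at most $t(X)$.

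The final step is to re-index this subfan exactly onto the cardinal $t(X)$. I would fix an injection of $B$ into $t(X)$, transport the sets $F_\alpha$ along it, and fill the remaining coordinates with $\emptyset$; since adjoining empty sets affects neither compact-finiteness nor the failure of local finiteness, and since $\emptyset$ has the $\IR$-open (and functional, via the constant function $1$) neighborhood $\emptyset$, the resulting $t(X)$-indexed family is a (strong, strict) $\F^{t(X)}$-fan. The one point needing genuine care — and the only place where anything could go wrong — is precisely this padding: the Lemma controls the index cardinality only up to $t(X)$, whereas the statement demands the exact index $t(X)$, and the step is legitimate only because $\emptyset$ belongs to the families $\Fin$, $\Cld$, and $\Clop$ at issue here (in general one assumes $\emptyset\in\F$).
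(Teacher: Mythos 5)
Your proof is correct and follows essentially the same route the paper intends: the corollary is read off from the preceding Lemma, supplemented by the observations that the witnessing compact-finite neighborhoods $(U_\alpha)_{\alpha\in B}$ restrict to subfamilies (so strongness/strictness survives) and that the index set can be inflated to exactly $t(X)$ by adjoining empty sets — the same padding device the paper itself uses in the proof of Proposition~\ref{p:Fin-fan-char}. Your closing caveat, that the padding step needs $\emptyset\in\F$ (true for $\Fin$, $\Cld$, $\Clop$ and every family to which the paper applies the corollary), is a fair remark about the statement's precision rather than a gap in the argument.
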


Corollary~\ref{c:tight} combined with Proposition~\ref{p:aleph-strong-fan} implies the following characterization.

\begin{corollary}\label{c:ct-reduct} Let $X$ be a countably tight Tychonoff $\aleph$-space, $\F$ be a family of closed subsets of $X$. The following conditions are equivalent:
\begin{enumerate}
\item[\textup{1)}] $X$ contains an $\F$-fan;
\item[\textup{2)}] $X$ contains an $\F^\w$-fan;
\item[\textup{3)}] $X$ contains a strong $\F$-fan;
\item[\textup{4)}] $X$ contains a strong $\F^\w$-fan.
\end{enumerate}
If the space $X$ is normal, then the conditions \textup{(1)--(4)} are equivalent to:
\begin{enumerate}
\item[\textup{5)}] $X$ contains a strict $\F$-fan;
\item[\textup{6)}] $X$ contains a strict $\F^\w$-fan.
\end{enumerate}
\end{corollary}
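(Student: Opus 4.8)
The plan is to collapse the index cardinal down to $\w$ using countable tightness, then to promote a plain countable fan to a strong one via the $\aleph$-space structure, and finally (under normality) to promote a strong fan to a strict one by Urysohn's lemma.

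Since $X$ is countably tight we have $t(X)\le\w$; moreover every fan is indexed by an infinite cardinal (a finite family of sets is automatically locally finite), so in fact $t(X)=\w$ whenever any of the conditions holds. Applying Corollary~\ref{c:tight} to the family $\F$ of closed subsets then gives the three equivalences (1)$\Leftrightarrow$(2), (3)$\Leftrightarrow$(4) and, in the normal case, (5)$\Leftrightarrow$(6), each time identifying the $\F^{t(X)}$-fan of that corollary with an $\F^\w$-fan. The implications strict $\Ra$ strong $\Ra$ fan, together with the observation that an $\F^\w$-fan is in particular an $\F$-fan, supply for free the downward implications (6)$\Ra$(4)$\Ra$(2), (5)$\Ra$(3)$\Ra$(1) and (4)$\Ra$(2), (3)$\Ra$(1).

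The one substantial step is (2)$\Ra$(4). Given an $\F^\w$-fan $(F_n)_{n\in\w}$, this is precisely a countable fan in the Tychonoff $\aleph$-space $X$, so Corollary~\ref{c:fan-in-aleph-space} (which in turn rests on Proposition~\ref{p:aleph-strong-fan}) guarantees that it is strong; that is, it is a strong $\F^\w$-fan. Feeding this back through the tightness equivalences closes the loop (1)$\Leftrightarrow$(2)$\Leftrightarrow$(4)$\Leftrightarrow$(3), so conditions (1)--(4) are equivalent. This is where the real content sits; the rest is bookkeeping.

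For normal $X$ it remains to derive (3)$\Ra$(5). I would start from a strong $\F$-fan with $\IR$-open neighborhoods $U_\alpha\supset F_\alpha$ whose family $(U_\alpha)$ is compact-finite. Each $F_\alpha$ belongs to $\F$ and is therefore closed, while $U_\alpha$ is open, so $F_\alpha$ and $X\setminus U_\alpha$ are disjoint closed sets; normality yields (by Urysohn's lemma) a continuous map $f_\alpha:X\to[0,1]$ with $f_\alpha(F_\alpha)\subset\{0\}$ and $f_\alpha(X\setminus U_\alpha)\subset\{1\}$, so $U_\alpha$ is in fact a functional neighborhood of $F_\alpha$. As the same compact-finite family $(U_\alpha)$ now consists of functional $\IR$-open neighborhoods, the fan is strict, which is (5); together with (5)$\Leftrightarrow$(6) and strict $\Ra$ strong this makes all six conditions coincide. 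The only delicate point I expect is the interface between the reduction to countable index and Corollary~\ref{c:fan-in-aleph-space}, so I would make sure that the $\F^\w$-fan produced by the reduction really is a countable fan to which that corollary literally applies.
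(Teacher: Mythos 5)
Your proof is correct and follows essentially the same route as the paper: Corollary~\ref{c:tight} for the equivalences $(1)\Leftrightarrow(2)$, $(3)\Leftrightarrow(4)$, $(5)\Leftrightarrow(6)$, Proposition~\ref{p:aleph-strong-fan} (via Corollary~\ref{c:fan-in-aleph-space}) for $(2)\Ra(4)$, and the fact that in a normal space open neighborhoods of closed sets are functional for the strict case. The only cosmetic difference is that you close the normal case via $(3)\Ra(5)$ where the paper uses $(4)\Ra(6)$; both rest on the identical Urysohn-lemma observation.
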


\begin{proof} The equivalences $(1)\Leftrightarrow(2)$, $(3)\Leftrightarrow(4)$, and $(5)\Leftrightarrow(6)$ are proved in Corollary~\ref{c:tight} and the implications $(5)\Ra(3)\Ra(1)$ are trivial.
The implication $(2)\Ra(4)$ follows from Proposition~\ref{p:aleph-strong-fan}.
The implication $(4)\Ra(6)$ follows from the fact that each open neighborhood of a closed subset $F$ of a normal space $X$ is a functional neighborhood of $F$ in $X$.
\end{proof}

\begin{remark} An example of a paracompact $\aleph$-space, which is not countably tight can be found in \textup{\cite[3.9]{GK15}}.
\end{remark}

\section{Extension operators and $C_k$-embedded subsets}

For a subspace $Z$ of a topological space $X$ a map $E:C_k(Z)\to C_k(X)$ will be called an \index{extension operator}{\em extension operator} if for any function $f\in C_k(Z)$ the function
$Ef\in C_k(X)$ restricted to $Z$ equals $f$. Such operator will be called \index{extension operator!$0$-continuous} {\em $0$-continuous} if for every compact subset $K\subset X$ there is a compact set $B\subset Z$ such that for any function $f\in C_k(Z)$ with $f|B=0$ we get $Ef|K=0$.

\begin{proposition}\label{p:lin->0} Let $Z$ be a subspace of a topological space $X$. Each continuous linear extension operator $E:C_k(Z)\to C_k(X)$ is $0$-continuous.
\end{proposition}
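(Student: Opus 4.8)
The plan is to read off the required compact set $B\subset Z$ directly from the continuity of $E$ at the zero function, and then to upgrade the resulting quantitative estimate to an exact vanishing statement by exploiting the homogeneity of the linear operator $E$.

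First I would fix an arbitrary compact set $K\subset X$ and consider the standard basic neighborhood $W=\{g\in C_k(X):\sup_{x\in K}|g(x)|<1\}$ of the zero function in $C_k(X)$. Since $E$ is continuous and linear, it is in particular continuous at $0$, so there is a neighborhood of $0$ in $C_k(Z)$ whose image under $E$ is contained in $W$. Recalling that the compact-open topology on $C_k(Z)$ admits basic neighborhoods of $0$ of the form $\{f\in C_k(Z):\sup_{z\in B}|f(z)|<\delta\}$ (any finite intersection of the subbasic neighborhoods $\{f:f(K_i)\subset(-\e_i,\e_i)\}$ being absorbed into one such set by taking $B=\bigcup_i K_i$ and $\delta=\min_i\e_i$), I may choose a single compact set $B\subset Z$ and a real number $\delta>0$ so that
$$\sup_{z\in B}|f(z)|<\delta\quad\Rightarrow\quad \sup_{x\in K}|Ef(x)|<1 .$$
This $B$ will be the compact set demanded by the definition of $0$-continuity.

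Next I would check that $B$ works. Take any $f\in C_k(Z)$ with $f|B=0$. For every real scalar $t$ the function $tf$ also vanishes on $B$, so $\sup_{z\in B}|tf(z)|=0<\delta$, and the displayed implication yields $\sup_{x\in K}|E(tf)(x)|<1$. By linearity $E(tf)=t\,Ef$, hence $|t|\cdot\sup_{x\in K}|Ef(x)|<1$ for all $t\in\IR$. Letting $|t|\to\infty$ forces $\sup_{x\in K}|Ef(x)|=0$, that is, $Ef|K=0$. Since $K$ was arbitrary, this is exactly the $0$-continuity of $E$.

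I do not expect a genuine obstacle here; the only point requiring care is the bookkeeping that converts the abstract continuity of $E$ into the concrete ``one compact set, one radius'' estimate, after which the homogeneity argument of the last paragraph does all the work. It is worth emphasizing that the proof uses only continuity at $0$ together with the scalar homogeneity of $E$, while the extension property $(Ef)|Z=f$ plays no role in this particular implication.
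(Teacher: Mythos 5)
Your proof is correct and takes essentially the same approach as the paper's: both extract a single compact set $B\subset Z$ and a radius from the continuity of $E$ at zero, so that $E$ maps $\{f:\sup_{z\in B}|f(z)|<\delta\}$ into $\{g:\sup_{x\in K}|g(x)|<1\}$, and then exploit scalar homogeneity of $E$ on functions vanishing on $B$ to conclude $Ef|K=0$. The only cosmetic difference is that the paper reaches a contradiction by scaling with the particular scalar $\lambda=1/Ef(x)$, whereas you let $|t|\to\infty$; these are the same argument.
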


\begin{proof} For every compact set $K\subset X$ the set $[K;1]=\{f\in C_k(X):$ $\sup_{x\in K}|f(x)|<1\}$ is a neighborhood of zero in $C_k(X)$. The continuity of $E$ yields a neighborhood of zero $U\subset C_k(Z)$ such that $E(U)\subset [K,1]$. By the definition of the compact-open topology on $C_k(Z)$, there exist a compact set $B\subset Z$ and a number $\e>0$ such that the set $[B;\e]=\{f\in C_k(Z):\sup_{x\in B}|f(x)|<\e\}$ is contained in $U$. We claim that $Ef|K=0$ for any function $f\in C_k(X)$ with $f|B=0$. Assuming that $Ef(x)\ne 0$ for some $x\in K$, consider the real number $\lambda=1/Ef(x)$ and observe that the function $\lambda\cdot f$ belongs to $[B,\e]$ and hence $\lambda\cdot E(f)=E(\lambda f)\in [K;1]$, which is not possible as $\lambda\cdot f(x)=1$.
\end{proof}

We shall say that a subspace $Z\subset X$ of a topological space $X$ is \index{subset!$C_k$-embedded}{\em $C_k$-embedded} if there exists a $0$-continuous extension operator $E:C_k(Z)\to C_k(X)$. An Extension Theorem of Borges \cite{Bor66} implies that every closed subspace of a stratifiable space is $C_k$-embedded.

Let us recall that a regular space $X$ is \index{topological space!stratifiable}{\em stratifiable} if each open set $U\subset X$ can be represented as the union $U=\bigcup_{n\in\w}U_n$ of a sequence $(U_n)_{n\in\w}$ of open sets $U_n$ such that $\overline{U}_n\subset U_{n+1}$ for every $n\in\w$ and $V_n\subset U_n$ for any open sets $V\subset U$ in $X$ and any $n\in\w$. Stratifiable spaces were introduced by Ceder \cite{Ceder} and named by Borges \cite{Bor66}. They form an important class of generalized metric spaces, see \cite[\S5]{Grue}. It is known \cite[5.7]{Grue} that stratifiable spaces are perfectly paracompact and hence are normal. The following deep fact was proved by Borges in \cite{Bor66} (see also Corollary~\ref{c:s->C_k-emb}).

 \begin{proposition}\label{p:Borges} For any closed subspace $Z$ of a stratifiable space $X$ there exists a linear continuous extension operator $E:C_k(Z)\to C_k(X)$.
Consequently, $Z$ is $C_k$-embedded in $X$.
\end{proposition}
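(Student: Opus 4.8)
The plan is to produce the operator $E$ by a Dugundji-type construction adapted to the stratification of $X$, and then to read off the $C_k$-embedding from the preceding proposition. The final assertion is in fact immediate: once a continuous linear extension operator $E\colon C_k(Z)\to C_k(X)$ has been constructed, Proposition~\ref{p:lin->0} shows that $E$ is $0$-continuous, which is precisely the definition of $Z$ being $C_k$-embedded in $X$. So the whole task reduces to constructing $E$.

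For the construction I would follow the classical Dugundji extension scheme, using the stratification of $X$ in the role of a metric. Write $W=X\setminus Z$; since stratifiability is hereditary, $W$ is again stratifiable, hence paracompact. Using the assignment $U\mapsto(U_n)_{n\in\w}$ from the definition of stratifiability, the goal is to manufacture a \emph{Dugundji system} for the pair $(X,Z)$: a locally finite (in $W$) open cover $(V_\alpha)_\alpha$ of $W$, a subordinated partition of unity $(\phi_\alpha)_\alpha$ with $\supp\phi_\alpha\subset V_\alpha$, and selected points $z_\alpha\in Z$, enjoying the boundary-nearness property that for every $z\in Z$ and every neighborhood $O\ni z$ in $X$ there is a neighborhood $O'\ni z$ with $V_\alpha\cap O'\ne\emptyset\Ra z_\alpha\in O$. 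The monotone control furnished by the stratification (the analogue of ``the size of a canonical neighborhood is comparable to its distance to $Z$'') is exactly what makes such a selection possible. Given the system, one defines
$$Ef(x)=\begin{cases} f(x),& x\in Z,\\ \sum_\alpha\phi_\alpha(x)\,f(z_\alpha),& x\in W,\end{cases}$$
which is visibly linear in $f$ and restricts to $f$ on $Z$.

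It then remains to check the two continuity statements. Continuity of $Ef$ at points of $W$ is automatic, the defining sum being locally finite; continuity at a point $z\in Z$ follows from boundary-nearness, which forces the contributing values $f(z_\alpha)$ to lie near $f(z)$. For continuity of $E$ as a map $C_k(Z)\to C_k(X)$, linearity reduces matters to an estimate on a compact $K\subset X$: from $\sum_\alpha\phi_\alpha\equiv 1$ one gets $\sup_K|Ef|\le\sup_B|f|$, where $B$ is the closure in $Z$ of $(K\cap Z)\cup\{z_\alpha:V_\alpha\cap K\ne\emptyset\}$. The key point is that boundary-nearness together with local finiteness of $(V_\alpha)_\alpha$ in $W$ forces the selected points meeting $K$ to cluster only towards the compact set $K\cap Z$, so that $B$ is compact; then $E([B;1])\subset[K;1]$, yielding continuity of $E$ at $0$ and hence everywhere.

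I expect the genuine obstacle to be the construction of the Dugundji system itself, that is, extracting from the order-theoretic stratification data a cover of $W$ with selected points of $Z$ satisfying boundary-nearness; this is exactly where Borges's theorem does its real work and where the bare metric intuition must be replaced by the monotone neighborhood assignment. The relative compactness of the selected points over a compact $K$, needed for the compact-open continuity, is the second delicate point, and it rests on the same boundary-nearness property. Since this is precisely the content of Borges's extension theorem \cite{Bor66}, I would in a first pass simply invoke it, deferring the self-contained argument to Corollary~\ref{c:s->C_k-emb}.
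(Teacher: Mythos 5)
Your proposal is correct, and in its final form it coincides with what the paper actually does for this proposition: the paper offers no proof at all, presenting the statement as a ``deep fact proved by Borges in \cite{Bor66}'', and your reduction of the ``Consequently'' clause to Proposition~\ref{p:lin->0} is exactly the intended logic. Where you genuinely diverge is in the self-contained argument you sketch before deferring to the citation. You outline the classical Dugundji-system route (a locally finite cover of $X\setminus Z$, a subordinated partition of unity, selected points $z_\alpha\in Z$ with boundary-nearness, and $Ef=\sum_\alpha\phi_\alpha\cdot f(z_\alpha)$ off $Z$), which is essentially Borges's original method; but the paper's own self-contained derivation, the one the statement points to via Corollary~\ref{c:s->C_k-emb}, is different. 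It combines Proposition~\ref{p:BB}(1) (every closed subspace of a stratifiable space is an $\HM_k$-valued retract, a fact imported from \cite{BB}) with Proposition~\ref{p:rHM}, where the extension operator is obtained by averaging against the Hartman--Mycielski-valued retraction $r:X\to\HM(Z)$, namely $Ef(y)=\sum_{x\in\bar y([0,1))}\lambda\bigl(\bar y^{-1}(x)\bigr)f(x)$ with $\bar y=r(y)$ and $\lambda$ the Lebesgue measure on $[0,1)$. The compactness difficulty you correctly single out (the selected points over a compact $K$ must stay inside a compact subset of $Z$) is precisely what the $\HM_k$ condition packages: $r(K)\subset\HM(B;Z)$ for some compact $B\subset Z$, which turns the compact-open continuity into the one-line estimate $\sup_K|Ef|\le\sup_B|f|$. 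So your route buys a classical construction that is in principle self-contained, at the price of the delicate extraction of a Dugundji system from the stratification; the paper's route outsources that combinatorial work to \cite{BB}, makes the operator and its continuity nearly trivial, and reuses the same machinery to get analogous $C_k$-embedding results for compact metrizable and Polish subspaces of non-stratifiable spaces (Proposition~\ref{p:BB}(2,3)), which the paper needs elsewhere.
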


For our purposes $C_k$-embedded subspaces are interesting because of the following fact.

\begin{proposition}\label{p:Ck-embed1}  If $Z$ is an $C_k$-embedded subspace in a topological space $X$, then every strictly compact-finite family of sets $(F_\alpha)_{\alpha\in\lambda}$ in $Z$ remains strictly compact-finite in $X$.
\end{proposition}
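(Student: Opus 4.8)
The plan is to transport the witnessing data for strict compact-finiteness from $Z$ to $X$ through the $0$-continuous extension operator, being careful that this operator need not be linear. Since $(F_\alpha)_{\alpha\in\lambda}$ is strictly compact-finite in $Z$, each $F_\alpha$ has a functional neighborhood $U_\alpha\subset Z$, so there is a continuous function $f_\alpha:Z\to[0,1]$ with $f_\alpha(F_\alpha)\subset\{0\}$ and $f_\alpha(Z\setminus U_\alpha)\subset\{1\}$, and the family $(U_\alpha)_{\alpha\in\lambda}$ is compact-finite in $Z$. Let $E:C_k(Z)\to C_k(X)$ be a $0$-continuous extension operator. The key idea is to feed $E$ not $f_\alpha$ itself but its ``complement'' $1-f_\alpha\in C_k(Z)$; write $h_\alpha=E(1-f_\alpha):X\to\IR$, so that $h_\alpha$ is continuous and $h_\alpha|Z=1-f_\alpha$. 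I would then set $V_\alpha=\{x\in X:h_\alpha(x)>0\}$ and let $\phi_\alpha:X\to[0,1]$ be $\phi_\alpha=1-\min\{1,\max\{0,h_\alpha\}\}$, the composition of $h_\alpha$ with the (continuous) truncation $\IR\to[0,1]$ followed by $t\mapsto 1-t$.

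First I would check that each $V_\alpha$ is a functional neighborhood of $F_\alpha$ in $X$ witnessed by $\phi_\alpha$. On $Z$ the truncation acts trivially since $1-f_\alpha$ already takes values in $[0,1]$, so $\phi_\alpha|Z=f_\alpha$; in particular $\phi_\alpha(F_\alpha)=f_\alpha(F_\alpha)\subset\{0\}$. On $X\setminus V_\alpha$ we have $h_\alpha\le 0$, hence the truncation of $h_\alpha$ vanishes and $\phi_\alpha=1$, so $\phi_\alpha(X\setminus V_\alpha)\subset\{1\}$. Thus $\phi_\alpha$ witnesses that $V_\alpha$ is a functional neighborhood of $F_\alpha$ in $X$.

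The main point is to verify that $(V_\alpha)_{\alpha\in\lambda}$ is compact-finite in $X$, and this is where $0$-continuity and the choice of $1-f_\alpha$ do the work. Fix a compact set $K\subset X$ and use $0$-continuity to obtain a compact set $B\subset Z$ such that $g|B=0$ implies $Eg|K=0$ for every $g\in C_k(Z)$. I claim $\{\alpha:K\cap V_\alpha\ne\emptyset\}\subset\{\alpha:B\cap U_\alpha\ne\emptyset\}$. Indeed, if $B\cap U_\alpha=\emptyset$ then $B\subset Z\setminus U_\alpha$, so $f_\alpha|B\equiv 1$ and hence $(1-f_\alpha)|B\equiv 0$; applying $0$-continuity to $g=1-f_\alpha$ gives $h_\alpha|K=E(1-f_\alpha)|K\equiv 0$, so $K\cap V_\alpha=\{x\in K:h_\alpha(x)>0\}=\emptyset$. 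The asserted inclusion follows by contraposition, and its right-hand side is finite because $(U_\alpha)_{\alpha\in\lambda}$ is compact-finite in $Z$ and $B$ is compact in $Z$. Hence $(V_\alpha)_{\alpha\in\lambda}$ is compact-finite in $X$, and together with the previous paragraph this shows that $(F_\alpha)_{\alpha\in\lambda}$ is strictly compact-finite in $X$. The only genuine obstacle is that $E$ is merely $0$-continuous rather than linear, so one cannot simply extend $f_\alpha$ and argue with $1-Ef_\alpha$; extending $1-f_\alpha$ instead converts the condition ``$f_\alpha\equiv 1$ on $B$'' into the hypothesis ``$g\equiv 0$ on $B$'' that $0$-continuity is designed to handle.
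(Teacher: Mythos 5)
Your proof is correct and is essentially the paper's own argument: the paper simply normalizes its witnessing functions the opposite way from the start (taking $f_\alpha(F_\alpha)\subset\{1\}$ and $f_\alpha(Z\setminus U_\alpha)\subset\{0\}$), so the function it feeds to $E$ is literally your $1-f_\alpha$, and the $0$-continuity step with the compact set $B\subset Z$ is identical. Your explicit truncation witness $\phi_\alpha$ and the choice $V_\alpha=\{h_\alpha>0\}$ are just a slightly more careful writing of the paper's $V_\alpha=(Ef_\alpha)^{-1}((0,1])$.
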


\begin{proof} Let $E:C_k(Z)\to C_k(X)$ be a $0$-continuous extension operator. Since the family $(F_\alpha)_{\alpha\in\lambda}$ is strictly compact-finite, each set $F_\alpha$ has a functional neighborhood $U_\alpha\subset Z$ such that the family $(U_\alpha)_{\alpha\in\lambda}$ is compact-finite in $Z$.

For every $\alpha\in\lambda$ choose  a continuous function $f_\alpha:Z\to[0,1]$ such that $f_\alpha(F_\alpha)\subset\{1\}$ and $f_{\alpha}(Z\setminus U_\alpha)\subset \{0\}$. Consider the extended function $Ef_\alpha\in C_k(X)$ and observe that $V_\alpha=(Ef_\alpha)^{-1}((0,1])$ is a functional neighborhood of the set $F_\alpha$ in $X$.

We claim that the family $(V_\alpha)_{\alpha\in\lambda}$ is compact-finite in $X$.
Given any compact subset $K\subset X$, use the $0$-continuity of the operator $E$ and find a compact set $B\subset Z$ such that $Ef|K=0$ for any  $f\in C_k(Z)$ with $f|B=0$. Since the family $(U_\alpha)_{\alpha\in\lambda}$ is compact-finite, the set $\Lambda=\{\alpha\in\lambda:U_\alpha\cap B\ne \emptyset\}$ if finite. Then for every $\alpha\in\lambda\setminus \Lambda$ we get $f_\alpha|B=0$ and thus $Ef_\alpha|K=0$ and $V_\alpha\cap K=\emptyset$, which means that the family $(V_\alpha)_{\alpha\in\lambda}$ is compact-finite in $X$ and $(F_\alpha)_{\alpha\in\lambda}$ is strictly compact-finite in $X$.
\end{proof}

A similar result holds also for strongly compact-finite families of Lindel\"of sets.
We recall that a topological space $X$ is \index{topological space!Lindel\"of}{\em Lindel\"of} if each open cover of $X$ has a countable subcover. A subset $U$ of a topological space $X$ is \index{subset!functionally open}{\em functionally open} if $U=f^{-1}((0,1])$ for some continuous function $f:X\to[0,1]$.

\begin{proposition}\label{p:Ck-embed2}  Let $Z$ be a $C_k$-embedded subspace in a topological space $X$ and $(F_\alpha)_{\alpha\in\lambda}$ be a strongly compact-finite family of set in the space $Z$. If each $\IR$-open subset of $X$ is functionally open or every set $F_\alpha$, $\alpha\in A$, is Lindel\"of, then the family $(F_\alpha)_{\alpha\in\lambda}$ is strongly compact-finite in $X$.
\end{proposition}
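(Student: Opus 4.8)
The plan is to follow the pattern of the proof of Proposition~\ref{p:Ck-embed1}, replacing the single functional neighborhood available in the strict case by the family of pointwise separating functions supplied by $\IR$-openness, and then assembling these into a genuine neighborhood of each $F_\alpha$ in $X$. Fix a $0$-continuous extension operator $E:C_k(Z)\to C_k(X)$. Since $(F_\alpha)_{\alpha\in\lambda}$ is strongly compact-finite in $Z$, each $F_\alpha$ has an $\IR$-open neighborhood $U_\alpha\subset Z$ with $(U_\alpha)_{\alpha\in\lambda}$ compact-finite in $Z$. By $\IR$-openness, for every $x\in F_\alpha$ there is a continuous $f_{\alpha,x}:Z\to\II$ with $f_{\alpha,x}(x)=1$ and $f_{\alpha,x}(Z\setminus U_\alpha)\subset\{0\}$.

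The first task is to produce, for each $\alpha$, a single $\IR$-open neighborhood $V_\alpha$ of $F_\alpha$ in $X$, and this is where the two alternative hypotheses enter. If every $F_\alpha$ is Lindel\"of, then the cover $\{f_{\alpha,x}^{-1}((0,1]):x\in F_\alpha\}$ of $F_\alpha$ by relatively open sets admits a countable subcover indexed by a sequence $(x_n)_{n\in\w}$, and the function $g_\alpha=\sum_{n\in\w}2^{-n-1}f_{\alpha,x_n}:Z\to\II$ is continuous, strictly positive on $F_\alpha$, and vanishes on $Z\setminus U_\alpha$; I then set $V_\alpha=\{y\in X:Eg_\alpha(y)>0\}$. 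If instead each $\IR$-open subset of $X$ is functionally open, I extend each $f_{\alpha,x}$ separately, put $V_{\alpha,x}=\{y\in X:Ef_{\alpha,x}(y)>0\}$ (a functionally open, hence $\IR$-open, neighborhood of $x$, expressible via $\min(\max(Ef_{\alpha,x},0),1)$), and take $V_\alpha=\bigcup_{x\in F_\alpha}V_{\alpha,x}$, which is $\IR$-open as a union of $\IR$-open sets and is recognized as functionally open by the hypothesis. In either case $Eg_\alpha$ (resp. each $Ef_{\alpha,x}$) restricts to $g_\alpha$ (resp. $f_{\alpha,x}$) on $Z$, so $V_\alpha$ is a neighborhood of all of $F_\alpha$ in $X$.

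It then remains to verify that $(V_\alpha)_{\alpha\in\lambda}$ is compact-finite in $X$, and here the argument is exactly as in Proposition~\ref{p:Ck-embed1}. Given a compact $K\subset X$, the $0$-continuity of $E$ yields a compact $B\subset Z$ such that $Ef|K=0$ whenever $f\in C_k(Z)$ vanishes on $B$. Since $(U_\alpha)_{\alpha\in\lambda}$ is compact-finite in $Z$, the set $\Lambda=\{\alpha\in\lambda:U_\alpha\cap B\ne\emptyset\}$ is finite; and for $\alpha\notin\Lambda$ the function $g_\alpha$ (resp. every $f_{\alpha,x}$) vanishes on $B\subset Z\setminus U_\alpha$, whence $Eg_\alpha|K=0$ (resp. $Ef_{\alpha,x}|K=0$) and $V_\alpha\cap K=\emptyset$. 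Thus at most the finitely many indices in $\Lambda$ meet $K$, so $(V_\alpha)_{\alpha\in\lambda}$ is compact-finite and $(F_\alpha)_{\alpha\in\lambda}$ is strongly compact-finite in $X$.

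The conceptual obstacle, and the reason one cannot simply quote Proposition~\ref{p:Ck-embed1}, is precisely the gap between strict and strong compact-finiteness: $\IR$-openness separates only individual points of $F_\alpha$ from $Z\setminus U_\alpha$, never all of $F_\alpha$ at once, so there is no single function on $Z$ ready to be fed into $E$. The whole content lies in manufacturing such a function (or neighborhood) from the pointwise data while preserving vanishing off $U_\alpha$ --- the Lindel\"of hypothesis achieves this by a countable summation on $Z$, the functionally-open hypothesis by keeping an arbitrary union within the class of functionally open sets in $X$. Once the neighborhood is secured, the $0$-continuity of $E$ closes the compact-finiteness argument verbatim as in the strict case.
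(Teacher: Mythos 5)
Your proof is correct, and in the Lindel\"of branch it is essentially the paper's own argument: a countable subcover of $F_\alpha$, the weighted sum $\sum_{n}2^{-n-1}f_{\alpha,x_n}$, and extension of that single function by $E$. In the other branch you take a genuinely different route. The paper uses the functional-openness hypothesis to manufacture, for each $\alpha$, a \emph{single} continuous $f_\alpha:Z\to[0,1]$ with $f_\alpha(F_\alpha)\subset(0,1]$ and $f_\alpha(Z\setminus U_\alpha)\subset\{0\}$ (incidentally applying the hypothesis to $U_\alpha\subset Z$, although the statement speaks of $\IR$-open subsets of $X$), and only then feeds it to $E$; you instead extend each point-separating function $f_{\alpha,x}$ separately and set $V_\alpha=\bigcup_{x\in F_\alpha}\{y\in X: Ef_{\alpha,x}(y)>0\}$, using two observations: $\IR$-openness is preserved by arbitrary unions (a function witnessing $\IR$-openness of a member of the union vanishes off the whole union a fortiori), and the $0$-continuity of $E$ acts uniformly --- one compact $B\subset Z$ forces $Ef_{\alpha,x}|K=0$ for \emph{every} $x\in F_\alpha$ simultaneously when $U_\alpha\cap B=\emptyset$, so the entire union $V_\alpha$ misses $K$. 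This buys something the paper's proof does not: your second branch never actually uses the hypothesis (your remark that $V_\alpha$ is ``recognized as functionally open by the hypothesis'' is decorative, since strong compact-finiteness only asks for $\IR$-open neighborhoods), so your union argument proves the proposition with no dichotomy hypothesis at all. The hypothesis in the paper is an artifact of insisting on one extended function per index --- which is genuinely unavailable in general, e.g.\ when $F_\alpha$ is uncountable and discrete in $Z$, so the paper's method, unlike yours, really needs it. A small additional point in your favor: your neighborhoods $\{Eg_\alpha>0\}$, via the truncation $\min(\max(Eg_\alpha,0),1)$, are cleanly functionally open in $X$ even though extended functions may leave $[0,1]$ off $Z$, whereas the paper's $(Ef_\alpha)^{-1}((0,1])$ glosses over this.
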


\begin{proof} Since the family $(F_\alpha)_{\alpha\in\lambda}$ is strongly compact-finite in $Z$, each set $F_\alpha$ has an $\IR$-open neighborhood $U_\alpha\subset Z$ such that the family $(U_\alpha)_{\alpha\in\lambda}$ is compact-finite in $Z$. For every $\alpha\in\lambda$ we shall construct a continuous function $f_\alpha:Z\to [0,1]$ such that $f_{\alpha}(F_\alpha)\subset(0,1]$ and $f_\alpha(X\setminus U_\alpha)\subset\{0\}$.

If the set $U_\alpha$ is functionally open, then the function $f_\alpha$ exists by the definition of a functionally open set. So, now we assume that $F_\alpha$ is Lindel\"of. Since the set $U_\alpha$ is $\IR$-open, for every $x\in F_\alpha$ there is a continuous function $g_x:Z\to[0,1]$ such that $g_x(x)=1$ and $g_x(X\setminus U_\alpha)\subset\{0\}$. Since the space $F_\alpha$ is Lindel\"of, the open cover $\{g_x^{-1}((\frac12,1]):x\in F_\alpha\}$ of $F_\alpha$ has a countable subcover $\{g_x^{-1}((\frac12,1]):x\in F'_\alpha\}$. Here $F'_\alpha$ is a suitable countable subset of $F_\alpha$. Let $F_\alpha'=\{x_n\}_{n\in\w}$ be an enumeration of the set $F'_\alpha$. Then the function $f_\alpha=\sum_{n\in\w}\frac1{2^{n+1}}g_{x_n}$ is continuous and has the required property: $f_\alpha(F_\alpha)\subset(0,1]$ and $f_\alpha(X\setminus U_\alpha)\subset\{0\}$.

Let $E:C_k(A)\to C_k(X)$ be a 0-continuous extension operator. For every $\alpha\in\lambda$ consider the extended function $Ef_\alpha\in C_k(X)$ and put $V_\alpha=(Ef_\alpha)^{-1}((0,1])$. Proceeding as in the proof of Proposition~\ref{p:Ck-embed1}, we can show that the family $(V_\alpha)_{\alpha\in\lambda}$ is compact-finite in $X$, which implies that $(F_\alpha)_{\alpha\in \lambda}$ is a strongly compact-finite family in $X$.
\end{proof}

For retracts, Proposition~\ref{p:Ck-embed2} can be proved without any conditions on sets of the family. We recall that a subset $A$ of a topological space $X$ is called a \index{subset!retract}{\em retract} of $X$ if there exists a continuous map $r:X\to A$ such that $r(a)=a$ for all $a\in A$.

\begin{proposition} Let $Z$ be a retract of a topological space $X$. Each strongly (strictly) compact-finite family of sets $(F_\alpha)_{\alpha\in\lambda}$ in $Z$ remains a strongly (strictly) compact-finite in $X$.
\end{proposition}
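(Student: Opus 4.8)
The plan is to transport the witnessing neighborhoods from $Z$ to $X$ by pulling them back along the retraction. Let $r\colon X\to Z$ be a retraction, so that $r$ is continuous and $r(z)=z$ for all $z\in Z$. Suppose $(F_\alpha)_{\alpha\in\lambda}$ is strongly (resp. strictly) compact-finite in $Z$, so each $F_\alpha$ has an $\IR$-open (resp. functional) neighborhood $U_\alpha\subset Z$ with $(U_\alpha)_{\alpha\in\lambda}$ compact-finite in $Z$. I would simply set $V_\alpha=r^{-1}(U_\alpha)$ and verify that $(V_\alpha)_{\alpha\in\lambda}$ witnesses strong (resp. strict) compact-finiteness of $(F_\alpha)_{\alpha\in\lambda}$ in $X$. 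First note that $V_\alpha$ is a neighborhood of $F_\alpha$: since $F_\alpha\subset U_\alpha\subset Z$ and $r$ restricts to the identity on $Z$, every $x\in F_\alpha$ satisfies $r(x)=x\in U_\alpha$, hence $x\in V_\alpha$.

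\textbf{Quality of the pulled-back neighborhood.} Next I would check that $V_\alpha$ inherits the appropriate separation property by composing the witnessing functions of $U_\alpha$ with $r$. In the strong case, $V_\alpha$ is $\IR$-open: for any $x\in V_\alpha$ we have $r(x)\in U_\alpha$, so there is a continuous $g\colon Z\to[0,1]$ with $g(r(x))=1$ and $g(Z\setminus U_\alpha)\subset\{0\}$; then $g\circ r\colon X\to[0,1]$ is continuous, equals $1$ at $x$, and vanishes on $X\setminus V_\alpha$ because $y\notin V_\alpha$ forces $r(y)\in Z\setminus U_\alpha$. In the strict case, $U_\alpha$ is a functional neighborhood of $F_\alpha$, witnessed by a continuous $h_\alpha\colon Z\to[0,1]$ with $h_\alpha(F_\alpha)\subset\{0\}$ and $h_\alpha(Z\setminus U_\alpha)\subset\{1\}$; then $h_\alpha\circ r$ shows that $V_\alpha$ is a functional neighborhood of $F_\alpha$ in $X$, using again that $r$ fixes $F_\alpha$ pointwise and maps $X\setminus V_\alpha$ into $Z\setminus U_\alpha$.

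\textbf{Compact-finiteness and the main point.} Finally I would verify that $(V_\alpha)_{\alpha\in\lambda}$ is compact-finite in $X$. Given a compact set $K\subset X$, its image $r(K)$ is compact in $Z$ by continuity of $r$. If $K\cap V_\alpha\ne\emptyset$, pick $x\in K$ with $r(x)\in U_\alpha$; then $r(x)\in r(K)\cap U_\alpha$, so $\{\alpha\in\lambda:K\cap V_\alpha\ne\emptyset\}\subset\{\alpha\in\lambda:r(K)\cap U_\alpha\ne\emptyset\}$, and the latter set is finite since $(U_\alpha)_{\alpha\in\lambda}$ is compact-finite in $Z$. This completes both cases. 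There is no genuine obstacle here: the argument is a clean pullback, and the only points requiring care are the pointwise verification of $\IR$-openness (since $\IR$-openness is a local, point-by-point condition, the witnessing function depends on the chosen $x$) and the observation that $r$ carries compact sets to compact sets, which is exactly what converts compact-finiteness in $Z$ into compact-finiteness in $X$.
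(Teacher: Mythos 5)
Your proof is correct and follows exactly the paper's approach: the paper also pulls the witnessing neighborhoods back along the retraction, setting $W_\alpha=r^{-1}(U_\alpha)$, and merely asserts ("it can be shown") the verifications you carry out in full. Your write-up supplies precisely the omitted details — the composition $g\circ r$ (resp. $h_\alpha\circ r$) for the $\IR$-open (resp. functional) property, and the inclusion $\{\alpha:K\cap V_\alpha\ne\emptyset\}\subset\{\alpha:r(K)\cap U_\alpha\ne\emptyset\}$ for compact-finiteness.
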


\begin{proof} Let $r:X\to Z$ be a retraction. Since $(F_\alpha)_{\alpha\in\lambda}$ is strongly (strictly) compact-finite in $Z$, each set $F_\alpha$ has a (functional) $\IR$-open neighborhood $U_\alpha\subset Z$ such that the family $(U_\alpha)_{\alpha\in\lambda}$ is compact-finite. It can be shown that for every $\alpha\in\lambda$ the set $W_\alpha=r^{-1}(U_\alpha)$ is a (functional) $\IR$-neighborhood of the set $F_\alpha$ in $X$, witnessing that the family $(F_\alpha)_{\alpha\in\lambda}$ is (strictly) strongly compact-finite in $X$.
\end{proof}

Combining Propositions~\ref{p:Borges}--\ref{p:Ck-embed2} with Proposition~\ref{p:aleph-strong-fan}, we get the following corollary.

\begin{corollary}\label{c:s-sfan1}  Let $Z$ be a closed subspace of a stratifiable space $X$.
\begin{enumerate}
\item[\textup{1)}] Each strongly (strictly) compact-finite family of sets $(F_\alpha)_{\alpha\in\lambda}$ in $Z$
remains strongly (strictly) compact-finite in $X$.
\item[\textup{2)}] If $Z$ is an $\aleph$-space, then each countable compact-finite family of sets $(F_n)_{n\in\w}$ in $Z$ is strongly compact-finite in $X$.
\end{enumerate}
\end{corollary}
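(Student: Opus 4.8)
The plan is to reduce everything to the two transfer results, Propositions~\ref{p:Ck-embed1} and~\ref{p:Ck-embed2}, which move (strictly, strongly) compact-finite families from a $C_k$-embedded subspace into the ambient space, together with the $\aleph$-space upgrade of Proposition~\ref{p:aleph-strong-fan}. First I would record the single observation that unlocks all three assertions: since $X$ is stratifiable and $Z$ is closed in $X$, Proposition~\ref{p:Borges} supplies a continuous linear extension operator $E:C_k(Z)\to C_k(X)$; by Proposition~\ref{p:lin->0} this operator is $0$-continuous, and therefore $Z$ is $C_k$-embedded in $X$.

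For the strict half of (1) the conclusion is then immediate: Proposition~\ref{p:Ck-embed1} asserts precisely that a $C_k$-embedded subspace carries strictly compact-finite families to strictly compact-finite families in $X$, so there is nothing further to do. For the strong half of (1) I would invoke Proposition~\ref{p:Ck-embed2}, whose hypotheses demand \emph{either} that every $\IR$-open subset of $X$ be functionally open, \emph{or} that each set $F_\alpha$ be Lindel\"of. The one place where stratifiability is genuinely needed beyond the extension operator is in verifying the first alternative in full generality. Here I would use the fact, recalled just before Proposition~\ref{p:Borges}, that stratifiable spaces are perfectly paracompact, hence both perfect and normal, i.e.\ perfectly normal; and in a perfectly normal space every open set is a cozero set, that is, functionally open. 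Since each $\IR$-open set is in particular open, every $\IR$-open subset of $X$ is functionally open, so Proposition~\ref{p:Ck-embed2} applies and the family $(F_\alpha)_{\alpha\in\lambda}$ remains strongly compact-finite in $X$, with no restriction on the sets $F_\alpha$.

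Finally, for (2) I would first work inside $Z$. As a closed subspace of a stratifiable space, $Z$ is itself stratifiable, hence normal; being an $\aleph$-space it is also regular and $T_1$, so $Z$ is a Tychonoff $\aleph$-space. Proposition~\ref{p:aleph-strong-fan} then upgrades the given countable compact-finite family $(F_n)_{n\in\w}$ in $Z$ to a strongly compact-finite family in $Z$, and the strong case of part~(1), already established, transfers this strong compact-finiteness to $X$.

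The step requiring the most care is the perfect-normality argument in the strong case of (1), since it is exactly what allows us to dispense with any Lindel\"of hypothesis on the $F_\alpha$ when applying Proposition~\ref{p:Ck-embed2}; the remaining steps are direct citations of the preceding propositions chained together in the order above.
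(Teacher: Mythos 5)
Your proposal is correct and follows essentially the same route as the paper's own proof: Borges's extension theorem (Proposition~\ref{p:Borges}) gives the $C_k$-embedding, perfect paracompactness of stratifiable spaces makes every ($\IR$-)open subset of $X$ functionally open so that Propositions~\ref{p:Ck-embed1} and \ref{p:Ck-embed2} apply without any Lindel\"of hypothesis, and part (2) follows by upgrading the family inside the Tychonoff $\aleph$-space $Z$ via Proposition~\ref{p:aleph-strong-fan} and then invoking part (1). Your only additions are explicit spellings-out of steps the paper leaves implicit (the $0$-continuity via Proposition~\ref{p:lin->0}, and the verification that $Z$ is Tychonoff so that Proposition~\ref{p:aleph-strong-fan} is applicable), which are harmless and indeed careful.
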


\begin{proof} The space $X$, being stratifiable, is perfectly paracompact, which implies that all open subsets of $X$ are functionally open. By Proposition~\ref{p:Borges}, $Z$ is $C_k$-embedded into $X$. Now the first statement can be derived from Propositions~\ref{p:Ck-embed1} and \ref{p:Ck-embed2}. If $Z$ is an $\aleph$-space, then by Proposition~\ref{p:aleph-strong-fan}, each compact-finite family of sets $(F_n)_{n\in\w}$ in $Z$ is strongly compact-finite in $Z$ and also in $X$ (by the first statement).
\end{proof}

\chapter{Applications of strict $\Cld$-fans in General Topology}\label{ch:GT}.

In this section we discuss some applications of (strict) $\Cld$-fans and $\Clop$-fans in General Topology. We recall that a \index{fan!$\Cld$-fan}\index{fan!strict $\Cld$-fan}({\em strict}) {\em $\Cld$-fan} in a topological space $X$ is a (strictly) compact-finite family $(F_\alpha)_{\alpha\in\lambda}$ of closed subsets of $X$, which is not locally finite in $X$.

 \index{fan!$\Clop$-fan}By a {\em $\Clop$-fan} in a topological space $X$ we shall understand an $\F$-fan for the family $\F=\Clop$ of clopen subsets of $X$. It is clear that each $\Clop$-fan is strict.

\section{Characterizing $P$-spaces and discrete spaces}

In this section we apply strict $\Cld$-fans to characterizing discrete spaces and $P$-spaces.
Let us recall that a topological space $X$ is called a \index{topological space!$P$-space}{\em $P$-space} if each $G_\delta$-set in $X$ is open. A space $X$ is {\em $k$-discrete} if each compact subset of $X$ is finite.

\begin{proposition}\label{P-space} For a Tychonoff space $X$ the following conditions are equivalent:
\begin{enumerate}
\item[\textup{1)}] $X$ is a $P$-space;
\item[\textup{2)}] $X$ is $k$-discrete and contains no $\Cld^\w$-fans;
\item[\textup{3)}] $X$ is $k$-discrete and contains no strict $\Cld^\w$-fans;
\item[\textup{4)}] $X$ is zero-dimensional, $k$-discrete and contains no $\Clop^\w$-fans.
\end{enumerate}
\end{proposition}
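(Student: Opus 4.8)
The plan is to establish the cycle $(1)\Ra(2)\Ra(3)\Ra(1)$ together with the pair $(1)\Ra(4)\Ra(1)$, so that all four conditions become equivalent. The implications out of $(1)$ rest on three standard features of a Tychonoff $P$-space $X$, which I would verify first. Since every countable subset of $X$ is an $F_\sigma$ and hence closed and discrete, an infinite compact set would contain an infinite closed discrete subset, which is impossible; thus $X$ is $k$-discrete. Since the zero-set $f^{-1}(0)$ of any continuous $f:X\to\IR$ is a $G_\delta$, it is open, and being also closed it is clopen; as cozero-sets form a base in a Tychonoff space, $X$ is zero-dimensional. Finally, for any compact-finite family $(F_n)_{n\in\w}$ of closed sets and any point $x$, the set $\{n:x\in F_n\}$ is finite (as $\{x\}$ is compact), while $\bigcup\{F_n:x\notin F_n\}$ is an $F_\sigma$, hence closed and missing $x$; this repeats the argument of Proposition~\ref{k-no-Cld-fan} and shows that $X$ has no $\Cld^\w$-fan. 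These three facts give $(1)\Ra(2)$ and, since a $\Clop^\w$-fan is in particular a $\Cld^\w$-fan, also $(1)\Ra(4)$. The implication $(2)\Ra(3)$ is trivial, because every strict $\Cld^\w$-fan is a $\Cld^\w$-fan.

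For $(3)\Ra(1)$ I would argue by contraposition, producing a strict $\Cld^\w$-fan whenever the Tychonoff $k$-discrete space $X$ fails to be a $P$-space. By the standard description of Tychonoff $P$-spaces via zero-sets, this failure yields a continuous $f:X\to[0,\infty)$ and a point $x$ with $f(x)=0$ lying outside the interior of $f^{-1}(0)$. The key preliminary observation is that then every neighborhood $V$ of $x$ contains points with arbitrarily small positive value: otherwise $V\cap f^{-1}\big([0,\tfrac1n)\big)$ would be a neighborhood of $x$ contained in $f^{-1}(0)$. I would then set $F_m=f^{-1}\big([\tfrac1{m+1},\tfrac1m]\big)$ and surround each band by the functional neighborhood $U_m=f^{-1}\big((c_m,d_m)\big)$ with $0<c_m<\tfrac1{m+1}<\tfrac1m<d_m$ and $d_m\to 0$, so that each positive value lies in only finitely many intervals $(c_m,d_m)$ and the value $0$ in none. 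The observation above makes $(F_m)$ not locally finite at $x$, while $k$-discreteness (compact sets are finite, each point lies in at most two bands and in finitely many $U_m$) makes both $(F_m)$ and $(U_m)$ compact-finite. Thus $(F_m)_{m\ge1}$ is a strict $\Cld^\w$-fan, contradicting $(3)$.

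The implication $(4)\Ra(1)$ is the analogous contrapositive, but now the fan must consist of clopen sets, which is where zero-dimensionality enters. Assuming again that $X$ is not a $P$-space, I would choose a decreasing sequence of open neighborhoods of a point $x$ whose intersection is not a neighborhood of $x$; using that clopen sets form a base and that shrinking the sets only enlarges their complements, I may take these neighborhoods to be clopen, say $W_0\supseteq W_1\supseteq\cdots$ with $x\in\bigcap_n W_n$ and $x$ not interior to $\bigcap_n W_n$. Setting $C_n=W_n\setminus W_{n+1}$ gives a family of \emph{pairwise disjoint} clopen sets avoiding $x$. Disjointness together with $k$-discreteness makes $(C_n)$ automatically compact-finite, since a finite compact set meets at most as many $C_n$ as it has points. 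To see that it is not locally finite at $x$, given a neighborhood $V$ and an index $N$, the set $V\cap W_N$ is still a neighborhood of $x$, hence meets the complement of $\bigcap_n W_n$ in some point $z$; taking $j\ge N$ maximal with $z\in W_j$ places $z\in V\cap C_j$, so $V$ meets $C_j$ for arbitrarily large $j$. Thus $(C_n)$ is a $\Clop^\w$-fan, contradicting $(4)$.

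The routine parts are the three properties of $P$-spaces and the trivial step $(2)\Ra(3)$; the substance lies in the two fan constructions. The main obstacle, in both, is guaranteeing compact-finiteness: for a general Tychonoff space the level bands $f^{-1}\big([\tfrac1{m+1},\tfrac1m]\big)$ need \emph{not} form a compact-finite family, since a compact set may have $f$-values accumulating at $0$. It is precisely $k$-discreteness that removes this difficulty by forcing all compact sets to be finite; in the zero-dimensional case the additional device of passing to the disjoint differences $W_n\setminus W_{n+1}$ makes compact-finiteness immediate while simultaneously keeping the sets clopen. Verifying ``not locally finite at $x$'' — the small-positive-value lemma in one case and the maximal-index argument in the other — is the remaining point demanding care.
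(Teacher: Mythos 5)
Your proof is correct and takes essentially the same route as the paper's: the same three $P$-space facts give $(1)\Ra(2)$ and $(1)\Ra(4)$, the same level-band construction $F_m=f^{-1}\big(\big[\tfrac1{m+1},\tfrac1m\big]\big)$ with wider functional band neighborhoods (compact-finite by $k$-discreteness) gives $(3)\Ra(1)$, and the same disjoint clopen differences of a decreasing clopen neighborhood sequence give $(4)\Ra(1)$. The only cosmetic difference is that you obtain the function $f$ for $(3)\Ra(1)$ by citing the zero-set characterization of Tychonoff $P$-spaces, whereas the paper builds $f$ explicitly from a non-closed $F_\sigma$-set; these are interchangeable, since the proof of that characterization is precisely the paper's construction.
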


\begin{proof} $(1)\Ra(2)$. Assume that $X$ is a $P$-space. Then each $F_\sigma$-set in $X$ is closed. In particular, each countable subset of $X$ is closed. This implies that all (countably) compact subspaces of $X$ are finite and hence $X$ is $k$-discrete. To see that $X$ contains no $\Cld^\w$-fan,  we need to check that each compact-finite family $(F_n)_{n\in\w}$ of closed subsets of $X$ is locally finite at each point $x\in X$. Since the singleton $\{x\}$ is compact, the set $\Lambda=\{n\in\w:x\in F_n\}$ is finite. Since $X$ is a $P$-space, the countable intersection $U=\bigcap_{n\in\w\setminus \Lambda}X\setminus F_n$ is an open neighborhood of $x$ meeting only finitely many sets $F_n$, $n\in\w$, and witnessing that the family $(F_n)_{n\in\w}$ is locally finite at $x$.
\smallskip

The implication $(2)\Ra(3)$ is trivial.
\smallskip

$(3)\Ra(1)$ Assume that $X$ is $k$-discrete and contains no strict $\Cld^\w$-fans. To derive a contradiction, assume that $X$ is not a $P$-space. Then there is an $F_\sigma$-set $A\subset X$ which is not closed in $X$. Fix any point $x\in\bar A\setminus A$.

Write the $F_\sigma$-set $A$ as the union $\bigcup_{n\in\IN}A_n$ of an increasing sequence $(A_n)_{n\in\IN}$ of closed sets $A_n\subset A_{n+1}\subset X$. Since the space $X$ is Tychonoff, for every $n\in\w$ we can find a continuous function $f_n:X\to [0,2^{-n}]$ such that $f(x)=0$ and $f(A_n)\subset\{2^{-n}\}$. Then $f=\sum_{n=1}^\infty f_n:X\to [0,1]$ is a continuous function such that $f(x)=0$ and $f(A)\subset(0,1]$. For every $n\in\IN$ consider the closed subset $F_n=f_n^{-1}\big([\frac1{2^{n+1}},\frac1{2^n}]\big)$ and its functional neighborhood $U_n=f_n^{-1}\big((\frac1{2^{n+2}},\frac1{2^{n-1}})\big)$. Since all compact subsets of $X$ are finite, the family $(U_n)_{n\in\IN}$ is compact-finite and hence the family $(F_n)_{n\in\IN}$ is strictly compact-finite in $X$. Since $X$ contains no strict $\Cld^\w$-fan,  this family is locally finite. Then $x$ has an open neighborhood $O_x\subset X$ such that the set $E=\{n\in\IN:O_x\cap F_n\ne\emptyset\}$ is finite. Replacing $O_x$ by the open neighborhood $O_x\setminus \bigcup_{n\in E}F_n$ we can assume that $O_x\cap F_n=\emptyset$ for all $n\in\IN$, which implies that $O_x\subset f^{-1}(0)$. On the other hand, the choice of the sequence $(A_n)_{n\in\IN}$ guarantees that the neighborhood $O_x$ has a common point $a\in O_x\cap A$ with the set $A$ and hence $a\in A_n$ for some $n\in\IN$. Then $f(a)\ge f_n(a)=\frac1{2^n}>0$, which contradicts the inclusion $a\in O_x\subset f^{-1}(0)$.
 This contradiction shows that $X$ is a $P$-space.
 \smallskip

 It is clear that the equivalent conditions (1) and (2) imply (4).
 To prove that $(4)\Ra(1)$, assume that $X$ is zero-dimensional, $k$-discrete and contains no  $\Clop^\w$-fans.
Assuming that $X$ is not a $P$-space, fix a non-open $G_\delta$-set $G\subset X$ and find a point $x\in G$ such that $G$ is not a neighborhood of $x$ in $X$. Using the zero-dimensionality of $X$, choose a decreasing sequence $(U_n)_{n\in\w}$ of clopen neighborhoods of $x$ such that $\bigcap_{n\in\w}U_n\subset G$ and the set $V_n=U_n\setminus U_{n+1}$ is not empty for all $n\in\w$. Then the family $(V_n)_{n\in\w}$ is compact-finite but locally finite in $X$, which means that $(V_n)_{n\in\w}$ is a $\Clop^\w$-fan in $X$.
\end{proof}

 Proposition~\ref{P-space} will be used to prove the following characterization of discrete spaces.

 \begin{theorem}\label{t:disc-char}
 For a Tychonoff space $X$ the following conditions are equivalent:
 \begin{enumerate}
 \item[\textup{1)}] $X$ is discrete;
 \item[\textup{2)}] $X$ is zero-dimensional, $k$-discrete, countably-tight and contains no $\Clop^\w$-fan;
 \item[\textup{3)}] $X$ is zero-dimensional, $k$-discrete, and contains no $\Clop$-fan;
 \item[\textup{4)}] $X$ is $k$-discrete and contains no strict $\Cld^\w$-fans and no $\Clop$-fans.
 \end{enumerate}
  \end{theorem}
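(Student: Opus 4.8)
The plan is to treat discreteness as the hub and show that each of (2), (3), (4) is equivalent to (1). The implications $(1)\Ra(2)$, $(1)\Ra(3)$, $(1)\Ra(4)$ are routine: a discrete space is zero-dimensional and $k$-discrete, has countable tightness, and---since in a discrete space a family of sets is compact-finite if and only if it is point-finite if and only if it is locally finite---it contains no fan at all, in particular no $\Clop$-fan, no $\Clop^\w$-fan and no strict $\Cld^\w$-fan. So the work is entirely in the reverse directions.

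The technical heart is a single construction producing a $\Clop$-fan out of a non-isolated point. Given a zero-dimensional $k$-discrete Tychonoff space $X$ with a non-isolated point $x$, I would use Zorn's Lemma to fix a maximal disjoint family $\mathcal{D}$ of non-empty clopen sets with $x\notin\bigcup\mathcal{D}$. A maximality argument shows $x\in\overline{\bigcup\mathcal{D}}$: any clopen neighborhood of $x$ missing $\bigcup\mathcal{D}$ would, since $x$ is non-isolated and clopen sets form a base, contain a non-empty clopen set avoiding $x$ and disjoint from $\bigcup\mathcal{D}$, which could be adjoined to $\mathcal{D}$. Being disjoint, $\mathcal{D}$ is point-finite, hence compact-finite, because every compact subset of the $k$-discrete space $X$ is finite and so meets at most finitely many members of a disjoint family. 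Finally $\mathcal{D}$ is not locally finite at $x$: if a clopen neighborhood $O\ni x$ met only $D_1,\dots,D_n\in\mathcal{D}$, then $O\setminus(D_1\cup\cdots\cup D_n)$ would be a clopen neighborhood of $x$ disjoint from $\bigcup\mathcal{D}$, contradicting $x\in\overline{\bigcup\mathcal{D}}$. Thus $\mathcal{D}$ is a $\Clop$-fan.

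With this construction available, $(3)\Ra(1)$ is immediate: a space satisfying (3) that were not discrete would carry a non-isolated point and hence a $\Clop$-fan, contradicting the hypothesis. For $(2)\Ra(1)$ I would invoke Proposition~\ref{P-space}: the hypotheses in (2) contain exactly ``zero-dimensional, $k$-discrete, no $\Clop^\w$-fan'', so $X$ is a $P$-space; in a $T_1$ $P$-space every countable set is closed (its complement is a $G_\delta$, hence open), and combined with countable tightness this forces $\overline{A}=A$ for every $A\subset X$, i.e. $X$ is discrete. For $(4)\Ra(1)$, Proposition~\ref{P-space} again yields that $X$ is a $P$-space (from $k$-discreteness and the absence of strict $\Cld^\w$-fans); a Tychonoff $P$-space is zero-dimensional, since every cozero set is an $F_\sigma$ and therefore closed, so cozero sets are clopen and form a base. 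Now $X$ is zero-dimensional, $k$-discrete and contains no $\Clop$-fan, so the construction applies and $X$ is discrete.

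The main obstacle is the verification inside the core construction that the maximal disjoint clopen family simultaneously accumulates at $x$ and fails to be locally finite there; both facts hinge on the interplay of maximality with zero-dimensionality, and one must check carefully that subtracting the finitely many offending members keeps the neighborhood clopen and still containing $x$. A secondary point to watch is that in (2) the only forbidden fan is the countable $\Clop^\w$-fan, so the maximal family $\mathcal{D}$ (which may be uncountable) cannot be used directly; this is exactly why I route through the $P$-space characterization, although one could instead use countable tightness to extract from $\mathcal{D}$ a countable subfamily that is still a fan.
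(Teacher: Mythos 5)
Your proposal is correct, and its core coincides with the paper's proof: the Zorn's Lemma construction of a maximal disjoint family of non-empty clopen sets avoiding a non-isolated point, verified to be compact-finite (via $k$-discreteness) but not locally finite, is exactly the paper's argument for $(3)\Rightarrow(1)$; and your treatment of $(4)$ --- invoke Proposition~\ref{P-space} to get that $X$ is a $P$-space, then observe that functionally open sets are $F_\sigma$, hence closed, hence clopen, so $X$ is zero-dimensional and the construction applies --- is the paper's $(4)\Rightarrow(3)\Rightarrow(1)$ verbatim. The only divergence is condition $(2)$: the paper disposes of it by Corollary~\ref{c:tight}, which says that in a countably tight space the existence of a $\Clop$-fan is equivalent to the existence of a $\Clop^\w$-fan, giving $(2)\Rightarrow(3)$ in one line; you instead route $(2)\Rightarrow(1)$ through Proposition~\ref{P-space} (zero-dimensional $+$ $k$-discrete $+$ no $\Clop^\w$-fan $\Rightarrow$ $P$-space) and then argue directly that a countably tight $T_1$ $P$-space is discrete, since countable sets are closed in a $T_1$ $P$-space and countable tightness then forces every set to be closed. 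Both arguments are sound; the paper's is shorter because the tightness-reduction corollary is already available as a general tool, while yours is self-contained at this point and makes the role of the $P$-space property uniform across $(2)$ and $(4)$ --- and you correctly note that extracting a countable subfamily from the maximal family via countable tightness would recover the paper's route.
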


 \begin{proof} The implication $(1)\Ra(2)$ is trivial and $(2)\Ra(3)$ follows from Corollary~\ref{c:tight}.

 To prove that $(3)\Ra(1)$, assume that $X$ is zero-dimensional, $k$-discrete and contains no $\Clop$-fans.  To derive a contradiction, assume that the space $X$ is not discrete and fix a non-isolated point $x\in X$. By Zorn's Lemma, there exists a maximal disjoint family $(U_\alpha)_{\alpha\in\lambda}$ of non-empty clopen subsets of $X$ which do not contain the point $x$.

By the maximality of $\U$, the point $x$ belongs to the closure of the union $\bigcup_{\alpha\in\lambda}U_\alpha$ in $X$, which implies that the family $(U_\alpha)_{\alpha\in\lambda}$ is not locally finite. On the other hand, this family is compact-finite (because all compact subsets of $X$ are finite). So,  $(U_\alpha)_{\alpha\in\lambda}$ is a $\Clop$-fan, which is a desired contradiction.
\smallskip

The implication $(1)\Ra(4)$ is trivial. To prove $(4)\Ra(3)$, assume that $X$ is $k$-discrete and contains no strict $\Cld^\w$-fans and no $\Clop$-fans. By Proposition~\ref{P-space}, $X$ is a $P$-space. Observe that each functionally open subset of the space $X$ is of type $F_\sigma$ and hence is closed in $X$. Then $X$ is a zero-dimensional space.
\end{proof}

\section{Characterizing $k_\IR$-spaces and scattered sequential spaces}

In this section we shall apply strict $\Cld^\w$-fans to characterize $k_\IR$-spaces. We recall that a topological space $X$ is called a \index{topological space!$k_\IR$-space}{\em $k_\IR$-space} if each $k$-continuous function $f:X\to\IR$ is continuous.

\begin{proposition}\label{kR-no-Cld-fan}A $k_\IR$-space $X$ contains no strict $\Cld$-fan.
\end{proposition}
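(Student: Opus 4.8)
The plan is to argue by contradiction, manufacturing from a strict $\Cld$-fan a $k$-continuous real-valued function that is discontinuous; since this is impossible on a $k_\IR$-space, no such fan can exist. So suppose $(F_\alpha)_{\alpha\in\lambda}$ is a strict $\Cld$-fan in $X$. By the definition of strictness (Definition~\ref{d:fan}), each $F_\alpha$ has a functional neighborhood $U_\alpha\subset X$ — so there is a continuous $f_\alpha:X\to[0,1]$ with $f_\alpha(F_\alpha)\subset\{0\}$ and $f_\alpha(X\setminus U_\alpha)\subset\{1\}$, which forces $F_\alpha\subset U_\alpha$ — such that the family $(U_\alpha)_{\alpha\in\lambda}$ is compact-finite. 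Being a fan, the family fails to be locally finite at some point $x\in X$, i.e. every neighborhood $O_x$ of $x$ meets $F_\alpha$ for infinitely many $\alpha$. For each $\alpha$ I set $g_\alpha=1-f_\alpha$, a continuous function equal to $1$ on $F_\alpha$ and to $0$ off $U_\alpha$.

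The first step is a reduction ensuring $x\notin U_\alpha$ for all $\alpha$. Since $\{x\}$ is compact and $(U_\alpha)_{\alpha\in\lambda}$ is compact-finite, the set $\Lambda=\{\alpha\in\lambda:x\in U_\alpha\}$ is finite. Discarding these indices — exactly as in the proof of Proposition~\ref{k-no-Cld-fan} — does not destroy the failure of local finiteness at $x$, because only finitely many sets are removed and the relevant index sets $\{\alpha:O_x\cap F_\alpha\ne\emptyset\}$ stay infinite. Hence I may assume $x\notin U_\alpha$, and thus $g_\alpha(x)=0$, for every $\alpha\in\lambda$. Now define $g=\sum_{\alpha\in\lambda}g_\alpha:X\to\IR$. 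This is well defined and real-valued at each point $y$: applying compact-finiteness to the singleton $\{y\}$ shows that $\{\alpha:y\in U_\alpha\}$ is finite, so only finitely many summands $g_\alpha(y)$ are nonzero.

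It remains to verify that $g$ is $k$-continuous but discontinuous. For a compact set $K\subset X$ the index set $\{\alpha:U_\alpha\cap K\ne\emptyset\}$ is finite, and every $g_\alpha$ with $U_\alpha\cap K=\emptyset$ vanishes on $K$; therefore $g|K$ is a finite sum of continuous functions, hence continuous, and $g$ is $k$-continuous. On the other hand $g(x)=0$, whereas $g\ge g_\alpha$ and $g_\alpha=1$ on $F_\alpha$, so $g\ge 1$ on $\bigcup_\alpha F_\alpha$. As every neighborhood of $x$ meets some $F_\alpha$, the function $g$ assumes values $\ge 1$ arbitrarily near $x$, so it is discontinuous at $x$. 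Since $X$ is a $k_\IR$-space, the $k$-continuous function $g$ must be continuous, a contradiction; thus $X$ contains no strict $\Cld$-fan.

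The one genuinely delicate point is the reduction guaranteeing $g(x)=0$: without removing the finitely many indices with $x\in U_\alpha$ one has no control over the value $g(x)$, and the argument hinges on compact-finiteness of $(U_\alpha)$ (to make $\Lambda$ finite) together with the stability of non-local-finiteness under deletion of finitely many sets. The remaining steps are the routine observation that a sum of the bump functions $g_\alpha$, finite on every compactum by compact-finiteness, is automatically $k$-continuous.
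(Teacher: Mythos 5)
Your proof is correct and follows essentially the same route as the paper's: discard the finitely many indices with $x\in U_\alpha$, sum the bump functions supported in the compact-finite family $(U_\alpha)_{\alpha\in\lambda}$ to get a $k$-continuous function vanishing at $x$ but equal to at least $1$ on each $F_\alpha$, and contradict the $k_\IR$-property via discontinuity at $x$. The only differences are cosmetic (you write $g_\alpha=1-f_\alpha$ where the paper directly chooses $f_\alpha$ with $f_\alpha(F_\alpha)\subset\{1\}$, and you spell out the well-definedness and reduction steps slightly more explicitly).
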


\begin{proof} To derive a contradiction, assume that the $k_\IR$-space $X$ contains a strict $\Cld$-fan $(F_\alpha)_{\alpha\in \lambda}$. This fan is not locally finite at some point $x\in X$.
 Since the family $(F_\alpha)_{\alpha\in \lambda}$ is strictly compact-finite, each set $F_\alpha$ has a functional neighborhood $U_\alpha\subset X$ such that the family $(U_\alpha)_{\alpha\in \alpha}$ is compact-finite.
Then the set $\Lambda=\{\alpha\in \lambda:x\in U_\alpha\}$ is finite and hence the family $(F_\alpha)_{\alpha\in \lambda\setminus\Lambda}$ is not locally finite at $x$.

 For every $\alpha\in \lambda\setminus\Lambda$ fix a continuous function $f_\alpha:X\to[0,1]$ such that $f_{\alpha}(F_\alpha)\subset \{1\}$ and $f_\alpha(X\setminus U_\alpha)\subset\{0\}$. Consider the function $f=\sum_{\alpha\in \alpha\setminus\Lambda}f_\alpha:X\to\IR$ and observe that it is well-defined and $k$-continuous since for each compact set $K\subset X$ all but finitely many functions $f_\alpha|K$ are zero. Since $X$ is a $k_\IR$-space, the function $f$ is continuous. In particular, it continuous at $x$. So, we can find a neighborhood $O_x\subset X$ such that $f(O_x)\subset [0,1)$. Since the family $(F_\alpha)_{\alpha\in \alpha\setminus\Lambda}$ is not locally finite at $x$, the neighborhood $O_x$ has a common point $z$ with some set $F_\alpha$, $\alpha\in \alpha\setminus\Lambda$. At this point we get $f(z)\ge f_\alpha(z)=1$, which contradicts the choice of the neighborhood $O_x$.
 \end{proof}

In scattered perfectly normal spaces the $k_\IR$-space property is  equivalent to the absence of strict $\Cld^\w$-fans. We recall that a space $X$ is called \index{topological space!prefectly normal}{\em perfectly normal} if $X$ is normal and each closed subset $F$ of $X$ is of type $G_\delta$.

We define a topological space $X$ to be \index{topological space!scatteredly-$k_\IR$}{\em scatteredly-$k_\IR$} if for any $k$-continuous function $f:X\to \IR$ each non-empty closed subset $F\subset X$ contains a non-empty relatively open subset $U\subset F$ such that the restriction $f|U$ is continuous. It is easy to see that each scattered space is scatteredly-$k_\IR$. We recall that a topological space $X$ is \index{topological space!scattered}{\em scattered} if each non-empty subspace $A\subset X$ contains an isolated point.

\begin{proposition} A perfectly normal space $X$ is a $k_\IR$-space if and only if $X$ is scatteredly-$k_\IR$ and $X$ contains no strict $\Cld^\w$-fan.
\end{proposition}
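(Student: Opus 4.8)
The plan is to prove the two implications separately, with all the work in the ``if'' direction. The ``only if'' direction is immediate: if $X$ is a $k_\IR$-space then every $k$-continuous $f$ is continuous, so for any nonempty closed $F$ the choice $U=F$ witnesses the scatteredly-$k_\IR$ property, and by Proposition~\ref{kR-no-Cld-fan} the space $X$ contains no strict $\Cld$-fan, hence no strict $\Cld^\w$-fan. So assume $X$ is perfectly normal, scatteredly-$k_\IR$ and contains no strict $\Cld^\w$-fan, fix a $k$-continuous $f:X\to\IR$, and suppose toward a contradiction that $f$ is discontinuous somewhere. The goal is to manufacture a strict $\Cld^\w$-fan.

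First I would run a Cantor--Bendixson-type stratification driven by the scatteredly-$k_\IR$ property. Set $X_0=X$, let $G_\alpha$ be the union of all relatively open $U\subset X_\alpha$ with $f|U$ continuous (so $f|G_\alpha$ is continuous, as continuity is local), put $X_{\alpha+1}=X_\alpha\setminus G_\alpha$, and take intersections at limits. Scatteredly-$k_\IR$ applied to the closed set $X_\alpha$ guarantees $G_\alpha\ne\emptyset$ whenever $X_\alpha\ne\emptyset$, so this strictly decreasing chain of closed sets reaches $\emptyset$; each point acquires a rank $\rho(x)$ with $x\in G_\alpha$, and $X_\alpha=\{x:\rho(x)\ge\alpha\}$. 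Now pick a discontinuity point $x_0$ of minimal rank $\alpha_0$. Minimality forces every discontinuity point into $X_{\alpha_0}$, so $f$ is continuous at every point of the open set $W=X\setminus X_{\alpha_0}$; and since $x_0\in G_{\alpha_0}$ with $G_{\alpha_0}$ relatively open in $X_{\alpha_0}$, the restriction $f|X_{\alpha_0}$ is continuous at $x_0$. Consequently the oscillation of $f$ at $x_0$ can only come from $W$: after normalizing one sees there is $\e>0$ and (choosing the side that persists) a set $A=\{x\in W: f(x)\ge f(x_0)+\e\}$ with $x_0\in\overline A$.

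The key construction, and the point where countability and perfect normality enter, is the following. Since $X$ is perfectly normal the closed set $X_{\alpha_0}$ is a zero set: fix continuous $g:X\to[0,1]$ with $g^{-1}(0)=X_{\alpha_0}$, so $g>0$ on $W\supset A$ and $g\to0$ along $A$ toward $x_0$. Using the continuity of $f|X_{\alpha_0}$ at $x_0$ and complete regularity, choose $\psi:X\to[0,1]$ with $\psi(x_0)=1$ and $f<f(x_0)+\e/2$ on $\{\psi>0\}\cap X_{\alpha_0}$. Then slice $A$ by the countably many $g$-shells: for $n\in\w$ set
$B_n=\{x: f(x)\ge f(x_0)+\e,\ g(x)\in[\tfrac1{n+1},\tfrac1n],\ \psi(x)\ge\tfrac12\}$,
together with the open overshoot $U_n=\{x: f(x)>f(x_0)+\tfrac\e2,\ g(x)\in(\tfrac1{n+1}-\eta_n,\tfrac1n+\eta_n),\ \psi(x)>\tfrac14\}$. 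Each $B_n$ is closed in $X$ (the $g$-level set lies in $W$, where $f$ is continuous), each $U_n\supset B_n$ is open, and by normality $U_n$ is a functional neighborhood of $B_n$. The family is countable precisely because it is indexed by the $g$-shells, not by a neighborhood base at $x_0$ — this is what evades any tightness hypothesis. It is not locally finite at $x_0$: every neighborhood $O'$ of $x_0$, intersected with $\{\psi>1/2\}$, still meets $A$ in some shell (so meets some $B_n$), and since the $B_n$ are closed in $X$ a neighborhood meeting only finitely many would force $x_0\in A$, which is false; hence every neighborhood meets infinitely many $B_n$.

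The main obstacle I expect is verifying that $(U_n)$ is compact-finite, and this is exactly where $k$-continuity and the localization by $\psi$ pay off. If a compact $K$ met infinitely many $U_{n_k}$, choose $z_k\in K\cap U_{n_k}$ with $n_k\to\infty$, so $g(z_k)\to0$; a cluster point $z\in K$ then satisfies $g(z)=0$ (so $z\in X_{\alpha_0}$), $\psi(z)>0$, and, by continuity of $f|K$, $f(z)\ge f(x_0)+\e/2$. But $z\in\{\psi>0\}\cap X_{\alpha_0}$ gives $f(z)<f(x_0)+\e/2$, a contradiction. Thus $(U_n)$ is compact-finite, so $(B_n)_{n\in\w}$ is strictly compact-finite and not locally finite, i.e.\ a strict $\Cld^\w$-fan — contradicting the hypothesis. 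Therefore $f$ is continuous, and $X$ is a $k_\IR$-space. The only genuinely delicate bookkeeping is keeping the slices $B_n$ closed in $X$ while trapping them inside $U_n$, which is handled cleanly by cutting with $\psi$ rather than with a raw neighborhood of $x_0$.
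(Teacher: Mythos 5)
Your proof is correct, and its skeleton is the same as the paper's: assume $f$ is $k$-continuous but discontinuous, locate a discontinuity point lying in a closed set that contains all discontinuity points and on which $f$ restricts continuously near that point, then use perfect normality to slice the set witnessing the discontinuity into countably many closed ``shells'' whose open neighborhoods are functional (by normality) and compact-finite (by $k$-continuity on compacta), yielding a strict $\Cld^\w$-fan and a contradiction. Where you genuinely diverge is in the two middle steps. First, the paper does not iterate: it takes the maximal open set $U$ on which $f$ is continuous (dense by the scatteredly-$k_\IR$ property) and applies that property once more inside $F=X\setminus U$ to find a relatively open dense $V\subset F$ with $f|V$ continuous and a discontinuity point $a\in V$; your transfinite Cantor--Bendixson rank with a minimal-rank discontinuity point achieves the same end through a (harmless) transfinite induction. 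Second, and more substantively, the paper normalizes the function: it multiplies by a bump $\lambda$ and subtracts a Tietze--Urysohn extension of $\lambda f|F$, obtaining a $k$-continuous $h$ vanishing identically on $F$, so that the shells come from a decreasing sequence of open sets $W_n$ with $\bigcap_{n\in\w} W_n=F$ and compact-finiteness is nearly automatic, since traces of compacta converge into $F\subset h^{-1}(0)$. You avoid the Tietze extension (and the paper's boundedness reduction) entirely, keeping $f$ itself and routing the needed control through the cutoff $\psi$ and the relative continuity of $f|X_{\alpha_0}$ at $x_0$; this trims the toolkit at the price of the slightly more delicate cluster-point argument for compact-finiteness, which you carry out correctly. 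One small item to pin down explicitly: you must take $\eta_n<\frac1{n+1}$ so that the enlarged $g$-shell defining $U_n$ stays inside $W=X\setminus X_{\alpha_0}$ --- this is precisely what makes $U_n$ open (and, for the closed shells, what makes $B_n$ closed), because $f$ is only known to be continuous at points of $W$.
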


\begin{proof} The ``only if'' part follows from Proposition~\ref{kR-no-Cld-fan}. To prove the ``if'' part, assume that
$X$ is scatteredly-$k_\IR$  and $X$ contains no strict $\Cld^\w$-fan. Assuming that $X$ is not $k_\IR$, find a bounded discontinuous $k$-continuous function $f:X\to \IR$. Let $U\subset X$ be the maximal open set such that $f|U$ is continuous. The scattered-$k_\IR$ property of $X$ guarantees that $U$ is dense in $X$. Since $f$ is discontinuous, the closed set $F=X\setminus U$ is not empty and hence contains a relatively open dense subset $V\subset F$ such that the restriction $f|V$ is continuous. Fix any open set $\widetilde V\subset X$ such that $\widetilde V \cap F=V$. By the maximality of the set $U$, the set $\widetilde V$ contains a discontinuity point $a\in X$ of the function $f$ and this point belongs to $\tilde V\cap F=V$. Using the complete regularity of $X$, we can find a continuous function
$\lambda:X\to [0,1]$ such that $\lambda^{-1}(1)$ is a neighborhood of $a$ in $X$ and $\cl_X\big(\lambda^{-1}\big((0,1]\big)\big)\subset \widetilde V$. Then the function $\lambda f:X\to \IR$, $\lambda f:x\mapsto \lambda(x)\cdot f(x)$, is discontinuous at $a$ and $\lambda f|F$ is continuous.

By Tietze-Urysohn Theorem \cite[2.1.8]{En}, there exists a bounded continuous function $g:X\to \IR$ such that $g|F=\lambda f|F$. Then the function $h=f-g$ is $k$-continuous, discontinuous at $a$, continuous on the set $U$ and is equal to zero on $F$. Since $h$ is discontinuous at $a$, we can find a positive $\e>0$ such that the set $A=\{x\in X:|h(x)|\ge\e\}$ contains the point $a$ in its closure.

Using the perfect normality of $X$, choose a sequence of open sets $(W_n)_{n\in\w}$ in $X$ such that $W_{-1}=W_0=X$, $F=\bigcap_{n\in\w}W_n$ and $\overline{W}_{n}\subset W_{n-1}$ for all $n\in\IN$. For every $n\in\IN$ consider the set $A_n=A\cap (W_n\setminus W_{n+1})$ and observe that the set $B_n=\{x\in W_{n-1}\setminus W_{n+2}:h(x)>\frac\e2\}$ is a functional neighborhood of $A_n$ in $X$. Since $a\in \bar A$, the family $(A_n)_{n\in\w}$ is not locally finite at $a$. On the other hand, for any compact subset $K\subset X$, the continuity of the function $h|K$, the equality $h(a)=0$, and the convergence $W_n\cap K\to K\cap F\subset h^{-1}(0)$ imply that the compact set $K$ meets at most finitely many sets $B_n$. This means that the family $(B_n)_{n\in\w}$ is compact-finite and hence $(A_n)_{n\in\w}$ is a strict $\Cld^\w$-fan in $X$. This is a desired contradiction witnessing that $X$ is a $k_\IR$-space.
\end{proof}

Next, we characterize the sequentiality of scattered spaces using  strict $\Fin$-fans.

\begin{proposition} A scattered regular space  $X$ is sequential if and only if each closed subspace of $X$ contains no strict $\Fin$-fan.
\end{proposition}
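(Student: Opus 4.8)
The plan is to treat the two implications separately: the forward implication reduces cleanly to the $k$-space case, while the backward implication is proved by contraposition through an explicit construction of a strict $\Fin$-fan, and it is there that the scatteredness hypothesis does all the work.

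\emph{The ``only if'' part.} First I would record that every sequential space is a $k$-space. Indeed, if $B\subset X$ is $k$-closed and $(b_n)_{n\in\w}$ is a sequence in $B$ converging to a point $y$, then $K=\{y\}\cup\{b_n:n\in\w\}$ is compact, so $B\cap K$ is closed in $K$ and contains all the $b_n$; hence $y\in B$. Thus every $k$-closed set is sequentially closed, and in a sequential space it is therefore closed, so $X$ is a $k$-space. Since closed subspaces of sequential spaces are again sequential (and hence $k$-spaces), Proposition~\ref{k-no-Cld-fan} shows that no closed subspace of a sequential space contains a $\Fin$-fan, in particular none contains a strict one. Note that this direction uses neither scatteredness nor regularity.

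\emph{The ``if'' part.} Here I would argue by contraposition. Assuming $X$ is not sequential, I would produce a strict $\Fin$-fan inside a closed subspace. Non-sequentiality yields a sequentially closed non-closed set $A$ and a point $x\in\bar A\setminus A$. Passing to the closed subspace $\bar A$, which is again scattered and regular and inherits the hypothesis, I may assume $A$ is dense, $x\notin A$, and \emph{no sequence in $A$ converges to} $x$. Since $x\in\bar A\setminus A$, the point $x$ is non-isolated, so by scatteredness it has a Cantor--Bendixson rank $\gamma\ge 1$ and an open neighborhood $W\ni x$ meeting the top level $X^{(\gamma)}$ only in $x$; thus every point of $A$ near $x$ sits at a strictly smaller rank. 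Exploiting this rank stratification together with regularity, I would select by transfinite recursion points $d_\xi\in A$ inside a decreasing family of neighborhoods of $x$, each carrying a neighborhood $U_\xi$ whose closure avoids $x$, arranging that the set $D=\{d_\xi\}$ accumulates \emph{only} at $x$ (so $x\in\bar D\setminus D$). Taking the $U_\xi$ to be functional neighborhoods afforded by regularity (or, where the scattered structure supplies clopen sets, a $\Clop$-fan, which is automatically strict) will secure strictness.

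The crux, and the step I expect to be the main obstacle, is verifying that $D$ is compact-finite, equivalently that $(U_\xi)$ is compact-finite. For this I would use that every compact subset $K$ of a scattered regular space is compact scattered Hausdorff, hence \emph{sequentially compact}: if $K\cap D$ were infinite, sequential compactness would extract a subsequence of $D$ converging in $K$. But the recursion is designed so that the only accumulation point of $D$ is $x$, so this subsequence would be a sequence in $A$ converging to $x$, contradicting the sequential closedness of $A$. Hence $K\cap D$ is finite for every compact $K$, so $D$ is a non-closed strictly compact-finite set, and by Proposition~\ref{p:Fin-fan-char} it determines a strict $\Fin$-fan in $\bar A$, contradicting the hypothesis. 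The genuinely delicate point is to run the selection so that $D$ is simultaneously \emph{cofinal toward} $x$ (guaranteeing $x\in\bar D$) and \emph{free of accumulation away from} $x$ (guaranteeing compact-finiteness): the scattered rank of $x$ is exactly the device that lets one peel off neighborhoods of $x$ level by level and prevent the chosen points from clustering anywhere but at $x$, while the sequential closedness of $A$ is what converts any putative clustering into a forbidden convergent sequence.
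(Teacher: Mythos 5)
Your ``only if'' direction is correct and is essentially the paper's (closed subspaces of sequential spaces are sequential, hence $k$-spaces, and Proposition~\ref{k-no-Cld-fan} applies). The genuine gap is in the ``if'' direction, at the very step you flag as the crux: the transfinite selection of a set $D=\{d_\xi\}\subset A$ that accumulates \emph{only} at $x$. You offer no mechanism by which the Cantor--Bendixson stratification prevents the chosen points from clustering at points of $A$, and no mechanism can exist, because the statement being proved is false. Take $X=\omega_1+1=[0,\omega_1]$ with the order topology and $A=[0,\omega_1)$. Then $X$ is compact Hausdorff (hence regular) and scattered, and it is not sequential: $A$ is sequentially closed (a convergent sequence of countable ordinals has countable supremum) but not closed. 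Yet no closed subspace of $X$ contains any $\Fin$-fan, strict or otherwise: closed subspaces of $X$ are compact, and in a compact space $Y$ a compact-finite family has only finitely many nonempty members (test compact-finiteness against $K=Y$), so it is automatically locally finite. Measured against your construction: every infinite $D\subset A$ fails to be compact-finite, since $D\cap X$ is infinite and $X$ itself is compact; more pointedly, every countably infinite subset of $A$ is bounded by some $\beta<\omega_1$ and therefore accumulates at a point of the compact set $[0,\beta]\subset A$. The CB ranks of points of $A$ are unbounded in $\omega_1$, so the ``peeling'' recursion inevitably piles up points that cluster inside $A$ at limit stages. Sequential closedness of $A$ forbids convergence to $x$, but it does nothing to forbid clustering elsewhere in $A$ --- and that is exactly what the countable compactness of $\omega_1$ forces.

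For comparison, the paper's own proof of the ``if'' direction founders on the same rock. Having chosen $a$ isolated in $\bar A\setminus A$ and a closed neighborhood $U_a$ with $U_a\cap(\bar A\setminus A)=\{a\}$, it asserts that $a$ is the unique non-isolated point of $B=U_a\cap\bar A$, i.e.\ that $U_a\cap A$ is discrete; this does not follow (scatteredness isolates $a$ within the remainder $\bar A\setminus A$, not within $\bar A$), and in the example above $B$ is a closed tail $[\beta,\omega_1]$, full of non-isolated points. So your instinct about where the difficulty lies was exactly right, and your auxiliary claims are sound: compact scattered Hausdorff spaces are indeed sequentially compact, and the reduction via Proposition~\ref{p:Fin-fan-char} is legitimate. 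But the crux is not a technical obstacle to be finessed by rank bookkeeping; it is the precise point at which both your argument and the paper's fail, and $\omega_1+1$ shows that the proposition requires an additional hypothesis --- one excluding sequentially closed sets that are countably compact in themselves without being compact (countable tightness of $X$, for instance, would at least rule out this example) --- before any proof can succeed.
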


\begin{proof} The ``only if'' part follows from Proposition~\ref{k-no-Cld-fan}.
To prove the ``if'' part, assume that a scattered space $X$ is not sequential. Then $X$ contains a non-closed subset $A\subset X$ such that for every compact countable set $K\subset X$ the intersection $K\cap A$ is compact.
Since $X$ is scattered, the remainder $\bar A\setminus A$ contains an isolated point $a$. This point has a neighborhood $O_a\subset X$ such that $O_a\cap(\bar A\setminus A)=\{a\}$. This means that $a$ is a unique non-isolated point of the intersection $\bar A\cap O_a$. By the regularity of $X$, the point $x$ has a closed neighborhood $U_a\subset X$ such that $U_a\subset O_a$. Then $B=U_a\cap \bar A=\{a\}\cup(U_a\cap A)$ is a closed subset of $X$ and $a$ is a unique non-isolated point of $B$.

We claim that each compact subset $K\subset B$ is finite. Assume conversely that $B$ contains an infinite compact set $K$. Since the space $B\setminus\{a\}$ is discrete, the point $a$ belongs to $K$ and is non-isolated in $K$. Replacing $K$ by any countable subset containing $a$, we can assume that $K$ is countable. Then by our assumption, $K\cap A=K\setminus\{a\}$ is closed in $K$ and hence $a$ is an isolated point of $K$. This contradiction shows that all compact subsets of $B$ are finite. It follows that $(\{b\})_{b\in B\setminus\{a\}}$ is a $\Fin$-fan in $\bar B$. Since each point $b\in B\setminus\{a\}$ is isolated in $\bar B$ this $\Fin$-fan is strict, which contradicts our assumption.
\end{proof}

\section{Two characterizations of spaces containing no strict $\Cld$-fans}

In this section we present two characterizations of spaces containing no strict $\Cld$-fans. The first of them implies that Ascoli spaces contain no strict $\Cld$-fan. We recall \cite{BG} that a topological space $X$ is \index{topological space!Ascoli}{\em Ascoli} if each compact subset $K\subset C_k(X)$ is evenly continuous, which means that the evaluation function $K\times X\to Y$, $(f,x)\mapsto f(x)$, is continuous.

Let $\lambda$ be an infinite cardinal. A subset $S$ of a topological space $X$ will be called a \index{subset!convergent $\lambda$-sequence}{\em convergent $\lambda$-sequence} if $|S|\le\lambda$ and the closure $\bar S$ in $X$ is compact and has a unique non-isolated point.

\begin{theorem}\label{Cld-fan<->sequence} Let $\lambda$ be an infinite cardinal. A topological space $X$ contains no strict $\Cld^\lambda$-fan if and only if every convergent $\lambda$-sequence in the function space $C_k(X)$ is evenly continuous.
\end{theorem}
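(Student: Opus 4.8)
The plan is to prove both implications by contraposition, using a single dictionary that converts closed sets into functions and back. The basic translation device is this: to a closed set $F\subset X$ with a functional neighborhood $U$ I attach a continuous function $f:X\to[0,1]$ with $f(F)\subset\{1\}$ and $f(X\setminus U)\subset\{0\}$ (such $f$ exists by the very definition of functional neighborhood); conversely, to a function $f:X\to\IR$ and a threshold $\e>0$ I attach the closed set $\{x:|f(x)|\ge\e\}$ together with the functional neighborhood $\{x:|f(x)|>\e/2\}$. In both directions the compact-finiteness of the family of neighborhoods $(U_\alpha)$ will be exactly equivalent to convergence of the associated functions $f_\alpha$ to $0$ in $C_k(X)$ (since $(U_\alpha)$ is compact-finite iff for every compact $K$ all but finitely many $f_\alpha$ vanish, resp.\ are $<\e/2$, on $K$), and a point at which the family of closed sets fails to be locally finite will be exactly a point at which the evaluation map fails to be continuous.

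First I would handle the ``if'' part by its contrapositive: assuming a strict $\Cld^\lambda$-fan $(F_\alpha)_{\alpha\in\lambda}$ with witnessing functional neighborhoods $(U_\alpha)$ and a point $x_0$ of non-local-finiteness, I form the functions $f_\alpha$ as above and set $S=\{f_\alpha:\alpha\in\lambda\}$. Compact-finiteness of $(U_\alpha)$ gives $f_\alpha\to 0$ and hence that $\bar S=S\cup\{0\}$ is compact. I would then check that $0$ is the unique non-isolated point of $\bar S$, so that $S$ is a genuine convergent $\lambda$-sequence: each nonzero $f_\alpha$ is isolated because, evaluating on the compact set $\{x\}$ for a point $x\in F_\alpha$, only finitely many $U_\beta$ are met and $C_k(X)$ is Hausdorff; and $0$ is a genuine limit because only finitely many indices can share one nonzero value $f$ (they would all force $\{f>0\}\subseteq U_\beta$, contradicting compact-finiteness), so $S\setminus\{0\}$ is infinite. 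Finally $S$ is not evenly continuous at $(0,x_0)$: any neighborhood $W$ of $0$ in $\bar S$ omits only finitely many $f_\alpha$, while every neighborhood $V$ of $x_0$ meets $F_\alpha$ for infinitely many $\alpha$, producing $f_\alpha\in W$ and $x\in V$ with $f_\alpha(x)=1$, which defeats even continuity.

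For the ``only if'' part I would again argue contrapositively: suppose some convergent $\lambda$-sequence $S$ with limit $f_\infty$ in $C_k(X)$ is not evenly continuous. Replacing $S$ by the homeomorphic translate $S-f_\infty$ (which preserves both the convergent-sequence and the even-continuity properties, since the discrepancy $(g,x)\mapsto f_\infty(x)$ is continuous) I may assume $f_\infty=0$. Since every $g\in\bar S$ is continuous and every isolated point of $\bar S$ has an open-in-$\bar S$ singleton neighborhood on which evaluation is automatically jointly continuous, the only possible point of discontinuity of the evaluation map is of the form $(0,x_0)$; I fix such $x_0$ and a witnessing $\e>0$. Setting $F_\alpha=\{x:|f_\alpha(x)|\ge\e\}$ and $U_\alpha=\{x:|f_\alpha(x)|>\e/2\}$, the convergence $f_\alpha\to0$ makes $(U_\alpha)$ compact-finite, and a continuous truncation $\theta\circ|f_\alpha|$ exhibits each $U_\alpha$ as a functional neighborhood of the closed set $F_\alpha$, so $(F_\alpha)_{\alpha\in\lambda}$ is strictly compact-finite; moreover the failure of even continuity at $(0,x_0)$ says precisely that every neighborhood of $x_0$ meets $F_\alpha$ for infinitely many $\alpha$, so the family is not locally finite at $x_0$. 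Padding with empty sets to reindex by $\lambda$, this is a strict $\Cld^\lambda$-fan.

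The step I expect to be the main obstacle is the verification, in the ``if'' part, that $\bar S$ really has a \emph{unique} non-isolated point, i.e.\ that $S$ is a bona fide convergent $\lambda$-sequence and not merely a family accumulating at $0$ together with spurious extra limits; this is where compact-finiteness of $(U_\alpha)$ must be invoked a second time, both to isolate each $f_\alpha$ and to guarantee infinitely many distinct nonzero functions. In the other direction the only delicate point is the reduction of the failure of even continuity to a single pair $(0,x_0)$, which rests on the elementary observation that evaluation is automatically jointly continuous at pairs whose first coordinate is an isolated point of $\bar S$.
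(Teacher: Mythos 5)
Your proposal is correct and follows essentially the same route as the paper: the same dictionary between closed sets with compact-finite functional neighborhoods and bump functions converging to zero in $C_k(X)$, with both implications proved contrapositively. In fact you are somewhat more careful than the paper at the two points it glosses over — verifying that the constructed family really is a convergent $\lambda$-sequence (unique non-isolated point, infinitely many distinct nonzero functions), and justifying that a failure of even continuity must occur at a pair whose first coordinate is the limit function.
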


\begin{proof} To prove the ``if'' part, assume that $X$ contains a strict $\Cld^\lambda$-fan $(X_\alpha)_{\alpha\in \lambda}$. For every $\alpha\in \lambda$ choose a functional neighborhood $U_\alpha$ of $X_\alpha$ in $X$ such that the family $(U_\alpha)_{\alpha\in \lambda}$ is compact-finite. Since $U_\alpha$ is a functional neighborhood of $X_\alpha$, there is a continuous function $f_\alpha:X\to [0,1]$ such that $f_\alpha(X_\alpha)\subset\{1\}$ and $f_\alpha(X\setminus U_\alpha)\subset\{0\}$. Denote by $\zeta:X\to\{0\}\subset\IR$ the zero function and observe that the set $K=\{\zeta\}\cup\{f_\alpha\}_{\alpha\in \lambda}$ is compact in the function space $C_k(X)$. Moreover, the zero function is a unique accumulation point of $K$. So, $K$ is a convergent $\lambda$-sequence in $C_k(X)$. Assuming that $K$ is evenly continuous, for every point $x\in X$ we would find a neighborhood $O_x\subset X$ and a neighborhood $W\subset K$ of $\zeta$ such that $\{f(z):f\in W,\;z\in O_x\}\subset(-1,1)$. Since $\zeta$ is the unique accumulation point of $K$, the set $\Lambda=\{\alpha\in \lambda:f_\alpha\notin W\}$ is finite. Then the set $\{\alpha\in \lambda:O_x\cap X_\alpha\ne\emptyset\}\subset \Lambda$ is finite. This means that the $\Cld$-fan $(X_\alpha)_{\alpha\in \lambda}$ is locally finite, which is a contradiction.
\smallskip

To prove the ``only if'' part, assume that the function space $C_k(X)$ contains a convergent $\lambda$-sequence $K=\{f_\alpha\}_{\alpha\in\lambda}$ which is not evenly continuous at some point $x\in X$.
Let $\zeta$ be the unique non-isolated point of the compact set $\bar K$. Replacing each function $f_\alpha$ by $f_\alpha-\zeta$, we can assume that $\zeta$ coincides with the zero function.

Since $K$ is not evenly continuous at $x$, there is a positive $\e$ such that for every neighborhood $O_x\subset X$ of $x$ and every neighborhood $O_\zeta\subset K$ of $\zeta$ we get $O_\zeta(O_x)\not\subset (-\e,\e)$. For every ordinal $\alpha\in\lambda$ consider the sets $X_\alpha=f_\alpha^{-1}(\IR\setminus (-\e,\e))$ and $W_\alpha=f_\alpha^{-1}(\IR\setminus [-\frac\e2,\frac\e2])$, and observe that $W_\alpha$ is a functional neighborhood of $X_\alpha$. The convergence $(f_\alpha)_{\alpha\in\lambda}\to\zeta=0$ in $C_k(X)$ implies that the family $(W_\alpha)_{\alpha\in\lambda}$ is compact-finite in $X$. On the other hand, the choice of $\e$ guarantees that the family $(X_\alpha)_{\alpha\in \lambda}$ is not
locally finite, which means that this family is a strict $\Cld^\lambda$-fan in $X$.
\end{proof}

\begin{corollary}\label{c:A->noFan} Ascoli spaces contain no strict $\Cld$-fans.
\end{corollary}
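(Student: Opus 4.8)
The plan is to obtain this as an immediate corollary of Theorem~\ref{Cld-fan<->sequence}. First I would reduce the claim about arbitrary strict $\Cld$-fans to the parametrized statement appearing in that theorem: by Definition~\ref{d:fan} every $\Cld$-fan is a $\Cld^\lambda$-fan for some cardinal $\lambda$, and since no finite family can fail to be locally finite, the indexing cardinal of any fan is necessarily infinite. Hence it suffices to show that for each infinite cardinal $\lambda$ the Ascoli space $X$ contains no strict $\Cld^\lambda$-fan.

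Next I would invoke Theorem~\ref{Cld-fan<->sequence}, according to which the absence of a strict $\Cld^\lambda$-fan in $X$ is equivalent to the assertion that every convergent $\lambda$-sequence in $C_k(X)$ is evenly continuous. So fix an infinite cardinal $\lambda$ and an arbitrary convergent $\lambda$-sequence $S\subset C_k(X)$. By the very definition of a convergent $\lambda$-sequence, its closure $\bar S$ is a compact subset of the function space $C_k(X)$.

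The only thing that then has to be observed is that the Ascoli hypothesis applies directly to this compact set: since $X$ is Ascoli, every compact subset of $C_k(X)$ is evenly continuous, and in particular $\bar S$ is evenly continuous, i.e. the evaluation map $\bar S\times X\to\IR$ is continuous. Restricting this continuous map to $S\times X$ shows that the convergent $\lambda$-sequence $S$ is evenly continuous as well. As $S$ was arbitrary, Theorem~\ref{Cld-fan<->sequence} yields that $X$ contains no strict $\Cld^\lambda$-fan, and since $\lambda$ was an arbitrary infinite cardinal, $X$ contains no strict $\Cld$-fan.

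I do not expect any genuine obstacle here: the entire content sits in Theorem~\ref{Cld-fan<->sequence}, and the corollary is just the remark that the phrase ``compact subset'' in the definition of an Ascoli space already covers the closure of every convergent $\lambda$-sequence. The only point requiring a moment of care is the bookkeeping of the two formulations of even continuity — for a compact set versus for a convergent sequence — but these agree on $\bar S$, so passing from the even continuity of the compact set $\bar S$ to that of the sequence $S$ is immediate by restriction of the evaluation map.
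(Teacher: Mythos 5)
Your proof is correct and takes exactly the route the paper intends: Corollary~\ref{c:A->noFan} is stated as an immediate consequence of Theorem~\ref{Cld-fan<->sequence}, since in an Ascoli space the compact closure $\bar S$ of any convergent $\lambda$-sequence $S\subset C_k(X)$ is evenly continuous, and even continuity passes to $S$ by restriction of the evaluation map. Your preliminary reduction (every fan is a $\Cld^\lambda$-fan for some necessarily infinite $\lambda$) is the right bookkeeping and matches the paper's implicit reading.
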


Another characterization of spaces without $\Cld^\lambda$-fans is analogous to a characterization of Ascoli spaces proved in \cite[5.10]{BG}.

\begin{proposition} Let $\lambda$ be a cardinal. A topological space $X$ contains no strict $\Cld^\lambda$-fan if and only if each point $x\in X$ is contained in a dense subspace of $X$ containing no strict $\Cld^\lambda$-fans.
\end{proposition}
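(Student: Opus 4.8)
The statement is an equivalence, and one direction is immediate: if $X$ itself contains no strict $\Cld^\lambda$-fan, then $X$ is a dense subspace of itself through every point $x\in X$, so the ``only if'' part holds with the witnessing dense subspace taken to be $X$. All the content lies in the ``if'' direction, which I would prove by contraposition: assuming that $X$ \emph{does} contain a strict $\Cld^\lambda$-fan, I would exhibit a point $x\in X$ such that \emph{every} dense subspace of $X$ through $x$ again carries a strict $\Cld^\lambda$-fan.

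So suppose $(F_\alpha)_{\alpha\in\lambda}$ is a strict $\Cld^\lambda$-fan in $X$. By strict compact-finiteness, each $F_\alpha$ has a functional neighborhood $U_\alpha$, witnessed by a continuous $f_\alpha:X\to[0,1]$ with $f_\alpha(F_\alpha)\subset\{0\}$ and $f_\alpha(X\setminus U_\alpha)\subset\{1\}$, such that $(U_\alpha)_{\alpha\in\lambda}$ is compact-finite. Fix a point $x$ at which the fan fails to be locally finite; it then suffices to show that an \emph{arbitrary} dense subspace $D\subset X$ with $x\in D$ contains a strict $\Cld^\lambda$-fan. The plan is to transplant the fan to $D$ not by the naive restriction $F_\alpha\cap D$ (which may lose points, or even become empty, since $F_\alpha$ is closed and need not meet $D$) but by \emph{fattening} each closed set to a sublevel set: put $F'_\alpha=\{y\in D:f_\alpha(y)\le\tfrac13\}$ together with the open functional neighborhood $U'_\alpha=\{y\in D:f_\alpha(y)<\tfrac12\}$ in $D$. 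Each $F'_\alpha$ is closed in $D$, and a piecewise-linear rescaling of $f_\alpha|_D$, such as $\max\{0,\min\{1,6(f_\alpha-\tfrac13)\}\}$, shows that $U'_\alpha$ is a functional neighborhood of $F'_\alpha$ in $D$. Since $U'_\alpha\subset f_\alpha^{-1}([0,1))\subset U_\alpha$ and every compact subset of $D$ is compact in $X$, the family $(U'_\alpha)_{\alpha\in\lambda}$ inherits compact-finiteness from $(U_\alpha)_{\alpha\in\lambda}$, so $(F'_\alpha)_{\alpha\in\lambda}$ is strictly compact-finite in $D$.

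The crux is to check that $(F'_\alpha)_{\alpha\in\lambda}$ is still not locally finite at $x$ in $D$, and this is exactly where density is used. Any neighborhood of $x$ in $D$ has the form $O_x\cap D$ for an open $O_x\ni x$ in $X$; since $(F_\alpha)_{\alpha\in\lambda}$ is not locally finite at $x$, the set $\{\alpha:O_x\cap F_\alpha\ne\emptyset\}$ is infinite. For each such $\alpha$ the open set $O_x\cap\{y\in X:f_\alpha(y)<\tfrac13\}$ is nonempty, as it contains any point of $O_x\cap F_\alpha$ (where $f_\alpha=0$), so by density of $D$ it meets $D$; any such meeting point lies in $(O_x\cap D)\cap F'_\alpha$. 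Hence $\{\alpha:(O_x\cap D)\cap F'_\alpha\ne\emptyset\}$ is infinite, and $(F'_\alpha)_{\alpha\in\lambda}$ is a strict $\Cld^\lambda$-fan in $D$, contradicting the hypothesis on $D$. The one point demanding care is precisely this persistence of non-local-finiteness: the thresholds $\tfrac13<\tfrac12$ must be separated so that $F'_\alpha$ sits strictly inside the functionally open $U'_\alpha$, and the fattening must be just generous enough that density of $D$ converts ``$O_x$ meets $F_\alpha$'' into ``$O_x\cap D$ meets $F'_\alpha$''. (For finite $\lambda$ there is nothing to prove, since a finite family of sets is automatically locally finite, so no $\Cld^\lambda$-fan exists and both sides of the equivalence hold vacuously.)
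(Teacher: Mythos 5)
Your proof is correct, and it takes a genuinely different route from the paper's. The paper does not argue with fans directly: it derives this proposition as a corollary of its characterization Theorem~\ref{Cld-fan<->sequence}, which says that $X$ contains no strict $\Cld^\lambda$-fan if and only if every convergent $\lambda$-sequence in $C_k(X)$ is evenly continuous. Given a convergent $\lambda$-sequence $K\subset C_k(X)$ and a point $x$, the paper takes a dense subspace $Z\ni x$ without strict $\Cld^\lambda$-fans, notes that density makes the restriction operator $R:C_k(X)\to C_k(Z)$ injective (so $R|K$ is a homeomorphism onto $R(K)$), applies even continuity of $R(K)$ in $C_k(Z)$, and pulls the resulting neighborhoods back to $X$ by taking closures. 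Your argument instead works by contraposition entirely at the level of fans: you fix the point $x$ of non-local-finiteness and transplant the strict fan into an \emph{arbitrary} dense $D\ni x$ by fattening each $F_\alpha$ to the relative sublevel set $\{y\in D:f_\alpha(y)\le\tfrac13\}$ inside the functionally open $\{y\in D:f_\alpha(y)<\tfrac12\}$; the separation of thresholds is exactly what lets density convert ``$O_x$ meets $F_\alpha$'' into ``$O_x\cap D$ meets the fattened set,'' while compact-finiteness passes down since $U'_\alpha\subset U_\alpha$ and compactness is intrinsic. What each approach buys: the paper's proof is short once Theorem~\ref{Cld-fan<->sequence} is available and runs in parallel with the analogous density characterization of Ascoli spaces from \cite{BG}; yours is self-contained and elementary, avoids function spaces altogether, and actually establishes the formally stronger fact that a single point (any point where some strict $\Cld^\lambda$-fan fails to be locally finite) witnesses the failure for \emph{every} dense subspace through it. Both arguments are complete; your handling of the degenerate cases (empty fattened sets, finite $\lambda$) is harmless and correctly dismissed.
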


\begin{proof}
The ``only if'' part is trivial. To prove the ``if'' part, assume that each point $x\in X$ is contained in a dense subspace of $X$ containing no strict $\Cld^\lambda$-fans. To prove that $X$ contains no strict $\Cld^\lambda$-fan, we shall apply the characterization Theorem~\ref{Cld-fan<->sequence}. Given a convergent $\lambda$-sequence $K\subset C_k(X,Y)$ we need to prove that $K$ is evenly continuous. Replacing $K$ by its closure, we can assume that $K$ is compact.

To prove that $K$ is evenly continuous, fix a function $f\in K$, a point $x\in X$, and a neighborhood $O_{f(x)}\subset \IR$ of $f(x)$. By the regularity of $\IR$, there is a neighborhood $W_{f(x)}\subset \IR$ of $f(x)$ such that $\overline{W_{f(x)}}\subset O_{f(x)}$. By our assumption, the point $x$ is contained in a dense subspace $Z\subset X$ containing no strict $\Cld^\lambda$-fan. By Theorem~\ref{Cld-fan<->sequence}, each convergent $\lambda$-sequence in the function space $C_k(Z)$ is evenly continuous. The density of $Z$ in $X$ implies that the restriction operator
$$
R:C_k(X)\to C_k(Z), \quad R:g\mapsto g|Z,
$$
is injective and hence the restriction $R|K:K\to R(K)\subset C_k(Z)$ is a homeomorphism. By the choice of $Z$,  the convergent $\lambda$-sequence $R(K)\subset C_k(Z)$ is evenly continuous. Consequently, for the function $h:=R(f)=f|Z$ and the neighborhood $W_{f(x)}$ of $h(x)=f(x)$ there are neighborhoods $U_h\subset R(K)$ of $h$ and $W_x\subset Z$ of $x$  such that $U_h(W_x)\subset W_{f(x)}$. It follows that $U_f :=\{g\in {K}:R(g)\in U_h\}$ is a neighborhood of $f$ in $K$ and the closure $\overline{W}_x$ of $W_x$ in $X$ is a (closed) neighborhood of $x$ in $X$ such that $U_f(\overline{W}_x)\subset \overline{W}_{f(x)}\subset O_{f(x)}$. Thus the compact set $K$ is evenly continuous and by Theorem~\ref{Cld-fan<->sequence}, the space $X$ contains no strict $\Cld^\lambda$-fans.
\end{proof}

A similar characterization of Ascoli spaces was proved in \cite{BG}.

\begin{theorem} A topological space $X$ is Ascoli if and only if each point $x\in X$ is contained in a dense Ascoli subspace of $X$.
\end{theorem}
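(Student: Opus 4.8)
The plan is to mimic almost verbatim the proof of the preceding Proposition (about strict $\Cld^\lambda$-fans), replacing convergent $\lambda$-sequences by arbitrary compact subsets of $C_k(X)$ and replacing the fan characterization by the defining even-continuity property of Ascoli spaces. The ``only if'' direction is immediate, since $X$ is a dense Ascoli subspace of itself containing every point. So the whole content lies in the ``if'' direction: assuming each point of $X$ lies in a dense Ascoli subspace, I must show that every compact set $K\subset C_k(X)$ is evenly continuous.

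First I would fix a compact $K\subset C_k(X)$, a function $f\in K$, a point $x\in X$, and a neighborhood $O_{f(x)}\subset\IR$ of $f(x)$; using the regularity of $\IR$ choose $W_{f(x)}$ with $\overline{W_{f(x)}}\subset O_{f(x)}$. By hypothesis pick a dense Ascoli subspace $Z\subset X$ with $x\in Z$. The key tool is the restriction operator $R:C_k(X)\to C_k(Z)$, $R:g\mapsto g|Z$, which is continuous and, because $Z$ is dense, injective (two continuous real-valued functions agreeing on a dense set coincide). Hence $R|K:K\to R(K)$ is a continuous bijection from a compact space onto a Hausdorff space, and is therefore a homeomorphism; in particular $R(K)$ is compact in $C_k(Z)$. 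Since $Z$ is Ascoli, $R(K)$ is evenly continuous.

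Next I would transport even continuity back to $X$. Applying even continuity of $R(K)$ to $h:=f|Z\in R(K)$, to the point $x\in Z$, and to the neighborhood $W_{f(x)}$ of $h(x)=f(x)$, I obtain neighborhoods $U_h\subset R(K)$ of $h$ and $W_x\subset Z$ of $x$ with $U_h(W_x)\subset W_{f(x)}$. Pulling back through the homeomorphism $R|K$, the set $U_f:=\{g\in K:R(g)\in U_h\}$ is a neighborhood of $f$ in $K$. The delicate point is to produce a genuine neighborhood of $x$ in the whole space $X$: writing $W_x=\widetilde W\cap Z$ for some open $\widetilde W\subset X$ with $x\in\widetilde W$, the density of $Z$ gives $\widetilde W\subset\overline{\widetilde W\cap Z}\subset\overline{W_x}$ (closure in $X$), so the closed set $\overline{W}_x$ is a neighborhood of $x$ in $X$.

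Finally I would verify $U_f(\overline{W}_x)\subset O_{f(x)}$. For $g\in U_f$ we have $R(g)\in U_h$, hence $g(W_x)=R(g)(W_x)\subset W_{f(x)}$; the continuity of $g$ then yields $g(\overline{W}_x)\subset\overline{g(W_x)}\subset\overline{W_{f(x)}}\subset O_{f(x)}$. This establishes even continuity of $K$ at the arbitrary pair $(f,x)$, so $K$ is evenly continuous and $X$ is Ascoli. The main obstacle I anticipate is precisely this last transfer step across the part of $\overline{W}_x$ lying outside $Z$: even continuity of $R(K)$ only controls the values $g(y)$ for $y\in W_x\subset Z$, and one must invoke the continuity inclusion $g(\overline{A})\subset\overline{g(A)}$ together with the buffered choice $\overline{W_{f(x)}}\subset O_{f(x)}$ to keep control over the points of $\overline{W}_x\setminus Z$.
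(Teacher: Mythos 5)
Your proof is correct and takes essentially the same route as the paper: the paper itself only cites \cite{BG} for this theorem, but the proposition it proves immediately beforehand (on strict $\Cld^\lambda$-fans) uses precisely this argument — restriction to a dense Ascoli subspace, the homeomorphism $R|K$ onto $R(K)$, and transfer of even continuity through the closed neighborhood $\overline{W}_x$ with the buffer $\overline{W_{f(x)}}\subset O_{f(x)}$. Your two added justifications (that density gives $\widetilde W\subset\overline{\widetilde W\cap Z}$, so $\overline{W}_x$ is a neighborhood of $x$ in $X$, and that $g(\overline{W}_x)\subset\overline{g(W_x)}$ by continuity) correctly fill in the steps the paper leaves implicit.
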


An analogous characterization of $k_\IR$-spaces looks a bit differently.

\begin{theorem}\label{t:Ascoli-kY} A topological space $X$ is a $k_\IR$-space if and only if $X$ admits a cover $\A$ by $k_\IR$-subspaces such that for any sets $A,B\in\A$ the intersection $A\cap B$ is dense in $X$.
\end{theorem}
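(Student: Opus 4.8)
The plan is to treat the two implications separately; the forward implication is immediate, and all the content lies in the converse. For the ``only if'' part I would take the trivial one-element cover $\A=\{X\}$: then $X$ is a $k_\IR$-subspace of itself and the single intersection $X\cap X=X$ is dense in $X$, so nothing needs to be checked.

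For the ``if'' part, assume $X=\bigcup\A$ where each $A\in\A$ is a $k_\IR$-space and $A\cap B$ is dense in $X$ for all $A,B\in\A$. Given a $k$-continuous function $f:X\to\IR$, I must show $f$ is continuous. The first step is to observe that $f|A$ is continuous for every $A\in\A$: any compact $K\subset A$ is compact in $X$, so $f|K$ is continuous; thus $f|A$ is $k$-continuous on $A$, and since $A$ is a $k_\IR$-space, $f|A$ is continuous. (Setting $A=B$ in the hypothesis also shows each $A\in\A$ is dense, but only the density of the pairwise intersections will be used below.)

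The key estimate is then a triangle-inequality argument at an arbitrary point. Fixing $x_0\in X$ and $\e>0$, I would choose $A\in\A$ with $x_0\in A$ and, by continuity of $f|A$, an open $U\subset X$ with $x_0\in U$ and $|f(a)-f(x_0)|<\e$ for all $a\in A\cap U$. The goal is to show $f(U)\subset(f(x_0)-2\e,\,f(x_0)+2\e)$. For an arbitrary $z\in U$, pick $B\in\A$ with $z\in B$ and, by continuity of $f|B$, an open $V\subset X$ with $z\in V$ and $|f(b)-f(z)|<\e$ for all $b\in B\cap V$. Since $U\cap V$ is a nonempty open set (it contains $z$) and $A\cap B$ is dense, there is a point $w\in A\cap B\cap U\cap V$; then $|f(w)-f(x_0)|<\e$ and $|f(w)-f(z)|<\e$, whence $|f(z)-f(x_0)|<2\e$. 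As $z\in U$ was arbitrary, this proves continuity of $f$ at $x_0$, and hence of $f$ everywhere.

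The one genuinely creative step is this coupling: to control $f$ at a point $z\in U$ that need not lie in $A$, I bring in a second member $B\ni z$ of the cover and use the density of $A\cap B$ to force the two separate local estimates---one valid on $A$ near $x_0$, one valid on $B$ near $z$---to meet at a common point $w$. I expect this to be the only real obstacle; the remaining bookkeeping (distinguishing open subsets of $X$ from relatively open subsets of the subspaces $A,B$) is routine.
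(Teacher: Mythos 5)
Your proof is correct and follows essentially the same route as the paper: both first show that each restriction $f|A$ is continuous, and then control $f$ at an arbitrary point $z$ of the chosen neighborhood by introducing a second member $B\in\A$ containing $z$ and exploiting the density of $A\cap B$ to tie the two local estimates together. The only difference is cosmetic: the paper runs the final step through closures (fixing a neighborhood $V_{f(a)}$ with $\overline{V_{f(a)}}\subset U_{f(a)}$ and using that $f|B$ maps the closure of the dense set into the closure of its image), whereas you use an $\e$/triangle-inequality argument with a single witness point $w\in A\cap B\cap U\cap V$, which is equivalent for real-valued $f$.
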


\begin{proof} The ``only if'' part is trivial. To prove the ``if'' part, assume that $\A$ is a cover of a topological space $X$ by $k_\IR$-subspaces such that for any sets $A,B\in\A$ the intersection $A\cap B$ is dense in $X$. To show that $X$ is a $k_\IR$-space, fix a $k$-continuous function $f:X\to \IR$. Then for every $A\in\A$ the restriction $f|A$ is continuous. To show that $f$ is continuous, fix any point $a\in X$ and a neighborhood $U_{f(a)}\subset Y$ of $f(a)$. By the regularity of $\IR$, there is an open neighborhood $V_{f(a)}\subset Y$ of $f(a)$ such that $\overline{V_{f(a)}}\subset U_{f(a)}$. Find a set $A\in\A$ containing the point $a$, and observe that the set $V_a :=A\cap f^{-1}(V_{f(a)})$ is an open neighborhood of $a$ in $A$. Choose any open set $\widetilde V_a\subset X$ such that $A\cap\widetilde V_a=V_a$. We claim that $f(\widetilde V_a)\subset \cl_Y(V_{f(a)})\subset U_{f(a)}$. Indeed, for any point $b\in\widetilde V_a$ we can find a set $B\in\A$ containing $b$. By our assumption the intersection $A\cap B$ is dense in $X$ and hence the set $C:=A\cap B\cap \widetilde V_a$ is dense in $B\cap\widetilde V_a$. The continuity of $f|B$ implies that
$$
f(b)\in f(B\cap\widetilde V_a)  \subset f(\overline{C})\subset \cl(f(C)) \subset \cl(f(A\cap\widetilde V_a))
 = \cl(f(A\cap V_a))\subset\cl(V_{f(a)})\subset U_{f(a)}.
$$
So, $f(\widetilde V_a)\subset U_{f(a)}$ and $X$ is a $k_\IR$-space.
\end{proof}

\section{Preservation of (strong) fans by some operations}

In this subsection we study the problem of preservation of (strong) fans by some operations over topological spaces. First observe that a fan in a subspace $Z$ of a topological space $X$ needs not remain a fan in $X$. On the other hand, any fan in a {\em closed} subspace  of a topological space $X$ remains a fan in $X$. For strong fans this preservation property does not hold.

\begin{example}\label{Cld-not-hereditary} There exists a pseudocompact space $Y$ and a closed subspace $X\subset Y$ such that
\begin{enumerate}
\item[\textup{1)}] $X$ contains a strict $\Fin^\w$-fan;
\item[\textup{2)}] $Y$ contains no strong $\Cld$-fans.
\end{enumerate}
\end{example}

\begin{proof} Take any Tychonoff space $X$ containing a strict $\Fin^\w$-fan (for example, take any countable $k$-discrete space $X$ with a unique non-isolated point). Embed $X$ into the Tychonoff cube $[0,\frac12]^\lambda$ for some uncountable cardinal $\lambda$. By standard methods (see \cite[3.12.21]{En}), it can be shown that the complement $P=\II^\lambda\setminus[0,\frac12]^\lambda$ is pseudocompact. Then the space $Y=\II^\lambda\setminus (\bar X\setminus X)$  is pseudocompact and contains $X$ as a closed subspace. It remains to prove that $Y$ contains no strong $\Cld$-fans. For this we shall prove that each strongly compact-finite family $(Y_\alpha)_{\alpha\in A}$ of non-empty subsets of $Y$ is finite. Each set $Y_\alpha$ has an open neighborhood $U_\alpha\subset Y$ such that the family $(U_\alpha)_{\alpha\in A}$ is compact-finite in $Y$. Since the sets $\bar X\subset[0,\frac12]^\lambda$ are nowhere dense in $\II^\lambda$,  each open set $U_\alpha$ contains a non-empty open set $U_\alpha\subset\II^\lambda$ such that $\bar V_{\alpha}\cap[0,\frac12]^\lambda=\emptyset$. It follows that the compact-finite family $(V_\alpha)_{\alpha\in A}$ of open subsets of the locally compact space $P=\II^\lambda\setminus[0,\frac12]^\lambda$
is locally finite and by the pseudocompactness of $P$, is finite. Therefore, the index set $A$ is finite and so is the family $(X_\alpha)_{\alpha\in A}$
\end{proof}

\begin{remark} Under some conditions (in particular, those given in Corollary~\ref{c:s-sfan1}) strong fans in closed subspaces remain strong fans in ambient spaces.
\end{remark}

Next, we study the problem of preservation of (strong) fans by quotient maps.
 Let us recall that a map $f:X\to Y$ between topological spaces is \index{map!quotient}{\em quotient} if a subset $U\subset Y$ is open if and only if its preimage $f^{-1}(U)$ is open in $X$.

\begin{proposition}\label{p:quot-fan} Let $f:X\to Y$ be a quotient map between topological spaces. If $(F_\alpha)_{\alpha\in\lambda}$ is a (strict, strong) $\Cld$-fan in $Y$, then $(f^{-1}(F_\alpha))_{\alpha\in\lambda}$ is a (strict, strong) $\Cld$-fan in $X$.
\end{proposition}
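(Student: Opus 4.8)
The plan is to verify directly the three defining features of a $\Cld$-fan for the family $(f^{-1}(F_\alpha))_{\alpha\in\lambda}$ in $X$ --- that it consists of closed sets, is compact-finite, and is not locally finite --- and then to treat the strict and strong refinements by pulling back the witnessing neighborhoods along $f$. First I would note that a quotient map is continuous, so each $f^{-1}(F_\alpha)$ is closed in $X$ and we genuinely have a family of closed subsets. For compact-finiteness I would use that continuous images of compact sets are compact: given a compact $K\subset X$, the image $f(K)$ is compact in $Y$, and since $K\cap f^{-1}(F_\alpha)\ne\emptyset$ is equivalent to $f(K)\cap F_\alpha\ne\emptyset$, the index set $\{\alpha:K\cap f^{-1}(F_\alpha)\ne\emptyset\}=\{\alpha:f(K)\cap F_\alpha\ne\emptyset\}$ is finite by the compact-finiteness of $(F_\alpha)$ in $Y$.

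The main obstacle --- and the only place where the quotient (rather than merely continuous) hypothesis is used --- is showing that $(f^{-1}(F_\alpha))_{\alpha\in\lambda}$ is \emph{not} locally finite. Here I would argue by contradiction, reusing the union trick from the proof of Proposition~\ref{k-no-Cld-fan}. Since $(F_\alpha)_{\alpha\in\lambda}$ is a fan, it fails to be locally finite at some point $y\in Y$; because $\{y\}$ is compact, the set $\Lambda=\{\alpha\in\lambda:y\in F_\alpha\}$ is finite, and the union $F=\bigcup_{\alpha\in\lambda\setminus\Lambda}F_\alpha$ then satisfies $y\in\bar F\setminus F$, so $F$ is not closed in $Y$. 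If $(f^{-1}(F_\alpha))_{\alpha\in\lambda}$ were locally finite, then so would be its subfamily indexed by $\lambda\setminus\Lambda$; as a locally finite union of closed sets is closed, the saturated set $f^{-1}(F)=\bigcup_{\alpha\in\lambda\setminus\Lambda}f^{-1}(F_\alpha)$ would be closed in $X$. Taking complements in the quotient condition (a set $U$ is open in $Y$ iff $f^{-1}(U)$ is open in $X$) with $U=Y\setminus F$, the closedness of $f^{-1}(F)$ forces $F$ to be closed in $Y$, a contradiction. I would point out that this argument invokes the full quotient property and requires no surjectivity of $f$.

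Finally, for the strict and strong versions I would pull back the witnessing neighborhoods. If $U_\alpha$ is a functional (resp. $\IR$-open) neighborhood of $F_\alpha$ with $(U_\alpha)_{\alpha\in\lambda}$ compact-finite in $Y$, I claim $f^{-1}(U_\alpha)$ is a functional (resp. $\IR$-open) neighborhood of $f^{-1}(F_\alpha)$ and that $(f^{-1}(U_\alpha))_{\alpha\in\lambda}$ is compact-finite in $X$; the latter repeats the image-of-compact computation above verbatim. In the strict case, composing a separating function $g_\alpha\colon Y\to[0,1]$ (with $g_\alpha(F_\alpha)\subset\{0\}$ and $g_\alpha(Y\setminus U_\alpha)\subset\{1\}$) with $f$ yields a function witnessing that $f^{-1}(U_\alpha)$ is functional, using $X\setminus f^{-1}(U_\alpha)=f^{-1}(Y\setminus U_\alpha)$. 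In the strong case, for each $x\in f^{-1}(U_\alpha)$ I would take the function $h\colon Y\to[0,1]$ witnessing $\IR$-openness of $U_\alpha$ at $f(x)$ and compose with $f$ to witness $\IR$-openness of $f^{-1}(U_\alpha)$ at $x$. These are routine verifications; the genuine content of the proposition resides entirely in the non-local-finiteness step, where the quotient hypothesis does its work.
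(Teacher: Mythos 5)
Your proof is correct and follows essentially the same route as the paper's: continuity of $f$ handles the (strictly, strongly) compact-finite part by pulling back the witnessing neighborhoods, and the non-local-finiteness is obtained by the same contradiction argument — assume local finiteness, form the closed union $\bigcup_{\alpha\in\lambda\setminus\Lambda}f^{-1}(F_\alpha)$, and use the quotient property to contradict the failure of local finiteness at $y$ (you phrase this as non-closedness of $F=\bigcup_{\alpha\in\lambda\setminus\Lambda}F_\alpha$ in $Y$, the paper as producing a saturated open neighborhood of $y$, which is the same thing). Your added observations — that surjectivity is never used under the paper's definition of quotient map, and the explicit composition arguments for the strict and strong cases that the paper compresses into one sentence — are accurate refinements, not a different method.
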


\begin{proof} The continuity of $f$ implies that the family $(f^{-1}(F_\alpha))_{\alpha\in\lambda}$ is (strictly, strongly) compact-finite in $X$. It remains to prove that it is not locally finite in $X$.

To derive a contradiction, assume that the family $(f^{-1}(F_\alpha))_{\alpha\in\lambda}$ is locally finite in $X$. The family $(F_\alpha)_{\alpha\in\lambda}$, being a fan in $Y$, is not locally finite at some point $y\in Y$. By the compactness of the singleton $\{y\}$, the set $\Lambda=\{\alpha\in\lambda:y\in F_\alpha\}$ is finite. For every $\alpha\in\lambda\setminus\Lambda$ the set $F_\alpha$ does not contain the point $y$ and consequently, the closed set $f^{-1}(F_\alpha)$ is disjoint with $f^{-1}(y)$. The union $F=\bigcup_{\alpha\setminus\Lambda}f^{-1}(Y_\alpha)$ of the locally finite family $(f^{-1}(Y_\alpha))_{\alpha\in\lambda}$ of closed subsets of $X$ is closed in $X$, which implies that $X\setminus F$ is an open neighborhood of $f^{-1}(y)$. Since $X\setminus F=f^{-1}(X\setminus\bigcup_{\alpha\in\lambda\setminus\Lambda}Y_\alpha)=f^{-1}(f(X\setminus Z))$, the set $f(X\setminus F)$ is an open neighborhood of $y$, disjoint with the sets $Y_\alpha$, $\alpha\in\lambda\setminus\Lambda$, and witnessing that the family $(Y_\alpha)_{\alpha\in\lambda}$ is locally finite at the point $y$. But this contradicts the choice of the point $y$.
 \end{proof}

\begin{corollary} Let $f:X\to Y$ be a quotient map. If for some cardinal $\lambda$ the space $X$ contains no $\Cld^\lambda$-fan, then the space $Y$ also contains no $\Cld^\lambda$-fan.
\end{corollary}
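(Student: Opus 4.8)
The plan is to deduce this immediately from the preceding Proposition~\ref{p:quot-fan} by contraposition. Suppose, toward a contradiction, that $Y$ contains a $\Cld^\lambda$-fan $(F_\alpha)_{\alpha\in\lambda}$; that is, an indexed family of closed subsets of $Y$ which is compact-finite but not locally finite in $Y$.

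First I would observe that, since $f$ is continuous, each preimage $f^{-1}(F_\alpha)$ is a closed subset of $X$, and the index set is unchanged, so $(f^{-1}(F_\alpha))_{\alpha\in\lambda}$ is again a family of closed sets indexed by $\lambda$. Proposition~\ref{p:quot-fan} then applies directly (in its plain form, ignoring the parenthetical ``strict''/``strong'' strengthenings, which are not needed here): it guarantees that $(f^{-1}(F_\alpha))_{\alpha\in\lambda}$ is a $\Cld$-fan in $X$, i.e.\ compact-finite but not locally finite. Since its index set is $\lambda$, it is a $\Cld^\lambda$-fan in $X$.

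This contradicts the hypothesis that $X$ contains no $\Cld^\lambda$-fan, and the corollary follows. There is essentially no obstacle in this argument: the only points to verify are that the preimage family has the same index cardinality $\lambda$ and consists of closed sets, both of which are immediate from continuity. All the genuine content --- that compact-finiteness is inherited by the preimage family along a continuous map, and that the quotient property forces the preimage family to fail local finiteness precisely when the original family does --- is already established in Proposition~\ref{p:quot-fan}.
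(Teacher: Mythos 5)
Your argument is correct and is exactly the intended one: the paper states this corollary without proof because it is the immediate contrapositive of Proposition~\ref{p:quot-fan}, which is precisely how you derive it. The only points requiring verification (preimages of closed sets are closed, the index set remains $\lambda$) are handled correctly via the continuity of the quotient map $f$.
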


For strict fans quotient maps in Proposition~\ref{p:quot-fan} can be replaced by  pseudo-clopen maps.

A map $f:X\to Y$ between topological spaces is called
\begin{itemize}
\item \index{map!pseudo-open}{\em pseudo-open} if for each point $y\in Y$ and a neighborhood $U\subset X$ of $f^{-1}(y)$ the image $f(U)$ is a neighborhood of $y$;
\item \index{map!pseudo-clopen}{\em pseudo-clopen} if a closed subset $F\subset Y$ is a neighborhood of a point $y\in F$ in $Y$ if $f^{-1}(F)$ is a neighborhood of $f^{-1}(y)$ in $X$.
\end{itemize}
It is clear that each open map and each closed map is pseudo-open and each pseudo-open map is pseudo-clopen and quotient. Pseudo-open maps are well-known in General Topology \cite[2.4.F]{En}. The notion of a pseudo-clopen map seems to be new.

\begin{proposition} Let $f:X\to Y$ be a pseudo-clopen map between topological spaces. If for some cardinal $\lambda$ the space $X$ contains no strict $\Cld^\lambda$-fan, then the space $Y$ also contains no strict $\Cld^\lambda$-fans.
\end{proposition}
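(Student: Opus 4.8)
The plan is to assume the hypothesis that $X$ contains no strict $\Cld^\lambda$-fan and suppose, toward a contradiction, that $Y$ carries a strict $\Cld^\lambda$-fan $(F_\alpha)_{\alpha\in\lambda}$ which is not locally finite at some point $y\in Y$; the goal is then to contradict the choice of $y$ by showing that $(F_\alpha)$ is in fact locally finite at $y$. For each $\alpha$ I fix a continuous function $h_\alpha:Y\to[0,1]$ with $h_\alpha(F_\alpha)\subset\{0\}$ and $h_\alpha(Y\setminus U_\alpha)\subset\{1\}$, where $(U_\alpha)_{\alpha\in\lambda}$ is the compact-finite family of functional neighborhoods witnessing strictness. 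The naive move — pulling back the $F_\alpha$ themselves and trying to apply the quotient-style argument of Proposition~\ref{p:quot-fan} — fails, since pseudo-clopenness only reflects neighborhoods of \emph{closed} sets. The key device is to thicken the $F_\alpha$: I set $\tilde F_\alpha=\{h_\alpha\le\tfrac12\}$ and $\hat U_\alpha=\{h_\alpha<\tfrac12\}$, so that $F_\alpha\subset\hat U_\alpha\subset\overline{\hat U_\alpha}\subset\tilde F_\alpha\subset U_\alpha$, with $\tilde F_\alpha$ closed and $\hat U_\alpha$ open.

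First I would record the routine facts about the pullbacks. Using continuity of $f$, each $f^{-1}(\tilde F_\alpha)$ is closed in $X$ and each $\hat V_\alpha:=f^{-1}(\hat U_\alpha)$ is open. Moreover $V_\alpha:=f^{-1}(U_\alpha)$ is a functional neighborhood of $f^{-1}(\tilde F_\alpha)$, witnessed by $\bigl(\max\{0,2h_\alpha-1\}\bigr)\circ f$, which is $0$ on $f^{-1}(\tilde F_\alpha)$ and $1$ off $V_\alpha$; and $(V_\alpha)_{\alpha\in\lambda}$ is compact-finite in $X$ because $f(K)$ is compact for every compact $K\subset X$ and $(U_\alpha)$ is compact-finite in $Y$. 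Hence $(f^{-1}(\tilde F_\alpha))_{\alpha\in\lambda}$ is a strictly compact-finite family of closed sets in $X$. Since $X$ contains no strict $\Cld^\lambda$-fan, this family cannot be a fan, so it \textbf{must be locally finite} in $X$. This is the one point where the hypothesis on $X$ enters, and the thickening $F_\alpha\rightsquigarrow\tilde F_\alpha$ is precisely what keeps the pulled-back family strictly compact-finite (same neighborhoods $V_\alpha$) while making the available conclusion strong enough for the final step.

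With local finiteness of $(f^{-1}(\tilde F_\alpha))$ in hand, the family $(\hat V_\alpha)$ is locally finite as well, because $\hat V_\alpha\subset f^{-1}(\tilde F_\alpha)$; consequently $\overline{\bigcup_\alpha\hat V_\alpha}=\bigcup_\alpha\overline{\hat V_\alpha}$. Let $\Lambda'=\{\alpha:y\in U_\alpha\}$, which is finite since $\{y\}$ is compact and $(U_\alpha)$ is compact-finite. I then put $F=Y\setminus\bigcup_{\alpha\notin\Lambda'}\hat U_\alpha$, a closed set containing $y$ (as $y\notin U_\alpha\supset\hat U_\alpha$ for $\alpha\notin\Lambda'$) that is disjoint from every $F_\alpha$ with $\alpha\notin\Lambda'$ (as $F_\alpha\subset\hat U_\alpha$). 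The crucial verification is that $f^{-1}(F)=X\setminus\bigcup_{\alpha\notin\Lambda'}\hat V_\alpha$ is a neighborhood of $f^{-1}(y)$: for $\alpha\notin\Lambda'$ one has $\overline{\hat V_\alpha}\subset f^{-1}(\overline{\hat U_\alpha})\subset f^{-1}(U_\alpha)=V_\alpha$, which is disjoint from $f^{-1}(y)$ because $y\notin U_\alpha$, and local finiteness upgrades this to $f^{-1}(y)\cap\overline{\bigcup_{\alpha\notin\Lambda'}\hat V_\alpha}=\emptyset$. Pseudo-clopenness of $f$ then yields that $F$ is a neighborhood of $y$, and since $\mathrm{int}\,F$ meets only the finitely many $F_\alpha$ with $\alpha\in\Lambda'$, the fan $(F_\alpha)$ is locally finite at $y$ — the desired contradiction. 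I expect the main obstacle to be exactly this neighborhood verification: it is what forces the two-level construction $\hat U_\alpha\subset\tilde F_\alpha\subset U_\alpha$, since one needs both that each closure $\overline{\hat V_\alpha}$ remains trapped inside $V_\alpha$ and that the family $(\hat V_\alpha)$ is genuinely locally finite (not merely compact-finite) so that the closure of the infinite union splits as a union of closures.
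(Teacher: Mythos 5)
Your proof is correct and follows essentially the same route as the paper's: both thicken each $F_\alpha$ to a closed level set of the functional witness sandwiched inside $U_\alpha$, pull this strictly compact-finite family of closed sets back to $X$, invoke the no-strict-fan hypothesis to conclude it is locally finite there, and then feed the complement of the open thickenings into the pseudo-clopenness of $f$ to contradict non-local-finiteness at $y$. The only cosmetic differences are that the paper discards the finitely many indices with $y\in U_\alpha$ at the outset (so your $\Lambda'$ becomes empty) and uses closedness of the locally finite union of the closed pullbacks, rather than your closure-splitting of the open pullbacks.
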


\begin{proof} To derive a contradiction, assume that $(F_\alpha)_{\alpha\in\lambda}$ is a strict $\Cld^\lambda$-fan in $Y$. Then each set $F_\alpha$ has a functional neighborhood $U_\alpha\subset Y$ such that the family $(U_\alpha)_{\alpha\in\lambda}$ is compact-finite in $Y$.
Let $y\in Y$ be a point in which the fan $(F_\alpha)_{\alpha\in\lambda}$ is not locally finite. It follows that the set $\Lambda=\{\alpha\in \lambda:x\in U_\alpha\}$ is finite. Replacing the fan $(F_\alpha)_{\alpha\in\lambda}$ by the fan $(F_\alpha)_{\alpha\setminus \Lambda}$, we can assume that $\Lambda=\emptyset$ and hence $y\notin\bigcup_{\alpha\in\lambda}U_\alpha$.

For every $\alpha\in\lambda$ choose a continuous function $\xi_\alpha:Y\to[0,1]$ such that $\xi_\alpha(F_\alpha)\subset\{1\}$ and $\xi_\alpha(Y\setminus U_\alpha)\subset \{0\}$, and consider the open set  $V_\alpha=\xi^{-1}_\alpha((\frac12,1])$ and its closure $\bar V_\alpha$ in $Y$. The inclusion $\bar V_\alpha\subset\xi_\alpha^{-1}([\frac12,1])$ implies that $U_\alpha$ is a functional neighborhood of $\bar V_\alpha$ in $Y$.
Then the family $(\bar V_\alpha)_{\alpha\in\lambda}$ is strictly compact-finite in $Y$ and the family $(f^{-1}(\bar V_\alpha))_{\alpha\in\lambda}$ is strictly compact-finite in $X$. Since $X$ contains no strict $\Cld^\lambda$-fans, the latter family is locally finite in $X$, which implies that the set $\bigcup_{\alpha\in\lambda}f^{-1}(\bar V_\alpha)$ is closed in $X$ and $W=X\setminus \bigcup_{\alpha\in\lambda}f^{-1}(\bar V_\alpha)$ is a neighborhood of $f^{-1}(y)$ in $X$. Now consider the closed set $E=Y\setminus\bigcup_{\alpha\in\lambda}V_\alpha$ in $X$ and observe that
$f^{-1}(E)=X\setminus\bigcup_{\alpha\in\lambda}f^{-1}(V_\alpha) \supset W$ is a neighborhood of $f^{-1}(y)$. Since the map $f$ is pseudo-clopen, the closed set $E$ is a neighborhood of $y$, disjoint with $\bigcup_{\alpha\in\lambda}F_\alpha$, which contradicts the choice of $y$.
\end{proof}

An analogous result holds for Ascoli spaces, too.

\begin{proposition}\label{p:As-pso} The image $Y$ of an Ascoli space $X$ under a continuous pseudo-clopen map $\varphi:X\to Y$ is Ascoli.
\end{proposition}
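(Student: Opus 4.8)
The plan is to reduce the Ascoli property of $Y$ to the (already available) even continuity of compact sets in $C_k(X)$, transported along the dual map $\varphi^*\colon C_k(Y)\to C_k(X)$, $g\mapsto g\circ\varphi$. First I would record the routine facts: since $\varphi$ is continuous and surjective, $\varphi^*$ is a continuous injection, so any compact $K\subset C_k(Y)$ is carried homeomorphically onto a compact set $\varphi^*(K)\subset C_k(X)$. Because $X$ is Ascoli, $\varphi^*(K)$ is evenly continuous, which after composing with $\varphi^*\times\id_X$ says exactly that the map $e_X\colon K\times X\to\IR$, $(g,x)\mapsto g(\varphi(x))$, is continuous. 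The goal is then to descend this to continuity of the genuine evaluation $e_Y\colon K\times Y\to\IR$, $(g,y)\mapsto g(y)$, i.e. even continuity of $K$.

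Next I would fix $g_0\in K$, $y_0\in Y$ and $\e>0$, set $c=g_0(y_0)$, and aim to produce a neighborhood $W\times O$ of $(g_0,y_0)$ with $e_Y(W\times O)\subset[c-\e,c+\e]$. The device is the set $F_W=\{y\in Y:|g(y)-c|\le\e\text{ for all }g\in W\}$, which is closed (an intersection of closed sets) and will serve via $O=\mathrm{int}_Y F_W$ once we know it is a neighborhood of $y_0$. By the pseudo-clopenness of $\varphi$ it suffices to verify that $\varphi^{-1}(F_W)$ is a neighborhood of the fiber $\varphi^{-1}(y_0)$ in $X$.

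To arrange this I would choose $W$ to be a \emph{closed} (hence compact, using the regularity of the compact Hausdorff space $K$) neighborhood of $g_0$ contained in the open set $\{g\in K:|g(y_0)-c|<\e\}$. For each $x_0\in\varphi^{-1}(y_0)$ the slice $W\times\{x_0\}$ then lies in the open set $e_X^{-1}\big((c-\e,c+\e)\big)$, since $e_X(g,x_0)=g(y_0)$ for $g\in W$; the tube lemma (here compactness of $W$ is used) furnishes a neighborhood $U_{x_0}$ of $x_0$ with $W\times U_{x_0}\subset e_X^{-1}\big((c-\e,c+\e)\big)$, that is $U_{x_0}\subset\varphi^{-1}(F_W)$. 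The union of these $U_{x_0}$ is an open neighborhood of $\varphi^{-1}(y_0)$, so $\varphi^{-1}(F_W)$ is a neighborhood of the fiber; pseudo-clopenness upgrades $F_W$ to a neighborhood of $y_0$, and taking $O=\mathrm{int}_Y F_W$ we get $e_Y(W\times O)\subset[c-\e,c+\e]$. As $\e$ is arbitrary this is continuity of $e_Y$ at $(g_0,y_0)$, and since the point was arbitrary, $K$ is evenly continuous and $Y$ is Ascoli.

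The step I expect to be the main obstacle is precisely the one the hypotheses are tailored to: even continuity of $\varphi^*(K)$ is only a pointwise statement at each $x_0$ of the fiber, and $\varphi^{-1}(y_0)$ need be neither compact nor finite, so there is no a priori way to choose a single $W\subset K$ working uniformly over the whole fiber. The resolution is to fix $W$ first as a compact neighborhood inside $\{g:|g(y_0)-c|<\e\}$ — a choice available because it is $K$, not the fiber, that is compact — and then to keep $W$ fixed while the tube lemma produces the $x_0$-dependent neighborhoods $U_{x_0}$. Pseudo-clopenness is exactly the tool that reassembles these preimage-side neighborhoods into an honest neighborhood of $y_0$ in $Y$; this is why mere quotientness, absent a local-compactness (Whitehead-type) input, would not obviously suffice.
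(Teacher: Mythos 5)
Your proof is correct and follows essentially the same route as the paper's: both transport $K$ into $C_k(X)$ via the dual map $\varphi^*$, fix a compact neighborhood of $g_0$ in $K$ (your $W$, the paper's $K_f$) on which values at $y_0$ are controlled, use even continuity plus compactness (your explicit tube lemma, the paper's compressed version) to show that the closed set $F_W$ (the paper's $N_y=\bigcap_{g\in K_f}g^{-1}(\overline{U}_{f(y)})$) has preimage a neighborhood of the fiber, and then invoke pseudo-clopenness. The only differences are cosmetic: $\e$-intervals versus nested neighborhoods, and defining $W$ by regularity of $K$ versus directly as $\{g\in K:g(y)\in\overline{V}_{f(y)}\}$.
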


\begin{proof} To prove that the space $Y$ is Ascoli, we need to check that each compact subset $K\subset C_k(Y)$ is evenly continuous. Fix a function $f\in K$, a point $y\in Y$ and an open  neighborhood $O_{f(y)}\subset \IR$ of $f(y)$. Choose two neighborhoods $V_{f(y)}\subset U_{f(y)}$ of $f(y)$ such that $\overline{V}_{f(y)}\subset U_{f(y)}\subset\overline{U}_{f(y)}\subset O_{f(y)}$. It follows that the set $K_f=\{g\in K:g(y)\in \overline{V}_{f(y)}\}$ is a closed neighborhood of $y$ in $K$. Consider the closed set $N_y=\bigcap_{g\in K_f}g^{-1}(\overline{U}_{f(y)})$ in $Y$ and observe that $K_f(N_y)\subset \overline{U}_{f(y)}\subset O_{f(y)}$. We claim that $N_y$ is a closed neighborhood of $y$ in $Y$.

For this consider the preimage $\varphi^{-1}(N_y)$ and observe that $$\varphi^{-1}(N_y)=\bigcap_{g\in K_f}(g\circ\varphi)^{-1}(\overline{U}_{f(y)}).$$ The continuity of the dual map $\varphi^*:C_k(Y)\to C_k(X)$, $\varphi^*:g\mapsto g\circ\varphi$, implies that the set $K_f^*=\varphi^*(K_f)$ is compact. Observe that for every point $x\in\varphi^{-1}(y)$ we get $K^*_f(x)\subset K_f(y)\subset \overline{V}_{f(y)}\subset U_{f(y)}$. Since $X$ Ascoli, each point $x\in  \varphi^{-1}(x)$ has a neighborhood $O_x\subset X$ such that $K_f^*(O_x)\subset U_{f(x)}$. Then $O_x\subset \varphi^{-1}(N_y)$, which implies that $\varphi^{-1}(N_y)$ is a neighborhood of $\varphi^{-1}(y)$ in $X$. Since the map $\varphi$ is pseudo-clopen, $N_y$ is a neighborhood of $y$ in $Y$.
\end{proof}

\begin{remark} Proposition~\ref{p:As-pso} improves Proposition~3.3 \cite{GKP}, which says that the image of an Ascoli space under a surjective open continuous map is Ascoli.
\end{remark}

\begin{problem}\label{prob:Ascoli-quotient} Is the quotient image of an Ascoli space Ascoli?
\end{problem}

\section{Some counterexamples}

As we already know, for any topological space and
each infinite cardinal $\lambda$
the following implications hold:
{
$$
\xymatrix{
\genfrac{}{}{0pt}{}{\mbox{Fr\'echet-}}{\mbox{Urysohn}}\ar@{=>}[r]&\mbox{sequential}\ar@{=>}[d]&\mbox{no $\Fin$-fan}\ar@{=>}[r]
&\mbox{no $\Fin^\lambda$-fan}\ar@{=>}[r]&
\mbox{no $\Fin^\w$-fan}\\
\genfrac{}{}{0pt}{}{\mbox{Tychonoff}}{\mbox{$k$-space}}
\ar@{=>}[r]\ar@{=>}[dd]&\mbox{$k$-space}\ar@{=>}[dd]\ar@{=>}[r]
&\mbox{no $\Cld$-fan}\ar@{=>}[r]\ar@{=>}[d]\ar@{=>}[u]
&\mbox{no $\Cld^\lambda$-fan}\ar@{=>}[r]\ar@{=>}[d]\ar@{=>}[u]&
\mbox{no $\Cld^\w$-fan}\ar@{=>}[d]\ar@{=>}[u]\\
&&
\genfrac{}{}{0pt}{}{\mbox{no strong}}{\mbox{$\Cld$-fan}}
\ar@{=>}[d]\ar@{=>}[r]&
\genfrac{}{}{0pt}{}{\mbox{no strong}}{\mbox{$\Cld^\lambda$-fan}}
\ar@{=>}[r]\ar@{=>}[d]&
\genfrac{}{}{0pt}{}{\mbox{no strong}}{\mbox{$\Cld^\w$-fan}}
\ar@{=>}[d]\\
\genfrac{}{}{0pt}{}{\mbox{Tychonoff}}{\mbox{$k_\IR$-space}}
\ar@{=>}[r]&\mbox{Ascoli}\ar@{=>}[r]&
\genfrac{}{}{0pt}{}{\mbox{no strict}}{\mbox{$\Cld$-fan}}
\ar@{=>}[r]\ar@{=>}[d]&
\genfrac{}{}{0pt}{}{\mbox{no strict}}{\mbox{$\Cld^\lambda$-fan}}
\ar@{=>}[r]\ar@{=>}[d]&
\genfrac{}{}{0pt}{}{\mbox{no strict}}{\mbox{$\Cld^\w$-fan}}
\ar@{=>}[d]\\
&&
\genfrac{}{}{0pt}{}{\mbox{no strict}}{\mbox{$\Fin$-fan}}
\ar@{=>}[r]\ar@{<=>}[d]&
\genfrac{}{}{0pt}{}{\mbox{no strict}}{\mbox{$\Fin^\lambda$-fan}}
\ar@{=>}[r]\ar@{<=>}[d]&
\genfrac{}{}{0pt}{}{\mbox{no strict}}{\mbox{$\Fin^\w$-fan}}
\ar@{<=>}[d]\\
&&
\genfrac{}{}{0pt}{}{\mbox{no strong}}{\mbox{$\Fin$-fan}}
\ar@{=>}[r]&
\genfrac{}{}{0pt}{}{\mbox{no strong}}{\mbox{$\Fin^\lambda$-fan}}
\ar@{=>}[r]&
\genfrac{}{}{0pt}{}{\mbox{no strong}}{\mbox{$\Fin^\w$-fan.}}
}
$$
}

In this section we present some examples showing that some of the implications in this diagram cannot be reversed.

\begin{example} For any uncountable cardinal $\lambda$ the power $\IR^\lambda$ is a $k_\IR$-space but not a $k$-space  \textup{(see \cite[Theorem 5.6]{Nob70} and \cite[Problem 7.J(b)]{Kelley})}.
\end{example}

An example of an Ascoli space which fails to be a $k_\IR$-space was constructed in \cite{BG}:

\begin{example} \label{exa:Ascoli-Group-non-kR} For infinite cardinals $\lambda<\kappa$ and a first countable regular space $Y$ the subspace $X=\{f\in Y^\kappa:\exists y\in Y\;|f^{-1}(Y\setminus\{y\})|<\lambda\}\subset Y^\kappa$ is Ascoli but not a $k_\IR$-space.
\end{example}

\begin{example} For any uncountable cardinal $\lambda$ there is a topological space $X$ of cardinality $|X|=\lambda$ with a unique non-isolated point such that
\begin{enumerate}
\item[1)] $X$ has no $\Cld^\kappa$-fans for all cardinals $\kappa<\cf(\lambda)$;
\item[2)] $X$ contains a strict $\Fin^\lambda$-fan.
\end{enumerate}
\end{example}

\begin{proof} Take any point $\infty\notin\lambda$ and on the space $X=\{\infty\}\cup\lambda$ consider the regular topology in which points $x\in\lambda$ are isolated and a subset $U\subset X$ is a neighborhood of $\infty$ if and only if $X\setminus U$ is a subset of cardinality $<\kappa$ in $\lambda$. It can be shown that the space $X$ is $k$-discrete, which implies that the family of singletons $(\{x\})_{x\in\lambda}$ is a strict $\Fin^\lambda$-fan in $X$.

Next, we show that the space $X$ has no $\Cld^\kappa$-fan for every cardinal $\kappa<\cf(\lambda)$. It suffices to check that  compact-finite family $(F_\alpha)_{\alpha\in\kappa}$ of closed subsets of $X$ is locally finite at the unique non-isolated point $\infty$ of $X$. Since $(F_\alpha)_{\alpha\in\kappa}$ is compact-finite, the set $\Lambda=\{\alpha\in\kappa:\infty\in F_\alpha\}$ is finite. For every $\alpha\in\kappa\setminus \Lambda$ the definition of the topology of $X$ implies that $|F_\alpha|<\lambda$ and then the union $F=\bigcup_{\alpha\in\kappa\setminus\Lambda}F_\alpha$ has cardinality $|F|<\lambda$ and hence is closed in $X$. Then $X\setminus F$ is a neighborhood of $\infty$ meeting finitely many sets $F_\alpha$, $\alpha\in\kappa$.
\end{proof}

By a $\Comp$-fan in a topological space $X$ we understand a fan $(F_\alpha)_{\alpha\in\lambda}$ consisting of compact subsets $F_\alpha$ of $X$.

\begin{example} There exists a cosmic hemicompact space, which contains a strict $\Comp^\w$-fan but does not contain $\Fin$-fans.
\end{example}

\begin{proof} Let $\II$ denote the unit interval $[0,1]$. Take any point $\infty\notin\II\times\IN$ and on the space $X=\{\infty\}\cup(\II\times \IN)$ consider the topology which coincides with the product topology at each point of the set $\II\times\IN$ and at the point $\infty$ is generated by the neighborhood base consisting of the sets $U\subset X$ such that the intersection $U\cap (\II\times\IN)$ is open in $\II\times\IN$ and $\lim_{n\to\infty}\lambda_n(U\cap (\II\times\{n\})=1$ where $\lambda_n$ denotes the Lebesgue measure on the interval $\II\times\{n\}$. It can be shown that the space $X$ is cosmic and contains no sequences convergent to $\infty$. This implies that each compact subset of $X$ is contained in the compact set $\{\infty\}\cup(\II\cup\{1,\dots,n\})$ for some $n\in\IN$. So, the space $X$ is hemicompact.

Observe that the family $(\II\times\{n\})_{n\in\IN}$ is a strict $\Comp^\w$-fan in $X$.

It remains to prove that $X$ contains no $\Fin$-fans. Since $X$ has countable tightness, it suffices to check that $X$ contains no $\Fin^\w$-fans. To derive a contradiction, assume that $(F_n)_{n\in\w}$ is a $\Fin^\w$-fan in $X$. We loss no generality assuming that the union $F=\bigcup_{n\in\w}F_n$ does not contain the point $\infty$. Taking into account that the fan is compact-finite, we conclude that for every $n\in\IN$ the set $(\II\times\{1,\dots,n\})\cap F$ is finite and hence $X\setminus F$ is an open neighborhood of the point $\infty$ by the definition of the topology at $\infty$. So the fan $(F_n)_{n\in\w}$ is locally finite at $\infty$. The local compactness of the space $X\setminus\{\infty\}$ implies that the fan is locally finite at each point $x\ne\infty$. So, the fan $(F_n)_{n\in\w}$ is locally finite in $X$, which is a desired contradiction.
\end{proof}

Two questions related to the above diagram remain open.

\begin{problem} Assume that a Tychonoff space contains no $\Cld$-fan. Is $X$ a $k$-space?
\end{problem}

\begin{problem} Assume that a Tychonoff space contains no strict $\Cld$-fan. Is $X$ Ascoli?
\end{problem}

\chapter{$S$-Fans, $S$-semifans and $D$-cofans in topological spaces}

In this chapter we shall describe three tools (called $S$-fans, $S$-semifans and $D$-cofans) which will be used for constructing (strong) $\Fin$-fans in topological spaces and topological groups.

\section{$S$-fans}

We recall that a \index{convergent sequence}\index{subset!convergent sequence} {\em convergent sequence} in a topological space $X$ is a countable subset $S\subset X$ whose closure $\bar S$ is compact and has a unique non-isolated point $\lim S$ called the \index{convergent sequence!limit of}{\em limit} of $S$. 

An \index{fan!$S$-fan}\index{$S$-fan}{\em $S$-fan} (more precisely, an \index{fan!$S^\lambda$-fan}\index{$S$-fan!$S^\lambda$-fan}{\em $S^\lambda$-fan}) in a topological space $X$ is a fan $(S_\alpha)_{\alpha\in\lambda}$ consisting of convergent sequences. So, the family $(S_\alpha)_{\alpha\in\lambda}$ is compact-finite but not locally finite in $X$. If this family is strongly compact-finite, then the $S$-fan is called \index{fan!strong $S$-fan}\index{$S$-fan!strong $S$-fan}{\em strong}.

An $S$-fan $(S_\alpha)_{\alpha\in\lambda}$ will be called a \index{fan!$\bar S$-fan}\index{$S$-fan!$\bar S$-fan}{\em $\bar S$-fan} if the set $\{\lim S_\alpha\}_{\alpha\in\lambda}$ has compact closure $K$ in $X$. If this closure is metrizable (resp.  a singleton), then the $S$-fan will be called a \index{fan!$\ddot S$-fan}\index{$S$-fan!$\ddot S$-fan}{\em $\ddot S$-fan} (resp. a \index{fan!$\dot S$-fan}\index{$S$-fan!$\dot S$-fan}{\em $\dot S$-fan}). So, a $\dot S$-fan consists of sequences converging to the same point, and $\ddot S$-fan consists of sequences that converge to points of some compact metrizable set. For each
$\ddot S$-fan $(S_\alpha)_{\alpha\in\lambda}$ there exists a countable  subset $\Lambda\subset\lambda$ such that the set $\{\lim S_\alpha\}_{\alpha\in\Lambda}$ is either a singleton or a convergent sequence.


It is clear that
$$
\xymatrix
{
\mbox{$\dot S$-fan}\ar@{=>}[r]&\mbox{$\ddot S$-fan}\ar@{=>}[r]&\mbox{$\bar S$-fan}\ar@{=>}[r]&\mbox{ $S$-fan}\\
\mbox{strong $\dot S$-fan}\ar@{=>}[r]\ar@{=>}[u]&\mbox{strong $\ddot S$-fan}\ar@{=>}[r]\ar@{=>}[u]&\mbox{strong $\bar S$-fan}\ar@{=>}[r]\ar@{=>}[u]&\mbox{strong $S$-fan.}\ar@{=>}[u]
}$$


By Corollary~\ref{c:fan-in-aleph-space}, each $S^\w$-fan in a Tychonoff $\aleph$-space $X$ is a strong $S^\w$-fan in $X$. If each compact subset of a topological space $X$ is metrizable, then each $\bar S$-fan in $X$ is a $\ddot S$-fan.

Now we define two test spaces containing $\ddot S^\w$-fans. One of them is the well-known \index{fan!Fr\'echet-Urysohn fan}{\em Frech\'et-Urysohn fan}
$$V=\{(0,0)\}\cup\{(\tfrac1n,\tfrac1{nm}):n,m\in\IN\}$$endowed with the strongest topology inducing the Euclidean topology on each convergent sequence $V_n=\{(0,0)\}\cup \{(\tfrac1m,\tfrac1{nm})\}_{m\in\IN}$, $n\in\IN$.
It can be shown that $V$ is a $k_\w$-space and $(V_n)_{n\in\IN}$ is a strong $\dot S^\w$-fan in $V$.

The other test space is the \index{fan!Arens' fan}{\em Arens' fan}
$$A=\{(0,0)\}\cup\{(\tfrac1n,0):n\in\IN\}\cup\{(\tfrac1n,\tfrac1{nm}):n,m\in\IN\}$$endowed with the strongest topology inducing the Euclidean topology on the convergent sequences $A_0=\{(0,0)\}\cup\{(\frac1n,0)\}_{n\in\w}$ and $A_n=\{(\frac1n,0)\}\cup \{(\tfrac1n,\tfrac1{nm})\}_{m\in\IN}$. It is well-known that $A$ is a (sequential) $k_\w$-space, which is not Fr\'echet-Urysohn. It can be shown that the family $(A_n)_{n\in\w}$ is a strong $\ddot S^\w$-fan in the space $A$.
The Fr\'echet-Urysohn and Arens' fans are important spaces that have many applications in the theory of generalized metric spaces, see \cite{GTZ}, \cite{Lin}, \cite{NT}, \cite{NST}, \cite{Tan89}.

\begin{proposition}\label{p:FU-fan} A topological space $X$ contains a $\dot S^\w$-fan if and only if $X$ contains a $k$-closed subset $F\subset X$ which is $k$-homeomorphic to the Fr\'echet-Urysohn fan $V$.
\end{proposition}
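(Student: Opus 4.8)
The plan is to prove both implications by transferring, back and forth, the structure of the countably many spikes $V_n=\{(0,0)\}\cup\{(\tfrac1m,\tfrac1{nm})\}_{m\in\IN}$ of $V$. Write $v_0=(0,0)$ for the apex, and first record the one structural fact about $V$ that drives everything: a set $C\subseteq V$ is compact if and only if $C\subseteq\{v_0\}\cup V_{n_1}\cup\dots\cup V_{n_k}$ for finitely many spikes. Indeed, if $C$ met infinitely many spikes in non‑apex points, then choosing one such point per spike would give an infinite subset that is closed and discrete (a neighbourhood of $v_0$ contains a tail of each spike and may be arranged to avoid the chosen points), contradicting compactness. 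In particular every convergent sequence, and hence every compact subset, of $V$ is confined to finitely many spikes. The whole proof routes statements through compact sets, because a $k$-homeomorphism preserves neither closures nor open sets.

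For the ``if'' part, let $h\colon V\to F$ be a $k$-homeomorphism onto a $k$-closed set $F\subseteq X$, put $x_0=h(v_0)$ and $S_n=h(V_n)\setminus\{x_0\}$. Since $V_n$ is a compact convergent sequence and $h$ is $k$-continuous, $h|V_n$ is continuous and injective, so $h(V_n)$ is compact and $S_n$ is a convergent sequence with limit $x_0$. As all $S_n$ converge to the common point $x_0$, the family $(S_n)_{n\in\w}$ is automatically not locally finite at $x_0$, and its limit set $\{x_0\}$ has singleton closure; so it is a $\dot S^\w$-fan once compact-finiteness is checked. For that, fix a compact $K\subseteq X$: since $F$ is $k$-closed, $F\cap K$ is closed in $K$, hence compact, and since $h^{-1}$ is $k$-continuous, $h^{-1}(F\cap K)$ is a compact subset of $V$, so it lies in finitely many spikes. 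If $S_n\cap K\ne\emptyset$ then $h^{-1}(F\cap K)$ meets $V_n\setminus\{v_0\}$, which occurs for only finitely many $n$; thus $(S_n)_{n\in\w}$ is the required $\dot S^\w$-fan.

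For the ``only if'' part, let $(S_n)_{n\in\w}$ be a $\dot S^\w$-fan, with all members converging to a common limit $x_0$, and delete $x_0$ from each so that $x_0\notin S_n$. The first step is to \emph{disjointify}: each closure $\overline{S_i}$ is compact, so compact-finiteness makes $\{n:S_n\cap\overline{S_i}\ne\emptyset\}$ finite, whence the graph on $\w$ joining $i,j$ when $S_i\cap S_j\ne\emptyset$ is locally finite; a greedy choice (each step excluding only the finitely many indices adjacent to the chosen one) yields an infinite $M\subseteq\w$ with $(S_n)_{n\in M}$ pairwise disjoint. Re-indexing gives pairwise disjoint convergent sequences $T_k$ ($k\in\IN$), each converging to $x_0$, with $x_0\notin T_k$, still compact-finite. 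Enumerating $T_k=\{t_{k,m}\}_{m\in\IN}$ with $t_{k,m}\to x_0$, define $h\colon V\to F:=\{x_0\}\cup\bigcup_k T_k$ by $h(v_0)=x_0$ and $h\big(\tfrac1m,\tfrac1{km}\big)=t_{k,m}$; disjointness makes $h$ a bijection. A compact $C\subseteq V$ meets finitely many spikes, on which $h$ restricts to finitely many convergent sequences glued at $x_0$, so $h|C$ is continuous and $h$ is $k$-continuous. Conversely, compact-finiteness confines any compact $D\cap F$ to $\{x_0\}$ and finitely many $T_k$, inside which every point other than $x_0$ is isolated and each $T_k$ contributes only a tail near $x_0$, so $h^{-1}|D$ is continuous. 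Finally, for compact $K\subseteq X$ one has $F\cap K=\bigcup_j(\overline{T_{k_j}}\cap K)$ over the finitely many $T_{k_j}$ that $K$ meets, a finite union of sets closed in $K$; hence $F$ is $k$-closed.

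The main obstacle is precisely this interplay forcing everything through compact sets: a $k$-homeomorphism need not preserve closures or openness, so ``not locally finite'' and topological continuity cannot be transported directly. The argument instead uses $k$-closedness of $F$ to turn $F\cap K$ into a compact set and the confinement of compact subsets of $V$ (dually, compact-finiteness in $X$) to guarantee that only finitely many spikes are ever in play; this is what simultaneously yields compact-finiteness in one direction and $k$-continuity of $h^{-1}$ in the other. The remaining point-set care -- that each $S_n$, respectively each $T_k$, is a genuine convergent sequence with $x_0$ as its unique non-isolated point -- is routine, and is the only place where a mild separation hypothesis, if needed, would enter.
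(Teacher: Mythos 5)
Your proof is correct and takes essentially the same route as the paper's: the ``if'' direction transfers compact sets through the $k$-homeomorphism exactly as the paper does (using $k$-closedness of $F$, $k$-continuity of $h^{-1}$, and the fact that compact subsets of $V$ meet only finitely many spikes), and the ``only if'' direction likewise disjointifies the fan, defines the evident bijection $h:V\to F$, and verifies $k$-continuity of $h$ and $h^{-1}$ and $k$-closedness of $F$ from compact-finiteness. The only quibble is that your preliminary claim that $C\subseteq V$ is compact \emph{if and only if} it lies in finitely many spikes fails in the ``if'' direction (a spike minus its limit point is not compact), but you only prove and use the correct implication; your extra details (the greedy disjointification, the closedness of $F\cap K$ via the closed sets $\overline{T_{k_i}}\cap K$, and the explicit flag about the implicit separation hypothesis) merely make precise what the paper leaves implicit.
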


\begin{proof} To prove the ``if'' part, assume that $F\subset X$ is a $k$-closed subset of $X$, $k$-homeo\-morphic to the Fr\'echet-Urysohn fan $V$. We recall that $V=\{(0,0)\}\cup\bigcup_{n\in\IN}V_n$ where $V_n=\{(\frac1m,\frac1{nm})\}_{m\in\IN}$, $n\in\IN$, are sequences convergent to the point $v=(0,0)$ in $V$. Fix a $k$-homeomorphism $h:V\to F$ and observe that $(h(V_n))_{n\in\IN}$ is an $\dot S^\w$-fan in $X$. Indeed, the $k$-continuity of $h$ ensures that each set $h(S_n)$ is a convergent sequence in $X$ accumulating at the point $h(v)$. It remains to check that the family $(h(V_n))_{n\in\IN}$ is compact-finite in $X$. As $F$ is $k$-closed in $X$, for every compact subset $K\subset X$ the intersection $K\cap F$ is compact and by the $k$-continuity of $h^{-1}$ the preimage $h^{-1}(K\cap F)$ is a compact subset of the Fr\'echet-Urysohn fan $V$. The choice of the topology on $V$ ensures that $h^{-1}(K\cap F)$ meets at most finitely many convergent sequences $V_n$, which implies that $K$ meets only finitely many convergent sequences $h(V_n)$. But this means that the family $(h(V_n))_{n\in\IN}$ is compact-finite and hence is an $\dot S^\w$-fan in $X$.
\smallskip

To prove the ``only if'' part, assume that the space $X$ contains an $\dot S^\w$-fan $(S_n)_{n\in\w}$.  Replacing $(S_n)_{n\in\w}$ by a suitable subsequence we can assume that the family $(S_n)_{n\in\IN}$ is disjoint. Let $x$ be the common limit point of the convergent sequences $S_n$, $n\in\IN$, and for every $n\in\IN$ let $\{x_{n,m}\}_{m\in\IN}$ be an injective enumeration of the set $S_n\setminus \{x\}$. Define a continuous bijective map $h:V\to X$ letting $h(0,0)=x$ and $h(\frac1m,\frac1{nm})=x_{n,m}$ for all $n,m\in\IN$.

We claim that the set $F=h(V)$ is $k$-closed in $X$ and the map $h:V\to F$ is a $k$-homeomorphism. Taking into account that the family $(S_n)_{n\in\w}$ is compact-finite, for every compact subset $K\subset X$ we can find a number $k\in\IN$ such that $K\cap F=\bigcup_{n=1}^kK\cap \bar S_n$. Since $S_n$ is a convergent sequence, the set $K\cap\bar S_n$ is compact and so is the set $K\cap F$. Moreover, if $K\cap\bar S_n$ is infinite, then it is a sequence convergent to $x$, which implies that $h^{-1}|F\cap K$ is continuous. Hence  $h^{-1}:F\to V$ is $k$-continuous and $h:V\to F$ is a $k$-homeomorphism of the Fr\'echet-Urysohn fan $V$ onto the $k$-closed set $F$ in $X$.
\end{proof}


\begin{proposition}\label{p:FUAr} A topological space $X$ contains a $\ddot S^\w$-fan if and only if $X$ contains a $k$-closed subset $F\subset X$ which is $k$-homeomorphic to the Fr\'echet-Urysohn fan $V$ or to the Arens fan $A$.
\end{proposition}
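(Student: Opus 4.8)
The plan is to follow closely the template of Proposition~\ref{p:FU-fan}, treating the Fr\'echet--Urysohn fan $V$ and the Arens fan $A$ as the two canonical shapes of a $\ddot S^\w$-fan. Recall that $V$ and $A$ are Hausdorff $k_\w$-spaces whose defining $k_\w$-sequences are $V_1\cup\dots\cup V_n$ and $A_0\cup A_1\cup\dots\cup A_n$, respectively; in particular every compact subset of $V$ (or of $A$) is contained in such a member $K_n$ and hence meets only finitely many of the spikes $V_m$ (resp.\ $A_m$). The essential difference between the two shapes is whether the limits of the sequences coincide (the $V$ case) or form a nontrivial convergent sequence (the $A$ case), and this is exactly the dichotomy I shall extract from a given $\ddot S^\w$-fan.

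To prove the ``if'' part I would fix a $k$-closed subset $F\subset X$ together with a $k$-homeomorphism $h$ onto $F$. If $h:V\to F$, then Proposition~\ref{p:FU-fan} already yields a $\dot S^\w$-fan in $X$, which is in particular a $\ddot S^\w$-fan. If $h:A\to F$, I claim that $(h(A_k))_{k\in\IN}$ is the desired $\ddot S^\w$-fan. Since $h$ is $k$-continuous and each $\bar A_k=A_k$ is compact, the restriction $h|A_k$ is continuous and injective, so $h(A_k)$ is a convergent sequence with limit $h(\tfrac1k,0)$; likewise $h(A_0)$ is a convergent sequence with limit $h(0,0)$, so the limits $\{h(\tfrac1k,0)\}_{k\in\IN}$ have compact metrizable closure $h(A_0)$. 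Compact-finiteness follows from the $k$-closedness of $F$ and the swallowing property of $A$: for compact $K\subset X$ the preimage $h^{-1}(K\cap F)$ is compact in $A$, hence lies in some $K_n$ and meets only finitely many $A_k$. Finally the family is not locally finite at $h(0,0)$, because $h(A_0)\to h(0,0)$ forces every neighbourhood of $h(0,0)$ to contain $h(\tfrac1k,0)=\lim h(A_k)$ for all large $k$ and therefore to meet the sequence $h(A_k)$.

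For the ``only if'' part I would start with a $\ddot S^\w$-fan $(S_n)_{n\in\w}$, set $x_n=\lim S_n$, and let $K=\overline{\{x_n:n\in\w\}}$ be the compact metrizable closure of the limit set. Two cases arise. If some point $y$ equals $x_n$ for infinitely many $n$, then the corresponding subfamily is a $\dot S^\w$-fan (it consists of convergent sequences with common limit $y$, is compact-finite, and is not locally finite at $y$), and Proposition~\ref{p:FU-fan} supplies a $k$-closed copy of $V$. Otherwise $n\mapsto x_n$ is finite-to-one, so $\{x_n\}$ is infinite and, by compact metrizability of $K$, admits a subsequence of pairwise distinct points $x_{n_k}\to x_*\in K$ with $x_{n_k}\ne x_*$; the subfamily $(S_{n_k})_k$ is then compact-finite and not locally finite at $x_*$ (every neighbourhood of $x_*$ contains $x_{n_k}=\lim S_{n_k}$, hence meets $S_{n_k}$, for all large $k$). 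In this second case I would build a $k$-homeomorphic copy of $A$: after a disjointification I replace each $S_{n_k}$ by a cofinite subsequence $T_k$ so that the sets $\bar T_k=T_k\cup\{x_{n_k}\}$ together with $\{x_*\}$ are pairwise disjoint, and define $h:A\to X$ by $h(0,0)=x_*$, $h(\tfrac1k,0)=x_{n_k}$, and a bijective enumeration of the spike $\{(\tfrac1k,\tfrac1{km})\}_m$ onto $T_k$. Then $F=h(A)=\{x_*\}\cup\bigcup_k\bar T_k$ is $k$-closed (for compact $K$ one has $K\cap F=(K\cap(\{x_*\}\cup\{x_{n_k}\}))\cup\bigcup_{k\in E}(K\cap\bar T_k)$ with $E$ finite), and $h$ is a $k$-homeomorphism by the same spike-by-spike argument as in Proposition~\ref{p:FU-fan}.

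The main obstacle I anticipate is the disjointification in the Arens case. Its justification rests on the observation that, because the limit of a convergent sequence is unique, any two of our sequences $S_{n_k},S_{n_j}$ (with distinct limits) meet in a finite set, and likewise each $S_{n_k}$ meets the convergent sequence $L=\{x_*\}\cup\{x_{n_j}:j\in\IN\}$ of limits in a finite set; hence I can inductively delete finitely many points from each $S_{n_k}$ to make the closures $\bar T_k$ pairwise disjoint and disjoint from $x_*$, without destroying convergence, compact-finiteness, or non-local-finiteness. A secondary nuisance is the verification that $h$ is $k$-continuous at the junction points $(\tfrac1k,0)$ and at the apex $(0,0)$; this is handled, exactly as for $V$, by noting that inside any $K_n$ a net approaching $(0,0)$ must eventually run along the spine $A_0$, since the finitely many spikes $A_1,\dots,A_n$ have first coordinates bounded away from $0$.
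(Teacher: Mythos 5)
Your proof is correct and takes essentially the same route as the paper's: the same dichotomy on the compact metrizable set of limits (a constant subsequence yields a $\dot S^\w$-fan and a $k$-closed copy of $V$ via Proposition~\ref{p:FU-fan}, while an injective convergent subsequence yields a $k$-closed copy of the Arens fan via the explicit map $h$ sending the apex to $x_*$, the junction points to the limits $\lim S_{n_k}$, and the spikes onto the disjointified sequences), with the same spike-by-spike verification of $k$-closedness and $k$-homeomorphism. The additional details you spell out (the automatic non-local-finiteness of a subfamily whose limits converge to a common point, and the almost-disjointness of convergent sequences with distinct limits used for disjointification) are exactly the points the paper leaves implicit in its reduction ``we can replace the family by a suitable subfamily and assume\dots''.
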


\begin{proof} The ``if'' part can be proved by analogy with the proof of the ``if'' part in Proposition~\ref{p:FU-fan}. To prove the ``only if'' part, assume that $(S_n)_{n\in\w}$ is a $\ddot S^\w$-fan in $X$. Taking into account that the set $\{\lim S_n\}_{n\in\w}$ has compact metrizable closure in $X$, we can replace the family $(S_n)_{n\in\w}$ by a suitable subfamily and assume that the closure of the set $\{\lim S_n\}_{n\in\w}$ is a singleton or a convergent sequence; moreover, we can assume that the family $(S_n)_{n\in\w}$ is disjoint and the union $\bigcup_{n\in\w}S_n$ is disjoint with the closure of the set $\{\lim S_n\}_{n\in\w}$. If the set $\{\lim S_n\}_{n\in\w}$ is a singleton,  then the sequence $(S_n)_{n\in\w}$ is a $\dot S^\w$-fan. In this case we can apply Proposition~\ref{p:FU-fan} and conclude that $X$ contains a $k$-closed subspace $F\subset X$, which is $k$-homeomorphic to the Fr\'echet-Urysohn fan $V$.

If the closure of the set $\{\lim S_n\}_{n\in\w}$ is a convergent sequence, then we can replace the family $(S_n)_{n\in\w}$ by a suitable infinite subfamily, and additionally assume that the sequence $(\lim S_n)_{n\in\w}$ is convergent to some point $x$ in $X$ and consists of pairwise distinct points, which differ from the limit point $x$.

For every $n\in\IN$, fix an injective enumeration $(x_{n,m})_{m\in\IN}$ of the convergent sequence $S_n$ and observe that the sequence $(x_{n,m})_{m\in\IN}$ converges to $\lim S_n$ in $X$.

Now consider the injective map $h:A\to X$ defined by $h(0,0)=x$, $h(\frac1n,0)=\lim S_n$ and $h(\frac1n,\frac1{nm})=x_{n,m}$ for $n,m\in\IN$. Repeating the argument of the proof of Proposition~\ref{p:FU-fan}, we can show that the set $F=h(A)$ is $k$-closed in $X$ and the map $h:A\to F$ is a $k$-homeomorphism.
\end{proof}

We recall \cite{Ar92} that a topological space $X$ has \index{topological space!of countable fan-tightness}\index{countable fan-tightness}{\em countable fan-tightness} if for any sequences of sets $(A_n)_{n\in\w}$ and any $a\in\bigcap_{n\in\w}\bar A_n$ there are finite sets $F_n\subset A_n$, $n\in\w$, such that $a$ is a cluster point of the union $\bigcup_{n\in\w}F_n$.

\begin{proposition}\label{p:sfan-tight} If a topological space $X$ contains a (strong) $\bar S^\w$-fan and has countable fan-tightness, then $X$ contains a (strong) $\Fin^\w$-fan.
\end{proposition}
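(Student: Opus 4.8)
The plan is to extract from the given $\bar S^\w$-fan a single finite subset of each ``tail'' and to show that these finite sets already constitute the desired $\Fin^\w$-fan. The selection mechanism will be the countable fan-tightness, while the compact-finiteness of the selected sets (and, in the strong case, their strong compact-finiteness) will be forced entirely by the compact-finiteness of the original fan. Let $(S_n)_{n\in\w}$ be the (strong) $\bar S^\w$-fan and let $x\in X$ be a point at which it fails to be locally finite. Before selecting anything, I would record two elementary consequences of compact-finiteness. Since each singleton is compact, \emph{every point of $X$ lies in only finitely many of the sets $S_n$}; and since each closure $\overline{S_n}$ is compact, \emph{any compact-finite set meets $\overline{S_n}$ in a finite set}. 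These two facts are the whole engine of the argument.

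Next I would set up the fan-tightness application. Put $A_n=\bigcup_{k\ge n}S_k$. Because the family is not locally finite at $x$, every neighbourhood $O_x$ of $x$ meets $S_k$ for infinitely many $k$, hence meets some $S_k$ with $k\ge n$, and therefore meets $A_n$; thus $x\in\overline{A_n}$ for every $n$, i.e.\ $x\in\bigcap_{n\in\w}\overline{A_n}$. Since $x$ belongs to only finitely many $S_k$, there is an index $N$ with $x\notin S_k$ for $k>N$, so $x\notin A_n$ for all $n>N$; after discarding the initial indices $n\le N$ and reindexing I may assume $x\notin A_n$ for all $n$. Now countable fan-tightness, applied to the sequence $(A_n)_{n\in\w}$ and the point $x$, produces finite sets $F_n\subset A_n$ such that $x$ is a cluster point of $D:=\bigcup_{n\in\w}F_n$. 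Since $F_n\subseteq A_n$ and $x\notin A_n$, we get $x\notin D$ while $x\in\overline D$, so $D$ is a non-closed countable set.

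The key remaining step is compact-finiteness of $D$, and this is where the two recorded facts enter. Fix a compact set $K'\subset X$. By compact-finiteness of the fan, the set $\{k:S_k\cap K'\ne\emptyset\}$ is finite; let $m$ be its maximum. For any $p\in D\cap K'$, every index $k$ with $p\in S_k$ satisfies $S_k\cap K'\ne\emptyset$, so $k\le m$; on the other hand $p\in F_n\subset A_n$ means $p\in S_j$ for some $j\ge n$, whence $n\le j\le m$. Therefore $p\in F_n$ only for $n\le m$, giving $D\cap K'\subseteq\bigcup_{n\le m}F_n$, a finite set. Hence $D$ is a non-closed compact-finite countable set, and Proposition~\ref{p:Fin-fan-char} delivers a $\Fin^\w$-fan in $X$. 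For the strong case I would additionally use the $\IR$-open neighbourhoods $U_n\supseteq S_n$ with $(U_n)_{n\in\w}$ compact-finite: for each $p\in D$ choose an index $k(p)$ with $p\in S_{k(p)}$ and set $W_p:=U_{k(p)}$, an $\IR$-open neighbourhood of $p$. Given compact $K'$, the set $M:=\{k:U_k\cap K'\ne\emptyset\}$ is finite, and $W_p\cap K'\ne\emptyset$ forces $k(p)\in M$; moreover each fibre $\{p\in D:k(p)=k\}\subseteq D\cap\overline{S_k}$ is finite by the second recorded fact. Thus $\{p\in D:W_p\cap K'\ne\emptyset\}$ is a finite union of finite sets, so $D$ is strongly compact-finite and Proposition~\ref{p:Fin-fan-char} yields a strong $\Fin^\w$-fan.

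The step I expect to be the main obstacle is precisely the compact-finiteness of $D=\bigcup_n F_n$: a priori the finite sets $F_n$ are carved out of \emph{tails} $A_n$, so a single point could be contributed by a high-index tail yet lie on a low-index sequence meeting $K'$, and one might fear needing to disjointify the sequences first. The observation that dissolves this difficulty is that, because singletons and the closures $\overline{S_k}$ are compact, no point can belong to more than finitely many $S_n$ and $D$ meets each $\overline{S_k}$ finitely; this lets the index-counting in the previous paragraph go through with no disjointness assumption on the $S_n$ whatsoever.
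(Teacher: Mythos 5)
Your proof is correct, and its engine is the same as the paper's: apply countable fan-tightness to the tails $A_n=\bigcup_{k\ge n}S_k$ at a point where the fan fails to be locally finite, and let the compact-finiteness of the original fan force the compact-finiteness of the selected finite sets. But the packaging differs in ways that are not cosmetic. The paper first replaces $(S_n)_{n\in\w}$ by a disjoint subfamily whose union misses the compact closure $L$ of $\{\lim S_n\}_{n\in\w}$, and must then re-derive non-local-finiteness of that subfamily from the compactness of $L$ (an infinite subfamily of a fan need not be a fan, so this reduction genuinely uses the $\bar S$ hypothesis). You skip the disjointification entirely and work at the original witness point $x$; your two ``recorded facts'' are exactly what lets the index-counting go through without any disjointness. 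As a consequence, your argument never uses the compactness of the closure of the limit set at all, so it proves the formally stronger statement that a (strong) $S^\w$-fan plus countable fan-tightness yields a (strong) $\Fin^\w$-fan. A second difference: the paper exhibits the fan directly as the family $(F_n)_{n\in\w}$, taking $\bigcup_{m\ge n}V_m$ as an $\IR$-open neighborhood of $F_n$ in the strong case, whereas you aggregate $D=\bigcup_{n\in\w}F_n$ and route through Proposition~\ref{p:Fin-fan-char}, assigning per-point neighborhoods $W_p=U_{k(p)}$ whose fibres are finite because $D$ meets each compact $\overline{S_k}$ finitely. Both work, but note that Proposition~\ref{p:Fin-fan-char} is stated for $T_1$-spaces, so formally your route assumes $T_1$; this costs essentially nothing, since the paper's own step (``$x$ is a cluster point of $\bigcup_n F_n$, hence $(F_n)_{n\in\w}$ is not locally finite'') silently needs the same separation axiom to shrink a neighborhood of $x$ off a finite set, and all applications in the paper are to Tychonoff spaces anyway.
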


\begin{proof} Let $(S_n)_{n\in\w}$ be a  $\bar S_\w$-fan in $X$. Replacing $(S_n)_{n\in\w}$ by a suitable subfamily, we can assume that this family is disjoint and its union $\bigcup_{n\in\w}S_n$ does not intersect the (compact) closure $L$ of the set $\{\lim S_n\}_{n\in\w}$. Since $L$ is compact, there exists a point $x\in L$ such that each neighborhood $O_x\subset X$ of $x$ contains infinitely many points $\lim S_n$, $n\in\w$. For every $n\in\w$ consider the set $A_n=\bigcup_{m\ge n}S_m$ and observe that $x\in\bigcap_{n\in\w}\bar A_n$. Since $X$ has countable fan-tightness, each set $A_n$ contains a finite subset $F_n\subset A_n$ such that $x$ is contained in $\bigcup_{n\in\w}F_n$. This implies that the family $(F_n)_{n\in\w}$ is not locally finite in $X$. We claim that this family is compact-finite in $X$. Given any compact set $K\subset X$, find $n\in\w$ such that $K\cap\bigcup_{m\ge n}S_n=\emptyset$ and hence $K\cap F_m=\emptyset$ for all $m\ge n$. So, $(F_n)_{n\in\w}$ is compact-finite and hence is a $\Fin^\w$-fan in $X$. If the $\bar S^\w$-fan $(S_n)_{n\in\w}$ is strong, then each set $S_n$ has an $\IR$-open neighborhood $V_n\subset X$ such that the family $(V_n)_{n\in\w}$ is compact-finite. Then for every $n\in\w$ the set $U_n=\bigcup_{m\ge n}V_m$ is an $\IR$-open neighborhood of $F_n$ and the family $(U_n)_{n\in\w}$ is compact-finite in $X$, which implies that the $\Fin^\w$-fan $(F_n)_{n\in\w}$ is strong.
\end{proof}

We recall that a topological space $X$ is \index{topological space!$k$-metrizable}{\em $k$-metrizable} if $X$ is $k$-homeomorphic to a metrizable space.

\begin{proposition}\label{p:k-metr} A $k^*$-metrizable space $X$ is $k$-metrizable if and only if $X$ contains no $\ddot S^\w$-fan.
\end{proposition}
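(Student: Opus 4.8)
The plan is to reduce everything to the metrizability of the $k$-modification $kX$ and then to feed in Proposition~\ref{p:FUAr}, which identifies $\ddot S^\w$-fans with $k$-closed copies of the Fr\'echet--Urysohn fan $V$ or the Arens fan $A$. I will use repeatedly that $X$ is $k$-metrizable if and only if $kX$ is metrizable, that $X$ and $kX$ have the same compact subsets, and that a subset of $X$ is $k$-closed exactly when it is closed in $kX$. A short observation links the two fan pictures: if $F\subset X$ is $k$-closed, then the $k$-modification of $F$ (with the topology inherited from $X$) coincides with the subspace of $kX$ supported by $F$. One inclusion of topologies is immediate; for the other, a relatively open $U\subset F$ extends to $U\cup(X\setminus F)$, which is $kX$-open precisely because $F\cap K$ is closed in every compact $K$. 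Since $V$ and $A$ are $k_\w$-spaces, hence $k$-spaces equal to their own $k$-modifications, a $k$-closed copy of $V$ or $A$ in $X$ is the same thing as a closed copy of $V$ or $A$ in $kX$.

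For the ``only if'' part I assume $X$ is $k$-metrizable, so $kX$ is metrizable. If $X$ contained a $\ddot S^\w$-fan, then by Proposition~\ref{p:FUAr} it would contain a $k$-closed subspace $k$-homeomorphic to $V$ or to $A$, and by the previous paragraph this would be a closed subspace of $kX$ homeomorphic to $V$ or $A$. But $V$ and $A$ are not metrizable (the former is not first countable at its vertex, the latter is not even Fr\'echet--Urysohn), whereas closed subspaces of the metrizable space $kX$ are metrizable. This contradiction shows that $X$ contains no $\ddot S^\w$-fan.

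For the ``if'' part I assume $X$ is $k^*$-metrizable and contains no $\ddot S^\w$-fan, and I must prove that $kX$ is metrizable. The idea is to upgrade the $k$-network on passing to $kX$. Being $k^*$-metrizable, $X$ has a $k$-network $\N=\bigcup_{n\in\w}\N_n$ with each $\N_n$ compact-finite; closing each level under finite intersections keeps it compact-finite and still a $k$-network, so I may assume the $\N_n$ are increasing and closed under finite intersections. These families remain compact-finite in $kX$, and replacing each member by its closure in $kX$ turns each level into a compact-finite family of \emph{closed} sets in the $k$-space $kX$; by Proposition~\ref{k-no-Cld-fan} such a family is locally finite. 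Thus $kX$ carries a $\sigma$-locally finite closed $k$-network, so $kX$ is an $\aleph$-space that is a $k$-space. By Proposition~\ref{p:FUAr} and the first paragraph, $kX$ contains no closed copy of $V$ and none of $A$, and a metrization theorem in the theory of $\aleph$-spaces (see \cite{Grue}) then gives that a regular $k$-space with a $\sigma$-locally finite $k$-network is metrizable as soon as it omits closed copies of $V$ and $A$. Hence $kX$ is metrizable and $X$ is $k$-metrizable.

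The main obstacle is exactly the regularity of $kX$ needed in the ``if'' part: the $k$-modification of a Tychonoff space can fail to be regular, and both the passage to a closed $k$-network and the final metrization theorem require it, so this step must genuinely use the $k^*$-metrizable hypothesis. The concrete handle is the characterization of \cite{BBK}: $k^*$-metrizability provides a metrizable space $Z$, a continuous surjection $f\colon Z\to X$, and a precompact-preserving section $s\colon X\to Z$, and $k$-metrizability of $X$ is equivalent to $s$ being $k$-continuous, i.e.\ to $s(K)$ being compact for every compact $K\subset X$. The delicate point is to show that a failure of this property — a point of $\overline{s(K)}\setminus s(K)$ together with the metrizability of $Z$ and the compact-finite $k$-network of $X$ — assembles an honest $\ddot S^\w$-fan, namely an infinite compact-finite but not locally finite family of convergent sequences, rather than a mere single convergent sequence. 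Establishing this is what both closes the contrapositive and, in effect, secures the regularity of $kX$ under the no-fan hypothesis.
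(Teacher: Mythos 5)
Your ``only if'' half is correct and is essentially the paper's argument in a slightly different dress: the paper maps a hypothetical $\ddot S^\w$-fan forward under a $k$-homeomorphism $h:X\to M$ onto a metrizable space, while you pass to $kX$ and use Proposition~\ref{p:FUAr} to produce a closed copy of $V$ or $A$ inside the metrizable space $kX$; both routes work. The problem is the ``if'' half, where your argument has two genuine gaps. First, the step ``replacing each member by its closure in $kX$ turns each level into a compact-finite family of closed sets'' is false: compact-finiteness is \emph{not} preserved by taking closures. The Fr\'echet-Urysohn fan $V$ itself is a counterexample: the sequences $V_n\setminus\{(0,0)\}$, $n\in\IN$, form a compact-finite family in the $k_\w$-space $V$, but every closure $\overline{V_n\setminus\{(0,0)\}}=V_n$ contains the vertex $(0,0)$, so the family of closures is met by the compact singleton $\{(0,0)\}$ for all $n$ and is neither compact-finite nor locally finite. (This is exactly why Proposition~\ref{k-no-Cld-fan} is stated for $\Cld$-fans and not for fans of arbitrary sets.) Consequently your construction of a $\sigma$-locally finite closed $k$-network on $kX$ collapses at its first step, independently of the regularity issue.

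Second, even setting that aside, you correctly identify that the regularity of $kX$ is needed both for the notion of $\aleph$-space and for the metrization theorem you want to quote from \cite{Grue}, and your fallback plan via the section characterization of $k^*$-metrizability (Theorem 6.4 of \cite{BBK}) ends with the admission that ``the delicate point'' --- turning a failure of $k$-continuity of the section into an honest $\ddot S^\w$-fan --- remains to be established. That delicate point is not a technicality: it is the entire content of the ``if'' direction, and your proposal never proves it. The paper's own proof is short precisely because this work is outsourced: by Proposition~\ref{p:FUAr}, the absence of $\ddot S^\w$-fans means $X$ has no $k$-closed subspace $k$-homeomorphic to $V$ or to $A$, and then Theorem~7.1 of \cite{BBK} (a $k^*$-metrizable space without such $k$-closed copies is $k$-metrizable) finishes the proof in one line. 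So either quote that theorem, as the paper does, or actually carry out the construction you only sketched; as written, the ``if'' implication is not proved.
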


\begin{proof} To prove the ``if'' part, assume that a $k^*$-metrizable space $K$ contains no $\ddot S^\w$-fan. Since  By Proposition~\ref{p:FUAr}, $X$ contains no $k$-closed subspace $k$-homeomorphic to the Fr\'echet-Urysohn or Arens' fans. Applying Theorem 7.1 of \cite{BBK}, we conclude that $X$ is $k$-metrizable.

Now we prove the ``only if'' part. Assume that $X$ is $k$-metrizable and let $h:X\to M$ be a $k$-homeomorphism of $X$ onto a metrizable space.
Assuming that $X$ contains a $\ddot S^\w$-fan $(S_n)_{n\in\w}$, we can show that $(h(S_n))_{n\in\w}$ is an $\ddot S^\w$-fan in $M$. On the other hand, the space $M$, being metrizable, contains no $\ddot S^\w$-fan. This contradiction shows that $X$ contains no $\ddot S^\w$-fan.
\end{proof}

\begin{theorem}\label{t:Sfan} A topological space $X$ is $k$-metrizable if one of four conditions are satisfied:
\begin{enumerate}
\item[\textup{1)}] $X$ is a $k^*$-metrizable space containing no $\ddot S^\w$-fans;
\item[\textup{2)}] $X$ is a $k^*$-space of countable fan-tightness, containing no $\Fin^\w$-fans;
\item[\textup{3)}] $X$ is a Tychonoff $\aleph$-space containing no strong $\ddot S^\w$-fans;
\item[\textup{4)}] $X$ is a Tychonoff $\aleph$-space of countable fan-tightness, containing no strong $\Fin^\w$-fans.
\end{enumerate}
\end{theorem}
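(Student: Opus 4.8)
The plan is to reduce all four cases to Proposition~\ref{p:k-metr}, which says that a $k^*$-metrizable space is $k$-metrizable exactly when it contains no $\ddot S^\w$-fan. Thus in every case it suffices to check two things: that $X$ is $k^*$-metrizable, and that $X$ contains no $\ddot S^\w$-fan. Case (1) is immediate, since the two hypotheses are precisely these two conditions, so Proposition~\ref{p:k-metr} applies at once.

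For case (2) the $k^*$-metrizability is given by hypothesis, so only the absence of $\ddot S^\w$-fans must be established. Here I would argue by contraposition using Proposition~\ref{p:sfan-tight}: every $\ddot S^\w$-fan is in particular a $\bar S^\w$-fan, so if $X$ contained one, then (having countable fan-tightness) $X$ would contain a $\Fin^\w$-fan, contrary to assumption. Hence $X$ has no $\ddot S^\w$-fan, and Proposition~\ref{p:k-metr} finishes the case.

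Cases (3) and (4) rest on the implication, read off the second diagram of Chapter~\ref{ch:GT}'s introduction, that every $\aleph$-space is $k^*$-metrizable, so the first requirement is automatic for a Tychonoff $\aleph$-space. The key extra ingredient is Corollary~\ref{c:fan-in-aleph-space}, by which every countable fan in a Tychonoff $\aleph$-space is strong; since a $\ddot S^\w$-fan is a countable fan, ``no strong $\ddot S^\w$-fan'' upgrades to ``no $\ddot S^\w$-fan'', settling case (3) via Proposition~\ref{p:k-metr}. For case (4) I would argue that a hypothetical $\ddot S^\w$-fan in $X$ would, by Corollary~\ref{c:fan-in-aleph-space}, be strong, hence a strong $\bar S^\w$-fan; combined with countable fan-tightness, the strong form of Proposition~\ref{p:sfan-tight} would then produce a strong $\Fin^\w$-fan, contradicting the hypothesis. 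So again $X$ has no $\ddot S^\w$-fan and is $k$-metrizable.

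The arguments are short because the analytic content is already isolated in Propositions~\ref{p:k-metr} and \ref{p:sfan-tight} and in Corollary~\ref{c:fan-in-aleph-space}. The only point needing care is the bookkeeping of the strong versus non-strong distinction: one applies the strong version of Proposition~\ref{p:sfan-tight} in case (4) and the plain version in case (2), and invokes Corollary~\ref{c:fan-in-aleph-space} to move between strong and ordinary $\ddot S^\w$-fans in the $\aleph$-space cases. I expect the main (minor) obstacle to be confirming that a $\ddot S^\w$-fan genuinely counts as a countable fan, so that Corollary~\ref{c:fan-in-aleph-space} applies, together with the fact that $\aleph$-spaces are $k^*$-metrizable as recorded in the diagram.
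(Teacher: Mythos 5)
Your proposal is correct and follows essentially the same route as the paper: case (1) is exactly Proposition~\ref{p:k-metr}, case (2) reduces to case (1) via Proposition~\ref{p:sfan-tight}, and cases (3) and (4) reduce to cases (1) and (2) via Corollary~\ref{c:fan-in-aleph-space} (together with the fact that $\aleph$-spaces are $k^*$-metrizable). Your bookkeeping of the strong/non-strong distinction and the observation that a $\ddot S^\w$-fan is a countable fan (so the corollary applies) are precisely the details the paper leaves implicit.
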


\begin{proof} The first statement follows from Proposition~\ref{p:k-metr} and the second statement follows from the first one and Proposition~\ref{p:sfan-tight}.
The third and fourth statements follow from the first and the second statements and  Corollary~\ref{c:fan-in-aleph-space}, respectively.
\end{proof}

Since $k$-spaces (resp. Tychonoff $k_\IR$-spaces) contain no (strong) $\Fin^\w$-fans,  Theorem~\ref{t:Sfan} implies the following metrizability criterion.

\begin{corollary}\label{c:fan-metrization} For a topological space $X$ the following conditions are equivalent:
\begin{enumerate}
\item[\textup{1)}] $X$ is metrizable.
\item[\textup{2)}] $X$ is a $k^*$-metrizable $k$-space with countable fan-tightness.
\item[\textup{3)}] $X$ is a Tychonoff $\aleph$-$k_\IR$-space with countable fan-tightness.
\end{enumerate}
\end{corollary}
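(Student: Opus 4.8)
The plan is to prove the chain $(1)\Rightarrow(2)$, $(1)\Rightarrow(3)$, $(2)\Rightarrow(1)$ and $(3)\Rightarrow(1)$. The two implications starting from $(1)$ are routine verifications, while the two returning to $(1)$ are applications of Theorem~\ref{t:Sfan} followed by a short passage from $k$-metrizability to genuine metrizability.

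For $(1)\Rightarrow(2)$ and $(1)\Rightarrow(3)$ I would check that a metrizable space $X$ enjoys every listed property. Such an $X$ is Tychonoff, is an $\aleph$-space and hence $k^*$-metrizable (via the diagram of generalized metric spaces), and is a $k$-space and hence a $k_\IR$-space. The only point deserving justification is countable fan-tightness, which holds for any first countable space: given sets $A_n$ with a common point $a\in\bigcap_{n\in\w}\bar A_n$ and a decreasing base $(U_n)_{n\in\w}$ at $a$, the singletons $F_n=\{a_n\}$ with $a_n\in A_n\cap U_n$ have union clustering at $a$.

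For $(2)\Rightarrow(1)$: as a $k$-space, $X$ contains no $\Fin^\w$-fan by Proposition~\ref{k-no-Cld-fan}; being in addition $k^*$-metrizable and of countable fan-tightness, Theorem~\ref{t:Sfan}(2) makes $X$ $k$-metrizable. Since a $k$-metrizable $k$-space is metrizable (as observed earlier), $X$ is metrizable.

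For $(3)\Rightarrow(1)$ --- the step I expect to be the main obstacle --- I would first note that a Tychonoff $k_\IR$-space contains no strong $\Fin^\w$-fan: by Proposition~\ref{kR-no-Cld-fan} it has no strict $\Cld$-fan, and in a $T_1$-space a strong (equivalently strict) $\Fin^\w$-fan would itself be a strict $\Cld$-fan. Thus Theorem~\ref{t:Sfan}(4) applies and $X$ is $k$-metrizable. It remains to upgrade this to metrizability. Writing $\id\colon kX\to X$ for the continuous $k$-homeomorphism onto the metrizable space $kX$, the crucial observation is that $X$ and $kX$ carry the same continuous real-valued functions: a function is $k$-continuous on $X$ exactly when it is continuous on the $k$-space $kX$ (the two spaces share their compact sets and the induced topologies on them), and since $X$ is a $k_\IR$-space, $k$-continuity on $X$ coincides with continuity on $X$. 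As both $X$ (Tychonoff) and $kX$ (metrizable) are completely regular, each topology equals the weak topology generated by this common family $C(X,\IR)=C(kX,\IR)$; the two topologies therefore coincide, so $X=kX$ is metrizable. The delicate point is precisely this identification of function families via the $k_\IR$ hypothesis, the rest being formal.
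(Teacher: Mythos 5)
Your proof is correct and follows essentially the same route as the paper: both reduce the corollary to Theorem~\ref{t:Sfan} (parts (2) and (4)), using the facts that $k$-spaces contain no $\Fin^\w$-fans (Proposition~\ref{k-no-Cld-fan}) and that Tychonoff $k_\IR$-spaces contain no strong ($=$ strict) $\Fin^\w$-fans (via Proposition~\ref{kR-no-Cld-fan}). The paper's one-sentence proof leaves the upgrade from $k$-metrizability to metrizability implicit; your explicit argument in case (3) --- that a Tychonoff $k_\IR$-space with metrizable $k$-modification satisfies $C(X)=C(kX)$, so the two completely regular topologies coincide --- correctly supplies exactly the detail the paper glosses over.
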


Corollary~\ref{c:fan-metrization} can be compared with metrizability criterion proved in Theorem~4.5 \cite{GK15}: a topological space $X$ is metrizable if and only if $X$ is a $\mathfrak P$-space of countable fan-tightness.

 \section{$S$-semifans}

 A family $(S_\alpha)_{\alpha\in\lambda}$ of convergent sequences in a topological space $X$ is called an \index{$S$-semifan}\index{$S$-fan!$S$-semifan}{\em $S$-semifan} (more precisely, an \index{$S$-semifan!$S^\lambda$-semifan}{\em $S^\lambda$-semifan}) if this family is compact-finite in $X$. If this family is strongly (strictly)  compact-finite in $X$, then it is called a \index{$S$-semifan!strong $S$-semifan}\index{$S$-semifan!strict $S$-semifan}{\em strong $S$-semifan} (resp. a {\em strict $S$-semifan}).

 It is clear that each (strong) $S$-fan is a (strong) $S$-semifan. An $S$-semifan
 $(S_\alpha)_{\alpha\in\lambda}$ is an $S$-fan if and only if it is not locally finite in $X$.

 Recall that a subset $B$ of a topological space $X$ is called \index{subset!$\lambda$-bounded}{\em $\lambda$-bounded} for a cardinal $\lambda$ if $X$ contains no locally finite family $(U_\alpha)_{\alpha\in\lambda}$ of open sets of $X$ that meet $B$.

\begin{lemma}\label{l:cs-compact} If for some cardinal $\lambda$ a topological space $X$ contains no strong $S^\lambda$-semifan, then $X$ contains a closed $\lambda$-bounded set $K\subset X$ whose complement $X\setminus K$ contains no infinite metrizable compact subspaces.
\end{lemma}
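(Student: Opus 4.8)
The plan is to argue by contraposition: assuming that no closed $\lambda$-bounded set has a complement free of infinite metrizable compacta, I will build a strong $S^\lambda$-semifan and thereby contradict the hypothesis. The trivial case is dispatched first: if $X$ itself is $\lambda$-bounded, then $K=X$ works, its complement being empty. So I may assume $X$ is not $\lambda$-bounded, and I assume toward a contradiction that for every closed $\lambda$-bounded $K\subset X$ the open set $X\setminus K$ contains an infinite metrizable compact subspace, and in particular a convergent sequence $S$ with $\bar S\subset X\setminus K$ (the closure of a nontrivial convergent sequence sitting inside such a compactum stays inside it, since a metrizable compactum carries convergent sequences and a convergent sequence together with its limit is itself such a compactum).

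The construction is a transfinite recursion producing convergent sequences $S_\alpha$ and $\IR$-open neighborhoods $U_\alpha\supset S_\alpha$ for $\alpha<\lambda$. At stage $\alpha$ I set $K_\alpha=\overline{\bigcup_{\beta<\alpha}\bar S_\beta}$. Each $\bar S_\beta$ is compact, hence $\lambda$-bounded; since taking closures preserves $\lambda$-boundedness (an open set meeting $\bar B$ already meets $B$) and, for regular $\lambda$, a union of fewer than $\lambda$ many $\lambda$-bounded sets is again $\lambda$-bounded by a pigeonhole argument, the set $K_\alpha$ is closed and $\lambda$-bounded whenever $\alpha<\lambda$ (the singular case needs extra cofinality bookkeeping, which I would treat separately). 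By the standing assumption I then pick a convergent sequence $S_\alpha$ with $\bar S_\alpha\subset X\setminus K_\alpha$, together with an $\IR$-open neighborhood $U_\alpha$ of $S_\alpha$ whose closure misses $K_\alpha$, the metrizable compactum carrying $S_\alpha$ supplying the continuous functions needed to produce such a $U_\alpha$. The decisive elementary observation driving the recursion is that \emph{adjoining a single set to a compact-finite family keeps it compact-finite}: for a compact $C$ the indices $\alpha$ with $C\cap U_\alpha\ne\emptyset$ are those already present plus at most one. Consequently the process can be halted before reaching $\lambda$ only by one of two events: either at some stage $\alpha$ the complement $X\setminus K_\alpha$ already contains no infinite metrizable compactum, in which case $K:=K_\alpha$ is the desired set, or a limit stage is reached at which the accumulated family of neighborhoods fails to be compact-finite.

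The heart of the argument, and the step I expect to be the main obstacle, is ruling out the second event, i.e. guaranteeing that the family $(U_\alpha)$ stays compact-finite through all limit stages. Compact-finiteness is a local condition and is not automatically inherited by unions of increasing chains, so it cannot be extracted from $\lambda$-boundedness of the cores alone; the device is to exploit the inclusions $\bar U_\alpha\cap K_\alpha=\emptyset$ together with $\bar S_\beta\subset K_{\beta+1}$, which force every later neighborhood to avoid every earlier convergent sequence, and then to run an accumulation-point argument: were some compact $C$ to meet infinitely many $U_\alpha$, a cluster point of chosen representatives would have to land in a core $K_\gamma$ and violate the separation just arranged. Making this precise, and in particular securing the $\IR$-open neighborhoods in the absence of any global separation axiom on $X$ as well as handling singular $\lambda$, is the technical crux. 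Once it is in place the recursion cannot stall at a limit stage, so it must either terminate early with the desired $K$ or run for the full length $\lambda$; in the latter case $(S_\alpha)_{\alpha<\lambda}$ is a strongly compact-finite family of $\lambda$ convergent sequences, that is, a strong $S^\lambda$-semifan, contradicting the hypothesis. Hence the early-termination alternative must occur, which yields a closed $\lambda$-bounded $K$ whose complement $X\setminus K$ contains no infinite metrizable compact subspace.
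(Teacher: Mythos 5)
Your approach has a genuine gap, and it sits exactly where you flag it: the limit-stage compact-finiteness of the greedily constructed family is not a technicality to be filled in later, it is the point where the construction actually breaks. The separation you arrange, $\overline{U}_\alpha\cap K_\alpha=\emptyset$ together with $\bar S_\beta\subset K_{\beta+1}$, only insulates each neighborhood from \emph{earlier} material; it says nothing about where points of later neighborhoods may accumulate. If a compact set $C$ meets $U_{\alpha_n}$ for infinitely many $\alpha_n$ and you take a cluster point $c$ of chosen points $c_n\in C\cap U_{\alpha_n}$, then $c$ will typically lie in a \emph{limit} core such as $K_{\sup_n\alpha_n}$, and this contradicts nothing: $c\in K_\gamma\setminus K_{\alpha_n}$ is perfectly compatible with $c\in\overline{U}_{\alpha_n}$ for every $n$, since each $U_{\alpha_n}$ was only required to avoid $K_{\alpha_n}$. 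Concretely, take $X=[0,1]\times D$ with $D$ discrete of cardinality $\lambda$ (so $X$ is not $\lambda$-bounded and your trivial case does not apply): your recursion may legally choose every $S_\alpha$ inside a single compact slice $[0,1]\times\{d\}$, and then $C=[0,1]\times\{d\}$ meets all the $U_\alpha$, so the family is not compact-finite at the first limit stage, yet none of your constraints has been violated. The defect is structural: compact-finiteness is a global condition against all compact sets simultaneously, and a stage-by-stage greedy choice cannot enforce it. A secondary gap is the production of $\IR$-open neighborhoods: continuous functions on the metrizable compactum carrying $S_\alpha$ need not extend over $X$, since the lemma imposes no separation axiom on $X$ (extension results such as the paper's Lemma~\ref{l:fH-ext} require $X$ to be functionally Hausdorff).

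The missing idea, which is how the paper proceeds, is to not build the semifan by recursion at all, but to define the set $K$ canonically and let the definition of $\lambda$-boundedness hand you the semifan. Let $\V$ be the family of all open subsets of $X$ containing no infinite metrizable compact subspace, and put $B=X\setminus\bigcup\V$. Then $B$ is closed, and $X\setminus B=\bigcup\V$ contains no infinite metrizable compactum: such a compactum would contain a sequence converging to a point $x$, the point $x$ lies in some $V\in\V$, and then $V$ contains the limit together with a tail of the sequence, i.e.\ an infinite metrizable compactum, a contradiction. If $B$ failed to be $\lambda$-bounded, there would exist a \emph{locally finite} family $(U_\alpha)_{\alpha\in\lambda}$ of open sets meeting $B$; each $U_\alpha$ meets $B$, hence $U_\alpha\notin\V$, hence $U_\alpha$ contains a convergent sequence $S_\alpha$, and local finiteness of $(U_\alpha)_{\alpha\in\lambda}$ immediately gives the compact-finiteness you were struggling to engineer, so $(S_\alpha)_{\alpha\in\lambda}$ is a strong $S^\lambda$-semifan, contradicting the hypothesis. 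Note that this one-step argument also makes all of your cardinal bookkeeping (regular versus singular $\lambda$, unions of $\lambda$-bounded sets) unnecessary.
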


\begin{proof} Let $\V$ be the family of all open sets that contain no infinite metrizable subsets. Then the union $\bigcup\V$ is open in $X$ and its complement $B=X\setminus\bigcup\V$ is closed in $X$.  We claim that the set $B$ is $\lambda$-bounded in $X$. Assuming the opposite, find a locally finite family $(U_\alpha)_{\alpha\in\lambda}$ of open sets that meet the set $B$.
Observe that for every $\alpha\in\lambda$ the set $U_\alpha$ does not belong to the family $\V$ and hence contains a convergent sequence $S_\alpha$. Then $(S_\alpha)_{\alpha\in\lambda}$ is a strong $S^\lambda$-semifan, whose existence is  forbidden by our assumption.

It is clear that the complement $X\setminus B$ contains no infinite metrizable compact subset.
\end{proof}

\begin{theorem}\label{t:kR-noSemifan} For a topological space $X$ the following conditions are equivalent:
\begin{enumerate}
\item[\textup{1)}] $X$ metrizable and has compact set of non-isolated points.
\item[\textup{2)}] $X$ is a $\mu$-complete $\bar\aleph_k$-$k_\IR$-space containing no strong $S^\w$-semifans.
\end{enumerate}
\end{theorem}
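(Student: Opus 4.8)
The plan is to prove the two implications separately; the forward direction is routine, and the reverse direction carries the real content.

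\textbf{The implication $(1)\Rightarrow(2)$.} Assume $X$ is metrizable with compact set $X'$ of non-isolated points. Metrizability at once gives that $X$ is a $k$-space (hence a $k_\IR$-space), that $X$ is submetrizable and Tychonoff (hence $\mu$-complete, as recalled after Lemma~\ref{l:Runbound}), and that $X$ has a $\sigma$-locally finite base, hence a $\sigma$-locally finite $k$-network, so $X$ is an $\aleph$-space and therefore a $\bar\aleph_k$-space. It remains to exclude a strong $S^\w$-semifan. Suppose $(S_n)_{n\in\w}$ were one, witnessed by $\IR$-open neighborhoods $U_n\supseteq S_n$ with $(U_n)_{n\in\w}$ compact-finite. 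Each $S_n$ is infinite, so $\lim S_n$ is non-isolated and lies in the compact metrizable set $X'$; passing to a subsequence we may assume $\lim S_{n_k}\to p\in X'$. Using the metric, choose $x_{n_k}\in S_{n_k}$ with $d(x_{n_k},\lim S_{n_k})<1/k$, so that $x_{n_k}\to p$. Then $C=\{p\}\cup\{x_{n_k}:k\in\w\}$ is compact and meets $U_{n_k}$ for every $k$, contradicting the compact-finiteness of $(U_n)_{n\in\w}$.

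\textbf{The implication $(2)\Rightarrow(1)$: locating the compact core.} Since $X$ has no strong $S^\w$-semifan, Lemma~\ref{l:cs-compact} with $\lambda=\w$ furnishes a closed bounded set $K\subset X$ whose complement $Y:=X\setminus K$ contains no infinite metrizable compact subspace. As $X$ is $\mu$-complete, the closed bounded set $K$ is compact. As $X$ is a $\bar\aleph_k$-space, the traces on any compact $C\subset X$ of the (countably many) members of the compact-countable closed $k$-network meeting $C$ form a countable network on $C$, so each compact subspace, being compact Hausdorff with countable network, is metrizable. Consequently $Y$ has no infinite compact subspace at all, i.e. $Y$ is $k$-discrete.

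\textbf{Upgrading to discreteness via the $k_\IR$-property.} I would next show $Y$ is in fact discrete, which is the cleanest use of the $k_\IR$-hypothesis. Fix $y\in Y$ and any compact $C\ni y$. If $y$ were non-isolated in the metrizable set $C$, some sequence in $C\setminus\{y\}$ would converge to $y$; being open, $Y$ would eventually contain this sequence, which together with $y$ gives an infinite compact subset of $Y$ — impossible. Hence $y$ is isolated in every compact set containing it, so $\{y\}$ is open in each compact $C$ and, being finite, closed in $C$; thus $\chi_{\{y\}}:X\to\{0,1\}\subset\IR$ is $k$-continuous. Since $X$ is a $k_\IR$-space, $\chi_{\{y\}}$ is continuous, so $\{y\}$ is open and $y$ is isolated in $X$. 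Therefore $Y$ is discrete, every non-isolated point lies in $K$, and $X'$, a closed subset of the compact set $K$, is compact and metrizable.

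\textbf{Metrizability — the main obstacle.} At this stage $X=X'\sqcup D$ with $X'$ compact metrizable and $D=X\setminus X'$ open and discrete, and $X$ still carries a compact-countable closed $k$-network; since $X'$ is compact, only countably many of its members meet $X'$, so each $p\in X'$ has a countable closed network. The plan is to prove $X$ is \emph{first countable}: at isolated points this is trivial, while at $p\in X'$ the absence of strong $S^\w$-semifans must be invoked a second time to forbid the $V_{\w_1}$-type accumulation of independent convergent sequences (and the associated Fr\'echet/Arens-fan configurations) that would otherwise wreck countable character — such a configuration is, after all, exactly a strong $S^\w$-semifan. Granting first countability, $X$ is then a first-countable $k$-space with a compact-countable closed $k$-network and compact set of non-isolated points, and a $\sigma$-locally finite base can be assembled from the countable neighborhood bases at points of the compact metrizable $X'$ together with the discrete family of singletons on $D$, so $X$ is metrizable by the Nagata--Smirnov theorem. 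The genuinely hard point, and where I expect to spend the most effort, is the passage from \emph{countable network} to \emph{countable character} at points of $X'$: I would show that any failure of countable character at such a point manufactures a strong $S^\w$-semifan, and is therefore ruled out by hypothesis.
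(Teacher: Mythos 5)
Your implication $(1)\Rightarrow(2)$ and the first half of $(2)\Rightarrow(1)$ are correct and agree with the paper: Lemma~\ref{l:cs-compact} plus $\mu$-completeness produces the compact set $B$, metrizability of compact subsets of $\bar\aleph_k$-spaces makes $Y=X\setminus B$ $k$-discrete, and your $\chi_{\{y\}}$-argument is a legitimate filling-in of the paper's one-line assertion that the open $k$-discrete set $Y$ is discrete. The gap is in the metrizability step, and it is not a technical omission but a wrong strategy. Your key claim --- that a failure of countable character at a point of $X'$ manufactures a strong $S^\w$-semifan --- is left unproved, and it is in fact \emph{false} in the setting you have reached, because you have silently dropped the $k_\IR$-hypothesis at exactly the point where it is indispensable. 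Concretely, let $X=\{p\}\cup\w_1$, where all points of $\w_1$ are isolated and the neighborhoods of $p$ are the co-countable sets. This space is regular, $\mu$-complete (every infinite closed set is unbounded, as witnessed by a countable locally finite family of singletons), and a $\bar\aleph_k$-space (all compact sets are finite, so the singletons form a compact-countable closed $k$-network); its set of non-isolated points is the compact singleton $\{p\}$, its complement is open and discrete, and it contains no strong $S^\w$-semifan for the trivial reason that it contains no convergent sequence at all. Yet $X$ is not first countable at $p$. So ``no strong $S^\w$-semifans'' together with everything you have established cannot yield countable character; the only hypothesis separating this example from the theorem is $k_\IR$ (here $\chi_{\{p\}}$ is $k$-continuous but discontinuous), and your plan never invokes it in this step. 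This is precisely why the paper's Claim~\ref{cl:int-fin} runs through the $k_\IR$-property: a candidate neighborhood of $B$ that fails to be a neighborhood yields a discontinuous characteristic function, $k_\IR$ locates a compact set on which it is discontinuous, and metrizability of that compact set manufactures the convergent sequences from which the forbidden strong semifan is then assembled. Without $k_\IR$ there is no machine for producing convergent sequences, and the semifan hypothesis has nothing to bite on.

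There is a second, independent flaw: even granting first countability, your Nagata--Smirnov assembly does not work. The family of singletons of $D$ is not locally finite --- every neighborhood of a point of $X'$ may meet infinitely many isolated points, as already happens in the metric cofan $M$ --- and it is $\sigma$-locally finite if and only if $D$ decomposes into countably many closed discrete subsets of $X$, which is essentially the statement that the compact set $X'$ has a countable outer neighborhood base, i.e.\ exactly the uniform statement (the paper's Claims~\ref{cl:int-fin} and \ref{cl:G-delta}) that pointwise first countability was supposed to replace. The Alexandroff duplicate of $[0,1]$ makes this concrete: it is compact, regular, first countable, its set of non-isolated points $[0,1]\times\{0\}$ is compact metrizable with open discrete complement, so all ingredients of your proposed construction are present, yet the space is not metrizable; the construction never uses the $k$-network, so it cannot be correct as described. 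The paper instead proves the uniform neighborhood statement directly, deduces that $X'$ is a $G_\delta$-set, exhibits a $\sigma$-discrete $k$-network (so $X$ is an $\aleph$-space), and only then concludes metrizability via Theorem~\ref{t:Sfan} combined with the $k_\IR$-property. If you want to salvage your outline, the correct intermediate target is not countable character at each point of $X'$ but a countable neighborhood base of the set $X'$ itself, and obtaining it requires using the $k_\IR$-property and the semifan hypothesis together, as in Claim~\ref{cl:int-fin}.
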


\begin{proof} The implication $(1)\Ra(2)$ is trivial. To prove that $(2)\Ra(1)$, assume that $X$ is a $\mu$-complete $\bar\aleph_k$-$k_\IR$-space containing no strong $S^\w$-semifans. By Lemma~\ref{l:cs-compact}, the space $X$ contains a closed bounded subset $B\subset X$ whose complement $X\setminus B$ contains no infinite compact metrizable subsets. Since $X$ is a $\bar\aleph_k$-space, each compact subset of $X$ is metrizable, which implies that $X\setminus B$ contains no infinite compact subsets and hence is $k$-discrete. Since $X$ is a $k_\IR$-space, the open $k$-discrete subspace $X\setminus B$ of $X$ is discrete. By the $\mu$-completeness of $X$, the closed bounded subset $B$ of $X$ is compact. So, $B$ is a compact metrizable set with discrete complement $X\setminus B$ in $X$.

Let $\mathcal N$ be a compact-countable closed $k$-network for the $\bar\aleph_k$-space $X$. It follows that the subfamily $\mathcal N_B=\{N\in\mathcal N:N\cap B\ne \emptyset\}$ is countable.

\begin{claim}\label{cl:int-fin} For every open neighborhood $U\subset X$ of $B$ there exists a finite subfamily $\F\subset \mathcal N_B$ and an open set $V\subset X$ such that $B\subset V\subset \bigcup\F\subset U$.
\end{claim}

\begin{proof} By the regularity of the space $X$, the compact set $B$ has an open neighborhood $W\subset X$ such that $\overline{W}\subset U$. Let $\{N_k\}_{k\in\w}$ be an enumeration of the countable subfamily $N_B(U)=\{N\in\mathcal N_B:N\subset U\}$. Since $\mathcal N$ is a $k$-network for $X$, there exists a number $m\in\w$ such that $\bigcup_{k\le m}N_k$ contains the compact set $B$. We claim that there exists a number $i\in\w$ such that the union $U_i=\bigcup_{k\le m+i}N_k$ is a neighborhood of $B$ in $X$. To derive a contradiction, assume that for every $i\in\w$ the set $U_i$ is not a neighborhood of $B$. Let $f_i:X\to\{0,1\}$ be the function defined by the equality $f_i^{-1}(0)=U_i$. Since $U_i$ is not a neighborhood of $B$, the function $f_i$ is discontinuous at some point $x\in B$. Since $X$ is a $k_\IR$-space, for some compact set $K_i\subset X$ the restriction $f_i|K_i$ is discontinuous at some point $x_i\in K_i$. It follows that the point $x_i$ is not isolated and hence belongs to the set $B$. The metrizability of the compact set $K_i$ yields a sequence $S_i\subset K\setminus U_i$ that converges to the point $x_i$. Replacing $S_i$ by a suitable subsequence, we can assume that $S_i\subset W$. Since $\mathcal N$ is a $k$-network in $X$, for every $i\in\w$ there exists a finite subfamily $\F_i\subset\mathcal N$ such that $\bar S_i\subset\bigcup\F\subset U$. Find a number $p(i)\in\w$ such that $\F_i\cap\mathcal N_B\subset\{N_k\}_{k\le m+p(i)}$. Since the space $X\setminus B$ is discrete,  each set $F\in\F_i\setminus \mathcal N_B$ is closed and discrete in $X$. Consequently, $F\cap\bar S_i$ is finite and the set $S_i\setminus \bigcup(\F_i\cap\mathcal N_B)\supset S_i\setminus U_{p(i)}$ is finite. Replacing $S_i$ by $S_i\cap U_{p(i)}$, we can assume that $S_i\subset U_{p(i)}$. Choose any increasing function $f:\w\to\w$ such that $f(i+1)\ge p(f(i))$ for all $i\ge m$ and observe that for any $i<j$ we get $S_{f(i)}\subset U_{p(f(i))}$ and $$S_{f(j)}\subset X\setminus U_{f(j)}\subset X\setminus U_{f(i+1)}\subset X\setminus U_{p(f(i))}\subset X\setminus S_{f(i)},$$ which implies that the family $(S_{f(i)})_{i\ge \w}$ is disjoint.
We claim that this family is compact-finite in $X$. In the opposite case we could find a compact subset
$K\subset X$ meeting infinitely many sequences $S_{f(i)}$, $i\in\w$. Since $\bigcup_{i\in\w}S_i\subset W$, the compact set $K\cap\overline{W}$ also meets infinitely many sequences $S_{f(i)}$, $i\in\w$. Replacing $K$ by its closed subset $K\cap\overline{W}$, we can assume that $K\subset\overline{W}\subset U$. The set $K$ intersects infinitely many (pairwise disjoint) sequences $S_{f(i)}$, $i\in\w$, and hence is
infinite. Since $X\setminus B$ is discrete, $K\cap B\ne\emptyset$ and hence $K\in\mathcal N_B(U)$ and $K=N_k$ for some $k\in\w$. Then $K\cap S_{f(i)}=\emptyset$ for all $i\in\w$ with $f(i)>m+k$, which contradicts the choice of $K$. This contradiction shows that for some $i\in\w$ the set $U_i$ is a neighborhood of $B$ in $X$. Then the finite subfamily $\F=\{N_k\}_{k\le m+i}\subset\mathcal N_B$ and the interior $V$ of the union $\bigcup\F=U_i$ in $X$ have the desired properties: $B\subset V\subset\bigcup\F\subset U$.
\end{proof}

\begin{claim}\label{cl:G-delta} The compact set $B$ is a $G_\delta$-set in $X$.
\end{claim}

\begin{proof} Let $\mathfrak F$ be the (countable) family of finite subfamilies $\F\subset\mathcal N_B$ whose union $\bigcup\F$ contains the set $B$ in its interior $(\bigcup\F)^\circ$. By Claim~\ref{cl:int-fin}, $B=\bigcap_{\F\in\mathfrak F}(\bigcup\F)^\circ$, which means that $B$ is a $G_\delta$-set in $X$.
\end{proof}

\begin{claim} The space $X$ is an $\aleph$-space.
\end{claim}

\begin{proof} By Claim~\ref{cl:G-delta}, there is a decreasing sequence $(W_n)_{n\in\w}$ of open sets such that $B=\bigcap_{n\in\w}W_n$. Since the space $X\setminus B$ is discrete, for every $n\in\w$ the family of singletons $\mathcal N_n=\big\{\{x\}:x\in X\setminus W_n\big\}$ is locally finite in $X$. Then the family $\mathcal N'=\mathcal N_B\cup\bigcup_{n\in\w}\mathcal N_n$ is $\sigma$-discrete. We claim that $\mathcal N'$ is a $k$-network for $X$. Fix any open set $U\subset X$ and a compact subset $K\subset U$. Since $\mathcal N$ is a $k$-network for $X$, there is finite subfamily $\F\subset\mathcal N$ such that $K\subset\bigcup\F\subset U$. Let $\F_B=\F\cap\mathcal N_B$. Observe that every set $F\in\F\setminus\mathcal N_B$ is closed and discrete in $X$. Consequently, the set $K\cap F$ is finite and so is the union $E=\bigcup_{F\in\F\setminus\F_B}K\cap B$. Then the finite subfamily  $\F'=\F_B\cup\big\{\{x\}\big\}_{x\in E}$ of $\mathcal N'$ has the desired property: $K\subset\bigcup\F'\subset U$. Now we see that $\mathcal N'$ is a $\sigma$-discrete $k$-network for $X$ and hence $X$ is an $\aleph$-space.
\end{proof}

By Lemma~\ref{l:iso-norm}, the space $X$ is normal and hence Tychonoff and by Theorem~\ref{t:Sfan}, the space $X$ is $k$-metrizable and being a $k_\IR$-space, is metrizable.
\end{proof}

In some cases the $k_\IR$-space property in Theorem~\ref{t:kR-noSemifan} can be replaced by the absence of $\Clop^\w$-fans.

\begin{proposition}\label{p:cosmic-sequential} If a cosmic space $X$ contains no strict $S^\w$-semifan and no $\Clop^\w$-fan, then $X$ is sequential and contains a compact subset $K\subset X$ with discrete complement.
\end{proposition}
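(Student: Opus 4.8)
The plan is to run, for cosmic spaces, the skeleton of the proof of Theorem~\ref{t:kR-noSemifan}, with two substitutions: paracompactness of cosmic spaces replaces the hypothesis of $\mu$-completeness, and the absence of $\Clop^\w$-fans plays the role of the $k_\IR$-property. First I record the standing facts about a cosmic space $X$: being regular with a countable network, $X$ is hereditarily Lindel\"of and separable, perfectly normal, paracompact (hence $\mu$-complete, as recalled above), countably tight and $T_1$; every compact subspace of $X$ is metrizable (compact with a countable network) and every subspace of $X$ is again cosmic. I shall also use a reformulation valid in any Tychonoff space: a functional neighborhood of a convergent sequence $S$ automatically contains the compact closure $\overline S$ (the separating function is constant on $\overline S$), while conversely every open neighborhood of a compact set is a functional neighborhood of it; hence a family $(S_n)_{n\in\w}$ of convergent sequences is a strict $S^\w$-semifan in $X$ precisely when the compact sets $(\overline{S_n})_{n\in\w}$ admit a compact-finite family of open neighborhoods.

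\textbf{The compact core.} Let $D$ be the open set of isolated points of $X$ and put $X'=X\setminus D$, a closed set. I claim that $K:=X'$ is compact with discrete complement $D$, which gives the second assertion of the proposition. By $\mu$-completeness it suffices to show that $X'$ is bounded in $X$. Suppose it is not. By Proposition~\ref{p:unbound-str-c-f} the unbounded set $X'$ contains an infinite closed discrete strictly compact-finite subset $\{x_n\}_{n\in\w}$, and shrinking the witnessing functional neighborhoods I may assume they are pairwise disjoint functional neighborhoods $U_n\ni x_n$ with $(U_n)_{n\in\w}$ compact-finite. Each $x_n$ is non-isolated, so the next step yields a nontrivial convergent sequence $S_n\subset U_n$ with $\lim S_n=x_n$ and $\overline{S_n}\subset U_n$. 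Then $(U_n)_{n\in\w}$ witnesses that $(S_n)_{n\in\w}$ is a strict $S^\w$-semifan, contradicting the hypothesis. Thus $X'$ is bounded and $K=X'$ is the required compact set.

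\textbf{Producing convergent sequences (the crux).} The heart of the argument is the lemma: \emph{in a cosmic space without $\Clop^\w$-fans and without strict $S^\w$-semifans, every non-isolated point $x$ is the limit of a nontrivial convergent sequence lying in any prescribed neighborhood of $x$.} To prove it I pick, by countable tightness, a countable set $C$ in the given neighborhood with $x\in\overline C\setminus C$ and analyze how $C$ accumulates at $x$. When the accumulating points may be chosen isolated in $X$, I run a dichotomy: either $C$ is compact-finite in $X$, in which case the singletons $(\{c\})_{c\in C}$ --- clopen in $X$ since $X$ is $T_1$ and the points are isolated --- form a $\Clop^\w$-fan, contradicting the hypothesis; or $C$ is not compact-finite, so some infinite subset of $C$ has compact, hence metrizable, closure and therefore contains a nontrivial convergent sequence. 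When instead $x$ has a crowded neighborhood (no nearby isolated points), the convergent sequence is extracted from a compact metrizable subset obtained from countable tightness. I expect this step to be the main obstacle: one must \emph{steer} the produced sequence to the prescribed limit $x$ rather than to some other accumulation point, and one must guarantee that the competing family of singletons is genuinely compact-finite in $X$ (not merely in a countable subspace). I plan to control the dispersal of the auxiliary limit points by an almost-disjoint refinement in the spirit of Lemma~\ref{l:ad}, invoking the absence of a strict $S^\w$-semifan to forbid infinitely many such limits from forming a compact-finite family; this is exactly the interplay between the two fan hypotheses that makes the statement work.

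\textbf{Sequentiality.} With $X=K\sqcup D$ ($K$ compact metrizable, $D$ open discrete) in hand, I prove sequentiality by contradiction. Let $A\subset X$ be sequentially closed but not closed and pick $x\in\overline A\setminus A$; necessarily $x\in K$. By countable tightness choose a countable $A_0\subset A$ with $x\in\overline{A_0}$ and split $A_0=(A_0\cap K)\cup(A_0\cap D)$. If $x\in\overline{A_0\cap K}$, then since $K$ is metrizable some sequence in $A_0\cap K\subset A$ converges to $x\notin A$, contradicting sequential closedness. If $x\in\overline{A_0\cap D}$, the crux lemma applied to the isolated set $A_0\cap D$ produces either a $\Clop^\w$-fan (excluded) or a nontrivial sequence in $A_0\cap D\subset A$ converging to $x\notin A$, again a contradiction. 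Hence no such $A$ exists and $X$ is sequential, completing the proof.
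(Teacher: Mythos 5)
Your proposal has a genuine gap, and it sits exactly where you predicted: the ``crux lemma'' is not proved, and the plan you sketch for it would not go through. Two concrete problems. First, the crowded case: you propose to extract the convergent sequence ``from a compact metrizable subset obtained from countable tightness'', but countable tightness only produces a countable set with $x$ in its closure --- it gives no compact set whatsoever. This case genuinely occurs: a crowded neighborhood of $x$ may have all compact subsets finite (this is what happens in countable regular crowded spaces without nontrivial convergent sequences, which are cosmic and $k$-discrete), and then there is literally nothing to extract; the contradiction in that configuration has to come from the $\Clop^\w$-fan hypothesis via zero-dimensionality and a maximal disjoint family of clopen sets, i.e.\ the mechanism of Theorem~\ref{t:disc-char}, which your outline never invokes (the paper earns zero-dimensionality by first proving the relevant part of $X$ is \emph{countable}, via a continuous bijection onto it from a separable metric space). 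Second, the steering problem: ``no strict $S^\w$-semifan'' is a purely negative statement --- no countable family of convergent sequences admits a compact-finite family of functional neighborhoods --- and by itself it does not manufacture a sequence converging to the prescribed point $x$; the Arens fan shows that isolated points can accumulate at $x$ while every convergent sequence they contain converges elsewhere. Your almost-disjoint plan in the spirit of Lemma~\ref{l:ad} is left as a plan, and there is a circularity lurking: your compact-core step quotes the crux lemma, while the natural way to steer limits toward $x$ (pass to $\cl(A)\cap K$ and use metrizability of $K$) presupposes that the set of non-isolated points is already known to be compact.

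The sequentiality step inherits the same gap and adds a misapplication: you apply the crux lemma to the \emph{set} $A_0\cap D$, whereas the lemma (even if true) concerns neighborhoods of $x$; the honest dichotomy for a set $E$ of isolated points accumulating at $x$ yields either a $\Clop^\w$-fan or a sequence in $E$ converging to \emph{some} point $z$, and since $A$ is sequentially closed this $z$ lies in $A$ --- no contradiction. Compare with the paper's proof, which needs none of this machinery: it gets the compact set from Lemma~\ref{l:cs-compact} (the complement contains no infinite compact metrizable sets) together with Lindel\"ofness of cosmic spaces ($\Rightarrow$ $\mu$-completeness), shows $X\setminus K$ is countable via a continuous bijective image of a separable metric space without convergent sequences, hence zero-dimensional, hence discrete by Theorem~\ref{t:disc-char}; and for sequentiality it takes a $k$-closed non-closed set $A$ (sequential closedness suffices, as all compacta here are metrizable), uses $k$-closedness to see that $A\cap K$ is compact, separates the point $a\in\bar A\setminus A$ from $A\cap K$ by a closed neighborhood $\bar O_a$, and then observes that any infinite compact subset of $\bar O_a\cap A$ would contain a convergent sequence whose limit lies in $\bar O_a\cap A$ itself --- impossible, since that set misses $K$. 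Hence $\bar O_a\cap A$ is compact-finite and consists of isolated points, producing a forbidden $\Clop^\w$-fan. That separation trick is precisely the resolution of the steering problem that your outline leaves open.
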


\begin{proof} By Lemma~\ref{l:cs-compact}, $X$ contains a closed bounded subset $K$ whose complement $X\setminus K$ contains no infinite compact metrizable spaces.
Being Lindel\"of, the cosmic space $X$ is $\mu$-complete, which implies that the closed bounded set $K$ is compact.
  Being cosmic, the space $X\setminus K$ is the image of a metrizable separable space $Z$ under a continuous bijective map $f:Z\to X\setminus K$. Taking into account that the image of any convergent sequence in $Z$ is a convergent sequence in $X\setminus K$, we conclude that the metric space $Z$ contains no convergent sequence and hence is discrete and countable. Then its image $X\setminus K$ is countable and hence zero-dimensional. By Theorem~\ref{t:disc-char}, the space $X\setminus K$ is discrete.

Next, we show that $X$ is a $k$-space. To derive a contradiction, assume that $X$ contains a $k$-closed subset $A\subset X$ which is not closed. Fix any point $a\in \bar A\setminus A$ and observe that $a\in K$. Since $A$ is $k$-closed, the intersection $A\cap K$ is compact and does not contain the point $a$. So, we can find a neighborhood $O_a\subset X$ of $a$ such that $\bar{O}_a\cap (A\cap K)=\emptyset$. Since the set $A$ is $k$-closed, for every compact subset $C\subset X$ the intersection $C\cap (\bar{O}_a\cap A)$ is compact. We claim that this compact set is finite. Otherwise it contains a convergent sequence and its limit $z$ belongs to the set $K$. Then also $z\in K\cap (\bar O_a\cap A)=\bar O_a\cap(K\cap A)=\emptyset$, which is a contradiction showing that the set $\bar O_a\cap A$ is compact-finite. Then the family of singletons $\F=(\{x\})_{x\in \bar O_a\cap A}$ is compact-finite but not locally finite at the point $a$. So, $\F$ is a $\Fin^\w$-fan. Since each point $x\in \bar O_a\cap A$ is isolated in $X$, the singleton $\{x\}$ is a clopen subset of $X$. So the fan $\F$ a $\Clop^\w$-fan, which is not possible. This contradiction shows that $X$ is a $k$-space. Since compact subsets of $X$ are metrizable, the $k$-space $X$ is sequential.
\end{proof}

A similar result holds also for zero-dimensional $\mu$-complete $k^*$-metrizable spaces.

\begin{proposition}\label{p:k*-withoutSw} Any zero-dimensional $\mu$-complete $k^*$-metrizable space $X$ containing no strong $S^\w$-semifan and no $\Clop$-fan is metrizable and the set $X'$ of non-isolated points of $X$ is compact.
\end{proposition}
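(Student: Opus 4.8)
The plan is to follow the template of the proofs of Theorem~\ref{t:kR-noSemifan} and Proposition~\ref{p:cosmic-sequential}, replacing the $k_\IR$-space (resp.\ cosmic) hypotheses by zero-dimensionality together with the absence of $\Clop$-fans, and using $k^*$-metrizability to control compact sets. First I would apply Lemma~\ref{l:cs-compact} (with $\lambda=\w$) to the hypothesis that $X$ has no strong $S^\w$-semifan, obtaining a closed bounded set $K\subset X$ whose complement $X\setminus K$ contains no infinite metrizable compact subspace; since $X$ is $\mu$-complete, this $K$ is compact. Next I would record that every compact subspace of a $k^*$-metrizable space is metrizable: using the characterization of \cite{BBK}, write $X=f(Z)$ for a continuous surjection $f\colon Z\to X$ from a metrizable $Z$ with a section $s$ sending compacta to precompacta; then for compact $C\subset X$ the set $\overline{s(C)}$ is compact metrizable, so $f(\overline{s(C)})\supset C$ is compact metrizable (a Hausdorff continuous image of a compact metric space), and hence so is its closed subset $C$. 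Consequently $X\setminus K$ contains no infinite compact subset at all, i.e.\ it is $k$-discrete.

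The first substantial step is to upgrade ``$X\setminus K$ is $k$-discrete'' to ``$X\setminus K$ is discrete'', which forces the set $X'$ of non-isolated points into $K$. Suppose some $x\in X\setminus K$ is not isolated; using zero-dimensionality and the openness of $X\setminus K$, choose a clopen (in $X$) neighbourhood $U\subseteq X\setminus K$ of $x$, so that $U$ is a zero-dimensional, $k$-discrete space in which $x$ is not isolated. I would then run the maximal-disjoint-clopen-family argument from the proof of $(3)\Ra(1)$ in Theorem~\ref{t:disc-char}: a Zorn-maximal disjoint family $(U_\alpha)_\alpha$ of nonempty clopen subsets of $U$ avoiding $x$ accumulates at $x$ (by maximality and zero-dimensionality) and is compact-finite (all compacta of $U$ being finite), hence a $\Clop$-fan in $U$. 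Because $U$ is clopen in $X$, each $U_\alpha$ is clopen in $X$ and the family stays compact-finite and non-locally-finite in $X$, giving a $\Clop$-fan in $X$, contradicting the hypothesis. Thus $X'\subseteq K$, and being closed, $X'$ is a compact (metrizable) set.

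Next I would prove that $X$ is a $k$-space, again exploiting the absence of $\Clop$-fans exactly as in the last part of Proposition~\ref{p:cosmic-sequential}. Given a $k$-closed non-closed $A\subset X$ and a point $a\in\bar A\setminus A$, note $a\in X'\subseteq K$; since $A\cap K$ is compact and misses $a$, zero-dimensionality yields a clopen $O_a\ni a$ with $O_a\cap(A\cap K)=\emptyset$. Using that compacta are metrizable, hence sequentially compact, I would check that $O_a\cap A\cap C$ is finite for every compact $C$ (an infinite such set would contain a convergent sequence whose limit lands in $O_a\cap A\cap K=\emptyset$), so $O_a\cap A$ is a compact-finite set of isolated points accumulating at $a$; its family of singletons is then a $\Clop$-fan, a contradiction. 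Hence $X$ is a $k$-space.

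Finally I would eliminate $\ddot S^\w$-fans and conclude. The key observation is that, $X'$ being compact, any $S^\w$-fan $(S_n)_{n\in\w}$ meets $X'$ in only finitely many members; discarding those, every remaining $S_n$ lies in the discrete open set $X\setminus X'$, so $S_n$ is open and in fact $\IR$-open (the indicator functions of its isolated points witness this), whence $(S_n)$ is automatically a strong $S^\w$-semifan, which the hypothesis forbids. Thus $X$ contains no $S^\w$-fan, in particular no $\ddot S^\w$-fan, so by Proposition~\ref{p:k-metr} the $k^*$-metrizable space $X$ is $k$-metrizable; being also a $k$-space, it is metrizable, and together with the compactness of $X'$ this gives the claim. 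I expect the two $\Clop$-fan arguments (the discreteness of $X\setminus K$ and the $k$-space property) to be the main technical obstacles, the delicate points being the transfer of clopen-ness and of compact-finiteness between the clopen piece $U$ (or the singleton families) and the ambient space $X$; by contrast, the elimination of $S^\w$-fans is essentially immediate once $X'$ is known to be compact.
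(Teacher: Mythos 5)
Your proof is correct and follows essentially the same route as the paper's: Lemma~\ref{l:cs-compact} together with the metrizability of compacta in $k^*$-metrizable spaces yields a compact set $K$ with $k$-discrete complement; the no-$\Clop$-fan hypothesis upgrades this to discreteness of $X\setminus K$ (the paper invokes Theorem~\ref{t:disc-char}(3) for the subspace $X\setminus K$, where you inline the Zorn clopen-family argument inside a clopen-in-$X$ neighbourhood --- your version is in fact slightly more careful, since it makes the transfer of clopen-ness and compact-finiteness from the subspace to $X$ explicit); then the discreteness of the complement turns any remaining $\ddot S^\w$-fan into a forbidden strong $S^\w$-semifan, so Proposition~\ref{p:k-metr} gives $k$-metrizability, and the $\Clop$-fan argument borrowed from Proposition~\ref{p:cosmic-sequential} gives the $k$-space property, whence metrizability and compactness of $X'\subset K$. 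The only cosmetic differences are the order of the last two steps and that you discard fan members meeting $X'$ rather than $K$.
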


\begin{proof} By Lemma~\ref{l:cs-compact}, the $\mu$-complete space $X$ contains a compact  subset $K$ whose complement $X\setminus K$ contains no infinite compact metrizable spaces. Since each compact subset of the $k^*$-metrizable space $X$ is metrizable, the complement $X\setminus K$ contains no infinite compact set.

By Theorem~\ref{t:disc-char}(3), the space $X\setminus K$ is discrete. We claim that the space $X$ is $k$-metrizable. To obtain a contradiction, assume that $X$ is not $k$-metrizable. Applying Proposition~\ref{p:k-metr}, we can find a $\ddot S^\w$-fan  $(S_n)_{n\in\w}$ in $X$. Replacing the compact-finite family $(S_n)_{n\in\w}$ by a suitable infinite subfamily, we can assume that $S_n\cap K=\emptyset$ for all $n\in\w$. Since the space $X\setminus K\supset\bigcup_{n\in\w}S_n$ is discrete, the compact-finite family $(S_n)_{n\in\w}$ is strongly compact-finite in $X$, which means that $(S_n)_{n\in\w}$ is a strong $\ddot S^\w$-fan in $X$. But this is forbidden by our assumption. This contradiction shows that $X$ is a $k$-metrizable.

Repeating a corresponding piece of the proof of Proposition~\ref{p:cosmic-sequential}, we can show that the $k$-metrizable space $X$ is a $k$-space and hence is metrizable.
\end{proof}

Proposition~\ref{p:k*-withoutSw} and Corollary~\ref{c:tight} imply:

\begin{corollary}\label{c:k*-withoutSw} Any countably tight zero-dimensional $\mu$-complete $k^*$-metrizable space $X$ containing no strong $S^\w$-semifan and no $\Clop^\w$-fan is metrizable and the set $X'$ of non-isolated points of $X$ is compact.
\end{corollary}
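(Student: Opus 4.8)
The plan is to deduce Corollary~\ref{c:k*-withoutSw} directly from Proposition~\ref{p:k*-withoutSw} by upgrading the hypothesis ``no $\Clop^\w$-fan'' to the formally stronger hypothesis ``no $\Clop$-fan'', the upgrade being powered by the countable tightness of $X$ together with Corollary~\ref{c:tight}. Indeed, the two statements differ only in that the proposition forbids $\Clop$-fans of \emph{arbitrary} cardinality, whereas here we assume only that there are no \emph{countable} ones, and that the corollary carries the extra hypothesis of countable tightness precisely to bridge this gap.

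First I would observe that every clopen subset of $X$ is closed, so the family $\F=\Clop$ of clopen subsets of $X$ is a family of closed sets and Corollary~\ref{c:tight} is applicable to it. Since $X$ is countably tight, its tightness satisfies $t(X)\le\w$. Hence any $\Clop^{t(X)}$-fan has index set of cardinality at most $\w$, and after padding with empty members (which affect neither compact-finiteness nor local finiteness) it becomes a $\Clop^\w$-fan; consequently the absence of $\Clop^\w$-fans forces the absence of $\Clop^{t(X)}$-fans. Applying the equivalence of Corollary~\ref{c:tight} with $\F=\Clop$, the space $X$ contains a $\Clop$-fan if and only if it contains a $\Clop^{t(X)}$-fan, so $X$ contains no $\Clop$-fan whatsoever.

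At this stage $X$ satisfies every hypothesis of Proposition~\ref{p:k*-withoutSw}: it is zero-dimensional, $\mu$-complete, $k^*$-metrizable, contains no strong $S^\w$-semifan, and (by the previous paragraph) contains no $\Clop$-fan. Invoking that proposition finishes the argument: $X$ is metrizable and its set $X'$ of non-isolated points is compact.

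The only step demanding any care — and the closest thing to an obstacle in an otherwise immediate deduction — is the cardinality bookkeeping in the second paragraph. One must verify that Corollary~\ref{c:tight} genuinely applies to $\F=\Clop$ (which holds since clopen sets are closed) and that countable tightness really yields $t(X)\le\w$, so that ``no $\Clop^\w$-fan'' is equivalent to ``no $\Clop^{t(X)}$-fan''. Once this translation is secured, the result is a direct citation of the two preceding statements.
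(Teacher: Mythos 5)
Your proposal is correct and is exactly the paper's intended derivation: the paper states that Corollary~\ref{c:k*-withoutSw} follows from Proposition~\ref{p:k*-withoutSw} together with Corollary~\ref{c:tight}, which is precisely your reduction of ``no $\Clop^\w$-fan'' plus countable tightness to ``no $\Clop$-fan''. Your cardinality bookkeeping (clopen sets are closed, $t(X)\le\w$, padding with empty clopen sets) correctly fills in the details the paper leaves implicit.
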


Finally we prove some properties of topological spaces that contain no $S^{\w_1}$-semifan.

\begin{proposition}\label{p:k*->a} If a $k^*$-metrizable space $X$ contains no $S^{\w_1}$-semifan, then $X$ contains a $k$-closed $\aleph_0$-subspace $A\subset X$ with  $k$-discrete complement $X\setminus A$. Moreover, if $X$ contains no uncountable compact-finite set, then $X$ is an $\aleph_0$-space.
\end{proposition}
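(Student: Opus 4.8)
The plan is to take $A=K$ for the closed set $K$ produced by Lemma~\ref{l:cs-compact}, and the whole difficulty will be to verify that this $K$ is an $\aleph_0$-space. Since every strong $S^{\w_1}$-semifan is an $S^{\w_1}$-semifan, the assumption that $X$ contains no $S^{\w_1}$-semifan implies that $X$ contains no strong $S^{\w_1}$-semifan, so Lemma~\ref{l:cs-compact} (with $\lambda=\w_1$) yields a closed $\w_1$-bounded set $K\subset X$ whose complement $X\setminus K$ contains no infinite compact metrizable subspace. As $X$ is $k^*$-metrizable, every compact subspace of $X$ is metrizable, so $X\setminus K$ in fact contains no infinite compact subspace, i.e. it is $k$-discrete. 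Putting $A:=K$, the set $A$ is closed (hence $k$-closed) with $k$-discrete complement, so the only remaining point of the first assertion is that $A=K$ is an $\aleph_0$-space.

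For this, fix a $\sigma$-compact-finite $k$-network $\N=\bigcup_{i\in\w}\N_i$ witnessing the $k^*$-metrizability of $X$, each $\N_i$ compact-finite. The trace $\{N\cap K:N\in\N\}$ is a $k$-network of the (regular) subspace $K$, so it suffices to show that $\N^K:=\{N\in\N:N\cap K\ne\emptyset\}$ is countable, equivalently that each $\N_i^K:=\{N\in\N_i:N\cap K\ne\emptyset\}$ is countable. I would use here that a compact-finite family is point-finite (a singleton is compact, hence lies in finitely many members): if some $\N_i^K$ were uncountable, then choosing $x_N\in N\cap K$ and discarding the finite fibres of $N\mapsto x_N$ produces an uncountable set $Y=\{x_N\}\subset K$ that is compact-finite in $X$, since for a compact $C$ only finitely many $N\in\N_i$ meet $C$ and hence only finitely many $x_N$ lie in $C$. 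Thus the first assertion reduces to the key fact that an $\w_1$-bounded subset $K$ of a $k^*$-metrizable space with no $S^{\w_1}$-semifan contains no uncountable compact-finite subset.

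To prove this key fact I would suppose there is an uncountable compact-finite $Y\subset K$ and derive a contradiction. Note first that $Y$ contains no convergent sequence — the closure of one would be an infinite compact set meeting the compact-finite $Y$ in infinitely many points — and, for the same reason, the $k$-discrete set $X\setminus K$ carries none either; so no convergent sequence of $X$ can live inside $Y\cup(X\setminus K)$. If $Y$ fails to be closed and discrete, it accumulates at some $p\in K$ with no sequence of $Y$ converging to $p$, and the task is to convert this non-sequential clustering — using the point-countability of $\N$ and the fact, visible from the proof of Lemma~\ref{l:cs-compact}, that every neighbourhood of $p$ carries an infinite metrizable subspace — into $\w_1$ genuinely convergent sequences forming a strong $S^{\w_1}$-semifan. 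If instead $Y$ is closed and discrete, one must expand the pointwise family $(\{y\})_{y\in Y}$ to a locally finite family of $\IR$-open sets meeting $K$, which would contradict $\w_1$-boundedness; producing such an expansion from the $\sigma$-compact-finite $k$-network is the delicate step. Reconciling these two cases — manufacturing convergent sequences in the first and a locally finite $\IR$-open expansion in the second — is the main obstacle of the whole proposition.

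The "moreover" part is comparatively soft and does not even need the set $K$. Assuming that $X$ contains no uncountable compact-finite set, the representative argument above, applied to the whole family $\N_i$ in place of $\N_i^K$, shows that each $\N_i$ is countable: otherwise a choice of $x_N\in N$ would, after discarding the finite fibres, yield an uncountable compact-finite subset of $X$. Hence $\N=\bigcup_{i\in\w}\N_i$ is a countable $k$-network of the regular space $X$, so $X$ is an $\aleph_0$-space. This hypothesis is of course consistent with the standing one, since one point chosen from each member of an $S^{\w_1}$-semifan would already constitute an uncountable compact-finite set.
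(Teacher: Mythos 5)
Your proposal has a genuine gap at its core, and you acknowledge it yourself: the entire first assertion is reduced to the ``key fact'' that the closed set $K$ produced by Lemma~\ref{l:cs-compact} contains no uncountable compact-finite subset, and this fact is never proved. Worse, the route you sketch for proving it cannot work as stated. In your second case ($Y\subset K$ uncountable, compact-finite, closed and discrete) you propose to expand the singletons $(\{y\})_{y\in Y}$ to a locally finite family of $\IR$-open sets meeting $K$ and thereby contradict the $\w_1$-boundedness of $K$. But $\w_1$-boundedness only forbids uncountable \emph{locally finite} families of open sets, and an uncountable compact-finite set need not admit any such expansion --- this is precisely the distinction between compact-finite, strongly compact-finite and locally finite families on which the whole paper turns. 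A Mr\'owka space $\Psi(\mathcal A)$ over a MAD family shows the principle you need is false in general: $\Psi(\mathcal A)$ is pseudocompact, hence $\w_1$-bounded in itself, yet the uncountable set $T$ of its limit points is closed, discrete and compact-finite. So the contradiction cannot come from $\w_1$-boundedness alone; $k^*$-metrizability and the absence of $S^{\w_1}$-semifans must enter the argument in an essential, structural way, and your sketch (in both cases) does not show how. Your first case is equally unresolved: you would need to manufacture an $\w_1$-indexed \emph{compact-finite} family of convergent sequences from non-sequential clustering, and no construction is offered.

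For comparison, the paper's proof does not go through Lemma~\ref{l:cs-compact} at all. It invokes Theorem~6.4 of \cite{BBK}: $X$ is the image of a metric space $M$ under a continuous map $f:M\to X$ admitting a section $s:X\to M$ that preserves precompact sets. The absence of $S^{\w_1}$-semifans is used exactly once, to show that the set $M'$ of non-isolated points of $M$ is separable (otherwise an uncountable $\e$-separated subset of $M'$ yields, by pushing forward small convergent sequences from $s(X)$, a compact-finite $\w_1$-family of convergent sequences in $X$); then $A:=f(M')$ is the desired subspace, with $k$-closedness of $A$ and $k$-discreteness of $X\setminus A$ both verified via the section $s$. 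Any correct completion of your plan would in effect have to recover this structure. On the positive side, your argument for the ``moreover'' part is correct and is actually more direct than the paper's: if $X$ has no uncountable compact-finite subset, then (since compact-finite families are point-finite, so the choice map $N\mapsto x_N$ is finite-to-one) every compact-finite family of non-empty sets is countable, hence the $\sigma$-compact-finite $k$-network $\N=\bigcup_{i\in\w}\N_i$ is countable and the regular space $X$ is an $\aleph_0$-space; note this does not even use the semifan hypothesis, whereas the paper routes this statement through Theorems~5.3 and 7.2 of \cite{BBK}.
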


\begin{proof} By Theorem~6.4 \cite{BBK}, the $k^*$-metrizable space $X$ is the image of a metric space $(M,d)$ under a continuous map $f:M\to X$ admitting a (not necessarily continuous) section $s:X\to M$ that preserves precompact sets in the sense that for any compact set $K\subset K$ the image $s(K)$ has compact closure in $M$. Replacing the space $M$ by the closure of $s(X)$ in $M$, we can assume that $s(X)$ is dense in $M$. We claim that the set $M'$ of non-isolated points of $M$ is not separable.
To derive a contradiction, assume that $M'$ is separable. For every $\e>0$ choose a maximal subset $M_\e\subset M'$ which is $\e$-separated in the sense that $d(x,y)\ge \e$ for any distinct points $x,y\in M_\e$. Since the union $\bigcup_{n\in\IN}M_{1/n}$ is dense in the non-separable space $M'$, for some $n\in\IN$ the set $M_{1/n}$ is uncountable. Put $\e=1/n$ and for every point $x\in M_\e$ choose a sequence $S_x\subset s(X)$ of diameter $<\e/3$ that converges to $x$.
By the continuity and bijectivity of $f|s(X)$, the set $f(S_x)$ is a convergent  sequence converging to $f(x)$ in $X$. We claim that the family $\big(f(S_x)\big)_{x\in M_{\e}}$ is compact-finite. Indeed, for any compact set $K\subset X$ the set $s(K)$ has compact closure $\overline{s(K)}$ in $M$ and hence it meets at most finitely many sets $S_x$, $x\in M_\e$ (since the distance between them is $>\e/3$). Since $f|s(X)$ is bijective and $\bigcup_{x\in M_\e}S_x\subset s(X)$, the set $K=f(\overline{s(K)})$ meets only finitely many sets $f(S_x)$, $x\in M_\e$. Therefore the family $\big(f(S_x)\big)_{x\in M_\e}$, being compact-finite in $X$, is a $S^{|M_\e|}$-semifan, which is forbidden by our assumption.

This contradiction shows that the set $M'$ is separable. We claim that the image $A=f(M')$ is a $k$-closed $\aleph_0$-subspace of $X$. Assuming the opposite, we could find a compact subset $K\subset X$ such that $A\cap K$ is not closed in $K$ and hence contains a sequence $(x_n)_{n\in\w}$ converging to a point $x\notin A$. Then the sequence $\{s(x_n)\}_{n\in\w}$ is contained in some compact subset of $M$ and hence contains a subsequence $(s(x_{n_k}))_{k\in\w}$ that converges to some point $z\in M'$. The continuity of $f$ guarantees that $f(z)=\lim_{k\to\infty}f\circ s(x_{n_k})=\lim_{k\to\infty}x_{n_k}=x$, so $x\in f(M')=A$, which is a desired contradiction. Being a continuous image of the metrizable separable space $M'$ the $k^*$-metrizable space $A$ is cosmic. By Theorem 7.2 \cite{BBK}, $A$ is  an $\aleph_0$-space.

  It remains to prove that the complement $X\setminus A$ is $k$-discrete. Assuming  the opposite, we could find an infinite compact subset $K\subset X\setminus A$ and choose a sequence $(x_n)_{n\in\w}$ of pairwise distinct points in $K$. Since the set $s(K)$ has compact closure in $M$, the sequence $s(x_n)_{n\in\w}$ has an accumulation point $z\in \overline{s(K)}\cap M'$. Then $f(z)\in K\cap f(M')=K\cap A$, which contradicts the choice of $K$.
 \smallskip

 Now assuming that $X$ contains no uncountable compact-finite subset, we shall prove that the metrizable space $M$ is separable and hence $X$ is an $\aleph_0$-space. It suffices to prove that for every $\e>0$ the closed set $B_\e=\{z\in M:d(z,M')\ge\e\}$ is countable. To derive a contradiction, assume that for some $\e>0$ the set $B_\e$ is uncountable. Taking into account that the set $s(X)$ is dense in $M$, conclude that it contains all isolated points of $M$, so $B_\e\subset s(X)$ and the image $f(B_\e)$ is uncountable. We claim that this image is compact-finite. To derive a contradiction, assume that $f(B_\e)$ has infinite intersection with some compact set $K\subset X$ then  the infinite set $B_\e\cap s(K)$ is contained in the compact set $\overline{s(K)}$ and hence contains a non-trivial convergent sequence $(x_n)_{n\in\w}$, which is not possible as $B_\e$ is a closed set in $M$, consisting of isolated points of $M$.
 \end{proof}


Some results proved in this section are summed up in the following theorem.

\begin{theorem}\label{t:Ssemifan} Let $X$ be a topological space.
\begin{enumerate}
\item[\textup{1)}] If for some cardinal $\lambda$ the space $X$ contains no strong $S^\lambda$-semifans, then $X$ contains a $\lambda$-bounded closed subset $B$ whose complement $X\setminus B$ contains no infinite compact metrizable sets.
\item[\textup{2)}] If $X$ is $\aleph_k$-space containing no strong $S^\w$-semifans, then $X$ contains a bounded subset $B\subset X$ with $k$-discrete complement $X\setminus B$.
\item[\textup{3)}] If $X$ is $\mu$-complete $\bar\aleph_k$-$k_\IR$-space containing no strong $S^\w$-semifans, then $X$ is metrizable and has compact set of non-isolated points.
\item[\textup{4)}] If $X$ is a zero-dimensional $\mu$-complete $k^*$-metrizable space containing no strong $S^\w$-semifans and no $\Clop$-fans, then $X$ is metrizable and has compact set of non-isolated points.
\item[\textup{5)}] If $X$ is a cosmic space containing no strong $S^\w$-semifans, then $X$ contains a compact subset with countable complement and hence $X$ is $\sigma$-compact.
\item[\textup{6)}] If $X$ is a Tychonoff $k^*$-metrizable space containing no $\ddot S^\w$-fans and no strong $S^\w$-semifans, then  its $k$-modifi\-cation $kX$ is a metrizable space with compact set of non-isolated points.
\item[\textup{7)}] If $X$ is a Tychonoff $\aleph$-space containing no strong $S^\w$-semifans, then  its $k$-modifi\-cation $kX$ is a metrizable space with compact set of non-isolated points.
\item[\textup{8)}] If $X$ a $k^*$-metrizable space contains no $S^{\w_1}$-semifans, then $X$
contains a $k$-closed  $\aleph_0$-subspace $A\subset X$ with $k$-discrete complement $X\setminus A$.
\item[\textup{9)}] If $X$ is a $k^*$-metrizable and $X$ contains no uncountable compact-finite sets, then $X$ is an $\aleph_0$-space.
\end{enumerate}
\end{theorem}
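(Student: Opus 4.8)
The plan is to read Theorem~\ref{t:Ssemifan} as a dictionary: each of its nine items is either one of the statements already established in this section or a short corollary of them, so the whole proof amounts to pointing at the right earlier result and supplying the few missing links. First I would dispose of the verbatim restatements. Item (1) is Lemma~\ref{l:cs-compact}; item (3) is the nontrivial implication $(2)\Ra(1)$ of Theorem~\ref{t:kR-noSemifan}; item (4) is Proposition~\ref{p:k*-withoutSw}; and items (8) and (9) are, respectively, the first assertion and the ``Moreover'' clause of Proposition~\ref{p:k*->a}. For these five items nothing remains to be proved.

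Next I would handle (2) and (5), which both read ``apply Lemma~\ref{l:cs-compact} with $\lambda=\w$, then use that compact sets are metrizable.'' Lemma~\ref{l:cs-compact} produces a closed bounded $B\subset X$ whose complement contains no infinite compact metrizable subspace. In an $\aleph_k$-space every compact subspace $K$ carries a countable $k$-network (the compact-countable $k$-network of $X$ meets $K$ in a countable subfamily that is a $k$-network for $K$), hence is an $\aleph_0$-space and so metrizable; thus ``no infinite compact metrizable set'' upgrades to ``no infinite compact set'', i.e. $X\setminus B$ is $k$-discrete, which is (2). For (5) the cosmic space $X$ is Lindel\"of, hence $\mu$-complete, so the closed bounded $B$ is compact; compact subsets of a cosmic space are metrizable, so again $X\setminus B$ is $k$-discrete, and the cosmic-condensation argument from the first paragraph of the proof of Proposition~\ref{p:cosmic-sequential} (a cosmic space is a continuous bijective image of a separable metric space, and a $k$-discrete such space is countable) shows $X\setminus B$ is countable. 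Then $X=B\cup(X\setminus B)$ is the union of a compact set and a countable set, hence $\sigma$-compact.

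The $k$-modification statements (6) and (7) are the substantial ones, and I would begin both with the same step. The hypotheses force $X$ to be $k^*$-metrizable and to contain no $\ddot S^\w$-fan (for (7) this is because in an $\aleph$-space, by Proposition~\ref{p:aleph-strong-fan}, every compact-finite family is strongly compact-finite, so ``no strong $S^\w$-semifan'' coincides with ``no $S^\w$-semifan'' and in particular excludes $\ddot S^\w$-fans), whence Proposition~\ref{p:k-metr} (equivalently Theorem~\ref{t:Sfan}(1)) makes $X$ $k$-metrizable and its $k$-modification $kX$ metrizable. It then remains to prove that the closed set $(kX)'$ of non-isolated points of $kX$ is compact. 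Since $kX$ and $X$ have the same compact sets and the same convergent sequences, every point of $(kX)'$ is the limit of a nontrivial convergent sequence of $X$; as $X\setminus B$ is open and contains no infinite compact set, any such limit must lie in $B$ (a tail of the sequence together with its limit would otherwise be an infinite compact subset of $X\setminus B$), so $(kX)'\subseteq B$. For (7) the proof now finishes cleanly: if $(kX)'$ were non-compact, the metric space $kX$ would contain a sequence $(x_n)\subset(kX)'$ with no cluster point, hence a discrete family of pairwise disjoint $kX$-open sets $W_n\ni x_n$; choosing $S_n\subset W_n$ convergent to $x_n$ yields a family $(S_n)$ locally finite in $kX$, therefore compact-finite in $X$, i.e. an $S^\w$-semifan of $X$, which by Proposition~\ref{p:aleph-strong-fan} is automatically strong --- contradicting the hypothesis.

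The hard part, and the only place where genuine work is required, is item (6): here $X$ is merely $k^*$-metrizable, so Proposition~\ref{p:aleph-strong-fan} is unavailable and the compact-finite family $(S_n)$ just constructed cannot be promoted to a \emph{strong} semifan for free. The plan is to exploit the representation of Theorem~6.4 of \cite{BBK}: a continuous $f\colon M\to X$ from a metric space together with a section $s\colon X\to M$ preserving precompact sets. Given a sequence $(x_n)\subset(kX)'\subseteq B$ with no $kX$-cluster point, I would examine $\{s(x_n)\}\subset M$. If it clustered at some $z$, then $f$ would carry a subsequence $s(x_{n_k})\to z$ to $x_{n_k}\to f(z)$ with $\{x_{n_k}\}\cup\{f(z)\}$ compact in $X$ and hence in $kX$, producing a forbidden $kX$-cluster point; so $\{s(x_n)\}$ must be closed and discrete in $M$. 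The remaining task --- the true obstacle --- is to turn this metric spreading of $\{s(x_n)\}$ into a strong $S^\w$-semifan of $X$: pulling convergent sequences of small $M$-diameter back through $f$ gives compact-finiteness in $X$ exactly as in the proof of Proposition~\ref{p:k*->a}, but manufacturing the $\IR$-open neighborhoods in $X$ that witness \emph{strongness} (which in (7) came free from the $\aleph$-space structure) is delicate. I expect this promotion to strong compact-finiteness to be where the bulk of the argument for (6) lies.
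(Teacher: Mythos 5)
Your treatment of items (1)--(5) and (7)--(9) is correct and coincides with the paper's own proof. Items (1), (3), (4), (8), (9) are exactly the earlier results (Lemma~\ref{l:cs-compact}, Theorem~\ref{t:kR-noSemifan}, Proposition~\ref{p:k*-withoutSw}, Proposition~\ref{p:k*->a}); items (2) and (5) follow from (1) by precisely your glue observations (compact subsets of $\aleph_k$-spaces, resp.\ cosmic spaces, are metrizable; $k$-discrete cosmic spaces are countable). For (7) the paper routes through (6) together with Corollary~\ref{c:fan-in-aleph-space}, whereas your argument (Proposition~\ref{p:aleph-strong-fan} makes the constructed compact-finite family of convergent sequences automatically strongly compact-finite, yielding a forbidden strong $S^\w$-semifan) is direct and self-contained; that is actually an advantage, since your (7) does not inherit whatever happens with (6).

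The genuine gap is item (6), and you have located it exactly but not closed it. Your reduction is right: $kX$ is metrizable by Proposition~\ref{p:k-metr}; the set $B$ of Lemma~\ref{l:cs-compact} is closed and bounded with $k$-discrete complement; $(kX)'\subset B$; and if $(kX)'$ were not compact one would get a sequence $(x_n)$ in $(kX)'$ closed and discrete in $kX$ together with convergent sequences $S_n\to x_n$. What is missing is the production of $\IR$-open sets $U_n\supset S_n$ \emph{of $X$} (not of $kX$) forming a compact-finite family: boundedness of $B$ only excludes locally finite open expansions, and the $kX$-open balls separating the $x_n$ need not come from $X$-open sets. Your proposal ends with the admission that you do not know how to manufacture these witnesses, so as submitted the proof of (6) is incomplete.

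You should also know that this is not a step you could have transcribed from the paper: its entire proof of (6) is the sentence ``follows from Proposition~\ref{p:k-metr} and Lemma~\ref{l:cs-compact}'', and those two conclusions alone cannot suffice. Indeed, let $P$ be an infinite pseudocompact Tychonoff space all of whose compact subsets are finite (such spaces are known to exist) and put $X=P\times(\w+1)$. Then $X$ is Tychonoff and $k^*$-metrizable (the singletons and the tails $\{p\}\times[n,\w]$, grouped by the level $n$, form a $\sigma$-compact-finite $k$-network), $X$ contains no $\ddot S^\w$-fans (every nontrivial convergent sequence is eventually contained in one stick $\{p\}\times(\w+1)$, while compact subsets of $P\times\{\w\}$ are finite), $kX=P_d\times(\w+1)$ is metrizable, and $B=P\times\{\w\}$ is closed and bounded with $k$-discrete complement; yet $(kX)'=P\times\{\w\}$ is infinite and discrete in $kX$, hence non-compact. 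This $X$ is not a counterexample to (6) itself, because it does contain strong $S^\w$-semifans: take an infinite cellular family $(V_n)$ in $P$, points $p_n\in V_n$, and $S_n=\{p_n\}\times\w\subset V_n\times\w$, noting that $(V_n\times\w)$ is compact-finite since compact subsets of $X$ have finite projection to $P$. But it shows that any correct proof of (6) must invoke the hypothesis ``no strong $S^\w$-semifan'' a second time, beyond its use inside Lemma~\ref{l:cs-compact}, exactly to build the open compact-finite expansions you flagged. So the step you identified is the true crux of (6), and neither your proposal nor the paper's one-line derivation closes it.
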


\begin{proof} The first statement was proved in Lemma~\ref{l:cs-compact}. The second statement follows from the first one and the observation that compact sets in $\aleph_k$-spaces are metrizable.
The third and fourth statements were proved in Theorem~\ref{t:kR-noSemifan} and Proposition~\ref{p:k*-withoutSw}. The fifth
statement follows from the first one and the observation that cosmic $k$-discrete spaces are countable.  The sixth statement follows from Proposition~\ref{p:k-metr} and Lemma~\ref{l:cs-compact}. The seventh statement follows from the fifth one and Corollary~\ref{c:fan-in-aleph-space}. The eighths statement is proved in Proposition~\ref{p:k*->a}. The final ninth statement follows from Theorems 5.3 and  7.2 in \cite{BBK}.
\end{proof}

\section{$D_\w$-Cofans}

In this section we shall discuss the notion of a $D_\w$-cofan, which is dual in some sense to the notion of an $S^\w$-fan.

We recall that a subset $A$ of a topological space $X$ is \index{subset!compact-finite}{\em compact-finite} if the family of singletons $(\{a\})_{a\in A}$ is compact-finite in $X$. If this family is strongly compact-finite, then the subset $A$ is called \index{subset!strongly compact-finite}{\em strongly compact-finite}.

\begin{definition}  A \index{$D_\w$-cofan}{\em $D_\w$-cofan} in a topological space $X$ is a sequence $(D_n)_{n\in\w}$ of compact-finite subsets $D_n\subset X$ of cardinality $|D_n|=\w$, which converge to some point $x\in X$ in the sense that each neighborhood $O_x\subset X$ of $x$ contains all but finitely many sets $D_n$. The point $x$ is called \index{$D_\w$-cofan!limit point of}{\em the limit point} of the $D_\w$-cofan $(D_n)_{n\in\w}$. If all sets $D_n$, $n\in\w$, are strongly compact-finite, then the $D_\w$-cofan $(D_n)_{n\in\w}$ will be called \index{$D_\w$-cofan!strong}{\em strong}.
\end{definition}

Corollary~\ref{c:fan-in-aleph-space} implies that each  $D_\w$-cofan in a Tychonoff $\aleph$-space is strong. A simple application of Tietze-Urysohn Theorem yields another useful fact.

\begin{proposition}\label{p:cofan-normal} A $D_\w$-cofan $(D_n)_{n\in\w}$ in a normal space $X$ is strict if each set $D_n$ is closed and discrete in $X$. \end{proposition}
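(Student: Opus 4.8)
The plan is to reduce the statement to a single claim about each member of the cofan. Since the $D_\w$-cofan is \emph{strong} (resp.\ \emph{strict}) exactly when every $D_n$ is strongly (resp.\ strictly) compact-finite, it suffices to prove that each set $D_n$ is strictly compact-finite in $X$. So I would fix $n\in\w$ and, using $|D_n|=\w$, choose an injective enumeration $D_n=\{d_k\}_{k\in\w}$. Because $D_n$ is discrete in its subspace topology, the map $g\colon D_n\to\IR$, $g(d_k)=k$, is automatically continuous; and because $D_n$ is closed in the normal space $X$, the Tietze--Urysohn Theorem (cf.\ \cite[2.1.8]{En}) extends $g$ to a continuous function $\bar g\colon X\to\IR$ with $\bar g(d_k)=k$ for all $k\in\w$.

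Next I would produce the functional neighborhoods from this single extension. For each $k\in\w$ set $U_k=\bar g^{-1}\big((k-\tfrac12,k+\tfrac12)\big)$; this is an open set containing $d_k$. It is a functional neighborhood of $\{d_k\}$ because the continuous function $f_k\colon X\to[0,1]$ defined by $f_k(y)=\min\{1,\,2|\bar g(y)-k|\}$ satisfies $f_k(d_k)=0$ and $f_k(X\setminus U_k)\subset\{1\}$, the latter since $y\notin U_k$ forces $|\bar g(y)-k|\ge\tfrac12$.

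The key step is to verify that the family $(U_k)_{k\in\w}$ is compact-finite in $X$, and here it is the \emph{unboundedness} of $\bar g$ on $D_n$ that does the work. Given any compact $K\subset X$, the image $\bar g(K)$ is compact, hence bounded, say $\bar g(K)\subset[-M,M]$; then $K\cap U_k\neq\emptyset$ can occur only when $(k-\tfrac12,k+\tfrac12)$ meets $[-M,M]$, i.e.\ for finitely many $k$. Thus $(U_k)_{k\in\w}$ is compact-finite, which by definition means the family of singletons $(\{d_k\})_{k\in\w}$, and hence the set $D_n$, is strictly compact-finite in $X$. As $n$ was arbitrary, every $D_n$ is strictly compact-finite, so the $D_\w$-cofan is strict.

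The step I would flag as the conceptual heart (rather than a genuine obstacle, since the construction is short) is precisely the compact-finiteness of $(U_k)_{k\in\w}$: one must resist the temptation to pick neighborhoods of the points $d_k$ one at a time, which need not yield a compact-finite family. Using a \emph{single} unbounded Tietze extension $\bar g$ and taking preimages of unit intervals is exactly what converts compactness of $K$ into boundedness of $\bar g(K)$, thereby carrying compact-finiteness through the passage from points to functional neighborhoods. It is worth noting that the argument uses only closedness, discreteness, and normality; the compact-finiteness of $D_n$ built into the definition of a $D_\w$-cofan is not needed, being automatic from closed discreteness, and the whole content of the proposition is this upgrade from arbitrary neighborhoods to functional ones.
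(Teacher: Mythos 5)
Your proof is correct and is exactly the argument the paper intends: the paper omits a written proof, presenting the proposition as ``a simple application of Tietze--Urysohn,'' and your construction --- extend the unbounded map $d_k\mapsto k$ off the closed discrete set $D_n$ by Tietze--Urysohn and take preimages of the intervals $(k-\tfrac12,k+\tfrac12)$ as functional neighborhoods --- is precisely that application, matching the technique the paper uses elsewhere (e.g.\ in Proposition~\ref{p:unbound-str-c-f}). Your closing observations are also accurate: closed discreteness already gives compact-finiteness of $D_n$, and the single unbounded extension is what makes the family of neighborhoods compact-finite.
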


A typical space containing a $D_\w$-cofan is the \index{fan!metric fan}{\em metric cofan}
$$M=\{(0,0)\}\cup\big\{(\tfrac1n,\tfrac1{nm}):n,m\in\IN\big\}$$in the Euclidean plane. The space $M$ is not locally compact. By \cite[8.3]{vD}, a metrizable space $X$ is not locally compact if and only if $X$ contains a closed subspace homeomorphic to the space $M$.

\begin{proposition} A topological space $X$ contains a $D_\w$-cofan if and only if $X$ contains a $k$-closed subset $F\subset X$, which is $k$-homeomorphic to the metric cofan $M$.
\end{proposition}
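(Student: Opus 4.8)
The plan is to mirror the proofs of Propositions~\ref{p:FU-fan} and \ref{p:FUAr}, with the \emph{rows} of the metric cofan playing the role of the convergent sequences. Write $M=\{(0,0)\}\cup\bigcup_{n\in\IN}M_n$ where $M_n=\{(\tfrac1n,\tfrac1{nm}):m\in\IN\}$. First I would record the model picture. Each row $M_n$ lies at Euclidean distance $\ge\tfrac1n$ from the origin, so its only planar accumulation point $(\tfrac1n,0)$ lies outside $M$; hence any compact $C\subset M$ meets $M_n$ in a finite set, i.e. each $M_n$ is an infinite compact-finite subset of $M$. Moreover a ball $B((0,0),\e)$ contains $M_n$ entirely once $\tfrac1n\sqrt2<\e$, so the sequence of sets $(M_n)_{n\in\IN}$ converges to $(0,0)$ in $M$. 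Thus $(M_n)_{n\in\IN}$ is itself a $D_\w$-cofan in $M$, and this is exactly the configuration to be transported.

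For the ``if'' part, let $h\colon M\to F$ be a $k$-homeomorphism onto a $k$-closed subset $F\subset X$, and put $D_n=h(M_n)$ and $x=h((0,0))$; each $D_n$ is countably infinite since $h$ is injective. To see that $D_n$ is compact-finite, take a compact $K\subset X$: as $F$ is $k$-closed, $K\cap F$ is closed in $K$, hence compact, and the $k$-continuity of $h^{-1}$ makes $h^{-1}(K\cap F)$ a compact subset of $M$; since $M_n$ is compact-finite in $M$, the set $h^{-1}(K\cap F)\cap M_n$ is finite, whence $K\cap D_n=h\big(h^{-1}(K\cap F)\cap M_n\big)$ is finite. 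For the convergence $(D_n)\to x$, note that $M$ is metrizable, hence a $k$-space, so the $k$-continuous map $h$ is continuous; given a neighborhood $O_x$ of $x$, the set $h^{-1}(O_x)$ is an open neighborhood of $(0,0)$ in $M$, so all but finitely many $M_n$ lie in it and therefore all but finitely many $D_n$ lie in $O_x$. Thus $(D_n)_{n\in\IN}$ is a $D_\w$-cofan.

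For the ``only if'' part, let $(D_n)_{n\in\w}$ be a $D_\w$-cofan converging to $x$. First I would normalize the family by a diagonal choice of points: enumerating the pairs $(n,k)$ and at each step choosing a point $d_{n,k}\in D_n$ distinct from $x$ and from the finitely many points already selected (possible because $D_n$ is infinite), I replace each $D_n$ by the infinite subset $\{d_{n,k}:k\in\IN\}$, so that the $D_n$ become pairwise disjoint and avoid $x$; since passing to subsets never destroys the inclusion of a set into a neighborhood of $x$, the convergence $(D_n)\to x$ is preserved automatically. Using these enumerations I define $h\colon M\to X$ by $h((0,0))=x$ and $h((\tfrac1n,\tfrac1{nm}))=d_{n,m}$, a bijection onto $F:=\{x\}\cup\bigcup_n D_n$. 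Continuity of $h$ is clear at the isolated points of $M$, and at $(0,0)$ it follows from convergence: for a neighborhood $O_x$ of $x$ choose $N$ with $D_n\subset O_x$ for all $n\ge N$; then $M\setminus\bigcup_{n<N}M_n$ is an open neighborhood of $(0,0)$ (the complement of finitely many closed discrete rows) whose image $\{x\}\cup\bigcup_{n\ge N}D_n$ lies in $O_x$.

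The hard part will be showing that $F$ is $k$-closed and that $h^{-1}$ is $k$-continuous, because here the family $(D_n)$ need not be compact-finite (it is not, already in $M$), so the finite-union argument of Proposition~\ref{p:FU-fan} is unavailable and must be replaced by the convergence $(D_n)\to x$. The plan is to fix a compact $K\subset X$ and prove that $Q:=h^{-1}(K\cap F)$ is compact in $M$: it meets each row $M_n$ in the finite set $h^{-1}(K\cap D_n)$ by compact-finiteness of $D_n$, and for every $\e>0$ only finitely many rows meet $M\setminus B((0,0),\e)$, so $Q$ has only finitely many points outside each ball about the origin; this forces an infinite $Q$ to accumulate only at $(0,0)$, and the inclusion $(0,0)\in Q$ is exactly where convergence of $(D_n)$ to $x$ is used, guaranteeing that an infinite $K\cap F$ drags $x$ into $K$. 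Granting this, $K\cap F=h(Q)$ is the image of a compact subset of $M$ meeting finitely many rows, from which I read off both that $K\cap F$ is closed in $K$ and that $h^{-1}|_{K\cap F}$ is continuous, yielding $F$ $k$-closed and $h$ a $k$-homeomorphism onto $F$. The delicate point I expect to fight with is the behaviour of the limit points of $K\cap F$ when $x\notin K$: controlling these again rests on the fact that $(D_n)\to x$ makes $x$ the only possible accumulation point of any selection meeting infinitely many of the $D_n$.
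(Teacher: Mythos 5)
Your proof is correct and follows essentially the same route as the paper's: the ``if'' direction pushes the rows $M_n$ forward through the $k$-homeomorphism exactly as the paper does, and the ``only if'' direction makes the same diagonal selection of pairwise distinct points avoiding $x$, defines the same map $h$, and establishes $k$-closedness and $k$-continuity of $h^{-1}$ by the same analysis of $K\cap F$ (finite trace on each $D_n$ by compact-finiteness, all but finitely many rows absorbed into any ball by convergence, with $x$ forced into $K$ when $K\cap F$ is infinite — a point you in fact treat more explicitly than the paper). One small slip: the phrase ``a compact subset of $M$ meeting finitely many rows'' should read ``meeting each row in a finite set,'' since a compact convergent sequence in $M$ can meet infinitely many rows, but this does not affect the argument.
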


\begin{proof} To prove the ``if'' part, assume that $X$ contains a $k$-closed subset $F\subset X$ $k$-homeomorphic to the space $M$.
Take any $k$-homeomorphism $h:M\to F$ and observe that for every $n\in\IN$ the set $D_n=\{h(\frac1n,\frac1{nm}):m\in\IN\}$ is compact-finite in $X$ and the sequence $(D_n)_{n\in\IN}$ converges to $h(0,0)$ by the continuity of $h$. So, $(D_n)_{n\in\IN}$ is a $D_\w$-cofan in $X$.

Now we prove the ``only if'' part. Assume that the space $X$ contains a $D_\w$-cofan $(D_n)_{n\in\w}$, which converges to a point $x\in X$. Fix a well-order $\preceq$ on the set $\IN\times\IN$ such that for every $(n,m)\in\w\times\w$ the initial segment $\{(p,q)\in\IN\times\IN:(p,q)\preceq (n,m)\}$ is finite. By induction, for every pair $(n,m)\in\IN\times\IN$ choose a point $x_{n,m}\in D_n\setminus(\{x\}\cup \{x_{p,q}:(p,q)\preceq (n,m)\})$. Then $x_{n,m}$, $n,m\in\IN$, are pairwise distinct points of the set $X\setminus\{x\}$ and hence the map $h:M\to X$ defined by $h(0,0)=x$ and $h(\frac1n,\frac1{nm})=x_{n,m}$ is bijective. The convergence of the sequence $(D_n)_{n\in\w}$ to $x$ implies that the map $h$ is continuous.

We claim that the set $F=h(M)$ is $k$-closed in $X$ and the map $h:M\to F$ is a $k$-homeomorphism. Take any compact subset $K\subset X$ and observe that for every $n\in\IN$ the intersection $K\cap D_n$ is finite. Taking into account that  the sequence $(D_n)_{n\in\w}$ converges to $x$, we conclude that the set $K\cap F\subset \{x\}\cup\bigcup_{n\in\w}K\cap D_n$ is either finite or a convergent sequence to $x$. In both cases the intersection $K\cap F$ is compact. Moreover, the definition of the map $h$ ensures that $h^{-1}|K\cap F$ is continuous, which means that $h:M\to F$ is a $k$-homeomorphism of the space $M$ onto the $k$-closed subset $F$ of $X$.
\end{proof}

\begin{lemma}\label{l:D-cof} Let $f:X\to Y$ be a continuous map of a first-countable space $X$ onto a (Tychonoff) topological space $Y$. If the space $X$ contains a point $x\in X$ such that for each neighborhood $O_x\subset X$ of $x$ the set $f(O_x)$ is not bounded in $Y$, then the space $Y$ contains a (strong) $D_\w$-cofan.
\end{lemma}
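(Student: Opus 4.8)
The plan is to take $f(x)$ as the limit point of the cofan and to build the sets $D_n$ inside the images $f(O_n)$ of a shrinking neighborhood base at $x$. First I would use the first-countability of $X$ to fix a decreasing neighborhood base $(O_n)_{n\in\w}$ at the point $x$. The crucial preliminary observation is that \emph{any} choice of sets $D_n\subset f(O_n)$ will automatically converge to $y:=f(x)$: given a neighborhood $V\subset Y$ of $f(x)$, the continuity of $f$ at $x$ yields a neighborhood $O$ of $x$ with $f(O)\subset V$, and since $(O_n)_{n\in\w}$ is a base there is $N$ with $O_n\subset O$, hence $D_n\subset f(O_n)\subset V$ for all $n\ge N$. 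Thus the whole problem reduces to choosing, for each $n$, a compact-finite subset $D_n\subset f(O_n)$ with $|D_n|=\w$ (and strongly compact-finite in the Tychonoff case).

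For the Tychonoff conclusion this choice is immediate. Since $f(O_n)$ is unbounded in the Tychonoff space $Y$, Proposition~\ref{p:unbound-str-c-f} supplies a strictly compact-finite (in particular, strongly compact-finite) closed discrete countably infinite subset $D_n\subset f(O_n)$. The resulting sequence $(D_n)_{n\in\w}$ converges to $f(x)$ by the observation above, so it is a strong $D_\w$-cofan with limit point $f(x)$.

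For the general (not necessarily Tychonoff) conclusion I would argue directly from the definition of unboundedness. The negation of boundedness of $f(O_n)$ gives an infinite locally finite family of open subsets of $Y$ meeting $f(O_n)$; passing to a countable subfamily $(W_k)_{k\in\w}$ and choosing a point $d_{n,k}\in f(O_n)\cap W_k$ for each $k$, I set $D_n=\{d_{n,k}:k\in\w\}$. Here is where the verification lives, and the main point to check is compact-finiteness: for a compact $K\subset Y$, local finiteness of $(W_k)_{k\in\w}$ together with compactness of $K$ forces $K$ to meet only finitely many $W_k$, and since $d_{n,k}\in W_k$ this confines $K\cap D_n$ to a finite set. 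The same local finiteness shows that no point of $Y$ lies in infinitely many $W_k$, so the assignment $k\mapsto d_{n,k}$ has finite fibres and $D_n$ is genuinely countably infinite. Convergence of $(D_n)_{n\in\w}$ to $f(x)$ again follows from the preliminary observation, which completes the construction of a $D_\w$-cofan. (Note that surjectivity of $f$ is not actually needed in this argument, only continuity and the unboundedness hypothesis.)
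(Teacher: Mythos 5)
Your proof is correct and follows essentially the same route as the paper's: fix a decreasing neighborhood base at $x$, observe that continuity forces any choice of sets $D_n\subset f(O_n)$ to converge to $f(x)$, and use Proposition~\ref{p:unbound-str-c-f} to extract strongly compact-finite infinite subsets of the unbounded sets $f(O_n)$. In fact you are slightly more careful than the paper in the non-Tychonoff case: the paper cites Proposition~\ref{p:unbound-str-c-f} (which is stated only for Tychonoff spaces) for both readings of the lemma, whereas your direct argument with a countable locally finite family $(W_k)_{k\in\w}$ --- including the check that compactness confines $K$ to finitely many $W_k$ and that the assignment $k\mapsto d_{n,k}$ has finite fibres --- is exactly what is needed to produce a plain compact-finite countably infinite subset in an arbitrary topological space.
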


\begin{proof} Fix a decreasing neighborhood base $(U_n)_{n\in\w}$ of the point $x$ in the first-countable space $X$. The continuity of the map $f$ ensures that the sequence $(f(U_n))_{n\in\w}$ converges to the point $f(x)$. By our assumption, for every $n\in\w$ the set $f(U_n)$ is not bounded in $X$ and hence contains a (strongly) compact-finite subset $D_n$ of cardinality $|D_n|=\w$ (see Proposition~\ref{p:unbound-str-c-f}). Then $(D_n)_{n\in\w}$ is a (strong) $D_\w$-cofan in $X$.
\end{proof}

\begin{proposition}\label{p:XD->s+h+kw} Assume that a Tychonoff space $X$ contains no strong $D_\w$-cofan.
\begin{enumerate}
\item[\textup{1)}] If $X$ is cosmic, then $X$ is $\sigma$-compact;
\item[\textup{2)}] If $X$ is an $\aleph_0$-space, then $X$ is hemicompact;
\item[\textup{3)}] If $X$ is a sequential $\aleph_0$-space, then $X$ is a $k_\w$-space.
\end{enumerate}
\end{proposition}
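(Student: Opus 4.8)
The plan is to pull the problem back along a continuous surjection from a metric space and to read the hypothesis ``no strong $D_\w$-cofan'' as a local boundedness condition via Lemma~\ref{l:D-cof}. Note that $X$ itself need not be first-countable, so Lemma~\ref{l:D-cof} cannot be applied to $X$ directly; this is exactly why we route through a metric preimage. For part (1) I represent the cosmic space $X$ as a continuous surjective image $f:M\to X$ of a separable metrizable space $M$ (by \cite[4.9]{Grue}); for parts (2) and (3), since $X$ is an $\aleph_0$-space, I choose $f$ to be in addition \emph{compact-covering} (by \cite[\S11]{Grue}). In every case $M$ is separable metrizable, hence first-countable and Lindel\"of, and $X$, being cosmic and therefore Lindel\"of, is $\mu$-complete. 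The key reduction is the contrapositive of Lemma~\ref{l:D-cof}: since $M$ is first-countable and $X$ contains no strong $D_\w$-cofan, every point $m\in M$ has an open neighborhood $O_m$ with $f(O_m)$ bounded in $X$. As the closure of a bounded set is bounded and $X$ is $\mu$-complete, the set $L_m:=\overline{f(O_m)}$ is compact.

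\textbf{Part (1).} The family $\{O_m\}_{m\in M}$ is an open cover of the Lindel\"of space $M$, so it has a countable subcover $\{O_{m_i}\}_{i\in\w}$. Then $X=f(M)=\bigcup_{i\in\w}f(O_{m_i})\subseteq\bigcup_{i\in\w}L_{m_i}\subseteq X$, exhibiting $X$ as a countable union of the compacta $L_{m_i}$; hence $X$ is $\sigma$-compact.

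\textbf{Part (2).} I keep the countable subcover $\{O_{m_i}\}_{i\in\w}$ and the compacta $L_i:=L_{m_i}$, and claim that the countable family $\mathcal K=\{L_{i_1}\cup\dots\cup L_{i_k}:\{i_1,\dots,i_k\}\in[\w]^{<\w}\}$ of compact sets witnesses hemicompactness. Indeed, let $C\subseteq X$ be compact. Because $f$ is compact-covering, $C=f(C')$ for some compact $C'\subseteq M$; the finitely many neighborhoods $O_{m_{i_1}},\dots,O_{m_{i_k}}$ needed to cover the compact set $C'$ then give $C=f(C')\subseteq\bigcup_{j\le k}f(O_{m_{i_j}})\subseteq\bigcup_{j\le k}L_{i_j}\in\mathcal K$, so every compact subset of $X$ lies in a member of $\mathcal K$ and $X$ is hemicompact. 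I expect this to be the main obstacle: cosmicity alone does not suffice (the space $\mathbb Q$ is a $\sigma$-compact cosmic space that is not hemicompact, but it is first-countable and does carry a $D_\w$-cofan, so it is excluded here). What makes the jump from $\sigma$-compactness to hemicompactness work is precisely the compact-covering property, which is what allows an \emph{arbitrary} compact subset of $X$ to be pulled back to a compact subset of $M$ and thus covered by finitely many of the good neighborhoods.

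\textbf{Part (3).} By part (2) the space $X$ is hemicompact, so there is an increasing sequence $(K_n)_{n\in\w}$ of compacta with $X=\bigcup_{n\in\w}K_n$ that swallows compacta, i.e.\ every compact subset of $X$ lies in some $K_n$. Consequently a subset $A\subseteq X$ is $k$-closed if and only if $A\cap K_n$ is closed in $K_n$ for every $n$: the forward direction is immediate, and conversely any compact $C\subseteq X$ satisfies $C\subseteq K_m$ for some $m$, whence $A\cap C=(A\cap K_m)\cap C$ is closed in $C$. Since $X$ is sequential it is a $k$-space, so its $k$-closed subsets are exactly its closed subsets. Combining the two facts, $A$ is closed in $X$ if and only if $A\cap K_n$ is closed in $K_n$ for all $n$; that is, $(K_n)_{n\in\w}$ is a $k_\w$-sequence and $X$ is a $k_\w$-space.
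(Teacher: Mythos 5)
Your proof is correct and follows essentially the same route as the paper's: the same reduction via the contrapositive of Lemma~\ref{l:D-cof} applied to a continuous (and, for parts (2) and (3), compact-covering) surjection from a separable metrizable space, with the Lindel\"of property and $\mu$-completeness of $X$ producing the countable family of compacta, and sequentiality upgrading hemicompactness to the $k_\w$-property. The only differences are presentational: you close the family of compacta under finite unions in part (2) (a detail the paper's proof leaves implicit) and you spell out the $k$-closed versus closed argument in part (3) more explicitly.
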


\begin{proof} If $X$ is cosmic, then $X$ is the image of a separable metrizable space $Z$ under a continuous map $f:Z\to X$. By Lemma~\ref{l:D-cof}, each point $z\in Z$ has a neighborhood $O_z\subset Z$ whose image $f(O_z)$ is bounded in $X$. The space $X$, being Lindel\"of, is $\mu$-complete. So, the set $f(O_z)$ has compact closure. By the Linde\"of property of the metrizable separable space $Z$, the open cover $\{O_z:z\in Z\}$ has a countable subcover $\{O_z:z\in C\}$. Then $\K=\big\{\cl\big(f(O_z)\big):z\in C\big\}$ is a countable cover of $X$ by compact subsets, and hence $X$ is $\sigma$-compact.

If $X$ is an $\aleph_0$-space, then we can additionally assume that the map $f$ is compact-covering (see \cite[\S11]{Grue}). In this case the countable cover $\mathcal K$ witnesses that the space $X$ is hemicompact.

If $X$ is a sequential $\aleph_0$-space, then the family $\mathcal K$ generates the topology of $X$ in the sense that a subset $A\subset X$ is closed in $X$ if and only if for every compact set $K\in\K$ the intersection $K\cap A$ is closed in $K$. This means that $X$ is a $k_\w$-space.
\end{proof}

\begin{lemma}\label{l:b-nw} If a Hausdorff (Tychonoff) space $X$ contains no (strong) $D_\w$-cofan, then for every compact-countable $k$-network
$\mathcal N$ for $X$ the subfamily $$\mathcal B=\big\{\textstyle{\bigcap\F}:\mbox{$\F\in[\mathcal N]^{<\w}$ and $\bigcap\F$ is bounded in $X$}\big\}$$ is a $k$-network for $X$.
\end{lemma}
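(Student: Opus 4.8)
The plan is to verify the defining property of a $k$-network for $\mathcal B$ directly: given a compact set $K$ and an open set $U\supseteq K$, I must produce a finite subfamily $\mathcal G\subseteq\mathcal B$ with $K\subseteq\bigcup\mathcal G\subseteq U$. First I would observe that only countably many members of $\mathcal B$ are relevant. Since $K$ is compact and $\mathcal N$ is compact-countable, the family $\mathcal N_K=\{N\in\mathcal N:N\cap K\ne\emptyset\}$ is countable, and every $P=\bigcap\mathcal F\in\mathcal B$ that meets $K$ and lies in $U$ has each of its factors in $\mathcal N_K$ (as $\bigcap\mathcal F$ meets $K$). Hence the relevant pieces $\{P_i\}_{i\in\w}=\{P\in\mathcal B:P\subseteq U,\ P\cap K\ne\emptyset\}$ form a countable family, and the proof splits according to whether these pieces suffice to cover $K$.

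The main step is to rule out, using the hypothesis, that some point of $K$ escapes all bounded pieces. Suppose $x\in K$ lies in no member of $\mathcal B$ that is contained in $U$. Applying compact-countability to the singleton $\{x\}$, the trace $\mathcal N_x=\{N\in\mathcal N:x\in N\}$ is a countable network at $x$; fixing $N_0\in\mathcal N_x$ with $N_0\subseteq U$ and enumerating $\{N\in\mathcal N_x:N\subseteq U\}=\{N_j\}_{j\in\w}$, the finite intersections $Q_n=\bigcap_{j\le n}N_j$ form a decreasing network at $x$ with $x\in Q_n\subseteq U$. By the choice of $x$, each $Q_n$, being a finite intersection of members of $\mathcal N$ that contains $x$ and lies in $U$, must be unbounded (otherwise it would be a bounded piece through $x$). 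Now Proposition~\ref{p:unbound-str-c-f} supplies in each unbounded $Q_n$ an infinite, strictly (hence strongly) compact-finite set $D_n\subseteq Q_n$; in the merely Hausdorff case one instead extracts an infinite compact-finite $D_n$ by choosing one point in each member of an infinite locally finite family of open sets meeting $Q_n$. Since $D_n\subseteq Q_n$ and $(Q_n)$ is a network at $x$, the sequence $(D_n)_{n\in\w}$ converges to $x$, so it is a (strong) $D_\w$-cofan, contradicting the hypothesis. Consequently every point of $K$ lies in some member of $\mathcal B$ contained in $U$.

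It remains to extract a finite subcover, and here the subtlety is that members of $\mathcal N$ (hence of $\mathcal B$) need not be open, so compactness of $K$ cannot be applied to the $P_i$ directly. My plan is to replace each piece by an honest open set near the relevant points. If a point $x\in K$ lies in the interior (relative to $K$) of some bounded piece $P\in\mathcal B$ with $P\subseteq U$ — for instance when $x$ has a bounded open neighbourhood, in which case regularity lets one pull a member of $\mathcal N$ inside a bounded open set, whereupon it is automatically bounded — then $x$ has an open neighbourhood $W_x$ with $W_x\cap K\subseteq\bigcup\mathcal G_x$ for a finite $\mathcal G_x\subseteq\mathcal B$ with $\bigcup\mathcal G_x\subseteq U$. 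The open sets $W_x$ then cover $K$, and a finite subcover yields the desired finite $\mathcal G=\bigcup_i\mathcal G_{x_i}\subseteq\mathcal B$. The only points that resist this are those lying on the relative boundary of every bounded piece containing them, and at such a point one runs the same cofan dichotomy on traces in $K$: either a bounded relative neighbourhood is found, or an infinite compact-finite family escaping the bounded pieces is produced, giving again a (strong) $D_\w$-cofan.

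The heart of the argument, and the place where the no-cofan hypothesis is genuinely used, is the construction in the second paragraph, which I expect to be routine once the countable network $\mathcal N_x$ and the decreasing pieces $Q_n$ are in place. The main obstacle will be the passage from pointwise coverage by bounded pieces to a finite subcover, precisely because the members of $\mathcal N$ are not open; this is what forces the relative-interior analysis and a second appeal to Proposition~\ref{p:unbound-str-c-f}. Throughout, the Tychonoff hypothesis together with the absence of a strong $D_\w$-cofan yields the strong conclusion via strictly compact-finite extraction, while the merely Hausdorff hypothesis with the absence of a plain $D_\w$-cofan yields the statement via the locally finite extraction of infinite compact-finite sets.
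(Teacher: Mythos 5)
Your second step is sound and is, in substance, the same local use of the hypothesis that the paper makes: the trace $\mathcal N_x=\{N\in\mathcal N:x\in N\}$ is a countable network at $x$, the decreasing finite intersections $Q_n$ inside $U$ would all be unbounded if no bounded piece through $x$ lay in $U$, and Proposition~\ref{p:unbound-str-c-f} (or, in the Hausdorff case, choosing one point from each member of an infinite locally finite family of open sets) then produces infinite (strongly) compact-finite sets $D_n\subset Q_n$ converging to $x$, i.e.\ a (strong) $D_\w$-cofan. But the third step is a genuine gap, and you have correctly located it yourself: pointwise coverage of $K$ by countably many \emph{non-open} bounded pieces does not yield a finite subcover, and nothing you write closes this. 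In the abstract the implication simply fails (cover $[0,1]$ by the singletons $\{1/n\}$, $n\in\IN$, together with $[0,1]\setminus\{1/n:n\in\IN\}$: every point is covered, no finite subfamily suffices), so some additional structural argument is mandatory. Your proposed fix is not one: the phrase ``one runs the same cofan dichotomy on traces in $K$'' does not specify what unbounded sets converge to what point, a ``bounded relative neighbourhood'' need not be a finite intersection of members of $\mathcal N$ and so need not belong to $\mathcal B$ at all, and the parenthetical appeal to regularity is unavailable in the merely Hausdorff case.

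The paper avoids the difficulty by never covering $K$ point by point. It enumerates \emph{all} finite subfamilies $\F_n\subset\mathcal N_K$ with $K\subset\bigcup\F_n\subset U$, forms for each $n$ the finite family $\mathcal E_n$ of those intersections $F_1\cap\dots\cap F_n$ (with $F_i\in\F_i$) that meet $K$ --- so each $\mathcal E_n$ is already a finite cover of $K$ inside $U$ --- and organizes the pairs $(n,E)$, $E\in\mathcal E_n$, into a finitely branching tree ordered by reverse inclusion. Two things are then proved: first, using the Hausdorff property and the fact that the $\F_n$ exhaust all finite $\mathcal N_K$-covers of $K$ in $U$, every infinite branch $(E_n)_{n\in\w}$ shrinks to a single point, in the sense that each neighborhood of that point contains some $E_n$; second, by exactly your unbounded-set extraction, a branch consisting entirely of unbounded sets would give a (strong) $D_\w$-cofan. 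Hence the subtree of unbounded vertices has no infinite branch, and K\"onig's Lemma forces it to be finite; so some whole level $\mathcal E_n$ consists of bounded sets, and that level is the required finite subfamily of $\mathcal B$. In short, the no-cofan hypothesis must be applied along branches of a tree of refining finite covers, with K\"onig's Lemma as the combinatorial step converting ``no bad branch'' into ``some entirely good level''; this is the ingredient your proposal is missing, and without it the passage from pointwise coverage to a finite cover does not go through.
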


\begin{proof} To prove that $\mathcal B$ is a $k$-network for $X$, fix an open set $U\subset X$ and a compact subset $K\subset U$. Since the family $\mathcal N$ is compact-countable, the subfamily $\mathcal N_K=\{N\in\mathcal N:N\cap K\ne\emptyset\}$ is countable.

Let $\mathfrak F$ be the family of finite subfamilies $\F\subset\mathcal N_K$ such that $K\subset\bigcup\F\subset U$. It is clear that the family $\mathfrak F$ is countable and hence it can be enumerated as $\mathfrak F=\{\F_n\}_{n\in\IN}$. For every $n\in\IN$ consider the finite family $\F_1\wedge\dots\wedge\F_n=\{\bigcap_{i=1}^n F_i:(F_i)_{i=1}^n\in \prod_{i=1}^n\F_i\}$ and its subfamily $\mathcal E_n=\{A\in\F_1\wedge\dots\wedge \F_n:A\cap K\ne\emptyset\}$.
Let also $\mathcal E_0=\{X\}$.

Consider the countable set $$T=\bigcup_{n\in\w}\{n\}\times\mathcal E_n$$partially ordered by the order $(n,A)\le (m,B)$ if $n\le m$ and $B\subset A$. It follows that $T$ is a tree and each vertex $(n,A)$ of $T$ has finite degree. Each branch $t\in T$ (i.e., a maximal linearly ordered subset) of the tree $T$ can be identified with a decreasing sequence $(E_n)_{n\in\w}\in\prod_{n\in\w}\mathcal E_n$. By the compactness of $K$, the intersection $K_t=\bigcap_{n\in\w}\cl(K\cap E_n)$ is not empty.

\begin{claim}\label{cl:branch-open} For every point $x\in K_t$ and open neighborhood $O_x\subset X$ there is $n\in\w$ such that $E_n\subset O_x$.
\end{claim}

\begin{proof} Using the Hausdorff property of $X$, we can find an open neighborhood $U_x\subset O_x\cap U$ of $x$ and an open neighborhood $V\subset U$ of the compact set $K\setminus O_x$ such that $U_x\cap V=\emptyset$. It follows that $K=(K\setminus V)\cup\overline{K\cap V}$ and $K\setminus V\subset O_x$. Since $x$ does not belong to the compact set $\overline{K\cap V}$, there are disjoint open sets $W_x\subset U_x$ and $W\subset U$ such that $x\in W_x$ and $\overline{K\cap V}\subset W$.

Since $\mathcal N$ is a $k$-network, there are finite subfamilies $\F_x,\F'\subset\mathcal N$ such that $K\setminus V\subset \bigcup\F_x\subset O_x\subset U$ and $\overline{K\cap V}\subset\bigcup\F'\subset W\subset U$. We lose no generality assuming that each set $F\in \F_x\cup\F'$ meets the set $K$. Consequently, $\F_x\cup\F'\in \mathcal N_K$ and hence $\F_x\cup\F'=\F_n$ for some $n\in\IN$. The definition of the family $\mathcal E_n$ implies that $E_n\subset F$ for some  $F\in\F_n=\F_x\cup\F'$. Taking into account that $x\in\overline{E_n\cap K}\subset \overline{F}_n$ and $\bigcup\F'\subset W\subset\overline{W}\subset X\setminus W_x$, we conclude that $F\in\F_x$ and hence $E_n\subset F\subset\bigcup\F_x\subset O_x$.
\end{proof}

Claim~\ref{cl:branch-open} and the Hausdorff property of $X$ imply that the compact set $K_t$ is a singleton and the sequence $(E_n)_{n\in\w}$ converges to $K_t$.

\begin{claim}\label{cl:tree-bound} For any branch $(E_n)_{n\in\w}\in\prod_{n\in\w}\mathcal E_n$ of the tree $T$ there is a number $n\in\w$ such that the set $E_n$ is bounded.
\end{claim}

\begin{proof} To derive a contradiction, assume that each set $E_n$, $n\in\w$ is not bounded in $X$. Then $E_n$ contains an infinite compact-finite closed discrete subset $D_n\subset E_n$, which is strongly compact-finite if the space $X$ is Tychonoff (see Proposition~\ref{p:unbound-str-c-f}). By Claim~\ref{cl:branch-open}, the sequence $(D_n)_{n\in\w}$ converges to the unique point $x\in\bigcap_{k\in\w}\overline{K\cap E_k}$. This means that  the family $(D_n)_{n\in\w}$ is a (strong) $D_\w$-fan in $X$, which contradicts our assumption.
\end{proof}

In the tree $T$ consider the subtree $T_u$ consisting of pairs $(n,A)$ such that the set $A$ is unbounded. Claim~\ref{cl:tree-bound} implies that the subtree $T_u$ has no infinite branch. Since each point of $t$ has finite degree, by the K\"onig's Lemma, the subtree $T_u$ is finite. Consequently for some $n\in\w$ the $n$th level $\{n\}\times\mathcal E_n$ of the tree $T$ does not intersect the subtree $T_u$, which means that each set $E\in\mathcal E_n$ is bounded. This implies that $\mathcal E_n\subset\mathcal B$. Since $K\subset\bigcup\mathcal E_n\subset U$, the family $\mathcal B$ is a $k$-network for $X$.
\end{proof}

A topological space $X$ is called a \index{topological space!$k$-sum of hemicompact spaces}{\em $k$-sum of hemicompact spaces} if $X$ is the union of a compact-clopen disjoint family $\F$ of hemicompact subspaces of $X$.
A family $\F$ of subsets of $X$ is \index{family of sets!compact-clopen}{\em compact-clopen} if for any compact set $K\subset X$ and any set $F\in\F$ the intersection $F\cap K$ is clopen in $K$.
If a topological space $X$ is a $k$-sum of hemicompact spaces, then its $k$-modification $kX$ is a topological sum of $k_\w$-spaces.

\begin{proposition}\label{p:decomp1} Each $\mu$-complete (Tychonoff) $\bar\aleph_k$-space $X$ containing no (strong) $D_\w$-cofan is a $k$-sum of cosmic hemicompact spaces and the $k$-modification of $X$ is a topological sum of cosmic $k_\w$-spaces.
\end{proposition}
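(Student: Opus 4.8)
The plan is to manufacture from the given network a \emph{compact} $k$-network with a locally countable intersection pattern, and then to split $X$ into the connected pieces of the ``overlap graph'' of this network. First I would fix a compact-countable closed $k$-network $\mathcal N$ for the $\bar\aleph_k$-space $X$ and apply Lemma~\ref{l:b-nw}, whose hypotheses hold because $X$ is Hausdorff (resp. Tychonoff in the strong case) and contains no (strong) $D_\w$-cofan, to obtain the $k$-network $\mathcal B=\{\bigcap\F:\F\in[\mathcal N]^{<\w}\text{ and }\bigcap\F\text{ is bounded in }X\}$. Every member $\bigcap\F$ is closed (an intersection of members of the closed network $\mathcal N$) and bounded, hence compact by the $\mu$-completeness of $X$. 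Moreover $\mathcal B$ is compact-countable: a set $\bigcap\F$ can meet a compact $K$ only if each $N\in\F$ meets $K$, and only countably many $N\in\mathcal N$ meet $K$, so only countably many members of $\mathcal B$ meet $K$; in particular, as each $B\in\mathcal B$ is itself compact, each $B$ meets only countably many other members of $\mathcal B$.

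Next I would consider the graph on the vertex set $\mathcal B$ in which two members are adjacent when they intersect. By the previous paragraph every vertex has countable degree, so every connected component is countable (the sets of vertices reachable in at most $n$ steps grow through countable unions of countable sets). Letting $(\mathcal B_\alpha)_\alpha$ be the connected components and putting $X_\alpha=\bigcup\mathcal B_\alpha$, the family $(X_\alpha)_\alpha$ covers $X$ since the $k$-network $\mathcal B$ covers $X$, and the $X_\alpha$ are pairwise disjoint, because a common point of $X_\alpha$ and $X_\beta$ would lie in intersecting members of $\mathcal B_\alpha$ and $\mathcal B_\beta$, forcing $\alpha=\beta$.

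The core of the argument is to show that $(X_\alpha)_\alpha$ is compact-clopen and that each $X_\alpha$ is cosmic and hemicompact. Given a compact $K\subset X$, choose (using that $\mathcal B$ is a $k$-network, with the open set $U=X$) a finite $\F\subset\mathcal B$ with $K\subset\bigcup\F$; discarding members missing $K$ and grouping the rest by component shows that $K$ meets only finitely many $X_\alpha$ and that $K\cap X_\alpha=\bigcup_{B\in\F\cap\mathcal B_\alpha}(K\cap B)$ is a finite union of compacta, hence closed in $K$. A finite partition of $K$ into closed sets is automatically a partition into clopen sets, so $(X_\alpha)_\alpha$ is compact-clopen. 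Fixing $\alpha$ and enumerating the countable family $\mathcal B_\alpha=\{B_n\}_{n\in\w}$, set $K_n=\bigcup_{i\le n}B_i$; the same finite-cover argument applied to compacta $K\subset X_\alpha$ (all of whose covering members necessarily lie in $\mathcal B_\alpha$) shows that every compact subset of $X_\alpha$ lies inside some $K_n$, so $X_\alpha$ is hemicompact. Finally each $B_n$, being a compact subspace of a space with a compact-countable closed $k$-network (restrict $\mathcal N$ to $B_n$ to get a countable network for $B_n$), is cosmic, whence the countable union $X_\alpha$ is cosmic. Thus $X$ is a $k$-sum of cosmic hemicompact spaces, and Lemma~\ref{l:ts-kw} upgrades this to the conclusion that the $k$-modification $kX$ is a topological sum of cosmic $k_\w$-spaces.

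I expect the main obstacle to be the compact-clopen and hemicompactness verifications, both of which hinge on the finite-cover property that each compact $K$ lies in a union of finitely many members of $\mathcal B$ which, once $K$ is contained in a single $X_\alpha$, are forced to belong to $\mathcal B_\alpha$. The delicate point is checking that $K\cap X_\alpha$ collapses exactly to the finite union $\bigcup_{B\in\F\cap\mathcal B_\alpha}(K\cap B)$ — that is, that no stray point of $K\cap X_\alpha$ escapes the chosen finite cover — which is precisely where the disjointness of the components and the fact that every point of $K$ is already covered by $\bigcup\F$ must be combined with care.
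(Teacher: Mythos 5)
Your proof is correct and follows essentially the same route as the paper: the connected components of your intersection graph on $\mathcal B$ are exactly the equivalence classes of the paper's chain relation $\sim$ on the compact $k$-network produced by Lemma~\ref{l:b-nw}, and your verifications (compact-countability of $\mathcal B$, countability of components, hemicompactness and cosmicity of each piece, compact-clopenness of the decomposition) match the paper's argument step for step. The only cosmetic differences are that you verify explicitly what the paper leaves implicit (that $\mathcal B$ is compact-countable, and that the pieces are cosmic via traces of $\mathcal N$ rather than via metrizability of compacta in an $\bar\aleph_k$-space), which is harmless.
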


\begin{proof} By Lemma~\ref{l:b-nw}, the $\bar\aleph_k$-space $X$ has a compact-countable $k$-network $\mathcal K$ consisting of closed bounded subsets of $X$. Since $X$ is a $\mu$-complete $\bar\aleph_k$-space, all elements of $\mathcal K$ are metrizable compact subsets of $X$. For two sets $A,B\in\K$ we write that $A\sim B$ if there are sets $A_1,\dots,A_n\in\K$ such that $A=A_1$, $B=A_n$ and $A_i\cap A_{i+1}\ne\emptyset$ for every $i<n$. It is clear that $\sim$ is an equivalence relation. For every set $A\in\K$ by $[A]_{\sim}$ we denote the equivalence class of $A$. The compact-countability of the family $\K$ implies that each equivalence class $[A]_\sim$ is countable.

We claim that the union $\bigcup[A]_\sim$ is a hemicompact subspace of $X$. It suffices to prove that each compact subset $K\subset \bigcup[A]_\sim$ is contained in the union $\bigcup\F$ of some finite subfamily $\F\subset[A]_\sim$. Since $\K$ is a $k$-network, there is a finite subfamily $\F\subset\K$ such that $K\subset\bigcup\F$ and each $F\in\F$ meets $K$. Taking into account that $K\subset\bigcup[A]_\sim$, we conclude that $\F\subset[A]_\sim$.

So, $X$ decomposes into the union of the disjoint family $\U=\{\bigcup[A]_\sim:A\in\K\}$ of hemicompact subspaces. It remains to prove that this family is compact-clopen. Given a compact subset $K\subset X$, find a finite subfamily $\F\subset\K$ such that $K\subset\bigcup\F$ and observe that for any $A\in\K$ the intersection $K\cap (\bigcup[A]_\sim)$ is clopen in $K$ because both sets
$K\cap (\bigcup[A]_\sim)=\bigcup\{K\cap F:F\in\F\cap [A]_\sim\}$ and
$K\setminus (\bigcup[A]_\sim)=\bigcup\{K\cap F\in\F:F\in\F\setminus [A]_\sim\}$ are compact.
\end{proof}

\begin{proposition}\label{p:a+s->ba} If a (Tychonoff) $\mu$-complete $\aleph_k$-space $X$ contains  no (strong) $D_\w$-cofan and no $\ddot S^{\w_1}$-fan, then $X$ is an $\bar\aleph_k$-space and hence $X$ is a $k$-sum of hemicompact spaces and its $k$-modification $kX$ is a topological sum of cosmic $k_\w$-spaces.
\end{proposition}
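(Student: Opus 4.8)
The plan is to reduce everything to the single claim that $X$ is an $\bar\aleph_k$-space; granting this, the two remaining assertions follow at once from Proposition~\ref{p:decomp1} applied to the $\mu$-complete (Tychonoff) $\bar\aleph_k$-space $X$, which by hypothesis has no (strong) $D_\w$-cofan. So the whole work goes into upgrading the compact-countable $k$-network witnessing the $\aleph_k$-property to a compact-countable \emph{closed} $k$-network. (The argument is written for the plain reading; the parenthetical "Tychonoff/strong" reading is identical, invoking the matching versions of Lemma~\ref{l:b-nw} and Proposition~\ref{p:decomp1}. Recall also that $\aleph_k$-spaces are regular by definition.)

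First I would fix a compact-countable $k$-network $\mathcal N$ for $X$ and pass to the family $\mathcal B=\{\bigcap\mathcal F:\mathcal F\in[\mathcal N]^{<\w}\text{ and }\bigcap\mathcal F\text{ is bounded in }X\}$, which by Lemma~\ref{l:b-nw} (this is where the absence of a (strong) $D_\w$-cofan is used) is again a $k$-network. This $\mathcal B$ stays compact-countable: if a finite intersection $\bigcap\mathcal F$ meets a compact $K$, then every member of $\mathcal F$ meets $K$, so $\mathcal F\subset\mathcal N_K:=\{N\in\mathcal N:N\cap K\ne\emptyset\}$, a countable family with only countably many finite subfamilies. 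Since $X$ is $\mu$-complete, the closure $\bar B$ of each bounded $B\in\mathcal B$ is compact, and since $X$ is an $\aleph_k$-space each such compactum has a countable $k$-network and is therefore metrizable. I would then check that $\bar{\mathcal B}=\{\bar B:B\in\mathcal B\}$ is a \emph{closed} $k$-network, which uses only regularity: given a compact $K$ inside an open $U$, pick an open $V$ with $K\subset V\subset\bar V\subset U$, choose a finite $\mathcal F\subset\mathcal B$ with $K\subset\bigcup\mathcal F\subset V$, and observe that $\bigcup\{\bar B:B\in\mathcal F\}=\overline{\bigcup\mathcal F}\subset\bar V\subset U$, the equality holding because $\mathcal F$ is finite.

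It remains to prove that $\bar{\mathcal B}$ is compact-countable, and this is the only place where the absence of $\ddot S^{\w_1}$-fans enters. Arguing by contradiction, suppose some compact $K$ meets uncountably many distinct members $\bar B_\alpha$, $\alpha\in\w_1$. As $\mathcal B$ is compact-countable, only countably many $B_\alpha$ themselves meet $K$, so after discarding these I may assume $B_\alpha\cap K=\emptyset$ while $\bar B_\alpha\cap K\ne\emptyset$. Fixing $x_\alpha\in\bar B_\alpha\cap K$, the point $x_\alpha$ is a non-isolated point of the compact metrizable set $\bar B_\alpha$, hence the limit of a convergent sequence $S_\alpha\subset B_\alpha$. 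All the limits $x_\alpha$ lie in the compact metrizable set $K$, so every subfamily of $(S_\alpha)_{\alpha\in\w_1}$ has compact metrizable set of limits; moreover a condensation point $x\in K$ of the uncountable set $\{x_\alpha:\alpha\in\w_1\}$ (which exists since $K$ is second countable) makes the family fail to be locally finite at $x$, because any neighborhood $O$ of $x$ contains uncountably many $x_\alpha\in\bar S_\alpha$ and hence meets uncountably many $S_\alpha$.

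The main obstacle is that $(S_\alpha)$ is a priori only compact-countable (it is inherited from $\mathcal B$), whereas a $\ddot S^{\w_1}$-fan must be compact-finite. To overcome this I would thin $(S_\alpha)$ by a transfinite recursion of length $\w_1$: since each $\xi<\w_1$ has only countably many predecessors and, by compact-countability, each compact $\bar S_{\alpha_\eta}$ meets only countably many of the $S_\alpha$, at stage $\xi$ only countably many indices are forbidden, so one can choose $\alpha_\xi$ with $S_{\alpha_\xi}$ disjoint from $\bigcup_{\eta<\xi}\bar S_{\alpha_\eta}$; arranging the limits to be distinct keeps the selected sequences and their compact metrizable closures pairwise disjoint. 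One must then verify, by a König-type argument against the compact-countable $k$-network $\mathcal N$ exactly in the spirit of the proof of Lemma~\ref{l:b-nw}, that no compact subset of $X$ can meet infinitely many of the selected pairwise-disjoint sequences; any such compactum would carry a transversal convergent sequence whose existence is incompatible with the compact-countability of $\mathcal N$. The thinned family is then a compact-finite, not locally finite family of convergent sequences with compact metrizable set of limits, i.e.\ a $\ddot S^{\w_1}$-fan, contradicting the hypothesis. This contradiction shows $\bar{\mathcal B}$ is a compact-countable closed $k$-network, so $X$ is an $\bar\aleph_k$-space, and Proposition~\ref{p:decomp1} completes the proof. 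I expect the compact-finiteness extraction in this last paragraph to be the genuinely delicate step, the rest being routine.
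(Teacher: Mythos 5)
Your proposal follows the paper's own route almost step for step --- reduce to showing that $X$ is an $\bar\aleph_k$-space and finish with Proposition~\ref{p:decomp1}; get a compact-countable $k$-network of bounded sets from Lemma~\ref{l:b-nw}; take closures (compact by $\mu$-completeness, metrizable by the $\aleph_k$-property); and argue by contradiction that the family of closures is compact-countable, extracting convergent sequences $S_\alpha\subset B_\alpha$ with limits in the offending compact set $K$ --- but it has a genuine gap at exactly the step you yourself flag as delicate. Thinning $(S_\alpha)_{\alpha\in\w_1}$ to a pairwise disjoint family (even with pairwise disjoint compact closures) does \emph{not} make it compact-finite, and the justification you sketch --- that a compact set meeting infinitely many of the disjoint sequences would carry a ``transversal convergent sequence whose existence is incompatible with the compact-countability of $\mathcal N$'' --- is not a valid argument: compact-countability permits a compact set to meet countably infinitely many members of a family, so such a transversal sequence contradicts nothing. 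Concretely, in $[0,1]^2$ the sequences $S_t=\{(t,1/n):n\in\IN\}$, for $t$ ranging over an uncountable subset of $[0,1]$, are pairwise disjoint, have pairwise disjoint compact closures, and live in a space with a countable (hence compact-countable) $k$-network, yet the single compact set $[0,1]^2$ meets all of them. So no argument using only disjointness and compact-countability can deliver the compact-finiteness you need; an additional idea is required.

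The paper's device for precisely this point is a chain-connectedness equivalence relation on the network: declare $A\sim B$ for $A,B\in\mathcal N$ if there is a finite chain $A=N_0,N_1,\dots,N_k=B$ in $\mathcal N$ with $N_i\cap N_{i+1}\ne\emptyset$ for all $i<k$. Compact-countability makes every class $[A]_\sim$ countable, while the family of unions $\bigcup[A]_\sim$ is compact-\emph{finite}: any compact $C$ is contained in the union of a finite subfamily $\mathcal N_C\subset\mathcal N$, and any member of $\mathcal N$ meeting $C$ must meet some $M\in\mathcal N_C$ and hence lies in one of the finitely many classes $[M]_\sim$, $M\in\mathcal N_C$. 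Choosing the sets $N_\alpha$, $\alpha\in\w_1$, from pairwise distinct $\sim$-classes inside the uncountable family of those $N$ whose closure meets $K$ but which themselves miss $K$ (possible by transfinite induction, since at each stage only countably many sets are excluded) makes $(N_\alpha)_{\alpha\in\w_1}$, and hence $(S_\alpha)_{\alpha\in\w_1}$, compact-finite; this is what produces the forbidden $\ddot S^{\w_1}$-fan. If you replace your disjointification paragraph by this equivalence-class selection, the remainder of your write-up is correct and coincides with the paper's proof.
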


\begin{proof} By Lemma~\ref{l:b-nw}, the $\aleph_k$-space $X$ has a compact-countable $k$-network $\mathcal N$ consisting of bounded subsets of $X$. By the $\mu$-completeness of $X$, each set $N\in\mathcal N$ has compact closure. The regularity of the space $X$ implies that the family $\overline{\mathcal N}=\{\bar N:N\in\mathcal N\}$ is a $k$-network for $X$. We claim that this $k$-network is compact-countable. Assuming the opposite, we could find a compact subset $K\subset X$ such that the family $\mathcal N_K=\{N\in\mathcal N:K\cap\overline{N}\ne\emptyset\}$ is uncountable. Since the family $\mathcal N$ is compact-countable, the family $\mathcal N_K'=\{N\in\mathcal N_K:K\cap N\ne\emptyset\}$ is a countable. Then the compact space $K$ has a countable network and hence $K$ is metrizable.

For two sets $A,B\in\mathcal N$ we write $A\sim B$ if there are sets $N_0,\dots,N_k\in\mathcal N$ such that $A=N_0$, $B\in N_k$ and $N_i\cap N_{i+1}\ne\emptyset$ for all $i<k$. The compact-countability of the family $\mathcal N$ implies that each equivalence class $[A]_\sim$ is countable. We claim that the family $\{\bigcup[A]_\sim:A\in\mathcal N\}$ is compact-finite. Indeed, for any compact set $C\subset X$ we can find a finite subfamily $\mathcal N_C\subset\mathcal N$ whose union contains $C$ and observe that family
$\{[A]_\sim:A\in\mathcal N,\;C\cap(\bigcup[A]_\sim)\ne\emptyset\}$
is contained in the finite family $\{[A]_\sim:A\in\mathcal N_C\}$.

By transfinite induction choose an indexed family $(N_\alpha)_{\alpha\in\w_1}$ of sets in the uncountable family $\mathcal N_K\setminus \mathcal N_K'$ such that $N_\alpha\notin\bigcup_{\beta<\alpha}[N_\beta]_\sim$. Such a choice ensures that the family $(N_\alpha)_{\alpha\in\w_1}$ is compact-finite.

For every $\alpha\in\w_1$ choose a sequence $S_\alpha\subset N_\alpha$ convergent to a point $x_\alpha\in K\cap\overline{N}_\alpha$. Such a sequence exists since the compact space $\overline{N}_\alpha$ is metrizable. Then $(S_\alpha)_{\alpha\in\w_1}$ is a $\ddot S^{\w_1}$-fan in $X$. But this contradicts our assumption. This contradiction implies that the $k$-network $\overline{\mathcal N}$ is compact-countable and hence $X$ is an $\bar \aleph_k$-space. Now Proposition~\ref{p:decomp1} completes the proof.
\end{proof}

Some results of this section are summed up in the following theorem.

\begin{theorem}\label{t:Dcofan} Let $X$ be a topological (Tychonoff) space containing no (strong) $D_\w$-cofan.
\begin{enumerate}
\item If $X$ is a $\mu$-complete $\bar\aleph_k$-space, then the $k$-modification of $X$ is a topological sum of cosmic $k_\w$-spaces.
\item  If $X$ is a $\mu$-complete $\aleph_k$-space containing no $\bar S^{\w_1}$-fan, then $X$ is a $\bar\aleph_k$-space and its $k$-modification $kX$  is a topological sum of cosmic $k_\w$-spaces.
\item If $X$ is an $\aleph_0$-space, then $X$ is hemicompact.
\item If $X$ is cosmic, then $X$ is $\sigma$-compact.
\end{enumerate}
\end{theorem}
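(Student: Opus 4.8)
The plan is to read off all four assertions from propositions already proved in this section; Theorem~\ref{t:Dcofan} is a summary statement, collecting those results under the single hypothesis that $X$ admits no (strong) $D_\w$-cofan, so no new argument is needed beyond matching hypotheses and invoking the implication hierarchy among fan types.

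First I would dispose of (1), (3) and (4) as essentially verbatim citations. Assertion (1) is the $k$-modification half of Proposition~\ref{p:decomp1}. Assertions (3) and (4) are, respectively, statements (2) and (1) of Proposition~\ref{p:XD->s+h+kw}. For these last two, the cited proposition is stated for a Tychonoff space with no \emph{strong} $D_\w$-cofan, so I would note that a cosmic (resp.\ $\aleph_0$-) space is regular and Lindel\"of, hence Tychonoff, and that the absence of any $D_\w$-cofan a fortiori rules out strong ones; thus both readings of the parenthetical hypothesis reduce to the hypotheses of Proposition~\ref{p:XD->s+h+kw}, which then applies unchanged.

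The only assertion that is not a direct transcription is (2), and the single point to check is the compatibility of the fan hypotheses. Proposition~\ref{p:a+s->ba} concludes that a $\mu$-complete $\aleph_k$-space with no (strong) $D_\w$-cofan and no $\ddot S^{\w_1}$-fan is a $\bar\aleph_k$-space whose $k$-modification is a topological sum of cosmic $k_\w$-spaces, whereas (2) assumes instead the absence of a $\bar S^{\w_1}$-fan. Here I would invoke the implication $\ddot S$-fan $\Ra$ $\bar S$-fan recorded just after the definition of $S$-fans: every $\ddot S^{\w_1}$-fan is in particular a $\bar S^{\w_1}$-fan, so ruling out $\bar S^{\w_1}$-fans rules out $\ddot S^{\w_1}$-fans, and the hypotheses of Proposition~\ref{p:a+s->ba} are met. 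Consequently there is no genuine obstacle here; the entire proof is bookkeeping, with the fan-hierarchy implication in (2) the one step where one must pause to confirm the direction of the inclusion.
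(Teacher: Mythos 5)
Your proposal is correct and follows essentially the same route as the paper, whose proof is exactly the citation of Propositions~\ref{p:decomp1}, \ref{p:a+s->ba} and \ref{p:XD->s+h+kw}. The only difference is that you make explicit the bookkeeping the paper leaves silent (that no $\bar S^{\w_1}$-fan implies no $\ddot S^{\w_1}$-fan, and that cosmic and $\aleph_0$-spaces are Tychonoff so both readings of the parenthetical hypothesis reduce to Proposition~\ref{p:XD->s+h+kw}), and these checks are carried out correctly.
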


\begin{proof} The four statement of this theorem are proved in Propositions~\ref{p:decomp1}, \ref{p:a+s->ba} and \ref{p:XD->s+h+kw}.
\end{proof}

\section{(Strong) $\Fin$-fans in products}\label{s:prod}

In the section we shall apply $\dot S$-fans and $D_\w$-cofans to construct $\Fin$-fans in products of topological spaces. The proofs of the following two theorems in the simplest form display two methods of constructing (strong) $\Fin$-fans, which will be exploited many times in this paper.
These methods are known and were used in the papers of Tanaka \cite{Tan76}, \cite{Tan79}, \cite{Tan89}, \cite{Tan97}, Gruenhage \cite{Gru80}, Chen \cite{Chen90}, Banakh \cite{Ban98}.

\begin{theorem}\label{t:product1} If a topological space $X$ contains a (strong) $D_\w$-cofan $(D_n)_{n\in\w}$ and a topological space $Y$ contains a (strong) $\bar S^\w$-fan $(S_n)_{n\in\w}$, then the product $X\times Y$ contains a (strong) $\Fin^\w$-fan.
\end{theorem}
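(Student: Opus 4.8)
The plan is to produce the desired $\Fin^\w$-fan directly as a family of singletons and then invoke Proposition~\ref{p:Fin-fan-char}, which reduces the task to exhibiting a countable (strongly) compact-finite non-closed subset $A\subset X\times Y$. Let $x\in X$ be the limit point of the cofan $(D_n)_{n\in\w}$ and let $y_*\in Y$ be a point at which the fan $(S_n)_{n\in\w}$ fails to be locally finite. Deleting $x$ from each $D_n$ if necessary (which alters neither $|D_n|=\w$, nor the (strong) compact-finiteness of $D_n$, nor the convergence $(D_n)\to x$), I may assume $x\notin\bigcup_{n\in\w}D_n$. Fix enumerations $D_n=\{d_{n,m}:m\in\w\}$ and $S_n=\{s_{n,m}:m\in\w\}$ and set $A=\{(d_{n,m},s_{n,m}):n,m\in\w\}$, indexed by the countable set $\w\times\w$. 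Since $x\notin\bigcup_n D_n$, the point $(x,y_*)$ does not lie in $A$.

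First I would check that $A$ (i.e.\ the family of its singletons) is compact-finite. Given a compact set $K\subset X\times Y$, put $K_X=\pr_X(K)$ and $K_Y=\pr_Y(K)$; both are compact. If $(d_{n,m},s_{n,m})\in K$, then $s_{n,m}\in S_n\cap K_Y$ and $d_{n,m}\in D_n\cap K_X$. As $(S_n)_{n\in\w}$ is compact-finite, the set $\{n:S_n\cap K_Y\ne\emptyset\}$ is finite; and for each such $n$ the set $D_n\cap K_X$ is finite because $D_n$ is compact-finite. Hence $K\cap A$ is finite. Next I would verify that $A$ is not locally finite at $(x,y_*)$, which together with $(x,y_*)\notin A$ shows that $A$ is non-closed. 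Let $U\times W$ be a basic neighborhood of $(x,y_*)$. Since $(D_n)\to x$, there is $N$ with $D_n\subset U$ for all $n\ge N$; since $(S_n)$ is not locally finite at $y_*$, there are infinitely many $n\ge N$ with $S_n\cap W\ne\emptyset$. For each such $n$ choose $m$ with $s_{n,m}\in W$; as $D_n\subset U$ we also have $d_{n,m}\in U$, so $(d_{n,m},s_{n,m})\in (U\times W)\cap A$.

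The delicate point is that these points are genuinely infinitely many: if a single point $(d,s)$ arose for infinitely many indices $n$, then $s$ would lie in $S_n$ for infinitely many $n$, contradicting the compact-finiteness of $(S_n)$ applied to the compact singleton $\{s\}$. Thus $U\times W$ meets $A$ in an infinite set, $A$ is non-closed and compact-finite, and by Proposition~\ref{p:Fin-fan-char} the product $X\times Y$ contains a $\Fin^\w$-fan. I regard this accumulation step as the main obstacle: the naive pairing by a common index $m$ could a priori collapse to finitely many points, and it is exactly the compact-finiteness of the fan that rules this out.

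Finally, for the strong case I would upgrade the neighborhoods. A strong cofan gives, for each $n$, $\IR$-open sets $P_{n,m}\ni d_{n,m}$ with $(P_{n,m})_{m\in\w}$ compact-finite in $X$, and a strong fan gives $\IR$-open sets $Q_n\supset S_n$ with $(Q_n)_{n\in\w}$ compact-finite in $Y$. Each product $P_{n,m}\times Q_n$ is an $\IR$-open neighborhood of $(d_{n,m},s_{n,m})$: if $f\colon X\to\II$ and $g\colon Y\to\II$ witness $\IR$-openness of $P_{n,m}$ and $Q_n$ at the relevant points, then $(p,q)\mapsto f(p)\,g(q)$ witnesses it for the box, since it vanishes whenever $p\notin P_{n,m}$ or $q\notin Q_n$. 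The family $(P_{n,m}\times Q_n)_{n,m}$ is compact-finite by the same two-step projection argument as above: for compact $K$ only finitely many $n$ satisfy $Q_n\cap K_Y\ne\emptyset$, and for each such $n$ only finitely many $m$ satisfy $P_{n,m}\cap K_X\ne\emptyset$. Hence $A$ is strongly compact-finite and the $\Fin^\w$-fan obtained above is strong. The strong case thus rests on the routine but essential observation that the two separate compact-finiteness witnesses combine coordinatewise into a single compact-finite family of $\IR$-open boxes.
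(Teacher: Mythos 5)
Your proof is correct, and its core construction is the paper's: pair the points of $D_n$ with points of $S_n$ along a common index and verify compact-finiteness by the two-step projection argument (compact-finiteness of $(S_n)$ in $Y$ kills all but finitely many $n$, then compact-finiteness of each $D_n$ in $X$ kills all but finitely many $m$); the strong case via the boxes $P_{n,m}\times Q_n$ is also exactly the paper's, with your explicit verification of $\IR$-openness of a box being a detail the paper leaves implicit. Where you genuinely diverge is the witness for non-local-finiteness. The paper exploits the $\bar S$-structure: it takes $y_\infty$ in the compact closure of $\{\lim S_n\}_{n\in\w}$ such that every neighborhood of $y_\infty$ contains $\lim S_n$ for infinitely many $n$, and then uses convergence of each $S_n$ to its limit. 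You instead take $y_*$ to be any point at which the fan $(S_n)$ fails to be locally finite --- such a point exists by the very definition of a fan --- and you never use the limit points at all. This buys something real: your argument needs only that $(S_n)$ is a (strong) fan of countable sets, not a $\bar S^\w$-fan, so you have in effect proved a slightly stronger statement; the price is the collapse issue (distinct $n$ could produce the same pair), which you correctly dispose of by applying compact-finiteness of $(S_n)$ to a singleton. Two minor repairs. First, the final appeal to Proposition~\ref{p:Fin-fan-char} is both unnecessary and slightly off: that proposition is stated for $T_1$-spaces, while the theorem assumes no separation axiom; but your ``delicate point'' already shows that every neighborhood of $(x,y_*)$ meets infinitely many of the singletons $\{(d_{n,m},s_{n,m})\}$, which is literally the statement that this indexed family is not locally finite, so combined with compact-finiteness you have the (strong) $\Fin^\w$-fan directly and should simply say so. Second, take the enumerations of the sets $D_n$ to be injective (possible since $|D_n|=\w$), so that finiteness of $D_n\cap K_X$ as a set yields finiteness of the corresponding set of indices $(n,m)$; compact-finiteness of a fan is a statement about indices, not just about points.
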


\begin{proof} Let $(S_n)_{n\in\w}$ be a (strong) $\bar S^\w$-fan in $Y$. Since $(S_n)_{n\in\w}$ is compact-finite, we can assume that $\{\lim S_n\}_{n\in\w}$ is disjoint with $\bigcup_{n\in\w}S_n$. For every $n\in\w$ let $\{y_{n,m}\}_{m\in\w}$ be an enumeration of the convergent sequence $S_n$. Since the set $\{\lim S_n\}_{n\in\w}$ has compact closure in $Y$, there is a point $y_\infty\in Y$ such that every neighborhood $O_y\subset Y$ of $y$ contains the limit points $\lim S_n$ of infinitely many sequences $S_n$, $n\in\w$.

If the fan $(S_n)_{n\in\w}$ is strong, then each convergent sequence $S_n$ has an $\IR$-open neighborhood $V_n\subset Y$ such that the family $(V_n)_{n\in\w}$ is compact-finite in $Y$. If the fan $(S_n)_{n\in\w}$ is not strong, then we put $V_n=S_n$ for all $n\in\w$.

Let $x_\infty$ be the limit point of the cofan $(D_n)_{n\in\w}$.
Replacing $(D_n)_{n\in\w}$ by a suitable subfamily, we can assume that $x_\infty\notin\bigcup_{n\in\w}D_n$. For every $n\in\w$ write the compact-finite set $D_n$ as $D_n=\{x_{n,m}\}_{m\in\w}$. If $D_n$ is strongly compact-finite, then each point $x_{n,m}\in D_n$ has an $\IR$-open neighborhood $U_{n,m}\subset X$ such that the family $(U_{n,m})_{m\in\w}$ is compact-finite in $X$.
Replacing the sequence $(x_{n,m})_{m\in\w}$ by a suitable subsequence, we can assume that $x_\infty\notin\bigcup_{m\in\w}U_n$. If the set $D_n$ is not strongly compact-finite, then we put $U_{n,m}=\{x_{n,m}\}$ an observe that the family $(U_{n,m})_{m\in\w}$ is compact-finite in $X$.

We claim that the family $(z_{n,m})_{n,m\in\w}$ of the singletons $z_{n,m}=(x_{n,m},y_{n,m})$ is a (strong) $\Fin^\w$-fan in $X\times Y$.

First we show that this family is not locally finite at the point $(x_\infty,y_\infty)$. Given any open neighborhood $U\times V\subset X\times Y$ of $(x_\infty,y_\infty)$, use the convergence of the sequence $(D_n)_{n\in\w}$ to $x_\infty$ and find a number $k\in\w$ such that $\bigcup_{n\ge k}D_n\subset U$.
By the choice of $y_\infty$ the open set $V$ contains the limit point $\lim S_n$ of the sequence $S_n$ the some $n\ge k$. Since the sequence $(y_{n,m})_{m\in\w}$ converges to $\lim S_n\in V$, there exists a number $m\in\w$ such that $y_{n,m}\in V$. Then the point $z_{n,m}=(x_{n,m},y_{n,m})$ belongs to $U\times V$.

Next, we show that the family $(z_{n,m})_{n,m\in\w}$ is (strongly) compact-finite. This will follow as soon as we check that the family $(U_{n,m}\times V_n)_{n\in\w}$ is compact-finite in $X\times Y$. Given any compact set $K_X\times K_Y$, use the compact-finiteness of the family $(V_n)_{n\in\w}$ and find a number $n_0\in\w$ such that $K_Y\cap V_n=\emptyset$ for all $n\ge n_0$. The compact-finiteness of the families $(U_{n,m})_{m\in\w}$ for $n\le n_0$ yields a number $m_0\in\w$ such that $K_X\cap U_{n,m}=\emptyset$ for all $n\le n_0$ and $m\ge m_0$. Then the set $$\{(n,m)\in\w\times\w:(K_X\times K_Y)\cap(U_{n,m}\times V_n)\ne\emptyset\}\subset [0,n_0]\times [0,m_0]$$ is finite, witnessing that the family $(U_{n,m}\cap V_n)_{n,m\in\w}$ is compact-finite.
\end{proof}

\begin{theorem}\label{t:product2} If topological spaces $X,Y$ contain (strong) $\bar S^{\w_1}$-fans, then the product $X\times Y$ contains a (strong) $\Fin^{\w_1}$-fan.
\end{theorem}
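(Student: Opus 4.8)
The plan is to reduce, via Proposition~\ref{p:Fin-fan-char}, to producing a non-closed (strongly) compact-finite subset $D\subset X\times Y$ with $|D|\le\w_1$, and to build $D$ from the two fans with the help of an almost disjoint family so that Lemma~\ref{l:ad} can enforce compact-finiteness. Write the given $\bar S^{\w_1}$-fans as $(S_\alpha)_{\alpha\in\w_1}$ in $X$ and $(T_\alpha)_{\alpha\in\w_1}$ in $Y$, fix injective enumerations $S_\alpha=\{a_{\alpha,n}\}_{n\in\w}$ and $T_\alpha=\{b_{\alpha,n}\}_{n\in\w}$ with $a_{\alpha,n}\to p_\alpha:=\lim S_\alpha$ and $b_{\alpha,n}\to q_\alpha:=\lim T_\alpha$, and recall that $\overline{\{p_\alpha\}}$ and $\overline{\{q_\alpha\}}$ are compact. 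Then fix an almost disjoint family $(A_\gamma)_{\gamma\in\w_1}$ of infinite subsets of $\w$ and set
$$D=\{(a_{\alpha,n},b_{\beta,n}):\alpha,\beta\in\w_1,\ \alpha\ne\beta,\ n\in A_\alpha\cap A_\beta\},$$
so that $|D|\le\w_1$.

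First I would verify that $D$ is (strongly) compact-finite, and this is exactly where almost disjointness does the work. Any compact subset of $X\times Y$ lies in a product $K\times K'$ of compact sets; since $(S_\alpha)$ and $(T_\beta)$ are compact-finite, the index sets $P=\{\alpha:S_\alpha\cap K\ne\emptyset\}$ and $Q=\{\beta:T_\beta\cap K'\ne\emptyset\}$ are finite, and a point $(a_{\alpha,n},b_{\beta,n})\in K\times K'$ forces $\alpha\in P$, $\beta\in Q$ and $n\in A_\alpha\cap A_\beta$. As $\alpha\ne\beta$, the intersection $A_\alpha\cap A_\beta$ is finite, so only finitely many triples survive and $K\times K'$ meets $D$ finitely. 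The reason for running over \emph{distinct} spikes with a common inner index is precisely this: the diagonal sets $\{(a_{\alpha,n},b_{\alpha,n})\}_{n}$ would converge to $(p_\alpha,q_\alpha)$ and destroy compact-finiteness. For the strong version, if $S_\alpha$ and $T_\beta$ carry $\IR$-open neighbourhoods $G_\alpha$, $H_\beta$ with $(G_\alpha)$, $(H_\beta)$ compact-finite, I would group $D$ into the finite pieces $F_{\alpha,\beta}=\{(a_{\alpha,n},b_{\beta,n}):n\in A_\alpha\cap A_\beta\}$ and attach to each the product $G_\alpha\times H_\beta$, which is $\IR$-open (multiply the witnessing functions); the family $(G_\alpha\times H_\beta)_{\alpha\ne\beta}$ is compact-finite since $K\times K'$ meets $G_\alpha\times H_\beta$ only for $(\alpha,\beta)$ in a finite product of index sets.

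The harder half is to exhibit a point at which $D$ is not locally finite, and here the compactness of the limit sets and Lemma~\ref{l:ad} combine. The $\w_1$-indexed family $(p_\alpha,q_\alpha)_{\alpha\in\w_1}$ lies in the compact set $\overline{\{p_\alpha\}}\times\overline{\{q_\alpha\}}$, so since $\w_1$ is regular (finitely many sets each carrying $<\w_1$ indices cannot absorb all of them) there is a complete accumulation point $(x_\infty,y_\infty)$: for every neighbourhood $U\times W$ of it the set $R=\{\gamma:p_\gamma\in U,\ q_\gamma\in W\}$ has cardinality $\w_1$. Given such $U\times W$, for $\gamma\in R$ pick $N_\gamma,M_\gamma\in\w$ with $a_{\gamma,n}\in U$ for $n\ge N_\gamma$ and $b_{\gamma,n}\in W$ for $n\ge M_\gamma$; by the regularity of $\w_1$ there is an uncountable $R'\subseteq R$ on which $N_\gamma=N$ and $M_\gamma=M$ are constant. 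Applying Lemma~\ref{l:ad} to the almost disjoint subfamily $(A_\gamma)_{\gamma\in R'}$ with $F=\{0,\dots,\max(N,M)\}$ yields distinct $\alpha,\beta\in R'$ and some $n\in(A_\alpha\cap A_\beta)\setminus F$; then $a_{\alpha,n}\in U$, $b_{\beta,n}\in W$, and $(a_{\alpha,n},b_{\beta,n})\in D\cap(U\times W)$. Hence $(x_\infty,y_\infty)\in\overline D$, and (deleting this single point from $D$ if it happens to lie there) $D$ is not closed; by Proposition~\ref{p:Fin-fan-char} this non-closed (strongly) compact-finite set of size $\le\w_1$ delivers the desired (strong) $\Fin^{\w_1}$-fan.

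I expect the genuine obstacle to be this last step: one must choose a \emph{single} accumulation point that serves \emph{all} neighbourhoods at once, with no countable base available, and one must reconcile the two \emph{independent} spike-indices $\alpha$ and $\beta$. The key device will be to place the accumulation point on the product of the compact limit sets and then to feed one and the same uncountable set $R'$ into Lemma~\ref{l:ad} in both index slots, so that membership of $\gamma$ in $R'$ simultaneously controls the $X$-coordinate through $p_\gamma$ and the $Y$-coordinate through $q_\gamma$. Compact-finiteness, by contrast, is essentially automatic once the construction is phrased over pairs $\alpha\ne\beta$, being forced by the finiteness of $A_\alpha\cap A_\beta$.
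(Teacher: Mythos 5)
Your construction coincides, at its combinatorial core, with the paper's own proof: the paper also fixes an almost disjoint family $(A_\alpha)_{\alpha\in\w_1}$, forms for each pair $(\alpha,\beta)$ with $\alpha\ne\beta$ the finite set $D_{\alpha,\beta}=\{(x_{\alpha,n},y_{\beta,n}):n\in A_\alpha\cap A_\beta\}$, proves compact-finiteness exactly as you do, and derives non-local finiteness from the same pigeonhole-plus-Lemma~\ref{l:ad} argument (using a free ultrafilter with uncountable elements where you use a complete accumulation point of $(p_\gamma,q_\gamma)_{\gamma\in\w_1}$ in the compact set $\overline{\{p_\alpha\}}\times\overline{\{q_\alpha\}}$ -- these two devices deliver the same conclusion). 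The differences are in the packaging, but one of them creates a genuine gap relative to the statement as given: Theorem~\ref{t:product2} carries no separation hypothesis, while Proposition~\ref{p:Fin-fan-char}, through which you route the conclusion, is stated (and is only true) for $T_1$-spaces; its ``if'' direction uses $T_1$ to convert a non-closed compact-finite set into a non--locally-finite family of singletons. So, as written, your argument proves the theorem only when $X\times Y$ is $T_1$. The repair costs nothing you don't already have: the grouping of $D$ into the finite pieces $F_{\alpha,\beta}$, which you introduce for the strong case, together with your accumulation argument (every neighbourhood of $(x_\infty,y_\infty)$ meets $F_{\alpha,\beta}$ for uncountably many pairs) shows directly that $(F_{\alpha,\beta})_{\alpha\ne\beta}$ is compact-finite but not locally finite, i.e.\ is the desired (strong) $\Fin^{\w_1}$-fan, with no separation axiom and no appeal to Proposition~\ref{p:Fin-fan-char}. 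This is exactly how the paper proceeds.

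A second, smaller gap is the parenthetical ``deleting this single point from $D$ if it happens to lie there.'' For the deletion to leave a non-closed set you need $(x_\infty,y_\infty)$ to be an accumulation point of $D$, i.e.\ every neighbourhood must contain a point of $D$ \emph{distinct from} $(x_\infty,y_\infty)$; a priori your argument only puts some point of $D$ in each neighbourhood. This is true but needs a line: since the fans $(S_\alpha)$ and $(T_\beta)$ are compact-finite, the singletons $\{x_\infty\}$ and $\{y_\infty\}$ meet only finitely many spikes, so among the uncountably many pairs $(\alpha,\beta)$ produced by Lemma~\ref{l:ad} at most finitely many can yield the point $(x_\infty,y_\infty)$ itself. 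The paper avoids both this issue and the ambiguity of multiple representations of a point of $D$ (when spikes overlap) by a preliminary normalization you skipped: pass to uncountable subfamilies that are pairwise disjoint and whose unions avoid the closures of the limit sets, so that the accumulation point automatically lies outside $\bigcup_{\alpha\ne\beta} D_{\alpha,\beta}$. Adopting that normalization would make your proof clean end to end.
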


\begin{proof} Let $(X_\alpha)_{\alpha\in\w_1}$ and $(Y_\alpha)_{\alpha\in\w_1}$  be  (strong) $\bar S^{\w_1}$-fans in $X$ and $Y$, respectively. Since the sets $\{\lim X_\alpha\}_{\alpha\in\w_1}$ and $X_\alpha$, $\alpha\in\w_1$, have compact closures in $X$  we can replace $(X_\alpha)_{\alpha\in\w_1}$ by a suitable uncountable subfamily and assume that the family $(X_\alpha)_{\alpha\in\w_1}$ is disjoint and its union does not intersect the closure of the set $\{\lim X_\alpha\}_{\alpha\in\w_1}$.
By analogy, we can replace $(Y_\alpha)_{\alpha\in\w_1}$ by a suitable uncountable subfamily and assume that the family $(Y_\alpha)_{\alpha\in\w_1}$ is disjoint and its union does not intersect the closure of the set $\{\lim Y_\alpha\}_{\alpha\in\w_1}$.

For every $\alpha\in\w_1$ chose sequences $(x_{\alpha,n})_{n\in\IN}$ and $(y_{\alpha,n})_{n\in\w}$ of pairwise distinct points in the convergent sequences $X_\alpha$ and $Y_\alpha$, respectively.

Choose any almost disjoint family $(A_\alpha)_{\alpha\in\w_1}$ of infinite subsets of $\w$.
Let $\Lambda=\{(\alpha,\beta)\in\w_1\times w_1:\alpha\ne \beta\}$ and for any pair $(\alpha,\beta)\in\Lambda$ consider the finite subset $$D_{\alpha,\beta}=\{(x_{\alpha,n},y_{\beta,n}):n\in A_\alpha\cap A_\beta\}$$of the product $X\times Y$.

We claim the family $(D_{\alpha,\beta})_{(\alpha,\beta)\in\Lambda}$ is a $\Fin^{\w_1}$-fan in $X\times Y$. First we check that this family is not locally finite in $X\times Y$. Fix any free ultrafilter $\U$ on $\w_1$ whose elements are uncountable sets of $\w_1$. Since the set $X_\infty=\{\lim X_\alpha\}_{\alpha\in\w_1}$ has compact closure $\bar X_\infty$ in $X$, the $\w_1$-sequence $(\lim X_\alpha)_{\alpha\in\w_1}$ is $\U$-convergent to some point $x\in \bar X_\infty$. The latter means that for every neighborhood $O_x\subset X$ of $x$ the set $\{\alpha\in\w_1:\lim X_\alpha\in O_x\}$ belongs to the ultrafilter $\U$. By analogy, the $\w_1$-sequence $(Y_\alpha)_{\alpha\in\lambda}$ is $\U$-convergent to some point $y\in Y$.

We claim that the family $(D_{\alpha,\beta})_{(\alpha,\beta)\in\Lambda}$ is not locally finite at the point $(x,y)$. Observe hat for any open neighborhood $U\times V\subset X\times Y$ of $(x,y)$ the set $\Omega=\{\alpha\in\w_1:(\lim X_\alpha,\lim Y_\alpha)\in U\times V\}$ belongs to the ultrafilter $\U$ and hence is uncountable.
For every $\alpha\in\Omega$ the open sets $U$ and $V$ are neighborhoods of the limit points $\lim X_\alpha$, $\lim Y_\alpha$ of the sequences $X_\alpha=\{x_{\alpha,n}\}_{n\in\w}$ and $Y_\alpha=\{y_{\alpha,n}\}_{n\in\w}$, respectively. So there is a number $\varphi(\alpha)\in\w$ such that $\{x_{\alpha,n}\}_{n\ge\varphi(\alpha)}\subset U$ and $\{y_{\alpha,n}\}_{n\ge\varphi(\alpha)}\subset V$.

By the Pigeonhole Principle, for some $m\in\w$ the set $\Omega_m=\{\alpha\in\Omega:\varphi(\alpha)=m\}$ is uncountable. By Lemma~\ref{l:ad}, the set $\Lambda'=\{(\alpha,\beta)\in\Lambda:\alpha,\beta\in\Omega_m,\;A_\alpha\cap A_\beta\not\subset[0,m]\}$ is uncountable. Fix any pair $(\alpha,\beta)\in\Lambda'$ and choose a number $n\in A_\alpha\cap A_\beta\setminus[0,m]$. Then $(x_{\alpha,n},y_{\beta,n})\in (U\times V)\cap D_{\alpha,\beta}$, which means that the family $(D_{\alpha,\beta})_{(\alpha,\beta)\in\Lambda}$ is not locally finite at $(x,y)$.

Next, we show that the family $(D_{\alpha,\beta})_{(\alpha,\beta)\in\Lambda}$ is compact-finite in $X\times Y$. Given a compact set $K\subset X\times Y$, find compact sets $K_X\subset X$ and $K_Y\subset Y$ such that $K\subset K_X\times K_Y$. Since the $\bar S^{\w_1}$-fans $(X_\alpha)_{\alpha\in\w_1}$ and $(Y_\alpha)_{\alpha\in\w_1}$ are compact-finite, the sets $F_X=\{\alpha\in\w_1:K_X\cap X_\alpha\ne\emptyset\}$ and $F_Y=\{\beta\in\w_1:K_Y\cap Y_\beta\ne\emptyset\}$ are finite. Then the set $\{(\alpha,\beta)\in\Lambda:D_{\alpha,\beta}\cap K\ne\emptyset\}\subset F_X\times F_Y$ is finite too. This shows that  the family $(D_{\alpha,\beta})_{(\alpha,\beta)\in\Lambda}$ is a $\Fin^{\w_1}$-fan.

If the fans $(X_\alpha)_{\alpha\in\w_1}$ and $(Y_\alpha)_{\alpha\in\w_1}$ are strong, then we can find compact-finite families of $(U_\alpha)_{\alpha\in\w_1}$ and $(V_\alpha)_{\alpha\in\w_1}$ of $\IR$-open sets in the spaces $X$ and $Y$, respectively, such that $X_\alpha\subset U_\alpha$ and $Y_\alpha\subset V_\alpha$ for all $\alpha\in\w_1$. Then for every $(\alpha,\beta)\in\Lambda$ the set $U_\alpha\times V_\beta$ is an $\IR$-open neighborhood of the set $D_{\alpha,\beta}$ and the family $(U_{\alpha}\times V_\beta:(\alpha,\beta)\in\Lambda\}$ is compact-finite in $X\times Y$, which means that the $\Fin^{\w_1}$-fan $(D_{\alpha,\beta})_{(\alpha,\beta)\in\Lambda}$ is strong.
\end{proof}

\begin{corollary}\label{c:prod1} If the product $X\times Y$ of two $k^*$-metrizable spaces $X,Y$ contains no $\Fin^\w$-fans, then one of the following alternatives hold:
\begin{enumerate}
\item[\textup{1)}] $X$ and $Y$ are $k$-metrizable;
\item[\textup{2)}] $X$ and $Y$ are $k$-homeomorphic to topological sums of cosmic $k_\w$-spaces;
\item[\textup{3)}] $X$ or $Y$ is $k$-homeomorphic to a metrizable locally compact space.
\end{enumerate}
\end{corollary}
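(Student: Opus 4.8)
The plan is to reduce the statement to a clean dichotomy for each factor---whether it is $k$-metrizable---and then to feed the product construction of Theorem~\ref{t:product1} into the hypothesis ``no $\Fin^\w$-fan''. Since $X$ and $Y$ are $k^*$-metrizable, all their compact subsets are metrizable, so every $\bar S^\w$-fan in either factor is automatically a $\ddot S^\w$-fan; by Proposition~\ref{p:k-metr} a factor is $k$-metrizable \emph{if and only if} it contains no $\bar S^\w$-fan. If both $X$ and $Y$ are $k$-metrizable we are in alternative (1) and there is nothing more to prove, so from now on I would assume, without loss of generality, that $X$ is not $k$-metrizable and hence carries a $\bar S^\w$-fan.

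The first key step uses Theorem~\ref{t:product1}: if $Y$ contained a $D_\w$-cofan, then, taking $Y$ as the cofan factor and $X$ as the $\bar S^\w$-fan factor, the product $Y\times X\cong X\times Y$ would contain a $\Fin^\w$-fan, a contradiction. Thus $Y$ contains no $D_\w$-cofan. Now I would split according to $Y$. If $Y$ is \emph{not} $k$-metrizable, then $Y$ too carries a $\bar S^\w$-fan, and the same application of Theorem~\ref{t:product1} with the roles reversed forces $X$ to contain no $D_\w$-cofan either; so in this subcase both factors are $k^*$-metrizable spaces without $D_\w$-cofans. If $Y$ \emph{is} $k$-metrizable, then $Y$ is a $k$-metrizable space without $D_\w$-cofans, and this will land us in alternative (3).

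The technical heart is the structural claim $(\star)$: a $k^*$-metrizable space $Z$ containing no $D_\w$-cofan is $k$-homeomorphic to a topological sum of cosmic $k_\w$-spaces. Granting $(\star)$, the remaining subcases close immediately: if both $X,Y$ lack $D_\w$-cofans we obtain alternative (2); and if $Y$ is in addition $k$-metrizable, then its $k$-modification $kY$ is simultaneously metrizable and a topological sum of cosmic $k_\w$-spaces, hence a topological sum of metrizable hemicompact (so locally compact) spaces, i.e. metrizable and locally compact---this is alternative (3). To prove $(\star)$ I would start from a $\sigma$-compact-finite (hence compact-countable, and after taking closures, closed) $k$-network for $Z$ and apply Lemma~\ref{l:b-nw} to replace it by a compact-countable closed $k$-network $\mathcal B$ all of whose members are bounded. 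Following the decomposition in Proposition~\ref{p:decomp1}, I would take the equivalence relation on $\mathcal B$ generated by ``$A\cap B\ne\emptyset$''; compact-countability makes every class $[A]_\sim$ countable, so each piece $P_A=\bigcup[A]_\sim$ carries the countable $k$-network $[A]_\sim$ and is therefore an $\aleph_0$-space. Being cosmic, $P_A$ is Lindel\"of and hence $\mu$-complete; inheriting the absence of $D_\w$-cofans, $P_A$ is hemicompact by Proposition~\ref{p:XD->s+h+kw}(2) (equivalently Theorem~\ref{t:Dcofan}(3)); the family $(P_A)$ is compact-clopen by the same computation as in Proposition~\ref{p:decomp1}, and Lemma~\ref{l:ts-kw} assembles $Z$ into a $k$-sum of cosmic hemicompact spaces, i.e. a $k$-homeomorph of a topological sum of cosmic $k_\w$-spaces.

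I expect the main obstacle to be exactly the $\mu$-completeness issue inside $(\star)$: Proposition~\ref{p:decomp1} is stated for $\mu$-complete $\bar\aleph_k$-spaces, whereas a general $k^*$-metrizable space need be neither $\bar\aleph_k$ nor $\mu$-complete. The point to get right is that the absence of $D_\w$-cofans, via Lemma~\ref{l:b-nw}, already upgrades the $k$-network to a compact-countable closed family of \emph{bounded} sets (so $Z$ becomes a $\bar\aleph_k$-space), and that each individual piece $P_A$---having a \emph{countable} $k$-network---is cosmic, hence Lindel\"of and $\mu$-complete even when $Z$ is not; this is precisely what lets me conclude that the bounded pieces are hemicompact and run the decomposition locally. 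Care will also be needed to verify that boundedness, the compact-clopen property, and the absence of $D_\w$-cofans all pass correctly to the pieces $P_A$ and survive $k$-modification; these checks are routine but essential. Note that Theorem~\ref{t:product2} is not needed here, since the hypothesis forbids only $\Fin^\w$-fans.
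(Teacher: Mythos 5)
Your overall skeleton coincides with the paper's proof: the paper argues by contraposition, combining Proposition~\ref{p:k-metr} (a $k^*$-metrizable space is $k$-metrizable iff it has no $\ddot S^\w$-fan), the decomposition statement ``no $D_\w$-cofan $\Rightarrow$ $k$-homeomorphic to a sum of cosmic $k_\w$-spaces'' (cited as Proposition~\ref{p:decomp1}), and Theorem~\ref{t:product1}; your direct case-split is the same argument. You were also right to flag that Proposition~\ref{p:decomp1} assumes $\mu$-completeness and the $\bar\aleph_k$ property, neither of which follows from $k^*$-metrizability; the paper's own citation glosses over exactly this point.

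However, your repair $(\star)$ does not close the hole, and the steps you call routine are the ones that fail. First, closures of a compact-countable $k$-network need not form a compact-countable family; this is precisely the issue Proposition~\ref{p:a+s->ba} is designed to handle, and it needs the extra hypotheses ``$\mu$-complete'' and ``no $\ddot S^{\w_1}$-fan''. Second, the countability of the classes $[A]_\sim$ in Proposition~\ref{p:decomp1} is deduced from the fact that each member of the network is \emph{compact}, hence meets only countably many members; a merely bounded member of a compact-countable family may meet uncountably many members. Third, the compact-clopenness computation requires each $K\cap F$ to be closed in $K$, which fails for bounded non-closed members. Without the last two points the pieces $P_A$ need not be $\aleph_0$-spaces, need not inherit the absence of $D_\w$-cofans, and Lemma~\ref{l:ts-kw} cannot be invoked.

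Worse, the gap is not one of technique: $(\star)$ is false, and so is the corollary as stated. The uncountable sequential fan $V_{\w_1}$ (discussed in the paper) is $k^*$-metrizable: the singleton of the centre, the non-central singletons, and the tails of the spikes form a $k$-network which is $\sigma$-compact-finite, because a compact subset of $V_{\w_1}$ meets only finitely many spikes outside the centre (any one-point-per-spike selection is closed and discrete). By the same selection argument $V_{\w_1}$ contains no $D_\w$-cofan: each infinite compact-finite set $D_n$ must meet infinitely many spikes, so removing a suitable one-point-per-spike closed discrete set yields a neighbourhood of the centre containing no $D_n$, contradicting convergence. Yet $V_{\w_1}$ is a sequential $k$-space (so ``$k$-homeomorphic'' means ``homeomorphic'' for it) which is neither metrizable nor a topological sum of cosmic $k_\w$-spaces, since any clopen summand containing the centre contains tails of all $\w_1$ spikes and hence an uncountable closed discrete subset. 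Finally, $V_{\w_1}\times V_{\w_1}$ contains no $\Fin^\w$-fan: the union of a countable family of finite sets lies in a closed subspace of the form (countable subfan)$\times$(countable subfan), which is a product of two cosmic $k_\w$-spaces, hence a $k_\w$-space by Lemma~\ref{l:kw-product} and therefore fan-free by Proposition~\ref{k-no-Cld-fan}. So $X=Y=V_{\w_1}$ satisfies the hypothesis while violating all three alternatives; the actual obstruction in this product is a $\Fin^{\w_1}$-fan (Theorem~\ref{t:product2}), which the hypothesis does not exclude. Thus the mismatch you sensed is fatal both to your argument and to the paper's; a correct statement must either also forbid $\Fin^{\w_1}$-fans (so that Proposition~\ref{p:a+s->ba} and Theorem~\ref{t:Dcofan}(2) become available) or strengthen ``$k^*$-metrizable'' to a hypothesis ruling out $V_{\w_1}$, such as the $\aleph$-space assumption of Corollary~\ref{c:prod2}, noting that the $k$-space version (Corollary~\ref{c:tanaka-gen}) is unaffected because $k$-spaces contain no fans of any cardinality.
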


\begin{proof} Observe that each metrizable $k_\w$-space is locally compact. This implies that a topological space $X$ is $k$-homeomorphic to a metrizable locally compact space if and only if $X$ is $k$-metrizable and $k$-homeomorphic to a topological sum of cosmic $k_\w$-spaces. Therefore, if none of the three alternatives holds, then one of the spaces (say $X$) is not $k$-metrizable and the other space (then $Y$) is not $k$-homeomorphic to a topological sum of cosmic $k_\w$-spaces. By Proposition~\ref{p:k-metr}, $X$ contains an $\ddot S^\w$-fan and by Proposition~\ref{p:decomp1}, the space $Y$ contains a $D_\w$-cofan. By Theorem~\ref{t:product1}, the product $X\times Y$ contains a $\Fin^\w$-fan, which is a desired contradiction.
\end{proof}

Corollary~\ref{c:prod1} combined with Corollary~\ref{c:fan-in-aleph-space} implies another corollary, generalizing a result of Y.Tanaka \cite{Tan76}.

\begin{corollary}\label{c:prod2} If the product $X\times Y$ of two Tychonoff $\aleph$-spaces $X,Y$ contains no strong $\Fin^\w$-fans, then one of the following alternatives hold:
\begin{enumerate}
\item[\textup{1)}] $X$ and $Y$ are $k$-metrizable;
\item[\textup{2)}] $X$ and $Y$ are $k$-homeomorphic to topological sums of cosmic $k_\w$-spaces;
\item[\textup{3)}] $X$ or $Y$ is $k$-homeomorphic to a metrizable locally compact space.
\end{enumerate}
\end{corollary}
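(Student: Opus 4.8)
The plan is to upgrade the construction used in the proof of Corollary~\ref{c:prod1} so that it produces a \emph{strong} $\Fin^\w$-fan, invoking Corollary~\ref{c:fan-in-aleph-space} to guarantee that the fans and cofans manufactured inside the factors are automatically strong. First I would record the class inclusions read off from the diagram in the introduction: a Tychonoff $\aleph$-space is simultaneously $k^*$-metrizable, a $\bar\aleph_k$-space, and (being a $\sigma$-space, hence submetrizable) $\mu$-complete. Consequently both $X$ and $Y$ meet the standing hypotheses of Propositions~\ref{p:k-metr} and \ref{p:decomp1}.

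Assume, towards a contradiction, that none of the alternatives (1)--(3) holds. Arguing exactly as in the proof of Corollary~\ref{c:prod1}, and using that a metrizable $k_\w$-space is locally compact (so that alternative (3) says precisely that one factor is \emph{both} $k$-metrizable and $k$-homeomorphic to a topological sum of cosmic $k_\w$-spaces), one may relabel the factors so that $X$ is not $k$-metrizable while $Y$ is not $k$-homeomorphic to a topological sum of cosmic $k_\w$-spaces.

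Since $X$ is $k^*$-metrizable but not $k$-metrizable, Proposition~\ref{p:k-metr} supplies an $\ddot S^\w$-fan in $X$. This is a countable fan in the Tychonoff $\aleph$-space $X$, so Corollary~\ref{c:fan-in-aleph-space} makes it strong; in particular $X$ carries a strong $\bar S^\w$-fan. On the other side, $Y$ is a $\mu$-complete Tychonoff $\bar\aleph_k$-space that is not $k$-homeomorphic to a topological sum of cosmic $k_\w$-spaces, so by the contrapositive of Proposition~\ref{p:decomp1} (combined with Lemma~\ref{l:ts-kw}) the space $Y$ must contain a strong $D_\w$-cofan --- equivalently, $Y$ contains a $D_\w$-cofan, which in a Tychonoff $\aleph$-space is automatically strong. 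Feeding these inputs into Theorem~\ref{t:product1}, with $Y$ (carrying the strong $D_\w$-cofan) as the cofan factor and $X$ (carrying the strong $\bar S^\w$-fan) as the fan factor, yields a strong $\Fin^\w$-fan in $Y\times X$ and hence, since $Y\times X$ and $X\times Y$ are homeomorphic, in $X\times Y$, contradicting the hypothesis.

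The argument is essentially mechanical once the inputs are assembled; the only point demanding care is the bookkeeping of the ``strong'' qualifiers --- matching the strong-fan and strong-cofan slots of Theorem~\ref{t:product1} and confirming, via Corollary~\ref{c:fan-in-aleph-space} together with the Tychonoff form of Proposition~\ref{p:decomp1}, that both inputs are genuinely strong rather than merely fans/cofans. As an alternative route one could instead note that a finite product of Tychonoff $\aleph$-spaces is again a Tychonoff $\aleph$-space, deduce from Corollary~\ref{c:fan-in-aleph-space} that the absence of strong $\Fin^\w$-fans in $X\times Y$ already forces the absence of \emph{all} $\Fin^\w$-fans there, and then quote Corollary~\ref{c:prod1} verbatim; this is closer to the phrasing ``Corollary~\ref{c:prod1} combined with Corollary~\ref{c:fan-in-aleph-space}'', but it leans on the stability of $\aleph$-spaces under finite products, which is classical yet not established within the excerpt, whereas the first route stays entirely inside the cited results.
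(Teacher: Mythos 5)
Your proposal is correct and follows essentially the paper's own route: the paper proves this corollary in one line by combining Corollary~\ref{c:prod1} with Corollary~\ref{c:fan-in-aleph-space}, and your main argument is exactly that combination fleshed out --- rerunning the proof of Corollary~\ref{c:prod1} with the $\ddot S^\w$-fan in $X$ and the $D_\w$-cofan in $Y$ upgraded to strong ones via the Tychonoff $\aleph$-space hypotheses (Corollary~\ref{c:fan-in-aleph-space} and the Tychonoff case of Proposition~\ref{p:decomp1}), then invoking the strong version of Theorem~\ref{t:product1}. Your closing observation that the alternative route (applying Corollary~\ref{c:fan-in-aleph-space} to the product itself) would need the classical but uncited fact that finite products of $\aleph$-spaces are $\aleph$-spaces is also accurate.
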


Repeating the proof of Corollary~\ref{c:prod1} and applying Proposition~\ref{p:XD->s+h+kw} instead of Proposition~\ref{p:decomp1} we obtain the following $\aleph_0$-version of Corollary~\ref{c:prod2}.

\begin{corollary}\label{c:prod3} If the product $X\times Y$ of two $\aleph_0$-spaces $X,Y$ contains no strong $\Fin^\w$-fans, then one of the following alternatives hold:
\begin{enumerate}
\item[\textup{1)}] $X$ and $Y$ are $k$-metrizable;
\item[\textup{2)}] $X$ and $Y$ are cosmic and hemicompact;
\item[\textup{3)}] $X$ or $Y$ is metrizable separable and locally compact.
\end{enumerate}
\end{corollary}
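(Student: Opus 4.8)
The plan is to imitate the proof of Corollary~\ref{c:prod1} almost verbatim, replacing the appeal to Proposition~\ref{p:decomp1} (which manufactures a $D_\w$-cofan out of the failure of the ``topological sum of cosmic $k_\w$-spaces'' structure) by an appeal to Proposition~\ref{p:XD->s+h+kw}(2) (which manufactures a strong $D_\w$-cofan out of the failure of hemicompactness). First I would record the bookkeeping facts that make the two production mechanisms available for $\aleph_0$-spaces. Since every $\aleph_0$-space is a Tychonoff $\aleph$-space whose compact subsets are metrizable, Proposition~\ref{p:k-metr} together with Corollary~\ref{c:fan-in-aleph-space} shows that an $\aleph_0$-space $Z$ is \emph{not} $k$-metrizable if and only if it contains a strong $\ddot S^\w$-fan (hence a strong $\bar S^\w$-fan), while the contrapositive of Proposition~\ref{p:XD->s+h+kw}(2) shows that an $\aleph_0$-space which is \emph{not} hemicompact contains a strong $D_\w$-cofan.

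The second ingredient is the $\aleph_0$-analogue of the observation used in Corollary~\ref{c:prod1}, namely: \emph{an $\aleph_0$-space $Z$ is metrizable, separable and locally compact if and only if $Z$ is $k$-metrizable and hemicompact.} The forward implication is immediate, since a separable metrizable locally compact space is $\sigma$-compact and locally compact, hence hemicompact, and of course $k$-metrizable. For the converse, the $k$-modification $kZ$ is metrizable (as $Z$ is $k$-metrizable) and a $k_\w$-space (as $Z$ is hemicompact), so $kZ$ is a metrizable $k_\w$-space and therefore locally compact; this is the same fact ``a metrizable $k_\w$-space is locally compact'' already exploited in Corollary~\ref{c:prod1}, which via \cite[8.3]{vD} reduces to the observation that the metric fan $M$ is not hemicompact.

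With these facts in hand I would argue by contradiction, exactly as in Corollary~\ref{c:prod1}. Assume $X\times Y$ contains no strong $\Fin^\w$-fan and that none of the alternatives (1)--(3) holds. Writing $m(Z)$ for ``$Z$ is $k$-metrizable'' and $h(Z)$ for ``$Z$ is hemicompact'' (recall cosmicity is automatic for $\aleph_0$-spaces), the failure of (1) and (2) gives $\neg(m(X)\wedge m(Y))$ and $\neg(h(X)\wedge h(Y))$, while the observation turns the failure of (3) into $\neg(m(X)\wedge h(X))$ and $\neg(m(Y)\wedge h(Y))$. A short Boolean check on these four clauses shows that either $X$ is not $k$-metrizable while $Y$ is not hemicompact, or $Y$ is not $k$-metrizable while $X$ is not hemicompact. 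In the first case $X$ carries a strong $\bar S^\w$-fan and $Y$ a strong $D_\w$-cofan, so by Theorem~\ref{t:product1} the product $X\times Y$ contains a strong $\Fin^\w$-fan, the desired contradiction; the second case is symmetric.

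The step I expect to be the genuine obstacle is the converse half of the observation, i.e. upgrading ``$k$-metrizable and hemicompact'' to ``metrizable'', equivalently proving $Z=kZ$. Knowing only that $kZ$ is locally compact and metrizable is not by itself enough, since $kZ$ is a priori strictly finer than $Z$ and being a $k$-space is not inherited downward along the identity $kZ\to Z$. The route I would take is to exploit that $Z$ and $kZ$ have the same compact subspaces with the same subspace topology, hence the same convergent sequences, and that $Z$, being cosmic, is countably tight; combined with the absence of $\ddot S^\w$-fans in the $k$-metrizable space $Z$ (Proposition~\ref{p:k-metr}), this should force $Z$ to be Fr\'echet--Urysohn, whereupon sharing convergent sequences with the metrizable space $kZ$ yields $\tau_Z=\tau_{kZ}$ and hence metrizability of $Z$; separability and local compactness then follow from cosmicity and hemicompactness. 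Making this last implication airtight is the only non-formal part of the argument, everything else being a mechanical transcription of the proof of Corollary~\ref{c:prod1}.
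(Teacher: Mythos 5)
Your overall plan---the Boolean reduction combined with Proposition~\ref{p:k-metr}, Corollary~\ref{c:fan-in-aleph-space}, Proposition~\ref{p:XD->s+h+kw}(2) and Theorem~\ref{t:product1}---is exactly the paper's intended transcription of Corollary~\ref{c:prod1}, and it does prove the corollary with alternative (3) read in the weak form ``$X$ or $Y$ is $k$-metrizable and hemicompact'' (which is all that the paper's own one-line proof literally yields). You also correctly isolated the only remaining issue: upgrading this weak form to the stated one, ``metrizable, separable and locally compact.'' But the route you propose for the upgrade is not merely unfinished, it is false, and the counterexample is in the paper itself. Consider the space $Z=\{\infty\}\cup(\II\times\IN)$ from the counterexamples section of Chapter~3, whose neighborhoods of $\infty$ have traces of Lebesgue measure tending to $1$ on the intervals $\II\times\{n\}$. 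The paper shows that $Z$ is cosmic and hemicompact (hence an $\aleph_0$-space), contains no $\Fin$-fans whatsoever, and admits no sequence converging to $\infty$; moreover every compact subset of $Z$ lies in some $\{\infty\}\cup(\II\times\{1,\dots,n\})$ with $\infty$ isolated in it, so $kZ\cong\{\infty\}\oplus\bigoplus_{n\in\IN}(\II\times\{n\})$ is metrizable, separable and locally compact. Thus $Z$ is $k$-metrizable, has (by Proposition~\ref{p:k-metr}) no $\ddot S^\w$-fans, is countably tight, has the same compacta and convergent sequences as its metrizable $k$-modification, and even contains no strong $\Fin^\w$-fans at all---yet $Z$ is not Fr\'echet--Urysohn, not a $k$-space, and not metrizable. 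So your claimed implication fails, the clauses $\neg\bigl(m(X)\wedge h(X)\bigr)$ and $\neg\bigl(m(Y)\wedge h(Y)\bigr)$ cannot be extracted from the failure of alternative (3), and---this is the essential point---no lemma about a single factor can ever close this gap, since $Z$ satisfies every factor-wise hypothesis available in the corollary.

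The corollary as printed is nevertheless true, but the missing case must be killed inside the product. The dangerous configuration is: $X$ is $k$-metrizable and hemicompact but not a $k$-space, while $Y$ is neither $k$-metrizable nor hemicompact, so that $Y$ carries a strong $D_\w$-cofan $(B_k)_{k\in\w}$ converging to some $y_\infty$. In this situation one can construct a strong $\Fin^\w$-fan directly in $X\times Y$: fix a $k$-closed non-closed set $A\subset X$, a point $x_\infty\in\bar A\setminus A$, and a swallowing sequence $(C_k)_{k\in\w}$ of compacta for the hemicompact space $X$. Each $A\cap C_k$ is compact and misses $x_\infty$, hence $x_\infty\in\overline{A\setminus C_k}$, and countable tightness of the cosmic space $X$ yields countable sets $S_k\subset A\setminus C_k$ with $x_\infty\in\overline{S_k}$. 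Pairing the points of $S_k$ injectively with the points of $B_k$ gives a family of singletons in $X\times Y$ which is strongly compact-finite (separate each point of $S_k$ from the compactum $C_k$ by an open set, and use the $\IR$-open expansions of the cofan; any compact set projects into some $C_m\times\pi_Y(K)$ and so meets only expansions with $k<m$, finitely many for each such $k$) but is not locally finite at $(x_\infty,y_\infty)$, because every box neighborhood $O\times W$ contains a pair $(a_{k,j},b_{k,j})$ for every sufficiently large $k$. This contradicts the hypothesis on $X\times Y$, so the dangerous case cannot occur; then $X$ is a $k$-space, and $k$-metrizability plus hemicompactness do give metrizability, separability and local compactness. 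Without an argument of this product-level kind, your proof establishes strictly less than the stated corollary.
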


Corollary~\ref{c:prod1} and Proposition~\ref{k-no-Cld-fan} imply the following characterization of the $k$-space property in products (proved for $\aleph$-spaces by Tanaka \cite{Tan76}).

\begin{corollary}\label{c:tanaka-gen} For $k^*$-metrizable spaces $X,Y$ the product $X\times Y$ is a $k$-space if and only if one of the following conditions holds:
\begin{enumerate}
\item[\textup{1)}] $X$ and $Y$ are metrizable;
\item[\textup{2)}] $X$ and $Y$ are topological sums of cosmic $k_\w$-spaces;
\item[\textup{3)}] $X$ and $Y$ are $k$-spaces and one of them is metrizable and locally compact.
\end{enumerate}
\end{corollary}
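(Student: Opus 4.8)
The plan is to prove both implications, with the sufficiency direction resting on standard product theorems and the necessity direction reducing essentially to Corollary~\ref{c:prod1} together with Proposition~\ref{k-no-Cld-fan}. The one conceptual ingredient that makes the whole argument go through is the observation that, once $X\times Y$ is known to be a $k$-space, the factors $X$ and $Y$ are $k$-spaces as well, and a $k$-homeomorphism between $k$-spaces is automatically a homeomorphism.

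For the sufficiency direction I would treat the three cases separately. If $X$ and $Y$ are metrizable, then $X\times Y$ is metrizable and hence a $k$-space. If $X=\bigoplus_\alpha X_\alpha$ and $Y=\bigoplus_\beta Y_\beta$ are topological sums of cosmic $k_\w$-spaces, then, since each $X_\alpha$ is clopen in $X$ and each $Y_\beta$ is clopen in $Y$, the product decomposes as the topological sum $X\times Y=\bigoplus_{\alpha,\beta}(X_\alpha\times Y_\beta)$; by Lemma~\ref{l:kw-product} each block $X_\alpha\times Y_\beta$ is a $k_\w$-space and hence a $k$-space, and a topological sum of $k$-spaces is a $k$-space (a compact subset of a sum meets only finitely many clopen summands, each in a compact piece). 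Finally, if both $X,Y$ are $k$-spaces and one of them, say $X$, is metrizable and locally compact, then the classical theorem that the product of a locally compact Hausdorff space with a $k$-space is a $k$-space yields that $X\times Y$ is a $k$-space.

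For the necessity direction, suppose $X\times Y$ is a $k$-space. By Proposition~\ref{k-no-Cld-fan} it contains no $\Fin$-fans, in particular no $\Fin^\w$-fan, so Corollary~\ref{c:prod1} applies and delivers one of its three alternatives, each stated only up to $k$-homeomorphism. The upgrading device is that $X$ and $Y$ are themselves $k$-spaces: fixing points $x_0\in X$ and $y_0\in Y$, the slices $X\times\{y_0\}\cong X$ and $\{x_0\}\times Y\cong Y$ are closed subspaces of the $k$-space $X\times Y$ (the spaces being Hausdorff, as $k^*$-metrizable spaces are regular), and closed subspaces of $k$-spaces are $k$-spaces. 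Since a $k$-continuous map on a $k$-space is continuous, any $k$-homeomorphism between two $k$-spaces is a homeomorphism. Hence in the first alternative of Corollary~\ref{c:prod1} the spaces $X,Y$ are $k$-metrizable $k$-spaces, so metrizable, giving condition~(1); in the second alternative $X,Y$ are $k$-homeomorphic to topological sums of cosmic $k_\w$-spaces, which are $k$-spaces, so these $k$-homeomorphisms are homeomorphisms and condition~(2) holds; in the third alternative $X$ or $Y$ is $k$-homeomorphic to a metrizable locally compact space, itself a $k$-space, so that factor is genuinely metrizable and locally compact while both factors are $k$-spaces, giving condition~(3).

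The step I would be most careful about is precisely this passage from the $k$-homeomorphism statements of Corollary~\ref{c:prod1} to honest topological statements: it hinges entirely on $X$ and $Y$ inheriting the $k$-space property as closed slices of $X\times Y$, which collapses each $k$-homeomorphism to a homeomorphism. No fan is constructed directly here; all the combinatorial labour has already been discharged in Theorems~\ref{t:product1}--\ref{t:product2} and Corollary~\ref{c:prod1}, so the remaining work is bookkeeping with the cited product lemmas and the elementary permanence properties of $k$-spaces.
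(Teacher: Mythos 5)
Your proof is correct and follows essentially the same route as the paper: the paper derives this corollary directly from Proposition~\ref{k-no-Cld-fan} and Corollary~\ref{c:prod1} without spelling out the details, and your argument supplies exactly those details (the standard product theorems for sufficiency, and for necessity the observation that the factors, being closed slices of the $k$-space $X\times Y$, are themselves $k$-spaces, so each $k$-homeomorphism in Corollary~\ref{c:prod1} collapses to a homeomorphism). No gaps; the upgrading step you flag as delicate is precisely the implicit content of the paper's citation.
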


\section{Constructing (strong) $\Fin^\w$-fans in rectifiable spaces}\label{s:rectif}

In this section we prove a simple criterion for detecting topological groups (more generally, rectifiable spaces) that contain strong $\Fin^\w$-fans.

A topological space $X$ is called \index{topological space!rectifiable}{\em rectifiable} if there exists a homeomorphism $h:X\times X\to X\times X$ such that $h(\Delta_X)=X\times\{e\}$ for some point $e\in X$ and $h(\{x\}\times X)=\{x\}\times X$ for all $x\in X$.  Here by $\Delta_X=\{(x,y)\in X\times X:x=y\}$ we denote the diagonal of $X\times X$. Rectifiable spaces were introduced by Arhangelskii as non-associative generalizations of topological groups and were studied in \cite{Ar02}, \cite{Gul}, \cite{Lin13}, \cite{LLL}, \cite{LS11}, \cite{Ba15}, \cite{BL}.

 By \cite[3.2]{BR}, a topological space $X$ is rectifiable if and only if $X$ admits a continuous binary operation $\cdot:X\times X\to X$ with a two-sided unit $e\in X$ such that the map $h:X\times X\to X\times X$, $h:(x,y)\mapsto (x,xy)$, is a homeomorphism. Topological spaces endowed with such operation are called \index{topological lop}{\em topological lops} (see \cite[\S3]{BR}, \cite{Ba15}, \cite{BL}). 

 It is clear that the class of rectifiable spaces includes all topological groups.
 For topological groups the following equivalence was proved in \cite{NST} (see also \cite{Lin13} and \cite{LS11}).

 \begin{proposition}\label{p:dotSfan} A rectifiable space $X$ contains a (strong) $\dot S^\w$-fan if and only if $X$ contains a (strong) ${\bar S}^\w$-fan.
 \end{proposition}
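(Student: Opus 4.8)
The plan is to dispose of the implication ``$\dot S^\w$-fan $\Ra$ $\bar S^\w$-fan'' for free and to concentrate on the reverse implication. Indeed, a $\dot S^\w$-fan is by definition a $\bar S^\w$-fan (its set of limits is a singleton, hence has compact closure), and the same remark applies verbatim to the strong versions, so nothing is to be proved in that direction. For the converse I would first invoke the lop structure of $X$: by \cite[3.2]{BR} the rectifiable space $X$ carries a continuous binary operation $\cdot\colon X\times X\to X$ with two-sided unit $e$ for which $h\colon(x,y)\mapsto(x,xy)$ is a homeomorphism of $X\times X$. Since $h$ preserves first coordinates, restricting it to the fibres $\{a\}\times X$ shows that for every $a\in X$ the left translation $\lambda_a\colon x\mapsto ax$ is a homeomorphism of $X$ whose inverse is the left division $\lambda_a^{-1}\colon x\mapsto a\backslash x$; note $\lambda_a^{-1}(a)=a\backslash a=e$ because $h(a,e)=(a,a)$. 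This left division is the non-associative substitute for multiplication by $a^{-1}$ in a topological group.

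Turning to the construction, given a (strong) $\bar S^\w$-fan $(S_n)_{n\in\w}$ with $y_n:=\lim S_n$ and $K:=\overline{\{y_n:n\in\w\}}$ compact, I would set $T_n:=\lambda_{y_n}^{-1}(S_n)=\{y_n\backslash s:s\in S_n\}$. As $\lambda_{y_n}^{-1}$ is a homeomorphism, each $T_n$ is again a convergent sequence, now with limit $\lambda_{y_n}^{-1}(y_n)=e$; thus all the $T_n$ converge to the single point $e$. Since infinitely many $T_n$ are nonempty, every neighbourhood of $e$ contains all but finitely many points of each such $T_n$ and therefore meets infinitely many members of the family, so $(T_n)_{n\in\w}$ is not locally finite at $e$. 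Hence, once compact-finiteness is verified, $(T_n)_{n\in\w}$ will be the desired $\dot S^\w$-fan.

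The heart of the argument, and the step I expect to be the main obstacle, is showing that the family $(T_n)_{n\in\w}$ remains compact-finite, the difficulty being that we have translated by a \emph{different} element $y_n$ for each $n$. Here the $\bar S$-hypothesis (i.e.\ compactness of $K$) is exactly what saves the day. If a compact set $C\subset X$ met $T_n$ for infinitely many $n$, I would choose $t_n\in T_n\cap C$ and write $t_n=y_n\backslash s_n$ with $s_n\in S_n$; then $s_n=y_n t_n\in K\cdot C$. The product $K\cdot C$ is the continuous image of the compact set $K\times C$, hence compact, so by the compact-finiteness of the original fan it meets only finitely many $S_n$, contradicting $s_n\in S_n\cap(K\cdot C)$ for infinitely many $n$. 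This establishes that $(T_n)_{n\in\w}$ is a $\dot S^\w$-fan.

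Finally, for the strong case I would transport the witnessing neighbourhoods by the same homeomorphisms: if each $S_n$ has an $\IR$-open neighbourhood $U_n$ with $(U_n)_{n\in\w}$ compact-finite, put $V_n:=\lambda_{y_n}^{-1}(U_n)$. Being the image of an $\IR$-open set under the homeomorphism $\lambda_{y_n}^{-1}$, the set $V_n$ is an $\IR$-open neighbourhood of $T_n$: given $v\in V_n$, one composes the function witnessing $\IR$-openness of $U_n$ at the point $y_n v$ with $\lambda_{y_n}$ to obtain the required witness at $v$. The family $(V_n)_{n\in\w}$ is compact-finite by the very same $K\cdot C$ computation, now applied to a point $u_n\in U_n\cap(K\cdot C)$. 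Thus a strong $\bar S^\w$-fan yields a strong $\dot S^\w$-fan, completing the proof.
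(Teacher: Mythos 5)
Your proof is correct and follows essentially the same route as the paper's: both translate each sequence $S_n$ by the inverse of left translation by its limit $\lim S_n$ (so that all translated sequences converge to the unit $e$), verify compact-finiteness via the compactness of the product $L\cdot K$ of the compact limit set with a given compact set, and transport the witnessing $\IR$-open neighbourhoods by the same homeomorphisms in the strong case. The only differences are expository — you justify in more detail that left translations are homeomorphisms and that preimages of $\IR$-open sets remain $\IR$-open, steps the paper takes for granted.
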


 \begin{proof} Assume that $(S_n)_{n\in\w}$ is a (strong) ${\bar S}^\w$-fan in $X$.
 The space $X$, being rectifiable, admits a continuous binary operation $\cdot :X\times X\to X$ with a two-sided unit $e$ such that the map $h:X\times X\to X\times X$, $h(x,y)\mapsto (x,x\cdot y)$, is a homeomorphism.

Since $(S_n)_{n\in\w}$ is a ${\bar S}^\w$-fan, the set $\{\lim S_n\}_{n\in\w}$ has compact closure $L$ in $X$. For every $n\in\w$ consider the homeomorphism $h_n:X\to X$, $h_n:x\mapsto (\lim S_n)\cdot x$ and observe that $h_n^{-1}(\lim S_n)=e$. Then the preimage $S_n'=h_n^{-1}(S_n)$ is a sequence in $X$, convergent to the unit $e$ on $X$. We claim that $(S'_n)_{n\in\w}$ is a (strong) $\dot S^\w$-fan in $X$.

Given a compact set $K\subset X$ and $n\in\w$, observe that the set $S_n'=h_n^{-1}(S_n)$ meets $K$ if and only if the set $S_n$ meets the compact set $h_n(K)\subset L\cdot K$. The compact-finiteness of the family $(S_n)_{n\in\w}$ implies that the sets $\{n\in\w:K\cap S_n'\}\subset\{n\in\w:S_n\cap L\cdot K\ne\emptyset\}$ is finite. This means that the family $(S'_n)_{n\in\w}$ is compact-finite and hence $(S'_n)_{n\in\w}$ is a $\dot S^\w$-fan.

If the $\bar S^\w$-fan $(S_n)_{n\in\w}$ is strong, then every set $S_n$ is contained in an $\IR$-open set $U_n\subset X$ such that the family $(U_n)_{n\in\w}$ is compact-finite. Then for every $n\in\w$ the preimage $U_n'=h_n^{-1}(U_n)$ is $\IR$-open in $X$ and the family $(U_n')_{n\in\w}$ is compact-finite in $X$, witnessing that $(S_n)_{n\in\w}$ is a strong $\dot S^\w$-fan in $X$.
\end{proof}


\begin{theorem}\label{t:rec-fan} If a rectifiable space $X$ contains a strong $D_\w$-cofan and (strong) $\bar S^\w$-fan, then $X$ contains a (strong) $\Fin^\w$-fan.
\end{theorem}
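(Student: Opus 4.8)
The plan is to mimic the product construction of Theorem~\ref{t:product1}, but to realize the resulting $\Fin^\w$-fan \emph{inside} $X$ by multiplying the cofan against the sequence-fan, using that a rectifiable space is a lop. First I would fix, via \cite{BR}, a continuous lop operation $\cdot:X\times X\to X$ with two-sided unit $e$ for which $h:(x,y)\mapsto(x,xy)$ is a homeomorphism of $X\times X$; hence every left translation $L_x:y\mapsto xy$ is a homeomorphism and the left division $(x,z)\mapsto x\backslash z=\pr_2 h^{-1}(x,z)$ is continuous. By Proposition~\ref{p:dotSfan} the given (strong) $\bar S^\w$-fan yields a (strong) $\dot S^\w$-fan $(S_n)_{n\in\w}$ all of whose sequences converge to one common point, which (translating by a left translation) I may take to be $e$; this common limit is exactly what will make the construction work. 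Write $D_n=\{x_{n,m}\}_{m\in\w}$ for the members of the strong cofan, with limit $a$, and let $W_{n,m}\ni x_{n,m}$ be $\IR$-open with $(W_{n,m})_{m\in\w}$ compact-finite for each fixed $n$.

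Because only \emph{left} operations are available, I would place the cofan on the left. For each $n$ I choose points $y_{n,m}\in S_n$ with $y_{n,m}\to e$ as $m\to\infty$, picked deep enough in $S_n$ that $w_{n,m}:=x_{n,m}\cdot y_{n,m}\in W_{n,m}$ (possible since $L_{x_{n,m}}(e)=x_{n,m}\in W_{n,m}$ and $y_{n,m}\to e$), and I claim the indexed family of singletons $(\{w_{n,m}\})_{(n,m)\in\w\times\w}$ is a $\Fin^\w$-fan. For non-local-finiteness at $a$: given a neighborhood $O$ of $a$, the open set $p^{-1}(O)$ (with $p(x,y)=xy$) contains a box $U\times V\ni(a,e)$; since $D_n\to a$ we get $D_n\subset U$ for all large $n$, and then $y_{n,m}\to e\in V$ gives $w_{n,m}\in O$ for all large $m$, i.e. for infinitely many indices.

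Compact-finiteness of $(\{w_{n,m}\})$ is the heart of the matter: for a compact $K\subset X$ I would bound $\{(n,m):w_{n,m}\in K\}$ in two steps. Bounding $m$ is easy: for fixed $n$, $w_{n,m}\in K$ forces $W_{n,m}\cap K\ne\emptyset$, so compact-finiteness of $(W_{n,m})_m$ leaves finitely many $m$. \textbf{Bounding $n$ is the main obstacle}, precisely because right translations and right division are \emph{not} available, so I cannot recover the right factor $y_{n,m}$ directly from $w_{n,m}$. The key is again that the cofan converges: if infinitely many $n$ admitted some $m(n)$ with $w_{n,m(n)}\in K$, then $x_{n,m(n)}\in D_n\to a$, so $Z:=\{x_{n,m(n)}\}\cup\{a\}$ is compact and $(x_{n,m(n)},w_{n,m(n)})\in Z\times K$; applying the continuous $h^{-1}$ gives
$$y_{n,m(n)}=x_{n,m(n)}\backslash w_{n,m(n)}\in\pr_2 h^{-1}(Z\times K)=:K^\flat,$$
a \emph{single} compact set meeting $S_n$ for infinitely many $n$, contradicting compact-finiteness of the $\dot S$-fan $(S_n)$. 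Hence only finitely many $n$ occur, and together with the bound on $m$ the family is compact-finite; with non-local-finiteness it is a $\Fin^\w$-fan.

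Finally, for the \emph{strong} conclusion (when the $\bar S^\w$-fan, and hence $(S_n)$, is strong) I would take compact-finite $\IR$-open neighborhoods $V_n\supset S_n$ and set $G_{n,m}:=(x_{n,m}\cdot V_n)\cap W_{n,m}$. This set is $\IR$-open, being a finite intersection of $\IR$-open sets (the product of the two witnessing functions witnesses $\IR$-openness of the intersection), it contains $w_{n,m}$, it lies in $x_{n,m}V_n$, and it lies in $W_{n,m}$. The inclusion $G_{n,m}\subset W_{n,m}$ bounds $m$ as above, while $G_{n,m}\subset x_{n,m}V_n$ lets the displayed argument run with a point $v_n\in V_n$ satisfying $x_{n,m(n)}v_n\in K$ in place of $y_{n,m(n)}$, forcing $v_n\in V_n\cap K^\flat$ and contradicting compact-finiteness of $(V_n)$ to bound $n$. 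Thus $(G_{n,m})_{(n,m)}$ is compact-finite and the $\Fin^\w$-fan is strong.
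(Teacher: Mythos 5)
Your proof is correct and takes essentially the same route as the paper's: both pass to a $\dot S^\w$-fan via Proposition~\ref{p:dotSfan}, form products $x_{n,m}\cdot y_{n,m}$ with the fan points chosen close enough to the unit to keep the products inside the cofan's witnessing $\IR$-open sets, get non-local finiteness from continuity of the multiplication, and bound the index $n$ in the compact-finiteness check by pushing a compact set through the continuous left division (your $\pr_2 h^{-1}(Z\times K)$ is exactly the paper's $C^{-1}K$), with the same intersection $(x_{n,m}\cdot V_n)\cap W_{n,m}$ handling the strong case. The only cosmetic difference is that the paper first uses homogeneity to translate the cofan so that it also converges to $e$, whereas you leave its limit at $a$.
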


\begin{proof} By \cite[3.2]{BR}, the space $X$, being rectifiable, admits a continuous binary operation $\cdot:X\times X\to X$ with a two-sided unit $e\in X$ such that the map $h:X\times X\to X\times X$, $h:(x,y)\mapsto (x,xy)$, is a homeomorphism. For any points $x,y\in X$ let $x^{-1}y$ denote the unique point of $X$ such that $h^{-1}(x,y)=(x,x^{-1}y)$. It follows that the binary operation $X\times X\to X$, $(x,y)\mapsto x^{-1}y$, is continuous and satisfies the identities $x(x^{-1}y)=y=x^{-1}(xy)$ for all $x,y\in X$.

Let $(D_n)_{n\in\w}$ be a strong $D_\w$-cofan in $X$ and $(S_n)_{n\in\w}$ be a (strong) $\dot S^\w$-fan in $X$ (which exists by Proposition~\ref{p:dotSfan}). Since the rectifiable space $X$ is topologically homogeneous, we can assume that the sequences $S_n$, $n\in\w$, and $(D_n)_{n\in\w}$ converge to the unit $e$ of $X$. For every $n\in\w$ write the strict compact-finite set $D_n$ as $D_n=\{x_{n,m}\}_{m\in\w}$ and for every $n,m\in\w$ find an $\IR$-open neighborhood $U_{n,m}\subset X$ of $x_{n,m}$ such that the family $(U_{n,m})_{m\in\w}$ is strongly compact-finite. Replacing each sequence $D_n=\{x_{n,m}\}_{m\in\w}$ by suitable subsequence, we can assume that $e\notin U_{n,m}$ for all $n,m\in\w$.

For every $n\in\w$ let $\{y_{n,m}\}_{m\in\w}$ be an injective enumeration of the convergent sequence $S_n\setminus\{e\}$.
If the fan $(S_n)_{n\in\w}$ is strong, then each convergent sequence $S_n$ has an $\IR$-open neighborhood $V_n\subset X$ such that the family $(V_n)_{n\in\w}$ is compact-finite in $X$. If the fan $(S_n)_{n\in\w}$ is not strong, then we put $V_n=X$ for all $n\in\w$.

For every $n,m\in\IN$ the left shift $z\mapsto x_{n,m}\cdot z$ is a homeomorphism of $X$, which implies that the set $x_{n,m}^{-1}U_{n,m}=\{z\in Z: x_{n,m}\cdot z\in U_{n,m}\}$ is an $\IR$-open neighborhood of the point $e=\lim_{k\to\infty}y_{n,m}$. So, we can choose a number $k_m>m$ such that $y_{n,k_m}\in x_{n,m}^{-1}U_{n,m}$ and $x_{n,m}\cdot y_{n,k_m}\in U_{n,m}$.

It follows that for every $n,m\in\w$ the set $W_{n,m}=U_{n,m}\cap (x_{n,m}\cdot V_n)$ is an $\IR$-open neighborhood of the point $z_{n,m}=x_{n,m}\cdot y_{n,k_m}$.
We claim that the family $(\{z_{n,m}\})_{n,m\in\w}$ is compact-finite in $X$.

 Fix any compact subset $K\subset X$.
For every $n\in\w$ the family $(U_{n,m})_{m\in\w}$ is compact-finite, which implies that the sets $\{m\in\w:z_{n,m}\in K\}\ne\emptyset\}\subset\{m\in\w:K\cap U_{n,m}\ne\emptyset\}$ are finite and hence the set $C=\{e\}\cup\{x_{n,m}:n,m\in\w,\;z_{n,m}\in K\}$ is compact.

It follows that the set $C^{-1}K=\{x^{-1}y:x\in C,\;y\in K\}$ is compact and by the choice of the family $(S_n)_{n\in\w}$ there is $N\in\w$ such that $(C^{-1}K)\cap S_{n}=\emptyset$ for every $n>N$. Then the set  $\{(n,m)\in\w\times\w:z_{n,m}\in K\}
\subset\bigcup_{n=1}^N\{m\in \w:K\cap U_{n,m}\ne\emptyset\}$ is finite, witnessing that the family $(\{z_{n,m}\})_{n,m\in\w}$ is compact-finite in $X$.

If the family $(V_n)_{n\in\w}$ is compact-finite, then by the same argument we can prove that the family $(W_{n,m})_{n,m\in\w}$ is compact-finite in $X$ and hence the family $(\{z_{n,m}\})_{n,m\in\w}$ is strongly compact-finite in $X$.

Finally, we check that the family   $(\{z_{n,m}\})_{n,m\in\w}$ is not locally finite at $e$. Take any neighborhood $O_e\subset X$ of $e$. By the continuity of the binary operation, there is a neighborhood $U_e\subset X$ of $e=ee$ such that $U_e\cdot U_e\subset O_e$. Since the sets $D_n=\{x_{n,m}\}_{m\in\IN}$ converge to $e$, there is a number $n\in\w$ such that $\{x_{n,m}\}_{m\in\IN}\subset U_e$. Since $\lim_{k\to\infty}y_{n,k}=e\in U_e$, we can choose a number $m_0$ such that $y_{n,k_m}\in U_e$ for all $m\ge m_0$. Then for every $m\ge m_0$ we get $x_{n,m}\cdot y_{n,k_m}\subset U_e\cdot U_e\subset O_e$, which means that the family $(\{z_{n,m}\})_{n,m\in\w}$ is not locally finite at $e$ and hence this family is a $\Fin^\w$-fan in $X$. If the $\dot S^\w$-fan $(S_n)_{n\in\w}$ is strong, then the family $(W_{n,m})_{n,m\in\w}$ is compact-finite, which implies that the $\Fin^\w$-fan $(\{z_{n,m}\})_{n,m\in\w}$ is strong.
\end{proof}

Theorem~\ref{t:rec-fan} and Propositions~\ref{p:k-metr}, \ref{p:decomp1} imply:

\begin{corollary} If a rectifiable Tychonoff space $X$ is $k^*$-metrizable and contains no $\Fin^\w$-fan, then $X$ is $k$-homeomorphic to a metrizable space or to a topological sum of cosmic $k_\w$-spaces.
\end{corollary}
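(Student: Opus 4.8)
The plan is to run the same dichotomy that drives the proof of Corollary~\ref{c:prod1}, with Theorem~\ref{t:rec-fan} taking over the role that Theorem~\ref{t:product1} played there. Record the standing hypotheses: $X$ is rectifiable, Tychonoff, $k^*$-metrizable, and contains no $\Fin^\w$-fan. Since the identity map $kX\to X$ is a continuous $k$-homeomorphism, the two desired conclusions are equivalent to saying that $kX$ is metrizable, respectively that $kX$ is a topological sum of cosmic $k_\w$-spaces. First I would split on whether $X$ contains a $\ddot S^\w$-fan. If it does not, then Proposition~\ref{p:k-metr} (which uses only $k^*$-metrizability, and not rectifiability) tells us that $X$ is $k$-metrizable, i.e. $k$-homeomorphic to a metrizable space; this is the first alternative.

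In the opposite case $X$ contains a $\ddot S^\w$-fan, and since every $\ddot S$-fan is a $\bar S$-fan, this is in particular a $\bar S^\w$-fan. This is the sole place where rectifiability is used: were $X$ to contain in addition a strong $D_\w$-cofan, then Theorem~\ref{t:rec-fan}, applied to the rectifiable space $X$ together with this cofan and the $\bar S^\w$-fan, would produce a $\Fin^\w$-fan in $X$, contradicting the hypothesis. Hence $X$ contains no strong $D_\w$-cofan. Feeding this into the decomposition Proposition~\ref{p:decomp1} yields that $kX$ is a topological sum of cosmic $k_\w$-spaces, so $X$ is $k$-homeomorphic to such a sum; this is the second alternative, completing the dichotomy.

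The step that requires care — and the main obstacle — is this final appeal to Proposition~\ref{p:decomp1}, whose stated hypotheses demand a $\mu$-complete $\bar\aleph_k$-space, whereas $X$ is given only to be $k^*$-metrizable (which, by the chart of generalized metric spaces, supplies merely an $\aleph_k$-space). The genuine content is therefore to confirm that a $k^*$-metrizable space carrying no strong $D_\w$-cofan lies within the scope of the decomposition result, that is, that $X$ is a $\mu$-complete $\bar\aleph_k$-space: via Lemma~\ref{l:b-nw} one replaces the compact-countable $k$-network by one consisting of bounded closed sets, and one then argues, in the absence of the cofan, that these are relatively compact, upgrading the network to a compact-countable closed one. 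This is exactly the verification tacitly used when Corollary~\ref{c:prod1} applied Proposition~\ref{p:decomp1} to its $k^*$-metrizable factor, so I would isolate it once as a separate lemma and quote it here; everything else in the argument is a routine unwinding of the fan–cofan implications and of the $k$-homeomorphism invariance of the two target properties.
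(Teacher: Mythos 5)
Your reconstruction of the argument is exactly the paper's: the corollary is stated there as an immediate consequence of Proposition~\ref{p:k-metr} (the dichotomy on $\ddot S^\w$-fans), Theorem~\ref{t:rec-fan} (rectifiability turns a $\bar S^\w$-fan plus a strong $D_\w$-cofan into a $\Fin^\w$-fan, so strong $D_\w$-cofans are excluded), and Proposition~\ref{p:decomp1}. You have also put your finger on the genuinely delicate point: Proposition~\ref{p:decomp1} asks for a $\mu$-complete $\bar\aleph_k$-space, whereas the hypothesis supplies only a Tychonoff $k^*$-metrizable (hence $\aleph_k$) space; the paper elides this both here and in Corollary~\ref{c:prod1}.

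The gap is in your repair, which is the step your proof actually rests on. The lemma you propose --- that a $k^*$-metrizable space with no strong $D_\w$-cofan is a $\mu$-complete $\bar\aleph_k$-space --- is false, and the indicated proof is circular. Lemma~\ref{l:b-nw} yields a $k$-network of \emph{bounded} sets; the further step ``in the absence of the cofan, these are relatively compact'' is not an inference but a restatement of $\mu$-completeness, a completeness-type property which the cofan hypothesis (already spent inside Lemma~\ref{l:b-nw}) cannot deliver. Concretely, take any pseudocompact Tychonoff space all of whose compact subsets are finite; such spaces exist in ZFC (Shakhmatov's pseudocompact space in which every countable subset is closed, or $\w$ enlarged by a suitable family of weak $P$-points of $\beta\w\setminus\w$). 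Being regular and $k$-discrete, such a space has the family of singletons as a compact-finite closed $k$-network, so it is $k^*$-metrizable and even $\bar\aleph_k$; it contains no $D_\w$-cofan at all, since choosing one point from each member of a sequence of infinite compact-finite sets converging to a point $x$ yields, together with $x$, an infinite compact set; yet it is not $\mu$-complete, the whole space being closed, bounded and non-compact. (This space does not violate the conclusion of the corollary --- its $k$-modification is discrete --- so the statement itself may well be true; but your route does not prove it.) A second defect is independent of $\mu$-completeness: upgrading the bounded $k$-network of Lemma~\ref{l:b-nw} to a compact-countable \emph{closed} $k$-network, i.e.\ the $\bar\aleph_k$ property, is the content of Proposition~\ref{p:a+s->ba} and requires in addition the absence of $\ddot S^{\w_1}$-fans, a hypothesis nowhere available here: only $\Fin^\w$-fans are forbidden, and in the relevant case $X$ does contain $\ddot S^\w$-fans. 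Your argument becomes correct if one strengthens the hypothesis to ``$X$ is a $\mu$-complete $\bar\aleph_k$-space'' --- in particular it covers Tychonoff $\aleph$-spaces, which are $\sigma$-spaces, hence submetrizable, hence $\mu$-complete, and that is all the subsequent Corollary~\ref{c:rect-aleph} needs --- but for the statement in the stated generality the mismatch you identified remains an open gap, shared with the paper, that your lemma does not close.
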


Combining this corollary with Corollary~\ref{c:fan-in-aleph-space}, we get

\begin{corollary}\label{c:rect-aleph} If a rectifiable Tychonoff $\aleph$-space $X$ contains no strong $\Fin^\w$-fan, then $X$ is $k$-homeomorphic to a metrizable space or to a topological sum of cosmic $k_\w$-spaces.
\end{corollary}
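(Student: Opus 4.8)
The plan is to reduce Corollary~\ref{c:rect-aleph} to the preceding corollary, which already settles the structure of rectifiable Tychonoff $k^*$-metrizable spaces that contain no $\Fin^\w$-fan. Two gaps separate the two statements: the preceding corollary assumes $k^*$-metrizability and forbids an \emph{arbitrary} $\Fin^\w$-fan, whereas here we are handed an $\aleph$-space and are only promised the absence of \emph{strong} $\Fin^\w$-fans. Both gaps are closed by the $\aleph$-space machinery assembled earlier, so the real task is a pair of short upgrades of the hypotheses.

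First I would observe that every Tychonoff $\aleph$-space is $k^*$-metrizable. Indeed, by definition an $\aleph$-space carries a $k$-network $\mathcal N=\bigcup_{i\in\w}\mathcal N_i$ in which each family $\mathcal N_i$ is locally finite; since a locally finite family is automatically compact-finite (any compact set is covered by finitely many of the witnessing neighborhoods, each meeting only finitely many members), this same network exhibits $X$ as a $k^*$-metrizable space. Hence the $k^*$-metrizability hypothesis of the preceding corollary holds for free.

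Next I would promote the hypothesis ``$X$ contains no strong $\Fin^\w$-fan'' to ``$X$ contains no $\Fin^\w$-fan''. This is exactly where Corollary~\ref{c:fan-in-aleph-space} is used: in a Tychonoff $\aleph$-space every countable fan is strong, and a $\Fin^\w$-fan, being indexed by $\w$, is a countable fan. Therefore, were $X$ to contain any $\Fin^\w$-fan, that fan would automatically be strong, contradicting our assumption; so $X$ contains no $\Fin^\w$-fan whatsoever.

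With both upgrades in place, $X$ is a rectifiable Tychonoff $k^*$-metrizable space containing no $\Fin^\w$-fan, and the preceding corollary delivers at once that $X$ is $k$-homeomorphic either to a metrizable space or to a topological sum of cosmic $k_\w$-spaces, which is the desired conclusion. The only genuinely substantive work lies hidden inside that preceding corollary --- the fan-construction of Theorem~\ref{t:rec-fan}, which manufactures a $\Fin^\w$-fan out of a $D_\w$-cofan and a $\dot S^\w$-fan using the rectifiable multiplication, together with the dichotomies furnished by Propositions~\ref{p:k-metr} and \ref{p:decomp1}. At the present level there is no real obstacle to overcome: the entire content of the argument is the two one-line reductions above.
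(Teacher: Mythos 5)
Your proposal is correct and is exactly the paper's (implicit) argument: the paper derives this corollary by "combining" the preceding corollary for $k^*$-metrizable spaces with Corollary~\ref{c:fan-in-aleph-space}, which is precisely your two reductions (every Tychonoff $\aleph$-space is $k^*$-metrizable since locally finite families are compact-finite, and every countable fan in a Tychonoff $\aleph$-space is strong, so the absence of strong $\Fin^\w$-fans rules out all $\Fin^\w$-fans). Nothing is missing; you have simply spelled out the details the paper leaves to the reader.
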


\begin{corollary}\label{c:rect-aleph0} If a rectifiable $\aleph_0$-space $X$ contains no strong $\Fin^\w$-fan, then $X$ is either $k$-metrizable or hemicompact.
\end{corollary}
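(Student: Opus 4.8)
The plan is to mimic the proof of Corollary~\ref{c:rect-aleph}, simply replacing the appeal to Proposition~\ref{p:decomp1} (which produces a topological sum of cosmic $k_\w$-spaces) by Proposition~\ref{p:XD->s+h+kw}, which is tailored to $\aleph_0$-spaces and produces hemicompactness instead. First I would record the standing facts about $X$: an $\aleph_0$-space is cosmic, hence Lindel\"of and regular, hence normal and Tychonoff; moreover it is an $\aleph$-space (so that Corollary~\ref{c:fan-in-aleph-space} applies) and is $k^*$-metrizable (so that Proposition~\ref{p:k-metr} applies). Since $X$ is thus a Tychonoff $\aleph$-space, Corollary~\ref{c:fan-in-aleph-space} lets me upgrade the hypothesis ``$X$ contains no strong $\Fin^\w$-fan'' to the stronger-looking ``$X$ contains no $\Fin^\w$-fan at all'', because any countable fan in such a space is automatically strong.

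The argument then splits into the two alternatives in the conclusion. If $X$ is $k$-metrizable there is nothing to prove, so I would assume $X$ is \emph{not} $k$-metrizable. Because $X$ is $k^*$-metrizable, Proposition~\ref{p:k-metr} now supplies a $\ddot S^\w$-fan in $X$, which is in particular a $\bar S^\w$-fan. Being a countable fan in a Tychonoff $\aleph$-space, this fan is strong by Corollary~\ref{c:fan-in-aleph-space}; thus $X$ carries a strong $\bar S^\w$-fan.

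The key step is to exclude a strong $D_\w$-cofan. Suppose $X$ contained one. Then, since $X$ is rectifiable, Theorem~\ref{t:rec-fan} applied to the strong $D_\w$-cofan together with the strong $\bar S^\w$-fan just produced would yield a strong $\Fin^\w$-fan in $X$, contradicting the strengthened hypothesis. Hence $X$ contains no strong $D_\w$-cofan, and Proposition~\ref{p:XD->s+h+kw}(2) immediately gives that the $\aleph_0$-space $X$ is hemicompact. Combining the two cases, $X$ is either $k$-metrizable or hemicompact, as required.

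I do not expect a serious obstacle here, since every ingredient is already available; the only points needing care are the routine verifications that an $\aleph_0$-space satisfies the standing hypotheses (Tychonoff, $\aleph$-space, $k^*$-metrizable) required to invoke Corollary~\ref{c:fan-in-aleph-space}, Proposition~\ref{p:k-metr}, and Theorem~\ref{t:rec-fan}, and the trivial observation that a $\ddot S^\w$-fan is a $\bar S^\w$-fan so that Theorem~\ref{t:rec-fan} is applicable.
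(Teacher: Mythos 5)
Your proof is correct and follows essentially the same route as the paper's: the paper obtains this corollary from Theorem~\ref{t:rec-fan} together with Proposition~\ref{p:k-metr} and Corollary~\ref{c:fan-in-aleph-space}, invoking Proposition~\ref{p:XD->s+h+kw}(2) (in place of Proposition~\ref{p:decomp1} used for Corollary~\ref{c:rect-aleph}) to get hemicompactness of the $\aleph_0$-space from the absence of a strong $D_\w$-cofan. Your preliminary verifications (Tychonoff, $\aleph$-space, $k^*$-metrizable) and the dichotomy argument are exactly what the paper's chain of references encodes.
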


Since each topological group is rectifiable, Theorem~\ref{t:rec-fan} implies:

\begin{corollary}\label{c:g->fan} If a topological group $G$ contains a strong $D_\w$-cofan and (strong) $\bar S^\w$-fan, then $G$ contains a (strong) $\Fin^\w$-fan.
\end{corollary}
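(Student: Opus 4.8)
The plan is to observe that the statement is an immediate specialization of Theorem~\ref{t:rec-fan} to topological groups, so the only thing requiring comment is that the hypothesis of Theorem~\ref{t:rec-fan}---rectifiability---is automatic for groups. First I would recall that every topological group $G$ is rectifiable: writing the group operation as $(x,y)\mapsto xy$ with two-sided unit $e$, the map $h:G\times G\to G\times G$, $h(x,y)=(x,xy)$, is a homeomorphism (with continuous inverse $(x,y)\mapsto(x,x^{-1}y)$), so $G$ is a topological lop and hence rectifiable by \cite[3.2]{BR}. Equivalently, the map $(x,y)\mapsto(x,x^{-1}y)$ carries the diagonal $\Delta_G$ onto $G\times\{e\}$ while preserving each fibre $\{x\}\times G$, which is precisely the defining condition of a rectifiable space.

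With rectifiability established, the two hypotheses imposed in the corollary---that $G$ contain a strong $D_\w$-cofan and a (strong) $\bar S^\w$-fan---are verbatim the hypotheses of Theorem~\ref{t:rec-fan}. Applying that theorem to $X=G$ produces a (strong) $\Fin^\w$-fan in $G$, which is exactly the desired conclusion. No further argument is needed: the corollary quotes Theorem~\ref{t:rec-fan} through the single observation that groups are rectifiable, and the parenthetical ``(strong)'' propagates automatically because Theorem~\ref{t:rec-fan} already tracks the strong variant.

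Since all the genuine work sits inside Theorem~\ref{t:rec-fan}, there is no real obstacle at the level of the corollary. Were one to unfold the argument rather than cite it, the mechanism would be to use Proposition~\ref{p:dotSfan} to convert the $\bar S^\w$-fan into a $\dot S^\w$-fan $(S_n)_{n\in\w}$ converging to $e$, enumerate the cofan as $D_n=\{x_{n,m}\}_{m\in\w}$ and each fan sequence as $S_n=\{y_{n,m}\}_{m\in\w}$, and form the diagonal points $z_{n,m}=x_{n,m}\cdot y_{n,k_m}$ for suitably large $k_m$, transporting the $\IR$-open neighbourhoods of $x_{n,m}$ by the left translations $z\mapsto x_{n,m}z$. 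The delicate point in that construction---simultaneously guaranteeing compact-finiteness of the resulting singleton family and its failure to be locally finite at $e$---is handled exactly as in the proof of Theorem~\ref{t:rec-fan}, so I would simply invoke that theorem rather than reproduce the estimate.
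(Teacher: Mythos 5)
Your proposal is correct and takes exactly the paper's route: the paper derives this corollary in a single line by observing that every topological group is rectifiable and then invoking Theorem~\ref{t:rec-fan}. Your explicit verification of rectifiability via the homeomorphism $(x,y)\mapsto(x,x^{-1}y)$ (equivalently, the lop structure from \cite[3.2]{BR}) is precisely the justification the paper leaves implicit, and the parenthetical ``(strong)'' indeed propagates automatically through Theorem~\ref{t:rec-fan}.
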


\chapter{Strong $\Fin$-fans in functions spaces}\label{ch:Funct}

In this section we shall construct strong $\Fin$-fans in function spaces $C_\K(X)$. Here for topological spaces $X,Y$ by $C(X,Y)$ we denote the linear space of all continuous functions $f:X\to Y$. Each class $\K$ of compact spaces determines the \index{topology!$\K$-open}{\em $\K$-open topology} $\tau_\K$ on $C(X,Y)$ generated by the sub-base consisting of the sets $$[K;U]=\{f\in C(X,Y):f(K)\subset U\}$$where $K\in\K$ is a compact subset of $X$ and $U$ is an open set in $Y$. The topological space $(C(X,Y),\tau_\K)$ is denoted by $C_\K(X,Y)$. If  $\K$ is the class of all compact (finite) subsets of $X$, then the function space $C_\K(X,Y)$ is denoted by $C_k(X,Y)$ (resp. $C_p(X,Y)$). If $Y$ is the real line, then we write $C_\K(X)$ instead of $C_\K(X,\IR)$.

The construction of the function space $C_\K(X)$ determines a contravariant functor from the category $\Top$ of topological spaces and their continuous maps to the category of locally convex linear topological spaces and their linear operators.
To each continuous map $f:X\to Y$ between topological spaces the functor $C_\K$ assigns the linear operator $C_\K f:C_\K(Y)\to C_\K(X)$, $C_\K f:\varphi\mapsto \varphi\circ f$. If the class $\K$ is closed under continuous images, then the linear operator $C_\K$ is continuous, which means that $C_\K$ is a functor from the category $\Top$ to the category $\LCS$ of locally convex spaces and their linear continuous operators.

In particular $C_k$ and $C_p$ are contravariant functors from $\Top$ to $\LCS$.

In this chapter we assume that $\K$ is a class of compact topological spaces containing all countable compact spaces with at most one non-isolated point.

\section{A duality between $D_\w$-cofans and $S^\w$-fans in function spaces}

The following two propositions display certain kind of functional duality between strong $D_\w$-cofans and strong $\dot S^\w$-fans.

\begin{proposition}\label{p:XD->FS} If a topological space $X$ contains a strong $D_\w$-cofan, then the function space $C_\K(X)$ contains a strong $\dot S^\w$-fan $(S_n)_{n\in\w}$. Moreover, if the space $X$ is zero-dimensional, then $\bigcup_{n=1}^\infty S_n\subset C_\K(X,2)\subset C_\K(X)$.
\end{proposition}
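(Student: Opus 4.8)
The plan is to dualize the cofan by turning the points $x_{n,m}\in D_n$ into bump functions peaking at them. Fix a strong $D_\w$-cofan $(D_n)_{n\in\w}$ with limit point $x$, write $D_n=\{x_{n,m}\}_{m\in\w}$, and for each $n$ choose $\IR$-open neighborhoods $U_{n,m}\ni x_{n,m}$ with $(U_{n,m})_{m\in\w}$ compact-finite in $X$. Since $x\ne x_{n,m}$ and (as in all constructions using $\IR$-open sets) $x$ is functionally separated from each $x_{n,m}$, I may shrink the $U_{n,m}$ so that $x\notin U_{n,m}$, and then pick a continuous $f_{n,m}\colon X\to[0,1]$ with $f_{n,m}(x_{n,m})=1$ and $f_{n,m}(X\setminus U_{n,m})\subset\{0\}$; in particular $f_{n,m}(x)=0$. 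Put $S_n=\{f_{n,m}:m\in\w\}$. Because $(U_{n,m})_m$ is compact-finite, each compact $K\subset X$ meets only finitely many $U_{n,m}$, so $f_{n,m}\to\zeta$ in $C_\K(X)$ as $m\to\infty$, where $\zeta$ is the zero function; since $C_\K(X)$ is Hausdorff this makes each $S_n$ a convergent sequence with limit $\zeta$ (after passing to a subsequence in $m$ we may assume the $f_{n,m}$ are pairwise distinct). As all $S_n$ share the limit $\zeta$, every neighborhood of $\zeta$ meets every $S_n$, so $(S_n)_{n\in\w}$ is automatically not locally finite at $\zeta$; it remains to prove compact-finiteness, which is where the convergence $D_n\to x$ enters.

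For compact-finiteness, suppose some compact $\mathcal K\subset C_\K(X)$ met infinitely many $S_n$; reindexing, pick $g_n=f_{n,m_n}\in\mathcal K$ for all $n$ and set $y_n=x_{n,m_n}$, so $g_n(y_n)=1$ and $g_n(x)=0$. The cofan convergence $D_n\to x$ forces $y_n\to x$, hence $L=\{x\}\cup\{y_n:n\in\w\}$ is a convergent sequence, a compact metrizable space lying in $\K$. The restriction map $R\colon C_\K(X)\to C(L)$, $R(f)=f|L$, is continuous (its basic preimages are sub-basic sets $[C;U]$ with $C\subset L$ compact, hence $C\in\K$), so $R(\mathcal K)$ is compact in the Banach space $C(L)$ and therefore equicontinuous at $x$ by the Ascoli theorem \cite[3.4.20]{En}. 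Thus there is a neighborhood $W$ of $x$ in $L$ with $|f(y)-f(x)|<\tfrac12$ for all $f\in\mathcal K$ and $y\in W$. For large $n$ we have $y_n\in W$, whence $1=|g_n(y_n)-g_n(x)|<\tfrac12$, a contradiction. Hence $(S_n)_{n\in\w}$ is compact-finite and so is a $\dot S^\w$-fan.

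To upgrade it to a strong fan, note that $C_\K(X)$ carries the locally convex topology of uniform convergence on the members of $\K$, so it is Tychonoff and every open subset is $\IR$-open. For each $n$ put $\mathcal U_n=\bigcup_{m\in\w}\big([\{x_{n,m}\};(\tfrac34,\infty)]\cap[\{x\};(-\infty,\tfrac14)]\big)$, an open (hence $\IR$-open) set containing $S_n$, since $f_{n,m}(x_{n,m})=1$ and $f_{n,m}(x)=0$. Repeating the Ascoli argument with the sharper constants shows $(\mathcal U_n)_n$ is compact-finite: if $g_n\in\mathcal K\cap\mathcal U_n$ with witness $m_n$, then $g_n(y_n)>\tfrac34$ and $g_n(x)<\tfrac14$ give $g_n(y_n)-g_n(x)>\tfrac12$, contradicting equicontinuity at $x$ along $y_n\to x$. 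Therefore $(S_n)_{n\in\w}$ is strongly compact-finite, and being a non-locally-finite family of convergent sequences with common limit $\zeta$, it is a strong $\dot S^\w$-fan.

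Finally, for the moreover part, assume $X$ zero-dimensional. Then the $x_{n,m}$ admit clopen neighborhoods $U_{n,m}$ with $x\notin U_{n,m}$ and $(U_{n,m})_m$ compact-finite, and I may take $f_{n,m}=\chi_{U_{n,m}}$, the indicator function, which is continuous, two-valued, satisfies $f_{n,m}(x_{n,m})=1$ and $f_{n,m}(x)=0$, and lies in $C_\K(X,2)$. Running the argument above with these $f_{n,m}$ yields a strong $\dot S^\w$-fan with $\bigcup_{n}S_n\subset C_\K(X,2)\subset C_\K(X)$. The main obstacle throughout is the compact-finiteness of $(S_n)$: it is exactly here that one must convert the cofan's convergence $D_n\to x$ into a convergent-sequence compactum $L\in\K$ and invoke equicontinuity, and it is this step that dictates arranging $f_{n,m}(x)=0$ so that the equicontinuity estimate at $x$ produces the contradiction.
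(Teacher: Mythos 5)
Your overall strategy is the same as the paper's: dualize the strong $D_\w$-cofan into bump functions supported in the witnessing $\IR$-open sets $U_{n,m}$, observe that compact-finiteness of $(U_{n,m})_{m\in\w}$ forces $f_{n,m}\to 0$ in $C_\K(X)$, and prove (strong) compact-finiteness of the resulting family by an equicontinuity argument over a compact convergent sequence assembled from the cofan, which lies in $\K$ by the chapter's standing assumption on $\K$. Your packaging of that last step --- restricting to $L=\{x\}\cup\{y_n\}_{n}$, noting $L\in\K$ so the restriction operator into the Banach space $C(L)$ is continuous, and invoking the classical Ascoli theorem --- is a legitimate alternative to the paper's more hands-on argument with an accumulation point $g_\infty$ and a uniform neighborhood over the set $C=\{x_\infty\}\cup\bigcup_{n}\{x_{n,m}\}_{m\le m_n}$.

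There is, however, a genuine gap at your very first step. You justify ``$x\notin U_{n,m}$ after shrinking'' by asserting that the limit point $x$ is functionally separated from each $x_{n,m}$. The proposition is stated for an \emph{arbitrary} topological space, and distinct points need not be separated by continuous real-valued functions; worse, if every continuous function takes the same value at $x$ and at $x_{n,m}$, then \emph{every} $\IR$-open set containing $x_{n,m}$ automatically contains $x$ (the function witnessing $\IR$-openness is $1$ at $x_{n,m}$, hence also at $x$, so $x$ cannot lie in $X\setminus U_{n,m}$ where that function vanishes), and no shrinking can remove $x$. Your construction genuinely needs $f_{n,m}(x)=0$: both the restriction-to-$L$ contradiction and the neighborhoods $[\{x\};(-\infty,\tfrac14)]$ collapse without it, so as written the proof fails on general spaces. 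The gap is repairable, but by a different mechanism: since $(U_{n,m})_{m\in\w}$ is compact-finite and $\{x\}$ is compact, only finitely many $U_{n,m}$ contain $x$; discard those indices (each $D_n$ stays infinite), and then $x\in X\setminus U_{n,m}$ gives $f_{n,m}(x)=0$ for free. A second, smaller gap of the same kind: you assert that after passing to a subsequence the $f_{n,m}$ are pairwise distinct, but you must rule out that only finitely many distinct functions occur among them; again compact-finiteness does it (if a single function $f$ equaled $f_{n,m}$ for infinitely many $m$, then picking $m_0$ with $f(x_{n,m_0})=1$ would put $x_{n,m_0}$ inside $U_{n,m}$ for infinitely many $m$). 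Distinctness is not cosmetic: it is what makes each $S_n$ an infinite convergent sequence, and your ``automatically not locally finite'' remark also uses it. Note that the paper's proof avoids both problems at once by inductively choosing the points $x_{n,m}\in D_n$ so that $U_n(x_{n,m})$ misses $\{x_{n,k}\}_{k<m}$, and then never refers to the limit point at all: its separating neighborhoods $W_{n,m}$ compare values at $x_{n,m}$ with values at the earlier points $x_{n,k}$ only.
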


\begin{proof} Let $(D_n)_{n\in\w}$ be a strong $D_\w$-cofan converging to a point $x_\infty$ of the space $X$. 
For every $n\in\w$ the set $D_n$ is strongly compact-finite in $X$. Hence each point $x\in D_n$ has an $\IR$-open neighborhood $U_{n}(x)\subset X$ such that the family $\big(U_n(x)\big)_{x\in D_n}$ is compact-finite in $X$.

For every $n\in\w$ we can inductively choose a sequence of points $(x_{n,m})_{m\in\w}$ in $D_n$ such that $U_n(x_{n,m})\cap\{x_{n,k}\}_{k<m}=\emptyset$ for every $m\in\w$.
By definition of an $\IR$-open set, for every $n,m\in\w$ there exists a continuous function $f_{n,m}:X\to[0,1]$ such that $f_{n,m}(x_{n,m})=1$ and $f_{n,m}^{-1}(X\setminus U_{n}(x_{n,m}))\subset\{0\}$. If the space $X$ is zero-dimensional, then we can additionally assume that $f_{n,m}(X)\subset \{0,1\}$. For every $n\in\w$ the compact-finiteness of the family $(U_n(x))_{x\in D_n}$ implies that the sequence of functions $S_n=\{f_{n,m}\}_{m\in\w}$ converges to zero in the function space $C_\K(X)$. For every $n,m\in\w$ consider the open neighborhood $$W_{n,m}=\{g\in C_k(X):g(x_{n,m})>\tfrac34\mbox{ \ and \ }g(x_{n,k})<\tfrac14 \mbox{ for all $k<m$}\}$$ of the function $f_{n,m}$ in $C_\K(X)$. We claim that the family of open sets  $W_{n}=\bigcup_{m=1}^\infty W_{n,m}$, $n\in\w$, is compact-finite in $C_\K(X)$. Assuming that $(W_n)_{n\in\w}$ is not compact-finite, we can find a compact set $K\subset C_\K(X)$ such that the set $\Omega=\{n\in\w:K\cap W_n\ne\emptyset\}$ is infinite.

For every $n\in\Omega$ choose a function $g_n\in K\cap W_n$ and find a number $m_n\in\w$ such that $g_n\in W_{n,m_n}$. By the compactness of $K$, the sequence $(g_n)_{n\in\Omega}$ has an accumulation point $g_\infty$. The convergence of the sequence $(D_n)_{n\in\w}$ to $x_\infty$ implies that the set $C=\{x_\infty\}\cup\bigcup_{n\in\Omega}\{x_{n,m}\}_{m\le m_n}$ is a compact convergent sequence in $X$. So, $C\in\K$ and the set $W_\infty=\{g\in C_\K(X):\sup_{x\in C}|g(x)-g_\infty(x)|<\frac16\}$ is a neighborhood of the function $g_\infty$ in $C_\K(X)$. The continuity of $g_\infty$ at $x_\infty$ yields a neighborhood $O_x\subset X$ whose image $g_\infty(O_x)$ has diameter $<\frac16$ in the real line $\IR$. Since $g_\infty$ is an accumulation point of the sequence $(g_n)_{n\in\w}$, the neighborhood $W_\infty$ of $g_\infty$ contains some function $g_n$, $n>1$. For this function $g_n$ we get $$\diam(g_n(O_x\cap C))\le \diam(g_\infty(O_x\cap C))+2\sup_{x\in C}|g_n(x)-g_\infty(x)|\le \frac16+\frac26=\frac12.$$ In particular, $|g_n(x_{n,m_n})-g_n(x_{n,m_{n-1}})|\le \frac12$, which contradicts the inclusion $g_n\in W_{n,m_n}$. This contradiction shows that the family $(W_n)_{n\in\w}$ is compact-finite in $C_\K(X)$ and hence the family $(S_n)_{n\in\w}$ is a strong $\dot S^\w$-fan in $C_\K(X)$.
\end{proof}

\begin{proposition}\label{p:s->d} If a topological space $X$ contains a strong $\bar S^\w$-fan $(S_n)_{n\in\w}$, then the function space $C_\K(X)$ contains a strong $D_\w$-cofan $(D_n)_{n\in\w}$. If the space $X$ is zero-dimensional, then  $\bigcup_{n\in\w}D_n\subset C_\K(X,2)\subset C_\K(X)$.
\end{proposition}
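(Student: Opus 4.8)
The plan is to dualise the construction in Proposition~\ref{p:XD->FS}: starting from the convergent sequences $S_n$ I build, for each $n$, a countable set $D_n$ of bump functions peaking at the successive points of $S_n$, and then show that $(D_n)_{n\in\w}$ is a strong $D_\w$-cofan converging to the zero function $\mathbf 0$ of $C_\K(X)$. Enumerate $S_n=\{x_{n,m}\}_{m\in\w}$ injectively; discarding $\lim S_n$ if necessary, I may assume $p_n:=\lim S_n\notin S_n$, so that $\overline{S_n}=S_n\cup\{p_n\}$ is a countable compact space with at most one non-isolated point and hence belongs to the class $\K$. Since $(S_n)_{n\in\w}$ is strongly compact-finite, each $S_n$ carries an $\IR$-open neighbourhood $U_n\subset X$ with $(U_n)_{n\in\w}$ compact-finite. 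I will only use this strong compact-finiteness together with the convergent-sequence structure of the $S_n$; neither the fan nor the $\bar S$ hypothesis is needed.

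In the general (real-valued) case I exploit that the target is $\IR$ and let the peaks grow without bound. Using the $\IR$-openness of $U_n$ at $x_{n,m}$ I choose a continuous $f_{n,m}\colon X\to[0,m]$ with $f_{n,m}(x_{n,m})=m$ and $f_{n,m}(X\setminus U_n)\subset\{0\}$, and set $D_n=\{f_{n,m}\}_{m\in\w}$. The convergence $D_n\to\mathbf 0$ is the easy half: any basic neighbourhood of $\mathbf 0$ contains a finite intersection $\bigcap_{i\le r}[K_i;V_i]$ with $0\in V_i$, the compact sets $K_i$ meet only finitely many $U_n$ by compact-finiteness of $(U_n)_{n\in\w}$, and for each remaining $n$ every $f_{n,m}$ vanishes on $\bigcup_{i\le r}K_i$, whence $D_n\subset\bigcap_{i\le r}[K_i;V_i]$.

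The heart of the matter is that each $D_n$ is (strongly) compact-finite. For real-valued functions this is immediate from the unbounded peaks: since $\overline{S_n}\in\K$, the restriction map $C_\K(X)\to C(\overline{S_n})$ into the sup-metric space is continuous, so any compact $\mathcal K\subset C_\K(X)$ satisfies $\sup\{\sup_{\overline{S_n}}|g|:g\in\mathcal K\}<\infty$, whereas $\sup_{\overline{S_n}}f_{n,m}\ge f_{n,m}(x_{n,m})=m$; hence $\mathcal K$ contains only finitely many $f_{n,m}$. Strongness follows with the neighbourhoods $\mathcal V_{n,m}=\{g\in C_\K(X):g(x_{n,m})>m-1\}$, which are $\IR$-open (explicitly, via the continuous evaluation functional $g\mapsto g(x_{n,m})$ at the singleton $\{x_{n,m}\}\in\K$): the same boundedness shows $(\mathcal V_{n,m})_{m\in\w}$ is compact-finite, so $D_n$ is strongly compact-finite.

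The delicate case is the zero-dimensional one, where the functions must take values in $\{0,1\}$ and the unbounded-peak trick is unavailable. Here I use zero-dimensionality to pick clopen neighbourhoods $W_{n,m}\subset U_n$ of $x_{n,m}$ missing the finite set $\{x_{n,k}:k<m\}$, and put $f_{n,m}=\mathbf 1_{W_{n,m}}\in C_\K(X,2)$. This triangular avoidance makes the restrictions $(f_{n,m}|_{\overline{S_n}})_{m\in\w}$ pairwise $1$-separated in the sup-metric, since for $k<m$ one has $f_{n,m}(x_{n,k})=0$ while $f_{n,k}(x_{n,k})=1$; thus no infinite subfamily is relatively compact and each $D_n$ is compact-finite. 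Strongness is the one place a genuine argument is required, and I expect to reproduce verbatim the accumulation-point argument of Proposition~\ref{p:XD->FS} for $\mathcal V_{n,m}=\{g:g(x_{n,m})>\tfrac34\text{ and }g(x_{n,k})<\tfrac14\text{ for }k<m\}$: were some compact $\mathcal K$ to meet infinitely many $\mathcal V_{n,m}$, an accumulation point $g_\infty$ of chosen $g_m\in\mathcal K\cap\mathcal V_{n,m}$ would be continuous at $p_n$, forcing $\diam g_m(O\cap\overline{S_n})<\tfrac12$ on a small neighbourhood $O$ of $p_n$, contradicting $g_m(x_{n,m})-g_m(x_{n,k})>\tfrac12$ for earlier indices $k$ with $x_{n,k}\in O$ (such $k$ exist because $x_{n,j}\to p_n$). \textbf{The main obstacle} is precisely this zero-dimensional strong compact-finiteness together with the verification that clopen separation of $x_{n,m}$ from finitely many earlier points is available; the remaining verifications are routine bookkeeping.
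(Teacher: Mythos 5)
Your proof is correct, but in the real-valued case it runs along a genuinely different route than the paper's. The paper works with bounded cumulative bumps ($f_{n,m}\equiv 1$ on $\{x_{n,k}:k\le m\}$ and $\equiv 0$ off $U_n$) and certifies strong compact-finiteness with neighborhoods $W_{n,m}$ that also constrain the value at the limit point $x_n$; for this it needs $f_{n,m}(x_n)=0$, and that is exactly where the $\bar S$-hypothesis enters: the closure of $\{\lim S_n\}_{n\in\w}$ is compact, so by compact-finiteness of $(U_n)_{n\in\w}$ one may pass to a subfamily whose sets $U_n$ miss all the limit points. Your unbounded peaks $f_{n,m}(x_{n,m})=m$ make this reduction unnecessary: since $\bar S_n$ (and each of its compact subsets) belongs to $\K$, the restriction operator $C_\K(X)\to C(\bar S_n)$ is continuous into a sup-normed space, so compact subsets of $C_\K(X)$ are uniformly bounded on $\bar S_n$, which yields compact-finiteness of $D_n$ and of $(\mathcal V_{n,m})_{m\in\w}$ in one stroke. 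As you observe, this uses only the strong $S^\w$-semifan structure of $(S_n)_{n\in\w}$; in particular your argument also proves Proposition~\ref{p:XS->FD} without its functional-Hausdorffness hypothesis (which the paper needs there precisely to manufacture functions vanishing at $\lim S_n$). In the zero-dimensional case your argument is essentially the paper's: the accumulation-point/continuity-at-$p_n$ argument of Proposition~\ref{p:XD->FS} transfers exactly as you predict, and the remaining differences are cosmetic (single-point clopen bumps with triangular avoidance versus cumulative bumps, and verifying neighborhoods conditioned only at sequence points rather than also at the limit point, which again spares the preliminary reduction).

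The one obstacle you flag --- availability of the clopen separation --- is in fact no obstacle, and it needs no separation axiom on $X$. Since $p_n$ is the unique non-isolated point of $\bar S_n$, each $x_{n,m}$ is isolated in $\bar S_n$, so there is an open set $V\subset X$ with $V\cap\bar S_n=\{x_{n,m}\}$. As clopen sets form a base of the zero-dimensional space $X$, you may choose a clopen set $W_{n,m}$ with $x_{n,m}\in W_{n,m}\subset U_n\cap V$; such $W_{n,m}$ automatically misses every $x_{n,k}$ with $k\ne m$ as well as $p_n$, which is even a little more than your construction requires.
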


\begin{proof} Let $(S_n)_{n\in\w}$ be a strong $\bar S^\w$-fan in $X$. 
Since the fan $(S_n)_{n\in\w}$ is strong, each set $S_n$ is contained in an $\IR$-open set $U_n\subset X$ such that the family $(U_n)_{n\in\w}$ is compact-finite in $X$.
Replacing the sequence $(S_n)_{n\in\w}$ by a suitable subsequence, we can assume that the family $(S_n)_{n\in\w}$ is disjoint, and the union $\bigcup_{n\in\w}U_n$ does not intersect the (compact) closure of the set  $\{\lim S_n\}_{n\in\w}$.

For every $n\in\w$ denote by $x_n$ the limit point of the convergent sequence $S_n$ and let $(x_{n,m})_{m\in\w}$ be an injective enumeration of the set $S_n\setminus\{x_n\}$.
For every $n,m\in\w$ the set $U_n$ is an $\IR$-open neighborhood of the points $x_{n,m}$, $m\in\w$, which allows us to construct a continuous function $f_{n,m}:X\to \IR$ such that $f(x_{n,k})=1$ for all $k\le m$ and $f_{n,m}(X\setminus U_{n})\subset\{0\}$. If the space $X$ is zero-dimensional, then we can additionally require that $f_{n,m}(X)\subset\{0,1\}$. Since the union $\bigcup_{n\in\w}U_n$ does not intersect the set $\{\lim S_n\}_{n\in\w}=\{x_n\}_{n\in\w}$, for every $n\in \w$ the point $x_n$ does not belong to the $\IR$-open set $U_n$ and hence $f_{n,m}(x_n)\in f_{n,m}(X\setminus U_n)\subset\{0\}$.
Then for every $n,m\in\w$ the set
$$W_{n,m}=\{f\in C_\K(X):\mbox{$f(x_{n})<\tfrac14$ and $f(x_{n,k})>\tfrac34$ for all $k\le m$}\}$$is an open neighborhood of the function $f_{n,m}$ in $C_\K(X)$. Since the space $C_\K(X)$ is Tychonoff, the open set $W_{n,m}$ is $\IR$-open.

We claim that for every $n\in\w$ the family $(W_{n,m})_{m\in\w}$ is compact-finite in $C_\K(X)$. To derive a contradiction, assume that for some compact set $K\subset C_\K(X)$ the family $\Omega=\{m\in\w:K\cap W_{n,m}\ne\emptyset\}$ is infinite. For every $m\in\Omega$ choose a function $g_m\in K\cap W_{n,m}$. By the compactness of $K$ the sequence $(g_m)_{m\in\w}$ has an accumulation point $g_\infty$ in $K$. By our assumption, the compact convergent sequence $\bar S_n$ belongs to the family $\K$. So, the set $[\bar S_n,\frac16]=\{g\in C_\K(X):\sup_{x\in \bar S_n}|g(x)-g_\infty(x)|<\frac16\}$ is a neighborhood of $g_\infty$ in the function space $C_\K(X)$.
The continuity of the function $g_\infty$ yields a neighborhood $O(x_n)$ of the point $x_n$ such that the set $g_\infty(O(x_n))$ has diameter $<\frac16$ in the real line. Choose a number $m_0$ such that $\{x_{n,m}\}_{m\ge m_0}\subset O(x_n)$.
 Since $g_\infty$ is an accumulation point of the sequence $(g_k)_{k\in\w}$, the neighborhood $[\bar S_n,\frac16]$ contains some function $g_k$ with $k\ge m_0$. Then we get the upper bound $$|g_k(x_n)-g_k(x_{n,m})|\le|g_\infty(x_n)-g_\infty(x_{n,m})|+2\sup_{x\in\bar S_n}|g_\infty(x)-g_k(x)|\le \frac16+\frac26=\frac12,$$which contradicts the inclusion $g_k\in W_{n,m}$. This contradiction shows that the family $(W_{n,m})_{m\in\w}$ is compact-finite and hence the set $D_n=\{f_{n,m}\}_{m\in\w}$ is strongly compact-finite in the function space $C_\K(X)$.

 The compact-finiteness of the family $(U_n)_{n\in\w}$ implies that the sequence $(D_n)_{n\in\w}$ converges to zero in $C_\K(X)$ and hence $(D_n)_{n\in\w}$ is a strong $D_\w$-cofan in $C_\K(X)$.
 \end{proof}

For functionally Hausdorff spaces Proposition~\ref{p:s->d} can be improved by replacing the strong $\bar S^\w$-fan by a strong $S^\w$-semifan. We recall that a topological space $X$ is functionally Hausdorff if for any distinct points $x,y\in X$ there is a continuous function $f:X\to[0,1]$ such that $f(x)=0$ and $f(y)=1$.

\begin{proposition}\label{p:XS->FD} If a functionally Hausdorff space $X$ contains a strong $S^\w$-semifan, then the function space $C_\K(X)$ contains a strong $D_\w$-cofan $(D_n)_{n\in\w}$. If the space $X$ is zero-dimensional, then  $\bigcup_{n\in\w}D_n\subset C_\K(X,2)\subset C_\K(X)$.
\end{proposition}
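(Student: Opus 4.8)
The plan is to follow the proof of Proposition~\ref{p:s->d} almost verbatim, isolating the single point where the hypothesis there (a strong $\bar S^\w$-\emph{fan}) is genuinely stronger than a strong $S^\w$-semifan and patching it with the functionally Hausdorff property. In Proposition~\ref{p:s->d} the compactness of the closure of the limit set $\{\lim S_n\}_{n\in\w}$ was used only to arrange, via the compact-finiteness of the witnessing family $(U_n)_{n\in\w}$, that $\bigcup_{n\in\w}U_n$ misses every limit point $x_n:=\lim S_n$; this was precisely what guaranteed $f_{n,m}(x_n)=0$ for the separating functions. For an $S^\w$-semifan the limit points need not have compact closure, so this is no longer available and the vanishing at $x_n$ must be produced directly.

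Concretely, I would start with a strong $S^\w$-semifan $(S_n)_{n\in\w}$ together with $\IR$-open neighborhoods $U_n\supset S_n$ for which $(U_n)_{n\in\w}$ is compact-finite. For each $n$ put $x_n=\lim S_n$ and fix an injective enumeration $S_n\setminus\{x_n\}=\{x_{n,m}\}_{m\in\w}$, which is a genuine sequence converging to $x_n$ since $x_n$ is the unique non-isolated point of the compact set $\bar S_n$. The key step is to construct, for each $n,m$, a continuous $f_{n,m}\colon X\to[0,1]$ with $f_{n,m}(x_{n,k})=1$ for all $k\le m$, with $f_{n,m}(x_n)=0$, and with $f_{n,m}(X\setminus U_n)\subset\{0\}$. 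For each $k\le m$ the $\IR$-openness of $U_n$ yields $\phi_{n,k}\colon X\to[0,1]$ with $\phi_{n,k}(x_{n,k})=1$ and $\phi_{n,k}(X\setminus U_n)\subset\{0\}$, while the functionally Hausdorff property yields $h_{n,k}\colon X\to[0,1]$ with $h_{n,k}(x_{n,k})=1$ and $h_{n,k}(x_n)=0$. Setting $g_{n,k}=h_{n,k}\cdot\phi_{n,k}$ and then $f_{n,m}=\max_{k\le m}g_{n,k}$ produces a continuous function with the three required properties. In the zero-dimensional case one instead takes $\phi_{n,k}$ and $h_{n,k}$ to be indicator functions of clopen sets, available since $X$ is zero-dimensional and Hausdorff, so that $f_{n,m}$ becomes $\{0,1\}$-valued and $\bigcup_n D_n\subset C_\K(X,2)$.

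With these functions in hand the rest runs exactly as in Proposition~\ref{p:s->d}. Each $D_n=\{f_{n,m}\}_{m\in\w}$ is infinite: if it were finite, some function would coincide with $f_{n,m}$ for infinitely many $m$ and would therefore take the value $1$ at every $x_{n,k}$, whence continuity at $x_n$ together with $x_{n,k}\to x_n$ would force the value $1$ at $x_n$, contradicting $f_{n,m}(x_n)=0$. To see that $D_n$ is strongly compact-finite I set $W_{n,m}=\{f\in C_\K(X):f(x_n)<\tfrac14\text{ and }f(x_{n,k})>\tfrac34\text{ for all }k\le m\}$, an $\IR$-open neighborhood of $f_{n,m}$ (the space $C_\K(X)$ being Tychonoff), and reproduce the accumulation-point argument of Proposition~\ref{p:s->d}: since $\bar S_n$ is a countable compact space with a single non-isolated point we have $\bar S_n\in\K$, so an accumulation point $g_\infty$ of functions $g_m\in K\cap W_{n,m}$ would, by continuity at $x_n$ and uniform $\tfrac16$-closeness on $\bar S_n$, give $|g_k(x_n)-g_k(x_{n,m_0})|\le\tfrac12$ for suitable indices $m_0\le k$, contradicting $g_k\in W_{n,k}$. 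Finally $(D_n)_{n\in\w}$ converges to the zero function, because for any $C\in\K$ the compact-finiteness of $(U_n)_{n\in\w}$ leaves all but finitely many $U_n$ disjoint from $C$, whence $f_{n,m}|_C\equiv 0$ and $D_n\subset[C;(-\e,\e)]$ for those $n$. Thus $(D_n)_{n\in\w}$ is a strong $D_\w$-cofan.

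The main obstacle is exactly the construction in the second paragraph: without compactness of the limit set one cannot push the supports of the separating functions off the limit points, yet the compact-finiteness argument for $(W_{n,m})_{m\in\w}$ rests on having $f_{n,m}(x_n)=0$ while $f_{n,m}(x_{n,k})=1$ on an initial segment. The functionally Hausdorff hypothesis is precisely what enables the product-and-maximum trick to separate each $x_{n,k}$ from $x_n$ inside $U_n$; once this local separation is available, all estimates and the convergence to zero are identical to those in Proposition~\ref{p:s->d}.
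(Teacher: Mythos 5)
Your proof is correct and follows essentially the same route as the paper: the paper likewise builds $\tilde f_{n,m}$ from the $\IR$-open neighborhoods $U_n$, uses the functionally Hausdorff property to obtain $\lambda_{n,m}$ vanishing at $x_n$ and equal to $1$ on $\{x_{n,k}\}_{k\le m}$, takes the product $f_{n,m}=\lambda_{n,m}\cdot\tilde f_{n,m}$ (your per-point products followed by a maximum give the same functions up to bookkeeping), and then repeats the argument of Proposition~\ref{p:s->d}. Your explicit check that each $D_n$ is infinite is a small detail the paper leaves implicit, but otherwise the two proofs coincide.
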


\begin{proof} Let $(S_n)_{n\in\w}$ be a strong $S^\w$-semifan in $X$.
By definition, every set $S_n$ has an $\IR$-open neighborhood $U_n\subset X$ such that the family $(U_n)_{n\in\w}$ is compact-finite in $X$. For every $n\in\w$ denote by $x_n$ the limit point of the convergent sequence $S_n$ and let $(x_{n,m})_{m\in\w}$ be an injective enumeration of the set $S_n\setminus\{x_n\}$.

For every $n,m\in\w$ the set $U_n$ is an $\IR$-open neighborhood of the points $x_{n,m}$, $m\in\w$, which allows us to construct a continuous function $\tilde f_{n,m}:X\to \IR$ such that $\tilde f(x_{n,k})=1$ for all $k\le m$ and $\tilde f_{n,m}(X\setminus U_{n})\subset\{0\}$.
Using the functional Hausdorff property of $X$, choose a continuous function $\lambda_{n,m}:X\to [0,1]$ such that $\lambda_{n,m}(x_n)=0$ and $\lambda(\{x_{n,k}\}_{k\le m})\subset\{1\}$. The the function $f_{n,m}=\lambda_{n,m}\cdot \tilde f_{n,m}$ has properties: $f_{n,m}(\{x_{n,k}\}_{k\le m})\subset\{1\}$ and $f_{n,m}(U_n\cup\{x_n\})\subset\{0\}$.

 If the space $X$ is zero-dimensional, then we can additionally require that $\tilde f_{n,m}(X)\cup \lambda_{n,m}(X)\subset\{0,1\}$ and hence $f_{n,m}(X)\subset\{0,1\}$. Proceeding as in the proof of Proposition~\ref{p:s->d}, we can prove that for every $n\in\w$ the set $D_n=\{f_{n,m}\}_{m\in\w}$ is strongly compact-finite in $C_\K(X)$ and the sequence $(D_n)_{n\in\w}$ converge to zero, so is a strong $D_\w$-cofan in $C_k(X)$.
 \end{proof}

There is also a duality between strict $\Cld^\w$-fans in a space and  strong $D_\w$-cofan in its function space.

\begin{proposition}\label{p:XCld->FD} Assume that a topological space $X$ contains a strict $\Cld^\w$-fan $(F_n)_{n\in\w}$ such that the union $\bigcup_{n\in\w}F_n$ is contained in the closure of the union $\bigcup_{n\in\w}K_n$ of some countable subfamily $\{K_n\}_{n\in\w}\subset \K$. Then the function space $C_\K(X)$ contains a strong $D_\w$-cofan $(D_n)_{n\in\w}$, which is contained in the space $C_\K(X,2)$ if the $\Cld^\w$-fan $(F_n)_{n\in\w}$ is a $\Clop^\w$-fan.
\end{proposition}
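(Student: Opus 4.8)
The plan is to dualize the proof of Proposition~\ref{p:s->d}, replacing its convergent-sequence fan elements by convergent sequences manufactured inside the functional neighborhoods of the closed fan sets. The target is a strong $D_\w$-cofan $(D_n)_{n\in\w}$ in $C_\K(X)$ converging to the zero function $\zeta$, with each $D_n=\{f_{n,m}\}_{m\in\w}$ a countable strongly compact-finite set; in the $\Clop^\w$ case all the $f_{n,m}$ will be $\{0,1\}$-valued, so that $\bigcup_{n\in\w}D_n\subset C_\K(X,2)$.

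First I would fix the point $x_\infty$ at which $(F_n)_{n\in\w}$ fails to be locally finite, together with functional neighborhoods $U_n\supset F_n$ for which $(U_n)_{n\in\w}$ is compact-finite. Discarding the finitely many indices with $x_\infty\in U_n$ (there are only finitely many, since $\{x_\infty\}$ is compact), I may assume $x_\infty\notin U_n$ for all $n$. The decisive use of the hypothesis $\bigcup_{n\in\w}F_n\subset\overline{\bigcup_m K_m}$ comes next: for each $n$ I would produce, inside $U_n$, a convergent sequence $S_n=\{x_{n,m}\}_{m\in\w}$ assembled from points of $\bigcup_m K_m$ together with its limit $\hat x_n$ (at which the functions below will vanish), arranged so that $\bar S_n$ is a convergent sequence and hence a member of $\K$ by the standing assumption on $\K$. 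Being drawn from the compacta $K_m$, such sequences are available near the closed sets $F_n$ even though the $F_n$ themselves carry no intrinsic sequential structure, and they do not violate the compact-finiteness of the fan because they consist of points off the fan.

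With the $S_n$ in hand the construction is exactly that of Proposition~\ref{p:s->d}. Since $U_n$ is a functional (hence $\IR$-open) neighborhood, for every $n,m$ I can choose a continuous $f_{n,m}\colon X\to[0,1]$ with $f_{n,m}(x_{n,k})=1$ for all $k\le m$, $f_{n,m}(\hat x_n)=0$, and $f_{n,m}(X\setminus U_n)\subset\{0\}$ (this is consistent with continuity, since only the finitely many first terms receive the value $1$, while the tail $x_{n,k}\to\hat x_n$ forces $f_{n,m}(x_{n,k})\to 0$); when the fan is a $\Clop^\w$-fan the clopenness of the relevant sets lets me take $f_{n,m}$ valued in $\{0,1\}$. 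Put $D_n=\{f_{n,m}\}_{m\in\w}$. The sets
$$W_{n,m}=\bigl\{g\in C_\K(X):g(\hat x_n)<\tfrac14\text{ and }g(x_{n,k})>\tfrac34\text{ for all }k\le m\bigr\}$$
are neighborhoods of $f_{n,m}$ (evaluations at the points $\hat x_n,x_{n,k}\in\bar S_n\in\K$ are continuous on $C_\K(X)$), and they are $\IR$-open because $C_\K(X)$ is a Tychonoff locally convex space. Using that $\bar S_n\in\K$, so that the seminorm $g\mapsto\sup_{x\in\bar S_n}|g(x)|$ is continuous on $C_\K(X)$, the same equicontinuity-at-$\hat x_n$ argument as in Proposition~\ref{p:s->d} shows that every compact subset of $C_\K(X)$ meets only finitely many of the sets $W_{n,m}$, $m\in\w$; hence each $D_n$ is strongly compact-finite.

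It remains to verify that $(D_n)_{n\in\w}$ converges to $\zeta$. Each $f_{n,m}$ is supported in $U_n$, and a neighborhood base at $\zeta$ in $C_\K(X)$ is given by the sets $\bigcap_{i\le j}[K_i;(-\e,\e)]$ with $K_i\in\K$; since $(U_n)_{n\in\w}$ is compact-finite, all but finitely many $n$ satisfy $U_n\cap K_i=\emptyset$ for every $i\le j$, whence $D_n\subset\bigcap_{i\le j}[K_i;(-\e,\e)]$. Thus $(D_n)_{n\in\w}$ is a strong $D_\w$-cofan, contained in $C_\K(X,2)$ in the $\Clop^\w$ case. The main obstacle is precisely the one nonroutine step above: extracting from the closure condition $\bigcup_{n\in\w}F_n\subset\overline{\bigcup_m K_m}$ the convergent sequences $S_n$ (with closures in $\K$) that witness the divergence of each $D_n$, while keeping every test set used in the compact-finiteness bookkeeping inside the class $\K$, so that the argument is valid for the $\K$-open topology and not merely for the compact-open one. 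Everything else transcribes from Propositions~\ref{p:s->d} and~\ref{p:XS->FD}.
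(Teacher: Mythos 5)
Your proof has a fatal gap at precisely the step you flag as ``the one nonroutine step'': the extraction, from the hypothesis $\bigcup_{n\in\w}F_n\subset\overline{\bigcup_{m}K_m}$, of convergent sequences $S_n\subset U_n\cap\bigcup_m K_m$ with limits $\hat x_n$. No argument can supply these. The proposition assumes nothing about $X$ beyond being a topological space (no sequentiality, no first countability, no functional Hausdorffness), so closure is not sequential closure; and the compacta $K_m\in\K$ need not contain a single nontrivial convergent sequence (each $K_m$ could be finite, or a copy of $\beta\w$). The set $U_n\cap\bigcup_m K_m$ is indeed nonempty when $F_n\ne\emptyset$, but it may be an infinite discrete set meeting each $K_m$ in finitely many points, with no convergent subsequence whatsoever. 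Two further steps fail even if the $S_n$ existed: (i) your $f_{n,m}$ must equal $1$ at $x_{n,k}$, $k\le m$, and $0$ at the limit $\hat x_n$, which now lies \emph{inside} the closure of $U_n$; separating $\hat x_n$ functionally from nearby points of $S_n$ needs a functional-Hausdorff-type hypothesis, which is exactly why Proposition~\ref{p:XS->FD} assumes it and the present proposition does not; (ii) the $\Clop^\w$ addendum collapses, because the points $x_{n,k}$ lie in $\bigcup_m K_m$, not in the clopen sets $F_n$, so clopenness of $F_n$ yields no $\{0,1\}$-valued function equal to $1$ at them.

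The closure hypothesis has to be used in a non-sequential way, and the paper's proof shows how: keep the closed sets $F_k$ themselves. Choose $f_k\colon X\to[0,1]$ with $f_k(F_k)\subset\{1\}$ and $f_k(X\setminus U_k)\subset\{0\}$ (the characteristic function $f_k=\chi_{F_k}$ in the clopen case, which settles the $\{0,1\}$-valued claim), put $f_{n,m}=\max_{n\le k\le m}f_k$ and $D_n=\{f_{n,m}\}_{m\ge n}$, and take as witnessing neighborhoods $W_{n,m}=\bigcap_{i\le m}\{g\in C_\K(X):\max_{x\in K_i}|g(x)-f_{n,m}(x)|<\tfrac14\}$, after adjoining the singleton $\{x\}$ (where $x$ is the point of non-local finiteness, arranged to lie outside $\bigcup_n U_n$) to the family $\{K_i\}_{i\in\w}$. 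If a compact set of $C_\K(X)$ met infinitely many $W_{n,m}$, $m\ge n$, an accumulation point $g_\infty$ would satisfy $g_\infty(x)\le\tfrac14$, while closeness to $f_{n,m}$ on the $K_i$'s propagates, by continuity of $g_\infty$ and the inclusion $\bigcup_k F_k\subset\overline{\bigcup_i K_i}$, to a lower bound for $g_\infty$ on $\bigcup_{k\ge n}F_k$; since $x\in\overline{\bigcup_{k\ge n}F_k}$, this contradicts the continuity of $g_\infty$ at $x$. So the hypothesis serves to transfer control from the test compacta $K_i$ (the only sets the topology of $C_\K(X)$ sees) to the fan sets $F_k$ --- not to manufacture convergent sequences in $X$.
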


\begin{proof} Let $x\in X$ be a point at which the $\Cld^\w$-fan $(F_n)_{n\in\w}$ is not locally finite. For every $n\in\w$ choose a functional neighborhood $U_n\subset X$ of $F_n$ such that the family $(U_n)_{n\in\w}$ is compact-finite. Then for some $m$ the union $\bigcup_{n\ge m}U_n$ does not contain the point $x$. Replacing $(F_n)_{n\in\w}$ by a suitable subsequence, we can assume that $m=0$ and hence $x\notin\bigcup_{n\in\w}U_n$. Since $\bigcup\K=X$, we can attach the singleton $\{x\}$ to the family $\{K_n\}_{n\in\w}$ and assume that $x\in K_0$. For every $n\in\w$ choose a function $f_n:X\to[0,1]$ such that $f(F_n)\subset \{1\}$ and $f_n(X\setminus U_n)\subset\{0\}$. If some set $F_n$ is clopen in $X$ then we shall additionally assume that $f_n$ is the characteristic function of $F_n$ in $X$ and hence $f_n\in C_\K(X,2)$.

For any numbers $n\le m$ consider the continuous function $f_{n,m}=\max_{n\le k\le m}f_k\in C_\K(X)$ and its neighborhood  $$W_{n,m}=\bigcap_{i\le m}\{f\in C_\K(X):\max_{x\in K_i}|f(x)-f_{n,m}(x)|<\tfrac14\}$$  in $C_\K(X)$. For every $n\in\w$ the family $(W_{n,m})_{m\ge n}$ witnesses that the set $D_n=\{f_{n,m}\}_{m\ge n}$ is strongly compact-finite in $C_\K(X)$ (any accumulation point $f_\infty$ of the sequence $(W_{n,m})_{m\ge n}$ in $\IR^X$ has $f(x)=0$ and $f|\bigcup_{m\ge n}F_n=1$ and hence is discontinuous). The compact-finiteness of the family $(U_n)_{n\in\w}$ implies that the sequence $(D_n)_{n\in\w}$ converges to zero in $C_\K(X)$ and hence is a strong $D_\w$-cofan in $C_\K(X)$. If the fan $(F_n)_{n\in\w}$ is a $\Clop^\w$-fan, then $\bigcup_{n\in\w}D_n\subset C_\K(X,2)$ by the choice of the functions $f_n$, $n\in\w$.
\end{proof}

\begin{proposition}\label{p:XI->FD}  If a topological space $X$ contains a strongly compact-finite infinite set $D\subset X$, then the  function space $C_\K(X)$ contains a strong $D_\w$-cofan.
\end{proposition}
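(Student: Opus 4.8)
The plan is to exploit the linear structure of $C_\K(X)$: rather than spreading the set $D$ across the whole cofan, I would use only \emph{one} point of $D$ per row and let the scalar multiples run off to infinity. Since $D$ is infinite and strongly compact-finite, I would first fix a sequence of pairwise distinct points $(b_n)_{n\in\w}$ in $D$ together with $\IR$-open neighborhoods $U_{b_n}\subset X$ such that the family $(U_{b_n})_{n\in\w}$ is compact-finite in $X$; this is exactly the strong compact-finiteness of $D$ restricted to the countable subset $\{b_n\}_{n\in\w}$. For each $n$, the definition of an $\IR$-open set yields a continuous function $f_n\colon X\to[0,1]$ with $f_n(b_n)=1$ and $f_n(X\setminus U_{b_n})\subset\{0\}$. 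I then propose the cofan $(D_n)_{n\in\w}$ given by $D_n=\{(m+1)f_n:m\in\w\}\subset C_\K(X)$, which I claim is a strong $D_\w$-cofan converging to the zero function $\zeta$.

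First I would verify that $(D_n)_{n\in\w}$ converges to $\zeta$. A basic neighborhood of $\zeta$ in $C_\K(X)$ is contained in a set of the form $\bigcap_{i=1}^r[K_i;(-\e,\e)]$ with $K_1,\dots,K_r\in\K$. Since each $f_n$ is supported in $U_{b_n}$ and takes values in $[0,1]$, the entire row $D_n$ lies in this neighborhood as soon as $U_{b_n}$ misses $K_1\cup\dots\cup K_r$. By compact-finiteness of $(U_{b_n})_{n\in\w}$, only finitely many indices $n$ fail this condition, so all but finitely many rows lie in the neighborhood, which is precisely convergence of $(D_n)_{n\in\w}$ to $\zeta$.

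The step I expect to be the crux is showing that each $D_n$ is strongly compact-finite, and here the scaling is essential. The naive attempt of using a \emph{different} point $a_{n,m}\in D$ for each member of a row fails, because every single-bump function $f_a$ already converges to $\zeta$ along $D$, so such a row would sit inside a compact convergent sequence and could not be compact-finite. Using instead the fixed point $b_n$ with growing coefficients, I would equip $(m+1)f_n$ with the neighborhood $W_{n,m}=\{g\in C_\K(X):|g(b_n)-(m+1)|<\tfrac12\}$. This set is open because the evaluation $\mathrm{ev}_{b_n}\colon C_\K(X)\to\IR$ is continuous (singletons belong to $\K$); it is $\IR$-open because $g\mapsto\max(0,\,1-2|g(b_n)-(m+1)|)$ is a continuous $[0,1]$-valued witness; and it contains $(m+1)f_n$ since $f_n(b_n)=1$. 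For fixed $n$ the family $(W_{n,m})_{m\in\w}$ is compact-finite: any compact $K\subset C_\K(X)$ has $\mathrm{ev}_{b_n}(K)$ bounded, say within $[-R,R]$, so $K\cap W_{n,m}=\emptyset$ once $m+\tfrac12>R$. Hence each $D_n$ is strongly compact-finite, and together with the convergence established above this makes $(D_n)_{n\in\w}$ a strong $D_\w$-cofan. The main obstacle is therefore conceptual rather than computational: recognizing that the cofan should run along the single continuous coordinate $\mathrm{ev}_{b_n}$, on which a row is unbounded, while the compact-finite support of $f_n$ forces the rows to collapse to $\zeta$; once this is seen, every verification is routine.
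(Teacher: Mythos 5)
Your proof is correct and is essentially the paper's own argument: the paper likewise chooses points $x_n\in D$ with bump functions $f_n$ supported in the compact-finite $\IR$-open neighborhoods $U(x_n)$ and takes the rows $D_n=\{m\cdot f_n\}_{m\in\w}$, leaving as ``easy to see'' exactly the two verifications you carry out (convergence of the rows to $\zeta$ via compact-finiteness of the $U(x_n)$, and strong compact-finiteness of each row via unboundedness along the continuous evaluation $\mathrm{ev}_{b_n}$). The only cosmetic differences are your shift to $(m+1)f_n$ and your explicit neighborhoods $W_{n,m}$; just note that the witness for $\IR$-openness of $W_{n,m}$ should be produced for each of its points (by pulling back, under $\mathrm{ev}_{b_n}$, a bump function on the interval $(m+\tfrac12,m+\tfrac32)$ peaking at the given value), not only at the center $(m+1)f_n$.
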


\begin{proof} Since $D$ is strongly compact-finite in $X$, each point $x\in D$ has an $\IR$-open neighborhood $U_x\subset X$ such that the family $(U(x))_{x\in D}$ is compact-finite in $X$. Inductively we can choose a sequence of points $(x_n)_{n\in\w}$ in $D$ such that $U(x_n)\cap \{x_k\}_{k<n}=\emptyset$ for every $n\in\w$.

For every $n\in\w$ choose a continuous function $f_n:X\to[0,1]$ such that $f_n(x_n)=1$ and $f_n(X\setminus U(x_n))\subset\{0\}$. It is easy to see that the set $D_n=\{m\cdot f_n\}_{m\in\w}$ is strongly compact-finite in $C_\K(X)$ and the family $(D_n)_{n\in\w}$ converges to zero in $C_\K(X)$. This means that $(D_n)_{n\in\w}$ is a strong $D_\w$-cofan in $X$.
\end{proof}

\section{Function spaces $C_k(X)$}

In this section we apply the results proved in the preceding sections to the problem of detecting function spaces containing no $\Fin^\w$-fans.

\begin{theorem}\label{t:F-prop} Assume that for a topological space $X$ the function space $C_\K(X)$ contains no strong $\Fin^\w$-fan. Then:
\begin{enumerate}
\item[\textup{1)}] The space $X$ contains no strong $D_\w$-cofan.
\item[\textup{2)}] If $X$ is cosmic, then $X$ is $\sigma$-compact;
\item[\textup{3)}] If $X$ is an $\aleph_0$-space, then $X$ is hemicompact;
\item[\textup{4)}] If $X$ is a sequential $\aleph_0$-space, then $X$ is a $k_\w$-space;
\item[\textup{5)}] If $X$ is a $\mu$-complete Tychonoff $\bar\aleph_k$-space, then $X$ is a $k$-sum of hemicompact spaces;
\item[\textup{6)}] If $X$ is a $\mu$-complete Tychonoff $\bar\aleph_k$-$k_\IR$-space, then $X$ is a topological sum of cosmic $k_\w$-spaces.
\end{enumerate}
\end{theorem}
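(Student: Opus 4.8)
The plan is to reduce the entire theorem to statement (1): once we know that $X$ carries no strong $D_\w$-cofan, statements (2)--(6) follow by feeding this fact into the structural results of the preceding sections (Propositions~\ref{p:XD->s+h+kw} and \ref{p:decomp1}), with only statement (6) needing a genuinely extra argument. So the bulk of the work is concentrated in (1), and the rest is bookkeeping.

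For (1) I would argue by contraposition: assuming that $X$ contains a strong $D_\w$-cofan $(D_n)_{n\in\w}$, I will manufacture a strong $\Fin^\w$-fan inside $C_\K(X)$, contradicting the hypothesis. The crucial structural input is that $C_\K(X)$ is a locally convex space, hence a topological abelian group, so Corollary~\ref{c:g->fan} will fuse a cofan and a fan into a $\Fin^\w$-fan --- provided I can produce both objects inside $C_\K(X)$. A strong $\bar S^\w$-fan comes for free: by Proposition~\ref{p:XD->FS} the presence of a strong $D_\w$-cofan in $X$ yields a strong $\dot S^\w$-fan in $C_\K(X)$, which is in particular a strong $\bar S^\w$-fan. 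A strong $D_\w$-cofan in $C_\K(X)$ comes from the dual construction: each member $D_n$ of the cofan in $X$ is an infinite strongly compact-finite set, so Proposition~\ref{p:XI->FD} supplies a strong $D_\w$-cofan in $C_\K(X)$. Corollary~\ref{c:g->fan} then produces the strong $\Fin^\w$-fan. I expect the main obstacle to be exactly this fusion step: a strong $\dot S^\w$-fan is not by itself a $\Fin^\w$-fan, and it is only the combination of the two dual objects together with the group structure of $C_\K(X)$ that yields one.

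Granting (1), statements (2)--(4) are immediate applications of Proposition~\ref{p:XD->s+h+kw}, since cosmic and $\aleph_0$-spaces are regular and Lindel\"of, hence normal and Tychonoff, so that the proposition applies with the hypothesis ``no strong $D_\w$-cofan'' supplied by (1). Statement (5) is Proposition~\ref{p:decomp1} applied to the $\mu$-complete Tychonoff $\bar\aleph_k$-space $X$; that proposition moreover records that the $k$-modification $kX$ is a topological sum of cosmic $k_\w$-spaces, which is what I will exploit in the last step.

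Finally, for (6) I would upgrade the conclusion of (5) from $kX$ to $X$ using the $k_\IR$ assumption. By Proposition~\ref{p:decomp1} the space $kX$ is a topological sum of cosmic $k_\w$-spaces, hence normal and in particular Tychonoff. Since the identity map $\id:kX\to X$ is a $k$-homeomorphism, $X$ and $kX$ share the same compact subsets with the same subspace topologies; therefore every $g\in C(kX)$ restricts continuously to each compact subset of $X$, so $g$ is $k$-continuous on $X$ and hence continuous by the $k_\IR$ property, while the continuity of $\id:kX\to X$ gives the reverse inclusion. Thus $C(X)=C(kX)$. As both $X$ and $kX$ are Tychonoff, each carries the weak topology generated by its family of continuous real-valued functions, and these families coincide; hence the two topologies coincide and $X=kX$ is itself a topological sum of cosmic $k_\w$-spaces. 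The only point at which the $k_\IR$ hypothesis enters is this identification of $X$ with its $k$-modification.
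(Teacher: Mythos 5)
Your proposal is correct and follows essentially the same route as the paper: statement (1) is proved by exactly the same contradiction argument (Propositions~\ref{p:XD->FS} and \ref{p:XI->FD} produce a strong $\dot S^\w$-fan and a strong $D_\w$-cofan in the topological group $C_\K(X)$, and Corollary~\ref{c:g->fan} fuses them into a strong $\Fin^\w$-fan), while (2)--(6) are deduced from the no-cofan conclusion via Propositions~\ref{p:XD->s+h+kw} and \ref{p:decomp1}, which are precisely the ingredients behind the paper's citation of Theorem~\ref{t:Dcofan}. Your explicit $k_\IR$-argument identifying $X$ with its $k$-modification in (6) is a correct spelling-out of a step the paper leaves implicit (and uses elsewhere, e.g.\ in the proof of Theorem~\ref{t:xy+xyz+xy.zs}).
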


\begin{proof} First we show that $X$ contains no $D_\w$-fan. To derive a contradiction, assume that the space $X$ contains a strong $D_\w$-cofan. Then $X$ contains a strongly compact-finite infinite subset. By Propositions~\ref{p:XD->FS} and \ref{p:XI->FD}, the function space $C_\K(X)$ contains a strong $\dot S^\w$-fan and a strong $D_\w$-cofan. Since $C_\K(X)$ is a topological group, we can apply Corollary~\ref{c:g->fan} and conclude that the space $C_\K(X)$ contains a strong $\Fin^\w$-fan. This is the desired contradiction completing the proof of the first statement.

All other statements follow from Theorem~\ref{t:Dcofan}.
\end{proof}

Theorem~\ref{t:F-prop} implies the following characterization extending the results of Arens \cite{Ar46}, Pol \cite{Pol74} and Gabriyelyan \cite[1.2]{Gab}.

\begin{corollary}\label{c:aleph0Ck} For an $\aleph_0$-space $X$ the following conditions are equivalent:
\begin{enumerate}
\item[\textup{1)}] the function space $C_k(X)$ is metrizable;
\item[\textup{2)}] $C_k(X)$ contains no strong $\Fin^\w$-fan;
\item[\textup{3)}] the space $X$ is hemicompact.
\end{enumerate}
\end{corollary}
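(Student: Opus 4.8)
The plan is to establish the cycle of implications $(1)\Ra(2)\Ra(3)\Ra(1)$, which immediately yields the equivalence of all three conditions. The class $\K$ underlying $C_k$ is the class of \emph{all} compact spaces, which in particular contains every countable compact space with at most one non-isolated point, so the standing hypothesis on $\K$ made at the start of this chapter is satisfied and $C_\K(X)=C_k(X)$ throughout.

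First I would dispose of $(1)\Ra(2)$. If $C_k(X)$ is metrizable, then it is Fr\'echet--Urysohn, hence sequential, hence a $k$-space. By Proposition~\ref{k-no-Cld-fan} a $k$-space contains no $\Fin$-fans at all; in particular it contains no strong $\Fin^\w$-fan. This gives $(2)$.

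The implication $(2)\Ra(3)$ is the substantive one, but it has already been done: it is precisely statement $(3)$ of Theorem~\ref{t:F-prop}. Indeed, assuming $C_k(X)$ contains no strong $\Fin^\w$-fan and that $X$ is an $\aleph_0$-space, Theorem~\ref{t:F-prop}(3) (with $\K$ the class of all compact spaces) concludes that $X$ is hemicompact. So here the work consists only in invoking the already-proved theorem and checking that its hypotheses apply.

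Finally, for $(3)\Ra(1)$ I would appeal to the classical metrization of function spaces over hemicompact spaces (Arens \cite{Ar46}). Concretely, if $X$ is hemicompact, fix a countable family $\{K_n\}_{n\in\w}$ of compact subsets of $X$ such that every compact subset of $X$ lies in some $K_n$. Then the sets $[K_n;\tfrac1m]=\{f\in C_k(X):\sup_{x\in K_n}|f(x)|<\tfrac1m\}$, for $n,m\in\w$, form a countable base of neighborhoods of the zero function in the topological vector space $C_k(X)$. Hence $C_k(X)$ is a first-countable topological group, and therefore metrizable, giving $(1)$. I do not expect any genuine obstacle here: the only points requiring care are the bookkeeping that the class $\K$ meets the chapter's standing assumptions and the routine verification that the $[K_n;\tfrac1m]$ are cofinal among basic zero-neighborhoods of the compact-open topology. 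All the real difficulty of the statement is already encapsulated in Theorem~\ref{t:F-prop}(3), so this corollary is essentially an assembly of that theorem with two standard facts.
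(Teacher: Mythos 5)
Your proof is correct and follows exactly the same route as the paper: the cycle $(1)\Ra(2)\Ra(3)\Ra(1)$, with $(1)\Ra(2)$ from Proposition~\ref{k-no-Cld-fan} (metrizable $\Ra$ $k$-space $\Ra$ no $\Fin$-fans), $(2)\Ra(3)$ by invoking Theorem~\ref{t:F-prop}(3), and $(3)\Ra(1)$ by Arens' classical metrization theorem. Your additional verifications (that the class $\K$ of all compact spaces satisfies the chapter's standing hypothesis, and the explicit countable neighborhood base $[K_n;\tfrac1m]$ at zero) are sound and merely make explicit what the paper leaves to the reader.
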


\begin{proof} The implication $(1)\Ra(2)\Ra(3)$ follow from Proposition~\ref{k-no-Cld-fan} and Theorem~\ref{t:F-prop}(3) and $(3)\Ra(1)$ is a classical result of Arens \cite{Ar46} (see \cite[3.4.E]{En}).
\end{proof}

This characterization is completed by the following characterization, which generalizes Theorem 1.4 in \cite{GKP}. In this characterization by $d(X)$ we denote the density of a topological space (equal to the smallest cardinality of a dense set in $X$).

\begin{theorem}\label{t:Ck-char} For a Tychonoff $\mu$-complete $\bar\aleph_k$-$k_\IR$-space $X$ the following conditions are equivalent:
\begin{enumerate}
\item[\textup{1)}] $C_k(X)$ is a $k_\IR$-space;
\item[\textup{2)}] $C_k(X)$ is Ascoli;
\item[\textup{3)}] $C_k(X)$ contains no strong $\Fin^\w$-fan;
\item[\textup{4)}] $X$ is a topological sum of cosmic $k_\w$-spaces;
\item[\textup{5)}] the function space $C_k(X)$ is homeomorphic to the space $\IR^{d(X)}$.
\end{enumerate}
\end{theorem}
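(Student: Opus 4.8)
The plan is to establish the cyclic chain of implications
$$(5)\Ra(1)\Ra(2)\Ra(3)\Ra(4)\Ra(5).$$
Three of these arrows are immediate from results already available. For $(1)\Ra(2)$, recall that $C_k(X)$ is always a Tychonoff space and that, by Noble \cite{Nob69}, every Tychonoff $k_\IR$-space is Ascoli. For $(2)\Ra(3)$ I would combine Corollary~\ref{c:A->noFan} with the implications recorded in the diagram of the introduction: an Ascoli space contains no strict $\Cld$-fan, hence no strict $\Fin$-fan, and since for $\Fin$-fans the strict and strong notions coincide, it contains no strong $\Fin$-fan, in particular no strong $\Fin^\w$-fan. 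The arrow $(3)\Ra(4)$ is precisely Theorem~\ref{t:F-prop}(6), whose hypotheses (that $X$ be a $\mu$-complete Tychonoff $\bar\aleph_k$-$k_\IR$-space) are exactly the standing assumptions here; this is the only place where those assumptions on $X$ enter.

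For $(5)\Ra(1)$ I would invoke the fact that every power $\IR^\kappa$ is a $k_\IR$-space: for $\kappa\le\w$ the space is metrizable, hence a $k$-space and a $k_\IR$-space, and for uncountable $\kappa$ this is the content of the example cited from \cite[Theorem 5.6]{Nob70} and \cite[Problem 7.J(b)]{Kelley}. Thus if $C_k(X)$ is homeomorphic to $\IR^{d(X)}$, then $C_k(X)$ is a $k_\IR$-space.

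The substance of the proof is the implication $(4)\Ra(5)$. Writing $X=\bigoplus_{\alpha\in A}X_\alpha$ as a topological sum of (nonempty) cosmic $k_\w$-spaces, the standard identification $C_k\big(\bigoplus_{\alpha}X_\alpha\big)\cong\prod_{\alpha}C_k(X_\alpha)$ reduces the problem to analysing each factor. For a cosmic $k_\w$-space $K$ the function space $C_k(K)$ is a Fréchet space: being a $k_\w$-space, $K$ is a hemicompact $k$-space, so $C_k(K)$ is completely metrizable, and being cosmic, $K$ forces $C_k(K)$ to be cosmic, hence separable. If $K$ is finite, then $C_k(K)=\IR^{|K|}$; if $K$ is infinite, then $C_k(K)$ is an infinite-dimensional separable Fréchet space and hence, by the Anderson--Kadec theorem, homeomorphic to $\IR^\w$. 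Consequently $C_k(X)$ is homeomorphic to a product of copies of $\IR$ and of $\IR^\w$, that is, to $\IR^\kappa$ where $\kappa=\sum_{\alpha:\,X_\alpha\text{ finite}}|X_\alpha|\,+\,\w\cdot|\{\alpha:X_\alpha\text{ infinite}\}|$ (a product of powers of $\IR$ is again such a power, the exponent being the cardinal sum of the exponents).

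It then remains to verify the bookkeeping identity $\kappa=d(X)$. Since the density of a topological sum is the cardinal sum of the densities of its clopen summands, and since a finite Tychonoff $X_\alpha$ is discrete with $d(X_\alpha)=|X_\alpha|$ while an infinite cosmic $X_\alpha$ is $T_1$ with $d(X_\alpha)=\w$, the density $d(X)$ equals exactly the cardinal $\kappa$ displayed above. As $\IR^\mu\cong\IR^\nu$ implies $\mu=\nu$ for cardinals (for infinite exponents the exponent is recoverable as the weight), this yields $C_k(X)\cong\IR^{d(X)}$, completing the cycle. I expect the main obstacle to be this factor-by-factor analysis together with the infinite-dimensional topology input (Anderson--Kadec); the cardinal arithmetic reconciling $\kappa$ with $d(X)$, though routine, must be handled with some care in the degenerate cases, such as $A$ finite or all summands finite.
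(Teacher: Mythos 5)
Your proposal is correct and follows essentially the same route as the paper's proof: the same cycle $(5)\Ra(1)\Ra(2)\Ra(3)\Ra(4)\Ra(5)$, with Noble's results for $(5)\Ra(1)$ and $(1)\Ra(2)$, Corollary~\ref{c:A->noFan} for $(2)\Ra(3)$, Theorem~\ref{t:F-prop}(6) for $(3)\Ra(4)$, and the decomposition $C_k(X)\cong\prod_{\alpha}C_k(X_\alpha)$ plus the Anderson--Kadec theorem for $(4)\Ra(5)$. Your extra care with the finite/infinite case split and the cardinal identity $\kappa=d(X)$ only makes explicit what the paper compresses into the statement $C_k(X_\alpha)\cong\IR^{d(X_\alpha)}$ (and the appeal to uniqueness of the exponent in $\IR^\kappa$ is unnecessary, since you compute $\kappa=d(X)$ directly).
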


\begin{proof} The implication $(5)\Ra(1)$ follows an old result of Noble \cite{Nob70} who proved that a Tychonoff product of metrizable spaces is a $k_\IR$-space. The implications $(1)\Ra(2)$ was proved in \cite{Nob69} and $(2)\Ra(3)$ follows from Corollary~\ref{c:A->noFan}, $(3)\Ra(4)$ was proved in Theorem~\ref{t:F-prop}(6). To prove the final implication $(4)\Ra(5)$, assume that $X=\bigoplus_{\alpha\in A}X_\alpha$ is a topological sum of cosmic $k_\w$-spaces $X_\alpha$. Then $C_k(X)$ is topologically isomorphic to the Tychonoff product $\prod_{\alpha\in A}C_k(X_\alpha)$ of the separable Fr\'echet spaces $C_k(X_\alpha)$. Applying the classical Anderson-Kadec Theorem \cite[VI.5.2]{BP75}, we can prove that for every $\alpha\in A$ the Fr\'echet space $C_k(X_\alpha)$ is homeomorphic to the product $\IR^{d(X_\alpha)}$ of $d(X_\alpha)$ many real lines. Then $C_k(X)$ is homeomorphic to $\prod_{\alpha\in A}\IR^{d(X_\alpha)}=\IR^{d(X)}$.
\end{proof}

\section{Function spaces $C_k(X,2)$}\label{s:Funct}

In this section we study the topological structure of the function spaces $C_\K(X,2)$ over zero-dimensional spaces $X$.

\begin{theorem}\label{t:C2} Assume that for a Hausdorff zero-dimensional  $\mu$-complete space $X$ the function space $C_\K(X,2)$ contains no strong $\Fin^\w$-fan. Then:
\begin{enumerate}
\item[\textup{1)}] $X$ contains no strong $D_\w$-cofan or no strong $S^\w$-semifan;
\item[\textup{2)}] If $X$ is an $\aleph$-space, then its $k$-modification $kX$ is either a topological sum of cosmic $k_\w$-spaces or $kX$ is metrizable and has compact set of non-isolated points.
\item[\textup{3)}] If $X$ is a $\bar\aleph_k$-$k_\IR$-space, then $X$ is either a topological sum of cosmic $k_\w$-spaces or $X$ is metrizable and has compact set of non-isolated points.
\item[\textup{4)}] If $X$ is cosmic, then $X$ is $\sigma$-compact.
\end{enumerate}
\end{theorem}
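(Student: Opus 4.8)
The crux of the theorem is statement~(1); parts (2)--(4) will then follow from it by a two-case analysis feeding into the structure theorems of the previous two sections. Throughout I use that $X$, being Hausdorff and zero-dimensional, is functionally Hausdorff (indeed Tychonoff), and that $C_\K(X,2)$ is a topological subgroup of $C_\K(X)$: it is the group of continuous maps from $X$ into the discrete two-element group $\{0,1\}$ (addition modulo $2$) equipped with the $\K$-open topology, and it is even closed in $C_\K(X)$ because $\tau_\K$ refines the topology of pointwise convergence (as $\K$ contains all finite sets) and $\{0,1\}$ is closed in $\IR$.

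To prove (1) I argue by contradiction, assuming $X$ contains \emph{both} a strong $D_\w$-cofan and a strong $S^\w$-semifan. Since $X$ is zero-dimensional, Proposition~\ref{p:XD->FS} converts the cofan into a strong $\dot S^\w$-fan $(S_n)_{n\in\w}$ in $C_\K(X)$ with $\bigcup_n S_n\subset C_\K(X,2)$ and limit the zero function, while Proposition~\ref{p:XS->FD} converts the semifan (using functional Hausdorffness) into a strong $D_\w$-cofan $(D_n)_{n\in\w}$ in $C_\K(X)$ with $\bigcup_n D_n\subset C_\K(X,2)$, again converging to zero. The decisive point is that both objects \emph{restrict} to the subspace $C_\K(X,2)$: if $U\subset C_\K(X)$ is $\IR$-open then $U\cap C_\K(X,2)$ is $\IR$-open in $C_\K(X,2)$ (restrict the witnessing function); a family compact-finite in $C_\K(X)$ stays compact-finite in $C_\K(X,2)$, which has fewer compacta; and convergence to the zero function, which lies in $C_\K(X,2)$, is inherited in the subspace. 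Hence the topological group $G=C_\K(X,2)$ contains a strong $D_\w$-cofan and a strong $\dot S^\w$-fan, the latter being in particular a strong $\bar S^\w$-fan; by Corollary~\ref{c:g->fan} it then contains a strong $\Fin^\w$-fan, contradicting the hypothesis. Thus at least one of the two configurations must be absent, which is exactly~(1).

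For parts (2)--(4) I invoke (1) and split according to which configuration is missing. If $X$ contains no strong $D_\w$-cofan: when $X$ is an $\aleph$-space (hence a $\bar\aleph_k$-space) Theorem~\ref{t:Dcofan}(1) gives that $kX$ is a topological sum of cosmic $k_\w$-spaces, yielding the first alternative of (2), and, for the $\bar\aleph_k$-$k_\IR$ case, the first alternative of (3) after the reduction noted below; when $X$ is cosmic, Proposition~\ref{p:XD->s+h+kw}(1) gives $\sigma$-compactness, which is (4). If instead $X$ contains no strong $S^\w$-semifan: Theorem~\ref{t:Ssemifan}(7) makes $kX$ metrizable with compact set of non-isolated points (second alternative of (2)), Theorem~\ref{t:Ssemifan}(3) makes $X$ itself metrizable with compact set of non-isolated points (second alternative of (3)), and Theorem~\ref{t:Ssemifan}(5) again yields $\sigma$-compactness (4). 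The one remaining link is upgrading ``$kX$ is a topological sum of cosmic $k_\w$-spaces'' to the same statement for $X$ in part (3): since $X$ is a $k_\IR$-space, the continuous real-valued functions on $X$ coincide with those on $kX$, and because both spaces are Tychonoff and carry the coarsest topology making all their continuous real functions continuous, the continuous bijection $kX\to X$ is a homeomorphism, so $X=kX$.

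The main obstacle I expect is precisely the transfer step in (1): one must ensure that the \emph{strongness} of the fan and cofan produced in $C_\K(X)$ survives passage to the subgroup $C_\K(X,2)$. This is why the refined conclusions of Propositions~\ref{p:XD->FS} and~\ref{p:XS->FD}, placing all the sequences inside $C_\K(X,2)$ and the limit at $0\in C_\K(X,2)$, are essential — without them Corollary~\ref{c:g->fan} could not be applied inside the group $C_\K(X,2)$. A secondary delicate point is the $k_\IR$ reduction $X=kX$ used in (3), which hinges on both topologies being Tychonoff and hence determined by the common algebra of continuous real-valued functions.
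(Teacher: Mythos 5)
Your proof is correct and follows essentially the same route as the paper: for (1) it combines Propositions~\ref{p:XD->FS} and \ref{p:XS->FD} (with their zero-dimensional refinements placing everything inside $C_\K(X,2)$) and applies Corollary~\ref{c:g->fan} to the topological group $C_\K(X,2)$, and for (2)--(4) it splits on which of the two configurations is absent and quotes the same structure theorems (Proposition~\ref{p:decomp1}/Theorem~\ref{t:Dcofan}, Theorem~\ref{t:Ssemifan}, Theorem~\ref{t:kR-noSemifan}). The only difference is that you make explicit two steps the paper leaves implicit --- the transfer of the strong $\dot S^\w$-fan and strong $D_\w$-cofan from $C_\K(X)$ to the subspace $C_\K(X,2)$, and the identification $X=kX$ in part (3) via the $k_\IR$-property of the Tychonoff spaces $X$ and $kX$ --- and you handle both correctly.
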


\begin{proof} First observe that $X$, being a zero-dimensional $T_0$-space, is Tychonoff.
 \smallskip

1. To prove the first statement (by contradiction), assume that the space $X$ contains a strong $D_\w$-cofan and a strong $S^\w$-semifan. By Propositions~\ref{p:XD->FS} and \ref{p:XS->FD}, the function space $C_\K(X,2)$ contains a strong $\dot S^\w$-fan and a strong $D_\w$-cofan. Since $C_\K(X,2)$ is a topological group, we can apply Corollary~\ref{c:g->fan} and conclude that the space $C_\K(X,2)$ contains a strong $\Fin^\w$-fan, which is a desired contradiction.
\smallskip

2. Assume that $X$ is an $\aleph$-space. By the preceding statement, the space $X$ contains no strong $D_\w$-cofan or no strong $S^\w$-semifan. If $X$ contains no strong $D_\w$-cofan, then by Proposition~\ref{p:decomp1}, the $k$-modification $kX$ of $X$ is a topological sum of cosmic $k_\w$-spaces. If $X$ contains no strong $S^\w$-semifan, then by Theorem~\ref{t:Ssemifan}(7), the $k$-modification $kX$ of $X$ is metrizable and has compact set of non-isolated points.
\smallskip

3. Assume that $X$ is a $\bar \aleph_k$-$k_\IR$-space. By the first statement, the space $X$ contains no strong $D_\w$-cofan or no strong $S^\w$-semifan. If $X$ contains no strong $D_\w$-cofan, then by Proposition~\ref{p:decomp1}, $X$ is a topological sum of cosmic $k_\w$-spaces. If $X$ contains no strong $S^\w$-semifan, then by Theorem~\ref{t:kR-noSemifan}, the space $X$ is metrizable and has compact set of non-isolated points.

4. If the space $X$ is cosmic, then by the first statement, $X$ contains no strong $D_\w$-cofan or no strong $S^\w$-semifan and by Theorem~\ref{t:Dcofan}(4) and \ref{t:Ssemifan}(5), $X$ is $\sigma$-compact.
\end{proof}

Theorem~\ref{t:C2} implies the following characterization of zero-dimensional spaces $X$ whose function spaces $C_k(X,2)$ are Ascoli or $k_\IR$-spaces. For metrizable spaces this characterization was proved by Gabriyelyan \cite[1.4]{Gab2}.

\begin{theorem}\label{t:C2-kR} For a zero-dimensional $\mu$-complete $\bar\aleph_k$-$k_\IR$-space $X$ and its function space $C_k(X,2)$ the following conditions are equivalent:
\begin{enumerate}
\item[\textup{1)}] $C_k(X,2)$ is a $k_\IR$-space;
\item[\textup{2)}] $C_k(X,2)$ is Ascoli;
\item[\textup{3)}] $C_k(X,2)$ contains no strong $\Fin^\w$-fan;
\item[\textup{4)}] $X$ is either a topological sum of cosmic $k_\w$-spaces or $X$ is metrizable with compact set $X'$ of non-isolated points.
\end{enumerate}
\end{theorem}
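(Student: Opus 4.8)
The plan is to establish the cyclic chain of implications $(4)\Ra(1)\Ra(2)\Ra(3)\Ra(4)$, in which the three implications $(1)\Ra(2)\Ra(3)\Ra(4)$ are essentially formal consequences of results already in hand, while the implication $(4)\Ra(1)$ carries the genuine content. Throughout I would identify $C_k(X,2)$ with $C_\K(X,2)$ for $\K$ the class of all compact spaces, and record at the outset that $C_k(X,2)$, being a subspace of the locally convex space $C_k(X)$, is Tychonoff, and that $X$ itself is Hausdorff (a zero-dimensional $T_0$-space is Tychonoff), so that the standing hypotheses of Theorem~\ref{t:C2} are met.

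For $(1)\Ra(2)$ I would invoke the theorem of Noble \cite{Nob69} that every Tychonoff $k_\IR$-space is Ascoli; since $C_k(X,2)$ is Tychonoff, $(1)$ gives $(2)$. For $(2)\Ra(3)$ I would use Corollary~\ref{c:A->noFan}, by which an Ascoli space contains no strict $\Cld$-fan; as $C_k(X,2)$ is $T_1$, a strict $\Fin$-fan would be a strict $\Cld$-fan, so the absence of strict $\Cld$-fans forces the absence of strict $\Fin$-fans, which for $T_1$-spaces coincides with the absence of strong $\Fin$-fans, hence of strong $\Fin^\w$-fans. For $(3)\Ra(4)$ I would simply quote Theorem~\ref{t:C2}(3): our $X$ is a zero-dimensional $\mu$-complete Hausdorff $\bar\aleph_k$-$k_\IR$-space whose function space $C_k(X,2)=C_\K(X,2)$ contains no strong $\Fin^\w$-fan, and that proposition yields exactly the dichotomy of $(4)$.

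The remaining implication $(4)\Ra(1)$ is where the work lies, and I would split it along the two alternatives of $(4)$. If $X=\bigoplus_{\alpha\in A}X_\alpha$ is a topological sum of cosmic $k_\w$-spaces, then $C_k(X,2)$ is homeomorphic to the product $\prod_{\alpha\in A}C_k(X_\alpha,2)$; each $X_\alpha$, being a cosmic $k_\w$-space, is hemicompact, so by Arens' theorem $C_k(X_\alpha)$ is metrizable, and $C_k(X_\alpha,2)$ is a closed subspace of it, hence metrizable. A Tychonoff product of metrizable spaces is a $k_\IR$-space by Noble \cite{Nob70}, so $C_k(X,2)$ is a $k_\IR$-space and $(1)$ holds. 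If instead $X$ is metrizable with compact set $X'$ of non-isolated points, then $X$ falls under the metrizable domains treated by Gabriyelyan, and I would invoke his characterization \cite[1.4]{Gab2} to conclude that $C_k(X,2)$ is a $k_\IR$-space; this finishes $(4)\Ra(1)$ and closes the cycle.

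The main obstacle is precisely this metrizable alternative in $(4)\Ra(1)$. In the companion result for $C_k(X)$ (Theorem~\ref{t:Ck-char}) condition $(4)$ reduces to the single clause ``topological sum of cosmic $k_\w$-spaces'', handled uniformly by Noble's product theorem; here, however, a metrizable space with compact $X'$ but uncountably many isolated points is genuinely not a topological sum of cosmic $k_\w$-spaces, so Noble's argument does not apply and one is forced to rely on Gabriyelyan's finer metrizable analysis (or to reprove directly that $C_k(X,2)$ is a $k_\IR$-space in this case). The one point I would verify with care is that the hypotheses imposed on $X$ indeed subsume every standing assumption of Theorem~\ref{t:C2}, so that part $(3)$ of that theorem can be cited verbatim for $(3)\Ra(4)$.
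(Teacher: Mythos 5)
Your proposal is correct and follows essentially the same route as the paper: the same cycle $(1)\Ra(2)\Ra(3)\Ra(4)\Ra(1)$, with Noble's theorem for $(1)\Ra(2)$, Corollary~\ref{c:A->noFan} for $(2)\Ra(3)$, Theorem~\ref{t:C2} for $(3)\Ra(4)$, and the same two-case treatment of $(4)\Ra(1)$ (Gabriyelyan's result for the metrizable alternative, Arens' metrizability of the factors plus Noble's product theorem for the topological-sum alternative). The extra details you supply (Tychonoffness of $C_k(X,2)$, the strict-versus-strong bookkeeping for $\Fin$-fans, and the verification of the hypotheses of Theorem~\ref{t:C2}) are sound elaborations of steps the paper leaves implicit.
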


\begin{proof} The implication $(1)\Ra(2)$ is proved by Noble \cite{Nob69}, $(2)\Ra(3)$ follows from Corollary~\ref{c:A->noFan}, and $(3)\Ra(4)$ follows from Theorem~\ref{t:C2}. It remains to prove the implication $(4)\Ra(1)$. If $X$ is metrizable and has compact set of non-isolated points, then the function space $C_k(X,2)$ is a $k_\IR$-space by \cite[1.4]{Gab}(i). So, we assume that $X$ is a topological sum $X=\oplus_{\alpha\in A}X_\alpha$ of cosmic $k_\w$-spaces. In this case the space $C(X,2)$ is homeomorphic to the Tychonoff product $\prod_{\alpha\in A}C_k(X_\alpha,2)$. By \cite{Ar46}, each space $C_k(X_\alpha,2)$ is metrizable and by a result of Noble \cite{Nob70}, the Tychonoff product $\prod_{\alpha\in A}C_k(X_\alpha,2)$ is a $k_\IR$-space.
\end{proof}

Next, we characterize $\bar\aleph_k$-$k_\IR$-spaces whose function spaces $C_k(X,2)$ are $k$-spaces.
For metrizable spaces the following characterization was proved by Gabriyelyan \cite{Gab}.

\begin{theorem}\label{t:C2-k} For a zero-dimensional $\mu$-complete $\bar\aleph_k$-$k_\IR$-space $X$ its function space $C_k(X,2)$ is a $k$-space if and only if $X$ is either a topological sum $K\oplus D$ of a cosmic $k_\w$-space $K$ and a discrete space $D$ or else $X$ is metrizable and has compact set $X'$ of non-isolated points.
\end{theorem}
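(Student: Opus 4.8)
The plan is to deduce the theorem from the Ascoli characterisation already obtained in Theorem~\ref{t:C2-kR}, together with the product representation $C_k(X,2)=\prod_{\alpha}C_k(X_\alpha,2)$ available when $X$ is a topological sum $\bigoplus_{\alpha}X_\alpha$. Since a $k$-space contains no $\Fin$-fans (Proposition~\ref{k-no-Cld-fan}), if $C_k(X,2)$ is a $k$-space then it contains no strong $\Fin^\w$-fan, so condition (3) of Theorem~\ref{t:C2-kR} holds and $X$ is either (a) a topological sum of cosmic $k_\w$-spaces, or (b) metrizable with compact set $X'$ of non-isolated points. Case (b) is already the second alternative of the statement, so the whole task is to analyse case (a), and conversely to check that both listed alternatives do produce $k$-spaces.

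For the ``if'' part I would argue directly. If $X=K\oplus D$ with $K$ a cosmic $k_\w$-space and $D$ discrete, then $C_k(X,2)$ is canonically homeomorphic to $C_k(K,2)\times C_k(D,2)$. Here $K$ is hemicompact and cosmic, so by Arens' theorem \cite{Ar46} the space $C_k(K)$ is metrizable and hence so is its subspace $C_k(K,2)$, while $C_k(D,2)=2^{D}$ is a compact (Cantor) cube. Thus $C_k(X,2)$ is the product of a metrizable space with a locally compact Hausdorff space, and such a product is a $k$-space. The remaining alternative, namely metrizable $X$ with compact $X'$, is exactly Gabriyelyan's theorem \cite{Gab}, which I would simply cite.

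For the ``only if'' part in case (a) the crux is to show that at most countably many summands $X_\alpha$ fail to be discrete. Granting this, set $K=\bigoplus\{X_\alpha:X_\alpha\text{ non-discrete}\}$ and $D=\bigoplus\{X_\alpha:X_\alpha\text{ discrete}\}$: then $K$ is a countable topological sum of cosmic $k_\w$-spaces, hence a cosmic $k_\w$-space, $D$ is a topological sum of discrete spaces, hence discrete, and $X=K\oplus D$. To prove the countability I would first record the elementary dichotomy that, for a zero-dimensional cosmic $k_\w$-space $Y$, the function space $C_k(Y,2)$ is compact if and only if $Y$ is discrete: if $Y$ is discrete (hence countable) then $C_k(Y,2)=2^{Y}$ is compact, whereas if $Y$ has a non-isolated point then, taking a non-trivial convergent sequence and surrounding its terms by shrinking pairwise-disjoint clopen sets, one obtains a $1$-separated (hence closed, discrete, infinite) sequence of characteristic functions in the metrizable space $C_k(Y,2)$, witnessing non-compactness. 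Thus ``non-discrete summand'' is synonymous with ``non-compact factor'', and it remains to invoke the product criterion: a product $\prod_{\alpha}M_\alpha$ of non-trivial metrizable spaces is a $k$-space only if all but countably many $M_\alpha$ are compact.

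The main obstacle is precisely this necessity direction of the product criterion, i.e.\ that uncountably many non-compact separable metrizable factors destroy the $k$-space property; this is where the content beyond the Ascoli theorem resides. I would establish it by producing, inside $\prod_{\alpha}M_\alpha$ when uncountably many $M_\alpha$ are non-compact, a $\Cld$-fan, which is forbidden in a $k$-space by Proposition~\ref{k-no-Cld-fan}. The delicate point — and the reason this does not follow mechanically from the earlier product constructions of Theorems~\ref{t:product1} and \ref{t:product2} — is that in the relevant case $\prod_{\w_1}\IN$ (the factors being countable discrete, which occurs when the summands $X_\alpha$ are infinite compact) the compact subsets are exactly the coordinate-wise finite boxes; these are so flexible that no uncountable family of convergent sequences can be compact-finite, so the required fan cannot be an $S$-fan and the non-locally-finite point must be approached along nets escaping every compact box rather than along convergent sequences. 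Carrying out this combinatorial construction on $\w_1$ (a pressing-down/almost-disjointness argument matching the ``no cluster point'' sequences in uncountably many coordinates) is the technical heart; once it is in place the equivalence follows by assembling the pieces above.
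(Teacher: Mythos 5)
Your reduction via Theorem~\ref{t:C2-kR} and your treatment of the ``if'' direction essentially coincide with the paper's: $C_k(K\oplus D,2)\cong C_k(K,2)\times 2^{D}$ is the product of a metrizable space and a compact space, hence a $k$-space by \cite[3.3.27]{En}, and the metrizable case is Gabriyelyan's theorem \cite{Gab}. Your strategy of proving that only countably many summands can be non-discrete is also the right one. The problem is the step you yourself flag as ``the technical heart'': showing that a product with uncountably many non-compact metrizable factors is not a $k$-space. You propose to certify this by constructing a $\Cld$-fan in the product and invoking Proposition~\ref{k-no-Cld-fan}, but you never carry out the construction, and there is good reason to believe it cannot be carried out. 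The relevant space is (a closed copy of) $\IN^{\w_1}$, which is a closed subspace of $\IR^{\w_1}$; the latter is a $k_\IR$-space, hence Ascoli, hence contains no strict $\Cld$-fan, so any fan you build must be non-strict. Whether $\IN^{\w_1}$ contains \emph{any} $\Cld$-fan is exactly an instance of the paper's own open problem (does ``no $\Cld$-fan'' imply ``$k$-space''?): fans give a \emph{necessary} condition for the $k$-space property, not a sufficient one, so non-$k$-ness cannot be certified by fan machinery unless a fan is actually exhibited --- and for $\IN^{\w_1}$ none is known. Your proposal thus hangs on a construction that is at best open and quite possibly impossible.

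The correct (and the paper's) way to finish is much cheaper: each non-discrete summand $X_\alpha$ has $C_k(X_\alpha,2)$ a non-compact metrizable topological group, and a non-compact metrizable space is not countably compact, hence contains an infinite closed discrete subspace, i.e.\ a closed copy of $\IN$ (your $1$-separated sequence of characteristic functions proves exactly this). If uncountably many summands were non-discrete, then $C_k(X,2)\cong\prod_{\alpha}C_k(X_\alpha,2)$ would contain a closed copy of $\IN^{\w_1}$; now one cites the classical fact \cite[3.3.E]{En} that $\IN^{\w_1}$ is not a $k$-space, together with the fact that a closed subspace of a $k$-space is a $k$-space, to get a contradiction. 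With this substitution for your fan construction, the rest of your argument assembles into a complete proof.
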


\begin{proof} To prove the ``only if'' part, assume that $C_k(X,2)$ is a $k$-space. By Theorem~\ref{t:C2-kR}, $X$ is either metrizable and has compact set of non-isolated points or $X$ is a topological sum of cosmic $k_\w$-spaces. In the first case we are done. Next, assume that $X$ is a topological sum $\oplus_{\alpha\in A}X_\alpha$ of non-empty cosmic $k_\w$-spaces $X_\alpha$ and hence $C_k(X,2)$ is homeomorphic to the Tychonoff product $\prod_{\alpha\in A}C_k(X_\alpha,2)$. Let $B=\{\alpha\in A:X_\alpha$ is non-discrete$\}$. Observe that for every $\alpha\in B$ the function space $C(X_\alpha,2)$ is a non-compact metrizable topological group and hence $C(X_\alpha,2)$ contains a closed topological copy of the countable discrete space $\IN$. If the set $B$ is uncountable, then the Tychonoff product $\prod_{\alpha\in A}C_k(X_\alpha,2)$ contains a closed subspace homeomorphic to $\IN^{\w_1}$.
 By \cite[3.3.E]{En}, the space $\IN^{\w_1}$ is not a $k$-space and then the spaces
 $\prod_{\alpha\in B}C_k(X_\alpha,2)$ and $C_k(X,2)\cong \prod_{\alpha\in A}C_k(X_\alpha,2)$ are not $k$-spaces. This contradiction shows that the set $B$ is countable and hence $X$ is a topological sum $K\oplus D$ of the cosmic $k_\w$-space $K=\oplus_{\alpha\in B}X_\alpha$ and the discrete space $D=\oplus_{\alpha\in A\setminus B}X_\alpha$.
 \smallskip

If $X$ is metrizable and has compact set of non-isolated points, then by Theorem 1.4 of \cite{Gab}, the function space $C_k(X,2)$ is a $k$-space. If $X$ is a topological sum $K\oplus D$ of a cosmic $k_\w$-space $K$ and a discrete space $D$, then the function space $C_k(X,2)$ is homeomorphic to the product $C_k(K,2)\times C_k(D,2)$ of the metrizable space $C_k(K,2)$ and the compact space $C_k(D,2)$.
By \cite[3.3.27]{En}, this product is a $k$-space.
\end{proof}

Next, we characterize $\bar\aleph_k$-$k_\IR$-spaces whose function spaces $C_k(X,2)$ are sequential.
For metrizable spaces the following characterization was proved by Gabriyelyan \cite{Gab2}.

\begin{theorem}\label{t:C2-s} For a zero-dimensional $\mu$-complete $\bar\aleph_k$-$k_\IR$-space $X$ its function space $C_k(X,2)$ is sequential if and only if $X$ is either a cosmic $k_\w$-space or else $X$ is Polish and has compact set $X'$ of non-isolated points.
\end{theorem}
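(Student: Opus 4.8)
The plan is to reduce the statement to two already available facts: the characterization of the $k$-space property for $C_k(X,2)$ (Theorem~\ref{t:C2-k}) and the metrizable case of the present theorem, due to Gabriyelyan~\cite{Gab2}. Since every sequential space is a $k$-space, I would begin the ``only if'' part by assuming $C_k(X,2)$ is sequential and invoking Theorem~\ref{t:C2-k}, whose hypotheses (zero-dimensional $\mu$-complete $\bar\aleph_k$-$k_\IR$-space) coincide with ours. This yields the dichotomy: either $X$ is a topological sum $K\oplus D$ of a cosmic $k_\w$-space $K$ and a discrete space $D$, or $X$ is metrizable with compact set $X'$ of non-isolated points. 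The remaining argument splits along this dichotomy.

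In the metrizable case there is essentially nothing to add: $X$ is a zero-dimensional metrizable space with compact $X'$ and $C_k(X,2)$ is sequential, so Gabriyelyan's metrizable characterization~\cite{Gab2} applies directly and gives that $X$ is a cosmic $k_\w$-space or a Polish space with compact $X'$, which are exactly the two desired alternatives. The real work is concentrated in the topological-sum case $X=K\oplus D$. Here the canonical homeomorphism $C_k(X,2)\cong C_k(K,2)\times C_k(D,2)$ identifies $C_k(D,2)$ with the Cantor cube $2^D$. Fixing a point $f_0\in C_k(K,2)$, the slice $\{f_0\}\times 2^D$ is a closed subspace of $C_k(X,2)$ homeomorphic to $2^D$; as closed subspaces of sequential spaces are sequential, the cube $2^D$ is sequential. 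If $D$ were uncountable, then $2^{\w_1}$ would embed as a closed retract of $2^D$, and $2^{\w_1}$ is not sequential: its $\Sigma$-product $\{x\in 2^{\w_1}:|\{\alpha:x_\alpha\ne 0\}|\le\w\}$ is a proper dense subset that is sequentially closed (a convergent sequence has countable total support). This contradiction forces $D$ to be countable, and then $X=K\oplus D$, being a countable topological sum of a cosmic $k_\w$-space and singletons, is itself a cosmic $k_\w$-space, giving the first alternative.

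For the ``if'' part I would treat the two alternatives separately. If $X$ is a cosmic $k_\w$-space, then $X$ is hemicompact, so by Arens' theorem~\cite{Ar46} (see Corollary~\ref{c:aleph0Ck}) the space $C_k(X)$ is metrizable; hence its subspace $C_k(X,2)$ is metrizable and therefore sequential. If $X$ is a Polish space with compact $X'$, then $X$ is again zero-dimensional and metrizable, so the sequentiality of $C_k(X,2)$ is precisely the content of Gabriyelyan's metrizable theorem~\cite{Gab2}, applied in the opposite direction.

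I expect the main obstacle to be the topological-sum case, namely proving that sequentiality of $C_k(X,2)$ forces the discrete summand $D$ to be countable; the metrizable case is a genuine black box (the hard analytic content having been carried out by Gabriyelyan), and the rest is bookkeeping built on Theorem~\ref{t:C2-k}. The one point requiring care is verifying that the two branches produced by Theorem~\ref{t:C2-k} are both correctly subsumed under the stated conclusion, in particular that a metrizable cosmic $k_\w$-space (which is locally compact, separable, and hence Polish) is consistently placed in the ``cosmic $k_\w$'' alternative rather than creating a spurious third case.
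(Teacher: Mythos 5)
Your proposal is correct and takes essentially the same route as the paper: both reduce the ``only if'' direction to Theorem~\ref{t:C2-k}, settle the metrizable branch by the known metrizable characterization of Gabriyelyan, rule out an uncountable discrete summand $D$ by the non-sequentiality of the Cantor cube $2^D$, and prove the ``if'' direction via Arens' theorem plus the metrizable case. The only (immaterial) difference is the witness for non-sequentiality of $2^{\w_1}$: the paper uses a closed copy of the ordinal segment $[0,\w_1]$, while you use the proper dense sequentially closed $\Sigma$-product.
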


\begin{proof} If the function space $C_k(X,2)$ is sequential, then by Theorem~\ref{t:C2-k}, $X$ is either metrizable with compact set of non-isolated points or $X$ is a topological sum $K\oplus D$ of a cosmic $k_\w$-space $K$ and a discrete space $D$. In the first case we can apply Theorem 1.4(iii) of Gabriyelyan \cite{Gab2} and conclude that $X$ is Polish. In the second case we observe that $C_k(X,2)$ contains a closed subspace homeomorphic to the Cantor cube $2^D$. If the set $D$ is uncountable, then the cube $2^D$ contains a closed copy of the ordinal segment $[0,\w_1]$ and hence is not sequential. This means that the discrete space $D$ is at most countable and $X=K\oplus D$ is a cosmic $k_\w$-space.

If $X$ is a cosmic $k_\w$-space, then by a classical result of Arens \cite{Ar46}, the function space $C_k(X,2)$ is metrizable and hence sequential. If $X$ is a Polish space with compact set of non-isolated points, then by \cite[3.11]{GTZ}, the function space $C_k(X,2)$ is a sequential cosmic $k_\w$-space.
\end{proof}

Finally, we characterize $\bar\aleph_k$-$k_\IR$-spaces whose function spaces $C_k(X,2)$ are Fr\'echet-Urysohn. For metrizable spaces the following characterization was proved by Gabriyelyan \cite{Gab2}.

\begin{theorem}\label{t:C2-FU} For a zero-dimensional $\mu$-complete $\bar\aleph_k$-$k_\IR$-space $X$ its function space $C_k(X,2)$ is Fr\'echet-Urysohn if and only if $C_k(X,2)$ is metrizable if and only if $X$ is a cosmic $k_\w$-space.
\end{theorem}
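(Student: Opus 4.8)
The plan is to establish the cycle of implications
\[
\text{$C_k(X,2)$ is Fr\'echet-Urysohn}\ \Ra\ \text{$X$ is a cosmic $k_\w$-space}\ \Ra\ \text{$C_k(X,2)$ is metrizable}\ \Ra\ \text{$C_k(X,2)$ is Fr\'echet-Urysohn},
\]
the last arrow being trivial since every metrizable space is Fr\'echet-Urysohn. For the middle arrow I would note that a cosmic $k_\w$-space is hemicompact, so by the classical theorem of Arens \cite{Ar46} (see \cite[3.4.E]{En}) the function space $C_k(X)$ is metrizable, and hence so is its subspace $C_k(X,2)$. Thus all the substance lies in the first implication.

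To prove that the Fr\'echet-Urysohn property of $C_k(X,2)$ forces $X$ to be a cosmic $k_\w$-space, I would first use that every Fr\'echet-Urysohn space is sequential, so $C_k(X,2)$ is sequential. Since $X$ is, by hypothesis, a zero-dimensional $\mu$-complete $\bar\aleph_k$-$k_\IR$-space, the already proven Theorem~\ref{t:C2-s} applies and yields a dichotomy: either $X$ is a cosmic $k_\w$-space, in which case we are done, or $X$ is a Polish space with compact set $X'$ of non-isolated points. In the remaining case $X$ is metrizable, so it falls within the scope of Gabriyelyan's theorem \cite{Gab2}, whose metrizable version asserts precisely that $C_k(X,2)$ is Fr\'echet-Urysohn if and only if $X$ is a cosmic $k_\w$-space. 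Applying it to our (metrizable) $X$ with Fr\'echet-Urysohn $C_k(X,2)$ gives that $X$ is a cosmic $k_\w$-space, completing the first implication and hence the whole equivalence.

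The delicate point is that the second alternative of Theorem~\ref{t:C2-s} is genuinely present and is \emph{not} automatically a $k_\w$-space: the metric cofan $M$ is a Polish space whose only non-isolated point is $(0,0)$, yet $M$ is not locally compact and hence not hemicompact. Consequently the sequentiality of $C_k(X,2)$ alone does not deliver the conclusion, and the full strength of the Fr\'echet-Urysohn property must be invoked (through Gabriyelyan's metrizable characterization) to eliminate these Polish non-$k_\w$ spaces. I therefore expect the only real work to be the bookkeeping verification that the standing hypotheses on $X$ are exactly those required by Theorem~\ref{t:C2-s}; once the problem is reduced to the metrizable setting, the cited result of Gabriyelyan closes the argument verbatim.
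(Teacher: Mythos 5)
Your proof is correct and follows essentially the same route as the paper's: the paper likewise gets metrizability (hence the Fr\'echet-Urysohn property) of $C_k(X,2)$ from Arens' theorem when $X$ is a cosmic $k_\w$-space, and in the converse direction invokes Theorem~\ref{t:C2-s} to reduce to the dichotomy and then Gabriyelyan's metrizable-case result (cited there as Theorem~1.4(iv) of \cite{Gab}, which yields that $X$ is Polish and locally compact, hence a cosmic $k_\w$-space) to dispose of the Polish alternative. Your extra observation that sequentiality alone cannot suffice is accurate but is commentary rather than a change of method.
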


\begin{proof} If $X$ is a cosmic $k_\w$-space, then by \cite{Ar46}, the function space $C_k(X,2)$ is metrizable and hence Fr\'echet-Urysohn. If $C_k(X,2)$ is Fr\'echet-Urysohn, then by Theorem~\ref{t:C2-s}, $X$ is either a cosmic $k_\w$-space or a Polish space with compact set of non-isolated points. In the first case we are done. In the second case we can apply Theorem 1.4(iv) of \cite{Gab} and conclude that the space $X$ is Polish and locally compact, and hence $X$ is a cosmic $k_\w$-space.
\end{proof}

In turns out that for function spaces $C_k(X,2)$ over zero-dimensional $\aleph_0$-spaces $X$ the properties distinguished in Theorems~\ref{t:C2-kR}--\ref{t:C2-s} are equivalent.

\begin{theorem}\label{t:C2-aleph0} For a zero-dimensional $\aleph_0$-space $X$ the following conditions are equivalent:
\begin{enumerate}
\item[\textup{1)}] $C_k(X,2)$ is either metrizable or a cosmic $k_\w$-space;
\item[\textup{2)}] the function space $C_k(X,2)$ is sequential;
\item[\textup{3)}] $C_k(X,2)$ is a $k$-space;
\item[\textup{4)}] $C_k(X,2)$ is Ascoli;
\item[\textup{5)}] $C_k(X,2)$ contains no strong $\Fin^\w$-fans;
\item[\textup{6)}] $X$ is either hemicompact or $X$ is Polish with compact set $X'$ of non-isolated points.
\end{enumerate}
\end{theorem}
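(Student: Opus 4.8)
The plan is to prove the cyclic chain of implications $(6)\Ra(1)\Ra(2)\Ra(3)\Ra(4)\Ra(5)\Ra(6)$. Throughout I will use that the $\aleph_0$-space $X$ is cosmic (hence countably tight and $\mu$-complete), an $\aleph$-space, and $k^*$-metrizable, and that $C_k(X,2)$ is a topological group sitting inside the Tychonoff space $C_k(X)$; all the propositions of the previous section will be applied with $\K$ the class of all compact subsets of $X$, so that $C_\K(X,2)=C_k(X,2)$. Since $X$ is a Tychonoff $\aleph$-space, Corollary~\ref{c:fan-in-aleph-space} and Proposition~\ref{p:aleph-strong-fan} allow me to drop the word ``strong'' for objects living in $X$: in $X$ every $D_\w$-cofan is strong and every $S^\w$-semifan is strong. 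The implications $(1)\Ra(2)\Ra(3)\Ra(4)\Ra(5)$ are routine. A metrizable space is sequential, and a cosmic $k_\w$-space is sequential because its compact subsets are metrizable, hence sequential; sequential spaces are $k$-spaces; a $k$-space that is Tychonoff (as $C_k(X,2)$ is) is Ascoli by the classical Ascoli theorem; and an Ascoli space contains no strict $\Cld$-fan by Corollary~\ref{c:A->noFan}, hence no strict $\Cld^\w$-fan, so, as in the $T_1$-space $C_k(X,2)$ a strong $\Fin^\w$-fan is a strict $\Fin^\w$-fan and therefore a strict $\Cld^\w$-fan, condition (5) follows.

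For $(6)\Ra(1)$ I treat the two alternatives separately. If $X$ is hemicompact, then by Corollary~\ref{c:aleph0Ck} the space $C_k(X)$ is metrizable, and hence so is its subspace $C_k(X,2)$. If $X$ is Polish with compact set $X'$ of non-isolated points, then $C_k(X,2)$ is a cosmic $k_\w$-space by \cite[3.11]{GTZ} (exactly as used in the proof of Theorem~\ref{t:C2-s}). In both cases (1) holds.

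The substantial implication is $(5)\Ra(6)$. Assume $C_k(X,2)$ contains no strong $\Fin^\w$-fan. Since $X$ is zero-dimensional, $\mu$-complete and cosmic, Theorem~\ref{t:C2}(1) applies, and Theorem~\ref{t:C2}(4) shows that $X$ is $\sigma$-compact. I split according to whether $X$ contains a $D_\w$-cofan. If $X$ contains no (strong) $D_\w$-cofan, then Proposition~\ref{p:XD->s+h+kw}(2) gives that the $\aleph_0$-space $X$ is hemicompact, the first alternative of (6). If $X$ does contain a $D_\w$-cofan, then Theorem~\ref{t:C2}(1) forces $X$ to contain no (strong) $S^\w$-semifan, and I then claim that $X$ contains no $\Clop^\w$-fan. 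Indeed, the $D_\w$-cofan produces, via Proposition~\ref{p:XD->FS}, a strong $\dot S^\w$-fan (hence a strong $\bar S^\w$-fan) in $C_k(X,2)$; if $X$ also had a $\Clop^\w$-fan, its union would lie in the $\sigma$-compact space $X$, so Proposition~\ref{p:XCld->FD} would yield a strong $D_\w$-cofan in $C_k(X,2)$, and then the topological group $C_k(X,2)$ would contain a strong $\Fin^\w$-fan by Corollary~\ref{c:g->fan}, contradicting (5). Having excluded $\Clop^\w$-fans and strong $S^\w$-semifans, Corollary~\ref{c:k*-withoutSw} (applicable because $X$ is countably tight, zero-dimensional, $\mu$-complete and $k^*$-metrizable) shows that $X$ is metrizable and has compact set $X'$ of non-isolated points.

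It remains to upgrade this to the Polish conclusion of the second alternative of (6); this step, together with the exclusion of $\Clop^\w$-fans, is where I expect the main difficulty to lie, since without a $k_\IR$-assumption I cannot invoke Theorem~\ref{t:kR-noSemifan} and must exploit zero-dimensionality instead. Here $X$ is separable metrizable with $X'$ compact, and I will show that $X$ is a $G_\delta$-subset of its (Polish) metric completion $\widehat X$. The crux is that every isolated point of $X$ remains isolated in $\widehat X$: as $X$ is dense in $\widehat X$, a punctured ball meeting $X$ only in the center can contain no further points of $\widehat X$. Consequently the isolated points of $X$ are precisely the open set of isolated points of $\widehat X$, so $\widehat X\setminus X$ equals the intersection of the open set $\widehat X\setminus X'$ with the closed set of non-isolated points of $\widehat X$, and is therefore $F_\sigma$ in $\widehat X$. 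Hence $X$ is a $G_\delta$-subset of the Polish space $\widehat X$ and is itself Polish, which completes $(5)\Ra(6)$ and the proof.
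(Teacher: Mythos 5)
Your proof is correct and follows essentially the same route as the paper's: the same dichotomy on whether $X$ contains a $D_\w$-cofan, the same use of Propositions~\ref{p:XD->s+h+kw}, \ref{p:XD->FS}, \ref{p:XCld->FD} together with Corollary~\ref{c:g->fan} and Corollary~\ref{c:k*-withoutSw} (your detour through Theorem~\ref{t:C2}(1) is just a repackaging of the paper's direct group-theoretic argument), and the same upgrade from ``metrizable with compact $X'$'' to Polish. Your additional care --- invoking $\sigma$-compactness from Theorem~\ref{t:C2}(4) to justify the hypothesis of Proposition~\ref{p:XCld->FD}, and the explicit $G_\delta$-in-the-completion argument for Polishness --- merely fills in details the paper leaves implicit.
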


\begin{proof} The implication $(6)\Ra(1)$ follows from \cite{Ar46} and \cite{GTZ}, and the implications  $(1)\Ra(2)\Ra(3)\Ra(4)\Ra(5)$ are trivial. It remains to prove that $(5)\Ra(6)$. Assume that the function space $C_k(X,2)$ contains no strong $\Fin^\w$-fans. If $X$ contains no strong $D_\w$-cofan, then by Proposition~\ref{p:XD->s+h+kw} the $\aleph_0$-space $X$ is hemicompact. So, we assume that the space $X$ contains a strong $D_\w$-cofan. Then by Proposition~\ref{p:XD->FS}, the function space $C_k(X,2)$ contains a strong $\dot S^\w$-fan. Since $C_k(X,2)$ is a topological group, we can apply Corollary~\ref{c:g->fan} and conclude that $C_k(X,2)$ contains no strong $D_\w$-cofan. By Propositions~\ref{p:XS->FD} and \ref{p:XCld->FD}, the space $X$ contains no strong $S^\w$-semifan and no $\Clop^\w$-fan and by Corollary~\ref{c:k*-withoutSw}, the space $X$ is metrizable and has compact set of non-isolated points. Being cosmic, the metrizable space $X$ is separable. Being the union of a compact and discrete space, the metrizable separable space $X$ is Polish.
 \end{proof}

\chapter{Strong $\Fin^\w$-fans in locally convex and Banach spaces with the weak topology}\label{ch:Banach}

In this chapter we construct strong $\Fin^\w$-fans in (subsets of) locally convex (Banach) spaces endowed with the weak topology. For a locally convex space $X$ by $X^*$ we denote the linear space of all linear continuous functionals on $X$. A locally convex space $X$ endowed with the weak topology is denoted by $(X,w)$. We recall that the {\em weak topology} on $X$ is the smallest topology on $X$ making all functionals $f\in X^*$ continuous.

\section{Strong $\Fin^\w$-fans in locally convex spaces with the weak topology}\label{s:weak-lc}

In \cite{GKP} Gabriyelyan, K\c akol and Plebanek proved that for a normed space $X$ the space $(X,w)$ is Ascoli if and only if $X$ is finite dimensional. The following theorem generalizes this result in two directions and answers a question posed in \cite{GKP}.

\begin{theorem} For a locally convex linear metric space $X$ the following conditions are equivalent:
\begin{enumerate}
\item[\textup{1)}] $(X,w)$ is metrizable.
\item[\textup{2)}] $(X,w)$ is Ascoli.
\item[\textup{3)}] $(X,w)$ contains no strong $\Fin^\w$-fan.
\item[\textup{4)}] The weak topology on $X$ coincides with the original topology of $X$.
\item[\textup{5)}] The dual space $X^*$ has at most countable Hamel basis.
\end{enumerate}
\end{theorem}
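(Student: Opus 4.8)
The plan is to prove the cycle $(4)\Ra(1)\Ra(2)\Ra(3)\Ra(5)\Ra(4)$. The implications $(4)\Ra(1)$ and $(1)\Ra(2)$ are immediate: if the weak topology coincides with the original one then $(X,w)$ carries the metric topology of $X$, and every metrizable space is a $k$-space, hence Ascoli by the classical Ascoli Theorem. For $(2)\Ra(3)$ I would invoke Corollary~\ref{c:A->noFan}: an Ascoli space contains no strict $\Cld$-fan, and since $(X,w)$ is a completely regular (hence $T_1$) space, every strong $\Fin^\w$-fan is a strict $\Fin^\w$-fan and thus a strict $\Cld$-fan; so $(X,w)$ contains no strong $\Fin^\w$-fan.

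The linear-topological heart is the equivalence $(4)\Leftrightarrow(5)$. I would fix a countable base $(V_n)_{n\in\w}$ of absolutely convex neighborhoods of zero for the original topology, with Minkowski functionals $p_n$, and let $V_n^\circ\subset X^*$ be their polars. Since every $f\in X^*$ is dominated by a multiple of some $p_n$, we have $X^*=\bigcup_n\mathrm{span}(V_n^\circ)$, and a closed absolutely convex set is a weak neighborhood of zero precisely when its polar is finite-dimensional. Hence $(4)$ holds iff every $V_n^\circ$ is finite-dimensional, which at once gives $(4)\Ra(5)$. For $(5)\Ra(4)$, assume $X^*$ has a countable Hamel basis, so $X^*=\bigcup_m E_m$ for an increasing sequence of finite-dimensional subspaces. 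By the Bourbaki--Alaoglu theorem each $V:=V_n^\circ$ is weak-$*$ compact and absolutely convex; writing $V=\bigcup_m(V\cap E_m)$ as a countable union of weak-$*$ closed sets and applying the Baire category theorem to the compact space $V$, I get an index $m_0$ and a nonempty weak-$*$ open $W\subset V$ with $W\subset E_{m_0}$. Choosing $a\in W$ and using absolute convexity, for every $v\in V$ the point $(1-t)a+tv\in V$ lies in $W\subset E_{m_0}$ for all small $t>0$, which forces $v\in E_{m_0}$; thus $V\subset E_{m_0}$ is finite-dimensional. So all polars are finite-dimensional and $(4)$ holds.

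The main obstacle is $(3)\Ra(5)$, which I would prove contrapositively: assuming $X^*$ has uncountable Hamel basis, I must build a strong $\Fin^\w$-fan in $(X,w)$. By the previous paragraph some polar $V_N^\circ$ is infinite-dimensional; writing $p:=p_N$, note that $p=\sup_{f\in V_N^\circ}|f|$ is weakly lower semicontinuous and that $\ker p$ has infinite codimension (the quotient has infinite-dimensional dual $\mathrm{span}(V_N^\circ)$). The key geometric lemma is that every weak neighborhood of zero contains points of arbitrarily large $p$-value: for $g_1,\dots,g_r\in X^*$ the subspace $Z=\bigcap_{i\le r}\ker g_i$ has finite codimension, hence cannot lie in $\ker p$, so some $z\in Z$ has $p(z)>0$ and the ray $\IR z\subset Z$ is $p$-unbounded. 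The model to imitate is $(\ell_2,w)$, where $d_k=\sqrt k\,e_k$ has $\|d_k\|\to\infty$ while a summability estimate places $0$ in the weak closure of $\{d_k\}$; weak lower semicontinuity of the norm then makes the sets $\{x:\|x\|>\sqrt k-1\}$ witness that $\{d_k\}$ is strongly compact-finite, so by Proposition~\ref{p:Fin-fan-char} it is a strong $\Fin^\w$-fan.

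What makes the general case delicate is that a weak neighborhood of zero is governed by arbitrary functionals $g\in X^*$, not merely by those bounded by $p$, so a single $p$-escaping sequence need not accumulate at zero once $X^*$ is large; reconciling ``compact-finite'' (seminorms tending to infinity) with ``non-closed'' (zero in the weak closure) is the step I expect to cost the most work. I would handle it by first passing to a separable infinite-dimensional subspace carrying a countable biorthogonal system $(x_k,f_k)$ with $f_k\in V_N^\circ$ (so every $f_k$ is $p$-bounded), and then diagonalizing the escaping points against a countable cofinal family of test neighborhoods, in the spirit of the $\ell_2$ summability computation. Alternatively, since $(X,w)$ is a topological group, I would attempt to manufacture a strong $D_\w$-cofan together with a strong $\bar S^\w$-fan and conclude via Corollary~\ref{c:g->fan}.
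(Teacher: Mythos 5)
Your easy implications and your treatment of $(4)\Leftrightarrow(5)$ are correct: the polar/Alaoglu/Baire argument showing that a countable Hamel basis of $X^*$ forces every polar $V_n^\circ$ to be finite-dimensional is sound, and it is a legitimate (slightly different) substitute for the paper's more pedestrian computation, which proves $(4)\Ra(5)$ directly by extracting finite sets $F_n\subset X^*$ from a countable neighborhood basis and noting $\bigcap_{f\in F_n}f^{-1}(0)\subset g^{-1}(0)$ implies $g\in\mathrm{span}(F_n)$, while getting $(5)\Ra(1)$ for free (countably many functionals generate a metrizable weak topology). The real problem is the implication $(3)\Ra(5)$, which you never actually prove, and both fallback strategies you sketch founder on the same obstacle. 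Your ``diagonalization against a countable cofinal family of test neighborhoods'' cannot be carried out: in the case you must handle (uncountable Hamel basis of $X^*$), the weak topology has \emph{no} countable cofinal family of neighborhoods of zero --- that is exactly the failure of first countability you are trying to exploit --- so a countable construction cannot certify that your escaping points accumulate weakly at $0$. The alternative via a strong $D_\w$-cofan plus a strong $\bar S^\w$-fan and Corollary~\ref{c:g->fan} merely relocates the difficulty: an $\bar S^\w$-fan is by definition \emph{not locally finite}, and a $D_\w$-cofan requires a sequence of sets converging to a point in the weak topology, so in either case you again need to verify a statement quantified over \emph{all} weak neighborhoods of a point, which is the step you have no tool for.

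The paper closes this gap with a substantive external ingredient that your proposal is missing: by Theorem~1.2 of \cite{GKZ}, the space $(X,w)$ has \emph{countable fan tightness}. The paper then proves the contrapositive of $(3)\Ra(4)$ (note: targeting $(4)$ rather than $(5)$ is what makes convexity usable): if the weak topology differs from the original one, pick an open convex neighborhood $U$ of zero that is not a weak neighborhood; since $\bar U\subset U+U=2U$, no homothet $n\bar U$ is a weak neighborhood of zero, so $0$ lies in the weak closure of each $A_n=X\setminus n\bar U$. Countable fan tightness is precisely what produces finite sets $F_n\subset A_n$ with $0\in\overline{\bigcup_n F_n}$, i.e.\ non-local-finiteness. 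Strong compact-finiteness then comes from the two facts you did identify in spirit: $X\setminus n\bar U$ is weakly open (closed convex sets are weakly closed) and contains $F_n$, and every weakly compact set is bounded in $X$ \cite[3.31]{Os14}, hence absorbed by $\bar U$, so the family $(X\setminus n\bar U)_{n\in\w}$ is compact-finite. Unless you import the fan-tightness theorem (or prove an equivalent accumulation principle), your cycle is broken at $(3)\Ra(5)$, and the remaining correct pieces do not suffice to establish the theorem.
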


\begin{proof} The implications $(5)\Ra(1)\Ra(2)\Ra(3)$ are trivial or follow from Corollary~\ref{c:A->noFan}.

$(3)\Ra(4)$ Assume that the weak topology on $X$ does not coincide with the original topology of $X$. Then $X$ contains an open convex neighborhood $U$ of zero, which is not a neighborhood of zero in the weak topology.  Taking into account that $\bar U\subset U+U=2U$, we conclude that the set $\bar U$ and its homothetic copies $n\bar U$ are not neighborhoods of zero in the weak topology. Then for every $n\in\w$ the weak closure of the set $A_n=X\setminus n\bar U$ contains zero. By Theorem 1.2 \cite{GKZ}, the space $(X,w)$ has countable fan tightness. So, in each set $X\setminus n\bar U$ we can choose a finite subset $F_n\subset X\setminus n\bar U$ such that the union $\bigcup_{n\in\w}F_n$ contains zero in its closure, which implies that the family $(F_n)_{n\in\w}$ is not locally finite at zero. To see that this family  is a strong $\Fin^\w$-fan in $(X,w)$, it remains to check that $(F_n)_{n\in\w}$ is strongly compact-finite in $(X,w)$. Since closed convex subsets of $X$ are weakly closed, for every $n\in\w$ the set $X\setminus n\bar U$ is a weakly open neighborhood of the set $F_n$. Since each weakly compact subset of $(X,w)$ is bounded in $X$ (see \cite[3.31]{Os14}), the family $(X\setminus n\bar U)_{n\in\w}$ is compact-finite in $(X,w)$, which means that $(F_n)_{n\in\w}$ is a strong $\Fin^\w$-fan in $(X,w)$.
\smallskip

$(4)\Ra(5)$ Assume that the weak topology of $X$ coincides with the original topology of $X$. Using the metrizability of $X$, fix a countable neighborhood basis $(U_n)_{n\in\w}$ at zero in $X$. Since the weak topology  coincides with the original topology of $X$, for every  $n\in\w$ there is a finite subset $F_n\subset X^*$ such that $\bigcap_{f\in F_n}\{x\in X:|f_n(x)|<1\}\subset U_n$. We claim that the linear hull of the countable set $\bigcup_{n\in\w}F_n$ coincides with the dual space $X^*$ of $X$.
Given any functional $g\in X^*$, consider the weakly open neighborhood $U_g=\{x\in X:|g(x)|<1\}$ of zero, and find a basic neighborhood $U_n$ such that $U_n\subset U_g$. It follows that
$\bigcap_{f\in F_n}\{x\in X:|f(x)|<1\}\subset U_n\subset U_g$ and hence $\bigcap_{f\in F_n}f^{-1}(0)\subset g^{-1}(0)$, which implies that $g$ belongs to the linear hull of the set $F_n$.
\end{proof}

\section{Strong $\Fin^\w$-fans in bounded subsets of Banach spaces with the weak topology}\label{s:Banach}

In this section we construct strong $\Fin^\w$-fans in bounded subsets of Banach spaces endowed with the weak topology. For a subset $B$ of a Banach space $X$ by $(B,w)$ or just $B_w$ we denote the set $B$ endowed with the subspace topology inherited from the weak topology of $X$. If $B$ is the closed unit ball of $X$, then the space $B_w$ will be called the {\em weak unit ball} of $X$.

In Theorem 1.9 of \cite{GKP} Gabriyelyan, K\c akol and Plebanek proved that the weak unit ball $B_w$ of a Banach space $X$ is Ascoli if and only if $X$ contains no isomorphic copy of $\ell_1$. The proof of this theorem is rather involved and used some non-trivial tools of probability theory. In this section we present an alternative (and shorter) proof using geometric methods of the classical Banach space theory. 

First we recall some definitions. We say that a subset $B$ of linear space $X$ is \index{subset!absolutely convex}{\em absolutely convex} if $\lambda x+\mu y\in B$ for any points $x,y\in B$ and any real numbers $\lambda,\mu$ with $|\lambda|+|\mu|\le 1$.

A compact space $K$ is \index{topological space!Rosenthal compact}{\em Rosenthal compact} if $K$ embeds into the space $B_1(P)\subset\IR^P$ of functions of the Baire class on a Polish space $P$. It is known \cite[4.1]{Debs14} that each Rosenthal compact space $K$ is \index{topological space!Fr\'echet-Urysohn}{\em Fr\'echet-Urysohn} in the sense that for any subset $A\subset K$ and a point $x\in\bar A$ there is a sequence $S\subset A$, convergent to $x$.

A sequence $(e_n)_{n\in\w}$ of points of a Banach space $X$ is called \index{sequence!$\ell_1$-basic} {\em $\ell_1$-basic} if there are two positive real constants $c,C$ such that $$c\cdot\sum_{n=0}^\infty|x_n|\le\Big\|\sum_{i=0}^\infty x_ne_n\Big\|\le C\cdot\sum_{n=0}^\infty|x_n|$$for any sequence $(x_n)_{n\in\w}\in\ell_1$.

By $X^*$ we denote the dual Banach space to $X$ and by $X^{**}$ the second dual. Besides the weak topology $w$ the dual Banach space $X^*$ carries the weak$^*$ topology $w^*$ inherited from the Tychonoff product topology on $\IR^X$.

The following theorem is the main technical result of this section. In case of the unit ball $B$ this theorem was proved in \cite[1.9]{GKP} (by a different method).

\begin{theorem} For a bounded absolutely convex subset $B$ of a Banach space $X$ the following conditions are equivalent:
\begin{enumerate}
\item[\textup{1)}] the space $B_w=(B,w)$ is Fr\'echet-Urysohn;
\item[\textup{2)}] $B_w$ is a $k$-space;
\item[\textup{3)}] $B_w$ is Ascoli;
\item[\textup{4)}] $B_w$ contains no strong $\Fin^\w$-fan;
\item[\textup{5)}] $B$ contains no $\ell_1$-basic sequences;
\item[\textup{6)}] for any separable subspace $S\subset B$ its closure $\overline S_{w^*}$ in $(X^{**},w^*)$ is Rosenthal compact.
\end{enumerate}
\end{theorem}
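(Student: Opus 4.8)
The plan is to run the cycle $(1)\Ra(2)\Ra(3)\Ra(4)\Ra(5)\Ra(6)\Ra(1)$. The implications $(1)\Ra(2)\Ra(3)$ are the standard ones recorded in the diagram of Chapter~\ref{ch:GT}: a Fr\'echet--Urysohn space is sequential, a sequential space is a $k$-space, and every $k$-space is Ascoli (classical Ascoli theorem). The implication $(3)\Ra(4)$ is immediate from Corollary~\ref{c:A->noFan} together with the equivalence of strong and strict $\Fin$-fans in $T_1$-spaces: an Ascoli space has no strict $\Cld$-fan, hence no strict (=strong) $\Fin^\w$-fan. This leaves the three substantial implications $(4)\Ra(5)$, $(5)\Ra(6)$ and $(6)\Ra(1)$.

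\emph{Main obstacle, $(4)\Ra(5)$ (contrapositive).} First I would show that if $B$ contains an $\ell_1$-basic sequence $(e_n)_{n\in\w}$ then $(B,w)$ contains a strong $\Fin^\w$-fan. The geometric heart is the difference set $A=\{\tfrac12(e_p-e_q):p<q\}$. A pigeonhole argument on the bounded vectors $(f_1(e_p),\dots,f_k(e_p))\in\IR^k$ shows that $0$ lies in the weak closure of every tail $A_n=\{\tfrac12(e_p-e_q):n\le p<q\}$: given finitely many functionals $f_1,\dots,f_k\in X^*$ and $\e>0$ one finds $p,q\ge n$ with $|f_i(\tfrac12(e_p-e_q))|<\e$. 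On the other hand the lower $\ell_1$-estimate gives $\|\tfrac12(e_p-e_q)\|\ge c$, so by the Schur property of the copy $[e_n]\cong\ell_1$ no sequence drawn from $A$ converges weakly to $0$. Because $(B,w)$, as a subspace of a locally convex space in its weak topology, has countable fan tightness (Theorem~1.2 of \cite{GKZ}), I can extract finite sets $F_n\subset A_n$ with $0\in\overline{\bigcup_nF_n}^w$; this family is not locally finite at $0$. It is compact-finite because any weakly compact $K\subset X$ is weakly sequentially compact (Eberlein--\v Smulian), whereas the points of $\bigcup_nF_n$ meeting larger and larger index blocks form an $\ell_1$-block sequence with no weakly convergent subsequence, so $K$ can meet only finitely many $F_n$. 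Finally I upgrade the fan to a \emph{strong} one: extending the coordinate functionals of $(e_n)$ by Hahn--Banach to biorthogonal functionals in $X^*$, one builds weakly open (hence $\IR$-open) neighborhoods $U_n\supset F_n$ whose family is still compact-finite by the same block-sequence argument. Thus $(B,w)$ has a strong $\Fin^\w$-fan, contradicting $(4)$. This fan construction --- and in particular verifying strong compact-finiteness --- is the step I expect to be the most delicate.

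\emph{$(5)\Ra(6)$.} Let $S\subset B$ be separable and $Y=\overline{[S]}$ its closed linear span, a separable Banach space. A routine perturbation-and-scaling argument turns any isomorphic copy of $\ell_1$ inside $Y$ into an $\ell_1$-basic sequence lying in $B$; so by $(5)$ the space $Y$ contains no copy of $\ell_1$. By the Odell--Rosenthal theorem every element of $Y^{**}$ is a weak$^*$ limit of a sequence from $Y$, equivalently (since $Y$ is separable, $(B_{Y^*},w^*)$ is compact metrizable) $B_{Y^{**}}$ is a Rosenthal compact space of first Baire class functions on $(B_{Y^*},w^*)$. As $\overline S_{w^*}$ is a closed subset of a bounded multiple of $B_{Y^{**}}$ inside $(X^{**},w^*)$, and Rosenthal compactness is inherited by closed subspaces, $\overline S_{w^*}$ is Rosenthal compact.

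\emph{$(6)\Ra(1)$.} Here I use that the canonical inclusion $(X,w)\hookrightarrow(X^{**},w^*)$ is a topological embedding (both topologies are generated by $X^*$), so for separable $S$ the weak topology on $S$ is the subspace topology inherited from $\overline S_{w^*}$. Since Rosenthal compacta are Fr\'echet--Urysohn and even angelic (\cite[4.1]{Debs14}), each such $(S,w)$ is Fr\'echet--Urysohn. To deduce that all of $(B,w)$ is Fr\'echet--Urysohn, I reduce to the separable case: given $A\subset B$ and $b\in\overline A^{w}$, condition $(5)$ (equivalent to $(6)$ and meaning precisely that $B$ is weakly precompact, by Rosenthal's $\ell_1$-theorem) makes $(B,w)$ angelic, and angelicity is determined on separable subspaces; hence a countable $A_0\subset A$ with $b\in\overline{A_0}^w$ exists, $A_0\cup\{b\}$ sits in a separable $S$, and the Fr\'echet--Urysohn property of $(S,w)$ yields a sequence in $A_0\subset A$ converging weakly to $b$. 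The only point needing care is this separable-determinacy of angelicity, which I would justify through the Bourgain--Fremlin--Talagrand analysis of Rosenthal compacta; it is the secondary technical hurdle after the fan construction of $(4)\Ra(5)$.
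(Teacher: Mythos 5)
Your cycle $(1)\Ra(2)\Ra(3)\Ra(4)\Ra(5)\Ra(6)\Ra(1)$ and the treatment of the easy implications coincide with the paper's, but the two implications you yourself single out as delicate both contain genuine gaps.

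In $(5)\Ra(6)$ your reduction rests on the claim that if $B$ contains no $\ell_1$-basic sequence then the closed linear span $Y=\overline{[S]}$ contains no isomorphic copy of $\ell_1$. This claim is false. Take $X=\ell_1$ and let $B$ be the absolutely convex hull of $S=\{e_n/n:n\in\IN\}$. Then $Y=\ell_1$ certainly contains a copy of $\ell_1$, yet $B$ contains no $\ell_1$-basic sequence: every $x\in B$ satisfies $\sum_{n>N}|x(n)|\le 1/N$ for all $N$, so $B$ lies in a norm-compact subset of $\ell_1$, whereas an $\ell_1$-basic sequence is norm-separated (by the lower $\ell_1$-estimate) and cannot lie in a totally bounded set. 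Consequently you cannot invoke Odell--Rosenthal for $Y$; indeed in this example $B_{Y^{**}}=B_{\ell_\infty^*}$ with the weak$^*$ topology is \emph{not} Rosenthal compact (the weak$^*$ closure of $\{e_n\}$ in it is a copy of $\beta\IN$, which is not Fr\'echet--Urysohn), even though the conclusion about $\overline{S}_{w^*}$ still holds for a different reason. The paper avoids this trap by running the Baire-class-one analysis directly on the set $S$: if some $f\in\overline{S}_{w^*}$ is not of the first Baire class on the metrizable dual ball, the Rosenthal-type lemmas produce an $\ell_1$-basic sequence inside $S$ itself, contradicting (5). That is the step your argument is missing and cannot be replaced by passing to the span.

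In $(4)\Ra(5)$ your construction of a plain $\Fin^\w$-fan is sound (fan tightness, Eberlein--\v{S}mulian, and the Schur property inside the copy of $\ell_1$), but the upgrade to a \emph{strong} fan does not follow. Points of the weakly open neighborhoods $U_n$ built from Hahn--Banach extensions $\tilde e_n^*$ of the coordinate functionals need not lie in the copy of $\ell_1$, so the Schur/block argument says nothing about them; worse, compact-finiteness can fail outright. In $X=\ell_1\oplus\ell_2$ with $B$ the unit ball, choose $\tilde e_n^*=e_n^*\oplus u_n^*$, where $u_n^*$ is the $n$th coordinate functional of $\ell_2$: for a point $\frac12(e_p-e_q)\in F_n$ the vector $y=\frac12(u_p-u_q)$ takes \emph{exactly the same values} under every $\tilde e_j^*$, hence lies in any neighborhood of $\frac12(e_p-e_q)$ defined through these functionals; since such vectors form a weakly null sequence, the weakly compact set consisting of them together with $0$ meets infinitely many $U_n$. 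So compact-finiteness genuinely depends on a careful choice of the functionals, and this is precisely where the paper's machinery enters: $X$ is embedded into $\ell_\infty(\Gamma)$, the functionals are taken as $T^*e_n^*$ for a bounded operator $T:\ell_\infty(\Gamma)\to c_0$ extending the inclusion $\ell_1\to c_0$, their weak$^*$ convergence to zero is upgraded to weak convergence by the Grothendieck property of $\ell_\infty(\Gamma)$, and compact-finiteness of the neighborhoods is then forced by the Dunford--Pettis property of $\ell_\infty(\Gamma)=C(\beta\Gamma)$. These are properties of the ambient $\ell_\infty(\Gamma)$, not of a general Banach space, and nothing in your sketch replaces them. (A minor remark on $(6)\Ra(1)$: your detour through angelicity and Bourgain--Fremlin--Talagrand is unnecessary; Kaplansky's theorem on countable tightness of the weak topology gives the reduction to separable subsets directly, which is how the paper argues.)
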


\begin{proof} The implication $(1)\Ra(2)\Ra(3)\Ra(4)$ are trivial (or follow from Corollary~\ref{c:A->noFan}).
\vskip5pt

To prove that $(4)\Ra(5)$, assume that the condition (4) holds but (5) does not. Then the set $B$ contains an $\ell_1$-basic sequence $(e_n)_{n\in\w}$. This basic sequence spans a closed linear subspace which is isomorphic to $\ell_1$ and hence can be identified with $\ell_1$.
By \cite[p.142]{CFA}, the Banach space $X$ embeds into $\ell_\infty(\Gamma)$ for some set $\Gamma$. By (the proof of) Josefson-Nissenzweig Theorem \cite[p.223]{Dis}, the natural inclusion operator $\ell_1\to c_0$ defined on the subspace $\ell_1\subset X\subset\ell_\infty(\Gamma)$ extends to a bounded operator $T:\ell_\infty(\Gamma)\to c_0$. Consider the dual operator $T^*:c_0^*\to (\ell_\infty(\Gamma))^*$ and for every $n\in\w$ denote by $\bar e_n^*=T^*(e^*_n)$  the image of the $n$-coordinate functional $e_n^*\in c_0^*$. Then $\{(e_n,\bar e_n^*)\}_{n\in\w}$ is a biorthogonal sequence in $\ell_\infty(\Gamma)\times \ell_\infty(\Gamma)^*$. Since the sequence $(e^*_n)_{n\in\w}$ converges to zero in the weak$^*$ topology of $c_0^*$, its image $(\bar e^*_n)_{n\in\w}$ converges to zero in the weak$^*$ topology of the dual Banach space $\ell_\infty(\Gamma)^*$. By the Grothendieck property of the Banach space $\ell_\infty(\Gamma)$ (see Theorem~VII.15 \cite{Dis}), the sequence $(\bar e_n^*)_{n\in\w}$ converges to zero in the weak topology of $\ell_\infty(\Gamma)^*$.

For every $n,m\in\w$ consider the point $a_{n,m}=\frac12 e_n-\frac12 e_m\in B$ and its weakly open neighborhood $$U_{n,m}=\big\{x\in B:\max_{k\le m}|\bar e_k^*(x)-\bar e^*_k(a_{n,m})|<\tfrac1{2^{m+2}}\big\}\subset B_w,$$ which does not contain zero.

Next, for every $n\in\w$ consider the finite subset $F_n=\{a_{n,m}\}_{n<m\le 2n}$ of $B$ and its weakly open neighborhood $U_n=\bigcup_{n<m\le 2n}U_{n,m}$. We claim that $(F_n)_{n\in\w}$ is a strong $\Fin^\w$-fan in $B_w$.

First we show that the family $(F_n)_{n\in\w}$ is not locally finite at zero. We should prove that any weakly open neighborhood $U\subset B_w$ of zero meets infinitely many sets $F_n$, $n\in\w$. It suffices to show that for every $n_0\in\w$ the neighborhood $U$ meets some set $F_n$ with $n\ge n_0$.

By the definition of the weak topology on $X$, there exists a finite subset $\F\subset X^*$ of functionals such that $\{x\in B_w:\sup_{x^*\in \F}|x^*(x)|<1\}\subset U$. The family $\F$ can be considered as the operator $\F:X\to\ell_\infty(\F)$, $\F:x\mapsto (f(x))_{f\in\F}$, into the finite-dimensional Banach space $\ell_\infty(\F)$. Since the set $\F(B)$ has compact closure in $\ell_\infty(\F)$, there is a number $k\ge n_0$ such that each subset $A\subset \F(B)\subset\ell_\infty(\F)$ of cardinality $|A|\ge k$ contains two distinct points $a,b\in A$ with $\|a-b\|<1$. Such  choice of the number $k$ guarantees the existence of two numbers $n,m$ such that $k<n<m\le 2k$ and $\|\F(e_n)-\F(e_m)\|<1$. Then the point $a_{n,m}=\frac12e_n-\frac12e_m$ belongs to the set $F_n\cap U$, which completes the proof.

It remains to prove that the family $(F_n)_{n\in\w}$ is strongly compact-finite in $B_w$. This will follow as soon as we check that the family $(U_n)_{n\in\w}$ is compact-finite in $B_w$. Assuming the opposite, we could find a compact subset $K\subset B_w$ such that the set $\{n\in\w:K\cap U_n\ne\emptyset\}$ is infinite. Then we can choose  an infinite subset $\Omega\subset \{(n,m)\in\w\times\w:n<m\le 2n\}$ such that for every pair $(n,m)\in\Omega$ the intersection $K\cap U_{n,m}$ contains some point $x_{n,m}$.

By \cite[4.50]{CFA}, the (Eberlein) compact space $K\subset B_w\subset X_w$ is Fr\'echet-Urysohn, which allows us to replace the set $\Omega$ by a smaller infinite subset and assume that the sequence $(x_{n,m})_{(n,m)\in\Omega}$ converges to some point $x_\infty\in K$. Then the sequence $(x_{n,m}-x_\infty)_{(n,m)\in\Omega}$ weakly converges to zero in $X\subset\ell_\infty(\Gamma)$.

For every pair $(n,m)\in\Omega$ consider the functional $a_{n,m}^*=\bar e^*_n-\bar e^*_m$ and observe that $a_{n,m}^*(a_{n,m})=\frac12+\frac12=1$.
The inclusion $x_{n,m}\in U_{n,m}$ implies that $$\sup_{k\le m}|\bar e^*_k(x_{n,m}-a_{n,m})|<\frac14.$$ Then $$
\begin{aligned}
|a_{n,m}^*(x_{n,m}-a_{n,m})|&=|\bar e_n^*(x_{n,m}-a_{n,m})-\bar e^*_m(x_{n,m}-a_{n,m})|\le\\
 &\le|\bar e_n^*(x_{n,m}-a_{n,m})|+|\bar e^*_m(x_{n,m}-a_{n,m})|<\tfrac12
\end{aligned}
$$
and finally $a_{n,m}^*(x_{n,m})>a_{n,m}^*(a_{n,m})-\frac12=\frac12$.

Next, observe that for every pair $(p,q)\in\Omega$ the set $\Omega_{p,q}=\{(n,m)\in\Omega:q<n<m\}$ is infinite and for every $(n,m)\in\Omega_{p,q}$ we get
$$
\begin{aligned}
|a^*_{p,q}(x_{n,m})|&=|(\bar e^*_p-\bar e^*_q)(x_{n,m})|\le |(\bar e^*_p-\bar e^*_q)(a_{n,m})|+|(\bar e^*_p-\bar e_q^*)(a_{n,m}-x_{n,m})|\le\\
&\le|(\bar e^*_p-\bar e^*_q)(a_{n,m})|+|\bar e^*_p(a_{n,m}-x_{n,m})|+|\bar e^*_q(a_{n,m}-x_{n,m})|<\\
&<0+\frac1{2^{m+2}}+\frac1{2^{m+2}}=\frac1{2^{m+1}}.
\end{aligned}
$$
Now the weak convergence of the sequence $(x_{n,m})_{(n,m)\in\Omega}$ to $x_\infty$ implies that $a^*_{p,q}(x_\infty)=0$ for every $(p,q)\in\Omega$ and hence $a_{n,m}^*(x_{n,m}-x_\infty)=a_{n,m}^*(x_{n,m})>\frac12$.

The weak convergence of the sequence $(\bar e^*_n)_{n\in\w}$ to zero implies the weak convergence of the sequence $(a^*_{n,m})_{n<m}$ to zero. So, we obtain a weak null sequence $(a^*_{n,m})_{(n,m)\in\Omega}$ in $\ell_\infty(\Gamma)^*$ and a weak null sequence $(x_{n,m}-x_\infty)_{(n,m)\in\Omega}$ in $\ell_\infty(\Gamma)$  such that $$\lim_{(n,m)\to\infty}a^*_{n,m}(x_{n,m})\not=0.$$ But this contradicts the Dunford-Pettis property of $\ell_\infty(\Gamma)=C(\beta\Gamma)$ (see, \cite[11.36]{CFA}). This contradiction shows that the family $(U_{n})_{n\in\w}$ is compact-finite in $B_w$.
\vskip5pt

$(5)\Ra(6)$. Assume that $B$ contains no $\ell_1$-basic sequence and fix a separable subspace $S\subset B$. We should prove that the closure $\bar S_{w^*}$ of $S$ in $(X^{**},\mathrm{weak}^*)$
is Rosenthal compact. We shall follow closely the lines of the proof of Odell-Rosenthal Theorem in \cite[p.237]{Dis}. Replacing $B$ by $S$, we can assume that $B=S$ is separable and so is the Banach space $X$. Then the closed unit ball $K^*$ of the dual space $X^*$ endowed with the weak$^*$ topology is compact and metrizable. The second dual space $X^{**}$ can be identified with a subspace of $\ell_\infty(K^*)$. Under this identification, $X\subset C(K^*)\subset\ell_\infty(K^*)$. We claim that $\overline{S}_{w^*}\subset B_1(K^*)$ where $B_1(K^*)$ stands for the space of functions of the first Baire class on the compact metrizable space $K^*$. Assuming that some function $f\in\overline{S}_{w^*}$ does not belong to $B_1(K^*)$, we can apply Baire's Theorem~\cite[p.232]{Dis} and find a closed subset $D\subset K^*$ such that $f|D$ has no continuity points. By Lemma 8 in \cite[p.235]{Dis}, Lemma 9 in \cite[p.236]{Dis} and Proposition 3 in \cite[p.207]{Dis}, the set $B$ contains an $\ell_1$-basic sequence, which is a desired contradiction showing that $\bar S_{w^*}\subset B_1(K^*)$, which means that $\bar S_{w^*}$ is Rosenthal compact.
\vskip5pt

$(6)\Ra(1)$ Assume that for every separable subspace $S\subset B$ its closure $\overline{S}_{w^*}$ in $(X^{**},w^*)$ is Rosenthal compact. Since Rosenthal compacta are Fr\'echet-Urysohn \cite{Debs14}, the space $S$ is Fr\'echet-Urysohn. Now we are able to prove that the space $B_w$ is Fr\'echet-Urysohn.
Fix any subset $A\subset B_w$ and a point $a\in\bar A\setminus A\subset B_w$. By Kaplanski Theorem~\cite[4.49]{CFA}, the weak topology of any Banach space has countable tightness. Consequently, we can find a countable subset $S\subset A$ containing the point $a$ it its closure. The space $S\cup\{a\}\subset B_w$, being separable, is Fr\'echet-Urysohn, which allows us to choose a sequence $\{x_n\}_{n\in\w}\subset S\subset A$ convergent to $a$.
\end{proof}

\chapter{Topological functors and their properties}\label{ch:functor}

In this chapter we shall construct (strong) $\Fin$-fans in \index{functor-space}\index{functor!functor-space}{\em functor-spaces}, i.e., spaces of the form $FX$ where $X$ is a topological space and $F:\Top\to \Top$ is a functor in the category $\Top$ of topological spaces and their continuous maps. The results obtained in this chapter will be widely used in the next two chapters. These results can be considered as a self-contained presentation of the theory of functors with finite supports in categories of topological spaces.

Sometimes, functors naturally appearing in Topological Algebra are defined on some smaller subcategories of the category $\Top$, in particular, in the subcategories $\Top_i$, $i\in\{1,2,2\frac12,3,3\frac12\}$, determined by the corresponding Separation Axioms.
\smallskip

We recall that a topological space $X$ satisfies the separation axiom
\begin{itemize}
\item[$T_1$:] \index{topological space!$T_1$-space}(or is a {\em $T_1$-space}) if for any two distinct points $x,y\in X$ the point $x$ has a neighborhood $O_x\subset X$ such that $y\notin O_x$;
\item[$T_2$:] \index{topological space!Hausdorff}(or is {\em Hausdorff\/}) if any two distinct points $x,y\in X$ have disjoint neighborhoods $O_x,O_y\subset X$;
\item[$T_{2\frac12}$:] \index{topological space!functionally Hausdorff}(or is {\em functionally Hausdorff\/}) if for any distinct points $x,y\in X$ there is a continuous function $f:X\to[0,1]$ such that $f(x)=0$ and $f(y)=1$;
\item[$T_3$:] \index{topological space!regular}(or is {\em regular}) if $X$ is a $T_1$-space and for any open set $U\subset X$ and a point $x\in U$ there exists an open set $V\subset X$ such that $x\in V\subset\bar V\subset U$;
\item[$T_{3\frac12}$:] \index{topological space!Tychonoff}(or is {\em Tychonoff\/}) if $X$ is a $T_1$-space and for any open set $U\subset X$ and a point $x\in U$ there exists a continuous function $f:X\to[0,1]$ such that $x\in f^{-1}([0,1))\subset U$.
\end{itemize}
By $\Top_i$ for $i\in\{1,2,2\frac12,3,3\frac12\}$ we denote the category whose objects are $T_i$-spaces and morphisms are continuous maps between $T_i$-spaces. The category $\Top_i$ is a full subcategory of the category $\Top$. We recall that a subcategory $\T$ of $\Top$ is \index{subcategory!full}{\em full} if any continuous map between objects of $\T$ is a morphism of $\T$. A full subcategory of $\Top$ can be identified with the class of its objects.
The implications $T_{3\frac12}\Ra T_{2^\frac12}\Ra T_2\Ra T_1$ yields the following chain of the full subcategories of $\Top$:
$$\Top_{3\frac12}\subset \Top_{2^\frac12}\subset \Top_2\subset \Top_1\subset \Top.$$
In this chapter by $\Top_i$ we shall understand any full subcategory of the category $\Top$ which contains all finite discrete space and is \index{subcategory!hereditary}{\em hereditary} in the sense that  for any object $X$ of $\Top_i$ any subspace of $X$ also is an object of $\Top_i$.

We recall that a functor $F:\Top_i\to\Top$ is \index{functor!monomorphic}{\em monomorphic} it is preserves monomorphisms (which coincide with injective maps in the category $\Top_i$).

By $\w$ we denote the set of all finite ordinals and by $\IN=\w\setminus\{0\}$ the set of natural numbers. Each finite ordinal $n$ will be identified with the finite set $\{0,\dots,n-1\}$ endowed with the discrete topology. For a set $X$ by $X_d$ we denote the set $X$ endowed with the discrete topology.

For a set $X$ by $[X]^{<\w}$ we denote the family of all finite subsets of $X$. It is clear that
$[X]^{<\w}=\bigcup_{n\in\w}[X]^{\le n}$ where $[X]^{\le n}=\{A\in X^{<\w}:|A|\le n\}$ for $n\in\w$.



\section{Functors with finite supports} In this section we assume that $F:\Top_i\to\Top$ is a functor defined on a full hereditary subcategory $\Top_i$ containing all finite discrete spaces as objects. We shall identify the category $\Top_i$ with the class of its objects. So, finite ordinals $n\in\w$ endowed with the discrete topology belong to the class $\Top_i$.

For a finite ordinal $n\in\w$ and a space $X\in\Top_i$ any function $\xi:n\to X$ is a morphism of the category $\Top_i$. So, we can apply the functor $F$ to this morphism and obtain the continuous map $F\xi:Fn\to FX$ whose image $F\xi(Fn)$ is a subset of $FX$. Unifying all such images we obtain the subspace $$F_n(X)=\bigcup_{\xi\in X^n}F\xi(Fn)\subset FX.$$ Observe that the construction of the spaces $F_n(X)$ determines a subfunctor $F_n$ of the functor $F$. Indeed, for any continuous map $f:X\to Y$ between spaces $X,Y\in\Top_i$ and any map $\xi\in X^n$ the map $f\circ\xi$ belongs to $Y^n$ and hence $Ff(F\xi(Fn))=F(f\circ\xi)(Fn)\subset F_n(Y)$. So, $Ff(F_n(X))\subset F_n(Y)$ and we can put $F_nf=Ff|F_n(X):F_n(X)\to F_n(Y)$.

In fact, the space $F_n(X)$ admits an inner definition using finite subsets of $X$ endowed with the discrete topology (instead of maps defined on finite ordinals).
 We recall that a set $A$ endowed with the discrete topology is denoted by $A_d$. If $X$ is a $T_1$-space, then any finite subspace $A\subset X$ is discrete and hence $A_d=A$.

  For a topological space $X$ and a subset $A\subset X$ let $i_{A,X}:A\to X$ be the identity embedding. By  $i^d_{A,X}:A_d\to X$ (and $i^{dd}_{A,X}:A_d\to X_d$) we denote the same function $i_{A,X}$ but with discrete domain (and range).
It is clear that for two sets $A\subset B$ in a topological space $X$ we get $i^d_{A,X}=i^d_{B,X}\circ i^{dd}_{A,B}$.

If a topological space $X$ is an object of the category $\Top_i$ and $A$ is a finite subspace of  $X$, then the space $A$ and its discrete modification $A_d$ are objects of the category $\Top_i$. So, we can consider the continuous maps $Fi_{A,X}:FA\to FX$, $Fi^d:FA_d\to FX$ and their images $F(A;X)=Fi_{A,X}(FA)$ and  $F(A_d;X)=Fi^d_{A,X}(FA_d)$ in $FX$. Applying the functor $F$ to the equality $i^d_{A,X}=i_{A,X}\circ i^d_{A,A}$, we get the equality
$Fi^d_{A,X}=Fi_{A,X}\circ Fi^{dd}_{A,A}$, implying $F(A_d;X)\subset F(A;X)$.

Observe that for any (finite) subspaces $A\subset B$ of $X$ the obvious equality $i_{A,X}=i_{B,X}\circ i_{A,B}$ (and $i^d_{A,X}=i^d_{B,X}\circ i^{dd}_{A,B}$) implies the equality $Fi_{A,X}=Fi_{B,X}\circ Fi_{A,B}$ (and $Fi^d_{A,X}=Fi^d_{B,X}\circ Fi^{dd}_{A,B}$), which yields the inclusion $F(A;X)\subset F(B;X)$ (and $F(A_d;X)\subset F(B_d;X)$).

The following proposition yields alternative inner description of the subfunctor $F_n$.

\begin{proposition} For any space $X\in\Top_i$ and finite cardinal $n$ we get
$$F_n(X)=\bigcup_{A\in [X]^{\le n}}F(A_d;X).$$
\end{proposition}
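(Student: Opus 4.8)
The plan is to prove the set equality by establishing two inclusions, both of which follow by unwinding the definitions and using naturality of the functor $F$ on factorizations of maps.

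First I would prove the inclusion $\bigcup_{A\in[X]^{\le n}}F(A_d;X)\subset F_n(X)$. Fix a finite subset $A\in[X]^{\le n}$ and consider an element $z\in F(A_d;X)=Fi^d_{A,X}(FA_d)$. Since $|A|\le n$, I can choose a surjection $\eta:n\to A$, and by the $T_1$ (or hereditary discreteness) hypothesis the finite subspace $A$ is discrete, so $A_d=A$ and $\eta$ defines a continuous map $\eta:n\to A_d$. Composing with the embedding gives the morphism $\xi=i^d_{A,X}\circ\eta:n\to X$, which is precisely an element $\xi\in X^n$. The key point is the functorial equality $F\xi=Fi^d_{A,X}\circ F\eta$. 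Because $\eta:n\to A_d$ is surjective between finite discrete spaces it admits a section $s:A_d\to n$ with $\eta\circ s=\id_{A_d}$; applying $F$ gives $F\eta\circ Fs=\id_{FA_d}$, so $F\eta$ is surjective onto $FA_d$. Hence $F\xi(Fn)=Fi^d_{A,X}(F\eta(Fn))=Fi^d_{A,X}(FA_d)=F(A_d;X)$, and therefore $z\in F\xi(Fn)\subset F_n(X)$.

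Next I would prove the reverse inclusion $F_n(X)\subset\bigcup_{A\in[X]^{\le n}}F(A_d;X)$. Take $z\in F_n(X)$, so $z\in F\xi(Fn)$ for some $\xi\in X^n$. Let $A=\xi(n)$ be the image, a finite subset of $X$ with $|A|\le n$. The idea is to factor $\xi$ through its image: write $\xi=i^d_{A,X}\circ\bar\xi$, where $\bar\xi:n\to A_d$ is the corestriction of $\xi$ (which is continuous since $n$ is discrete, regardless of the topology on $A$). Applying $F$ yields $F\xi=Fi^d_{A,X}\circ F\bar\xi$, so
$$F\xi(Fn)=Fi^d_{A,X}\bigl(F\bar\xi(Fn)\bigr)\subset Fi^d_{A,X}(FA_d)=F(A_d;X).$$
Thus $z\in F(A_d;X)$ with $A\in[X]^{\le n}$, completing this inclusion.

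I expect the main subtlety to lie in the first inclusion, specifically in verifying that $F\eta$ maps $Fn$ onto all of $FA_d$ so that the whole set $F(A_d;X)$ is recovered; this is where the section argument for surjections of finite discrete spaces is essential, and it relies only on functoriality (preservation of the identity and of composition), not on any further hypotheses such as monomorphicity. The second inclusion is essentially a direct factorization-through-the-image argument and should be routine. Throughout I would use only that $\Top_i$ is hereditary and contains all finite discrete spaces (so that $A_d$, $n$, and all the displayed morphisms are legitimate objects and morphisms of the category), which are exactly the standing assumptions in this section.
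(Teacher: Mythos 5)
Your proof is correct and follows essentially the same route as the paper: one inclusion by factoring $\xi:n\to X$ through its image as $\xi=i^d_{A,X}\circ\bar\xi$, the other by realizing $F(A_d;X)$ inside $F\xi(Fn)$ via a surjection $n\to A_d$ admitting a section (the paper's retraction $r:n\to k$ composed with a homeomorphism $k\cong A_d$ is exactly such a surjection-with-section). One small remark: your appeal to a $T_1$ hypothesis is unnecessary and not among the standing assumptions of that section --- continuity of $\eta:n\to A_d$ is automatic because $n$ is discrete, which is precisely why the paper works with $A_d$ rather than the subspace $A$.
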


\begin{proof} If $a\in F_n(X)$, then $a\in F\xi(Fn)$ for some map $\xi:n\to X$. For the set $A=Fn\in[X]^{\le n}$ choose a (unique) map $\tilde \xi:n\to A_d$ such that $\xi=i^d_{A,X}\circ \tilde \xi$ and observe that $a\in F\xi(Fn)=Fi^d_{A,X}(F\tilde\xi(Fn))\subset Fi^d_{A,X}(FA_d)=F(A_d;X)$.

On the other hand, for any finite subset $A\in [X]^{\le n}$ with $a\in F(A_d;X)$ we can choose a homeomorphism $\zeta:A_d\to k$ to the cardinal $k=|A|\subset n$, then take any retraction $r:n\to k$ and observe that the map $\xi=i^d_{A,X}\circ\zeta^{-1}\circ r:n\to X$ has the desired property: $a\in F\xi(Fn)$. This can be shown by applying the functor $F$ to the obvious equality $i^d_{A,X}=\xi\circ i_{k,n}\circ \zeta$:
 $$a\in F(A_d;X)=Fi^d_{A,X}(FA_d)=F\xi\circ Fi_{k,n}\circ F\zeta(FA_d)\subset F\xi\circ Fi_{k,n}(Fk)\subset F\xi(Fn).$$
\end{proof}

For any space $X\in\Top_i$ consider the subspace $$F_{{<}\w}(X)=\bigcup_{n\in\w}F_n(X)=\bigcup_{n\in\w}\bigcup_{A\in[X]^{\le n}}F(A_d;X).$$
It follows that for any element $a\in F_{{<}\w}(X)$ the family $$\Supp(a)=\{A\in[X]^{<\w}:a\in F(A_d;X)\}$$is not empty and hence its intersection
$$\supp(a)=\bigcap\Supp(a)$$is a well-defined finite subset of $X$, called the \index{functor!support of an element}{\em support} of $a$.

The family $\Supp(a)$ has the following useful intersection property.

\begin{lemma}\label{l:supp1} Let $F:\Top_i\to\Top$ be a monomorphic functor, $X\in \Top_i$ and $a\in FX$. For any sets $A,B\in\Supp(a)\subset[X]^{<\w}$, the family $\Supp(a)$ contains any finite non-empty set $C\subset X$ with $A\cap B\subset C$.
\end{lemma}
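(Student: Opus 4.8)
The plan is to transport the whole problem into the finite discrete space $E_d$, where $E=A\cup B\cup C$, solve it there by an explicit retraction argument, and then push the conclusion back to $X$. Before doing so, I would make a harmless reduction to the case where $D:=A\cap B$ is non-empty. Given an arbitrary non-empty $C$ with $A\cap B\subset C$, fix a point $c_0\in C$ and replace $A,B$ by $A\cup\{c_0\}$ and $B\cup\{c_0\}$. By the monotonicity $F(A_d;X)\subset F(A'_d;X)$ for $A\subset A'$ (recorded just before the lemma), the enlarged sets still belong to $\Supp(a)$, their intersection equals $(A\cap B)\cup\{c_0\}$, which is non-empty and still contained in $C$. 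So from now on I may assume $D=A\cap B\neq\emptyset$ and $D\subset C$, and I set $E=A\cup B\cup C$, so that $A,B,C,D\subset E$.

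Next I would lift $a$ to $E_d$. Since $A\in\Supp(a)$ there is $a_A\in FA_d$ with $a=Fi^d_{A,X}(a_A)$; using $i^d_{A,X}=i^d_{E,X}\circ i^{dd}_{A,E}$ I put $a_E:=Fi^{dd}_{A,E}(a_A)$, so that $a=Fi^d_{E,X}(a_E)$ and $a_E\in F(A_d;E_d):=Fi^{dd}_{A,E}(FA_d)$. This is the one place where monomorphicity is used: as $i^d_{E,X}$ is injective, $Fi^d_{E,X}$ is injective, so $a_E$ is the \emph{unique} preimage of $a$ under $Fi^d_{E,X}$. Running the same computation for $B$ gives some $a_E'\in F(B_d;E_d)$ with $Fi^d_{E,X}(a_E')=a$, and injectivity forces $a_E'=a_E$. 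Hence $a_E$ lies in $F(A_d;E_d)\cap F(B_d;E_d)$.

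The heart of the argument is a double-retraction trick, which exploits the fact that any self-map of a finite discrete space is continuous. First I record the invariance principle: for any $\psi\colon E_d\to E_d$ with $\psi|_A=\id_A$ one has $F\psi(a_E)=F(\psi\circ i^{dd}_{A,E})(a_A)=Fi^{dd}_{A,E}(a_A)=a_E$, and symmetrically $F\chi(a_E)=a_E$ whenever $\chi|_B=\id_B$. The goal is to produce a single $\phi\colon E_d\to E_d$ with $\phi(E)\subset D$, $\phi|_D=\id_D$ and $F\phi(a_E)=a_E$. Fixing $d_0\in D$, I define $\psi$ to be the identity on $A$ and constantly $d_0$ off $A$ (so $\psi(E)=A$), and $\chi$ to be the identity on $B$ and constantly $d_0$ off $B$. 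Then $\phi:=\chi\circ\psi$ fixes $D$ (each $x\in D\subset A\cap B$ is fixed by both), and $\phi(E)=\chi(A)\subset D$ (points of $D$ are fixed while points of $A\setminus B$ are sent to $d_0\in D$). Consequently $F\phi(a_E)=F\chi\big(F\psi(a_E)\big)=F\chi(a_E)=a_E$, and factoring $\phi=i^{dd}_{D,E}\circ\rho$ through $D$ yields $a_E\in F(D_d;E_d)\subset F(C_d;E_d)$, since $D\subset C$.

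Finally, writing $a_E=Fi^{dd}_{C,E}(a_C)$ and composing with $i^d_{E,X}$ gives $a=F\big(i^d_{E,X}\circ i^{dd}_{C,E}\big)(a_C)=Fi^d_{C,X}(a_C)\in F(C_d;X)$, that is, $C\in\Supp(a)$. The functoriality identities for the various inclusions are routine bookkeeping; the genuine content — and the step I expect to be the main obstacle to pin down precisely — is the design of the pair $(\psi,\chi)$, which must simultaneously fix the set responsible for each map's invariance of $a_E$ and collapse the composite into $D=A\cap B$ while fixing $D$. The non-emptiness of $D$, secured by the preliminary enlargement, is exactly what makes the common collapse point $d_0$ available, so that the two separately harmless collapses can be chained into one retraction onto $D$.
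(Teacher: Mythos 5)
Your proof is correct and follows essentially the same route as the paper: pass to the finite discrete space on $E=A\cup B\cup C$, use monomorphicity of $F$ to identify the lifts of $a$ coming from the $A$-side and the $B$-side, and then apply self-maps of $E_d$ that restrict to the identity on $A$ (resp.\ $B$) to collapse the lift into the target set. The only difference is structural rather than substantive: the paper uses a \emph{single} map that is the identity on $A$ and sends $B\setminus A$ into $C$ (which is non-empty by hypothesis, so no preliminary enlargement is needed), whereas you compose two collapses onto a common point $d_0\in A\cap B$, which is what forces your extra reduction to the case $A\cap B\ne\emptyset$.
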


\begin{proof}  Consider the finite subset $U=A\cup B\cup C$ of $X$ and the identity inclusion $i^d_{U,X}\colon U_d\to X$. Since the functor $F$ is monomorphic, the map $Fi_{U,X}\colon FU_d\to FX$ is injective. Consider the identity embeddings $i^{dd}_{A,U}\colon A_d\to U_d$ and $i^{dd}_{B,U}\colon B_d\to U_d$ of the corresponding finite discrete spaces and observe that $i^d_{A,X}=i^{d}_{U,X}\circ i^{dd}_{A,U}$ and $i^d_{B,X}=i^d_{U,X}\circ i^{dd}_{B,U}$. Since $a\in F(A_d;X)=Fi^d_{A,X}(FA_d)$, we can find an element $a_A\in FA_d$ such that $Fi^d_{A,X}(a_A)=a$. By analogy, there exists an element $a_B\in FB_d$ such that $a=Fi^d_{B,X}(a_B)$.  Consider the elements $a_A'=Fi^{dd}_{A,U}(a_A)$ and $a_B'=Fi^{dd}_{B,U}(a_B)$ in the functor-space $FU_d$ and observe that
$$
\begin{aligned}
Fi^d_{U,X}(a'_A)&=Fi^d_{U,X}\circ Fi^{dd}_{A,U}(a_A)=Fi^d_{A,X}(a_A)=a=\\
&=Fi^d_{B,X}(a_B)=Fi^d_{U,X}\circ Fi^{dd}_{B,U}(a_B)=Fi^d_{U,X}(a_B').
\end{aligned}
$$The injectivity of the map $Fi^d_{U,X}$ implies $a_A'=a_B'$.
Now choose any map $r_A:U_d\to U_d$ such that $r_A|A_d=\id$ and $r_A(B\setminus A)\subset C$. Observe that $r_A(B)=r_A(A\cap B)\cup r_A(B\setminus A)\subset (A\cap B)\cup C=C$, which implies that the map $r_{B}:B\to C$, $r_{B}:x\mapsto r_A(x)$, is well-defined and satisfies the equality $i^d_{C,X}\circ r_B=i^d_{U,X}\circ r_A\circ i^{dd}_{B,U}$.

Observe that $i^d_{U,X}\circ r_A\circ i^{dd}_{A,U}=i^d_{A,X}$ and hence $$
\begin{aligned}
a=&Fi^d_{A,X}(a_A)=Fi^d_{U,X}\circ Fr_A\circ Fi^{dd}_{A,U}(a_A)=Fi^d_{U,X}\circ Fr_A(a_A')=Fi^d_{U,X}\circ Fr_A(a_B')=\\
&=Fi^d_{U,X}\circ Fr_A\circ Fi^{dd}_{B,U}(a_B)=Fi^{d}_{C;X}\circ Fr_B(a_B)\in Fi^d_{C,X}(FC_d)=F(C_d,X),
\end{aligned}
$$
which implies $C\in\Supp(a)$.
\end{proof}

Lemma~\ref{l:supp1} will be used in the proof of the following important fact (cf. \cite{BMZ}).

\begin{theorem}\label{t:supp} Let $F:\Top_i\to\Top$ be a monomorphic functor and $X\in\Top_i$. For any $a\in F_{{<}\w}(X)$ and non-empty finite subset $A\subset X$ with $\supp(a)\subset A$ the element $a$ belongs to the set $F(A_d;X)$. In particular, $a\in F(\supp(a)_d;X)$ if $\supp(a)\ne \emptyset$.
\end{theorem}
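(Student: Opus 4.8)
The plan is to prove Theorem~\ref{t:supp} by reducing to the situation of Lemma~\ref{l:supp1}. The key observation is that the support $\supp(a)=\bigcap\Supp(a)$ is an intersection of a family of finite subsets of $X$, and $[X]^{<\w}$ ordered by inclusion is well-founded on finite sets, so only finitely many sets are involved in realizing this intersection. Concretely, since each member of $\Supp(a)$ is finite, I would first extract finitely many sets $A_1,\dots,A_k\in\Supp(a)$ whose intersection already equals $\supp(a)$; this is possible because intersecting a decreasing-under-inclusion chain of finite sets stabilizes. The main work is then to show $\supp(a)\in\Supp(a)$, i.e.\ that the support is itself a support set, provided it is non-empty.

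First I would handle the case $\supp(a)\neq\emptyset$. Starting from two sets $A,B\in\Supp(a)$, Lemma~\ref{l:supp1} tells me that any non-empty finite $C$ with $A\cap B\subset C$ also lies in $\Supp(a)$; taking $C=A\cap B$ (when non-empty) shows $\Supp(a)$ is closed under taking intersections as long as the intersection is non-empty. Iterating this over the finite subfamily $A_1,\dots,A_k$, I obtain that $A_1\cap\cdots\cap A_k=\supp(a)$ belongs to $\Supp(a)$, which by the very definition of $\Supp(a)$ means $a\in F(\supp(a)_d;X)$. This gives the last sentence of the theorem. The delicate point to check is non-emptiness of the partial intersections along the way: if at some stage $A_i\cap(A_{i+1}\cap\cdots\cap A_k)$ were empty while the total intersection $\supp(a)$ is non-empty, that cannot happen since the total intersection is contained in every partial intersection, so each partial intersection contains $\supp(a)\neq\emptyset$ and Lemma~\ref{l:supp1} applies at every step.

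Next, for the general statement I take any non-empty finite $A\subset X$ with $\supp(a)\subset A$. I have just shown $\supp(a)\in\Supp(a)$ (in the non-empty case), so $a\in F(\supp(a)_d;X)$. Now I use the monotonicity of the subspaces $F(\cdot_d;X)$ established in the text before the statement: for $\supp(a)\subset A$ one has the inclusion $F(\supp(a)_d;X)\subset F(A_d;X)$, coming from applying $F$ to the factorization $i^d_{\supp(a),X}=i^d_{A,X}\circ i^{dd}_{\supp(a),A}$. Hence $a\in F(A_d;X)$, as required. The only remaining subtlety is the edge case $\supp(a)=\emptyset$, where the ``in particular'' clause is vacuous and the hypothesis asks for $A$ non-empty with $\emptyset\subset A$; here I would argue directly that some singleton support set exists (as $a\in F_{<\w}(X)$ guarantees $\Supp(a)\neq\emptyset$, pick any $B\in\Supp(a)$ and apply Lemma~\ref{l:supp1} with $A=B$ to replace $B$ by any non-empty $C\subset B$, then pass up to $A$ by monotonicity).

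The step I expect to be the main obstacle is verifying that $\Supp(a)$ is genuinely closed under non-empty finite intersections, which is exactly the content of Lemma~\ref{l:supp1} but must be applied with care to the inductive reduction: I must ensure that at each stage the intersection I feed into the lemma remains non-empty, and that the two-set formulation of the lemma suffices to reach an arbitrary finite intersection. Since $\supp(a)$ is contained in every partial intersection, non-emptiness propagates automatically, so the induction goes through cleanly; the essential algebraic identity is already packaged in Lemma~\ref{l:supp1}, and everything else is the functorial monotonicity $F(\supp(a)_d;X)\subset F(A_d;X)$ that the preceding discussion supplies.
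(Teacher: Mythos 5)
Your main case ($\supp(a)\ne\emptyset$) is correct and essentially the paper's own argument: you use Lemma~\ref{l:supp1} to show that $\Supp(a)$ is closed under non-empty pairwise intersections, iterate over a finite subfamily realizing $\supp(a)=\bigcap\Supp(a)$ (non-emptiness of the partial intersections propagates because each contains $\supp(a)$), conclude $\supp(a)\in\Supp(a)$, and finish by the monotonicity $F(\supp(a)_d;X)\subset F(A_d;X)$. This matches the paper's proof in the case where all pairwise intersections of members of $\Supp(a)$ are non-empty.

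However, your treatment of the edge case $\supp(a)=\emptyset$ has a genuine error, and this case is not vacuous: the main statement still asserts $a\in F(A_d;X)$ for \emph{every} non-empty finite $A$. You propose to ``pick any $B\in\Supp(a)$ and apply Lemma~\ref{l:supp1} with $A=B$ to replace $B$ by any non-empty $C\subset B$.'' But the lemma, applied with both sets equal to $B$, says that $\Supp(a)$ contains every non-empty finite $C$ with $B\cap B=B\subset C$ --- i.e.\ it produces \emph{supersets} of $B$, not subsets; the inclusion goes the wrong way for your purpose. Moreover, even if you had a singleton member $\{x\}\in\Supp(a)$, your final step ``pass up to $A$ by monotonicity'' requires $\{x\}\subset A$, which fails for an arbitrary non-empty finite $A$ not containing $x$. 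The correct way to close this case is the paper's dichotomy: if no two members of $\Supp(a)$ were disjoint, your own intersection-closure argument would apply at every stage and force $\supp(a)$ to be a \emph{non-empty} member of $\Supp(a)$, contradicting $\supp(a)=\emptyset$; hence there exist disjoint $B,C\in\Supp(a)$, and Lemma~\ref{l:supp1} applied to this pair, with the target set $A\supset\emptyset=B\cap C$, yields $A\in\Supp(a)$ directly, i.e.\ $a\in F(A_d;X)$.
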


\begin{proof} Since $a\in F_{{<}\w}(X)$, the set $\Supp(a)\subset [X]^{<\w}$ is not empty and has well-defined finite intersection $\supp(a)=\bigcap\Supp(a)\subset A$. If the family $\Supp(a)$ contains two disjoint finite sets $B,C$, then $B\cap C=\emptyset\subset  A$ and $a\in F(A_d;X)$ by Lemma~\ref{l:supp1}. So, we assume that for any two sets $B,C\in\Supp(a)$ the intersection $B\cap C$ is not empty and hence belongs to the family $\Supp(a)$ by Lemma~\ref{l:supp1}. This means that the family $\Supp(a)$ is closed under finite intersections, which implies that the finite set $\supp(a)$ belongs to $\Supp(a)$ and hence $a\in F(\supp(a)_d;X)\subset F(A_d;X)$.
\end{proof}

Theorem~\ref{t:supp} has many corollaries.

\begin{corollary}\label{c:Fn} Let $F:\Top_i\to\Top$ be a monomorphic functor. For any space $X\in\Top_i$ the inequality
$$F_n(X)=\{a\in F_{{<}\w}(X):|\supp(a)|\le n\}$$holds for all $n\in\IN$.
\end{corollary}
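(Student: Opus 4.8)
The plan is to prove the set equality $F_n(X)=\{a\in F_{{<}\w}(X):|\supp(a)|\le n\}$ by establishing the two inclusions separately, using Theorem~\ref{t:supp} as the main lever for the harder direction.

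First I would prove the inclusion $F_n(X)\subset\{a\in F_{{<}\w}(X):|\supp(a)|\le n\}$. Take any $a\in F_n(X)$. By definition, $F_n(X)=\bigcup_{A\in[X]^{\le n}}F(A_d;X)$, so there is a finite set $A\subset X$ with $|A|\le n$ and $a\in F(A_d;X)$. Since $a\in F_n(X)\subset F_{{<}\w}(X)$, the support $\supp(a)$ is well-defined. The inclusion $a\in F(A_d;X)$ means precisely that $A\in\Supp(a)$, and hence $\supp(a)=\bigcap\Supp(a)\subset A$, which gives $|\supp(a)|\le|A|\le n$. This direction is essentially immediate from the definitions and requires no appeal to the monomorphic hypothesis.

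The reverse inclusion is where the content lies and where Theorem~\ref{t:supp} does the work. Suppose $a\in F_{{<}\w}(X)$ with $|\supp(a)|\le n$. If $\supp(a)\neq\emptyset$, then Theorem~\ref{t:supp} (applied with the finite set $A=\supp(a)$, which trivially satisfies $\supp(a)\subset A$) yields $a\in F(\supp(a)_d;X)$; since $|\supp(a)|\le n$, this set is one of the pieces in the union defining $F_n(X)$, so $a\in F_n(X)$. The one delicate point I expect to be the main obstacle is the degenerate case $\supp(a)=\emptyset$: here $|\supp(a)|=0\le n$ holds for every $n\in\IN$, but $\supp(a)_d$ is empty and Theorem~\ref{t:supp} explicitly requires a \emph{non-empty} finite set $A$ with $\supp(a)\subset A$. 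To handle this I would pick any singleton $A=\{x\}\subset X$ (possible since $a\in F_{{<}\w}(X)$ forces $X$ to be non-empty, as $\Supp(a)\neq\emptyset$); then $\emptyset=\supp(a)\subset A$ and $A$ is non-empty, so Theorem~\ref{t:supp} gives $a\in F(A_d;X)$ with $|A|=1\le n$, whence again $a\in F_n(X)$.

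Combining both inclusions gives the claimed equality for every $n\in\IN$. The only subtlety worth flagging in the write-up is the empty-support boundary case, which is why the non-emptiness clause in Theorem~\ref{t:supp} must be circumvented by an arbitrary singleton rather than by the support itself.
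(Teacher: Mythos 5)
Your argument follows exactly the route the paper intends: Corollary~\ref{c:Fn} is stated there without proof, as an immediate consequence of Theorem~\ref{t:supp}, and your two inclusions --- the forward one straight from the definition of $\supp(a)$ as $\bigcap\Supp(a)$ (where, as you correctly note, monomorphicity plays no role), the reverse one from Theorem~\ref{t:supp} applied to $A=\supp(a)$, with a separate treatment of the case $\supp(a)=\emptyset$ --- are precisely that argument. In the substantive case $X\ne\emptyset$ your proof is correct.

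The one flaw is your justification of $X\ne\emptyset$ in the empty-support case. You argue that $a\in F_{{<}\w}(X)$ forces $X$ to be non-empty ``as $\Supp(a)\ne\emptyset$''; this inference is invalid, because $\Supp(a)$ is a family of \emph{finite subsets} of $X$ and the empty set is such a subset, so $\Supp(a)$ can be nonempty (namely $\Supp(a)=\{\emptyset\}$) while $X=\emptyset$. This really happens whenever $F\emptyset\ne\emptyset$: for instance, for the monomorphic functor $FX=X\oplus\{*\}$ one has $*\in F(\emptyset_d;\emptyset)\subset F_{{<}\w}(\emptyset)$, so $\Supp(*)=\{\emptyset\}\ne\emptyset$ although the underlying space is empty. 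The gap is harmless and closes in one line: if $X=\emptyset$, then $[X]^{\le n}=\{\emptyset\}$, so the inner description of $F_n$ gives $F_n(\emptyset)=F(\emptyset_d;\emptyset)=F_{{<}\w}(\emptyset)$, while every $a\in F_{{<}\w}(\emptyset)$ has $\supp(a)=\emptyset$ of cardinality $0\le n$; hence both sides of the asserted equality coincide with $F_{{<}\w}(\emptyset)$ and the statement holds trivially. Your singleton trick is then needed, and available, exactly when $X\ne\emptyset$.
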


\begin{proposition}\label{p:supp=image} Let $f:X\to Y$ be a continuous map between spaces $X,Y\in\Top_i$ such that the image $f(X)$ contains more than one point. For any $a\in F_{<\w}(X)$ we get $Ff(a)\in F_{{<}\w}(Y)$ and $\supp(Ff(a))\subset f(\supp(a))$. Moreover, if the restriction $f|\supp(a)$ is injective, then $\supp(Ff(a))=f(\supp(a))$.
\end{proposition}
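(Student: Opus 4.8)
The proof rests on Theorem~\ref{t:supp}, which lets us convert statements about supports into statements about the subspaces $F(A_d;X)$. The plan is to first establish that $Ff(a)$ lands in $F_{<\w}(Y)$ and that its support is contained in $f(\supp(a))$, and then to upgrade the inclusion to an equality under the injectivity hypothesis.

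For the first part, set $S=\supp(a)$. If $S=\emptyset$, then $a\in F_0(X)$, so $Ff(a)\in F_0(Y)\subset F_{<\w}(Y)$ and $\supp(Ff(a))=\emptyset=f(S)$, and we are done; so I would assume $S\neq\emptyset$. By Theorem~\ref{t:supp}, $a\in F(S_d;X)$, meaning $a=Fi^d_{S,X}(b)$ for some $b\in FS_d$. The key naturality observation is that the restriction $g=f|S:S_d\to f(S)_d$ is a (continuous) map between discrete spaces, and the square relating $i^d_{S,X}$, $i^d_{f(S),Y}$, $f$, and $g$ commutes: $f\circ i^d_{S,X}=i^d_{f(S),Y}\circ g^{dd}$, where $g^{dd}:S_d\to f(S)_d$ is $g$ with discrete range. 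Here I must be slightly careful: $f(S)$ is a finite subset of $Y$ with at least one point, and since $f(X)$ has more than one point, $f(S)_d$ is a legitimate finite discrete object of $\Top_i$. Applying $F$ to the commuting square yields
$$
Ff(a)=Ff\circ Fi^d_{S,X}(b)=Fi^d_{f(S),Y}\circ Fg^{dd}(b)\in Fi^d_{f(S),Y}(Ff(S)_d)=F(f(S)_d;Y).
$$
Thus $Ff(a)\in F_{<\w}(Y)$ and, by the definition of $\Supp$, we have $f(S)\in\Supp(Ff(a))$, whence $\supp(Ff(a))=\bigcap\Supp(Ff(a))\subset f(S)=f(\supp(a))$.

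For the equality statement, assume $f|S$ is injective. Then $g:S_d\to f(S)_d$ is a bijection between finite discrete spaces, hence a homeomorphism, with continuous inverse $g^{-1}:f(S)_d\to S_d$. The idea is to run the previous argument in reverse using a map $h:Y\to X$ that inverts $f$ on $f(S)$. Concretely, I would fix a point $s_0\in S$ and define $h:f(S)_d\to X$ by $h=i^d_{S,X}\circ g^{-1}$; since $f\circ h=i^d_{f(S),Y}$ (as $f$ inverts $g^{-1}$ on $f(S)$), applying $F$ gives $Fh$ sending the witness $Fg^{dd}(b)$ back to $a$. This shows $S\in\Supp(\,\cdot\,)$ for the preimage, but the cleanest route is to argue directly on supports: since $Ff(a)\in F(f(S)_d;Y)$ with $f(S)$ of cardinality $|S|$, Corollary~\ref{c:Fn} gives $|\supp(Ff(a))|\le|S|$; combined with $\supp(Ff(a))\subset f(S)$ and $|f(S)|=|S|$ (injectivity), I would deduce $\supp(Ff(a))=f(S)$ provided I can rule out the support being a proper subset. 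The main obstacle is exactly this last point: the inclusion $\supp(Ff(a))\subset f(S)$ together with a cardinality bound does not by itself force equality. To close the gap I would suppose $\supp(Ff(a))=T\subsetneq f(S)$ and pull back: by Theorem~\ref{t:supp}, $Ff(a)\in F(T_d;Y)$, and composing with $Fh$ (which is injective as $h$ is an embedding of discrete spaces into $X$, using monomorphy of $F$) would exhibit $a\in F(h(T)_d;X)$ with $h(T)=g^{-1}(T)\subsetneq S$, contradicting the minimality of $\supp(a)=S$. This contradiction, which is where the injectivity of $f|S$ is genuinely used, yields $\supp(Ff(a))=f(S)$.
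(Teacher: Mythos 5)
Your handling of the non-empty-support cases follows essentially the paper's own route: factor $f\circ i^d_{S,X}=i^d_{f(S),Y}\circ g^{dd}$ through the finite discrete space $f(S)_d$, apply $F$ to get the inclusion $\supp(Ff(a))\subset f(S)$, and for the equality pull the witness back through the homeomorphism $g^{-1}:f(S)_d\to S_d$ to contradict minimality of $\supp(a)$. One point of looseness there: the injectivity you actually need is that of $Fi^d_{f(S),Y}$ (equivalently of $F(f\circ i^d_{S,X})$, which is where injectivity of $f|S$ and monomorphy of $F$ enter), since the identification of the two witnesses takes place inside $F(f(S)_d)$ \emph{before} you apply $Fh$; injectivity of $Fh$ itself is not the relevant fact.

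There are, however, two genuine gaps, both stemming from the same misconception about empty supports, and both occurring exactly where the hypothesis $|f(X)|>1$ must be used --- a hypothesis your argument never genuinely invokes. First, your base case asserts that $\supp(a)=\emptyset$ implies $a\in F_0(X)$. This is false: $\supp(a)=\bigcap\Supp(a)$ can be empty without $\emptyset$ being a member of $\Supp(a)$ (by Lemma~\ref{l:supp1}, if two disjoint sets lie in $\Supp(a)$ then every \emph{non-empty} finite set does, but the empty set need not). The paper's own example of the variety with a constant unary operation exhibits exactly this: the element $1_X$ has empty support yet does not lie in the hull of the empty set. The correct argument picks $x_1,x_2\in X$ with $f(x_1)\ne f(x_2)$, uses Theorem~\ref{t:supp} to get $a\in F(\{x_1\}_d;X)\cap F(\{x_2\}_d;X)$, and concludes $\supp(Ff(a))\subset\{f(x_1)\}\cap\{f(x_2)\}=\emptyset$. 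Second, in your contradiction argument for the equality you apply Theorem~\ref{t:supp} to $T=\supp(Ff(a))$; but that theorem applies only to \emph{non-empty} sets, so the sub-case $T=\emptyset$ (which can certainly occur, e.g.\ when $|S|=1$) is left unhandled, and your pull-back breaks there. Closing it again requires $|f(X)|>1$: from $\supp(Ff(a))=\emptyset$ one gets $Ff(a)\in F(C_d;Y)$ for \emph{every} non-empty finite $C\subset Y$, in particular for two singletons $\{f(x_1)\},\{f(x_2)\}$ with distinct values, and pulling both back into $X$ (as the paper does) forces $\supp(a)=\emptyset$, contradicting $S\ne\emptyset$. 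Until these two empty-support cases are patched, the proof is incomplete.
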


 \begin{proof}  First we consider the case of empty support $\supp(a)$.
Since $|f(X)|>1$, we can choose two points $x_1,x_2$ such that $f(x_1)\ne f(x_2)$. Theorem~\ref{t:supp} implies that $a\in F(\{x_1\}_d;X)\cap F(\{x_2\}_d;X)$ and then $Ff(a)\subset F(\{f(x_1)\}_d;Y)\cap F(\{f(x_2)\}_d;Y)$ and $\supp(Ff(a))\subset \{f(x_1)\}\cap \{f(x_2)\}=\emptyset=f(\supp(a))$.

It remains to consider the case of non-empty support $A=\supp(a)$.
 In this case Theorem~\ref{t:supp} guarantees that $a\in F(A_d;X)=Fi^d_{A,X}(FA_d)$, so we can find an element $\tilde a\in FA_d$ with $a=Fi^d_{A,X}(\tilde a)$. Consider the set $B=f(A)\subset Y$ and the continuous map $g:A_d\to B_d$, $g:x\mapsto f(x)$. Applying the functor $F$ to the obvious equality $f\circ i^d_{A,X}=i^d_{B,Y}\circ g$, we get the equality
 $Ff(a)=Ff\circ Fi^d_{A,X}(\tilde a)=Fi^d_{B,Y}\circ Fg(\tilde a)\in Fi^d_{B,Y}(FB_d)=F(B_d;Y)$ and hence $\supp(Ff(a))\subset B=f(\supp(a))$.

 Now assuming that the map $f|\supp(a)$ is injective, we shall prove that $\supp(Ff(a))=f(\supp(a))$. To derive a contradiction, assume that the element $b=Ff(a)$ has support $\supp(b)\ne f(\supp(a))$. In this case the set $B'=\supp(b)$ is a proper subset of $f(\supp(a))=B$. So, $\supp(a)\ne\emptyset$ and by Theorem~\ref{t:supp}, $a\in F(\supp(a)_d;X)=F(A_d;X)$. Choose an element $\tilde a\in F(A_d)$ such that $a=Fi^d_{A,X}(\tilde a)$.

 If $B'\ne\emptyset$, then Theorem~\ref{t:supp} guarantees that $b\in F(B_d';X)$. So, we can find an element $b'\in F(B'_d)$ such that $Fi^d_{B',Y}(b')=b$.  Since the map $f|\supp(A)$ is injective, the map $g:A_d\to B_d$, $g:x\mapsto f(x)$, is a homeomorphism. Consider the proper subset $A'=g^{-1}(B')$ of $A$ and the homeomorphism $h:B'_d\to A'_d$, $h:y\mapsto g^{-1}(y)$, inducing the homeomorphism $Fh:FB'_d\to FA'_d$ that maps the element $b'$ onto the element $a'=Fh(b')\in FA'_d$.
  The obvious equality
 $i^d_{B',Y}=f\circ i^d_{A,X}\circ i^{dd}_{A',A}\circ h$ implies
 $b=Fi^d_{B',Y}(b')=Ff\circ Fi^d_{A,X}\circ Fi^{dd}_{A',A}\circ Fh(b')=Ff\circ Fi^d_{A,X}\circ Fi^{dd}_{A',A}(a')$. On the other hand, $b=Ff(a)=Ff\circ Fi^d_{A,X}(\tilde a)$.

 Since the functor $F$ is monomorphic, the injectivity of the map $f\circ i^d_{A,X}$ implies the injectivity of the map
 $Ff\circ Fi^d_{A,X}$.  Taking into account that $$Ff\circ Fi^d_{A,X}(\tilde a)=b=
 Ff\circ Fi^d_{A,X}\circ Fi^{dd}_{A',A}(a'),$$ we conclude that  $\tilde a=Fi^{dd}_{A',A}(a')$ and
 $a=Fi^d_{A,X}(\tilde a)=Fi^d_{A,X}\circ Fi^{dd}_{A',A}(a')=Fi^d_{A',X}(a')\in F(A_d';X)$.
 So, $A=\supp(a)\subset A'$, which is not possible as $A'$ is a proper subset of $A$.

 Now assume that $B'=\supp(b)=\emptyset$. By Theorem~\ref{t:supp}, $b$ belongs to $F(C_d;Y)$ for any non-empty set $C\subset Y$. Since $|f(X)|>1$, we can choose two points $x_1,x_2\in X$ such that $f(x_1)\ne f(x_2)$. Then $Ff(a)=b\in F(\{f(x_1)\}_d;Y)\cap F(\{f(x_2)\}_d;Y)$ implies $a\in F(\{x_1\}_d;X)\cap F(\{x_2\}_d;X)$, which yields $\supp(a)\subset\{x_1\}\cap \{x_2\}=\emptyset$ and hence $\supp(b)=f(\supp(a))$.
\end{proof}

\begin{definition} A functor $F:\Top_i\to\Top$ is defined to \index{functor!with finite supports}{\em have finite supports} if for every space $X\in\Top_i$ we get $FX=F_{{<}\w}(X)$. This happens if and only if for every space $X\in\Top_i$ and point $a\in FX$ there is a finite subset $A\subset X$ such that $a\in F(A_d;X)$.
\end{definition}

\begin{proposition} If a functor $F:\Top_i\to\Top$ has finite supports, then it preserves surjective maps.
\end{proposition}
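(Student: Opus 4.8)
The plan is to show that for any surjective continuous map $f\colon X\to Y$ between objects of $\Top_i$ the induced map $Ff\colon FX\to FY$ is surjective. First I would fix an arbitrary element $b\in FY$ and exploit the hypothesis that $F$ has finite supports: by the characterization recorded immediately after the definition of finite supports, there is a finite subset $B\subset Y$ with $b\in F(B_d;Y)=Fi^d_{B,Y}(FB_d)$. Hence I may pick a witness $\tilde b\in FB_d$ with $Fi^d_{B,Y}(\tilde b)=b$.

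Next I would lift the finite set $B$ through $f$. Using the surjectivity of $f$, for each $y\in B$ choose a point $s(y)\in f^{-1}(y)$; this defines a map $s\colon B_d\to X$ which is automatically continuous (its domain being discrete) and therefore a morphism of the hereditary subcategory $\Top_i$. By construction $f\circ s=i^d_{B,Y}$ as maps from $B_d$ into $Y$, since $f(s(y))=y$ for every $y\in B$. Applying the functor $F$ and using functoriality gives $Ff\circ Fs=F(f\circ s)=Fi^d_{B,Y}$.

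Setting $a=Fs(\tilde b)\in FX$, I then compute $Ff(a)=Ff\circ Fs(\tilde b)=Fi^d_{B,Y}(\tilde b)=b$, so $b$ lies in the image of $Ff$. As $b\in FY$ was arbitrary, $Ff$ is surjective, which is exactly the assertion that $F$ preserves surjective maps.

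The only point requiring care is the degenerate case $B=\emptyset$ (which can occur when the support of $b$ is empty): here $s\colon\emptyset\to X$ is the empty map, the equality $f\circ s=i^d_{\emptyset,Y}$ still holds trivially, and the same computation applies, so no separate argument is needed. I expect the main (though still mild) obstacle to be the bookkeeping of the discrete-domain inclusions $i^d_{B,Y}$ and the verification that $s$ genuinely qualifies as a morphism of $\Top_i$; once these are in place the proof reduces to a one-line diagram chase. Notably the argument uses only surjectivity together with finite supports, and neither the monomorphy of $F$ nor Theorem~\ref{t:supp} is required.
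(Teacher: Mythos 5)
Your proof is correct and is essentially identical to the paper's: both extract a finite set $B\subset Y$ with $b\in Fi^d_{B,Y}(FB_d)$, lift the inclusion $i^d_{B,Y}$ through $f$ to a (automatically continuous) map $B_d\to X$, and conclude by functoriality. Your extra remark about the degenerate case $B=\emptyset$ and the observation that monomorphy is not needed are accurate but do not change the argument.
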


\begin{proof} Given any surjective map $f:X\to Y$ between objects of the category $\Top_i$ and any element $b\in FY$, find a finite subset $B\subset Y$ such that $b=Fi^d_{B,Y}(b')$ for some $b'\in FB_d$. Since $f$ is surjective, there exists a map $g:B_d\to X$ such that $f\circ g=i^d_{B,Y}$ and hence $Ff\circ Fg=Fi^d_{B,Y}$. Then for the elements $a=Fg(b')\in FX$ we get $b=Fi^d_{B,Y}(b')=Ff\circ Fg(b')=Ff(a)$, which means that the map $Ff$ is surjective.
\end{proof}

\begin{proposition}\label{p:supp=cap} If a monomorphic functor $F:\Top_i\to\Top$  has finite supports, then for any $T_1$-space $X\in\Top_i$ and any $a\in FX$ we get
$\supp(a)=\bigcap\{A\subset X:a\in F(A;X)\}$.
\end{proposition}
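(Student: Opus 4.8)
The plan is to prove the identity by establishing separately the two inclusions between $\supp(a)$ and $T:=\bigcap\{A\subset X:a\in F(A;X)\}$. Throughout I will exploit the single observation that unlocks the whole statement: since $X$ is a $T_1$-space, every finite subspace $A\subset X$ carries the discrete topology, so $A_d=A$ as objects of $\Top_i$ and consequently $F(A_d;X)=F(A;X)$. This is exactly the bridge between the definition of $\supp(a)$, which is phrased through the \emph{discrete} sets $A_d$, and the family $\{A:a\in F(A;X)\}$ on the right-hand side, which uses the \emph{subspace} inclusions $i_{A,X}$; it is here that both the $T_1$ hypothesis and the finiteness of supports come into play.

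For the inclusion $\supp(a)\subseteq T$ I would fix an arbitrary $A\subset X$ with $a\in F(A;X)$ and show $\supp(a)\subseteq A$; intersecting over all such $A$ then gives the claim. Choose $\tilde a\in FA$ with $Fi_{A,X}(\tilde a)=a$. Since $\Top_i$ is hereditary, $A$ is an object of $\Top_i$, and because $F$ has finite supports we have $\tilde a\in F_{{<}\w}(A)$, so there is a finite set $B\subset A$ with $\tilde a\in F(B_d;A)$. The key is now the factorization of inclusions $i_{A,X}\circ i^d_{B,A}=i^d_{B,X}$: applying $F$ to this equality and chasing $\tilde a$ through yields $a\in F(B_d;X)$, whence $B\in\Supp(a)$ and therefore $\supp(a)=\bigcap\Supp(a)\subseteq B\subseteq A$. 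This argument is uniform in $A$ and needs no case distinction on $|A|$, since the support of $\tilde a$ inside $A$ is automatically finite.

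For the reverse inclusion $T\subseteq\supp(a)$ I would split according to whether $\supp(a)$ is empty. If $\supp(a)\ne\emptyset$, then Theorem~\ref{t:supp} gives $a\in F(\supp(a)_d;X)$, and by the bridging observation $F(\supp(a)_d;X)=F(\supp(a);X)$, so $\supp(a)$ itself belongs to the family defining $T$; hence $T\subseteq\supp(a)$. If $\supp(a)=\emptyset$, I instead show $T=\emptyset$ directly: when $X$ has two distinct points $x\ne y$, Theorem~\ref{t:supp} applied to the non-empty sets $\{x\},\{y\}\supseteq\emptyset$ gives $a\in F(\{x\};X)$ and $a\in F(\{y\};X)$, so $T\subseteq\{x\}\cap\{y\}=\emptyset$; the degenerate cases $|X|\le 1$ force $\emptyset\in\Supp(a)$, i.e. $a\in F(\emptyset;X)$, which again makes $T=\emptyset$. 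I expect this empty-support case to be the only genuine obstacle, precisely because there $\supp(a)$ is not itself realized as a set on which $a$ is ``supported,'' so the membership trick of the non-empty case fails and one must instead produce disjoint witnesses that collapse $T$ to $\emptyset$.
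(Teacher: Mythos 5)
Your proof is correct. The first inclusion $\supp(a)\subset\bigcap\{A\subset X:a\in F(A;X)\}$ is argued exactly as in the paper: pull $a$ back to $\tilde a\in FA$, use finite supports of $F$ on the subspace $A$ (legitimate by heredity of $\Top_i$) to find a finite $B\subset A$ with $\tilde a\in F(B_d;A)$, and push forward along the factorization $i^d_{B,X}=i_{A,X}\circ i^d_{B,A}$ to get $B\in\Supp(a)$, hence $\supp(a)\subset B\subset A$. For the reverse inclusion, however, you take a genuinely different and somewhat heavier route. The paper's argument is a one-liner with no case distinction: the intersection on the right runs over \emph{all} sets $A$ with $a\in F(A;X)$, hence is contained in the intersection over the \emph{finite} such sets; for finite $A$ in a $T_1$-space one has $A_d=A$, so $F(A;X)=F(A_d;X)$, and the latter intersection is $\bigcap\Supp(a)=\supp(a)$ by definition. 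You instead exhibit explicit members of the family: when $\supp(a)\ne\emptyset$, Theorem~\ref{t:supp} shows that $\supp(a)$ itself lies in the family, and when $\supp(a)=\emptyset$ you produce two disjoint singleton witnesses via Theorem~\ref{t:supp}, or the empty-set witness in the degenerate cases $|X|\le 1$ (your claim that these cases force $\emptyset\in\Supp(a)$ is indeed correct, since $\Supp(a)$ is then a non-empty family of subsets of an at most one-point set with empty intersection). Both arguments are valid. Yours buys the extra information that the infimum is attained --- for $\supp(a)\ne\emptyset$ the support is the smallest member of the family --- but at the cost of invoking Theorem~\ref{t:supp}, whose proof rests on Lemma~\ref{l:supp1} and genuinely uses monomorphicity, and of the case analysis on $\supp(a)=\emptyset$. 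The paper's route buys brevity and economy of hypotheses: its proof of this direction (and in fact of the whole proposition) uses only finite supports and the $T_1$ identification $F(A;X)=F(A_d;X)$, never the monomorphicity of $F$.
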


\begin{proof} The inclusion $\supp(a)\subset \bigcap\{A\subset X:a\in F(A;X)\}$ will follow as soon as we check that any subset $A\subset X$ with $a\in F(A;X)$ contains $\supp(a)$. It follows that $a=Fi_{A,X}(a')$ for some $a'\in FA$. Since $F$ has finite supports, there exists a finite subset $B\subset A$ such that $a'=Fi^d_{B,A}(b)$ for some $B\in F(B_d)$. Taking into account that $i^d_{B,X}=i_{A,X}\circ i^d_{B,A}$, we conclude that $Fi^d_{B,X}(b)=Fi_{A,X}\circ Fi^d_{B,A}(b)=Fi_{A,X}(a')=a$ and hence $a\in F(B_d;X)$ and $\supp(a)\subset B\subset A$.

On the other hand, the inclusions $\bigcap\{A\subset X:a\in F(A;X)\}\subset \bigcap\{A\in[X]^{<\w}:a\in F(A;X)\}=\bigcap\{A\in [X]^{<\w}:a\in F(A_d;X)\}=\supp(a)$ follows from the fact that each finite subspace $A$ of the $T_1$-space $X$ is discrete and hence $F(A;X)=F(A_d;X)$.
\end{proof}




\section{$\II$-regular functors}

In this section we assume that $\Top_i$ is a full hereditary subcategory of the category $\Top$.

A functor $F:\Top_i\to\Top$ is defined to be \index{functor!$\II$-regular}{\em $\II$-regular} if $\II\in\Top_i$ and
for any closed subset $A$ of the unit interval $\II=[0,1]$ the set $F(A;\II)$ is $\IR$-closed in $F(\II)$.

We recall that a subset $F$ of a topological space $X$ is \index{subset!$\IR$-closed}{\em $\IR$-closed} if its complement $X\setminus F$ is \index{subset!$\IR$-open}{\em $\IR$-open} in $X$. The latter means that for any point $x\in X\setminus F$ there exists a continuous function $f:X\to[0,1]$ such that $f(x)=1$ and $f(F)\subset\{0\}$.

\begin{lemma}\label{l:supp-Ropen}  For any $\II$-regular monomorphic functor  $F:\Top_i\to\Top$ with finite supports and any functionally Hausdorff space $X\in\Top_i$ the support map $\supp:FX\to [X]^{<\w}$ is \index{map!lower semicontinuous}{\em lower semicontinuous} in the sense that  for any $\IR$-open set $U\subset X$ the set $W=\{a\in FX:\supp(a)\cap U\ne\emptyset\}$ is $\IR$-open in $FX$.
\end{lemma}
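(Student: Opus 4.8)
The plan is to verify the $\IR$-openness of $W$ pointwise: for each $a\in W$ I will produce a continuous function $g\colon FX\to[0,1]$ with $g(a)=1$ and $g(FX\setminus W)\subset\{0\}$. The mechanism for producing $g$ is to transport an $\IR$-closed set from $F\II$ back to $FX$. Concretely, I will find a continuous map $\phi\colon X\to\II$ and a closed set $C\subsetneq\II$ such that (i) $\phi(X\setminus U)\subset C$, (ii) $\phi$ is injective on $\supp(a)$, and (iii) $\phi(\supp(a))\not\subset C$. Granting such $\phi$ and $C$, I argue as follows. Since $F$ is $\II$-regular, $F(C;\II)$ is $\IR$-closed in $F\II$; because $F\phi\colon FX\to F\II$ is continuous, it then suffices to check that $F\phi(a)\notin F(C;\II)$ while $F\phi(b)\in F(C;\II)$ for every $b\in FX\setminus W$, and afterwards to compose $F\phi$ with a continuous function on $F\II$ that sends $F\phi(a)$ to $1$ and $F(C;\II)$ to $\{0\}$.

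The membership statement for the complement uses Theorem~\ref{t:supp}. If $b\in FX\setminus W$ then $\supp(b)\subset X\setminus U$; taking any nonempty finite $A\subset X\setminus U$ containing $\supp(b)$ (using a point of $X\setminus U$ when $\supp(b)=\emptyset$), Theorem~\ref{t:supp} gives $b\in F(A_d;X)$, and since $\phi(A)\subset\phi(X\setminus U)\subset C$ the map $\phi\circ i^d_{A,X}$ factors through $C$, whence $F\phi(b)\in F(C;\II)$. For $a$ itself, condition (ii) lets me invoke Proposition~\ref{p:supp=image} (observe that $\phi(X)$ has more than one point by (i) and (iii)) to get $\supp(F\phi(a))=\phi(\supp(a))$, which by (iii) is not contained in $C$; Proposition~\ref{p:supp=cap} applied in the $T_1$-space $\II$ then forces $F\phi(a)\notin F(C;\II)$, as required.

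The real work is the construction of $\phi$ and $C$, and this is where the two separation properties of $X$ must be reconciled. Fix $x_0\in\supp(a)\cap U$. Functional Hausdorffness provides a continuous map $h\colon X\to\II^m$ injective on the finite set $\supp(a)$, and $\IR$-openness of $U$ provides $f_0\colon X\to\II$ with $f_0(x_0)=1$ and $f_0(X\setminus U)\subset\{0\}$. I will scalarize the combined map $(h,f_0)\colon X\to\II^{m+1}$ by a suitable affine functional $q(v)=\langle t\alpha,(v_1,\dots,v_m)\rangle+\tfrac12 v_{m+1}$, choosing $\alpha$ off the finitely many hyperplanes that would collapse a pair of points of $\supp(a)$, and the scale $t>0$ so small that $\langle t\alpha,h(\cdot)\rangle$ stays within $(-\tfrac18,\tfrac18)$. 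After a harmless affine renormalization this yields $\phi$ with $\phi(X\setminus U)\subset[0,\tfrac14]$, $\phi(x_0)>\tfrac12$, and $\phi$ injective on $\supp(a)$; taking $C=[0,\tfrac14]$ gives (i)--(iii). The delicate point is precisely that $f_0$ is forced to $0$ on all of $X\setminus U$, hence on $\supp(a)\setminus U$, so the naive choice $C=\{0\}$ would destroy injectivity on $\supp(a)$ whenever $\supp(a)$ meets $X\setminus U$ in more than one point; enlarging $C$ to a nondegenerate interval and letting the $\tfrac12 v_{m+1}$-gap dominate the injectivity perturbation is what resolves this tension. Finally, the degenerate case $U=X$ (so $X\setminus U=\emptyset$) is handled separately and easily, since there one only needs to detect nonempty support.
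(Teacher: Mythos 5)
Your proof is correct, and its functorial skeleton coincides with the paper's: for each $a\in W$ both arguments manufacture a continuous map of $X$ into $\II$ and a closed interval $C\subsetneq\II$ so that every element of $FX\setminus W$ is pushed into $F(C;\II)$ (which is $\IR$-closed by $\II$-regularity), while the image of $a$ lands outside it because its support, computed via Proposition~\ref{p:supp=image}, meets $\II\setminus C$; the witness on $FX$ is then the pullback of an $\IR$-open witness through the continuous map into $F(\II)$. Both proofs even resolve the central tension identically: $C$ must be a nondegenerate interval rather than a point, precisely because the function witnessing $\IR$-openness of $U$ collapses all of $X\setminus U$, which may contain several points of $\supp(a)$. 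Where you genuinely diverge is in how the scalar map is built. The paper identifies $X$ with a subspace of $\beta X$ (injectively, by functional Hausdorffness), chooses an open $O_x\subset\beta X$ with $\overline{O}_x\cap\supp(a)=\{x\}$ and $O_x\cap X\subset U$, and glues a Urysohn function on $\overline{O}_x$ with a Tietze--Urysohn extension on $\beta X\setminus O_x$ chosen injective on $\supp(a)\setminus\{x\}$; all separation work is delegated to the normality of the compactification. You stay inside $X$ and scalarize the pair $(h,f_0)$ by a generic affine functional with a small coefficient on the $h$-part; genericity yields injectivity on $\supp(a)$ and smallness keeps $X\setminus U$ inside $C$ while $\phi(x_0)$ stays above it. Your route buys a more self-contained argument (no compactification, no normality, no Tietze extension), at the price of the perturbation bookkeeping you yourself flag: one should fix the generic direction $\alpha$ first, off the linear hyperplanes $\langle\alpha,h(y)-h(y')\rangle=0$, and only then take $t$ small, noting that pairs $y\ne y'$ in $\supp(a)$ with $f_0(y)\ne f_0(y')$ are separated automatically once $t$ is small enough for the $\tfrac12$-gap to dominate (the exact-cancellation locus for such pairs depends on $t$, so they cannot be treated as fixed hyperplanes); also your constants need a harmless adjustment (with the $(-\tfrac18,\tfrac18)$ bound the renormalized image of $X\setminus U$ lies in $[0,\tfrac13]$ rather than $[0,\tfrac14]$, and $C=[0,\tfrac13]$ works verbatim). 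A small bonus of your write-up: for $b\in FX\setminus W$ you factor $\phi\circ i^d_{A,X}$ through $C$ directly via Theorem~\ref{t:supp}, which is slightly more robust than the paper's appeal to Proposition~\ref{p:supp=image} at that step, since the latter tacitly requires the image of the scalar map to have more than one point.
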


\begin{proof} Given any point $a\in W$ we should find a continuous function $\hbar:F X\to[0,1]$ such that $\hbar (a)=1$ and $\hbar(F X\setminus W)\subset\{0\}$.

Since $a\in W$, there exists a point $x\in\supp(a)\cap U$. Consider the canonical map $\delta:X\to\beta X$ of $X$ to its Stone-\v Cech compactification. Since $X$ is functionally Hausdorff, this map $\delta$ is injective. So, we can identify $X$ with the subset $\delta(X)$ of $\beta X$. Observe that the topology on $X$ inherited from $\beta X$ coincides with the family of all $\IR$-open subsets of $X$.

So, we can choose an open neighborhood $O_x\subset \beta X$ of the point $x\in X\subset\beta X$ such that $\bar O_x\cap \supp(a)=\{x\}$ and $O_x\cap X\subset U$. By the normality of $\beta X$, there is a continuous map $\xi_1:\beta X\to [\frac12,1]$ such that $\xi_1(x)=1$ and $\xi_1(\beta X\setminus O_x)\subset\{\frac12\}$. Let $\partial O_x$ be the boundary of $O_x$ in $\beta X$. Using Tietze-Urysohn Theorem we can construct a continuous map $\xi_2:\beta X\setminus O_x\to[0,\frac12]$ such that $\xi_2(\partial O_x)\subset \{\frac12\}$ and $\xi_2|\supp(a)\setminus \{x\}$ is injective. Then the map $\xi:X\to [0,1]$ defined by $\xi|X\cap\bar O_x=\xi_1|X\cap\bar O_x$ and $\xi|X\setminus O_x=\xi_2|X\setminus O_x$ is continuous and the restriction $\xi|\supp(a)$ is injective. Consider the map $F\xi:FX\to F\II$ and observe that the element $b=F\xi(a)$ has support $\supp(b)=\xi(\supp(a))\ni 1$ and hence does not belong to the $\IR$-closed subset $F([0,\frac12];\II)$ of $F(\II)$.
So, we can find a continuous function $h:F(\II)\to[0,1]$ such that $h(b)=1$ and $h\big(F([0,\frac12];\II)\big)\subset \{0\}$ and consider the continuous function $\hbar=h\circ F\xi:FX\to [0,1]$. Observe that $\hbar(a)=h(b)=1$.

We claim that the $\IR$-open neighborhood  $V=\hbar^{-1}((0,1])$ of $a$ is contained in $W$. Assuming that $V\not\subset W$, we could find an element $v\in V\setminus W$. Then $\supp(v)\cap U=\emptyset$ and hence $\supp(v)\subset X\setminus O_x$ and $\xi(\supp(v))\subset \xi(X\setminus O_x)\subset [0,\frac12]$. Proposition~\ref{p:supp=image} implies that $\supp(F\xi(v))\subset \xi(\supp(v))\subset [0,\frac12]$ and then $F\xi(v)\in F([0,\frac12];\II)$ and $\hbar(v)=h\circ F\xi(v)\in h(F([0,\frac12];\II))\subset\{0\}$, which is a desired contradiction.
Therefore, $V\subset W$ and $\hbar (F(\II)\setminus W)\subset \hbar(F X\setminus V)\subset\{0\}$.
\end{proof}

\begin{corollary}\label{c:Fn-Rclosed} Let  $F:\Top_i\to\Top$ be an $\II$-regular monomorphic functor  with finite supports. For any functionally Hausdorff space $X\in\Top_i$ and every $n\in\IN$ the set $F_n(X)$ is $\IR$-closed in $FX$.
\end{corollary}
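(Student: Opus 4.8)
The plan is to reduce the statement to the lower semicontinuity of the support map established in Lemma~\ref{l:supp-Ropen}. First I would reformulate $F_n(X)$ in terms of the cardinality of supports. Since $F$ has finite supports we have $FX=F_{{<}\w}(X)$, and because $F$ is monomorphic Corollary~\ref{c:Fn} gives $F_n(X)=\{a\in FX:|\supp(a)|\le n\}$. Consequently the complement is $FX\setminus F_n(X)=\{a\in FX:|\supp(a)|\ge n+1\}$, and it suffices to show that this complement is $\IR$-open; that is, to exhibit around each of its points an $\IR$-open neighborhood contained in it.

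Next, fix $a\in FX$ with $|\supp(a)|\ge n+1$ and choose pairwise distinct points $x_0,\dots,x_n\in\supp(a)$. I would separate them by pairwise disjoint $\IR$-open sets via the Stone--\v Cech compactification. Using the injective canonical map $\delta:X\to\beta X$ (injective because $X$ is functionally Hausdorff) and identifying $X$ with $\delta(X)\subset\beta X$, recall that the $\IR$-open subsets of $X$ are exactly the traces on $X$ of open subsets of the compact Hausdorff space $\beta X$. Since $\delta(x_0),\dots,\delta(x_n)$ are distinct points of the Hausdorff space $\beta X$, they admit pairwise disjoint open neighborhoods in $\beta X$, whose traces yield pairwise disjoint $\IR$-open sets $U_0,\dots,U_n\subset X$ with $x_i\in U_i$.

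For each $i\le n$ Lemma~\ref{l:supp-Ropen} guarantees that $W_i=\{b\in FX:\supp(b)\cap U_i\ne\emptyset\}$ is $\IR$-open in $FX$, and $a\in W_i$ because $x_i\in\supp(a)\cap U_i$. Thus $a\in W:=\bigcap_{i=0}^n W_i$. A finite intersection of $\IR$-open sets is again $\IR$-open: if $f_i:FX\to[0,1]$ witnesses the $\IR$-openness of $W_i$ at $a$ (so $f_i(a)=1$ and $f_i(FX\setminus W_i)\subset\{0\}$), then $\min_{i\le n}f_i$ witnesses the $\IR$-openness of $W$ at $a$. Hence $W$ is an $\IR$-open neighborhood of $a$. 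The crucial observation is now that every $b\in W$ has $\supp(b)$ meeting each of the pairwise disjoint sets $U_0,\dots,U_n$, so $\supp(b)$ contains at least $n+1$ distinct points and therefore $b\notin F_n(X)$. Thus $W\subset FX\setminus F_n(X)$, which shows that the complement of $F_n(X)$ is $\IR$-open and so $F_n(X)$ is $\IR$-closed.

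I expect the only genuinely delicate point to be the passage from the mere functionally Hausdorff separation axiom to pairwise disjoint $\IR$-open neighborhoods of the chosen points; this is handled cleanly through $\beta X$, exactly as in the proof of Lemma~\ref{l:supp-Ropen}. The remaining ingredients — the cardinality reformulation of $F_n(X)$ and the stability of $\IR$-openness under finite intersections — are routine.
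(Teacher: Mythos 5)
Your proof is correct and follows essentially the same route as the paper's: reformulate $F_n(X)$ via Corollary~\ref{c:Fn}, pick pairwise disjoint $\IR$-open sets meeting $\supp(a)$, and intersect the resulting $\IR$-open sets from Lemma~\ref{l:supp-Ropen}. The only difference is that you spell out two details the paper leaves implicit (obtaining the disjoint $\IR$-open neighborhoods via $\beta X$, and the stability of $\IR$-openness under finite intersections), both of which you handle correctly.
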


\begin{proof} Given any element $a\in FX\setminus F_n(X)$, we can apply Corollary~\ref{c:Fn} and conclude that $\supp(a)>n$. Choose pairwise disjoint $\IR$-open sets $U_0,\dots,U_n\subset X$ such that $\supp(a)\cap U_i\ne\emptyset$ for all $i\le n$. By Lemma~\ref{l:supp-Ropen}, the set $W=\bigcap_{i=0}^n\{b\in FX:\supp(b)\cap U_i\ne\emptyset\}$ is $\IR$-open in $FX$ and is disjoint with the set  $F_n(X)=\{b\in FX:|\supp(b)|\le n\}$, witnessing that this set is $\IR$-closed in $FX$.
\end{proof}

\section{Bounded and strongly bounded functors}

In this section we assume that $\Top_i$ is a full hereditary subcategory of the category $\Top$.

A functor $F:\Top_i\to\Top$ is defined to be \index{functor!bounded}\index{functor!strongly bounded}({\em strongly}) {\em bounded} if for each space $X\in\Top_i$ and a compact subset $K\subset FX$ there exists a bounded subset $B\subset X$ (and a number $n\in\IN$) such that $K\subset F(B;X)$ (and $K\subset F_n(X)$). We recall that a subset $B$ of a topological space $X$ is \index{subset!bounded}{\em bounded} if $X$ contains no infinite locally finite family of open sets meeting the set $B$.

In the following characterization of bounded functors by $\IR_+$ we denote the closed half-line $[0,\infty)$.

\begin{proposition}\label{p:F-bounded} Let $F:\Top_i\to\Top$  be a monomorphic functor with finite supports. Then the following conditions are equivalent:
\begin{enumerate}
\item[\textup{1)}] the functor $F$ is bounded;
\item[\textup{2)}] for every space $X\in\Top_i$ and compact set $K\subset FX$ the set $\supp(K)=\bigcup_{a\in K}\supp(a)$ is bounded in $X$.
\end{enumerate}
If $\IR_+\in\Top_i\subset\Top_{3\frac12}$, then the equivalent conditions \textup{(1)--(2)} are equivalent to:
\begin{enumerate}
\item[3)] for every compact subset $K\subset F(\IR_+)$ the set $\supp(K)$ is bounded in $\IR_+$.
\end{enumerate}
\end{proposition}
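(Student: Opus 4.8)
The plan is to prove the core equivalence $(1)\Leftrightarrow(2)$ in full generality first, and then, under the extra assumption $\IR_+\in\Top_i\subset\Top_{3\frac12}$, to observe that $(2)\Rightarrow(3)$ is trivial (take $X=\IR_+$) and to concentrate all the work on the implication $(3)\Rightarrow(2)$, which is the only substantial step.

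For $(1)\Rightarrow(2)$ I would take a compact $K\subset FX$ and use boundedness of $F$ to produce a bounded set $B\subset X$ with $K\subset F(B;X)$. As shown in the proof of Proposition~\ref{p:supp=cap} (the ``$\subseteq$'' half, which uses only that $F$ has finite supports and not any separation axiom), every subset $A\subset X$ with $a\in F(A;X)$ contains $\supp(a)$; applying this with $A=B$ to each $a\in K$ gives $\supp(K)=\bigcup_{a\in K}\supp(a)\subset B$, and any subset of a bounded set is bounded. For the converse $(2)\Rightarrow(1)$ I would set $B=\supp(K)\cup\{x_0\}$ for an arbitrarily fixed point $x_0\in X$ (the case $X=\emptyset$ being trivial); this $B$ is bounded as a finite union of bounded sets (a single point is bounded, and finite unions of bounded sets are bounded). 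For each $a\in K$ with $\supp(a)\ne\emptyset$, Theorem~\ref{t:supp} gives $a\in F(\supp(a)_d;X)$, and factoring $i^d_{\supp(a),X}=i_{B,X}\circ i^d_{\supp(a),B}$ through $B$ (legitimate since $\supp(a)_d$ is finite discrete and $B$ is a subspace, hence both lie in $\Top_i$) shows $a\in F(B;X)$; for $a$ with empty support, Theorem~\ref{t:supp} places $a$ in $F(\{x_0\}_d;X)\subset F(B;X)$. Thus $K\subset F(B;X)$ with $B$ bounded, i.e.\ $F$ is bounded.

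The heart of the proof is $(3)\Rightarrow(2)$, which I would argue by contradiction. Suppose $\supp(K)$ is unbounded in $X$ for some compact $K\subset FX$. Since $X$ is Tychonoff, an unbounded set always contains a countable unbounded subset (test against a continuous real function with unbounded image), so I choose a countable unbounded $D\subset\supp(K)$, pick for each $x\in D$ some $a_x\in K$ with $x\in\supp(a_x)$, and form the countable unbounded set $A=\bigcup_{x\in D}\supp(a_x)\supset D$. Now I invoke Lemma~\ref{l:Runbound} to obtain a continuous $f:X\to\IR_+$ with $f|A$ injective and $f(A)$ unbounded. Choosing $y_k\in A$ with $f(y_k)\to\infty$, each $y_k$ lies in some $\supp(a_{x_k})$ with $x_k\in D$, and $f|\supp(a_{x_k})$ is injective as a restriction of $f|A$; since $f(A)$ is infinite the image $f(X)$ has more than one point, so Proposition~\ref{p:supp=image} yields $\supp(Ff(a_{x_k}))=f(\supp(a_{x_k}))\ni f(y_k)$. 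Hence $f(y_k)\in\supp(Ff(K))$ for all $k$, making $\supp(Ff(K))$ unbounded in $\IR_+$. But $Ff(K)$ is a compact subset of $F(\IR_+)$, so (3) forces $\supp(Ff(K))$ to be bounded — the desired contradiction.

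The main obstacle is precisely the gap between the inclusion $\supp(Ff(a))\subset f(\supp(a))$, which holds for an arbitrary continuous $f$, and the equality $\supp(Ff(a))=f(\supp(a))$, which Proposition~\ref{p:supp=image} guarantees only when $f$ is injective on $\supp(a)$. Were $f$ allowed to collapse points of the supports, the image supports could stay bounded even though $f(\supp(K))$ is unbounded. The resolution is to pass to a countable subset (so that all the supports involved live in one countable set $A$) and then apply Lemma~\ref{l:Runbound} to perturb $f$ into a function that is simultaneously injective on $A$ and unbounded on $A$; this is exactly the point where the Tychonoff (functionally Hausdorff) hypothesis $\Top_i\subset\Top_{3\frac12}$ is used.
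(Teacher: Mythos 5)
Your proposal is correct and follows essentially the same route as the paper's proof: the same support-inclusion argument for $(1)\Rightarrow(2)$, the same construction of a bounded set containing $\supp(K)$ plus Theorem~\ref{t:supp} for $(2)\Rightarrow(1)$, and for $(3)\Rightarrow(2)$ the same contradiction via a countable unbounded subset, Lemma~\ref{l:Runbound}, and Proposition~\ref{p:supp=image} applied where $f$ is injective on supports. The only differences are cosmetic: you cite the inclusion from the proof of Proposition~\ref{p:supp=cap} instead of rederiving it inline, and you treat the empty-support case and the choice of the point $x_0$ slightly more explicitly than the paper does.
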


\begin{proof} $(1)\Ra(2)$ Assume that the functor $F$ is bounded. Take any
space $X\in\Top_i$ and  compact subset $K\subset FX$. By the boundedness of $F$, $K\subset F(B;X)$ for some bounded set $B\subset X$. Then for any $a\in K$ the inclusion $a\in F(B;X)=Fi_{B,X}(FB)$ implies the existence of $b\in FB$ such that $a=Fi_{B,X}(b)$. Since the functor $F$ has finite supports, there exists a finite subset $A\subset B$ such that $b\in Fi^d_{A,B}(FA_d)$. Then $a\in Fi_{B,X}(b)\in Fi_{B,X}\circ Fi^d_{A,B}(FA_d)=Fi_{A,X}(A_d)=F(A_d;X)$ and hence $\supp(a)\subset A\subset B$ by the definition of $\supp(a)$. So, the set $\supp(K)=\bigcup_{a\in K}\supp(a)\subset B$ is bounded in $X$.
\smallskip

$(2)\Ra(1)$ Assume that for any space $X\in \Top_i$ and any compact set $K\subset FX$ the set $\supp(K)$ is bounded in $X$. If $X$ is bounded in $X$, then
$K\subset F(X)=F(X;X)$ and we are done. So, we assume that $X$ is not bounded and hence not empty. Then we can choose a non-empty bounded subset $B\subset X$ containing $\supp(K)$ and applying Theorem~\ref{t:supp} conclude that $K\subset F(B;X)$.
\smallskip

 Now assume that $\IR_+\in\Top_i\subset\Top_{3\frac12}$.
 \smallskip

 The implication $(2)\Ra(3)$ is trivial. To prove the implication $(3)\Ra(2)$,
assume  that for some Tychonoff space $X$ and compact subset $K\subset X$ the set $\supp(K)$ is unbounded in $X$. Since each unbounded set contains a countable unbounded set, we can choose a countable subset $K_0\subset K$ such that the countable set $\supp(K_0)=\bigcup_{a\in K_0}\supp(a)$ is unbounded in $X$.  By Lemma~\ref{l:Runbound}, there exists a continuous function $f:X\to\IR_+$ such that the restriction $f|\supp(K)$ is injective and the set $f(\supp(K))$ is unbounded in $\IR_+$. Consider the continuous map $Ff:FX\to F\IR_+$ and observe that the set $Ff(K)$ is compact in $F\IR_+$. By $(3)$, the set $\supp(Ff(K))$ is bounded in $\IR_+$. Since $f|\supp(K_0)$ is injective, we can apply Proposition~\ref{p:supp=image} and conclude that the set $f(\supp(K_0))=\supp(Ff(K_0))$ is bounded in $\IR_+$, which contradicts the choice of $f$.
\end{proof}

\begin{proposition}\label{p:F-strong-bound} Assume that $\Top_{3\frac12}\subset\Top_i\subset\Top_{2\frac12}$. A monomorphic functor   $F:\Top_i\to\Top$ is strongly bounded if and only if $F$ is bounded, has finite supports, and for every compact Hausdorff  space $X$, any compact subset $K\subset F(X)$ is contained in the subspace $F_n(X)$ for some $n\in\IN$.
\end{proposition}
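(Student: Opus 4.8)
The plan is to treat the two implications separately, with essentially all the content concentrated in the ``if'' direction.

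\emph{The ``only if'' direction.} Suppose $F$ is strongly bounded. That $F$ is bounded is immediate, since the defining condition for strong boundedness already includes the inclusion $K\subset F(B;X)$ for a bounded $B$. For the compact-Hausdorff condition, note that since $\Top_{3\frac12}\subset\Top_i$, every compact Hausdorff space is an object of $\Top_i$, so the strong boundedness condition applies to it verbatim and delivers the required $n\in\IN$ with $K\subset F_n(X)$. Finally, to see that $F$ has finite supports, I would fix $X\in\Top_i$ and $a\in FX$, apply strong boundedness to the compact singleton $\{a\}$ to obtain $n\in\IN$ with $a\in F_n(X)$, and then use $F_n(X)=\bigcup_{A\in[X]^{\le n}}F(A_d;X)$ to conclude $a\in F(A_d;X)$ for some finite $A$, i.e.\ $a\in F_{<\w}(X)$.

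\emph{The ``if'' direction.} Here I assume $F$ is bounded, has finite supports, and satisfies the compact-Hausdorff condition, and I must produce, for an arbitrary $X\in\Top_i$ and compact $K\subset FX$, both a bounded $B\subset X$ with $K\subset F(B;X)$ and an $n\in\IN$ with $K\subset F_n(X)$. The bounded set $B$ is handed to me directly by the boundedness of $F$, so the whole task reduces to finding the uniform support bound $n$.

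The key device is the Stone--\v Cech compactification. Since $\Top_i\subset\Top_{2\frac12}$, the space $X$ is functionally Hausdorff, so the canonical map $\delta\colon X\to\beta X$ is injective, and $\beta X$ is compact Hausdorff, hence an object of $\Top_i$. If $|X|\le 1$ then every element of $FX$ has support of size $\le 1$ and $K\subset F_1(X)$ trivially; so I may assume $|\delta(X)|=|X|>1$. Applying $F$ to $\delta$ gives a continuous map $F\delta\colon FX\to F(\beta X)$, so $F\delta(K)$ is compact and, by the compact-Hausdorff hypothesis, $F\delta(K)\subset F_n(\beta X)$ for some $n\in\IN$. Now for each $a\in K$ the injectivity of $\delta$ makes $\delta|\supp(a)$ injective, so Proposition~\ref{p:supp=image} yields $\supp(F\delta(a))=\delta(\supp(a))$, whence $|\supp(a)|=|\supp(F\delta(a))|$. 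Since $F$ has finite supports, $F\delta(a)\in F_{<\w}(\beta X)$, and Corollary~\ref{c:Fn} turns the inclusion $F\delta(a)\in F_n(\beta X)$ into the inequality $|\supp(F\delta(a))|\le n$. Combining, $|\supp(a)|\le n$ for every $a\in K$, and a second application of Corollary~\ref{c:Fn} gives $K\subset F_n(X)$, as required.

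The routine parts (the ``only if'' direction and the extraction of $B$) are immediate from the definitions; the only real step is the transfer of the support-cardinality bound from $\beta X$ back to $X$. The subtlety I would watch is the hypothesis of Proposition~\ref{p:supp=image} that the image contain more than one point, which forces the separate treatment of the degenerate case $|X|\le 1$; everything else follows because $\delta$ is globally injective, so no genuine case analysis on individual supports is needed.
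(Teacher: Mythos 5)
Your proof is correct and follows essentially the same route as the paper's: pass to the Stone--\v{C}ech compactification $\beta X$ (compact Hausdorff, hence in $\Top_i$), bound the supports of elements of $F\delta(K)\subset F_n(\beta X)$, and transfer that bound back to $K$ via Proposition~\ref{p:supp=image} and Corollary~\ref{c:Fn}/Theorem~\ref{t:supp}. If anything, you are slightly more careful than the paper, which only excludes $X=\emptyset$ and silently applies Proposition~\ref{p:supp=image} even when $|X|=1$, where its ``image contains more than one point'' hypothesis fails --- your separate treatment of $|X|\le 1$ closes that small gap.
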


\begin{proof}The ``only if'' part is trivial. To prove the ``if'' part, take any space  $X\in\Top_i$ and compact subset $K\subset FX$. If $X$ is empty, then $K\subset F(X)=F_0(X)$ and we are done. So, we assume that $X$ is not empty.   Let $\delta:X\to\beta X$ be the canonical map of $X$ into its Stone-\v Cech compactification. Since the functor $F$ is monomorphic, the continuous map $F\delta:FX\to F(\beta X)$ is injective. By our assumption, the compact subset $F\delta(K)\subset F(\beta X)$ is contained in $F_n(\beta X)$ for some $n\in\IN$. Then for every $a\in K$ the element $F\delta(a)\in F_n(\beta X)$ has support of cardinality $|\supp(F\delta(a))|\le n$.
Since the map $\delta$ is injective, we can apply Proposition~\ref{p:supp=image} and conclude that $\delta(\supp(a))=\supp(F\delta(a))$ and $|\supp(a)|=|\supp(F\delta(a))|\le n$. Choose any non-empty subset $B\subset X$ that contains the set $\supp(a)$ and has cardinality $|B|\le n$. By Theorem~\ref{t:supp}, $a\in F(B_d;X)$ and hence $a\in F_n(X)$. Therefore, $K\subset F_n(X)$.
\end{proof}

We shall often use the following property of bounded functors.

\begin{lemma}\label{l:cont-supp} Let $F:\Top_i\to\Top$ be a bounded $\II$-regular monomorphic functor with finite supports. If $(U_\alpha)_{\alpha\in A}$ is a compact-finite family of sets in a $\mu$-complete space $X\in\Top_i$, then the family $(V_\alpha)_{\alpha\in A}$ of the sets
$$V_\alpha=\{a\in FX:\supp(a)\cap U_\alpha\ne\emptyset\}, \;\alpha\in A,$$
is compact-finite in $FX$.
\end{lemma}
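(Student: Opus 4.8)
The plan is to unravel the definition of compact-finiteness and reduce the whole statement to a fact about supports. So I fix an arbitrary compact subset $K\subset FX$ and aim to show that the index set $\{\alpha\in A: K\cap V_\alpha\ne\emptyset\}$ is finite. The key observation is the inclusion
$$\{\alpha\in A: K\cap V_\alpha\ne\emptyset\}\subset\{\alpha\in A:U_\alpha\cap\supp(K)\ne\emptyset\},\qquad \supp(K):=\bigcup_{a\in K}\supp(a),$$
which holds because $K\cap V_\alpha\ne\emptyset$ produces some $a\in K$ with $\supp(a)\cap U_\alpha\ne\emptyset$, and then $U_\alpha$ meets $\supp(K)\supset\supp(a)$. (Here $\supp$ is defined on all of $FX$ since having finite supports means $FX=F_{{<}\w}(X)$.) It therefore suffices to bound the number of $U_\alpha$ that meet the single set $\supp(K)$.

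Next I would control $\supp(K)$ using boundedness of the functor. Since $F$ is monomorphic and has finite supports, Proposition~\ref{p:F-bounded} applies, and its equivalence $(1)\Leftrightarrow(2)$ tells me exactly that $B:=\supp(K)$ is a \emph{bounded} subset of $X$ for every compact $K\subset FX$. This is the only structural property of $F$ I expect to need; in particular the $\II$-regularity hypothesis plays no role in this argument and is presumably carried along only for the companion results.

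The remaining task is to convert boundedness into compactness via the $\mu$-completeness of $X$. First I would check that the closure $\bar B$ of $B$ in $X$ is still bounded: any open set $U$ meeting $\bar B$ contains a point of $\bar B$ and, being open, must already meet $B$, so every locally finite family of open sets meeting $\bar B$ meets $B$ and is hence finite. Now $\bar B$ is a closed bounded set, so by $\mu$-completeness it is compact. Since $(U_\alpha)_{\alpha\in A}$ is compact-finite, the set $\Lambda=\{\alpha\in A: U_\alpha\cap\bar B\ne\emptyset\}$ is finite, and combining this with the inclusion above gives $\{\alpha: K\cap V_\alpha\ne\emptyset\}\subset\Lambda$, which finishes the proof.

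The only genuine subtlety — and it is a mild one — is the mismatch between the two finiteness notions in play: compact-finiteness of $(U_\alpha)$ is stated for compact test sets, whereas boundedness of $\supp(K)$ is stated through locally finite families of open sets. The $\mu$-completeness assumption is precisely the bridge between them, but to invoke it I must pass to the closure $\bar B$ and verify that boundedness survives this passage, since $\mu$-completeness is a statement about \emph{closed} bounded sets. Once that routine check is in place, the rest is bookkeeping.
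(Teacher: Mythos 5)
Your proof is correct and follows essentially the same route as the paper's: both arguments confine the supports of all elements of a compact set $K\subset FX$ to a bounded subset of $X$ (you via Proposition~\ref{p:F-bounded}, the paper directly from the definition of boundedness together with Theorem~\ref{t:supp} applied to $K\subset F(B;X)$), then use $\mu$-completeness to replace that bounded set by a compact one and apply compact-finiteness of $(U_\alpha)_{\alpha\in A}$ to it. Your explicit check that boundedness survives passage to the closure, and your observation that $\II$-regularity is never used, are both accurate; the paper glosses over the former by asserting a bounded \emph{closed} set $B$ outright.
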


\begin{proof} Without loss of generality, the space $X$ is not empty. To see that the family $(V_\alpha)_{\alpha\in A}$ is compact-finite in $FX$, fix any compact set $K\subset FX$. By the boundedness of the functor $F$, there exists a non-empty bounded closed set $B\subset X$ such that $K\subset F(B;X)$. By the $\mu$-completeness of $X$ the bounded closed set $B$ is compact. Since the family $(U_\alpha)_{\alpha\in A}$ is compact-finite in $X$, the subfamily $A'=\{\alpha\in A:B\cap U_\alpha\ne\emptyset\}$ is finite. For any $\alpha\in A\setminus A'$ and point $a\in K\subset F(B;X)\subset FX$ we get $\supp(a)\subset B\subset X\setminus U_\alpha$ (by Theorem~\ref{t:supp})  and hence $a\notin V_\alpha$. So, $K\cap V_\alpha=\emptyset$ for all $\alpha\in A\setminus A'$, which means that the family $(V_\alpha)_{\alpha\in A}$ is compact-finite in $FX$.
\end{proof}

\section{$\HM$-commuting functors}

In the previous subsection we have seen that the $\II$-regularity is a nice property of functors with important implications. In this section we shall prove that each $\HM$-commuting functor $F:\Top_i\to\Tych$ is $\II$-regular.

First we recall some information on the Hartman-Mycielski construction $\HM(X)$, which was introduced in \cite{HM} and in general form in \cite{BM}. This construction has many applications in Topological Algebra, see \cite{BM}, \cite{HM}, \cite[\S3.8]{AT}, \cite{BGG}, \cite{BH}.

For a topological
space $X$ let $\HM(X)$ be the set of all functions $f:[0;1) \to
X$ for which there exists a sequence $0=a_{0}<a_{1}<\dots<a_{n}=1$
such that $f$ is constant on each interval $[a_{i-1},a_{i})$,
$1\leq i\leq n$. A
neighborhood sub-base of the Hartman-Mycielski topology of
$\HM(X)$ at an $f\in \HM(X)$ consists of sets $N(a,b,V,\varepsilon)$,
where
\begin{itemize}
\item[1)] $0\leq a<b\leq 1$, $f$ is constant on $[a;b)$, $V$ is
    neighborhood of $f(a)$ in $X$ and $\varepsilon>0$;
\item[2)] $g\in N(a,b,V,\varepsilon)$ means that $|\{t\in [a;b):
    g(t)\notin V\} |<\varepsilon$, where $|\cdot|$ denotes the Lebesgue
    measure on $[0,1)$.
\end{itemize}

If $X$ is a Hausdorff (Tychonoff) space, then so is the space
$\HM(X)$, see \cite{BM}. Moreover, if the topology of $X$ is generated by a bounded metric $d$, then the Hartman-Mycielski topology on $\HM(X)$ is generated by the metric $$d_{\HM}(f,g)=\int_0^1d(f(t),g(t))dt.$$

The construction $\HM$ is functorial in the sense
that for any continuous map $p:X\to Y$ between topological spaces the map
$\HM(p):\HM(X)\to \HM(Y)$, $\HM(p):f\mapsto p\circ f$, is continuous,
see \cite{BM}. So, $\HM:\Top\to\Top$ is a functor in the category $\Top$ such that $\HM(\Top_i)\subset\Top_i$ for $i\in\{1,2,2\frac12,3\frac12\}$.

The space $X$ can be identified with a subspace of
$\HM(X)$ via the embedding $hm_X:X\to \HM(X)$ assigning to each point $x$ the constant
function $hm_X(x):t\mapsto x$. This embedding $hm_X:X\to \HM(X)$
is closed if $X$ is Hausdorff (see \cite{BM}). The maps $hm_X$ determine a natural transformation $hm:\Id\to \HM$ of the identity functor to the Hartman-Mycielski functor $\HM$.

We shall say that a subspace $X\subset Y$ of a topological space $Y$ is an \index{subspace!$\HM$-valued retract}{\em $\HM$-valued retract} of $Y$ if there exists a continuous map $r:Y\to\HM(X)$ such that $r\circ i_{X,Y}=hm_X$. If for each compact subset $K\subset Y$ there is a compact subset $B\subset X$ such that $r(K)\subset HM(B;X)$, then we say that $X$ is an \index{subspace!$\HM_k$-valued retract}{\em $\HM_k$-valued retract} of $Y$.

\begin{proposition}\label{p:BB} A closed subspace $X$ of a topological space $Y$ is an $\HM_k$-valued retract of $X$ if one of the following conditions is satisfied:
\begin{enumerate}
\item[\textup{1)}] the space $Y$ is stratifiable;
\item[\textup{2)}] $X$ is compact metrizable and  $Y$ is functionally Hausdorff;
\item[\textup{3)}] the space $X$ is Polish and the space $Y$ is normal.
\end{enumerate}
\end{proposition}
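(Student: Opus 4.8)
The plan is to realize $r$ as a Dugundji-type extension valued in $\HM(X)$: for $y\in X$ we are forced to set $r(y)=hm_X(y)$ (the constant function $t\mapsto y$), while for $y\in Y\setminus X$ we build a step function $[0,1)\to X$ whose values lie in $X$ and cluster toward $X$ as $y$ approaches $X$. Concretely, suppose we are given a locally finite open cover $(W_s)_{s\in S}$ of $Y\setminus X$, a partition of unity $(\phi_s)_{s\in S}$ subordinate to it, a fixed linear order on $S$, and a choice of points $x_s\in X$ satisfying the \emph{smallness condition}: for every $x\in X$ and every neighborhood $O\subset Y$ of $x$ there is a neighborhood $O'\subset O$ of $x$ such that $W_s\cap O'\neq\emptyset$ implies $x_s\in O$. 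For $y\in Y\setminus X$ only finitely many values $\phi_s(y)$ are nonzero, say for $s_1<\dots<s_k$; I would let $r(y)$ be the step function equal to $x_{s_i}$ on the half-open interval $\big[\sum_{j<i}\phi_{s_j}(y),\ \sum_{j\le i}\phi_{s_j}(y)\big)$. Since $\sum_i\phi_{s_i}(y)=1$, this is a genuine element of $\HM(X)$. The things to verify are continuity of $r$ (routine on $Y\setminus X$ from continuity of the $\phi_s$ and the definition of the Hartman--Mycielski topology, since perturbing the endpoints alters the function only on a set of small Lebesgue measure; and at points of $X$ exactly from the smallness condition) and the covering identity $r\circ i_{X,Y}=hm_X$, which is built into the definition.

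The $\HM_k$-clause then comes essentially for free: if $K\subset Y$ is compact, local finiteness gives a finite set $S_K=\{s\in S:W_s\cap K\neq\emptyset\}$, so every $r(y)$ with $y\in K$ takes values in the set $B=(K\cap X)\cup\{x_s:s\in S_K\}$, which is compact because $X$ is closed (so $K\cap X$ is compact) and the adjoined set is finite; hence $r(K)\subset\HM(B;X)$. Thus the whole problem reduces to producing, under each of the three hypotheses, a locally finite subordinated partition of unity on $Y\setminus X$ together with a selection $(x_s)$ obeying the smallness condition.

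For (2) and (3) I would reduce to a metrizable ambient space and run the construction there. In case (3), $X$ is Polish, hence homeomorphic to a closed subspace $e(X)$ of $\IR^\w$; extending the coordinate functions of $e$ over the normal space $Y$ by the Tietze theorem yields a continuous $g:Y\to\IR^\w$ with $g|X=e$ and $e(X)$ closed in $\IR^\w$. Performing the partition-of-unity construction above in the metrizable (hence paracompact) space $\IR^\w$ for the closed set $e(X)$ gives an $\HM_k$-valued retraction $\rho:\IR^\w\to\HM(e(X))$, and I would set $r=\HM(e^{-1})\circ\rho\circ g$; compactness is preserved because both $g$ and $\rho$ preserve it. Case (2) is identical except that the compact metrizable $X$ embeds as a \emph{compact} set $e(X)\subset\II^\w$, and the coordinate functions are extended over the functionally Hausdorff space $Y$ by Lemma~\ref{l:fH-ext} (followed by a retraction of $\IR$ onto $\II$); here every relevant set is compact, so the $\HM_k$-clause is automatic.

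The genuinely different and hardest case is (1), where $X$ is an arbitrary closed subspace of a stratifiable space $Y$ and carries no metric, so the metric ``nearness'' used above to pick the points $x_s$ is unavailable. Here I would use that stratifiable spaces are hereditarily paracompact, so $Y\setminus X$ admits locally finite subordinated partitions of unity, and I would replace the metric selection by the monotone neighborhood assignment of a stratification of $Y$, following Borges' proof of the Dugundji extension theorem for stratifiable spaces: the stratification lets one refine the cover of $Y\setminus X$ by open sets each ``pushed toward $X$'' and, for each such set, select a point $x_s\in X$ in a controlled neighborhood, so that the smallness condition holds. Establishing this selection — and the attendant continuity of $r$ at points of $X$ phrased purely in terms of neighborhoods rather than a metric — is the main obstacle, and is exactly where the full strength of stratifiability, as opposed to mere paracompactness, is used.
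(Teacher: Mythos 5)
Your treatment of cases (2) and (3) is correct and coincides with the paper's: embed $X$ as a closed subspace $e(X)$ of $\IR^\w$ (a compact subset of $\II^\w$ in case (2)), extend the embedding over $Y$ by Lemma~\ref{l:fH-ext} resp.\ the Tietze--Urysohn theorem, obtain an $\HM_k$-valued retraction of $\IR^\w$ onto $e(X)$, and compose; your verification that compactness is preserved through the composition is also the right one. The only cosmetic difference is that the paper gets the retraction $\IR^\w\to\HM(e(X))$ by invoking case (1) (metrizable spaces being stratifiable), whereas you propose to rerun a metric Dugundji construction there; either works, since in a metric space the canonical Dugundji covers (sets $W_s$ of diameter comparable to their distance from $e(X)$, with $x_s$ a near-point) deliver exactly your smallness condition.

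The genuine gap is case (1). The paper does not prove it at all: it is quoted from \cite{BB}, and it is the real content of the proposition. Your plan --- locally finite partition of unity on $Y\setminus X$ (available since stratifiable spaces are hereditarily paracompact), step functions ordered by a linear order on the index set, and a selection $(x_s)_{s\in S}$ in $X$ satisfying the smallness condition --- is structurally sound: your check that the $\HM_k$-clause follows from local finiteness is correct, and the smallness condition does yield continuity of $r$ at points of $X$ (for $y\in O'$ every index $s$ with $\phi_s(y)\neq 0$ has $W_s\cap O'\neq\emptyset$, so all values of the step function $r(y)$ lie in the prescribed neighborhood). But you stop exactly at the step that carries all the difficulty: producing, in a stratifiable space with no metric, a locally finite cover of $Y\setminus X$ together with points $x_s\in X$ for which the smallness condition holds. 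Saying that ``the stratification lets one refine the cover \dots so that the smallness condition holds'' is a restatement of what must be proved, not a proof; this selection is precisely the Borges/Banakh--Bessaga machinery (monotone normality of the stratification used to build a Dugundji system without a metric), and without it your argument establishes only cases (2) and (3). As written, case (1) remains an unproven assertion rather than a completed construction.
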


\begin{proof}  The first statement was proved in \cite{BB}.
To prove the other two statements, assume that $Y$ is functionally Hausdorff (normal) and $X$ is compact metrizable (Polish). Let $\xi:X\to \xi(X)\subset \IR^\w$ be a closed topological embedding. The homeomorphism $h=\xi^{-1}:\xi(X)\to X$ induces a homeomorphism $\HM h:\HM(\xi(X))\to \HM(X)$. By Lemma~\ref{l:fH-ext} (resp. the Tietze-Urysohn Theorem \cite[2.1.8]{En}), the map $\xi$ can be extended to a continuous map $\xi:X\to\IR^\w$. By the stratifiability of the metrizable space $\IR^\w$, there is an $\HM_k$-valued retraction $r:\IR^\w\to \HM(\xi(X))$. Then the map $\HM h\circ r\circ \bar\xi:Y\to\HM(X)$ is a required $\HM_k$-valued retraction of $Y$ on $X$.
\end{proof}


\begin{proposition}\label{p:rHM} Each $\HM_k$-valued retract $X$ of a topological space $Y$ is $C_k$-embedded into $Y$.
\end{proposition}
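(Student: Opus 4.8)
The plan is to build a (not necessarily linear) $0$-continuous extension operator $E\colon C_k(X)\to C_k(Y)$ directly out of the given $\HM_k$-valued retraction $r\colon Y\to\HM(X)$ by "averaging over $[0,1)$". Fix once and for all the homeomorphism $h=\arctan\colon\IR\to(-\tfrac\pi2,\tfrac\pi2)$; composing with $h$ bounds an otherwise arbitrary $f\in C(X)$, and this is essential, since raw integration of an unbounded function over $\HM(X)$ is discontinuous (a small-measure interval on which the step function takes a value where $f$ is huge spoils the integral). For $f\in C(X)$ and $y\in Y$ the element $r(y)\in\HM(X)$ is a step function $[0,1)\to X$ with finitely many values, so I would set
$$
Ef(y)=h^{-1}\Big(\int_0^1 h\big(f(r(y)(t))\big)\,dt\Big),\qquad y\in Y.
$$
The integrand is a step function with values in $(-\tfrac\pi2,\tfrac\pi2)$, so its integral is a convex combination of finitely many points of the convex open interval $(-\tfrac\pi2,\tfrac\pi2)$ and hence lies in it; thus $h^{-1}$ applies and $Ef\colon Y\to\IR$ is well defined.

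The first and main step is to show that $Ef$ is continuous. Writing $I_g\colon\HM(X)\to\IR$, $I_g(\varphi)=\int_0^1 g(\varphi(t))\,dt$, one has $Ef=h^{-1}\circ I_{h\circ f}\circ r$, so since $r$ and $h^{-1}$ are continuous it suffices to prove that $I_g$ is continuous whenever $g\colon X\to\IR$ is bounded (here $g=h\circ f$ has values in $(-\tfrac\pi2,\tfrac\pi2)$). This is where the real work lies, and I would argue it straight from the definition of the Hartman-Mycielski topology. Given $\varphi_0\in\HM(X)$ with partition $0=a_0<\dots<a_n=1$ and value $x_i$ on $[a_{i-1},a_i)$, and given $\sigma>0$, choose neighborhoods $V_i\ni x_i$ with $|g-g(x_i)|<\sigma/2$ on $V_i$ and form the basic neighborhood $W=\bigcap_{i=1}^n N(a_{i-1},a_i,V_i,\varepsilon)$. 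For $\varphi\in W$, on the "good" part of each interval the integrand deviates from $g(x_i)$ by less than $\sigma/2$, while on the "bad" part (of measure $<\varepsilon$) it deviates by at most $2\sup|g|$, giving $|I_g(\varphi)-I_g(\varphi_0)|\le\tfrac\sigma2+2\sup|g|\,n\varepsilon<\sigma$ once $\varepsilon$ is small. Hence $I_g$ is continuous, and therefore so is $Ef$. (Equivalently, $I_g=b\circ\HM(g)$, where $\HM(g)$ is continuous by functoriality of $\HM$ and maps $\HM(X)$ into the step functions with values in the compact interval $[-\sup|g|,\sup|g|]$, on which the integration map $b$ is continuous.)

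It remains to verify the extension property and $0$-continuity, both routine once the formula is fixed. For $x\in X$ the retraction identity $r\circ i_{X,Y}=hm_X$ says $r(x)$ is the constant function $t\mapsto x$, so $Ef(x)=h^{-1}\big(h(f(x))\big)=f(x)$; thus $Ef|X=f$ and $E$ is an extension operator. For $0$-continuity, fix a compact set $K\subset Y$ and use the $\HM_k$-property to choose a compact $B\subset X$ with $r(K)\subset\HM(B;X)$, so that for every $y\in K$ the step function $r(y)$ takes all its values in $B$. Consequently, if $f\in C(X)$ satisfies $f|B=0$, then $h(f(r(y)(t)))=h(0)=0$ for all $t$, whence $Ef(y)=h^{-1}(0)=0$; that is, $f|B=0$ forces $Ef|K=0$. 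This exhibits $E$ as a $0$-continuous extension operator, so $X$ is $C_k$-embedded in $Y$. The only genuine obstacle is the continuity lemma for $I_g$; the $\arctan$ normalization and the two verifications above are bookkeeping around the averaging formula.
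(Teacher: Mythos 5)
Your proof is correct, but it takes a genuinely different route from the paper's. The paper uses the same averaging idea in its naked linear form: it sets $Ef(y)=\sum_{x\in \bar y([0,1))}\lambda(\bar y^{-1}(x))f(x)$, i.e.\ $\int_0^1 f(r(y)(t))\,dt$, notes that $E$ is linear, checks that $E$ is continuous at zero as an operator $C_k(X)\to C_k(Y)$ (this is where the $\HM_k$-property enters, exactly as in your $0$-continuity step), and then invokes Proposition~\ref{p:lin->0} to conclude $0$-continuity. You instead compress through $\arctan$, which destroys linearity, so Proposition~\ref{p:lin->0} is unavailable and you verify the extension property and $0$-continuity by hand; in exchange you prove outright that $Ef$ is continuous on $Y$, via the continuity of $I_g$ on $\HM(X)$ for \emph{bounded} $g$. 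The trade-off: the paper's route, when it works, yields a linear continuous extension operator (a stronger object, in the spirit of Borges--Dugundji), while yours yields only a $0$-continuous extension map --- which is all the definition of $C_k$-embedded requires.

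Moreover, your reason for the $\arctan$ normalization is not mere bookkeeping: it repairs a genuine gap in the paper's argument. The paper never checks that its $Ef$ lies in $C_k(Y)$, and for unbounded $f$ this can fail, since the $\HM_k$-condition constrains $r$ only on compact subsets of $Y$, whereas continuity of $Ef$ at a point is not a compact-determined property. Concretely, let $X=\w$ be discrete, $f(n)=n$, let $D=\{d_n\}_{n\ge 1}$ be points sent by $r$ to the step functions equal to $n$ on $[0,\tfrac1n)$ and $0$ elsewhere, and let $Y=X\cup D\cup\{y_0\}$ with all points of $X\cup D$ isolated, $r(y_0)=hm_X(0)$, and neighborhoods of $y_0$ of the form $\{y_0\}\cup F$ where $F\subset D$ has asymptotic density $1$. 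Then $r$ is continuous and $r\circ i_{X,Y}=hm_X$; and since every infinite $C\subset\w$ contains an infinite set of density zero (whose complement is a neighborhood trace missing infinitely many $d_n$, $n\in C$), no injective sequence in $X\cup D$ converges to $y_0$, so compact subsets of $Y$ are essentially finite and the $\HM_k$-condition holds. Yet the paper's formula gives $Ef(d_n)=1$ for all $n$ while $Ef(y_0)=0$, so that $Ef$ is discontinuous at $y_0$; your bounded average instead gives $Ef(d_n)=\tan(\arctan(n)/n)\to 0$. So your extra care at the continuity step is precisely what makes the proof complete.
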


\begin{proof} Let $r:Y\to\HM(X)$ be an $\HM_k$-valued retraction.
 For a point $y\in Y$ the function $r(y):[0,1)\to X$ will be denoted by $\bar y$. Define an extension operator $E:C_k(X)\to C_k(Y)$ assigning to each function $f\in C_k(X)$ the function $Ef:Y\to \IR$ given by the formula
$$Ef(y)=\sum_{x\in \bar y([0,1))}\lambda(\bar y^{-1}(x))f(x)$$where $\lambda$ be the Lebesgue measure on the interval $[0,1)$. It is clear that the operator $E$ is linear. So, it suffices to check the continuity of $E$ at zero. Fix a compact subset $K\subset Y$ and $\e>0$. Since $r$ is an $\HM_k$-valued retraction, there is a compact subset $B\subset X$ such that $r(K)\subset \HM(B;X)$. Then $[B,\e]=\{f\in C_k(X):\max_{x\in B}|f(x)|<\e\}$ is an open neighborhood of zero such that for any $f\in [B,\e]$ we get $\max_{y\in K}|Ef(y)|\le\max_{x\in B}|f(x)|<\e$. By Proposition~\ref{p:lin->0}, the operator $E$ is 0-continuous.
\end{proof}

Proposition~\ref{p:BB}(1) and Proposition~\ref{p:rHM} imply the following result of Borges \cite{Bor66}.

\begin{corollary}\label{c:s->C_k-emb} Each closed subset $X$ of a stratifiable space $Y$ is $C_k$-embedded in $X$.
\end{corollary}

Let $\Top_i$ be a full hereditary subcategory of $\Top$, containing all compact metrizable spaces. We shall say that a functor $F:\Top_i\to\Top$ is \index{functor!$\HM$-commuting}{\em $\HM$-commuting} if for every  space $X\in\Top_i$ the space $\HM X$ is an object of the category $\Top_i$ and there exists a continuous map $c_X:F\circ \HM X\to \HM\circ F X$ making the following diagram  commutative:
$$
\xymatrix{
F(\HM X)\ar^{c_X}[rr]&&\HM(FX)\\
&FX\ar_{hm_{FX}}[ru]\ar^{F(hm_X)}[ul]}
$$

\begin{proposition}\label{p:Femb+HMr} Let $F:\Top_i\to\Top$ be an $\HM$-commuting functor, $Y\in\Top_i$ be a topological space and $X$ be a subspace of $Y$. If $X$ is an $\HM$-valued retract of $Y$, then the map $Fi_{X,Y}:FX\to FY$ is injective. Moreover, if the  space $FX$ is Hausdorff, then the map $Fi_{X,Y}$ is a closed topological embedding and the subspace $F(X;Y)$ is an $\HM$-valued retract of $FY$.
\end{proposition}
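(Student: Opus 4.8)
The plan is to reduce the entire statement to a single continuous map. Let $r:Y\to\HM(X)$ be the $\HM$-valued retraction, so $r\circ i_{X,Y}=hm_X$, and let $c_X:F(\HM X)\to\HM(FX)$ be the comparison map from the definition of an $\HM$-commuting functor, which satisfies $c_X\circ F(hm_X)=hm_{FX}$. Set
$$R:=c_X\circ Fr:FY\to\HM(FX).$$
Applying the functor $F$ to $r\circ i_{X,Y}=hm_X$ gives $Fr\circ Fi_{X,Y}=F(hm_X)$, and postcomposing with $c_X$ yields the crucial identity
$$R\circ Fi_{X,Y}=hm_{FX}.$$
Every assertion will be extracted from this one equation.

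First I would prove injectivity. The map $hm_{FX}:FX\to\HM(FX)$ sends each point to the corresponding constant function and is therefore injective for an arbitrary topological space, with no separation axiom needed. Since $R\circ Fi_{X,Y}=hm_{FX}$ is injective, its left factor $Fi_{X,Y}$ is injective as well, which settles the first assertion.

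Now assume $FX$ is Hausdorff, so that $\HM(FX)$ is Hausdorff and $hm_{FX}$ is a closed topological embedding. To see that $e:=Fi_{X,Y}$ is a topological embedding, I would corestrict it to the continuous bijection $e':FX\to F(X;Y)$ and produce a continuous inverse: for $b=e(a)\in F(X;Y)$ we have $R(b)=R(e(a))=hm_{FX}(a)$, whence $e'^{-1}=hm_{FX}^{-1}\circ(R|_{F(X;Y)})$, which is continuous because $R$ is continuous, $R(F(X;Y))=hm_{FX}(FX)$, and $hm_{FX}^{-1}$ is continuous on $hm_{FX}(FX)$. Thus $e'$ is a homeomorphism. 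To realize $F(X;Y)$ as an $\HM$-valued retract of $FY$, I would transport $R$ through this homeomorphism: since $\HM$ is a functor, $\HM(e'):\HM(FX)\to\HM(F(X;Y))$ is a homeomorphism, and I put $\rho:=\HM(e')\circ R:FY\to\HM(F(X;Y))$. For $b=e'(a)\in F(X;Y)$ one computes that $\rho(b)=\HM(e')\big(hm_{FX}(a)\big)$ is the constant function with value $e'(a)=b$, that is $hm_{F(X;Y)}(b)$; hence $\rho\circ i_{F(X;Y),FY}=hm_{F(X;Y)}$, as required.

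The remaining and most delicate point is closedness of $F(X;Y)$ in $FY$. Since $hm_{FX}(FX)$ is closed in the Hausdorff space $\HM(FX)$, the preimage $N:=R^{-1}(hm_{FX}(FX))$ is closed in $FY$ and contains $F(X;Y)$; moreover continuity of $R$ gives $R(\overline{F(X;Y)})\subset\overline{R(F(X;Y))}=hm_{FX}(FX)$, so $\overline{F(X;Y)}\subset N$. On $N$ the formula $\Theta:=e\circ hm_{FX}^{-1}\circ(R|_N)$ defines a continuous retraction of $N$ onto $F(X;Y)$ that fixes $F(X;Y)$ pointwise and satisfies $\Theta\circ\Theta=\Theta$; consequently $F(X;Y)$ coincides with the fixed-point set $\{y\in N:\Theta(y)=y\}$. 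I expect the main work to lie in verifying that this fixed-point set is closed, and this is the step where the Hausdorffness of $F(X;Y)$ (equivalently, the closedness of the constant functions in $\HM(FX)$) must be leveraged, by writing the fixed-point set as the equalizer of $\Theta$ and $\id_N$ and separating any candidate limit point $y\in\overline{F(X;Y)}$ from the picture via its image $\Theta(y)\in F(X;Y)$ and the value $R(y)\in hm_{FX}(FX)$. Once closedness is in hand, the homeomorphism $e'$ upgrades $Fi_{X,Y}$ to a closed topological embedding, completing the argument.
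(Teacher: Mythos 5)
Your treatment of injectivity, of the embedding property, and of the retraction is correct and is in substance the paper's own argument: both proofs run on the single identity $R\circ Fi_{X,Y}=hm_{FX}$ with $R=c_X\circ Fr$, deduce injectivity from injectivity of $hm_{FX}$, invert the corestriction via $hm_{FX}^{-1}\circ(R|_{F(X;Y)})$, and exhibit the $\HM$-valued retraction of $FY$ onto $F(X;Y)$ as $\HM(e')\circ R$ (the paper's map $\HM f\circ c_X\circ Fr$). The genuine gap is exactly where you stopped: the closedness of $F(X;Y)$ in $FY$. Your reduction is circular, because $\Theta=e\circ hm_{FX}^{-1}\circ(R|_N)$ maps $N$ into $F(X;Y)$ and fixes $F(X;Y)$ pointwise, so the fixed-point set of $\Theta$ \emph{is} $F(X;Y)$: ``verify that the fixed-point set is closed'' restates the goal rather than advancing it. Moreover, the tool you invoke cannot establish it: the equalizer of $\Theta$ and $\id_N$ is the preimage of the diagonal of $N$ under $(\Theta,\id_N)$, so its closedness requires the diagonal of $N\subset FY$ to be closed, i.e.\ Hausdorffness of (a subspace of) $FY$ --- not Hausdorffness of $F(X;Y)$, which is all you have. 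In fact, no argument using only the identity $R\circ Fi_{X,Y}=hm_{FX}$ can yield closedness: take $A=Z=\w+1$, let $B$ be the $T_1$ non-Hausdorff space obtained from $\w+1$ by doubling the limit point, let $f:A\to B$ be the inclusion missing the doubled limit and $g:B\to Z$ the map identifying the two limits; then $g\circ f=\id_{\w+1}$ is a closed embedding between compact metrizable spaces, yet $f(A)$ is dense and not closed in $B$.

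The paper disposes of this step in one stroke: since $FX$ is Hausdorff, $hm_{FX}$ is a \emph{closed} topological embedding (the Hartman--Mycielski fact quoted from \cite{BM}), and the paper concludes that the factor $Fi_{X,Y}$ of the closed embedding $hm_{FX}=(c_X\circ Fr)\circ Fi_{X,Y}$ is itself a closed embedding. That implication --- the first map in a factorization of a closed embedding is a closed embedding --- is precisely what your construction was circling around. It is valid as soon as limits of nets in $FY$ are unique: run a net $Fi_{X,Y}(a_\lambda)\to b$ through $R$, use closedness of $hm_{FX}(FX)$ to write $R(b)=hm_{FX}(a)$, pull back through the homeomorphism $hm_{FX}$ to get $a_\lambda\to a$, and compare the two limits $b$ and $Fi_{X,Y}(a)$; this is how the proposition gets applied in the paper (e.g.\ in Corollary~\ref{c:F-Ireg}, where the ambient functor-space $F(\II)$ is Tychonoff). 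But, as the doubled-limit example shows, it is not a consequence of the stated hypotheses by themselves, so your instinct that this is the delicate point was sound. What your write-up is missing is either this implication together with the separation property of $FY$ that makes it true, or some genuinely new input beyond the identity $R\circ Fi_{X,Y}=hm_{FX}$; without one of these, the proof is incomplete.
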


\begin{proof} We shall follow the idea of the proof of Theorem 2 in \cite{BH}. Since the functor $F$ is $\HM$-commuting, for the space $X$ there exists a continuous map
$c_X:F(\HM X)\to\HM(FX)$ such that $c_X\circ Fhm_X=hm_{FX}$.
 Since $X$ is an $\HM$-valued retract of $Y$, there exists a continuous map $r:Y\to \HM X$ such that $r\circ i_{X,Y}=hm_X$. Applying to the latter equality the functor $F$ we get the commutative diagram:
$$\xymatrix{
FY\ar_{Fr}[rd]&&FX\ar_{Fi_{X,Y}}[ll]\ar^{Fhm_X}[ld]\ar^{hm_{FX}}[rd]\\
&F(\HM X)\ar_{c_X}[rr]&&\HM(FX)
}
$$ Taking into account that the map $hm_{FX}=c_X\circ Fr\circ  Fi_{X,Y}$ is injective, we conclude that $Fi_{X,Y}:FX\to FY$ is injective. Moreover, if the space $FX$ is Hausdorff, then $hm_{FX}=c_X\circ Fr\circ Fi_{X,Y}$ a closed topological embedding and so is the map $Fi_{X,Y}$.

To see that the space $F(X;Y)=Fi_{X,Y}(FX)\subset FY$ is an $\HM$-valued retract of $FY$, consider the homeomorphism $f:FX\to F(X;Y)$ defined by $f(x)=Fi_{A,X}(a)$ for $a\in FX$, and observe that $ Fi_{A,X}\circ f^{-1}=i_{F(X;Y),FY}$. It follows that $\HM f:\HM(FX)\to\HM (F(X;Y))$ a homeomorphism too and the following diagram commutes.
$$\xymatrix{
FY\ar_{Fr}[rd]&&FX\ar_{Fi_{X,Y}}[ll]\ar^{Fhm_X}[ld]\ar_{hm_{FX}}[rd]
&F(X;Y)\ar^{hm_{F(X;Y)}}[rd]\ar@/_20pt/_{i_{F(X;Y),FY}}[lll]\ar^{f^{-1}}[l]\\
&F(\HM X)\ar_{c_X}[rr]&&\HM(FX)\ar_{\HM f}[r]&\HM(F(X;Y))
}
$$
Then the continuous map $R=\HM f\circ c_X\circ Fr:FY\to \HM(F(X;Y))$ witnesses that $F(X;Y)$ is an $\HM$-valued retract of $FY$.
\end{proof}

Combining Proposition~\ref{p:Femb+HMr} with Proposition~\ref{p:BB}, we get the following two corollaries.

\begin{corollary}\label{c:Fs->emb} If $F:\Top_i\to\Top$ is an $\HM$-commuting functor, and $f:X\to Y$ be a closed topological embedding of two spaces $X,Y\in\Top_i$.
Assume that one of the following conditions is satisfied:
\begin{enumerate}
\item[\textup{1)}] the space $Y$ is stratifiable;
\item[\textup{2)}]  $X$ is compact metrizable and  $Y$ is functionally Hausdorff;
\item[\textup{3)}] the space $X$ is Polish and the space $Y$ is normal.
\end{enumerate}
Then the map $Ff:FX\to FY$ is injective. Moreover, if the functor-space $FX$ is Hausdorff, then $Ff$ is a closed topological embedding.
\end{corollary}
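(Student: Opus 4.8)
The plan is to reduce Corollary~\ref{c:Fs->emb} directly to Proposition~\ref{p:Femb+HMr} via Proposition~\ref{p:BB}. The bridge between the two is the notion of an $\HM$-valued retract: Proposition~\ref{p:Femb+HMr} gives injectivity (and, in the Hausdorff case, a closed embedding) of $Fi_{X,Y}$ whenever $X$ is an $\HM$-valued retract of $Y$, while Proposition~\ref{p:BB} supplies exactly the hypotheses under which a closed subspace $X$ is such a retract — indeed it produces the stronger $\HM_k$-valued retract, which is in particular an $\HM$-valued retract. So the whole corollary is a composition of these two facts, modulo one preliminary normalization.

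The one genuine step is to pass from an abstract closed topological embedding $f:X\to Y$ to the concrete inclusion $i_{X',Y}:X'\hookrightarrow Y$ of the image $X'=f(X)$. First I would set $X'=f(X)$ and note that, since $f$ is a closed topological embedding, $f$ corestricts to a homeomorphism $f_0:X\to X'$ onto the closed subspace $X'$ of $Y$, and $f=i_{X',Y}\circ f_0$. Applying the functor $F$ and using functoriality gives $Ff=Fi_{X',Y}\circ Ff_0$, where $Ff_0:FX\to FX'$ is a homeomorphism (because $F$ sends the homeomorphism $f_0$ to a homeomorphism). Thus $Ff$ is injective (resp.\ a closed topological embedding) if and only if $Fi_{X',Y}$ is, and similarly $FX$ is Hausdorff if and only if $FX'$ is. This reduces everything to the inclusion map $i_{X',Y}$ of a closed subspace $X'\subset Y$, which is precisely the situation handled by Propositions~\ref{p:BB} and \ref{p:Femb+HMr}.

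Now I would invoke Proposition~\ref{p:BB}: under any one of the three listed conditions — $Y$ stratifiable; $X'$ compact metrizable and $Y$ functionally Hausdorff; or $X'$ Polish and $Y$ normal — the closed subspace $X'$ is an $\HM_k$-valued retract of $Y$, hence an $\HM$-valued retract of $Y$. Here I should remark that each of the three conditions on $(X,Y)$ transfers verbatim to $(X',Y)$, since $X'$ is homeomorphic to $X$ (so ``compact metrizable'' and ``Polish'' are preserved) and $Y$ is unchanged. Since the functor $F$ is $\HM$-commuting by hypothesis, Proposition~\ref{p:Femb+HMr} applies to the pair $X'\subset Y$ and yields that $Fi_{X',Y}:FX'\to FY$ is injective. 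Combining this with the factorization $Ff=Fi_{X',Y}\circ Ff_0$ and the fact that $Ff_0$ is a homeomorphism proves that $Ff$ is injective.

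Finally, for the moreover clause, suppose $FX$ is Hausdorff; then $FX'$ is Hausdorff as well (being homeomorphic to $FX$ via $Ff_0$), so the Hausdorff hypothesis of Proposition~\ref{p:Femb+HMr} is met and $Fi_{X',Y}$ is a closed topological embedding. Composing the closed embedding $Fi_{X',Y}$ with the homeomorphism $Ff_0$ shows $Ff$ is a closed topological embedding, as claimed. I do not expect any real obstacle here: the argument is a routine gluing of two already-proved propositions, and the only point requiring a little care is the bookkeeping of the reduction to the inclusion of the image and the verification that the three alternative hypotheses survive the replacement of $X$ by the homeomorphic copy $X'$.
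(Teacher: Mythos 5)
Your proposal is correct and follows essentially the same route as the paper's own proof: factor $f$ as $i_{X',Y}\circ f_0$ through the image $X'=f(X)$, apply Proposition~\ref{p:BB} to get that $X'$ is an $\HM$-valued retract of $Y$, invoke Proposition~\ref{p:Femb+HMr} for $Fi_{X',Y}$, and compose with the homeomorphism $Ff_0$. Your extra bookkeeping (transfer of the three hypotheses to $X'$, Hausdorffness of $FX'$) only makes explicit what the paper leaves implicit.
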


\begin{proof} Let $X'=f(X)$ and $f':X\to X'$ be the homeomorphism defined by the formula $f'(x)=f(x)$ for $x\in X$. It follows that the map $Ff':FX\to FX'$ is a homeomorphism,  $f=i_{X',Y}\circ f'$ and hence $Ff=Fi_{X',Y}\circ Ff'$. By
 Proposition~\ref{p:Femb+HMr} with Proposition~\ref{p:BB}, the map $Fi_{X',Y}$ is injective (and is a closed topological embedding if the functor-space $FX$ is Hausdorff). Then so is the composition $Ff=Fi_{X',Y}\circ Ff'$ of $Fi_{X',Y}$ with the homeomorphism $Ff'$.
 \end{proof}

\begin{corollary}\label{c:FHM->mono} If a functor $F:\Top_i\to\Top$ has finite supports and is $\HM$-commu\-ting, then for any injective map $f:X\to Y$ between functionally Hausdorff spaces $X,Y\in \Top_i$ the map $Ff:FX\to FY$ is injective.
\end{corollary}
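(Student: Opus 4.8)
The plan is to prove injectivity of $Ff$ directly, being careful \emph{not} to route the argument through the support map or Theorem~\ref{t:supp}: those rest on the monomorphy of $F$, which is precisely what this corollary is meant to supply, so they are unavailable here. Instead I would use only two ingredients: the hypothesis that $F$ has finite supports, and the elementary inclusion $F(A'_d;X)\subset F(A_d;X)$ valid for finite $A'\subset A$ (noted right after the subspaces $F(A_d;X)$ are introduced). Fix $a,b\in FX$ with $Ff(a)=Ff(b)$; the goal is $a=b$. I may assume $X\ne\emptyset$, since for $X=\emptyset$ the map $f$ is the closed embedding $i_{\emptyset,Y}$ of a compact metrizable space and injectivity of $Ff$ follows at once from Corollary~\ref{c:Fs->emb}. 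The first step is to produce a single finite discrete ``carrier'' that captures both $a$ and $b$.

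Concretely, since $F$ has finite supports I would choose finite sets $C_a,C_b\subset X$ with $a\in F((C_a)_d;X)$ and $b\in F((C_b)_d;X)$, fix a point $x_0\in X$, and put $A=C_a\cup C_b\cup\{x_0\}$, a nonempty finite subset of $X$. By the monotonicity recalled above, $a,b\in F(A_d;X)=Fi^d_{A,X}(FA_d)$, so I may write $a=Fi^d_{A,X}(\tilde a)$ and $b=Fi^d_{A,X}(\tilde b)$ for suitable $\tilde a,\tilde b\in FA_d$. Because $f$ is injective, its restriction to $A$ is injective; set $B=f(A)$. As $Y$ is functionally Hausdorff, hence $T_1$, the finite set $B$ is discrete in $Y$, so $i^d_{B,Y}\colon B_d\to Y$ is a closed topological embedding and the bijection $g\colon A_d\to B_d$, $g\colon x\mapsto f(x)$, is a homeomorphism of finite discrete spaces. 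The tautological identity $f\circ i^d_{A,X}=i^d_{B,Y}\circ g$, after applying $F$, yields
$$Ff(a)=Fi^d_{B,Y}\bigl(Fg(\tilde a)\bigr),\qquad Ff(b)=Fi^d_{B,Y}\bigl(Fg(\tilde b)\bigr).$$

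The decisive step is to apply Corollary~\ref{c:Fs->emb}(2) to the closed embedding $i^d_{B,Y}\colon B_d\to Y$: here $B_d$ is finite discrete, hence compact metrizable, $Y$ is functionally Hausdorff, and $F$ is $\HM$-commuting, so the hypotheses are met and $Fi^d_{B,Y}$ is injective. From $Ff(a)=Ff(b)$ I then get $Fg(\tilde a)=Fg(\tilde b)$, and since $g$ is a homeomorphism, $Fg$ is a bijection, so $\tilde a=\tilde b$ and therefore $a=Fi^d_{A,X}(\tilde a)=Fi^d_{A,X}(\tilde b)=b$. This is exactly the injectivity of $Ff$. I expect the only genuine obstacle to be the one flagged at the start, namely avoiding the circular use of $\supp$ and Theorem~\ref{t:supp}; once injectivity has been localized to the closed embedding of a finite discrete (compact metrizable) set, the $\HM$-commuting hypothesis does all the work through Corollary~\ref{c:Fs->emb}, and no monomorphy of $F$ is needed in advance.
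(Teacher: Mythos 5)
Your proof is correct and follows essentially the same route as the paper's: both use the finite-supports hypothesis to place the two elements in $F(A_d;X)$ for a single finite set $A$, and then invoke Corollary~\ref{c:Fs->emb}(2) for a closed embedding of a compact metrizable (finite discrete) space into the functionally Hausdorff space $Y$, exactly as you do (your worry about circularity through $\supp$ and Theorem~\ref{t:supp} is well founded, and the paper avoids them in the same way). The only differences are cosmetic: the paper argues contrapositively with distinct $a_1,a_2\in FX$ and applies the corollary directly to the composite $f\circ i_{A,X}$ (a closed embedding since $A$ is compact and $Y$ is Hausdorff), rather than factoring through $B=f(A)$ and the homeomorphism $g\colon A_d\to B_d$ as you do.
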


\begin{proof} Given any two distinct points $a_1,a_2\in FX$, find a finite set $A\subset X$ such that $a_1,a_2\in Fi_{A,X}(FA)$ and hence $a_i=Fi_{A,X}(a'_i)$, $i\in\{1,2\}$, for some (necessarily distinct) points $a_1',a_2'\in FA$.
Taking into account that the finite functionally Hausdorff space $A$ is compact and metrizable, apply Corollary~\ref{c:Fs->emb}(2) and conclude that the map $F(f\circ i_{A,X}):FA\to FY$ is injective and hence
$$Ff(a_1)=Ff\circ Fi_{A,X}(a'_1)=F(f\circ i_{A,X})(a_1')\ne F(f\circ i_{A,X})(a_1')=
Ff(a_2).$$
\end{proof}

\begin{corollary}\label{c:F-Ireg} An $\HM$-commuting functor $F:\Top_i\to\Top$ is $\II$-regular if the space $F(\II)$ is Tychonoff.
\end{corollary}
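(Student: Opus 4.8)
The plan is to deduce $\II$-regularity directly from Corollary~\ref{c:Fs->emb}(2) together with the complete regularity of $F(\II)$. First note that the defining membership requirement $\II\in\Top_i$ holds automatically, since in this section $\Top_i$ is assumed to contain all compact metrizable spaces, and $\II=[0,1]$ is such a space. So it remains to fix an arbitrary closed subset $A\subset\II$ and prove that $F(A;\II)=Fi_{A,\II}(FA)$ is $\IR$-closed in $F(\II)$. The strategy is twofold: I will first show that $F(A;\II)$ is actually \emph{closed} in $F(\II)$, and then upgrade this to $\IR$-closedness using the Tychonoff hypothesis on $F(\II)$.

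For the closedness step I would apply Corollary~\ref{c:Fs->emb}(2) to the inclusion $i_{A,\II}\colon A\to\II$. Since $A$ is closed in the compact metrizable space $\II$, it is itself compact metrizable, the inclusion $i_{A,\II}$ is a closed topological embedding, and $\II$ is functionally Hausdorff (indeed Tychonoff). Hence case (2) of the corollary guarantees that $Fi_{A,\II}\colon FA\to F(\II)$ is injective. Now I would observe that $FA$ is Hausdorff: by hypothesis $F(\II)$ is Tychonoff and therefore Hausdorff, and a continuous injection into a Hausdorff space has Hausdorff domain (pull back a pair of disjoint neighborhoods of the distinct images of two distinct points). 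With $FA$ Hausdorff, the ``moreover'' clause of Corollary~\ref{c:Fs->emb}(2) applies and tells me that $Fi_{A,\II}$ is a closed topological embedding. Consequently its image $F(A;\II)=Fi_{A,\II}(FA)$ is a closed subset of $F(\II)$.

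Finally I would upgrade ``closed'' to ``$\IR$-closed''. Because $F(\II)$ is Tychonoff, every closed subset is $\IR$-closed: given any $x\in F(\II)\setminus F(A;\II)$, complete regularity applied to the point $x$ and the closed set $F(A;\II)$ produces a continuous $f\colon F(\II)\to[0,1]$ with $f(x)=1$ and $f\big(F(A;\II)\big)\subset\{0\}$ (one obtains such an $f$ from the defining separation property of $\Top_{3\frac12}$ by composing with a suitable affine truncation $[0,1]\to[0,1]$). This witnesses that $F(\II)\setminus F(A;\II)$ is $\IR$-open, i.e. $F(A;\II)$ is $\IR$-closed, which is exactly the $\II$-regularity of $F$.

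The proof is short and essentially a bookkeeping exercise, so there is no serious obstacle; the only point requiring a moment's care is the middle transition, where I must first extract injectivity of $Fi_{A,\II}$, then separately argue that $FA$ is Hausdorff, and only then invoke the conditional closed-embedding conclusion of Corollary~\ref{c:Fs->emb}. This is where the hypothesis ``$F(\II)$ is Tychonoff'' does its real work, and it is used twice: its Hausdorffness unlocks the closed-embedding clause, and its complete regularity converts the resulting closed set into an $\IR$-closed one.
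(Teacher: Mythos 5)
Your proof is correct and follows essentially the same route as the paper's: both apply Corollary~\ref{c:Fs->emb} twice — first to get injectivity of $Fi_{A,\II}$ (whence $FA$ is Hausdorff, since $F(\II)$ is), then to upgrade to a closed topological embedding — and both conclude by noting that closed subsets of the Tychonoff space $F(\II)$ are $\IR$-closed. Your version merely spells out the bookkeeping (the membership $\II\in\Top_i$, the pullback argument for Hausdorffness) that the paper leaves implicit.
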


\begin{proof} Assume that the space $F(\II)$ is Tychonoff. By Corollary~\ref{c:Fs->emb}, for every closed subspace $X\subset \II$ the map $Fi_{X,\II}:FX\to F(\II)$ is injective, which implies that the space $FX$ is Hausdorff. Applying  Corollary~\ref{c:Fs->emb} once more, we obtain that the map $Fi_{X,\II}:FX\to F(\II)$ is a closed topological embedding. So, the set $F(X;\II)=Fi_{X,\II}(FX)$ is closed in $F(\II)$ and hence $\IR$-closed in $F(\II)$ (as $F(\II)$ is Tychonoff).
\end{proof}

\section{Functors preserving closed embeddings of metrizable compacta}

We shall say that a functor $F:\Top_i\to\Top$ \index{functor!preserves closed embeddings}{\em preserves closed embeddings of metrizable compacta} if for any continuous injective map $f:X\to Y$
between compact metrizable spaces the map $Ff:FX\to FY$ is a closed topological embedding. Observe that $F$ preserves closed embedding of metrizable compacta if and only if for every compact metrizable space $Y$ and a closed subspace $X\subset Y$ the set $F(X;Y)$ is closed in $FX$ and the map $Fi_{X,Y}:F(X)\to F(X;Y)\subset FX$ is a homeomorphism between the spaces $F(X)$ and $F(X;Y)$.

Corollary~\ref{c:Fs->emb} implies the following fact.

\begin{corollary}\label{c:HM-presemb} Any $\HM$-commuting functor $F:\Top_i\to\Top$ with Hausdorff  functor-space $F(\II^\w)$ preserves closed embeddings of metrizable compacta.
\end{corollary}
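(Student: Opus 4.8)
The plan is to deduce Corollary~\ref{c:HM-presemb} directly from Corollary~\ref{c:Fs->emb} by unwinding the definition of ``preserves closed embeddings of metrizable compacta''. Recall that by the characterization stated just before the corollary, it suffices to show that for any injective continuous map $f:X\to Y$ between compact metrizable spaces the map $Ff:FX\to FY$ is a closed topological embedding. So let $f:X\to Y$ be such a map, with $X,Y$ compact metrizable, hence objects of $\Top_i$ since $\Top_i$ contains all compact metrizable spaces.

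First I would verify the two hypotheses needed to apply Corollary~\ref{c:Fs->emb}(2). Since $X$ is compact and $f$ is continuous and injective, $f$ is a closed topological embedding of $X$ into $Y$ (a continuous injection from a compact space into a Hausdorff space is automatically a closed embedding). The target $Y$, being compact metrizable, is in particular functionally Hausdorff, and $X$ is compact metrizable; thus the hypotheses of case (2) of Corollary~\ref{c:Fs->emb} are met. That corollary then yields that $Ff:FX\to FY$ is injective, and moreover a closed topological embedding \emph{provided the functor-space $FX$ is Hausdorff}.

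Hence the one remaining point is to secure the Hausdorffness of $FX$. This is where the hypothesis on $F(\II^\w)$ enters. Since $X$ is compact metrizable, it admits a closed topological embedding $j:X\to\II^\w$ into the Hilbert cube $\II^\w$ (which is an object of $\Top_i$ and is itself compact metrizable, hence functionally Hausdorff). Applying Corollary~\ref{c:Fs->emb}(2) to the embedding $j:X\to\II^\w$ and using only its first conclusion, the map $Fj:FX\to F(\II^\w)$ is injective. By hypothesis the space $F(\II^\w)$ is Hausdorff, and an injective continuous map into a Hausdorff space forces the domain to be Hausdorff; therefore $FX$ is Hausdorff.

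With $FX$ now known to be Hausdorff, the full conclusion of Corollary~\ref{c:Fs->emb}(2) applies to $f:X\to Y$, giving that $Ff:FX\to FY$ is a closed topological embedding. As $f$ was an arbitrary injective map between compact metrizable spaces, this is exactly the assertion that $F$ preserves closed embeddings of metrizable compacta. I do not expect any genuine obstacle here: the argument is purely a matter of applying Corollary~\ref{c:Fs->emb} twice, once to bootstrap the Hausdorff property of $FX$ via the universal embedding into $\II^\w$ and once to obtain the embedding for $f$ itself. The only subtlety worth stating carefully is the bootstrapping step, namely that Hausdorffness of $FX$ is not assumed but is extracted from the stronger hypothesis on $F(\II^\w)$ together with the injectivity half of Corollary~\ref{c:Fs->emb}.
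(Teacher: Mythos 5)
Your proof is correct and is exactly the argument the paper leaves implicit when it says Corollary~\ref{c:Fs->emb} implies Corollary~\ref{c:HM-presemb}: one applies Corollary~\ref{c:Fs->emb}(2) first to a closed embedding $X\to\II^\w$ to transfer Hausdorffness from $F(\II^\w)$ to $FX$ via injectivity of $Fj$, and then again to $f:X\to Y$ to obtain the closed topological embedding. The bootstrapping step you single out is precisely the reason the hypothesis is phrased in terms of $F(\II^\w)$ alone, so nothing is missing.
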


\begin{proposition}\label{p:FembTych} If a functor $F:\Top_i\to\Top$ preserves closed embeddings of metrizable compacta, then for every compact metrizable subset $K$ of a functionally Hausdorff space $X\in\Top_i$ the map $Fi_{K,X}:FK\to FX$ is a closed topological embedding.
\end{proposition}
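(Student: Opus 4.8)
```latex
The plan is to reduce the assertion about a compact metrizable subset $K$ of a functionally Hausdorff space $X$ to the situation already handled by the hypothesis, namely closed embeddings of metrizable compacta. The natural device for this is the Stone-\v Cech compactification, via the canonical map $\delta\colon X\to\beta X$, which is injective precisely because $X$ is functionally Hausdorff (as recorded in the section on functionally Hausdorff spaces). First I would observe that the restriction $\delta|K\colon K\to\beta X$ is a homeomorphism of the compact metrizable space $K$ onto its image $\delta(K)$, which is a closed metrizable compact subset of the compact Hausdorff space $\beta X$. Thus $\delta|K$ is a closed topological embedding of metrizable compacta, and by the hypothesis the induced map $F(\delta|K)\colon FK\to F(\delta(K))\subset F(\beta X)$ is a closed topological embedding.

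Next I would factor the map $Fi_{K,X}$ through the functorial image of $\delta$. The equality $i_{\delta(K),\beta X}\circ(\delta|K)=\delta\circ i_{K,X}$ of continuous maps (both equal to the corestriction-then-inclusion of $K$ into $\beta X$) yields, after applying $F$, the factorization
$$F\delta\circ Fi_{K,X}=Fi_{\delta(K),\beta X}\circ F(\delta|K).$$
The right-hand side is a composition of a closed topological embedding (namely $F(\delta|K)$, by the previous paragraph, noting $\delta(K)$ is closed metrizable compact in $\beta X$ so the hypothesis applies once more to the inclusion $i_{\delta(K),\beta X}$ restricted to the relevant metrizable compacta) with another closed embedding; in particular $F\delta\circ Fi_{K,X}$ is injective and is a homeomorphism onto its image. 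Consequently $Fi_{K,X}$ itself is injective, so the functor-space $FK$ is Hausdorff and $Fi_{K,X}$ is a continuous injection.

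To upgrade injectivity to a closed topological embedding, I would use the fact that $F\delta\circ Fi_{K,X}$ is a homeomorphism onto a closed subset of $F(\beta X)$. Since $F\delta$ is continuous and $Fi_{K,X}$ is continuous and injective, and the composite is a closed embedding, the map $Fi_{K,X}\colon FK\to FX$ must be a homeomorphism onto its image $F(K;X)$, and that image must be closed in $FX$: any net in $FK$ whose $Fi_{K,X}$-image converges in $FX$ is carried by $F\delta$ to a convergent net in the closed set $F\delta(F(K;X))$, whence the original net converges in $FK$ by properness of the composite embedding. This gives both the homeomorphism property and closedness of $F(K;X)$.

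The main obstacle I anticipate is justifying that the hypothesis on preservation of closed embeddings of \emph{metrizable compacta} applies cleanly to the inclusion $i_{\delta(K),\beta X}$, since $\beta X$ need not be metrizable. The correct way around this is to never apply the functor to the inclusion into $\beta X$ directly, but only to maps between metrizable compacta: one should route through an intermediate metrizable compactification of $K$, or equivalently observe that $\delta(K)$ sits inside a metrizable compact subset (indeed $\delta(K)$ itself is metrizable compact), and apply the hypothesis only to injective maps $X'\to Y'$ of metrizable compacta. Thus the clean argument fixes a metrizable compact $Y'$ containing a homeomorphic copy of $K$ as a closed subspace and transfers the embedding property along the homeomorphism $\delta|K$, exactly as in the proof of Corollary~\ref{c:Fs->emb}. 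Getting this bookkeeping right—keeping every application of $F$ confined to metrizable compacta while still landing inside $F(\beta X)$ to detect closedness—is the delicate point.
```
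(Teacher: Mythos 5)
Your argument breaks precisely where you route it through $\beta X$, and for two separate reasons. First, the functor $F$ is defined only on $\Top_i$, and $\beta X$ need not be an object of $\Top_i$: the standing assumptions make $\Top_i$ a hereditary full subcategory containing the metrizable compacta, so $\Top_i$ could be, for instance, the category of separable metrizable spaces; then for every non-compact $X\in\Top_i$ the expressions $F(\beta X)$, $F\delta$ and $Fi_{\delta(K),\beta X}$ are simply undefined. Second, even when $\beta X$ does belong to $\Top_i$ (say $\Top_i=\Top_{3\frac12}$), the hypothesis of Proposition~\ref{p:FembTych} concerns injective maps \emph{between metrizable compacta}, whereas $i_{\delta(K),\beta X}$ takes values in $\beta X$, which is not metrizable unless $X$ itself is compact metrizable. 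So your parenthetical claim that ``the hypothesis applies once more to the inclusion $i_{\delta(K),\beta X}$'' is unjustified: the assertion that $Fi_{\delta(K),\beta X}$ is a closed embedding is itself an instance of the proposition being proved (for the pair $\delta(K)\subset\beta X$), which makes the argument circular; the same assumption is hidden in the notation $F(\delta(K))\subset F(\beta X)$. You sense this obstacle in your final paragraph, but the repair you sketch still insists on ``landing inside $F(\beta X)$ to detect closedness'', which is exactly what cannot be done; and the appeal to Corollary~\ref{c:Fs->emb} does not help, since that corollary is proved under the different hypothesis that $F$ is $\HM$-commuting.

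The missing idea is to replace $\beta X$ by a \emph{metrizable} compactum receiving a map from all of $X$. Since $K$ is compact metrizable and $X$ is functionally Hausdorff, there is a single continuous map $f:X\to\II^\w$ into the Hilbert cube whose restriction $\bar f=f|K$ is injective: for any two distinct points $x,y\in K$ choose a continuous $g:X\to\II$ with $g(x)\ne g(y)$; the corresponding open sets $\{(x,y)\in K\times K:g(x)\ne g(y)\}$ cover $(K\times K)\setminus\Delta_K$, which is Lindel\"of, so countably many such functions $g_n$ suffice and $f=(g_n)_{n\in\w}$ works. Now every application of $F$ stays inside $\Top_i$: $\II^\w$ is a metrizable compactum, so $Ff:FX\to F(\II^\w)$ is defined, and $\bar f$ is an injective map between metrizable compacta, so $F\bar f$ is a closed topological embedding by hypothesis. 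Applying $F$ to the equality $f\circ i_{K,X}=\bar f$ gives $Ff\circ Fi_{K,X}=F\bar f$, and the cancellation step that your third paragraph aims at (a map whose composition with a continuous map is a closed topological embedding is itself one) is now applied to a factorization in which every map is legitimately defined; it yields that $Fi_{K,X}$ is a closed topological embedding. This is exactly the paper's proof: it never applies $F$ outside $\Top_i$ and never invokes the preservation hypothesis for a map whose codomain is not a metrizable compactum.
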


\begin{proof} Since $X$ is functionally Hausdorff, there exists a continuous map $f:X\to\II^\w$ into the Hilbert cube whose restriction $\bar f=f|K$ to the compact metrizable subspace $K\subset X$ is injective. Since the functor $F$ preserves closed embedding of metrizable compacta, the map $F\bar f:FK\to F(\II^\w)$ is a closed topological embedding.

  Applying the functor $F$ to the equality $f\circ i_{K,X}=\bar f$, we get the equality $F\bar f\circ Fi_{K,X}=F\bar f$. Since $F\bar f$ is a closed topological embedding, the map $Fi_{K,X}:F(K)\to F(X)$ is a closed topological embedding too.
\end{proof}

\begin{proposition}\label{p:FX-fH} Assume that a functor $F:\Top_i\to\Top$ has finite supports and preserves closed embedding of metrizable compacta. If the space $f(\II)$ is (functionally) Hausdorff, then for every functionally Hausdorff space $X$ the functor-space $FX$ is (functionally) Hausdorff, too.
\end{proposition}

\begin{proof} Given two distinct elements $a_1,a_2\in FX$ it suffices to construct a continuous map $g:FX\to F(\II)$ such that $g(a_1)\ne g(a_2)$. Since $F$ has finite supports, there exists a finite subset $A\subset X$ such that $a_1,a_2\in Fi_{A,X}(FA)$. Take any closed embedding $\xi:A\to\II$ and using Lemma~\ref{l:fH-ext}, extend it to a continuous map $\bar \xi:X\to\II$. Then the map $F\bar\xi:FX\to F(\II)$ has the required property: $F(\bar \xi)(a_1)\ne F(\bar \xi)(a_2)$. Indeed, for the points $a_1,a_2$, we can find points $a_1',a_2'\in FX$ such that $Fi_{A,X}(a_i')=a_i$ for $i\in\{1,2\}$. Since $F$ preserves closed embeddings for metrizable compacta, the map $F\xi:FA\to F(\II)$ is injective and hence $F\xi(a_1')\ne F\xi(a_2')$. Applying the functor $F$ to the obvious equality $\bar\xi\circ i_{A,X}=\xi$, we conclude that $F\bar\xi\circ Fi_{A,X}=F\xi$ and hence $F\bar\xi(a_1)=F\bar\xi(Fi_{A,X}(a_1'))=F\xi(a_1')\ne F\xi(a_2')=F\bar \xi(a_2)$.
\end{proof}

\section{Functors preserving preimages}

This section is devoted to functors that preserve preimages. In this section we assume that $\Top_i$ is a full hereditary subcategory of the category $\Top$, containing all finite discrete spaces.

\begin{definition} We say that a monomorphic functor $F:\Top_i\to\Top$ \index{functor!preserves preimages}{\em preserves preimages} if for any continuous surjective map $f:X\to Y$ between spaces $X,Y\in\Top_i$ and any non-empty subset $A\subset Y$ we get $Ff^{-1}(F(A;Y))=F(f^{-1}(A);X)$.
\end{definition}

\begin{proposition}\label{p:non-preim} For a monomorphic functor $F:\Top_i\to\Top$ with finite supports the following conditions are equivalent:
\begin{enumerate}
\item[\textup{1)}] The functor $F$ preserves preimages.
\item[\textup{2)}] For any  surjective map $f:X\to Y$ between finite discrete spaces and any non-empty subset $A\subset Y$ we get $Ff^{-1}(F(A;Y))=F(f^{-1}(A);X)$.
\item[\textup{3)}] For any surjective map $f:X\to Y$ between finite discrete spaces with $|Y|=|X|-1>1$ and any non-empty set $Z\subset Y$ we get $F(f^{-1}(Z);X)\subset(Ff)^{-1}(F(Z;Y))$.
\end{enumerate}
\end{proposition}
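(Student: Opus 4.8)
The equivalences $(1)\Ra(2)\Ra(3)$ are immediate: finite discrete spaces are objects of $\Top_i$, so (2) is the special case of (1) for finite discrete $X,Y$, and (3) is in turn a special case of (2), restricting the class of maps and asking only for a single inclusion. Note also that for any functor and any surjection $f\colon X\to Y$ the restriction of $f$ to $f^{-1}(Z)$ maps into $Z$, so functoriality gives $Ff\big(F(f^{-1}(Z);X)\big)\subset F(Z;Y)$, i.e. the inclusion $F(f^{-1}(Z);X)\subset(Ff)^{-1}(F(Z;Y))$ holds automatically. Consequently the substance of all three conditions is the reverse inclusion $(Ff)^{-1}(F(A;Y))\subset F(f^{-1}(A);X)$, and the whole proof reduces to establishing this inclusion in full generality (1) from its minimal instance (3).

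The plan is to prove $(3)\Ra(1)$ in two stages. First I would reduce an arbitrary surjection to the finite discrete case. Fix a surjective $f\colon X\to Y$ in $\Top_i$ and a non-empty $A\subset Y$, together with $a\in FX$ such that $b:=Ff(a)\in F(A;Y)$; the goal is $a\in F(f^{-1}(A);X)$. The degenerate cases $|Y|=1$ and $\supp(a)=\emptyset$ are disposed of directly using Theorem~\ref{t:supp} (any point of $f^{-1}(A)$ picks out a support set). Otherwise put $S:=\supp(a)$, $T:=f(S)$ and let $g:=f|S\colon S_d\to T_d$ be the induced surjection of finite discrete spaces. Writing $a=Fi^d_{S,X}(\tilde a)$ via Theorem~\ref{t:supp}, the equality $f\circ i^d_{S,X}=i^d_{T,Y}\circ g$ gives $b=Fi^d_{T,Y}(\tilde b)$ with $\tilde b=Fg(\tilde a)$. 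Using Proposition~\ref{p:supp=image} (so $\supp(b)\subset f(S)=T$) together with the intersection property of $\Supp(b)$ in $Y$, one shows $\supp(b)\subset A\cap T=:A'$, hence $b\in F(A';Y)$; since $F$ is monomorphic and $i^d_{A',Y}=i^d_{T,Y}\circ i^{dd}_{A',T}$, this pulls back to $\tilde b\in F(A';T_d)$ inside $FT_d$. Thus everything is now confined to the finite discrete world.

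Second, I would prove the finite discrete inclusion $(Fg)^{-1}(F(A';T_d))\subset F(g^{-1}(A');S_d)$ by induction on $|S|-|T|$, using (3). The key observation is that the required inclusion is closed under composition: if $g=h\circ g_1$ and the inclusion holds for $h$ with the set $A'$ and for $g_1$ with the set $h^{-1}(A')$, then it holds for $g$ with $A'$ (here $h^{-1}(A')$ is again non-empty since $h$ is surjective). Every surjection $g\colon S_d\to T_d$ of finite discrete spaces factors into $|S|-|T|$ elementary surjections, each dropping cardinality by exactly one; when $|T|\ge 2$ all intermediate targets have cardinality $\ge 2$, so each elementary step is exactly an instance of (3), while the case $|T|=1$ is trivial ($g^{-1}(A')=S$). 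This yields the finite discrete inclusion, and feeding it back through the first stage gives $\tilde a\in F(g^{-1}(A');S_d)$; since $g^{-1}(A')=S\cap f^{-1}(A)\subset f^{-1}(A)$, applying $Fi^d_{S,X}$ returns $a\in F(f^{-1}(A);X)$, completing $(3)\Ra(1)$.

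The main obstacle is the first stage: when $f|\supp(a)$ fails to be injective, $f$ collapses points of the support and $\supp(b)$ may be strictly smaller than $f(\supp(a))$, so one cannot simply read off $f(\supp(a))\subset A$ from $b\in F(A;Y)$. The careful bookkeeping with $\Supp(b)$ (via Lemma~\ref{l:supp1} and Theorem~\ref{t:supp}) to locate $\supp(b)$ inside $A\cap T$, and the transfer of this information back to $FT_d$ using monomorphicity, is exactly what makes the collapsing controllable; it is also precisely this collapsing behaviour, reduced to a single merged pair of points, that condition (3) axiomatizes.
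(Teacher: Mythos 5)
Your overall route is the same as the paper's: your second stage (factoring a finite surjection into elementary surjections that drop cardinality by one, each an instance of (3), and composing the resulting inclusions) is exactly the paper's proof of $(3)\Ra(2)$, and your first stage is a reduction of $(2)\Ra(1)$ via finite supports, as in the paper. The composition argument in stage 2 is correct. The genuine gap is in stage 1. You set $S=\supp(a)$, $T=f(S)$, $A'=A\cap T$ and assert ``$\supp(b)\subset A\cap T$, hence $b\in F(A';Y)$''. That inference rests on Theorem~\ref{t:supp}, which applies only to \emph{non-empty} sets containing the support, and nothing in your argument guarantees $A\cap T\ne\emptyset$: the inclusion $\supp(b)\subset A\cap T$ is vacuously compatible with $A\cap T=\emptyset$ exactly when $\supp(b)=\emptyset$. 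This configuration really occurs under your standing hypotheses (monomorphic, finite supports): for the free locally convex space functor $\Lc$ take $a=\delta_X(x)-\delta_X(y)$, let $f$ glue $x$ and $y$ to a point $p$, and let $A$ be any non-empty set with $p\notin A$; then $b=\Lc f(a)=0$ lies in $F(A;Y)$ while $A\cap f(\supp(a))=\emptyset$. So your reduction, presented as valid before (3) is ever invoked, breaks down in the case $\supp(Ff(a))=\emptyset$; your degenerate-case analysis covers $|Y|=1$ and $\supp(a)=\emptyset$ but not this one.

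The gap is reparable, but the repair is precisely the extra flexibility the paper builds into its $(2)\Ra(1)$ step: do not tie the finite carrier to $\supp(a)$. Use finite supports twice: pick a non-empty finite $Z'\subset A$ with $b\in F(Z'_d;Y)$, pick a finite $X'\supset\supp(a)$ with $a\in F(X'_d;X)$, and enlarge $X'$ so that $Z'\subset f(X')$ (possible since $f$ is surjective). Then the induced finite discrete surjection $f':X'_d\to Y'_d$ onto $Y'=f(X')$, together with the non-empty set $Z'\subset Y'$, is exactly what your stage 2 needs, and the conclusion $a'\in F\bigl(f'^{-1}(Z');X'_d\bigr)$ pushes forward to $a\in F(f^{-1}(A);X)$ as you describe. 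Alternatively you can keep $S=\supp(a)$ and, in the bad case, adjoin to $S$ one point of $f^{-1}(A)$, which forces $|T|\ge2$ and $A\cap T\ne\emptyset$, and then use that an element of empty support lies in $F(C_d;\cdot)$ for every non-empty finite $C$. Either way, an enlargement of the carrier set is needed, and its absence is a real hole in the proof as written.
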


\begin{proof} The implication $(1)\Ra(3)$ is trivial.

To prove $(3)\Ra(2)$, take any surjective map $f:X\to Y$ between finite discrete spaces and any non-empty subset $Z\subset Y$. Given an element $a\in FX$ we should prove that $a\in F(f^{-1}(Z);X)$ if and only if $Ff(a)\in F(Z;Y)$.
This is clear if $Z=Y$. So, we assume that $Z\ne Y$ and hence $Y$ contains more than one point. If $a\in F(f^{-1}(Z);X)$, then by the functoriality of $F$, we get $Ff(a)\in F(Z;X)$. Next, assuming that $Ff(a)\in F(Z;Y)$, we shall show that $a\in F(f^{-1}(Z);X)$. Since the space $X$ is finite, the map $f$ can be written as a composition $f=g_1\circ\dots\circ g_m$ of maps $g_i:X_{i+1}\to X_i$ such that $X_{m+1}=X$, $X_1=Y$ and $|X_{i+1}|=|X_i|+1$ for every $i\le m$. Let $a_m=a$ and $a_{i-1}=Fg_i(a_i)\in FX_i$ for $i\in\{m,\dots,2\}$.
Also put $Z_1=Z$ and $Z_{i+1}=g_i^{-1}(Z_i)$ for all $i\le m$. By our assumption, for every $i\in\{1,\dots,m\}$ we get $(Fg_i)^{-1}(F(Z_i;X_i))=F(Z_{i+1},X_{i+1})$.
For every $i\in\{1,\dots,m\}$ consider the map $f_i=g_i\circ\cdots\circ g_1:X_{i+1}\to X_1=Y$ and observe that $f_i^{-1}(Z)=Z_{i+1}$.
By induction, we shall prove that for every $i\in\{1,\dots,m\}$ we get $(Ff_i)^{-1}(F(Z;Y))=F(Z_{i+1};X_{i+1})$. For $i=1$ this equality follows from the equality $f_1=g_1$. Assume that for some $1<i\le m$ the equality $(Ff_{i-1})^{-1}(F(Z;Y))=F(Z_{i},X_{i})$ has been proved. Since $f_{i}=g_{i}\circ f_{i-1}$ and $Ff_i=Fg_i\circ Ff_{i-1}$, we conclude that $Ff_i^{-1}(F(Z;Y))=Fg_i^{-1}(Ff_{i-1}^{-1}(F(Z;Y))=Fg_i^{-1}(F(Z_i,X_i))=F(Z_{i+1},X_{i+1})$.
This completes the inductive step. For $i=m$, we get $f_m=f$ and $Ff^{-1}(F(Z;Y))=F(Z_m;X)=F(f^{-1}(Z);X)$.
\smallskip

To prove $(2)\Ra(1)$, take any continuous surjective map $f:X\to Y$ between spaces $X,Y\in\Top_i$ and any non-empty subset $Z\subset Y$. Given an element $a\in FX$ we need to prove that $a\in F(f^{-1}(Z);X)$ if and only if $Ff(a)\in F(Z;Y)$. If $a\in F(f^{-1}(Z);X)$, then by the functoriality of $F$, we get $Ff(a)\in F(Z;Y)$. So, assume that $Ff(a)\in F(Z;Y)$. Since $F$ has finite supports, there exists a finite subset $Z'\subset Z$ such that $Ff(a)\in F(Z'_d;Y)$. Since $F$ has finite supports, we can choose a finite set $X'\subset X$ such that $a\in F(X'_d;X)$. Replacing $X'$ by a larger finite set we can assume that $Z'\subset f(X')$. Put $Y'=f(X')$ and consider the map $f':X'_d\to Y'_d$, $f':x\mapsto f(x)$. Since $a\in F(X'_d;X)$, there is an element $a'\in F(X'_d)$ such that $a=Fi^d_{X',X}(a')$. Consider the element $b'=Ff'(a')\in FY'_d$.
Applying the functor $F$ to the equality $f\circ i^d_{X',X}=i^d_{Y',Y}\circ f'$, we get the equality  $Ff(a)=Ff\circ Fi^d_{X'X}(a')=Fi^d_{Y',Y}\circ Ff'(a')=Fi^d_{Y',Y}(b')$. So, $Fi^d_{Y',Y}(b')=Ff(a)\in F(Z'_d;Y)$ and we can choose an element $c\in FZ'_d$ such that $Ff(a)=Fi^d_{Z';Y}(c)$. Observe that $i^d_{Z',Y}=i^d_{Y',Y}\circ i^{dd}_{Z',Y'}$ and hence
$$Fi^d_{Y',Y}(b')=Ff(a)=Fi^d_{Z',Y}(c)=Fi^d_{Y',Y}(Fi^{dd}_{Z',Y'}(c)).$$
The injectivity of the map $Fi^b_{Y',Y}$ guarantees that $$Ff'(a')=b'=Fi^{dd}_{Z',Y'}(c)\in F(Z'_d;Y'_d).$$ Now we can apply the assumption (2) and conclude that $a'\in F(f'^{-1}(Z'_d);X_d')$. Then $a=Fi^d_{X',X}(a')\in F(X'\cap f^{-1}(Z);X)\subset F(f^{-1}(Z);X)$ and we are done.
\end{proof}

\begin{corollary}\label{c:preim-fin} Let $F:\Top_i\to\Top$ be a monomorphic  functor with finite supports. If $F$ does not preserve preimages, then there exists a surjective map $f:A\to B$ between non-empty finite discrete spaces such that $|B|=|A|-1$ and $Ff(a)\in F_{|B|-1}(B)$ for some $a\in F(A)\setminus F_{|A|-1}(A)$.
\end{corollary}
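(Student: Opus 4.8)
The plan is to read the statement off from the characterisation of preimage preservation in Proposition~\ref{p:non-preim}. Since $F$ does not preserve preimages, by that proposition condition~(3) must fail, and unravelling its negation produces finite discrete spaces $X,Y$, a surjection $f\colon X\to Y$ with $|Y|=|X|-1>1$, a non-empty set $Z\subset Y$, and an element $a_0\in FX$ such that $Ff(a_0)\in F(Z;Y)$ but $a_0\notin F(f^{-1}(Z);X)$. Because $f$ is surjective and $|X|=|Y|+1$, it identifies exactly one pair of distinct points $p,q\in X$, say $f(p)=f(q)=y_0$, and is injective off this pair. The whole construction is then obtained by restricting $f$ and $a_0$ to the support $A:=\supp(a_0)$.

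First I would record two facts about $A$. It is non-empty: if $\supp(a_0)=\emptyset$, then either $\emptyset\in\Supp(a_0)$ or $\Supp(a_0)$ contains two disjoint sets, and in both cases Lemma~\ref{l:supp1} together with Theorem~\ref{t:supp} forces $a_0\in F(f^{-1}(Z);X)$ (using $f^{-1}(Z)\ne\emptyset$), contradicting the choice of $a_0$. Moreover $f|_A$ is not injective: were it injective, Proposition~\ref{p:supp=image} would give $\supp(Ff(a_0))=f(A)$, and since $Ff(a_0)\in F(Z;Y)$ this would yield $A\subset f^{-1}(Z)$ and hence $a_0\in F(f^{-1}(Z);X)$ by Theorem~\ref{t:supp}, again a contradiction. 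As $f$ glues only $p,q$, we conclude $p,q\in A$. Writing $a_0=Fi^d_{A,X}(\tilde a)$ via Theorem~\ref{t:supp} and using injectivity of $i_{A,X}$ with Proposition~\ref{p:supp=image}, one gets $\supp(\tilde a)=A$, so $\tilde a\in F(A)\setminus F_{|A|-1}(A)$. Setting $B:=f(A)$ and $g:=f|_A\colon A\to B$, the map $g$ is a surjection of non-empty finite discrete spaces with $|B|=|A|-1$.

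It remains to check $Fg(\tilde a)\in F_{|B|-1}(B)$, since renaming $(g,A,B,\tilde a)$ as $(f,A,B,a)$ is then the assertion. Applying $F$ to $f\circ i^d_{A,X}=i^d_{B,Y}\circ g$ gives $Ff(a_0)=Fi^d_{B,Y}(Fg(\tilde a))$, so $\supp(Fg(\tilde a))\subset B$ by functoriality and Theorem~\ref{t:supp}. The condition $A=\supp(a_0)\not\subset f^{-1}(Z)$ furnishes a point $x_0\in A$ with $f(x_0)\in B\setminus Z$. When $|B|\ge 2$ the inclusion $i^d_{B,Y}$ is injective with image of more than one point, so Proposition~\ref{p:supp=image} identifies $\supp(Fg(\tilde a))$ with $\supp(Ff(a_0))\subset Z$; thus $\supp(Fg(\tilde a))\subset Z\cap B$, a proper subset of $B$, whence $|\supp(Fg(\tilde a))|\le|B|-1$ and Corollary~\ref{c:Fn} gives $Fg(\tilde a)\in F_{|B|-1}(B)$.

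The main obstacle is the borderline case $|A|=2$. Then $B=\{y_0\}$ is a single point, $|B|-1=0$, and the target $F_0(B)=F(\emptyset_d;B)$ is the one stratum not described by Corollary~\ref{c:Fn}; indeed for a general monomorphic functor with finite supports an element may have empty support without lying in $F_0$ (its family $\Supp$ may consist of all non-empty finite sets), so I cannot merely invoke $\supp(Fg(\tilde a))=\emptyset$. What I would prove instead is the sharper statement $Ff(a_0)\in F(\emptyset_d;Y)$; granting it, the factorisation $\emptyset\hookrightarrow\{y_0\}\hookrightarrow Y$ together with injectivity of $Fi^d_{\{y_0\},Y}$ and the constant retraction $k\colon Y\to\{y_0\}$ (with $Fk\circ Fi^d_{\{y_0\},Y}=\id$) immediately yields $Fg(\tilde a)\in F(\emptyset_d;\{y_0\})=F_0(B)$. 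Establishing $Ff(a_0)\in F(\emptyset_d;Y)$ — rather than just $\supp(Ff(a_0))\subset f(A)\cap Z=\{y_0\}\cap Z=\emptyset$ — is the delicate heart of the argument, and is exactly where the hypothesis that $F$ \emph{fails} to preserve preimages (as opposed to the benign empty-support elements permitted by preimage-preserving functors) must be exploited.
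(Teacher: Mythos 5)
Your treatment of the main case $|B|\ge 2$ is, step for step, the paper's own proof: extract a witness $(f,Z,a_0)$ from the negation of condition (3) of Proposition~\ref{p:non-preim}, pass to $A=\supp(a_0)$ and $B=f(A)$, use Theorem~\ref{t:supp} and Proposition~\ref{p:supp=image} to see that $A\ne\emptyset$ and that $f|A$ is not injective, and conclude from $\supp(Ff(a_0))\subset Z\cap B\subsetneq B$ together with Corollary~\ref{c:Fn} that the restricted element lands in $F_{|B|-1}(B)$. The difference lies entirely in the borderline case $|A|=2$, $|B|=1$: there your proposal proves nothing --- it reduces the assertion to the unproved claim $Ff(a_0)\in F(\emptyset_d;Y)$ and stops, explicitly calling this ``the delicate heart'' that remains to be established. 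Since the witness produced by Proposition~\ref{p:non-preim} can perfectly well have a two-point support, this is a genuine gap, not a removable technicality.

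Moreover, the missing claim cannot be proved, because the corollary as literally stated is false. For a non-empty space $X$ let $HX$ be the topological sum of $X$ and the discrete space $[X]^2\cup\{\infty_X\}$, where $[X]^2$ is the set of two-element subsets of $X$, and let $H\emptyset=\emptyset$; for continuous $f:X\to Y$ put $Hf|X=f$, $Hf(\infty_X)=\infty_Y$, and $Hf(\{x,y\})=\{f(x),f(y)\}$ if $f(x)\ne f(y)$, while $Hf(\{x,y\})=\infty_Y$ if $f(x)=f(y)$. This is a functor (a pair collapsed by $g\circ f$ goes to an $\infty$-point no matter at which stage it is collapsed, and $\infty$-points go to $\infty$-points), it is monomorphic (an injective $f$ collapses no pair), and it has finite supports, with $\supp(\{x,y\})=\{x,y\}$, $\supp(\infty_X)=\emptyset$ and $\infty_X\in H(A;X)$ for every non-empty $A$, whereas $H_0(X)=\emptyset$ because $H\emptyset=\emptyset$. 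Taking $X=\{x,y,z\}$, $Y=\{y_0,y_1\}$, $f(x)=f(y)=y_0$, $f(z)=y_1$ and $Z=\{y_1\}$ gives $Hf(\{x,y\})=\infty_Y\in H(Z;Y)$ but $\{x,y\}\notin H(f^{-1}(Z);X)$, so $H$ does not preserve preimages. Yet no pair $(f,a)$ as in the corollary exists: for $|A|\ge3$ every element of $HA$ has support of cardinality at most $2\le|A|-1$, so $HA=H_{|A|-1}(A)$ by Corollary~\ref{c:Fn}, while for $|A|=2$ the unique element of $HA\setminus H_1(A)$ is the pair, whose image under the constant map onto $B$ is $\infty_B\notin H_0(B)=\emptyset$. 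So your instinct about where the difficulty sits is exactly right, but it is a defect of the statement itself, inherited by the paper's proof, whose last line infers $Ff(a)\in F_{|B|-1}(B)$ from $|\supp(Ff(a))|<|B|$ --- an application of Corollary~\ref{c:Fn} at $n=|B|-1$ that is illegitimate (and, for $H$, false) when $|B|=1$. The statement becomes true, with precisely your and the paper's argument, once the conclusion is weakened to the support bound $|\supp(Ff(a))|\le|B|-1$; this weaker, support-level form is also what the paper actually uses afterwards, since Corollary~\ref{c:F-preim} and Definition~\ref{d:stong-preim} are formulated in terms of supports.
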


\begin{proof} By Proposition~\ref{p:non-preim}, there is a surjective map $f:X\to Y$ between finite discrete spaces of cardinality $|Y|=|X|-1$ and a non-empty subset $Z\subset Y$ such that $(Ff)^{-1}(F(Z;Y))\not\subset F(f^{-1}(Z);X)$. In this case $Z\ne Y$ and $|Y|\ge 2$. So, we can find an element $a\in FX$ such that $Ff(a)\in F(Z;Y)$ such that $a\notin F(f^{-1}(Z),X)$. Put $A=\supp(a)$ and $B=f(\supp(a))$. Taking into account that $a\notin F(f^{-1}(Z),X)$ we can apply Theorem~\ref{t:supp} and conclude that the set $A=\supp(a)$ is not empty and is not contained in the set $f^{-1}(Z)$. Then $B\not\subset Z$ and hence $B\cap Z\ne B$.

 If the map $f_a=f|A:A\to B$ is injective, then  $\supp(Ff(a))=f(\supp(a))$ by Proposition~\ref{p:supp=image}. Then $\supp(Ff(a))\subset Z$ implies $\supp(a)\subset f^{-1}(Z)$ and $a\in F(\supp(a);X)\subset F(f^{-1}(Z);X)$, which contradicts the choice of $a$. Therefore, the map $f|A$ is not injective and hence $|B|=|A|-1$.
Since the functor $F$ is monomorphic, the set $F(A;X)$ can be identified with $FA$. Then $a\in FA$ is a required element such that $a\notin F_{|A|-1}(A)$ (by Theorem~\ref{t:supp}) but $Ff(a)\in F_{|B|-1}(B)$. To see that the latter inclusion holds, observe that $Ff(a)\in F(Z;Y)\cap F(B;Y)$ implies that $\supp(Ff(a))\subset Z\cap B$ and hence $|\supp(Ff(a))|\le|Z\cap B|<|B|$.
\end{proof}

For a positive integer $n=\{0,\dots,n-1\}\in\IN$ let $r^{n+1}_n:n+1\to n$ be the retraction such that $r^{n+1}_n|n=\id$ and $r_n^{n+1}(n)=n-1$. Since any finite discrete space $X$ is homeomorphic to the cardinal $|X|$ endowed with the discrete topology, Corollary~\ref{c:preim-fin} implies its own canonical version:

\begin{corollary}\label{c:F-preim} Let $F:\Top_i\to\Top$ be a monomorphic  functor with finite supports. If $F$ does not preserve preimages, then for some $n\in\IN$ there exists an element $a\in F(n+1)\setminus F_n(n+1)$ such that $\supp(Fr^{n+1}_n(a))\subset n-1$.
\end{corollary}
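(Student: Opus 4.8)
The plan is to deduce the statement from its non-canonical predecessor, Corollary~\ref{c:preim-fin}, by transporting the data it produces along relabelling homeomorphisms of finite discrete spaces onto the standard retractions $r^{n+1}_n$. First I would apply Corollary~\ref{c:preim-fin} to obtain a surjective map $f\colon A\to B$ between non-empty finite discrete spaces with $|B|=|A|-1$, together with an element $a\in F(A)\setminus F_{|A|-1}(A)$ such that $Ff(a)\in F_{|B|-1}(B)$. Since $|B|=|A|-1$ and $f$ is surjective, $f$ identifies exactly one pair of distinct points $a_1,a_2\in A$ to a single point $b_0=f(a_1)=f(a_2)\in B$ and is injective off this pair. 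Writing $n=|B|$, so that $|A|=n+1\ge 2$, the aim is to recognize $f$ as a copy of the canonical retraction $r^{n+1}_n$.

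Next I would choose bijections (hence homeomorphisms of finite discrete spaces, so morphisms of the category) $\alpha\colon A\to n+1$ and $\beta\colon B\to n$ conjugating $f$ into $r^{n+1}_n$, i.e.\ with $\beta\circ f=r^{n+1}_n\circ\alpha$. Such a pair exists and is essentially forced: the only pair identified by $r^{n+1}_n$ is $\{n-1,n\}$, so $\alpha$ must send the collapsed pair $\{a_1,a_2\}$ onto $\{n-1,n\}$ and, as a consequence, $\beta$ must send $b_0$ onto $n-1$; on the complements $\alpha$ may be any bijection onto $\{0,\dots,n-2\}$, after which $\beta$ is uniquely determined. Put $a'=F\alpha(a)\in F(n+1)$. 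Since $F$ has finite supports we have $a\in F_{<\w}(A)$ with $|\supp(a)|>n$, so $\supp(a)=A$ by Corollary~\ref{c:Fn}; as $\alpha$ is injective, Proposition~\ref{p:supp=image} gives $\supp(a')=\alpha(\supp a)=n+1$, whence $a'\in F(n+1)\setminus F_n(n+1)$. Applying $F$ to $\beta\circ f=r^{n+1}_n\circ\alpha$ yields $Fr^{n+1}_n(a')=F\beta\bigl(Ff(a)\bigr)$; and since $\beta$ is a homeomorphism, $F\beta$ carries the subfunctor $F_{n-1}(B)$ onto $F_{n-1}(n)$, so $Fr^{n+1}_n(a')\in F_{n-1}(n)$, i.e.\ $|\supp(Fr^{n+1}_n(a'))|\le n-1$.

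The main obstacle is the \emph{sharper} conclusion $\supp(Fr^{n+1}_n(a'))\subset n-1$, i.e.\ that this support omit the distinguished point $n-1=\beta(b_0)$. By the computation above and the injectivity of $\beta$, this is equivalent to the assertion $b_0\notin\supp(Ff(a))$: the support of the image must drop \emph{precisely at the collapse point}. This is exactly the information that Corollary~\ref{c:preim-fin} records only implicitly, and extracting it is the delicate step. Here I would reopen the proof of Corollary~\ref{c:preim-fin}, where the data arise from a failure of condition~(3) of Proposition~\ref{p:non-preim} and satisfy $\supp(Ff(a))\subset Z\cap B\subsetneq B$ for the witnessing set $Z$. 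Shrinking $Z$ to be minimal forces $Z=\supp(Ff(a))$, and the escaping point of $\supp(a)$ (whose $f$-image lies outside $Z$) then exhibits a point of $B$ omitted by $\supp(Ff(a))$. When this escaping point can be taken among $\{a_1,a_2\}$, its image is $b_0$, so $b_0\notin Z\supseteq\supp(Ff(a))$ and the relabelling of the second paragraph already gives $\supp(Fr^{n+1}_n(a'))\subset n-1$. The genuinely hard case is when the omitted point of $B$ has a single $f$-preimage, so it cannot itself be a collapse image: here one must re-select the collapsed pair (equivalently, choose a different one-point-collapse witnessing the same failure of preimage preservation) so that the dropped point becomes the image of an identified pair, and only then relabel onto $r^{n+1}_n$. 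Establishing that such a realignment is always available — i.e.\ that a functor failing to preserve preimages must exhibit a one-point collapse whose image is dropped — is the crux; once it is in hand, the bookkeeping of the first two paragraphs is routine.
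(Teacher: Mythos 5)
Your first two paragraphs are exactly the paper's own argument: the paper's entire ``proof'' of Corollary~\ref{c:F-preim} is the preceding remark that finite discrete spaces may be relabelled onto cardinals, i.e.\ precisely your conjugation of $f$ onto $r^{n+1}_n$. Your third paragraph correctly identifies why this is not enough: the relabelling transports the conclusion of Corollary~\ref{c:preim-fin} only into $Fr^{n+1}_n(a)\in F_{n-1}(n)$ (\emph{some} point of the image support is missing), whereas the canonical statement demands that the missing point be the collapse point $\beta(b_0)=n-1$. The step you call the crux --- that a failure of preimage preservation can always be re-witnessed by a one-point collapse whose image support omits the collapse point --- is left unproven in your proposal, so the proposal has a genuine gap; it is the same gap the paper silently skips.

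Moreover, this gap cannot be closed: the realignment claim, and with it Corollary~\ref{c:F-preim} as literally stated, is false. Let $FX=X^3/\sim$, where $(x,y,z)\sim(x',y',z')$ iff the triples are equal, or $y=z$, $y'=z'$ and $y=y'$; thus all triples $(t,y,y)$, $t\in X$, form a single class, denoted $[y]$, while each triple with $y\ne z$ is a class by itself. With the quotient topology this is a functor on $\Top$; it is monomorphic (an injective $f$ induces an injective $Ff$, by inspection of the two kinds of classes) and has finite supports, with $\supp\,[(x,y,z)]=\{x,y,z\}$ when $y\ne z$ and $\supp\,[y]=\{y\}$. It fails to preserve preimages: for $a=[(0,1,2)]\in F(3)$, $f=r^3_2$ and $Z=\{1\}$ we get $Ff(a)=[(0,1,1)]=[1]\in F(Z;2)$ (this class contains $(1,1,1)$), while $a\notin F(f^{-1}(Z);3)=F(\{1,2\};3)$, since the class of $(0,1,2)$ is a singleton with no representative in $\{1,2\}^3$. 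Yet the conclusion of Corollary~\ref{c:F-preim} fails for every $n\in\IN$: supports have size at most $3$, so $F(n+1)\setminus F_n(n+1)=\emptyset$ for $n\ge 3$; for $n=2$ the six full-support classes $[(x,y,z)]$ (permutations of $(0,1,2)$) map under $r^3_2$ either to $[1]$, with support $\{1\}\not\subset\{0\}=n-1$, or to a nondegenerate class with support $\{0,1\}$; for $n=1$ every image is $[0]$, with support $\{0\}\not\subset\emptyset$. In fact for this $F$ \emph{every} one-point collapse of \emph{every} full-support element keeps the collapse point in the support (if the collapsed pair is $\{y,z\}$ the image is the degenerate class of the collapse point; otherwise the image has full support), so no re-selection of the collapsed pair is ever available --- exactly the hard case you isolated. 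What survives the relabelling is only the weaker conclusion $Fr^{n+1}_n(a)\in F_{n-1}(n)$; the pointwise containment $\supp(Fr^{n+1}_n(a))\subset n-1$, which Proposition~\ref{p:F->structure}(2) genuinely needs, must instead be imposed as a definition (as Section~\ref{s:monad} effectively does when it ``recalls'' non-preservation of preimages in exactly this form) or verified directly for the concrete functor at hand, as the paper does for $\Lc$.
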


\begin{definition}\label{d:stong-preim} We shall say that a monomorphic functor $F:\T\to \Top$ \index{functor!strongly fails to preserve preimages}{\em strongly fails to preserve preimages} if for some $n\ge 2$ there is an element $a\in F(n+1)\setminus F_n(n+1)$ such that $\supp(Fr^{n+1}_n(a))\subset n-2$.
\end{definition}

Functors which strongly fail to preserve preimages will be used in Section~\ref{s:monad}.

Finally we shall discuss the relation of functors that preserve preimages to functors with continuous finite supports.

\begin{definition} Let $F:\Top_i\to \Top$ be a monomorphic functor with finite supports, defined on a full subcategory $\Top_i$ of $\Top$, containing all finite discrete spaces.
We say that the functor $F$
\begin{itemize}
\item is \index{functor!continuous}{\em continuous} if for any object $X$ of $\Top_i$, number $n\in\w$ and element
$a\in F(n)$ the map $a_X:X^n\to FX$, $a_X:f\mapsto Ff(a)$, is continuous;
\item has \index{functor!with continuous supports}{\em continuous supports} if for every object $X$ of $\Top_i$ the support map $\supp:FX\to[X]^{<\w}$ is continuous with respect to the Vietoris topology on $[X]^{<\w}$ (this happens if and only if for any open sets $U\subset X$ the sets $\{a\in FX:\supp(a)\subset U\}$ and $\{a\in FX:\supp(a)\cap U\ne\emptyset\}$ are open in $FX$).
\end{itemize}
\end{definition}

\begin{proposition} Let $F:\Top_i\to\Top$ be a monomorphic functor with finite supports defined on a full subcategory of $\Top$, containing all finite discrete spaces. If $F$ is continuous and has continuous supports, then $F$ preserves preimages.
\end{proposition}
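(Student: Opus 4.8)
The plan is to argue by contraposition, extract the canonical obstruction to preimage preservation furnished by Corollary~\ref{c:F-preim}, and then destroy it by sliding one ``support coordinate'' along a convergent sequence, playing the continuity of $F$ against the continuity of its support map. So I would assume that $F$ does \emph{not} preserve preimages. By Corollary~\ref{c:F-preim} there are $n\in\IN$ and an element $a\in F(n+1)\setminus F_n(n+1)$ with $\supp(Fr^{n+1}_n(a))\subset n-1$, where $r^{n+1}_n\colon n+1\to n$ is the retraction fixing $\{0,\dots,n-1\}$ and sending $n$ to $n-1$; by Corollary~\ref{c:Fn} the condition $a\notin F_n(n+1)$ forces $\supp(a)=n+1$.

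The test object will be the convergent sequence $X=\w+1=\{0,1,2,\dots\}\cup\{\infty\}$ (an object of $\Top_i$), whose points $0,1,2,\dots$ are isolated and converge to $\infty$. For $k\in\w$ I define $g_k\colon n+1\to X$ by $g_k(i)=i$ for $i\le n-2$, $g_k(n-1)=\infty$ and $g_k(n)=n-1+k$, and let $g_\infty$ be given by the same rules but with $g_\infty(n)=\infty$. Only the last coordinate varies and $n-1+k\to\infty$, so $g_k\to g_\infty$ in $X^{n+1}$; since $F$ is continuous, the evaluation map $a_X\colon g\mapsto Fg(a)$ is continuous, whence $Fg_k(a)\to Fg_\infty(a)$ in $FX$.

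I would then compute the two relevant supports. For every finite $k$ the map $g_k$ is injective, so (as $\supp(a)=n+1$) Proposition~\ref{p:supp=image} gives $\supp(Fg_k(a))=g_k(n+1)$, which contains $g_k(n-1)=\infty$. For the limit, note that $g_\infty=h\circ r^{n+1}_n$, where $h\colon n\to X$ sends $i\mapsto i$ for $i\le n-2$ and $n-1\mapsto\infty$; hence $Fg_\infty(a)=Fh(Fr^{n+1}_n(a))$. When $n\ge 2$ the map $h$ is injective with $|h(n)|>1$, so Proposition~\ref{p:supp=image} together with $\supp(Fr^{n+1}_n(a))\subset n-1$ yields $\supp(Fg_\infty(a))\subset\{0,\dots,n-2\}$; in the degenerate case $n=1$ one has $\supp(Fr^{2}_1(a))=\emptyset$, i.e. $Fr^2_1(a)\in F_0(1)$, and since $F_0$ is a subfunctor $Fg_\infty(a)\in F_0(X)$, so again $\supp(Fg_\infty(a))=\emptyset$. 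In all cases $\infty\notin\supp(Fg_\infty(a))$.

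Finally I would invoke that $F$ has continuous supports, in its open-set form: for the open set $U=X\setminus\{\infty\}$ the set $O=\{b\in FX:\supp(b)\subset U\}=\{b\in FX:\infty\notin\supp(b)\}$ is open in $FX$. Since $Fg_\infty(a)\in O$ and $Fg_k(a)\to Fg_\infty(a)$, the elements $Fg_k(a)$ lie in $O$ for all large $k$, i.e. $\infty\notin\supp(Fg_k(a))$ eventually, contradicting $\infty\in\supp(Fg_k(a))$. This contradiction shows that $F$ preserves preimages. The main obstacle is the correct bookkeeping in the limit-support computation: one must arrange the approximating maps $g_k$ to be injective (so their supports are honest images that still contain the collapse point $\infty$) while the limit map $g_\infty$ factors through the collapsing retraction $r^{n+1}_n$ (so its support omits $\infty$), and one must treat separately the empty-support case $n=1$, where Proposition~\ref{p:supp=image} does not apply and the conclusion instead rests on $F_0$ being a subfunctor. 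A minor point to record is that the argument uses the two continuity hypotheses only on the single non-discrete object $\w+1$, which must therefore be assumed to belong to $\Top_i$.
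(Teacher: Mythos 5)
Your proof is correct and follows essentially the same route as the paper's own: the paper also argues by contradiction from Corollary~\ref{c:F-preim}, slides the image of the last coordinate along a convergent sequence (there inside $\II$, via $f_m(n)=\frac{n-1}{n}+\frac1m\to\frac{n-1}{n}$, rather than inside $\w+1$), and obtains the contradiction by playing the continuity of the evaluation map $g\mapsto Fg(a)$ against the openness of $\{b\in FX:\supp(b)\subset U\}$ for an open set $U$ missing the collapse point, using Proposition~\ref{p:supp=image} for the two support computations. If anything, your write-up is slightly more careful than the paper's, which tacitly uses $\II\in\Top_i$ (you only need $\w+1\in\Top_i$) and does not separate out the degenerate case $n=1$, where Proposition~\ref{p:supp=image} cannot be applied to the collapsed constant map and one must instead invoke, as you do, that $F_0$ is a subfunctor.
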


\begin{proof} Assuming that $F$ does not preserve preimages, we can apply Corollary~\ref{c:F-preim} and find a number $l\in\IN$ and an element $a\in F(n+1)\setminus F_n(n+1)$ such that $\supp(Fr^{n+1}_n(a))\subset  n-1$. Consider the map $f_\infty:n+1\to [0,1]$ defined by $f_\infty(i)=\frac1{n}r^{n+1}_n(i)$ for $i\in n+1$. Next, for every $m\in\IN$ consider the map $f_m:n+1\to[0,1]$ such that $f_m|n=f_\infty|n$ and $f_m(n)=\frac{n-1}n+\frac1m$. It is clear that the sequence $(f_m)_{m=1}^\infty$ converges to the map $f_\infty$ in the space $X^{n+1}$. By the continuity of $F$, the sequence $(a_m)_{m\in\w}$ of points $a_m=Ff_m(a)$ converges to the point $a_\infty=Ff_\infty(a)=Ff_\infty\circ Fr^{n+1}_n(a)$. It follows from $\supp(Fr^{n+1}_n(a))\subset n-1$ that $\supp(a_\infty)\subset \{\frac{i}n\}_{i\in n-1}\subset [0,\frac{n-1}n)$. Assuming that the functor $F$ has continuous supports, we would conclude that the set $W=\{b\in F\II:\supp(b)\subset [0,\frac{n-1}n)\}$ is an open neighborhood of $a_\infty$ in $FX$.
So, $W$ contains some points $a_m$, which is not possible as  $\supp(a_m)=\{\frac{i}{n}\}_{i\in n}\cup\{\frac{n-1}n+\frac1m\}\not\subset [0,\frac{n-1}n)$.
\end{proof}

    \section{Strong $\Cld$-fans in functor-spaces}\label{s:monads}

In this section we give conditions on a functor $F:\Top_i\to\Top$ guaranteeing that each Ascoli space $X\in\Top_i$ with Ascoli functor-space $FX$ is discrete.


In this subsection we assume that $\Top_i\subset\Top_{2\frac12}$ is a full hereditary subcategory of the category of functionally Hausdorff spaces and $\Top_i$ contains all metrizable compacta. We also assume that $F:\Top_i\to\Top$ is a functor, preserving closed embeddings of metrizable compacta.

\begin{lemma}\label{l:cfFI->cfFX} Let $\lambda$ be an infinite cardinal, $S$ be a convergent sequence in a space $X\in\Top_i$ and $f:S\to\II$ be an embedding such that for every strongly (strictly) compact-finite family $(A_\alpha)_{\alpha\in\lambda}$ of subsets of the space $F(S)$ the family $(Ff(A_\alpha))_{\alpha\in\lambda}$ is strongly (strictly) compact-finite in $F(\II)$. Then every strongly (strictly) compact-finite family of subsets $(B_n)_{n\in\w}$ in the space $F(S;X)\subset F(X)$ remains strongly (strictly) compact-finite in $FX$.
\end{lemma}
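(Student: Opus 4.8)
The plan is to move the family $(B_n)$ back and forth between $F(S;X)$, $FS$, $F\II$ and $FX$ along functorially induced maps, using two elementary facts about a continuous map $g$: it pulls a compact-finite family back to a compact-finite family (a compact set has compact image, which meets only finitely many members), and it sends an $\IR$-open (resp. functional) neighborhood $W$ of a set to the $\IR$-open (resp. functional) neighborhood $g^{-1}(W)$ of its preimage. I will treat the ``strong'' case explicitly; the ``strict'' case is identical after replacing every occurrence of ``$\IR$-open neighborhood'' by ``functional neighborhood'', since preimages of functional neighborhoods under continuous maps are again functional neighborhoods.

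First I would produce a continuous extension $\bar f\colon X\to\II$ of $f$. Since $S$ is a convergent sequence, its closure $\bar S$ is compact and metrizable, so by Lemma~\ref{l:fH-ext} the map $f$ (viewed as an $\IR$-valued map) extends to a continuous $\tilde f\colon X\to\IR$; composing with the retraction $\rho\colon\IR\to\II$, $\rho(t)=\min\{1,\max\{0,t\}\}$, gives $\bar f=\rho\circ\tilde f\colon X\to\II$ with $\bar f|S=f$. Applying the functor $F$ to the equality $\bar f\circ i_{S,X}=f$ then yields the crucial identity $F\bar f\circ Fi_{S,X}=Ff$ as maps $FS\to F\II$.

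The core of the argument is a threefold transfer. Starting from a strongly compact-finite family $(B_n)_{n\in\w}$ in $F(S;X)$, I choose $\IR$-open (in $F(S;X)$) neighborhoods $U_n\supseteq B_n$ with $(U_n)_{n\in\w}$ compact-finite in $F(S;X)$, and set $A_n=(Fi_{S,X})^{-1}(B_n)$ and $\tilde U_n=(Fi_{S,X})^{-1}(U_n)$ in $FS$. Because $Fi_{S,X}\colon FS\to F(S;X)\subseteq FX$ is continuous, each $\tilde U_n$ is $\IR$-open, contains $A_n$, and $(\tilde U_n)_{n\in\w}$ is compact-finite in $FS$; hence $(A_n)_{n\in\w}$ is strongly compact-finite in $FS$. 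Padding this family by empty sets to a $\lambda$-indexed family (which is legitimate since $\lambda\ge\w$) and invoking the hypothesis, I obtain that $(Ff(A_n))_{n\in\w}$ is strongly compact-finite in $F\II$, witnessed by $\IR$-open neighborhoods $W_n\supseteq Ff(A_n)$ with $(W_n)$ compact-finite. Finally, the identity $F\bar f\circ Fi_{S,X}=Ff$ gives $F\bar f(B_n)=Ff(A_n)\subseteq W_n$, so the preimages $V_n=(F\bar f)^{-1}(W_n)$ are $\IR$-open neighborhoods of $B_n$ in $FX$ whose family is compact-finite in $FX$ (again by pulling compact sets back through the continuous map $F\bar f$). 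This exhibits $(B_n)_{n\in\w}$ as strongly compact-finite in $FX$, as required.

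I expect the only genuinely delicate point to be the extension step: it is what forces the functionally Hausdorff hypothesis (through Lemma~\ref{l:fH-ext}) and the compactness of $\bar S$, and it is where one must be slightly careful that $f$ is defined on a compact subspace (i.e. that the convergent sequence $S$ is taken together with its limit, as in the model space $\w+1$). The three transfers themselves are routine once the single identity $F\bar f\circ Fi_{S,X}=Ff$ is established, since they rely only on the continuity of $Fi_{S,X}$ and $F\bar f$ together with the two elementary facts about continuous maps recorded at the outset; in particular the preservation of closed embeddings of metrizable compacta is used only to guarantee the good behaviour of $F(S;X)$ and the availability of the extension.
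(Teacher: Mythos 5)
Your proof is correct and follows essentially the same route as the paper's: extend $f$ to a continuous $\bar f\colon X\to\II$ via Lemma~\ref{l:fH-ext}, apply $F$ to obtain the key identity $F\bar f\circ Fi_{S,X}=Ff$, pull the family back to $FS$, push it into $F(\II)$ by the hypothesis, and pull the witnessing $\IR$-open (functional) neighborhoods back to $FX$ along $F\bar f$. The only differences are harmless refinements on your side: the paper justifies the first transfer by citing Proposition~\ref{p:FembTych} (so that $Fi_{S,X}$ is a closed embedding), whereas you observe that continuity of $Fi_{S,X}$ alone suffices for the pullback, and your care about the codomain of the extension and about padding the $\w$-indexed family by empty sets to a $\lambda$-indexed one fills small details that the paper's own proof glosses over.
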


\begin{proof} Taking into account that the space $X$ is functionally Hausdorff, we can apply Lemma~\ref{l:fH-ext} and extend the function $f:S\to\II$ to a continuous function $\bar f:X\to \II$. Applying the functor $F$ to the equality $\bar f\circ i_{S,X}=f$, we obtain the equality $F\bar f\circ Fi_{S,X}=Ff$. By Proposition~\ref{p:FembTych}, the continuous maps $Ff$ and $Fi_{S,X}$ are closed topological embeddings.

Now take any strongly (strictly) compact-finite family $(B_\alpha)_{\alpha\in\lambda}$ of subsets of the space $F(S;X)$. Since $Fi_{S,X}:FS\to F(S;X)$ is a topological embedding, the family $(A_\alpha)_{\alpha\in\lambda}$ of the sets $A_\alpha=(Fi_{S,X})^{-1}(B_\alpha)$ is strongly (strictly) compact-finite in the space $FS$. By our assumption, the family $(Ff(A_\alpha))_{\alpha\in\lambda}=(F\bar f(B_\alpha))_{\alpha\in\lambda}$ is strongly (strictly) compact-finite in $F(\II)$.  Consequently, each set $F\bar f(B_\alpha)$, $\alpha\in\lambda$, has an $\IR$-open (functional) neighborhood $U_\alpha\subset F(\II)$ such that the family $(U_\alpha)_{\alpha\in\lambda}$ is compact-finite in $F(\II)$. It is easy to see that for every $\alpha\in\lambda$ the set $W_\alpha=(F\bar f)^{-1}(U_\alpha)$ is an $\IR$-open (functional)  neighborhood of the set $B_\alpha$ in $FX$ and the family $(W_\alpha)_{\alpha\in\lambda}$ is compact finite in $FX$, witnessing that the family $(B_\alpha)_{\alpha\in\lambda}$ is strongly (strictly) compact-finite in $FX$.
\end{proof}

Combining Lemma~\ref{l:cfFI->cfFX} with Corollary~\ref{c:s-sfan1}, we obtain:

\begin{corollary}\label{c:remain1} Let $S$ be a convergent sequence in a space $X\in \Top_i$. If the functor-space $F(\II)$ is stratifiable, then every strongly compact-finite family of (closed) subsets in the space $F(S;X)\subset F(X)$ is  (strictly) strongly compact-finite in $FX$.
\end{corollary}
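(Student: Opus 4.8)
The corollary is meant to be a direct combination of Lemma~\ref{l:cfFI->cfFX} (with $\lambda=\w$) and the transfer principle in Corollary~\ref{c:s-sfan1}. The plan is to verify the hypothesis of Lemma~\ref{l:cfFI->cfFX} under the stratifiability assumption on $F(\II)$, and then apply the lemma to pass from $F(\II)$ to $FX$.

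First I would fix an embedding $f:S\to\II$ of the convergent sequence $S$ into the unit interval; such an embedding exists because $S$, being a convergent sequence, is a compact metrizable space and $\II$ is a universal space for zero-dimensional (indeed all) separable metrizable spaces of the relevant type — more simply, any injective continuous map of the countable compact set $S$ into $\II$ works, and these exist since $S$ is metrizable. The image $f(S)$ is then a closed (compact) subspace of $\II$. Since the functor $F$ preserves closed embeddings of metrizable compacta, the map $Ff:F(S)\to F(\II)$ is a closed topological embedding, and $F(f(S);\II)$ is a closed subspace of $F(\II)$ that is homeomorphic to $F(S)$ via $Ff$.

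The heart of the argument is the observation that any strongly (strictly) compact-finite family $(A_\alpha)_{\alpha\in\w}$ in $F(S)$ transfers through $Ff$ to a strongly (strictly) compact-finite family in $F(\II)$. Here is where stratifiability enters: because $F(\II)$ is stratifiable and $F(f(S);\II)$ is a closed subspace of it, Corollary~\ref{c:s-sfan1}(1) applies to the inclusion of the closed subspace $F(f(S);\II)$ into $F(\II)$, guaranteeing that every strongly (strictly) compact-finite family of (closed) subsets of $F(f(S);\II)$ remains strongly (strictly) compact-finite in the ambient space $F(\II)$. Transporting the family $(A_\alpha)_{\alpha\in\w}$ along the homeomorphism $Ff:F(S)\to F(f(S);\II)$ and then applying this transfer principle verifies precisely the hypothesis of Lemma~\ref{l:cfFI->cfFX} for the embedding $f$ and the cardinal $\lambda=\w$.

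With the hypothesis of Lemma~\ref{l:cfFI->cfFX} established, the conclusion of that lemma gives that every strongly (strictly) compact-finite family $(B_n)_{n\in\w}$ of (closed) subsets of $F(S;X)\subset F(X)$ remains strongly (strictly) compact-finite in $FX$, which is exactly the statement of the corollary. The only point that requires a little care — and the step I expect to be the main obstacle — is confirming that Corollary~\ref{c:s-sfan1}(1) is genuinely applicable, i.e.\ that $F(f(S);\II)$ is a \emph{closed} subspace of the stratifiable space $F(\II)$; this is supplied by the hypothesis that $F$ preserves closed embeddings of metrizable compacta, since then $Ff$ is a closed embedding and so $F(f(S);\II)=Ff(F(S))$ is closed in $F(\II)$. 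The parenthetical ``strictly'' versions follow verbatim by tracking the functional-neighborhood refinements through the same two results, noting that stratifiable spaces are perfectly paracompact so that all $\IR$-open sets involved are functionally open.
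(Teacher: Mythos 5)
Your proposal is correct and is exactly the paper's argument: the paper derives Corollary~\ref{c:remain1} precisely by noting that $Ff(F(S))=F(f(S);\II)$ is a closed subspace of the stratifiable space $F(\II)$ (using the standing assumption that $F$ preserves closed embeddings of metrizable compacta), invoking Corollary~\ref{c:s-sfan1}(1) to verify the hypothesis of Lemma~\ref{l:cfFI->cfFX}, and then applying that lemma. The only cosmetic sharpening worth making is in your last sentence: the ``closed $\Rightarrow$ strictly'' upgrade rests on normality of the stratifiable space $F(S;X)\cong F(f(S);\II)$ (every neighborhood of a closed set in a normal space is a functional neighborhood), rather than on $\IR$-open sets being functionally open.
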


\begin{corollary}\label{c:remain2} Let $S$ be a convergent sequence in a space $X\in\Top_i$. If the functor-space $F(\II)$ is a (normal) $\aleph$-space, then every  compact-finite countable family of (closed) subsets in the space $F(S;X)\subset F(X)$ is (strictly)  strongly compact-finite in $FX$.
\end{corollary}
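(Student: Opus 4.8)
The plan is to derive Corollary~\ref{c:remain2} as a direct combination of Lemma~\ref{l:cfFI->cfFX} with Corollary~\ref{c:s-sfan1}, exactly in the spirit of the preceding Corollary~\ref{c:remain1}. The role of Lemma~\ref{l:cfFI->cfFX} is to reduce a compact-finiteness statement about families in $F(S;X)\subset FX$ to the corresponding statement about families in the test-space $F(\II)$, via the embedding $f:S\to\II$. The role of Corollary~\ref{c:s-sfan1} is to supply, in the $\aleph$-space setting, the promotion of \emph{mere} countable compact-finiteness to \emph{strong} (or strict) compact-finiteness inside $F(\II)$.

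First I would fix an embedding $f:S\to\II$ of the convergent sequence $S$ as a closed subspace of $\II$ (a convergent sequence together with its limit is a compact metrizable space, hence embeds as a closed subset of $\II$). To invoke Lemma~\ref{l:cfFI->cfFX} I must verify its hypothesis for this $f$: that whenever $(A_\alpha)_{\alpha\in\lambda}$ is a strongly (strictly) compact-finite family of subsets of $F(S)$, the pushforward family $\big(Ff(A_\alpha)\big)_{\alpha\in\lambda}$ is strongly (strictly) compact-finite in $F(\II)$. Since $f$ is a closed embedding of metrizable compacta and $F$ preserves such embeddings, the map $Ff:F(S)\to F(\II)$ is a closed topological embedding onto the closed subspace $F(S;\II)$ of $F(\II)$. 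Thus the $A_\alpha$ are carried homeomorphically onto subsets of the closed subspace $F(S;\II)$, and the question becomes whether a strongly (strictly) compact-finite family in the closed subspace $F(S;\II)$ of $F(\II)$ remains so in the ambient space $F(\II)$. This is precisely where Corollary~\ref{c:s-sfan1} enters: $F(\II)$ is an $\aleph$-space (normal, if we want the strict conclusion), $F(S;\II)$ is a closed subspace which is itself an $\aleph$-space (being a closed subspace of a stratifiable-enough space; in fact $\aleph$-spaces are hereditary with respect to closed subsets), so statement~(2) of Corollary~\ref{c:s-sfan1} guarantees that every countable compact-finite family of (closed) subsets in $F(S;\II)$ is (strictly) strongly compact-finite in $F(\II)$.

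With the hypothesis of Lemma~\ref{l:cfFI->cfFX} thus verified, the conclusion is immediate: starting from a countable compact-finite family $(B_n)_{n\in\w}$ of (closed) subsets of $F(S;X)$, I first note that transporting along the embedding $Fi_{S,X}:F(S)\to F(S;X)$ yields a countable compact-finite family in $F(S)$, which by the argument of the previous paragraph becomes (strictly) strongly compact-finite, and Lemma~\ref{l:cfFI->cfFX} then returns a (strictly) strongly compact-finite family in $FX$. Concretely the write-up reads: ``The space $F(\II)$, being an $\aleph$-space, satisfies the hypothesis of Corollary~\ref{c:s-sfan1}(2) applied to the closed subspace $F(S;\II)$; hence every countable compact-finite family of (closed) subsets in $F(S;\II)\cong F(S)$ is (strictly) strongly compact-finite in $F(\II)$. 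Applying Lemma~\ref{l:cfFI->cfFX} we conclude that every countable compact-finite family of (closed) subsets in $F(S;X)$ is (strictly) strongly compact-finite in $FX$.''

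The only genuine obstacle I anticipate is the bookkeeping around normality and closedness needed for the \emph{strict} (parenthetical) half of the statement. Corollary~\ref{c:s-sfan1}(2) as stated concerns $\aleph$-spaces inside stratifiable spaces, and the strict conclusion requires that open neighborhoods be upgradeable to functional ones; the parenthetical ``normal'' hypothesis on $F(\II)$ is exactly what makes open neighborhoods of closed sets functional (as used in Corollary~\ref{c:ct-reduct}). I would therefore be careful to track that the family $(B_n)_{n\in\w}$ consists of \emph{closed} subsets precisely when I want the strict conclusion, and that the normality of $F(\II)$ is invoked only in that case. Everything else—the closed-embedding properties of $Ff$ and $Fi_{S,X}$, and the extension of $f$ over $X$ via functional Hausdorffness (Lemma~\ref{l:fH-ext})—is already packaged inside Lemma~\ref{l:cfFI->cfFX} and Proposition~\ref{p:FembTych}, so no fresh computation is needed.
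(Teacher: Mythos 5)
Your skeleton — embed $S$ into $\II$, transfer everything along the closed embeddings $Fi_{S,X}$ and $F\bar f$, upgrade compact-finiteness in the test space $F(\II)$, and pull neighborhoods back to $FX$ — is the right one, but the key citation is wrong, and this is a genuine gap. Both parts of Corollary~\ref{c:s-sfan1} require the \emph{ambient} space to be stratifiable: they rest on Borges' extension theorem and $C_k$-embedding of closed subspaces (Propositions~\ref{p:Borges}--\ref{p:Ck-embed2}), which is machinery for extending neighborhoods from a closed subspace to the whole space. In Corollary~\ref{c:remain2} the ambient space $F(\II)$ is only assumed to be a (normal) $\aleph$-space; stratifiability is not available, and this is precisely why Corollary~\ref{c:remain2} is stated separately from Corollary~\ref{c:remain1}. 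The tool that actually does the work here is Proposition~\ref{p:aleph-strong-fan}: every countable compact-finite family of subsets of a Tychonoff $\aleph$-space is strongly compact-finite. Crucially, you apply it \emph{in $F(\II)$ itself}, not inside the subspace: since $Ff$ and $Fi_{S,X}$ are closed embeddings (Proposition~\ref{p:FembTych}), the sets $F\bar f(B_n)=Ff\bigl((Fi_{S,X})^{-1}(B_n)\bigr)$ lie in the closed subspace $F(S;\II)$, so the countable family $\bigl(F\bar f(B_n)\bigr)_{n\in\w}$ is compact-finite in $F(\II)$ (every compact subset of $F(\II)$ meets $F(S;\II)$ in a compact set), and hence strongly compact-finite in $F(\II)$. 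No extension of neighborhoods from the subspace is needed — which is exactly what makes the $\aleph$-space hypothesis sufficient. For the parenthetical version, closedness of the $B_n$ makes the $F\bar f(B_n)$ closed in $F(\II)$, and normality of $F(\II)$ upgrades the witnessing open neighborhoods of these closed sets to functional ones, giving strict compact-finiteness.

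A second, smaller defect: even after repairing the citation, you cannot use Lemma~\ref{l:cfFI->cfFX} as a black box. Its hypothesis quantifies over \emph{all} strongly (strictly) compact-finite families of \emph{arbitrary} subsets of $F(S)$, and in the strict case for non-closed sets this push-forward preservation is not delivered by Proposition~\ref{p:aleph-strong-fan} plus normality (the closures of the pushed sets need not stay inside the witnessing neighborhoods, so one cannot manufacture functional neighborhoods). Moreover your phrase ``becomes (strictly) strongly compact-finite'' leaves ambiguous \emph{where} this happens: what the argument produces is strong compact-finiteness of the push-forward family in $F(\II)$, whereas the lemma's conclusion takes as input strong compact-finiteness of $(B_n)$ in $F(S;X)$ together with the universal hypothesis on $f$. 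The correct repair is to re-run the second half of the proof of Lemma~\ref{l:cfFI->cfFX} for the one family at hand: having found $\IR$-open (functional) neighborhoods $U_n\supset F\bar f(B_n)$ with $(U_n)_{n\in\w}$ compact-finite in $F(\II)$, the preimages $W_n=(F\bar f)^{-1}(U_n)$ are $\IR$-open (functional) neighborhoods of the $B_n$ in $FX$, and $(W_n)_{n\in\w}$ is compact-finite in $FX$ because $F\bar f$ maps compact sets to compact sets. With these two repairs your argument coincides with the paper's.
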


\begin{corollary} Let $S$ be a convergent sequence in a space $X\in\Top_i$. If the functor-space $F(\II)$ is stratifiable and $FS$ is an $\aleph$-space, then every  compact-finite countable family of (closed) subsets in the space $F(S;X)\subset F(X)$ is (strictly)  strongly compact-finite in $FX$.
\end{corollary}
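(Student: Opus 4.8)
The plan is to reduce the statement to the already established Corollary~\ref{c:remain1} by first upgrading an arbitrary countable compact-finite family in $F(S;X)$ to a strongly compact-finite one, the upgrade being powered by the extra hypothesis that $FS$ is an $\aleph$-space. Concretely, I would fix an embedding $f\colon S\to\II$, which exists because the convergent sequence $S$ is (compact) metrizable and hence homeomorphic to a subset of $\II$, such as $\{0\}\cup\{1/n:n\in\IN\}$. Since $F(\II)$ is stratifiable it is in particular Hausdorff, and since $F$ preserves closed embeddings of metrizable compacta, Proposition~\ref{p:FembTych} shows that both $Fi_{S,X}\colon FS\to FX$ and $Ff\colon FS\to F(\II)$ are closed topological embeddings. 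Thus $Fi_{S,X}$ is a homeomorphism of $FS$ onto the closed subspace $F(S;X)$ of $FX$, while $Ff$ realizes $FS$ as a closed subspace of the stratifiable, hence normal, space $F(\II)$; consequently $FS$ is itself normal and in particular Tychonoff.

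Next I would transport a given countable compact-finite family $(B_n)_{n\in\w}$ of (closed) subsets of $F(S;X)$ back to $FS$ along the homeomorphism $Fi_{S,X}$, obtaining the countable compact-finite family $(A_n)_{n\in\w}$, $A_n=(Fi_{S,X})^{-1}(B_n)$, of (closed) subsets of $FS$. Here the decisive input is that $FS$ is a \emph{Tychonoff} $\aleph$-space: Proposition~\ref{p:aleph-strong-fan} then guarantees that $(A_n)_{n\in\w}$ is strongly compact-finite in $FS$. Pushing this family forward through $Fi_{S,X}$ once more shows that $(B_n)_{n\in\w}$ is a strongly compact-finite family of (closed) subsets of $F(S;X)$.

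Finally, since $F(\II)$ is stratifiable, Corollary~\ref{c:remain1} applies verbatim to the now strongly compact-finite family $(B_n)_{n\in\w}$ and yields that it is (strictly) strongly compact-finite in $FX$, the parenthetical \emph{strictly} corresponding exactly to the case in which the sets $B_n$ are closed. In this way the statement is obtained simply by combining Proposition~\ref{p:aleph-strong-fan} with Corollary~\ref{c:remain1}, mirroring the pattern by which Corollary~\ref{c:remain1} itself was deduced from Lemma~\ref{l:cfFI->cfFX} and Corollary~\ref{c:s-sfan1}.

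The only genuinely delicate point I anticipate is verifying that $FS$ meets the Tychonoff hypothesis of Proposition~\ref{p:aleph-strong-fan}: an $\aleph$-space is by definition only regular and $T_1$, so some care is needed to promote this to Tychonoff. This is precisely what the closed embedding $Ff\colon FS\to F(\II)$ into the normal space $F(\II)$ supplies, so no separation assumption on $F$ or on $X$ beyond what is already standing in this subsection is required. Everything else is a routine transfer of the compact-finiteness properties across the homeomorphism $Fi_{S,X}$, so I do not expect any further obstacle.
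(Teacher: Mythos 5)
Your proof is correct and is essentially the derivation the paper intends: the corollary is stated without proof precisely because it follows by upgrading the countable compact-finite family to a strongly compact-finite one via Proposition~\ref{p:aleph-strong-fan} (applied to $FS$, which is Tychonoff as a closed subspace of the stratifiable space $F(\II)$ via $Ff$) and then invoking Corollary~\ref{c:remain1}. Your identification of the Tychonoff-promotion step as the only delicate point, and your resolution of it through the closed embedding $Ff\colon FS\to F(\II)$, matches the paper's standing assumptions and conventions exactly.
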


We recall that $\w+1=\w\cup\{\w\}$ stands for the standard convergent sequence.

\begin{theorem}\label{t:F-noseq} Assume that the space $F(\w+1)$ contains a (strong) $\Fin^\w$-fan and the space $F(\II)$ is stratifiable or an $\aleph$-space. If a Tychonoff space $X\in\Top_i$ contains a convergent sequence, then the functor-space $FX$ contains a (strong) $\Fin^\w$-fans.
\end{theorem}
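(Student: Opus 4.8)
The plan is to exploit that a convergent sequence in $X$ carries a closed copy of $\w+1$, transport it by $F$ to a closed copy of $F(\w+1)$ inside $FX$, and then carry the given fan across this embedding, invoking the hypotheses on $F(\II)$ only to upgrade the transported fan from a plain fan to a strong one.

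First I would fix a convergent sequence $S\subset X$ and, replacing $S$ by its closure, assume $S$ is compact; then $S$, being a countable compact Hausdorff space with a single non-isolated point, is homeomorphic to the standard convergent sequence $\w+1$. Fix a homeomorphism $h\colon\w+1\to S$. Since $F$ is a functor, $Fh\colon F(\w+1)\to FS$ is a homeomorphism, so the hypothesis supplies a (strong) $\Fin^\w$-fan $(\Phi_n)_{n\in\w}$ in $F(\w+1)$ whose image $(Fh(\Phi_n))_{n\in\w}$ is a (strong) $\Fin^\w$-fan in $FS$. Because $S$ is a compact metrizable subset of the functionally Hausdorff space $X$ and $F$ preserves closed embeddings of metrizable compacta, Proposition~\ref{p:FembTych} guarantees that $Fi_{S,X}\colon FS\to FX$ is a closed topological embedding; write $F(S;X)=Fi_{S,X}(FS)$ for its closed image. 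Transporting the fan once more, I obtain a family $\Psi_n:=Fi_{S,X}(Fh(\Phi_n))$, $n\in\w$, of finite subsets of $F(S;X)$ that is compact-finite and not locally finite in the subspace $F(S;X)$.

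Next I would check that these two properties survive the passage to $FX$. Failure of local finiteness at a point $b\in F(S;X)$ is automatic: any $FX$-neighborhood of $b$ meets $F(S;X)$ in a neighborhood of $b$ that already meets infinitely many $\Psi_n$, and each $\Psi_n$ lies in $F(S;X)$. Compact-finiteness passes up precisely because $F(S;X)$ is closed in $FX$: for a compact $C\subset FX$ the intersection $C\cap F(S;X)$ is compact, hence meets only finitely many $\Psi_n$, and these are all that $C$ itself can meet. Thus $(\Psi_n)_{n\in\w}$ is already a $\Fin^\w$-fan in $FX$, which settles the non-strong assertion (the hypotheses on $F(\II)$ are not even needed here).

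It remains to produce a strong neighborhood assignment in the strong case, and this is the only delicate point: by Example~\ref{Cld-not-hereditary} a strong fan sitting in a closed subspace need not stay strong in the ambient space, so the strong compact-finiteness of $(\Psi_n)$ in $FX$ cannot simply be read off from that of $(Fh(\Phi_n))$ in $FS$. This is exactly where the assumption on $F(\II)$ enters. Since $(Fh(\Phi_n))$ is strongly compact-finite in $FS$ and $Fi_{S,X}$ is a homeomorphism onto $F(S;X)$, the family $(\Psi_n)$ is strongly compact-finite in $F(S;X)$. If $F(\II)$ is stratifiable, Corollary~\ref{c:remain1} applies and yields that $(\Psi_n)$ is strongly compact-finite in $FX$; if instead $F(\II)$ is an $\aleph$-space, I invoke Corollary~\ref{c:remain2}, whose hypothesis is met because $(\Psi_n)_{n\in\w}$ is a countable compact-finite family, to reach the same conclusion. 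Combining this with the non-local-finiteness established above, $(\Psi_n)_{n\in\w}$ is a strong $\Fin^\w$-fan in $FX$, completing the argument.
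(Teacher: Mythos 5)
Your proof is correct and follows essentially the same route as the paper's: identify $\bar S\cong\w+1$, transport the (strong) $\Fin^\w$-fan into the closed subspace $F(S;X)\subset FX$ via the closed embedding of Proposition~\ref{p:FembTych}, and invoke Corollaries~\ref{c:remain1} and \ref{c:remain2} to keep the family strongly compact-finite in $FX$. Your additional observation that the non-strong case needs no hypothesis on $F(\II)$ (a plain fan in a closed subspace automatically remains a fan in the ambient space) is an accurate small sharpening that the paper glosses over by citing the corollaries for both cases.
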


\begin{proof} Assume that $S$ is a compact convergent sequence in $X$. By our assumption, the space $F(S)$ contains a (strong) $\Fin^\w$-fan $(D_n)_{n\in\w}$. By Proposition~\ref{p:FembTych}, the map $Fi_{S,X}:F(S)\to F(X)$ is a closed topological embedding. So, we can identify the space $F(S)$ with its image $F(S;X)$.
By Corollaries~\ref{c:remain1} and \ref{c:remain2} the (strongly) compact-finite family $(D_n)_{n\in\w}$ in $F(S;X)$ remains (strongly) compact-finite in $FX$.
So, it is a (strong) $\Fin^\w$-fan in $FX$, which contradicts our assumption.
\end{proof}

Theorems~\ref{t:F-noseq} and \ref{t:disc-char} imply:

\begin{corollary}\label{c:F-disc} Assume that the space $F(\w+1)$ contains a strong $\Fin^\w$-fan and the space $F(\II)$ is stratifiable or an $\aleph$-space. A Tychonoff space $X\in\Top_i$ is discrete if the following conditions are satisfied:
\begin{enumerate}
\item[\textup{1)}] the space $FX$ contains no strong $\Fin^\w$-fans,
\item[\textup{2)}] each infinite compact set in $X$ contains a convergent sequence;
\item[\textup{3)}] the space $X$ contains no $\Clop$-fan;
\item[\textup{4)}] $X$ is zero-dimensional or contain no strict $\Cld^\w$-fan.
\end{enumerate}
\end{corollary}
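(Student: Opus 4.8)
The plan is to deduce discreteness of $X$ purely by assembling the two cited results: the functor-space hypotheses will be used only to rule out convergent sequences in $X$, and the fan hypotheses on $X$ itself will then feed into the characterization of discrete spaces in Theorem~\ref{t:disc-char}.

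First I would show that $X$ is $k$-discrete. The key move is to apply Theorem~\ref{t:F-noseq} in its strong form (reading the parenthetical ``(strong)'' throughout): the hypotheses of the present corollary include that $F(\w+1)$ contains a strong $\Fin^\w$-fan and that $F(\II)$ is stratifiable or an $\aleph$-space, so that theorem is applicable. Contrapositively, since condition~(1) asserts that $FX$ contains no strong $\Fin^\w$-fan, the space $X$ can contain no convergent sequence. Combining this with condition~(2) — each infinite compact subset of $X$ contains a convergent sequence — forces every compact subset of $X$ to be finite; that is, $X$ is $k$-discrete.

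With $k$-discreteness in hand, I would invoke Theorem~\ref{t:disc-char} and split on condition~(4). If $X$ is zero-dimensional, then $X$ is zero-dimensional, $k$-discrete, and (by condition~(3)) contains no $\Clop$-fan, so $X$ is discrete by the implication $(3)\Ra(1)$ of Theorem~\ref{t:disc-char}. If instead $X$ contains no strict $\Cld^\w$-fan, then $X$ is $k$-discrete, contains no strict $\Cld^\w$-fan, and contains no $\Clop$-fan (again by condition~(3)), so $X$ is discrete by the implication $(4)\Ra(1)$ of Theorem~\ref{t:disc-char}. In either case $X$ is discrete, as required.

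Since the argument is essentially a bookkeeping combination of Theorems~\ref{t:F-noseq} and \ref{t:disc-char}, there is no serious analytic obstacle. The one point requiring care is the consistent reading of the parenthetical ``(strong)'' in Theorem~\ref{t:F-noseq}: one must use the strong version, because what is \emph{available} is the absence of strong $\Fin^\w$-fans in $FX$ (condition~(1)) and what is \emph{hypothesized} is a strong $\Fin^\w$-fan in $F(\w+1)$. A secondary check is that both branches of condition~(4) match the respective clauses of Theorem~\ref{t:disc-char} — in particular, that condition~(3)'s absence of $\Clop$-fans supplies the ``no $\Clop$-fan'' requirement needed in each of clauses $(3)$ and $(4)$ of that theorem.
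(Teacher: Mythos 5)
Your proof is correct and is precisely the paper's intended argument: the paper offers no separate proof, stating only that Theorems~\ref{t:F-noseq} and \ref{t:disc-char} imply the corollary, and your assembly — using the strong reading of Theorem~\ref{t:F-noseq} contrapositively with conditions (1) and (2) to get $k$-discreteness, then splitting on condition (4) to invoke the implications $(3)\Ra(1)$ and $(4)\Ra(1)$ of Theorem~\ref{t:disc-char} — is exactly that. The points of care you flag (the consistent ``strong'' reading and matching condition (3) to the ``no $\Clop$-fan'' clauses) are the right ones.
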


Since Ascoli spaces contain not strict $\Cld$-fans and no strong $\Fin$-fans, Corollary~\ref{c:F-disc} implies the following corollary.

\begin{corollary} Assume that the space $F(\w+1)$ contains a strong $\Fin^\w$-fan and the space $F(\II)$ is stratifiable or an $\aleph$-space. A Tychonoff space $X\in\Top_i$ is discrete if each infinite compact set in $X$ contains a convergent sequence and the spaces $X$ and $FX$ are Ascoli.
\end{corollary}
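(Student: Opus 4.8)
The plan is to reduce the statement directly to Corollary~\ref{c:F-disc}, whose hypotheses on the functor $F$ (namely that $F(\w+1)$ contains a strong $\Fin^\w$-fan and that $F(\II)$ is stratifiable or an $\aleph$-space) coincide verbatim with ours. It therefore suffices to check the four conditions (1)--(4) of Corollary~\ref{c:F-disc} under the present assumptions that $X$ and $FX$ are Ascoli and that every infinite compact subset of $X$ contains a convergent sequence; once this is done, Corollary~\ref{c:F-disc} yields that $X$ is discrete.

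First I would record the two facts recalled immediately before the statement. By Corollary~\ref{c:A->noFan} every Ascoli space contains no strict $\Cld$-fan, and, chasing the implication diagram (strict $\Cld$-fan $\Ra$ strict $\Fin$-fan, together with the equivalence strict $\Fin$-fan $\Leftrightarrow$ strong $\Fin$-fan valid in $T_1$-spaces), no strong $\Fin$-fan either. Since $FX$ is Ascoli it is in particular Tychonoff, hence $T_1$, so these facts apply to it: $FX$ contains no strong $\Fin$-fan, and a fortiori no strong $\Fin^\w$-fan. This is condition (1). Condition (2) is exactly one of the standing hypotheses, so nothing is required there.

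For conditions (3) and (4) I would apply the same facts to the space $X$ itself. Being Ascoli, $X$ contains no strict $\Cld$-fan; by the horizontal implications of the diagram (no strict $\Cld$-fan $\Ra$ no strict $\Cld^\w$-fan) it then contains no strict $\Cld^\w$-fan, which is the second alternative in condition (4). For condition (3) I would invoke the remark that each $\Clop$-fan is strict: if $F_\alpha$ is clopen then the characteristic function of its complement exhibits $F_\alpha$ as a functional neighborhood of itself, so any $\Clop$-fan is in particular a strict $\Cld$-fan. Consequently the absence of strict $\Cld$-fans in the Ascoli space $X$ forces the absence of $\Clop$-fans, giving condition (3). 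With all four conditions verified, Corollary~\ref{c:F-disc} applies.

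There is no genuine obstacle here, since the statement is merely a specialization of Corollary~\ref{c:F-disc} in which the four combinatorial fan-conditions are replaced by the single, more transparent Ascoli hypothesis on $X$ and $FX$. The only point requiring a moment's care is the bookkeeping of the various refinements of fans (strict versus strong, the classes $\Cld$, $\Clop$ and $\Fin$, and full index versus index $\w$), together with the observation that the Ascoli assumption already supplies the Tychonoff separation needed for the strict $\Leftrightarrow$ strong equivalence for $\Fin$-fans.
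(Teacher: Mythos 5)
Your proof is correct and takes exactly the route the paper intends: the paper derives this corollary from Corollary~\ref{c:F-disc} via the one-line remark that Ascoli spaces contain no strict $\Cld$-fans and no strong $\Fin$-fans, which is precisely the verification of conditions (1), (3) and (4) that you spell out (using Corollary~\ref{c:A->noFan}, the fact that every $\Clop$-fan is a strict $\Cld$-fan, and the strict-versus-strong equivalence for $\Fin$-fans). Your write-up just makes explicit the bookkeeping the paper leaves implicit, so there is nothing to add.
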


For functors mapping compact maps to quotient maps we can prove more.
We recall that a map $f:X\to Y$ between topological spaces is called
\begin{itemize}
\item \index{map!closed}{\em closed} (resp. {\em open}) if for each closed (resp. open) set $A\subset X$ the set $f(A)$ is closed (resp. open) in $Y$;
\item \index{map!perfect}{\em perfect} if $f$ is closed and for each point $y\in Y$ the preimage $f^{-1}(y)$ is compact;
\item \index{map!compact}{\em compact} if $f$ is perfect, surjective and there is a compact subset $K\subset Y$ such that for every point $y\in Y\setminus K$ the preimage $f^{-1}(y)$ is a singleton.
\end{itemize}

A functor $F:\Top_{3\frac12}\to\Top$ will be called
 \begin{itemize}
 \item  \index{functor!compact-to-quotient}{\em compact-to-quotient} if for any compact map $f:X\to Y$ between Tychonoff spaces the map $Ff:FX\to FY$ is quotient;
 \item \index{functor!quotient-to-open}{\em quotient-to-open}     if for any quotient map $f:X\to Y$ between Tychonoff spaces the map $Ff:FX\to FY$ is open.
  \end{itemize}
  It is clear that each quotient-to-open functor is compact-to-quotient.

\begin{theorem}\label{t:discrete2} Assume a functor $F:\Top_{3\frac12}\to \Top$ preserves closed embeddings of metrizable compacta and is compact-to-quotient. Assume that  the space $F(\w+1)$ contains a strong $\Fin^\w$-fan and the space $F(\II)$ is stratifiable or an $\aleph$-space. A Tychonoff space $X$ is discrete if $X$ contains no strong $\Cld^\w$-fans, no $\Clop$-fans and the space $FX$ contains no strong $\Cld^\w$-fans.
\end{theorem}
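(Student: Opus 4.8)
The plan is to reduce the discreteness of $X$ to its $k$-discreteness, and then, assuming an infinite compact set exists in $X$, to transport the strong $\Fin^\w$-fan of $F(\w+1)$ into $FX$ along a carefully built compact map, contradicting the absence of strong $\Cld^\w$-fans in $FX$.

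First I would apply Theorem~\ref{t:disc-char}. Since every strict fan is a strong fan, the assumption that $X$ contains no strong $\Cld^\w$-fan forces $X$ to contain no strict $\Cld^\w$-fan; combined with the assumed absence of $\Clop$-fans, condition (4) of Theorem~\ref{t:disc-char} reduces the problem to showing that $X$ is $k$-discrete, i.e. that $X$ has no infinite compact subset. So I would assume, towards a contradiction, that $X$ contains an infinite compact subset $K$.

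The core construction produces a compact map from $X$ onto a Tychonoff space carrying a convergent sequence. Since $K$ is infinite compact Hausdorff, the algebra $C(K)$ is infinite-dimensional, so there is a continuous $h\colon K\to\IR$ with infinite range; choosing a convergent sequence $t_n\to t_\infty$ of distinct points in the compact set $h(K)$ and a continuous surjection $\psi\colon\IR\to\w+1$ with $\psi(t_n)=n$ and $\psi(t_\infty)=\w$, the map $q=\psi\circ h\colon K\to\w+1$ is a continuous surjection. By Lemma~\ref{l:fH-ext} (applied to the functionally Hausdorff space $X$), $h$ extends to a continuous $\bar h\colon X\to\IR$, whence $\bar q=\psi\circ\bar h\colon X\to\w+1$ is continuous with $\bar q|K=q$. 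Let $\rho\colon X\to X/K$ be the quotient map collapsing $K$ to a point $*$; this is a compact map, and $X/K$ is Tychonoff (a standard separation argument using compactness of $K$ and complete regularity of $X$). I would then set $Y=\Psi(X)$ for the diagonal $\Psi=\langle\rho,\bar q\rangle\colon X\to (X/K)\times(\w+1)$. By the standard diagonal property of perfect maps (see \cite{En}), $\Psi$ is perfect because $\rho$ is perfect and $(X/K)\times(\w+1)$ is Hausdorff; hence $Y$ is closed in the Tychonoff product $(X/K)\times(\w+1)$ and is therefore Tychonoff. A fiber computation shows that $\Psi$ is injective on $X\setminus K$ and collapses exactly the sets $q^{-1}(t)$ inside $K$, so $\Psi\colon X\to Y$ is a compact map with the non-singleton fibers lying over the compact set $Y\cap(\{*\}\times(\w+1))$, and $\Psi(K)=\{*\}\times(\w+1)\cong\w+1$ is a convergent sequence sitting closedly in $Y$.

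Finally I would run the functor. Because $F$ is compact-to-quotient and $\Psi\colon X\to Y$ is a compact map between Tychonoff spaces, $F\Psi\colon FX\to FY$ is quotient. Since $Y$ is a Tychonoff space containing a convergent sequence and $F(\w+1)$ contains a strong $\Fin^\w$-fan with $F(\II)$ stratifiable or an $\aleph$-space, Theorem~\ref{t:F-noseq} yields a strong $\Fin^\w$-fan in $FY$; by Proposition~\ref{p:FembTych} this fan lives inside the closed Hausdorff copy $F(\Psi(K);Y)$ of $F(\w+1)$, so its finite members are closed in $FY$ and it is a strong $\Cld^\w$-fan. Pulling it back through the quotient map $F\Psi$ via Proposition~\ref{p:quot-fan} produces a strong $\Cld^\w$-fan in $FX$, contradicting the hypothesis and completing the proof. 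The main obstacle is exactly the middle step: verifying that the collapsed space $Y$ is genuinely Tychonoff (perfect images of Tychonoff spaces need not be completely regular in general), which is why I route the construction through the closed embedding $Y\hookrightarrow (X/K)\times(\w+1)$ rather than invoking preservation of complete regularity directly; the ancillary facts that every infinite compact Hausdorff space maps onto $\w+1$ and that $X/K$ is Tychonoff are routine but must be recorded.
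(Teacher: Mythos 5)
Your high-level strategy is the same as the paper's (reduce to $k$-discreteness, then derive a contradiction from an infinite compact $K\subset X$ by pushing $X$ along a compact map onto a Tychonoff space containing a convergent sequence and applying the compact-to-quotient hypothesis together with Theorem~\ref{t:F-noseq} and Proposition~\ref{p:quot-fan}), and your reduction via Theorem~\ref{t:disc-char}(4) and your final functorial step are both correct. However, the core construction contains a genuine error: the continuous surjection $\psi\colon\IR\to\w+1$ you postulate does not exist. The real line is connected while $\w+1$ is totally disconnected (its only nonempty connected subsets are singletons), so \emph{every} continuous map $\IR\to\w+1$ is constant; consequently $\bar q=\psi\circ\bar h$, the diagonal $\Psi=\langle\rho,\bar q\rangle$, and the space $Y$ are never constructed. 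This is not repairable by choosing $\psi$ more cleverly or by restricting it to $h(K)$: if $X$ is connected (say $X=\IR$ and $K=\{0\}\cup\{1/n:n\in\IN\}$), then every continuous map $X\to\w+1$ is constant, so no continuous $\bar q\colon X\to\w+1$ can restrict to a surjection of $K$ onto $\w+1$. The conceptual slip is conflating ``the target space contains a convergent sequence'' (which is all that Theorem~\ref{t:F-noseq} needs, and all the paper uses) with ``the target space maps onto, or retracts onto, $\w+1$''.

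The paper's proof avoids exactly this trap: it fixes a continuous surjection $f\colon K\to M$ onto an infinite compact \emph{metrizable} space (your $h$ works, with $M=h(K)$), collapses the fibres of $f$ inside $X$ via the closed equivalence relation $E=\{(x,y):x=y\ \mbox{or}\ x,y\in K,\ f(x)=f(y)\}$, and shows the quotient map $X\to X/E$ is compact and $X/E$ is Tychonoff; then $X/E$ contains a copy of $M$, hence a convergent sequence, and one concludes with Corollary~\ref{c:remain1} or \ref{c:remain2} (which your appeal to Theorem~\ref{t:F-noseq} packages) and Proposition~\ref{p:quot-fan}, exactly as you do. Your diagonal idea can in fact be salvaged, and then it nicely justifies the Tychonoff-ness the paper leaves as ``it can be shown'': simply use $\bar h$ in place of $\bar q$. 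The map $\Psi=\langle\rho,\bar h\rangle\colon X\to(X/K)\times\IR$ is perfect (the diagonal of a perfect map with a continuous map into a Hausdorff space is perfect), its image $Y$ is closed in the Tychonoff product $(X/K)\times\IR$ and hence Tychonoff, $\Psi\colon X\to Y$ is a compact map whose non-singleton fibres lie over the compact set $\{*\}\times h(K)$, and $Y\supset\{*\}\times h(K)\cong h(K)$ is an infinite metrizable compactum, hence contains a convergent sequence. With that single substitution the remainder of your argument goes through and is essentially the paper's proof.
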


\begin{proof} Assume that a Tychonoff space $X$ contains no strong $\Cld^\w$-fan and no $\Clop$-fan, and  the functor-space $FX$ contains no strong $\Cld^\w$-fan. We claim that the space $X$ contains no compact infinite set. To derive a contradiction, assume that $K$ is an infinite compact set in $X$. Fix any continuous map $f:K\to M$ onto an infinite compact metrizable space $M$. The map $f$ determines a closed equivalence relation $$E=\{(x,y)\in X\times X:\mbox{$x=y$ or $x,y\in K$ and $f(x)=f(y)$}\}.$$It can be shown that the quotient map $q:X\to X/E$ is compact and the quotient space $X/E$ is Tychonoff. Since the functor $F$ is compact-to-quotient, the map $Fq:FX\to F(X/E)$ is quotient. Since the space $FX$ contains no strong $\Cld^\w$-fan, we can apply Proposition~\ref{p:quot-fan} and conclude that the space $X/E$ contains no strong $\Cld^\w$-fan. On the other hand, the space $X/E$ contains an infinite compact metrizable space and hence contains a convergent sequence $S$.
By our assumption, the functor-space $FS$ contains a strong $\Cld^\w$-fan and then by Corollary~\ref{c:remain1} or \ref{c:remain2}, the space $F(X/E)$ contains a strong $\Cld^\w$-fan too and this is a desired contradiction, which shows that the space $X$ contains no infinite compact set. Now Corollary~\ref{c:F-disc} implies that the space $X$ is discrete.
\end{proof}

\begin{corollary} Assume a functor $F:\Top_{3\frac12}\to \Top$ preserves closed embeddings of metrizable compacta and is compact-to-quotient. Assume that  the space $F(\w+1)$ contains a strong $\Fin^\w$-fan and the space $F(\II)$ is stratifiable or an $\aleph$-space. Then each Ascoli space $X$ with Ascoli functor-space $F(X)$ is discrete.
\end{corollary}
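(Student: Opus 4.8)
The plan is to re-run the argument of Theorem~\ref{t:discrete2}, with the two Ascoli hypotheses playing the role of the fan-freeness conditions. The one point to watch is that an Ascoli space contains no \emph{strict} $\Cld$-fan (Corollary~\ref{c:A->noFan}), which is formally weaker than the ``no strong $\Cld^\w$-fan'' hypothesis of Theorem~\ref{t:discrete2}; so that theorem cannot be quoted verbatim. The observation that bridges this gap is that the obstruction living on the functor side is a \emph{strong} $\Fin^\w$-fan in $F(\w+1)$, and for $\Fin$-fans the notions ``strong'' and ``strict'' coincide. Hence that fan is in fact a strict $\Fin^\w$-fan, and — since $F(\w+1)$ is $T_1$, as it embeds into the stratifiable or $\aleph$-space $F(\II)$ — a strict $\Cld^\w$-fan. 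This is precisely the kind of fan that the Ascoli property forbids, so the whole argument can be carried out with ``strict'' in place of ``strong''.

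Concretely, I would first show that $X$ contains no infinite compact subset. Suppose $K\subset X$ were infinite compact and fix a continuous surjection $f:K\to M$ onto an infinite compact metrizable space $M$. As in the proof of Theorem~\ref{t:discrete2}, the relation $E=\{(x,y):x=y \text{ or } x,y\in K,\ f(x)=f(y)\}$ is closed, the quotient map $q:X\to X/E$ is compact, and $X/E$ is Tychonoff. Since $F$ is compact-to-quotient, $Fq:FX\to F(X/E)$ is quotient; as $FX$ is Ascoli it contains no strict $\Cld$-fan, so by the strict version of Proposition~\ref{p:quot-fan} the space $F(X/E)$ contains no strict $\Cld^\w$-fan. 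On the other hand $X/E$ contains the infinite compact metrizable set $q(K)\cong M$, hence a convergent sequence $S$. Because $F$ preserves closed embeddings of metrizable compacta, Proposition~\ref{p:FembTych} identifies $F(S)=F(\w+1)$ with the closed subspace $F(S;X/E)$ of $F(X/E)$. The strong (hence strict) $\Fin^\w$-fan in $F(\w+1)$ is a strict $\Cld^\w$-fan in $F(S;X/E)$, and by Corollary~\ref{c:remain1} (when $F(\II)$ is stratifiable) or Corollary~\ref{c:remain2} (when $F(\II)$ is an $\aleph$-space) it stays strictly compact-finite, and so remains a strict $\Cld^\w$-fan, in $F(X/E)$. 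This contradicts the previous sentence, so $X$ has no infinite compact subset.

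It then remains to invoke Corollary~\ref{c:F-disc} for this $X$, whose four hypotheses all hold: $FX$ is Ascoli, hence contains no strict $\Fin$-fan and therefore no strong $\Fin^\w$-fan; the requirement that each infinite compact subset of $X$ contain a convergent sequence is vacuous, as $X$ has no infinite compact subset; and $X$ is Ascoli, hence contains no strict $\Cld$-fan, and so in particular no $\Clop$-fan and no strict $\Cld^\w$-fan. Therefore $X$ is discrete.

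The main obstacle — and really the only delicate point — is the strict/strong discrepancy described above; once one notices that the $F(\w+1)$-fan is simultaneously strong and strict, each transfer step (through the quotient map $Fq$ via Proposition~\ref{p:quot-fan}, and through the closed embedding $F(S;X/E)\hookrightarrow F(X/E)$ via Corollaries~\ref{c:remain1} and \ref{c:remain2}) goes through in its strict form, and the Ascoli property supplies exactly the strict fan-freeness needed to close the argument. Checking that $F(S)$ is $T_1$ (so the finite members of the $\Fin^\w$-fan are closed) and that $q$ is genuinely compact are the routine technical verifications.
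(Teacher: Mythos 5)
Your proof is correct and is essentially the paper's own (implicit) argument: the paper presents this statement as an immediate consequence of Theorem~\ref{t:discrete2}, and your strict-fan re-run of that theorem's proof, followed by the application of Corollary~\ref{c:F-disc}, is precisely the bridge needed over the strict/strong mismatch that you rightly note prevents quoting the theorem verbatim (Ascoli only forbids strict fans, while the theorem hypothesizes the absence of strong ones). One caveat on a single step: in the $\aleph$-space case, Corollary~\ref{c:remain2} as stated yields \emph{strict} compact-finiteness only when $F(\II)$ is a \emph{normal} $\aleph$-space, so there you should instead transfer \emph{strong} compact-finiteness (which needs no normality) and then upgrade to strict using the fact that the members of your fan are finite sets --- exactly the strong-equals-strict-for-$\Fin$-fans observation you already invoke for $F(\w+1)$.
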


\chapter{Constructing $\Fin$-fans in topological algebras}\label{ch:algebra}

In this section we obtain many quite general results on the existence of (strong) $\Fin$-fans in functor-spaces possessing a kind of algebraic structure. We shall consider six such  algebraic structures, called topological algebras of types $xy$, $x(y^*z)$, $(x^*y){}(z^*s)$,  $x(s^*y_i)^{<\w}$, $x(y^*(s^*z)y)$,  and $(x^*(s^*z)x){}(y^*(s^*z)y)$.

These six structures are introduced with the help of $F$-valued $n$-ary operations, defined as follows. Let $\Top_i$ be a full hereditary subcategory of the category $\Top$, $F:\Top_i\to\Top$ be a functor and $n\in\w$. By an \index{operation!$n$-ary $F$-valued}{\em $n$-ary $F$-valued operation} on a space $X\in\Top_i$ we understand a function $p:X^n\to FX$, (not necessarily continuous). Elements of the power $X^n$ are functions $x:n\to X$, which can be written as vectors $(x(0),\dots,x(n-1))$.

In the following six subsections we shall consider six $n$-ary $F$-valued operations possessing some special properties. {\em In these subsections we assume that $F:\Top_i\to\Top$ is a bounded $\II$-regular monomorphic functor with finite supports}. In this case Lemma~\ref{l:supp-Ropen} guarantees that for every functionally Hausdorff space $X$ the support map $\supp:FX\to[X]^{<\w}$ is lower semicontinuous in the sense that for every $\IR$-open subset $U\subset X$ the set $W=\{a\in FX:\supp(a)\cap U\ne\emptyset\}$ is $\IR$-open in $FX$.

For a topological space $X$ by $X_d$ we denote the space $X$ endowed with the discrete topology. By $\Delta$  we shall denote the diagonal $\{(x,y)\in X\times X:x=y\}$ of the square $X\times X$.

\section{Topological algebras of type $x{\cdot}y$}

\begin{definition}\label{d:type-xy}
Given a space $X\in\Top_i$, we say that the functor-space $FX$ is a \index{topological algebra!of type $x{\cdot}y$}{\em topological algebra of type $x{\cdot}y$} if there are a finite subset $C\subset X$ and a binary $F$-valued operation $p:X^{2}\to FX$ satisfying the following properties:
\begin{enumerate}
\item[1)] $\{x,y\}\subset \supp\big(p(x,y)\big)$ for any distinct points $x,y\in X\setminus C$;
\item[2)] for any points $x,y\in X\setminus C$ and any neighborhood $W\subset FX$ of $p(x,y)$ there are neighborhoods $O_x,O_y\subset X$ and $x,y$ such that $p(O_x\times O_y)\subset W$.
\end{enumerate}
\end{definition}

\begin{lemma}\label{l:type-xy} Assume that a $\mu$-complete (functionally Hausdorff) space $X\in\Top_i$ contains a (strong) $D_\w$-cofan $(D_n)_{n\in\w}$ and a (strong) $\bar S^\w$-fan $(S_n)_{n\in\w}$. If the functor-space $FX$ is a topological algebra of type  $x{\cdot}y$, then the functor-space $FX$ contains a (strong) $\Fin^\w$-fan.
\end{lemma}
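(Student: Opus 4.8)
The plan is to mimic the construction in the proof of Theorem~\ref{t:product1}, but instead of working in a product $X\times Y$, to use the binary $F$-valued operation $p:X^2\to FX$ to produce a single indexed family in $FX$ that is simultaneously not locally finite (witnessed by the cofan) and compact-finite (controlled by both the cofan and the fan). The two given structures play asymmetric roles: the $D_\w$-cofan $(D_n)_{n\in\w}$ supplies the ``horizontal'' points $x_{n,m}\in D_n$ which stay strongly compact-finite within each level $n$ and converge (as a sequence of sets) to the cofan's limit point $x_\infty$, while the $\bar S^\w$-fan $(S_n)_{n\in\w}$ supplies the ``vertical'' convergent sequences $y_{n,m}\in S_n$ whose limits $\lim S_n$ cluster, via compactness of $\{\lim S_n\}_{n\in\w}$, at some point $y_\infty$.

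First I would set up notation exactly as in Theorem~\ref{t:product1}: discard the finitely many indices so that $x_\infty\notin\bigcup_n D_n$ and $\{\lim S_n\}_{n\in\w}$ is disjoint from $\bigcup_n S_n$; enumerate $D_n=\{x_{n,m}\}_{m\in\w}$ and $S_n=\{y_{n,m}\}_{m\in\w}$; choose $\IR$-open neighborhoods $U_{n,m}$ of $x_{n,m}$ (with $(U_{n,m})_{m\in\w}$ compact-finite for each fixed $n$, in the strong case) and $V_n\supset S_n$ (with $(V_n)_{n\in\w}$ compact-finite, in the strong case). The candidate fan is then $\big(\{z_{n,m}\}\big)_{n,m\in\w}$ where $z_{n,m}=p(x_{n,m},y_{n,m})\in FX$. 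Throwing away finitely many points per sequence, I may assume all the $x_{n,m},y_{n,m}$ lie outside the finite exceptional set $C$ from Definition~\ref{d:type-xy} and are pairwise distinct, so that property~(1) of the topological algebra gives $\{x_{n,m},y_{n,m}\}\subset\supp(z_{n,m})$.

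For non-local-finiteness at a suitable point, I expect $p(x_\infty,y_\infty)$ (or more carefully a limit extracted from the clustering behaviour) to be the accumulation point. Given a neighborhood $W$ of $z_{n,m}$, property~(2) yields neighborhoods $O_{x},O_{y}$ with $p(O_x\times O_y)\subset W$; combining this with the convergence of $(D_n)_{n\in\w}$ to $x_\infty$ and the fact that every neighborhood of $y_\infty$ catches $\lim S_n$ for infinitely many $n$ (hence catches $y_{n,m}$ for suitable large $m$), I can locate points $z_{n,m}$ in any neighborhood of the limit, exactly paralleling the product argument. For compact-finiteness I would use the support-control lemmas: by Lemma~\ref{l:cont-supp} the sets $W_\alpha=\{a\in FX:\supp(a)\cap U_\alpha\ne\emptyset\}$ attached to a compact-finite family $(U_\alpha)$ are compact-finite in $FX$. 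Applying this to the family $(U_{n,m}\cap(\text{something from }V_n))$ and invoking property~(1) (which forces $\supp(z_{n,m})$ to meet both $U_{n,m}$ and $V_n$), I get that $\{z_{n,m}\}$ is compact-finite; in the strong case the same $W_\alpha$'s are $\IR$-open and witness strong compact-finiteness. The main obstacle, as in Theorem~\ref{t:product1}, is the bookkeeping that makes the doubly-indexed family compact-finite: one must arrange that a compact $K\subset FX$, whose support is bounded (hence compact after $\mu$-completeness) by boundedness of $F$, meets only finitely many $V_n$ and then, for each such $n$, only finitely many $U_{n,m}$. This is precisely where I expect the interplay of the cofan's per-level compact-finiteness with the fan's global compact-finiteness, funneled through the support map via Lemma~\ref{l:cont-supp}, to carry the argument; the functionally-Hausdorff and $\mu$-completeness hypotheses are what license the lower semicontinuity of $\supp$ (Lemma~\ref{l:supp-Ropen}) and the passage from bounded to compact supports, respectively.
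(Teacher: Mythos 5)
Your proposal follows essentially the same route as the paper's proof: the paper likewise forms the family $\big(\{p(x_{n,m},y_{n,m})\}\big)_{n,m\in\w}$, shows it is not locally finite at $p(x,y)$ (with $y$ an accumulation point of $(\lim S_n)_{n\in\w}$) via property~(2) of Definition~\ref{d:type-xy}, and derives (strong) compact-finiteness from property~(1) together with the support machinery — boundedness of $F$, $\mu$-completeness, Lemma~\ref{l:supp-Ropen} and Lemma~\ref{l:cont-supp} — using exactly the two-stage bookkeeping (finitely many $n$, then finitely many $m$ for each such $n$) that you describe. Your only slips are notational and harmless: the neighborhood $W$ must be taken around the limit point $p(x_\infty,y_\infty)$ rather than around $z_{n,m}$, and the witnessing $\IR$-open sets are $\{a\in FX:\supp(a)\cap U_{n,m}\ne\emptyset\ne\supp(a)\cap V_n\}$, an intersection formed in $FX$ rather than a lift of $U_{n,m}\cap V_n\subset X$.
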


\begin{proof} Fix a finite set $C\subset X$ and an operation $p:X^2\to FX$ witnessing that $FX$ is a topological algebra of type $xy$.

Let $(S_n)_{n\in\w}$ be a (strong) $\bar S^\w$-fan in $X$. Replacing the sequence  $(S_n)_{n\in\w}$ by a suitable subsequence, we can assume that the compact-finite family $(S_n)_{n\in\w}$ is disjoint and the set of limit points $\{\lim S_n\}_{n\in\w}$ has compact closure in $X$, which does not intersect the union $\bigcup_{n\in\w}S_n$. Moreover, simultaneously replacing $(S_n)_{n\in\w}$ by a smaller infinite subfamily and each compact-finite set $D_n$, $n\in\w$ by a suitable infinite subset, we can assume that the unions $\bigcup_{n\in\w}D_n$ and $\bigcup_{n\in\w}S_n$ are disjoint and are contained in $X\setminus C$.

For every $n\in\w$ fix an injective enumeration $(x_{n,m})_{m\in\w}$ of the set $D_n$ and an injective enumeration $(y_{n,m})_{m\in\w}$ of the convergent sequence $S_n$.

By Definition~\ref{d:type-xy}, for any distinct elements $x,y\in X\setminus C$ the element
$w=p(x,y)$ has support $\supp(w)\supset\{x,y\}$.
In particular,  for every $n,m\in\w$ the point $w_{n,m}=p(x_{n,m},y_{n,m})$ of $FX$ has support $\supp(w_{n,m})\supset \{x_{n,m},y_{n,m}\}$.

We claim that the family of singletons $(\{w_{n,m}\})_{n,m\in\w}$ is a  $\Fin^\w$-fan in $FX$. Let $x$ be the limit point of the $D_\w$-cofan $(D_n)_{n\in\w}$. Since the set $\{\lim S_n\}_{n\in\w}$ has compact closure in $X$, the sequence $(\lim S_n)_{n\in\w}$ has an accumulation point $y\in X$.

We claim that the family of singletons $(\{w_{n,m}\})_{n,m\in\w}$ is not locally finite at the point $p(x,y)$. Given any neighborhood $W\subset FX$ of $p(x,y)$, use Definition~\ref{d:type-xy}(2) to find two open sets $O_x,O_y\subset X$ such that $x\in O_x$, $y\in O_y$ and $p(O_x\times O_y)\subset W$.

Since the sequence $(D_n)_{n\in\w}$ converges to $x$, there exists a number $n_0\in\w$ such that $\{x_{n,m}:n\ge n_0,\;m\in\w\}\subset O_x$. Since $y$ is an accumulation point of the sequence $(\lim S_n)_{n\in\w}$, there exists $n\ge n_0$ such that $\lim S_n\in O_y$. Then $O_y$ is an open neighborhood of the point $\lim S_n$ and we can find a number $m_0\in\w$ such that $\{y_{n,m}\}_{m\ge m_0}\subset O_y$. Then
$w_{n,m}=p(x_{n,m},y_{n,m})\in p(O_x\times O_y)\subset W$ for every $m\ge m_0$,  which means that the family $(\{w_{n,m}\})_{n,m\in\w}$ is not locally finite at the point $p(x,y)$.

Next, we prove that the family of singletons $(\{w_{n,m}\})_{n,m\in\w}$ is compact-finite in $FX$.  Fix any compact subset $K\subset FX$. The boundedness of the functor $F$ and the $\mu$-completeness of $X$ guarantee that $K\subset F(B;X)$ for some closed bounded (and hence compact) non-empty subset $B\subset X$.  Since the family $(S_n)_{n\in\w}$ is compact-finite, there is $n_0\in\w$ such that $B\cap S_n=\emptyset$ for all $n>n_0$. For every $n\le n_0$ the set $D_n$ is compact-finite in $X$ and hence the intersection $B\cap D_n$ is finite. So, we can find a number $m_0\in\w$ such that $B\cap\bigcup_{n\le n_0}D_n\subset\{x_{n,m}:n\le n_0,\;m\le m_0\}$.
 We claim that $\{(n,m)\in\w\times\w:w_{n,m}\in K\}\subset [0,n_0]\times [0,m_0]$. Indeed, if for some $n,m\in\w$ the point $w_{n,m}$ belongs to $K$, then $w_{n,m}\in F(B;X)$ and hence
$\{x_{n,m},y_{n,m}\}\subset\supp(w_{n,m})\subset B$. The choice of the numbers $n_0$ and $m_0$ guarantee that $n\le n_0$ and then $m\le m_0$, which implies that the sets $$\{(n,m)\in\w\times \w:w_{n,m}\in K\}\subset \{(n,m)\in\w\times \w:w_{n,m}\in F(B;X)\}\subset [0,n_0]\times [0,m_0],$$are finite. Therefore, the family $(\{w_{n,m}\})_{n,m\in\w}$ if compact-finite and hence is a $\Fin^\w$-fan.

Now assuming that the space $X$ is functionally Hausdorff, and the cofan $(D_n)_{n\in\w}$ and the fan $(S_n)_{n\in\w}$
are strong, we shall prove that the fan $(\{w_{n,m}\})_{n,m\in\w}$ is strong. Since the cofan $(D_n)_{n\in\w}$ is strong, every set $D_n$ is strongly compact-finite and hence each point $x_{n,m}\in D_n$ has an $\IR$-open neighborhood $U_{n,m}\subset X$ such that the family $(U_{n,m})_{m\in\w}$ is compact-finite. Since the fan $(S_n)_{n\in\w}$ is strong, every set $S_n$ has an $\IR$-open neighborhood $V_n\subset X$ such that the family $(V_n)_{n\in\w}$ is compact-finite. Since $\supp(w_{n,m})\supset\{x_{n,m},y_{n,m}\}$ the set $$W_{n,m}=\{a\in F(X):\supp(a)\cap U_{n,m}\ne \emptyset\ne\supp(a)\cap V_n\}$$ is an $\IR$-open neighborhood of $w_{n,m}$ in $FX$ (see Lemma~\ref{l:supp-Ropen}). We claim that the family $(W_{n,m})_{n,m\in\w}$ is compact-finite. For this observe that $W_{n,m}=\widetilde U_{n,m}\cap \widetilde V_n$ where $\widetilde U_{n,m}=\{a\in FX:\supp(a)\cap U_{n,m}\ne\emptyset\}$ and $\widetilde V_n=\{a\in FX:\supp(a)\cap V_n\ne\emptyset\}$.

Take any compact set $K\subset FX$. By Lemma~\ref{l:cont-supp}, the family $(\widetilde V_n)_{n\in\w}$ is compact-finite in $FX$. So, there is a number $n_0\in\w$ such that $K\cap\widetilde V_n=\emptyset$ for all $n> n_0$. By Lemma~\ref{l:cont-supp}, for every $n\le n_0$ the family $(\widetilde U_{n,m})_{m\in\w}$ is compact-finite in $FX$, which allows us to find a number $m_0\in\w$ such that $K\cap \widetilde U_{n,m}=\emptyset$ for all $n\le n_0$ and $m>m_0$. Now we see that the set
$$\{(n,m)\in\w^2:K\cap W_{n,m}\ne\emptyset\}=\{(n,m)\in\w^2:K\cap\widetilde U_{n,m}\ne\emptyset\ne K\cap\widetilde V_n\}\subset[0,n_0]\times[0,m_0]$$is finite, which implies that the family $(W_{n,m})_{n,m\in\w}$ is compact-finite and hence the family $(\{w_{n,m}\})_{n,m\in\w}$ is strongly compact-finite, which means that the $\Fin^\w$-fan $(\{w_{n,m}\})_{n,m\in\w}$ is strong.
\end{proof}

Topological algebras of type $x{\cdot}y$ also have another useful property:

\begin{lemma}\label{l:x.y} Assume that a (functionally Hausdorff) $\mu$-complete space $X\in\Top_i$ contains a (strong) $\bar S^{\w_1}$-fan. If the functor-space $FX$ is a topological algebra of type $x{\cdot}y$, then the functor-space $FX$ contains a (strong) $\Fin^{\w_1}$-fan.
\end{lemma}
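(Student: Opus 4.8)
The plan is to run a "product-type" construction analogous to Theorem~\ref{t:product2}, but pushing the two interleaved convergent sequences through the binary $F$-valued operation $p$ so that they land in the single functor-space $FX$. First I would fix the finite set $C\subset X$ and the operation $p:X^2\to FX$ witnessing that $FX$ is of type $x{\cdot}y$. Let $(S_\alpha)_{\alpha\in\w_1}$ be the (strong) $\bar S^{\w_1}$-fan in $X$. Following the opening moves of the proof of Theorem~\ref{t:product2}, I would replace $(S_\alpha)_{\alpha\in\w_1}$ by a suitable uncountable subfamily so that the family is disjoint, its union avoids the (compact) closure of $\{\lim S_\alpha\}_{\alpha\in\w_1}$, and (shrinking once more) the union $\bigcup_{\alpha\in\w_1}S_\alpha$ is contained in $X\setminus C$. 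For each $\alpha$ fix an injective enumeration $(x_{\alpha,n})_{n\in\w}$ of $S_\alpha$, and choose an almost disjoint family $(A_\alpha)_{\alpha\in\w_1}$ of infinite subsets of $\w$. Setting $\Lambda=\{(\alpha,\beta)\in\w_1\times\w_1:\alpha\ne\beta\}$, I would define, for $(\alpha,\beta)\in\Lambda$, the finite set
$$
D_{\alpha,\beta}=\{\,p(x_{\alpha,n},x_{\beta,n}):n\in A_\alpha\cap A_\beta\,\}\subset FX,
$$
exactly mirroring the diagonal-almost-disjoint trick of Theorem~\ref{t:product2}, but with the pair $(x_{\alpha,n},x_{\beta,n})$ fed into $p$. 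Since the points $x_{\alpha,n},x_{\beta,n}$ are distinct and lie in $X\setminus C$, property (1) of Definition~\ref{d:type-xy} gives $\{x_{\alpha,n},x_{\beta,n}\}\subset\supp(p(x_{\alpha,n},x_{\beta,n}))$, which is the key structural fact linking supports in $FX$ back to $X$.

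Next I would verify that $(D_{\alpha,\beta})_{(\alpha,\beta)\in\Lambda}$ is a $\Fin^{\w_1}$-fan. For \emph{non-local-finiteness}, I would copy the ultrafilter argument of Theorem~\ref{t:product2}: fix a free ultrafilter $\U$ on $\w_1$ whose members are uncountable, let $x$ and $y$ be the $\U$-limits of $(\lim S_\alpha)_{\alpha\in\w_1}$ (these exist by compactness of the closure of $\{\lim S_\alpha\}$), and consider the target point $p(x,y)\in FX$. Given a neighborhood $W$ of $p(x,y)$, property (2) of Definition~\ref{d:type-xy} supplies open $O_x,O_y\subset X$ with $p(O_x\times O_y)\subset W$; then the set $\Omega=\{\alpha:\lim S_\alpha\in O_x\cap O_y\}$... — more precisely one runs the Pigeonhole/Lemma~\ref{l:ad} step to find uncountably many pairs $(\alpha,\beta)$ and an index $n\in A_\alpha\cap A_\beta$ with $x_{\alpha,n}\in O_x$ and $x_{\beta,n}\in O_y$, whence $p(x_{\alpha,n},x_{\beta,n})\in p(O_x\times O_y)\subset W$, so $W$ meets $D_{\alpha,\beta}$. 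This shows the family is not locally finite at $p(x,y)$. For \emph{compact-finiteness}, I would use boundedness of $F$ and $\mu$-completeness of $X$: any compact $K\subset FX$ lies in $F(B;X)$ for a compact $B\subset X$, so every $a\in K$ has $\supp(a)\subset B$ by Theorem~\ref{t:supp}; since $(S_\alpha)_{\alpha\in\w_1}$ is compact-finite, only finitely many $\alpha$ have $B\cap S_\alpha\ne\emptyset$, and because $\{x_{\alpha,n},x_{\beta,n}\}\subset\supp(p(x_{\alpha,n},x_{\beta,n}))$, the pairs $(\alpha,\beta)$ with $D_{\alpha,\beta}\cap K\ne\emptyset$ are confined to $F_X\times F_Y$ with $F_X,F_Y$ finite, giving finiteness.

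Finally, for the \emph{strong} version, assuming $X$ functionally Hausdorff and the fan strong, each $S_\alpha$ has an $\IR$-open neighborhood $U_\alpha\subset X$ with $(U_\alpha)_{\alpha\in\w_1}$ compact-finite. Using the lower semicontinuity of the support map (Lemma~\ref{l:supp-Ropen}), I would set
$$
W_{\alpha,\beta}=\{a\in FX:\supp(a)\cap U_\alpha\ne\emptyset\ne\supp(a)\cap U_\beta\},
$$
an $\IR$-open neighborhood of $D_{\alpha,\beta}$ (by property (1)), and verify via Lemma~\ref{l:cont-supp} that $(W_{\alpha,\beta})_{(\alpha,\beta)\in\Lambda}$ is compact-finite in $FX$, exactly as in the strong part of Lemma~\ref{l:type-xy}. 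I expect the main obstacle to be the non-local-finiteness step: unlike Theorem~\ref{t:product2}, the fan now lives in $FX$ rather than a product, so the convergence must be transported through the (merely pointwise-continuous, in the sense of Definition~\ref{d:type-xy}(2)) operation $p$ to the single limit point $p(x,y)$, and care is needed to combine the $\U$-convergence of the limit points with the Pigeonhole and almost-disjointness (Lemma~\ref{l:ad}) arguments so that a genuine index $n\in A_\alpha\cap A_\beta$ outside the relevant initial segment can be selected simultaneously for both coordinates.
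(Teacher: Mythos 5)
Your proposal is correct and follows essentially the same route as the paper's proof: the same sets $D_{\alpha,\beta}=\{p(x_{\alpha,n},x_{\beta,n}):n\in A_\alpha\cap A_\beta\}$ built from an almost disjoint family, the same Pigeonhole/Lemma~\ref{l:ad} argument for non-local-finiteness, support-based compact-finiteness via Lemma~\ref{l:cont-supp}, and the identical $W_{\alpha,\beta}$ neighborhoods from Lemma~\ref{l:supp-Ropen} for the strong case. The only cosmetic difference is that you extract the accumulation point of $(\lim S_\alpha)_{\alpha\in\w_1}$ as a limit along a free ultrafilter with uncountable members (as in Theorem~\ref{t:product2}), whereas the paper takes a condensation point of this $\w_1$-sequence directly — these are equivalent, and since there is only one fan here your two limits $x,y$ coincide, so your target point $p(x,y)$ is the paper's $p(x,x)$.
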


\begin{proof} Let a finite subset $C\subset X$ and a binary operation $p:X^2\to FX$ witness that the functor-space $FX$ is a topological algebra of type $x{\cdot}y$.

Let $(S_\alpha)_{\alpha\in\w_1}$ be a (strong) $\bar S^{\w_1}$-fan in $X$. Since the sets $\{\lim S_\alpha\}_{\alpha\in\w_1}$ and $S_\alpha$, $\alpha\in\w_1$, have compact closures in $X$  we can replace $(S_\alpha)_{\alpha\in\w_1}$ by a suitable uncountable subfamily and assume that this family is disjoint and its union does not intersect the closure of the set $C\cup\{\lim S_\alpha\}_{\alpha\in\w_1}$.

For every $\alpha\in\w_1$ chose a sequence $(x_{\alpha,n})_{n\in\IN}$ of pairwise distinct points in $S_\alpha$.
Choose any almost disjoint family $(A_\alpha)_{\alpha\in\w_1}$ of infinite subsets of $\w$.
Let $\Lambda=\{(\alpha,\beta)\in\w_1\times w_1:\alpha\ne \beta\}$ and for any pair $(\alpha,\beta)\in\Lambda$ consider the finite subset $$D_{\alpha,\beta}=\{p(x_{\alpha,n},x_{\beta,n}):n\in A_\alpha\cap A_\beta\}$$of $FX$.

We claim the family $(D_{\alpha,\beta})_{(\alpha,\beta)\in\Lambda}$ is not locally finite in $FX$. Since the set $L=\{\lim S_\alpha\}_{\alpha\in\w_1}$ has compact closure $\bar L$ in $X$, the $\w_1$-sequence $(\lim S_\alpha)_{\alpha\in\w_1}$ has a condensation point $x\in \bar L$. The latter means that every neighborhood $O_x\subset X$ of $x$ contains uncountably many points $\lim S_\alpha$.

We claim that the family $(D_{\alpha,\beta})_{(\alpha,\beta)\in\Lambda}$ is not locally finite at the point $p(x,x)$. Given any neighborhood $W\subset FX$ of $p(x,x)$ use Definition~\ref{d:type-xy}(2) and find a neighborhood $O_x\subset X$ of $x$ such that $p(O_x\times O_x)\subset W$. Since $x$ is a condensation point of the $\w_1$-sequence $(\lim S_\alpha)_{\alpha\in\w_1}$, the set $\Omega=\{\alpha\in\w_1:\lim S_\alpha\in O_x\}$ is uncountable.

For every $\alpha\in\Omega$ the open set $O_x$ is a neighborhood of the limit point $\lim S_\alpha$ of the sequence $S_\alpha=\{x_{\alpha,n}\}_{n\in\w}$. So there is a number $\varphi(\alpha)\in\w$ such that $\{x_{\alpha,n}\}_{n\ge\varphi(\alpha)}\subset O_x$.
By the Pigeonhole Principle, for some $m\in\w$ the set $\Omega_m=\{\alpha\in\Omega:\varphi(\alpha)=m\}$ is uncountable. By Lemma~\ref{l:ad}, the set $\Lambda'=\{(\alpha,\beta)\in\Lambda:\alpha,\beta\in\Omega_m,\;A_\alpha\cap A_\beta\not\subset[0,m]\}$ is uncountable. Fix any pair $(\alpha,\beta)\in\Lambda'$ and choose a number $n\in A_\alpha\cap A_\beta\setminus[0,m]$. Then $x_{\alpha,n},x_{\beta,n}\in O_x$ and hence $$D_{\alpha,\beta}\ni  p(x_{\alpha,n},x_{\beta,n})\subset p(O_x\times O_x)\subset W$$which implies that $D_{\alpha,\beta}\cap W\ne\emptyset$ and the set $\{(\alpha,\beta)\in\Lambda:D_{\alpha,\beta}\cap W\ne\emptyset\}\supset\Lambda'$ is uncountable. This means that the family $(D_{\alpha,\beta})_{(\alpha,\beta)\in\Lambda}$ is not locally countable and hence not locally finite at $p(x,x)$.

Next, we show that the family $\{D_{\alpha,\beta}\}_{(\alpha,\beta)\in\Lambda}$ is compact-finite in $FX$. Since $$\supp\big(p(x_{\alpha,n},x_{\beta,n})\big)
\supset\{x_{\alpha,n},x_{\beta,n}\},$$ we can use  the compact-finiteness of the family $(S_\alpha)_{\alpha\in\w_1}$ and Lemma~\ref{l:cont-supp} to conclude that the family $(D_{\alpha,\beta})_{(\alpha,\beta)\in\Lambda}$ is compact-finite in $FX$.

If the $\bar S$-fan $(S_\alpha)_{\alpha\in\w_1}$ is strong and the space $X$ is functionally Hausdorff, then each set $S_\alpha$ has an $\IR$-open neighborhood $V_\alpha\subset X$ such that the family $(V_\alpha)_{\alpha\in\w_1}$ is compact-finite. Definition~\ref{d:type-xy}(1) and Lemma~\ref{l:supp-Ropen} imply that for every pair $(\alpha,\beta)\in\Lambda$ the set  $W_{\alpha,\beta}=\{a\in FX:\supp(a)\cap U_\alpha\ne\emptyset\ne\supp(a)\cap U_\beta\}$ is an $\IR$-open neighborhood of the finite set $D_{\alpha,\beta}$ in $FX$. Applying Lemma~\ref{l:cont-supp}, we can show that the family $(W_{\alpha,\beta})_{(\alpha,\beta)\in\Lambda}$ is compact-finite in $FX$, which implies that the family $(D_{\alpha,\beta})_{(\alpha,\beta)\in\Lambda}$ is a strong $\Fin^\w$-fan in $FX$.
\end{proof}

Lemma~\ref{l:type-xy} and Theorems~\ref{t:Sfan} and \ref{t:Dcofan} imply:

\begin{theorem}\label{t:xy} Let $X\in\Top_i$ be a $\mu$-complete space such that the space $FX$ is a topological algebra of type $x{\cdot}y$. Assume that one of the following conditions is satisfied:
\begin{enumerate}
\item[\textup{1)}] $FX$ contains no strong $\Fin^\w$-fan and $X$ is a Tychonoff $\aleph$-space;
\item[\textup{2)}] $FX$ contains no $\Fin^\w$-fan and $X$ is a $k^*$-metrizable space.
\end{enumerate}
Then $X$ is either $k$-metrizable or a $k$-sum of hemicompact spaces. Moreover, if $X$ is a $k_\IR$-space, then $X$ is either metrizable or a topological sum of $k_\w$-spaces.
\end{theorem}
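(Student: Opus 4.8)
The plan is to combine the ``fan-production'' Lemma~\ref{l:type-xy} with the structural dichotomies already established for spaces that contain no $\Fin^\w$-fan. The logic is entirely by contraposition: if $X$ were neither $k$-metrizable nor a $k$-sum of hemicompact spaces, then I would manufacture a $D_\w$-cofan and a $\bar S^\w$-fan in $X$, feed them into Lemma~\ref{l:type-xy}, and obtain a (strong) $\Fin^\w$-fan in $FX$, contradicting the hypothesis.

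First I would treat case (1), where $FX$ contains no strong $\Fin^\w$-fan and $X$ is a Tychonoff $\aleph$-space. Suppose $X$ is not $k$-metrizable. By Proposition~\ref{p:k-metr} (applied after noting that an $\aleph$-space is in particular $k^*$-metrizable, via the diagram of generalized metric spaces), $X$ contains a $\ddot S^\w$-fan, and since $X$ is a Tychonoff $\aleph$-space, Corollary~\ref{c:fan-in-aleph-space} upgrades this to a \emph{strong} $\ddot S^\w$-fan, hence a strong $\bar S^\w$-fan. Suppose also that $X$ is not a $k$-sum of hemicompact spaces; since an $\aleph$-space is a $\bar\aleph_k$-space and $\mu$-complete, Proposition~\ref{p:decomp1} forces $X$ to contain a strong $D_\w$-cofan (its contrapositive: a $\mu$-complete Tychonoff $\bar\aleph_k$-space with no strong $D_\w$-cofan is a $k$-sum of cosmic hemicompact spaces). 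Now Lemma~\ref{l:type-xy} yields a strong $\Fin^\w$-fan in $FX$, the desired contradiction. Thus $X$ is $k$-metrizable or a $k$-sum of hemicompact spaces.

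Next I would treat case (2), where $FX$ contains no $\Fin^\w$-fan and $X$ is merely $k^*$-metrizable. The argument is parallel but uses the non-strong versions: if $X$ is not $k$-metrizable, Proposition~\ref{p:k-metr} gives a $\ddot S^\w$-fan (hence a $\bar S^\w$-fan), and if $X$ is not a $k$-sum of hemicompact spaces, then, using that a $k^*$-metrizable space is a $\bar\aleph_k$-space and invoking Proposition~\ref{p:decomp1} again, $X$ contains a $D_\w$-cofan. Lemma~\ref{l:type-xy} (non-strong branch) then produces a $\Fin^\w$-fan in $FX$, a contradiction. For the final ``moreover'' clause, I would add the $k_\IR$-hypothesis: a Tychonoff $k_\IR$-space is Ascoli (by Nobeling's result quoted in the introduction) and hence contains no strict $\Cld$-fan and no strong $\Fin^\w$-fan; combined with $k$-metrizability this gives metrizability via Corollary~\ref{c:fan-metrization}, while combined with being a $k$-sum of hemicompact spaces and the $k_\IR$-property it gives that the $k$-sum is a genuine topological sum of $k_\w$-spaces, since the $k$-modification coincides with the original topology for a $k_\IR$-space whose compact sets already generate it.

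The main obstacle I anticipate is bookkeeping the exact hypotheses of Proposition~\ref{p:decomp1} and Proposition~\ref{p:k-metr} against what ``$k^*$-metrizable'' versus ``$\aleph$-space'' actually supply: Proposition~\ref{p:decomp1} is stated for $\bar\aleph_k$-spaces, so I must verify that $k^*$-metrizable and $\aleph$-spaces fall under its scope (they do, via the displayed implications among generalized metric spaces, since every $\aleph$-space and every $k^*$-metrizable space is an $\bar\aleph_k$-space or at least possesses a compact-countable closed $k$-network after the reduction in Lemma~\ref{l:b-nw}). The most delicate point in the ``moreover'' is passing from a $k$-sum of hemicompact spaces to an honest topological sum of $k_\w$-spaces under the $k_\IR$-assumption: here I would argue that a $k_\IR$-space equals its own $k$-modification is false in general, so instead I must use that the compact-clopen decomposition of the $k$-sum, being witnessed by bounded closed (hence compact, by $\mu$-completeness) pieces, already induces the sum topology on a $k_\IR$-space, whence $X=\bigoplus_\alpha X_\alpha$ with each $X_\alpha$ a cosmic $k_\w$-space by Lemma~\ref{l:ts-kw}.
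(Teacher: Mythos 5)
Your case (1) is correct and is, modulo packaging, exactly the paper's own argument: the paper's one-line proof cites Theorems~\ref{t:Sfan} and \ref{t:Dcofan}, which for a Tychonoff $\aleph$-space reduce precisely to the ingredients you use, namely Proposition~\ref{p:k-metr} together with Corollary~\ref{c:fan-in-aleph-space} (producing a strong $\bar S^\w$-fan from non-$k$-metrizability) and Proposition~\ref{p:decomp1} (producing a strong $D_\w$-cofan from the failure of the $k$-sum alternative), fed into Lemma~\ref{l:type-xy}.

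The gap is in case (2). You apply Proposition~\ref{p:decomp1}, which is stated for $\mu$-complete $\bar\aleph_k$-spaces, after asserting that every $k^*$-metrizable space is a $\bar\aleph_k$-space. That inclusion is not available and is in fact false: the paper's diagram gives only $k^*$-metrizable $\Rightarrow$ $\aleph_k$, and the difference is exactly that $\bar\aleph_k$ demands a compact-countable \emph{closed} $k$-network --- replacing the members of a $\sigma$-compact-finite $k$-network by their closures can destroy compact-countability. Concretely, the paper notes that every $\bar\aleph_k$-space has countable $\cs^*$-character, whereas the sequential fan $V_{\w_1}$ (a La\v snev, hence $k^*$-metrizable, space) is an $\aleph_k$-space of uncountable $\cs^*$-character, so it cannot be $\bar\aleph_k$. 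Your fallback via Lemma~\ref{l:b-nw} does not close the gap either: that lemma yields a compact-countable $k$-network of \emph{bounded} sets, and upgrading it (taking closures, using $\mu$-completeness) to a compact-countable \emph{closed} $k$-network is precisely Proposition~\ref{p:a+s->ba}, i.e.\ Theorem~\ref{t:Dcofan}(2), whose proof needs the extra hypothesis that $X$ contains no $\ddot S^{\w_1}$-fan. That hypothesis is not free in case (2): from a $\bar S^{\w_1}$-fan in $X$, Lemma~\ref{l:x.y} produces only a $\Fin^{\w_1}$-fan in $FX$, which is \emph{not} excluded by ``$FX$ contains no $\Fin^\w$-fan'' (in the paper's diagram, ``no $\Fin^\w$-fan'' is the weakest of the no-fan conditions). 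So the $k^*$-metrizable case must be routed through Theorem~\ref{t:Dcofan}(2) and requires an honest argument for, or an added hypothesis excluding, $\bar S^{\w_1}$-fans; this is also why in later applications to $k^*$-metrizable spaces (e.g.\ Theorem~\ref{t:Sem}) the paper adds ``no $\Fin^{\w_1}$-fans'' to the hypotheses. Your proposal silently skips this step, and it would fail as written.

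Two smaller points concern the ``moreover'' clause. You cannot cite Corollary~\ref{c:fan-metrization} for the metrizable alternative, since it requires countable fan-tightness, which you have not established. The argument that is actually intended (compare the proof of Theorem~\ref{t:xy+xyz+xy.zs}) is: in either alternative the $k$-modification $kX$ is Tychonoff (being metrizable, respectively a topological sum of cosmic $k_\w$-spaces), so its topology is initial with respect to $C(kX)$; but $C(kX)$ consists exactly of the $k$-continuous functions on $X$, which are continuous because $X$ is a $k_\IR$-space, whence the topology of $kX$ is coarser than that of $X$ and therefore $X=kX$. Your closing sketch about the compact-clopen decomposition points in the right direction (the characteristic functions of the pieces are $k$-continuous, hence continuous, so the pieces are genuinely clopen), but as stated it is too vague to replace this argument.
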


\section{Topological algebras of type $x{} (y^*z)$}\label{s:xyz}

\begin{definition}\label{d:xyz}
Given a space $X\in\Top_i$, we say that the functor-space $FX$ is a \index{topological algebra!of type $(x(y^*z)$}{\em topological algebra of type $x{}(y^*z)$} if there exist a finite subset $C\subset X$ and a ternary $F$-valued operation $p:X^3\to FX$ satisfying the following properties:
\begin{enumerate}
\item[1)] $p(x,y,y)=p(x,x,x)$ for all $x,y\in X$;
\item[2)] $\{x,z\}\subset \supp(p(x,y,z))$ for any pairwise distinct points $x,y,z\in X\setminus C$;
\item[3)] for any point $x\in X$ and neighborhood $U\subset FX$ of $p(x,x,x)$ there are a neighborhood $O_x\subset X$ of $x$ and a neighborhood $U_\Delta$ of the diagonal $\Delta_X$ in the space $X_d\times X$ such that $p(O_x\times U_\Delta)\subset U$.
\end{enumerate}
\end{definition}

\begin{lemma}\label{l:xyz} Assume that a $\mu$-complete (functionally Hausdorff) space $X\in\Top_i$ contains a (strong) $D_\w$-cofan $(D_n)_{n\in\w}$ and a (strong) $S^\w$-semifan $(S_n)_{n\in\w}$ with infinite set of limit points $\{\lim S_n\}_{n\in\w}$. If the functor-space $FX$ is a topological algebra of type $x{}(y^*z)$, then the space $FX$ contains a (strong) $\Fin^\w$-fan.
\end{lemma}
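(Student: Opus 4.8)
The plan is to mimic closely the structure of the proof of Lemma~\ref{l:type-xy}, but using the type $x{}(y^*z)$ structure in place of type $x{\cdot}y$, and exploiting the idempotency relation (1) in Definition~\ref{d:xyz} to handle the semifan (rather than a fan). The point of condition (1), $p(x,y,y)=p(x,x,x)$, is that it lets us ``collapse'' the second and third arguments into the limit point of a convergent sequence when they agree, so that the product points $p(x_{n,m},\lim S_n, y_{n,m})$ accumulate at $p(x_\infty, \lim S_n, \lim S_n)=p(x_\infty,x_\infty',x_\infty')$-type elements near the diagonal; condition (3), the continuity near the diagonal $\Delta_X$ in $X_d\times X$, is exactly what makes this accumulation produce a failure of local finiteness. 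First I would set up the data: replace $(D_n)_{n\in\w}$ and $(S_n)_{n\in\w}$ by suitable infinite subfamilies so that $\bigcup_n D_n$, $\bigcup_n S_n$ and the (compact) closure of $\{\lim S_n\}_{n\in\w}$ are pairwise disjoint and contained in $X\setminus C$, and so that $(S_n)$ is disjoint. I would enumerate $D_n=\{x_{n,m}\}_{m\in\w}$ and $S_n=\{y_{n,m}\}_{m\in\w}$, and write $s_n=\lim S_n$.

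The candidate fan will be the family of singletons $(\{w_{n,m}\})_{n,m\in\w}$ where $w_{n,m}=p(x_{n,m},\,s_n,\,y_{n,m})$. By Definition~\ref{d:xyz}(2), for distinct $x_{n,m},s_n,y_{n,m}\in X\setminus C$ we have $\{x_{n,m},y_{n,m}\}\subset\supp(w_{n,m})$ (I must first arrange, by shrinking, that these three points are genuinely distinct; since the $D_n$, $S_n$, and limit set are disjoint this is automatic for large indices). Let $x$ be the limit point of the cofan $(D_n)$, and let $s_\infty$ be an accumulation point of the sequence $(s_n)_{n\in\w}$ (which exists since $\{s_n\}$ has compact closure). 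I claim the family is not locally finite at $p(x,s_\infty,s_\infty)=p(x,x_*,x_*)$, equivalently, using (1), near a ``diagonal'' value. Given a neighborhood $U$ of $p(x,s_\infty,s_\infty)$, apply Definition~\ref{d:xyz}(3) to get a neighborhood $O_x$ of $x$ and a neighborhood $U_\Delta$ of $\Delta_X$ in $X_d\times X$ with $p(O_x\times U_\Delta)\subset U$. Since $(D_n)\to x$, eventually $x_{n,m}\in O_x$; and since $(y_{n,m})_{m}\to s_n$ with $s_n\to s_\infty$, the pairs $(s_n,y_{n,m})$ eventually lie in $U_\Delta$ (here I use that $U_\Delta$ is a neighborhood of the diagonal and $s_n$ is fixed while $y_{n,m}\to s_n$, so $(s_n,y_{n,m})$ is close to $(s_n,s_n)\in\Delta_X$). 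Hence infinitely many $w_{n,m}$ land in $U$. The key subtlety, and the main obstacle I anticipate, is getting the quantifier order right in (3): the neighborhood $U_\Delta$ is of the \emph{whole} diagonal in $X_d\times X$ (discrete first factor), so for each fixed first coordinate $s_n$ it restricts to a genuine neighborhood of $s_n$ in $X$, and I must check that $(s_n, y_{n,m})\in U_\Delta$ for suitably large $m$ uniformly enough to produce accumulation at a single $n$ (or across $n$ via the accumulation of $s_n$ at $s_\infty$). This is precisely the place where the discreteness of the first factor in $U_\Delta$ is essential and must be used carefully.

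Next I would verify compact-finiteness exactly as in Lemma~\ref{l:type-xy}: given a compact $K\subset FX$, boundedness of $F$ and $\mu$-completeness of $X$ give a compact non-empty $B\subset X$ with $K\subset F(B;X)$, so by Theorem~\ref{t:supp} every $w_{n,m}\in K$ has $\{x_{n,m},y_{n,m}\}\subset\supp(w_{n,m})\subset B$. Compact-finiteness of the semifan $(S_n)$ bounds the admissible $n$, and compact-finiteness of each $D_n$ then bounds the admissible $m$, confining $\{(n,m):w_{n,m}\in K\}$ to a finite rectangle. Finally, for the ``strong'' version, assuming $X$ functionally Hausdorff with strong cofan and strong semifan, each $x_{n,m}$ has an $\IR$-open neighborhood $U_{n,m}$ with $(U_{n,m})_m$ compact-finite, and each $S_n$ has an $\IR$-open neighborhood $V_n$ with $(V_n)_n$ compact-finite. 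Using Lemma~\ref{l:supp-Ropen} and the support lower bound $\{x_{n,m},y_{n,m}\}\subset\supp(w_{n,m})$, the set
$$W_{n,m}=\{a\in FX:\supp(a)\cap U_{n,m}\ne\emptyset\ne\supp(a)\cap V_n\}$$
is an $\IR$-open neighborhood of $w_{n,m}$, and Lemma~\ref{l:cont-supp} shows the family $(W_{n,m})_{n,m\in\w}$ is compact-finite, exactly as in the proof of Lemma~\ref{l:type-xy}. This witnesses that the $\Fin^\w$-fan $(\{w_{n,m}\})_{n,m\in\w}$ is strong, completing the argument.
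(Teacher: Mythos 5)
Your construction is the same as the paper's: the candidate fan is the family of singletons $\{w_{n,m}\}$ with $w_{n,m}=p(x_{n,m},\lim S_n,y_{n,m})$, non-local finiteness comes from Definition~\ref{d:xyz}(3), compact-finiteness from boundedness of $F$ plus $\mu$-completeness together with the support estimate of Definition~\ref{d:xyz}(2), and strongness from Lemmas~\ref{l:supp-Ropen} and \ref{l:cont-supp}. However, one step in your write-up is wrong: you extract an accumulation point $s_\infty$ of the limit points $s_n=\lim S_n$ by asserting that $\{s_n\}_{n\in\w}$ has compact closure. A (strong) $S^\w$-semifan carries no such compactness: it is merely a compact-finite family of convergent sequences, and the hypothesis of the lemma only adds that the set of limit points is \emph{infinite}. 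For instance, in $X=\IR$ the sequences $S_n\subset(n-\tfrac14,n+\tfrac14)$ converging to $n$ form a strong $S^\w$-semifan whose limit set is closed, discrete and non-compact, so no $s_\infty$ exists. Compactness of the closure of the limit set is exactly the extra assumption that distinguishes $\bar S$-fans (as in Lemma~\ref{l:type-xy}) from semifans, so you cannot import it here; your preliminary refinement requiring the ``(compact) closure'' of $\{\lim S_n\}_{n\in\w}$ to be disjoint from the other data suffers from the same problem.

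Fortunately the flaw is inessential, and your own remark ``equivalently, using (1)'' contains the repair. By Definition~\ref{d:xyz}(1), $p(x,s,s)=p(x,x,x)$ for every $s$, so the point at which local finiteness fails should simply be taken to be $p(x,x,x)$, where $x$ is the limit of the cofan; no accumulation point of $(s_n)_{n\in\w}$ is needed anywhere. The rest of your argument then goes through verbatim: given a neighborhood $U$ of $p(x,x,x)$, condition (3) yields $O_x$ and a neighborhood $U_\Delta$ of the diagonal in $X_d\times X$ with $p(O_x\times U_\Delta)\subset U$; convergence of the cofan gives $n$ with $D_n\subset O_x$; and for this fixed $n$, discreteness of the first factor makes $U_\Delta\cap(\{s_n\}\times X)$ a genuine neighborhood of $(s_n,s_n)$, so $(s_n,y_{n,m})\in U_\Delta$ for all large $m$ --- exactly the point you flagged as the ``key subtlety'', and the reason the limit points $s_n$ need not converge or accumulate anywhere. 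This is precisely how the paper argues; with the sentence about compact closure deleted and $p(x,s_\infty,s_\infty)$ replaced by $p(x,x,x)$, your proof coincides with the paper's.
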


\begin{proof} Let a finite subset $C\subset X$ and an operation $p:X^3\to FX$ witness that $FX$ is a topological algebra of type $x{}(y^*z)$. Let $x$ be the limit point of the $D_\w$-cofan $(D_n)_{n\in\w}$.
Since the semifan $(S_n)_{n\in\w}$ is compact-finite, we can replace this family by a suitable subfamily and assume that it is disjoint, $x\notin\bigcup_{n\in\w}S_n$ and the infinite set $\{\lim C_n\}_{n\in\w}$ is disjoint with $C$. Moreover, replacing simultaneously the sets $D_n$ of the $D_\w$-cofan by suitable infinite subsets of $D_n$ and the family $(S_n)_{n\in\w}$ by a suitable subfamily, we can assume that the family $(D_n)_{n\in\w}$ is disjoint and the sets $\bigcup_{n\in\w}D_n$,$\{x\}\cup\bigcup_{n\in\w}\bar S_n$, and $C\setminus \{x\}$ are pairwise disjoint.

For every $n\in\w$ fix an injective enumeration $\{x_{n,m}\}_{m\in\w}$ of the set $D_n$.
For every $n\in\w$ let $y_n$ be the limit point of the convergent sequence $S_n$ and $\{y_{n,m}\}_{m\in\w}$ be an injective enumeration of the set $S_n\setminus\{y_n\}$.

By Definition~\ref{d:xyz}(2), for every $n,m\in\w$ the point $w_{n,m}=p(x_{n,m},y_{n},y_{n,m})$ of $FX$ has support\break $\supp\big(p(x_{n,m},y_n,y_{n,m})\big)\supset\{x_{n,m},y_{n,m}\}$.

We claim that the family of singletons $(\{w_{n,m}\})_{n,m\in\w}$ is a  $\Fin^\w$-fan in $FX$. First we check that this family is not locally finite at the point $p(x,x,x)$. Given any neighborhood $W\subset FX$ of $p(x,x,x)$, apply Definition~\ref{d:xyz}(3) and find a neighborhood $O_x\subset X$ of $x$ and an open neighborhood $U_\Delta\subset X_d\times X$ of the diagonal in the space $X_d\times X$ such that $p(O_x\times U_\Delta)\subset W$.
Since the sequence $(D_n)_{n\in\w}$ converges to $x$, there exists a number $n\in\w$ such that $\{x_{n,m}\}_{m\in\w}\subset O_x$. Since the sequence $(y_{n,m})_{m\in\w}$ converges to $y_n$, we can find a  number $m_0\in\w$ such that $\{(y_n,y_{n,m})\}_{m\ge m_0}\subset U_\Delta$. Then
$w_{n,m}=p(x_{n,m},y_n,y_{n,m})\in p(O_x\times U_\Delta)\subset W$ for every $m\ge m_0$,  which means that the family $(\{w_{n,m}\})_{n,m\in\w}$ is not locally finite at the point $p(x,x,x)$.

Next, we prove that the family of singletons $(\{w_{n,m}\})_{n,m\in\w}$ is compact-finite in $FX$.  Fix any compact subset $K\subset FX$. The boundedness of the functor $F$ and the $\mu$-completeness of $X$ guarantee that $K\subset F(B;X)$ for some compact non-empty subset $B\subset X$. Since the family $(S_n)_{n\in\w}$ is compact-finite, there is $n_0\in\w$ such that $B\cap S_n=\emptyset$ for all $n>n_0$. For every $n\le n_0$ the set $D_n$ is compact-finite in $X$ and hence the intersection $B\cap D_n$ is finite. So, we can find a number $m_0\in\w$ such that $B\cap\bigcup_{n\le n_0}D_n\subset\{x_{n,m}:n\le n_0,\;m\le m_0\}$.
 We claim that $\{(n,m)\in\w\times\w:w_{n,m}\in K\}\subset [0,n_0]\times [0,m_0]$. Indeed, if for some $n,m\in\w$ the point $w_{n,m}$ belongs to $K$, then $w_{n,m}\in F(B;X)$ and hence
$\{x_{n,m},y_{n,m}\}\subset\supp(w_{n,m})\subset B$. The choice of the numbers $n_0$ and $m_0$ guarantee that $n\le n_0$ and then $m\le m_0$, which implies that the sets $$\{(n,m)\in\w\times \w:w_{n,m}\in K\}\subset \{(n,m)\in\w\times \w:w_{n,m}\in F(B;X)\}\subset [0,n_0]\times [0,m_0],$$are finite. Therefore, the family $(\{w_{n,m}\})_{n,m\in\w}$ if compact-finite and hence is a $\Fin^\w$-fan.

Now assuming that the cofan $(D_n)_{n\in\w}$ and the semifan $(S_n)_{n\in\w}$
are strong and the space $X$ is functionally Hausdorff, we shall prove that the fan $(\{w_{n,m}\})_{n,m\in\w}$ is strong. Since the cofan $(D_n)_{n\in\w}$ is strong, every set $D_n$ is strongly compact-finite and hence each point $x_{n,m}\in D_n$ has an $\IR$-open neighborhood $U_{n,m}\subset X$ such that the family $(U_{n,m})_{m\in\w}$ is compact-finite. Since the semifan $(S_n)_{n\in\w}$ is strong, every set $S_n$ has an $\IR$-open neighborhood $V_n\subset X$ such that the family $(V_n)_{n\in\w}$ is compact-finite. By Definition~\ref{d:xyz}(2), for every $n,m\in\w$ the point $w_{n,m}= p(x_{n,m},y_n,y_{n,m})$ has support $\supp(w_{n,m})\supset\{x_{n,m},y_{n,m}\}$ and then by Lemma~\ref{l:supp-Ropen}, the set $$W_{n,m}=\{a\in F(X):\supp(a)\cap U_{n,m}\ne \emptyset\ne\supp(a)\cap V_n\}$$ is an $\IR$-open neighborhood of $w_{n,m}$ in $FX$. We claim that the family $(W_{n,m})_{n,m\in\w}$ is compact-finite. For this observe that $W_{n,m}=\widetilde U_{n,m}\cap \widetilde V_n$ where $\widetilde U_{n,m}=\{a\in FX:\supp(a)\cap U_{n,m}\ne\emptyset\}$ and $\widetilde V_n=\{a\in FX:\supp(a)\cap V_n\ne\emptyset\}$.

Take any compact set $K\subset FX$. By Lemma~\ref{l:cont-supp}, the family $(\widetilde V_n)_{n\in\w}$ is compact-finite in $FX$. So, there is a number $n_0\in\w$ such that $K\cap\widetilde V_n=\emptyset$ for all $n> n_0$. By Lemma~\ref{l:cont-supp}, for every $n\le n_0$ the family $(\widetilde U_{n,m})_{m\in\w}$ is compact-finite in $FX$, which allows us to find a number $m_0\in\w$ such that $K\cap \widetilde U_{n,m}=\emptyset$ for all $n\le n_0$ and $m>m_0$. Now we see that the set
$$\{(n,m)\in\w^2:K\cap W_{n,m}\ne\emptyset\}=\{(n,m)\in\w^2:K\cap\widetilde U_{n,m}\ne\emptyset\ne K\cap\widetilde V_n\}\subset[0,n_0]\times[0,m_0]$$is finite, which implies that the family $(W_{n,m})_{n,m\in\w}$ is compact-finite and hence the family $(\{w_{n,m}\})_{n,m\in\w}$ is strongly compact-finite, which means that the $\Fin^\w$-fan $(\{z_{n,m}\}_{n,m\in\w}$ is strong.
\end{proof}

Lemmas~\ref{l:type-xy} and \ref{l:xyz} imply:

\begin{corollary}\label{c:xy+xyz} Assume that a $\mu$-complete (functionally Hausdorff) space $X\in\Top_i$ contains a (strong) $D_\w$-cofan $(D_n)_{n\in\w}$ and a (strong) $S^\w$-semifan $(S_n)_{n\in\w}$. If the functor-space $FX$ is a topological algebra of types $x{\cdot}y$ and $x{}(y^*z)$, then the space $FX$ contains a (strong) $\Fin^\w$-fan.
\end{corollary}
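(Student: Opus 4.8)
The plan is to derive the corollary from the two preceding lemmas by a dichotomy on the set of limit points $L=\{\lim S_n\}_{n\in\w}$ of the given $S^\w$-semifan, using each lemma to resolve one case. The key observation is that the corollary supplies \emph{both} algebraic structures (types $x{\cdot}y$ and $x{}(y^*z)$) precisely so that whichever alternative occurs can be handled. Since the $D_\w$-cofan $(D_n)_{n\in\w}$ appears in the hypotheses of both lemmas and is given, it is available throughout, and in both cases I pass only to subfamilies of $(S_n)_{n\in\w}$, which preserves (strong) compact-finiteness.

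First I would treat the case where $L$ is infinite. Passing to a subsequence, I may assume that the limit points $\lim S_n$, $n\in\w$, are pairwise distinct, so that $(S_n)_{n\in\w}$ is an $S^\w$-semifan with infinite set of limit points; any subfamily of a (strongly) compact-finite family is (strongly) compact-finite, so it remains a (strong) $S^\w$-semifan. Then Lemma~\ref{l:xyz}, applied with the topological-algebra-of-type-$x{}(y^*z)$ structure on $FX$, yields a (strong) $\Fin^\w$-fan in $FX$.

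The second case is where $L$ is finite. By the Pigeonhole Principle there are a point $x_*\in X$ and an infinite set $\Omega\subset\w$ such that $\lim S_n=x_*$ for all $n\in\Omega$. The subfamily $(S_n)_{n\in\Omega}$ consists of convergent sequences all converging to $x_*$, so its set of limit points is the singleton $\{x_*\}$, which trivially has compact closure; moreover every neighborhood of $x_*$ contains a tail of each $S_n$, $n\in\Omega$, and hence meets infinitely many of these sequences, so the family is not locally finite at $x_*$. Therefore $(S_n)_{n\in\Omega}$ is a (strong) $\dot S^\w$-fan and in particular a (strong) $\bar S^\w$-fan. Invoking Lemma~\ref{l:type-xy} with the topological-algebra-of-type-$x{\cdot}y$ structure on $FX$ then produces a (strong) $\Fin^\w$-fan in $FX$.

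The only point requiring care — and the main (if modest) obstacle — is the verification in the finite-limit-points case that a $\dot S^\w$-semifan is automatically a fan, i.e. that the common accumulation at $x_*$ forces failure of local finiteness, together with the observation that a singleton limit set qualifies as the compact-closure condition defining a $\bar S$-fan. Once this and the routine stability of (strong) compact-finiteness under passing to subfamilies are recorded, the corollary follows immediately by combining the two cases; in either alternative the adjective \emph{strong} is transported in parallel, since each lemma delivers a strong $\Fin^\w$-fan exactly when its input data are strong.
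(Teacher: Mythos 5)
Your proposal is correct and follows essentially the same route as the paper: the paper's proof is exactly the dichotomy on whether $\{\lim S_n\}_{n\in\w}$ is finite or infinite, invoking Lemma~\ref{l:type-xy} (type $x{\cdot}y$) in the finite case and Lemma~\ref{l:xyz} (type $x{}(y^*z)$) in the infinite case. You merely spell out details the paper leaves implicit — the pigeonhole passage to a common limit point $x_*$ and the verification that the resulting subfamily fails local finiteness at $x_*$, hence is a (strong) $\bar S^\w$-fan — which is a harmless (and sound) elaboration.
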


\begin{proof} If the set $\{\lim S_n\}_{n\in\w}$ is finite, then the (strong) semifan $(S_n)_{n\in\w}$ is a (strong) $\bar S^\w$-fan and we can apply Lemma~\ref{l:type-xy} to conclude that $FX$ contains a (strong) $\Fin^\w$-fan. If the set $\{\lim S_n\}_{n\in\w}$ is infinite, then the functor-space $FX$ contains a (strong) $\Fin^\w$-fan by Lemma~\ref{l:xyz}.
\end{proof}

Corollary~\ref{c:xy+xyz} and Theorems~\ref{t:Dcofan} and \ref{t:Ssemifan} imply the following theorem:

\begin{theorem}\label{t:xy+xyz} Let $X\in\Top_i$ be a $\mu$-complete space such that the functor-space $FX$ is a topological algebra of types $xy$ and $x(y^*z)$.
\begin{enumerate}
\item If $X$ is Tychonoff and the functor-space $FX$ contains no strong $\Fin^\w$-fan, then:
\begin{enumerate}
\item If $X$ is cosmic, then $X$ is $\sigma$-compact;
\item If $X$ is a $\bar\aleph_k$-space, then  either $X$ contains a compact set with $k$-discrete complement or $X$ is a $k$-sum of hemicompact spaces.
\item If $X$ is a $\bar\aleph_k$-$k_\IR$-space, then either $X$ a topological sum of cosmic $k_\w$-spaces or $X$ is metrizable and has compact set of non-isolated points.
\item If $X$ is an $\aleph$-space, then the $k$-modification $kX$ of $X$ either is a topological sum of cosmic $k_\w$-spaces or $kX$ is a metrizable space with compact set of non-isolated points.
\end{enumerate}
\item If the space $FX$ contains no $\Fin^{\w}$-fan and $X$ is $k^*$-metrizable, then its $k$-modification $kX$ is either a metrizable space with compact set of non-isolated points or $kX$ is a topological sum of  cosmic $k_\w$-spaces.
\end{enumerate}
\end{theorem}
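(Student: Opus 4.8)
The plan is to derive everything from the contrapositive of Corollary~\ref{c:xy+xyz} and then route the resulting dichotomy into the already-proved structure theorems \ref{t:Dcofan} and \ref{t:Ssemifan}. Since $FX$ is a topological algebra of both types $x{\cdot}y$ and $x{}(y^*z)$, Corollary~\ref{c:xy+xyz} says that a $\mu$-complete $X$ cannot simultaneously carry a $D_\w$-cofan and an $S^\w$-semifan (of the relevant strength) without $FX$ acquiring a $\Fin^\w$-fan of that strength. So the first step is to record the basic dichotomy: under the hypothesis of~(1), that $FX$ has no \emph{strong} $\Fin^\w$-fan, the space $X$ contains no strong $D_\w$-cofan \emph{or} no strong $S^\w$-semifan; under the hypothesis of~(2), that $FX$ has no $\Fin^\w$-fan at all and $X$ is $k^*$-metrizable, $X$ contains no $D_\w$-cofan or no $S^\w$-semifan. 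Everything else is a case-by-case routing of the two horns.

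For Part~(1) I would treat the two horns in parallel across the four sub-cases. On the "no strong $D_\w$-cofan" horn I would invoke Proposition~\ref{p:decomp1} (equivalently Theorem~\ref{t:Dcofan}(1)): a $\mu$-complete $\bar\aleph_k$-space without a strong $D_\w$-cofan is a $k$-sum of cosmic hemicompact spaces whose $k$-modification is a topological sum of cosmic $k_\w$-spaces. This gives (b) at once; for the cosmic case (a) I would instead use Theorem~\ref{t:Dcofan}(4) for $\sigma$-compactness, and for the $\aleph$-space case (d) the claim about $kX$ is exactly Proposition~\ref{p:decomp1}. On the "no strong $S^\w$-semifan" horn I would apply the matching parts of Theorem~\ref{t:Ssemifan}: part~(5) for (a); parts~(1)--(2) together with $\mu$-completeness for (b), where the bounded \emph{closed} witness set is compact and its complement, having no infinite metrizable (hence no infinite) compacta, is $k$-discrete; part~(3) for (c); and part~(7) for (d). The one genuinely extra step occurs in (c): the cofan horn yields, via Proposition~\ref{p:decomp1}, only a compact-clopen partition $X=\bigcup_\alpha X_\alpha$ into hemicompact pieces, and I must upgrade this to an honest topological sum. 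For this I would observe that the characteristic function of each piece is $k$-continuous and therefore, by the $k_\IR$-property, continuous; hence each $X_\alpha$ is clopen and $X=\bigoplus_\alpha X_\alpha$ is a topological sum of cosmic $k_\w$-spaces.

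For Part~(2) the semifan-free horn is clean: "no $S^\w$-semifan" forces both "no $\ddot S^\w$-fan" and "no strong $S^\w$-semifan", so Theorem~\ref{t:Ssemifan}(6) delivers that $kX$ is metrizable with compact set of non-isolated points. The cofan-free horn is where I expect the main obstacle. Here $X$ is only $k^*$-metrizable, hence an $\aleph_k$-space but not a priori a $\bar\aleph_k$-space, so Proposition~\ref{p:decomp1} does not apply directly; the right instrument is Theorem~\ref{t:Dcofan}(2) (Proposition~\ref{p:a+s->ba}), which upgrades a $\mu$-complete $\aleph_k$-space with no $D_\w$-cofan to a $\bar\aleph_k$-space and concludes that $kX$ is a topological sum of cosmic $k_\w$-spaces -- but it demands in addition the absence of a $\bar S^{\w_1}$-fan. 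The delicate point is therefore to exclude $\bar S^{\w_1}$-fans (equivalently $\ddot S^{\w_1}$-fans, since compacta in a $k^*$-metrizable space are metrizable) from the bare assumption that $FX$ has no $\Fin^\w$-fan. This exclusion is \emph{not} automatic: by Lemma~\ref{l:x.y} a $\bar S^{\w_1}$-fan feeds only a $\Fin^{\w_1}$-fan into $FX$, not a $\Fin^\w$-fan, so it is compatible with our hypothesis. I would handle it by a secondary analysis at cardinal $\w_1$ --- restricting a would-be $\bar S^{\w_1}$-fan to a countable $S^\w$-semifan to re-enter the cofan/semifan machinery, and otherwise invoking Theorem~\ref{t:Dcofan}(2) --- and this reconciliation of the $k^*$-metrizable hypothesis with the $\bar\aleph_k$ hypothesis of the decomposition propositions is the step I expect to require the most care.
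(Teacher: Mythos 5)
Your overall strategy is exactly the paper's: Theorem~\ref{t:xy+xyz} is derived there in one line from the dichotomy of Corollary~\ref{c:xy+xyz} routed into Theorems~\ref{t:Dcofan} and \ref{t:Ssemifan}, and your treatment of 1(a), 1(b), 1(d) and of all the ``no strong $S^\w$-semifan'' horns is correct. In 1(c), however, your clopen upgrade stops short of the conclusion: the characteristic-function argument does show that each piece $X_\alpha$ of the compact-clopen decomposition is clopen, so $X=\bigoplus_\alpha X_\alpha$, but the pieces are then only known to be cosmic \emph{hemicompact} $k_\IR$-spaces, and hemicompact does not imply $k_\w$ --- that requires each piece to be a $k$-space, which is precisely what remains to be proved. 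The missing bridge (and the paper's actual route at the identical point of Theorem~\ref{t:xy+xyz+xy.zs}(3), which makes your detour unnecessary) is: by Theorem~\ref{t:Dcofan}(1) the $k$-modification $kX$ is a topological sum of cosmic $k_\w$-spaces, hence Tychonoff; by the $k_\IR$-property every continuous real function on $kX$ is continuous on $X$; since the topology of the Tychonoff space $kX$ is generated by its cozero sets, the identity $X\to kX$ is continuous and $X=kX$. This gap is real but easily repaired.

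The serious gap is part 2 on the cofan-free horn, which you flag but do not close, and your proposed repair cannot work as described. Passing from a $\bar S^{\w_1}$-fan to a countable subfamily preserves compact-finiteness but destroys non-local-finiteness, so all you recover is an $S^\w$-semifan; in the case at hand ($X$ has no $D_\w$-cofan) this contradicts nothing, because with only the types $x{\cdot}y$ and $x(y^*z)$ available, Lemmas~\ref{l:type-xy} and \ref{l:xyz} manufacture $\Fin^\w$-fans in $FX$ only from configurations that \emph{contain} a $D_\w$-cofan, while Lemma~\ref{l:x.y} converts a $\bar S^{\w_1}$-fan merely into a $\Fin^{\w_1}$-fan, which your hypothesis does not exclude. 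Consequently the case ``$X$ is $\mu$-complete, $k^*$-metrizable, has no $D_\w$-cofan, but carries a $\ddot S^{\w_1}$-fan'' is untouched: Theorem~\ref{t:Dcofan}(2) (Proposition~\ref{p:a+s->ba}) is blocked by the $\w_1$-fan, and Theorem~\ref{t:Dcofan}(1) is unavailable since $k^*$-metrizability yields only $\aleph_k$, not $\bar\aleph_k$. Note that in the richer Theorem~\ref{t:xy+xyz+xy.zs} the paper disposes of $\w_1$-fans through the third algebra type $(x^*y)(z^*s)$ via Corollary~\ref{c:xy+xyzs}, which you do not have here; so completing part 2 requires either a purely topological argument that a $\mu$-complete $k^*$-metrizable space without $D_\w$-cofans admits no $\ddot S^{\w_1}$-fan (equivalently, is $\bar\aleph_k$), or a new construction of a $\Fin^\w$-fan in $FX$ from such a fan. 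The proposal supplies neither --- to your credit, you have isolated a step that the paper's one-line proof also leaves unjustified, but identifying the obstruction is not the same as removing it.
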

\newpage

\section{Topological algebras of type $(x^*y){}(z^*s)$}\label{s:ta:xy.zs}

\begin{definition}\label{d:xy.zs}
Given a space $X\in\Top_i$, we say that the functor-space $FX$ is a \index{topological algebra!of type $(x^*y)(z^*s)$}{\em topological algebra of type $(x^*y){}(z^*s)$} if there exist a finite subset $C\subset X$ and an $F$-valued operation $p:X^4\to FX$ having the properties:
\begin{enumerate}
\item[1)] $p(x,x,x,x)=p(x,x,z,z)=p(z,z,z,z)$ for all $x,z\in X$;
\item[2)]  $\{y,s\}\subset \supp\big(p(x,y,z,s)\big)$ for any pairwise distinct points $x,y,z,s\in X\setminus C$;
\item[4)] for any point $x\in X$ and neighborhood $W\subset FX$ of $p(x,x,x,x)$ there is a neighborhood $U_\Delta\subset X_d\times X$ of the diagonal $\Delta$ of $X^2$ such that $p(U_{\Delta}\times U_\Delta)\subset W$.
\end{enumerate}
A topological algebra of type $(x^*y){}(z^*s)$ is called a \index{topological superalgebra of type  $(x^*y)(z^*s)$}{\em topological superalgebra of type} $(x^*y){}(z^*s)$ if  it has an additional property:
\begin{enumerate}
\item[4)] for a compact subset $B\subset X$ and a compact subset $K\subset FX$ there exists a finite subset $A\subset X$ such that for any pairwise distinct points $x,y,z,s\in B\setminus C$ with $p(x,y,z,s)\in K$ we get $\{x,z\}\subset A$.
\end{enumerate}
\end{definition}

\begin{lemma}\label{l:xy.zs} Assume that a (functionally Hausdorff) $\mu$-complete space $X$ contains a (strong) $S^{\w_1}$-semifan $(S_\alpha)_{\alpha\in\w_1}$ such that the set $\{\lim S_\alpha\}_{\alpha\in\w_1}$ is uncountable. If the functor-space $FX$ is a topological algebra of type  $(x^*y){}(z^*s)$, then $FX$ contains a (strong) $\Fin^{\w_1}$-fan.
\end{lemma}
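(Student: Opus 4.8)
The plan is to follow the template of the $\w_1$-constructions in Theorem~\ref{t:product2} and Lemma~\ref{l:x.y}, exploiting the fact that condition~(1) of Definition~\ref{d:xy.zs} forces $p(x,x,x,x)=p(z,z,z,z)$ for all $x,z\in X$, so that there is a single element $e:=p(x,x,x,x)\in FX$ independent of $x$, which will serve as the accumulation point of the fan. Let $C$ and $p:X^4\to FX$ witness that $FX$ is a topological algebra of type $(x^*y){}(z^*s)$. I would first normalize the semifan $(S_\alpha)_{\alpha\in\w_1}$: since the set $\{\lim S_\alpha\}_{\alpha\in\w_1}$ is uncountable, after reindexing by an uncountable subset I may assume the limit points $y_\alpha:=\lim S_\alpha$ are pairwise distinct; discarding the finitely many $\alpha$ with $\bar S_\alpha\cap C\ne\emptyset$ (finite by compact-finiteness of the semifan applied to the compact set $C$ together with distinctness of the limits) I may assume $\bar S_\alpha\cap C=\emptyset$ for all $\alpha$. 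Then, by transfinite recursion of length $\w_1$, I would thin the family to an uncountable subfamily whose closures $(\bar S_\alpha)$ are pairwise disjoint: at each step only countably many indices $\delta$ satisfy $\bar S_\delta\cap T\ne\emptyset$ for the countable set $T$ accumulated so far, because each point of $T$ lies in $S_\delta$ for only finitely many $\delta$ (compact-finiteness) and equals $y_\delta$ for at most one $\delta$ (distinct limits). For each surviving $\alpha$ I fix an injective enumeration $(y_{\alpha,n})_{n\in\IN}$ of $S_\alpha\setminus\{y_\alpha\}$.

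Next I would pick an almost disjoint family $(A_\alpha)_{\alpha\in\w_1}$ of infinite subsets of $\w$, set $\Lambda=\{(\alpha,\beta)\in\w_1\times\w_1:\alpha\ne\beta\}$, and define the finite sets
$$D_{\alpha,\beta}=\{p(y_\alpha,y_{\alpha,n},y_\beta,y_{\beta,n}):n\in A_\alpha\cap A_\beta\}\subset FX,\qquad (\alpha,\beta)\in\Lambda.$$
Because the four points $y_\alpha,y_{\alpha,n},y_\beta,y_{\beta,n}$ lie in the disjoint closures $\bar S_\alpha,\bar S_\beta$ and avoid $C$, they are pairwise distinct and lie in $X\setminus C$, so condition~(2) of Definition~\ref{d:xy.zs} gives $\{y_{\alpha,n},y_{\beta,n}\}\subset\supp\big(p(y_\alpha,y_{\alpha,n},y_\beta,y_{\beta,n})\big)$. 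To see that $(D_{\alpha,\beta})_{(\alpha,\beta)\in\Lambda}$ is not locally finite at $e$, I fix a neighborhood $W$ of $e$, use the diagonal--neighborhood condition of Definition~\ref{d:xy.zs} to get a neighborhood $U_\Delta\subset X_d\times X$ of the diagonal with $p(U_\Delta\times U_\Delta)\subset W$, and for each $\alpha$ choose a neighborhood $O_\alpha$ of $y_\alpha$ with $\{y_\alpha\}\times O_\alpha\subset U_\Delta$ and a number $\varphi(\alpha)$ with $y_{\alpha,n}\in O_\alpha$ for $n\ge\varphi(\alpha)$. By the Pigeonhole Principle some $\Omega_m=\{\alpha:\varphi(\alpha)=m\}$ is uncountable, and by Lemma~\ref{l:ad} (applied to $(A_\alpha)_{\alpha\in\Omega_m}$ and $F=[0,m]$) the set $\Lambda'=\{(\alpha,\beta):\alpha,\beta\in\Omega_m,\ A_\alpha\cap A_\beta\not\subset[0,m]\}$ is uncountable; for $(\alpha,\beta)\in\Lambda'$ and $n\in A_\alpha\cap A_\beta\setminus[0,m]$ both $(y_\alpha,y_{\alpha,n})$ and $(y_\beta,y_{\beta,n})$ lie in $U_\Delta$, whence $p(y_\alpha,y_{\alpha,n},y_\beta,y_{\beta,n})\in W\cap D_{\alpha,\beta}$. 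Thus uncountably many $D_{\alpha,\beta}$ meet $W$.

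For compact-finiteness I would take a compact $K\subset FX$, use boundedness of $F$ and $\mu$-completeness of $X$ to get a compact $B\subset X$ with $K\subset F(B;X)$, and apply Theorem~\ref{t:supp}: any element of $K$ has support inside $B$, so $D_{\alpha,\beta}\cap K\ne\emptyset$ forces $S_\alpha\cap B\ne\emptyset\ne S_\beta\cap B$, i.e.\ $\alpha,\beta$ lie in the finite set $\{\gamma:S_\gamma\cap B\ne\emptyset\}$ (finite by compact-finiteness of the semifan). Hence $\{(\alpha,\beta):D_{\alpha,\beta}\cap K\ne\emptyset\}$ is finite and $(D_{\alpha,\beta})_{(\alpha,\beta)\in\Lambda}$ is a $\Fin^{\w_1}$-fan. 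In the strong case, when $X$ is functionally Hausdorff and each $S_\alpha$ has an $\IR$-open neighborhood $V_\alpha$ with $(V_\alpha)_{\alpha\in\w_1}$ compact-finite, I would put $\widetilde V_\alpha=\{a\in FX:\supp(a)\cap V_\alpha\ne\emptyset\}$, which is $\IR$-open by Lemma~\ref{l:supp-Ropen} and compact-finite by Lemma~\ref{l:cont-supp}; then $W_{\alpha,\beta}=\widetilde V_\alpha\cap\widetilde V_\beta$ is an $\IR$-open neighborhood of $D_{\alpha,\beta}$ and the family $(W_{\alpha,\beta})_{(\alpha,\beta)\in\Lambda}$ is compact-finite, making the $\Fin^{\w_1}$-fan strong.

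I expect the main obstacle to be the normalization step, namely extracting an uncountable subfamily of the semifan whose convergent sequences have pairwise disjoint closures and avoid $C$, since the semifan (unlike a $\bar S$-fan) carries no compact set of limit points to exploit; the point is that reindexing to distinct limits makes the family $(\bar S_\alpha)$ countable-to-one over each point of $X$, which is exactly enough to drive the transfinite disjointification. Everything downstream is then a routine transcription of the type-$x{\cdot}y$ argument of Lemma~\ref{l:x.y}, with the constant value $e=p(x,x,x,x)$ playing the role of the value $p(x,x)$ there.
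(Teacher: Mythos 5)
Your proposal is correct and follows essentially the same route as the paper's proof: the same finite sets $D_{\alpha,\beta}=\{p(x_\alpha,x_{\alpha,n},x_\beta,x_{\beta,n}):n\in A_\alpha\cap A_\beta\}$ built from an almost disjoint family, the same pigeonhole plus Lemma~\ref{l:ad} argument showing failure of local finiteness at the constant point $p(x,x,x,x)$, and the same use of supports (Theorem~\ref{t:supp}, Lemmas~\ref{l:supp-Ropen} and \ref{l:cont-supp}) for (strong) compact-finiteness. Your normalization step -- first passing to an uncountable subfamily with pairwise distinct limits and then transfinitely disjointifying the closures -- is just a more explicit rendering of the transfinite induction the paper performs, and it correctly isolates why the uncountability of $\{\lim S_\alpha\}_{\alpha\in\w_1}$ is needed.
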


\begin{proof}
Fix a finite subset $C\subset X$ and an operation $p:X^4\to FX$ witnessing that the functor-space $FX$ is a topological algebra of type $(x^*y){}(z^*s)$.

By our assumption, the set $\{\lim S_\alpha\}_{\alpha\in\w_1}$ is uncountable and the family $(S_\alpha)_{\alpha\in\w_1}$ is compact-finite and consists of sets with compact closures. Using these facts, by transfinite induction we can choose an uncountable subset $\Omega\subset\w_1$ such that every ordinal $\alpha\in\Omega$ the convergent sequence $\bar S_\alpha$ does not intersect the countable set $C\cup \bigcup\{\bar S_\beta:\beta\in\Omega,\;\beta<\alpha\}$. For such set $\Omega$ the family $(\bar S_\alpha)_{\alpha\in\Omega}$ is disjoint.
Let $\Lambda=\{(\alpha,\beta)\in\Omega:\alpha\ne\beta\}$.

For every $\alpha\in\w_1$ denote by $x_\alpha$ the limit point of the convergent sequence $S_\alpha$ and let $\{x_{\alpha,n}\}_{n\in\w}$ be an injective enumeration of the set $S_\alpha\setminus\{x_\alpha\}$.

Choose any almost disjoint family $(A_\alpha)_{\alpha\in\Omega}$ of infinite subsets of $\w$, and for any pair $(\alpha,\beta)\in\Lambda$   consider the finite subset $D_{\alpha,\beta}=\{p(x_\alpha, x_{\alpha,n},x_\beta,x_{\beta,n}):n\in A_\alpha\cap A_\beta\}.$

Definition~\ref{d:xy.zs}(2) implies the inclusion $$D_{\alpha,\beta}\subset\{a\in FX:\supp(a)\cap S_{\alpha}\ne \emptyset\ne \supp(a)\cap S_\beta\}$$ for all $(\alpha,\beta)\in\Lambda$. Since the family $(S_\alpha)_{\alpha\in\Omega}$ is compact-finite, Lemma~\ref{l:cont-supp} implies that the family $(D_{\alpha,\beta})_{(\alpha,\beta)\in\Lambda}$ is compact-finite in $FX$.

Let us show that  this family is not locally finite at the point $p(x,x,x,x)$ for any $x\in X$. Given a neighborhood $W\subset FX$ of $p(x,x,x,x)$ we can apply Definition~\ref{d:xy.zs}(3) and find a neighborhood $U_\Delta\subset X_d\times X$ of the diagonal such that $p(U_\Delta\times U_\Delta)\subset W$. Since for every $\alpha\in\Omega$ the sequence $(x_{\alpha,n})_{n\in\w}$ converges to $x_\alpha$, we can find a number $\varphi(\alpha)\in\w$ such that $\{(x_\alpha,x_{\alpha,n})\}_{n\ge\varphi(\alpha)}\subset U_\Delta$.
By the Pigeonhole Principle, for some $m\in\w$ the set $\Omega_m=\{\alpha\in\Omega:\varphi(\alpha)=m\}$ is uncountable.
By Lemma~\ref{l:ad}, the set $$\Lambda_m=\{(\alpha,\beta)\in\Lambda:\alpha,\beta\in\Omega_m,\;A_{\alpha}\cap A_{\beta}\not\subset [0,m]\}$$is uncountable. For any pair $(\alpha,\beta)\in\Lambda_m$ we can find a number $n\in A_\alpha\cap A_\beta\setminus[0,m]$ and conclude that
$$D_{\alpha,\beta}\ni p(x_\alpha,x_{\alpha,n},x_\beta,x_{\beta,n})\in p(U_\Delta\times U_\Delta)\subset W$$and hence the family $\{(\alpha,\beta)\in\Lambda:W\cap D_{\alpha,\beta}\ne\emptyset\}\supset \Lambda_m$ is uncountable, which implies that the family $(D_{\alpha,\beta})_{(\alpha,\beta)\in\Lambda}$ is not locally countable (and not locally finite) at $p(x,x,x,x)$.

If the $S^{\w_1}$-semifan $(S_\alpha)_{\alpha\in\w_1}$ is strong and $X$ is functionally Hausdorff, then each set $S_\alpha$ has an $\IR$-open neighborhood $V_\alpha\subset X$ such that the family $(V_\alpha)_{\alpha\in\w_1}$ is compact-finite. By Definition~\ref{d:xy.zs}(2) and  Lemma~\ref{l:supp-Ropen}, for every pair $(\alpha,\beta)\in\Lambda$ the set
$$W_{\alpha,\beta}=\{a\in FX:\supp(a)\cap U_\alpha\ne\emptyset\ne\supp(a)\cap U_\beta\}$$is an $\IR$-open neighborhood of the set $D_{\alpha,\beta}$. Using Lemma~\ref{l:cont-supp}, we can show that the family $(W_{\alpha,\beta})_{(\alpha,\beta)\in\Lambda}$ is compact-finite in $FX$ witnessing that the $\Fin$-fan $(D_{\alpha,\beta})_{(\alpha,\beta)\in\Lambda}$ is strong.
 \end{proof}

\begin{corollary}\label{c:xy+xyzs} Assume that a (functionally Hausdorff) $\mu$-complete space $X$ contains a (strong) $S^{\w_1}$-semifan. If the functor-space $FX$ is a topological algebra of types $x{\cdot}y$ and   $(x^*y){}(z^*s)$, then $FX$ contains a (strong) $\Fin^{\w_1}$-fan.
\end{corollary}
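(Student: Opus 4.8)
The plan is to argue by a dichotomy on the set of limit points $L=\{\lim S_\alpha\}_{\alpha\in\w_1}$ of the given (strong) $S^{\w_1}$-semifan $(S_\alpha)_{\alpha\in\w_1}$, reducing each of the two resulting cases to one of the two preceding lemmas. Since $FX$ is assumed to be a topological algebra of \emph{both} types $x{\cdot}y$ and $(x^*y){}(z^*s)$, I have Lemma~\ref{l:x.y} and Lemma~\ref{l:xy.zs} at my disposal, and the whole argument amounts to feeding the correct fan into the correct lemma while tracking the parenthetical ``strong'' uniformly.

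First I would treat the case where $L$ is uncountable. In this case the hypotheses of Lemma~\ref{l:xy.zs} are met verbatim: $X$ is a (functionally Hausdorff) $\mu$-complete space carrying a (strong) $S^{\w_1}$-semifan whose set of limit points is uncountable, and $FX$ is a topological algebra of type $(x^*y){}(z^*s)$. Hence Lemma~\ref{l:xy.zs} immediately yields a (strong) $\Fin^{\w_1}$-fan in $FX$, and this case is finished with no further work.

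The substance lies in the case where $L$ is countable. Here I would apply the Pigeonhole Principle to the map $\alpha\mapsto\lim S_\alpha$ from $\w_1$ to the countable set $L$: were every fiber countable, $\w_1$ would be a countable union of countable sets, which is impossible, so there is a point $x\in L$ such that $\Omega=\{\alpha\in\w_1:\lim S_\alpha=x\}$ is uncountable. The subfamily $(S_\alpha)_{\alpha\in\Omega}$ then consists of convergent sequences all sharing the single limit $x$, so its set of limit points is the singleton $\{x\}$, which has compact closure; thus it is a $\dot S^{\w_1}$-fan candidate, and in particular a $\bar S^{\w_1}$-fan candidate. I would then verify that it is genuinely a fan, i.e.\ not locally finite at $x$: each $S_\alpha$ is an infinite set clustering at $x$, so every neighborhood $O_x$ of $x$ meets $S_\alpha$ for \emph{every} $\alpha\in\Omega$, whence the family is not even locally countable at $x$. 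Being a subfamily of a (strongly) compact-finite family, $(S_\alpha)_{\alpha\in\Omega}$ remains (strongly) compact-finite, so it is a (strong) $\bar S^{\w_1}$-fan. Feeding it into Lemma~\ref{l:x.y}, which uses only that $FX$ is a topological algebra of type $x{\cdot}y$, produces a (strong) $\Fin^{\w_1}$-fan in $FX$.

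The argument carries no serious obstacle; the only points requiring care are the two routine verifications in the countable case --- that a family of infinite sequences all converging to one point $x$ fails to be locally finite at $x$, and that (strong) compact-finiteness is inherited by subfamilies --- together with keeping the parenthetical ``strong'' synchronized so that the strong hypothesis yields the strong conclusion in both branches.
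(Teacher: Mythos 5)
Your proof is correct and follows essentially the same route as the paper: the same dichotomy on the cardinality of the limit set $L$, with Lemma~\ref{l:xy.zs} handling the uncountable case and the pigeonhole argument producing an uncountable fiber $\Omega$ so that $(S_\alpha)_{\alpha\in\Omega}$ is a (strong) $\dot S^{\w_1}$-fan to which Lemma~\ref{l:x.y} applies. The extra verifications you supply (non-local-finiteness at the common limit, inheritance of (strong) compact-finiteness by subfamilies) are exactly the routine facts the paper leaves implicit.
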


\begin{proof} By our assumption, the space $X$ contains a (strong) $S^{\w_1}$-semifan $(S_\alpha)_{\alpha\in\w_1}$. If the set $L=\{\lim S_\alpha\}_{\alpha\in\w_1}$ is uncountable, then by Lemma~\ref{l:xy.zs}, the space $FX$ contains a (strong) $\Fin^{\w_1}$-fan.

If the set $L$ is countable, then for some point $x\in L$ the set $\Omega=\{\alpha\in\w_1:x=\lim S_\alpha\}$ is uncountable. Then $(S_\alpha)_{\alpha\in\Omega}$ is a (strong) $\dot S^{\w_1}$-fan in $X$ and we can apply Lemma~\ref{l:x.y} to find a (strong) $\Fin^{\w_1}$-fan in $FX$.
\end{proof}

We recall that a  subset $B\subset X$ of a topological space $X$ is \index{subset!$\w_1$-bounded}{\em $\w_1$-bounded} if $X$ contains no uncountable locally finite family $\U$ of open subsets of $X$ that meet $B$. A topological space $X$ is \index{topological space!$\w_1$-bounded}{\em $\w_1$-bounded} if $X$ is ${\w_1}$-bounded in itself.

\begin{corollary}\label{c:xy.zs3} Let $X\in\Top_i$ be a $\mu$-complete space such that the space $FX$ is a topological algebra of types $x{\cdot}y$ and $(x^*y)(z^*s)$.
\begin{enumerate}
\item[\textup{1)}] If $X$ is a functionally Hausdorff ($\aleph_k$-)space and  $FX$ contains no strong $\Fin^{\w_1}$-fan, then $X$ contains a closed $\w_1$-bounded subset $B\subset X$ whose complement $X\setminus B$ contains no infinite metrizable compact set (and is $k$-discrete).
\item[\textup{2)}] If the space $FX$ contains no $\Fin^{\w_1}$-fan and $X$ is $k^*$-metrizable, then $X$ contains a $k$-closed $\aleph_0$-subspace $A\subset X$ with $k$-discrete complement $X\setminus A$.
\end{enumerate}
\end{corollary}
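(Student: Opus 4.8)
The plan is to obtain both statements by contraposition from Corollary~\ref{c:xy+xyzs} and then feed the resulting absence of $S^{\w_1}$-semifans into the structural results established earlier. The argument is short, since all of the combinatorial work is already packaged in Corollary~\ref{c:xy+xyzs}, Lemma~\ref{l:cs-compact}, and Proposition~\ref{p:k*->a}; what remains is to route the hypotheses correctly.

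For statement (1) I would first apply the strong (parenthetical) form of Corollary~\ref{c:xy+xyzs}: since $X$ is a functionally Hausdorff $\mu$-complete space whose functor-space $FX$ is a topological algebra of types $x{\cdot}y$ and $(x^*y)(z^*s)$, the presence of a strong $S^{\w_1}$-semifan in $X$ would force a strong $\Fin^{\w_1}$-fan in $FX$. As $FX$ contains no such fan by hypothesis, $X$ contains no strong $S^{\w_1}$-semifan. Then Lemma~\ref{l:cs-compact}, applied with $\lambda=\w_1$, yields a closed $\w_1$-bounded subset $B\subset X$ whose complement $X\setminus B$ contains no infinite metrizable compact subspace, which is precisely the first assertion. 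For the parenthetical strengthening, assume in addition that $X$ is an $\aleph_k$-space. Then every compact subset of $X$ is metrizable, so the statement ``$X\setminus B$ contains no infinite metrizable compact subspace'' is equivalent to ``$X\setminus B$ contains no infinite compact subset''; since a space is $k$-discrete exactly when it has no infinite compact subset, $X\setminus B$ is $k$-discrete.

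For statement (2) I would instead invoke the plain (non-parenthetical) form of Corollary~\ref{c:xy+xyzs}, which requires only the $\mu$-completeness of $X$: an $S^{\w_1}$-semifan in $X$ would produce a $\Fin^{\w_1}$-fan in $FX$, so the absence of the latter means $X$ contains no $S^{\w_1}$-semifan. Since $X$ is $k^*$-metrizable, Proposition~\ref{p:k*->a} then applies verbatim and produces a $k$-closed $\aleph_0$-subspace $A\subset X$ with $k$-discrete complement $X\setminus A$, as required.

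There is no genuine obstacle here beyond bookkeeping: one must match the strong and non-strong alternatives of Corollary~\ref{c:xy+xyzs} to the two statements (the strong version, which needs functional Hausdorffness, for (1); the plain version for (2)), verify that the standing hypotheses of $\mu$-completeness, functional Hausdorffness, and $k^*$-metrizability are exactly those demanded by Lemma~\ref{l:cs-compact} and Proposition~\ref{p:k*->a}, and recall the two elementary facts that compacta in $\aleph_k$-spaces are metrizable and that $k$-discreteness is equivalent to the absence of infinite compacta.
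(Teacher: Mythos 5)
Your proposal is correct and follows essentially the same route as the paper: contrapose Corollary~\ref{c:xy+xyzs} (strong form for statement (1), plain form for statement (2)) to rule out the relevant $S^{\w_1}$-semifans, then apply Lemma~\ref{l:cs-compact} together with the metrizability of compacta in $\aleph_k$-spaces for (1), and Proposition~\ref{p:k*->a} for (2). The only cosmetic difference is that the paper cites Theorem~\ref{t:Ssemifan}(1) where you cite Lemma~\ref{l:cs-compact} directly, but these are the same result.
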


\begin{proof}  1. If $X$ is functionally Hausdorff and $FX$ contains no strong $\Fin^{\w_1}$-fan, then by Corollary~\ref{c:xy+xyzs}, the space $X$ contains no strong $S^{\w_1}$-semifan. By Theorem~\ref{t:Ssemifan}(1), $X$ contains a closed $\w_1$-bounded set $B\subset X$ whose complement $X\setminus B$ contains no infinite compact metrizable subspaces. If $X$ is an $\aleph_k$-space, then all compact subsets of $X$ are metrizable, which implies that $X\setminus B$ is $k$-discrete.
\smallskip

2. If $FX$ contains no $\Fin^{\w_1}$-fan, then by Corollary~\ref{c:xy+xyzs}, the space $X$ contains no $S^{\w_1}$-semifan and by Proposition~\ref{p:k*->a}, the $k^*$-metrizable space $X$ contains a $k$-closed $\aleph_0$-subspace with $k$-discrete complement.
\end{proof}

\begin{theorem}\label{t:xy+xyz+xy.zs} Let $X\in\Top_i$ be a $\mu$-complete  space such that the space $FX$ is a topological algebra of types $x{\cdot}y$, $x(y^*z)$ and $(x^*y)(z^*s)$.
\begin{enumerate}
\item If $X$ is a functionally Hausdorff ($\aleph_k$-)space and $FX$ contains no strong $\Fin^{\w_1}$-fans, then $X$ contains a closed $\w_1$-bounded subset $B\subset X$ whose complement $X\setminus B$ contains no infinite metrizable compact set (and is $k$-discrete).
\item If the space $FX$ contains no $\Fin^{\w_1}$-fan and $X$ is $k^*$-metrizable, then $X$ contains a $k$-closed $\aleph_0$-subspace $A\subset X$ with $k$-discrete complement $X\setminus A$.
\item If the $X$ is Tychonoff and the space $FX$ contains no strong $\Fin^\w$-fan, then:
\begin{enumerate}
\item If $X$ is cosmic, then $X$ is $\sigma$-compact.
\item If $X$ is an $\bar\aleph_k$-space, then  either $X$ contains a compact set with $k$-discrete complement or $X$ is $k$-homeomorphic to a topological sum of cosmic $k_\w$-spaces.
\item If $X$ is an $\aleph$-space, then the $k$-modification $kX$ of $X$ either is a topological sum of cosmic $k_\w$-spaces or $kX$ is a metrizable space with compact set of non-isolated points.
\item If $X$ is a $\bar\aleph_k$-$k_\IR$-space, then  either
 $X$ is a topological sum $K\oplus D$ of a cosmic $k_\w$-space $K$ and a discrete space $D$ or else $X$ is metrizable and has compact set of non-isolated points.
\end{enumerate}
\item If the space $FX$ contains no $\Fin^{\w}$-fan, then:
\begin{enumerate}
\item if $X$ is an $\aleph_k$-space, then $X$ either $X$ contains a compact set with $k$-discrete complement or the $k$-modification of $X$ is a topological sum $D\oplus K$ of a discrete space $D$ and a cosmic $k_\w$-space $K$.
\item if $X$ is Tychonoff and $k^*$-metrizable, then its $k$-modification $kX$ is either a metrizable space with compact set of non-isolated points or $kX$ is a topological sum of a discrete space and a cosmic $k_\w$-space.
\end{enumerate}
\end{enumerate}
\end{theorem}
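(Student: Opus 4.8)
The plan is to assemble the statement from the three two-type results already at hand, letting each of the types $x{\cdot}y$, $x(y^*z)$ and $(x^*y)(z^*s)$ contribute exactly the piece it controls. Parts (1) and (2) are immediate: a topological algebra of types $x{\cdot}y$, $x(y^*z)$ and $(x^*y)(z^*s)$ is in particular of types $x{\cdot}y$ and $(x^*y)(z^*s)$, so both follow verbatim from Corollary~\ref{c:xy.zs3}(1) and (2). For Part (3), items (a), (b), (c) are precisely the conclusions of Theorem~\ref{t:xy+xyz}(1)(a), (b), (d), where only the types $x{\cdot}y$ and $x(y^*z)$ intervene; in (b) one passes between ``$k$-sum of hemicompact spaces'' and ``$k$-homeomorphic to a topological sum of cosmic $k_\w$-spaces'' via Lemma~\ref{l:ts-kw}. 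In the same way, Theorem~\ref{t:xy+xyz}(1)(c) supplies the coarse dichotomy underlying item (d): either $X$ is metrizable with compact set of non-isolated points (already one of the desired alternatives), or $X$ is a topological sum of cosmic $k_\w$-spaces.

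The genuinely new content is the sharpening of ``topological sum of cosmic $k_\w$-spaces'' to ``$K\oplus D$ with $K$ a cosmic $k_\w$-space and $D$ discrete'' in (3)(d), and the analogous sharpenings in (4)(a),(b); this is where the third type $(x^*y)(z^*s)$ enters. First I would write $X=\bigoplus_{\alpha\in A}X_\alpha$ and let $B\subset A$ index the non-discrete summands, i.e.\ those containing an infinite compact set, equivalently a convergent sequence (in a $\bar\aleph_k$-space compacta are metrizable). Choosing one convergent sequence $S_\beta\subset X_\beta$ for each $\beta\in B$ and using that the summands are clopen, hence $\IR$-open, and form a compact-finite family, the family $(S_\beta)_{\beta\in B}$ is a strong $S$-semifan whose set of limit points is closed, discrete, and of cardinality $|B|$. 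If $B$ were uncountable, then, restricting to an $\w_1$-subfamily, Lemma~\ref{l:xy.zs} would produce a strong $\Fin^{\w_1}$-fan in $FX$. I would conclude that $B$ is countable, whence $K=\bigoplus_{\beta\in B}X_\beta$ is a cosmic $k_\w$-space and $D=\bigoplus_{\alpha\in A\setminus B}X_\alpha$ is discrete.

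The hard part is closing this last step: the object constructed is a strong $\Fin^{\w_1}$-fan, while the standing hypothesis is merely the absence of a (strong) $\Fin^\w$-fan, and the absence of a $\Fin^\w$-fan does not formally forbid a $\Fin^{\w_1}$-fan. To bridge the gap I would exploit that, in the topological-sum case, $X$ is a countably tight $\aleph$-space (being a topological sum of cosmic spaces), so that the functor-space $FX$ is countably tight as well; then Corollary~\ref{c:tight} (or Corollary~\ref{c:ct-reduct}) collapses the strong $\Fin^{\w_1}$-fan, being a strong $\Fin$-fan, to a strong $\Fin^\w$-fan, contradicting the hypothesis. Securing the countable tightness of $FX$ for a bounded $\II$-regular monomorphic functor with finite supports over a countably tight $\aleph$-space is the decisive technical point on which all the refinements rest, and I expect it to be the main obstacle.

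Finally, Part (4) is handled analogously, using the stronger input ``no $\Fin^\w$-fan''. For (4)(b) I would run the argument above on the $k$-modification $kX$, starting from the dichotomy of Theorem~\ref{t:xy+xyz}(2). For (4)(a), where only the $\aleph_k$-property is assumed, I would first upgrade $X$ to a $\bar\aleph_k$-space by Proposition~\ref{p:a+s->ba}: the $\Fin^\w$-hypothesis, again through the tightness reduction together with the type $x{\cdot}y$ and Lemma~\ref{l:x.y}, forbids both a strong $D_\w$-cofan and a $\ddot S^{\w_1}$-fan, after which the $\bar\aleph_k$-analysis of Part (3) applies to give the stated structure of $kX$.
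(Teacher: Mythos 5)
Your handling of parts (1), (2) and 3(a)--(c) coincides with the paper's own proof: (1) and (2) are quoted from Corollary~\ref{c:xy.zs3}, and 3(a)--(c) follow from the two-type machinery (the paper invokes Corollaries~\ref{c:xy+xyz}, \ref{c:xy+xyzs} together with Theorems~\ref{t:Dcofan} and \ref{t:Ssemifan}, which is the content of Theorem~\ref{t:xy+xyz}(1)), with Lemma~\ref{l:ts-kw} converting ``$k$-sum of hemicompact spaces'' into ``$k$-homeomorphic to a topological sum of cosmic $k_\w$-spaces''. You have also put your finger on the genuine crux of 3(d) and (4): the lemmas that count non-discrete summands (Lemma~\ref{l:xy.zs}, Lemma~\ref{l:x.y}, Corollary~\ref{c:xy+xyzs}) produce only $\Fin^{\w_1}$-fans, whereas the standing hypothesis forbids only $\Fin^{\w}$-fans. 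For what it is worth, the paper does not solve this either: its proof simply asserts ``Then $FX$ contains no strong $\Fin^{\w_1}$-fan, too'' with no justification, and this is exactly the implication you are trying to supply.

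The problem is that your proposed bridge --- countable tightness of $FX$ plus Corollary~\ref{c:tight} --- is not merely unproven but false in precisely the case it is needed. Take $F=\FA$, the free Abelian topological group functor (it is bounded, $\II$-regular, monomorphic, has finite supports, and $\FA(X)$ is a topological algebra of all three types), and $X=\bigoplus_{\alpha<\w_1}S_\alpha$, a topological sum of $\w_1$ disjoint compact convergent sequences; $X$ is metrizable, hence a $\mu$-complete Tychonoff $\bar\aleph_k$-$k_\IR$-space. The clopen sequences $S_\alpha$ form a strong $S^{\w_1}$-semifan with uncountably many limit points, so Lemma~\ref{l:xy.zs} gives a strong $\Fin^{\w_1}$-fan in $\FA(X)$. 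On the other hand, $\FA(X)$ contains no $\Fin^\w$-fan whatsoever: a countable family of finite sets has all its supports inside a countable clopen subsum $Y=\bigoplus_{\alpha\in C}S_\alpha$; the set $F(Y;X)=\{g\in \FA(X):\supp(g)\subset Y\}$ is closed in $\FA(X)$ (its complement is $\IR$-open by Lemma~\ref{l:supp-Ropen} applied to the clopen set $X\setminus Y$), and the retraction $X\to Y$ shows it is homeomorphic to $\FA(Y)$, a cosmic $k_\w$-space by Theorem~\ref{t:kw}; being compact-finite there, the family is locally finite in $F(Y;X)$ by Proposition~\ref{k-no-Cld-fan}, hence locally finite in $\FA(X)$. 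By Corollary~\ref{c:tight} these two facts force $t(\FA(X))>\w$, so the tightness reduction you hope to establish is unavailable. Worse, this same example satisfies every hypothesis of part 3(d) while $X$ is neither a sum $K\oplus D$ of a cosmic $k_\w$-space and a discrete space nor metrizable with compact set of non-isolated points; so no argument can close the gap, because the refinements in 3(d), 4(a), 4(b) are false as stated. They genuinely require forbidding (strong) $\Fin^{\w_1}$-fans in $FX$, which is how the corresponding results are formulated in the Introduction (Theorems~\ref{it:FG} and \ref{it:FA}); your proof (and the paper's) goes through verbatim once that stronger hypothesis is assumed.
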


\begin{proof} 1,2. The first two statements are proved in Corollary~\ref{c:xy.zs3}.
\smallskip

3. Assume that $X$ is Tychonoff and $FX$ contains no strong $\Fin^{\w}$-fan. Then $FX$ contains no strong $\Fin^{\w_1}$-fan, too. In this case Corollary~\ref{c:xy+xyzs} ensures that $X$ contains no strong $S^{\w_1}$-semifan and Corollary~\ref{c:xy+xyz} guarantees that $X$ contains no strong $D_\w$-cofan or no strong $S^\w$-semifan. If $X$ is cosmic, then $X$ is $\sigma$-compact by Theorem~\ref{t:Dcofan}(4) or \ref{t:Ssemifan}(5).

So, assume that $X$ is a $\bar\aleph_k$-space. If $X$ contains no strong $D_\w$-cofan, then Theorem~\ref{t:Dcofan}(1) guarantees that the $k$-modification $kX$ of $X$ is a topological sum $\oplus_{\alpha\in A}X_\alpha$ of cosmic $k_\w$-spaces and hence $kX$ is a Tychonoff space. If $X$ is a $k_\IR$-space, then $X=kX$ is a topological sum of cosmic $k_\w$-spaces. Since $X$ contains no strong $S^{\w_1}$-semifan, at most countably many cosmic $k_\w$-spaces $X_\alpha$, $\alpha\in A$, can be non-discrete, which implies that the topological sum $\bigoplus_{\alpha\in A}X_\alpha$ decomposes into a topological sum of a cosmic $k_\w$-space and a discrete space.

If the $\mu$-complete $\bar\aleph_k$-space contains no strong $S^\w$-semifan, then  by Theorem~\ref{t:Ssemifan}(2), $X$ contains a compact subset with $k$-discrete complement.
Moreover, if $X$ is an $k_\IR$-space, then by Theorem~\ref{t:Ssemifan}(3), $X$ is metrizable and has compact set of non-isolated points. If $X$ is an $\aleph$-space, then by Theorem~\ref{t:Ssemifan}(7), the $k$-modification $kX$ of $X$ is metrizable and has compact set of non-isolated points.
\smallskip

4. Assume that $X$ is a (Tychonoff $k^*$-metrizable) space and $FX$ contains no $\Fin^\w$-fan.  By Corollary~\ref{c:xy+xyzs}, the space $X$ contains no $S^{\w_1}$-semifan and by Corollary~\ref{c:xy+xyz}, $X$ contains no $D_\w$-cofan or no $S^\w$-semifan. By Theorems~ \ref{t:Ssemifan}(2,6) and \ref{t:Dcofan}(3), the $k$-modification of $X$ either (is metrizable and) has compact set of non-isolated points or is a topological sum $\bigoplus_{\alpha\in A}X_\alpha$ of cosmic $k_\w$-spaces. Since $X$ (and $kX$) contains no $S^{\w_1}$-semifan, at most countably many cosmic $k_\w$-spaces $X_\alpha$ are non-discrete, which implies that the
topological sum $\bigoplus_{\alpha\in A}X_\alpha$ decomposes into a topological sum of a cosmic $k_\w$-space and a discrete space.
\end{proof}

Now we prove a modification of Lemma~\ref{l:xy.zs} for topological superalgebras of type $(x^*y){}(z^*s)$.

\begin{lemma}\label{l:xy..zs} Assume that a  $\mu$-complete space $X\in\Top_i$ contains an uncountable disjoint family $(\bar S_\alpha)_{\alpha\in\w_1}$ of compact convergent sequences. If the functor-space $FX$ is a topological superalgebra of type $(x^*y){}(z^*s)$, then $FX$ contains a  $\Fin^{\w_1}$-fan.
\end{lemma}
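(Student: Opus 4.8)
The plan is to reproduce the construction from the proof of Lemma~\ref{l:xy.zs} verbatim, to keep its verification of non-local-finiteness unchanged, and to replace only the compact-finiteness step — which there rested on the compact-finiteness of the semifan through Lemma~\ref{l:cont-supp} — by an argument based on the superalgebra clause of Definition~\ref{d:xy.zs}. Note that here only a plain (not strong) $\Fin^{\w_1}$-fan is asserted, so the $\IR$-open neighbourhood bookkeeping of Lemma~\ref{l:xy.zs} is dropped entirely.

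First I would carry out the routine reductions. Fix a finite set $C\subset X$ and an operation $p:X^4\to FX$ witnessing that $FX$ is a topological superalgebra of type $(x^*y){}(z^*s)$. Since $(\bar S_\alpha)_{\alpha\in\w_1}$ is disjoint and $C$ is finite, at most finitely many $\bar S_\alpha$ meet $C$, so after passing to an uncountable subfamily I may assume every $\bar S_\alpha$ is disjoint from $C$. Let $x_\alpha=\lim S_\alpha$ and fix an injective enumeration $\{x_{\alpha,n}\}_{n\in\w}$ of $S_\alpha\setminus\{x_\alpha\}$. Choosing an almost disjoint family $(A_\alpha)_{\alpha\in\w_1}$ of infinite subsets of $\w$ and putting $\Lambda=\{(\alpha,\beta)\in\w_1\times\w_1:\alpha\ne\beta\}$, I set
$$D_{\alpha,\beta}=\{p(x_\alpha,x_{\alpha,n},x_\beta,x_{\beta,n}):n\in A_\alpha\cap A_\beta\},$$
which is finite because $A_\alpha\cap A_\beta$ is finite. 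The proof that $(D_{\alpha,\beta})_{(\alpha,\beta)\in\Lambda}$ is not locally finite at $p(x,x,x,x)$, for any fixed $x\in X$, then copies the corresponding step in Lemma~\ref{l:xy.zs}: given a neighbourhood $W$ of $p(x,x,x,x)$ I take a diagonal neighbourhood $U_\Delta$ with $p(U_\Delta\times U_\Delta)\subset W$ from the diagonal-continuity clause of Definition~\ref{d:xy.zs}, use $x_{\alpha,n}\to x_\alpha$ to find $\varphi(\alpha)$ with $(x_\alpha,x_{\alpha,n})\in U_\Delta$ for $n\ge\varphi(\alpha)$, apply the Pigeonhole Principle to get an uncountable $\Omega_m$, and Lemma~\ref{l:ad} to get an uncountable set $\Lambda_m$ of pairs admitting $n\in A_\alpha\cap A_\beta\setminus[0,m]$, whence $p(x_\alpha,x_{\alpha,n},x_\beta,x_{\beta,n})\in W\cap D_{\alpha,\beta}$. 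This step uses only disjointness and convergence, never compact-finiteness, so it transfers without change.

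The genuinely new part is compact-finiteness. I would fix a compact $K\subset FX$; by boundedness of $F$ the set $\supp(K)$ is bounded (Proposition~\ref{p:F-bounded}), so by $\mu$-completeness its closure $B$ is compact and $K\subset F(B;X)$ by Theorem~\ref{t:supp}, whence $\supp(a)\subset B$ for every $a\in K$. If $K\cap D_{\alpha,\beta}\ne\emptyset$, a witness $a=p(x_\alpha,x_{\alpha,n},x_\beta,x_{\beta,n})\in K$ has $\{x_{\alpha,n},x_{\beta,n}\}\subset\supp(a)\subset B$ by Definition~\ref{d:xy.zs}(2), and the four points $x_\alpha,x_{\alpha,n},x_\beta,x_{\beta,n}$ are pairwise distinct and avoid $C$. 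Granting that the two sequence-limits $x_\alpha,x_\beta$ also lie in $B$, the superalgebra clause of Definition~\ref{d:xy.zs}, applied to $B$ and $K$, produces a finite set $A\subset X$ with $\{x_\alpha,x_\beta\}\subset A$; since $\alpha\mapsto x_\alpha$ is injective by disjointness of the family, at most $|A|$ indices $\alpha$ satisfy $x_\alpha\in A$, so only finitely many indices occur as a coordinate of a participating pair, and hence $\{(\alpha,\beta)\in\Lambda:K\cap D_{\alpha,\beta}\ne\emptyset\}$ is finite. This makes $(D_{\alpha,\beta})_{(\alpha,\beta)\in\Lambda}$ compact-finite, and together with the previous paragraph it is the desired $\Fin^{\w_1}$-fan.

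The hard part is exactly the clause I flagged: the limits $x_\alpha,x_\beta$ occupy the first and third coordinates, which are the ones controlled by the superalgebra clause, yet Definition~\ref{d:xy.zs}(2) only forces the second and fourth coordinates into the support, so a priori $x_\alpha,x_\beta$ need not lie in $B$. In Lemma~\ref{l:xy.zs} this difficulty did not arise, because the compact-finiteness of the semifan let Lemma~\ref{l:cont-supp} bound the whole family through the support meeting $S_\alpha$ and $S_\beta$; here that tool is unavailable and the disjoint-family hypothesis together with $\mu$-completeness must carry the argument instead. My intended route is to isolate those $\alpha$ whose sequence $S_\alpha$ actually contributes a witness (so that $S_\alpha\cap B\ne\emptyset$) and to confine their limits to the compact support-hull $B$, enlarging $B$ within a compact set if necessary, and the careful verification of this confinement is the step I expect to be the main obstacle.
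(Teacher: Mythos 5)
Everything up to the step you flagged coincides with the paper's own proof: the same sets $D_{\alpha,\beta}$ built from an almost disjoint family, the same use of clause (3) of Definition~\ref{d:xy.zs}, Lemma~\ref{l:ad} and the Pigeonhole Principle for non-local finiteness, and the same first half of the compact-finiteness step (boundedness of $F$ and $\mu$-completeness give a compact $B$ with $K\subset F(B;X)$, so Theorem~\ref{t:supp} and clause (2) put the witnesses $x_{\alpha,n},x_{\beta,n}$ into $B$). The gap is exactly the step you left open, and the repair you sketch — enlarging $B$ to a compact set containing the limits $x_\alpha$ of those sequences that contribute witnesses — cannot work. In $X=\IR^2$ take $B=[0,1]\times\{0\}$, an unbounded $g:[0,1]\to(0,\infty)$, and $\bar S_t=\{(t,0)\}\cup\{(t,g(t)+\tfrac1n):n\in\IN\}\cup\{(t,g(t))\}$: this is an uncountable disjoint family of compact convergent sequences, every member of which meets $B$, while the limits $(t,g(t))$ escape every compact subset of the plane. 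Since a priori every $S_\alpha$ could contribute a witness to $K$, there is no compact enlargement of $B$ that confines the relevant limits; indeed, knowing that only finitely many sequences contribute is essentially the assertion being proved, so the confinement strategy is circular.

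The paper closes this step in a different way: it applies Definition~\ref{d:xy.zs}(4) to the pair $(B,K)$ and concludes $x_\alpha,x_\beta\in E$ outright, without requiring $x_\alpha,x_\beta$ to lie in $B$ — that is, it uses the superalgebra clause as if the finite set controlled the first and third coordinates of \emph{every} quadruple of pairwise distinct points of $X\setminus C$ that $p$ sends into $K$. Read literally, clause (4) does not say this; but this stronger, $B$-free form is precisely what is established in the only place the superalgebra property is ever verified, Proposition~\ref{p:F->super}: there the finite set $A$ is produced from $E$-superboundedness applied to the compact set $K$ alone, the compact set $B$ is never used, and the conclusion $\{x,z\}\subset A$ holds for all pairwise distinct $x,y,z,s\in X\setminus C$ with $p(x,y,z,s)\in K$. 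So the missing idea is not compact confinement of the limits; it is to invoke (or first record, if you want to rest only on the written definition) the $B$-free form of clause (4). With it your argument finishes exactly as you wrote it: after your preliminary reduction the four points are pairwise distinct and avoid $C$, hence $\{x_\alpha,x_\beta\}\subset A$; and since disjointness of the family makes $\alpha\mapsto x_\alpha$ injective, at most $|A|^2$ pairs $(\alpha,\beta)$ can satisfy $D_{\alpha,\beta}\cap K\ne\emptyset$.
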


\begin{proof}  Fix a finite subset $C\subset X$ and an operation $p:X^4\to FX$ witnessing that $FX$ is a topological superalgebra of type $(x^*y){}(z^*s)$.

For every $\alpha\in\w_1$ denote by $x_\alpha$ the limit point of the convergent sequence $\bar S_\alpha$ and let $\{x_{\alpha,n}\}_{n\in\w}$ be an injective enumeration of the set $\bar S_\alpha\setminus\{x_\alpha\}$.

Let $\Lambda=\{(\alpha,\beta)\in\w_1\times\w_1:\alpha\ne\beta\}$.
Choose any almost disjoint family $(A_\alpha)_{\alpha\in\w_1}$ of infinite subsets of $\w$, and for any pair $(\alpha,\beta)\in\Lambda$   consider the finite subset $D_{\alpha,\beta}=\{ p(x_\alpha, x_{\alpha,n},x_\beta,x_{\beta,n}):n\in A_\alpha\cap A_\beta\}.$
Repeating the argument from Lemma~\ref{l:xy.zs}, we can prove that  the family $(D_{\alpha,\beta})_{(\alpha,\beta)\in\Lambda}$ is not locally finite in $FX$.
\smallskip

To prove that this family is compact-finite in $FX$, take any compact subset $K\subset F(X)$. The $\mu$-completeness of $X$ and the boundedness of the functor $F$ yields a compact subset $B\subset X$ such that $K\subset F(B;X)$. Definition~\ref{d:xy.zs}(4) yields a finite subset $E\subset X$ such that for any pairwise distinct points $x,y,z,s\in B$ the inclusion $p(x,y,z,s)\in K$ implies $x,z\in E$.
Then the set
$$
\begin{aligned}
\{(\alpha,\beta)\in\Lambda&:D_{\alpha,\beta}\cap K\ne\emptyset\}=\\
&=\{(\alpha,\beta)\in\Lambda:\exists n\in A_\alpha\cap A_\beta\;\; p(x_\alpha,x_{\alpha,n},x_\beta,x_{\beta,n})\in K\}\subset\\
&\subset\{(\alpha,\beta)\in\Lambda:x_\alpha,x_\beta\subset E\}
\end{aligned}
$$is finite, witnessing that the family $(D_{\alpha,\beta})_{(\alpha,\beta)\in\Lambda}$ is compact-finite and hence is a $\Fin^{\w_1}$-fan in $FX$.
\end{proof}

Lemma~\ref{l:xy..zs} implies:

\begin{corollary}\label{c:cosmic-countable} If for a $\mu$-complete space $X$ the  functor-space $FX$ is a topological superalgebra of type $(x^*y){}(z^*s)$ and the space $FX$ contains no $\Fin^{\w_1}$-fans, then each cosmic subspace of $X$ is at most countable.
\end{corollary}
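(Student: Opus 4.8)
The plan is to derive the statement as a direct consequence of Lemma~\ref{l:xy..zs}, using the elementary fact that an uncountable cosmic space must contain an uncountable disjoint family of convergent sequences. Suppose, for contradiction, that $X$ contains a cosmic subspace $Y\subset X$ of uncountable cardinality. Being cosmic, $Y$ is regular with a countable network, hence hereditarily Lindel\"of and hereditarily separable; in particular $Y$ is first-countable on a large set of points or, more usefully, every uncountable subset of $Y$ has an uncountable subset whose points are non-isolated and which carries nontrivial convergent sequences. The concrete goal is to extract from $Y$ an uncountable disjoint family $(\bar S_\alpha)_{\alpha\in\w_1}$ of compact convergent sequences lying in $X$, which then feeds directly into Lemma~\ref{l:xy..zs}.

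The construction of the convergent sequences is where I would concentrate the work. First I would note that an uncountable cosmic space cannot be scattered into isolated points only: since $Y$ has a countable network and is uncountable, the set $Y'$ of non-isolated points of $Y$ is uncountable (an uncountable set of isolated points would give an uncountable discrete-in-itself family, contradicting the Lindel\"of property of the cosmic space $Y$). Each non-isolated point $y$ of the cosmic, hence first-countable-at-each-point-of-a-convergent-sequence, space $Y$ is the limit of a nontrivial sequence $S_y\subset Y\setminus\{y\}$; more precisely, since cosmic spaces are exactly continuous images of separable metrizable spaces (by \cite[4.9]{Grue} as recalled in the excerpt), and since such images have countable $\cs^*$-character, I can choose at each non-isolated point a convergent sequence. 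Then by a standard disjointification argument, using the hereditary Lindel\"of (equivalently, the countable-spread) property of cosmic spaces together with transfinite recursion over $\w_1$, I would thin out to an uncountable subfamily $(\bar S_\alpha)_{\alpha\in\w_1}$ whose closures are pairwise disjoint compact convergent sequences. This is precisely the disjointness hypothesis needed in Lemma~\ref{l:xy..zs}.

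With the disjoint family in hand, the remainder is immediate: $X$ is $\mu$-complete by hypothesis, and $FX$ is a topological superalgebra of type $(x^*y){}(z^*s)$ by hypothesis, so Lemma~\ref{l:xy..zs} applies to the family $(\bar S_\alpha)_{\alpha\in\w_1}$ and produces a $\Fin^{\w_1}$-fan in $FX$. This contradicts the standing assumption that $FX$ contains no $\Fin^{\w_1}$-fan. Therefore no uncountable cosmic subspace of $X$ can exist, which is exactly the assertion that every cosmic subspace of $X$ is at most countable.

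The main obstacle will be the second paragraph: rigorously producing an \emph{uncountable disjoint} family of \emph{genuine} convergent sequences inside a merely cosmic space, rather than just an uncountable set of non-isolated points. The subtlety is that cosmic spaces need not be first-countable globally, so I must argue that each non-isolated point carries a nontrivial convergent sequence and then perform the transfinite disjointification without running out of room; the countable-spread and hereditary-Lindel\"of properties of cosmic spaces are what make the recursion succeed, since at each stage $\alpha<\w_1$ only countably many previously chosen sequences and their limits need to be avoided, and the uncountable supply of non-isolated points guarantees a fresh choice. Once that combinatorial extraction is in place, invoking Lemma~\ref{l:xy..zs} closes the argument with no further calculation.
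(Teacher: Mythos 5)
Your overall strategy coincides with the paper's: assume an uncountable cosmic subspace exists, build an uncountable disjoint family of compact convergent sequences by transfinite recursion, and feed it into Lemma~\ref{l:xy..zs} to produce a forbidden $\Fin^{\w_1}$-fan. The skeleton and the final step are exactly as in the paper. However, the step you yourself flag as the crux contains a genuine error: it is \emph{false} that every non-isolated point of a cosmic space is the limit of a nontrivial convergent sequence. The Arens--Fort space is a countable regular space (hence cosmic, since its singletons form a countable network) with a non-isolated point to which no sequence converges; the paper itself constructs, in its counterexamples section, a cosmic hemicompact space $\{\infty\}\cup(\II\times\IN)$ containing no sequences convergent to $\infty$. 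The appeal to countable $\cs^*$-character does not rescue this: countable $\cs^*$-character is a condition about families detecting the convergent sequences that happen to exist, and it holds vacuously at any point to which no nontrivial sequence converges, so it can never \emph{produce} convergent sequences. Consequently your recursion, as justified (``choose at each non-isolated point a convergent sequence\dots the uncountable supply of non-isolated points guarantees a fresh choice''), would fail.

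The repair is the weaker, global fact that the paper actually invokes: every \emph{uncountable} cosmic space contains at least one nontrivial convergent sequence. This follows because an uncountable cosmic space is the image of an uncountable separable metrizable space $Z$ under a continuous bijection (cf.\ the argument in Proposition~\ref{p:cosmic-sequential}); $Z$ is not discrete, hence contains a compact convergent sequence $\bar S$, and the restriction of the bijection to $\bar S$ is a homeomorphism onto a compact convergent sequence in the cosmic space. With this fact in hand the transfinite recursion goes through exactly as the paper indicates: at stage $\alpha<\w_1$ the union of the previously chosen compact convergent sequences is a countable set, its complement in the cosmic subspace is again uncountable and cosmic, and therefore supplies a fresh compact convergent sequence disjoint from all earlier ones. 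So your proof is repairable, but as written the central step rests on a false claim rather than on this global extraction argument.
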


\begin{proof} To derive a contradiction, assume that $X$ contains a uncountable cosmic subspace $Z$. Taking into account that each uncountable cosmic space contains a non-trivial convergent sequence, by transfinite induction we can construct a disjoint uncountable family $(\bar S_\alpha)_{\alpha\in\w_1}$ of compact convergent sequences in $Z\subset X$. By Lemma~\ref{l:xy..zs}, the space $FX$ contains a $\Fin^{\w_1}$-fan, which is forbidden by our assumption.
\end{proof}

\section{Topological algebras of type $x{}(s^*y_i)^{<\w}$}

\begin{definition}\label{d:xyy_w}
Given a space $X\in\Top_i$, we  say that the functor-space $FX$ is a \index{topological algebra!of type $x(s^*y_i)^{<\w}$}{\em topological algebra of type $x{}(s^*y_i)^{<\w}$} if for every $s\in Y$ there exist sequences $(C_n)_{n\in\w}$ and $(p_n)_{n\in\w}$ satisfying the following axioms:
\begin{enumerate}
\item[1)] for every $n\in\w$ \; $C_n$ is a finite subset of $X$ and $p_n:X^{1+n}\to FX$ is an $(1+n)$-ary $F$-valued operation such that $p_n(s,s,\dots,s)=p_0(s)$;
\item[2)] $\{x,y_1,\dots,y_n\}\subset \supp(p_n(x,y_1,\dots,y_n))$ for any $n\in\IN$ and pairwise distinct points $x,y_1,\dots,y_n\in X\setminus C_n$;
\item[3)] for any neighborhood $U\subset FX$ of $p_0(s)$ there is a sequence $(U_n)_{n\in\w}$ of neighborhoods of $s$ in $X$ such that $p_n(U_0\times \cdots \times U_n)\subset U$ for all $n\in\IN$.
\end{enumerate}
\end{definition}

\begin{lemma}\label{l:xyy_w} Assume that a (functionally Hausdorff) $\mu$-complete space $X$ contains a (strong) $D_\w$-cofan $(D_n)_{n\in\w}$. If the functor $F$ is strongly bounded and the functor-space $FX$ is a topological algebra of type $x{}(s^*y_i)^{<\w}$, then the space $FX$ contains a (strong) $\Fin^\w$-fan.
\end{lemma}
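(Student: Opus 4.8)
The plan is to imitate the constructions of Lemmas~\ref{l:type-xy} and \ref{l:xyz}, realizing the required fan as a doubly-indexed family of singletons in $FX$ whose two indices are governed by the two features of the cofan: the convergence $D_n\to s$ of the \emph{family} of spines, and the compact-finiteness of each \emph{individual} spine. First I would let $s$ be the limit point of the given $D_\w$-cofan and fix sequences $(C_n)_{n\in\w}$ and $(p_n)_{n\in\w}$ witnessing that $FX$ is a topological algebra of type $x{}(s^*y_i)^{<\w}$. After passing to infinite subsets I may assume the sets $D_n$ are pairwise disjoint, avoid $s$, and meet no $C_m$; fix an injective enumeration $D_n=\{x_{n,m}\}_{m\in\w}$. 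For $n\in\w$ and $m\in\IN$ I set $w_{n,m}=p_m(x_{n,0},x_{n,1},\dots,x_{n,m})$; by property~(2) of Definition~\ref{d:xyy_w} each such element satisfies $\supp(w_{n,m})\supseteq\{x_{n,0},\dots,x_{n,m}\}\subseteq D_n$, so in particular $|\supp(w_{n,m})|\ge m+1$. The fan will be the family of singletons $(\{w_{n,m}\})$.

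Non-local-finiteness at $p_0(s)$ is the clean half. Given a neighborhood $U\subset FX$ of $p_0(s)$, property~(3) supplies neighborhoods $(U_i)_{i\in\w}$ of $s$ with $p_m(U_0\times\dots\times U_m)\subseteq U$ for every $m\in\IN$. For each \emph{fixed} $m$ the finite intersection $V_m=\bigcap_{i\le m}U_i$ is again a neighborhood of $s$, so the convergence $D_n\to s$ of the cofan yields an index $N_m$ with $D_n\subseteq V_m$ for all $n\ge N_m$; for such $n$ one has $x_{n,i}\in U_i$ for all $i\le m$, whence $w_{n,m}\in p_m(U_0\times\dots\times U_m)\subseteq U$. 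Thus every neighborhood $U$ of $p_0(s)$ contains $w_{n,m}$ for one fixed $m$ and infinitely many $n$, so the family is not locally finite at $p_0(s)$.

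The compact-finiteness of the family is where the real work lies, and I expect it to be the \textbf{main obstacle}: the support points $x_{n,m}$ accumulate at $s$ as $n\to\infty$, so a priori a single compact set could swallow infinitely many of the supports. Here the \emph{strong} boundedness of $F$ is decisive. For a compact $K\subseteq FX$ it gives $K\subseteq F_b(X)$ for some $b$, and by Corollary~\ref{c:Fn} no $w_{n,m}$ with $m\ge b$ can meet $K$; this bounds the depth $m$. For the finitely many depths $m<b$ I would use that $K\subseteq F(B;X)$ for a compact $B$ (boundedness of $F$ together with $\mu$-completeness of $X$), so that Theorem~\ref{t:supp} forces $\{x_{n,0},\dots,x_{n,m}\}\subseteq B$ whenever $w_{n,m}\in K$. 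The delicate point — and the heart of the proof — is to coordinate the indices so that the growth of $|\supp(w_{n,m})|$ forced by strong boundedness genuinely eliminates all but finitely many members while the accumulation at $p_0(s)$ survives, exactly the balancing between the cofan index and the in-spine index carried out in Lemma~\ref{l:xyz}; this is where strong boundedness (rather than mere boundedness) enters as an essential hypothesis.

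Finally, to obtain a \emph{strong} fan under the assumption that the cofan is strong and $X$ is functionally Hausdorff, I would thicken the singletons to $\IR$-open support-neighborhoods. Using the strongly compact-finite families $(U_{n,m})_{m}$ of $\IR$-open sets witnessing strong compact-finiteness of each spine $D_n$, Lemma~\ref{l:supp-Ropen} shows that the sets $\{a\in FX:\supp(a)\cap U_{n,m}\neq\emptyset\}$ are $\IR$-open neighborhoods of the corresponding $w_{n,m}$, and Lemma~\ref{l:cont-supp} transfers compact-finiteness of the underlying families from $X$ to $FX$, thereby upgrading the $\Fin^\w$-fan to a strong one.
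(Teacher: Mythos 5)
Your construction has a fatal flaw, and it sits exactly where you flagged ``the heart of the proof'': with $w_{n,m}=p_m(x_{n,0},\dots,x_{n,m})$ the family $(\{w_{n,m}\})_{n,m}$ is provably \emph{not} compact-finite, so no bookkeeping of indices can rescue it. Indeed, your own non-local-finiteness argument shows more than you claim: for each \emph{fixed} $m$, given any neighborhood $U$ of $p_0(s)$ and the sets $(U_i)_{i\in\w}$ from Definition~\ref{d:xyy_w}(3), all spines $D_n$ with $n$ large lie in $\bigcap_{i\le m}U_i$, whence $w_{n,m}\in U$ for \emph{all} large $n$. That is precisely the statement that the sequence $(w_{n,m})_{n\in\w}$ converges to $p_0(s)$ in $FX$. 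Consequently $K=\{p_0(s)\}\cup\{w_{n,m}:n\in\w\}$ is a compact subset of $FX$ meeting infinitely many (pairwise distinct, since their supports differ) members of your family, which kills compact-finiteness. The structural reason is that you tied the arity of the operation to the in-spine depth $m$ and let the spine index $n$ be the direction of convergence; then strong boundedness bounds only $m$, while nothing can bound $n$ --- and indeed nothing does, because the first $m+1$ points of all the spines, together with $s$, already form a compact subset of $X$. (A smaller, fixable issue: you cannot ``pass to infinite subsets meeting no $C_m$'', since $\bigcup_{m\in\w} C_m$ may be countably infinite and may even contain a whole spine; avoidance of $C_m$ must be arranged separately for each arity.)

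The paper's proof swaps the roles of the two indices. It first extracts from the cofan an auxiliary convergent sequence $Y$ with limit $s$ (e.g. one point from each $D_n$), and sets $w_{n,m}=p_n(x_{n,m},y_{1,m},\dots,y_{n,m})$: the arity equals the \emph{spine index} $n$, only one argument $x_{n,m}$ comes from the spine $D_n$ (namely its $m$-th point), and the remaining $n$ arguments come from $Y$ and converge to $s$ as $m\to\infty$. Then Definition~\ref{d:xyy_w}(2) gives $|\supp(w_{n,m})|\ge n+1$, i.e. $w_{n,m}\notin F_n(X)$, so strong boundedness plus $\mu$-completeness (yielding $K\subset F_k(B;X)$ with $B$ compact) bounds the spine index by $k$; and for each fixed $n\le k$ the compact-finiteness of the single spine $D_n$ bounds $m$, since $x_{n,m}\in\supp(w_{n,m})\subset B$ by Theorem~\ref{t:supp} and $B\cap D_n$ is finite. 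Non-local-finiteness at $p_0(s)$ still holds: pick one $n$ with $D_n\subset U_0$ and let $m\to\infty$, so that the points $y_{k,m}$ enter the sets $U_k$. Note also that in the strong case the paper takes as $\IR$-open neighborhoods the sets $\{a\in FX:\supp(a)\cap U_{n,m}\ne\emptyset\}\setminus F_n(X)$, invoking Corollary~\ref{c:Fn-Rclosed}; your plan to rely on Lemma~\ref{l:cont-supp} alone cannot work even in the corrected construction, because the doubly-indexed family $(U_{n,m})_{n,m}$ is not compact-finite in $X$ (only each row $(U_{n,m})_{m\in\w}$ is), so subtracting $F_n(X)$ is what bounds the spine index there.
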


\begin{proof} Let $s$ be the limit point of the cofan $(D_n)_{n\in\w}$. Fix a sequence $(C_n)_{n\in\w}$ of finite sets in $X$ and a sequence $(p_n)_{n\in\w}$ of $(1+n)$-ary operations witnessing that the functor-space $FX$ is a topological algebra of type $x{}(s^*y_i)^{<\w}$.

Since $X$ contains an $D_\w$-cofan, it contains a convergent sequence $Y\subset X\setminus \{s\}$ convergent to $s$. By induction choose pairwise distinct points $y_{n,m}$, $n,m\in\w$, in $Y$ such that for every $n\in\w$ the sequence $\{y_{n,m}\}_{m\in\w}$ is contained in $Y\setminus C_n$.
Observe that for every $n\in\IN$ the sequence $(y_{n,m})_{m\in\IN}$ converges to $y$.
For every $n\in\w$ chose pairwise distinct points $x_{n,m}$, $m\in\w$, in the set $D_n\setminus (C_n\cup Y)$.

For every $n,m\in\IN$ consider the element $w_{n,m}= p_n(x_{n,m},y_{1,m},\dots,y_{n,m})\in FX$. By Definition~\ref{d:xyy_w}(2), $\{x_{n,m},y_{1,m},\dots,y_{n,m}\}\subset \supp(w_{n,m})$ and hence $w_{n,m}\notin F_{n}(X)$.

We claim that the family $(\{w_{n,m}\})_{n,m\in\w}$ is a (strong) $\Fin^\w$-fan in $FX$. First we check that this family is not locally finite at the point $p_0(s)$. Given any neighborhood $U\subset FX$ of the point $p_0(s)$, use Definition~\ref{d:xyy_w}(3), to find a sequence $(U_n)_{n\in\w}$ of neighborhoods of $s$ in $X$ such that $p_n(U_0\times\cdots \times U_n)\subset U$ for all $n\in\IN$.

Since the sets $D_n\supset\{x_{n,m}\}_{m\in\w}$ tend to $s$, for the neighborhood $U_0$ of $s$ there is a number $n\in\w$ such that $\{x_{n,m}\}_{m\in\w}\subset U_0$. For the number $n$  find a number $m_0\in\w$ such that $y_{k,m}\in U_k$ for all $k\le n$ and $m\ge m_0$. Then
$$p_n(x_{n,m},y_{1,m},\dots,y_{n,m})\in p_n(U_0\times\cdots\times U_n)\subset U$$for every $m\ge m_0$, which ensures that the family $(\{w_{n,m}\})_{n,m\in\w}$ is not locally finite at $p_0(s)$.

Next, we show that this family is compact-finite in $FX$. Fix any compact set $K\subset FX$ and using the strong boundedness of the functor $F$ and the $\mu$-completeness of the space $X$, find a compact subset $B\subset X$ and a number $k\in\w$ such that $K\subset F_k(B;X)\subset FX$. Taking into account that  $w_{n,m}\notin F_n(X)$ for all $n,m\in\w$, we conclude that $w_{n,m}\notin K$ for all $n>k$ and $m\in\w$. Since the sets $D_n$, $n\le k$, are compact-finite, we can find a number $m_0\in\w$ such that $B\cap\bigcup_{n\le k}D_n\subset \{x_{n,m}:n\le k,\;m\le m_0\}$. Taking into account that $x_{n,m}\in\supp(w_{n,m})$ for all $n,m\in\w$, we conclude that the set
$$\{(n,m)\in\w\times\w:w_{n,m}\in K\}\subset\{(n,m)\in\w\times \w:w_{n,m}\in F_k(B;X)\}\subset[0,k]\times[0,m_0]$$is finite, which implies that the family $(\{w_{n,m}\})_{n,m\in\w}$ is compact-finite in $FX$.

Now assuming that the $D_\w$-cofan $(D_n)_{n\in\w}$ is strong and $X$ is functionally Hausdorff, we shall prove that the $\Fin^\w$-fan $(\{w_{n,m}\})_{n,m\in\w}$ is strong in $FX$. It follows that for every $n\in\w$ the set $D_n$ is strongly compact-finite in $X$. Consequently, each point $x_{n,m}\in D_n$ has an $\IR$-open neighborhood $U_{n,m}\subset X$ such that the family $(U_{n,m})_{m\in\w}$ is compact-finite in $X$.  Lemma~\ref{l:supp-Ropen} and Corollary~\ref{c:Fn-Rclosed} imply that the set$$W_{n,m}=\{a\in FX:\supp(a)\cap U_{n,m}\ne\emptyset\}\setminus F_n(X)$$is an $\IR$-open neighborhood of the point $w_{n,m}$ in $FX$. We claim that the family $(W_{n,m})_{n,m\in\w}$ is compact-finite in $FX$. Fix any compact subset $K\subset FX$. The strong boundedness of the functor $F$ and the $\mu$-completeness of $X$ guarantee that $K\subset F_k(B;X)$ for some compact subset $B\subset X$ and some $k\in\IN$.  Since the families $(U_{n,m})_{m\in\w}$, $n\le k$, are compact-finite, there is $m_0\in\w$ such that $B\cap U_{n,m}=\emptyset$ for all $n\le k$ and $m\ge m_0$. Then $\{(n,m)\in\w\times \w:K\cap W_{n,m}\ne\emptyset\}\subset [0,k]\times [0,m_0]$, which means that the family $(W_{n,m})_{n,m\in\w}$ is compact-finite and the $\Fin^\w$-fan $(\{w_{n,m}\})_{n,m\in\w}$ is strong.
\end{proof}

Lemma~\ref{l:xyy_w} and Theorem~\ref{t:Dcofan} implies:

\begin{corollary}\label{c:xyy_w} Let $X\in\Top_i$ be a $\mu$-complete (Tychonoff) functionally Hausdorff space such that the functor-space $FX$ is a topological algebra of type $x{} (s^*y_i)^{<\w}$. Assume that the functor $F$ is strongly bounded and the space $FX$ contains no (strong) $\Fin^\w$-fan.
\begin{enumerate}
\item[\textup{1)}] If $X$ is a $\bar\aleph_k$-space, then the $k$-modification of $X$ is a topological sum of cosmic $k_\w$-spaces.
\item[\textup{2)}] If $X$ is an $\aleph_0$-space, then $X$ is hemicompact.
\item[\textup{3)}] If $X$ is cosmic, then $X$ is $\sigma$-compact.
\end{enumerate}
\end{corollary}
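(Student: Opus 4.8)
The plan is to argue by contraposition against Lemma~\ref{l:xyy_w} and then to quote the relevant clauses of Theorem~\ref{t:Dcofan}. All standing hypotheses of Lemma~\ref{l:xyy_w} hold here: the space $X$ is (functionally Hausdorff and) $\mu$-complete, the functor $F$ is strongly bounded, and the functor-space $FX$ is a topological algebra of type $x(s^*y_i)^{<\w}$. Therefore, if $X$ contained a (strong) $D_\w$-cofan, then Lemma~\ref{l:xyy_w} would produce a (strong) $\Fin^\w$-fan in $FX$, contradicting the assumption that $FX$ contains no such fan. Hence the first thing I would record is that \emph{$X$ contains no (strong) $D_\w$-cofan}.

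Once this is in hand, each of the three conclusions is a direct application of Theorem~\ref{t:Dcofan}, read under the matching plain/(strong,\,Tychonoff) parenthetical convention. For (1) I would note that $X$ is a $\mu$-complete $\bar\aleph_k$-space with no (strong) $D_\w$-cofan, so Theorem~\ref{t:Dcofan}(1) gives that the $k$-modification $kX$ is a topological sum of cosmic $k_\w$-spaces. For (2) I would use that an $\aleph_0$-space is regular with a countable network, hence Lindel\"of and Tychonoff, so Theorem~\ref{t:Dcofan}(3) applies and yields that $X$ is hemicompact. For (3) I would likewise observe that a cosmic space is Tychonoff and invoke Theorem~\ref{t:Dcofan}(4) to conclude that $X$ is $\sigma$-compact.

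The only delicate point --- and it is merely bookkeeping rather than a genuine obstacle --- is keeping the parenthetical ``(strong)'' and ``(Tychonoff)'' markers synchronized across the statement, Lemma~\ref{l:xyy_w}, and Theorem~\ref{t:Dcofan}. In the plain reading one uses the implication ``$FX$ has no $\Fin^\w$-fan $\Ra$ $X$ has no $D_\w$-cofan'', which needs nothing beyond $\mu$-completeness; in the strong reading one uses the functional Hausdorffness of $X$ built into the hypotheses to apply the strong form of Lemma~\ref{l:xyy_w}, after which the corresponding clause of Theorem~\ref{t:Dcofan} (resting on Propositions~\ref{p:decomp1} and~\ref{p:XD->s+h+kw}) delivers the same three conclusions. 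Since ``no $D_\w$-cofan'' trivially forces ``no strong $D_\w$-cofan'', both readings reduce to the same three citations, and no further work is required.
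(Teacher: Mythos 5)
Your proposal is correct and is exactly the paper's argument: the paper derives this corollary by combining Lemma~\ref{l:xyy_w} (in contrapositive form, giving that $X$ has no (strong) $D_\w$-cofan) with the corresponding clauses of Theorem~\ref{t:Dcofan}, which rest on Propositions~\ref{p:decomp1} and~\ref{p:XD->s+h+kw}. Your bookkeeping of the plain versus strong/Tychonoff readings, including the observation that $\aleph_0$-spaces and cosmic spaces are automatically Tychonoff and that ``no $D_\w$-cofan'' implies ``no strong $D_\w$-cofan,'' matches what the paper's citation of Theorem~\ref{t:Dcofan} tacitly requires.
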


\smallskip

\begin{corollary}\label{c:xy.zs+xsy_w} Let $X\in\Top_i$ be a $\mu$-complete (Tychonoff) functionally Hausdorff space such that  the functor-space $FX$ is a topological algebra of types $x{\cdot}y$,  $x{}(s^*y_i)^{<\w}$ and $(x^*y){} (z^*s)$. Assume that the functor $F$ is strongly bounded and the space $FX$ contains no (strong) $\Fin^\w$-fan.
\begin{enumerate}
\item[\textup{1)}] If $X$ is a $\bar \aleph_k$-$k_\IR$-space, then $X$ is a topological sum $K\oplus D$ of a cosmic $k_\w$-space $K$ and a discrete space $D$.
\item[\textup{2)}] If $X$ is an $\aleph_k$-space and $FX$ contains no $\Fin^{\w_1}$-fan, then the $k$-modification $kX$ of $X$ is a topological sum $K\oplus D$ of a cosmic $k_\w$-space $K$ and a discrete space $D$.
\end{enumerate}
\end{corollary}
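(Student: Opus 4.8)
The plan is to follow the scheme of the proof of Theorem~\ref{t:xy+xyz+xy.zs}, the only novelty being that the type $x{}(s^*y_i)^{<\w}$ lets me replace the dichotomy obtained there by the sharper conclusion that $kX$ is a topological sum of cosmic $k_\w$-spaces, with no ``metrizable with compact set of non-isolated points'' alternative. Throughout I would exploit the hypotheses on $FX$ in contrapositive form through the three algebra types: from the type $x{}(s^*y_i)^{<\w}$ together with the strong boundedness of $F$, Lemma~\ref{l:xyy_w} shows that the absence of a (strong) $\Fin^\w$-fan in $FX$ forces $X$ to contain no (strong) $D_\w$-cofan; and from the types $x{\cdot}y$ and $(x^*y){}(z^*s)$, Corollary~\ref{c:xy+xyzs} shows that the absence of a (strong) $\Fin^{\w_1}$-fan in $FX$ forces $X$ to contain no (strong) $S^{\w_1}$-semifan, hence no $\ddot S^{\w_1}$-fan.

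For statement (2) the argument is clean. Since $FX$ contains no $\Fin^\w$-fan, the space $X$ contains no $D_\w$-cofan; since $FX$ contains no $\Fin^{\w_1}$-fan, $X$ contains no $S^{\w_1}$-semifan and in particular no $\ddot S^{\w_1}$-fan. As $X$ is a $\mu$-complete $\aleph_k$-space, Proposition~\ref{p:a+s->ba} then yields that $X$ is a $\bar\aleph_k$-space and $kX=\bigoplus_{\alpha\in A}X_\alpha$ is a topological sum of cosmic $k_\w$-spaces. To bound the non-discrete summands I would use that $X$ is Hausdorff (being functionally Hausdorff), so compact subsets of $X$ and of $kX$ coincide together with their subspace topologies; consequently the notions of convergent sequence and of compact-finite family agree for $X$ and $kX$, so a family is an $S^{\w_1}$-semifan in $kX$ if and only if it is one in $X$. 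If uncountably many $X_\alpha$ were non-discrete, choosing a nontrivial convergent sequence $S_\alpha\subset X_\alpha$ in each and taking the clopen summands $X_\alpha$ as $\IR$-open neighborhoods would produce an $S^{\w_1}$-semifan in $kX$, hence in $X$, a contradiction. Thus only countably many $X_\alpha$ are non-discrete; gathering them into $K=\bigoplus\{X_\alpha: X_\alpha\text{ non-discrete}\}$ (a countable sum of cosmic $k_\w$-spaces, hence itself a cosmic $k_\w$-space) and the rest into a discrete space $D$ gives $kX=K\oplus D$.

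For statement (1) the same skeleton applies in the strong version. The absence of a strong $\Fin^\w$-fan in $FX$ gives, via Lemma~\ref{l:xyy_w} and Corollary~\ref{c:xyy_w}(1) (using that $X$ is a $\mu$-complete $\bar\aleph_k$-space), the decomposition $kX=\bigoplus_{\alpha\in A}X_\alpha$ into cosmic $k_\w$-spaces; in particular $kX$ is a Tychonoff $k$-space. The $k_\IR$ hypothesis is then used to identify $X$ with $kX$: since $X$ and $kX$ share the same compact sets with the same subspace topology, a real-valued function is $k$-continuous on $X$ exactly when it is continuous on $kX$, so the $k_\IR$ property gives $C(X,\IR)=C(kX,\IR)$; as $kX$ is Tychonoff and $\id\colon kX\to X$ is continuous, these common functions recover the topology of $kX$ from that of $X$, whence $X=kX=\bigoplus_{\alpha\in A}X_\alpha$. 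Finally I would bound the non-discrete summands as in statement (2): an uncountable family of non-discrete summands yields, through the clopen summands, a strong $S^{\w_1}$-semifan in $X$ with uncountable set of limit points, which by Lemma~\ref{l:xy.zs} (equivalently Corollary~\ref{c:xy+xyzs}) produces a strong $\Fin^{\w_1}$-fan in $FX$, leaving $X=K\oplus D$.

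The main obstacle lies precisely in this last count for statement (1). Since $X$ is by then a $k$-space, it contains no $\Fin$-fans at all (Proposition~\ref{k-no-Cld-fan}), so the offending $\w_1$-indexed family of convergent sequences is only a locally finite strong $S^{\w_1}$-semifan in $X$ and yields no contradiction inside $X$; the contradiction must be located in $FX$, where Corollary~\ref{c:xy+xyzs} delivers a strong $\Fin^{\w_1}$-fan. But the hypothesis supplies only the absence of a strong $\Fin^\w$-fan, and by Proposition~\ref{p:Fin-fan-char} a $\Fin^\w$-fan is automatically a $\Fin^{\w_1}$-fan, so the absence of the latter is a priori the stronger property. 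Bridging this gap — exactly as in the corresponding step of the proof of Theorem~\ref{t:xy+xyz+xy.zs}(3) — is the delicate point: here one must argue that in the present situation strong $\Fin$-fans of the relevant spaces reduce to countable ones (the tightness reduction of Corollary~\ref{c:ct-reduct}, made available by the cosmic $k_\w$ structure of $X$). In statement (2) this obstacle is absent, since the absence of a $\Fin^{\w_1}$-fan is assumed outright.
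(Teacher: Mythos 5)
Your overall route coincides with the paper's own proof: both parts are obtained from Lemma~\ref{l:xyy_w} (type $x(s^*y_i)^{<\w}$ plus strong boundedness kills $D_\w$-cofans in $X$), Corollary~\ref{c:xy+xyzs} (types $x{\cdot}y$ and $(x^*y)(z^*s)$ kill $S^{\w_1}$-semifans), the decomposition of $kX$ via Proposition~\ref{p:a+s->ba} (the paper quotes it through Theorem~\ref{t:Dcofan}(2)), and the count of non-discrete summands via Lemma~\ref{l:xy.zs}. Your part (2) is complete and correct, and your explicit verification that $X=kX$ for $k_\IR$-spaces (through $C(X,\IR)=C(kX,\IR)$ and the Tychonoff property of $kX$) spells out a step the paper leaves implicit.

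The obstacle you flag in part (1) is a genuine gap, and your proposed bridge does not close it: Corollaries~\ref{c:tight} and \ref{c:ct-reduct} reduce uncountable fans to countable ones only inside a space of countable tightness, and the fan to be excluded lives in $FX$, not in $X$; the cosmic $k_\w$ structure of $X$ bounds the tightness of $X$ but gives no tightness bound whatsoever on $FX$ for an abstract monomorphic, bounded, $\II$-regular functor $F$. So part (1) of your argument remains unfinished. You should know, however, that the paper's own proof has exactly the same lacuna: it derives a strong $\Fin^{\w_1}$-fan in $FX$ from Lemma~\ref{l:xy.zs} and declares it ``forbidden by our assumptions,'' although the assumption of part (1) forbids only strong $\Fin^\w$-fans; and the proof of the companion Theorem~\ref{t:xy+xyz+xy.zs}(3) asserts outright that the absence of a strong $\Fin^\w$-fan implies the absence of a strong $\Fin^{\w_1}$-fan, which is the reverse of the true implication. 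Indeed, a $\Fin^\w$-fan padded by empty sets is a $\Fin^{\w_1}$-fan (this is how Proposition~\ref{p:Fin-fan-char} is used), so ``no $\Fin^{\w_1}$-fan'' is the \emph{stronger} property, and the paper's own example of the space $\{\infty\}\cup\w_1$ with co-countable neighborhoods of $\infty$ (which carries a strict $\Fin^{\w_1}$-fan but no $\Cld^\w$-fans) shows the converse fails in general. So your diagnosis is correct: both your argument and the paper's need either an additional ingredient controlling the tightness of $FX$, or the outright hypothesis that $FX$ contains no strong $\Fin^{\w_1}$-fan, as in part (2) and as in condition (5) of Theorem~\ref{it:FA} of the introduction.
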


\begin{proof} 1. Assume that $X$ is a $\bar \aleph_k$-$k_\IR$-space. By Corollary~\ref{c:xyy_w}, the space $X$ is a  topological sum $\bigoplus_{\alpha\in A}X_\alpha$ of  cosmic $k_\w$-subspaces of $X$. If the set $B=\{\alpha\in A:X_\alpha$ is not discrete$\}$ is uncountable, then the space $X$ contains a strong $S^{\w_1}$-semifan $(S_\alpha)_{\alpha\in\w_1}$ with uncountable set $\{\lim S_\alpha\}_{\alpha\in\w_1}$ and by Lemma~\ref{l:xy.zs} the space $FX$ contains a strong $\Fin^{\w_1}$-fan, which is forbidden by our assumptions. So, the set $B$ is countable and then $X$ is the topological sum of the cosmic $k_\w$-space $K=\bigcup_{\alpha\in B}X_\alpha$ and the discrete space $D=\bigcup_{\alpha\in A\setminus B}X_\alpha$.
\smallskip

2. Assume that  $X$ is an $\aleph_k$-space and $FX$ contains no $\Fin^{\w_1}$-fan. By Corollary~\ref{c:xy+xyzs}, the space $X$ contains no $S^{\w_1}$-semifan and by Lemma~\ref{l:xyy_w}, the space $X$ contains no strong $D_\w$-cofan. By Theorem~\ref{t:Dcofan}(2), the $k$-modification $kX$ of $X$ is a topological sum $\bigoplus_{\alpha\in A}X_\alpha$ of  cosmic $k_\w$-spaces. If the set $B=\{\alpha\in A:X_\alpha$ is not discrete$\}$ is uncountable, then the space $X$ contains an $S^{\w_1}$-semifan $(S_\alpha)_{\alpha\in\w_1}$ with uncountable set $\{\lim S_\alpha\}_{\alpha\in\w_1}$, which is forbidden by  Lemma~\ref{l:xy.zs}. So, the set $B$ is countable and then $kX$ is the topological sum of the cosmic $k_\w$-space $K=\bigcup_{\alpha\in B}X_\alpha$ and the discrete space $D=\bigcup_{\alpha\in A\setminus B}X_\alpha$.  If $FX$ is a topological superalgebra of type $(x^*y)(z^*s)$, then the cosmic space $K$ is countable by Corollary~\ref{c:cosmic-countable}.
\end{proof}
\smallskip

\section{Topological algebras of type $x{}(y(s^*z)y^*)$}\label{s:xyzs}
\begin{definition}\label{d:xyzs}
Given a space $X\in\Top_i$, we say that the functor-space $FX$ is a \index{topological algebra!of type $x(y(s^*z)y^*)$}{\em topological  algebra of type $x{}(y(s^*z)y^*)$}  if for every point $s\in X$ there exist a finite subset $C_s\subset X$ and a ternary operation $p:X^3\to FX$ having the following properties:
\begin{enumerate}
\item[1)] $p(x,y,s)=p(x,s,s)$ for all $x,y\in X$;
\item[2)] $\{x,y\}\subset \supp(p(x,y,z))$ for any pairwise distinct points $x,y,z\in X\setminus C_s$;
\item[3)] for any neighborhood $U\subset FX$ of the point $p(s,s,s)$ there exist a neighborhood $O_s\subset X$ of $s$ and a neighborhood $U_{X\times\{s\}}\subset X_d\times X$ of the set $X_d\times\{s\}$  such that $p(O_s\times U_{X\times\{s\}})\subset U$.
\end{enumerate}
\end{definition}

\begin{lemma}\label{l:xyzs} Assume that a (functionally Hausdorff) $\mu$-complete space $X$ contains a (strong) $D_\w$-cofan $(D_n)_{n\in\w}$. If the functor-space $FX$ is a topological algebra of type $x{}(y(s^*z)y^*)$, then the space $FX$ contains a (strong) $\Fin^\w$-fan.
\end{lemma}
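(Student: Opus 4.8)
The plan is to mirror the proof of Lemma~\ref{l:xyz}, but since here only a single $D_\w$-cofan is available, it must supply all the data that in Lemma~\ref{l:xyz} came from both a cofan and a semifan. Let $s$ be the limit point of the cofan $(D_n)_{n\in\w}$, and let the finite set $C_s\subset X$ and the ternary operation $p:X^3\to FX$ witness that $FX$ is a topological algebra of type $x{}(y(s^*z)y^*)$. After passing to a subfamily of the cofan and to infinite subsets of the sets $D_n$ (as in the earlier lemmas), I may assume that the sets $D_n$, $n\in\w$, are pairwise disjoint and disjoint from $C_s$.

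The key observation is that a single level of the cofan is already an infinite (strongly) compact-finite set. I would fix an injective enumeration $D_0=\{y_n\}_{n\in\w}$ and use $(y_n)_{n\in\w}=D_0$ as the \emph{escaping} second coordinate; for every $n\in\IN$ fix an injective enumeration $D_n=\{x_{n,m}\}_{m\in\w}$, providing the first coordinate; and extract a sequence $(z_m)_{m\in\w}$ converging to $s$ (for instance by reserving one point $z_m\in D_m$ from each level, which converges to $s$ precisely because $(D_n)_{n\in\w}\to s$). For $n\in\IN$ and $m\in\w$ I would then set $w_{n,m}=p(x_{n,m},y_n,z_m)$, the three arguments being pairwise distinct after the reductions above, so that Definition~\ref{d:xyzs}(2) gives $\{x_{n,m},y_n\}\subset\supp(w_{n,m})$, and claim that $(\{w_{n,m}\})_{n\in\IN,\,m\in\w}$ is the desired $\Fin^\w$-fan.

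For non-local-finiteness at the point $p(s,s,s)$ I would, given a neighborhood $W\subset FX$ of $p(s,s,s)$, use Definition~\ref{d:xyzs}(3) to get a neighborhood $O_s\subset X$ of $s$ and a neighborhood $U_{X\times\{s\}}\subset X_d\times X$ of $X_d\times\{s\}$ with $p(O_s\times U_{X\times\{s\}})\subset W$; then choose $n$ so large that $D_n\subset O_s$ (cofan convergence), so that all $x_{n,m}\in O_s$; and, since $U_{X\times\{s\}}$ contains $\{y_n\}\times W_{y_n}$ for some neighborhood $W_{y_n}\subset X$ of $s$ and $z_m\to s$, conclude that $w_{n,m}\in W$ for all sufficiently large $m$. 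For compact-finiteness I would take a compact $K\subset FX$, use the boundedness of $F$ and the $\mu$-completeness of $X$ to get a compact $B\subset X$ with $K\subset F(B;X)$, so that $w_{n,m}\in K$ forces $\{x_{n,m},y_n\}\subset B$; the compact-finiteness of $D_0$ then bounds the index $n$, and for each of the finitely many such $n$ the compact-finiteness of $D_n$ bounds $m$.

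For the strong version (assuming $X$ functionally Hausdorff and the cofan strong), I would replace the singletons by the $\IR$-open sets $W_{n,m}=\{a\in FX:\supp(a)\cap U_{n,m}\ne\emptyset\ne\supp(a)\cap V_n\}$, where $(U_{n,m})_{m\in\w}$ and $(V_n)_{n\in\w}$ are the compact-finite families of $\IR$-open neighborhoods of the points $x_{n,m}\in D_n$ and $y_n\in D_0$ furnished by strong compact-finiteness, and verify via Lemma~\ref{l:supp-Ropen} and Lemma~\ref{l:cont-supp} that $(W_{n,m})_{n,m}$ is compact-finite in $FX$, exactly as in Lemma~\ref{l:xyz}. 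The main obstacle is the conceptual point that, with only a cofan at hand, the escaping family needed to bound the outer index $n$ must be harvested from one fixed level $D_0$ (rather than from a semifan), while the cofan's convergence to $s$ is simultaneously needed for the first coordinate and for non-local-finiteness; a secondary technical point is the dependence of the fiber neighborhoods $W_y$ on $y$ in Definition~\ref{d:xyzs}(3), which is handled by keeping the second coordinate $y_n$ constant along each level $m\mapsto w_{n,m}$.
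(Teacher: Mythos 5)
Your proposal is correct and follows essentially the same route as the paper's proof: the paper likewise extracts from the cofan a convergent sequence $(z_m)_{m\in\w}\to s$ and an infinite (strongly) compact-finite set $Y$ (your $D_0$), forms $w_{n,m}=p(x_{n,m},y_n,z_m)$, and verifies non-local-finiteness at $p(s,s,s)$ via Definition~\ref{d:xyzs}(3) and compact-finiteness via $\{x_{n,m},y_n\}\subset\supp(w_{n,m})$, boundedness of $F$ and $\mu$-completeness, with the identical $W_{n,m}$ neighborhoods in the strong case. The only difference is bookkeeping (the paper makes $Y$, $Z$, $C_s$ and the $D_n$'s disjoint by explicit reductions, which is what your ``reserving'' of the points $z_m$ and refinement of the levels accomplishes).
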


\begin{proof} Let $s$ be the limit point of the cofan $(D_n)_{n\in\w}$. Fix a finite set $C_s\subset X$ and an operation $p:X^3\to FX$ witnessing that $FX$ is a topological algebra of type
 $x{}(y(s^*z)y^*)$.

Since $X$ contains a (strong) $D_\w$-cofan convergent to $s$, it contains a convergent sequence $Z\subset X\setminus C_s$ convergent to $s$ and a (strongly) compact-finite infinite set $Y\subset X$. Deleting from $Z$ its limit point $s$, we can assume that $s\notin Z$.
Replacing $Y$ by $Y\setminus (Z\cup C_s)$ we can also assume that $Y\cap (Z\cup C_s)=\emptyset$.

Fix an injective enumeration $\{z_m\}_{m\in\w}$ of the convergent sequence $Z=Z\setminus\{s\}$.

Replacing every set $D_n$ of the cofan by $D_n\setminus (Z\cup C_s)$ we can assume that the set $\bigcup_{n\in\w}D_n$ is disjoint with $Z\cup C_s$. Replacing  the compact-finite sets  $Y$ and $D_n$, $n\in\w$, by suitable compact-finite subsets we can assume that $Y$ is disjoint with $\{x\}\cup\bigcup_{n\in\w}D_n$. For every $n\in\w$ fix an injective enumeration $\{x_{n,m}\}_{m\in\w}$ of the set $D_{n}$.
Choose any sequence $(y_n)_{n\in\w}$ of pairwise distinct points in $Y$.

For every $n,m\in\IN$ consider the element $w_{n,m}=p_n(x_{n,m},y_n,z_{m})\in FX$. By Definition~\ref{d:xyzs}(2), $\{x_{n,m},y_n\}\subset \supp(w_{n,m})$.
We claim that the family of singletons $(\{w_{n,m}\})_{n,m\in\w}$ is a (strong) $\Fin^\w$-fan in $FX$.

First we check that this family is not locally finite at the point $p(s,s,s)$. Let $U$ be any neighborhood of the point $p(s,s,s)$ in $FX$.
By Definition~\ref{d:xyzs}(3), there are a neighborhood $O_s\subset X$ of $s$ and a neighborhood $U_{X\times \{s\}}\subset X_d\times X$ of the set $X\times \{s\}$ such that $p(O_s\times U_{X\times \{s\}})\subset U$.
Since the sequence $(D_n)_{n\in\w}$ converges to $s$, there is $n\in\IN$ such that $\{x_{n,m}\}_{m\in\w}=D_n\subset O_s$. Since the sequence $(z_m)_{m\in\w}$ converges to $s$, for the number $n$ we can find a number $m_0\in\w$ such that $\{(y_n,z_m)\}_{m\ge m_0}\in U_{X\times \{s\}}$. Then
$$w_{n,m}=p(x_{n,m},y_n,z_m)\in p(O_s\times U_{X\times \{s\}})\subset U,$$ for all $m\ge m_0$, which means that the family $(\{w_{n,m}\})_{n,m\in\w}$ is not locally finite at $p(s,s,s)$.

Using the inclusion $\{x_{n,m},y_n\}\subset\supp(w_{n,m})$ and taking into account that the sets $Y$ and $D_n$, $n\in\w$, are compact-finite in $X$, we can show that the family $\big(\{z_{n,m}\}\big)_{n,m\in\w}$ is compact-finite in $FX$.

Now assume that the $D_\w$-cofan $(D_n)_{n\in\w}$ is strong and the space $X$ is functionally Hausdorff. In this case the choice of the set $Y$ guarantees that it is strongly compact-finite. So, each point  $y_n\in D$ has an $\IR$-open neighborhood $V_n\subset X$ such that the family $(V_n)_{n\in\w}$ is compact-finite in $X$. By our assumption, for every $n\in\w$ the set $D_n=\{x_{n,m}\}_{m\in\w}$ is strongly compact-finite in $X$. Consequently, each point  $x_{n,m}$ has an $\IR$-open neighborhood $U_{n,m}\subset X$ such that for every $n\in\w$ the family $(U_{n,m})_{n,m\in\w}$ is compact-finite. Since $\{x_{n,m},y_n\}\subset\supp(w_{n,m})$, for every $n,m\in\w$ the set
$$W_{n,m}=\{a\in FX:\supp(a)\cap U_{n,m}\ne\emptyset\ne \supp(a)\cap V_n\}$$ contains the point $w_{n,m}$ and by Lemma~\ref{l:supp-Ropen} is $\IR$-open in $FX$. Repeating the argument from the proof of Lemma~\ref{l:xyz}, we can show that the family $(W_{n,m})_{n,m\in\w}$ is compact-finite in $FX$, which implies that $(\{w_{n,m}\})_{n,m\in\w}$ is a strong $\Fin^\w$-fan in $FX$.
\end{proof}

Lemma~\ref{l:xyzs} with Theorem~\ref{t:Dcofan} imply:

\begin{corollary}\label{c:xyszy} Let $X\in\Top_i$ be a $\mu$-complete (Tychonoff) functionally Hausdorff  space such that the functor-space $FX$ is a topological algebra of type $x(y(s^*z)y^*)$. Assume that the space $FX$ contains no (strong) $\Fin^\w$-fan.
\begin{enumerate}
\item[\textup{1)}] If $X$ is a $\bar\aleph_k$-space, then the $k$-modification of $X$ is a topological sum of cosmic $k_\w$-spaces.
\item[\textup{2)}] If $X$ is an $\aleph_0$-space, then $X$ is hemicompact.
\item[\textup{3)}] If $X$ is cosmic, then $X$ is $\sigma$-compact.
\end{enumerate}
\end{corollary}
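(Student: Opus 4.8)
The plan is to reduce Corollary~\ref{c:xyszy} to the contrapositive of Lemma~\ref{l:xyzs} followed by the appropriate clauses of Theorem~\ref{t:Dcofan}; no new fan needs to be built here, since all of the genuine construction already took place in Lemma~\ref{l:xyzs}. First I would record the key reduction. By hypothesis $FX$ is a topological algebra of type $x(y(s^*z)y^*)$ and carries no (strong) $\Fin^\w$-fan, while $X$ is a functionally Hausdorff $\mu$-complete space. Reading Lemma~\ref{l:xyzs} contrapositively then shows that $X$ contains no (strong) $D_\w$-cofan: in the plain reading ``$FX$ has no $\Fin^\w$-fan'' forces ``$X$ has no $D_\w$-cofan'', and in the strong reading the Tychonoff (hence functionally Hausdorff) hypothesis on $X$ is exactly what Lemma~\ref{l:xyzs} requires in order to convert ``$FX$ has no strong $\Fin^\w$-fan'' into ``$X$ has no strong $D_\w$-cofan''.

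Once the absence of (strong) $D_\w$-cofans is established, each of the three assertions is a one-line citation of Theorem~\ref{t:Dcofan}. For item (1), $X$ is a $\mu$-complete $\bar\aleph_k$-space with no (strong) $D_\w$-cofan, so Theorem~\ref{t:Dcofan}(1) says its $k$-modification is a topological sum of cosmic $k_\w$-spaces. For item (2), $X$ is an $\aleph_0$-space with no (strong) $D_\w$-cofan, and Theorem~\ref{t:Dcofan}(3) gives hemicompactness. For item (3), $X$ is cosmic with no (strong) $D_\w$-cofan, and Theorem~\ref{t:Dcofan}(4) yields $\sigma$-compactness.

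The single point deserving care --- and the one spot where I would verify the bookkeeping --- is the alignment of the two parenthetical variants (``strong''/non-strong, together with the accompanying ``Tychonoff''/absent assumptions) across Lemma~\ref{l:xyzs} and Theorem~\ref{t:Dcofan}, so that the plain statement rests only on the plain versions of both results and the strong statement only on their strong versions. Since there is nothing to construct at this level, the real mathematical content, and the only genuine obstacle, lies upstream in the proof of Lemma~\ref{l:xyzs}, where the $D_\w$-cofan and the ternary operation of type $x(y(s^*z)y^*)$ are combined to manufacture the fan $(\{w_{n,m}\})_{n,m\in\w}$; the corollary is then a purely formal consequence.
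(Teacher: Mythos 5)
Your proposal is correct and follows exactly the paper's own route: the paper derives Corollary~\ref{c:xyszy} precisely by combining Lemma~\ref{l:xyzs} (read contrapositively) with Theorem~\ref{t:Dcofan}(1),(3),(4), and your matching of the plain/strong variants (with Tychonoff supplying the functionally Hausdorff hypothesis needed in the strong case) is the right bookkeeping.
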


The following corollary can be derived from Lemmas~\ref{l:xyy_w}, \ref{l:xy.zs}, \ref{l:xyzs} and  Theorem~\ref{t:Dcofan} by analogy with Corollary~\ref{c:xy.zs+xsy_w}.

\begin{corollary}\label{c:xy.zs+xyzs} Let $X\in\Top_i$ be a $\mu$-complete (Tychonoff) functionally Hausdorff space such that  the functor-space $FX$ is a topological algebra of types $x{\cdot}y$, $(x^*y){} (z^*s)$ and $x(y(s^*z)y^*)$. Assume that the space $FX$ contains no (strong) $\Fin^\w$-fan.
\begin{enumerate}
\item[\textup{1)}] If $X$ is a $\bar \aleph_k$-$k_\IR$-space, then $X$ is a topological sum $X\oplus D$ of a cosmic $k_\w$-space and a discrete space.
\item[\textup{2)}] If $X$ is an $\aleph_k$-space and $FX$ contains no $\Fin^{\w_1}$-fan, then the $k$-modification $kX$ of $X$ is a topological sum $K\oplus D$ of a cosmic $k_\w$-space $K$ and a discrete space $D$.
\end{enumerate}
\end{corollary}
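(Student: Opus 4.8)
The plan is to run the proof of Corollary~\ref{c:xy.zs+xsy_w} essentially line for line, replacing every appeal to the operation of type $x(s^*y_i)^{<\w}$ (together with Corollary~\ref{c:xyy_w}) by the corresponding appeal to the operation of type $x(y(s^*z)y^*)$ (together with Corollary~\ref{c:xyszy} and Lemma~\ref{l:xyzs}). The structural gain of this substitution is that Corollary~\ref{c:xyszy} and Lemma~\ref{l:xyzs} do not require $F$ to be strongly bounded, which is precisely why no boundedness hypothesis is present in the current statement; the parts supplied by types $x{\cdot}y$ and $(x^*y)(z^*s)$ (namely Corollary~\ref{c:xy+xyzs} and Lemma~\ref{l:xy.zs}) are reused verbatim.

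For item (1), I would assume $X$ is a $\bar\aleph_k$-$k_\IR$-space. Since $FX$ is a topological algebra of type $x(y(s^*z)y^*)$ and carries no strong $\Fin^\w$-fan, Corollary~\ref{c:xyszy}(1) gives that $kX$ is a topological sum of cosmic $k_\w$-spaces; as $X$ is a $k_\IR$-space one has $X=kX=\bigoplus_{\alpha\in A}X_\alpha$. I would then show that $B=\{\alpha\in A:X_\alpha\text{ is not discrete}\}$ is countable by contradiction. An uncountable $B$ supplies, from each non-discrete cosmic $k_\w$-summand, a nontrivial convergent sequence $S_\alpha$; since the summands are clopen and each compact set meets only finitely many of them, $(S_\alpha)_{\alpha\in\w_1}$ is a strong $S^{\w_1}$-semifan whose set of limit points $\{\lim S_\alpha\}_{\alpha\in\w_1}$ is uncountable. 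Lemma~\ref{l:xy.zs} (via type $(x^*y)(z^*s)$) then produces a strong $\Fin^{\w_1}$-fan in $FX$, contradicting the hypothesis. Hence $B$ is countable and $X=K\oplus D$ with $K=\bigoplus_{\alpha\in B}X_\alpha$ a cosmic $k_\w$-space and $D=\bigoplus_{\alpha\in A\setminus B}X_\alpha$ discrete.

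For item (2), I would assume $X$ is an $\aleph_k$-space with $FX$ carrying no $\Fin^{\w_1}$-fan. Corollary~\ref{c:xy+xyzs} gives that $X$ has no $S^{\w_1}$-semifan, hence no $\bar S^{\w_1}$-fan (every $\bar S$-fan is an $S$-semifan), while Lemma~\ref{l:xyzs} gives that $X$ has no strong $D_\w$-cofan (using that the absence of a $\Fin^\w$-fan yields the absence of a strong one). Theorem~\ref{t:Dcofan}(2) then makes $kX=\bigoplus_{\alpha\in A}X_\alpha$ a topological sum of cosmic $k_\w$-spaces, and the splitting argument of (1) applies again: an uncountable $B$ would yield, through Lemma~\ref{l:xy.zs}, a $\Fin^{\w_1}$-fan, now directly against the standing hypothesis. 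Thus $B$ is countable and $kX=K\oplus D$.

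The routine points (clopenness of the summands, strong compact-finiteness of a family of sequences lying in pairwise disjoint clopen pieces, and the inclusions $\bar S\text{-fan}\Rightarrow S\text{-fan}\Rightarrow S\text{-semifan}$) are identical to the corresponding passages of Corollary~\ref{c:xy.zs+xsy_w} and I would merely indicate them. The genuinely delicate point is the contradiction in item~(1): the available hypothesis there is only the absence of a \emph{strong $\Fin^\w$-fan}, whereas Lemma~\ref{l:xy.zs} manufactures a strong $\Fin^{\w_1}$-fan, and in general the latter is a weaker configuration than the former. I expect this to be the main obstacle, and I would resolve it exactly as in the model corollary: the strong $\Fin^{\w_1}$-fan accumulates at a single point of $FX$, so it reduces to a strong $\Fin^\w$-fan as soon as the tightness is countable, and verifying that the relevant tightness is indeed countable in the present $\bar\aleph_k$-$k_\IR$ situation (so that Corollary~\ref{c:tight} may be invoked) is the step that has to be written with the greatest care.
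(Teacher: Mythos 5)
Your route is the paper's own: the paper proves this corollary ``by analogy with Corollary~\ref{c:xy.zs+xsy_w}'', invoking Lemma~\ref{l:xyzs} and Theorem~\ref{t:Dcofan} where the model used Lemma~\ref{l:xyy_w}, and reusing Lemma~\ref{l:xy.zs} and Corollary~\ref{c:xy+xyzs} verbatim; your substitution of Corollary~\ref{c:xyszy} for Corollary~\ref{c:xyy_w}, and your explanation that this is exactly why the strong-boundedness hypothesis can be dropped, are correct. Your item (2) also reproduces the paper's argument faithfully, and it is sound: there the hypothesis explicitly excludes $\Fin^{\w_1}$-fans in $FX$, so the fan supplied by Lemma~\ref{l:xy.zs} is an immediate contradiction.

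The flaw is in your final paragraph, i.e.\ in how you propose to close item (1). First, the model corollary does not do what you say it does: in Corollary~\ref{c:xy.zs+xsy_w}(1) (and likewise in the proof of Theorem~\ref{t:xy+xyz+xy.zs}(3)) the paper performs no tightness reduction at all --- it simply declares the strong $\Fin^{\w_1}$-fan produced by Lemma~\ref{l:xy.zs} to be ``forbidden by our assumptions''. So there is nothing there to imitate. Second, the repair you sketch cannot be executed: Corollary~\ref{c:tight} shrinks a fan to one of size $t(Z)$ where $Z$ is the space \emph{containing} the fan, and here that space is $FX$, not $X$. The hypotheses of the statement ($\mu$-completeness, $\bar\aleph_k$, $k_\IR$) constrain only $X$ and give no control whatsoever over the tightness of the abstract functor-space $FX$; so ``verifying that the relevant tightness is countable'' is not a delicate step to be written with care, it is a step for which no proof is available. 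And some argument genuinely is needed at this point: by the paper's own implication diagram the absence of $\Fin^\w$-fans is the weakest of these properties, and the paper's example of a space with a unique non-isolated point admitting a strict $\Fin^{\w_1}$-fan but no $\Cld^\w$-fans shows that a strong $\Fin^{\w_1}$-fan need not yield any $\Fin^\w$-fan in general. If you are not content simply to quote the paper's assertion at this step, the clean fix is to strengthen the hypothesis of item (1) so that strong $\Fin^{\w_1}$-fans in $FX$ are also excluded, exactly as item (2) does in the non-strong setting.
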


\section{Topological algebras of type $(x(s^*z)x^*){\cdot}(y(s^*z)y^*)$}

\begin{definition}\label{d:long}
Given a space $X\in\Top_i$, we say that the functor-space $FX$ is a \index{topological algebra!of type $(x(s^*z)x^*)(y(s^*z)y^*)$}{\em topological  algebra of type $(x(s^*z)x^*){}(y(s^*z)y^*)$} if there exist a finite subset $C\subset X$ and an operation $p:X^3\to FX$ having the following properties:
\begin{enumerate}
\item[1)] $p(x,y,s)=p(s,s,s)$ for all $x,y\in X$;
\item[2)] $\{x,y\}\subset \supp(p(x,y,z))$ for any pairwise distinct points $x,y,z\in X\setminus C$;
\item[3)] for any neighborhood $W\subset FX$ of $p(s,s,s)$ there is a neighborhood $U_{X\times\{s\}}\subset X_d\times X$ of the set $X_d\times\{s\}$ such that  $p(x,y,z)\subset W$ or any triple $(x,y,z)\in X^3$ with $(x,z),(y,z)\in U_{X\times\{s\}}$.
\end{enumerate}
\end{definition}

\begin{lemma}\label{l:long} Assume that a (functionally Hausdorff) $\mu$-complete space $X$ contains a convergent sequence $S$ and a (strongly) compact-finite uncountable set $D$. If the functor-space $FX$ is a topological algebra of type   $(x(s^*z)x^*){\cdot}(y(s^*z)y^*)$, then $FX$ contains a (strong) $\Fin^{\w_1}$-fan.
\end{lemma}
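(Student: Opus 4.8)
The plan is to combine the two structural ingredients — a convergent sequence $S$ and a strongly compact-finite uncountable set $D$ — into a single $\Fin^{\w_1}$-fan in $FX$, mimicking the proof of Lemma~\ref{l:xyzs} but carrying an uncountable index set instead of $\w$. Let $s=\lim S$ and fix a finite set $C\subset X$ and an operation $p:X^3\to FX$ witnessing that $FX$ is a topological algebra of type $(x(s^*z)x^*){\cdot}(y(s^*z)y^*)$. Using Definition~\ref{d:long}(1), all three variables collapse to $p(s,s,s)$ when $z=s$, so $p(s,s,s)$ will play the role of the accumulation point of the fan. I would first clean up the data: delete $s$ from $S$, enumerate $S\setminus\{s\}=\{z_m\}_{m\in\w}$, and replace $D$ by $D\setminus(S\cup C)$ so that $D$ is disjoint from $S\cup C$ while remaining strongly compact-finite and uncountable. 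Enumerate $D=\{x_\alpha\}_{\alpha\in\w_1}$ injectively.

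First I would define the fan. For each pair $(\alpha,\beta)\in\Lambda:=\{(\alpha,\beta)\in\w_1\times\w_1:\alpha\ne\beta\}$ set
$$
D_{\alpha,\beta}=\{\,p(x_\alpha,x_\beta,z_m):m\in\w\,\}.
$$
Definition~\ref{d:long}(2) gives $\{x_\alpha,x_\beta\}\subset\supp(p(x_\alpha,x_\beta,z_m))$ whenever $x_\alpha,x_\beta,z_m$ are pairwise distinct, so each $D_{\alpha,\beta}$ meets the "support-supported-on-$x_\alpha$-and-$x_\beta$" regions. The key feature of type $(x(s^*z)x^*){\cdot}(y(s^*z)y^*)$ — as opposed to $x(y(s^*z)y^*)$ — is that axiom (3) controls $p(x,y,z)$ as soon as \emph{both} $(x,z)$ and $(y,z)$ lie in a neighborhood $U_{X\times\{s\}}$ of $X_d\times\{s\}$, with no separate control on $x$ or $y$. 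This symmetric two-sided convergence is exactly what lets me use two distinct points $x_\alpha,x_\beta$ of $D$ simultaneously and still have $p(x_\alpha,x_\beta,z_m)\to p(s,s,s)$ as $m\to\infty$, for \emph{every} fixed pair $(\alpha,\beta)$.

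For non-local-finiteness at $p(s,s,s)$: given a neighborhood $W$, Definition~\ref{d:long}(3) yields $U_{X\times\{s\}}$ with $p(x,y,z)\in W$ whenever $(x,z),(y,z)\in U_{X\times\{s\}}$. Since $(z_m)_{m\in\w}$ converges to $s$ and $U_{X\times\{s\}}$ is a neighborhood of $X_d\times\{s\}$, for \emph{each} fixed $x_\alpha$ there is an $m$ with $(x_\alpha,z_m)\in U_{X\times\{s\}}$; for any pair $(\alpha,\beta)$ I can choose a common $m$ making both $(x_\alpha,z_m)$ and $(x_\beta,z_m)$ lie in $U_{X\times\{s\}}$, whence $p(x_\alpha,x_\beta,z_m)\in W\cap D_{\alpha,\beta}$. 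As this works for all uncountably many pairs $(\alpha,\beta)\in\Lambda$, the family is not locally (even countably) finite at $p(s,s,s)$. Here a Pigeonhole/uncountability argument in the spirit of Lemma~\ref{l:xy.zs} guarantees that infinitely (indeed uncountably) many $D_{\alpha,\beta}$ meet $W$. Compact-finiteness follows from $\{x_\alpha,x_\beta\}\subset\supp$ combined with the compact-finiteness of $D$ via Lemma~\ref{l:cont-supp}: a compact $K\subset FX$ lies in $F(B;X)$ for some compact $B$ (boundedness of $F$ plus $\mu$-completeness), and only finitely many $x_\alpha$ meet $B$, pinning the offending indices into a finite $F_X\times F_X$. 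In the strong case, each $x_\alpha$ has an $\IR$-open neighborhood $U_\alpha$ with $(U_\alpha)_{\alpha\in\w_1}$ compact-finite, and $W_{\alpha,\beta}=\{a:\supp(a)\cap U_\alpha\ne\emptyset\ne\supp(a)\cap U_\beta\}$ is an $\IR$-open neighborhood of $D_{\alpha,\beta}$, compact-finite by Lemma~\ref{l:cont-supp}.

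\textbf{The main obstacle} I anticipate is the non-local-finiteness step in its uncountable form: unlike the $\w$-indexed cofan case of Lemma~\ref{l:xyzs}, here I must ensure that for a \emph{single} common witness $m$ both convergences $(x_\alpha,z_m),(x_\beta,z_m)\in U_{X\times\{s\}}$ hold simultaneously, for uncountably many pairs. Since $U_{X\times\{s\}}$ is a neighborhood of the \emph{whole} slice $X_d\times\{s\}$, for each $\alpha$ there is a threshold $\varphi(\alpha)\in\w$ with $\{(x_\alpha,z_m)\}_{m\ge\varphi(\alpha)}\subset U_{X\times\{s\}}$; a Pigeonhole argument then makes $\Omega_k=\{\alpha:\varphi(\alpha)=k\}$ uncountable for some $k$, so any $m\ge k$ and any pair $(\alpha,\beta)\in\Omega_k\times\Omega_k$ with $\alpha\ne\beta$ works, giving uncountably many such pairs. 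This is where the two-sided nature of axiom (3) is indispensable, and where care is needed to keep the pairs $(x_\alpha,x_\beta,z_m)$ pairwise distinct so that axiom (2) applies. Once this is handled, the remaining verifications are routine adaptations of Lemmas~\ref{l:xy.zs} and \ref{l:xyzs}.
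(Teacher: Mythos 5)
Your overall skeleton matches the paper's proof: Definition~\ref{d:long}(2) plus Lemma~\ref{l:cont-supp} for compact-finiteness, Definition~\ref{d:long}(3) plus a Pigeonhole argument on the thresholds $\varphi(\alpha)$ for non-local finiteness at $p(s,s,s)$, and the sets $W_{\alpha,\beta}=\{a\in FX:\supp(a)\cap U_\alpha\ne\emptyset\ne\supp(a)\cap U_\beta\}$ for the strong case. But there is a genuine gap at the very first step: your sets
$$D_{\alpha,\beta}=\{\,p(x_\alpha,x_\beta,z_m):m\in\w\,\}$$
are indexed by \emph{all} of $\w$ and hence are, in general, infinite. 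A $\Fin^{\w_1}$-fan must by definition consist of \emph{finite} sets, so your family, even though it is compact-finite and not locally finite (i.e., it is a fan), is not a $\Fin$-fan, and the lemma as stated is not proved. Nothing in Definition~\ref{d:long} forces the points $p(x_\alpha,x_\beta,z_m)$, $m\in\w$, to coincide for large $m$ --- axiom (2) only places $x_\alpha,x_\beta$ in the support, not $z_m$ --- so you cannot argue that these sets happen to be finite.

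The repair is exactly the combinatorial device your proposal omits, and it is the heart of the paper's proof: fix an almost disjoint family $(A_\alpha)_{\alpha\in\w_1}$ of infinite subsets of $\w$ and put $D_{\alpha,\beta}=\{p(x_\alpha,x_\beta,z_n):n\in A_\alpha\cap A_\beta\}$; these sets are finite because $A_\alpha\cap A_\beta$ is finite for $\alpha\ne\beta$. The price is that your non-local-finiteness argument no longer goes through as written: after the Pigeonhole step producing an uncountable $\Omega_k=\{\alpha\in\w_1:\varphi(\alpha)=k\}$, you may no longer take ``any $m\ge k$'' --- you need an index $n\ge k$ lying in the \emph{finite} set $A_\alpha\cap A_\beta$, and for a given pair this intersection could be entirely contained in $[0,k]$. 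This is where Lemma~\ref{l:ad} is indispensable: it guarantees that the set of pairs $(\alpha,\beta)\in\Omega_k\times\Omega_k$ with $A_\alpha\cap A_\beta\not\subset[0,k]$ is still uncountable, which salvages non-local finiteness. With this modification (your remaining verifications --- compact-finiteness via boundedness of $F$ and $\mu$-completeness, and strongness via Lemmas~\ref{l:supp-Ropen} and \ref{l:cont-supp} --- are correct) the argument becomes the paper's proof.
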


\begin{proof} Let $s$ be the limit point of the convergent sequence $S$. Fix a finite subset $C\subset X$ and an operation $p:X^3\to FX$ witnessing that $FX$ is a topological algebra of type
$(x(s^*z)x^*){}(y(s^*z)y^*)$.

 Choose any sequence $(s_n)_{n\in\w}$ of pairwise distinct points in the set $S\setminus(C\cup \{s\})$.
Next, take any pairwise distinct points $x_\alpha$, $\alpha\in\w_1$, in the uncountable set $D\setminus(C\cup \bar S)$.

Fix an almost disjoint family $(A_\alpha)_{\alpha\in\w_1}$ of infinite subsets of $\w$ and let $\Lambda=\{(\alpha,\beta)\in\w_1\times\w_1):\alpha\ne\alpha\}$.
For any pair $(\alpha,\beta)\in\Lambda$  consider the finite subset $D_{\alpha,\beta}=\{p(x_\alpha,x_\beta,s_n):n\in A_\alpha\cap A_\beta\}.$ By Definition~\ref{d:long}(2), $\{x_\alpha,x_\beta\}\subset\supp(p(x_\alpha,x_\beta,s_n))$, which implies that the family $(D_{\alpha,\beta})_{(\alpha,\beta)\in\Lambda}$ is compact-finite in $FX$.

Now we show that this family is not locally finite at the point $p(s,s,s)$. Given a neighborhood $W\subset FX$ of $p(s,s,s)$, apply Definition~\ref{d:long}(3) to find a neighborhood $U_{X\times\{s\}}\subset X_d\times X$ of the set $X_d\times\{s\}$ such that $p(x,y,z)\in W$ for every $(x,y,z)\in X^3$ with $(x,z),(y,z)\in U_{X\times\{s\}}$.

For every $\alpha\in\w_1$ choose a number $\varphi(\alpha)\in\w$ such that $\{(x_\alpha,s_n)\}_{n\ge\varphi(\alpha)}\subset U_{X\times\{z\}}$.
By the Pigeonhole Principle, for some $m\in\w$ the set $\Omega=\{\alpha\in\w_1:\varphi(\alpha)=m\}$ is uncountable. By Lemma~\ref{l:ad}, the set $\Lambda_m=\{(\alpha,\beta)\in\Lambda:A_\alpha\cap A_\beta\not\subset[0,m]\}$ is uncountable. For any pair $(\alpha,\beta)\in\Lambda_m$ we can find a  number  $n\in A_\alpha\cap A_\beta\setminus[0,m]$ and conclude that $(x_\alpha,s_n),(x_\beta,s_n)\in U_{X\times\{z\}}$ and hence
$D_{\alpha,\beta}\ni p(x_\alpha,x_\beta,s_n)\in W$, which means that the family $(D_{\alpha,\beta})_{(\alpha,\beta)\in\Lambda}$ is not locally countable and not locally finite in $FX$.

If the set $D$ is strongly compact-finite and $X$ is functionally Hausdorff, then each point $x_\alpha\in D$ has an $\IR$-open neighborhood $U_\alpha\subset X$ such that the family $(U_\alpha)_{\alpha\in\w_1}$ is compact-finite in $X$. Definition~\ref{d:long}(2) and Lemma~\ref{l:supp-Ropen} imply that for every pair $(\alpha,\beta)\in\Lambda$ the set
$$W_{\alpha,\beta}=\{a\in FX:\supp(a)\cap U_\alpha\ne\emptyset\ne\supp(a)\cap U_\beta\}$$ is an $\IR$-open neighborhood of the finite set $D_{\alpha,\beta}$ in $FX$. On the other hand, Lemma~\ref{l:cont-supp} implies that the family $\{W_{\alpha,\beta}:(\alpha,\beta)\in\Lambda\}$ is compact-finite in $FX$, which means that the $(D_{\alpha,\beta})_{(\alpha,\beta)\in\Lambda}$ is a strong $\Fin^{\w_1}$-fan in $FX$.
\end{proof}

\begin{theorem}\label{t:long1} Let $X\in\Top_i$ be a $\mu$-complete  space such that the functor-space $FX$ is a topological algebra of types $x{\cdot}y$, $x{} (y(s^*z)y^*)$ and
$(x(s^*z)x^*){\cdot}(y(s^*z)y^*)$ (and a topological superalgebra of type $(x^*y){}(z^*s)$).
\begin{enumerate}
\item[\textup{1)}] If $X$ is functionally Hausdorff, $FX$ contains no strong $\Fin^{\w_1}$-fan and each infinite compact subset of $X$ contains a convergent sequence, then $X$ either $k$-discrete or $\w_1$-bounded.
\item[\textup{2)}] If $X$ is $k^*$-metrizable and $FX$ contains no $\Fin^{\w_1}$-fan, then either $X$ is $k$-discrete or $X$ is a (countable) $\aleph_0$-space.
\item[\textup{3)}] If $X$ is a Tychonoff $\bar\aleph_k$-space and $FX$ contains no strong $\Fin^{\w}$-fan, then $X$ is either $k$-discrete or $X$ is $\w_1$-bounded and is a $k$-sum of (countable) hemicompact spaces; moreover, if $X$ is  a $k_\IR$-space, then $X$ is either discrete or a (countable) cosmic $k_\w$-space.
\item[\textup{4)}] If $X$ is a functionally Hausdorff $\aleph_k$-space and $FX$ contains no strong $\Fin^{\w}$-fan and no $\Fin^{\w_1}$-fan, then $X$ is either $k$-discrete or (countable and) hemicompact.
\item[\textup{5)}] If $X$ is an $\aleph_k$-space and $FX$ contains no $\Fin^\w$-fan, then $X$ is either $k$-discrete or (countable and) hemicompact.
\end{enumerate}
\end{theorem}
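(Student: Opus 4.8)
The plan is to prove every part by contraposition, turning the absence of a (strong) $\Fin^\lambda$-fan in the functor-space $FX$ into the absence of concrete combinatorial obstructions inside $X$, and then feeding these absences into the structure theorems already established for $D_\w$-cofans and $S$-semifans. The three postulated algebraic structures furnish exactly the required conversion devices: type $(x(s^*z)x^*)(y(s^*z)y^*)$ supplies Lemma~\ref{l:long} (a convergent sequence together with an uncountable (strongly) compact-finite set produces a (strong) $\Fin^{\w_1}$-fan); type $x(y(s^*z)y^*)$ supplies Lemma~\ref{l:xyzs} (a (strong) $D_\w$-cofan produces a (strong) $\Fin^\w$-fan); type $x\cdot y$ supplies Lemma~\ref{l:x.y}; and, when $FX$ is moreover a superalgebra of type $(x^*y)(z^*s)$, Corollary~\ref{c:cosmic-countable} forces every cosmic subspace of $X$ to be countable, which is precisely what upgrades each conclusion to its parenthetical \emph{(countable)} form.

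For (1) and (2) I would invoke the contrapositive of Lemma~\ref{l:long}: if $FX$ carries no (strong) $\Fin^{\w_1}$-fan, then $X$ contains no convergent sequence, or else $X$ contains no (strongly) compact-finite uncountable set. In the first alternative the standing hypothesis (each infinite compact subspace contains a convergent sequence, automatic for $k^*$-metrizable $X$ since its compacta are metrizable) makes $X$ $k$-discrete. In the second alternative I would translate ``no uncountable (strongly) compact-finite set'' into the stated conclusion: for (2) this is exactly Proposition~\ref{p:k*->a} (Theorem~\ref{t:Ssemifan}(9)), giving an $\aleph_0$-space, made countable by Corollary~\ref{c:cosmic-countable}.

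For (3)--(5) the pipeline has two stages. Lemma~\ref{l:xyzs} first removes $D_\w$-cofans, after which Theorem~\ref{t:Dcofan} together with Propositions~\ref{p:decomp1} and~\ref{p:a+s->ba} presents $X$ (or its $k$-modification) as a $k$-sum / topological sum of cosmic $k_\w$-spaces; the $k_\IR$ hypothesis then identifies $X$ with its $k$-modification. The remaining work is to bound the \emph{number} of non-discrete summands: Lemma~\ref{l:long} (applied to a convergent sequence in one non-discrete summand together with a transversal of an uncountable family of summands, which is an uncountable compact-finite set) and Lemma~\ref{l:x.y} rule out $\bar S^{\w_1}$-fans and uncountable compact-finite sets, forcing at most countably many non-discrete summands, hence hemicompactness in (4) (where no $\Fin^{\w_1}$-fan is assumed outright) and $\w_1$-boundedness in (3).

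The main obstacle is the cardinal bookkeeping in (3) and (5), whose hypotheses sit only at level $\w$ (no strong $\Fin^\w$-fan, resp. no $\Fin^\w$-fan) while their conclusions are genuinely $\w_1$-level. The configurations produced by Lemma~\ref{l:long} are a priori $\Fin^{\w_1}$-fans, so I must descend to $\Fin^\w$-fans; the instrument is Corollary~\ref{c:tight} (a (strong) $\Fin$-fan reduces to a (strong) $\Fin^{t(\cdot)}$-fan), and the delicate point is that it has to be applied after the decomposition, where the relevant tightness is controlled. I expect this reduction, rather than any individual fan construction, to be where the argument must be handled with real care. The only other point needing attention is in (1), where extracting an uncountable strongly compact-finite set from the failure of $\w_1$-boundedness rests on Proposition~\ref{p:unbound-str-c-f}-type reasoning and the passage from open to $\IR$-open neighborhoods: this is immediate once $X$ is Tychonoff (the setting of (3)--(5)) but must be routed through the separating functions in the merely functionally Hausdorff case.
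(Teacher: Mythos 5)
Your handling of parts (1), (2) and (4) is essentially the paper's own proof: contraposition of Lemma~\ref{l:long} (together with Lemma~\ref{l:xyzs} and Lemma~\ref{l:x.y}) feeding into Theorem~\ref{t:Ssemifan}(9), respectively Theorem~\ref{t:Dcofan}(2) via Propositions~\ref{p:decomp1} and~\ref{p:a+s->ba}, with Corollary~\ref{c:cosmic-countable} supplying the parenthetical countability. In fact your citation of Lemma~\ref{l:x.y} (which needs only type $x{\cdot}y$) to exclude $\bar S^{\w_1}$-fans is the correct one; the paper instead invokes Corollary~\ref{c:xy+xyzs}, whose hypothesis (an algebra of type $(x^*y)(z^*s)$) is only available in the parenthetical superalgebra case. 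Your remark about the open-versus-$\IR$-open passage in the merely functionally Hausdorff case of (1) also points at a genuine, though inherited, imprecision of the paper.

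The genuine gap sits exactly where you predicted, in (3) and (5), but the instrument you propose does not close it. Every $\w_1$-level exclusion you need after the decomposition --- no uncountable (strongly) compact-finite set, no $\bar S^{\w_1}$-fan --- is obtained by contradiction from Lemma~\ref{l:long} or Lemma~\ref{l:x.y}, whose outputs are (strong) $\Fin^{\w_1}$-fans \emph{in $FX$}, while the hypotheses of (3) and (5) only forbid (strong) $\Fin^{\w}$-fans in $FX$. Corollary~\ref{c:tight} converts a $\Fin^{\w_1}$-fan into a $\Fin^{\w}$-fan only when $t(FX)\le\w$: the tightness that matters is that of the functor-space where the fan lives, and decomposing $X$ into cosmic $k_\w$-summands gives no control whatsoever over $t(FX)$; none of the standing assumptions on $F$ (monomorphic, bounded, $\II$-regular, finite supports) bounds it. Worse, the implication you would need, ``no (strong) $\Fin^{\w}$-fan implies no (strong) $\Fin^{\w_1}$-fan,'' is false for general spaces: in the one-point Lindel\"ofication $L=\{\ast\}\cup\w_1$ of an uncountable discrete set, the singletons $(\{\alpha\})_{\alpha\in\w_1}$ form a strong $\Fin^{\w_1}$-fan (they are clopen and all compact subsets of $L$ are finite), yet $L$ contains no $\Fin^{\w}$-fan at all, because any countable compact-finite family of finite sets is locally finite at $\ast$ and at every isolated point. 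For what it is worth, the paper's own proof of (3) and (5) hides this same difficulty behind the unjustified assertion that no strong $\Fin^\w$-fan in $FX$ entails no strong $\Fin^{\w_1}$-fan in $FX$; so you located the soft spot correctly, but neither your reduction via Corollary~\ref{c:tight} nor the paper's assertion repairs it. An actual repair must either establish countable tightness of $FX$ in this setting, or produce a countably-indexed substitute for the construction of Lemma~\ref{l:long} (its almost-disjoint-family argument is intrinsically of size $\w_1$), or else strengthen the hypotheses of (3) and (5) to the $\w_1$-level, as in part (4).
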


\begin{proof} 1. Assume that $X$ is functionally Hausdorff, $FX$ contains no strong $\Fin^{\w_1}$-fan and each infinite compact subset of $X$ contains an infinite compact metrizable subspace. By Lemma~\ref{l:long}, the space $X$ either contains no infinite compact metrizable set or contains no unbounded strongly compact-finite set. In the first case the space $X$ is $k$-discrete. In the second case $X$ is $\w_1$-bounded.
\smallskip

2. Assume that $X$ is $k^*$-metrizable and $FX$ contains no $\Fin^{\w_1}$-fan.
By Lemma~\ref{l:long}, the space $X$ either is $k$-discrete or contains no uncountable compact-finite subset. In the latter case, the $k$-modification contains no uncountable closed discrete subsets, which means that $kX$ has countable extent. By Proposition 3.3 \cite{BBK}, the $k$-modification of the $k^*$-metrizable space $X$ is $k^*$-metrizable and Theorem 5.3 \cite{BBK}, the $k^*$-metrizable $k$-space $kX$ of countable extent is cosmic and so is its continuous image $X$. By Theorem 7.2 \cite{BBK} the cosmic $k^*$-metrizable space $X$ is an $\aleph_0$-space. (If $FX$ is a topological superalgebra of type $(x^*y)(s^*z)$, then by Corollary~\ref{c:cosmic-countable}, the $\aleph_0$-space $X$ is countable).
\smallskip

3. Assume that $X$ is a functionally Hausdorff $\bar\aleph_k$-space and $FX$ contains no strong $\Fin^{\w}$-fan. Then $FX$ contains no strong $\Fin^{\w_1}$-fan and by the first statement, the space $X$ is either $k$-discrete or $\w_1$-bounded.
Assume that $X$ is not $k$-discrete. In this case $X$ is $\w_1$-bounded.
  By Lemma~\ref{l:xyzs}, the space $X$ contains no strong $D_\w$-cofan.   Then by Theorem~\ref{t:Dcofan}(1), the $k$-modification $kX$ of $X$ is a topological sum $\oplus_{\alpha\in A}X_\alpha$ of non-empty cosmic $k_\w$-spaces.
 (If $FX$ is a topological superalgebra of type $(x^*y)(s^*z)$, then by Corollary~\ref{c:cosmic-countable}, each cosmic space $X_\alpha$ is countable).

  If $X$ is a $k_\IR$-space, then $X=kX$ and the $\w_1$-boundedness of $X$ guarantees that the index set $A$ is countable and hence $X=kX=\oplus_{\alpha\in A}X_\alpha$ is a cosmic $k_\w$-space.
(If $FX$ is a topological superalgebra of type $(x^*y)(s^*z)$, then by Corollary~\ref{c:cosmic-countable}, the cosmic space $X$ is countable).
\smallskip

  4. Assume that  $X$ is a Tychonoff $\aleph_k$-space and $FX$ contains no strong $\Fin^\w$-fans and no $\Fin^{\w_1}$-fans. Assuming that $X$ is not $k$-discrete, we shall prove that $X$ is hemicompact (and countable). By Lemma~\ref{l:long}, Corollary~\ref{c:xy+xyzs} and Lemma~\ref{l:xyzs}, $X$ contains no uncountable compact-finite set, no $S^{\w_1}$-semifan (and hence no $\bar S^{\w_1}$-fan) and no strong $D_\w$-cofan. Applying Theorem~\ref{t:Dcofan}(2), we conclude that $X$ is a $k$-sum $\bigcup_{\alpha\in A}X_\alpha$ of non-empty hemicompact spaces. Since $X$ contains no uncountable   compact-finite subset, the index set $A$ is at most countable and hence $X$ is hemicompact. If $FX$ is a topological superalgebra of type $(x^*y)(s^*z)$, then by Corollary~\ref{c:cosmic-countable}, the hemicompact space $X$ is countable.
\smallskip

5. Finally assume that  $X$ is an $\aleph_k$-space and $FX$ contains no $\Fin^\w$-fans and hence no $\Fin^{\w_1}$-fans. Assuming that $X$ is not $k$-discrete, we shall prove that $X$ is hemicompact (and countable). By Lemma~\ref{l:long}, Corollary~\ref{c:xy+xyzs} and Lemma~\ref{l:xyzs}, $X$ contains no uncountable compact-finite set, no $S^{\w_1}$-semifan (and hence no $\bar S^{\w_1}$-fan) and no $D_\w$-cofan. Applying Theorem~\ref{t:Dcofan}(2), we conclude that $X$ is a $k$-sum $\bigcup_{\alpha\in A}X_\alpha$ of non-empty hemicompact spaces. Since $X$ contains no uncountable   compact-finite subset, the index set $A$ is at most countable and hence $X$ is hemicompact. If $FX$ is a topological superalgebra of type $(x^*y)(s^*z)$, then by Corollary~\ref{c:cosmic-countable}, the hemicompact space $X$ is countable.
  \end{proof}

\section{Algebraic structures on monadic functors}\label{s:monad}

Let $\Top_i$ be a full hereditary subcategory of $\Top$ such that $\Top_i$ contains all finite discrete spaces. Let $F:\Top_i\to\Top$ be a monomorphic functor with finite supports. In this section we apply the results of the preceding sections to construct $\Fin$-fans in functor-spaces $FX$ using some canonical $n$-ary $F$-valued operations on $FX$.

Observe that each element $p\in F(n)$ determines an $n$-ary $F$-valued operation $p_X:X^n\to FX$ assigning to each function $x:n\to X$ the element $p_X(x):=Fx(p)$. Observe that $F_n(X)=\bigcup_{p\in F(n)} p_X(X^n)$.

We recall that a functor $F:\Top_i\to\Top$ is \index{functor!continuous}{\em continuous} if for every space $X\in\Top_i$ and every $p\in F(n)$ the $n$-ary operation $p_X:X^n\to FX$ is continuous.

The following lemma shows that the value of $p_X( x)$ depends only on the restriction $x|\supp(p)$.

\begin{lemma}\label{l:equal-supp} Let $n\in\IN$ and $p\in F(n)$. For any space $X\in\Top_i$ and vectors $x,y\in X^n$ with $x|\supp(p)=y|\supp(p)$ we get $p_X(x)=p_X(y)$.
\end{lemma}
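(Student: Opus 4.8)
The plan is to use Theorem~\ref{t:supp} to factor the element $p$ through its support and then exploit functoriality. Write $S=\supp(p)\subset n$ and suppose first that $S\neq\emptyset$. Since $n$ is a finite discrete space, its subset $S$ is already discrete, so $S_d=S$ and $F(S_d;n)=Fi_{S,n}(FS)$, where $i_{S,n}\colon S\hookrightarrow n$ is the inclusion. Theorem~\ref{t:supp} gives $p\in F(S_d;n)$, hence $p=Fi_{S,n}(p')$ for some $p'\in FS$. Applying $F$ to the equalities $x\circ i_{S,n}=x|S$ and $y\circ i_{S,n}=y|S$ and using the hypothesis $x|S=y|S$, I obtain
\[
p_X(x)=Fx(p)=Fx\circ Fi_{S,n}(p')=F(x|S)(p')=F(y|S)(p')=Fy\circ Fi_{S,n}(p')=Fy(p)=p_X(y),
\]
which settles this case and reduces everything to the degenerate situation $S=\emptyset$.

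The genuine subtlety, and what I expect to be the main obstacle, is exactly the case $S=\emptyset$: here the hypothesis $x|S=y|S$ is vacuous, so one must prove outright that $p_X(x)$ is independent of $x\in X^n$. (For $n=0$ the power $X^n$ is a single point and there is nothing to prove, so assume $n\ge1$.) The key is an idempotency identity. For each $i\in n$, Theorem~\ref{t:supp} applied to the singleton $\{i\}\supset\emptyset=\supp(p)$ yields $p\in F(\{i\};n)$, say $p=Fi_{\{i\},n}(a)$ with $a\in F(\{i\})$; since the constant map $\rho_i\colon n\to n$ at $i$ satisfies $\rho_i\circ i_{\{i\},n}=i_{\{i\},n}$, applying $F$ and evaluating at $a$ gives $F\rho_i(p)=p$. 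Consequently, for every $x\in X^n$ and every $i\in n$,
\[
Fx(p)=Fx\circ F\rho_i(p)=F(x\circ\rho_i)(p)=F(\mathrm{const}_{x(i)})(p),
\]
where $\mathrm{const}_z\colon n\to X$ denotes the constant map at $z\in X$.

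To finish the empty-support case when $n\ge2$, given arbitrary $x,y\in X^n$ I will choose an auxiliary vector $w\in X^n$ with $w(0)=x(0)$ and $w(1)=y(0)$; the displayed identity applied to $w$ with $i=0$ and with $i=1$ then gives $Fx(p)=F(\mathrm{const}_{x(0)})(p)=Fw(p)=F(\mathrm{const}_{y(0)})(p)=Fy(p)$. The only remaining configuration is $n=1$ with $\supp(p)=\emptyset$: since $\{0\}$ contains no two disjoint non-empty subsets, the intersection analysis of $\Supp(p)$ underlying Theorem~\ref{t:supp} (and Lemma~\ref{l:supp1}) forces $\emptyset\in\Supp(p)$, i.e. $p=Fi^d_{\emptyset,n}(p_\emptyset)$ factors through $F\emptyset$. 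Then for any $x\in X^n$ the composite $x\circ i^d_{\emptyset,n}$ is the unique empty map $i^d_{\emptyset,X}$, whence $Fx(p)=Fi^d_{\emptyset,X}(p_\emptyset)$ is manifestly independent of $x$. This factorization through $F\emptyset$ in fact disposes of every instance of $\emptyset\in\Supp(p)$ simultaneously, so it can also be used to streamline the argument above; and throughout, monomorphicity of $F$ enters only via its role in Theorem~\ref{t:supp} and Lemma~\ref{l:supp1}.
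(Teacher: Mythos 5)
Your proof is correct, and while the non-empty-support half coincides with the paper's argument (factor $p$ through $F(\supp(p);n)$ via Theorem~\ref{t:supp} and apply functoriality), your treatment of the empty-support case takes a genuinely different route. The paper handles $\supp(p)=\emptyset$ by invoking Theorem~1 of \cite{BMZ}: it factors $p$ through the set $F^\circ\emptyset=\{a\in F1:Fi_0(a)=Fi_1(a)\}\subset F1$ and uses the well-definedness of the induced map $F^\circ e\colon F^\circ\emptyset\to FX$ to see that $p_X$ is constant on constant vectors, then reduces arbitrary vectors to constant ones via the first half. You avoid this external machinery entirely: for $n\ge 2$ your idempotency identity $F\rho_i(p)=p$ (obtained from Theorem~\ref{t:supp} applied to the singleton $\{i\}\supset\supp(p)$) gives $p_X(x)=p_X(\mathrm{const}_{x(i)})$ for every $i$, and the auxiliary vector $w$ with $w(0)=x(0)$, $w(1)=y(0)$ transports the constant value from $x(0)$ to $y(0)$ — a purely functorial argument requiring only two coordinates; the residual case $n=1$ is then settled by the elementary observation that a non-empty family of subsets of a singleton with empty intersection must contain $\emptyset$, so $p$ literally factors through $F\emptyset$ (note this direct factorization is special to $n=1$ and fails for larger $n$, which is exactly why the paper needs the virtual object $F^\circ\emptyset$). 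What each approach buys: yours is self-contained and more elementary, staying entirely within the paper's own Theorem~\ref{t:supp} and Lemma~\ref{l:supp1}; the paper's is uniform in $n$ and produces the canonical empty-support part $F^\circ(\emptyset;X)$, a structure of independent use. Both are complete proofs.
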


\begin{proof} Take any non-empty set $P\subset n$ that contains the support $\supp(p)$ of $P$ and consider the identity embedding $i_{P,n}:P\to n$.
By Theorem~\ref{t:supp}, $p\in F(P;n)$. So, we can find an element $a\in FP$ such that $p=Fi_{P,n}(a)$.

For any vectors $x,y\in X^n$ the equality $x|P=y|P$ implies $x\circ i_{P,n}=y\circ i_{P,n}$. Applying the functor $F$ to the last equality we get $p_X(x)=Fx(p)=Fx\circ Fi_{P,n}(a)=Fy\circ Fi_{P,n}(b)=Fy(p)=p_X(y)$.

If $\supp(p)$ is not empty, then we can put $P=\supp(p)$ and conclude that for any $x,y\in X^n$ the equality $x|\supp(p)=y|\supp(y)$ implies $p_X(x)=p_X(y)$.

It remains to consider the case of empty support $\supp(p)$. By Theorem~\ref{t:supp}, $p\in F(P;X)$ for any non-empty set $P\subset X$.
For every number $k\in\{0,1\}=2$ consider the map $i_k:1\to \{k\}\subset 2$. In the functor-space $F1$ consider the set $F^\circ\emptyset=\{a\in F1:Fi_0(a)=Fi_1(a)\}$. Let $e:\emptyset\to X$ be the unique map. Define the map
$F^\circ e:F^\circ\emptyset\to FX$ letting $F^\circ e=Fz|F^\circ\emptyset$ where $z:1\to X$ is any map. By Theorem 1 of \cite{BMZ}, the map $F^\circ e$ is well-defined and does no depend on the choice of the map $z:1\to X$. Moreover, the set $F^\circ(\emptyset;X)=F^\circ e(F^\circ\emptyset)\subset FX$ contains all elements of $FX$ with empty support. In particular, $p\in F^\circ(\emptyset;X)$. So, we can find an element $a\in F^\circ \emptyset$ such that $F^\circ e(a)=p$. The definition of the image $F^\circ e(a)$ implies that for any maps $x,y:1\to X$ we get $p_X(x)=Fx(a)=F^\circ e(a)=Fy(a)=p_X(y)$. Then for any constant maps $\bar x,\bar y:n\to X$  we also get the equality $p_X(\bar x)=p_X(\bar y)$. Now take any maps $x,y\in X^n$ and let $\bar x,\bar y$ be the constant maps such that $\bar x(0)=x(0)$ and $\bar y(0)=y(0)$. Since $\supp(p)\subset\{0\}$, the latter equalities imply $p_X(x)=p_X(\bar x)=p_X(\bar y)=p_X(y)$.
\end{proof}

It turns out that functor-spaces $FX$ quite often are topological algebras of type $x{\cdot}y$.

\begin{proposition}\label{p:extype-xy} If $F:\Top_i\to\Top$ is a continuous functor with $F(n)\ne F_1(n)$ for some $n\in\w$, then for any infinite space $X\in\Top_i$ the functor-space $FX$ is a topological algebra of type $x{\cdot}y$.
\end{proposition}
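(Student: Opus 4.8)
The plan is to extract, from the hypothesis $F(n)\neq F_1(n)$, a single element $p\in F(n)$ whose support has at least two points, and then to build from $p$ the required binary operation on $FX$. Concretely, since $F(n)=\bigcup_{k}F_k(n)$ and by Corollary~\ref{c:Fn} we have $F_1(n)=\{a\in F(n):|\supp(a)|\le 1\}$, the assumption $F(n)\neq F_1(n)$ yields an element $p\in F(n)$ with $|\supp(p)|\ge 2$. Pick two distinct indices $i,j\in\supp(p)\subset n$. I would then define the binary $F$-valued operation $q:X^2\to FX$ by $q(x,y)=p_X(\sigma_{x,y})$, where $\sigma_{x,y}:n\to X$ is a function sending $i\mapsto x$, $j\mapsto y$, and all other coordinates to some fixed basepoint, arranged so that $\supp(p)$ is sent injectively. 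By Lemma~\ref{l:equal-supp} the value $q(x,y)$ depends only on the restriction of $\sigma_{x,y}$ to $\supp(p)$, so the ambiguity in the remaining coordinates is harmless. The finite exceptional set $C$ will be whatever finite set of basepoints (and points colliding with them) we must exclude to keep $\sigma_{x,y}|\supp(p)$ injective.

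The heart of the verification is Definition~\ref{d:type-xy}(1): for distinct $x,y\in X\setminus C$ we must show $\{x,y\}\subset\supp(q(x,y))$. Here I would invoke Proposition~\ref{p:supp=image}: if $\sigma_{x,y}|\supp(p)$ is injective then $\supp(p_X(\sigma_{x,y}))=\sigma_{x,y}(\supp(p))$, which contains $\sigma_{x,y}(i)=x$ and $\sigma_{x,y}(j)=y$. This is exactly why the exceptional set $C$ is chosen to guarantee injectivity of $\sigma_{x,y}$ on $\supp(p)$ — we need the basepoints distinct from $x,y$ and from each other on $\supp(p)$, and we need $x\neq y$ (which is given). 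The infiniteness of $X$ is what lets us fix such basepoints at all, since $n$ is fixed and $\supp(p)$ may require up to $|\supp(p)|-2$ auxiliary distinct values; an infinite $X$ certainly supplies them.

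Condition (2) of Definition~\ref{d:type-xy} follows from the continuity of the functor. The map $(x,y)\mapsto\sigma_{x,y}$ is a continuous map $X^2\to X^n$ (each coordinate is either a projection or constant), and by continuity of $F$ the operation $p_X:X^n\to FX$ is continuous; composing gives continuity of $q:X^2\to FX$. Thus for any neighborhood $W$ of $q(x,y)=p_X(\sigma_{x,y})$ there are neighborhoods $O_x,O_y$ of $x,y$ with $q(O_x\times O_y)\subset W$, which is precisely property (2). The main obstacle I anticipate is purely bookkeeping: specifying $\sigma_{x,y}$ so that its restriction to $\supp(p)$ is injective while keeping the two distinguished coordinates labeled by $x$ and $y$, and pinning down the finite set $C$ correctly. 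If $|\supp(p)|>2$ one must route the extra support coordinates to fixed distinct auxiliary points, and then $C$ must contain those auxiliary points so that $x,y$ avoid them; once the injectivity of $\sigma_{x,y}|\supp(p)$ is secured, both defining properties drop out immediately from Proposition~\ref{p:supp=image} and the continuity of $F$.
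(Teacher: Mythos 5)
Your proof is correct and follows essentially the same route as the paper's: extract an element $p$ with $|\supp(p)|\ge 2$, send the remaining support coordinates to fixed pairwise distinct points which form the exceptional set $C$, plug $x,y$ into two distinguished support coordinates, and verify the two conditions of Definition~\ref{d:type-xy} via Proposition~\ref{p:supp=image} and the continuity of $F$. The only cosmetic difference is that the paper first normalizes $p$ (by a permutation and restriction) to an element of $F(k+2)$ with full support and places the variables in the last two coordinates, whereas you work with $p\in F(n)$ directly and handle the coordinates outside $\supp(p)$ by fixing basepoints (with Lemma~\ref{l:equal-supp} noting their irrelevance).
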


\begin{proof} Taking into account that $\emptyset\ne F(n)\setminus F_1(n)=\bigcup_{k=0}^{n-2} F_{k+2}(n)\setminus F_{k+1}(n)$, we can find an ordinal
$k\le n-2$ and an element $q\in F_{k+2}(n)\setminus F_{n+1}(n)$. It follows that $|\supp(q)|=k+2$. Applying to $n$ a suitable permutation, we can assume that $\supp(q)=\{0,\dots,k+1\}=k+2$ and hence $q\in F(k+2;n)$. Find a unique element $p\in F(k+2)$ such that $q=Fi_{k+2,n}(p)$. It follows that $p\in F(k+2)\setminus F_{k+1}(k+2)$ and hence $\supp(p)=k+2$.

Fix any pairwise distinct points $c_1,\dots,c_k\in X$ and consider the set $C=\{c_1,\dots,c_k\}$. The element $p\in F(k+2)$ induces the binary $F$-valued operation $ p_c:X^2\to FX$, $p_c:(x,y)\mapsto p_X(c_1,\dots,c_k,x,y)$. Observe that for any distinct points $x,y\in X\setminus C$ we get $\supp(p_c(x,y))=C\cup\{x,y\}\supset\{x,y\}$.

The continuity of the functor $F$ implies the continuity of the operation $ p_c:X^2\to FX$. Now we see that the set $C$ and the operation $p_c$ witnesses that the functor-space $FX$ is a topological algebra of type $x{\cdot}y$.
\end{proof}

The constructions of more complicated operations on functor-spaces involves
the monadic structures on functors.

We say that a functor $F:\Top_i\to\Top_i$ is \index{functor!monadic}{\em monadic}
if there are natural transformations $\delta:\Id\to F$ and $\mu:F^2\to F$ making the following diagrams commutative:
$$\xymatrix{
F\ar_{F\delta}[d]\ar^{\delta F}[r]\ar^{\id}[rd]&F^2\ar^{\mu}[d]&&F^3\ar^{\mu_T}[r]\ar_{F\mu}[d]&F^2\ar^{\mu}[d]\\
F^2\ar_{\mu}[r]&F&&F^2\ar_\mu[r]&T
}
$$In this case the triple $(F,\delta,\mu)$ is called a \index{monad}{\em monad} and the functor $F$ is called its {\em functorial part}.
The natural transformations $\delta$ and $\mu$ are called the \index{monad!unit of}{\em unit} and the \index{monad!multiplication of}{\em multiplication} of the monad $(F,\delta,\mu)$.
More information on monads in categories of topological spaces can be found in the book \cite{TZ}.

If the functorial part $F:\Top_i\to\Top_i$ of a monad $(F,\delta,\mu)$ is a  monomorphic functor with finite supports, then for any space  $X\in\Top_i$ any element $\A\in F^2X$ has finite support $\supp(\A)\subset FX$ and any element $\alpha\in \supp(\A)$ has finite support $\supp(\alpha)\subset X$. Then the set $$\supp^2(\A)=\bigcup_{\alpha\in\supp(\A)}\supp(\alpha)$$ is finite and Theorem~\ref{t:supp} guarantees that $\A\subset F^2(A;X)=F(F(A;X);FX)$ for any non-empty subset $A\subset X$ containing $\supp^2(\A)$. We shall say that an element $\A\in F^2X$ has {\em disjoint support} if the indexed family $\big(\supp(\alpha)\big)_{\alpha\in \supp(\A)}$ is disjoint and consists of non-empty sets.

\begin{definition} Let $(F,\delta,\mu)$ be a monad in the category $\Top_i$ whose functorial part $F:\Top_i\to\Top_i$ is a monomorphic functor with finite supports. The monad $(F,\delta,\mu)$ is defined to \index{functor!preserves disjoint supports}{\em preserve disjoint supports} if  for any space $X\in\Top_i$ the following conditions hold:
\begin{itemize}
\item $\supp(\delta_X(x))=\{x\}$ for any $x\in X$;
\item $\supp(\mu(\A))=\supp^2(\A)$ for any element $\A\in F^2X$ with disjoint support $\big(\supp(\alpha)\big)_{\alpha\in\supp(\A)}$.
\end{itemize}
\end{definition}

\begin{example} The functor $F:\Top\to\Top$, $F:X\mapsto X\times X$, is the functorial part of a monad $(F,\delta,\mu)$ which does not preserve disjoint supports. The identity $\delta$ of this monad is defined by $\delta_X:x\mapsto (x,x)$ and the multiplication $\mu$ by $\mu_X:((x,y),(u,v))\mapsto (u,v)$. Then for any pairwise distinct points $x,y,u,v$ in a topological space $X$ the element $\A=((x,y),(u,v))\in (X\times X)\times (X\times X)$ has disjoint support $\{(x,y),(u,v)\}$ but $\supp(\mu(\A))=\supp((x,v))=\{x,v\}\ne \{x,y,u,v\}=\supp^2(\A)$.
\end{example}

Assume that a functor $F:\Top_i\to\Top_i$ can be completed to a monad $(F,\delta,\mu)$. Then for every space $X\in\Top_i$, every element $p\in F(n)$, $n\in\w$, induces the $n$-ary $F$-valued operation $p_{FX}:(FX)^n\to F(FX)$ which can be transformed into an $n$-ary operation $\dot p_{FX}=\mu_X\circ p_X:(FX)^n\to FX$ on $FX$. If the functor $F$ is continuous, then the operations $p_{FX}$ and $\dot p_{FX}$ are continuous. These two operations will be used in the proof of the following proposition.

\begin{proposition}\label{p:F->structure} Assume that a continuous monomorphic functor $F:\Top_i\to\Top_i$ with finite supports can be completed to a monad $(F,\delta,\mu)$ which preserves disjoint supports. Let $X\in\Top_i$ be an infinite space.
\begin{enumerate}
\item[\textup{1)}] If $F(X)\ne F_1(X)$, then the space $FX$ is a topological algebra of type $x{\cdot}y$.
\item[\textup{2)}] If $F$ does not preserve preimages, then $FX$ is a topological algebra of types $x{\cdot}y$, $x{}(y^*z)$ and  $(x^*y){}(z^*s)$.
\item[\textup{3)}] If $F$ strongly fails to preserve preimages, then $FX$ is a topological algebra of types $x{} (y^*(z^*s)y)$ and $(x^*(z^*s)y){\cdot} (y^*(z^*s)y)$.
\end{enumerate}
\end{proposition}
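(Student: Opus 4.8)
The plan is to verify, for each of the three cases, that the abstract defining conditions of the corresponding topological-algebra types (Definitions~\ref{d:type-xy}, \ref{d:xyz}, \ref{d:xy.zs}, \ref{d:xyzs}, \ref{d:long}) are satisfied by concrete $F$-valued operations built from the monad structure $(F,\delta,\mu)$. The raw material is the family of operations $p_X$ and $\dot p_X=\mu_X\circ p_{FX}$ induced by elements $p\in F(n)$, together with the support-calculus already developed: Lemma~\ref{l:equal-supp} (values depend only on the restriction to $\supp(p)$), Theorem~\ref{t:supp} and Proposition~\ref{p:supp=image} (support behaviour under maps), and the hypothesis that the monad preserves disjoint supports (so $\supp(\mu(\A))=\supp^2(\A)$ whenever $\A$ has disjoint support). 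Continuity of $F$ gives continuity of all the induced operations, which is exactly what the neighborhood axioms (item~(2) or (3) in each definition) require; Lemma~\ref{l:equal-supp} supplies the algebraic identities (item~(1)), and the disjoint-support hypothesis supplies the support lower bounds (item~(2)).

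First I would dispose of statement~(1), which is just Proposition~\ref{p:extype-xy} restated: the hypothesis $F(X)\ne F_1(X)$ for the infinite space $X$ yields, after permuting coordinates, an element $p\in F(k+2)\setminus F_{k+1}(k+2)$ with full support, and fixing $k$ ``dummy'' constants $c_1,\dots,c_k$ produces a binary operation $p_c(x,y)=p_X(c_1,\dots,c_k,x,y)$ whose support contains $\{x,y\}$ for distinct $x,y\notin C$. Continuity of $F$ gives continuity of $p_c$, which is Definition~\ref{d:type-xy}(2). For statement~(2), I would invoke Corollary~\ref{c:F-preim}: failure to preserve preimages produces $n\in\IN$ and $a\in F(n+1)\setminus F_n(n+1)$ with $\supp(Fr^{n+1}_n(a))\subset n-1$. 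The idea is to use $a$, promoted via $\delta$ and $\mu$ to an operation on $FX$, to build the ternary operation witnessing type $x(y^*z)$ and the $4$-ary operation witnessing type $(x^*y)(z^*s)$. The collapsing retraction $r^{n+1}_n$ is what forces the identities $p(x,y,y)=p(x,x,x)$ and $p(x,x,x,x)=p(x,x,z,z)=p(z,z,z,z)$ (Definition~\ref{d:xyz}(1), \ref{d:xy.zs}(1)): when two arguments coincide the support of the composed element drops, exactly mirroring $\supp(Fr^{n+1}_n(a))\subset n-1$. The support inclusions $\{x,z\}\subset\supp$ and $\{y,s\}\subset\supp$ (item~(2) of each definition) come from the monad preserving disjoint supports, applied to the nested element $\mu_X(\cdots)$ whose inner summands have pairwise-disjoint singleton-ish supports coming from the distinct points $x,y,z,s\notin C$. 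The neighborhood axioms follow once more from continuity of $F$ and $\mu$, giving the diagonal-neighborhood factorizations in Definition~\ref{d:xyz}(3) and \ref{d:xy.zs}(3).

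For statement~(3) I would use Definition~\ref{d:stong-preim}: strong failure to preserve preimages gives $n\ge 2$ and $a\in F(n+1)\setminus F_n(n+1)$ with $\supp(Fr^{n+1}_n(a))\subset n-2$, i.e. the collapse kills \emph{two} coordinates worth of support rather than one. This extra drop is what upgrades the identities to the ``$s$-absorbing'' identities $p(x,y,s)=p(x,s,s)$ and $p(x,y,s)=p(s,s,s)$ of Definitions~\ref{d:xyzs}(1) and~\ref{d:long}(1), and it allows the neighborhood axioms to be stated over a neighborhood $U_{X\times\{s\}}$ of the whole slice $X_d\times\{s\}$ (rather than just the diagonal). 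I would construct the ternary operations on $FX$ by substituting $\delta_X$-images at the appropriate coordinates of $\dot p_X$ and again reading off supports from the disjoint-support property; the $s$-slot gets the collapsing coordinates of $r^{n+1}_n$, and the continuity/openness of the monad operations delivers the factorization through $O_s\times U_{X\times\{s\}}$.

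The main obstacle I anticipate is the bookkeeping in statement~(3), namely verifying that the element $a$ from Definition~\ref{d:stong-preim}, after being pushed through $\delta$ and $\mu$ and having various coordinates identified, really produces supports that both \emph{contain} $\{x,y\}$ for distinct generic points and \emph{collapse correctly} when the $s$-coordinate is specialized; this is precisely where the preservation of disjoint supports is indispensable, and where one must be careful that the family $(\supp(\alpha))_{\alpha\in\supp(\A)}$ is genuinely disjoint so that $\supp(\mu(\A))=\supp^2(\A)$ applies. The neighborhood conditions, by contrast, are expected to be routine consequences of the continuity of $F$ (hence of $p_{FX}$, $\mu_X$, and $\dot p_{FX}$), so I would not belabor them. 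I would also double-check that the chosen finite exceptional sets $C$ (or $C_s$) absorb the fixed ``dummy'' constants and the images of the killed coordinates, so that the support-containment statements hold for all points outside $C$.
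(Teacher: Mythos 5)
Your proposal is correct and matches the paper's own proof essentially step for step: statement (1) is reduced to Proposition~\ref{p:extype-xy}, statements (2) and (3) use the element $a$ from Corollary~\ref{c:F-preim} (resp.\ Definition~\ref{d:stong-preim}) promoted to the operation $\dot a_{FX}=\mu_X\circ a_{FX}$, with the identities coming from the collapsing retraction via Lemma~\ref{l:equal-supp}, the support lower bounds from preservation of disjoint supports (with the dummy constants of disjoint supports absorbed into $C$), and the neighborhood axioms from continuity. The one detail you leave implicit — the type $x{\cdot}y$ in statement (2) — is exactly handled as the paper does it, since $a\in F(n+1)\setminus F_n(n+1)$ gives $F(n+1)\ne F_1(n+1)$ and Proposition~\ref{p:extype-xy} applies.
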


\begin{proof} 1. Assuming that $F(X)\ne F_1(X)$, choose an element $a\in FX\setminus F_1(X)$ and consider its support $\supp(a)$, which has cardinality $n=|\supp(a)|>1$ according to Theorem~\ref{t:supp}. Let $f:n\to\supp(a)$ be any bijective map and observe that $a\in F(f(n)_d,X)$ and hence $a=Ff(b)$ for some element $b\in F(n)\setminus F_1(n)$. Now we can apply Proposition~\ref{p:extype-xy} and conclude that the functor-space $FX$ is a topological algebra of type $x{\cdot}y$.
\smallskip

2. Assume that $F$ does not preserve preimages. By Corollary~\ref{c:F-preim}, there exist a number $l\in\IN$ and an element $a\in F(l+1)\setminus F_l(l+1)$ such that $\supp(Fr^{l+1}_l(a))\subset l-1$. Here $r^{l+1}_l:l+1\to l$, $r^{l+1}_l:i\mapsto\max\{j\in l:j\le i\}$, stands for the retraction of $l+1$ on $l$.

  For any space $Z\in\Top_i$ the element $a$ induces a continuous $(l+1)$-ary operation $a_Z:Z^{l+1}\to FZ$ such that
\begin{itemize}
\item[(i)] $\supp(a_Z(z_1,\dots,z_{l+1}))=\{z_1,\dots,z_{l+1}\}$ for any pairwise distinct points $z_1,\dots,z_{l+1}\in Z$;
\item[(ii)] $a_Z(z_1,\dots,z_{l+1})=a_Z(z_1',\dots,z_{l+1}')$ for any vectors $(z_i)_{i=1}^{l+1},(z_i')_{i=1}^{l+1}\in Z^{l+1}$ such that $z_l=z_{l+1}$, $z_l'=z'_{l+1}$ and $(z_i)_{i=1}^{l-1}=(z_i')_{i=1}^{l-1}$.
\end{itemize}
On the space $FX$ consider the continuous $(l+1)$-ary operation
 $\dot a_{FX}=\mu_X\circ  a_{FX}:(FX)^{l+1}\to FX$. If $l=1$, then the finite set $C=\emptyset$ and the continuous operation $p:X^3\to FX$ defined by $p(x,y,z)=\dot a_{FX}(\delta_X(x),a_X(y,z))$ witness that $FX$ is a topological algebra of type $x(y^*z)$. 

If $l>1$, then take any disjoint family $(C_i)_{i=1}^{l}$ of finite subsets of $X$ of cardinality $|C_i|=l+1$. For every $i<l$ fix an enumeration $\{c_{i,j}\}_{j=1}^{l+1}$ of the set $C_i$.
For every $i<l$ consider the element $w_i=a_X(c_{i,1},\dots,c_{i,l+1})\in FX$ and observe that $\supp(w_i)=C_i$.
This implies that the elements $w_1,\dots,w_l$ are pairwise distinct.
Let $\vec c_l=(c_{l,1},\dots,c_{l,l-1})\in X^{l-1}$.

Define a ternary operation $p:X^3\to FX$ by the formula $$p(x,y,z)=\dot a_{FX}\big(w_1,\dots,w_{l-1},\delta_X(x),a_X(\vec c_l,y,z)\big).$$
We claim that the finite set  $C=\bigcup_{i=1}^l C_i$ and the operation $p$ witness that the functor-space $FX$ is a topological algebra of type $x(y^*z)$.
We need to verify the conditions (1)--(3) of Definition~\ref{d:xyz}.

First we show that $p(x,y,y)=p(x,x,x)$ for all $x,y\in X$. Denote the vectors $(\vec c_l,x,x)$ and $(\vec c_l,y,y)$ by $\vec x$ and $\vec y$, respectively. We think of these vectors as functions $\vec x,\vec y:l+1\to X$. It is clear that $\vec x= \vec x\circ r^{l+1}_l$ and hence  $F\vec x=F\vec x\circ Fr^{l+1}_l$ and $a_X(\vec x)=F\vec x(a)=F\vec x\circ Fr^{l+1}_l(a)=F\vec x(b)=b(\vec x)$ where $b=Fr^{l+1}_l(a)$.
By analogy we can show that $a_X(\vec y)=b(\vec y)$.
Since $\supp(b)\subset l-1$ and $\vec x|l-1=\vec y|l-1$, Lemma~\ref{l:equal-supp} guarantees that $a_X(\vec x)=b(\vec x)=b(\vec y)=a_X(\vec y)$ and hence
$$
\begin{aligned}
p(x,x,x)&=\dot a_{FX}(w_1,\dots,w_{l-1},\delta_X(x),a_X(\vec x))=\\
&=\dot a_{FX}(w_1,\dots,w_{l-1},\delta_X(x),a_X(\vec y))=p(x,y,y).
\end{aligned}
$$
So, the condition (1) of Definition~\ref{d:xyz} holds.

Since the monad $(F,\delta,\mu)$ preserves disjoint supports, for any pairwise distinct points $x,y,z\in X\setminus C$ the element $p(x,y,z)=\dot a_X(w_1,\dots,w_{l-1},\delta_X(x),a_X(\vec c_l,y,z))$ has support $\supp(p(x,y,z))\supset\{x,y,z\}$. So the condition (2) of Definition~\ref{d:xyz} is satisfied too. The condition (3) of this definition follows from the continuity of the operation $p$, condition (1) of Definition~\ref{d:xyz} and the property (ii) of the operation $a_Z$.
\smallskip

To see that $FX$ is a topological algebra of type $(x^*y)\cdot(z^*s)$, consider the operation $q:X^4\to FX$ defined by the formula
$$q(x,y,z,s)=\dot a_{FX}(w_1,\dots,w_{l-1},a_X(\vec c_l,x,y),a_X(\vec c_l,z,s)).$$
By analogy with the case of topological algebra $x{}(y^*z)$, it can be shown that the finite set $C=\bigcup_{i=1}^lC_i$ and the operation $q$ witness that $FX$ is a topological algebra of type $(x^*y)\cdot (z^*s)$.
\smallskip

Taking into account that $a\in F(l+1)\setminus F_l(l+1)$, we conclude that $F(l+1)\ne F_1(l+1)$, so we can apply the first statement which states that $FX$ is a topological algebra of type $x{\cdot}y$.
\smallskip

3. If the functor $F$ strongly fails to preserve preimages, then Definition~\ref{d:stong-preim} yields a number $n\ge 2$ and an element $a\in F(n+1)\setminus F_n(n+1)$ such that $Fr^{n+1}_n(a)\in F_{n-2}(n)$. Then for every Tychonoff space $Z$ the $F$-valued operation $a_Z:Z^{n+1}\to FZ$ has the following two properties:
\begin{itemize}
\item[(i)] $\supp(\dot a_Z(z_1,\dots,z_{l+1}))=\{z_1,\dots,z_{l+1}\}$ for any pairwise distinct points $z_1,\dots,z_{l+1}\in Z$;
\item[(ii)] $\supp(\dot a_Z(z_1,\dots,z_{l+1}))\subset \{z_1,\dots,z_{l-2}\}$ for any points $z_1,\dots,z_{l+1}\in Z$ with $z_l=z_{l+1}$.
\end{itemize}
On the space $FX$ consider the continuous $(l+1)$-ary operation
 $$\dot a_{FX}=\mu_X\circ a_{FX}:(FX)^{l+1}\to FX.$$

 Take any disjoint family $(C_i)_{i=1}^{l}$ of finite subsets of $X$ of cardinality $|C_i|=l+1$. For every $i<l$ fix an enumeration $\{c_{i,j}\}_{j=1}^{l+1}$ of the set $C_i$.
For every $i<l$ consider the element $w_i= a_X(c_{i,1},\dots,c_{i,l+1})\in FX$ and observe that $\supp(w_i)=C_i$.
This implies that the elements $w_1,\dots,w_l$ are pairwise distinct.
Let $\vec c_l=(c_{l,1},\dots,c_{l,l-2})\in X^{l-2}$.

Next, define a continuous operation $p:X^4\to FX$ by the formula $$p(x,y,z,s)=\dot a_{FX}(w_1,\dots,w_{l-1},\delta_X(x),a_X(\vec c_l,y,z,s)).$$
By analogy with the case of topological algebra $x{}(y^*z)$, it can be shown that the finite set  $C=\bigcup_{i=1}^l C_i$ and the operation $p$ witness that $FX$ is a topological algebra of type $x\cdot(y^*(z^*s)y)$.

To see that $FX$ is a topological algebra of type $(x^*(z^*s)x)\cdot(y^*(z^*s)y)$, consider the operation $q:X^4\to FX$ defined by the formula
$$q(x,y,z,s)=\dot a_{FX}(w_1,\dots,w_{l-1},a_X(\vec c_l,x,z,s), a_X(\vec c_l,y,z,s)).$$
It can be shown that the finite set $C=\bigcup_{i=1}^lC_i$ and the operation $q$ witness that $FX$ is a topological algebra of type $(x^*(z^*s)y){} (y^*(z^*s)y)$.
\end{proof}

Proposition~\ref{p:F->structure}(1) and Lemmas~\ref{l:type-xy} and \ref{l:x.y} imply:

\begin{corollary}\label{c:funct-a1} Assume that a monomorphic functor $F:\Top_i\to\Top_i$ has finite supports, is bounded, $\II$-regular, and $F(n)\ne F_1(n)$ for some $n\in\w$. Let $X\in\Top_i$ be a $\mu$-complete (functionally Hausdorff) space.
\begin{enumerate}
\item If $FX$ contains no (strong) $\Fin^\w$-fan, then  $X$  contains no (strong) $\bar S^\w$-fan or no (strong) $D_\w$-cofan.
\item If $FX$ contains no (strong) $\Fin^{\w_1}$-fan, then $X$  contains no (strong) $\bar S^{\w_1}$-fan.
\end{enumerate}
\end{corollary}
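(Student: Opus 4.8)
The plan is to derive this corollary directly from the structural result of Proposition~\ref{p:F->structure}(1) together with the two contrapositive lemmas on topological algebras of type $x{\cdot}y$. Under the stated hypotheses the functor $F:\Top_i\to\Top_i$ is monomorphic, has finite supports, is bounded and $\II$-regular, and satisfies $F(n)\ne F_1(n)$ for some $n\in\w$. The first step is to observe that Proposition~\ref{p:F->structure}(1) applies once we know that $F$ can be completed to a monad preserving disjoint supports; in the present formulation we simply quote that the functor-space $FX$ is a topological algebra of type $x{\cdot}y$ for every infinite space $X\in\Top_i$. (If $X$ is finite, then $X$ contains no convergent sequence and no $D_\w$-cofan, so both conclusions hold vacuously; thus we may assume $X$ infinite from now on.)

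For the first statement I would argue by contraposition. Suppose $X$ contains both a (strong) $\bar S^\w$-fan $(S_n)_{n\in\w}$ and a (strong) $D_\w$-cofan $(D_n)_{n\in\w}$. Since $X$ is $\mu$-complete (and, in the strong case, functionally Hausdorff) and the functor-space $FX$ is a topological algebra of type $x{\cdot}y$, Lemma~\ref{l:type-xy} produces a (strong) $\Fin^\w$-fan in $FX$. This contradicts the assumption that $FX$ contains no (strong) $\Fin^\w$-fan. Hence $X$ can contain at most one of the two configurations: either no (strong) $\bar S^\w$-fan or no (strong) $D_\w$-cofan, which is precisely the conclusion of (1).

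The second statement is handled the same way but with a single input. Suppose, for contradiction, that $X$ contains a (strong) $\bar S^{\w_1}$-fan. Since $X$ is $\mu$-complete (and functionally Hausdorff in the strong case) and $FX$ is a topological algebra of type $x{\cdot}y$, Lemma~\ref{l:x.y} yields a (strong) $\Fin^{\w_1}$-fan in $FX$, contradicting the hypothesis that $FX$ contains no (strong) $\Fin^{\w_1}$-fan. Therefore $X$ contains no (strong) $\bar S^{\w_1}$-fan, as claimed.

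The only genuine subtlety, and the step I would watch most carefully, is the bookkeeping of the two parenthetical variants (plain versus strong) so that the hypotheses of Lemmas~\ref{l:type-xy} and \ref{l:x.y} are met in each reading: the functional-Hausdorff assumption on $X$ is needed exactly in the strong case (to invoke the lower semicontinuity of the support map via Lemma~\ref{l:supp-Ropen}, which underlies the strong-fan construction), whereas the $\mu$-completeness is used in both cases to pass from boundedness of the functor to compactness of the relevant bounding set. Since the present hypotheses supply functional Hausdorffness precisely in the strong reading, the two variants match up with the two lemmas without any further adjustment, and no additional calculation is required.
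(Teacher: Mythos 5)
Your proof is correct and is essentially the paper's own derivation: the paper obtains this corollary exactly by combining Proposition~\ref{p:F->structure}(1) (which supplies the topological algebra structure of type $x{\cdot}y$ on $FX$) with Lemmas~\ref{l:type-xy} and \ref{l:x.y}, read contrapositively, with the plain/strong variants matched to the $\mu$-completeness and functional-Hausdorffness hypotheses just as you describe. Your two added remarks --- that the finite case is vacuous, and that Proposition~\ref{p:F->structure} formally carries monad/continuity hypotheses not listed among the corollary's assumptions --- only make explicit points that the paper itself leaves implicit.
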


Combining this corollary with Theorems~\ref{t:Dcofan}(1) and \ref{t:Sfan}, we obtain:

\begin{theorem}\label{t:F-xy} Assume that a monomorphic functor $F:\Top_i\to\Top_i$ has finite supports, is bounded, $\II$-regular, and $F(n)\ne F_1(n)$ for some $n\in\w$. Let $X\in\Top_i$ be a $\mu$-complete space and one of the following conditions is satisfied:
\begin{enumerate}
\item $X$ is a Tychonoff $\aleph$-space and $FX$ contains no strong $\Fin^\w$-fans.
\item $X$ is $k^*$-metrizable and $FX$  contains no $\Fin^{\w}$-fans.
\end{enumerate}
Then the $k$-modification of $X$ either is metrizable or is a topological sum of cosmic $k_\w$-spaces.
\end{theorem}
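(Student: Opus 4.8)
The plan is to reduce Theorem~\ref{t:F-xy} directly to the structural dichotomy results already established for spaces without $D_\w$-cofans and without $\ddot S^\w$-fans. The key observation is that the hypotheses of the theorem match precisely the hypotheses of Corollary~\ref{c:funct-a1}, so the first step is simply to invoke that corollary to transfer the absence of fans from the functor-space $FX$ back to the space $X$ itself.

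First I would treat case (1). Here $X$ is a Tychonoff $\aleph$-space and $FX$ contains no strong $\Fin^\w$-fan. Applying Corollary~\ref{c:funct-a1}(1), I conclude that $X$ contains no strong $\bar S^\w$-fan or no strong $D_\w$-cofan. Since $X$ is a Tychonoff $\aleph$-space, Corollary~\ref{c:fan-in-aleph-space} guarantees that every countable (semi)fan in $X$ is automatically strong; in particular the absence of a strong $\bar S^\w$-fan is equivalent to the absence of a $\bar S^\w$-fan, and similarly for cofans. In the branch where $X$ has no strong $D_\w$-cofan, Theorem~\ref{t:Dcofan}(1) applies provided $X$ is a $\mu$-complete $\bar\aleph_k$-space; an $\aleph$-space is a $\bar\aleph_k$-space by the diagram of generalized metric spaces, so the $k$-modification $kX$ is a topological sum of cosmic $k_\w$-spaces. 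In the other branch, where $X$ has no strong $\bar S^\w$-fan (equivalently, since each compact subset of an $\aleph$-space is metrizable, no $\ddot S^\w$-fan), I would invoke Theorem~\ref{t:Sfan}(3): a Tychonoff $\aleph$-space containing no strong $\ddot S^\w$-fans is $k$-metrizable, hence $kX$ is metrizable.

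For case (2), $X$ is $k^*$-metrizable and $FX$ contains no $\Fin^\w$-fan. Applying Corollary~\ref{c:funct-a1}(1) in its non-strong form, I obtain that $X$ contains no $\bar S^\w$-fan or no $D_\w$-cofan. If $X$ has no $D_\w$-cofan, then Proposition~\ref{p:decomp1} (or Theorem~\ref{t:Dcofan}(1)) shows that the $\mu$-complete $\bar\aleph_k$-property is needed; but a $k^*$-metrizable space need not be a $\bar\aleph_k$-space, so here I would instead argue through $k$-metrizability. Specifically, if $X$ has no $\ddot S^\w$-fan, Proposition~\ref{p:k-metr} gives that the $k^*$-metrizable $X$ is $k$-metrizable, so $kX$ is metrizable. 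If instead $X$ has no $D_\w$-cofan, I would appeal to the structural decomposition for $k^*$-metrizable spaces. Finally, the unified conclusion ``$kX$ is metrizable or a topological sum of cosmic $k_\w$-spaces'' follows by combining the two alternatives, using that a metrizable $kX$ is subsumed, and that $k$-metrizability of $X$ means $kX$ is metrizable.

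The main obstacle I anticipate is case (2), because $k^*$-metrizable spaces are strictly weaker than $\aleph$-spaces and do not directly satisfy the $\bar\aleph_k$-hypothesis required by Theorem~\ref{t:Dcofan}(1); the clean route is to route both alternatives through $k$-metrizability via Proposition~\ref{p:k-metr} and Corollary~\ref{c:prod1}-style reasoning, rather than through the $\bar\aleph_k$ decomposition, so that the conclusion is stated for the $k$-modification. I would also need to be careful that in case (2) the relevant fan-freeness is stated without the word ``strong,'' matching the non-strong branch of Corollary~\ref{c:funct-a1}. The bookkeeping of matching strong versus non-strong hypotheses across the invoked lemmas is where the real care lies; the rest is a direct assembly of previously proven results.
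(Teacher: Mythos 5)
Your overall route is the same as the paper's: the paper proves this theorem in a single line, by combining Corollary~\ref{c:funct-a1} (which gives that $X$ contains no (strong) $\bar S^\w$-fan or no (strong) $D_\w$-cofan) with Theorem~\ref{t:Sfan} for the first alternative and Theorem~\ref{t:Dcofan}(1) for the second. Your case (1) is complete and correct and is exactly this argument: an $\aleph$-space is a $\bar\aleph_k$-space, so Theorem~\ref{t:Dcofan}(1) handles the cofan branch, and Theorem~\ref{t:Sfan}(3) handles the fan branch. Your fan branch of case (2), via Proposition~\ref{p:k-metr}, also matches the paper.

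The gap is in the $D_\w$-cofan branch of case (2). You correctly notice that a $k^*$-metrizable space need not be a $\bar\aleph_k$-space, so Theorem~\ref{t:Dcofan}(1) cannot be invoked directly; but your proposed repair does not close the branch. Routing ``both alternatives through $k$-metrizability'' is impossible: the absence of $D_\w$-cofans does not yield $k$-metrizability. Indeed, any Hausdorff $k_\w$-space contains no $D_\w$-cofan (given $D_n\to x$, pick $d_n\in D_n\setminus(K_n\cup\{x\})$ for a $k_\w$-sequence $(K_n)_{n\in\w}$; then $\{d_n\}_{n\in\w}$ is closed and its complement is a neighborhood of $x$ containing no $D_n$), so a non-metrizable cosmic $k_\w$-space such as the Fr\'echet-Urysohn fan $V$ is a $\mu$-complete $k^*$-metrizable space without $D_\w$-cofans that is not $k$-metrizable. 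In this branch the conclusion must be the second alternative, that $kX$ is a topological sum of cosmic $k_\w$-spaces, and that requires a decomposition theorem, not Proposition~\ref{p:k-metr}. The ``structural decomposition for $k^*$-metrizable spaces'' you appeal to does not exist in the paper in the form you need: the available decompositions are Theorem~\ref{t:Dcofan}(1) (= Proposition~\ref{p:decomp1}), which needs $\bar\aleph_k$, and Theorem~\ref{t:Dcofan}(2) (= Proposition~\ref{p:a+s->ba}), which needs $\aleph_k$ together with the absence of $\bar S^{\w_1}$-fans --- information you cannot extract from ``$FX$ has no $\Fin^\w$-fans,'' since Corollary~\ref{c:funct-a1}(2) would require the absence of $\Fin^{\w_1}$-fans in $FX$, and absence of $\Fin^\w$-fans does not imply absence of $\Fin^{\w_1}$-fans. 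To be fair, the paper's own one-sentence proof cites Theorem~\ref{t:Dcofan}(1) for case (2) as well, without verifying its $\bar\aleph_k$ hypothesis, so you have put your finger on a genuine soft spot of the paper itself; but your proposal, as written, does not furnish the missing step either.
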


Now we consider functors which do not preserve preimages.
We recall that a monomorphic functor $F:\Top_i\to\Top_i$ with finite supports
\begin{itemize}
\item \index{functor!does not preserve preimages}{\em does not preserve preimages} if for some $n\ge 1$ and $a\in F(n+1)\setminus F_n(n+1)$ we get $\supp(Fr^{n+1}_n(a))\subset n-1$;
\item \index{functor!strongly fails to preserve preimages}{\em strongly fails to preserve preimages} if for some $n\ge 2$ and $a\in F(n+1)\setminus F_n(n+1)$ we get $\supp(Fr^{n+1}_n(a))\subset n-2$.
\end{itemize}
Here by $r^{n+1}_n\colon n+1\to n$ we denote the (monotone) retraction of $n+1$ onto $n$.

Combining Proposition~\ref{p:F->structure}(1,2) with Corollaries~\ref{c:xy+xyz} and \ref{c:xy+xyzs} we obtain:

\begin{corollary}\label{c:Funct2} Assume that a monomorphic functor $F:\Top_i\to\Top_i$ has finite supports, is bounded, $\II$-regular, can be completed to a monad $(F,\delta,\mu)$ that preserves disjoint supports, but $F$ does not preserve preimages. Let $X\in\Top_i$ be a $\mu$-complete (functionally Hausdorff) space.
\begin{enumerate}
\item[\textup{1)}] If $FX$ contains no (strong) $\Fin^\w$-fans, then $X$ contains no (strong) $D_\w$-cofan or no (strong) $S^\w$-semifan.
\item[\textup{2)}] If $FX$ contains no (strong) $\Fin^{\w_1}$-fan, then $X$ contains no (strong) $S^{\w_1}$-semifan.
\end{enumerate}
\end{corollary}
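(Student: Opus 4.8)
The plan is to reduce Corollary~\ref{c:Funct2} to the already-established machinery of topological algebras by first upgrading the hypotheses on $F$ to the structural conclusions of Proposition~\ref{p:F->structure}. Since $F$ is a monomorphic functor with finite supports that is bounded and $\II$-regular and can be completed to a monad $(F,\delta,\mu)$ preserving disjoint supports, and since $F$ does not preserve preimages, Proposition~\ref{p:F->structure}(2) applies directly and tells us that for every infinite space $X\in\Top_i$ the functor-space $FX$ is simultaneously a topological algebra of types $x{\cdot}y$, $x{}(y^*z)$, and $(x^*y){}(z^*s)$. (The case where $X$ is finite is trivial, as then $X$ contains no $D_\w$-cofan, no $S^\w$-semifan, and no $S^{\w_1}$-semifan at all, so both conclusions hold vacuously.) The continuity of $F$ needed to invoke Proposition~\ref{p:F->structure} should be extracted from the monadic structure, or else assumed as part of the ambient running hypotheses of this section; in either case $FX$ carries the three operation-types required.

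With these three algebra-types in hand, the two statements become immediate applications of the contrapositive form of the key lemmas. For statement (1), I would argue by contraposition: suppose $X$ contains both a (strong) $D_\w$-cofan and a (strong) $S^\w$-semifan. Then Corollary~\ref{c:xy+xyz}, which requires exactly that $FX$ be a topological algebra of types $x{\cdot}y$ and $x{}(y^*z)$ over a $\mu$-complete (functionally Hausdorff) space, produces a (strong) $\Fin^\w$-fan in $FX$. This contradicts the hypothesis that $FX$ contains no such fan, so $X$ must fail to contain one of the two configurations, which is precisely the conclusion of (1).

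For statement (2), I would proceed the same way using the $\w_1$-level lemma. Assume $X$ contains a (strong) $S^{\w_1}$-semifan. Since $FX$ is a topological algebra of types $x{\cdot}y$ and $(x^*y){}(z^*s)$ over the $\mu$-complete (functionally Hausdorff) space $X$, Corollary~\ref{c:xy+xyzs} yields a (strong) $\Fin^{\w_1}$-fan in $FX$, contradicting the absence of such fans. Hence $X$ contains no (strong) $S^{\w_1}$-semifan, giving (2). The parenthetical ``(functionally Hausdorff)'' versions throughout track the ``(strong)'' versions in the standard way used in this chapter: the strong conclusions require $X$ functionally Hausdorff so that Lemma~\ref{l:supp-Ropen} furnishes the $\IR$-open neighborhoods witnessing strong compact-finiteness, while the plain conclusions need only $\mu$-completeness.

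The only genuine obstacle is a bookkeeping one: verifying that the hypotheses listed in Corollary~\ref{c:Funct2} exactly match, after passing through Proposition~\ref{p:F->structure}(2), the hypotheses demanded by Corollaries~\ref{c:xy+xyz} and \ref{c:xy+xyzs}. In particular one must check that $\mu$-completeness and the functionally-Hausdorff/strong dichotomy are threaded consistently, and that ``$F$ does not preserve preimages'' is precisely the input Proposition~\ref{p:F->structure}(2) needs to deliver all three operation-types at once. No new analytic content is required; the work is entirely in invoking the three preceding corollaries in their contrapositive form and confirming the compatibility of the structural hypotheses.
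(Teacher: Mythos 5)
Your proof is correct and takes essentially the same route as the paper: the paper obtains this corollary precisely by combining Proposition~\ref{p:F->structure}(1,2) (which endows $FX$ with the algebra types $x{\cdot}y$, $x{}(y^*z)$ and $(x^*y){}(z^*s)$) with Corollaries~\ref{c:xy+xyz} and \ref{c:xy+xyzs}, invoked in contrapositive form exactly as you do. Your caveat about the continuity hypothesis in Proposition~\ref{p:F->structure} is legitimate, but the paper itself passes over it in the same way, so there is no real divergence between your argument and the paper's.
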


Proposition~\ref{c:Funct2} and Theorem~\ref{t:xy+xyz+xy.zs} imply:


\begin{theorem}\label{t:Functor2} Assume that a monomorphic functor $F:\Top_i\to\Top_i$ has finite supports, is bounded, $\II$-regular, can be completed to a monad $(F,\delta,\mu)$ that preserves disjoint supports, but $F$ does not preserve preimages. Let $X\in\Top_i$ be a $\mu$-complete space.
\begin{enumerate}
\item If $X$ is a functionally Hausdorff ($\aleph_k$-)space and $FX$ contains no strong $\Fin^{\w_1}$-fans, then $X$ contains a closed $\w_1$-bounded subset $B\subset X$ whose complement $X\setminus B$ contains no infinite metrizable compact set (and is $k$-discrete).
\item If the space $FX$ contains no $\Fin^{\w_1}$-fan and $X$ is $k^*$-metrizable, then $X$ contains a $k$-closed $\aleph_0$-subspace $A\subset X$ with $k$-discrete complement $X\setminus A$.
\item If the $X$ is Tychonoff and the space $FX$ contains no strong $\Fin^\w$-fan, then:
\begin{enumerate}
\item If $X$ is cosmic, then $X$ is $\sigma$-compact.
\item If $X$ is an $\bar\aleph_k$-space, then  either $X$ contains a compact set with $k$-discrete complement or $X$ is $k$-homeomorphic to a topological sum of cosmic $k_\w$-spaces.
\item If $X$ is an $\aleph$-space, then the $k$-modification $kX$ of $X$ either is a topological sum of cosmic $k_\w$-spaces or $kX$ is a metrizable space with compact set of non-isolated points.
\item If $X$ is a $\bar\aleph_k$-$k_\IR$-space, then  either
 $X$ is a topological sum $K\oplus D$ of a cosmic $k_\w$-space $K$ and a discrete space $D$ or else $X$ is metrizable and has compact set of non-isolated points.
\end{enumerate}
\item If the space $FX$ contains no $\Fin^{\w}$-fan, then:
\begin{enumerate}
\item if $X$ is an $\aleph_k$-space, then $X$ either $X$ contains a compact set with $k$-discrete complement or the $k$-modification of $X$ is a topological sum $D\oplus K$ of a discrete space $D$ and a cosmic $k_\w$-space $K$.
\item if $X$ is Tychonoff and $k^*$-metrizable, then its $k$-modification $kX$ is either a metrizable space with compact set of non-isolated points or $kX$ is a topological sum of a discrete space and a cosmic $k_\w$-space.
\end{enumerate}
\end{enumerate}\end{theorem}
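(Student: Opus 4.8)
The plan is to recognize this theorem as a mechanical combination of two engines already in place: Proposition~\ref{p:F->structure}(2), which converts the single structural hypothesis on $F$ into three concrete algebraic structures on the functor-space $FX$, and Theorem~\ref{t:xy+xyz+xy.zs}, whose conclusion is literally items (1)--(4) below. So essentially no new analysis is required; the task is to check that the hypotheses line up and to invoke these two results in sequence. First I would dispose of the degenerate case: if $X$ is finite, then it is compact with no infinite compact subset, hence $k$-discrete and $\w_1$-bounded and carrying none of the forbidden fans, so each of (1)--(4) holds trivially (with $B=X$, $A=X$, or $X$ discrete, as appropriate). Thus I may assume $X$ is infinite.

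Next I would verify the hypotheses of Proposition~\ref{p:F->structure}(2): the functor $F$ is monomorphic with finite supports, is completed to a monad $(F,\delta,\mu)$ preserving disjoint supports, and does not preserve preimages. Here I would invoke Corollary~\ref{c:F-preim} to put this failure into the normal form $a\in F(n+1)\setminus F_n(n+1)$ with $\supp(Fr^{n+1}_n(a))\subset n-1$ that drives the construction of the operations. The one point needing attention is that Proposition~\ref{p:F->structure} is stated for a \emph{continuous} $F$: I would record that the continuity of $F$ is exactly what guarantees the neighborhood conditions (axiom~(3)) in Definitions~\ref{d:type-xy}, \ref{d:xyz} and \ref{d:xy.zs}. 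Granting this, Proposition~\ref{p:F->structure}(2) yields that $FX$ is simultaneously a topological algebra of types $x{\cdot}y$, $x{}(y^*z)$ and $(x^*y){}(z^*s)$. With these three structures in hand and $X$ still $\mu$-complete, every hypothesis of Theorem~\ref{t:xy+xyz+xy.zs} is met, and its conclusion coincides verbatim with statements (1)--(4) of the present theorem; applying it finishes the proof. Alternatively---and this is how Theorem~\ref{t:xy+xyz+xy.zs} is itself established---one can first read off from Corollary~\ref{c:Funct2} that the relevant fan-freeness of $FX$ forces $X$ to omit a (strong) $S^{\w_1}$-semifan, a (strong) $S^\w$-semifan, or a (strong) $D_\w$-cofan, and then feed these omissions into the decomposition results of Theorems~\ref{t:Dcofan} and \ref{t:Ssemifan} to obtain the metrizability, $k_\w$-decomposition, and compact-set-of-non-isolated-points alternatives case by case.

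The genuinely hard work is entirely upstream: constructing the $\Fin$-fans in functor-spaces (Lemmas~\ref{l:type-xy}, \ref{l:xyz}, \ref{l:xy.zs}) and the structural dichotomies for $\aleph$- and $\bar\aleph_k$-spaces (Theorems~\ref{t:Dcofan} and \ref{t:Ssemifan}). In the present derivation there is no remaining obstacle of substance; the only places demanding care are the bookkeeping that matches ``$F$ does not preserve preimages'' to the precise normal form consumed by Proposition~\ref{p:F->structure}, and the confirmation that the continuity of $F$ is available among the standing assumptions. Once those are in place the theorem is a direct corollary.
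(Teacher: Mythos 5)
Your proposal is correct and coincides with the paper's own proof: the paper obtains Theorem~\ref{t:Functor2} precisely by combining Proposition~\ref{p:F->structure}(2) (packaged as Corollary~\ref{c:Funct2}) with Theorem~\ref{t:xy+xyz+xy.zs}, whose conclusion is word-for-word items (1)--(4) of the statement. Your caveat about the continuity of $F$ is well taken, but this is a defect of the paper's own bookkeeping (Theorem~\ref{t:Functor2} and Corollary~\ref{c:Funct2} drop the continuity hypothesis that Proposition~\ref{p:F->structure} formally requires), not a gap in your argument relative to the paper's.
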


Finally we consider functors that strongly fail to preserve preimages.
Proposition~\ref{p:F->structure}(3) and Lemmas~\ref{l:xyzs}, \ref{l:long} imply:

\begin{corollary}\label{c:Functor3} Assume that a monomorphic functor $F:\Top_i\to\Top_i$ has finite supports, is bounded, $\II$-regular, can be completed to a monad $(F,\delta,\mu)$ that preserves disjoint supports, but $F$ strongly fails to preserve preimages. Let $X\in\Top_i$ be a $\mu$-complete (functionally Hausdorff) space.
\begin{enumerate}
\item[\textup{1)}] If $FX$ contains no (strong) $\Fin^\w$-fans, then $X$ contains no (strong) $D_\w$-cofan.
\item[\textup{2)}] If $FX$ contains no (strong) $\Fin^{\w_1}$-fan, then $X$ contains no non-trivial convergent sequence or no (strongly) compact-finite uncountable subset.
\end{enumerate}
\end{corollary}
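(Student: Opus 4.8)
The plan is to obtain both statements by contraposition, using Proposition~\ref{p:F->structure}(3) to equip the functor-space $FX$ with the requisite algebraic structure and then feeding that structure into the two structural lemmas already proved for it. Concretely, since the standing hypotheses on $F$ (monomorphic with finite supports, continuous, completable to a monad preserving disjoint supports) are exactly those of Proposition~\ref{p:F->structure}, and since $F$ strongly fails to preserve preimages, clause (3) of that proposition tells us that for every infinite $X\in\Top_i$ the functor-space $FX$ is at the same time a topological algebra of type $x{}(y(s^*z)y^*)$ (cf.\ Definition~\ref{d:xyzs}) and of type $(x(s^*z)x^*){\cdot}(y(s^*z)y^*)$ (cf.\ Definition~\ref{d:long}). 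Note that in each part the negated conclusion I am about to assume already forces $X$ to be infinite, so Proposition~\ref{p:F->structure} is applicable without a separate case analysis. Throughout I read the parentheticals ``(strong)'', ``(strongly)'' and ``(functionally Hausdorff)'' coherently, giving the plain version when they are omitted and the strengthened version when they are present, exactly as the two lemmas are phrased.

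For part (1) I argue by contradiction. Suppose the $\mu$-complete (functionally Hausdorff) space $X$ carries a (strong) $D_\w$-cofan $(D_n)_{n\in\w}$; in particular $X$ is infinite, so $FX$ is a topological algebra of type $x{}(y(s^*z)y^*)$ by Proposition~\ref{p:F->structure}(3). All hypotheses of Lemma~\ref{l:xyzs} are then met (namely $\mu$-completeness, the presence of the (strong) $D_\w$-cofan, and, in the strong reading, functional Hausdorffness), so Lemma~\ref{l:xyzs} yields a (strong) $\Fin^\w$-fan in $FX$. This contradicts the hypothesis that $FX$ contains no (strong) $\Fin^\w$-fan, whence $X$ admits no (strong) $D_\w$-cofan.

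For part (2) I again argue by contraposition. The negation of the disjunction ``$X$ contains no non-trivial convergent sequence or no (strongly) compact-finite uncountable subset'' asserts that $X$ contains both a non-trivial convergent sequence $S$ and a (strongly) compact-finite uncountable set $D$ (so again $X$ is infinite and Proposition~\ref{p:F->structure}(3) applies, making $FX$ a topological algebra of type $(x(s^*z)x^*){\cdot}(y(s^*z)y^*)$). Feeding $S$ and $D$ into Lemma~\ref{l:long}, whose remaining hypotheses ($\mu$-completeness and, in the strong reading, functional Hausdorffness) hold by assumption, produces a (strong) $\Fin^{\w_1}$-fan in $FX$. This contradicts the hypothesis that $FX$ has no such fan, so the desired disjunction holds.

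The argument is essentially bookkeeping once the machinery is in place, so the ``main obstacle'' is organizational rather than conceptual. The three delicate points I will take care with are: (i) matching the type-names produced by Proposition~\ref{p:F->structure}(3) with those required by Lemmas~\ref{l:xyzs} and \ref{l:long}, since the notations $y^*(z^*s)y$ and $y(s^*z)y^*$ (respectively the four-argument variants) denote the same structures, as is transparent from the explicit operations built in the proof of Proposition~\ref{p:F->structure}(3); (ii) checking that the hypotheses of the corollary coincide with those under which Proposition~\ref{p:F->structure}(3) is available (continuity of $F$ being used there); and (iii) keeping the ``(strong)'' and ``(functionally Hausdorff)'' parentheticals synchronized across each implication. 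None of these requires any new idea beyond invoking the already-established results.
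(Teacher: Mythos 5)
Your proposal is correct and follows exactly the paper's own route: the paper derives this corollary precisely from Proposition~\ref{p:F->structure}(3) together with Lemmas~\ref{l:xyzs} and \ref{l:long}, applied by contraposition just as you do, with the same notational identification of the algebra types. The one wrinkle you flag under (ii)---continuity of $F$ being a hypothesis of Proposition~\ref{p:F->structure} but absent from the corollary's stated assumptions---is an inconsistency inherited from the paper itself, not a defect introduced by your argument.
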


On the other hand, Proposition~\ref{p:F->structure}(3) and Theorem~\ref{t:long1} yield:

\begin{theorem}\label{t:Functor3} Assume that a monomorphic functor $F:\Top_i\to\Top_i$ has finite supports, is bounded, $\II$-regular, can be completed to a monad $(F,\delta,\mu)$ that preserves disjoint supports, but $F$ strongly fails to preserve preimages. Let $X\in\Top_i$ be a $\mu$-complete  space.
\begin{enumerate}
\item[\textup{1)}] If $X$ is functionally Hausdorff, $FX$ contains no strong $\Fin^{\w_1}$-fan and each infinite compact subset of $X$ contains a convergent sequence, then $X$ either $k$-discrete or $\w_1$-bounded.
\item[\textup{2)}] If $X$ is $k^*$-metrizable and $FX$ contains no $\Fin^{\w_1}$-fan, then either $X$ is $k$-discrete or $X$ is a (countable) $\aleph_0$-space.
\item[\textup{3)}] If $X$ is a Tychonoff $\bar\aleph_k$-space and $FX$ contains no strong $\Fin^{\w}$-fan, then $X$ is either $k$-discrete or $X$ is $\w_1$-bounded and is a $k$-sum of (countable) hemicompact spaces; moreover, if $X$ is  a $k_\IR$-space, then $X$ is either discrete or a (countable) cosmic $k_\w$-space.
\item[\textup{4)}] If $X$ is a functionally Hausdorff $\aleph_k$-space and $FX$ contains no strong $\Fin^{\w}$-fan and no $\Fin^{\w_1}$-fan, then $X$ is either $k$-discrete or (countable and) hemicompact.
\item[\textup{5)}] If $X$ is an $\aleph_k$-space and $FX$ contains no $\Fin^\w$-fan, then
$X$ is either $k$-discrete or (countable and) hemicompact.
\end{enumerate}
\end{theorem}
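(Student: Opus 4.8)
The plan is to prove Theorem~\ref{t:Functor3} by combining the structural information supplied by Proposition~\ref{p:F->structure}(3) with the absence-of-fan corollaries (Corollary~\ref{c:Functor3}) and the decomposition theorems for spaces lacking various cofans and semifans. The starting observation is that, under the standing hypotheses (monomorphic, bounded, $\II$-regular, completable to a disjoint-support-preserving monad, strongly failing to preserve preimages), Proposition~\ref{p:F->structure}(3) tells us that for every infinite $X\in\Top_i$ the functor-space $FX$ is simultaneously a topological algebra of type $x{}(y^*(z^*s)y)$ and of type $(x^*(z^*s)x){\cdot}(y^*(z^*s)y)$. Hence Theorem~\ref{t:long1}, which is stated precisely for functor-spaces carrying these algebraic structures, applies verbatim. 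So the proof reduces to checking that the hypotheses of each item of Theorem~\ref{t:Functor3} match the corresponding item of Theorem~\ref{t:long1}, and invoking it.

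First I would record the structural input once: by Proposition~\ref{p:F->structure}(3) the space $FX$ is a topological algebra of types $x{}(y^*(z^*s)y)$ and $(x^*(z^*s)x){\cdot}(y^*(z^*s)y)$. I should also note that strong failure to preserve preimages implies ordinary failure to preserve preimages (the defining condition with $n-2$ is stronger than that with $n-1$), which by Proposition~\ref{p:F->structure}(2) gives the types $x{\cdot}y$, $x{}(y^*z)$ and $(x^*y){}(z^*s)$ as well; and since $F(l+1)\ne F_1(l+1)$ is forced by the witnessing element $a\in F(n+1)\setminus F_n(n+1)$, Proposition~\ref{p:F->structure}(1) supplies type $x{\cdot}y$. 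Thus $FX$ is a topological algebra of types $x{\cdot}y$, $x{}(y(s^*z)y^*)$ and $(x(s^*z)x^*){\cdot}(y(s^*z)y^*)$ — exactly the hypothesis packaged in Theorem~\ref{t:long1}. Each of the five items of Theorem~\ref{t:Functor3} is then the matching item of Theorem~\ref{t:long1} under identical side-conditions on $X$ (functionally Hausdorff, $k^*$-metrizable, Tychonoff $\bar\aleph_k$, functionally Hausdorff $\aleph_k$, or $\aleph_k$) and on $FX$ (no strong $\Fin^{\w_1}$-fan, no $\Fin^{\w_1}$-fan, no strong $\Fin^\w$-fan, etc.).

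The one point that needs genuine attention, rather than pure bookkeeping, is the parenthetical ``countable'' conclusions appearing in items (2)--(5). These are conditional in Theorem~\ref{t:long1} on $FX$ being a topological \emph{superalgebra} of type $(x^*y){}(z^*s)$, via Corollary~\ref{c:cosmic-countable}. Proposition~\ref{p:F->structure}(3) as stated produces only the ordinary algebra structures of types $x{}(y^*(z^*s)y)$ and $(x^*(z^*s)x){\cdot}(y^*(z^*s)y)$, and does not claim the superalgebra axiom (Definition~\ref{d:xy.zs}(4)). So the plan is to state items (2)--(5) without the ``countable'' refinement in the unconditional form, exactly as Theorem~\ref{t:Functor3} is written with ``(countable)'' in parentheses — i.e. the countability holds precisely when the superalgebra condition is available. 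I would therefore phrase the proof so that the main dichotomies ($k$-discrete versus $\w_1$-bounded, $k$-discrete versus $\aleph_0$-space, $k$-discrete versus hemicompact, etc.) are asserted outright from Theorem~\ref{t:long1}, and the bracketed countability is noted as following from Corollary~\ref{c:cosmic-countable} under the additional superalgebra hypothesis.

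Concretely, the proof reads as follows.

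\begin{proof}
Since the functor $F$ strongly fails to preserve preimages, Definition~\ref{d:stong-preim} yields $n\ge 2$ and an element $a\in F(n+1)\setminus F_n(n+1)$ with $\supp(Fr^{n+1}_n(a))\subset n-2$. By Proposition~\ref{p:F->structure}(3), for the infinite space $X$ the functor-space $FX$ is a topological algebra of types $x{}(y(s^*z)y^*)$ and $(x(s^*z)x^*){\cdot}(y(s^*z)y^*)$. Moreover, strong failure to preserve preimages implies ordinary failure to preserve preimages, so Proposition~\ref{p:F->structure}(2) shows that $FX$ is also a topological algebra of types $x{\cdot}y$, $x{}(y^*z)$ and $(x^*y){}(z^*s)$; in particular $FX$ is a topological algebra of type $x{\cdot}y$. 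Thus $FX$ carries the algebraic structures required in Theorem~\ref{t:long1}, namely types $x{\cdot}y$, $x{}(y(s^*z)y^*)$ and $(x(s^*z)x^*){\cdot}(y(s^*z)y^*)$.

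Now each statement is obtained directly from the correspondingly numbered statement of Theorem~\ref{t:long1}, applied to the $\mu$-complete space $X$ under the stated hypotheses on $X$ and $FX$. The parenthetical conclusions of countability in statements (2)--(5) hold under the additional assumption that $FX$ is a topological superalgebra of type $(x^*y){}(z^*s)$, in which case Corollary~\ref{c:cosmic-countable} guarantees that the relevant cosmic, $\aleph_0$-, or hemicompact subspaces of $X$ are at most countable.
\end{proof}
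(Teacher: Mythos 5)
Your proposal is correct and takes essentially the same route as the paper, whose entire derivation of Theorem~\ref{t:Functor3} is the one-line remark that Proposition~\ref{p:F->structure}(3) and Theorem~\ref{t:long1} yield it. In fact you are somewhat more careful than the paper on two points: you supply the type $x{\cdot}y$ required by Theorem~\ref{t:long1} (which Proposition~\ref{p:F->structure}(3) alone does not give) by observing that strong failure to preserve preimages implies ordinary failure and then invoking Proposition~\ref{p:F->structure}(2), and you correctly flag that the parenthetical countability conclusions rest on a superalgebra structure of type $(x^*y)(z^*s)$ that Proposition~\ref{p:F->structure}(3) does not furnish.
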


\section{Superbounded functors}

In this section we shall give a condition on a monadic functor $F:\Top_i\to\Top_i$ guaranteeing that for every infinite space $X\in\Top_i$ the functor space $FX$ is a topological superalgebra of type $(x^*y)(z^*s)$ (see Definition~\ref{d:xy.zs}). In this section we assume that $\Top_i$ is a full hereditary subcategory of $\Top_{2\frac12}$, containing all compact metrizable spaces, and $F:\Top_i\to\Top_i$ is a monomorphic functor with finite supports that preserves closed embeddings of metrizable compact and can be completed to a monad $(F,\delta,\mu)$. Let $E:\Top_i\to\Top_i$ be a subfunctor of $F$ such that  $\delta_X(X)\subset EX\subset FX$ for every object $X$ of $\Top_i$ and $Ef=Ff|EX$ for every morphism $f:X\to Y$ of $\Top_i$.

Given a space $X\in\Top_i$ and a subset $A\subset FX$, denote by $i_{A,FX}:A\to FX$ the identity inclusion and consider the subspace $Ei_{A,FX}(EA)\subset E(FX)\subset F(FX)$. Its image $\langle A\rangle_E=\mu_X(Ei_{A,FX}(EA))\subset FX$ under the multiplication map $\mu_X:F^2X\to FX$ of the monad is called the \index{subset!$E$-hull of}{\em $E$-hull} of $A$ in $FX$.

We claim that the $E$-hull $\langle A\rangle_E$ of $A$ contains $A$. Indeed, the inclusion $\delta_A(A)\subset EA$, the equality $Ei_{A,FX}=Fi_{A,FX}|EX$, and the naturality of $\delta$ imply that $$Ei_{A,FX}(EA)\supset Ei_{A,FX}(\delta_A(A))=Fi_{A,FX}(\delta_A(A))=\delta_{FX}\circ i_{A,FX}(A)$$ and hence
$$\langle A\rangle_E\supset \mu_X\circ \delta_{FX}\circ i_{A,FX}(A)=i_{A,FX}(A)=A.$$

In the proof of Theorem~\ref{t:superbound} below we shall use the following two properties of the $E$-hulls.

\begin{lemma}\label{l:E-hull1} For any morphism $f:X\to Y$ of the category $\Top_i$ and any subsets $A\subset FX$ and $B\subset FY$ the inclusion $Ff(A)\subset B$ implies $Ff(\langle A\rangle_E)\subset \langle B\rangle_E$.
\end{lemma}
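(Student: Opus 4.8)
The plan is to prove Lemma~\ref{l:E-hull1} by unraveling the definition of the $E$-hull and exploiting the naturality of the monad multiplication $\mu$. Recall that $\langle A\rangle_E=\mu_X\big(Ei_{A,FX}(EA)\big)$, where $i_{A,FX}:A\to FX$ is the identity inclusion and $EA$ is viewed as a subspace of $E(FX)\subset F(FX)$ via $Ei_{A,FX}$. The key observation is that applying the outer functor $F$ to the morphism $f$ and using the naturality square for $\mu$ converts the $E$-hull in $X$ into the $E$-hull in $Y$.

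First I would set up the relevant morphisms. Since $Ff(A)\subset B$, the map $f$ restricts to a morphism $g:A\to B$, $g:a\mapsto Ff(a)$, in the category $\Top_i$ (here $A$ and $B$ carry the subspace topologies inherited from $FX$ and $FY$, and are objects of the hereditary category $\Top_i$). By construction $i_{B,FY}\circ g=Ff\circ i_{A,FX}$. Applying the functor $E$ (which is a subfunctor of $F$ with $Ef=Ff|EX$) gives the equality
\begin{equation*}
Ei_{B,FY}\circ Eg=E(Ff)\circ Ei_{A,FX}=F(Ff)\circ Ei_{A,FX},
\end{equation*}
where in the last step I use that $E(Ff)=F(Ff)|E(FX)$. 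Consequently $F(Ff)\big(Ei_{A,FX}(EA)\big)=Ei_{B,FY}\big(Eg(EA)\big)\subset Ei_{B,FY}(EB)$, since $Eg(EA)\subset EB$ by functoriality of $E$ applied to $g:A\to B$.

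Next I would invoke the naturality of the multiplication $\mu$ of the monad $(F,\delta,\mu)$. The naturality of $\mu:F^2\to F$ with respect to the morphism $Ff:FX\to FY$ gives the commuting square $\mu_Y\circ F(Ff)=Ff\circ\mu_X$ as maps $F^2X\to FY$. Applying both sides to the subset $Ei_{A,FX}(EA)\subset F^2X$, I obtain
\begin{equation*}
Ff\big(\langle A\rangle_E\big)=Ff\circ\mu_X\big(Ei_{A,FX}(EA)\big)=\mu_Y\circ F(Ff)\big(Ei_{A,FX}(EA)\big)\subset\mu_Y\big(Ei_{B,FY}(EB)\big)=\langle B\rangle_E,
\end{equation*}
using the inclusion established in the previous step. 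This completes the argument. I do not anticipate a genuine obstacle here; the only point requiring mild care is the bookkeeping of identity inclusions and the verification that $g:A\to B$ is a legitimate morphism of $\Top_i$ (which follows from $\Top_i$ being hereditary and $Ff(A)\subset B$), together with the consistent use of the relation $Ef=Ff|EX$ to pass between the subfunctor $E$ and the ambient functor $F$ when the naturality square for $\mu$ is applied.
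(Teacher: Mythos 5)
Your proof is correct and follows essentially the same route as the paper: the paper's proof is a single commutative diagram whose three squares are exactly your three ingredients — functoriality of $E$ applied to the restriction $g=Ff|A:A\to B$, the subfunctor identity $E(Ff)=F(Ff)|E(FX)$, and the naturality square $\mu_Y\circ F^2f=Ff\circ\mu_X$ — and your argument is just the explicit chase through that diagram. (Only a cosmetic slip: the naturality of $\mu$ is invoked at the morphism $f:X\to Y$, not at $Ff$; the equation you wrote is nevertheless the correct one.)
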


\begin{proof} This inclusion can be easily derived from the commutativity of the following diagram:
$$\xymatrix{
EA\ar^{Ei_{A,FX}}[r]\ar_{Ef|A}[d]& EFX\ar[r]\ar^{EFf}[d]&F^2X\ar^{\mu_X}[r]\ar^{F^2f}[d]&FX\ar^{Ff}[d]\\
EB\ar_{Ei_{B,FY}}[r]& EFY\ar[r]&F^2Y\ar_{\mu_Y}[r]&FY.
}
$$
\end{proof}

\begin{lemma} Let $X\in\Top_i$ be a space and $A\subset B$ be finite subsets in $X$. Then $Fi_{B,X}(\langle F(A;B)\cup\delta_B(A)\rangle_E)=\langle F(A;X)\cup\delta_X(A)\rangle_E$.
\end{lemma}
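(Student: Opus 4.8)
The plan is to reduce the claimed equality to the single observation that $Fi_{B,X}\colon FB\to FX$ restricts to an isomorphism between the two generating sets, and then to transport this isomorphism through the functor $E$ and the multiplication $\mu$. Write $P:=F(A;B)\cup\delta_B(A)\subset FB$ and $Q:=F(A;X)\cup\delta_X(A)\subset FX$, so that by definition $\langle P\rangle_E=\mu_B\big(Ei_{P,FB}(EP)\big)$ and $\langle Q\rangle_E=\mu_X\big(Ei_{Q,FX}(EQ)\big)$. First I would check that $Fi_{B,X}(P)=Q$: the functoriality identity $Fi_{B,X}\circ Fi_{A,B}=Fi_{A,X}$ gives $Fi_{B,X}(F(A;B))=F(A;X)$, while the naturality square $\delta_X\circ i_{B,X}=Fi_{B,X}\circ\delta_B$ gives $Fi_{B,X}(\delta_B(A))=\delta_X(A)$. (In fact the same naturality of $\delta$ applied to $i_{A,B}$ and $i_{A,X}$ shows $\delta_B(A)\subset F(A;B)$ and $\delta_X(A)\subset F(A;X)$, so $P=F(A;B)$ and $Q=F(A;X)$; but only $Fi_{B,X}(P)=Q$ is actually needed.)

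Next I would upgrade this to a topological statement. Since $X\in\Top_i\subset\Top_{2\frac12}$ is functionally Hausdorff and $B$ is a finite, hence compact metrizable, subspace of $X$, and since $F$ preserves closed embeddings of metrizable compacta, Proposition~\ref{p:FembTych} guarantees that $Fi_{B,X}\colon FB\to FX$ is a closed topological embedding. Restricting it to $P$ therefore yields a homeomorphism $h\colon P\to Fi_{B,X}(P)=Q$. As $\Top_i$ is hereditary, $P$ and $Q$ are objects of $\Top_i$ and $h$ is an isomorphism of $\Top_i$; applying the functor $E$ produces an isomorphism $Eh\colon EP\to EQ$, so in particular $Eh(EP)=EQ$.

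Finally I would run the diagram chase. From $i_{Q,FX}\circ h=Fi_{B,X}\circ i_{P,FB}$ and functoriality of $E$ I get $Ei_{Q,FX}\circ Eh=E(Fi_{B,X})\circ Ei_{P,FB}$, and the subfunctor relation $Ef=Ff|EX$ identifies $E(Fi_{B,X})$ with $F^2i_{B,X}|_{E(FB)}$. Combining these with $Eh(EP)=EQ$ and the naturality square $\mu_X\circ F^2 i_{B,X}=Fi_{B,X}\circ\mu_B$ evaluated on $Ei_{P,FB}(EP)\subset E(FB)\subset F^2B$, I obtain
$$\langle Q\rangle_E=\mu_X\big(Ei_{Q,FX}(EQ)\big)=\mu_X\big(F^2i_{B,X}(Ei_{P,FB}(EP))\big)=Fi_{B,X}\big(\mu_B(Ei_{P,FB}(EP))\big)=Fi_{B,X}(\langle P\rangle_E),$$
which is the desired equality.

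The main obstacle is the reverse inclusion $\langle Q\rangle_E\subset Fi_{B,X}(\langle P\rangle_E)$. The easy inclusion $\subset$ already follows from Lemma~\ref{l:E-hull1} applied to $Fi_{B,X}(P)=Q$ and needs nothing beyond functoriality. The reverse inclusion genuinely rests on the surjectivity of $Eh$, and hence on the injectivity of $Fi_{B,X}$ supplied by the hypothesis that $F$ preserves closed embeddings of metrizable compacta (Proposition~\ref{p:FembTych}); without this the homeomorphism $h$ — and with it the bijection $Eh$ — would collapse, and the two $E$-hulls would not have to coincide.
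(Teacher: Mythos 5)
Your proposal is correct and takes essentially the same route as the paper's own proof: both show that $Fi_{B,X}$ carries the generating set $F(A;B)\cup\delta_B(A)$ onto $F(A;X)\cup\delta_X(A)$, invoke Proposition~\ref{p:FembTych} to restrict $Fi_{B,X}$ to a homeomorphism $h$ between these sets, apply $E$ to get a bijection $Eh$, and then chase the same commutative diagram (subfunctor relation $E(Fi_{B,X})=F^2i_{B,X}|E(FB)$ plus naturality of $\mu$) to transfer the $E$-hulls. Your extra remarks (that the $\delta$-parts are in fact redundant, and that the forward inclusion alone already follows from Lemma~\ref{l:E-hull1}) are correct but do not change the argument.
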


 \begin{proof} Consider the sets $$U=F(A;X)\cup\delta_X(A)\subset FX\mbox{ \  and \ }V=F(A;B)\cup\delta_B(A)\subset FB$$ and observe that
 $$
 \begin{aligned}
 Fi_{B,X}(V)&=Fi_{B,X}(Fi_{A,B}(FA)\cup\delta_B(A))=\\
 &=Fi_{B,X}\circ Fi_{A,B}(FA)\cup Fi_{B,X}\circ\delta_B(A)=\\
 &=Fi_{A,X}(FA)\cup \delta_X\circ i_{B,X}(A)=U.
 \end{aligned}
 $$
 Since $F$ preserves closed embeddings of metrizable compacta, the map $Fi_{B,X}:FB\to FX$ is a closed topological embedding (see Proposition~\ref{p:FembTych}). Then the restriction $h=Fi_{B,X}|V:V\to U$ is a homeomorphism and so is the map $Eh:EV\to EU$.
 Now the equality $$Fi_{B,X}(\langle F(A;B)\cup\delta_B(A)\rangle_E=Fi_{B,X}\circ \mu_B(Ei_{V,FB}(EV))=\mu_X(Ei_{U,FX}(EU))$$ follows from the bijectivity of $Eh$ and the commutativity of the following diagram:
 $$\xymatrix{
EV\ar^{Ei_{V,FB}}[r]\ar_{Eh}[d]& EFB\ar[r]\ar^{EFi_{B,X}}[d]&F^2B\ar^{\mu_B}[r]\ar^{F^2i_{B,X}}[d]&FB\ar^{Fi_{B,X}}[d]\\
EU\ar_{Ei_{U,FX}}[r]& EFX\ar[r]&F^2X\ar_{\mu_Y}[r]&FX.
}
$$
\end{proof}

 Given an element $a\in FX$ with non-empty support $\supp(a)$, consider the family $$\Supp_E(a)=\big\{A\subset \supp(a): a\in \langle F(A;X)\cup\delta_X(\supp(a))\rangle_E\big\},$$
which contains the set  $A=\supp(a)$ and hence is not empty. Then its intersection $\supp_E(a)$ is a well-defined subset of $\supp(a)$, called the {\em $E$-support} of $a$. If $\supp(a)=\emptyset$, then the set $\supp_E(a)$ is not defined, so we put $\supp_E(x):=\emptyset$. In any case we get $\supp_E(a)\subset\supp(a)$.

\begin{definition} The functor $F$ is defined to be \index{functor!$E$-superbounded}{\em $E$-superbounded} if $F$ is strongly bounded and for any space $X\in\Top_i$ and compact subset $K\subset FX$ there exists a finite subset $B\subset X$ such that for every element $a\in K$ with non-empty support the family $\Supp_E(a)$ contains a subset of $B$.
\end{definition}

For $E$-superbounded functors $F$, it is easy to construct operations on functor-spaces $FX$ turning them into topological superalgebras of type $(x^*y)(z^*s)$.

\begin{proposition}\label{p:F->super} Assume that the functor $F$ is $E$-superbounded. For a space $X\in\Top_i$ the functor-space $FX$ is a topological superalgebra of type $(x^*y){}(z^*s)$ if there exist a finite subset $C\subset X$ and an $F$-valued operation $p:X^4\to FX$ having the properties:
\begin{enumerate}
\item[\textup{1)}] $p(x,x,x,x)=p(x,x,z,z)=p(z,z,z,z)$ for all $x,z\in X$;
\item[\textup{2)}]  $\{x,y,z,s\}\subset \supp_E\big(p(x,y,z,s)\big)$ for any pairwise distinct points $x,y,z,s\in X\setminus C$;
\item[\textup{3)}] for any point $x\in X$ and neighborhood $W\subset FX$ of $p(x,x,x,x)$ there is a neighborhood $U_\Delta\subset X_d\times X$ of the diagonal $\Delta$ of $X^2$ such that $p(U_{\Delta}\times U_\Delta)\subset W$.
\end{enumerate}
\end{proposition}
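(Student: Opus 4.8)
The plan is to show that the very same finite set $C\subset X$ and operation $p:X^4\to FX$ supplied in the hypothesis witness that $FX$ is a topological superalgebra of type $(x^*y){}(z^*s)$; that is, to verify that $C$ and $p$ satisfy all conditions of Definition~\ref{d:xy.zs} together with its additional superalgebra clause. Three of these conditions come essentially for free. The first condition of Definition~\ref{d:xy.zs} is literally hypothesis (1), and the diagonal-continuity condition is literally hypothesis (3). For the support condition $\{y,s\}\subset\supp(p(x,y,z,s))$, I would invoke the inclusion $\supp_E(a)\subset\supp(a)$, which holds for every $a\in FX$ by the very definition of the $E$-support; combined with hypothesis (2) this gives $\{x,y,z,s\}\subset\supp_E(p(x,y,z,s))\subset\supp(p(x,y,z,s))$, so in particular $\{y,s\}\subset\supp(p(x,y,z,s))$. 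At this stage $FX$ is already a topological algebra of type $(x^*y){}(z^*s)$.

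The substantive step is the superalgebra clause. Given compact sets $B\subset X$ and $K\subset FX$, I would apply the $E$-superboundedness of $F$ to the compact set $K$ to obtain a finite set $B_0\subset X$ with the property that for every $a\in K$ of non-empty support the family $\Supp_E(a)$ contains a subset of $B_0$. The key observation is then that $\supp_E(a)\subset B_0$ for each such $a$: if $A\in\Supp_E(a)$ satisfies $A\subset B_0$, then, reading $\supp_E(a)=\bigcap\Supp_E(a)$ as the intersection of all members of $\Supp_E(a)$, we have $\supp_E(a)\subset A\subset B_0$.

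With $B_0$ fixed I would set $A=B_0$ and check the clause directly. If $x,y,z,s\in B\setminus C$ are pairwise distinct and $p(x,y,z,s)\in K$, then hypothesis (2) yields $\{x,y,z,s\}\subset\supp_E(p(x,y,z,s))$; in particular $p(x,y,z,s)$ has non-empty support, so the observation of the previous paragraph applies and gives $\supp_E(p(x,y,z,s))\subset B_0=A$. Hence $\{x,z\}\subset\{x,y,z,s\}\subset A$, exactly as required. The single point demanding care, and the main conceptual obstacle, is precisely the passage from the combinatorial formulation of $E$-superboundedness (that $\Supp_E(a)$ contains a subset of $B_0$) to the containment $\supp_E(a)\subset B_0$; once this is secured through the intersection description of $\supp_E$, the rest is a direct substitution into Definition~\ref{d:xy.zs}.
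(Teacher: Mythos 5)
Your proposal is correct and follows essentially the same route as the paper: the paper's own proof likewise verifies only the superalgebra clause (treating conditions (1)--(3) of Definition~\ref{d:xy.zs} as immediate from the hypotheses together with $\supp_E(a)\subset\supp(a)$), obtains the finite set $A$ from $E$-superboundedness applied to $K$, and concludes via $\{x,z\}\subset\supp_E(a)=\bigcap\Supp_E(a)\subset A$. The only cosmetic difference is that the paper phrases the superboundedness output as $A\cap\supp(a)\in\Supp_E(a)$ rather than as the existence of some member of $\Supp_E(a)$ contained in $A$, but both yield the same containment of the intersection.
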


\begin{proof} Given compact sets  $B\subset X$ and $K\subset FX$, we need to find a finite subset $A\subset X$ such that for any pairwise distinct points $x,y,z,s\in B\setminus C$ with $p(x,y,z,s)\in K$ we get $\{x,z\}\subset A$.

Since the functor $F$ is $E$-superbounded, for the compact set $K$ there exists a finite set $A\subset X$ such that for every element $a\in K$ with non-empty support we get $A\cap\supp(a)\in\Supp_E(a)$. Now take any pairwise distinct elements $x,y,z,s\in B\setminus C$ and consider the element $a=p(x,y,z,s)$. By the condition (2), $\{x,z\}\subset\supp_E(a)\subset \supp(a)$, which implies that the support of $p(a)$ is not empty. Then the choice of $A$ guarantees that $A\cap \supp(a)\in\Supp_E(a)$. Then the condition (2) yields the required inclusion $\{x,z\}\subset \supp_E(a)=\bigcap\Supp_E(a)\subset A\cap\supp(a)\subset A$.
\end{proof}

The following theorem will help us to detect $E$-superbounded functors.

\begin{theorem}\label{t:superbound} The functor $F$ is $E$-superbounded if and only if $F$ is strongly bounded and for every compact metrizable space $Y$ with finite number of non-isolated points and every compact set $K\subset FY$ there exists a finite set $B\subset Y$ such that for every element $a\in K$ with non-empty support the family $\Supp_E(a)$ contains the set $B\cap\supp(a)$.
\end{theorem}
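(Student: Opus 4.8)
The plan is to prove the nontrivial (``if'') direction by contraposition, reducing an arbitrary failure of $E$-superboundedness to a failure on a compact metrizable space with finitely many non-isolated points. First I record an essential preliminary: the family $\Supp_E(a)$ is upward closed among subsets of $\supp(a)$. Indeed, if $A\subset A'\subset\supp(a)$ then $F(A;X)\subset F(A';X)$, and since the $E$-hull is monotone (because $Ei_{A,FX}(EA)\subset Ei_{A',FX}(EA')$ whenever $A\subset A'$), we get $\langle F(A;X)\cup\delta_X(\supp(a))\rangle_E\subset\langle F(A';X)\cup\delta_X(\supp(a))\rangle_E$, so $A\in\Supp_E(a)$ implies $A'\in\Supp_E(a)$. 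Consequently, for a finite set $B$ the conditions ``$\Supp_E(a)$ contains a subset of $B$'' and ``$B\cap\supp(a)\in\Supp_E(a)$'' are equivalent. With this in hand the ``only if'' direction is immediate: $E$-superboundedness contains strong boundedness by definition, and specializing the defining property to compact metrizable test spaces with finitely many non-isolated points gives exactly the stated condition.

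For the ``if'' direction I would argue by contradiction. Suppose $F$ is strongly bounded, the test-space condition holds, yet $F$ is not $E$-superbounded, witnessed by a space $X\in\Top_i$ and a compact set $K\subset FX$ admitting no good finite set. By strong boundedness $K\subset F_n(X)$ for some $n$, so every $a\in K$ has $|\supp(a)|\le n$. Using that no finite set works, I would inductively choose elements $a_m\in K$ with $\supp(a_m)\ne\emptyset$ and finite sets $B_m\subset X$ with $B_m\supset\bigcup_{j<m}\supp(a_j)$ and $B_m\cap\supp(a_m)\notin\Supp_E(a_m)$. Passing to a subsequence I may assume $|\supp(a_m)|=k$ is constant, fix injective enumerations $\supp(a_m)=\{x_m^1,\dots,x_m^k\}$, and set $C=\bigcup_m\supp(a_m)$, a countable subset of $X$.

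The central reduction transports this configuration into a compact metrizable space. Since $X$ is functionally Hausdorff, there is a continuous map $\xi:X\to\II^\w$ with $\xi|C$ injective (as in the proof of Lemma~\ref{l:fH-inj}). Because $\II^\w$ is compact metrizable, after a further subsequence each coordinate sequence $(\xi(x_m^i))_m$ converges to a point $y^i\in\II^\w$, and (deleting repetitions) the set $W=\{y^1,\dots,y^k\}\cup\{\xi(x_m^i):m,i\}$ is a compact metrizable subspace of $\II^\w$ with at most $k$ non-isolated points, hence an object of $\Top_i$. Writing $b_m=F\xi(a_m)$, Proposition~\ref{p:supp=image} and the injectivity of $\xi|C$ give $\supp(b_m)=\xi(\supp(a_m))\subset W$, so $b_m\in F(W;\II^\w)$. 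As $F$ preserves closed embeddings of metrizable compacta, $Fi_{W,\II^\w}$ is a closed embedding (Proposition~\ref{p:FembTych}); since $F\xi(K)$ is compact and $F(W;\II^\w)$ closed, the compact set $\overline{\{b_m\}}\subset F\xi(K)\cap F(W;\II^\w)$ pulls back to a compact $K'\subset FW$ containing the preimages $b_m'$ of the $b_m$. Applying the test-space hypothesis to $W$ and $K'$ yields a finite $B'\subset W$ with $B'\cap\supp(b_m')\in\Supp_E(b_m')$ for all $m$.

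Finally I would transfer this back to $X$. The map $\xi$ restricts to a homeomorphism of $\supp(a_m)$ onto $\supp(b_m')=\xi(\supp(a_m))$, and by naturality of $\delta$ and $\mu$ together with Lemma~\ref{l:E-hull1} (and its reverse), the correspondence $A\mapsto\xi(A)$ is a bijection between $\Supp_E(a_m)$ and $\Supp_E(b_m')$; hence $B'\cap\supp(b_m')\in\Supp_E(b_m')$ pulls back to $\supp(a_m)\cap\xi^{-1}(B')\in\Supp_E(a_m)$. Now $\xi^{-1}(B')\cap C$ is finite and contained in $\bigcup_{j\in J'}\supp(a_j)$ for some finite $J'$, so for all sufficiently large $m$ we have $\supp(a_m)\cap\xi^{-1}(B')\subset\bigcup_{j<m}\supp(a_j)\subset B_m$. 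Upward closure of $\Supp_E(a_m)$ then forces $B_m\cap\supp(a_m)\in\Supp_E(a_m)$, contradicting the choice of $a_m$ and $B_m$. The main obstacle is the two-sided transfer of membership in $\Supp_E$ under $\xi$: the forward inclusion is Lemma~\ref{l:E-hull1}, but the reverse requires carefully invoking that $F$ is monomorphic and preserves closed embeddings of metrizable compacta, so that $F\xi$ is injective on $F(\supp(a_m);X)$ and reflects $E$-hull membership, together with a separate treatment of the empty $E$-support edge case.
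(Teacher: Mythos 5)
Your plan reproduces the paper's own proof almost step for step: the same contradiction setup with recursively enlarging finite sets (your $B_m$ are the paper's $Z_m$), the same injective map of the countable union of supports into a metrizable compactum, the same extraction of a convergent subsequence of supports to manufacture a compact metrizable test space with finitely many non-isolated points, the same application of the hypothesis there and transfer back, with the contradiction delivered by upward closure of $\Supp_E(a_m)$ (which you rightly make explicit, and which the paper uses silently). The differences are immaterial: $\II^\w$ in place of $\II$, coordinatewise instead of Vietoris convergence of supports, pulling the compact set back into $FW$ rather than intersecting inside $F(\II)$, and an ``all sufficiently large $m$'' endgame in place of the paper's choice of one index $n(k)$ with $X'\subset Z_{n(k)}$.

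There is, however, a genuine gap at the decisive step, and it is exactly the step you flag as the main obstacle. You justify the reverse transfer --- from $B'\cap\supp(b_m')\in\Supp_E(b_m')$, computed in $FW$, to $\supp(a_m)\cap\xi^{-1}(B')\in\Supp_E(a_m)$, computed in $FX$ --- by asserting that $F\xi$ is injective on $F(\supp(a_m);X)$ and therefore ``reflects $E$-hull membership.'' Injectivity does not yield reflection. The two hulls are formed by applying $E$ and the monad multiplication in different ambient functor-spaces ($E(FW)$ and $\mu_W$ versus $E(FX)$ and $\mu_X$), so knowing that $b_m$ lies in the target hull gives you no preimage lying in the source hull: what you need is the inclusion opposite to Lemma~\ref{l:E-hull1}, namely that the hull of the image is contained in the image of the hull, and injectivity of $F\xi$ on $F(\supp(a_m);X)$ has no bearing on that. (Indeed, without the missing ingredient you do not even know that the source hull $\langle F(A';X)\cup\delta_X(\supp(a_m))\rangle_E$ is contained in $F(\supp(a_m);X)$, so the set on which you invoke injectivity is not known to capture it.) The paper fills this hole with a separate preparatory lemma, proved just before the theorem: for finite $A'\subset A\subset X$ the embedding $Fi_{A,X}$ maps $\langle F(A';A)\cup\delta_A(A)\rangle_E$ (computed in $FA$) \emph{onto} $\langle F(A';X)\cup\delta_X(A)\rangle_E$ (computed in $FX$). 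Its proof is where monomorphy and preservation of closed embeddings of metrizable compacta really enter: by Proposition~\ref{p:FembTych} the restriction of $Fi_{A,X}$ to the generating set $F(A';A)\cup\delta_A(A)$ is a homeomorphism onto $F(A';X)\cup\delta_X(A)$, hence applying the functor $E$ to it gives a bijection, and naturality of $\mu$ closes the diagram. Once you have this equality --- applied once inside $X$ (for $A'\subset\supp(a_m)$) and once inside $W$ (for $\xi(A')\subset\xi(\supp(a_m))$), and conjugated by the isomorphism $F(\supp(a_m))\cong F(\xi(\supp(a_m)))$ induced by the bijection $\xi|\supp(a_m)$ together with Theorem~\ref{t:supp} --- your claimed two-sided correspondence between $\Supp_E(a_m)$ and $\Supp_E(b_m')$ follows and the rest of your argument is correct. (Your worry about an ``empty $E$-support edge case'' is moot: every element to which $\Supp_E$ is applied in this argument has non-empty support by construction.)
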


\begin{proof} The ``only if'' part is trivial. To prove the ``if'' part, assume that the corresponding assumptions hold.
Fix a space $X\in\Top_i\subset\Top_{2\frac12}$ and compact subset $K\subset FX$. Since $F$ is strongly bounded, the set $\supp(K)$ is contained in $[X]^{\le m}$ for some $m\in\IN$. To derive a contradiction, assume that for every finite subset $Z\subset X$ there exists an element $a_Z\in K$ with non-empty support such that $\supp(a_Z)\cap Z\notin \Supp_E(a_Z)$. Put $Z_0=\emptyset$ and for every $n\in\IN$ define the finite set $Z_n$ by the recursive formula:
$$Z_n=Z_{n-1}\cup \supp(a_{Z_{n-1}}).$$
Consider the countable set $Z_\w=\bigcup_{n\in\w}Z_n$.
Since the space $X$ is functionally Hausdorff, there exists a continuous function $f:X\to\II$ such that the restriction $f|Z_\w$ is injective. It follows that $\big(f(\supp(a_{Z_n})\big)_{n\in\w}$ is a sequence in the
space $[\II]^{\le m}$, which is compact and metrizable with respect to the Vietoris topology. Then we can choose an increasing number  sequence $(n(k))_{k\in\w}$ such that the sequence $\big(f(\supp(a_{Z_{n(k)}})\big)_{k\in\w}$ converges in $[\II]^{\le m}$ to some set $Y_\infty\in[\II]^{\le m}$. It follows that $Y=Y_\infty\cup\bigcup_{k\in\w}\supp(a_{Z_{n(k)}})$ is a compact metrizable space with at most $m$ non-isolated points.

Since $F$ preserves closed embeddings of metrizable compacta, the map $Fi_{Y,\II}:FY\to F(\II)$ is a closed embedding. So, we can identify the space $FY$ with the closed subspace $F(Y;\II)=Fi_{Y,\II}(FY)$ of $F(\II)$. The continuity of the map $Ff:FX\to F(\II)$ guarantees that the set $Ff(K)$ is compact and so is its intersection $K_Y=Ff(K)\cap F(Y;\II)$. By our assumption, there exists a finite subset $Y'\subset Y$ such that for every element $b\in K_Y$ with non-empty support the family $\Supp_E(b)$
contains the set $Y'\cap \supp(b)$ as an element. Now consider the finite set $X'=Z_\w\cap f^{-1}(Y')$ and choose $k\in\w$ so large that $X'\subset Z_{n(k)}$. The choice of the element $a=a_{Z_{n(k)}}\in K$ guarantees that $a$ has non-empty support $A=\supp(a)\subset Z_{n(k)+1}$ such that the set $A'=A\cap X'\subset A\cap Z_{n(k)}\notin \Supp_E(a)$. Consider also the element $b=Ff(a)\in F(\II)$ and its support $B=\supp(b)$. By Proposition~\ref{p:supp=image}, $B=\supp(b)=f(\supp(a))=f(A)\subset Y$. By Theorem~\ref{t:supp}, $b\in F(\supp(b);\II)\subset F(Y;\II)$ and hence $b\in K_Y$. The choice of the set $Y'$ guarantees that the set $B'=\supp(b)\cap Y'$ belongs to the family $\Supp_E(b)$. Observe that $f(A')=f(A\cap X')=f(A)\cap f(X')=B\cap Y'=B'\in \Supp_E(b)$.
Looking at the following commutative diagram, we can see that $a\in \langle F(A';X)\cup\delta_X(A)\rangle_E$, which means that $A'\in\Supp_E(a)$ and this is a desired contradiction completing the proof.

$$\xymatrix{
\langle F(A';X)\cup\delta_X(A)\rangle_E\ar[ddd]\ar[rd]&\{a\}\ar[r]\ar[d]&\{c\}\ar[r]\ar[d]\ar[l]&\{b\}\ar[d]\ar[r]\ar[l]&\langle F(B';Y)\cup\delta_Y(B)\rangle_E\ar[ddd]\ar[ld]\\
&FX\ar^{Ff}[r]&F\II&FY\ar_{Fi_{Y,\II}}[l]\\
&FA\ar[rr]\ar^{Fi_{A,X}}[u]&&FB\ar[ll]\ar_{Fi_{B,Y}}[u]\\
\langle F(A';A)\cup\delta_A(A)\rangle_E\ar[rrrr]\ar[uuu]\ar[ru]&&&&\langle F(B';B)\cup\delta_B(B)\rangle_E\ar[llll]\ar[uuu]\ar[lu]
}
$$
\end{proof}

\chapter{Constructing fans in free universal algebras}\label{ch:free}

In this Chapter we shall apply the general theory developed in Chapters~\ref{ch:functor} and \ref{ch:algebra} to study the $k$-space and Ascoli properties in free objects that naturally appear in Topological Algebra and Functional Analysis. We start with studying free topologized $E$-algebras of a given signature $E$ in varieties of topologized $E$-algebras. In Chapter~\ref{ch:magma} we apply the results obtained in this chapter to study free objects in some concrete varieties of topologized algebras. The theory presented in this chapter can be considered as a self-containment treatment of the theory of universal topologized algebras developed (in a bit different form) by Choban with coauthors (see \cite{Cho93} and references therein).

\section{Universal $E$-algebras}

In this section we recall known information on universal algebras (see, \cite{Gr08} for more details).

A \index{signature}\index{universal algebra!signature of}{\em signature} is a set $E$ endowed with a function $\alpha:E\to\w$ called the {\em arity} function. For every $n\in\w$ the preimage $E_n=\alpha^{-1}(n)$ is the set of symbols for operations of arity $n$ in the signature $E$. Elements of the set $E_0$ are called symbols of constants. Any subset $A\subset E$ endowed with the restricted arity function $\alpha|A$ is called a \index{subsignature}\index{signature!subsignature of}{\em subsignature} of the signature $E$.

 A \index{universal algebra!of signature $E$}{\em universal algebra of signature $E$} (briefly, an {\em $E$-algebra}) is a pair $(X,\star)$ consisting of a set $X$ and a  function $\star\colon\oplus_{n\in\w}E_n\times X^n\to X$, $\star\colon (e,x_1,\dots,x_n)\mapsto e(x_1,\dots,x_n)$ called the {\em multiplication}. This function turns each element $e\in E_n\subset E$ into an $n$-ary operation $e\colon X^n\to X$, $e\colon (x_1,\dots,x_n)\mapsto \star(e,x_1,\dots,x_n)$, on $X$. So, often we shall omit the symbol of multiplication and shall write $e(x_1,\dots,x_n)$ instead of $\star(e,x_1,\dots,x_n)$. Also by $E_n(X^n)$ we shall denote the set $\star(E_n\times X^n)$ and by $E[X]$ the set $\bigcup_{n\in\w}E_n(X^n)$.

For $e\in E^0$ the power $X^0$ is a singleton (consisting of the unique map $\emptyset\to X$) and the 0-ary operation $e:X^0\to X$ maps the unique element of $X^0$ to some point of $X$. The set $E_0(X^0)$ is called the set of constants of the $E$-algebra $X$. If $E_0\ne\emptyset$, then the set $E_0(X^0)$ is not empty and hence $X\ne\emptyset$.

 A subset $A\subset X$ of an $E$-algebra $X$ is called an \index{$E$-algebra}\index{$E$-subalgebra}{\em $E$-subalgebra} of $X$  if $E[A]\subset A$. In this case $A$ endowed with the restricted operation $\star|\oplus_{n\in\w}E_n\times A^n$ is an $E$-algebra.  Observe that any $E$-subalgebra $A$ necessarily contains the set $E_0(X^0)=E_0(A^0)$ of constants of $X$.

For every subset $A$ of a universal $E$-algebra $X$ the intersection $\langle A\rangle_E$ of all $E$-sub\-algebras of $X$ that contain $A$ is called the $E$-subalgebra generated by $A$. This subalgebra can be equivalently described as $\langle A\rangle_E=\bigcup_{m\in\w}E^m[A]$ where $E^0[A]=A$ and $$E^{m+1}[A]=E^m[A]\cup E[E^m[A]]\mbox{ \ \  for \ }m\in\w.$$

A function $f:X\to Y$ between two $E$-algebras is called a \index{$E$-homomorphism}{\em $E$-homomorphism} if for every $n\in\w$, $e\in E_n$ and points $x_1,\dots,x_n\in X^n$ we get $$f(e(x_1,\dots,x_n))=e(f(x_1),\dots,f(x_n)).$$ A bijective $E$-homomorphism between two $E$-algebras is called an \index{$E$-isomorphism}{\em $E$-isomorphism}.

For universal $E$-algebras $(X_\alpha)_{\alpha\in A}$ their product $\prod_{\alpha\in A}X_\alpha$ has a natural structure of a universal $E$-algebra endowed with the coordinatewise multiplication: for an element $e\in E_n\subset E$ and functions $x_1,\dots,x_n\in\prod_{\alpha\in A}X_\alpha$ the function $e(x_1,\dots,x_n)\in\prod_{\alpha\in A}X_\alpha$ assigns to each $\alpha\in A$ the element $e(x_1(\alpha),\dots,x_n(\alpha))$.

\section{Topologized and topological $E$-algebras}

In this section we discuss topologized versions of the algebraic notions discussed in the preceding section.

A \index{signature!topological}{\em topological signature} is a signature $E$ endowed a topology making the arity function $\alpha:E\to\w$ continuous. In this case the sets $E_n=\alpha^{-1}(n)$, $n\in\w$, are closed-and-open in $E$.

 A \index{$E$-algebra!topologized}{\em topologized universal $E$-algebra} (briefly, a {\em topologized $E$-algebra}) is an universal $E$-algebra $X$ endowed with a topology $\tau$. A topologized $E$-algebra $X$ is called a \index{$E$-algebra!topological}{\em topological $E$-algebra} if its multiplication $\star:\oplus_{n\in\w}E_n\times X^n\to X$ is continuous.

 For a topologized $E$-algebra $X$ any $E$-subalgebra $A$ of $X$ endowed with the subspace topology is a topologized $E$-algebra.

Topologized $E$-algebras are objects of the category $\TEA$ whose morphisms are continuous $E$-homomorphisms of topologized $E$-algebras. A function $f:X\to Y$  between two topologized $E$-algebras is called a \index{$E$-isomorphism!topological}{\em topological $E$-isomorphism} if $f$ is bijective and both maps $f$ and $f^{-1}$ are continuous $E$-homomorphisms.

Let $\K$ be a class of topologized $E$-algebras and $X$ be a topological space. By a \index{$E$-algebra!free}{\em free topologized $E$-algebra of $X$ in the class $\K$} we understand a pair $(F_\K(X),\delta_X)$ consisting of a topologized $E$-algebra $F_\K(X)\in\K$ and a continuous map $\delta_X:X\to F_\K(X)$ such that for every continuous map $f:X\to Y$ to a topologized $E$-algebra $Y\in\K$ there exists a unique continuous $E$-homomorphism $\bar f:F_\K(X)\to Y$ such that $f=\bar f\circ\delta_X$.
This definition implies that any two free topologized $E$-algebras $(F_\K(X),\delta_X)$ and $(F'_\K(X),\delta_X')$ over $X$ are topologically $E$-isomorphic in the sense that there is a unique topological $E$-isomorphism $h:F_\K(X)\to F'_\K(X)$ such that $h\circ \delta_X=\delta_X'$.

A free topologized $E$-algebra $(F_\K(X),\delta_X)$ exists if the class $\K$ is a variety.

\begin{definition} A class $\K$ of topologized $E$-algebras is called a \index{variety}{\em variety} if  $\K$ contains a non-empty topologized $E$-algebra and $\K$ is closed under Tychonoff products, taking $E$-subalgebras and taking images under topological $E$-isomorphisms.
\end{definition}

The following proposition is a modification of a classical argument of Birkhoff \cite[Ch.4]{Gr08}.

\begin{proposition}\label{p:FA-exists} For any variety $\K$ of topologized $E$-algebras and any topological space $X$ a free topologized $E$-algebra $(F_\K(X),\delta_X)$ exists and is unique up to a topological $E$-isomorphism.
\end{proposition}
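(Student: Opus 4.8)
The plan is to handle uniqueness and existence separately, the former being a purely formal consequence of the universal property and the latter a topologized version of Birkhoff's classical construction. For uniqueness, suppose $(F_\K(X),\delta_X)$ and $(F'_\K(X),\delta'_X)$ both satisfy the defining property. Applying the universal property of the first to the continuous map $\delta'_X\colon X\to F'_\K(X)$ produces a continuous $E$-homomorphism $h\colon F_\K(X)\to F'_\K(X)$ with $h\circ\delta_X=\delta'_X$, and symmetrically a continuous $E$-homomorphism $h'\colon F'_\K(X)\to F_\K(X)$ with $h'\circ\delta'_X=\delta_X$. Then $h'\circ h$ and $\id$ are both continuous $E$-homomorphisms $F_\K(X)\to F_\K(X)$ whose composition with $\delta_X$ equals $\delta_X$, so the uniqueness clause in the definition forces $h'\circ h=\id$, and likewise $h\circ h'=\id$; hence $h$ is a topological $E$-isomorphism.

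For existence, the first and decisive step is to bound the cardinality of the algebras one must consider. Set $\lambda=\max\{|X|,|E|,\aleph_0\}$. Using the description $\langle A\rangle_E=\bigcup_{m\in\w}E^m[A]$, an induction on $m$ shows that every $E$-algebra generated by a subset of cardinality $\le|X|$ has cardinality at most $\lambda$. Consequently, up to topological $E$-isomorphism there is only a \emph{set} of pairs $(A,g)$ with $A\in\K$ generated by $g(X)$ and $g\colon X\to A$ continuous: the underlying set may be taken inside a fixed set of size $\le\lambda$, the $E$-algebra structure is a map drawn from a set of functions, and a topology is a member of a power set. Fix a set $S$ of representatives of all these isomorphism classes; since $\K$ contains a non-empty algebra $A_0$, the set $S$ is non-empty (map $X$ to a point of $A_0$ and pass to the generated subalgebra).

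The construction now proceeds routinely. Form the Tychonoff product $P=\prod_{(A,g)\in S}A$, which lies in $\K$ as $\K$ is closed under products, and define the continuous map $e\colon X\to P$ by $e(x)=(g(x))_{(A,g)\in S}$. Let $F=\langle e(X)\rangle_E$ be the $E$-subalgebra of $P$ generated by $e(X)$; it belongs to $\K$ because $\K$ is closed under taking $E$-subalgebras. Put $\delta_X:=e\colon X\to F$. To verify the universal property, take any continuous map $f\colon X\to Y$ into a topologized $E$-algebra $Y\in\K$ and set $Y'=\langle f(X)\rangle_E\subset Y$, an $E$-subalgebra in $\K$ generated by $f(X)$. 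By the choice of $S$ there are $(A,g)\in S$ and a topological $E$-isomorphism $h\colon A\to Y'$ with $h\circ g=f$. The restriction to $F$ of the projection $\pi_{(A,g)}\colon P\to A$ is a continuous $E$-homomorphism, so $\bar f:=h\circ(\pi_{(A,g)}|F)\colon F\to Y$ is a continuous $E$-homomorphism with $\bar f\circ\delta_X=h\circ g=f$. Uniqueness of $\bar f$ follows from $F$ being generated by $\delta_X(X)=e(X)$: two continuous $E$-homomorphisms on $F$ agreeing on $\delta_X(X)$ agree on each $E^m[\delta_X(X)]$ by induction, hence on all of $F$.

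The main obstacle is precisely the set-theoretic size issue addressed in the second paragraph: the collection of all algebras in $\K$ receiving a continuous map from $X$ is a proper class, and the entire argument rests on replacing it by a genuine set $S$ of representatives. The cardinality bound on generated subalgebras is exactly what legitimizes forming the product $P$, and everything downstream is a formal check that products and subalgebras in a variety carry the expected (product and subspace) topologies, which is built into the definition of a variety.
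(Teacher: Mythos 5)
Your proof is correct and follows essentially the same route as the paper's: a Birkhoff-style construction that bounds the cardinality of generated algebras by $\max\{|X|,|E|,\aleph_0\}$, forms the Tychonoff product over a set of small algebras equipped with continuous maps from $X$, takes the diagonal map, and passes to the generated $E$-subalgebra (the paper indexes by all pairs $(Y,f)$ with $Y\in\K$ having underlying set inside the cardinal $\kappa$, while you index by isomorphism classes of generated pairs, but this is only a bookkeeping difference). You additionally spell out the verification of the universal property and both uniqueness arguments, which the paper leaves to the reader.
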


\begin{proof} Given a topological space $X$, consider the {\em set} $\mathcal A_\kappa\subset \K$ of all topologized $E$-algebras whose underlying set is a subset of the cardinal $\kappa=\max\{|X|,|E|,\w\}$. Next, consider the set $\mathcal P$ of all pairs $(Y,f)$ where $Y\in\mathcal A_\kappa$ and $f:X\to Y$ is a continuous function. Let $\mathcal P=\{(Y_\alpha,f_\alpha)\}_{\alpha\in |\mathcal P|}$ be an enumeration of the set $\mathcal P$ by ordinals $\alpha<|\mathcal P|\le 2^{2^\kappa}$. Consider the Tychonoff product $Y=\prod_{\alpha<|\mathcal P|}Y_\alpha$ of the topologized $E$-algebras $Y_\alpha$, $\alpha<|\mathcal P|$, and the diagonal product $\delta_X:X\to  Y$, $\delta_X:x\mapsto (f_\alpha(x))_{\alpha<|\mathcal P|}$, of the continuous functions $f_\alpha$, $\alpha\in A$. Then for the $E$-subalgebra $F_\K(X)=\langle\delta_X(X)\rangle_E$ of $Y$, generated by the set $\delta_X(X)$, the pair $(F_\K(X),\delta_X)$ is a free topologized $E$-algebra over $X$ in the variety $\K$.

The uniqueness of a free topologized $E$-algebra follows from the definition.
\end{proof}

The construction of a free topologized $E$-algebra determines a functor $F_\K:\Top\to\TEA$ from the category of topological spaces and their continuous maps to the category $\TEA$ of topologized $E$-algebras and their continuous $E$-homomorphisms. Composing this functor with the functor $U:\TEA\to \Top$ forgetting the algebraic structure, we get a functor $F_\K:\Top\to\Top$ (denoted by the same symbol $F_\K$), which can be completed to a monad $(F_\K,\delta,\mu)$. In this monad, for every topological space $X$ the component $\delta_X$  of the natural transformation $\delta:\Id\to F_\K$ is just the map $\delta_X:X\to F_\K(X)$ (from the definition of a free topologized $E$-algebra) and the component $\mu_X$ of the multiplication $\mu:F_\K^2\to F_\K$ is the unique continuous $E$-homomorphism $\mu_X:F_\K(F_\K(X))\to F_\K(X)$ such that $\mu_X\circ\delta_{F_\K(X)}=\id_{F_\K(X)}$ (this  $E$-homomorphism $\mu_X$ exists and is unique by the definition of a free topologized $E$-algebra).

From now on we assume that $\K$ is a variety of topologized $E$-algebras. In this case for every topological space $X$ a free topologized $E$-algebra $(F_\K(X),\delta_X)$ exists and is unique up to a topological $E$-isomorphism.

\begin{proposition}\label{p:delta-inject} For a (Tychonoff) functionally Hausdorff space $X$ the map $\delta_X:X\to F_\K(X)$ is  injective (a topological embedding) if the closed interval  $\II$ admits a topological embedding $e:\II\to A$ into some topologized $E$-algebra $A\in\K$.
\end{proposition}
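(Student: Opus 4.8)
The plan is to feed the separating functions of the space $X$ through the embedding $e:\II\to A$ and then invoke the universal property of $(F_\K(X),\delta_X)$ to pull them back to continuous $E$-homomorphisms on $F_\K(X)$ that detect points of $\delta_X(X)$.

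First, for injectivity, I would take two distinct points $x,y\in X$ and use the functional Hausdorffness of $X$ to choose a continuous function $g:X\to\II$ with $g(x)\ne g(y)$. Composing with $e$ gives a continuous map $f=e\circ g:X\to A$ into an algebra $A\in\K$, so the defining property of the free topologized $E$-algebra provides a (unique) continuous $E$-homomorphism $\bar f:F_\K(X)\to A$ with $\bar f\circ\delta_X=f$. Since $e$ is injective, $\bar f(\delta_X(x))=e(g(x))\ne e(g(y))=\bar f(\delta_X(y))$, which forces $\delta_X(x)\ne\delta_X(y)$ and proves that $\delta_X$ is injective.

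For the embedding statement (when $X$ is Tychonoff) I would compare the original topology $\tau$ of $X$ with the topology $\tau_\delta$ that $\delta_X$ pulls back from the subspace $\delta_X(X)\subseteq F_\K(X)$. Continuity of $\delta_X$ gives $\tau_\delta\subseteq\tau$ at once, so the real work is the reverse inclusion $\tau\subseteq\tau_\delta$. The key observation will be that, for each continuous $g:X\to\II$, the associated homomorphism $\bar f$ carries $\delta_X(X)$ into $e(\II)$, because $\bar f(\delta_X(z))=e(g(z))$ for all $z\in X$. Since $e$ is a topological embedding, $e^{-1}$ is continuous on $e(\II)$, and hence $h_g:=e^{-1}\circ(\bar f|\delta_X(X))$ is a continuous map on $\delta_X(X)$ with $h_g\circ\delta_X=g$. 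Thus every continuous $g:X\to\II$ is continuous with respect to $\tau_\delta$; as the Tychonoff property means that $\tau$ is precisely the coarsest topology making all such functions $g$ continuous, I obtain $\tau\subseteq\tau_\delta$ and therefore $\tau=\tau_\delta$, i.e. $\delta_X$ is a topological embedding.

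The argument is almost entirely formal, so I do not expect a serious obstacle. The one place to be careful is the factorization step for the embedding part, where one must verify that $\bar f$ really lands inside $e(\II)$ on $\delta_X(X)$ before applying $e^{-1}$, and to keep straight the two topology inclusions $\tau_\delta\subseteq\tau$ (from continuity of $\delta_X$) and $\tau\subseteq\tau_\delta$ (from the Tychonoff factorization) so that they combine to the desired equality.
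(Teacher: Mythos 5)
Your proof is correct, but it reaches the conclusion by a different route than the paper. The paper's own proof unwinds the explicit construction of $F_\K(X)$ from the proof of Proposition~\ref{p:FA-exists}: there $F_\K(X)$ is realized as an $E$-subalgebra of a product $\prod_{\alpha<|\mathcal P|}Y_\alpha$ and $\delta_X$ as the diagonal product of all continuous maps $f_\alpha:X\to Y_\alpha$; after replacing $A$ by the subalgebra generated by the arc $I=e(\II)$, the paper restricts attention to the set $\Omega$ of coordinates $\alpha$ with $Y_\alpha=A$ and $f_\alpha(X)\subset I$, and observes that the projection $\pr\circ\delta_X:X\to I^\Omega$ is injective (an embedding) exactly because $X$ is functionally Hausdorff (Tychonoff); since a composition of $\delta_X$ with a continuous map has this property, so does $\delta_X$ itself. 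Your argument instead uses only the universal property of the pair $(F_\K(X),\delta_X)$: every $g\in C(X,\II)$ produces a continuous $E$-homomorphism $\bar f:F_\K(X)\to A$ with $\bar f\circ\delta_X=e\circ g$, the image $\bar f(\delta_X(X))$ lies in $e(\II)$, and composing with $e^{-1}$ shows that $g$ factors continuously through $\delta_X$; the Tychonoff (complete regularity) property then identifies the original topology with the initial topology of $C(X,\II)$, giving the embedding. The two proofs are the same in spirit --- both route the separating family $C(X,\II)$ through the arc $e(\II)\subset A$ and use the resulting homomorphisms on $F_\K(X)$ to detect points and topology (in the paper these homomorphisms are simply the coordinate projections of the product model) --- but yours is model-independent, applying verbatim to any realization of the free algebra rather than the particular one built in Proposition~\ref{p:FA-exists}, at the cost of invoking explicitly the initial-topology characterization of Tychonoff spaces; the paper's version is shorter for a reader who already has the concrete product construction in hand.
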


 \begin{proof}  Indeed, replacing $A$ by the $E$-subalgebra generated by the arc $I=e(\II)$ in $A$, we can assume that $A\in\mathcal A_\kappa$ where the set $\A_\kappa$ is taken from the proof of Proposition~\ref{p:FA-exists}. Next, consider the subset $\Omega=\{\alpha<|\mathcal P|:Y_\alpha=A,\;f_\alpha(X)\subset I\}$ and the projection $\pr:\prod_{\alpha<|\mathcal P|}Y_\alpha\to \prod_{\alpha\in\Omega}Y_\alpha$.  Taking into account that the space $X$ is functionally Hausdorff (Tychonoff), we conclude that
the map $\pr\circ\delta_X:X\to I^\Omega\subset M^\Omega= \prod_{\alpha\in\Omega}Y_\alpha$ is injective (a topological embedding). Then the map $\delta_X:X\to F_\K(X)\subset\prod_{\alpha<|\mathcal P|}Y_\alpha$ is injective (a topological embedding), too.
\end{proof}

Now we establish some properties of the functor $F_\K:\Top\to\Top$. We recall that a functor $F:\Top\to\Top$ has {\em finite supports} if for every topological space $X$ and every element $a\in FX$ there exists a map $f:n\to X$ defined on a finite ordinal $n=\{0,\dots,n-1\}$ such that $a\in Ff(Fn)$.

\begin{proposition}\label{p:TEA-fin-supp} The functor $F_\K:\Top\to\Top$ has finite supports.
\end{proposition}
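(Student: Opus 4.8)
The plan is to exploit the explicit description of the free algebra as the $E$-subalgebra generated by $\delta_X(X)$, namely $F_\K(X)=\langle\delta_X(X)\rangle_E=\bigcup_{m\in\w}E^m[\delta_X(X)]$, recorded earlier in this chapter. The guiding idea is that any element of this union is assembled from only finitely many generators $\delta_X(x)$ by finitely many operations, and that these finitely many points $x$ furnish the required support. Since ``finite supports'' is the set-level statement that $a\in F_\K f(F_\K n)$ for some map $f\colon n\to X$ off a finite ordinal, no topological obstruction can arise: a map out of a finite discrete space is automatically continuous, hence a morphism of $\Top$, so $F_\K f$ is always defined.

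First I would prove, by induction on $m$, the purely algebraic claim that every $a\in E^m[\delta_X(X)]$ lies in $\langle S\rangle_E$ for some finite subset $S\subseteq\delta_X(X)$. The base case $m=0$ is immediate, since $E^0[\delta_X(X)]=\delta_X(X)$ and one may take $S=\{a\}$. For the inductive step, an element of $E^{m+1}[\delta_X(X)]\setminus E^m[\delta_X(X)]$ has the form $e(a_1,\dots,a_k)$ with $e\in E_k$ and $a_1,\dots,a_k\in E^m[\delta_X(X)]$; choosing finite sets $S_j$ with $a_j\in\langle S_j\rangle_E$ by the inductive hypothesis and setting $S=\bigcup_{j=1}^k S_j$, the closure of $\langle S\rangle_E$ under the operation $e$ yields $e(a_1,\dots,a_k)\in\langle S\rangle_E$. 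The $0$-ary case, in which $e$ is a constant symbol, is the degenerate instance $S=\emptyset$, since constants belong to every $E$-subalgebra.

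Given $a\in F_\K(X)$, the claim produces a finite set $S=\{\delta_X(x_0),\dots,\delta_X(x_{n-1})\}$ with $a\in\langle S\rangle_E$. I would then define $f\colon n\to X$ on the finite discrete ordinal $n$ by $f(i)=x_i$, so that $F_\K f\colon F_\K(n)\to F_\K(X)$ is a continuous $E$-homomorphism. The concluding step is the identification $F_\K f(F_\K n)=\langle S\rangle_E$. For this I would combine two facts: the naturality relation $F_\K f\circ\delta_n=\delta_X\circ f$ coming from the unit $\delta$ of the monad $(F_\K,\delta,\mu)$, which gives $F_\K f(\delta_n(i))=\delta_X(x_i)$; and the general identity $g(\langle A\rangle_E)=\langle g(A)\rangle_E$ valid for any $E$-homomorphism $g$ (proved by the same kind of induction, via $g(E^m[A])=E^m[g(A)]$). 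Together with $F_\K(n)=\langle\delta_n(n)\rangle_E$ these give $F_\K f(F_\K n)=\langle F_\K f(\delta_n(n))\rangle_E=\langle S\rangle_E\ni a$, which is exactly the membership required.

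The argument is essentially routine once organized this way, and I expect the only delicate points to be bookkeeping ones: the commutation of an $E$-homomorphism with the generation operator $\langle\cdot\rangle_E$, and the degenerate cases $n=0$ (only constants are involved) and $X=\emptyset$, where the empty map $f\colon 0\to X$ already captures every element of $F_\K(X)=\langle\emptyset\rangle_E$. Neither causes real difficulty, so the main effort lies simply in verifying these two structural facts about generated $E$-subalgebras.
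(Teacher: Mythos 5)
Your proposal is correct and takes essentially the same route as the paper's proof: both use that $F_\K(X)=\langle\delta_X(X)\rangle_E$, extract a finite set of generators containing a given element $a$, and then conclude $a\in F_\K f(F_\K(n))$ from the fact that $F_\K f$ is an $E$-homomorphism whose image contains the relevant generators (via $F_\K f\circ\delta_n=\delta_X\circ f$). You merely make explicit the induction on the generation hierarchy $E^m[\delta_X(X)]$ and the identity $g(\langle A\rangle_E)=\langle g(A)\rangle_E$, which the paper leaves as an observation.
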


\begin{proof} Fix a topological space $X$ and an element $a\in F_\K(X)$. By the construction of a free topologized $E$-algebra $F_\K(X)$, the element $a$ belongs to the $E$-subalgebra of $F_\K(X)$, generated by the set $\delta_X(X)$. Then there exists a finite set $Z\subset X$ such that $a$ belongs to the $E$-subalgebra $\langle\delta_X(Z)\rangle$ of $F_\K(X)$ generated by the set $Z$. Now take any function $f:n\to X$ with $Z\subset f(n)$ and observe that $a\in F_\K f(F_\K(n))$ (which follows from the preservation of the algebraic operations by the $E$-homomorphism $F_\K f$).
\end{proof}

Propositions~\ref{p:TEA-fin-supp} and \ref{c:FKX-surj} imply:

\begin{corollary}\label{c:FKX-surj} For any surjective continuous map $f:X\to Y$ between topological spaces the continuous $E$-homomorphism $F_\K f:F_\K(X)\to F_\K(Y)$ is surjective.
\end{corollary}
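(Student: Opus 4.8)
The plan is to derive this surjectivity directly from the fact that $F_\K$ has finite supports, exactly as the statement of Corollary~\ref{c:FKX-surj} suggests by invoking Proposition~\ref{p:TEA-fin-supp}. The key observation is that the excerpt already contains the relevant lemma for abstract functors: namely the proposition (immediately before Proposition~\ref{p:supp=cap}) stating that \emph{if a functor $F\colon\Top_i\to\Top$ has finite supports, then it preserves surjective maps}. So the entire content of the corollary is to check that this general principle applies to $F_\K$ and that the resulting surjective map is moreover an $E$-homomorphism. The hardest conceptual point is therefore already dispatched upstream, and what remains is to assemble the pieces.

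First I would recall that $F_\K f\colon F_\K(X)\to F_\K(Y)$ is by definition a continuous $E$-homomorphism (it is the image of the morphism $f$ under the functor $F_\K\colon\Top\to\TEA$), so the only thing in question is surjectivity of the underlying map. Then I would apply the abstract preservation result: since $F_\K\colon\Top\to\Top$ has finite supports by Proposition~\ref{p:TEA-fin-supp}, it preserves surjective continuous maps, and as $f\colon X\to Y$ is surjective, the map $F_\K f$ is surjective. To make the dependency transparent, I would unfold the proof of that abstract lemma in the present setting: given $b\in F_\K(Y)$, finite supports provide a finite subset $B\subset Y$ and an element $b'\in F_\K(B_d)$ with $b=F_\K i^d_{B,Y}(b')$; since $f$ is surjective we can choose a map $g\colon B_d\to X$ with $f\circ g=i^d_{B,Y}$, whence $b=F_\K f(F_\K g(b'))$, exhibiting $b$ as a value of $F_\K f$.

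The only genuine obstacle I anticipate is bookkeeping about the category on which $F_\K$ is regarded as a functor: the abstract preservation statement is phrased for functors $F\colon\Top_i\to\Top$ on a hereditary subcategory containing the finite discrete spaces, whereas $F_\K$ is built on all of $\Top$. This is not a real difficulty, because $\Top$ itself qualifies (it is hereditary and contains every finite discrete space), so the lemma applies verbatim with $\Top_i=\Top$. I would simply note this to justify citing the earlier result. No delicate estimate or topological argument is needed; the surjectivity is a formal consequence of the finite-support property together with the functoriality of $F_\K$ into $\TEA$, which guarantees that the preimage element $F_\K g(b')$ maps to $b$ while $F_\K f$ remains an $E$-homomorphism.
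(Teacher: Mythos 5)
Your proposal is correct and follows exactly the paper's route: the paper derives this corollary by combining Proposition~\ref{p:TEA-fin-supp} (that $F_\K$ has finite supports) with the earlier abstract proposition that any functor with finite supports preserves surjective maps, which is precisely the argument you give (including the unfolded proof via a finite set $B\subset Y$, an element $b'\in F_\K(B_d)$, and a set-theoretic section $g\colon B_d\to X$ of $f$ over $B$). Your remark that $\Top$ itself is a full hereditary subcategory containing all finite discrete spaces, so the abstract lemma applies verbatim, is a sound piece of bookkeeping that the paper leaves implicit.
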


For a topological space $X$ and an element $a\in F_\K(X)$ the finite set
$$\supp(a)=\bigcap\{A\in[X]^{<\w}:a\in\langle\delta_X(A)\rangle_E\}$$is called the {\em support} of $a$.

\begin{proposition}\label{p:supp} If for some full subcategory $\Top_i\subset\Top$ containing all finite discrete spaces the functor $F_\K|\Top_i$ is monomorphic, then for every space $X\in\Top_i$ every element $a\in F_\K(X)$ is contained in the set $\langle \delta_X(A)\rangle_E$ for any non-empty subset $A\subset X$ containing $\supp(a)$.
\end{proposition}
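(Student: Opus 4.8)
The plan is to deduce this statement from Theorem~\ref{t:supp}, after reconciling the \emph{ad hoc} notion of support introduced in this section with the general functorial support attached to functors with finite supports in Chapter~\ref{ch:functor}. The whole argument reduces to the finite case already settled by Theorem~\ref{t:supp}, together with the obvious monotonicity of the assignment $A\mapsto\langle\delta_X(A)\rangle_E$.

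First I would record the key identification: for every finite subset $A\subset X$ the image $F_\K(A_d;X)=F_\K i^d_{A,X}(F_\K(A_d))$ coincides with the $E$-subalgebra $\langle\delta_X(A)\rangle_E$ generated by $\delta_X(A)$ in $F_\K(X)$. Indeed, by the construction in Proposition~\ref{p:FA-exists} the free algebra $F_\K(A_d)$ equals $\langle\delta_{A_d}(A_d)\rangle_E$, and since $E$-homomorphisms commute with the operations of the signature, the image of a generated $E$-subalgebra under $F_\K i^d_{A,X}$ is the $E$-subalgebra generated by the image of its generators; the naturality equality $F_\K i^d_{A,X}\circ\delta_{A_d}=\delta_X\circ i^d_{A,X}$ shows that these generators are carried onto $\delta_X(A)$, whence $F_\K(A_d;X)=\langle\delta_X(A)\rangle_E$. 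With this identification in hand, the family $\{A\in[X]^{<\w}:a\in\langle\delta_X(A)\rangle_E\}$ appearing in the definition of $\supp(a)$ is literally the family $\Supp(a)=\{A\in[X]^{<\w}:a\in F_\K(A_d;X)\}$ of Chapter~\ref{ch:functor}, so the support $\supp(a)$ defined here agrees with the functorial one. Moreover, by Proposition~\ref{p:TEA-fin-supp} the functor $F_\K$ has finite supports, so every $a\in F_\K(X)$ lies in $(F_\K)_{{<}\w}(X)$, the family $\Supp(a)$ is non-empty, and $\supp(a)$ is a well-defined finite subset of $X$.

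Now I would fix $a\in F_\K(X)$ and a non-empty subset $A\subset X$ with $\supp(a)\subset A$, and choose a non-empty \emph{finite} subset $A_0\subset A$ still containing $\supp(a)$ (take $A_0=\supp(a)$ if the support is non-empty, and otherwise any singleton contained in $A$). Applying Theorem~\ref{t:supp} to the monomorphic functor $F_\K|\Top_i$ — whose monomorphicity is exactly the standing hypothesis — yields $a\in F_\K((A_0)_d;X)=\langle\delta_X(A_0)\rangle_E$. Since $A_0\subset A$ gives $\delta_X(A_0)\subset\delta_X(A)$ and hence $\langle\delta_X(A_0)\rangle_E\subset\langle\delta_X(A)\rangle_E$, I conclude $a\in\langle\delta_X(A)\rangle_E$, as required.

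The only genuinely non-formal ingredient is the identification $F_\K(A_d;X)=\langle\delta_X(A)\rangle_E$, and this is where I expect the main (if modest) obstacle to lie: one must verify both that $F_\K i^d_{A,X}$ is precisely the $E$-homomorphism extending $\delta_X|A$ and that ``image of a generated $E$-subalgebra'' equals ``$E$-subalgebra generated by the image,'' each of which rests on the universal property of $F_\K$ and the naturality of $\delta$ encoded in the monad $(F_\K,\delta,\mu)$. Everything after that is a routine translation of the finite case, so no further delicate point is expected.
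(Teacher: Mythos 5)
Your proposal is correct and follows essentially the same route as the paper's own proof: both establish the identification $F_\K(A_d;X)=\langle\delta_X(A)\rangle_E$ via the facts that $F_\K(A_d)=\langle\delta_{A_d}(A_d)\rangle_E$, that $E$-homomorphisms carry generated $E$-subalgebras onto the $E$-subalgebras generated by the images of the generators, and the naturality of $\delta$, and then invoke Theorem~\ref{t:supp}. Your explicit reduction of an arbitrary non-empty $A$ to a finite $A_0\supset\supp(a)$ followed by monotonicity of $E$-hulls is slightly more careful than the paper, which applies Theorem~\ref{t:supp} directly, but the substance is the same.
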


\begin{proof}
Following notations from Chapter~\ref{ch:functor}, for a topological space $X$ and a finite subset $A\subset X$ by $i_{A,X}:A\to X$ denote the identity inclusion and by $i^d_{A,X}:A_a\to X$ the same map $i_{A,X}$ defined on the discrete modification of $A$, i.e., the set $A$ endowed with the discrete topology. The continuous map $i^d_{A,X}:A_d\to X$ induces a continuous $E$-homomorphism $F_\K i^d_{A,X}:F_\K(A_d)\to F_\K(X)$ whose image $F_\K i^d_{A,X}(F_\K(A_d))$ is denoted by $F_\K(A_d;X)$.
Taking into account that $Fi^d_{A,X}:F_\K(A_d)\to F_\K(X)$ is an $E$-homomorphism and the $E$-algebra $F_\K(A_d)$ coincides with the $E$-hull $\langle \delta_{A_d}(A_d)\rangle_E$ of the set $\delta_{A_d}(A_d)$, we conclude that $F(A_d;X)=\langle Fi^d_{A,X}(\delta_{A_d}(A_d))\rangle_E=\langle \delta_{X}(i^d_{A,X}(A_d))\rangle_E=\langle\delta_X(A)\rangle_E$. This implies that $\supp(a)=\bigcap\{A\in[X]^{<\w}:a\in F_\K(A_d;X)\}$. Applying Theorem~\ref{t:supp}, we conclude that $a\in F(A_d;X)=\langle\delta_X(A)\rangle_E$ for any non-empty subset $A\subset X$ containing $\supp(a)$.
\end{proof}

The following example shows that in general the inclusion $a\in\langle \delta_X(A)\rangle_E$ in Proposition~\ref{p:supp} cannot be improved to $a\in \langle \delta_X(\supp(a))\rangle_E$.

\begin{example} \textup{Consider the signature $E$ such that the space $E_1=\{e_1\}$ is a singleton and all other spaces $E_n$, $n\ne 1$, are empty. Let $\K$ be the variety of topological $E$-algebras $X$ satisfying the identity $e_1(x)=e_1(y)$ for all $x,y\in X$. This means that the unary operation $e_1:X\to X$ is constant. Then for the empty topological space its free topologized $E$-algebra $F_\K\emptyset$ is empty. On the other hand, for a non-empty topological space $X$ the free topologized $E$-algebra $F_\K(X)=\delta_X(X)\oplus \{1_X\}$ where $1_X$ is some element that does not belong to $\delta_X(X)$ and $1_X=e_1(x)$ for all $x\in F_\K(X)$. If $X$ contains more than one point, then $\supp(1_X)=\emptyset$ but $1_X\notin \langle \emptyset\rangle_E$.}
\end{example}

\begin{proposition}\label{p:supp-delta} Assume that the variety  $\K$ contains a topologized $E$-algebra $Y$ of cardinality $|Y|>1$ and for some full subcategory $\Top_i\subset\Top$ containing all finite discrete spaces the functor $F_\K|\Top_i$ is monomorphic. Then for every topological space $X$ and point $x\in X$ the element $a=\delta_X(x)\subset F_\K(X)$ has support $\supp(a)=x$.
\end{proposition}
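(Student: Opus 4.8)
The plan is to prove the two inclusions $\supp(a)\subseteq\{x\}$ and $\supp(a)\supseteq\{x\}$ separately, where $a=\delta_X(x)$. The first is immediate: since $a=\delta_X(x)$ lies in the $E$-subalgebra $\langle\delta_X(\{x\})\rangle_E$, the singleton $\{x\}$ belongs to the family over which the intersection defining $\supp(a)$ is taken, whence $\supp(a)\subseteq\{x\}$. Everything therefore reduces to showing $\supp(a)\neq\emptyset$, for which it suffices to prove that $\delta_X(x)\notin\langle\delta_X(A)\rangle_E$ for every finite set $A\subseteq X$ with $x\notin A$.

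The heart of the argument is a separation trick built from the algebra $Y\in\K$ with $|Y|>1$. I fix two distinct points $y_0,y_1\in Y$. The point is that two continuous maps $g_0,g_1\colon X\to Y$ that agree on $A$ induce, by the universal property of $F_\K$ and $Y\in\K$, two continuous $E$-homomorphisms $\bar g_0,\bar g_1\colon F_\K(X)\to Y$ with $\bar g_j\circ\delta_X=g_j$; these coincide on $\delta_X(A)$ and hence on the whole subalgebra $\langle\delta_X(A)\rangle_E$. Consequently, if in addition $g_0(x)\neq g_1(x)$, then the membership $\delta_X(x)\in\langle\delta_X(A)\rangle_E$ would give $g_0(x)=\bar g_0(\delta_X(x))=\bar g_1(\delta_X(x))=g_1(x)$, which is absurd.

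First I would dispose of the case $A=\emptyset$ for an arbitrary space $X$ by taking the two constant maps $g_0\equiv y_0$ and $g_1\equiv y_1$, which are continuous, agree (vacuously) on $\emptyset$, and differ at $x$; this yields $\delta_X(x)\notin\langle\emptyset\rangle_E=\langle\delta_X(\emptyset)\rangle_E$ and in particular settles the whole statement when $|X|=1$. For a nonempty finite $A\subseteq X$ with $x\notin A$ I would pass to the finite discrete model $B_d$, where $B=A\cup\{x\}\in\Top_i$, so that the monomorphic hypothesis and the support theory of Chapter~\ref{ch:functor} (Theorem~\ref{t:supp}, Proposition~\ref{p:supp}, Proposition~\ref{p:supp=image}) become available. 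On $B_d$ every function to $Y$ is continuous, so the separation trick applies directly with $g_0\equiv y_0$ and $g_1$ sending $x\mapsto y_1$ and $A\mapsto y_0$; this gives $\delta_{B_d}(x)\notin\langle\delta_{B_d}(A)\rangle_E$, i.e. $\supp_{F_\K(B_d)}(\delta_{B_d}(x))=\{x\}$.

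The remaining, and main, obstacle is the transfer from $B_d$ to $X$. Here I would apply $F_\K$ to the inclusion $c_B\colon B_d\to X$: by naturality of $\delta$ one has $F_\K(c_B)(\delta_{B_d}(z))=\delta_X(z)$ for all $z\in B$, so $F_\K(c_B)$ maps $\langle\delta_{B_d}(A)\rangle_E$ onto $\langle\delta_X(A)\rangle_E$ and carries $\delta_{B_d}(x)$ to $\delta_X(x)$. Since $A\neq\emptyset$ the image $c_B(B_d)=B$ has more than one point and $c_B$ restricts injectively to $\supp(\delta_{B_d}(x))=\{x\}$, so Proposition~\ref{p:supp=image} yields $\supp(\delta_X(x))=c_B(\supp(\delta_{B_d}(x)))=\{x\}$, completing the proof. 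The delicate point is exactly the legitimacy of running the monomorphic/support machinery across $c_B\colon B_d\to X$, that is, pulling the relation $\delta_X(x)\in\langle\delta_X(A)\rangle_E$ back to $B_d$; this is where the hypothesis that $F_\K|\Top_i$ is monomorphic (together with $B_d\in\Top_i$) is essential, and it is the step requiring care when $X$ is not itself assumed to lie in $\Top_i$.
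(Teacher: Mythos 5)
Your argument inside $F_\K(B_d)$ is correct, and it takes a genuinely different route from the paper's. To rule out $\delta_{B_d}(x)\in\langle\delta_{B_d}(A)\rangle_E$ the paper uses a \emph{single} homomorphism $\bar f:F_\K(B_d)\to Y^\kappa$ sending $\delta_{B_d}(A)$ into $\{y\}$ and $\delta_{B_d}(x)$ to a point $z$; for this to give a contradiction it must manufacture a point $z\notin\langle\{y\}\rangle_E$, which forces the passage to a large power $Y^\kappa$ and the cardinality estimate $|\langle\{y\}\rangle_E|\le|F_\K(\{x\})|<|Y^\kappa|$. Your two-homomorphism trick --- the equalizer of two $E$-homomorphisms is an $E$-subalgebra, so $\bar g_0,\bar g_1$ agreeing on $\delta_{B_d}(A)$ agree on all of $\langle\delta_{B_d}(A)\rangle_E$ --- needs nothing beyond two distinct points of $Y$, and it makes the cardinality argument and the power $Y^\kappa$ unnecessary. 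Your separate treatment of $A=\emptyset$ by two constant maps is also correct and, unlike the rest of the argument, works directly in $F_\K(X)$ for arbitrary $X$.

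The transfer from $F_\K(B_d)$ back to $F_\K(X)$, which you yourself flag as the delicate point, is however a genuine gap that your proposal does not close. Proposition~\ref{p:supp=image} is stated and proved only for maps between spaces of $\Top_i$ (its proof applies monomorphicity of the functor to maps whose codomain is the target space), whereas here $X$ is an arbitrary topological space, so invoking it for $c_B:B_d\to X$ is not licensed; moreover it is more than you need, since the proof only requires the pull-back implication $\delta_X(x)\in\langle\delta_X(A)\rangle_E\Rightarrow\delta_{B_d}(x)\in\langle\delta_{B_d}(A)\rangle_E$, i.e.\ the injectivity of $F_\K i^d_{B,X}:F_\K(B_d)\to F_\K(X)$. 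That injectivity is exactly what the paper asserts at the same point of its own proof (``since $F_\K|\Top_i$ is monomorphic, $F_\K i^d_{B,X}$ is injective''), and the assertion is likewise not a formal consequence of the stated hypothesis when $X\notin\Top_i$. The obstruction is real, not bureaucratic: take $\K$ to be the variety of all Hausdorff topological magmas (its restriction to finite discrete spaces is monomorphic, and it contains algebras with more than one point) and let $X$ be an indiscrete doubleton $\{x,y\}$; every continuous map of $X$ into a member of $\K$ is constant, so the construction of $F_\K(X)$ gives $\delta_X(x)=\delta_X(y)$, whence $F_\K i^d_{B,X}$ is not injective and $\supp(\delta_X(x))=\emptyset$. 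Thus for $X\notin\Top_i$ the transfer step (and the proposition as literally stated) can fail; to complete your proof one must either assume $X\in\Top_i$ (as in Proposition~\ref{p:supp}) or add a hypothesis guaranteeing that continuous maps of $X$ into members of $\K$ separate the points of each finite subset of $X$ --- so your argument is incomplete at precisely the spot where the paper's own proof is informal.
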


\begin{proof} It is clear that $\supp(a)\subset\{x\}$. Assuming that $\supp(a)\ne\{x\}$, we conclude that $\supp(a)=\emptyset$ and hence $a\in \langle \delta_X(A)\rangle_E$ for some finite set $A\subset X\setminus\{x\}$. Consider the finite set $B=A\cup\{x\}$. Since the functor $F_\K|\Top_i$ is monomorphic, the $E$-homomorphism $F_\K i^d_{B,X}:F_\K(B_d)\to F_\K(X)$ is injective. Identifying the $E$-algebra $F_\K(A_d)$ with the $E$-subalgebra $F_\K i^d_{B,X}(F_\K(B_d))$, we conclude that $a\in\langle \delta_{B_d}(A)\rangle_E$. Choose a cardinal $\kappa$ such that $|Y^\kappa|>|F_\K(\{x\})|$. Fix any point $y\in Y^\w$ and using the inequality $|Y^\kappa|>|F_\K(\{x\})|\ge\langle \{y\}\rangle_E$, find a point $z\in Y^\kappa\setminus\langle\{y\}\rangle_E$. Now consider the continuous map $f:B_d\to \{y,z\}\subset Y^\kappa$ such that $f(A)\subset y$ and $f(x)=z$. By the definition of the free $E$-algebra
$F_\K(B_d)$, there is a unique $E$-homomorphism $\bar f:F_\K(B_d)\to Y^\kappa$ such that $\bar f\circ\delta_{B_d}=f$. Then $$z=f(x)=\bar f\circ\delta_{B_d}(x)=\bar f(a)\in \langle f(A)\rangle_E\subset\langle \{y\}\rangle_E,$$ which contradicts the choice of $z$.
\end{proof}

By the \index{topological space!discrete modification of}{\em discrete modification} $X_d$ of a topologized $E$-algebra $X$ we understand this $E$-algebra endowed with the discrete topology.
We shall say that a class $\K$ of topologized $E$-algebras is \index{variety!$d$-stable}{\em $d$-stable} if for any topologized $E$-algebra $X\in\K$ the class $\K$ contains the discrete modification $X_d$ of $X$.

\begin{proposition}\label{p:FKX-disc} If a variety $\K$ of topologized $E$-algebras is $d$-stable, then for every discrete topological space $X$ its free topologized $E$-algebra $F_\K(X)$ is discrete and every element $a\in F_\K(X)$  belongs to the $E$-hull $\langle \delta_X(A)\rangle_E$ of any non-empty subset $A\subset X$ containing $\supp(a)$.
\end{proposition}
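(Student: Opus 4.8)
The plan is to treat the two assertions separately: first I would show that $F_\K(X)$ is discrete, and then deduce the support-hull property by reducing it to Proposition~\ref{p:supp}.

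For discreteness, the key observation is that $d$-stability makes the discrete modification $F_\K(X)_d$ of the topologized $E$-algebra $F_\K(X)$ again an object of $\K$. Since $X$ is discrete, the map $\delta_X\colon X\to F_\K(X)_d$ is continuous, so the universal property of the free topologized $E$-algebra yields a unique continuous $E$-homomorphism $h\colon F_\K(X)\to F_\K(X)_d$ with $h\circ\delta_X=\delta_X$. The underlying $E$-algebras of $F_\K(X)$ and $F_\K(X)_d$ coincide, so the identity map $\id$ of this common underlying set is itself an $E$-homomorphism $F_\K(X)\to F_\K(X)_d$ satisfying $\id\circ\delta_X=\delta_X$. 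By the construction in Proposition~\ref{p:FA-exists} the set $\delta_X(X)$ generates $F_\K(X)$ as an $E$-algebra, and two $E$-homomorphisms agreeing on a generating set coincide; hence $h=\id$ as functions on the underlying set. Consequently $\id\colon F_\K(X)\to F_\K(X)_d$ is continuous, which forces every subset of $F_\K(X)$ to be open, i.e.\ $F_\K(X)$ is discrete. (If $F_\K(X)$ is empty the claim is trivial.)

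For the support-hull property, I would note that the required statement is exactly the conclusion of Proposition~\ref{p:supp} applied to the full hereditary subcategory $\Top_i\subset\Top$ consisting of discrete spaces, which contains all finite discrete spaces. Thus it suffices to verify that $F_\K$ restricted to discrete spaces is monomorphic. Given a non-empty discrete space $X$ and an injective map $f\colon X\to Y$ into a discrete space $Y$, I would choose a retraction $r\colon Y\to X$ with $r\circ f=\id_X$ (possible because $X\ne\emptyset$ and all maps between discrete spaces are continuous). Applying $F_\K$ to the equality $r\circ f=\id_X$ gives $F_\K r\circ F_\K f=\id_{F_\K(X)}$, so $F_\K f$ has a left inverse and is therefore injective. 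Since monomorphisms in $\Top_i$ coincide with injective maps, $F_\K|\Top_i$ is monomorphic, and Proposition~\ref{p:supp} delivers $a\in\langle\delta_X(A)\rangle_E$ for every non-empty $A\subset X$ with $\supp(a)\subset A$; that $\supp(a)$ is well defined at all is guaranteed by the finite supports established in Proposition~\ref{p:TEA-fin-supp}.

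The main line of argument is therefore short, and the only place that demands genuine care is the treatment of empty sets. The retraction step needs $X\ne\emptyset$, and the behaviour of $F_\K$ on the unique map out of $\emptyset$ must either be checked directly or circumvented; the cleanest route is to observe that the proof of Proposition~\ref{p:supp} (resting on Theorem~\ref{t:supp} and Lemma~\ref{l:supp1}) only ever manipulates non-empty finite subsets of $X$ and non-empty supports, so the degenerate map plays no role. I expect this edge case, rather than either of the two principal steps, to be the sole obstacle.
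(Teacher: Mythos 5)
Your proof is correct and follows essentially the same route as the paper's: use $d$-stability and the universal property to identify the canonical homomorphism $F_\K(X)\to F_\K(X)_d$ with the identity (hence discreteness), then establish that $F_\K$ is monomorphic on discrete spaces via left inverses of injections and invoke Proposition~\ref{p:supp}. Your two refinements—justifying $h=\id$ via the fact that $E$-homomorphisms agreeing on the generating set $\delta_X(X)$ coincide (rather than a bare appeal to uniqueness, which would be circular since $\id$ is not yet known to be continuous), and the explicit handling of the empty-domain case in the retraction argument—are genuine improvements in rigor over the paper's terser wording, but they do not change the argument's structure.
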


\begin{proof} Let $F^d_\K:\Top\to\Top$ be the functor assigning to each topological space $X$ the discrete modification $F_\K(X)_d$ of its free topologized $E$-algebra.
To any continuous map $f:X\to Y$ between topological spaces the functor $F_\K^d$ assigns the map $F_\K f$ considered as the (necessarily continuous) map between the discrete modifications of $F_\K(X)$ and $F_\K(Y)$.

 Since the variety $\K$ is $d$-stable, for every topological space $X$ the discrete $E$-algebra $F^d_\K(X)$ belongs to $\K$. Observe that for a discrete topological space $X$ the canonical map $\delta_X:X\to F^d_\K(X)$ remains continuous. By the definition of a free topologized $E$-algebra, there exists a unique continuous $E$-homomorphism $h:F_\K(X)\to F^d_\K(X)$ such that $h\circ \delta_X=\delta_X$. The uniqueness of $h$ implies that $h$ is equal to the  identity map $F_\K(X)\to F_\K(X)_d$ whose continuity means that the space $F_\K(X)$ is discrete.

It is clear that the category $\Top_d$ of discrete topological spaces and their (necessarily continuous) maps is a full hereditary subcategory of $\Top$, containing all finite discrete spaces. Observe that the restriction $F_\K|\Top_d$ is monomorphic. This follows from the fact that each monomorphism  $f:X\to Y$ in the category $\Top^d$ is left-invertible in the sense that there exists a map $r:Y\to X$ such that $r\circ f=\id_X$. Applying to this equality the functor $F_\K$, we obtain $F_\K r\circ F_\K f=\id_{F_\K X}$, which implies that the $E$-homomorphism $F_\K f:F_\K X\to F_\K Y$ is injective. So, we can apply Proposition~\ref{p:supp} and conclude that for every discrete topological space $X$, every element $a\in F_\K(X)$ belongs to $\langle \delta_X(A)\rangle_E$ for every non-empty subset $A\subset X$ containing $\supp(a)$.
\end{proof}

\section{Free topologized $E$-algebras in $\HM$-stable varieties}

Observe that for every topologized $E$-algebra $X$ the Hartman-Mycielski space $\HM(X)$ is a topologized $E$-algebra (endowed with the algebraic structure  inherited from the power $X^{[0,1)}$ of $X$). The following fact is proved in \cite{BH}.

\begin{lemma}\label{l:BanHryn} If $X$ is a topological $E$-algebra, then so is the topologized $E$-algebra $\HM(X)$.
\end{lemma}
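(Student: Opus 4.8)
The plan is to equip $\HM(X)$ with the pointwise $E$-algebra structure it inherits as a subset of the power $X^{[0,1)}$, and then to show that the resulting multiplication is continuous. First I would check that $\HM(X)$ is genuinely closed under the operations. Given $e\in E_n$ and $f_1,\dots,f_n\in\HM(X)$, each $f_i$ is constant on the blocks of some finite partition of $[0,1)$; passing to a common refinement $0=c_0<c_1<\dots<c_k=1$ of these $n$ partitions, all the $f_i$ are simultaneously constant on each $[c_{j-1},c_j)$, hence so is the function $t\mapsto e\big(f_1(t),\dots,f_n(t)\big)$. Thus $e(f_1,\dots,f_n)\in\HM(X)$, and the piecewise-constant functions form an $E$-subalgebra of $X^{[0,1)}$ (for $n=0$ the constants of $X$ give single-block functions). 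This makes $\HM(X)$ a topologized $E$-algebra; the whole content of the lemma is the continuity of its multiplication.

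Since the spaces $E_n=\alpha^{-1}(n)$ are clopen in $E$, the domain $\bigoplus_{n\in\w}E_n\times\HM(X)^n$ of the multiplication is a topological sum, so it suffices to prove that each map $\star_n\colon E_n\times\HM(X)^n\to\HM(X)$, $(e,f_1,\dots,f_n)\mapsto e(f_1,\dots,f_n)$, is continuous. Because the sets $N(a,b,V,\e)$ form a neighborhood sub-base of the Hartman--Mycielski topology at each point, I would fix a point $(e_0,f_1,\dots,f_n)$, set $g_0=e_0(f_1,\dots,f_n)$, fix a sub-basic neighborhood $N(a,b,V,\e)$ of $g_0$ (so $g_0$ is constant, say with value $x_0$, on $[a,b)$ and $V$ is a neighborhood of $x_0$), and construct a neighborhood of $(e_0,f_1,\dots,f_n)$ that $\star_n$ sends into $N(a,b,V,\e)$.

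For the construction I would take the common refinement $c_0<\dots<c_k$ of the partitions of $f_1,\dots,f_n$ as above, adjoining $a$ and $b$ so that $[a,b)$ is a union of blocks, and write $f_i\equiv x_{i,j}$ on $[c_{j-1},c_j)$. For each block $[c_{j-1},c_j)\subset[a,b)$ one has $e_0(x_{1,j},\dots,x_{n,j})=g_0(c_{j-1})=x_0\in V$, so continuity of the multiplication $\star$ of $X$ at $(e_0,x_{1,j},\dots,x_{n,j})$ yields a neighborhood $W_j\ni e_0$ in $E_n$ and neighborhoods $O_{i,j}\ni x_{i,j}$ in $X$ with $\star\big(W_j\times O_{1,j}\times\dots\times O_{n,j}\big)\subset V$. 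Writing $k'$ for the number of blocks inside $[a,b)$, I set $W=\bigcap_j W_j$ and, for each $i$, $U_i=\bigcap_j N\big(c_{j-1},c_j,O_{i,j},\tfrac{\e}{n k'}\big)$, the intersections running over those blocks; each $U_i$ is a finite intersection of sub-basic neighborhoods of $f_i$, hence a neighborhood of $f_i$.

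Finally I would verify the inclusion. For $(e,h_1,\dots,h_n)\in W\times U_1\times\dots\times U_n$ and a block $[c_{j-1},c_j)\subset[a,b)$, the value $e(h_1(t),\dots,h_n(t))$ can fail to lie in $V$ only if $h_i(t)\notin O_{i,j}$ for some $i$ (since $e\in W\subset W_j$). The defining condition on $U_i$ bounds the measure of each exceptional set by $\tfrac{\e}{nk'}$, so summing over the $n$ operands and the $k'$ blocks gives $\big|\{t\in[a,b): e(h_1,\dots,h_n)(t)\notin V\}\big|<\e$, i.e. $e(h_1,\dots,h_n)\in N(a,b,V,\e)$. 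The one genuinely delicate point, which drives the whole argument, is that the operands $f_i$ need \emph{not} be constant on $[a,b)$ even though $g_0$ is; this is exactly why the passage to the common refinement and the per-block, per-operand apportionment of the Lebesgue measure $\tfrac{\e}{nk'}$ is needed, and once the exceptional sets are split this way the estimate closes.
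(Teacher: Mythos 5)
Your proof is correct and complete. Note that the paper itself never proves this lemma --- it simply cites \cite{BH} (Banakh--Hryniv) --- so there is no internal argument to compare against; what you give is the standard proof that the cited paper's result rests on: the pointwise operations on $\HM(X)\subset X^{[0,1)}$, closure under the operations via a common refinement of the step-function partitions, reduction of continuity to the clopen summands $E_n\times\HM(X)^n$ and to sub-basic neighborhoods $N(a,b,V,\e)$, and the per-block, per-operand apportionment $\e/(nk')$ of Lebesgue measure, which correctly handles the key point that the operands $f_i$ need not be constant on $[a,b)$. The only wrinkle worth recording is the degenerate case $n=0$: there the quantity $\e/(nk')$ is never formed because there are no operands, and $e\in W\subset\bigcap_j W_j$ already forces the (constant) output function to lie in $V$ on all of $[a,b)$, so the exceptional set is empty and the estimate holds trivially.
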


We say that the variety $\K$ is \index{variety!$\HM$-stable}{\em $\HM$-stable} if for any topologized $E$-algebra $X$ the topologized $E$-algebra $\HM(X)$ belongs to $\K$. In this case the functor $F_\K$ has many nice properties.

\begin{theorem}\label{t:FKX1} If a variety $\K$ of topologized $E$-algebras is $\HM$-stable, then the functor $F_\K:\Top\to\Top$ has the following properties:
\begin{enumerate}
\item[\textup{1)}] $F_\K$ has finite supports.
\item[\textup{2)}] $F_\K$ is $\HM$-commuting.
\item[\textup{3)}] $F_\K$ preserves the injectivity (and bijectivity) of continuous functions between functionally Hausdorff spaces.
\item[\textup{4)}] $F_\K$ preserves closed embeddings of metrizable compacta (more generally, closed embeddings of stratifiable spaces).
\item[\textup{5)}] For any functionally Hausdorff space $X$ every element $a\in F_\K(X)$ belongs to the $E$-hull $\langle\delta_X(A)\rangle_E$ of any non-empty subset $A\subset X$ containing $\supp(a)$.
\item[\textup{6)}] If the space $F(\II)$ is (functionally) Hausdorff, then for any functionally Hausdorff space $X$ the functor-space $FX$ is (functionally) Hausdorff.
\item[\textup{7)}] If the space $F_\K(\II)$ is Tychonoff, then the functor $F_\K$ is $\II$-regular.
\item[\textup{8)}] If $\K$ contains a topologized $E$-algebra $A$ of cardinality $|A|>1$, then for every functionally Hausdorff (Tychonoff) space $X$ the map $\delta_X:X\to F_\K(X)$ is injective (topological embedding).
\item[\textup{9)}] The functor $F_\K|\Top_{3\frac12}$ is bounded if and only if  every compact subset of $F_\K(\IR_+)$ is contained in the set $F_\K(B;\IR_+)\subset F_\K(\IR_+)$ for some bounded subset $B$ of the half-line $\IR_+=[0,\infty)$.
\end{enumerate}
\end{theorem}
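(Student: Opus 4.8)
The plan is to read the nine assertions off the general functor theory of Chapter~\ref{ch:functor}, so that the only genuinely new work concerns the $\HM$-commuting structure in (2); everything else cascades from it. Property (1) is literally Proposition~\ref{p:TEA-fin-supp}. The logical spine is: first establish (2), deduce that $F_\K$ restricted to functionally Hausdorff spaces is monomorphic (3), and then harvest (5), (8), (9) from the support and boundedness propositions and (4), (6), (7) from the embedding/separation corollaries.

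For (2) I would exploit $\HM$-stability. By Lemma~\ref{l:BanHryn} and the assumption that $\K$ is $\HM$-stable, the space $\HM(F_\K X)$ carries the structure of a topologized $E$-algebra belonging to $\K$. The continuous map $\HM(\delta_X)\colon \HM X\to \HM(F_\K X)$ therefore lands in an object of $\K$, so by the universal property of the free algebra there is a unique continuous $E$-homomorphism
$$c_X\colon F_\K(\HM X)\to \HM(F_\K X),\qquad c_X\circ\delta_{\HM X}=\HM(\delta_X).$$
It remains to verify $c_X\circ F_\K(hm_X)=hm_{F_\K X}$. Both sides are continuous $E$-homomorphisms $F_\K X\to \HM(F_\K X)$, so by freeness it suffices to check that they agree on the generating set $\delta_X(X)$. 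Using naturality of $\delta$ (that is $F_\K(hm_X)\circ\delta_X=\delta_{\HM X}\circ hm_X$), the defining equation of $c_X$, and naturality of $hm$, one computes
$$c_X\circ F_\K(hm_X)\circ\delta_X=c_X\circ\delta_{\HM X}\circ hm_X=\HM(\delta_X)\circ hm_X=hm_{F_\K X}\circ\delta_X,$$
which gives the required identity and establishes that $F_\K$ is $\HM$-commuting. With (1) and (2) in hand, Corollary~\ref{c:FHM->mono} yields that $F_\K f$ is injective for every injective continuous $f$ between functionally Hausdorff spaces; combined with the surjectivity preservation of Corollary~\ref{c:FKX-surj} this gives the bijectivity clause of (3) and shows that $F_\K|\Top_{2\frac12}$ is monomorphic.

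From monomorphicity the remaining items follow largely by quotation. Property (5) is Proposition~\ref{p:supp} applied with $\Top_i=\Top_{2\frac12}$. Property (9) is the equivalence $(1)\Leftrightarrow(3)$ of Proposition~\ref{p:F-bounded} taken over $\Top_{3\frac12}$ (legitimate since $\IR_+\in\Top_{3\frac12}$ and $F_\K$ is monomorphic with finite supports). For (8) I would invoke Proposition~\ref{p:delta-inject}: given $a_0\ne a_1$ in $A$, the step-function assignment $\iota\colon\II\to\HM(A)$ sending $t$ to the function equal to $a_1$ on $[0,t)$ and $a_0$ on $[t,1)$ is continuous and satisfies $\iota(0)\ne\iota(1)$, so composing a separating function $X\to\II$ with $\iota$ separates points of any functionally Hausdorff $X$ after passage to $F_\K X$; the embedding clause over Tychonoff $X$ is then read off Proposition~\ref{p:delta-inject} once $\II$ is seen to embed topologically into an algebra of $\K$ built from $A$ via the $\HM$ construction. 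The separation/embedding cluster (4), (6), (7) is handled by the $\HM$-commuting corollaries: Corollary~\ref{c:Fs->emb} (packaged for metrizable compacta as Corollary~\ref{c:HM-presemb}) gives injectivity of $F_\K f$ for closed embeddings into stratifiable or normal targets and upgrades these to closed topological embeddings once the domain functor-space is Hausdorff; Proposition~\ref{p:FX-fH} propagates the separation of $F_\K(\II)$ to $F_\K X$ for functionally Hausdorff $X$, which is exactly (6); and Corollary~\ref{c:F-Ireg} converts Tychonoffness of $F_\K(\II)$ into the $\II$-regularity of (7).

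The main obstacle is the bootstrapping inside the cluster (4)--(6). The embedding corollaries deliver the injective maps unconditionally, but converting an injective $F_\K f$ into a \emph{closed topological} embedding requires the domain $F_\K X$ to be Hausdorff, while that Hausdorffness is itself obtained, through Proposition~\ref{p:FX-fH}, from the preservation of embeddings one is trying to prove. The delicate point is therefore to sequence the argument correctly---first extracting injectivity and the propagation of separation from $F_\K(\II)$, then closing the loop so that (4) and (6) are read off simultaneously---rather than any single hard estimate. A secondary care point is the coherence identity for $c_X$ in (2), where one must be scrupulous that both $hm_{F_\K X}$ and $F_\K(hm_X)$ are genuine $E$-homomorphisms, so that the uniqueness-on-generators principle legitimately applies.
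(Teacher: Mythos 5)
Your proposal is correct and follows essentially the same route as the paper's own proof: item (2) via the universal property of $F_\K(\HM X)$ plus the naturality computation on the generating set $\delta_X(X)$, item (3) from Corollaries~\ref{c:FHM->mono} and \ref{c:FKX-surj}, and the remaining items harvested from the same propositions and corollaries in the same order. Two minor points in your favour: for (5) your citation of Proposition~\ref{p:supp} is the right one (the paper's reference to Proposition~\ref{p:FKX-disc} there appears to be a slip, since $d$-stability is not assumed in this theorem), and for (8) your explicit step-function arc $\iota\colon\II\to\HM(A)$, together with the separation of the injectivity clause from the embedding clause, makes precise the paper's bare appeal to path-connectedness of Hartman--Mycielski spaces.
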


\begin{proof} 1. The first statement was proved in Proposition~\ref{p:TEA-fin-supp}.
\smallskip

2. To see that $F_\K$ is $\HM$-commuting, choose any topological space $X$ and  consider its free topologized $E$-algebra $F_\K(X)\in\K$. Since the variety $\K$ is $\HM$-closed, the topologized $E$-algebra $\HM(F_\K(X))$ belongs to the variety $\K$. Now the definition of the free topologized $E$-algebra $F_\K(X)$ guarantees that for the continuous map $\HM \delta_X:\HM(X)\to \HM(F_\K(X))$ there exists a unique continuous $R$-homomorphism $c_X:F_\K(\HM(X))\to \HM F_\K(X)$ such that $c_X\circ \delta_{\HM X}=\HM\delta_X$. To complete the proof we need to check that $c_X\circ F hm_X=hm_{FX}$. For this consider the following diagram.
$$
\xymatrix{
X\ar^{\delta_X}[rr]\ar_{hm_X}[dd]&&FX\ar^{F hm_X}[dd]\ar^{hm_{FX}}[dl]\\
&\HM(FX)\\
HM(X)\ar_{\delta_{\HM X}}[rr]\ar_{\HM\delta_X}[ru]&&F(\HM X)\ar^{c_X}[ul]
}
$$
The naturality of the transformations $hm:\Id\to \HM$ and $\delta:\Id\to F$ yield two equalities (or two commutative squares in the diagram):  $$hm_{FX}\circ\delta_X=\HM\delta_X\circ hm_X\mbox{ \ and \ }Fhm_X\circ\delta_X=\delta_{\HM X}\circ hm_X.$$ Composing these two equalities with the equality $\HM\delta_X=c_X\circ \delta_{\HM X}$ multiplied by $hm_X$ from the right, we obtain the equality
$$hm_FX\circ\delta_X=\HM\delta_X\circ hm_X=c_X\circ\delta_{\HM X}\circ hm_X=c_X\circ F hm_X\circ\delta_X.$$
Taking into account that $hm_{FX}$ and $c_X\circ F hm_X$ are two continuous $E$-homomorphisms from the free topologized $E$-algebra  $FX$ to the topologized $E$-algebra $\HM(FX)$ satisfying the equality $hm_X\circ \delta_X=(c_X\circ \delta_{\HM X})\circ\delta_X$, we obtain the equality $hm_X=c_X\circ\delta_{\HM X}$ by (the uniqueness in) the definition of a free topologized $E$-algebra.
\smallskip

3. If $f:X\to Y$ is an injective continuous map between functionally Hausdorff spaces, then by the statement (1,2) and Corollary~\ref{c:FHM->mono} the map $F_\K f:F_\K(X)\to F_\K(Y)$ is injective. If $f$ is surjective, then $F_\K f$ is surjective by Corollary~\ref{c:FKX-surj}. This means that the functor $F_\K$ preserves bijective maps between functionally Hausdorff spaces.

4. By the statement (2) and Corollary~\ref{c:Fs->emb}, the functor $F_\K$ preserves closed embeddings of metrizable compacta (more generally, closed embeddings of stratifiable spaces).

5. The fifth statement follows from the third statement and Proposition~\ref{p:FKX-disc}.

 6,7.  The statements (6) and (7) follow from the first and the second statements,  and Proposition \ref{p:FX-fH} and Corollary~\ref{c:F-Ireg}, respectively.

8. The eighth statement follows from Proposition~\ref{p:delta-inject} and the path-connectedness of the Hartman-Mycielski spaces $HM(X)$.

9. The characterization of bounded functors $F_\K$ follows from Proposition~\ref{p:F-bounded} and the statements (1) and (3).
 \end{proof}

 \section{Free topologized $E$-algebras in $d{+}\HM$-stable varieties}

In this section we assume that $\K$ is a variety of topologized $E$-algebras and $\K$ is $d$-stable and $\HM$-stable. Such varieties will be called \index{variety!$d{+}\HM$-stable}{\em $d{+}\HM$-stable}.

First observe the following implication of Theorem~\ref{t:FKX1}(3) and Proposition~\ref{p:FKX-disc}.

\begin{corollary}\label{c:algebr-free} If a variety $\K$ of topologized $E$-algebras is $d{+}\HM$-stable, then for every functionally Hausdorff space $X$ its free topologized $E$-algebra $F_\K(X)$ is {\em algebraically free} in the sense that for the identity map $f:X_d\to X$ from the discrete modification of $X$ the continuous $E$-homomorphism $F_\K f:F_\K(X_d)\to F_\K(X)$ is bijective.
\end{corollary}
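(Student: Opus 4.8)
The plan is to exhibit the identity map $f\colon X_d\to X$ as a continuous bijection between functionally Hausdorff spaces and then invoke the preservation of bijectivity recorded in Theorem~\ref{t:FKX1}(3). First I would observe that $f$, being the identity on the common underlying set, is continuous because its domain $X_d$ carries the discrete topology; it is moreover a bijection. The discrete space $X_d$ is Tychonoff, hence functionally Hausdorff, while $X$ is functionally Hausdorff by hypothesis, so $f$ is indeed a continuous bijection between two functionally Hausdorff spaces.

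Since the variety $\K$ is $d{+}\HM$-stable, it is in particular $\HM$-stable, so all conclusions of Theorem~\ref{t:FKX1} are available. By item (3) of that theorem the functor $F_\K$ preserves injectivity and bijectivity of continuous maps between functionally Hausdorff spaces; applied to $f$ this yields at once that $F_\K f\colon F_\K(X_d)\to F_\K(X)$ is bijective, which is precisely the assertion. If one prefers not to cite bijectivity directly, the conclusion splits into two halves: injectivity of $F_\K f$ follows from the monomorphy of $F_\K$ on functionally Hausdorff spaces (Corollary~\ref{c:FHM->mono}, itself a consequence of $F_\K$ being $\HM$-commuting with finite supports, Theorem~\ref{t:FKX1}(1,2)), while surjectivity of $F_\K f$ follows from Corollary~\ref{c:FKX-surj}, since $f$ is surjective.

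The role of the $d$-stability hypothesis here is conceptual rather than logical for the bijectivity itself: by Proposition~\ref{p:FKX-disc} it guarantees that $F_\K(X_d)$ is discrete and is the genuinely (algebraically) free $E$-algebra on the set $X$, so that the bijectivity of $F_\K f$ is exactly the statement that the underlying $E$-algebra of $F_\K(X)$ depends only on the set $X$ and not on its topology. I expect no real obstacle in the argument; the entire difficulty has already been absorbed into Theorem~\ref{t:FKX1}. The only points requiring care are the routine verifications that $X_d$ is functionally Hausdorff and that $f$ is simultaneously continuous and bijective, after which the cited results close the proof immediately.
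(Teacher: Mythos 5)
Your proposal is correct and follows exactly the route the paper intends: the corollary is stated there as an immediate consequence of Theorem~\ref{t:FKX1}(3) (applied to the continuous bijection $f\colon X_d\to X$ between functionally Hausdorff spaces) together with Proposition~\ref{p:FKX-disc}, and your splitting into injectivity (Corollary~\ref{c:FHM->mono}) and surjectivity (Corollary~\ref{c:FKX-surj}) mirrors the proof of Theorem~\ref{t:FKX1}(3) itself. Your observation that $d$-stability serves only to interpret $F_\K(X_d)$ as the algebraically free object, rather than being needed for the bijectivity, is also accurate.
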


Given a variety $\K$ of topologized $E$-algebras, denote by $\bar\K$ the subvariety of $\K$ consisting of topological $E$-algebras, which are Tychonoff spaces. If the variety $\K$ is $d$-stable (and $\HM$-stable), then so is the variety $\bar\K$.

For every topological space $X$ we can consider its free topologized $E$-algebra $(F_\K(X),\delta_X)$ and its free topological $E$-algebra $(F_{\bar\K}(X),\bar\delta_X)$ in the varieties $\K$ and $\bar\K$, respectively. Since $F_{\bar \K}(X)\in\bar \K\subset\K$, there exists a unique continuous $E$-homomorphism $\bar\i_X:F_\K(X)\to F_{\bar \K}(X)$ such that $\bar\delta_X=\bar\i_X\circ\delta_X$. In fact, the $E$-homomorphisms $\bar\i_X$ are components of the natural transformation $\bar\i:F_\K\to F_{\bar\K}$.

 \begin{theorem}\label{t:bar-i-bijective} If a variety $\K$ of topologized $E$-algebras is $d{+}\HM$-stable, then for any functionally Hausdorff space $X$ the $E$-homomorphism $\bar\i_X:F_\K(X)\to F_{\bar \K}(X)$ is bijective and the space $F_\K(X)$ is functionally Hausdorff. Consequently, the functor $F_\K$ is $\II$-regular.
\end{theorem}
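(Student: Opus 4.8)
The plan is to first prove that each $\bar\i_X$ is a bijection and then read off functional Hausdorffness and $\II$-regularity as formal consequences. Surjectivity of $\bar\i_X$ holds for an arbitrary space $X$: the topological $E$-algebra $F_{\bar\K}(X)$ is generated by $\bar\delta_X(X)=\bar\i_X(\delta_X(X))$, and since the image of an $E$-homomorphism is an $E$-subalgebra, this image contains $\langle\bar\delta_X(X)\rangle_E=F_{\bar\K}(X)$. So the real work is injectivity, and I would reduce it to the case of a discrete space. Since $\K$ is $d{+}\HM$-stable, so is $\bar\K$ (stated just before the theorem), whence by Corollary~\ref{c:algebr-free} both $F_\K(X)$ and $F_{\bar\K}(X)$ are algebraically free: for the identity map $j\colon X_d\to X$ the $E$-homomorphisms $F_\K j\colon F_\K(X_d)\to F_\K(X)$ and $F_{\bar\K}j\colon F_{\bar\K}(X_d)\to F_{\bar\K}(X)$ are bijective. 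The naturality of $\bar\i$ yields the commutative square $\bar\i_X\circ F_\K j=F_{\bar\K}j\circ\bar\i_{X_d}$, so once $\bar\i_{X_d}$ is known to be bijective, $\bar\i_X=F_{\bar\K}j\circ\bar\i_{X_d}\circ(F_\K j)^{-1}$ is a composition of three bijections, hence bijective.

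It then remains to prove that $\bar\i_D$ is bijective for a discrete space $D$. By Proposition~\ref{p:FKX-disc} (using $d$-stability) the space $F_\K(D)$ is discrete, so every set map out of it is continuous. Given distinct $a,b\in F_\K(D)$, I would realize $F_\K(D)$ as an $E$-subalgebra of a product of members of $\K$ (as in the existence proof, Proposition~\ref{p:FA-exists}) to obtain a member $Y\in\K$ and a continuous $E$-homomorphism $\rho\colon F_\K(D)\to Y$ with $\rho(a)\ne\rho(b)$. By $d$-stability the discrete modification $Y_d$ lies in $\K$, and being discrete (hence Tychonoff) it lies in $\bar\K$; moreover $\rho$ stays continuous as a map into $Y_d$. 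Applying the universal property of $F_{\bar\K}(D)$ to $\rho\circ\delta_D\colon D\to Y_d$ yields a continuous $E$-homomorphism $\bar f\colon F_{\bar\K}(D)\to Y_d$ with $\bar f\circ\bar\delta_D=\rho\circ\delta_D$. Then $\bar f\circ\bar\i_D$ and $\rho$ are two continuous $E$-homomorphisms $F_\K(D)\to Y_d$ agreeing on $\delta_D(D)$, so the uniqueness in the universal property of $F_\K(D)$ forces $\bar f\circ\bar\i_D=\rho$. Hence $\bar f(\bar\i_D(a))=\rho(a)\ne\rho(b)=\bar f(\bar\i_D(b))$, so $\bar\i_D(a)\ne\bar\i_D(b)$ and $\bar\i_D$ is injective.

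With $\bar\i_X$ bijective for functionally Hausdorff $X$, functional Hausdorffness of $F_\K(X)$ is immediate: $F_{\bar\K}(X)\in\bar\K$ is Tychonoff, hence functionally Hausdorff, and for distinct $p,q\in F_\K(X)$ a continuous function $g\colon F_{\bar\K}(X)\to\II$ separating $\bar\i_X(p)$ from $\bar\i_X(q)$ pulls back along the continuous bijection $\bar\i_X$ to the separating function $g\circ\bar\i_X$. For $\II$-regularity I would first apply Theorem~\ref{t:FKX1}(7) to the $\HM$-stable variety $\bar\K$, whose free algebra $F_{\bar\K}(\II)$ is Tychonoff by the definition of $\bar\K$, concluding that $F_{\bar\K}$ is $\II$-regular. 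Fixing a closed set $A\subset\II$, naturality of $\bar\i$ together with the surjectivity of $\bar\i_A$ gives $\bar\i_{\II}(F_\K(A;\II))=F_{\bar\K}(A;\II)$, and since $\bar\i_{\II}$ is a continuous bijection ($\II$ being functionally Hausdorff) we get $F_\K(A;\II)=\bar\i_{\II}^{-1}(F_{\bar\K}(A;\II))$. As $F_{\bar\K}(A;\II)$ is $\IR$-closed in $F_{\bar\K}(\II)$, every point $x\in F_\K(\II)\setminus F_\K(A;\II)$ is separated from $F_{\bar\K}(A;\II)$ by a continuous $h\colon F_{\bar\K}(\II)\to\II$, and $h\circ\bar\i_{\II}$ witnesses that $F_\K(A;\II)$ is $\IR$-closed in $F_\K(\II)$; thus $F_\K$ is $\II$-regular.

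The main obstacle is the injectivity of $\bar\i_X$, and the device that makes it tractable is the reduction to a discrete domain, which trades the possibly pathological topology of $F_\K(X)$ for the discreteness of $F_\K(D)$ and thereby upgrades abstract $E$-homomorphisms to continuous ones. The two ingredients carrying this reduction are the algebraic freeness of Corollary~\ref{c:algebr-free} (turning the naturality square into a square of bijections) and $d$-stability (supplying, for each separating target $Y\in\K$, the discrete Tychonoff algebra $Y_d\in\bar\K$ through which the separation is routed). Everything after the bijectivity of $\bar\i$ is a routine transfer of properties along a continuous bijection onto a Tychonoff space.
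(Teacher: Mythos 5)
Your proposal is correct, and its overall skeleton coincides with the paper's: you reduce to the discrete modification via Corollary~\ref{c:algebr-free} and the naturality square, and you transfer functional Hausdorffness and $\II$-regularity along the continuous bijection exactly as the paper does (your use of Theorem~\ref{t:FKX1}(7) for $\bar\K$ in place of the paper's appeal to Theorem~\ref{t:FKX1}(4) is an inessential variation, and your explicit verification that $\bar\i_{\II}^{-1}(F_{\bar\K}(A;\II))=F_\K(A;\II)$ via naturality and surjectivity of $\bar\i_A$ is a detail the paper leaves implicit).

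The one genuine divergence is the treatment of the discrete case. The paper's argument is shorter and purely universal: since $F_\K(X_d)$ is discrete by Proposition~\ref{p:FKX-disc}, it is itself a Tychonoff topological $E$-algebra, hence lies in $\bar\K$; the universal property of $F_{\bar\K}(X_d)$ applied to $\delta_{X_d}\colon X_d\to F_\K(X_d)$ then produces a continuous $E$-homomorphism $h\colon F_{\bar\K}(X_d)\to F_\K(X_d)$ which, by the uniqueness clauses on both sides, is a two-sided inverse of $\bar\i_{X_d}$ — bijectivity (indeed a topological isomorphism) in one stroke, with no need to discuss surjectivity and injectivity separately. You instead prove surjectivity by generation and injectivity by separating points of $F_\K(D)$ with continuous $E$-homomorphisms into members of $\K$, routing the separation through the discrete modification $Y_d\in\bar\K$ of a target $Y\in\K$. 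This works, but it forces you to invoke the concrete product realization of Proposition~\ref{p:FA-exists} (to know that such separating homomorphisms exist), whereas the paper stays entirely at the level of universal properties. Note also that the key enabling fact you use for $Y_d$ — that a discrete member of the $d$-stable variety $\K$ is automatically a Tychonoff topological $E$-algebra and so lies in $\bar\K$ — is the same fact the paper uses for $F_\K(X_d)$; once you observe that $F_\K(D)$ itself enjoys it, your separation detour can be collapsed into the paper's one-line inverse construction.
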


\begin{proof} Let $i^d_{X,X}:X_d\to X$ denote the identity map. By Corollary~\ref{c:algebr-free}, the maps $F_\K i^d_{X,X}:F_\K(X_d)\to F_\K(X)$ and $F_{\bar\K} i^d_{X,X}:F_{\bar\K}(X_d)\to F_{\bar\K}(X)$ are bijective. The naturality of the transformation $\bar \i:F_\K\to F_{\bar\K}$ implies the commutativity of the diagram:
$$
\xymatrix{
F_\K(X_d)\ar^{F_\K i^d_{X,X}}[rr]\ar_{\bar\i_{X_d}}[d]&&F_\K(X)\ar^{\bar\i_{X}}[d]\\
F_{\bar\K}(X_d)\ar_{F_{\bar \K}i^d_{X,X}}[rr]&&F_{\bar\K}(X).
}
$$
By Proposition~\ref{p:FKX-disc}, the topologized $E$-algebra $F_\K(X_d)$ is discrete and hence it belongs to the variety $\bar \K$. Now the definition of the free $E$-algebra $F_{\bar \K}(X_d)$ yields a unique continuous $E$-homomorphism $h:F_{\bar \K}(X_d)\to F_\K(X_d)$ such that $h\circ \bar\delta_X=\delta_X$. The uniqueness of $h$ guarantees that $h$ is inverse to the $E$-homomorphism $\bar\i_{X_d}$. Then the $E$-homomorphism $\bar \i_{X_d}$ is bijective and so is the map $\bar\i_X$.

Taking into account that the space $F_{\bar\K}(X)\in\bar\K$ is Tychonoff and the map $\bar\i_X:F_\K(X)\to F_{\bar \K}(X)$ is continuous and injective, we conclude that the space $F_{\K}(X)$ is functionally Hausdorff.
\smallskip

Since the unit interval $\II$ is Tychonoff, the continuous $E$-homomorphism $\bar \i_{\II}:F_{\K}(\II)\to F_{\bar\K}(\II)$ is  bijective. By Theorem~\ref{t:FKX1}(4), the functor $F_{\bar K}$ preserves closed embeddings of metrizable compacta. Consequently, for every closed subset $X\subset\II$ the set $F_{\bar \K}(X;\II)$ is closed in the Tychonoff space $F_{\bar \K}(\II)$ and hence is $\IR$-closed in $F_{\bar\K}(\II)$. Then the preimage $\bar\i_X^{-1}(F_{\bar \K}(X;\II))=F_\K(X;\II)$ of the set $F_{\bar\K}(X;\II)$ is $\IR$-closed in the functor-space $F_\K(\II)$, which means that the functor $F_\K$ is $\II$-regular.
\end{proof}

\begin{corollary}\label{c:delta-closed}  If a variety $\K$ of topologized $E$-algebras is $d{+}\HM$-stable and contains a $E$-algebra of cardinality $>1$, then for any functionally Hausdorff (Tychonoff) space $X$ the map $\delta_X:X\to F_\K(X)$ is injective (topological embedding) and the set $\delta_X(X)$ is closed in $F_\K(X)$.
\end{corollary}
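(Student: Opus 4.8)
The plan is to obtain the injectivity and the embedding property directly from Theorem~\ref{t:FKX1}(8), which asserts exactly this under the standing hypothesis that $\K$ contains an algebra of cardinality $>1$; so the only substantial work is to prove that $\delta_X(X)$ is closed in $F_\K(X)$. For this I would show that the complement $F_\K(X)\setminus\delta_X(X)$ is open, by producing, for each $a\in F_\K(X)\setminus\delta_X(X)$, a continuous map $f\colon X\to\II$ whose induced homomorphism $F_\K(f)\colon F_\K(X)\to F_\K(\II)$ pushes $a$ off the ``generator set'' $\delta_\II(\II)$ while keeping all of $\delta_X(X)$ inside it.

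The first preparatory step is to observe that the target generator set $\delta_\II(\II)$ is closed in $F_\K(\II)$. This is the easy compact case: since $\II$ is compact, $\delta_\II(\II)$ is a compact subset of $F_\K(\II)$, and by Theorem~\ref{t:bar-i-bijective} the space $F_\K(\II)$ is functionally Hausdorff, hence Hausdorff, so its compact subset $\delta_\II(\II)$ is closed. With this in hand, for any continuous $f\colon X\to\II$ the set $(F_\K f)^{-1}\big(F_\K(\II)\setminus\delta_\II(\II)\big)$ is open in $F_\K(X)$; since the naturality of $\delta$ gives $F_\K f(\delta_X(x))=\delta_\II(f(x))$, this open set is automatically disjoint from $\delta_X(X)$. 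Thus it suffices, for the fixed $a\notin\delta_X(X)$, to choose $f$ with $F_\K f(a)\notin\delta_\II(\II)$.

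Here I would use Lemma~\ref{l:fH-inj} to pick a continuous $f\colon X\to[0,1]=\II$ that is injective on the finite set $\supp(a)$ (and, if $\supp(a)=\emptyset$ with $|X|>1$, also non-constant; the one-point case is trivial since $F_\K(X)$ is $T_1$). Then I would split according to $|\supp(a)|$. If $|\supp(a)|\ge 2$, the injectivity of $f|\supp(a)$ together with Proposition~\ref{p:supp=image} gives $\supp(F_\K f(a))=f(\supp(a))$ of size $\ge 2$, whereas by Proposition~\ref{p:supp-delta} every element of $\delta_\II(\II)$ has a singleton support, so $F_\K f(a)\notin\delta_\II(\II)$; the case $\supp(a)=\emptyset$ is similar, since then $\supp(F_\K f(a))=\emptyset$. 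The delicate case, which I expect to be the main obstacle, is $\supp(a)=\{x_0\}$ with $a\neq\delta_X(x_0)$: here I would invoke Theorem~\ref{t:FKX1}(5) to place $a$ in the one-point subalgebra $F_\K(\{x_0\};X)=\langle\delta_X(x_0)\rangle_E$, use that $F_\K(i_{\{x_0\},X})$ is injective (Theorem~\ref{t:FKX1}(3)) to lift $a$ to $\tilde a\in F_\K(\{x_0\})$ with $\tilde a\neq\delta_{\{x_0\}}(x_0)$, and note that $F_\K f$ restricted to $F_\K(\{x_0\};X)$ equals the composite $F_\K(c)$ for the map $c\colon\{x_0\}\to\II$, $x_0\mapsto f(x_0)$. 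Since $c$ is injective, $F_\K(c)$ is injective and sends $\delta_{\{x_0\}}(x_0)$ to $\delta_\II(f(x_0))$; hence $F_\K f(a)=F_\K(c)(\tilde a)\neq\delta_\II(f(x_0))$, and as $\supp(F_\K f(a))\subseteq\{f(x_0)\}$ forces any generator value to be exactly $\delta_\II(f(x_0))$, we conclude $F_\K f(a)\notin\delta_\II(\II)$. This separation exhibits the required open neighborhood of $a$ and shows that $\delta_X(X)$ is closed.
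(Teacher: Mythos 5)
Your proof is correct, and its first step (injectivity and the embedding property via Theorem~\ref{t:FKX1}(8)) is identical to the paper's. For the closedness of $\delta_X(X)$ both arguments rest on the same underlying mechanism --- the generator set of a free algebra over a \emph{compact} space is compact, hence closed, because that free algebra is Hausdorff by Theorem~\ref{t:bar-i-bijective} --- but the implementations genuinely differ. The paper uses a single global map: the canonical injection $f\colon X\to\beta X$ into the Stone-\v Cech compactification. Since $F_\K f$ is injective by Theorem~\ref{t:FKX1}(3), the closed preimage $C=(F_\K f)^{-1}\big(\delta_{\beta X}(\beta X)\big)$ is shown to equal $\delta_X(X)$ exactly: any $c\in C$ satisfies $F_\K f(c)=\delta_{\beta X}(\bar x)$ with $\{\bar x\}=\supp(F_\K f(c))=f(\supp(c))$ by Propositions~\ref{p:supp-delta} and~\ref{p:supp=image}, and the injectivity of $F_\K f$ then forces $c=\delta_X(x)$. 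You instead separate each $a\notin\delta_X(X)$ individually by a map into $\II$ chosen depending on $a$, which obliges you to split into the cases $|\supp(a)|\ge 2$, $\supp(a)=\emptyset$, and $|\supp(a)|=1$; the singleton case is precisely where the support invariant cannot distinguish $a$ from a generator, and you repair this by lifting $a$ into the one-point subalgebra $F_\K(\{x_0\})$ and exploiting injectivity of $F_\K c$ there --- a correct patch, using the same tools (Theorem~\ref{t:FKX1}(3),(5), Proposition~\ref{p:supp-delta}, naturality of $\delta$) that the paper deploys. In effect, the paper's global injectivity of $F_\K f$ (available because $f\colon X\to\beta X$ is injective on all of $X$) absorbs your singleton case automatically, yielding a shorter, uniform argument; what your version buys is that it never leaves the free algebra over the concrete compact space $\II$, replacing the Stone-\v Cech compactification of an arbitrary functionally Hausdorff space by elementary pointwise separation with interval-valued functions.
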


\begin{proof} By Theorem~\ref{t:FKX1}(8), the map  $\delta_X:X\to F_\K(X)$ is injective (topological embedding). It remains to prove that the image $\delta_X(X)$ is closed in $F_\K(X)$. Consider the canonical (injective) map $f:X\to \beta X$ of $X$ into its Stone-\v Cech compactification $\beta X$. By Theorem~\ref{t:FKX1}(3), the map $F_\K f: F_\K(X)\to F_\K(\beta X)$ is injective. By Theorem~\ref{t:bar-i-bijective}, the space $F_\K(\beta X)$ is Hausdorff and hence $\delta_{\beta X}(\beta X)$ is closed in $F_\K(\beta X)$. The continuity of the map $F_\K f$ implies that the set $C=\{a\in F_\K(X):F_\K f(a)\in\delta_{\beta X}(\beta X)\}$ is closed in $F_\K(X)$. We claim that $C=\delta_X(X)$. The naturality of the transformation $\delta:\Id\to F_\K$ guarantees that $\delta_X(X)\subset C$. To prove the reverse inclusion, fix any element $c\in C$ and consider its image $\bar c=F_\K f(c)\in\delta_{\beta X}(\beta X)$. Find a point $\bar x\in\beta X$ such that $\bar c=\delta_{\beta X}(\bar x)$. By Proposition~\ref{p:supp-delta}, $\supp(\bar c)=\{\bar x\}$ and by Proposition~\ref{p:supp=image}, $\{\bar x\}=\supp(\bar c)=f(\supp(c))\subset f(X)$.
So, $\bar x=f(x)$ for some $x\in X$. The naturality of the transformation $\delta:\Id\to F_\K$ guarantees that $F_\K f\circ \delta_X=\delta_{\beta X}\circ f$. Consequently, $F_\K f(\delta_X(x))=\delta_{\beta X}(f(x))=\delta_{\beta X}(\bar x)=\bar c=F_\K f(c)$. Now the injectivity of the map $F_\K f$ guarantees that $c=\delta_X(x)\in\delta_X(X)$.
\end{proof}

 \section{Free topological $E$-algebras in $k_\w$-stable varieties}

We recall that a topological space $X$ is a \index{topological space!$k_\w$-space}{\em $k_\w$-space} if $X=\bigcup_{n\in\w}X_n$ for an increasing sequence $(X_n)_{n\in\w}$ of compact subsets such that a subset $U\subset X$ is open in $X$ if and only if for every $n\in\w$ the intersection $U\cap X_n$ is relatively open in the compact space $X_n$. In this case the sequence $(X_n)_{n\in\w}$ is called a \index{$k_\w$-sequence}{\em $k_\w$-sequence} for the $k_\w$-space $X$.

\begin{definition} A variety $\K$ of topologized $E$-algebras is \index{variety!$k_\w$-stable}{\em $k_\w$-stable} if $\K$ contains any topological $E$-algebra $X$ which is a $k_\w$-space and admits a continuous bijective $E$-homomorphism $h:X\to Y$ onto a Hausdorff topological $E$-algebra $Y\in\K$.
 \end{definition}

We recall that for a subset $B$ of a topologized $E$-algebra $X$ its $E$-hull $\langle B\rangle_E$ in $X$ equals the union $\bigcup_{m\in\w}E^m[B]$ where $E^0[ B]=B$ and $$E^{m+1}[B]=E^m[B]\cup E[E^m[B]]\mbox{ \ for \ }m\in\w,$$
and $E[B]=\bigcup_{n\in\w}E_n[B^n]$. Here $E=\oplus_{n\in\w}E_n$ and $E_n=\alpha^{-1}(n)$ are the sets of symbols of $n$-ary operations in the signature $E$.

If $X$ is a topological $E$-algebra and the spaces $B$ and $E$ are compact, then by induction it can be shown that every set $E^m[B]$, $m\in\w$, is compact. Let us observe that any $E$-algebra $X$ remains an $E'$-algebra for any subsignature $E'\subset E$. So, for any subset $B\subset X$ we can consider its $E'$-hull $\langle B\rangle_{E'}$ which a subset of the $E$-hull $\langle B\rangle_E\subset X$.

The following known theorem describes the structure of compact subsets in free topological $E$-algebras of $k_\w$-signature $E$ over $k_\w$-spaces (cf. \cite[4.3]{Cho93}, \cite[7.5.2]{AT}).

 \begin{theorem}\label{t:kw} Assume that the signature $E$ is a (cosmic) $k_\w$-space and $\K$ is a $k_\w$-stable variety of topological $E$-algebras. If the free topological $E$-algebra $F_\K(X)$ of a (cosmic) $k_\w$-space $X$ is Hausdorff, then $F_\K(X)$ is a (cosmic) $k_\w$-space. Moreover, for every bounded subset $K\subset F_\K(X)$ there are compact sets $A\subset E$ and $B\subset X$ such that $K\subset A^m[\delta_X(B)]$ for some $m\in\w$.
 \end{theorem}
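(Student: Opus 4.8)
The plan is to exhibit an explicit $k_\w$-sequence for $F_\K(X)$ and then to identify the resulting $k_\w$-topology with the given topology of $F_\K(X)$ by means of the universal property of the free algebra. Let $\tau$ denote the topology of $F_\K(X)$. Fix an increasing $k_\w$-sequence $(K_n)_{n\in\w}$ of compact sets for $X$ and an increasing $k_\w$-sequence $(L_n)_{n\in\w}$ for the signature $E$. For compact $A\subset E$ and compact $D\subset F_\K(X)$ write $A[D]=\star(\bigoplus_{n\in\w}(A\cap E_n)\times D^n)$ and $A^0[D]=D$, $A^{m+1}[D]=A^m[D]\cup A[A^m[D]]$. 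Since $\alpha|A$ has compact domain, $\alpha(A)$ is finite, so $A$ meets only finitely many $E_n$; hence $A[D]$ is a finite union of continuous images of the compacta $(A\cap E_n)\times D^n$ under the (continuous) multiplication of $F_\K(X)$, and is therefore compact. By induction each set $C_n:=L_n^n[\delta_X(K_n)]$ is compact, and the monotonicity of $A^m[D]$ in $A$, $D$ and $m$ gives $C_n\subset C_{n+1}$. First I would check that $\bigcup_{n\in\w}C_n=F_\K(X)$: every $a\in F_\K(X)=\bigcup_m E^m[\delta_X(X)]$ is built by a finite term tree of depth $m$ whose leaves are generators $\delta_X(x_i)$ and whose nodes carry operation symbols $e_j\in E$; choosing $n\ge m$ so large that all the $x_i$ lie in $K_n$ and all the $e_j$ lie in $L_n$ places $a$ in $C_n$.

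Let $\tau_\w$ be the $k_\w$-topology generated on the set $F_\K(X)$ by the sequence $(C_n)_{n\in\w}$. Because each $C_n$ is $\tau$-compact, the identity is $\tau_\w$-to-$\tau$ continuous, so $\tau\subset\tau_\w$; in particular $(F_\K(X),\tau_\w)$ is Hausdorff. Moreover the subspace topologies induced by $\tau$ and by $\tau_\w$ on each $C_m$ both coincide with the original compact topology of $C_m$. The heart of the argument is to prove that $Y:=(F_\K(X),\tau_\w)$ is again a topological $E$-algebra. Since each $E_n$ is clopen in the Hausdorff space $E$, it is a Hausdorff $k_\w$-space with $k_\w$-sequence $(L_k\cap E_n)_k$, and $Y^n$ is a Hausdorff $k_\w$-space with $k_\w$-sequence $(C_k^n)_k$; by Lemma~\ref{l:kw-product} the product $E_n\times Y^n$ is then a $k_\w$-space with $k_\w$-sequence $((L_k\cap E_n)\times C_k^n)_k$. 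To see that $\star_Y$ is continuous it suffices, on the topological sum $\bigoplus_n E_n\times Y^n$, to check continuity of each restriction $\star_Y|(L_k\cap E_n)\times C_k^n$. The inclusion $L_k[C_k]\subset L_k^{k+1}[\delta_X(K_k)]\subset C_{k+1}$ shows that this restriction takes values in $C_{k+1}$; since it is the restriction of the original (continuous) multiplication of $F_\K(X)$ and since $\tau$ and $\tau_\w$ agree on $C_{k+1}$, it is continuous into $Y$. Thus $\star_Y$ is continuous and $Y$ is a Hausdorff $k_\w$-space which is a topological $E$-algebra. The identity map $Y\to(F_\K(X),\tau)$ is a continuous bijective $E$-homomorphism onto a Hausdorff algebra of $\K$, so the $k_\w$-stability of $\K$ gives $Y\in\K$.

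Now I would close the loop using freeness. The map $\delta_X\colon X\to Y$ is continuous: each restriction $\delta_X|K_n$ takes values in $C_n$ and is continuous into $(C_n,\tau|C_n)=(C_n,\tau_\w|C_n)$, so it is continuous into $Y$, and continuity on the $k_\w$-sequence $(K_n)$ yields continuity of $\delta_X\colon X\to Y$. By the universal property of $(F_\K(X),\delta_X)$ there is a unique continuous $E$-homomorphism $h\colon(F_\K(X),\tau)\to Y$ with $h\circ\delta_X=\delta_X$. Composing $h$ with the identity $Y\to(F_\K(X),\tau)$ yields a continuous $E$-endomorphism of $(F_\K(X),\tau)$ extending $\delta_X$, which by uniqueness equals the identity; hence $h$ is the identity map of sets, and its continuity means precisely $\tau_\w\subset\tau$. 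Therefore $\tau=\tau_\w$ and $(C_n)$ is a $k_\w$-sequence for $F_\K(X)$. In the cosmic case each $C_n$ is cosmic (a finite iteration of continuous operations applied to the cosmic compacta $L_n$ and $\delta_X(K_n)$), so the normal $k_\w$-space $F_\K(X)$ has a countable network and is cosmic. Finally, the ``moreover'' statement is immediate: by Lemma~\ref{l:bound-kw} every bounded subset $K\subset F_\K(X)$ lies in some $C_n=L_n^n[\delta_X(K_n)]$, so $A=L_n$, $B=K_n$ and $m=n$ work.

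The main obstacle is the inclusion $\tau_\w\subset\tau$, i.e. proving that the given topology is already a $k_\w$-topology; everything hinges on recognizing $(F_\K(X),\tau_\w)$ as an object of $\K$ via $k_\w$-stability and the product lemma, after which the universal property forces the two topologies to coincide. A secondary point requiring care is the continuity reduction for $\star_Y$, which relies on the fact that compact sets of operation symbols carry only finitely many arities (so $A[D]$ stays compact) and on the fact that $\tau$ and $\tau_\w$ restrict to the same topology on every member of the $k_\w$-sequence.
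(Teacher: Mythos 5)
Your proposal is correct and follows essentially the same route as the paper's proof: both endow $F_\K(X)$ with the $k_\w$-topology generated by the compact sets $A_n^n[\delta_X(B_n)]$ (your $L_n^n[\delta_X(K_n)]$), verify via Lemma~\ref{l:kw-product} and agreement of the two topologies on these compacta that the result is a Hausdorff topological $E$-algebra lying in $\K$ by $k_\w$-stability, and then invoke the universal property of the free algebra to force the two topologies to coincide, with Lemma~\ref{l:bound-kw} giving the ``moreover'' clause. Your explicit checks (finiteness of arities met by a compact set of symbols, the two-step uniqueness argument for the identity homomorphism) only spell out details the paper leaves implicit.
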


  \begin{proof} Let $(B_n)_{n\in\w}$ and $(A_n)_{n\in\w}$ be $k_\w$-sequences for the $k_\w$-spaces $X$ and $E$, respectively.
  The continuity of the multiplication in the topological $E$-algebra $F_\K(X)$ implies that $\big(A_n^n[\delta_X(B_n)]\big)_{n\in\w}$ is an increasing sequence of compact sets in $F_\K(X)$ such that $F_\K(X)=\bigcup_{n\in\w}A_n^n[\delta_X(B_n)]$.
Denote by $F_\K^\tau(X)$ the $E$-algebra $F_\K(X)$  endowed with the $k_\w$-topology $\tau$ determined by the $k_\w$-sequence $\big(A_n^n[\delta_X(B_n)]\big)_{n\in\w}$.
The topology $\tau$ consists of sets $U\subset F_\K(X)$ such that for every $n\in\w$ the intersection $U\cap A_n^n[\delta_X(B_n)]$ is open in the compact Hausdorff space $A_n^n[\delta_X(B_n)]$.

We claim that $F^\tau_\K(X)$ is a topological $E$-algebra. This will follow as soon as we check that for every $n\in \w$ the multiplication map
 $\star_n:E_n\times F_\K^\tau(X)^n\to F_\K^\tau(X)$ is continuous. Since $E_n\times (F^\tau_\K(X))^n$ is a $k_\w$-space (see Lemma~\ref{l:kw-product}), it suffices to prove that for every compact subset $K\subset E_n\times F_\K^\tau(X)^n$ the restriction $\star_n|K$ is continuous.
Taking into account that $(A_m)_{m\in\w}$ and $(A_m^m[\delta_X(B_m)])_{m\in\w}$ are $k_\w$-sequences in the $k_\w$-spaces $E$ and $F_\K^\tau(X)$, we can find a number $m\in\w$ such that $K\subset (E_n\cap A_m)\times (A_m^m[\delta_X(B_m)])^n$. Observing that $\star_n(K)\subset A_m[A_m^m[\delta(B_m)]]\subset A_m^{m+1}[\delta_X(B_m)]\subset A_{m+1}^{m+1}[\delta_X( B_{m+1})]\subset F_\K(X)$ and using the continuity of the multiplication in the topological $E$-algebra $F_\K(X)$, we conclude that the map $\star_n|K:K\to A_{m+1}^{m+1}[\delta_X(B_{m+1})]\subset F_\K^\tau(X)$ is continuous. So, $F^\tau_\K(X)$ is a Hausdorff topological $E$-algebra and the identity map $\id:F^\tau_\K(X)\to F_\K(X)$ is a continuous $E$-homomorphism of topological $E$-algebras.  For every $m\in\w$ the inclusion $\delta_X(B_m)\subset A_m^m[\delta_X(B_m)]\subset F_\K^\tau(X)$ ensures that the restriction $\delta|B_m\to F^\tau_\K(X)$ is continuous. This implies the continuity of the map $\delta_X:X\to F^\tau_\K(X)$ on the $k_\w$-space $X$.

 The $k_\w$-stability of the variety $\K$ guarantees that $F^\tau_\K(X)\in\K$ and then by the definition of the free topological $E$-algebra, there exists a unique continuous $E$-homomorphism $i: F_\K(X)\to F^\tau_\K(X)$ such that $i\circ \delta_X=\delta_X$. The unique $E$-homomorphism with this property is the identity map of $F_\K(X)$. So, the identity map $i$ is a homeomorphism and $F_\K(X)$ is a $k_\w$-space. By Lemma~\ref{l:bound-kw}, each bounded subset $K\subset F_\K(X)$ is contained in some set $A_n^n[\delta_X(B_n)]$, $n\in\w$.

If the spaces $X$ and $E$ are cosmic, then by induction we can prove that the compact sets $A_n^n[\delta_X(B_n)]$, $n\in\w$, are cosmic and so is the space $F_\K(X)$.
\end{proof}

\begin{lemma}\label{l:Ak} Let $X$ be a topological space, $A\subset E$ be a compact subset of the topological signature $E$ and $B\subset X$. Then there exists $m\in\w$ such that for every $k\in\w$ and $a\in A^k[\delta_X(B)]$ there is a subset $S\subset B$ of cardinality $|B|\le m^k$ such that $a\in A^k[\delta_X(S)]$.
\end{lemma}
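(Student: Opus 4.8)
The plan is to treat the statement as a purely algebraic (combinatorial) assertion about $E$-hulls, in which the topology of $F_\K(X)$ plays no role whatsoever; the compactness of $A$ enters only to bound the arities of the operation symbols occurring in $A$. I also read the cardinality bound as $|S|\le m^k$ (the ``$|B|$'' in the statement being a slip for $|S|$). First I would exploit the compactness of $A$: since the signature decomposes as $E=\bigoplus_{n\in\w}E_n$ into the clopen pieces $E_n=\alpha^{-1}(n)$, the compact set $A$ can meet only finitely many of them. Hence there is a largest $N\in\w$ with $A\cap E_N\ne\emptyset$ (put $N=0$ if $A=\emptyset$), so that every operation symbol in $A$ has arity at most $N$. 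I would then set $m=\max\{N,1\}$; the whole point of taking $m\ge N$ is that a single constant will then serve for all levels $k$ at once.

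Next I would prove the assertion by induction on $k$, using the recursion $A^{k+1}[C]=A^k[C]\cup A[A^k[C]]$ together with the evident monotonicity $A^j[\delta_X(S)]\subset A^j[\delta_X(S')]$ for $S\subset S'$. For the base case $k=0$ one has $A^0[\delta_X(B)]=\delta_X(B)$, so $a=\delta_X(b)$ for some $b\in B$, and $S=\{b\}$ works, with $|S|=1=m^0$. For the inductive step, given $a\in A^{k+1}[\delta_X(B)]$, the case $a\in A^k[\delta_X(B)]$ is settled directly by the inductive hypothesis (the bound $m^k\le m^{k+1}$ costing nothing). In the remaining case $a=e(a_1,\dots,a_n)$ with $e\in A\cap E_n$, $n\le N$, and $a_1,\dots,a_n\in A^k[\delta_X(B)]$; applying the hypothesis to each $a_i$ yields $S_i\subset B$ with $|S_i|\le m^k$ and $a_i\in A^k[\delta_X(S_i)]$. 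Setting $S=\bigcup_{i=1}^n S_i$, monotonicity gives $a_i\in A^k[\delta_X(S)]$ for all $i$, hence $a\in A[A^k[\delta_X(S)]]\subset A^{k+1}[\delta_X(S)]$, while $|S|\le n\cdot m^k\le N\cdot m^k\le m^{k+1}$. The degenerate subcase $n=0$ (a constant of the signature) I would handle by taking $S=\emptyset$, since such an element belongs to $A[C]$ for every set $C$; this also keeps $|S|=0\le m^{k+1}$.

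There is no substantial obstacle in this argument; the only step requiring genuine thought is the very first one, namely extracting from the compactness of $A$ the uniform arity bound $N$. This is exactly what allows the multiplicative constant $m=\max\{N,1\}$ to be chosen independently of $k$, so that the crude estimate $|S|\le N\cdot m^k$ produced by summing the supports of the $n\le N$ arguments collapses to the required $|S|\le m^{k+1}$. Everything else is a routine induction on the recursive definition of the $E$-hull.
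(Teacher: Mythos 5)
Your proof is correct and follows essentially the same route as the paper's: compactness of $A$ in $E=\bigoplus_{n\in\w}E_n$ yields a uniform arity bound $m$, and then an induction on $k$ over the recursion $A^{k+1}[C]=A^k[C]\cup A[A^k[C]]$, uniting the at most $m$ supports $S_i$ of the arguments to get $|S|\le m\cdot m^k\le m^{k+1}$. Your reading of the statement's $|B|\le m^k$ as a typo for $|S|\le m^k$ matches the paper's actual proof, and your explicit handling of constants and of the choice $m=\max\{N,1\}$ are harmless refinements of the same argument.
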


\begin{proof} Since $E=\oplus_{n\in\w}E_n$, the compact set $A\subset E$ is contained in the finite sup $\oplus_{n=0}^mE_n$ for some $m\in\w$. By induction on $k\in\w$ we shall prove that for every $k\in\w$ and $a\in A^k[\delta_X(B)]$ there is a subset $S\subset B$ of cardinality $|S|\le m^k$ such that $a\in A^k[\delta_X(S)]$.

For $k=0$ the inclusion $a\in A^0[\delta_X(B)]=\delta_X(B)$ implies that $a=\delta_X(x)$ for some $x\in B$ and hence for the singleton $S=\{x\}$ we have $a\in A^0[\delta_X(S)]=\delta_X(S)$ and $|S|=1=m^0$. Assume that for some $k\in\w$ we have proved that each element $a\in A^k[\delta_X(B)]$ is contained in $A^k[\delta_X(S)]$ for some subset $S\subset B$ of cardinality $|S|\le m^k$.

Take any element $a\in A^{k+1}[\delta_X(B)]=A^k[\delta_X(B)]\cup A[A^k[\delta_X(B)]]$.
If $a\in A^k[\delta_X(B)]$, then by the inductive assumption, $a\in A^k[\delta_X(S)]$ for some set $S\subset B$ of cardinality $|S|\le m^k\le m^{k+1}$ and we are done. If $a\in A[A^k[\delta_X(B)]$, then $a=e_p(a_1,\dots,a_p)$ for some $p\le m$,
$e_p\in A\cap E_p$, and points $a_1,\dots,a_p\in A^k[\delta_X(B)]$. By the inductive assumption, there are sets $S_1,\dots,S_p\subset B$ of cardinality $\le m^k$ such that $a_i\in A^k[\delta_X(S_i)]$ for every $i\le p$. Then the set $S=\bigcup_{i=1}^pS_i$ has cardinality $|S|\le\sum_{i=1}^p|S_i|\le p\cdot m^k\le m^{k+1}$ and $a=e_p(a_1,\dots,a_p)\in A[A^k[\delta_X(S)]]=A^{k+1}[\delta_X(S)]$, which completes the inductive step.
\end{proof}

\section{Free topologized $E$-algebras in $d{+}\HM{+}k_\w$-stable varieties}

 We shall say that a variety $\K$ of topologized $E$-algebras is \index{variety!$d{+}\HM{+}k_\w$-stable}{\em $d{+}\HM{+} k_\w$-stable} if it is $d$-stable, $\HM$-stable and $k_\w$-stable.

\begin{theorem}\label{t:d+HM+kw} If the signature $E$ is a $k_\w$-space and $\K$ is a $d{+}\HM{+}k_\w$-stable variety of topologized $E$-algebras, then for every Tychonoff space $X$ every bounded subset $K\subset F_\K(X)$ the set $\supp(K)=\bigcup_{a\in K}\supp(a)$ is bounded in $X$ and $\{\supp(a):a\in K\}\subset  [X]^{\le n}$ for some $n\in\IN$. Consequently, the functor $F_\K|\Top_{3\frac12}$ is strongly bounded.
\end{theorem}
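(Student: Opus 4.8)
The plan is to reduce the whole statement to the single space $\IR_+=[0,\infty)$, where the $k_\w$-structure theorem (Theorem~\ref{t:kw}) is available, and then to transport the resulting estimates to an arbitrary Tychonoff space $X$ by composing with continuous functions $X\to\IR_+$.

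First I would treat $X=\IR_+$. Passing to the Tychonoff subvariety $\bar\K$, I would check that $\bar\K$ is again $k_\w$-stable: if $Z$ is a topological $E$-algebra which is a $k_\w$-space and admits a continuous bijective $E$-homomorphism onto a Hausdorff $Y\in\bar\K\subset\K$, then $Z\in\K$ by the $k_\w$-stability of $\K$, and $Z$, being a Hausdorff $k_\w$-space, is normal (its $k_\w$-sequence consists of compact Hausdorff sets) and hence Tychonoff, so $Z\in\bar\K$. Since $\IR_+$ is a $k_\w$-space and $F_{\bar\K}(\IR_+)\in\bar\K$ is Tychonoff (hence Hausdorff), Theorem~\ref{t:kw} gives, for every bounded $K'\subset F_{\bar\K}(\IR_+)$, compact sets $A\subset E$, $B\subset\IR_+$ and $\ell\in\w$ with $K'\subset A^\ell[\bar\delta_{\IR_+}(B)]$. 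By Lemma~\ref{l:Ak} there is an integer $m$ (depending only on $A$) such that every $a'\in A^\ell[\bar\delta_{\IR_+}(B)]$ lies in $A^\ell[\bar\delta_{\IR_+}(S)]$ for some $S\subseteq B$ with $|S|\le m^\ell$; consequently $\supp(a')\subseteq S\subseteq B$ and $|\supp(a')|\le m^\ell$. Finally, the natural continuous bijective $E$-homomorphism $\bar\i_{\IR_+}\colon F_\K(\IR_+)\to F_{\bar\K}(\IR_+)$ (Theorem~\ref{t:bar-i-bijective}) satisfies $\bar\i_{\IR_+}(\langle\delta_{\IR_+}(T)\rangle_E)=\langle\bar\delta_{\IR_+}(T)\rangle_E$ for every finite $T$, so it preserves supports; since it also maps bounded sets to bounded sets, both conclusions hold verbatim for bounded subsets of $F_\K(\IR_+)$: such a set has support contained in a compact $B\subset\IR_+$ and of cardinality bounded by some $m^\ell$.

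Next I would deduce the two assertions for an arbitrary Tychonoff $X$ and bounded $K\subset F_\K(X)$, using that continuous images of bounded sets are bounded and that $F_\K|\Top_{3\frac12}$ is monomorphic with finite supports (Theorem~\ref{t:FKX1}). For boundedness of $\supp(K)$: if $\supp(K)$ were unbounded, I would choose a countable $K_0\subset K$ with $\supp(K_0)$ unbounded and, by Lemma~\ref{l:Runbound}, a continuous $f\colon X\to\IR_+$ injective on $\supp(K_0)$ with $f(\supp(K_0))$ unbounded; then $F_\K f(K)$ is bounded in $F_\K(\IR_+)$, while Proposition~\ref{p:supp=image} gives $\supp(F_\K f(K_0))=f(\supp(K_0))$ unbounded, contradicting the $\IR_+$ case. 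For the cardinality bound: if $\{|\supp(a)|:a\in K\}$ were unbounded, I would pick $a_k\in K$ with $|\supp(a_k)|\ge k$, set $D=\bigcup_k\supp(a_k)$ (a countable set) and, by Lemma~\ref{l:fH-inj}, a continuous $h\colon X\to\IR_+$ injective on $D$; then $F_\K h(K)$ is bounded, but $|\supp(F_\K h(a_k))|=|\supp(a_k)|\ge k$ by Proposition~\ref{p:supp=image}, contradicting the uniform support bound for $\IR_+$. Hence $\supp(K)$ is bounded and $\{\supp(a):a\in K\}\subset[X]^{\le n}$ for some $n\in\IN$.

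The concluding assertion follows from Proposition~\ref{p:F-strong-bound} applied with $\Top_i=\Top_{3\frac12}$: the functor $F_\K|\Top_{3\frac12}$ is monomorphic and has finite supports; it is bounded because the first assertion, applied to compact (hence bounded) subsets, verifies condition~(3) of Proposition~\ref{p:F-bounded}; and for a compact Hausdorff $X$ the second assertion applied to a compact $K$, together with Corollary~\ref{c:Fn}, shows $K\subset F_n(X)$. The main obstacle I anticipate is the bookkeeping around $\bar\K$: one must confirm that $\bar\K$ stays $k_\w$-stable and that $\bar\i$ transfers both supports and boundedness, and one must notice that the cardinality estimate for general $X$ cannot use a single function injective on all of $\supp(K)$ (which may be uncountable) but only the countable-injectivity trick of Lemma~\ref{l:fH-inj}.
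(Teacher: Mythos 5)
Your proposal is correct and follows essentially the same route as the paper's proof: reduce to a $k_\w$ test space via Lemma~\ref{l:Runbound} (for boundedness of $\supp(K)$) and Lemma~\ref{l:fH-inj} (for the cardinality bound), apply Theorem~\ref{t:kw} and Lemma~\ref{l:Ak} in the Tychonoff subvariety $\bar\K$, and transport supports back through the bijective $E$-homomorphism $\bar\i$ of Theorem~\ref{t:bar-i-bijective} together with Proposition~\ref{p:supp=image}. The only cosmetic differences are that you factor out the $\IR_+$ case as a standalone first step (the paper argues inline and uses $\II$ rather than $\IR_+$ as the target for the cardinality estimate) and that you derive the concluding strong-boundedness claim explicitly via Propositions~\ref{p:F-bounded} and~\ref{p:F-strong-bound}, which the paper leaves implicit.
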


\begin{proof} If the space $X$ is empty, then $\supp(K)=\emptyset$ and $\{\supp(a):a\in K\}=\{\emptyset\}\subset [\emptyset]^{\le 0}$.
So, we can assume that the Tychonoff space $X$ is not empty.
Let $\bar \K$ be the subvariety of $\K$ consisting of Tychonoff topological $E$-algebras. The definitions and Lemma~\ref{l:BanHryn} imply that the variety $\bar\K$ is $d{+}\HM{+}k_\w$-stable.

By Theorem~\ref{t:bar-i-bijective}, the map $\bar\i_X:F_\K(X)\to F_{\bar \K}(X)$ is bijective.  By Theorem~\ref{t:FKX1}(1,3), the functor $F_{\bar\K}|\Top_{3\frac12}$ is monomorphic and has finite supports. So, the set $\supp(K)=\bigcup_{a\in K}\supp(a)$ is well-defined. We claim that this set is bounded in $X$. To derive a contradiction, assume that $\supp(K)$ is not bounded. Since each unbounded subset contains a countable unbounded set, there is a countable set $K_0\subset K$ such that the countable set $\supp(K_0)$ is unbounded in $X$. By Lemma~\ref{l:Runbound}, there is a continuous map $f:X\to\IR_+$ such that $f|\supp(K_0)$ is injective and $f(\supp(K_0))$ is unbounded in $\IR_+$. The map $f$ induces a continuous $E$-homomorphism $F_{\bar \K}f:F_{\bar \K}(X)\to F_{\bar \K}(\IR_+)$. The continuity of the map $F_{\bar\K}f\circ \bar\i_X:F_\K(X)\to F_{\bar\K}(\IR_+)$ implies the boundedness of the set $K'=F_{\bar\K}f\circ\bar i_X(K)$ in the space $F_{\bar \K}(\IR_+)$. Since the half-line $\IR_+$ is a $k_\w$-space, by Theorem~\ref{t:kw}, $K'\subset
\langle \delta_X([0,n])\rangle_E\subset F_{\bar \K}(\IR_+)$ for some $n\in\IN$. Since the set $f(\supp(K_0))$ is unbounded in $\IR_+$, there exists an element $a\in K_0$ such that $f(\supp(a))\not\subset[0,n]$.
Let $\bar a=\bar\i_X(a)\in F_{\bar \K}(X)$. Since $\bar \i_X$ is an (algebraic) $E$-isomorphism of the $E$-algebras $F_\K(X)$ and $F_{\bar\K}(X)$, $\supp(\bar a)=\supp(a)$. The injectivity of the map $f|\supp(a)=f|\supp(\bar a)$ and Proposition~\ref{p:supp=image} guarantee that the element $a'=F_{\bar\K}f(\bar a)$ has support  $\supp(a')=f(\supp(\bar a))=f(\supp(a))\not\subset [0,n]$ and hence $a'\notin \langle \delta_X([0,n])\rangle_E$, which contradicts the inclusion $a'\in K'\subset\langle\delta_X([0,n])\rangle_E\subset F_{\bar\K}(\IR_+)$.
This contradiction completes the proof of the boundedness of the set $\supp(K)$ in $X$.

If $\supp(K)$ is not empty, then put $B=\supp(K)$. If $\supp(K)$ is empty, let $B$ be any singleton in $X$. In any case $B$ is a non-empty bounded subset of $X$ containing $\supp(K)$.

Next we prove that the set $\{\supp(a):a\in K\}$ is contained in $[X]^{\le n}$ for some $n\in\w$. To derive a contradiction, assume that for every $n\in\w$ there is an element $a_n\in K$ with $|\supp(a_n)|>n$. Consider the countable set $S=\bigcup_{n\in\w}\supp(a_n)$. Using Lemma~\ref{l:fH-inj}, find a continuous map $f:X\to\II$ such that $f|S$ is injective.

 By Theorem~\ref{t:bar-i-bijective}, for the Tychonoff space $X$ the map $\bar\i_\II:F_\K(\II)\to F_{\bar \K}(\II)$ is bijective and by Theorem~\ref{t:kw}, $F_{\bar\K}(\II)$ is a $k_\w$-space such that the bounded subset $K'=\bar\i_\II\circ Ff(K)$ is contained in $A^k[\delta_\II(\II)]$ for some compact subset $A\subset E$ and some $k\in\w$.

 Now consider the element $a_n\in K$ and its images $\bar a_n=F_\K f(a_n)\in F_\K(\II)$ and $a_n'=\bar \i_\II(\bar a_n)$. The injectivity of the map $f|\supp(a_n)$ and Proposition~\ref{p:supp=image} imply that $\supp(\bar a_n)=f(\supp(a_n))$. Since $\bar\i_\II$ is an $E$-isomorphism of the $E$-algebras $F_\K(\II)$ and $F_{\bar\K}(\II)$, $\supp(a'_n)=\supp(\bar a_n)=f(\supp(a_n))$ and hence $|\supp(a'_n)|=|\supp(a_n)|$.
By Lemma~\ref{l:Ak}, for the set $A^k[\delta_\II(\II)]$ there exists a number $m\in\w$ such that for every $a\in A^k[\delta_\II(\II)]$ there is a subset $S_a\subset \II$ of cardinality $|S_a|\le m^k$ such that $a\in A^k[\delta_\II(S_a)]$, which implies $|\supp(a)|\le |S_a|\le m^k$. In particular,
for every $n\in\w$ the inclusion $a_n'\in K'\subset A^k[\delta_\II(\II)]$ implies
$|\supp(a_n)|=|\supp(a_n')|\le m^k$, which contradicts the choice of the elements $a_n$ for $n>m^k$. This contradiction completes the proof of the inclusion $\{\supp(a):a\in K\}\subset [X]^{\le n}$ for some $n\in\w$.
\end{proof}

We recall that a topological space $X$ is  \index{topological space!$\mu_s$-complete}{\em $\mu_s$-complete} if each bounded  closed subset $B$ of $X$ is sequentially compact. This means that each sequence in $B$ contains a convergent subsequence. A $\mu$-complete space $X$ is $\mu_s$-complete if each compact subset of $X$ is sequentially compact.

\begin{theorem}  If the signature $E$ is a $k_\w$-space and $\K$ is a $d{+}\HM{+}k_\w$-stable variety of topologized $E$-algebras, then for every $\mu_s$-complete Tychonoff space $X$ every bounded subset $K\subset F_\K(X)$ is contained in $A^n[\delta_X(B)]$ for some compact set $A\subset E$, bounded set $B\subset X$, and some $n\in\IN$.
\end{theorem}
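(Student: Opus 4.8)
The plan is to argue by contradiction, first reducing to the free \emph{topological} $E$-algebra and then squeezing $K$ onto a compact metrizable piece of $X$ on which Theorem~\ref{t:kw} applies. I would pass from $F_\K$ to the subvariety $\bar\K$ of Tychonoff topological $E$-algebras in $\K$ (which is again $d{+}\HM{+}k_\w$-stable): by Theorem~\ref{t:bar-i-bijective} the canonical $E$-homomorphism $\bar\i_X\colon F_\K(X)\to F_{\bar\K}(X)$ is bijective for the Tychonoff (hence functionally Hausdorff) space $X$, it commutes with $\delta$ and with the operations, and it carries $K$ onto a bounded set. Since $\bar\i_X$ is an algebraic $E$-isomorphism it identifies the sets of the form $A^m[\delta_X(B)]$ in the two algebras, so it suffices to prove the claim for $F:=F_{\bar\K}$. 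Applying Theorem~\ref{t:d+HM+kw} to $K$ gives that $\supp(K)$ is bounded and that there is $n\in\IN$ with $|\supp(a)|\le n$ for all $a\in K$. I then set $B:=\overline{\supp(K)}$, which is bounded and closed, so by $\mu_s$-completeness it is \emph{sequentially compact}; and by Theorem~\ref{t:FKX1}(5) every $a\in K$ lies in $\langle\delta_X(B)\rangle_E=\bigcup_{k}A_k^k[\delta_X(B)]$, where $(A_k)_{k\in\w}$ is a $k_\w$-sequence of the $k_\w$-space $E$.

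Fixing such a $k_\w$-sequence, I would suppose that no single $A_k^k[\delta_X(B)]$ contains $K$ and choose $a_k\in K\setminus A_k^k[\delta_X(B)]$. Writing each $\supp(a_k)$ (padded by repetitions) as $x_{k,1},\dots,x_{k,n}\in B$ and using the sequential compactness of $B$ together with a diagonal extraction, I pass to a subsequence along which $x_{k,i}\to y_i\in B$ for every $i\le n$. Then $C:=\{y_i:i\le n\}\cup\{x_{k,i}:k,i\}$ is a finite union of convergent sequences, hence a compact metrizable subset of $X$ contained in $B$, with $\supp(a_k)\subset C$ for all surviving $k$. Since $F$ preserves closed embeddings of metrizable compacta (Theorem~\ref{t:FKX1}(4)), Proposition~\ref{p:FembTych} makes $Fi_{C,X}\colon F(C)\to F(X)$ a closed topological embedding, so I may identify $F(C)$ with $F(C;X)=\langle\delta_X(C)\rangle_E\subset F(X)$; by Theorem~\ref{t:FKX1}(5) each $a_k$ lies in this copy. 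Because $C$ is compact and $\bar\K$ is $k_\w$-stable, Theorem~\ref{t:kw} shows $F(C)$ is a $k_\w$-space with $k_\w$-sequence $(A_m^m[\delta_C(C)])_{m\in\w}$.

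The decisive step is to show that the $a_k$ do not escape to infinity inside $F(C)$. If they did, then along a further subsequence $a_{k_j}\notin A_j^j[\delta_C(C)]$, so $\{a_{k_j}\}$ is compact-finite, hence closed and discrete, in the normal $k_\w$-space $F(C)$, and normality yields $g\in C(F(C))$ with $g(a_{k_j})=j$. Here the boundedness must be transported back: I would use that the compact metrizable set $C$ is an $\HM_k$-valued retract of the functionally Hausdorff space $X$ (Proposition~\ref{p:BB}(2)) and that $F$ is $\HM$-commuting (Theorem~\ref{t:FKX1}(2)) to conclude, via Proposition~\ref{p:Femb+HMr} and Proposition~\ref{p:rHM}, that $F(C;X)$ is $C_k$-embedded in $F(X)$. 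Then $g$ extends to a continuous function on $F(X)$ unbounded on $\{a_{k_j}\}\subset K$, contradicting the boundedness of $K$. Hence some fixed $A_m^m[\delta_C(C)]$ contains infinitely many $a_k$; pushing these through the embedding gives $a_k\in A_m^m[\delta_X(C)]\subset A_m^m[\delta_X(B)]\subset A_k^k[\delta_X(B)]$ for any such $k\ge m$, contradicting the choice of $a_k$. Therefore $K\subset A_{k_0}^{k_0}[\delta_X(B)]$ for some $k_0$, and taking $A:=A_{k_0}$ (compact in $E$) and pulling back through $\bar\i_X$ finishes the proof.

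The step I expect to be the main obstacle is exactly this transport of boundedness: turning ``$\{a_{k_j}\}$ is closed and discrete in $F(C)$'' into ``$\{a_{k_j}\}$ is unbounded in $F(X)$''. This is genuinely nontrivial, because in a merely Tychonoff (possibly non-normal) functor-space a closed discrete set can be bounded, so the continuous extension of $g$ cannot be taken for granted and must be produced from the $\HM$-commuting structure of $F$ together with the $\HM_k$-retraction supplied by the compact metrizability of $C$; in particular I would need the $\HM_k$-refinement of Proposition~\ref{p:Femb+HMr} (tracking compacta through the construction $R=\HM f\circ c_X\circ Fr$) in order to apply Proposition~\ref{p:rHM}. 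The hypothesis of $\mu_s$-completeness enters only at the construction of the compact metrizable set $C$ from the sequence of supports, but it is essential there.
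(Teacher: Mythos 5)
Your argument agrees with the paper's proof up to and including the construction of the compact metrizable set $C\subset B$ and the $k_\w$-structure of $F(C)\cong F(C;X)$: the reduction via Theorem~\ref{t:bar-i-bijective}, the use of Theorem~\ref{t:d+HM+kw}, the extraction of $C$ from the supports by $\mu_s$-completeness, the identification through Proposition~\ref{p:FembTych}, and the appeal to Theorem~\ref{t:kw} are all exactly as in the paper. The genuine gap is at the step you yourself single out as the main obstacle, and it is not fillable by the ``tracking of compacta'' you propose. The map $c_C\colon F(\HM C)\to\HM(FC)$ is only known to be continuous, and continuity gives no control of value sets: a compact subset of $\HM(Z)$ need not lie in $\HM(B;Z)$ for any compact $B\subset Z$. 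Concretely, in $\HM(\IR)$ the step functions $h_n$ equal to $n$ on $[0,1/n^2)$ and to $0$ on $[1/n^2,1)$ converge to the constant function $0$, so $\{0\}\cup\{h_n:n\in\IN\}$ is compact while its set of values is unbounded. Hence knowing that $c_C(Fr(K'))$ is compact in $\HM(FC)$ for compact $K'\subset F(X)$ is worthless; to place it inside some $\HM(B;FC)$ with $B$ compact you must bound the algebraic complexity (the ``$A^n$-level'') of the elements of the compact set $Fr(K')\subset F(\HM C)$. Strong boundedness only bounds the \emph{size of supports}, not word length (think of $\delta(x)^j$ in a free group: support a singleton, complexity $j$), and bounding the complexity of compacta in $F(\HM C)$ is precisely an instance of the theorem you are proving, applied to the metrizable (hence $\mu_s$-complete Tychonoff) space $\HM C$ --- so the proposed repair is circular. (A secondary worry: even granted the $\HM_k$-retraction, the integration operator of Proposition~\ref{p:rHM} applied to your necessarily unbounded $g$ has its own continuity problems, since convergence in the $\HM$-topology does not control integrals of unbounded functions.) Note also that your difficulty is not an artifact of the method: closedness and discreteness of $\{a_{k_j}\}$ in the closed subspace $F(C;X)$ genuinely do not imply unboundedness in $F(X)$ on purely topological grounds (compare Example~\ref{Cld-not-hereditary}, where an infinite strictly compact-finite closed discrete set sits inside a closed subspace of a pseudocompact, hence everywhere-bounded, space), so some algebraic input is unavoidable here.

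The paper supplies that input by a Stone--\v Cech argument which replaces your extension step entirely, and I recommend you substitute it for your step: compose the continuous maps $F_\K(X)\to F_\K(\beta X)\to F_{\bar\K}(\beta X)$. Since $\beta X$ is compact, Theorem~\ref{t:kw} makes $F_{\bar\K}(\beta X)$ a $k_\w$-space, and by Lemma~\ref{l:bound-kw} the image of the bounded set $K$ there has \emph{compact} closure $\bar K_\beta$. By naturality of $\bar\i$ and Theorem~\ref{t:FKX1}(4), $F_{\bar\K}i_{C,\beta X}\colon F_{\bar\K}(C)\to F_{\bar\K}(\beta X)$ is a closed topological embedding, so the preimage of $\bar K_\beta$ is a compact subset of the $k_\w$-space $F_{\bar\K}(C)$ containing all the elements $\bar\i_C(c_n)$; it therefore lies in some $A_m^m[\delta_C(C)]$, and pushing forward gives $a_n\in A_m^m[\delta_X(B)]$ for all $n$, contradicting the choice of $a_n$ for $n\ge m$. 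Everything in your proposal before the ``decisive step'' can stay as written; only that step needs to be replaced by this argument.
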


\begin{proof} By Theorem~\ref{t:d+HM+kw}, for the bounded subset $K\subset F_\K(X)$ the set $\supp(K)=\bigcup_{a\in K}\supp(a)$ is bounded and $\{\supp(a):a\in K\}\subset [X]^{\le m}$ for some $m\in\w$. So, we can find a non-empty closed bounded subset $B\subset X$ containing $\supp(K)$.

Let $(A_n)_{n\in\w}$ be a $k_\w$-sequence for the $k_\w$-space $E$. We claim that $K\subset A_n^n[\delta_X(B)]$ for some $n\in\w$. To derive a contradiction, assume that for every $n\in\w$ there is an element $a_n\in K\setminus A_n^n[\delta_X(B)]$. Observe that $\{\supp(a_n)\}_{n\in\w}\subset [B]^{\le m}$. Since the space $X$ is $\mu_s$-complete, the sequence $(a_n)_{n\in\w}$ contains a subsequence $(a_{n_k})_{k\in\w}$ such that the sequence of supports $(\supp(a_{n_k})\big)_{k\in\w}$ converges to some set $S_\infty\in [B]^{\le m}$ in the Vietoris topology on $[X]^{\le m}$. Replacing the sequence $(a_n)_{n\in\w}$ by the subsequence $(a_{n_k})_{k\in\w}$ we can assume that the sequence $\big(\supp(a_n)\big)_{n\in\w}$ converges to $S_\infty$ and hence the countable set $S_\infty\cup\bigcup_{n\in\w}\supp(a_n)$ is compact.
Choose any non-empty compact countable set $C\subset X$ containing the set $S_\infty\cup\bigcup_{n\in\w}\supp(a_n)$. Then for every $n\in \w$ we get  $\supp(a_n)\subset C$. Applying Theorem~\ref{t:supp}, we can find an element $c_n\in F_\K(C)$ such that $F_{\K}i_{C,X}(c_n)=a_n$.

Let $i_{X,\beta X}:X\to\beta X$ be the embedding of $X$ into its Stone \v Cech compactification. Observe that the composition $i_{X,\beta X}\circ i_{C,X}$ coincides with the embedding $i_{C,\beta X}:C\to\beta X$.

  Let $\bar\K$ be the subvariety of $\K$ consisting of Tychonoff topological $E$-algebras. By Theorem~\ref{t:kw}, the space $F_{\bar \K}(\beta X)$ is a $k_\w$-space. Being Lindel\"of, the $k_\w$-space $F_{\bar \K}(\beta X)$ is $\mu$-complete. By Theorem~\ref{t:bar-i-bijective}, the components $\bar\i_{C}:F_\K(C)\to F_{\bar\K}(C)$ and $\bar\i_{\beta X}:F_\K(\beta X)\to F_{\bar\K}(\beta X)$ of the natural transformation $\bar\i:F_\K\to F_{\bar\K}$ are bijective continuous $E$-homomorphisms.

The continuity of the map $j=\bar\i_{\beta X}\circ F_{\bar \K} i_{X,\beta X}:F_{\K}(X)\to F_{\bar\K}(\beta X)$ implies that the set $K_\beta=j(K)$ is bounded in the $k_\w$-space $F_{\bar \K}(\beta X)$ and hence has compact closure $\bar K_\beta$ in $F_{\bar \K}(\beta X)$.

The naturality of $\bar\i$ ensures that
for every $n\in\w$ we get
$$
\begin{aligned}
F_{\bar\K}i_{C,\beta X}\circ \bar\i_C(c_n)&=\bar\i_{\beta X}\circ F_\K i_{C,\beta X}(c_n)=\bar \i_{\beta X}\circ F_\K i_{X,\beta X}\circ F_\K i_{C,X}(c_n)=\\
&=\bar \i_{\beta X}\circ
F_\K i_{X,\beta X}(a_n)\in \bar\i_{\beta X}\circ F_\K i_{X,\beta X}(K)=j(K)\subset\bar K_\beta.
\end{aligned}
$$

By Theorem~\ref{t:FKX1}, the map $F_{\bar\K} i_{C,\beta X}:F_{\bar\K}(C)\to F_{\bar \K}(\beta X)$ is a closed topological embedding. Then preimage $K_C=(F_{\bar\K} i_{C,\beta X})^{-1}(\bar K_\beta)$ is compact in $F_{\bar \K}(C)$ and contains the set $\{\bar i_C(c_n)\}_{n\in\w}$. By Theorem~\ref{t:kw}, $K_C\subset A_m^m[\bar\delta_C(C)]$ for some $m\in\w$. Since the $E$-homomorphism $\bar \i_C:F_\K(C)\to F_{\bar \K}(C)$ is bijective, $\{c_n\}_{n\in\w}\subset A_m^m[\delta_C(C)]$ and then $$\{a_n\}_{n\in\w}=\{F_\K i_{C,X}(c_n)\}_{n\in\w}\subset F_\K i_{C,X}(A_m^m[\delta_C(C)])\subset A_m^m[\delta_X(C)]\subset A_m^m[\delta_X(B)],$$
which contradicts the choice of the points $a_n$ for $n\ge m$.

This contradiction completes the proof of the inclusion $K\subset A_n^n[\delta_X(B)]$ for some $n\in\w$.
\end{proof}

 \section{Paratopological $(E,C)$-algebras}

In this section we introduce and study para\-topological $(E,C)$-algebras. Paratopological $(E,C)$-algebras generalize the well-known notion of a paratopological group (i.e, a group endowed with a topology making the multiplication continuous but the inversion not necessarily).

Let $E$ be a topological signature and $C\subset E$ be a subsignature of $E$.

A topologized $E$-algebra $X$ is called a \index{$E$-algebra!paratopological}\index{paratopological $(E,C)$-algebra}{\em paratopological $(E,C)$-algebra} if $X$ is a topological $C$-algebra. This means that the multiplication map $\star$ of $X$ restricted to $\oplus_{n\in\w}C_n\times X^n$ is continuous.

Any paratopological $(E,C)$-algebra $X$ has two hull operators. Given a subset $A\subset X$ we can consider its $E$-hull $\langle A\rangle_E$ and its $C$-hull $\langle A\rangle_C$ (which is contained in $\langle A\rangle_E$).

Let $\K$ be a variety of topologized $E$-algebras. We say that $\K$ is \index{variety!$k_\w^C$-superstable}{\em $k_\w^C$-superstable} if $\K$ contains any paratopological $(E,C)$-algebra $X$ which is a $k_\w$-space and  admits a continuous bijective $E$-homomorphism $f:X\to Y$ to some Hausdorff paratopological $(E,C)$-algebra $Y$ in the class $\K$. Since topological $E$-algebras are paratopological $(E,C)$-algebras, any $k_\w$-superstable variety of topologized $E$-algebras is $k_\w$-stable.

 In this section we shall be interested in describing compact subset in the free paratopological $(E,C)$-algebras $F_\K(X)$ for a given $k_\w^C$-superstable variety $\K$ of topologized $E$-algebras.

The following lemma can be considered as the paratopological counterpart of Theorem~\ref{t:kw}.

 \begin{lemma}\label{l:para-kw} Assume that the signature $E$ is countable and its subsignature $C\subset E$ is a $k_\w$-space. Assume that $\K$ is a $k_\w^C$-superstable variety of paratopological $(C,E)$-algebras. If the free topologized   $E$-algebra $F_\K(X)$ of a countable $k_\w$-space $X$ is Hausdorff, then $F_\K(X)$ is a $k_\w$-space. Moreover, every bounded subset $K\subset F_\K(X)$ is contained in $A^n[D\cup\delta_X(B)]$ for some compact sets $A\subset C$ and $B\subset X$, some finite set $D\subset F_\K(X)$ and some $n\in\w$.
 \end{lemma}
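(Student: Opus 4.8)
The plan is to mimic closely the proof of Theorem~\ref{t:kw}, the only new ingredient being that the non-continuous operations in $E\setminus C$ must be absorbed by finite sets, which is possible because the countability hypotheses force $F_\K(X)$ to be countable. First I would observe that since $E$ and $X$ are countable, the set $\delta_X(X)$ is countable and hence so is $F_\K(X)=\langle\delta_X(X)\rangle_E=\bigcup_{m\in\w}E^m[\delta_X(X)]$. Fix a $k_\w$-sequence $(A_n)_{n\in\w}$ for the $k_\w$-space $C$ and a $k_\w$-sequence $(B_n)_{n\in\w}$ for $X$, and enumerate the countable set $F_\K(X)=\{g_i\}_{i\in\w}$; put $D_n=\{g_i:i<n\}$. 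Since $F_\K(X)$ is a paratopological $(E,C)$-algebra (i.e.\ a topological $C$-algebra), $A_n\subset C$, and $D_n\cup\delta_X(B_n)$ is compact, each set $F_n=A_n^n[D_n\cup\delta_X(B_n)]$ is compact (by the continuity of the $C$-operations and the usual induction showing that $A^m[\,\cdot\,]$ of a compact set is compact). The sets $F_n$ form an increasing sequence with $\bigcup_{n}F_n=F_\K(X)$ (increasing because $A_n,B_n,D_n$ are increasing and $A^m[S]\subset A^{m+1}[S]$; covering because $D_n\subset F_n$). Let $F^\tau_\K(X)$ denote $F_\K(X)$ endowed with the $k_\w$-topology $\tau$ determined by the sequence $(F_n)_{n\in\w}$; since each $F_n$ is compact Hausdorff, this topology is Hausdorff.

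Next I would verify that $F^\tau_\K(X)$ is again a paratopological $(E,C)$-algebra, exactly as in Theorem~\ref{t:kw}. For each $n\in\w$ the space $C_n\times(F^\tau_\K(X))^n$ is a $k_\w$-space by Lemma~\ref{l:kw-product}, so it suffices to check continuity of the $C$-multiplication on compact subsets; any such compact set lies in $(A_m\cap C_n)\times F_m^n$ for some $m$, and the $C$-operations map it into $A_m[A_m^m[D_m\cup\delta_X(B_m)]]\subset A_m^{m+1}[D_m\cup\delta_X(B_m)]\subset F_{m+1}$, continuously, because on the compact set $F_{m+1}$ the $\tau$-topology agrees with the original subspace topology (a finer Hausdorff topology on a compact set coincides with it). The same inclusion argument, applied to $\delta_X(B_m)\subset F_m$, shows that $\delta_X\colon X\to F^\tau_\K(X)$ is continuous on each $B_m$ and hence continuous on the $k_\w$-space $X$.

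At this point I would invoke the $k_\w^C$-superstability of $\K$: the identity map $F^\tau_\K(X)\to F_\K(X)$ is a continuous bijective $E$-homomorphism of the $k_\w$-space paratopological $(E,C)$-algebra $F^\tau_\K(X)$ onto the Hausdorff paratopological $(E,C)$-algebra $F_\K(X)\in\K$, so $F^\tau_\K(X)\in\K$. Then the freeness of $F_\K(X)$ yields a unique continuous $E$-homomorphism $i\colon F_\K(X)\to F^\tau_\K(X)$ with $i\circ\delta_X=\delta_X$; composing with the identity $F^\tau_\K(X)\to F_\K(X)$ and using the uniqueness clause shows that $i$ is the identity map, so the original topology of $F_\K(X)$ contains $\tau$. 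As $\tau$ is finer than the original topology by construction, the two coincide and $F_\K(X)=F^\tau_\K(X)$ is a $k_\w$-space with $k_\w$-sequence $(F_n)$. Finally, for the ``moreover'' part, any bounded subset $K\subset F_\K(X)$ is, by Lemma~\ref{l:bound-kw}, contained in some $F_n=A_n^n[D_n\cup\delta_X(B_n)]$; taking $A=A_n\subset C$, $B=B_n\subset X$, and $D=D_n$ finite gives the desired inclusion $K\subset A^n[D\cup\delta_X(B)]$.

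The main obstacle, and the place where this proof genuinely departs from Theorem~\ref{t:kw}, is the treatment of the operations in $E\setminus C$: these need not be continuous, so applying them does not preserve compactness and they cannot be used to build the $k_\w$-sequence. The device that resolves this is the countability of $F_\K(X)$, which lets the increasing finite sets $D_n$ exhaust every element produced by the non-continuous operations, so that the $k_\w$-sequence is assembled using only the continuous $C$-operations acting on $D_n\cup\delta_X(B_n)$. The remaining care is routine bookkeeping: confirming that $A_m[A_m^m[S]]\subset F_{m+1}$ and that the topologies agree on the compact pieces $F_n$.
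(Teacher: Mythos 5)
Your proof is correct and follows essentially the same route as the paper's: exhausting the countable set $F_\K(X)$ by finite sets $D_n$, building the $k_\w$-sequence $\bigl(A_n^n[D_n\cup\delta_X(B_n)]\bigr)_{n\in\w}$ using only the continuous $C$-operations, verifying that the resulting $k_\w$-topology makes $F_\K(X)$ a paratopological $(E,C)$-algebra, and then invoking $k_\w^C$-superstability together with the freeness (uniqueness) property to identify the two topologies, with Lemma~\ref{l:bound-kw} giving the ``moreover'' clause. No gaps to report.
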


\begin{proof} The countability of the signature $E$ and the space $X$ implies the countability of the free topologized $E$-algebra $F_\K(X)$. Write the countable set $F_\K(X)$ as the union $\bigcup_{m\in\w}D_m$ of an increasing sequence $(D_m)_{m\in\w}$ of finite subsets. Let $(A_m)_{m\in\w}$ and $(B_m)_{m\in\w}$ be $k_\w$-sequences for the $k_\w$-spaces $C$ and $X$, respectively.

Since the variety $\K$ consists of paratopological $(E,C)$-algebras, $F_\K(X)$ is a topological $C$-algebra. The continuity of the multiplication in the topological $C$-algebra $F_\K(X)$ implies that $\big(A_n^n[D_n\cup \delta_X(B_n)]\big)_{n\in\w}$ is an increasing sequence of compact sets in $F_\K(X)$ such that $F_\K(X)=\bigcup_{n\in\w}A_n^n[D_n\cup \delta_X(B_n)]$.
Denote by $F_\K^\tau(X)$ the $E$-algebra $F_\K(X)$  endowed with the $k_\w$-topology $\tau$ determined by the $k_\w$-sequence $\big(A_n^n[D_n\cup \delta_X(B_n)]\big)_{n\in\w}$. We claim that $F^\tau_\K(X)$ is a paratopological $(E,C)$-algebra. This will follow as soon as we check that for every $n\in \w$ the multiplication map
 $\star_n:C_n\times F_\K^\tau(X)^n\to F_\K^\tau(X)$ is continuous. Since $C_n\times (F^\tau_\K(X))^n$ is a $k_\w$-space, it suffice to prove that for every compact subset $K\subset C_n\times F_\K^\tau(X)^n$ the restriction $\star_n|K$ is continuous.
Taking into account that $(A_m)_{m\in\w}$ and $(\langle\delta_X(B_m)\rangle_{A_m}^m)_{m\in\w}$ are $k_\w$-sequences in the $k_\w$-spaces $E$ and $F_\K^\tau(X)$, we can find a number $m\in\w$ such that $K\subset (C_n\cap A_m)\times A_m^m[D_m\cup \delta_X(B_m)]$. Observing that $\star_n(K)\subset A_m[A_m^m[D_m\cup\delta_X(B_m)]]\subset A_m^{m+1}[D_{m}\cup \delta_X(B_m)]\subset A_{m+1}^{m+1}[D_{m+1}\cup \delta_X( B_{m+1})]\subset F_\K(X)$ and using the continuity of the multiplication in the topological $C$-algebra $F_\K(X)$, we conclude that the map $\star_n|K:K\to A_{m}^{m+1}[D_{m}\cup\delta_X(B_m)]\subset F_\K^\tau(X)$ is continuous. So, $F^\tau_\K(X)$ is a Hausdorff paratopological topological $(E,C)$-algebra and the identity map $\id:F^\tau_\K(X)\to F_\K(X)$ is a continuous $E$-homomorphism of paratopological $(E,C)$-algebras. The $k_\w^C$-superstability of the variety $\K$ guarantees that $F^\tau_\K(X)\in\K$.

Next, we check that the map $\delta_X:X\to F_\K^\tau(X)$ is continuous. Since $(B_n)_{n\in\w}$ is a $k_\w$-sequence for the $k_\w$-space $X$, it suffices to check that for every $n\in\w$ the restriction $\delta_X|B_n$ is continuous. This follows from the inclusion $\delta_X(B_n)=A_n^0[\delta_X(B_n)]\subset A_n^n[\delta_X(B_n)]$ and the continuity of the map $\delta_X:X\to A_n^n[\delta_X(B_n)]\subset F_\K(X)$.

By the definition of the free topological $E$-algebra, there exists a unique continuous $E$-homomorphism $i: F_\K(X)\to F^\tau_\K(X)$ such that $i\circ \delta_X=\delta_X$. The unique $E$-homomorphism with this property is the identity map of $F_\K(X)$. So, the identity map $i$ is a homeomorphism and $F_\K(X)$ is a $k_\w$-space. By Lemma~\ref{l:bound-kw}, each bounded subset $K\subset F_\K(X)$ is contained in some set $A_n^n[D_n\cup \delta_X(B_n)]$, $n\in\w$.
\end{proof}

 We shall say that a variety $\K$ of topologized $E$-algebras is \index{variety!$d{+}\HM{+}k_\w^C$-superstable}{\em $d{+}\HM{+} k_\w^C$-superstable} if it is $d$-stable, $\HM$-stable and $k_\w^C$-superstable.

A subset $K$ of a topological space $X$ is called \index{topological space!countably compact}{\em countably compact} in $X$ if each countable subset $K\subset X$ has an accumulation point in $X$. It is easy to see that each countably compact set in $X$ is bounded in $X$.
 We recall that a topological space $X$ is called \index{topological space!$\mu_s$-complete}{\em $\mu_s$-complete} if each closed bounded subset of $X$ is sequentially compact.

\begin{theorem}\label{t:super} Assume that the signature $E$ is a countable $k_\w$-space and $C$ is a $k_\w$-subspace of $E$. Let $\K$ be a $d{+}\HM{+}k^C_\w$-superstable variety of topologized $E$-algebras. Then for every $\mu_s$-complete Tychonoff space $X$, every countably compact set $K\subset F_\K(X)$ is contained in $A^n[D\cup\delta_X(B)]$ for some $n\in\w$, compact subset $A\subset C$, finite subset $D\subset F_\K(X)$, and bounded set $B\subset X$.
\end{theorem}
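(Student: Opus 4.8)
The plan is to follow the proof of the preceding theorem (the one for $d{+}\HM{+}k_\w$-stable varieties) step by step, replacing its appeal to Theorem~\ref{t:kw} by the paratopological counterpart Lemma~\ref{l:para-kw} and carrying along the extra finite set that the latter introduces. First I would observe that our hypotheses fall under the earlier machinery: since $k^C_\w$-superstability implies $k_\w$-stability, the variety $\K$ is $d{+}\HM{+}k_\w$-stable, so Theorem~\ref{t:d+HM+kw} applies. A countably compact set is bounded, so for the countably compact $K\subset F_\K(X)$ this yields that $\supp(K)=\bigcup_{a\in K}\supp(a)$ is bounded in $X$ and that $\{\supp(a):a\in K\}\subset[X]^{\le m}$ for some $m\in\w$. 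I then fix a non-empty closed bounded set $B\subset X$ containing $\supp(K)$ (a singleton if $\supp(K)=\emptyset$) and a $k_\w$-sequence $(A_n)_{n\in\w}$ for the subsignature $C$. The goal becomes to find $n\in\w$ and a finite $D\subset F_\K(X)$ with $K\subset A_n^n[D\cup\delta_X(B)]$.

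Next I would argue by contradiction through an escaping sequence. Assuming the conclusion fails, I choose inductively $a_k\in K\setminus A_k^k[\{a_0,\dots,a_{k-1}\}\cup\delta_X(B)]$. Using the uniform bound $|\supp(a_k)|\le m$ together with $\mu_s$-completeness of $X$ (which makes the closed bounded $B$ sequentially compact, hence the hyperspace $[B]^{\le m}$ sequentially compact in the Vietoris topology), I pass to a subsequence along which the supports converge to some $S_\infty\in[B]^{\le m}$. Then $Q:=\overline{S_\infty\cup\bigcup_k\supp(a_k)}$ is a countable compact set contained in $B$ whose non-isolated points lie in $S_\infty$, so $Q$ is compact metrizable and contains every $\supp(a_k)$.

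The reflection step comes third. Each $a_k$ has support in $Q$, so by Theorem~\ref{t:FKX1}(5) it lifts to $c_k\in F_\K(Q)$ with $F_\K i_{Q,X}(c_k)=a_k$; as $Q$ is compact metrizable, $F_\K i_{Q,X}$ is a closed topological embedding (Theorem~\ref{t:FKX1}(4)) and $F_\K(Q)$ is Hausdorff (Theorem~\ref{t:bar-i-bijective}), so $\{c_k\}$ is bounded in $F_\K(Q)$ because $\{a_k\}\subset K$ is bounded in $F_\K(X)$. Since $Q$ is a countable $k_\w$-space, Lemma~\ref{l:para-kw} gives $\{c_k\}\subset A_l^l[D'\cup\delta_Q(B')]$ for some $l\in\w$, a finite $D'\subset F_\K(Q)$, and a compact $B'\subset Q$. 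Pushing forward through $F_\K i_{Q,X}$ and using $\delta_X\circ i_{Q,X}=F_\K i_{Q,X}\circ\delta_Q$ together with $B'\subset Q\subset B$, I obtain $\{a_k\}\subset A_l^l[\tilde D\cup\delta_X(B)]$ with $\tilde D=F_\K i_{Q,X}(D')$ finite. (If one prefers the $\beta X$ route of the model proof, the same bound is reached by transporting $K$ into the $k_\w$-space $F_{\bar\K}(\beta X)$, pulling back the compact closure along the closed embedding $F_{\bar\K}i_{Q,\beta X}$, and transporting back through the bijection $\bar\i_Q$.)

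The final step, and the one I expect to be the main obstacle, is converting this per-sequence bound into a genuine contradiction. The difficulty is intrinsic to the paratopological setting: unlike Theorem~\ref{t:kw}, which produces a finite-set-free bound $A_m^m[\delta_X(B)]$ and lets the inclusion $a_n\in A_m^m[\delta_X(B)]\subset A_n^n[\delta_X(B)]$ close the topological argument at once, Lemma~\ref{l:para-kw} emits a finite set $\tilde D$ that is generated only at the end of the construction, need not sit inside the finite sets $\{a_0,\dots,a_{k-1}\}$ that the escaping sequence was built to avoid, and lies in the full $E$-hull $\langle\delta_X(Q)\rangle_E$ rather than in any $C$-hull, so the hull levels do not combine cheaply. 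Overcoming this requires interleaving the choice of the escaping sequence with the growth of a countable compact set $Q$, so that the finite set eventually extracted from Lemma~\ref{l:para-kw} is already absorbed into the hulls the later terms must avoid; this is precisely the point at which countable compactness of $K$ is needed (mere boundedness sufficed in the topological model only because no finite set appeared), the accumulation point of $\{a_k\}$ in $K$ supplying the compact set that keeps the bookkeeping finite. Once the construction is arranged so that the inclusion $a_k\in A_l^l[\tilde D\cup\delta_X(B)]$ is incompatible with the defining property $a_k\notin A_k^k[\{a_0,\dots,a_{k-1}\}\cup\delta_X(B)]$ for large $k$, the contradiction is reached and the theorem follows.
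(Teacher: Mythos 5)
Your first half does track the paper's own strategy: reduce via Theorem~\ref{t:d+HM+kw} (countably compact sets are bounded, so the supports of elements of $K$ lie in a bounded set $B$ and have size $\le m$), use $\mu_s$-completeness to make the supports of an escaping sequence converge in the Vietoris topology, collect them into a countable compact set $M$, reflect into the free algebra over $M$, and invoke Lemma~\ref{l:para-kw}. But even this half needs two repairs. First, Lemma~\ref{l:para-kw} is stated for varieties of \emph{paratopological $(E,C)$-algebras}, whereas $\K$ consists of arbitrary topologized $E$-algebras; the paper passes to the subvariety $\tilde\K$ of Tychonoff paratopological $(E,C)$-algebras, applies the lemma to $F_{\tilde\K}(M)$, and transfers the bound back through the homomorphism $\tilde\i_M:F_\K(M)\to F_{\tilde\K}(M)$, whose bijectivity is deduced from Theorem~\ref{t:bar-i-bijective}. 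Second, your claim that $\{c_k\}$ is bounded in $F_\K(Q)$ ``because $\{a_k\}$ is bounded in $F_\K(X)$'' is false: boundedness does not pull back along closed embeddings (a closed discrete subset of a pseudocompact space is bounded in the ambient space but not in itself). This is exactly where countable compactness of $K$ enters the paper's proof --- $K\cap F_\K(M)$ is countably compact in the closed subspace $F_\K(M)$ and hence bounded there --- and not, as you suggest, in some later bookkeeping; no accumulation point of $\{a_k\}$ is ever used. Your parenthetical $\beta X$-route is also inadequate: running the argument through $F_{\bar\K}(\beta X)$ and Theorem~\ref{t:kw} yields hulls $A^m[\,\cdot\,]$ over a compact $A\subset E$ rather than $A\subset C$, which is not the bound the theorem asserts and cannot be converted into it.

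The genuine gap is the final step, which you explicitly leave as ``arrange the construction so that the contradiction is reached.'' The paper's resolution consists of concrete ideas absent from your sketch, and the first of them forces a different escaping sequence from the very start. (i) Writing $E=\bigcup_{n\in\w}\Gamma_n$ as an increasing union of finite sets, the $n$-th term is chosen with $a_n\notin A_n^n[D_n\cup\delta_X(B)]$ where $D_n=\bigcup_{k<n}\Gamma_n^n[\delta_X(S_k^+)]$ and $S_k^+=\supp(a_k)\cup\{x^+\}$; that is, one avoids the finite-stage full-signature hulls of the \emph{supports} of the earlier terms (together with a base point $x^+$), not the earlier terms $\{a_0,\dots,a_{k-1}\}$ themselves. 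This matters because the finite set eventually emitted by Lemma~\ref{l:para-kw} can, by Lemma~\ref{l:Ak}, be placed inside $\Gamma_p^p[\delta_M(D)]$ for a finite set $D\subset M$ of \emph{points}; there is no mechanism relating such a set to the elements $a_i$ you chose to avoid. (ii) With the bound $\{a_n\}_{n\in\Omega}\subset A_p^p[\Gamma_p^p[\delta_M(D)]\cup\delta_M(M)]$ in hand, the paper exploits zero-dimensionality of the countable compact set $M$ to choose retractions $r_n:M\to S_n^+$ with $r_n(D)\subset D\cap S_n^+$; since $F_\K r_n$ fixes $a_n$, this localizes the unknown $D$ to $D\cap S_n^+$, giving $a_n\in A_p^p[\Gamma_p^p[\delta_M(D\cap S_n^+)]\cup\delta_M(S_n^+)]$. (iii) A pigeonhole argument over the finitely many subsets of $D$ produces $k<n$, both greater than $p$, with $D\cap S_n^+=D\cap S_k^+\subset S_k^+$, whence $a_n\in A_n^n[\Gamma_n^n[\delta_X(S_k^+)]\cup\delta_X(B)]\subset A_n^n[D_n\cup\delta_X(B)]$, contradicting the choice of $a_n$. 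Without the support-hull bookkeeping, the retraction trick and the pigeonhole step, the per-sequence bound you obtain is simply not in tension with your escaping condition, and asserting that a suitable interleaving exists is not a proof.
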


\begin{proof}
If the space $X$ is empty, then $X$ is a countable $k_\w$-space and we can apply Lemma~\ref{l:para-kw}. So, we assume that the space $X$ is not empty and hence $X$ contains some point $x^+$.

Let $(A_n)_{n\in\w}$ be a $k_\w$-sequence for the $k_\w$-space $C$.
Since the signature $E$ is countable, it can be written as the union $E=\bigcup_{n\in\w}\Gamma_n$ of an increasing sequence $(\Gamma_n)_{n\in\w}$ of finite sets.

Observe that the countably compact subset $K$ of $F_\K(X)$ is bounded in $F_\K(X)$. By Theorem~\ref{t:d+HM+kw}, the set $\supp(K)=\bigcup_{a\in K}\supp(a)$ is bounded in $X$ and  $\{\supp(a):a\in K\}\subset [X]^{\le m}$ for some $m\in\IN$. Let $B$ be the closure of the non-empty bounded set $\{x^+\}\cup\supp(K)$.

We claim that $K\subset A_n^n[D\cup \delta_X(B)]$ for some $n\in\w$ and some finite set $D\subset F_\K(X)$. To derive a contradiction, assume that for every $n\in\w$ and every finite subset $D\subset F_\K(X)$ there is an element $a_{n,D}\in K\setminus A_n^n[D\cup \delta_X(B)]$.
Let $a_0=a_{0,\emptyset}$ and for every $n>0$ put $a_n=a_{n,D_n}$ where $D_n=\bigcup_{k<n}\Gamma^n_n[\delta_X(S^+_k)]$ where $S^+_k=\supp(a_k)\cup\{x^+\}$.

Observe that $\{\supp(a_n)\}_{n\in\w}\subset [B]^{\le {m}}$. Since the space $X$ is $\mu_s$-complete, the set $\w$ contains an infinite subset $\Omega$ such that  the sequence of supports $\big(\supp(a_{n})\big)_{n\in\Omega}$ converges to some set $L\in [B]^{\le m}$ in the Vietoris topology on $[X]^{\le m}$. Then the countable set $M=\{x^+\}\cup L\cup\bigcup_{n\in\Omega}\supp(a_{n})$ is compact and not empty.

By Theorem~\ref{t:FKX1}(4) and Proposition~\ref{p:FembTych}, the map $F_\K i_{M,X}:F_\K(M)\to F_\K(X)$ is a closed topological embedding, which allows us to identify $F_\K(M)$ with the closed subspace $F_\K i_{M,X}(F_\K(M))$ of $F_\K(X)$. It follows that the set $K_M=K\cap F_\K(M)\supset\{a_n\}_{n\in\Omega}$ is countably compact and hence bounded in $F_\K(M)$.

Let $\bar \K$ be the subvariety of $\K$ consisting of Tychonoff topological $E$-algebras and $\tilde\K$ be the subvariety of $\K$ consisting of Tychonoff paratopological $(E,C)$-algebras. Applying Lemma~\ref{l:BanHryn}, we can show that the variety $\bar\K$ is $d{+}\HM{+}k_\w$-stable and the variety $\tilde\K$ is $d{+}\HM{+}k^C_\w$-superstable. Let $(F_{\bar\K}(M),\bar\delta_M)$ and  $(F_{\tilde\K}(M),\tilde\delta_M)$ be the free topologized $E$-algebras over the compact countable space $M$ in the varieties $\bar\K$ and $\tilde \K$, respectively.
By Lemma~\ref{l:para-kw}, $F_{\tilde \K}(M)$ is a $k_\w$-space.
Since $F_{\tilde \K}(M)\in\tilde\K\subset\K$, there exists a unique continuous $E$-homomorphism $\tilde \i_M:F_\K(M)\to F_{\tilde \K}(M)$ such that $\tilde \i_M\circ\delta_M=\tilde\delta_M$. Since $F_{\bar \K}(M)\in\bar\K\subset\tilde\K$, there exists a unique continuous $E$-homomorphism $\tilde{\bar\i}_M:F_{\tilde \K}(M)\to F_{\bar\K}(M)$ such that $\tilde{\bar\i}_M\circ \tilde\delta_M=\bar\delta_M$.
Observe that the composition $\tilde{\bar\i}_M\circ\tilde\i_M:F_\K(M)\to F_{\bar\K}(M)$ coincides with the unique continuous $E$-homomorphism $\bar\i_M:F_\K(M)\to F_{\bar\K}(M)$ such that $\bar\i_M\circ \delta_M=\bar\delta_M$. By Theorem~\ref{t:bar-i-bijective}, the homomorphism $\bar\i_M$ is bijective. This implies that the $E$-homomorphism $\tilde\i_M:F_\K(M)\to F_{\tilde \K}(M)$ is bijective too.

It follows that the set $\tilde K_M=\tilde\i_M(K_M)$ is bounded in the $k_\w$-space $F_{\tilde \K}(M)$. By Lemma~\ref{l:para-kw}, $\tilde K_M\subset A_q^q[\mathcal D\cup \tilde \delta_M(M)]$ for some $q\in\w$ and some finite set $\mathcal D\subset F_{\tilde\K}(M)=\bigcup_{n\in\w}\Gamma^n_n(\delta_M(M))$.
Using Lemma~\ref{l:Ak}, we can find a number $p\ge q$ and a finite subset $D\subset M$ such that $\mathcal D\subset\Gamma^p_p[\tilde\delta_M(D)]$.
 Then $\tilde K_M\subset A^p_p[\Gamma^p_p[\tilde\delta_M(D)]\cup\tilde\delta_M(M)]$.
 Replacing the set $D$ by $D\cup\{x^+\}$, we can additionally assume that $x^+\in D$.

Taking into account that $\tilde i_{M}:F_\K(M)\to F_{\tilde\K}(M)$ is an $E$-isomorphism, we conclude that $\{a_n\}_{n\in\Omega}\subset K_M\subset A_p^p[\Gamma^p_p[\delta_M(D)]\cup\delta_M(M)]$.

Since the countable space $M$ is zero-dimensional, for every $n\in\Omega$ we can choose a retraction $r_n:M\to S_n^+=\supp(a_n)\cup\{x^+\}$ such that $r_n(D)\subset D\cap S^+_{n}$. Such a retraction $r_n$ exists since the set $D\cap S^+_n$ contains the point $x^+$ and hence is not empty.

By Theorem~\ref{t:FKX1}(4), the map $F_\K i_{S_n^+,M}:F_\K(S^+_n)\to F_\K(M)$ is a closed topological embedding, which allows us to identify the space $F_\K(S^+_n)$ with a closed  subspace of $F_\K(M)$. After such identification, we see that $a_n\in F_\K(S^+_n)$ and the map $F_\K r_n:F_\K(M)\to F_\K(S^+_n)$ is a retraction such that $$
\begin{aligned}
a_n&=F_\K r_n(a_n)=F_\K r_n(A^p_p[\Gamma^p_p[\delta_M(D)]\cup\delta_M(M)])\subset\\
&\subset A^p_p[\Gamma^p_p[\delta_M(r_n(D))]\cup\delta_M(S^+_n)]\subset A^p_p[\Gamma^p_p[\delta_M(D\cap S^+_n)]\cup\delta_M(S^+_n)].
\end{aligned}
$$

Since the set $D$ is finite, by the Pigeonhole principle, there exists a subset $D'\subset D$ such that the set $\Omega'=\{n\in\Omega:D'=D\cap S^+_n\}$ is infinite and hence $\Omega'$ contains two numbers $k<n$, greater that $p$.
It follows that $D\cap S^+_n=D'=D\cap S_k^+\subset S^+_k$ and hence $$a_n\in A^p_p[\Gamma_p^p[\delta_M(D\cap S^+_n)]\cup\delta_M(S^+_n)]\subset A^n_n[\Gamma^n_n[\delta_X(S_k^+)]\cup \delta_X(B)]\subset A^n_n[D_n\cup\delta_X(B)],$$
which contradicts the choice of $a_n=a_{n,D_n}$.
\end{proof}

\section{Fans in free paratopological $\dot E$-algebras}

Let $E$ be a topological signature with a distinguished symbol $\cdot\in E_2$ of binary operation.

A topologized $E$-algebra $X$ is called a \index{$\dot E$-algebra!paratopological}\index{paratopological $\dot E$-algebra}{\em paratopological $\dot E$-algebra} if the binary operation $\cdot:X^2\to X$ (interpreting the symbol $\cdot$) is continuous.

A paratopological $\dot E$-algebra is $X$ called \index{paratopological $\dot E$-algebra!$x{\cdot}y$-mixing}{\em $x{\cdot}y$-mixing} if there are two  points $x,y\in X$ such that $x{\cdot} y\notin\langle x\rangle_E\cup\langle y\rangle_E$.

\begin{lemma}\label{l:FK-xy} If an $\HM$-stable variety of paratopological $\dot E$-algebras contains an $x{\cdot}y$-mixing $\dot E$-algebra, then for every functionally Hausdorff space $X$ the free paratopological $\dot E$-algebra $F_\K(X)$ is a topological algebra of type $x{\cdot}y$.
\end{lemma}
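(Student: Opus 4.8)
The plan is to take as the witnessing binary operation the multiplication of the free algebra itself. I would define $p\colon X^2\to F_\K(X)$ by $p(a,b)=\delta_X(a)\cdot\delta_X(b)$, where $\cdot$ denotes the continuous binary operation interpreting the distinguished symbol $\cdot\in E_2$ on the paratopological $\dot E$-algebra $F_\K(X)\in\K$, and set $C=\emptyset$. Since $\delta_X$ is continuous and the operation $\cdot$ on $F_\K(X)$ is continuous, the map $p$ is continuous, and this gives at once the second condition of Definition~\ref{d:type-xy}: for any $(a,b)$ and neighbourhood $W\ni p(a,b)$, continuity produces neighbourhoods $O_a,O_b$ of $a,b$ with $p(O_a\times O_b)\subset W$.

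It remains to verify $\{a,b\}\subset\supp(p(a,b))$ for all distinct $a,b\in X$. The inclusion $\supp(p(a,b))\subset\{a,b\}$ is immediate, since $p(a,b)\in\langle\delta_X(\{a,b\})\rangle_E$. For the reverse inclusion I argue by contradiction: suppose, say, $a\notin\supp(p(a,b))$, so $\supp(p(a,b))\subset\{b\}$. By Theorem~\ref{t:FKX1}(5) (the functor $F_\K$ restricted to functionally Hausdorff spaces is monomorphic by Theorem~\ref{t:FKX1}(3), so the support machinery applies), it follows that $p(a,b)\in\langle\delta_X(b)\rangle_E$. I then transport this inclusion into the mixing algebra to contradict $x_0\cdot y_0\notin\langle y_0\rangle_E$, where $x_0,y_0$ are the points of the $x{\cdot}y$-mixing algebra $A\in\K$.

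The key construction — and the main obstacle — is producing a continuous map $f\colon X\to B$ into some $B\in\K$ that realizes the prescribed values at $a$ and $b$; one cannot in general map $X$ into $A$ itself, since $A$ need not be path-connected and a two-valued map on the (closed, discrete) pair $\{a,b\}$ need not extend continuously. This is exactly where $\HM$-stability enters. Since $\K$ is $\HM$-stable, the Hartman–Mycielski algebra $\HM(A)$ belongs to $\K$ and is path-connected, so there is a continuous path $h\colon\II\to\HM(A)$ with $h(0)=hm_A(x_0)$ and $h(1)=hm_A(y_0)$. Using functional Hausdorffness of $X$ I pick a continuous $g\colon X\to\II$ with $g(a)=0$, $g(b)=1$, and set $f=h\circ g$, so that $f(a)=hm_A(x_0)$ and $f(b)=hm_A(y_0)$.

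By freeness $f$ extends to a continuous $E$-homomorphism $\bar f\colon F_\K(X)\to\HM(A)$ with $\bar f\circ\delta_X=f$. Then
\[
\bar f(p(a,b))=\bar f(\delta_X(a))\cdot\bar f(\delta_X(b))=hm_A(x_0)\cdot hm_A(y_0)=hm_A(x_0\cdot y_0),
\]
using that $hm_A$ is an $E$-homomorphism. On the other hand $p(a,b)\in\langle\delta_X(b)\rangle_E$ gives $\bar f(p(a,b))\in\langle\bar f(\delta_X(b))\rangle_E=\langle hm_A(y_0)\rangle_E=hm_A(\langle y_0\rangle_E)$, the last equality holding because $hm_A(A)$ is an $E$-subalgebra of $\HM(A)$. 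Injectivity of $hm_A$ then forces $x_0\cdot y_0\in\langle y_0\rangle_E$, contradicting the mixing hypothesis; the symmetric argument (interchanging the roles of $a,b$ and of $x_0,y_0$) rules out $b\notin\supp(p(a,b))$. Hence $\supp(p(a,b))=\{a,b\}$ for all distinct $a,b$, so $p$ together with $C=\emptyset$ witnesses that $F_\K(X)$ is a topological algebra of type $x{\cdot}y$. The only routine facts to record are that an $E$-homomorphism carries the $E$-hull of a set onto the $E$-hull of its image and that $hm_A$ is an injective $E$-homomorphism; both are standard, the latter because the operations of $\HM(A)$ are computed coordinatewise and hence preserve constant functions.
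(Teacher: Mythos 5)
Your proposal is correct, and its verification of the continuity condition and the inclusion $\supp(p(a,b))\subset\{a,b\}$ coincides with the paper's. But your treatment of the reverse inclusion takes a genuinely different route. The paper localizes to the discrete doubleton $D=\{x,y\}\subset X$: it maps $D$ directly into the mixing algebra $Y$ (continuity is automatic on a two-point discrete space), extends by freeness to $\bar f\colon F_\K(D)\to Y$, derives the non-membership $\delta_D(x)\cdot\delta_D(y)\notin\langle\delta_D(x)\rangle_E\cup\langle\delta_D(y)\rangle_E$ inside $F_\K(D)$, and then transfers it to $F_\K(X)$ using the injectivity of $F_\K i_{D,X}$ — i.e.\ Theorem~\ref{t:FKX1}(3), the monomorphicity property whose proof runs through $\HM$-commutativity of the functor. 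You instead stay in $F_\K(X)$ throughout: you manufacture a continuous map on all of $X$ into $\HM(Y)$ by composing a separating function $X\to\II$ (functional Hausdorffness) with a path in $\HM(Y)$ joining $hm_Y(x_0)$ to $hm_Y(y_0)$ (path-connectedness of Hartman--Mycielski spaces, with $\HM$-stability guaranteeing $\HM(Y)\in\K$), extend by freeness to $\bar f\colon F_\K(X)\to\HM(Y)$, and close the argument with the elementary injectivity of $hm_Y$. What the paper's route buys is brevity, given that Theorem~\ref{t:FKX1}(3) is already established; what your route buys is a more transparent and self-contained use of $\HM$-stability — the only nontrivial injectivity you need is that of $hm_Y$, which is immediate since the operations on $\HM(Y)$ are computed coordinatewise — and it is in fact the same device the paper uses in Proposition~\ref{p:delta-inject} and Theorem~\ref{t:FKX1}(8). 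Both arguments share the remaining ingredients: freeness to produce $\bar f$, the fact that an $E$-homomorphism carries the $E$-hull of a set onto the $E$-hull of its image, and the support-to-hull implication of Theorem~\ref{t:FKX1}(5). One cosmetic remark: in your symmetric case $b\notin\supp(p(a,b))$ there is no need to interchange $x_0$ and $y_0$; the same map $\bar f$ yields $x_0\cdot y_0\in\langle x_0\rangle_E$, which is equally forbidden by the mixing hypothesis.
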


\begin{proof} We claim that the set $C=\emptyset$ and the $F_\K$-valued operation
$$p:X\times X\to F_\K(X),\;\;p:(x,y)\mapsto \delta_X(x)\cdot\delta_X(y),$$
witness that $F_\K(X)$ is a topological algebra of type $x{\cdot}y$.
We need to check the conditions (1) and (2) of Definition~\ref{d:type-xy}.

The condition (2) follows from the continuity of the operation $p$ (which follows from the continuity of the canonical map $\delta_X:X\to F_\K(X)$ and the continuity of the binary operation $\cdot: F_\K(X)\times F_\K(X)\to F_\K(X)$~).

So, it remains to check the condition (1). Given two distinct points $x,y\in X$, we should check that $\{x,y\}\subset \supp\big(p(x,y)\big)$. By our assumption, the variety $\K$ contains an $x{\cdot}y$-mixing $\dot E$-algebra $Y$. By definition, $Y$ contains two points $a,b\in Y$ such that $a\cdot b\notin\langle a\rangle_E\cup\langle b\rangle_E$. Consider the doubleton $D=\{x,y\}$ and the map $f:D\to Y$ such that $f(x)=a$ and $f(b)=b$. By the definition of a free topologized $\dot E$-algebra, there exists a unique continuous $E$-homomorphism $\bar f:F_\K(D)\to Y$ such that $f=\bar f\circ\delta_D$.
We claim that $\delta_D(x)\cdot \delta_D(y)\notin\langle \delta_D(x)\rangle_E\cup\langle \delta_D(y)\rangle_E$. In the opposite case, applying the $E$-homomorphism $\bar f$ to the inclusion $\delta_D(x)\cdot \delta_D(y)\in\langle \delta_D(x)\rangle_E\cup\langle \delta_D(y)\rangle_E$, we would get
$$
\begin{aligned}
a\cdot b&=f(x){\cdot}f(y)=\bar f(\delta_D(x))\cdot\bar f(\delta_D(y))=\bar f(\delta_D(x)\cdot\delta_D(y))\in \bar f(\langle \delta_D(x)\rangle_E\cup\langle \delta_D(y)\rangle_E)=\\
&=\langle \bar f\circ \delta_D(x)\rangle_E\cup\langle\bar f\circ\delta_D(y)\rangle_E=\langle f(x)\rangle_E\cup\langle f(y)\rangle_E=\langle a\rangle_E\cup\langle b\rangle_E,
\end{aligned}
$$ which contradicts the choice of the points $a,b$. This contradiction shows that $\delta_D(x)\cdot \delta_D(y)\notin\langle \delta_D(x)\rangle_E\cup\langle \delta_D(y)\rangle_E$.

By Theorem~\ref{t:FKX1}(3), the $E$-homomorphism $F_\K i_{D,X}:F_\K(D)\to F_\K(X)$ is injective. Consequently,
$$
\begin{aligned}
p(x,y)&=\delta_X(x)\cdot\delta_X(y)=F_\K i_{D,X}(\delta_D(x)\cdot \delta_D(y))\notin F_\K i_{D,X}(\langle \delta_D(x)\rangle_E\cup\langle\delta_D(y)\rangle_E)=\\
&=
\langle \delta_X(x)\rangle_E\cup\langle \delta_X(y)\rangle_E=F_\K(\{x\};X)\cup F_\K(\{y\};X),
\end{aligned}
$$ which implies that $\supp(p(x,y))$ is not contained in $\{x\}$ or $\{y\}$.
Combining this fact with the obvious inclusion $\supp(p(x,y))\subset\{x,y\}$, we conclude that $\supp(p(x,y))=\{x,y\}$. This yields the condition (1) of Definition~\ref{d:type-xy}.
\end{proof}

\begin{theorem}\label{t:dotE} Assume that a $d{+}\HM{+}k_\w$-stable variety of paratopological $\dot E$-algebras contains an $x{\cdot}y$-mixing $\dot E$-algebra.
Let $X$ be a $\mu$-complete Tychonoff space satisfying one of the following conditions:
\begin{enumerate}
\item[\textup{1)}]  $F_\K(X)$ contains no strong $\Fin^\w$-fan and $X$ is an $\aleph$-space.
\item[\textup{2)}] $F_\K(X)$  contains no $\Fin^{\w}$-fan and $X$ is $k^*$-metrizable.
\end{enumerate}
Then the $k$-modification $kX$ of $X$ is either metrizable or is a topological sum of cosmic $k_\w$-spaces.
\end{theorem}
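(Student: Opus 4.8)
The plan is to present $F_\K(X)$ as a topological algebra of type $x{\cdot}y$ and feed it into the general decomposition Theorem~\ref{t:xy}, afterwards rephrasing its conclusion about $X$ as one about the $k$-modification $kX$. I would begin by verifying that $F_\K$, regarded as a functor on the category of Tychonoff spaces, meets the blanket hypotheses under which Theorem~\ref{t:xy} is proved, namely that it is a monomorphic, $\II$-regular, bounded functor with finite supports. Finite supports is Theorem~\ref{t:FKX1}(1); since every Tychonoff space is functionally Hausdorff, monomorphicity is Theorem~\ref{t:FKX1}(3); $\II$-regularity follows from Theorem~\ref{t:bar-i-bijective}, as a $d{+}\HM{+}k_\w$-stable variety is in particular $d{+}\HM$-stable; and (strong, hence plain) boundedness is Theorem~\ref{t:d+HM+kw}, which applies because $\K$ is $d{+}\HM{+}k_\w$-stable and $E$ is a $k_\w$-space.

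Next I would invoke Lemma~\ref{l:FK-xy}. The variety $\K$ is $\HM$-stable and, by hypothesis, contains an $x{\cdot}y$-mixing $\dot E$-algebra, while the Tychonoff space $X$ is functionally Hausdorff; hence the lemma shows that $F_\K(X)$ is a topological algebra of type $x{\cdot}y$. With the functor properties already in hand, Theorem~\ref{t:xy} applies literally: condition (1) here is condition (1) there (recall that $X$ is Tychonoff) and condition (2) here is condition (2) there. In either case the $\mu$-complete space $X$ is therefore either $k$-metrizable or a $k$-sum of hemicompact spaces.

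It then remains to convert this dichotomy into the required statement and to upgrade \emph{hemicompact} to \emph{cosmic hemicompact}. If $X$ is $k$-metrizable, then $kX$ is metrizable by definition. If instead $X=\bigcup_\alpha X_\alpha$ is a $k$-sum of hemicompact subspaces, I would observe that in both cases (1) and (2) the space $X$ is an $\aleph_k$-space, so all of its compact subsets are metrizable. Each $X_\alpha$ is hemicompact, hence $\sigma$-compact, say $X_\alpha=\bigcup_{n}K_n$ with every $K_n$ compact metrizable and so carrying a countable network; the union of these networks is a countable network for $X_\alpha$, which, being a subspace of the Tychonoff space $X$, is regular and thus cosmic. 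Hence $X$ is a $k$-sum of cosmic hemicompact spaces, and Lemma~\ref{l:ts-kw} gives that $kX$ is a topological sum of cosmic $k_\w$-spaces, as desired.

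I expect the main obstacle to lie in the bookkeeping of the first paragraph: confirming that the standing hypotheses of the type-$x{\cdot}y$ theory are genuinely satisfied by $F_\K$, and in particular its boundedness, which rests on the description of bounded sets in free algebras (Theorem~\ref{t:d+HM+kw}) and hence on $E$ being a $k_\w$-space. The only other non-automatic point, treated above, is that Theorem~\ref{t:xy} delivers merely \emph{hemicompact} pieces, so the \emph{cosmic} strengthening must be read off separately from the metrizability of compacta in $\aleph_k$-spaces; all remaining steps are direct substitutions into results established earlier.
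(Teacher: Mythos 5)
Your proposal is correct and follows the paper's own proof: it uses exactly the same chain of citations (Theorems~\ref{t:FKX1}(1,3), \ref{t:bar-i-bijective} and \ref{t:d+HM+kw} for monomorphicity, $\II$-regularity, boundedness and finite supports of $F_\K$; Lemma~\ref{l:FK-xy} to see that $F_\K(X)$ is a topological algebra of type $x{\cdot}y$; then Theorem~\ref{t:xy}). The only difference is that you make explicit the final passage from ``$X$ is $k$-metrizable or a $k$-sum of hemicompact spaces'' to the stated conclusion about $kX$, including the upgrade of the hemicompact pieces to \emph{cosmic} ones via metrizability of compact sets in $\aleph_k$-spaces and Lemma~\ref{l:ts-kw}; this step is left implicit in the paper's ``apply Theorem~\ref{t:xy} and finish the proof,'' and your bookkeeping there is sound.
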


\begin{proof} By Theorems~\ref{t:FKX1}(1,3), \ref{t:bar-i-bijective}, \ref{t:d+HM+kw}, the functor $F_\K|\Top_{3\frac12}$ is monomorphic, $\II$-regular, bounded and has finite supports.
By Lemma~\ref{l:FK-xy}, the functor-space $F_\K(X)$ is a topological algebra of type $x{\cdot}y$. Now it is legal to apply Theorem~\ref{t:xy}  and finish the proof.
\end{proof}

\begin{definition}\label{d:xsy-mixing} A paratopological $\dot E$-algebra $X$ is called
\begin{itemize}
\item \index{paratopological $\dot E$-algebra!idempotent}{\em idempotent} if $x{\cdot}x=x$ for all $x\in X$;
\item \index{paratopological $\dot E$-algebra!$x(sy_i)^{<\w}$-mixing}{\em $x(sy_i)^{<\w}$-mixing} if for every $n\in\IN$ there exist pairwise distinct points $s,x,y_1,\dots,y_n\in X$ such that for the sequence of points $z_n=s\cdot y_n$ and $z_{k}=(s\cdot y_k)\cdot z_{k+1}$, $1\le k<n$, the point $x\cdot z_1$ is not contained in the $E$-hull $\langle A\cup\{s\}\rangle_E$ for any proper subset $A$ of $\{x,y_1,\dots,y_n\}$.
\end{itemize}
\end{definition}

\begin{lemma}\label{l:xxy_w->xy} Each $x(sy_i)^{<\w}$-mixing $\dot E$-algebra $X$ is $x{\cdot}y$-mixing.
\end{lemma}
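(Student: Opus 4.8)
The plan is to extract the required $x{\cdot}y$-mixing witnesses directly from the weakest ($n=1$) instance of the $x(sy_i)^{<\w}$-mixing hypothesis. First I would invoke Definition~\ref{d:xsy-mixing} with $n=1$: this produces pairwise distinct points $s,x,y_1\in X$ such that, for the element $z_1=s\cdot y_1$ (here the recursive clause $z_k=(s\cdot y_k)\cdot z_{k+1}$ is vacuous, so $z_n=z_1=s\cdot y_1$), the point $x\cdot z_1$ lies outside $\langle A\cup\{s\}\rangle_E$ for every proper subset $A\subsetneq\{x,y_1\}$.

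Next I would set $a=x$ and $b=z_1=s\cdot y_1$ and claim these witness $x{\cdot}y$-mixing, i.e. that $a\cdot b=x\cdot z_1\notin\langle a\rangle_E\cup\langle b\rangle_E$. The two required non-memberships arise from two specific choices of proper subset. Taking $A=\{x\}$ gives $x\cdot z_1\notin\langle\{x,s\}\rangle_E$, and since $\langle x\rangle_E\subseteq\langle\{x,s\}\rangle_E$ by monotonicity of the $E$-hull, we obtain $x\cdot z_1\notin\langle a\rangle_E$. Taking $A=\{y_1\}$ gives $x\cdot z_1\notin\langle\{y_1,s\}\rangle_E$; since $\langle\{y_1,s\}\rangle_E$ is an $E$-subalgebra containing $s$ and $y_1$, it is closed under the binary operation $\cdot$ and therefore contains $z_1=s\cdot y_1$, whence $\langle b\rangle_E=\langle z_1\rangle_E\subseteq\langle\{y_1,s\}\rangle_E$ and consequently $x\cdot z_1\notin\langle b\rangle_E$. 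Combining the two yields $a\cdot b\notin\langle a\rangle_E\cup\langle b\rangle_E$, which is exactly the $x{\cdot}y$-mixing condition.

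The verification relies only on elementary properties of the hull operator $\langle\cdot\rangle_E$: its monotonicity ($A\subseteq B$ implies $\langle A\rangle_E\subseteq\langle B\rangle_E$) and the fact that a hull is an $E$-subalgebra, hence closed under every operation of the signature, in particular under $\cdot$. I do not anticipate a genuine obstacle; the only points requiring care are the bookkeeping of which proper subset of $\{x,y_1\}$ to substitute for each of the two memberships, and the observation that the composite $s\cdot y_1$ must be treated as a single element $b$ whose $E$-hull is controlled by $\langle\{s,y_1\}\rangle_E$. The essential content of the lemma is simply that the $n=1$ avoidance condition of the $x(sy_i)^{<\w}$-mixing definition already encodes $x{\cdot}y$-mixing once the product $s\cdot y_1$ is collapsed into one element.
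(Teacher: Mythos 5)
Your proof is correct and follows essentially the same route as the paper: both take the $n=1$ instance of the mixing definition, use the witnesses $x$ and $s\cdot y_1$, and derive the conclusion from the inclusion $\langle\{x\}\rangle_E\cup\langle\{s\cdot y_1\}\rangle_E\subset\langle\{x,s\}\rangle_E\cup\langle\{y_1,s\}\rangle_E$. Your write-up just makes explicit the monotonicity and closure-under-$\cdot$ facts that the paper leaves implicit.
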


\begin{proof} The $x(sy_i)^{<\w}$-mixing $\dot E$-algebra $X$ contains pairwise distinct points $x,s,y$ such that $x(sy)\notin \langle \{x,s\}\rangle_E\cup\langle\{y,s\}\rangle_E$. Taking into account that  $\langle\{x\}\rangle_E\cup\langle\{sy\}\rangle_E\subset \langle\{x,s\}\rangle_E\cup\langle\{y,s\}\rangle_E$, we conclude that $x(sy)\notin\langle\{x\}\rangle_E\cup\langle\{sy\}\rangle_E$, which means that the $\dot E$-algebra $X$ is $x{\cdot}y$-mixing.
\end{proof}

\begin{lemma}\label{l:FK-xxy_w} If an $\HM{+}k_\w$-stable variety $\K$ of idempotent paratopological $\dot E$-algebras contains an $x(sy_i)^{<\w}$-mixing $\dot E$-algebra, then for every functionally Hausdorff space $X$ the free paratopological $\dot E$-algebra $F_\K(X)$ is a topological algebra of types $x{\cdot}y$ and $x(s^*y_i)^{<\w}$.
\end{lemma}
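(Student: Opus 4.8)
The plan is to prove the two assertions separately. The type $x{\cdot}y$ part is immediate: by Lemma~\ref{l:xxy_w->xy} every $x(sy_i)^{<\w}$-mixing $\dot E$-algebra is $x{\cdot}y$-mixing, so the hypothesis supplies an $x{\cdot}y$-mixing $\dot E$-algebra in $\K$; since $\K$ is in particular $\HM$-stable, Lemma~\ref{l:FK-xy} yields that $F_\K(X)$ is a topological algebra of type $x{\cdot}y$ for every functionally Hausdorff space $X$. For the type $x(s^*y_i)^{<\w}$ I would fix $s\in X$, put $C_n=\{s\}$ for all $n$, and define the operations from the nested products in Definition~\ref{d:xsy-mixing}: set $p_0(x)=\delta_X(x)$ and, for $n\ge 1$ and $(x,y_1,\dots,y_n)\in X^{1+n}$, put $z_n=\delta_X(s)\cdot\delta_X(y_n)$, $z_k=(\delta_X(s)\cdot\delta_X(y_k))\cdot z_{k+1}$ for $1\le k<n$, and $p_n(x,y_1,\dots,y_n)=\delta_X(x)\cdot z_1$. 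Each $p_n$ is continuous because $\delta_X$ is continuous and the binary operation $\cdot$ is continuous on $F_\K(X)\in\K$ (a paratopological $\dot E$-algebra). It then remains to verify conditions (1)--(3) of Definition~\ref{d:xyy_w}.

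Condition (1) rests on idempotence: since $F_\K(X)\in\K$ satisfies $a\cdot a=a$, substituting all arguments equal to $s$ gives $\delta_X(s)\cdot\delta_X(s)=\delta_X(s)$, whence $z_k=\delta_X(s)$ for all $k$ and $p_n(s,\dots,s)=\delta_X(s)=p_0(s)$. For condition (2), I would transfer the mixing property to the free algebra. Given pairwise distinct $x,y_1,\dots,y_n\in X\setminus\{s\}$, set $D=\{s,x,y_1,\dots,y_n\}$; as $D$ is finite and functionally Hausdorff it is discrete, and $F_\K i_{D,X}\colon F_\K(D)\to F_\K(X)$ is injective by Theorem~\ref{t:FKX1}(3), carrying the element $p^D_n$ computed in $F_\K(D)$ onto $p_n(x,y_1,\dots,y_n)$. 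Let $s',x',y_1',\dots,y_n'\in Y$ be the distinct points witnessing the $x(sy_i)^{<\w}$-mixing property in the given algebra $Y\in\K$, and let $\bar f\colon F_\K(D)\to Y$ be the continuous $E$-homomorphism induced by $s\mapsto s'$, $x\mapsto x'$, $y_i\mapsto y_i'$. Since $\bar f$ commutes with $\cdot$, it sends $p^D_n$ to $x'\cdot z_1'$. If some $t\in\{x,y_1,\dots,y_n\}$ were missing from $\supp(p^D_n)$, then by Theorem~\ref{t:FKX1}(5) we would have $p^D_n\in\langle\delta_D(A\cup\{s\})\rangle_E$ with $A=\{x,y_1,\dots,y_n\}\setminus\{t\}$ proper, and applying $\bar f$ would place $x'\cdot z_1'$ in $\langle f(A)\cup\{s'\}\rangle_E$ with $f(A)$ a proper subset, contradicting the mixing property. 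Hence $\{x,y_1,\dots,y_n\}\subset\supp(p^D_n)$, and Proposition~\ref{p:supp=image} (with $i_{D,X}$ injective) gives $\{x,y_1,\dots,y_n\}\subset\supp(p_n(x,y_1,\dots,y_n))$.

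For condition (3) I would build a single tower of neighborhoods uniform in $n$, which I expect to be the main obstacle. Writing $m(a,b)=a\cdot b$ and $\sigma(y)=\delta_X(s)\cdot\delta_X(y)$ (so $\sigma(s)=\delta_X(s)$ by idempotence), I would use continuity of $m$ at $(\delta_X(s),\delta_X(s))\mapsto\delta_X(s)$ to choose, inside a given neighborhood $U$ of $p_0(s)=\delta_X(s)$, a decreasing sequence of neighborhoods $W_0\supseteq W_1\supseteq\cdots$ of $\delta_X(s)$ with $W_0\subseteq U$ and $m(W_{k+1}\times W_{k+1})\subset W_k$ for all $k$. Setting $U_0=\delta_X^{-1}(W_1)$ and $U_k=\sigma^{-1}(W_{k+1})$ for $k\ge1$ (each a neighborhood of $s$), a downward induction from $k=n$ to $k=1$ shows $z_k\in W_k$ for every evaluation with $u_k\in U_k$: the base $z_n=\sigma(u_n)\in W_{n+1}\subseteq W_n$, and $z_k=m(\sigma(u_k),z_{k+1})\in m(W_{k+1}\times W_{k+1})\subset W_k$. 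Consequently $z_1\in W_1$ and $p_n(u_0,\dots,u_n)=m(\delta_X(u_0),z_1)\in m(W_1\times W_1)\subset W_0\subseteq U$, as required. The delicate point is precisely that the single sequence $(U_k)_{k\in\w}$ must control the growing nesting depth of $z_1$ uniformly over all $n$, which is exactly what the self-refining tower $m(W_{k+1}\times W_{k+1})\subset W_k$ (available thanks to idempotence fixing $\delta_X(s)$ as an idempotent of $m$) provides.
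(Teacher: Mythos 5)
Your proof is correct and follows essentially the same route as the paper: type $x{\cdot}y$ via Lemmas~\ref{l:xxy_w->xy} and \ref{l:FK-xy}, the same recursively nested operations $p_n$ (your $z_k$ are the paper's $p_{k,n}$), idempotence for condition (1), transfer to the finite subspace and the mixing algebra $Y$ for condition (2), and the same self-refining neighborhood tower around the idempotent $\delta_X(s)$ for condition (3). The only (immaterial) deviation is that for condition (2) you route through injectivity of $F_\K i_{D,X}$ plus Proposition~\ref{p:supp=image}, where the paper identifies $F_\K(S)$ with a closed subspace of $F_\K(X)$ via Theorem~\ref{t:FKX1}(4) and Proposition~\ref{p:FembTych}.
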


\begin{proof} By Lemma~\ref{l:xxy_w->xy}, the variety $\K$ contains an $x{\cdot}y$-mixing $\dot E$-algebra and by Lemma~\ref{l:FK-xy}, $F_\K(X)$ is a topological algebra of type $x{\cdot}y$.

Next, we show that $F_\K(X)$ is a topological algebra of type $x(s^*y_i)^{<\w}$.
Given any point $s\in X$, for every $n\in\IN$ and positive $k\le n$ consider the $(1+n)$-ary operation $p_{k,n}:X^{1+n}\to F_\K(X)$ defined by the recursive formulas $p_{n,n}(x,y_1,\dots,y_n)=\delta_X(s)\cdot \delta_X(y_n)$ and $$p_{k,n}(x,y_1,\dots,y_n)=(\delta_X(s)\cdot\delta_X(y_k))\cdot p_{k+1,n}(x,y_1,\dots,y_n),$$
 for $0<k<n$. Finally, put $p_0(x)=\delta_X(x)$ and $$p_n(x,y_1,\dots,y_n)=\delta_X(x_0)\cdot p_{1,n}(x,y_1,\dots,y_n)$$ for $n\in\IN$.

We claim that the sets $C_n=\{s\}$ and the operations $p_n$, $n\in\w$, witness that the functor-space $F_\K(X)$ is a topological algebra of type $x(s^*y_i)^{<\w}$. We need to verify the conditions (1)--(3) of Definition~\ref{d:xyy_w}. Fix any $n\in\IN$ and observe that $p_{n,n}(s,\dots,s)=\delta_X(s)\cdot\delta_X(s)=\delta_X(s)$.
Assume that for some positive $k<n$ we have proved that $p_{k{+}1,n}(s,\dots,s)=\delta_X(s)$. Then
$$
\begin{aligned}
p_{k,n}(s,\dots,s)&=(\delta_X(s){\cdot}\delta_X(s))\cdot p_{k{+}1,n}(s,\dots,s)=\\
&=(\delta_X(s){\cdot}\delta_X(s)){\cdot} \delta_X(s)=\delta_X(s){\cdot}\delta_X(s)=\delta_X(s).
\end{aligned}
$$
In particular, $p_{1,n}(s,\dots,s)=\delta_X(s)$ and $$p_n(s,\dots,s)=\delta_X(s)\cdot  p_{1,n}(s,\dots,s)=\delta_X(s)\cdot\delta_X(s)=\delta_X(s)=p_0(s).$$

To check the second condition of Definition~\ref{d:xyy_w}, fix $n\in \IN$ and pairwise distinct points $x,y_1,\dots,y_n\in X\setminus\{s\}$. We need to prove that $\{x,y_1,\dots,y_n\}\subset\supp(p_n(x,y_1,\dots,y_n))$. The definition of $p_n(x,y_1,\dots,y_n)$ implies that $\supp(p_n(x,y_1,\dots,y_n))$ is contained in the set $S=\{s,x,y_1,\dots,y_n\}$. Assuming that $\{x,y_1,\dots,y_n\}\not\subset\supp(p_n(x,y_1,\dots,y_n))$ for some $n\in\IN$, we conclude that $p_n(x,y_1,\dots,y_n)\in\langle \delta_X(A\cup\{s\})\rangle_E$ for some proper subset $A\subset\{x,y_1,\dots,y_n\}$. By Theorem~\ref{t:FKX1}(4) and Proposition~\ref{p:FembTych}, the map $F_\K i_{S,X}:F_\K(S)\to F_\K(X)$ is a closed topological embedding, which allows us to identify $F_\K(S)$ with a closed subspace of $F_\K(X)$.

By our assumption, the variety $\K$ contains a $x(sy_i)^{<\w}$-mixing $\dot E$-algebra $Y$. By Definition~\ref{d:xsy-mixing}(2), the $\dot E$-algebra $Y$ contains pairwise distinct points $\bar x,\bar s,\bar y_1,\dots,\bar y_n\in Y$ such that for the  points $\bar z_n=\bar s{\cdot}\bar y_n$ and $\bar z_{k}=(\bar s{\cdot}\bar y_k)\cdot \bar z_{k+1}$, $1\le k<n$, the point $\bar x{\cdot}\bar z_1$ is not contained in the $E$-hull $\langle \bar A\cup\{s\}\rangle_E$ for any proper subset $\bar A$ of $\{\bar x,\bar y_1,\dots,\bar y_n\}$.

Consider the function $f:S\to Y$ such that $f(x)=\bar x$, $f(s)=\bar s$, and $f(y_i)=\bar y_i$ for $1\le i\le n$. By the definition of the free $E$-algebra $F_\K(S)$, there exists a continuous $E$-homomorphism $\bar f:F_\K(S)\to Y$ such that $\bar f\circ\delta_S=f$.
The definition of the element $p_n(x,y_1,\dots,y_n)$ implies that
$$
p_n(x,y_1,\dots,y_n)\in F_\K i_{S,X}(F_\K(S))= F_\K(S)
$$ and $\bar x{\cdot}\bar z_1=\bar f(p_n(x_0,y_1,\dots,y_n))\in\bar f(\langle A\cup\{s\})\rangle_E=\langle f(A)\cup\{\bar s\}\rangle_E$ for the proper subset $f(A)$ of $\{\bar x_0,\bar y_1,\dots,\bar y_n\}$. But this contradicts the choice of the $\dot E$-algebra $Y$. This contradiction completes the proof of the condition (2) of Definition~\ref{d:xyy_w}.

To check the condition (3), fix any neighborhood $U\subset F_\K(X)$ of the point $p_0(s)=\delta_X(s)$. By the continuity of the multiplication in $F_\K(X)$, there is a neighborhood $V_0\subset F_\K(X)$ of the point $\delta_X(s)$ such that $V_0{\cdot}V_0\subset U$. By induction, we can construct a sequence $(V_n)_{n\in\w}$ of neighborhoods of $s$ in $F_\K(X)$ such that $V_n{\cdot}V_n\subset V_{n-1}$ and $V_n\subset V_{n-1}$ for all $n\in\IN$. Using the continuity of the map $\delta_X:X\to F_\K(X)$ and the continuity of the multiplication in the paratopological $\dot E$-algebra $F_\K(X)$, for every $n\in\w$ choose an open neighborhood $U_n\subset X$ of the point $s$ such that $\delta_X(U_n)\subset V_{n}$ and $\delta_X(s)\cdot\delta_X(U_n)\subset V_{n+1}$. We claim that $p_n(U_0\times \cdots\times U_n)\subset U$ for every $n\in\IN$. Fix any sequence $(u_0,\dots,u_n)\in U_0\times\dots \times U_n$. First we prove that $p_{k,n}(u_0,\dots,u_n)\in V_k$ for every $k\le n$. Indeed, for $k=n$, we get $p_{n,n}(u_0,\dots,u_n)=\delta_X(s){\cdot}\delta_X(u_n)\subset V_{n+1}\subset V_n$. Assuming that for some positive $k<n$ the inclusion $p_{k+1,n}(u_0,\dots,u_n)\in V_{k+1}$ has been proved, observe that $$
\begin{aligned}
p_{k,n}(u_0,\dots,u_n)&=(\delta_X(s)\cdot\delta_X(u_k))\cdot p_{k+1,n}(u_0,\dots,u_n)\in\\
&\in (\delta_X(s)\cdot\delta_X(U_k))\cdot V_{k+1}\subset V_{k+1}\cdot V_{k+1}\subset V_k.
\end{aligned}
$$
Then $p_n(u_0,\dots,u_n)=\delta_X(u_0)\cdot p_{1,n}(u_0,\dots,u_n)\in V_0{\cdot}V_1\subset U$, which completes the verification of the conditions (1)--(3) of Definition~\ref{d:xyy_w}.
\end{proof}

Combining Lemma~\ref{l:FK-xxy_w} with Corollaries~\ref{c:xyy_w} and Theorems~\ref{t:FKX1}(1), \ref{t:bar-i-bijective}, \ref{t:d+HM+kw}, we obtain:

\begin{corollary}\label{c:i-xyy_w} Let $\K$ be a $d{+}\HM{+}k_\w$-stable variety of idempotent topological $\dot E$-algebras such that $\K$ contains an  $x(sy_i)^{<\w}$-mixing topological algebra. Let $X$ be a $\mu$-complete Tychonoff space such that $F_\K X$ contains no strong $\Fin^\w$-fan.
\begin{enumerate}
\item[\textup{1)}] If $X$ is a $\bar\aleph_k$-space, then the $k$-modification of $X$ is a topological sum of cosmic $k_\w$-spaces.
\item[\textup{2)}] If $X$ is a $\bar\aleph_k$-space and a $k_\IR$-space, then the space $X$ is a topological sum of cosmic $k_\w$-spaces.
\item[\textup{3)}] If $X$ is an $\aleph_0$-space, then $X$ is hemicompact.
\item[\textup{4)}] If $X$ is cosmic, then $X$ is $\sigma$-compact.
\end{enumerate}
\end{corollary}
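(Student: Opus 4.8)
The plan is to recognize the statement as a direct application of Corollary~\ref{c:xyy_w} to the functor $F=F_\K$, restricted to the category $\Top_{3\frac12}$ of Tychonoff spaces, after checking that $F_\K$ meets all the standing and explicit hypotheses of that corollary. First I would assemble the functorial properties. By Theorem~\ref{t:FKX1}(1) the functor $F_\K$ has finite supports; by Theorem~\ref{t:FKX1}(3) it preserves injectivity of continuous maps between functionally Hausdorff spaces, so (Tychonoff spaces being functionally Hausdorff) its restriction to $\Top_{3\frac12}$ is monomorphic. Since $\K$ is in particular $d{+}\HM$-stable, Theorem~\ref{t:bar-i-bijective} guarantees that $F_\K$ is $\II$-regular. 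Finally, since the signature $E$ is a $k_\w$-space and $\K$ is $d{+}\HM{+}k_\w$-stable, Theorem~\ref{t:d+HM+kw} shows that $F_\K|\Top_{3\frac12}$ is strongly bounded, hence bounded. Thus $F_\K$ is a bounded $\II$-regular monomorphic functor with finite supports which is moreover strongly bounded, exactly the running hypotheses of Corollary~\ref{c:xyy_w}.

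Next I would install the required algebraic structure on the functor-space. A topological $\dot E$-algebra is in particular a paratopological $\dot E$-algebra, and $d{+}\HM{+}k_\w$-stability implies $\HM{+}k_\w$-stability; therefore $\K$ is an $\HM{+}k_\w$-stable variety of idempotent paratopological $\dot E$-algebras containing an $x(sy_i)^{<\w}$-mixing algebra. Lemma~\ref{l:FK-xxy_w} then applies and tells us that for every functionally Hausdorff space, in particular for our Tychonoff $X$, the free algebra $F_\K(X)$ is a topological algebra of types $x{\cdot}y$ and $x(s^*y_i)^{<\w}$. Together with the hypothesis that $F_\K(X)$ contains no strong $\Fin^\w$-fan, every assumption of Corollary~\ref{c:xyy_w} is now met, and items (1), (3) and (4) drop out immediately: if $X$ is a $\bar\aleph_k$-space its $k$-modification is a topological sum of cosmic $k_\w$-spaces; if $X$ is an $\aleph_0$-space it is hemicompact; and if $X$ is cosmic it is $\sigma$-compact.

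It remains to deduce statement (2) from statement (1), and this is the only step requiring an argument rather than bookkeeping. Assuming $X$ is both a $\bar\aleph_k$-space and a $k_\IR$-space, item (1) gives $kX=\bigoplus_{\alpha}X_\alpha$, a topological sum of cosmic $k_\w$-spaces; being a sum of regular Lindel\"of $k$-spaces, $kX$ is itself a Tychonoff $k$-space. The plan is to show $X=kX$. Because the identity map $\id\colon kX\to X$ is a continuous $k$-homeomorphism, the spaces $X$ and $kX$ share the same compact subsets with the same subspace topologies, so the continuous real-valued functions on $kX$ are precisely the $k$-continuous real-valued functions on $X$; the $k_\IR$ property of $X$ forces these to be continuous on $X$, whence $C(kX)=C(X)$. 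Since both $X$ and $kX$ are Tychonoff, each carries the initial topology determined by its continuous real functions, and these coincide; therefore the identity $kX\to X$ is a homeomorphism and $X=kX$ is a topological sum of cosmic $k_\w$-spaces. The main, if modest, obstacle is exactly this last upgrade: one must ensure that the $\bar\aleph_k$-conclusion of (1) lands inside the Tychonoff world, so that complete regularity of $kX$ is available and the $k_\IR$-hypothesis can convert $kX$ back into $X$. All the genuinely hard work has already been absorbed into Lemma~\ref{l:FK-xxy_w} and Corollary~\ref{c:xyy_w}.
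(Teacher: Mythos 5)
Your proposal is correct and follows essentially the same route as the paper, which obtains this corollary precisely by combining Lemma~\ref{l:FK-xxy_w} with Corollary~\ref{c:xyy_w} and Theorems~\ref{t:FKX1}, \ref{t:bar-i-bijective}, \ref{t:d+HM+kw}. Your explicit argument for item (2) --- that the $k_\IR$-property together with the Tychonoff property of $kX$ (being a topological sum of cosmic $k_\w$-spaces) forces $X=kX$ --- is exactly the step the paper leaves implicit here and invokes in the same way elsewhere (e.g.\ in the proof of Theorem~\ref{t:xy+xyz+xy.zs}).
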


\section{Fans in free paratopological $\dot E^*$-algebras}

Let $E$ be a topological signature with a distinguished symbol $\cdot\in E_2$ of binary operation and a distinguished symbol ${}^*\in E_1$ of a unary operation.

A topologized $E$-algebra $X$ is called a \index{paratopological $\dot E^*$-algebra}{\em paratopological $\dot E^*$-algebra} if the binary operation $\cdot\colon X^2\to X$, $\cdot\colon (x,y)\mapsto xy$,  (interpreting the symbol $\cdot$) is continuous. According to this definition the unary operation ${}^*\colon X\to X$,
${}^*\colon x\mapsto x^*$, is defined but is not necessarily continuous.

\begin{definition}\label{d:mixing} A paratopological $\dot E^*$-algebra is called
\begin{itemize}
\item \index{paratopological $\dot E^*$-algebra!$x(y^*z)$-mixing}{\em $x(y^*z)$-mixing} if there exist pairwise distinct points $x,y,z\in M$ such that\newline $x(y^*z)\notin\langle \{y,z\}\rangle_E\cup\langle \{x,z\}\rangle_E$;
\item \index{paratopological $\dot E^*$-algebra!$(x^*y)(z^*s)$-mixing}{\em $(x^*y)(z^*s)$-mixing} if there exist  pairwise distinct points $x,y,z,s$ such that\newline $(x^*y)(z^*s)\notin \langle \{x,z,s\}\rangle_E\cup\langle\{x,y,z\}\rangle_E$;
\item \index{paratopological $\dot E^*$-algebra!$x(y(s^*z)y^*)$-mixing} {\em $x(y(s^*z)y^*)$-mixing} if there exist  pairwise distinct points $x,y,z,s\in\K$ such that\newline $x((y(s^*z))y^*)\notin \langle \{x,s,z\}\rangle_E\cup \langle \{y,s,z\}\rangle_E$;
\item \index{paratopological $\dot E^*$-algebra!$(x(s^*z)x^*)(y(s^*z)y^*)$-mixing}{\em $(x(s^*z)x^*){\cdot}(y(s^*z)y^*)$-mixing}  if there exist  pairwise distinct points $x,y,z,s\in X$ such that\newline $((x(s^*z))x^*){\cdot}((y(s^*z))y^*)\notin \langle \{x,s,z\}\rangle_E\cup \langle \{y,s,z\}\rangle_E$;
\item \index{paratopological $\dot E^*$-algebra!$x(s^*y_i)^{<\w}$-mixing}{\em $x(s^*y_i)^{<\w}$-mixing} if for every $n\in\w$ there exist  pairwise distinct points $x,s,y_1,\dots,y_n\in X$ such that for the points $z_n=s\cdot y_n$ and $z_{k}=(s^*y_k)\cdot z_{k+1}$, $1\le k<n$, the point $x\cdot z_1$ is not contained in the $E$-hull $\langle A\cup\{s\}\rangle_E$ for any proper subset $A$ of $\{x,y_1,\dots,y_n\}$.
    \end{itemize}
\end{definition}

We shall say that the identity $x^*x=y^*y$
(resp.  $xx^*=yy^*$, $x(x^*x)=x$, $(x^*x)(x^*x)=x^*x$) holds in a variety $\K$ of paratopological $\dot E^*$-algebras if this equality holds for any elements $x,y$ of any paratopological $\dot E^*$-algebra $X\in \K$.

\begin{proposition}\label{p:mixing->TA} Let $\K$ be a $\HM$-stable variety of paratopological $\dot E^*$-algebras. For a functionally Hausdorff space $X$ the free paratopological $\dot E^*$-algebra $F_\K(X)$ is a topological algebra of type:
\begin{enumerate}
\item[\textup{1)}] $x(y^*z)$ if the identity $x^*x=y^*y$ holds in $\K$ and $\K$ contains a $x(y^*z)$-mixing paratopological $\dot E^*$-algebra;
\item[\textup{2)}] $(x^*y)(z^*s)$ if the identity $x^*x=y^*y$  holds in $\K$ and $\K$ contains a $(x^*y)(z^*s)$-mixing paratopological $\dot E^*$-algebra;
\item[\textup{3)}]  $x(y(s^*z)y^*)$ if the identities $x(x^*x)=x$, $x^*x=y^*y$ and $xx^*=yy^*$ hold in $\K$ and $\K$ contains a  $x(y(s^*z)y^*)$-mixing paratopological $\dot E^*$-algebra;
\item[\textup{4)}] $(x(s^*z)x^*){\cdot}(y(s^*z)y^*)$ if the identities $x(x^*x)=x$, $x^*x=y^*y$ and $xx^*=yy^*$ hold in $\K$ and $\K$ contains a $(x(s^*z)x^*){\cdot}(y(s^*z)y^*)$-mixing paratopological $\dot E^*$-algebra;
\item[\textup{5)}] $x(s^*y_i)^{<\w}$ if the identities $x(x^*x)=x$ and $(x^*x)(x^*x)=x^*x$ hold in $\K$ and $\K$-contains a $x(s^*y_i)^{<\w}$-mixing paratopological $\dot E^*$-algebra.
\end{enumerate}
\end{proposition}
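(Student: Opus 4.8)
The plan is to treat all five statements by a single scheme that upgrades the arguments of Lemma~\ref{l:FK-xy} and Lemma~\ref{l:FK-xxy_w} to the involutive setting. For each type I would manufacture the required $F_\K$-valued operation on $F_\K(X)$ by transcribing the word that defines the corresponding mixing notion in Definition~\ref{d:mixing}: every variable $v$ is replaced by $\delta_X(v)$, the symbol $\cdot$ is read as the (continuous) multiplication of $F_\K(X)$ and the symbol ${}^*$ as its (possibly discontinuous) involution. Concretely I would take $p(x,y,z)=\delta_X(x)\cdot(\delta_X(y)^*\cdot\delta_X(z))$ for (1), $p(x,y,z,s)=(\delta_X(x)^*\cdot\delta_X(y))\cdot(\delta_X(z)^*\cdot\delta_X(s))$ for (2), the ternary words $\delta_X(x)\cdot((\delta_X(y)(\delta_X(s)^*\delta_X(z)))\delta_X(y)^*)$ and $((\delta_X(x)(\delta_X(s)^*\delta_X(z)))\delta_X(x)^*)\cdot((\delta_X(y)(\delta_X(s)^*\delta_X(z)))\delta_X(y)^*)$ for (3) and (4), and for (5) the family $(p_n)$ defined by the same recursion as in Lemma~\ref{l:FK-xxy_w} but with the involution inserted following Definition~\ref{d:mixing}. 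In each case the finite set $C$ (or $C_n$) would be empty, or the singleton $\{s\}$ as in Lemma~\ref{l:FK-xxy_w}.

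Verifying the algebraic identity in each definition (its condition (1)) is then purely formal, because $F_\K(X)\in\K$ forces the assumed identities to hold in $F_\K(X)$. The observation that makes this work is that the relevant diagonal value is a single element independent of the free variable: under $x^*x=y^*y$ the element $e:=\delta_X(x)^*\delta_X(x)$ is constant, under $xx^*=yy^*$ so is $e':=\delta_X(x)\delta_X(x)^*$, and the identities $x(x^*x)=x$ and $(x^*x)(x^*x)=x^*x$ collapse $\delta_X(v)\cdot e$ to $\delta_X(v)$ and $e\cdot e$ to $e$. Substituting the diagonal arguments into each word and applying these rules reduces $p(x,y,y)$ to $p(x,x,x)$ in (1), equates $p(x,x,x,x)$, $p(x,x,z,z)$, $p(z,z,z,z)$ in (2), reduces $p(x,y,s)$ to $p(x,s,s)$ in (3) and to $p(s,s,s)$ in (4), and gives $p_n(s,\dots,s)=p_0(s)=\delta_X(s)$ by induction in (5).

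For the support condition (condition (2) of each definition) I would run the test-algebra argument of Lemma~\ref{l:FK-xy}. Given pairwise distinct points, let $D\subset X$ be the finite set they span; by Theorem~\ref{t:FKX1}(3) (equivalently Proposition~\ref{p:FembTych}) the homomorphism $F_\K i_{D,X}\colon F_\K(D)\to F_\K(X)$ is injective, so the word lives faithfully on $F_\K(D)$. If its support omitted one of the demanded generators, the element would lie in $\langle\delta_X(A)\rangle_E$ for the corresponding proper subset $A$. Taking a mixing algebra $Y\in\K$ from the hypothesis and the map $f\colon D\to Y$ onto the witnessing points, the induced $E$-homomorphism $\bar f\colon F_\K(D)\to Y$ sends the word to the corresponding word in $Y$ and $\langle\delta_X(A)\rangle_E$ into $\langle f(A)\rangle_E$, contradicting the mixing property; combined with the evident upper bound on the support this yields the equalities each definition requires.

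The neighbourhood (continuity) conditions follow from continuity of $\cdot$ and of $\delta_X$, and the crucial structural point — which is also where the hard part lies — is that every prescribed neighbourhood of the diagonal (or of $X_d\times\{s\}$) sits inside $X_d\times X$, so the arguments fed to ${}^*$ always occupy the discrete factor. Hence the discontinuity of ${}^*$ never intervenes: for each fixed value of a starred variable the remaining dependence of the word on the continuous coordinate is continuous, and at the diagonal it collapses, by the identities above, to the fixed element $e$ or $e'$. Given a neighbourhood of the diagonal value, continuity of $\cdot$ produces factor-neighbourhoods whose product lies inside it, and continuity in the continuous coordinate produces, for each value of the discrete variable, the corresponding slice of the required diagonal neighbourhood. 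The genuine difficulty is the interlocking of this step with the previous one: the words of (3)--(5) must be chosen so that pushing a block of variables onto the diagonal collapses them simultaneously algebraically and continuously, with ${}^*$ applied only to discrete coordinates; once the words above are fixed this becomes routine, but arranging it is the delicate point.
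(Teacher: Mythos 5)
Your proposal is correct and follows essentially the same route as the paper's proof: the same words transcribed via $\delta_X$ (with $C=\emptyset$ or $C_n=\{s\}$), the algebraic identities verified because $F_\K(X)\in\K$, the support condition via the test-algebra argument through $F_\K i_{D,X}$ and the mixing algebra, and the continuity condition exploiting that starred variables occupy the discrete factor $X_d$ so the word collapses to the constant $\bar x^*\bar x$ (or $\bar x\bar x^*$) at the diagonal.
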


\begin{proof} The proofs of the five statements are similar. So, we present a detail proof of the first statement only (see also the proof of Lemma~\ref{l:FK-xxy_w}). For a space $X$ and a point $x\in X$ by $\bar x$ we shall denote the image $\delta_X(x)$ of the point $x$ under the canonical map $\delta_X:X\to F_\K(X)$.

 Assume that the identity $x^*x=y^*y$ holds in $\K$ and $\K$ contains a $x(y^*z)$-mixing paratopological $\dot E^*$-algebra $Y\in\K$.
 In this case we shall show that for any functionally Hausdorff space $X$ the ternary $F_\K$-valued operation $$p:X^3\to F_\K(X),\;\;p:(x,y,z)\mapsto \bar x \cdot(\bar y^*\cdot \bar z),$$ witnesses that $F_\K(X)$ is a topological algebra of type $x(y^*z)$.
The identity $y^*y=x^*x$ holding in $\K$ guarantees that $p(x,y,y)=p(x,x,x)$ for all $x\in X$, confirming the condition (1) of Definition~\ref{d:xyz}.

To verify the condition (2) of that definition, we need to check that $$\{x,z\}\subset \supp(p(x,y,z))=\supp(\bar x(\bar y^*\bar z))$$ for any distinct points $x,y,z\in X$.
By Definition~\ref{d:mixing}, the $x(y^*z)$-mixing $\dot E^*$-algebra $Y$ contains  three points $x_0,y_0,z_0\in M$ such that $x_0(y_0^*z_0)\notin\langle \{x_0,z_0\}\rangle_E\cup\{y_0,z_0\}\rangle_E$.
Consider the (discrete) subspace $D=\{x,y,z\}\subset X$ and the map $f:D\to Y$ such that $f(x)=x_0$, $f(y)=y_0$, $f(z)=z_0$. This map generates a unique continuous $E$-homomorphism $\bar f:F_K(D)\to Y$ such that $\bar f\circ\delta_D=f$.

Assuming that $x\notin\supp(p(x,y,z))$ and taking into account that $\supp(x,y,z)\subset \{x,y,z\}$, we conclude that $p(x,y,z)\in F_\K(\{y,z\};X)$. The injectivity of the $E$-homo\-morphism $F_\K i_{D,X}:F_\K(D)\to F_\K(X)$ (proved in Theorem~\ref{t:FKX1}) implies that $\bar x(\bar y^*\bar z)\in \langle\{\bar y,\bar z\}\rangle_E$ in $F_\K(D)$ and hence $$x_0(y_0^*z_0)=F_\K f(\bar x(\bar y^*\bar z))\in F_\K f(\langle\{\bar x,\bar z\}\rangle_E)\subset \langle\{f(x),f(z)\}\rangle_E=\langle\{x_0,z_0\}\rangle_E,$$ which contradicts the choice of the points $x_0,y_0,z_0$. By analogy we can prove that $z\in\supp(p(x,y,z))$.
So, the condition (2) of Definition~\ref{d:xyz} is satisfied too.

To prove the condition (3), fix any point $x\in X$ and a neighborhood $U\subset F_\K(X)$ of the point $p(x,x,x)=\bar x(\bar x^*\bar x)$. By the continuity of the multiplication in $F_\K(X)$, the are open sets $O_{\bar x},O_{\bar x^*\bar x}\subset F_\K(X)$ such that $O_{\bar x}\cdot O_{\bar x^*\bar x}\subset U$. The continuity of the map $\delta_X:X\to F_\K(X)$ at  the point $x$ yields a neighborhood $O_x\subset X$ of $x$ such that $\delta_X(O_x)\subset O_{\bar x}$.
For every $y\in X$ use the equality $\bar y^*\bar y=\bar x^*\bar x\in O_{\bar x^*\bar x}$ and the continuity of the multiplication in $F_\K(X)$ to find a neighborhood $O_{\bar y}\subset F_\K(X)$ of $\bar y$ such that $\bar y^*\cdot O_{\bar y}\subset O_{\bar x^*\bar y}$. By the continuity of $\delta_X$, there exists a neighborhood $O_y\subset X$ such that  $\delta_X(O_y)\subset O_{\bar y}$.
Then the neighborhood $U_\Delta=\bigcup_{y\in X}\{y\}\times O_y\subset X_d\times X$ has the required property: $p(O_x\times U_\Delta)\subset U$.
\end{proof}

\begin{theorem}\label{t:xsy_w-mix} Let $\K$ be a $d{+}\HM{+}k_\w$-stable variety of paratopological $\dot E^*$-algebras such that the identities $x(x^*x)=x$ and $(x^*x)(x^*x)=x^*x$ hold in $\K$ and $\K$ contains an $x(s^*y_i)^{<\w}$-mixing paratopological $\dot E^*$-algebra. Assume that for a $\mu$-complete Tychonoff space $X$ the free paratopological $\dot E^*$-algebra $F_\K(X)$ contains no strong $\Fin^\w$-fan.
\begin{enumerate}
\item[\textup{1)}] If $X$ is a $\bar\aleph_k$-space, then the $k$-modification of $X$ is a topological sum of cosmic $k_\w$-spaces.
\item[\textup{2)}] If $X$ is a $\bar\aleph_k$-space and a $k_\IR$-space, then the space $X$ is a topological sum of cosmic $k_\w$-spaces.
\item[\textup{3)}] If $X$ is an $\aleph_0$-space, then $X$ is hemicompact.
\item[\textup{4)}] If $X$ is cosmic, then $X$ is $\sigma$-compact.
\end{enumerate}
\end{theorem}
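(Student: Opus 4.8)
The plan is to realize the functor-space $F_\K(X)$ as a topological algebra of type $x(s^*y_i)^{<\w}$ and then quote the structural dichotomy of Corollary~\ref{c:xyy_w}, exactly as was done for free paratopological $\dot E$-algebras in Corollary~\ref{c:i-xyy_w}. First I would record that, $\K$ being $d{+}\HM{+}k_\w$-stable, the functor $F_\K$ restricted to the category $\Top_{3\frac12}$ of Tychonoff spaces is monomorphic and has finite supports (Theorem~\ref{t:FKX1}(1,3)), is $\II$-regular (Theorem~\ref{t:bar-i-bijective}), and is strongly bounded (Theorem~\ref{t:d+HM+kw}); these are precisely the standing hypotheses under which Corollary~\ref{c:xyy_w} operates. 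Since the identities $x(x^*x)=x$ and $(x^*x)(x^*x)=x^*x$ hold in $\K$ and $\K$ contains an $x(s^*y_i)^{<\w}$-mixing paratopological $\dot E^*$-algebra, Proposition~\ref{p:mixing->TA}(5) applies to the functionally Hausdorff (indeed Tychonoff) space $X$ and shows that $F_\K(X)$ is a topological algebra of type $x(s^*y_i)^{<\w}$. As $F_\K(X)$ contains no strong $\Fin^\w$-fan by hypothesis, Corollary~\ref{c:xyy_w} then yields conclusions (1), (3) and (4) verbatim: if $X$ is a $\bar\aleph_k$-space its $k$-modification $kX$ is a topological sum of cosmic $k_\w$-spaces, if $X$ is an $\aleph_0$-space it is hemicompact, and if $X$ is cosmic it is $\sigma$-compact.

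It remains to upgrade conclusion (1) to conclusion (2), that is, to pass from a statement about $kX$ to one about $X$ itself under the extra assumption that $X$ is a $k_\IR$-space. Here I would argue that a $k_\IR$-space whose $k$-modification is a topological sum of cosmic $k_\w$-spaces is already a $k$-space, whence $X=kX$. Concretely, write $kX=\bigoplus_\alpha Y_\alpha$ with each $Y_\alpha$ a cosmic $k_\w$-space; since $X$ is Hausdorff the finer space $kX$ is Hausdorff, so each $Y_\alpha$ is a Hausdorff $k_\w$-space and hence normal, and the disjoint topological sum $kX$ is normal as well. Let $F\subset X$ be $k$-closed. Because $\id:kX\to X$ is a $k$-homeomorphism, the compact subsets of $X$ and $kX$ coincide with the same subspace topology, so $F$ is $k$-closed in $kX$ and therefore closed in the $k$-space $kX$. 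For a point $x\in X\setminus F$ the normality of $kX$ supplies a continuous $g:kX\to[0,1]$ with $g(x)=0$ and $g(F)\subset\{1\}$; restricting $g$ to compact sets (again shared by $X$ and $kX$) shows that $g$ is $k$-continuous on $X$, hence continuous on $X$ because $X$ is a $k_\IR$-space. Then $g^{-1}\big([0,\tfrac12)\big)$ is an $X$-open neighbourhood of $x$ missing $F$, so $X\setminus F$ is open and $F$ is closed in $X$. Thus every $k$-closed set is closed, $X$ is a $k$-space, $X=kX$, and $X$ is a topological sum of cosmic $k_\w$-spaces.

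The genuinely new work is this last upgrade; everything before it is the assembly of quotable facts from Chapters~\ref{ch:functor} and \ref{ch:algebra} and the preceding sections of the present chapter. The main (mild) obstacle is the $k_\IR$-to-$k$ step: the $k$-modification of a Tychonoff space can fail to be regular in general (as noted after Lemma~\ref{l:kmod=normal}), so it is essential to exploit that here $kX$ is a sum of $k_\w$-spaces and therefore normal, which is exactly what provides the separating functions needed to convert $k$-continuity into genuine continuity through the $k_\IR$ property. No almost-disjoint-family or fan-construction arguments are needed at this stage, since all the fan-theoretic content has already been absorbed into Proposition~\ref{p:mixing->TA} and Corollary~\ref{c:xyy_w}.
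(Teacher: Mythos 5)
Your proposal is correct and follows the same route as the paper: establish that $F_\K|\Top_{3\frac12}$ is monomorphic, has finite supports, is $\II$-regular and strongly bounded (Theorems~\ref{t:FKX1}(1,3), \ref{t:bar-i-bijective}, \ref{t:d+HM+kw}), invoke Proposition~\ref{p:mixing->TA}(5) to realize $F_\K(X)$ as a topological algebra of type $x(s^*y_i)^{<\w}$, and then quote Corollary~\ref{c:xyy_w}. The one place you go beyond the paper is item (2): the paper's proof simply says ``apply Corollary~\ref{c:xyy_w} and finish the proof,'' but that corollary contains no $k_\IR$-clause, so the passage from ``$kX$ is a topological sum of cosmic $k_\w$-spaces'' to ``$X=kX$'' is left implicit there (as it is elsewhere in the paper, e.g.\ in the proof of Theorem~\ref{t:xy+xyz+xy.zs}, where the equality $X=kX$ is asserted without argument). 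Your explicit upgrade---each summand of $kX$ is a regular Lindel\"of, hence normal, $k_\w$-space, so $kX$ is normal; compact sets and their subspace topologies agree in $X$ and $kX$; normality of $kX$ plus $k$-continuity and the $k_\IR$-property of $X$ convert $k$-closed sets into closed sets---is a correct and welcome filling of that gap, and is exactly the kind of argument the authors must have had in mind.
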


\begin{proof} By Theorems~\ref{t:FKX1}(1,3), \ref{t:bar-i-bijective}, \ref{t:d+HM+kw}, the functor $F_\K|\Top_{3\frac12}$ is monomorphic, $\II$-regular, strongly bounded and has finite supports.
By Proposition~\ref{p:mixing->TA}(5), the functor-space $F_\K(X)$ is a topological algebra of type $x(s^*y_i)^{<\w}$. Now it is legal to apply Corollary~\ref{c:xyy_w}  and finish the proof.
\end{proof}

\begin{theorem}\label{t:xsy+xy.zs} Let $\K$ be a $d{+}\HM{+}k_\w$-stable variety of paratopological $\dot E^*$-algebras such that the identities $x(x^*x)=x$ and $(x^*x)(x^*x)=x^*x=y^*y$ hold in $\K$ and $\K$ contains an $x(s^*y_i)^{<\w}$-mixing and an $(x^*y)(z^*s)$-mixing paratopological $\dot E^*$-algebras.
 Assume that for a $\mu$-complete Tychonoff space its free paratopological $\dot E^*$-algebra $F_\K (X)$ contains no strong $\Fin^\w$-fan.
\begin{enumerate}
\item[\textup{1)}] If $X$ is an $\bar \aleph_k$-$k_\IR$-space, then $X$ is a topological sum $C\oplus D$ of a cosmic $k_\w$-space $C$ and a discrete space $D$.
\item[\textup{2)}] If $X$ is an $\aleph_k$-space and $FX$ contains no $\Fin^{\w_1}$-fan, then $X$ is $k$-homeomorphic to a topological sum $K\oplus D$ of a cosmic $k_\w$-space $K$ and a discrete space $D$.
\end{enumerate}
\end{theorem}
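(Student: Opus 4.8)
The plan is to recognize this theorem as a direct application of Corollary~\ref{c:xy.zs+xsy_w} to the free-algebra functor $F=F_\K$, so that the work reduces entirely to verifying that $F_\K$ meets the hypotheses of that corollary. First I would fix $\Top_i=\Top_{3\frac12}$ and record, exactly as in the proof of Theorem~\ref{t:xsy_w-mix}, that the functor $F_\K|\Top_{3\frac12}$ is monomorphic, $\II$-regular, strongly bounded, and has finite supports; these four properties follow respectively from Theorem~\ref{t:FKX1}(3), Theorem~\ref{t:bar-i-bijective}, Theorem~\ref{t:d+HM+kw}, and Theorem~\ref{t:FKX1}(1), all of which apply because $\K$ is a $d{+}\HM{+}k_\w$-stable variety.

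Next I would show that the functor-space $F_\K(X)$ is simultaneously a topological algebra of the three types $x{\cdot}y$, $x(s^*y_i)^{<\w}$, and $(x^*y)(z^*s)$, which are precisely the structural hypotheses of Corollary~\ref{c:xy.zs+xsy_w}. The type $x(s^*y_i)^{<\w}$ comes from Proposition~\ref{p:mixing->TA}(5), whose hypotheses (the identities $x(x^*x)=x$ and $(x^*x)(x^*x)=x^*x$, together with the existence in $\K$ of an $x(s^*y_i)^{<\w}$-mixing algebra) are exactly the assumptions of the theorem. The type $(x^*y)(z^*s)$ comes from Proposition~\ref{p:mixing->TA}(2), using the identity $x^*x=y^*y$ (contained in the chain $(x^*x)(x^*x)=x^*x=y^*y$) and the $(x^*y)(z^*s)$-mixing algebra in $\K$.

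The only step requiring separate care is the type $x{\cdot}y$, since this type is not among the five cases of Proposition~\ref{p:mixing->TA}. Here I would first observe that the $x(s^*y_i)^{<\w}$-mixing algebra is automatically $x{\cdot}y$-mixing: taking $n=1$ in the definition (Definition~\ref{d:mixing}) yields pairwise distinct points $x,s,y_1$ with $x\cdot(s\cdot y_1)\notin\langle\{x,s\}\rangle_E\cup\langle\{y_1,s\}\rangle_E$, and then setting $u=x$ and $v=s\cdot y_1$ gives $u\cdot v\notin\langle u\rangle_E\cup\langle v\rangle_E$, because $\langle u\rangle_E\subset\langle\{x,s\}\rangle_E$ and $\langle v\rangle_E\subset\langle\{y_1,s\}\rangle_E$. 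Since $\K$ is $\HM$-stable and contains an $x{\cdot}y$-mixing algebra, Lemma~\ref{l:FK-xy} applies (its proof uses only $\HM$-stability and the injectivity statement of Theorem~\ref{t:FKX1}(3), and so carries over verbatim to paratopological $\dot E^*$-algebras) and shows that $F_\K(X)$ is a topological algebra of type $x{\cdot}y$.

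With all three structures established, both conclusions follow immediately: statement~(1) is Corollary~\ref{c:xy.zs+xsy_w}(1), applied under the standing hypothesis that $F_\K(X)$ contains no strong $\Fin^\w$-fan and $X$ is a $\bar\aleph_k$-$k_\IR$-space, and statement~(2) is Corollary~\ref{c:xy.zs+xsy_w}(2), applied under the additional hypothesis that $X$ is an $\aleph_k$-space and $F_\K(X)$ contains no $\Fin^{\w_1}$-fan. I expect no genuine obstacle beyond careful bookkeeping; the one point that warrants explicit attention is the $x{\cdot}y$ reduction above, since it is the sole required structure not delivered directly by Proposition~\ref{p:mixing->TA}.
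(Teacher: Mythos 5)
Your proposal is correct and follows essentially the same route as the paper: establish that $F_\K|\Top_{3\frac12}$ is monomorphic, $\II$-regular, strongly bounded and has finite supports (Theorems~\ref{t:FKX1}(1,3), \ref{t:bar-i-bijective}, \ref{t:d+HM+kw}), show that $F_\K(X)$ is a topological algebra of types $x{\cdot}y$, $x(s^*y_i)^{<\w}$ and $(x^*y)(z^*s)$ (Proposition~\ref{p:mixing->TA}(2,5) plus Lemma~\ref{l:FK-xy}), and then apply Corollary~\ref{c:xy.zs+xsy_w}. The only, entirely harmless, deviation is in producing the $x{\cdot}y$-mixing algebra needed for Lemma~\ref{l:FK-xy}: the paper extracts it from the $(x^*y)(z^*s)$-mixing algebra (taking $x=x_0^*y_0$, $y=z_0^*s_0$), while you extract it from the $x(s^*y_i)^{<\w}$-mixing algebra via its $n=1$ case, which is precisely the argument of Lemma~\ref{l:xxy_w->xy}; both derivations are valid.
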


\begin{proof} By Theorems~\ref{t:FKX1}(1,3), \ref{t:bar-i-bijective}, \ref{t:d+HM+kw}, the functor $F_\K|\Top_{3\frac12}$ is monomorphic, $\II$-regular, strongly bounded and has finite supports.
By Proposition~\ref{p:mixing->TA}(2,5), the functor-space $F_\K(X)$ is a topological algebra of types $(x^*y)(z^*s)$ and $x(s^*y_i)^{<\w}$. We claim that the functor-space $F_\K(X)$ is a topological algebra of type $x{\cdot}y$. By Lemma~\ref{l:FK-xy} this will follow as soon as we find an $x{\cdot}y$-mixing $\dot E$-algebra in the variety $\K$. By our assumption, the variety $\K$ contains a $(x^*y)(z^*s)$-mixing algebra $Y$. By Definition~\ref{d:mixing}, $Y$ contains pairwise distinct points $x_0,y_0,z_0,s_0$ such that $(x_0^*{\cdot}y_0){\cdot}(z_0^*{\cdot}s_0)\notin\langle \{x_0,z_0,s_0\}\rangle_E\cup\langle \{x_0,y_0,z_0\}\rangle_E$. Then for the elements $x=x_0^*{\cdot}y_0$ and $y=z_0^*{\cdot}s_0$ the product $x{\cdot}y=(x_0^*{\cdot}y_0){\cdot}(z_0^*{\cdot}s_0)$ is not contained in $\langle x\rangle_{E}\cup\langle y\rangle_E\subset\langle\{x_0,y_0\}\rangle_E\cup\langle\{z_0,s_0\}\rangle_E$.
The elements $x,y$ witness that the $\dot E$-algebra $Y$ is $x{\cdot}y$-mixing and $F_\K(X)$ is a topological algebra of type $x{\cdot}y$.

   Now it is legal to apply Corollary~\ref{c:xy.zs+xsy_w} and finish the proof.
\end{proof}

\begin{theorem}\label{t:long-mix->X} Let $\K$ be a $d{+}\HM{+}k_\w$-stable variety of paratopological $\dot E^*$-algebras. Assume that the identities $x(x^*x)=x$, $x^*x=y^*y$ and $xx^*=yy^*$ hold in $\K$ and $\K$ contains an $x(y(s^*z)y^*)$-mixing and an $(x(s^*z)x^*)(y(s^*z)y^*)$-mixing paratopological $\dot E^*$-algebras. Let $X$ be a $\mu$-complete Tychonoff space.
\begin{enumerate}
\item[\textup{1)}] If $F_\K(X)$ contains no strong $\Fin^{\w_1}$-fan and each infinite compact subset of $X$ contains a convergent sequence, then $X$ either $k$-discrete or $\w_1$-bounded.
\item[\textup{2)}] If $F_\K(X)$ contains no $\Fin^{\w_1}$-fan and $X$ is $k^*$-metrizable, then either $X$ is $k$-discrete or $X$ is an $\aleph_0$-space.
\item[\textup{3)}] If $X$ is an $\bar\aleph_k$-space and $F_\K(X)$ contains no strong $\Fin^{\w}$-fans, then $X$ is either $k$-discrete or $X$ is a $k$-sum of hemicompact spaces; moreover, if $X$ is  a $k_\IR$-space, then $X$ is either discrete or a cosmic $k_\w$-space.
\item[\textup{4)}] If $X$ is an $\aleph_k$-space and the space $F_\K(X)$ contains no  $\Fin^{\w}$-fans and no $\Fin^{\w_1}$-fans, then $X$ is either $k$-discrete or  hemicompact.
\end{enumerate}
\end{theorem}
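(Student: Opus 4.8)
The plan is to reduce Theorem~\ref{t:long-mix->X} to the purely functor-theoretic Theorem~\ref{t:long1}, following the same template by which Theorems~\ref{t:dotE}, \ref{t:xsy_w-mix} and \ref{t:xsy+xy.zs} were reduced to Theorem~\ref{t:xy}, Corollary~\ref{c:xyy_w} and Corollary~\ref{c:xy.zs+xsy_w}, respectively. Accordingly, the first step is to verify that the free-algebra functor $F_\K$ satisfies the standing structural hypotheses of the topological-algebra machinery of Sections~\ref{s:xyz}--\ref{s:xyzs}, namely that $F_\K|\Top_{3\frac12}$ is monomorphic, has finite supports, is $\II$-regular and is (strongly) bounded. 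Since $\K$ is $d{+}\HM{+}k_\w$-stable, this follows at once from Theorems~\ref{t:FKX1}(1,3), \ref{t:bar-i-bijective} and \ref{t:d+HM+kw}: finite supports by Theorem~\ref{t:FKX1}(1), monomorphy by Theorem~\ref{t:FKX1}(3) (Tychonoff spaces being functionally Hausdorff), $\II$-regularity by Theorem~\ref{t:bar-i-bijective}, and (strong) boundedness by Theorem~\ref{t:d+HM+kw}.

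Second, I would install the two ``long'' algebraic structures on the functor-space $F_\K(X)$. By hypothesis the identities $x(x^*x)=x$, $x^*x=y^*y$ and $xx^*=yy^*$ hold in $\K$, and $\K$ contains both an $x(y(s^*z)y^*)$-mixing and an $(x(s^*z)x^*)(y(s^*z)y^*)$-mixing paratopological $\dot E^*$-algebra. Hence Proposition~\ref{p:mixing->TA}(3,4) applies and shows that for every functionally Hausdorff (in particular every Tychonoff) space $X$ the functor-space $F_\K(X)$ is a topological algebra of types $x(y(s^*z)y^*)$ and $(x(s^*z)x^*)(y(s^*z)y^*)$.

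Third, Theorem~\ref{t:long1} also requires $F_\K(X)$ to be a topological algebra of type $x{\cdot}y$, which is not directly one of the two given mixing conditions; supplying this is the one genuinely new verification in the reduction and the step I expect to require the most care. By Lemma~\ref{l:FK-xy} it suffices to produce an $x{\cdot}y$-mixing $\dot E$-algebra inside $\K$, and I would extract one from the given $x(y(s^*z)y^*)$-mixing algebra $Y$. By Definition~\ref{d:mixing} there are pairwise distinct $x,y,z,s\in Y$ with $x\cdot w\notin\langle\{x,s,z\}\rangle_E\cup\langle\{y,s,z\}\rangle_E$, where $w=(y(s^*z))y^*$. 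Since $w$ is built from $y,s,z$ using only the operations $\cdot$ and ${}^*$, we have $\langle w\rangle_E\subset\langle\{y,s,z\}\rangle_E$, while $\langle x\rangle_E\subset\langle\{x,s,z\}\rangle_E$; thus $x\cdot w\notin\langle x\rangle_E\cup\langle w\rangle_E$, so the pair $(x,w)$ witnesses that $Y$ is $x{\cdot}y$-mixing. Lemma~\ref{l:FK-xy} then makes $F_\K(X)$ a topological algebra of type $x{\cdot}y$.

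Finally, with $X$ a $\mu$-complete Tychonoff space and $F_\K(X)$ now established to be a topological algebra of all three types $x{\cdot}y$, $x(y(s^*z)y^*)$ and $(x(s^*z)x^*)(y(s^*z)y^*)$, Theorem~\ref{t:long1} applies verbatim, and its conclusions (1)--(4) yield conclusions (1)--(4) of the present theorem. Since no topological superalgebra structure of type $(x^*y)(z^*s)$ is assumed here, the optional ``countable'' refinements of Theorem~\ref{t:long1} are simply omitted, which accounts precisely for their absence in the present statement.
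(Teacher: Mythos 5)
Your proposal is correct and follows essentially the same route as the paper's own proof: the same appeal to Theorems~\ref{t:FKX1}(1,3), \ref{t:bar-i-bijective}, \ref{t:d+HM+kw} for the functor properties, the same use of Proposition~\ref{p:mixing->TA}(3,4), the same extraction of an $x{\cdot}y$-mixing algebra (taking the pair $x_0$ and $(y_0(s_0^*z_0))y_0^*$ inside the $x(y(s^*z)y^*)$-mixing algebra) to invoke Lemma~\ref{l:FK-xy}, and the final application of Theorem~\ref{t:long1}. No gaps.
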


\begin{proof} By Theorems~\ref{t:FKX1}(1,3), \ref{t:bar-i-bijective}, \ref{t:d+HM+kw}, the functor $F_\K|\Top_{3\frac12}$ is monomorphic, $\II$-regular, bounded and has finite supports.
By Proposition~\ref{p:mixing->TA}(3,4), the functor-space $F_\K(X)$ is a topological algebra of types $x(y^*(s^*z)y)$ and $(x^*(s^*z)x)(y^*(s^*z)y)$.
We claim that the functor-space $F_\K(X)$ is a topological algebra of type $x{\cdot}y$. By Lemma~\ref{l:FK-xy} this will follow as soon as we find an $x{\cdot}y$-mixing $\dot E$-algebra in the variety $\K$. By our assumption, the variety $\K$ contains an $x(y(z^*s)y^*)$-mixing algebra $Y$. By Definition~\ref{d:mixing}, $Y$ contains pairwise distinct points $x_0,y_0,z_0,s_0$ such that $x_0{\cdot}((y_0(s_0^*z_0))y_0^*)\notin\langle \{y_0,z_0,s_0\}\rangle_E\cup\langle \{x_0,z_0,s_0\}\rangle_E$. Then for the elements $x=x_0$ and $y=(y_0(s_0^*z_0))y_0^*$ the product $x{\cdot}y=x_0\cdot ((y_0(s_0^*z_0))y_0^*)$ is not contained in $\langle x\rangle_{E}\cup\langle y\rangle_E\subset\langle\{x_0,z_0,s_0\}\rangle_E\cup\langle\{y_0,z_0,s_0\}\rangle_E$.
The elements $x,y$ witness that the $\dot E$-algebra $Y$ is $x{\cdot}y$-mixing and $F_\K(X)$ is a topological algebra of type $x{\cdot}y$.
Now it is legal to apply Theorem~\ref{t:long1}  and finish the proof.
\end{proof}

\begin{theorem}\label{t:gen-eq} Assume that the signature $E$ is a cosmic $k_\w$-space and let  $\K$ be a $d{+}\HM{+}k_\w$-stable variety of topological $\dot E^*$-algebras such that the identities $x(x^*x)=x$, $x^*x=y^*y$, $xx^*=yy^*$ hold in $\K$ and $\K$ contains an $x(y^*(s^*z)y)$-mixing and a $(x^*(s^*z)x)(y^*(s^*z)y)$-mixing topological $\dot E^*$-algebras. For a $\mu$-complete Tychonoff $k_\IR$-space $X$  the following conditions are equivalent:
\begin{enumerate}
\item[\textup{1)}] $X$ is either discrete or a cosmic $k_\w$-space.
\item[\textup{2)}] $F_\K(X)$ is either discrete or a cosmic $k_\w$-space.
\item[\textup{3)}] $X$ is a $\bar\aleph_k$-space and $F_\K(X)$ contains no strong $\Fin^{\w}$-fans.
\item[\textup{4)}] $X$ is an $\aleph_k$-space, $F_\K(X)$ contains no strong $\Fin^\w$-fans and no  $\Fin^{\w_1}$-fans.
\end{enumerate}
\end{theorem}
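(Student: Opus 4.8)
The plan is to establish the equivalence $(1)\Leftrightarrow(2)$ directly, and then close the remaining equivalences through the implications $(2)\Rightarrow(3)$, $(2)\Rightarrow(4)$, $(3)\Rightarrow(1)$ and $(4)\Rightarrow(3)$; together these make all four conditions equivalent without any circularity. Before starting I would record the standing structural facts. By Theorems~\ref{t:FKX1}(1,3), \ref{t:bar-i-bijective} and \ref{t:d+HM+kw} the restriction $F_\K|\Top_{3\frac12}$ is a monomorphic, $\II$-regular, bounded functor with finite supports, and by Corollary~\ref{c:delta-closed} the unit $\delta_X:X\to F_\K(X)$ is a closed topological embedding (note $\K$ contains an algebra of cardinality $>1$, namely any mixing algebra). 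Moreover, exactly as in the proof of Theorem~\ref{t:long-mix->X}, Proposition~\ref{p:mixing->TA}(3,4) together with the identities $x(x^*x)=x$, $x^*x=y^*y$, $xx^*=yy^*$ and the two mixing hypotheses show that $F_\K(X)$ is a topological algebra of types $x(y(s^*z)y^*)$ and $(x(s^*z)x^*){\cdot}(y(s^*z)y^*)$, while the $x(y(s^*z)y^*)$-mixing algebra is in particular $x{\cdot}y$-mixing, so by Lemma~\ref{l:FK-xy} the space $F_\K(X)$ is also a topological algebra of type $x{\cdot}y$.

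For $(1)\Rightarrow(2)$ I would split into two cases. If $X$ is discrete, then the $d$-stability of $\K$ and Proposition~\ref{p:FKX-disc} give that $F_\K(X)$ is discrete. If $X$ is a cosmic $k_\w$-space, then $X$ is functionally Hausdorff (being Tychonoff), so $F_\K(X)$ is Hausdorff by Theorem~\ref{t:bar-i-bijective}; since the signature $E$ is a cosmic $k_\w$-space and $\K$ is $k_\w$-stable, Theorem~\ref{t:kw} then yields that $F_\K(X)$ is a cosmic $k_\w$-space. The converse $(2)\Rightarrow(1)$ is immediate and independent: via the closed embedding $\delta_X$, a closed subspace of a discrete space is discrete and a closed subspace of a cosmic $k_\w$-space is again a cosmic $k_\w$-space.

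The implications $(2)\Rightarrow(3)$ and $(2)\Rightarrow(4)$ then follow formally. Using $(2)\Rightarrow(1)$, the space $X$ is discrete or a cosmic $k_\w$-space; in either case $X$ is an $\aleph_k$- and a $\bar\aleph_k$-space (a discrete space trivially, and a cosmic $k_\w$-space because it is an $\aleph_0$-space, hence an $\aleph$-space). On the other hand $F_\K(X)$ is discrete or a cosmic $k_\w$-space, hence a $k$-space, so by Proposition~\ref{k-no-Cld-fan} it contains no $\Fin$-fans of any kind; in particular no strong $\Fin^\w$-fan and no $\Fin^{\w_1}$-fan, which gives both (3) and (4). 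For $(3)\Rightarrow(1)$ I would invoke Theorem~\ref{t:long-mix->X}(3): $X$ is a $\bar\aleph_k$-space and $F_\K(X)$ has no strong $\Fin^\w$-fan, and since $X$ is a $k_\IR$-space the conclusion is exactly that $X$ is discrete or a cosmic $k_\w$-space.

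The crux, and the step I expect to be the main obstacle, is $(4)\Rightarrow(3)$, where I must upgrade the hypothesis that $X$ is an $\aleph_k$-space to $X$ being a $\bar\aleph_k$-space. Since $F_\K(X)$ contains no strong $\Fin^\w$-fan, the contrapositive of Lemma~\ref{l:xyzs} (applicable because $F_\K(X)$ is an algebra of type $x(y(s^*z)y^*)$ over the functionally Hausdorff $\mu$-complete space $X$) shows that $X$ contains no strong $D_\w$-cofan. Since $F_\K(X)$ contains no $\Fin^{\w_1}$-fan, the contrapositive of Lemma~\ref{l:x.y} (using the type $x{\cdot}y$ structure) shows that $X$ contains no $\bar S^{\w_1}$-fan, hence no $\ddot S^{\w_1}$-fan. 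Now Proposition~\ref{p:a+s->ba}, applied to the Tychonoff $\mu$-complete $\aleph_k$-space $X$ with no strong $D_\w$-cofan and no $\ddot S^{\w_1}$-fan, yields that $X$ is a $\bar\aleph_k$-space; combined with the absence of a strong $\Fin^\w$-fan in $F_\K(X)$ this is precisely condition (3). The delicate points are keeping the strong/non-strong distinction and the $\bar S$/$\ddot S$/$S$-semifan hierarchy correctly aligned with the exact hypotheses of Lemmas~\ref{l:xyzs} and \ref{l:x.y} and of Proposition~\ref{p:a+s->ba}, and making sure all the standing functorial properties (monomorphic, bounded, $\II$-regular, finite supports, and the three algebra types) are genuinely in force so that these lemmas may be invoked.
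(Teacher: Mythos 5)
Your proposal is correct, and most of it coincides with the paper's own proof: $(1)\Rightarrow(2)$ via Proposition~\ref{p:FKX-disc} and Theorems~\ref{t:bar-i-bijective}, \ref{t:kw}; $(2)\Rightarrow(3),(4)$ via Proposition~\ref{k-no-Cld-fan}; and $(3)\Rightarrow(1)$ via Theorem~\ref{t:long-mix->X}(3). Where you genuinely diverge is in closing the loop on condition (4): the paper simply cites Theorem~\ref{t:long-mix->X}(4) for $(4)\Rightarrow(1)$, whereas you prove $(4)\Rightarrow(3)$ by unpacking the fan machinery directly --- Lemma~\ref{l:xyzs} (absence of strong $\Fin^\w$-fans in $F_\K(X)$ forbids strong $D_\w$-cofans in $X$), Lemma~\ref{l:x.y} (absence of $\Fin^{\w_1}$-fans forbids $\bar S^{\w_1}$-fans, hence $\ddot S^{\w_1}$-fans), and Proposition~\ref{p:a+s->ba} (which upgrades the $\aleph_k$-space $X$ to a $\bar\aleph_k$-space); you correctly keep the strong/non-strong and Tychonoff/plain parentheticals aligned in all three citations. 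Both routes rest on the same toolbox, but your detour buys something concrete. Theorem~\ref{t:long-mix->X}(4) as literally stated assumes $F_\K(X)$ has no $\Fin^\w$-fans (without ``strong''), which is formally stronger than what condition (4) supplies, so one must retreat to Theorem~\ref{t:long1}(4) to see that the hypotheses really match; moreover its conclusion (``$k$-discrete or hemicompact'') lacks the clause converting this, under the $k_\IR$ hypothesis, into ``discrete or cosmic $k_\w$-space'', leaving an unstated step in the paper's $(4)\Rightarrow(1)$. Your reduction $(4)\Rightarrow(3)$ instead feeds into $(3)\Rightarrow(1)$, whose $k_\IR$ clause is explicit in Theorem~\ref{t:long-mix->X}(3), so your cycle needs no such repairs. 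A second, smaller difference: your direct $(2)\Rightarrow(1)$ via the closed embedding $\delta_X$ (Corollary~\ref{c:delta-closed}) supplies the $\aleph_k$/$\bar\aleph_k$ assertions about $X$ in conditions (3) and (4), which the paper's one-line proof of $(2)\Rightarrow(3,4)$ (citing only Proposition~\ref{k-no-Cld-fan}) leaves implicit.
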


\begin{proof} $(1)\Ra(2)$ If the space $X$ is discrete, then the free topological $E$-algebra $F_\K(X)$ is discrete by  Proposition~\ref{p:FKX-disc}. If $X$ is a cosmic $k_\w$-space, then $F_\K(X)$ is a cosmic $k_\w$-space by Theorems~\ref{t:bar-i-bijective} and \ref{t:kw}.
The implications $(2)\Ra(3,4)$ follows from Proposition~\ref{k-no-Cld-fan} and the implications $(3)\Ra(1)$ and $(4)\Ra(1)$ were proved in Theorem~\ref{t:long-mix->X}(3,4).
\end{proof}

\section{Free paratopological $\dot E^*$-algebras in $d{+}\HM{+}k_\w$-superstable varieties}

In this section we assume that the signature $E$ is a countable $k_\w$-space and $E$ contains two distinguished symbols $\cdot \in E_2$ and ${}^*\in E_1$ of a binary and unary operations.

A topologized $E$-algebra $X$ is called a \index{paratopological $\dot E^*$-algebra}{\em paratopological $\dot E^*$-algebra} if the binary operation $\cdot\colon X^2\to X$, $\cdot\colon: (x,y)\mapsto xy$, interpreting the symbol $\cdot\in E_2$, is continuous. The unary operation ${}*\colon:X\to X$ is defined but is not necessarily continuous.

A variety $\K$ of paratopological $\dot E^*$-algebras is called \index{variety!$k_\w$-superstable}{\em $k_\w$-superstable} if $\K$ contains any paratopological $\dot E^*$-algebra $X$ which is a $k_\w$-space and admits a continuous bijective $E$-homomorphism $h:X\to Y$ onto a paratopological $\dot E^*$-algebra $Y\in\K$.

We shall say that a variety $\K$ of paratopological $\dot E^*$-algebras is \index{variety!$d{+}\HM{+}k_\w$-superstable}{\em $d{+}\HM{+}k_\w$-superstable} if it is $d$-stable, $\HM$-stable, and $k_\w$-superstable. In this case $\K$ is $k_\w$-stable (which means that $\K$ contains any topological $\dot E^*$-algebra $X$ which is a $k_\w$-space and admits a continuous bijective $E$-homomorphism $h:X\to Y$ onto a topological $\dot E^*$-algebra $Y\in\K$).

\begin{definition}\label{d:supermix} A paratopological $\dot E^*$-algebra $X$ is called \index{paratopological $\dot E^*$-algebra!$(x^*y)(z^*s)$-supermixing}{\em $(x^*y)(z^*s)$-supermixing} if $X$ is $(x^*y)(z^*s)$-mixing and $X$ contains a 4-element set $Q=\{x,y,z,s\}$ such that  $$(x^*y)(z^*s)\notin \langle\{\langle Q\setminus\{x\}\rangle_E\cup Q\rangle_C\cup
\langle\{Q\setminus\{z\}\rangle_E\cup Q\rangle_C$$ where $C=\{\cdot\}\subset E_2\subset E$.
\end{definition}

\begin{proposition}\label{p:supermix-alg} Let $\K$ be a $d{+}\HM{+}k_\w$-superstable variety of paratopological $\dot E^*$-algebras. For a $\mu_s$-complete Tychonoff space $X$ the free paratopological $\dot E^*$-algebra $F_\K(X)$ is a topological superalgebra of type $(x^*y)(z^*s)$ if the identity $x^*x=y^*y$  holds in $\K$ and $\K$ contains a $(x^*y)(z^*s)$-supermixing paratopological $\dot E^*$-algebra.
\end{proposition}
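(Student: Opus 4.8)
The plan is to deduce the statement from Proposition~\ref{p:F->super} by realizing $F_\K$ inside the superbounded-functor framework of Chapter~\ref{ch:functor}. First I would record the structural properties of the free-algebra functor: by Theorem~\ref{t:FKX1}(1,3,4), Theorem~\ref{t:bar-i-bijective} and Theorem~\ref{t:d+HM+kw}, the restriction of $F_\K$ to $\Top_{3\frac12}$ is monomorphic, has finite supports, preserves closed embeddings of metrizable compacta, is $\II$-regular and strongly bounded, and is the functorial part of the monad $(F_\K,\delta,\mu)$. To feed this into the superbounded machinery I would take the subfunctor playing the role of the functor $E$ of that framework to be the binary-hull subfunctor $F_C\colon X\mapsto\langle\delta_X(X)\rangle_C$, where $C=\{\cdot\}\subset E_2$. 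One checks $\delta_X(X)\subset F_C X\subset F_\K X$ and $F_Cf=F_\K f|F_C X$, so $F_C$ is an admissible subfunctor, and that its associated hull and support specialize to the $C$-hull $\langle\cdot\rangle_C$ and to the support data $\supp_C$, $\Supp_C$ with $\Supp_C(a)=\{A\subset\supp(a): a\in\langle\langle\delta_X(A)\rangle_E\cup\delta_X(\supp(a))\rangle_C\}$ that appear in Definition~\ref{d:supermix}.

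Next I would exhibit the witnessing operation. Take $C_0=\emptyset$ and $p\colon X^4\to F_\K X$, $p(x,y,z,s)=(\delta_X(x)^*\delta_X(y))\cdot(\delta_X(z)^*\delta_X(s))$. Condition (1) of Proposition~\ref{p:F->super} is immediate from the identity $x^*x=y^*y$ holding in $\K$ (hence in $F_\K X\in\K$), which forces $\delta_X(x)^*\delta_X(x)=\delta_X(z)^*\delta_X(z)$; condition (3) follows from the continuity of the binary operation and of $\delta_X$ together with the same identity, exactly as in the proof of Proposition~\ref{p:mixing->TA}. The substantive point is the $C$-support bound $\{x,z\}\subset\supp_C(p(x,y,z,s))$ for pairwise distinct $x,y,z,s$. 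This I would obtain by the free-property transfer used in Proposition~\ref{p:mixing->TA}: embedding the four-point set $D=\{x,y,z,s\}$, mapping it onto the quadruple $Q$ of a fixed $(x^*y)(z^*s)$-supermixing algebra, extending to a continuous $E$-homomorphism and pulling back along the injective map $F_\K i_{D,X}$, the supermixing non-membership $(x^*y)(z^*s)\notin\langle\langle Q\setminus\{x\}\rangle_E\cup Q\rangle_C$ transfers to $p\notin\langle\langle\delta_X(D\setminus\{x\})\rangle_E\cup\delta_X(D)\rangle_C$. Since the mixing part of Definition~\ref{d:supermix} (via Theorem~\ref{t:supp}) gives $\supp(p)=D$, this says precisely that $D\setminus\{x\}\notin\Supp_C(p)$, i.e. $x\in\supp_C(p)$, and symmetrically $z\in\supp_C(p)$. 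Because supermixing implies mixing, Proposition~\ref{p:mixing->TA}(2) already shows that $F_\K X$ is a topological algebra of type $(x^*y)(z^*s)$, so only the superbounded refinement—property (4) of Definition~\ref{d:xy.zs}—remains, and it follows from this $C$-support bound exactly as in the proof of Proposition~\ref{p:F->super}, once $F_\K$ is known to be $F_C$-superbounded.

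The hard part will be establishing that $F_\K$ is $F_C$-superbounded. Here I would use Theorem~\ref{t:superbound}: since $F_\K$ is already strongly bounded, it suffices to treat a compact metrizable space $Y$ with finitely many non-isolated points and a compact set $K\subset F_\K Y$, and to produce a finite $B\subset Y$ with $B\cap\supp(a)\in\Supp_C(a)$ for every $a\in K$ of non-empty support. As such $Y$ is $\mu_s$-complete—this is where the completeness hypothesis enters—Theorem~\ref{t:super} (or Lemma~\ref{l:para-kw}) writes $K\subset A^n[D\cup\delta_Y(B_0)]$ for some $n\in\w$, a compact $A\subset C$, a finite $D\subset F_\K Y$ and a compact $B_0\subset Y$. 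Because $A\subset C=\{\cdot\}$, every $a\in K$ is a word in the single binary operation built from the factors in $D\cup\delta_Y(B_0)$; I would then set $B=\bigcup_{d\in D}\supp(d)$, a finite set. The guiding idea is that all use of the unary operation and of the higher $E$-operations is concentrated in the $D$-factors (whose supports lie in $B$), while the $\delta_Y(B_0)$-factors enter only as bare generators belonging to $\delta_Y(\supp(a))$.

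The delicate point—and the true obstacle—is to pass from $B$ to $B\cap\supp(a)$, that is, to show $a\in\langle\langle\delta_Y(B\cap\supp(a))\rangle_E\cup\delta_Y(\supp(a))\rangle_C$ rather than the analogous statement with $B$ in place of $B\cap\supp(a)$. This amounts to controlling the cancellation induced by the identities of $\K$ (such as $x(x^*x)=x$ and $x^*x=y^*y$), which can delete those parts of the $D$-factor supports lying outside $\supp(a)$. I would handle this by the retraction technique from the proof of Theorem~\ref{t:super}: since the countable compact subspaces of $Y$ that carry $\supp(a)$ are zero-dimensional, one can retract $Y$ onto a subspace fixing $\supp(a)$ and collapsing the stray support into $B\cap\supp(a)$, and then apply $F_\K$ of this retraction, which fixes $a\in\langle\delta_Y(\supp(a))\rangle_E$ (Theorem~\ref{t:supp}) while moving each $D$-factor into $\langle\delta_Y(B\cap\supp(a))\rangle_E$ and each generator factor into $\delta_Y(\supp(a))$. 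Once $F_C$-superboundedness is secured in this way, property (4) of Definition~\ref{d:xy.zs} follows from the $C$-support bound $\{x,z\}\subset\supp_C(p)$ precisely as in the proof of Proposition~\ref{p:F->super}; combined with the topological-algebra structure furnished by Proposition~\ref{p:mixing->TA}(2), this shows that $F_\K(X)$ is a topological superalgebra of type $(x^*y)(z^*s)$.
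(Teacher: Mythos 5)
Your reduction stands or falls with the claim that $F_\K$ is superbounded with respect to the subfunctor $F_C\colon X\mapsto\langle\delta_X(X)\rangle_C$, and this is exactly where the proof breaks: the claim is not just unproven, it is false under the hypotheses of Proposition~\ref{p:supermix-alg}. Your retraction $r\colon Y\to\supp(a)$ gives $B\cap\supp(a)\in\Supp_{F_C}(a)$ only when $B\cap\supp(a)\ne\emptyset$; when $\supp(a)$ is disjoint from $B$, superboundedness forces $\emptyset\in\Supp_{F_C}(a)$, i.e. $a\in\langle F_\K(\emptyset;Y)\cup\delta_Y(\supp(a))\rangle_C$, and nothing provides this. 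Concretely, let $E=\{\cdot,{}^*\}$ and let $\K$ be the variety of \emph{all} paratopological $\dot E^*$-algebras satisfying the single identity $x^*x=y^*y$; it is $d{+}\HM{+}k_\w$-superstable and contains the group $\IZ$, whose cube $\IZ^3$ is $(x^*y)(z^*s)$-supermixing by Lemma~\ref{l:supermix*}, so the Proposition applies to $\K$. In $F_\K(Y)$ with $Y=\w+1$ the element $\epsilon=\delta_Y(u)^*\delta_Y(u)$ does not depend on $u$ and has empty support, and $K=\epsilon\cdot\delta_Y(Y)$ is compact (left multiplication by $\epsilon$ is continuous). Your own recipe applied to $K$ produces $D=\{\epsilon\}$ and hence $B=\bigcup_{d\in D}\supp(d)=\emptyset$, so \emph{every} $a=\epsilon\cdot\delta_Y(u)\in K$ lands in the bad case; and no other finite $B$ can help, because $\supp(a)=\{u\}$, $F_\K(\emptyset;Y)=\emptyset$ (the signature has no constants), and since $F_\K(Y)$ is algebraically free (Corollary~\ref{c:algebr-free}), the membership $\emptyset\in\Supp_{F_C}(a)$ would say that $(\delta_Y(u)^*\delta_Y(u))\cdot\delta_Y(u)$ equals a pure $\cdot$-word in $\delta_Y(u)$, which is impossible since every term obtained from it by rewriting along $x^*x=y^*y$ still contains the symbol ${}^*$. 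Thus $\Supp_{F_C}(a)=\{\{u\}\}$ and a witnessing $B$ would have to contain every $u\in Y$. Consequently no argument routed through Theorem~\ref{t:superbound} and Proposition~\ref{p:F->super} can prove the Proposition in its stated generality.

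The paper's proof avoids this trap precisely because it never asserts anything about arbitrary elements of $K$: it verifies condition (4) of Definition~\ref{d:xy.zs} directly, and that condition constrains only elements of the special form $p_X(x,y,z,s)$ with pairwise distinct $x,y,z,s\in B$. After Theorem~\ref{t:super} yields $K\subset C^n[D\cup\delta_X(X)]\subset\langle\langle A\rangle_E\cup\delta_X(X)\rangle_C$ for a finite $A$ with $D\subset\langle A\rangle_E$, the paper assumes $x\notin A$, chooses a finite $H\supset Q\cup A$ with $p_X(x,y,z,s)\in\langle\langle A\rangle_E\cup\delta_X(H)\rangle_C$ and a retraction $r\colon H\to Q=\{x,y,z,s\}$ with $r(A)\subset Q\setminus\{x\}$, and then maps onward to the supermixing algebra; Definition~\ref{d:supermix} is formulated exactly to forbid the resulting membership, giving the contradiction. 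For these special elements the case ``support disjoint from $A$'' simply never needs to be confronted. A secondary mismatch in your plan points the same way: condition (2) of Proposition~\ref{p:F->super} demands $\{x,y,z,s\}\subset\supp_E(p(x,y,z,s))$, which your operation can never satisfy, since $\delta_X(y)\in\delta_X(\supp(p))$ always yields $\supp(p)\setminus\{y\}\in\Supp_{F_C}(p)$ and hence $y\notin\supp_{F_C}(p)$; your tacit weakening to $\{x,z\}$ is indeed all that proposition's proof uses, but then you are adapting its proof rather than citing it, with the false superboundedness hypothesis left as the sole support of the argument.
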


\begin{proof} For a point $x\in X$ by $\bar x$ we denote its image $\delta_X(x)$ in $F_\K(X)$. We claim that the operation
$p_X:X^4\to F_\K(X)$, $p_X:(x,y,z,s)\mapsto (\bar x^*\cdot\bar y)\cdot(\bar z^*\cdot \bar s)$,
turns $F_\K(X)$ into a topological superalgebra of type $(x^*y)(z^*s)$.

We need to check the conditions~(1)--(4) of Definition~\ref{d:xy.zs}.
The condition (1) follows from the identity $x^*x=y^*y$ holding in $\K$, the condition (2) from the existence of a $(x^*y)(z^*s)$-mixing paratopological $\dot E^*$-algebra in the variety $\K$, and the condition (3) from the continuity of the binary operation $\cdot$ in $F_\K(X)$.

It remains to verify the condition (4). Given compact sets $B\subset X$ and $K\subset F_\K(X)$, we need to find a finite subset $A\subset X$ such that for any pairwise distinct points $x,y,z,s\in B$ with $p_X(x,t,z,s)\in K$ we get $\{x,z\}\subset A$.
Let $C=\{\cdot\}\subset E_2\subset E$ be the singleton containing the distinguished symbol for the binary operation. By our assumption, the variety $\K$ contains an $(x^*y)(z^*s)$-supermixing $\dot E^*$-algebra $Y$. By Definition~\ref{d:supermix}, the algebra $Y$ contains a 4-element set $Q_0=\{x_0,y_0,z_0,s_0\}$ such that $$(x_0^*{\cdot}y_0){\cdot}(z^*_0{\cdot}s_0)\notin \big\langle \langle Q_0\setminus\{x_0\}\rangle_E\cup Q_0\big\rangle_C\cup
\big\langle \langle \{Q_0\setminus\{z_0\}\rangle_E\cup Q_0\big\rangle_C.$$

By Theorem~\ref{t:super}, the compact set $K$ is contained in $C^n[D\cup\delta_X(X)]$ for some $n\in\w$ and some finite set $D\subset F_\K(X)$. Find a finite subset $A\subset X$ such that $D\subset \langle A\rangle_E$ and conclude that $K\subset  C^n[\langle A\rangle_E\cup\delta_X(X)]\subset \langle \langle A\rangle_E\cup\delta_X(X)\rangle_C$.

We claim that for any pairwise distinct points $x,y,z,s\in B$ with $p_X(x,y,z,s)\in K$ we get $\{x,z\}\subset A$. To derive a contradiction, assume that $\{x,z\}\not\subset A$. Let $Q=\{x,y,z,s\}$. Taking into account that $p_X(x,y,z,s)\in K\subset \langle\langle A\rangle_E\cup\delta_X(X)\rangle_C$, choose a finite set $H\subset X$ such that $Q\cup A\subset H$ and $p_X(x,y,z,s)\in \langle\langle  A\rangle_E\cup\delta_X(H)\rangle_C$.
By Theorem~\ref{t:FKX1}(4) and Proposition~\ref{p:FembTych}, the $E$-homomorphisms $F_K i_{H,X}:F_\K(H)\to F_\K(X)$ and $F_\K i_{Q,X}:F_\K(Q)\to F_K(X)$ are closed topological embeddings, which allows us to identify the spaces $F_\K(Q)$ and $F_\K(H)$ with closed subspaces in $F_\K(X)$.

Since $\{x,z\}\not\subset A$, either $x\notin A$ or $z\notin A$. If $x\notin A$, then we can choose a retraction $r:H\to Q=\{x,y,z,s\}$ such that $r(A)\subset Q\setminus\{x\}$. Let $f:Q\to Q_0\subset Y$ be the map such that $f(x)=x_0$, $f(y)=y_0$, $f(z)=z_0$ and $f(s)=s_0$. By the definition of a free topologized $E$-algebra, there exists a unique continuous $E$-homomorphism $\bar f:F_\K(Q)\to Y$ such that $\bar f\circ\delta_Q=f$.
Taking into account that $\supp(p_X(x,y,z,s))\subset\{x,y,z,s\}=Q$, we conclude that $$
\begin{aligned}
p_X(x,y,z,s)&=p_Q(x,y,z,s)=F_\K r(p_H(x,y,z,s))\in\\&\in \langle\langle r(A)\rangle_E\cup\delta_H(r(H))\rangle_C\subset \langle \langle Q\setminus\{x\}\rangle_E\cup\delta_H(Q)\}\rangle_C.
\end{aligned}
$$
Applying to this inclusion the $E$-homomorphism $\bar f\colon F_\K(Q)\to Y$, we obtain the inclusion $$(x_0^*{\cdot}y_0){\cdot}(z^*_0{\cdot}s_0)=\bar f(p_Q(x,y,z,s))\in \langle \langle f(Q\setminus\{x\})\rangle_E\cup f(Q)\rangle_C=\langle\langle Q_0\setminus\{x_0\}\rangle_E\cup Q_0\rangle_C,$$ contradicting the choice of the set $Q_0$.

By analogy we can derive a contradiction from the assumption $z\notin A$. So $\{x,z\}\subset A$, which completes the proof of the condition (4) of Definition~\ref{d:xy.zs}.
\end{proof}

\begin{theorem}\label{t:dot*-super} Assume that the signature $E$ is a countable $k_\w$-space and  $\K$ is a $d{+}\HM{+}k_\w$-superstable variety of paratopological $\dot E^*$-algebras such that the identities $x(x^*x)=x$ and $(x^*x)(x^*x)=x^*x=y^*y$ hold in $\K$ and $\K$ contains an $x(s^*y_i)^{<\w}$-mixing and an $(x^*y)(z^*s)$-supermixing paratopological $\dot E^*$-algebras.
 Assume that for a $\mu$-complete Tychonoff $\aleph_k$-space its free paratopological $\dot E^*$-algebra $F_\K (X)$ contains no strong $\Fin^\w$-fans and no $\Fin^{\w_1}$-fans. Then the space $X$ is $k$-homeomorphic to a topological sum $K\oplus D$ of a countable $k_\w$-space $K$ and a discrete space $D$.
\end{theorem}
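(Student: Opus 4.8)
The plan is to reduce the statement to Corollary~\ref{c:xy.zs+xsy_w}(2), whose hypotheses are tailored to exactly the mixed fan condition ``no strong $\Fin^\w$-fan and no $\Fin^{\w_1}$-fan'' appearing here, and then to sharpen its conclusion ``cosmic $k_\w$-space $K$'' to ``countable $k_\w$-space $K$'' using the \emph{super}mixing hypothesis. First I would carry out the functorial bookkeeping. Since $\K$ is $d{+}\HM{+}k_\w$-superstable it is in particular $d{+}\HM{+}k_\w$-stable, so by Theorems~\ref{t:FKX1}(1,3), \ref{t:bar-i-bijective} and \ref{t:d+HM+kw} the restriction $F_\K|\Top_{3\frac12}$ is monomorphic, $\II$-regular, strongly bounded and has finite supports; these are precisely the standing assumptions under which Definitions~\ref{d:type-xy}, \ref{d:xyy_w} and \ref{d:xy.zs} of topological (super)algebras are designed to be read. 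I would also note that $X$, being a Tychonoff $\aleph_k$-space, has all compact subsets metrizable and hence sequentially compact, so together with $\mu$-completeness it is $\mu_s$-complete, which is what Proposition~\ref{p:supermix-alg} requires.

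Next I would install the three algebraic structures on $F_\K(X)$. Proposition~\ref{p:mixing->TA}(5), fed with the identities $x(x^*x)=x$ and $(x^*x)(x^*x)=x^*x$ and the $x(s^*y_i)^{<\w}$-mixing algebra of $\K$, makes $F_\K(X)$ a topological algebra of type $x(s^*y_i)^{<\w}$. Because $X$ is $\mu_s$-complete, the identity $x^*x=y^*y$ holds, and $\K$ contains an $(x^*y)(z^*s)$-supermixing algebra, Proposition~\ref{p:supermix-alg} makes $F_\K(X)$ a topological \emph{super}algebra of type $(x^*y)(z^*s)$, hence in particular a topological algebra of that type. Finally, to obtain type $x{\cdot}y$ I would extract an $x{\cdot}y$-mixing $\dot E$-algebra from the supermixing (so a fortiori $(x^*y)(z^*s)$-mixing) algebra exactly as in the proof of Theorem~\ref{t:xsy+xy.zs}: if $(x_0^*y_0)(z_0^*s_0)\notin\langle\{x_0,z_0,s_0\}\rangle_E\cup\langle\{x_0,y_0,z_0\}\rangle_E$, then $x=x_0^*y_0$ and $y=z_0^*s_0$ witness $x{\cdot}y$-mixing, and Lemma~\ref{l:FK-xy} upgrades this to type $x{\cdot}y$ on $F_\K(X)$.

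With all three structures in place, Corollary~\ref{c:xy.zs+xsy_w}(2) (in its strong reading: $X$ a $\mu$-complete Tychonoff $\aleph_k$-space, $F_\K$ strongly bounded, $F_\K(X)$ a topological algebra of types $x{\cdot}y$, $x(s^*y_i)^{<\w}$, $(x^*y)(z^*s)$ with no strong $\Fin^\w$-fan and no $\Fin^{\w_1}$-fan) applies verbatim and yields that the $k$-modification $kX$ is a topological sum $K\oplus D$ of a cosmic $k_\w$-space $K$ and a discrete space $D$. Since $F_\K(X)$ is moreover a superalgebra of type $(x^*y)(z^*s)$ and contains no $\Fin^{\w_1}$-fan, Corollary~\ref{c:cosmic-countable} forces every cosmic subspace of $X$ to be at most countable; the underlying set of $K$ is such a subspace, because a countable network for the finer $kX$-topology remains a network for the coarser $X$-topology, so $K$ is countable. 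As the identity $kX\to X$ is a $k$-homeomorphism, $X$ is $k$-homeomorphic to $K\oplus D$ with $K$ a countable $k_\w$-space, which is the assertion.

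The step I expect to be the main obstacle is the superalgebra structure of paragraph two: unlike the types $x{\cdot}y$ and $x(s^*y_i)^{<\w}$, it genuinely needs $k_\w$-\emph{super}stability rather than mere $k_\w$-stability, since Proposition~\ref{p:supermix-alg} rests on the fine description of countably compact subsets of $F_\K(X)$ in the paratopological setting (Theorem~\ref{t:super}) together with the hull-separation demanded by the supermixing Definition~\ref{d:supermix}. Checking that the $(x^*y)(z^*s)$-supermixing algebra actually produces the superbounded property, and that $\mu_s$-completeness is in force so that Theorem~\ref{t:super} can be run, is the delicate point; the remainder is assembly of already-established results.
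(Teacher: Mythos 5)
Your proposal is correct and follows essentially the same route as the paper's proof: establish that $F_\K|\Top_{3\frac12}$ is monomorphic, $\II$-regular, strongly bounded with finite supports; deduce $\mu_s$-completeness of $X$ from the $\aleph_k$ and $\mu$-completeness hypotheses; install the algebra structures via Proposition~\ref{p:mixing->TA} and Proposition~\ref{p:supermix-alg}; apply Corollary~\ref{c:xy.zs+xsy_w}(2); and finish with Corollary~\ref{c:cosmic-countable} to shrink the cosmic summand to a countable one. If anything, you are slightly more careful than the paper, which silently assumes the type $x{\cdot}y$ structure required by Corollary~\ref{c:xy.zs+xsy_w}, whereas you explicitly extract an $x{\cdot}y$-mixing algebra from the $(x^*y)(z^*s)$-(super)mixing one via Lemma~\ref{l:FK-xy}, exactly as in the proof of Theorem~\ref{t:xsy+xy.zs}.
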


\begin{proof} By Theorems~\ref{t:FKX1}(1,3), \ref{t:bar-i-bijective}, \ref{t:d+HM+kw}, the functor $F_\K|\Top_{3\frac12}$ is monomorphic, $\II$-regular, strongly bounded and has finite supports.
Since $X$ is an $\aleph_k$-space, each compact subset of $X$ is metrizable and hence sequentially compact. Now the $\mu$-completeness of the space $X$ implies its $\mu_s$-completeness.
By Propositions~\ref{p:mixing->TA}(2,5) and \ref{p:supermix-alg}, the functor-space $F_\K(X)$ is a topological algebra of type $x(s^*y_i)^{<\w}$ and a topological superalgebra of type $(x^*y)(z^*s)$. By Corollary~\ref{c:xy.zs+xsy_w}, the $k$-modification $kX$ of $X$ is a topological sum $K\oplus D$ of a cosmic $k_\w$-space $K$ and a discrete space $D$. Observe that the cosmic space $K\subset kX$ endowed with the original topology of $X$ remains cosmic. By Corollary~\ref{c:cosmic-countable}, $K$ is countable. Therefore $kX$ is a topological sum $K\oplus D$ of the countable $k_\w$-space $K$ and the discrete space $D$.
\end{proof}

\begin{theorem}\label{t:supermix-long}  Assume that the signature $E$ is a countable $k_\w$-space and  $\K$ is a $d{+}\HM{+}k_\w$-superstable variety of paratopological $\dot E^*$-algebras such that the identities $x(x^*x)=x$, $x^*x=y^*y$, $xx^*=yy^*$ hold in $\K$ and $\K$ contains an $x(y^*(s^*z)y)$-mixing, an $(x(s^*z)x^*)(y(s^*z)y^*)$-mixing, and an $(x^*y)(z^*s)$-supermixing paratopological $\dot E^*$-algebras.
Let $X$ be a $\mu$-complete Tychonoff space.
\begin{enumerate}
\item[\textup{1)}] If $X$ is $k^*$-metrizable and $F_\K(X)$ contains no $\Fin^{\w_1}$-fans, then either $X$ is $k$-discrete or $X$ is a countable $\aleph_0$-space.
\item[\textup{2)}] If $X$ is an $\aleph_k$-space and $F_\K(X)$ contains no strong $\Fin^{\w}$-fans and no $\Fin^{\w_1}$-fans, then $X$ is either $k$-discrete or countable and hemicompact.
\end{enumerate}
\end{theorem}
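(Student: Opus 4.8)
The plan is to reduce both statements to the already-proved Theorem~\ref{t:long1}(2,4) by equipping the functor-space $F_\K(X)$ with every algebraic structure demanded there and by checking that the functorial hypotheses are available. The first step is purely functorial: by Theorems~\ref{t:FKX1}(1,3), \ref{t:bar-i-bijective} and \ref{t:d+HM+kw}, the restriction $F_\K|\Top_{3\frac12}$ is a monomorphic, $\II$-regular, strongly bounded functor with finite supports. Next I would record that $X$ is $\mu_s$-complete: in case (1) the $k^*$-metrizable space $X$, and in case (2) the $\aleph_k$-space $X$, has metrizable (hence sequentially compact) compact subsets, so the $\mu$-completeness of $X$ upgrades to $\mu_s$-completeness, which is exactly the hypothesis feeding the superbounded machinery.

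Then I would install the required operations on $F_\K(X)$. Using the identities $x(x^*x)=x$, $x^*x=y^*y$, $xx^*=yy^*$ together with the two mixing algebras in $\K$, Proposition~\ref{p:mixing->TA}(3,4) makes $F_\K(X)$ a topological algebra of types $x(y(s^*z)y^*)$ and $(x(s^*z)x^*){\cdot}(y(s^*z)y^*)$. To obtain type $x{\cdot}y$ as well, I would extract an $x{\cdot}y$-mixing $\dot E$-algebra from the $x(y(s^*z)y^*)$-mixing one exactly as in the proof of Theorem~\ref{t:long-mix->X}: for its witnessing points $x_0,y_0,z_0,s_0$ one has $x_0\cdot\bigl((y_0(s_0^*z_0))y_0^*\bigr)\notin\langle\{x_0,s_0,z_0\}\rangle_E\cup\langle\{y_0,s_0,z_0\}\rangle_E$, so setting $y=(y_0(s_0^*z_0))y_0^*$ shows $x_0\cdot y\notin\langle x_0\rangle_E\cup\langle y\rangle_E$, and Lemma~\ref{l:FK-xy} applies. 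Finally, since $x^*x=y^*y$ holds and $\K$ contains an $(x^*y)(z^*s)$-supermixing algebra, the $\mu_s$-completeness obtained above lets Proposition~\ref{p:supermix-alg} upgrade the $(x^*y)(z^*s)$-structure to a topological \emph{super}algebra of type $(x^*y){}(z^*s)$.

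With all four structures in place, both assertions follow from Theorem~\ref{t:long1}. For (1), $X$ is $k^*$-metrizable and $F_\K(X)$ has no $\Fin^{\w_1}$-fan, so Theorem~\ref{t:long1}(2) gives that $X$ is either $k$-discrete or an $\aleph_0$-space; the superalgebra structure and Corollary~\ref{c:cosmic-countable} (every cosmic subspace of $X$ is at most countable) force this $\aleph_0$-space to be countable. For (2), $X$ is a functionally Hausdorff (being Tychonoff) $\aleph_k$-space and $F_\K(X)$ has neither a strong $\Fin^\w$-fan nor a $\Fin^{\w_1}$-fan, so Theorem~\ref{t:long1}(4) yields that $X$ is either $k$-discrete or hemicompact, and the superalgebra structure again renders the hemicompact alternative countable.

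The main obstacle I expect lies not in the final invocation of Theorem~\ref{t:long1} but in legitimately securing the superbounded/superalgebra hypotheses: this hinges on confirming that the compacta of $X$ are metrizable (so that $X$ is $\mu_s$-complete) and that the $(x^*y)(z^*s)$-supermixing condition genuinely feeds Proposition~\ref{p:supermix-alg}. The delicate quantitative content — bounding the supports of countably compact sets in $F_\K(X)$ and controlling the $E$-hulls of images under coordinate projections — is already encapsulated in Theorem~\ref{t:super} and Proposition~\ref{p:supermix-alg}, which I would treat as black boxes; everything else is the routine translation of mixing identities into topological-algebra types.
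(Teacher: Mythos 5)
Your proposal is correct and follows essentially the same route as the paper's own proof: establish the functorial properties via Theorems~\ref{t:FKX1}(1,3), \ref{t:bar-i-bijective}, \ref{t:d+HM+kw}, equip $F_\K(X)$ with the algebra types $x{\cdot}y$, $x(y(s^*z)y^*)$, $(x(s^*z)x^*)(y(s^*z)y^*)$ and the superalgebra type $(x^*y)(z^*s)$ via Proposition~\ref{p:mixing->TA}(3,4), Lemma~\ref{l:FK-xy} and Proposition~\ref{p:supermix-alg}, and then invoke Theorem~\ref{t:long1}(2,4), whose parenthetical ``(countable)'' alternatives are exactly the appeal to Corollary~\ref{c:cosmic-countable} that you describe. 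You are in fact slightly more careful than the paper's laconic three-line proof, which leaves implicit both the $\mu_s$-completeness needed to feed Proposition~\ref{p:supermix-alg} and the extraction of the $x{\cdot}y$ type from the $x(y(s^*z)y^*)$-mixing algebra --- steps the paper spells out only in the neighbouring proofs of Theorems~\ref{t:long-mix->X} and \ref{t:dot*-super}.
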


\begin{proof} By Theorems~\ref{t:FKX1}(1,3), \ref{t:bar-i-bijective}, \ref{t:d+HM+kw}, the functor $F_\K|\Top_{3\frac12}$ is monomorphic, $\II$-regular, bounded and has finite supports.
By Propositions~\ref{p:mixing->TA}(3,4) and \ref{p:supermix-alg}, the functor-space $F_\K(X)$ is a topological algebra of types $x(y(s^*z)y^*)$, $(x(s^*z)x^*)(y(s^*z)y^*)$, and a supertopological algebra of type $(x^*y)(z^*s)$. Now it is legal to apply Theorem~\ref{t:long1}  and finish the proof.
\end{proof}

\begin{theorem}\label{t:para-eq} Assume that the signature $E$ is a countable $k_\w$-space and let  $\K$ be a $d{+}\HM{+}k_\w$-superstable variety of paratopological $\dot E^*$-algebras such that the identities $x(x^*x)$, $x^*x=y^*y$, $xx^*=yy^*$ hold in $\K$ and $\K$ contains an $x(y(s^*z)y^*)$-mixing and a $(x(s^*z)x^*)(y(s^*z)y^*)$-mixing paratopological $\dot E^*$-algebras and a $(x^*y)(z^*s)$-supermixing paratopological $\dot E^*$-algebra. For a $\mu$-complete Tychonoff $k_\IR$-space $X$ the following conditions are equivalent:
\begin{enumerate}
\item[\textup{1)}] $X$ is either discrete or a countable $k_\w$-space.
\item[\textup{2)}] $F_\K(X)$ is either discrete or a countable $k_\w$-space.
\item[\textup{3)}] $X$ is a $\bar\aleph_k$-space and $F_\K(X)$ contains no strong $\Fin^{\w}$-fans.
\item[\textup{4)}] $X$ is an $\aleph_k$-space, $F_\K(X)$ contains no strong $\Fin^\w$-fans and no  $\Fin^{\w_1}$-fans.
\end{enumerate}
\end{theorem}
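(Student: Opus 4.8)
The plan is to prove the cycle $(1)\Ra(2)\Ra(3)\Ra(1)$ together with $(2)\Ra(4)\Ra(1)$, following the scheme of Theorem~\ref{t:gen-eq} but replacing the $k_\w$-stable machinery by its $k_\w$-superstable counterpart so that the conclusions acquire the extra word ``countable''. For $(1)\Ra(2)$ I would argue in two cases. If $X$ is discrete, then $F_\K(X)$ is discrete by Proposition~\ref{p:FKX-disc}. If $X$ is a countable $k_\w$-space, then $F_\K(X)$ is functionally Hausdorff by Theorem~\ref{t:bar-i-bijective}, and, viewing a paratopological $\dot E^*$-algebra as a paratopological $(E,C)$-algebra with the one-point (hence compact $k_\w$) subsignature $C=\{\cdot\}$, Lemma~\ref{l:para-kw} shows that $F_\K(X)$ is a $k_\w$-space; countability of $E$ and $X$ makes it countable.

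For $(2)\Ra(3)$ and $(2)\Ra(4)$ I would use that a discrete space and a countable $k_\w$-space are $k$-spaces, so by Proposition~\ref{k-no-Cld-fan} the functor-space $F_\K(X)$ contains no $\Fin^\w$- or $\Fin^{\w_1}$-fans, in particular no strong ones. Simultaneously, Corollary~\ref{c:delta-closed} makes $\delta_X\colon X\to F_\K(X)$ a closed topological embedding, so $X$ is homeomorphic to a closed subspace of a discrete space or of a countable $k_\w$-space; either way $X$ is discrete or a countable $k_\w$-space, and both discrete spaces and countable $k_\w$-spaces are $\bar\aleph_k$-spaces (hence $\aleph_k$-spaces). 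This supplies the generalized-metric hypotheses demanded by (3) and (4).

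The substantive direction is $(3)\Ra(1)$ and $(4)\Ra(1)$, and here the plan is to transfer the problem to the functor level. By Theorems~\ref{t:FKX1}(1,3), \ref{t:bar-i-bijective} and \ref{t:d+HM+kw} the functor $F_\K|\Top_{3\frac12}$ is monomorphic, $\II$-regular, strongly bounded and has finite supports. Since $X$ is an $\aleph_k$-space (already in case (3), as $\bar\aleph_k\Ra\aleph_k$), its compacta are metrizable and sequentially compact, so the $\mu$-complete space $X$ is $\mu_s$-complete. I would then invoke Proposition~\ref{p:mixing->TA}(3,4) and, crucially, Proposition~\ref{p:supermix-alg} (which uses exactly the $(x^*y)(z^*s)$-supermixing hypothesis and $\mu_s$-completeness) to conclude that $F_\K(X)$ is a topological algebra of types $x{}(y(s^*z)y^*)$ and $(x(s^*z)x^*){}(y(s^*z)y^*)$ and a topological \emph{super}algebra of type $(x^*y){}(z^*s)$; the type $x{\cdot}y$ then follows from Lemma~\ref{l:FK-xy} applied to the $x{\cdot}y$-mixing algebra extracted from the $x(y(s^*z)y^*)$-mixing one. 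With all of these structures in place, Theorem~\ref{t:long1}(3) gives $(3)\Ra(1)$ outright: a $\bar\aleph_k$ $k_\IR$-space whose free algebra has no strong $\Fin^\w$-fan is discrete or a countable cosmic $k_\w$-space. For $(4)\Ra(1)$, Theorem~\ref{t:long1}(4) yields that $X$ is $k$-discrete or countable and hemicompact; the $k_\IR$-property then finishes the job, since a $k$-discrete $k_\IR$-space is discrete, while for a countable hemicompact space its $k$-modification $kX$ is a countable (normal, hence Tychonoff) $k_\w$-space and the $k_\IR$-property forces $X=kX$.

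I expect the main obstacle to be the bookkeeping that guarantees the \emph{super}algebra structure of type $(x^*y){}(z^*s)$ rather than merely the algebra structure: it is precisely this upgrade --- fed by the supermixing hypothesis through Proposition~\ref{p:supermix-alg} and surfacing as the parenthetical ``(countable)'' clauses of Theorem~\ref{t:long1} via Corollary~\ref{c:cosmic-countable} --- that produces \emph{countable} rather than general cosmic $k_\w$-spaces, and verifying all hypotheses of Proposition~\ref{p:supermix-alg} (notably $\mu_s$-completeness, which rests on the metrizability of compacta in $\aleph_k$-spaces) is the delicate point. The secondary care needed is the final $k_\IR$-upgrade from ``hemicompact'' to ``$k_\w$-space'' in the $\aleph_k$ case, for which one uses that $X$ and $kX$ are Tychonoff spaces with the same continuous real-valued functions.
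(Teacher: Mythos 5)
Your proposal is correct and takes essentially the same route as the paper: the paper proves $(1)\Ra(2)$ via Proposition~\ref{p:FKX-disc} and Lemma~\ref{l:para-kw}, gets $(2)\Ra(3,4)$ from Proposition~\ref{k-no-Cld-fan}, and for $(3)\Ra(1)$ and $(4)\Ra(1)$ simply cites Theorem~\ref{t:supermix-long}, whose proof is exactly the chain you inline (Theorems~\ref{t:FKX1}, \ref{t:bar-i-bijective}, \ref{t:d+HM+kw}, Propositions~\ref{p:mixing->TA}(3,4) and \ref{p:supermix-alg}, then Theorem~\ref{t:long1}). Your extra care --- extracting the type $x{\cdot}y$ via Lemma~\ref{l:FK-xy}, verifying $\mu_s$-completeness, invoking Corollary~\ref{c:delta-closed} to recover the $\bar\aleph_k$/$\aleph_k$ hypotheses in $(2)\Ra(3,4)$, and the final $k_\IR$-upgrade from ``$k$-discrete or countable and hemicompact'' to ``discrete or a countable $k_\w$-space'' --- only makes explicit the steps the paper leaves implicit in its citations.
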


\begin{proof} $(1)\Ra(2)$ If the space $X$ is discrete, then the free topological $E$-algebra $F_\K(X)$ is discrete by  Proposition~\ref{p:FKX-disc}. If $X$ is a countable $k_\w$-space, then $F_\K(X)$ is a countable $k_\w$-space by Lemma~\ref{l:para-kw}.
The implications $(2)\Ra(3,4)$ follows from Proposition~\ref{k-no-Cld-fan} and the implications $(3)\Ra(1)$ and $(4)\Ra(1)$ were proved in Theorem~\ref{t:supermix-long}.
\end{proof}

\chapter{Free objects in some varieties of topologized algebras}\label{ch:magma}

In this section we apply the general results on free topologized $E$-algebras proved in the preceding chapter and study free object in
varieties of topologized magmas and $*$-magmas.

\section{Magmas and topologized magmas}

In this section we discuss the notion of a magma and consider several natural and known classes of magmas.

Following Bourbaki,  by a \index{magma}{\em magma} we understand a set $X$ endowed with a binary operation $\cdot:X\times X\to X$, $\cdot\colon(x,y)\mapsto xy$, called the \index{magma!multiplication of}{\em multiplication}. A subset $A$ of a magma $X$ is called  a \index{submagma}\index{magma!submagma}{\em submagma} if $A{\cdot }A\subset A$ where $A{\cdot}A=\{xy:x,y\in A\}$.

In the language of universal algebras, a magma is a universal algebra of signature $E=E_2=\{\cdot\}$ and submagma is an $E$-subalgebra.
Each magma can be considered as a universal $\dot E$-algebra of signature $E=\{\cdot\}=E_2$ with a distinguished binary operation $\cdot$.

A magma $X$ will be called \index{magma!non-trivial}{\em non-trivial} if $X$ contains more than one point. In the subsequent sections we shall need some results on mixing magmas of various types.

\begin{lemma}\label{l:xy+xsy-mixing} If a magma $X$ contains two points $a,b$ such that $ab=ba=a\ne b=bb$, then the magma $X^2$ is $x{\cdot}y$-mixing and the magma $X^\w$ is $x(sy_i)^{<\w}$-mixing.
\end{lemma}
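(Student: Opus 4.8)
The plan is to exhibit explicit mixing configurations in the two powers, exploiting that $b$ is idempotent, so that its generated submagma is $\langle b\rangle_E=\{b\}$, while $a$ is absorbed by $b$ on both sides. Throughout I will use that each coordinate projection $\pr_i$ is a magma homomorphism, so $\pr_i(\langle S\rangle_E)\subseteq\langle \pr_i(S)\rangle_E$ for any subset $S$; in particular, if every generator in $S$ has $i$-th coordinate equal to $b$, then $\pr_i(\langle S\rangle_E)\subseteq\langle b\rangle_E=\{b\}$, i.e.\ every element of $\langle S\rangle_E$ has $i$-th coordinate $b$.

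For the first assertion I would take $x=(a,b)$ and $y=(b,a)$ in $X^2$; these are distinct since $a\ne b$, and $xy=(ab,ba)=(a,a)$. Applying $\pr_2$ shows $xy\notin\langle x\rangle_E$, since every element of $\langle x\rangle_E$ has second coordinate $b$ whereas $\pr_2(xy)=a$; applying $\pr_1$ shows $xy\notin\langle y\rangle_E$. Hence $xy\notin\langle x\rangle_E\cup\langle y\rangle_E$, and $X^2$ is $x{\cdot}y$-mixing.

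For the second assertion, fix $n\in\IN$ and use coordinates $0,1,\dots,n$ of $X^\w$. I would set $s=(b,b,b,\dots)$, let $x$ carry $a$ in coordinate $0$ and $b$ in every other coordinate, and let $y_i$ (for $1\le i\le n$) carry $a$ in coordinate $i$ and $b$ in every other coordinate (with $b$ in all coordinates $>n$). These $n+2$ points are pairwise distinct because each carries its unique $a$ in a distinct coordinate. Expanding the defining recursion $z_n=s\cdot y_n$, $z_k=(s\cdot y_k)\cdot z_{k+1}$ writes $x\cdot z_1$ as a product under a fixed bracketing whose leaves, read left to right, are $x,s,y_1,s,y_2,\dots,s,y_n$.

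The technical heart — and the step I expect to be the main obstacle — is a bookkeeping lemma proved by structural induction on the bracketed product: in a magma satisfying $ab=ba=a$ and $bb=b$, any product all of whose leaves equal $b$ evaluates to $b$, and any product exactly one of whose leaves equals $a$ (the rest being $b$) evaluates to $a$. The delicate point is that the hypothesis says nothing about $aa$, so the whole construction is arranged precisely so that in each relevant coordinate there is a single $a$-leaf and the evaluation never multiplies two $a$'s together; this is what makes the induction go through verbatim. Granting the lemma, in coordinate $0$ the leaves of $x\cdot z_1$ are all $b$ except the lone $a$ from $x$, so $\pr_0(x\cdot z_1)=a$, and in coordinate $i$ for $1\le i\le n$ the leaves are all $b$ except the lone $a$ from $y_i$, so $\pr_i(x\cdot z_1)=a$. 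Finally, a proper subset $A\subsetneq\{x,y_1,\dots,y_n\}$ omits either $x$ or some $y_i$: in the first case every generator of $A\cup\{s\}$ has $0$-th coordinate $b$, whence $\pr_0(\langle A\cup\{s\}\rangle_E)=\{b\}\not\ni a=\pr_0(x\cdot z_1)$; in the second case every generator has $i$-th coordinate $b$, whence $\pr_i(\langle A\cup\{s\}\rangle_E)=\{b\}\not\ni a=\pr_i(x\cdot z_1)$. Either way $x\cdot z_1\notin\langle A\cup\{s\}\rangle_E$, which is exactly the condition of Definition~\ref{d:xsy-mixing}, so $X^\w$ is $x(sy_i)^{<\w}$-mixing.
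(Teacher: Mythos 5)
Your proposal is correct and follows essentially the same route as the paper: the same witnesses $x=(a,b)$, $y=(b,a)$ in $X^2$ and $s=b^\w$, $y_i$ with a single $a$ in coordinate $i$ in $X^\w$, together with the same coordinate-wise observation that idempotency of $b$ forces every element of the hull of generators having $b$ in a given coordinate to keep $b$ there. Your explicit structural-induction "bookkeeping lemma" merely formalizes the computation $z_0(i)=a$ that the paper leaves implicit (and correctly notes that the product never multiplies two $a$'s), so there is nothing to add.
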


\begin{proof} To see that the square $X^2$ is an $x{\cdot}y$-mixing algebra, consider the elements $x=(a,b)$ and $y=(b,a)$ and observe that $\langle x\rangle_E=\langle(a,b)\rangle_E\subset X\times\{b\}$ and $\langle y\rangle_E=\langle(b,a)\rangle_E\subset \{b\}\times X$, which implies $x\cdot y=(a,a)\notin \langle x\rangle_E\cup\langle y\rangle_E$.

To see that the countable power $X^\w$ is $x(sy_i)^{<\w}$-mixing, let $s\in\{b\}^\w\subset X^\w$ be the constant function $s:\w\to\{b\}$ and for every $n\in\w$ let $y_n\in\{a,b\}^\w\subset X^\w$ be the unique function such that $y_n^{-1}(a)=\{n\}$. We claim that the elements $x=y_0$, $s$, and $y_1,\dots,y_n$ witness that $X^\w$ is an $x(sy_i)^{<\w}$-mixing magma. Consider the points $z_n=s\cdot y_n$ and $z_{k}=(s\cdot y_k)\cdot z_{k+1}$, $1\le k<n$. We need to show that for any proper subset $A$ of $\{y_0,\dots,y_n\}$ the point $z_0=y_0\cdot z_1$ is not contained in the $E$-hull $\langle A\cup\{s\}\rangle_E$ of the set $A\cup\{s\}$. Let $i\le n$ be a number such that $y_i\notin A$ and observe that $s(i)=y_k(i)=b=bb$ for all $y_k\in A$. This implies that each element $f\in\langle A\cup\{s\}\rangle_E$ has $f(i)=b$. On the other hand, $z_0(i)=a\ne b$ and hence $z_0\notin\langle A\cup\{s\}\rangle_E$.
\end{proof}

Now we define some important classes of magmas. A magma $X$ is called
\begin{itemize}
\item \index{magma!unital}{\em unital} if $X$ has a  two-sided unit  (i.e., a necessarily unique element $1\in X$ such that $x1=x=1x$ for all $x\in X$);
\item \index{magma!commutative}{\em commutative} (or else \index{magma!Abelian}{\em Abelian}) if $xy=yx$ for all $x,y\in X$;
\item \index{magma!idempotent}{\em idempotent} if $xx=x$ for all $x\in X$;
\item \index{magma!a quasigroup} a {\em quasigroup} if for any $a,b\in X$ there are unique elements $x,y\in X$ such that $ax=b$ and $ya=b$;
\item \index{magma!a loop} a {\em loop} if $X$ is a unital quasigroup;
\item a {\em lop} if $X$ is a unital magma and for every $a,b\in X$ there exists a unique $x\in X$ such that $ax=b$;
\item \index{magma!a semigroup}\index{semigroup} a {\em semigroup} if the binary operation of $X$ is {\em associative} in the standard sense: $(xy)z=x(yz)$ for all $x,y,z\in X$;
\item a \index{semigroup!a monoid}{\em monoid} if $X$ is a unital semigroup (i.e., a semigroup possessing a two-sided unit);
\item a \index{magma!a group}\index{semigroup!a group}{\em group} if $X$ is a monoid and for any $x\in X$ there exists an element $x^{-1}\in X$ such that $xx^{-1}=1=x^{-1}x$;
\item a \index{semigroup!regular}{\em regular semigroup} if $X$ is a semigroup and for any $x\in X$ there is an element $x^{-1}\in X$ such that $xx^{-1}x=x$;
\item an \index{semigroup!inverse}{\em inverse semigroup} if $X$ is a semigroup and for every $x\in X$ there exists a unique element $x^{-1}\in X$ such that $xx^{-1}x=x$ and $x^{-1}xx^{-1}$;
\item a \index{semigroup!Clifford}{\em Clifford semigroup} if $X$ is a semigroup such that for every $x\in X$ there exists an element $x^{-1}\in X$ such that $xx^{-1}x=x$, $x^{-1}xx^{-1}=x^{-1}$ and $xx^{-1}=x^{-1}x$;
\item a \index{semigroup!Clifford inverse}{\em Clifford inverse semigroup} if $X$ is both an inverse semigroup and a Clifford semigroup;
\item a \index{semigroup!a band}{\em band} if $X$ is an idempotent semigroup;
\item a \index{semigroup!a semilattice}\index{semilattice}{\em semilattice} if $X$ is a commutative band.
\end{itemize}
These notions relate as follows:
$$
\xymatrix{
\vbox{\hsize57pt\baselineskip8pt \noindent commutative\newline \phantom{iiijii}loop}\ar@{=>}[d]&\vbox{\hsize57pt\baselineskip8pt \noindent commutative\newline \phantom{mm}group}\ar@{=>}[r]\ar@{=>}[d]\ar@{=>}[l]&
\vbox{\hsize78pt\baselineskip8pt\noindent \phantom{M}commutative\newline inverse semigroup}\ar@{=>}[d]&\mbox{semilattice}\ar@{=>}[l]\ar@{=>}[d]\\
\mbox{loop}\ar@{=>}[d]&\mbox{group}\ar@{=>}[r]\ar@{=>}[d]\ar@{=>}[l]&\vbox{\hsize78pt\baselineskip8pt\noindent\phantom{MM}Clifford\newline inverse semigroup}\ar@{=>}[d]\ar@{=>}[rd]&\mbox{band}\ar@{=>}[d]\\
\mbox{lop}\ar@{=>}[d]&\mbox{monoid}\ar@{=>}[ld]\ar@{=>}[rd]&\mbox{inverse semigroup}\ar@{=>}[d]\ar@{=>}[rd]&\vbox{\hsize50pt\baselineskip8pt \noindent\phantom{n}Clifford\newline semigroup}\ar@{=>}[d]\\
\mbox{unital magma}\ar@{=>}[r]&\mbox{magma}&\mbox{semigroup}\ar@{=>}[l]&\vbox{\hsize50pt\baselineskip8pt \noindent\phantom{n}regular\newline semigroup}\ar@{=>}[l]\\
}
$$
\smallskip

\begin{lemma}\label{l:IS-mixing} If $X$ is an inverse semigroup containing more than one element, then the magma $X^2$ is $x{\cdot}y$-mixing and the magma $X^\w$ is $x(sy_i)^{<\w}$-mixing.
\end{lemma}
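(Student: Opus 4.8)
The plan is to reduce Lemma~\ref{l:IS-mixing} to the already-proved Lemma~\ref{l:xy+xsy-mixing}. That lemma shows that both conclusions (that $X^2$ is $x{\cdot}y$-mixing and that $X^\w$ is $x(sy_i)^{<\w}$-mixing) follow as soon as the magma $X$ contains two points $a,b$ with $ab=ba=a\ne b=bb$. So the whole task becomes purely algebraic: exhibit such a pair $a,b$ inside an arbitrary inverse semigroup $X$ with $|X|>1$.

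First I would collect the standard structure theory of inverse semigroups. The set $E(X)=\{e\in X:ee=e\}$ of idempotents is non-empty (for any $x$ the element $xx^{-1}$ is idempotent and $X\ne\emptyset$), every idempotent is self-inverse, and, crucially, idempotents commute, so $E(X)$ is a commutative band, i.e.\ a semilattice under $e\wedge f=ef=fe$. I would then split into two cases according to $|E(X)|$.

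In the case $|E(X)|\ge 2$ I would fix two distinct idempotents $e\ne f$ and put $a=ef=fe$. Since $a=e$ and $a=f$ cannot both hold (they would force $e=f$), I may, after possibly interchanging the roles of $e$ and $f$, assume $a\ne e$ and set $b=e$. The required identities are then routine semilattice computations: $bb=ee=e=b$, $ba=e(ef)=(ee)f=ef=a$, and $ab=(ef)e=e(fe)=e(ef)=(ee)f=ef=a$, while $a\ne b$ by the choice of $e$. In the remaining case $|E(X)|=1$, say $E(X)=\{e\}$, I would first show that $X$ is a group: for every $x$ both $x^{-1}x$ and $xx^{-1}$ are idempotents and hence equal $e$, and the inverse-semigroup axiom $xx^{-1}x=x$ then gives $ex=xe=x$, so $e$ is a two-sided unit and each $x^{-1}$ a two-sided inverse. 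Because $|X|>1$, I may choose $a\in X\setminus\{e\}$ and put $b=e$; then $ab=ae=a$, $ba=ea=a$, $bb=ee=e=b$ and $a\ne b$.

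In either case the pair $a,b$ satisfies $ab=ba=a\ne b=bb$, so Lemma~\ref{l:xy+xsy-mixing} applies and yields that $X^2$ is $x{\cdot}y$-mixing and $X^\w$ is $x(sy_i)^{<\w}$-mixing. I expect the only genuine subtlety to be organizational rather than computational: one must remember to treat the degenerate case in which $X$ has a single idempotent, where no two distinct idempotents are available and one instead exploits that $X$ is forced to be a group. The commutativity of idempotents is the one external fact I would invoke from inverse-semigroup theory; everything else is a direct verification.
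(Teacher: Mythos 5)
Your proof is correct and follows essentially the same route as the paper: both reduce to Lemma~\ref{l:xy+xsy-mixing} by producing $a,b$ with $ab=ba=a\ne b=bb$, splitting into the case of at least two idempotents (where a product of two distinct commuting idempotents supplies the pair) and the case of a unique idempotent (where $X$ is a group and one takes the unit together with any other element). The only cosmetic difference is that you verify the semilattice identities explicitly and justify why a unique idempotent forces $X$ to be a group, steps the paper leaves implicit.
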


\begin{proof} It is known \cite[5.1.1]{How95} that a semigroup is inverse if and only if it is regular and all its idempotents commute. If the inverse semigroup $X$ contains more than one idempotent, then we can choose two distinct idempotents $a,b$ and conclude that $ab=ba$ is an idempotent distinct from $a$ or $b$. Without loss of generality we can assume that $ab\ne b$. Replacing the idempotent $a$ by $ab$ we get two idempotents $a,b$ such that $ab=ba=a\ne b=bb$.
If the inverse semigroup $X$ contains a unique idempotent, then $X$ is a group. In this case let $b$ be the unit of $X$ and $a\ne b$ be any element of $X$. In both cases we have found elements $a,b\in X$ such that $ab=ba=a\ne b=bb$. By Lemma~\ref{l:xy+xsy-mixing}, the magma $X^2$ is $x{\cdot}y$-mixing and $X^\w$ is $x(sy_i)^{<\w}$-mixing.
\end{proof}

By a \index{magma!topologized}\index{magma!topological}{\em topologized magma} we understand a magma $X$ endowed with a topology $\tau$. A topologized magma $(X,\tau)$ is called a {\em topological magma} if the multiplication $\cdot\colon X\times X\to X$ is continuous with respect to the topology $\tau$.

A family $\K$ of topologized magmas is called a \index{variety}{\em variety} if $\K$ contains a non-empty topologized magma and $\K$ is closed under taking Tychonoff products, submagmas, and images under topological isomorphisms.

For a variety $\K$ of topologized magmas and a topological space $X$ by $(F_\K(X),\delta_X)$ we denote the free topologized magma over $X$ in the variety $\K$.

\section{Topologized $*$-magmas}

By a \index{$*$-magma}{\em $*$-magma} we understand a magma $X$ endowed with an unary operation ${}^*\colon X\to X$, ${}^*\colon x\mapsto x^*$. A subset $A$ of a $*$-magma $X$ is called a {\em $*$-submagma} if $A^*=\{x^*:x\in A\}\subset A$ and $A\cdot A\subset A$. A map $h:X\to Y$ between two $*$-magmas is called a \index{$*$-homomorphism}{\em $*$-homomorphism} if $h(x\cdot y)=h(x)\cdot h(y)$ and $h(x^*)=h(x)^*$ for every $x,y\in X$.

$*$-Magmas can be thought as universal $E$-algebras of signature $E=E_1\oplus E_2$ where $E_1=\{{}^*\}$ and $E_2=\{\cdot\}$. Then $*$-submagmas are $E$-subalgebras and $*$-homomorphisms are $E$-homomorphisms.

A $*$-magma is called a \index{$*$-semigroup}{\em $*$-semigroup} if the binary operation $\cdot\colon X\times X\to X$ is associative. 

A $*$-semigroup $X$ is called
\begin{itemize}
\item \index{$*$-semigroup!inverse}an {\em inverse semigroup} if $(x^*)^*=x$, $xx^*x=x$ and $(xx^*)(yy^*)=(yy^*)(xx^*)$ for every $x,y\in X$;
\item a \index{$*$-semigroup!Clifford}{\em Clifford semigroup} if $(x^*)^*=x$, $xx^*x=x$ and $x^*x=xx^*$ for all $x\in X$;
\item a \index{$*$-semigroup!band}{\em band} if $x^*=x=xx$ for all $x\in X$;
\item a \index{$*$-semigroup!semilattice}{\em semilattice} if $X$ is a commutative band;
\item a {\em group} if $X$ is a non-empty Clifford semigroup such that $x^*x=y^*y$ for all $x,y\in X$.
\end{itemize}

By \cite[Ch.5]{How95}, inverse semigroups can be equivalently defined as $*$-semigroups such that $(x^*)^*=x$, $xx^*x=x$ and $(xy)^*=y^*x^*$ for all $x,y\in X$.

\begin{lemma}\label{l:mixing*} Let $X$ be a $*$-magma.
\begin{enumerate}
\item[\textup{1)}] The magma $X^2$ is $(x^*y)(z^*s)$-mixing if $X$ contains two points $a,e$ such that $e^*=e=ee\notin\{e(ea),(ea)e\}$.
\item[\textup{2)}] The magma $X^3$ is $x(y(s^*z)y^*)$-mixing if $X$ contains three points $b,c,e$ such that $e^*=e=ee$, $ec=ce=c\ne e$, and $e((bc)b^*)\ne c$.
\item[\textup{3)}] The magma $X^3$ is $(x(s^*z)x^*)(y(s^*z)y^*)$-mixing if $X$ contains three points $b,c,e$ such that $e^*=e=ee$, $ec=ce=c$, and $c((bc)b^*)\ne cc\ne ((bc)b^*)c$.
\item[\textup{4)}] The magma $X^\w$ is $x(s^*y_i)^{<\w}$-mixing if $X$ contains two points $a,e$ such that $ae=ea=a\ne e=ee=e^*$.
\end{enumerate}
\end{lemma}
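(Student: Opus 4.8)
The four statements have a common shape: each asserts that some power of $X$ is a mixing magma of a type introduced in Definition~\ref{d:mixing}, i.e.\ that there are \emph{pairwise distinct} points in the power whose ``mixing word'' $w$ avoids a prescribed union of two $E$-hulls $\langle S_1\rangle_E\cup\langle S_2\rangle_E$ (here $\langle\cdot\rangle_E$ denotes the generated $*$-submagma). The plan is to exhibit the witnessing points explicitly, coordinate by coordinate, and to verify the non-membership through the coordinate projections. Since each projection $\pi_i$ of a power of $X$ onto $X$ is an $E$-homomorphism, it satisfies $\pi_i(\langle S\rangle_E)\subseteq\langle\pi_i(S)\rangle_E$; hence to prove $w\notin\langle S\rangle_E$ it suffices to produce a single coordinate $i$ with $\pi_i(w)\notin\langle\pi_i(S)\rangle_E$. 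This is exactly the mechanism already used in Lemmas~\ref{l:xy+xsy-mixing} and \ref{l:IS-mixing}.

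The local escape will always be engineered around the idempotent $e$. Because $e^*=e=ee$, the singleton $\{e\}$ is an $E$-subalgebra, so in any coordinate where all generators of a hull equal $e$ the projected hull collapses to $\{e\}$, and then any word-value different from $e$ escapes. Accordingly I keep, in the chosen escape coordinate, every variable that \emph{belongs} to the hull equal to $e$, and route the perturbing value through the variable \emph{excluded} from that hull. Matching this to the hypotheses: for (1) the two escapes realise the products $e(ea)$ and $(ea)e$, which the hypothesis declares $\ne e$; for (4) note that $s^*=e^*=e$ turns $s^*y_k$ into $sy_k$, so the statement reduces verbatim to the $x(sy_i)^{<\w}$-part of Lemma~\ref{l:xy+xsy-mixing} with $b:=e$, the hypothesis $ae=ea=a\ne e=ee=e^*$ being precisely what that lemma needs. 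For (2) and (3) the relations $ec=ce=c$ let $e$ act as an absorbing idempotent on $c$, while the inequalities $e((bc)b^*)\ne c$ and $c((bc)b^*)\ne cc\ne((bc)b^*)c$ supply the escaping values in the coordinate where $s=e$, $z=c$ and $y=b$ (so that $s^*z=ec=c$).

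Pairwise distinctness is secured by the spare coordinates. In (2) and (3) we work in $X^3$: two coordinates carry the two escapes (featuring $y$, resp.\ $x$ and $y$), and the remaining coordinate is used purely as a tie-breaker, where I assign the four points distinct but harmless patterns in $\{e,c\}$ that do not interfere with the escapes. In (4) the indicator-type points produced in Lemma~\ref{l:xy+xsy-mixing} are already pairwise distinct by their supports, so no extra work is needed.

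The tightest point, and the main obstacle, is (1), where only $X^2$ is available. There both $x$ and $z$ occur in each of the two hulls $\langle\{x,y,z\}\rangle_E$ and $\langle\{x,z,s\}\rangle_E$, so they are forced to be silent ($=e$) in both escape coordinates; the work is therefore to break the resulting collision while preserving both escapes, exploiting the extra values $ea$ and $ae$ (note $ea\ne e$, since $ea=e$ would give $e(ea)=ee=e$) to distinguish the points. More generally, across all four parts the genuinely non-formal step is to confirm that each projected hull is small enough that the \emph{single} inequality provided by the hypothesis upgrades to the required full non-membership of $w$, i.e.\ that no accidental collapse carries the word back into the hull. Verifying this hull-smallness, rather than the bookkeeping of the points, is where the argument must be carried out with care.
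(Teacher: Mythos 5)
Your general framework---explicit witnesses in a power of $X$, coordinatewise projections, and the observation that $\{e\}$ is an $E$-subalgebra so that a hull whose generators are all $e$ in some coordinate collapses there---is exactly the paper's mechanism for parts (1) and (4), and your reduction of (4) to Lemma~\ref{l:xy+xsy-mixing} with $b:=e$ is precisely what the paper does. But for parts (2) and (3) your plan has a genuine gap, and it is not the ``hull-smallness verification'' you defer at the end: no single-coordinate escape can work there at all. In your designated escape coordinate you set $x=s=e$, $z=c$, $y=b$, so the hull $\langle\{x,z,s\}\rangle_E$ projects into $\langle\{e,c\}\rangle_E$, and you would need $e((bc)b^*)\notin\langle\{e,c\}\rangle_E$, whereas the hypothesis only provides $e((bc)b^*)\ne c$. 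That stronger non-membership is false in general: in the symmetric group $S_3$ (viewed as a $*$-group with $x^*=x^{-1}$), with $e=\mathrm{id}$, $b=(12)$, $c=(123)$, all hypotheses of (2) hold because $e((bc)b^*)=bcb^{-1}=c^{2}\ne c$, yet $c^{2}\in\langle\{e,c\}\rangle_E$; for (3) it is even worse, since there $c((bc)b^*)=c^{3}=e$. The idea you are missing---and the reason the stated inequalities are exactly sufficient---is the paper's \emph{diagonal} escape: in $X^3$ the three generators of the offending hull are chosen with equal second and third coordinates, e.g.\ in (2) $x=(c,e,e)$, $y=(e,e,b)$, $z=(e,c,c)$, $s=(e,e,e)$; then $\langle\{x,z,s\}\rangle_E\subset X\times\Delta$, where $\Delta=\{(u,v)\in X^2:u=v\}$ is a $*$-subalgebra of $X^2$, while the word computes to $(c,\,c,\,e((bc)b^*))$ and escapes $X\times\Delta$ precisely because $e((bc)b^*)\ne c$ (the other hull is handled by your $\{e\}$-collapse in the first coordinate, using $c\ne e$). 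In (3) \emph{both} escapes are diagonal, through $\Delta\times X$ and $X\times\Delta$, using the two inequalities $c((bc)b^*)\ne cc\ne((bc)b^*)c$; the $\{e\}$-collapse is not used there at all. So the hypothesis inequalities are tailored to a two-coordinate diagonal escape, not to a membership escape in a projected hull, and no amount of care converts the one into the other.

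Concerning (1): your worry about pairwise distinctness is legitimate---indeed the paper itself ignores it, taking $x=z=(e,e)$, $y=(a,e)$, $s=(e,a)$---but your hope of breaking the collision inside $X^2$ by ``exploiting $ea$ and $ae$'' cannot be realized: any configuration in which both hulls are trapped in subalgebras of the form $\{e\}\times X$, $X\times\{e\}$ or the diagonal pins both $x$ and $z$ to $(e,e)$. The reason this mismatch is harmless is that every application of the mixing notions (via Proposition~\ref{p:mixing->TA}) uses only the non-membership relation and never the distinctness of the witnessing points; if one insists on distinctness as written in Definition~\ref{d:mixing}, the natural repair is a spare third coordinate acting as a tie-breaker, i.e.\ proving the claim for $X^3$ rather than $X^2$.
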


\begin{proof}
1. Assume that the $*$-magma $X$ contains  two points $a,e$ such that $e^*=e=ee\notin\{e(ea),(ea)e\}$.  We claim that the elements $x=z=(e,e)$, $y=(a,e)$ and $s=(e,a)$ witness that the $*$-magma $X^2$ is $(x^*y)(z^*s)$-mixing.
Indeed, the equality $ee=e=e^*$ implies that $\langle \{x,y,z\}\rangle_E\subset X\times\{e\}$ and $\langle\{x,z,s\}\rangle_E\subset \{e\}\times X$. On the other hand, $(x^*y)(z^*s)=((e^*a)(e^*e),((e^*e)(e^*a))=((ea)e,e(ea))\notin (X\times\{e\})\cup(\{e\}\times X)$.
\smallskip

2. Assume that the magma $X$ contains three points $b,c,e$ such that $e^*=e=ee$, $ec=ce=c\ne e$, and $e((bc)b^*)\ne c$. We claim that the elements $x=(c,e,e)$, $y=(e,e,b)$, $z=(e,e,e)$, $s=(e,c,c)$ witness that the $*$-magma $X^3$ is $x(y(s^*z)y^*)$-mixing. Observe that $\langle\{y,z,s\}\rangle_E\subset \{e\}\times X^2$ and $\langle\{x,z,s\}\rangle_E\subset X\times\Delta$ where $\Delta=\{(u,v)\in X^2:u=v\}$. On the other hand,
$$
\begin{aligned}
x((y(s^*z))y^*)&=\big(c((e(e^*e)e^*),e^*(e(e^*c)e^*),e^*((b(e^*c))b^*\big)=\big(c,c,e((bc)b^*)\big)\notin\\
 &\notin(\{e\}\times X^2)\cup(X\times\Delta).
\end{aligned}
 $$

3. Assume that the magma $X$ contains three points $b,c,e$ such that $e^*=e=ee$, $ec=ce=c\ne e$, and $((bc)b^*)c\ne cc\ne c((bc)b^*)$. We claim that the elements $$x=(b,e,e),\;\;y=(e,e,b),\;\;z=(c,c,c),\;\;s=(e,e,e)$$ witness that the $*$-magma $X^3$ is $(x(s^*z)x^*)(y(s^*z)y^*)$-mixing. Observe that $\langle\{y,z,s\}\rangle_E\subset \Delta\times X$ and $\langle\{x,z,s\}\rangle_E\subset X\times\Delta$ where $\Delta=\{(u,v)\in X^2:u=v\}$. On the other hand,
$$
\begin{aligned}
&((x(s^*z))x^*)((y(s^*z))y^*)=\\
&=\big(((b(e^*c)b^*)((e^*(e^*c))e^*),((e(e^*c))e^*)((e(e^*c))e^*),
((e(e^*c))e^*)((b(e^*c))b^*)\big)=\\
&=\big((bc)b^*)c,cc,c((bc)b^*)\notin(\Delta\times X)\cup (X\times\Delta).
 \end{aligned}
 $$

4. If the $*$-magma $X$ contains two points $a,e$ such that $ae=ea=a\ne e=ee=e^*$, then we by analogy with Lemma~\ref{l:xy+xsy-mixing} we can prove that the $*$-magma $X^\w$ is $x(s^*y_i)^{<\w}$-mixing.
\end{proof}

 A \index{$*$-magma!topologized}{\em topologized $*$-magma} is a $*$-magma endowed with a topology. A topologized $*$-magma is called a \index{$*$-magma!paratopological}{\em paratopological $*$-magma} if its binary operation is continuous. A paratopological $*$-magma is called a \index{$*$-magma!topological}{\em topological $*$-magma} if its unary operation is continuous. In particular, a {\em topological inverse semigroup} is an inverse semigroup $X$ endowed with a topology making the binary and unary operations continuous.

\section{Free topological semigroups}

In this section we characterize $\aleph$-spaces $X$ whose free topological semigroups $\Sem(X)$ are $k$-spaces. A \index{topological semigroup}{\em topological semigroup} is a topological space $X$ endowed with a continuous associative binary operation $\cdot\colon X\times X\to X$.

A \index{topological semigroup!free}\index{free!topological semigroup}{\em free topological semigroup} over a topological space $X$ is a pair $(\Sem(X),\delta_X)$ consisting of a topological semigroup $\Sem(X)$ and a continuous map $\delta_X:X\to\Sem(X)$ such that for every continuous map $f:X\to S$ to a topological semigroup $S$ there is a continuous semigroup homomorphism $\bar f:\Sem(X)\to S$ such that $\bar f\circ\delta_X=f$.

A free topological semigroup $\Sem(X)$ can be identified with the direct sum $X^{<\IN}=\oplus_{n\in\IN}X^n$ of finite powers of $X$, endowed with the associative operation of concatenation of words. The canonical embedding $\delta_X:X\to X^1$ assigns to each point $x\in X$ the constant function $1\to\{x\}\subset X$.

\begin{theorem}\label{t:Sem}  For every Tychonoff space $X$ the following conditions are equivalent:
\begin{enumerate}
\item[\textup{1)}] $X$ either is metrizable or is a topological sum of cosmic $k_\w$-spaces.
\item[\textup{2)}] The free topological semigroup $\Sem(X)$ either is metrizable or is a topological sum of cosmic $k_\w$-spaces.
\item[\textup{3)}] $X$ is $k^*$-metrizable and $\Sem(X)$ is a $k$-space.
\item[\textup{4)}] $X$ is $k^*$-metrizable and $\Sem(X)$ is a Ascoli space.
\item[\textup{5)}] $X$ is an $\aleph$-space and $\Sem(X)$ contains no strong $\Fin^\w$-fans.
\item[\textup{6)}] $X$ is $k^*$-metrizable and $\Sem(X)$ contains no strong $\Fin^\w$-fan and no $\Fin^{\w_1}$-fans.
\end{enumerate}
\end{theorem}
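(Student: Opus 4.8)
The plan is to realize $\Sem(X)$ as the free topologized $E$-algebra $F_\K(X)$ for the variety $\K$ of all topological semigroups (signature $E=E_2=\{\cdot\}$ with associativity) and then to feed it into the machinery for topological algebras of type $x{\cdot}y$. First I would verify that $\K$ is $d{+}\HM{+}k_\w$-stable: discrete modifications of semigroups are semigroups, $\HM(X)$ is a topological semigroup by Lemma~\ref{l:BanHryn}, and $k_\w$-stability is routine. To obtain the algebraic structure I would exhibit the two-element semilattice $S=\{a,b\}$ with $ab=ba=aa=a$ and $bb=b$; since $S$ satisfies the hypothesis of Lemma~\ref{l:xy+xsy-mixing}, its square $S^2$ is an $x{\cdot}y$-mixing semigroup lying in $\K$, so by Lemma~\ref{l:FK-xy} the space $\Sem(X)$ is a topological algebra of type $x{\cdot}y$ for every functionally Hausdorff $X$; Theorems~\ref{t:FKX1}, \ref{t:bar-i-bijective} and \ref{t:d+HM+kw} guarantee that $F_\K|\Top_{3\frac12}$ is a monomorphic, bounded, $\II$-regular functor with finite supports. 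Finally I would record the concrete description $\Sem(X)=\bigoplus_{n\ge 1}X^n$ (concatenation), so that $X=X^1$ is a clopen subspace and a retract of $\Sem(X)$.

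The easy implications are structural. For $(1)\Rightarrow(2)$: if $X$ is metrizable, each $X^n$ is metrizable and the countable topological sum is metrizable; if $X=\bigoplus_\alpha X_\alpha$ with cosmic $k_\w$ summands, then $X^n=\bigoplus_{\bar\alpha}\prod_i X_{\alpha_i}$ is a topological sum of cosmic $k_\w$-spaces by Lemma~\ref{l:kw-product}, hence so is $\Sem(X)$. For $(2)\Rightarrow(1)$, the clopen subspace $X=X^1$ inherits either structural class. Then $(2)\Rightarrow(3)\Rightarrow(4)$, since metrizable spaces and topological sums of $k_\w$-spaces are $k$-spaces, hence Ascoli, and $X=X^1$ is $k^*$-metrizable; and $(1)\Rightarrow(5),(6)$, since $X$ is then an $\aleph$-space and $k^*$-metrizable while $\Sem(X)$, being a $k$-space, carries no $\Fin$-fan of any kind by Proposition~\ref{k-no-Cld-fan}.

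For the substantive implications into $(1)$ I would run Theorem~\ref{t:xy} (equivalently Theorem~\ref{t:dotE}) on $FX=\Sem(X)$. Under $(5)$, $X$ is an $\aleph$-space, hence submetrizable and $\mu$-complete, and $\Sem(X)$ has no strong $\Fin^\w$-fan, so case $(1)$ of Theorem~\ref{t:xy} shows $X$ is $k$-metrizable or a $k$-sum of cosmic hemicompact spaces (Proposition~\ref{p:decomp1}, Lemma~\ref{l:ts-kw}). Under $(6)$, the absence of a $\Fin^{\w_1}$-fan already forces the absence of a $\Fin^\w$-fan, so the $k^*$-metrizable case $(2)$ applies with the same conclusion. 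Under $(3)$, the fact that $\Sem(X)$ is a $k$-space makes $X=X^1$ a $k$-space, hence a $k_\IR$-space, so the ``Moreover'' clause of Theorem~\ref{t:xy} upgrades the conclusion to exactly $(1)$ — the pieces being cosmic because a $k^*$-metrizable $k_\w$-space has metrizable compacta and therefore a countable $k$-network.

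The hard part will be the passage from the $k$-modification level to $X$ itself in cases $(5)$ and $(6)$: Theorem~\ref{t:xy} only gives metrizability or a $k_\w$-decomposition up to $k$-homeomorphism, and to land in $(1)$ I must show that $X$ is actually a $k$-space (equivalently a $k_\IR$-space, which then activates the ``Moreover'' clause). The intended mechanism is a contradiction argument: a failure of the $k$-space property should produce a non-closed compact-finite subset of $X$, i.e.\ a $\Fin^\w$-fan, which in an $\aleph$-space is automatically strong by Corollary~\ref{c:fan-in-aleph-space} and transports to a strong $\Fin^\w$-fan in $\Sem(X)$ through the retraction $\Sem(X)\to X^1$, contradicting the hypothesis; the possibly uncountable tightness is absorbed either by the $\aleph$-structure (case $(5)$) or by the explicit exclusion of $\Fin^{\w_1}$-fans (case $(6)$). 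A separate delicate point is condition $(4)$: the Ascoli property yields, via Corollary~\ref{c:A->noFan}, only the absence of strict (equivalently strong) $\Fin^\lambda$-fans, which fits neither case of Theorem~\ref{t:xy} for a merely $k^*$-metrizable $X$; I would bridge this by first promoting $X$ to an $\aleph$-space, so that strong and plain countable fans coincide, thereby reducing $(4)$ to $(5)$. Reconciling these strong-versus-plain fan distinctions, together with verifying $\mu$-completeness in the $k^*$-metrizable cases, is where the main work lies.
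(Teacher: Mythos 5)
Your architecture is the paper's own: the same stability checks, the same mixing semigroup (your two\-/element semilattice is exactly what drives Lemma~\ref{l:IS-mixing}, which the paper cites, via Lemma~\ref{l:xy+xsy-mixing}), the same passage through Lemma~\ref{l:FK-xy} to Theorem~\ref{t:xy} (equivalently Theorem~\ref{t:dotE}), and the same easy implications. The genuine gap sits exactly at the step you yourself call ``the hard part'', and the mechanism you propose for it is false. You claim that failure of the $k$-space property produces a non-closed \emph{compact-finite} subset of $X$, i.e.\ a $\Fin^\w$-fan (Proposition~\ref{p:Fin-fan-char}). It does not: failure of the $k$-space property produces a non-closed \emph{$k$-closed} set, and $k$-closedness is far weaker than compact-finiteness. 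The paper itself exhibits the obstruction: the cosmic hemicompact space $X=\{\infty\}\cup(\II\times\IN)$ with the measure-convergence topology at $\infty$ (see the Example on cosmic hemicompact spaces in the section of counterexamples) carries a strict $\Comp^\w$-fan, hence is not a $k$-space, yet it contains no $\Fin$-fans whatsoever. So once Theorem~\ref{t:xy} has told you that $kX$ is metrizable or a topological sum of cosmic $k_\w$-spaces, your contradiction argument cannot even start. Your secondary bridge for condition (4), ``promoting $X$ to an $\aleph$-space'', is also unjustified: $k^*$-metrizable spaces need not be $\aleph$-spaces (the sequential fan $V_{\w_1}$ is $k^*$-metrizable but has uncountable $\cs^*$-character), so (4) reduces to neither (5) nor (6) this way.

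You should also know that this gap cannot be closed by a cleverer argument, because it is present in the paper's own proof and, apparently, in the statement itself. The paper's proof of $(5)\Ra(1)$ and $(6)\Ra(1)$ cites Theorem~\ref{t:dotE}, whose conclusion concerns only the $k$-modification $kX$; upgrading to $X$ itself requires the $k_\IR$-hypothesis of the ``Moreover'' clause of Theorem~\ref{t:xy}, which neither (5) nor (6) supplies. Worse, the space $X$ above seems to be an outright counterexample to $(5)\Ra(1)$: by the same ``delete finitely many points from each level'' argument, applied coordinatewise after projecting away those coordinates whose limit lies in the locally compact part $\II\times\IN$ (and inducting on the number of $\infty$-coordinates of a putative cluster point), every finite power $X^n$ contains no $\Fin^\w$-fan; hence $\Sem(X)=\oplus_{n\in\IN}X^n$ contains no (strong) $\Fin^\w$-fan, and, being cosmic and thus countably tight, no $\Fin^{\w_1}$-fan either by Corollary~\ref{c:tight}. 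Since $X$ is cosmic and hemicompact, it is an $\aleph_0$-space, so $X$ satisfies (5) and (6) while failing (1) --- it is not even a $k$-space. So the implication you are trying to prove requires changing the statement, not the proof: either add a $k$-space/$k_\IR$ assumption on $X$ in (5) and (6) (your argument for (3), where $\Sem(X)$ being a $k$-space makes the clopen subspace $X=X^1$ a $k$-space and activates the ``Moreover'' clause, is correct precisely because such an assumption is available there), or weaken (1) and (2) to assertions about $k$-modifications, as in Theorem~\ref{t:dotE}. Your instinct about where the difficulty lives was exactly right; the proposed repair is what fails.
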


\begin{proof} The implication $(1)\Ra(2)$ follows from the identification of the free topological semigroup $\Sem(X)$ with the topological sum $X^{<\IN}=\oplus_{n\in\IN}X^n$.
The implications $(2)\Ra(5,6)$ follow from Proposition~\ref{k-no-Cld-fan} and Corollary 7.5 of \cite{BBK} (saying that each $\aleph$-space is $k^*$-metrizable).
The implications $(5)\Ra(1)$ and $(6)\Ra(1)$ follows from Theorem~\ref{t:dotE} and Lemma~\ref{l:IS-mixing}. The implications $(1,2)\Ra(3)\Ra(4)\Ra(5)$ are trivial or follow from Proposition~\ref{k-no-Cld-fan} and Corollary~\ref{c:A->noFan}.
\end{proof}

It would be nice to generalize Theorem~\ref{t:Sem} onto some other varieties of topological magmas.

\begin{problem} Let $\K$ be a $d{+}\HM{+}k_\w$-stable variety of topological magmas. Assume that $\K$ contains the additive group $\IN$.  Is the space $F_\K(X)$ metrizable for any metrizable (separable) space $X$?
\end{problem}

\section{Free topological semilattices}

In this section we characterize $\aleph$-spaces $X$ whose free topological semilattices $\Sl(X)$ are $k$-spaces.
We start with a more general result treating free objects in some classes of idempotent topological magmas.

We recall that a magma $X$ is called \index{magma!idempotent}{\em idempotent} if $xx=x$ for all $x\in X$. It is easy to see that idempotent topological magmas form a variety of topological magmas containing all topological semilattices. A \index{topological semilattice}{\em topological semilattice} is an idempotent commutative topological semigroup. A semilattice $X$ is called {\em non-trivial} if $X$ contains more than one point.

Corollary~\ref{c:i-xyy_w} and Lemma~\ref{l:xy+xsy-mixing} imply:

\begin{corollary}\label{c:xyy_w3} Assume that a variety $\K$ of idempotent topological magmas is  $d{+}\HM{+}k_\w$-stable and $\K$ contains a non-trivial semilattice.
Assume that for a $\mu$-complete Tychonoff space $X$ the free topological magma $F_\K(X)$ contains no strong $\Fin^\w$-fans.
\begin{enumerate}
\item[\textup{1)}] If $X$ is a $\bar\aleph_k$-space, then the $k$-modification of $X$ is a topological sum of cosmic $k_\w$-spaces.
\item[\textup{2)}] If $X$ is a $\bar\aleph_k$-$k_\IR$-space, then the space $X$ is a topological sum of cosmic $k_\w$-spaces.
\item[\textup{3)}] If $X$ is an $\aleph_0$-space, then $X$ is hemicompact.
\item[\textup{4)}] If $X$ is cosmic, then $X$ is $\sigma$-compact.
\end{enumerate}
\end{corollary}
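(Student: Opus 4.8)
The plan is to deduce Corollary~\ref{c:xyy_w3} from Corollary~\ref{c:i-xyy_w} by verifying the single hypothesis of the latter that is not literally among the assumptions of the former, namely the existence of an $x(sy_i)^{<\w}$-mixing topological algebra in the variety $\K$. First I would observe that a topological magma is exactly a topological $\dot E$-algebra over the signature $E=E_2=\{\cdot\}$ with the single distinguished binary operation $\cdot$; consequently an idempotent topological magma is an idempotent topological $\dot E$-algebra, the free functors $F_\K$ coincide, and the hypotheses that $\K$ is $d{+}\HM{+}k_\w$-stable, that $X$ is a $\mu$-complete Tychonoff space, and that $F_\K(X)$ contains no strong $\Fin^\w$-fans carry over verbatim. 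The four conclusions of Corollary~\ref{c:i-xyy_w} are then identical to those of Corollary~\ref{c:xyy_w3}, using that a $\bar\aleph_k$-$k_\IR$-space is precisely a $\bar\aleph_k$-space that is a $k_\IR$-space.

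It remains to produce the mixing algebra. Let $S\in\K$ be the non-trivial semilattice provided by hypothesis. Since $\K$ is a variety of topologized magmas, it is closed under Tychonoff products, so $S^\w\in\K$; moreover $S^\w$ is again an idempotent commutative topological semigroup, hence an idempotent topological magma in $\K$. The key step is then to exhibit in $S$ two points $a,b$ with $ab=ba=a\ne b=bb$, so that Lemma~\ref{l:xy+xsy-mixing} applies to the magma $S^\w$ and shows it is $x(sy_i)^{<\w}$-mixing.

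To find such $a,b$, I would choose distinct points $x,y\in S$, which is possible as $S$ is non-trivial. Since $S$ is a semilattice, all its elements are idempotent and the operation is commutative and associative. If $xy\ne x$, set $a=xy$ and $b=x$; otherwise $xy=x$, and then necessarily $xy\ne y$ (for $xy=y$ together with $xy=x$ would force $x=y$), so set $a=xy$ and $b=y$. In either case a short computation using commutativity, associativity and idempotency gives $ba=ab=a$ and $bb=b$, while $a\ne b$ by the choice of the case; for instance in the first case $(xy)x=x(xy)=(xx)y=xy=a$. Thus the pair $a,b$ has the required form, and $S^\w$ is the desired $x(sy_i)^{<\w}$-mixing topological algebra in $\K$.

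Since there is no genuine analytic difficulty here, the only point demanding any care is this elementary selection of the mixing pair together with the observation that $S^\w$ lies in $\K$; everything else is a direct invocation of Corollary~\ref{c:i-xyy_w}. Having placed the $x(sy_i)^{<\w}$-mixing topological algebra $S^\w$ in $\K$, I would simply apply Corollary~\ref{c:i-xyy_w} to $X$ to obtain each of the four conclusions (1)--(4).
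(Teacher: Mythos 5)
Your proposal is correct and is essentially the paper's own argument: the paper derives this corollary by combining Corollary~\ref{c:i-xyy_w} with Lemma~\ref{l:xy+xsy-mixing}, leaving implicit exactly the details you supply (extracting a pair $a,b$ with $ab=ba=a\ne b=bb$ from the non-trivial semilattice, noting $S^\w\in\K$ by closure of varieties under Tychonoff products, and concluding that $S^\w$ is $x(sy_i)^{<\w}$-mixing). Your case analysis producing the pair $a,b$ matches the argument the paper uses in the analogous Lemma~\ref{l:IS-mixing}, so there is no gap.
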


\begin{theorem}\label{t:i-magma} Assume that a variety $\K$ of idempotent topological magmas is  $d{+}\HM{+}k_\w$-stable and $\K$ contains a non-trivial semilattice.
 For every $\mu$-complete Tychonoff $k_\IR$-space $X$ the following conditions are equivalent:
\begin{enumerate}
\item[\textup{1)}] $X$ is a topological sum of cosmic $k_\w$-spaces.
\item[\textup{2)}] $F_\K(X)$ is a topological sum of cosmic $k_\w$-spaces.
\item[\textup{3)}] $X$ is an $\bar\aleph_k$-space and $F_\K(X)$ is a $k$-space.
\item[\textup{4)}] $X$ is an $\bar\aleph_k$-space and $F_\K(X)$ is an Ascoli space.
\item[\textup{5)}] $X$ is an $\bar\aleph_k$-space and $F_\K(X)$ contains no strong $\Fin^\w$-fan.
\end{enumerate}
\end{theorem}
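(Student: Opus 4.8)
The plan is to prove the five conditions equivalent by the cyclic chain $(1)\Ra(2)\Ra(3)\Ra(4)\Ra(5)\Ra(1)$, in which four of the five steps are short and the whole weight of the argument falls on $(1)\Ra(2)$. Throughout I use that the standing hypotheses of the theorem are exactly those needed to apply the structural results of Chapter~\ref{ch:free}: $\K$ is a $d{+}\HM{+}k_\w$-stable variety of idempotent topological magmas containing a non-trivial semilattice, and $X$ is a $\mu$-complete Tychonoff $k_\IR$-space.

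First the four easy steps. The implication $(5)\Ra(1)$ is precisely Corollary~\ref{c:xyy_w3}(2): condition $(5)$ together with the standing hypotheses makes $X$ a $\mu$-complete Tychonoff $\bar\aleph_k$-$k_\IR$-space whose free magma $F_\K(X)$ contains no strong $\Fin^\w$-fan, and that corollary concludes that $X$ is a topological sum of cosmic $k_\w$-spaces. For $(4)\Ra(5)$ I use Corollary~\ref{c:A->noFan}: a strong $\Fin^\w$-fan is strict (strong and strict coincide for $\Fin$-fans in $T_1$-spaces) and hence is a strict $\Cld$-fan, which an Ascoli space cannot contain. The implication $(3)\Ra(4)$ is the classical Ascoli Theorem, that every $k$-space is Ascoli. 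For $(2)\Ra(3)$ the $F_\K(X)$-part is immediate, since a topological sum of $k_\w$-spaces is a $k$-space; and since $\K$ contains an algebra of cardinality $>1$, Corollary~\ref{c:delta-closed} makes $\delta_X\colon X\to F_\K(X)$ a closed topological embedding, so $X$ is a closed subspace of a topological sum of cosmic $k_\w$-spaces. As each cosmic $k_\w$-space is an $\aleph_0$-space and hence a $\bar\aleph_k$-space, and the $\bar\aleph_k$-property is preserved by topological sums and inherited by closed subspaces, $X$ is a $\bar\aleph_k$-space. (The same closed-embedding observation gives $(2)\Ra(1)$ directly, because a closed subspace of $\bigoplus_\beta Y_\beta$ is the topological sum of its clopen closed traces $X\cap Y_\beta$, each a cosmic $k_\w$-space; this is a useful consistency check even though it is not needed to close the cycle.)

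The substantial step is $(1)\Ra(2)$. Write $X=\bigoplus_{\alpha\in A}X_\alpha$ with each $X_\alpha$ a cosmic $k_\w$-space. For finite $T\subseteq A$ the sub-sum $X_T=\bigoplus_{\alpha\in T}X_\alpha$ is again a cosmic $k_\w$-space, so by Theorem~\ref{t:kw} (applied with the one-point, hence cosmic $k_\w$, signature $E=\{\cdot\}$, using Theorem~\ref{t:bar-i-bijective} to guarantee that $F_\K(X_T)$ is Hausdorff) the functor-space $F_\K(X_T)$ is a cosmic $k_\w$-space. Since every element of $F_\K(X)$ has finite support (Proposition~\ref{p:TEA-fin-supp}), one has $F_\K(X)=\bigcup_{T\in[A]^{<\w}}F_\K(X_T;X)$, a directed union of cosmic $k_\w$-subspaces; and because $F_\K|\Top_{3\frac12}$ is bounded (Theorem~\ref{t:d+HM+kw}) while a bounded subset of the topological sum $X$ meets only finitely many summands, the support $\supp(K)$ of any compact $K\subset F_\K(X)$ lies in some $X_T$, so every compact subset of $F_\K(X)$ is confined to a single cosmic $k_\w$-subspace $F_\K(X_T;X)$. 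The plan is then twofold: first, to run the coherence argument of the proof of Theorem~\ref{t:kw} along the directed system $(F_\K(X_T;X))_T$ to show that $F_\K(X)$ carries the topology coherent with this cover, whence it is a $k$-space; and second, to verify that $F_\K(X)$ is a $\mu$-complete Tychonoff $\bar\aleph_k$-space with no strong $D_\w$-cofan --- the $\mu$-completeness, the Tychonoff property and the compact-countable closed $k$-network all coming from the confinement of compacta to the normal cosmic $k_\w$-subspaces $F_\K(X_T;X)$ --- so that Proposition~\ref{p:decomp1} presents $F_\K(X)$ as a $k$-sum of cosmic hemicompact spaces. Since $F_\K(X)$ is already a $k$-space, Lemma~\ref{l:ts-kw} upgrades this $k$-sum to an honest topological-sum decomposition into cosmic $k_\w$-spaces, which is $(2)$.

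I expect the main obstacle to be exactly the coherence claim underlying the first half of the previous paragraph: that the topology of $F_\K(X)$ is determined by (coherent with) the cosmic $k_\w$-subspaces $F_\K(X_T;X)$, equivalently that $F_\K(X)$ is a $k$-sum of cosmic hemicompact spaces with the strata forming a compact-clopen family. The delicate point is that the index-support $\pi(a)=\{\alpha\in A:\supp(a)\cap X_\alpha\ne\emptyset\}$ must behave continuously on compact sets --- along a convergent net inside a compact subset it can neither drop (prevented by boundedness of $F_\K$ and the $k_\w$/$\mu$-completeness of the summands, which keep support points relatively compact) nor rise in the limit (controlled by the lower semicontinuity of $\supp$ from Lemma~\ref{l:supp-Ropen}). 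The bare lower semicontinuity of $\supp$ only yields openness of $\{a:\supp(a)\cap U\ne\emptyset\}$, so upgrading this to clopenness on compacta, and confirming that the coherent-topology magma still lies in $\K$ when the index set $A$ is uncountable, are the places where the argument will need the most care; once continuity of $\pi$ on compacta is established, the remaining steps are routine applications of Proposition~\ref{p:decomp1} and Lemma~\ref{l:ts-kw}.
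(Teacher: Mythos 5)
Your four short implications are correct and coincide with the paper's own reductions: $(2)\Rightarrow(3)$ via Corollary~\ref{c:delta-closed} and heredity of the $\bar\aleph_k$-property, $(3)\Rightarrow(4)$ by Ascoli's theorem, $(4)\Rightarrow(5)$ by Corollary~\ref{c:A->noFan}, and $(5)\Rightarrow(1)$ by Corollary~\ref{c:xyy_w3}. The gap is in $(1)\Rightarrow(2)$, and it is exactly the step you flag as "the main obstacle": the coherence claim that $F_\K(X)$ carries the final topology with respect to the directed family $\big(F_\K(X_T;X)\big)_{T\in[A]^{<\w}}$, together with the compact-clopenness of these strata needed for Proposition~\ref{p:decomp1} and Lemma~\ref{l:ts-kw}. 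This is not a technicality requiring care; it is the entire content of the implication, and the tools you invoke cannot deliver it. Boundedness plus Lemma~\ref{l:supp-Ropen} only make the sets $\{a\in F_\K(X):\supp(a)\cap X_\alpha\ne\emptyset\}$ $\IR$-open, never closed; and $k_\w$-stability of $\K$ only guarantees membership in $\K$ of coherent topologies generated by countable towers of compacta, so for uncountable $A$ there is no reason the coherent-topology magma is a topological magma in $\K$ (finality is not preserved by products, so even continuity of the multiplication is in doubt). That the gap is real and not merely unproven is witnessed by the free Abelian topological group functor $\FA$: it satisfies every hypothesis your outline actually uses (finite supports, monomorphic, bounded, $\II$-regular, $d{+}\HM{+}k_\w$-stable variety, Theorems~\ref{t:kw}, \ref{t:bar-i-bijective}, \ref{t:d+HM+kw}), yet for the metrizable $\mu$-complete $\aleph_k$-space $X=\bigoplus_{\alpha<\w_1}[0,1]$ Theorem~\ref{t:FA} shows $\FA(X)$ is not even a $k$-space, whereas coherence with the cosmic $k_\w$-strata plus confinement of compacta would force it to be one. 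So any correct proof of $(1)\Rightarrow(2)$ must use idempotency and the non-trivial semilattice in $\K$ in an essential way, and your outline for this step never does (you use the semilattice hypothesis only in $(5)\Rightarrow(1)$).

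The missing idea is the paper's characteristic-homomorphism trick, which makes the whole coherence program unnecessary. Since $\K$ is $d$-stable, idempotent and contains a non-trivial semilattice, it contains the \emph{discrete} two-element semilattice $\{0,1\}$ with the operation $\min$. Given $a\in F_\K(X)$, pick a clopen set $A\subset X$ that is a union of finitely many summands and contains $\supp(a)$; the characteristic function $f:X\to\{0,1\}$ with $f^{-1}(1)=A$ is continuous, hence extends to a continuous magma homomorphism $\bar f:F_\K(X)\to\{0,1\}$, and idempotency of the multiplication on $\{0,1\}$ gives $\bar f^{-1}(1)=\bigcup_{n\in\w}E^n[\delta_X(A)]=F_\K(A;X)$. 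Thus every point of $F_\K(X)$ has a \emph{clopen} neighborhood homeomorphic to $F_\K(A)$, which is a cosmic $k_\w$-space by Theorems~\ref{t:bar-i-bijective} and \ref{t:kw}, and the topological-sum decomposition of $F_\K(X)$ follows at once. Note also that the second half of your plan (verifying the hypotheses of Proposition~\ref{p:decomp1}, in particular that $F_\K(X)$ is a $\bar\aleph_k$-space with no strong $D_\w$-cofan) silently presupposes the same compact-clopenness of the strata, so it circles back to the identical gap rather than providing an independent route.
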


\begin{proof} To prove that $(1)\Ra(2)$, assume that $X$ is a topological sum of cosmic $k_\w$-spaces. Since the variety $\K$ is $d$-stable and contains a non-trivial semilattice, it contain the discrete two-element semilattice $\{0,1\}$ endowed with the operation of minimum. To show that $F_\K(X)$ is a topological sum of cosmic $k_\w$-spaces, it suffices to prove that each element $a\in F_\K(X)$ has a closed-and-open neighborhood, which is a cosmic $k_\w$-space. Since the functor $F_\K$ has finite supports and $X$ is a topological sum of cosmic $k_\w$-spaces, there exists a closed-and-open cosmic $k_\w$-space $A\subset X$ containing $\supp(a)$.  Let $f:X\to \{0,1\}$ be the map defined by $f^{-1}(1)=A$. Since $\{0,1\}\in\K$, there exists a continuous magma homomorphism $\bar f:F_\K(X)\to \{0,1\}$ such that $\bar f\circ\delta_X=f$.

Since $A$ is a retract of $X$, the map $F_\K i_{A,X}:F_\K(A)\to F_\K(X)$ is a topological embedding, which allows us to identify the topological magma $F_\K(A)$ with the submagma of $F_\K(X)$. By Theorem~\ref{t:kw}, $F_\K(A)$ is a cosmic $k_\w$-space.

Taking into account that $F_\K(X)=\bigcup_{n\in\w}E^n[\delta_X(X)]$ where $E=\{\cdot\}=E_2$, we can prove that $\bar f^{-1}(1)=\bigcup_{n\in\w}E^n[\delta_X(A)]=F_\K i_{A,X}(F_\K(A))$ is a closed-and-open cosmic $k_\w$-space containing the point $a$.
\smallskip

The implications $(2)\Ra(3)\Ra(4)\Ra(5)\Ra(1)$ follow from Corollary~\ref{c:delta-closed}, Proposition~\ref{k-no-Cld-fan} and Corollaries~\ref{c:A->noFan}, \ref{c:xyy_w3}.
\end{proof}

For a topological space $X$ by $\Sl(X)$ we shall denote its \index{topological semilattice!free}\index{free!topological semilattice}{\em free topological semilattice}. So, $\Sl(X)=F_\K(X)$ where $\K$ is the variety of all topological semilattices.

\begin{theorem}\label{t:SL}
 For every $\mu$-complete Tychonoff $k_\IR$-space $X$ the following conditions are equivalent:
\begin{enumerate}
\item[\textup{1)}] $X$ is a topological sum of cosmic $k_\w$-spaces;
\item[\textup{2)}] $\Sl(X)$ is a topological sum of cosmic $k_\w$-spaces;
\item[\textup{3)}] $X$ is an $\bar\aleph_k$-space and $\Sl(X)$ is a $k$-space;
\item[\textup{4)}] $X$ is an $\bar\aleph_k$-space and $\Sl(X)$ is a Ascoli space;
\item[\textup{5)}] $X$ is an $\bar\aleph_k$-space and $\Sl(X)$ contains no strong $\Fin^\w$-fans.
\end{enumerate}
\end{theorem}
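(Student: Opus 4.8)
The plan is to realize Theorem~\ref{t:SL} as the special case of Theorem~\ref{t:i-magma} obtained by taking $\K$ to be the variety of all topological semilattices, so that $\Sl(X)=F_\K(X)$ and the five conditions of Theorem~\ref{t:SL} become verbatim the five conditions of Theorem~\ref{t:i-magma}. First I would check that $\K$ really is a variety of idempotent topological magmas: topological semilattices are idempotent topological magmas, and the class of all topological semilattices is closed under Tychonoff products, subsemilattices, and topological isomorphisms, and contains a non-empty member; moreover it visibly contains the non-trivial two-element semilattice $(\{0,1\},\min)$, which supplies the ``non-trivial semilattice'' hypothesis of Theorem~\ref{t:i-magma}.

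The substance of the argument is then the verification that $\K$ is $d{+}\HM{+}k_\w$-stable. For $d$-stability I would note that the discrete modification of a topological semilattice is again a (trivially continuous) topological semilattice, hence lies in $\K$. For $k_\w$-stability the decisive point is that $\K$ is the \emph{full} variety of topological semilattices: if $X$ is a topological magma which is a $k_\w$-space and $h\colon X\to Y$ is a continuous bijective homomorphism onto a Hausdorff topological semilattice $Y\in\K$, then injectivity of $h$ transports the commutativity, associativity and idempotency identities from $Y$ back to $X$, so $X$ is itself a topological semilattice and therefore already belongs to $\K$; thus $k_\w$-stability holds for free. For $\HM$-stability I would invoke Lemma~\ref{l:BanHryn}, which gives that $\HM(X)$ is a topological magma whenever $X$ is, and then observe that the semilattice identities are inherited by $\HM(X)$ from the power $X^{[0,1)}$ (they hold coordinatewise), so $\HM(X)\in\K$.

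Once these stability properties are in hand, Theorem~\ref{t:i-magma} applies directly and yields the equivalence of conditions (1)--(5), since $F_\K(X)=\Sl(X)$. I do not expect a genuine obstacle: the only verification carrying any content is $\HM$-stability, and even that reduces to Lemma~\ref{l:BanHryn} together with the coordinatewise preservation of identities, while $d$- and $k_\w$-stability are automatic precisely because $\K$ is the entire variety of topological semilattices. The residual implications $(2)\Ra(3)\Ra(4)\Ra(5)\Ra(1)$ that Theorem~\ref{t:i-magma} handles internally rest, as there, on Corollary~\ref{c:delta-closed}, Proposition~\ref{k-no-Cld-fan} and Corollary~\ref{c:A->noFan}, and need no separate treatment in the semilattice case.
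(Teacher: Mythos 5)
Your proposal is correct and follows exactly the paper's route: the paper states Theorem~\ref{t:SL} as an immediate instance of Theorem~\ref{t:i-magma} for $\K$ the variety of all topological semilattices (having just noted $\Sl(X)=F_\K(X)$), leaving the hypothesis checks implicit. Your verifications of $d$-, $\HM$- and $k_\w$-stability (the last via pulling back identities along the bijective homomorphism, the second via Lemma~\ref{l:BanHryn} and coordinatewise identities) are precisely the routine checks the paper omits, and they are all sound.
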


\section{Free Lawson semilattices}

In Theorem~\ref{t:SL} we characterized $\bar\aleph_k$-spaces $X$ whose free topological semilattices $\Sl(X)$ contain no strong $\Fin^\w$-fans. In this section we obtain a similar characterization for free Lawson semilattices.

By a \index{topological semilattice!Lawson}{\em Lawson semilattice} $X$ we understand a topological semilattice which has a base of the topology consisting of open subsemilattices of $X$. It is easy to see that Lawson semilattices form a $d$-stable variety of idempotent topological magmas. However, this variety is neither $\HM$-stable nor $k_\w$-stable. So, most of results proved in  Sections~\ref{ch:free} cannot be applied.

A typical example of a Lawson semilattice is the hyperspace $\Cld(X)$ of closed subsets of a topological space $X$ endowed with the Vietoris topology and the semilattice operation of union. A base of the Vietoris topology on $\Cld(X)$ consists of the subsets
$$\langle U_1,\dots,U_n\rangle=\big\{F\subset X:F\subset\bigcup_{i=1}^nU_i\mbox{ \ and \ }\forall i\le n,\;F\cap U_i\ne\emptyset\big\}$$where $U_1,\dots,U_n$ are open sets in $X$.

For a topological space $X$ its \index{free!Lawson semilattice}\index{topological semilattice!free Lawson}{\em free Lawson semilattice} is a pair $(\Law(X),\delta_X)$ consisting of a Lawson semilattice $\Law(X)$ and a continuous map $\delta_X:X\to \Law(X)$ such that for any continuous map $f:X\to Y$ to a topological semilattice $Y$ there exists a unique continuous semilattice homomorphism $\bar f:\Law(X)\to Y$ such that $\bar f\circ\delta_X=f$.

Since Lawson semilattices form a variety of topological magmas, for any topological space $X$ a free Lawson semilattice $(\Law(X),\delta_X)$ exists and is unique up to a topological isomorphism. To obtain a concrete construction of a free Lawson semilattice space, take the set $\Law(X)$ of non-empty finite subsets of $X$ and endow it with the Vietoris topology. Next, consider the map $\delta_X:X\to\Fin(X)$, $\delta_X:x\mapsto \{x\}$. It can be shown that the pair $(\Fin(X),\delta_X)$ is a free Lawson semilattice over $X$.

The construction of the free topological semilattice determines a functor $\Law:\Top\to\Top$ in the category of topological spaces. Known properties of the Vietoris topology imply the following proposition.

\begin{proposition} The functor of free Lawson semilattice $\Law:\Top\to\Top$ has the following properties:
\begin{enumerate}
\item[\textup{1)}] $\Law$ can be completes to a monad $(\Law,\delta,\mu)$ in the category $\Top$;
\item[\textup{2)}] $\Law$ is monomorphic and preserves closed embeddings of topological spaces;
\item[\textup{3)}] $\Law$ has finite supports;
\item[\textup{4)}] $\Law$ is $\II$-regular;
\item[\textup{5)}] $\Law$ is bounded.
\end{enumerate}
\end{proposition}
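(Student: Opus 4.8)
The final proposition asserts five properties of the free Lawson semilattice functor $\Law:\Top\to\Top$: that it completes to a monad $(\Law,\delta,\mu)$, is monomorphic and preserves closed embeddings, has finite supports, is $\II$-regular, and is bounded. Each of these is a structural fact about the hyperspace $\Fin(X)$ of nonempty finite subsets of $X$ with the Vietoris topology and the union operation.

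Let me sketch how I would prove each.

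First I would set up the concrete model: by Proposition~\ref{p:FA-exists} the variety of Lawson semilattices, being a variety of topological magmas, admits free objects, and the functor $F_\K=\Law$ it determines completes to a monad $(\Law,\delta,\mu)$ by the general construction described after that proposition. It remains to identify the components on the concrete model $\Law(X)=\Fin(X)$ with the Vietoris topology: $\delta_X\colon x\mapsto\{x\}$ is the unit, and the multiplication $\mu_X\colon\Fin(\Fin(X))\to\Fin(X)$ is the ``big union'' map $\{F_1,\dots,F_k\}\mapsto\bigcup_{i}F_i$. To justify that these concrete maps really are the monad data I would verify the continuity of the union maps $\cup\colon\Fin(X)\times\Fin(X)\to\Fin(X)$ and $\mu_X$ in the Vietoris topology (a standard hyperspace fact) together with the monad laws; naturality of $\delta$ and $\mu$ is immediate from the formulas. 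This settles statement~(1).

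For statement~(2), the functor $\Law$ acts on morphisms by $\Law f\colon F\mapsto f(F)$; if $f$ is injective then $F\mapsto f(F)$ is injective on finite sets, so $\Law$ is monomorphic. Preservation of closed embeddings is the standard Vietoris fact: if we realize a closed embedding as the inclusion of a closed subspace $X\subset Y$, then $\Law i_{X,Y}$ identifies $\Fin(X)$ with $\{F\in\Fin(Y):F\subset X\}$; writing basic Vietoris sets over $X$ as restrictions of basic Vietoris sets over $Y$ shows this is a topological embedding, and its image is closed because its complement $\{F:F\cap(Y\setminus X)\ne\emptyset\}$ is Vietoris-open. Statement~(3) is a direct support computation: for $E=\{\cup\}$ the $E$-hull $\langle\delta_X(A)\rangle_E$ of the singletons over a finite set $A$ is exactly $\Fin(A)$, so for $a=\{x_1,\dots,x_n\}\in\Fin(X)$ the condition $a\in\langle\delta_X(A)\rangle_E$ is equivalent to $a\subset A$; hence $\supp(a)=\bigcap\{A\in[X]^{<\w}:a\subset A\}=a$ is finite, which is precisely finite supports (also a special case of Proposition~\ref{p:TEA-fin-supp}).

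Statement~(4) I would deduce from~(2): by the support computation, $\Law(A;\II)=\{F\in\Fin(\II):F\subset A\}$, and since $A$ is closed in $\II$ this set is closed in $\Law(\II)$ by~(2). As $\Fin(\II)$ carries the Vietoris topology on finite subsets of a compact metric space it is metrizable, hence Tychonoff, so every closed subset of $\Law(\II)$ is $\IR$-closed; thus $\Law(A;\II)$ is $\IR$-closed and $\Law$ is $\II$-regular. For statement~(5), having established that $\Law$ is monomorphic with finite supports, Proposition~\ref{p:F-bounded} reduces boundedness to showing that $\supp(K)=\bigcup_{F\in K}F=\bigcup K$ is bounded in $X$ whenever $K\subset\Fin(X)$ is compact. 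I would argue directly: given an infinite family $(U_n)_{n\in\w}$ of open sets each meeting $\bigcup K$, the Vietoris-open sets $V_n=\{F\in\Fin(X):F\cap U_n\ne\emptyset\}$ each meet $K$, and if $(U_n)$ were locally finite then $(V_n)$ would be locally finite in $\Fin(X)$ — a finite set $F=\{y_1,\dots,y_m\}$ has a neighborhood $\langle O_1,\dots,O_m\rangle$ with each $O_i\ni y_i$ meeting only finitely many $U_n$, and any $G\in\langle O_1,\dots,O_m\rangle$ meeting $U_n$ forces some $O_i\cap U_n\ne\emptyset$. Since a compact set cannot meet infinitely many members of a locally finite family, $(U_n)$ is finite, so $\bigcup K$ is bounded.

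The routine pieces are the embedding/support verifications; the genuinely load-bearing steps are the Vietoris-continuity of the union maps underlying the monad in~(1) and the local-finiteness argument in~(5). I expect the boundedness argument in~(5) to be the main obstacle, since it is the one place where the interplay between compactness in the hyperspace and boundedness in the base space must be handled explicitly rather than quoted as a standard hyperspace property.
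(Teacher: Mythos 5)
Your proposal is correct, and on items (1)--(4) it simply supplies the details that the paper dispatches with the phrase ``known properties of the Vietoris topology''; the only item the paper actually proves is (5), and there your route genuinely differs from the paper's. The paper establishes the stronger Lemma~\ref{l:comp-un}: for any compact $\C\subset\Law(X)$ the union $\bigcup\C$ is \emph{compact} in $X$. Its proof is a short finite-subcover argument: given an open cover $\U$ of $\bigcup\C$ closed under finite unions, each finite set $C\in\C$ lies in a single member $U_C\in\U$, the Vietoris sets $\langle U_C\rangle$ cover the compact set $\C$, and a finite subcover of $\C$ yields a finite subcover of $\bigcup\C$. Boundedness of the functor is then immediate: the set $B=\bigcup\C$ is compact (hence bounded) and $\C\subset\Law(B;X)$ holds by inspection, with no appeal to the general support machinery. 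Your argument instead establishes only that $\bigcup K$ is \emph{bounded} --- by transferring a locally finite family $(U_n)$ in $X$ to the locally finite Vietoris family $(V_n)$ in $\Law(X)$ --- and then needs Proposition~\ref{p:F-bounded} (hence monomorphicity, finite supports and Theorem~\ref{t:supp}) to convert this into the inclusion $K\subset\Law(B;X)$. Both routes are valid; your local-finiteness transfer is a clean, self-contained verification of the definition of boundedness, but the paper's compactness lemma is both shorter and strictly stronger, and the extra strength is not idle: Lemma~\ref{l:comp-un} is invoked again in the proof of Lemma~\ref{l:Law-fan}, where compactness (not mere boundedness) of the union of a compact family is what is actually used. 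So if your write-up were to replace the paper's, you should either keep the compactness statement or be prepared to re-derive it later.
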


The boundedness of the functor $\Law$ is proved in the following (known) lemma.

\begin{lemma}\label{l:comp-un}  For any topological space $X$ and any compact subset $\C\subset \Law(X)$ the set $\bigcup\C$ is compact in $X$. This implies that the functor $\Law:\Top\to\Top$ is bounded.
\end{lemma}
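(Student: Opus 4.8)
Looking at this lemma, I need to prove that for a compact subset $\C \subset \Law(X)$, the union $\bigcup\C = \bigcup_{F \in \C} F$ is compact in $X$, and that this implies boundedness of the functor $\Law$.

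Let me think about what $\Law(X)$ is. Recall that $\Law(X)$ is the set of non-empty finite subsets of $X$ endowed with the Vietoris topology. A compact set $\C$ in this hyperspace is a collection of finite subsets of $X$ that is compact in the Vietoris topology. I want to show $\bigcup\C$ is compact.

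**The plan for the union being compact.** The natural approach is to exhibit $\bigcup\C$ as a continuous image of a compact set. Consider the evaluation relation. There's a standard trick: the union map should be continuous in an appropriate sense. Specifically, I would consider the subspace $\{(x, F) \in X \times \Law(X) : x \in F\}$ — but wait, I need $\bigcup\C$ to be compact, so let me think about projecting. Let $K = \{(x,F) : F \in \C, \, x \in F\} \subset X \times \C$. The projection $\pi_2: K \to \C$ is... I want the projection $\pi_1: K \to X$ to have image $\bigcup\C$, and I want $K$ to be compact. The issue is whether $K$ is compact. Since $\C$ is compact, $X \times \C$ isn't necessarily a nice space, but $K$ is the set of "points belonging to members of $\C$."

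**Main obstacle and how to handle it.** The hard part is showing that $\bigcup\C$ is compact, i.e. that every open cover has a finite subcover (or that $\bigcup\C$ is closed in a suitable compact set and the cover argument works). I expect the cleanest route is a direct open-cover argument using the Vietoris topology. Let $\{U_i\}_{i \in I}$ be an open cover of $\overline{\bigcup\C}$ (or of $\bigcup\C$); I want a finite subcover. For each finite set $F \in \C$, since $F$ is finite and $F \subset \bigcup\C \subset \bigcup_i U_i$, there is a finite subfamily whose union contains $F$, giving $F \in \langle U_{i_1}, \dots, U_{i_k}\rangle$ for suitable indices. These basic Vietoris-open sets $\langle U_{i_1}, \dots, U_{i_k}\rangle$ (intersected appropriately so that $F$ belongs) form an open cover of the compact set $\C$. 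Extracting a finite subcover of $\C$ yields finitely many finite index-sets, whose total union is a finite subfamily $\{U_i : i \in J\}$ with $J$ finite; every $F \in \C$ lies in some member of the subcover and hence $F \subset \bigcup_{i \in J} U_i$. Therefore $\bigcup\C \subset \bigcup_{i \in J} U_i$, giving the finite subcover. The subtlety to be careful about is that membership of $F$ in a basic open set $\langle U_{i_1},\dots,U_{i_k}\rangle$ requires both $F \subset \bigcup_j U_{i_j}$ and $F$ meeting each $U_{i_j}$; I only need the first condition for covering, so I would instead use the open sets $\langle \bigcup_{j} U_{i_j} \rangle = \{G \in \Law(X) : G \subset \bigcup_j U_{i_j}\}$, which are Vietoris-open and contain $F$, and these cover $\C$.

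**Deducing boundedness of $\Law$.** By the definition of bounded functor (Proposition~\ref{p:F-bounded} and the surrounding definition), I must show that for each space $X$ and compact $\C \subset \Law(X)$ there is a bounded set $B \subset X$ with $\C \subset \Law(B; X)$. Take $B = \bigcup\C$, which is compact (hence bounded) by the first part. Each $F \in \C$ satisfies $F \subset B$, so $F = \delta_B$-image lies in $\Law i_{B,X}(\Law B) = \Law(B;X)$; concretely every finite subset of $B$ is a point of $\Law(B)$ mapping to $F$, so $\C \subset \Law(B;X)$. This is exactly the boundedness condition, completing the proof.
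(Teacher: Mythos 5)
Your proof is correct and takes essentially the same approach as the paper's: a direct open-cover argument in which each finite member of $\C$ is captured by a Vietoris-open set $\langle V\rangle$ with $V$ a union of finitely many cover members, after which compactness of $\C$ yields the finite subcover of $\bigcup\C$. The only cosmetic difference is that the paper first replaces the cover by its closure under finite unions (so each $C\in\C$ lies inside a single member $U_C$), whereas you keep the finite unions inside the Vietoris bracket; your explicit deduction of boundedness via $B=\bigcup\C$ is exactly the intended (and in the paper unwritten) second half.
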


\begin{proof} Given a cover $\U$ of $\bigcup\C$ by open subsets of $X$, we should find a finite subcover of $\V\subset\U$ of $\bigcup\C$. Without loss of generality, the family $\U$ is closed under finite unions. Then any (compact) set $C\in\C$ is contained in some set $U_C\in\U$ and the set $\langle U_C\rangle\cap\Law(X)=\{K\in\Law(X):K\subset U_C\}$ is an open neighborhood of $C$ in $\C$. By the compactness of $\C$ the open cover $\{\langle U_C\rangle:C\in\C\}$ contains a finite subcover $\{\langle U_C\rangle:C\in\F\}$ (here $\F$ is a finite subset of $\C$). Then $\{U_C:C\in\F\}\subset\U$ is a finite subcover of $\bigcup\C$.
\end{proof}


\begin{lemma}\label{l:Law-fan} If a functionally Hausdorff space $X$ contains a convergent sequence $S$, then its free Lawson semilattice $\Law(X)$ contains a strong $D_\w$-cofan. If $X$ contains a (strong) $\bar S^\w$-fan, then $\Law(X)$ contains a (strong) $\Fin^\w$-fan.
\end{lemma}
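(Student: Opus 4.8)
The plan is to treat $\Law(X)$ concretely as the space of non-empty finite subsets of $X$ with the Vietoris topology and the union operation, so that $\delta_X(x)=\{x\}$ and $\supp(F)=F$ for every $F\in\Law(X)$. Throughout I will use that $\Law$ is a monomorphic, $\II$-regular, bounded functor with finite supports (as recorded above), and that $\bigcup\C$ is compact in $X$ for every compact $\C\subset\Law(X)$ (Lemma~\ref{l:comp-un}).

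First I would prove the cofan statement. Fix a convergent sequence $S=\{s_m\}_{m\in\w}\to s$ with distinct terms and $s\notin S$, and for every $n\in\w$ put $F_{n,k}=\{s_n,s_{n+1},\dots,s_{n+k}\}$ and $\mathcal D_n=\{F_{n,k}:k\in\w\}$. The sets $F_{n,k}$ increase with $k$ to the infinite set $\{s_j:j\ge n\}$, so for fixed $n$ each point $s_j$, $j\ge n$, lies in all but finitely many $F_{n,k}$; hence any accumulation point of $\mathcal D_n$ in $\Law(X)$ would be a finite set containing every $s_j$ with $j\ge n$, which is impossible. Thus each $\mathcal D_n$ is closed and discrete, whence $|\mathcal D_n|=\w$ and $\mathcal D_n$ is compact-finite. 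Since $F_{n,k}\subseteq\{s_j:j\ge n\}$, every basic neighborhood $\langle O_s\rangle$ of $\{s\}$ contains all $\mathcal D_n$ with $n$ so large that $s_j\in O_s$ for $j\ge n$, so $(\mathcal D_n)\to\{s\}$ and $(\mathcal D_n)$ is a $D_\w$-cofan. For strongness I would use Lemma~\ref{l:fH-inj} to get a continuous $g\colon X\to\II$ injective on $\bar S$; then $\Law(g)$ maps into the \emph{metrizable} space $\Law(\II)$, where the images $g(F_{n,k})$ form (for fixed $n$) a closed discrete family, which therefore admits a locally finite family of functionally open neighborhoods $(O_{n,k})_k$. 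The preimages $\Law(g)^{-1}(O_{n,k})$ are $\IR$-open in $\Law(X)$ and compact-finite (a compact $\C\subset\Law(X)$ has compact image meeting finitely many $O_{n,k}$), so each $\mathcal D_n$ is strongly compact-finite.

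For the fan statement I would combine this construction with the fan. Let $(S_n)_{n\in\w}$ be a (strong) $\bar S^\w$-fan; after passing to a subfamily and relabelling I may assume the sequences are pairwise disjoint, $s_{n,m}\to x_n$, the limits satisfy $x_n\to x^\ast$ for $n\ge 1$ (an accumulation point exists since $\{x_n\}$ has compact closure and the family is not locally finite), and $S_0\to t:=x_0$ is disjoint from everything else with $t\ne x^\ast$. Writing $F_{n,m}=\{t_n,\dots,t_{n+m}\}$ for the terms $t_j$ of $S_0$, I set $w_{n,m}=F_{n,m}\cup\{s_{n,m}\}\in\Law(X)$ and claim $(\{w_{n,m}\})_{n\ge1,\,m\in\w}$ is the desired $\Fin^\w$-fan (reindexed by $\w$). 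For non-local-finiteness at $\{t,x^\ast\}$: given a basic neighborhood $\langle U,V\rangle$ with $t\in U$, $x^\ast\in V$, choose $N$ with $t_j\in U$ for $j\ge N$, pick $n\ge N$ with $x_n\in V$, and note that $F_{n,m}\subset U$ while $s_{n,m}\in V$ for all large $m$, so $w_{n,m}\in\langle U,V\rangle$ for infinitely many $(n,m)$. For compact-finiteness: if $\C\subset\Law(X)$ is compact then $K=\bigcup\C$ is compact, $w_{n,m}\in\C$ forces $s_{n,m}\in K$, so only finitely many $n$ occur (as $(S_n)$ is compact-finite), and for each fixed $n$ the family $(w_{n,m})_m$ is closed discrete by the persistent-point argument of the first part; hence only finitely many $(n,m)$ survive.

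Strongness of this fan is the step I expect to be the main obstacle, precisely because the escape in the parameter $m$ is a purely hyperspace phenomenon (growing cardinality inside the compact set $\bar S_0$) that cannot be detected by supports in $X$, whereas the spread in $n$ must be controlled by the $\IR$-open witnesses $U_n\supseteq S_n$ coming from the strong fan. I would therefore build the witnessing neighborhoods as intersections $W_{n,m}=\widetilde U_n\cap P_{n,m}$: the set $\widetilde U_n=\{G\in\Law(X):\supp(G)\cap U_n\ne\emptyset\}$ is $\IR$-open by Lemma~\ref{l:supp-Ropen}, and $(\widetilde U_n)_n$ is compact-finite because $\supp(G)=G\subseteq K$ for $G\in\C$ and $(U_n)$ is compact-finite; meanwhile $P_{n,m}=\Law(g)^{-1}(O_{n,m})$ arises, exactly as in the first part, from a locally finite family of functionally open neighborhoods of the closed discrete images $g(w_{n,m})$ (fixed $n$) in the metrizable space $\Law(\II)$, where $g\colon X\to\II$ is injective on the countable set involved. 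Intersecting an $n$-controlling with an $m$-controlling $\IR$-open set yields compact-finiteness in both directions at once, producing a strong $\Fin^\w$-fan. The routine general-position reductions (disjointness of the $S_n$, distinctness of $t$ and $x^\ast$) I would dispose of at the outset, as in the proofs of Theorems~\ref{t:product1} and \ref{t:product2}.
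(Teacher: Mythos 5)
Your construction of the cofan $(\mathcal D_n)_{n\in\w}$ and of the fan $\big(\{w_{n,m}\}\big)_{n,m\in\w}$ (an initial segment of one convergent sequence united with a single point of another) coincides with the paper's, and your verifications of non-local finiteness and of plain compact-finiteness (via Lemma~\ref{l:comp-un}) also match. Where you genuinely diverge is in certifying that the families are \emph{strongly} compact-finite. The paper stays inside $\Law(X)$: it produces pairwise disjoint $\IR$-open sets $U_k\ni x_k$ using the Tychonoff topology induced from $\beta X$, takes the Vietoris tubes $\U_{n,m}=\langle X,U_n,\dots,U_{n+m}\rangle$ and $\W_{n,m}=\langle V_n,U_n,\dots,U_{n+m}\rangle$ as witnessing neighborhoods, and proves their compact-finiteness by an accumulation-point argument run inside $\Law(X)$ itself. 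You instead push everything into the metrizable hyperspace $\Law(\II)$ along $\Law(g)$, where $g:X\to\II$ is injective on the relevant countable set (Lemma~\ref{l:fH-inj}): there the image families are closed and discrete, collectionwise normality of a metric space yields locally finite families of (functionally) open expansions, and their preimages are $\IR$-open and compact-finite in $\Law(X)$; the $n$-direction is controlled separately by the support sets $\widetilde U_n$ of Lemma~\ref{l:supp-Ropen}, whose compact-finiteness you correctly derive straight from Lemma~\ref{l:comp-un} rather than from Lemma~\ref{l:cont-supp} --- an important point, since $X$ is only functionally Hausdorff here and the $\mu$-completeness needed for Lemma~\ref{l:cont-supp} is unavailable. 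This is a valid alternative: it trades the paper's ad hoc Vietoris estimates (and the check that such tubes are $\IR$-open in the possibly non-Tychonoff space $\Law(X)$) for a soft argument in a metric space, at the cost of invoking functoriality of $\Law$ and metrizability of $\Law(\II)$.

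Two of your preliminary reductions are, however, false as stated, though harmless as used. First, you cannot pass to a subfamily so that the limits satisfy $x_n\to x^\ast$: compactness of the closure of $\{\lim S_n\}_{n\in\w}$ guarantees only a cluster point of the sequence of limits, not a convergent subsequence (compact Hausdorff spaces need not be sequentially compact; think of sequences in $\beta\IN$). Your argument survives because you only ever use the cluster-point property, namely that for every $N$ and every neighborhood $V$ of $x^\ast$ there is some $n\ge N$ with $x_n\in V$. Second, $t\ne x^\ast$ cannot be arranged when the given fan is a $\dot S^\w$-fan (all limits equal to one point): designating any member as $S_0$ forces $t=x^\ast$. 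In that case $\{t,x^\ast\}$ is a singleton, and you should note explicitly that the non-local-finiteness argument still runs verbatim with a single neighborhood $U=V$ and $\langle U\rangle$ in place of $\langle U,V\rangle$. With these two statements weakened to what is actually true (and actually used), your proof is complete.
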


\begin{proof} We identify the free Lawson semilattice $\Law(X)$ with the hyperspace of non-empty finite subsets of $X$.


Let $x$ be the limit point of the convergent sequence $S$ and $\{x_n\}_{n\in\w}$ be an injective enumeration of the set $S\setminus\{x\}$. Let $i_X:X\to\beta X$ be the (injective) map of $X$ into its Stone-\v Cech compactification. The topology $\tau_{3\frac12}$ on $X$ inherited from $\beta X$ turns $X$ into a Tychonoff space. The injectivity of the map $i_X$ implies that each point $x_n$ has a $\IR$-open neighborhood $U_n\subset X$ such that the family $(\overline{U}_n)_{n\in\w}$ is disjoint.


For every $n,m\in\w$ consider the finite set $u_{n,m}=\{x_{k}:n\le k\le n+m\}\in\Law(X)$ and its open neighborhood $\U_{n,m}=\langle X,U_n,\dots,U_{n+m}\rangle\cap\Law(X)$ in $\Law(X)$. It can be shown that the set $U_{n,m}$ is open in the Tychonoff space $\Law(X,\tau_{3\frac12})$ and hence is $\IR$-open in $\Law(X)$.

We claim that for every $n\in\w$ the set $D_n=\{u_{n,m}\}_{m\in\w}$ is strongly compact-finite in $X$. More precisely, the family $(\U_{n,m})_{m\in\w}$ is compact-finite in $\Law(X)$.
To derive a contradiction, assume that some compact set $K\subset\Law(X)$ meets infinitely many sets $\U_{n,m}$, $m\in\w$.
Then there is an increasing number sequence $(m_i)_{i\in\w}$ such that for every $i\in\w$ the intersection $\U_{n,{m_i}}\cap K$ contains some finite set $b_i\in K$. The definition of the set $\U_{n,m_i}$ implies that for every $i$ the set $b_i$ meets all sets $U_{n,m}$ for $n\le m\le n+m_i$. By the compactness of $K$ the sequence $(b_i)_{i\in\w}$ has an accumulation point $b_\infty\in K$, which is a finite subset $X$. Find $j\ge n$ such that $b_\infty\cap \bar U_{m_i}=\emptyset$ for all $i\ge j$. Since $b_\infty$ is an accumulation point of the sequence $(b_i)$, the neighborhood $\langle X\setminus \overline{U}_{m_j}\rangle$ of $b_\infty$ contains some set $b_i$ for $i\ge j$. Then $b_i\cap \overline{U}_{m_j}=\emptyset$, which contradicts the choice of $b_i$.

Therefore the family $(\U_{n,m})_{m\in\w}$ is compact-finite and the set $D_n$ is strongly compact-finite in $\Law(X)$.
The definition of the Vietoris topology on $\Cld(X)$ ensures that the sequence $(D_n)_{n\in\w}$ converges to the singleton $\{x\}$, which means that $(D_n)_{n\in\w}$ is a strong $D_\w$-cofan in $\Law(X)$.

Now assume that the space $X$ contains a (strong) $\bar S^\w$-fan $(S_n)_{n\in\w}$. Since the set $\{\lim S_n\}_{n\in\w}$ has compact closure in $X$, we can replace the family $(S_n)_{n\in\w}$ by a suitable subfamily and assume that $(S_n)_{n\in\w}$ is disjoint and the union $\bigcup_{n\in\w}S_n$ is disjoint with $\bar S\cup\{\lim S_n\}_{n\in\w}$. For every $n\in\w$ choose a sequence $(y_{n,m})_{m\in\w}$ of pairwise distinct points in the sequence $S_n$.

For every $n,m\in\w$ consider the finite set $w_{n,m}=\{y_{n,m}\}\cup u_{n,m}$. We claim that $(\{w_{n,m}\})_{n,m\in\w}$ is a (strong) $\Fin^\w$-fan in $\Law(X)$.

First we check that the family $(\{w_{n,m}\})_{n,m\in\w}$ is not locally finite in $\Law(X)$. Since the set $\{\lim S_n\}_{n\in\w}$ has compact closure in $X$, there exists a point $y\in X$ such that every neighborhood $O_y\subset X$ of $y$ contains infinitely many points $\lim S_n$, $n\in\w$. We claim that the family $(\{w_{n,m}\})_{n,m\in\w}$ is not locally finite at the set $\{x,y\}\in\Law(X)$. Fix any neighborhood $O_{\{x,y\}}\subset\Law(X)$ and find open neighborhoods $O_x,O_y\subset X$ of the points $x,y$ such that $\langle O_x,O_y\rangle\subset O_{\{x,y\}}$. Find $n\in\w$ such that $\{x_k\}_{k\ge n}\subset O_x$.
Next, find $n\ge k$ such that $\lim S_n\in O_y$ and choose a number $m\in\w$ with $y_{n,m}\in O_y$. Then the set $w_{n,m}=\{y_{n,m}\}\cup \{x_{k}\}^{n+m}_{k=n}\in \langle O_x,O_y\rangle\subset O_{\{x,y\}}$.

Next, we show that the family $(\{w_{n,m}\})_{n,m\in\w}$ is compact-finite in $\Law(X)$. Given any compact subset $K\subset \Law(X)$, apply Lemma~\ref{l:comp-un} and conclude that the union $\bigcup K$ is a compact subset of $X$. Since the family $(S_n)_{n\in\w}$ is compact-finite, there is a number $n_0\in\w$ such that $S_n\cap \bigcup K=\emptyset$ for all $n\ge n_0$. Observe that $w_{n,m}\in \U_{n,m}$ for every $n,m\in\w$. The compact-finiteness of the families $(\U_{n,m})_{m\in\w}$, $n\le n_0$, ensures that for every $n\le n_0$ the set $\{m\in \w:\U_{n,m}\cap K\ne\emptyset\}$ is finite. Consequently, there is a number $m_0\in\w$ such that $w_{n,m}\notin K$ for all $n\le n_0$ and $m\ge m_0$. Now we see that the set $\{(n,m)\in\w\times\w:w_{n,m}\in K\}\subset[0,n_0]\times [0,m_0]$ is finite and hence the family $(\{w_{n,m}\})_{n,m\in\w}$ is compact-finite in $\Law(X)$, so is a $\Fin^\w$-fan in $\Law(X)$.

Finally, assuming that the $\bar S^\w$-fan $(S_n)_{n\in\w}$ is strong, we shall show that the $\Fin^\w$-fan $\big(\{w_{n,m}\}\big)_{n,m\in\w}$ is strong.
Since the fan $(S_n)_{n\in\w}$ is strong, each sequence $S_n$ has an $\IR$-open neighborhood $V_n\subset X$ such that the family $(V_n)_{n\in\w}$ is compact-finite in $X$. It can be shown that for every $n,m\in\w$ the set $\W_{n,m}=\langle V_n,U_n,\dots,U_{n+m}\rangle\cap \Law(X)$ is an $\IR$-open neighborhood of $w_{n,m}$ in $\Law(X)$. Repeating the above argument, we can prove that the family $(\W_{n,m})_{n,m\in\w}$ is compact-finite in $\Law(X)$ and hence the $\Fin^\w$-fan $(\{w_{n,m}\})_{n,m\in\w}$ is strong.
\end{proof}

Now we are able to prove the main result of this subsection.

\begin{theorem}\label{t:Law} For a functionally Hausdorff space $X$ the following conditions are equivalent:
\begin{enumerate}
\item[\textup{1)}] $X$ is metrizable.
\item[\textup{2)}] $\Law(X)$ is metrizable.
\item[\textup{3)}] $X$ is a $k^*$-metrizable $k$-space and $\Law(X)$ contains no $\Fin^\w$-fan.
\item[\textup{4)}] $X$ is a Tychonoff sequential $\aleph$-space and $\Law(X)$ contains no strong $\Fin^\w$-fan.
\item[\textup{5)}] $X$ is $k^*$-metrizable and $\Law(X)$ is a $k$-space.
\item[\textup{6)}] $X$ is $k^*$-metrizable and $\Law(X)$ is Ascoli.
\end{enumerate}
\end{theorem}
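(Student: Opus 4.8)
The plan is to establish $(1)\Leftrightarrow(2)$ directly and then to prove that each of $(3)$–$(6)$ is equivalent to metrizability of $X$. Throughout I work with the concrete model in which $\Law(X)$ is the hyperspace $[X]^{<\w}$ of non-empty finite subsets of $X$ under the Vietoris topology, and I use that $\delta_X\colon X\to\Law(X)$, $x\mapsto\{x\}$, is a topological embedding whose image (the set of singletons) is closed, since its complement is Vietoris-open. The backbone of every backward implication is Lemma~\ref{l:Law-fan}: its contrapositive converts a fan hypothesis on $\Law(X)$ into the absence of a $\bar S^\w$-fan in $X$. It is worth stressing that Lemma~\ref{l:Law-fan} produces a $\Fin^\w$-fan in $\Law(X)$ from \emph{any} $\bar S^\w$-fan in $X$, without requiring an accompanying $D_\w$-cofan; this extra sensitivity of the hyperspace is exactly what forces the conclusion ``$X$ metrizable'' rather than the weaker dichotomy ``metrizable or a topological sum of $k_\w$-spaces'' obtained for free semigroups and semilattices in Theorems~\ref{t:Sem} and \ref{t:i-magma}.

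First I would settle $(1)\Leftrightarrow(2)$. If $X$ is metrizable, the Vietoris topology on $[X]^{<\w}$ coincides with the restriction of the Hausdorff-metric topology on the hyperspace of compact sets, so $\Law(X)$ is metrizable, giving $(1)\Rightarrow(2)$; conversely $X\cong\delta_X(X)$ is a subspace of the metrizable space $\Law(X)$, giving $(2)\Rightarrow(1)$. Moreover a metrizable $X$ is at once $k^*$-metrizable, a $k$-space, Tychonoff, sequential and an $\aleph$-space, while a metrizable $\Law(X)$ is a $k$-space, is Ascoli, and by Proposition~\ref{k-no-Cld-fan} contains neither a $\Fin^\w$-fan nor a strong $\Fin^\w$-fan; hence $(1)$ implies each of $(3)$–$(6)$.

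The substance lies in the backward implications. For $(3)$: the absence of a $\Fin^\w$-fan in $\Law(X)$ gives, via Lemma~\ref{l:Law-fan}, no $\bar S^\w$-fan and hence no $\ddot S^\w$-fan in $X$; Proposition~\ref{p:k-metr} then makes the $k^*$-metrizable space $X$ $k$-metrizable, and as $X$ is assumed a $k$-space it is metrizable. For $(5)$ the same chain applies — a $k$-space $\Law(X)$ has no $\Fin^\w$-fan by Proposition~\ref{k-no-Cld-fan} — and here the $k$-space property of $X$ is free, since $\delta_X(X)$ is a closed subspace of the $k$-space $\Law(X)$. For $(4)$ I would run the strong version: no strong $\Fin^\w$-fan in $\Law(X)$ yields no strong $\ddot S^\w$-fan in $X$, whence Theorem~\ref{t:Sfan}(3) makes the Tychonoff $\aleph$-space $X$ $k$-metrizable, and sequentiality (hence the $k$-space property) upgrades this to metrizability.

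The main obstacle is $(6)\Rightarrow(1)$. Ascoli only forbids strict $\Cld$-fans (Corollary~\ref{c:A->noFan}), hence strong $\Fin^\w$-fans, so Lemma~\ref{l:Law-fan} delivers merely ``no strong $\bar S^\w$-fan'', that is, ``no strong $\ddot S^\w$-fan'' in $X$ — and neither $k$-metrizability criterion applies to this, since Proposition~\ref{p:k-metr} needs the absence of \emph{all} $\ddot S^\w$-fans while Theorem~\ref{t:Sfan}(3) needs $X$ to be an $\aleph$-space, which $k^*$-metrizability does not provide. The plan to bridge this gap is to show that for a $k^*$-metrizable $X$ any $\ddot S^\w$-fan already forces a strict $\Cld$-fan in $\Law(X)$, contradicting Ascoli: by Proposition~\ref{p:FUAr} such a fan yields a $k$-closed copy $F$ of the Fr\'echet--Urysohn fan $V$ or the Arens fan $A$, and since $V,A$ are $k_\w$-spaces carrying strong $\dot S^\w$- and $\ddot S^\w$-fans, Lemma~\ref{l:Law-fan} shows that $\Law(V)$ and $\Law(A)$ contain strong $\Fin^\w$-fans. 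The delicate step — and the one I expect to be hardest — is transporting such a strong fan into $\Law(X)$ along the embedding of $\Law(F)$, because $F$ is only $k$-closed in $X$ and, as Example~\ref{Cld-not-hereditary} warns, strongness of fans is not preserved under passage to ambient spaces in general; I would try to circumvent this by constructing the required $\IR$-open neighborhoods directly in the Vietoris topology out of the rigid $k_\w$-structure of $V$ and $A$. Once all $\ddot S^\w$-fans in $X$ are excluded, $X$ is $k$-metrizable by Proposition~\ref{p:k-metr}, and it remains to pass from $k$-metrizability to metrizability, for which I would aim to recover the $k$-space property of $X$ and invoke Corollary~\ref{c:fan-metrization}.
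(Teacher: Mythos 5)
Your handling of conditions (1)--(5) is correct and is essentially the paper's own argument: $(1)\Leftrightarrow(2)$ via the Hausdorff metric and the embedding $\delta_X$; $(3)\Ra(1)$ and $(4)\Ra(1)$ via the contrapositive of Lemma~\ref{l:Law-fan} followed by Proposition~\ref{p:k-metr}, respectively Theorem~\ref{t:Sfan}(3); and your direct route $(5)\Ra(3)$ (the set of singletons is closed in the $k$-space $\Law(X)$, so $X$ is a $k$-space, while Proposition~\ref{k-no-Cld-fan} excludes all $\Fin^\w$-fans) is sound and, if anything, cleaner than the paper's chain $(5)\Ra(6)\Ra(3)$.

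The genuine gap is $(6)\Ra(1)$, and your proposal does not close it. Two steps of your plan are missing, and both are essential. First, the transport step: from a $\ddot S^\w$-fan in $X$, Proposition~\ref{p:FUAr} gives a $k$-closed set $F\subset X$ that is only \emph{$k$-homeomorphic} to $V$ or $A$, so the induced map $\Law(V)\to\Law(F)\subset\Law(X)$ is merely a continuous bijection onto a subspace carrying a possibly strictly weaker topology. Pushing the strong $\Fin^\w$-fan of $\Law(V)$ forward preserves non-local-finiteness, and compact-finiteness in $\Law(X)$ can be salvaged from Lemma~\ref{l:comp-un} and the $k$-closedness of $F$; but the $\IR$-open compact-finite neighborhoods witnessing strongness live in the finer space $\Law(V)$ and do not transfer. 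What survives in $\Law(X)$ is at best a plain $\Fin^\w$-fan --- exactly what the non-strong half of Lemma~\ref{l:Law-fan} already yields --- and a plain fan is compatible with the Ascoli property of $\Law(X)$, so no contradiction arises. Your proposed remedy (building the $\IR$-open neighborhoods directly from the $k_\w$-structure of $V$ and $A$) is precisely the missing content: in the proof of Lemma~\ref{l:Law-fan} those neighborhoods of the sequences $S_n$ come from the assumed strongness of the input fan, and nothing in condition (6) supplies them. Second, the endgame: even if all $\ddot S^\w$-fans in $X$ were excluded, Proposition~\ref{p:k-metr} gives only $k$-metrizability, and to conclude metrizability you need $X$ to be a $k$-space. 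Condition (6) offers no source for this: the Ascoli property of $\Law(X)$ does not make $\Law(X)$ a $k$-space, so closedness of $\delta_X(X)$ is of no use, and Corollary~\ref{c:fan-metrization} cannot be invoked. Your phrase ``I would aim to recover the $k$-space property of $X$'' is not backed by any mechanism.

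For comparison: the paper disposes of condition (6) by declaring the implication $(6)\Ra(3)$ ``trivial,'' i.e., it asserts without argument that Ascoli-ness of $\Law(X)$ yields both the absence of \emph{all} (not merely strict) $\Fin^\w$-fans in $\Law(X)$ and the $k$-space property of $X$. Your analysis identifies exactly why this assertion requires justification --- Ascoli only excludes strict $\Cld$-fans (Corollary~\ref{c:A->noFan}), and for $\Fin$-fans strict equals strong --- so your difficulty is not an artifact of your approach. Nevertheless, as submitted, your proposal establishes the equivalence of (1)--(5) only and leaves condition (6) open.
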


\begin{proof} The implication $(1)\Ra(2)$ follows from the metrizability of the Vietoris topology (by the Hausdorff metric) on the hyperspace $\Law(X)$ of finite subsets of a metric space.
The implication $(2)\Ra(3,4)$ follows from Proposition~\ref{k-no-Cld-fan} and the implications $(3)\Ra(1)$ and $(4)\Ra(1)$ follow from Lemma~\ref{l:Law-fan} and Theorem~\ref{t:Sfan}. The implications $(1,2)\Ra(5)\Ra(6)\Ra(3)$ are trivial.
\end{proof}

\section{Free topological inverse semigroups}

This section is devoted to studying free topological inverse (Clifford, Abelian) semigroups.
However we start with a general result related to suitable varieties of paratopological $*$-magmas.

\begin{theorem}\label{t:IS-gen} Let $\K$ be a $d{+}\HM{+}k_\w$-stable variety of paratopological $*$-magmas such that the identities $x(x^*x)=x$ and $(x^*x)(x^*x)=x^*x$ hold in $\K$ and $\K$ contains a non-trivial inverse semigroup. Assume that for a $\mu$-complete Tychonoff space $X$ the free paratopological $*$-magma $F_\K(X)$ contains no strong $\Fin^\w$-fan.
\begin{enumerate}
\item[\textup{1)}] If $X$ is a $\bar\aleph_k$-space, then the $k$-modification of $X$ is a topological sum of cosmic $k_\w$-spaces.
\item[\textup{2)}] If $X$ is a $\bar\aleph_k$-space and a $k_\IR$-space, then the space $X$ is a topological sum of cosmic $k_\w$-spaces.
\item[\textup{3)}] If $X$ is an $\aleph_0$-space, then $X$ is hemicompact.
\item[\textup{4)}] If $X$ is cosmic, then $X$ is $\sigma$-compact.
\end{enumerate}
\end{theorem}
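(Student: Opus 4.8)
The plan is to derive this theorem from Theorem~\ref{t:xsy_w-mix}, whose four conclusions coincide verbatim with conclusions (1)--(4) above and whose standing assumption on fans (``$F_\K(X)$ contains no strong $\Fin^\w$-fan'') is the same. Here we work with the signature $E=E_1\oplus E_2$ of $*$-magmas, where $E_1=\{{}^*\}$ and $E_2=\{\cdot\}$, so that a paratopological $*$-magma is precisely a paratopological $\dot E^*$-algebra. All but one of the hypotheses of Theorem~\ref{t:xsy_w-mix} are assumed here word for word: the variety $\K$ is $d{+}\HM{+}k_\w$-stable, and the identities $x(x^*x)=x$ and $(x^*x)(x^*x)=x^*x$ hold in $\K$. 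Moreover the signature $E$ is finite, hence a cosmic $k_\w$-space, so the strong boundedness of the functor $F_\K|\Top_{3\frac12}$ furnished by Theorem~\ref{t:d+HM+kw} (on which the proof of Theorem~\ref{t:xsy_w-mix} rests) is available. Thus the only point to settle is the remaining hypothesis of Theorem~\ref{t:xsy_w-mix}: that $\K$ contains an $x(s^*y_i)^{<\w}$-mixing paratopological $\dot E^*$-algebra.

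First I would produce such a mixing algebra from the assumed non-trivial inverse semigroup $X\in\K$. By Lemma~\ref{l:mixing*}(4), the countable power $X^\w$ is $x(s^*y_i)^{<\w}$-mixing provided $X$ contains two points $a,e$ with $ae=ea=a\ne e=ee=e^*$; and since a variety is closed under Tychonoff products, $X^\w$ again belongs to $\K$. So it remains to locate such a pair $a,e$ inside an arbitrary non-trivial inverse semigroup. Here I would follow the dichotomy used in the proof of Lemma~\ref{l:IS-mixing}: recalling that inverse semigroups are exactly the regular semigroups whose idempotents commute, either $X$ has at least two idempotents---in which case commutativity of idempotents yields idempotents $a,e$ with $ae=ea=a\ne e$---or $X$ has a single idempotent and is therefore a group, in which case I take $e$ to be the unit and $a$ any other element. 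In both cases $e$ is idempotent, and since every idempotent of an inverse semigroup is self-inverse (in the group case $e^*=e^{-1}=e$), we also obtain $e=e^*$; this supplies the extra relation that the purely multiplicative Lemma~\ref{l:IS-mixing} does not record.

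With the $x(s^*y_i)^{<\w}$-mixing algebra $X^\w\in\K$ in hand, every hypothesis of Theorem~\ref{t:xsy_w-mix} is met, and its conclusions (1)--(4) yield conclusions (1)--(4) of the present theorem directly. The main obstacle is therefore not topological but purely algebraic: confirming the relations $ae=ea=a\ne e=ee=e^*$ in a general non-trivial inverse semigroup. This rests on the standard facts that idempotents of an inverse semigroup commute and are self-inverse, and that an inverse semigroup with a unique idempotent is a group; once these are invoked, the proof is a short reduction rather than a fresh construction.
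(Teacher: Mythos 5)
Your proposal is correct and follows essentially the same route as the paper: the paper also reduces Theorem~\ref{t:IS-gen} to Theorem~\ref{t:xsy_w-mix} by producing an $x(s^*y_i)^{<\w}$-mixing algebra, namely by repeating the idempotent-dichotomy argument of Lemma~\ref{l:IS-mixing} to find $a,e$ in the non-trivial inverse semigroup $Y$ with $e^*=e=ee$ and $ea=ae=a\ne e$, and then invoking Lemma~\ref{l:mixing*}(4) to conclude that $Y^\w\in\K$ is $x(s^*y_i)^{<\w}$-mixing. Your explicit justification that idempotents of an inverse semigroup are self-inverse (supplying $e^*=e$) is a detail the paper leaves implicit, but it is the same argument.
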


\begin{proof} In order to apply Theorem~\ref{t:xsy_w-mix}, we need to show that the variety $\K$ contains an $x(s^*y_i)^{<\w}$-mixing $*$-magma. By our assumption, $\K$ contains an inverse semigroup $Y$ of cardinality $|Y|>1$. Repeating the proof of Lemma~\ref{l:IS-mixing}, we can find two elements $a,e\in Y$ such that $e^*=e=ee$ and $ea=ae=a\ne e$. By Lemma~\ref{l:mixing*}(4), the inverse semigroup $Y^\w\in\K$ is $x(s^*y_i)^{<\w}$-mixing. So, we can apply Theorem~\ref{t:xsy_w-mix} and finish the proof.
\end{proof}

By analogy with Theorem~\ref{t:i-magma} we can prove the following characterization.

\begin{theorem}\label{t:FK-IS} Let $\K$ be a $d{+}\HM{+}k_\w$-stable variety of topological $*$-magmas such that the identities $x(x^*x)=x$ and $(x^*x)(x^*x)=x^*x$ hold in $\K$ and $\K$ contains a non-trivial semilattice.
For a $\mu$-complete Tychonoff $k_\IR$-space $X$ the following conditions are equivalent:
\begin{enumerate}
\item[\textup{1)}] $X$ is a topological sum of cosmic $k_\w$-spaces.
\item[\textup{2)}] $F_\K(X)$ is a topological sum of cosmic $k_\w$-spaces.
\item[\textup{3)}] $X$ is an $\bar\aleph_k$-space and $F_\K(X)$ is a $k$-space.
\item[\textup{4)}] $X$ is an $\bar\aleph_k$-space and $F_\K(X)$ is an Ascoli space.
\item[\textup{5)}] $X$ is an $\bar\aleph_k$-space and $F_\K(X)$ contains no strong $\Fin^\w$-fan.
\end{enumerate}
\end{theorem}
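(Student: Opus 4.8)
The plan is to prove the equivalences by the cycle $(1)\Ra(2)\Ra(3)\Ra(4)\Ra(5)\Ra(1)$, following the template already used for Theorem~\ref{t:i-magma} and Theorem~\ref{t:SL} and deferring all the genuine fan-theoretic content to Theorem~\ref{t:IS-gen}. The one preliminary remark I would record is that a non-trivial $*$-semilattice is a non-trivial inverse semigroup in the sense of the paper: a $*$-band satisfies $x^*=x=xx$, so the defining identities $(x^*)^*=x$, $xx^*x=x$ and $(xx^*)(yy^*)=(yy^*)(xx^*)$ are immediate from idempotency and commutativity. Hence the hypothesis ``$\K$ contains a non-trivial semilattice'' delivers exactly the ``non-trivial inverse semigroup'' hypothesis of Theorem~\ref{t:IS-gen}, while the remaining hypotheses of that theorem (the identities $x(x^*x)=x$, $(x^*x)(x^*x)=x^*x$ and $d{+}\HM{+}k_\w$-stability) are assumed here verbatim.

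With that in place, $(5)\Ra(1)$ is a direct citation of Theorem~\ref{t:IS-gen}(2): $X$ is a $\mu$-complete Tychonoff $k_\IR$-space which by (5) is a $\bar\aleph_k$-space whose free object $F_\K(X)$ carries no strong $\Fin^\w$-fan, so $X$ is a topological sum of cosmic $k_\w$-spaces. The three remaining implications are soft. For $(2)\Ra(3)$: a topological sum of cosmic $k_\w$-spaces is a $k$-space (each summand is a $k_\w$-space and topological sums of $k$-spaces are $k$-spaces), and by Corollary~\ref{c:delta-closed} the space $X$ is homeomorphic to the closed subspace $\delta_X(X)$ of $F_\K(X)$; since each summand is an $\aleph_0$-space, hence a $\bar\aleph_k$-space, the sum and its closed subspaces are $\bar\aleph_k$-spaces, so $X$ is a $\bar\aleph_k$-space. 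For $(3)\Ra(4)$ I use that every $k$-space is Ascoli, and for $(4)\Ra(5)$ I use Corollary~\ref{c:A->noFan} together with the diagram implications ``Ascoli $\Ra$ no strict $\Cld$-fan $\Ra$ no strict $\Cld^\w$-fan $\Ra$ no strong $\Fin^\w$-fan''; the clause ``$X$ is a $\bar\aleph_k$-space'' is simply transported unchanged along each step.

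The substantive work is $(1)\Ra(2)$, which I would carry out by imitating the localization argument of Theorem~\ref{t:i-magma}, now in the $*$-magma signature $E=\{{}^*\}\oplus\{\cdot\}$. Since $\K$ is $d$-stable and contains a non-trivial semilattice, and $*$-semilattices have trivial involution, $\K$ contains the discrete two-element semilattice $\{0,1\}$ with $\min$-operation and identity involution. Writing $X=\bigoplus_\alpha X_\alpha$ as a sum of cosmic $k_\w$-spaces, I would fix $a\in F_\K(X)$, use finite supports to choose a clopen $A\subset X$ which is a finite union of the $X_\alpha$ (hence a cosmic $k_\w$-space) and contains $\supp(a)$, and form the $*$-homomorphism $\bar f\colon F_\K(X)\to\{0,1\}$ induced by the map $f\colon X\to\{0,1\}$ with $f^{-1}(1)=A$. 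Because $0$ is absorbing for $\min$ and fixed by the involution, $\bar f(w)=1$ precisely when $\supp(w)\subset A$, so $\bar f^{-1}(1)=F_\K(A;X)$ is a clopen neighborhood of $a$; as $A$ is a retract of $X$, the map $F_\K i_{A,X}$ is a split embedding identifying this set with $F_\K(A)$, which is Hausdorff by Theorem~\ref{t:bar-i-bijective} and hence a cosmic $k_\w$-space by Theorem~\ref{t:kw}. The main obstacle is then the passage from these local clopen cosmic $k_\w$-neighborhoods to a genuine topological-sum decomposition: the neighborhoods $F_\K(A;X)$ overlap for all finite-index $A$, so no naive partition works. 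I would instead push forward along the continuous collapse $c\colon X\to\Lambda_d$ onto the discrete index set; by Proposition~\ref{p:FKX-disc} the space $F_\K(\Lambda_d)$ is discrete, so the fibres of $F_\K c$ give a clopen partition of $F_\K(X)$. The delicate point — the step requiring more than the template of Theorem~\ref{t:i-magma} — is to verify, using Proposition~\ref{p:supp=image} and the behaviour of supports under collapsing retractions, that the fibre over $z\in F_\K(\Lambda_d)$ is contained in $F_\K(c^{-1}(\supp z);X)$, i.e.\ that no element mapping to $z$ has support spilling outside the finitely many summands indexed by $\supp(z)$; once this containment is established each fibre is a clopen subset of a finite-index cosmic $k_\w$-space, and the decomposition follows.
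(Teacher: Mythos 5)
Your overall architecture coincides with the paper's, which proves this theorem simply ``by analogy with Theorem~\ref{t:i-magma}'': the same cycle $(1)\Ra(2)\Ra(3)\Ra(4)\Ra(5)\Ra(1)$, the same soft ingredients (Corollary~\ref{c:delta-closed} to embed $X$ as a closed subspace of $F_\K(X)$ in $(2)\Ra(3)$, the Ascoli theorem for $(3)\Ra(4)$, Corollary~\ref{c:A->noFan} and the fan diagram for $(4)\Ra(5)$), and Theorem~\ref{t:IS-gen}(2) for $(5)\Ra(1)$ --- your preliminary remark that a commutative $*$-band satisfies the inverse-semigroup identities, so that the semilattice hypothesis feeds Theorem~\ref{t:IS-gen}, is exactly the bridge needed there. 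Where you genuinely depart from the paper is $(1)\Ra(2)$, and your instinct is correct: the paper's template reduces $(1)\Ra(2)$ to the claim that every $a\in F_\K(X)$ has a clopen cosmic $k_\w$-neighborhood $F_\K(A;X)$, and that reduction is not valid for arbitrary spaces --- in the space $\w_1$ of countable ordinals every point has a clopen compact metrizable neighborhood, yet a pressing-down argument shows $\w_1$ admits no partition into clopen Lindel\"of pieces, so it is not a topological sum of cosmic $k_\w$-spaces. Some local-to-global argument such as your collapse $c\colon X\to\Lambda_d$, with $F_\K(\Lambda_d)$ discrete by Proposition~\ref{p:FKX-disc}, is really needed, and it does produce a clopen partition of $F_\K(X)$ by fibres.

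The one genuine soft spot is the step you yourself flag as delicate: it is true, but it cannot be closed by Proposition~\ref{p:supp=image}, which gives $\supp(F_\K c(w))\subset c(\supp(w))$ --- the \emph{reverse} of the inclusion you need, and equality can fail for non-injective maps in general varieties (for groups, $\delta(x)\delta(y)^{-1}\mapsto e$ kills the support; this is why Theorem~\ref{t:FA} has a product, not a sum, in condition (2)). What saves you is the same two-element semilattice $\{0,1\}\in\K$ you already used. First, no element of $F_\K(X)$ has empty support: if $\supp(w)=\emptyset$ then $w\in\langle\delta_X(x)\rangle_E\cap\langle\delta_X(y)\rangle_E$ for distinct $x,y$ (Theorem~\ref{t:supp}), and the homomorphism induced by $x\mapsto1$, $y\mapsto 0$ sends $w$ into $\{1\}\cap\{0\}=\emptyset$. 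Second, if $\supp(w)\cap X_\alpha\ne\emptyset$ then $\alpha\in\supp(F_\K c(w))$: for $g\colon\Lambda_d\to\{0,1\}$ with $g^{-1}(1)=\Lambda\setminus\{\alpha\}$ one has $\bar g\circ F_\K c=\overline{g\circ c}$ by uniqueness of extensions, and your identity $\bar f^{-1}(1)=F_\K(f^{-1}(1);X)$ applied to $f=g\circ c$ gives $\overline{g\circ c}\,(w)=0$, hence $\bar g(F_\K c(w))=0$, hence $\alpha\in\supp(F_\K c(w))$. These two facts plus Theorem~\ref{t:supp} yield your fibre inclusion, and the proof is complete. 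Alternatively you can bypass the collapse map entirely: writing $A_T=\bigcup_{\alpha\in T}X_\alpha$, partition $F_\K(X)$ by the trace $T(w)=\{\alpha\in\Lambda:\supp(w)\cap X_\alpha\ne\emptyset\}$; the piece with trace $T$ equals $F_\K(A_T;X)\setminus\bigcup_{\alpha\in T}F_\K(A_{T\setminus\{\alpha\}};X)$ (with $F_\K(\emptyset;X)=\emptyset$), a clopen subset of the cosmic $k_\w$-space $F_\K(A_T;X)$, and once empty supports are excluded these pieces form the desired clopen partition using only what the paper already proved.
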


By $\IS$, $\CS$, $\ICS$, $\IAS$ we shall denote the functor $F_\K:\Top\to\Top$ of free topological $*$-magma for the variety $\K$ of topological inverse semigroups, topological Clifford semigroups, topological inverse Clifford semigroups, topological inverse Abelian semigroups, respectively. The functors $\IS$, $\ICS$, $\IAS$ were studied in \cite{BGG}, where a partial case of the following theorem was proved. \index{free!topological inverse semigroup} \index{free!topological Clifford semigroup}
\index{free!topological inverse Clifford semigroup} \index{free!topological inverse Abelian semigroup}

\begin{theorem}\label{t:IS+} Let $F$ be one of the functors $\IS$, $\CS$, $\ICS$, $\IAS$. For a $\mu$-complete Tychonoff $k_\IR$-space $X$ the following conditions are equivalent:
\begin{enumerate}
\item[\textup{1)}] $X$ is a topological sum of cosmic $k_\w$-spaces.
\item[\textup{2)}] $F(X)$ is a topological sum of cosmic $k_\w$-spaces.
\item[\textup{3)}] $X$ is an $\bar\aleph_k$-space and $F(X)$ is a $k$-space.
\item[\textup{4)}] $X$ is an $\bar\aleph_k$-space and $F(X)$ is an Ascoli space.
\item[\textup{5)}] $X$ is an $\bar\aleph_k$-space and $F(X)$ contains no strong $\Fin^\w$-fan.
\end{enumerate}
\end{theorem}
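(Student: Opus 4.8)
The plan is to deduce Theorem~\ref{t:IS+} directly from the general characterization of Theorem~\ref{t:FK-IS}. Each of the four functors $\IS,\CS,\ICS,\IAS$ is, by definition, the free-object functor $F_\K$ for the variety $\K$ of topological inverse semigroups, topological Clifford semigroups, topological inverse Clifford semigroups, and topological inverse commutative semigroups, respectively. All four are varieties of topological $*$-magmas, being equationally defined classes of $*$-semigroups with continuous binary and unary operations, and hence are closed under Tychonoff products, $*$-subalgebras, and topological $*$-isomorphisms. Thus it suffices to verify, for each of these four varieties $\K$, the three standing hypotheses of Theorem~\ref{t:FK-IS}: that the identities $x(x^*x)=x$ and $(x^*x)(x^*x)=x^*x$ hold in $\K$, that $\K$ contains a non-trivial semilattice, and that $\K$ is $d{+}\HM{+}k_\w$-stable.

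First I would check the two identities and the presence of a non-trivial semilattice. In every class above the relation $xx^*x=x$ holds, so associativity gives $x(x^*x)=xx^*x=x$ and $(x^*x)(x^*x)=x^*(xx^*x)=x^*x$; hence both required identities are valid in each of the four varieties. For the semilattice, I would observe that any semilattice equipped with the identity involution $x^*=x$ simultaneously satisfies the defining laws of an inverse semigroup, a Clifford semigroup, an inverse Clifford semigroup, and a commutative inverse semigroup; in particular the two-element semilattice $\{0,1\}$ with the operation of minimum and $x^*=x$ is a non-trivial member of every one of the four varieties.

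Next I would verify the $d{+}\HM{+}k_\w$-stability. For $d$-stability, the discrete modification of a topological inverse (Clifford, etc.) semigroup carries the same algebraic operations and the discrete topology, on which every map is continuous, so it remains an object of $\K$. For $\HM$-stability, Lemma~\ref{l:BanHryn} guarantees that $\HM(X)$ is again a topological $*$-magma with continuous operations; since $\HM(X)$ sits inside the power $X^{[0,1)}$ as a $*$-subalgebra (pointwise products and involutions of step functions are again step functions), and the laws defining each variety are equational and therefore inherited from $X^{[0,1)}$, the algebra $\HM(X)$ lies in $\K$ whenever $X$ does. Finally, for $k_\w$-stability, suppose $X$ is a $k_\w$-space which is a topological $*$-magma admitting a continuous bijective $E$-homomorphism onto a Hausdorff algebra $Y\in\K$; such a homomorphism is an algebraic isomorphism, so the equational laws of $\K$ transfer from $Y$ to $X$, and as $X$ already has continuous operations we conclude $X\in\K$.

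With all hypotheses of Theorem~\ref{t:FK-IS} in place for each of the four varieties, the equivalence $(1)\Leftrightarrow(2)\Leftrightarrow(3)\Leftrightarrow(4)\Leftrightarrow(5)$ follows at once, which is precisely the assertion of Theorem~\ref{t:IS+}. I expect the only point requiring genuine care to be the $\HM$- and $k_\w$-stability checks, and within these the crucial observation is that the Hartman--Mycielski construction preserves equational laws because $\HM(X)$ embeds as a $*$-subalgebra of a power of $X$; the remaining verifications are purely formal transfers of identities along products, subalgebras, and bijective homomorphisms.
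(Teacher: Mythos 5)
Your proposal is correct and follows exactly the paper's route: the paper's own proof is a one-line reduction to Theorem~\ref{t:FK-IS} applied to the four varieties, and you carry out precisely this reduction, merely spelling out the hypothesis checks (the identities via associativity and $xx^*x=x$, the two-element semilattice with identity involution, and $d{+}\HM{+}k_\w$-stability via Lemma~\ref{l:BanHryn} and the equational nature of the defining laws) that the paper leaves implicit.
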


This theorem follows from Theorem~\ref{t:FK-IS} applied to suitable varieties of topological inverse or Clifford semigroups.

\section{Free topological Abelian groups}

The section is devoted to studying the $k$-space and Ascoli properties in free topological Abelian groups.

\begin{theorem}\label{t:FA-gen} Let $\K$ be a $d{+}\HM{+}k_\w$-stable variety of paratopological $*$-magmas such that the identities $x(x^*x)=x$ and $(x^*x)(x^*x)=x^*x=y^*y$ hold in $\K$ and $\K$ contains a non-trivial group. Assume that for a $\mu$-complete Tychonoff space $X$ the free paratopological $*$-magma $F_\K(X)$ contains no strong $\Fin^\w$-fan.
\begin{enumerate}
\item[\textup{1)}] If $X$ is a $\bar \aleph_k$-$k_\IR$-space, then $X$ is a topological sum $K\oplus D$ of a cosmic $k_\w$-space $K$ and a discrete space $D$.
\item[\textup{2)}] If $X$ is an $\aleph_k$-space and $FX$ contains no $\Fin^{\w_1}$-fan, then $X$ is $k$-homeomorphic to a topological sum $K\oplus D$ of a cosmic $k_\w$-space $K$ and a discrete space $D$.
\end{enumerate}
\end{theorem}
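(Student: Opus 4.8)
The plan is to derive Theorem~\ref{t:FA-gen} as a direct corollary of the general Theorem~\ref{t:xsy+xy.zs}. The first observation is that a paratopological $*$-magma is precisely a paratopological $\dot E^*$-algebra for the signature $E=\{\cdot,{}^*\}$ consisting of one binary symbol $\cdot\in E_2$ and one unary symbol ${}^*\in E_1$; since this signature is a two-point discrete (hence $k_\w$-) space, all the structural results of Chapter~\ref{ch:free} apply. The hypotheses of Theorem~\ref{t:FA-gen} — that $\K$ is $d{+}\HM{+}k_\w$-stable and satisfies the identities $x(x^*x)=x$ and $(x^*x)(x^*x)=x^*x=y^*y$ — coincide verbatim with those of Theorem~\ref{t:xsy+xy.zs}. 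Hence the only thing left to verify is that $\K$ contains an $x(s^*y_i)^{<\w}$-mixing and an $(x^*y)(z^*s)$-mixing paratopological $*$-magma.

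To produce these mixing magmas I would use the non-trivial group $G\in\K$ supplied by the hypothesis. Recall that, in the sense of this paper, a group is a non-empty Clifford $*$-semigroup with $x^*x=y^*y$ for all $x,y$; a short computation shows that the common value $e:=x^*x$ is then a two-sided unit and that $x^*=x^{-1}$ is the two-sided inverse of $x$ (from $xx^*x=x$ together with $xx^*=x^*x=e$ one gets $xe=ex=x$ and $xx^*=x^*x=e$). Thus $G$ is a group in the usual sense with the $*$-operation being inversion. Fix the unit $e$ and any non-unit element $a\in G$ (which exists as $|G|>1$); then $e^*=e=ee$ and $ae=ea=a\neq e=ee=e^*$. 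Since a variety is closed under Tychonoff products, both $G^2$ and $G^\w$ belong to $\K$. By Lemma~\ref{l:mixing*}(4), the relations $ae=ea=a\neq e=ee=e^*$ guarantee that $G^\w$ is $x(s^*y_i)^{<\w}$-mixing. For the second condition note that $e(ea)=(ea)e=a$, so $\{e(ea),(ea)e\}=\{a\}$ does not contain $e=e^*=ee$; thus Lemma~\ref{l:mixing*}(1) shows that $G^2$ is $(x^*y)(z^*s)$-mixing.

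With both mixing algebras exhibited in $\K$, all the hypotheses of Theorem~\ref{t:xsy+xy.zs} are met, and its conclusions (1) and (2) yield, respectively, statements (1) and (2) of Theorem~\ref{t:FA-gen}. The entire argument is a reduction, so no new topological work is required. I expect the only genuine point requiring care to be the algebraic verification that the paper's notion of ``group'' forces $x^*=x^{-1}$ together with a two-sided unit — this is exactly what makes the two elements $a,e$ satisfy the numerical hypotheses of Lemma~\ref{l:mixing*}, and hence it is the hinge on which the whole reduction turns.
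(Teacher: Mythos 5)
Your proposal is correct and follows essentially the same route as the paper: the paper likewise reduces Theorem~\ref{t:FA-gen} to Theorem~\ref{t:xsy+xy.zs} by taking a non-trivial group $Y\in\K$ with unit $e$ and a non-unit $a$, observing $ae=ea=a\ne e=ee=e^*$ and $e^*=e=ee\notin\{e(ea),(ea)e\}=\{a\}$, and invoking Lemma~\ref{l:mixing*} to conclude that $Y^\w$ is $x(s^*y_i)^{<\w}$-mixing and $Y^2$ is $(x^*y)(z^*s)$-mixing. Your explicit verification that the paper's $*$-semigroup notion of ``group'' forces a two-sided unit with $x^*=x^{-1}$ is a detail the paper leaves implicit, but it does not change the argument.
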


\begin{proof} In order to apply Theorem~\ref{t:xsy+xy.zs}, we need to prove that the variety $\K$ contains an $x(s^*y_i)^{<\w}$-mixing and an $(x^*y)(s^*z)$-mixing paratopological $*$-magmas. By our assumption, the variety $\K$ contains a non-trivial group $Y$. Let $e$ be the unit of $Y$ and $a\in Y\setminus\{e\}$ be any element. Taking into account that $ae=ea=a\ne e=ee=e^*$ and
$e^*=e=ee\notin\{e(ea),(ea)e\}=\{a\}$ and applying Lemma~\ref{l:mixing*}, we conclude that the $*$-magma $Y^\w\in\K$ is $x(s^*y_i)^{<\w}$-mixing and the magma $Y^2\in\K$ is $(x^*y)(z^*s)$-mixing. So, we can apply  Theorem~\ref{t:xsy+xy.zs} and finish the proof.
\end{proof}

\begin{proposition}\label{p:product-abelian} Let $\K$ be a variety of topologized $*$-magmas such that
 the identities $(x^*)^*=x$, $x(x^*x)=x$, $(x^*x)(x^*x)=x^*x=y^*y$, $(xy)^*=y^*x^*$, $xy=yx$, $(xy)(uv)=(xu)(yv)$ hold in $\K$. Then for any non-empty topological spaces $X,Y$ and their topological sum $Z=X\oplus Y$, the map
 $$h:F_\K(X)\times F_\K(Y)\to F_\K(Z),\;\;h:(a,b)\mapsto F_\K i_{X,Z}(a)\cdot F_\K i_{Y,Z}(b),$$
 is a topological isomorphism of topologized $*$-magmas $F_\K(X)\times F_\K(Y)$ and $F_\K(Z)$.
 \end{proposition}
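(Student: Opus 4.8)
The plan is to realize the product $F_\K(X)\times F_\K(Y)$, equipped with a suitable canonical map from $Z$, as a free topologized $*$-magma over $Z$ in the variety $\K$, and then to identify $h$ with the inverse of the resulting universal isomorphism. First I would record the algebraic consequences of the imposed identities. The identities $(x^*x)(x^*x)=x^*x=y^*y$ show that in every non-empty algebra $A\in\K$ all elements $a^*a$ coincide in a single idempotent $e_A$ (this is immediate from $x^*x=y^*y$ once $A\neq\emptyset$); moreover $e_A^*=e_A$ (from $(x^*)^*=x$ and $(xy)^*=y^*x^*$), $e_Aa=ae_A=a$ for all $a$ (from $a(a^*a)=a$ with commutativity), and $e_Ae_A=e_A$. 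Since $X,Y\neq\emptyset$, the algebras $F_\K(X),F_\K(Y)$ are non-empty and carry such idempotents $e_X,e_Y$. I would then define $j\colon Z=X\oplus Y\to F_\K(X)\times F_\K(Y)$ by $j|X\colon x\mapsto(\delta_X(x),e_Y)$ and $j|Y\colon y\mapsto(e_X,\delta_Y(y))$; this is continuous on each summand and hence on $Z$, and $F_\K(X)\times F_\K(Y)\in\K$ because $\K$ is closed under Tychonoff products.

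Next I would verify that $(F_\K(X)\times F_\K(Y),j)$ satisfies the universal property of $F_\K(Z)$. Given a continuous map $f\colon Z\to W$ into some $W\in\K$, its restrictions $f|X$ and $f|Y$ induce continuous $*$-homomorphisms $\overline{f_X}\colon F_\K(X)\to W$ and $\overline{f_Y}\colon F_\K(Y)\to W$, and I set $\bar f(a,b)=\overline{f_X}(a)\cdot\overline{f_Y}(b)$. The key computation is that $\bar f$ is a $*$-homomorphism: compatibility with the binary operation is exactly the medial law $(xy)(uv)=(xu)(yv)$ applied to $x=\overline{f_X}(a)$, $y=\overline{f_X}(a')$, $u=\overline{f_Y}(b)$, $v=\overline{f_Y}(b')$, while compatibility with $*$ follows from $(xy)^*=y^*x^*$ together with commutativity. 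Using $e_W=w^*w$ and $we_W=w$ one checks $\bar f\circ j=f$. This is the conceptual heart of the argument: it is precisely the medial and commutativity identities that force the categorical product in $\K$ to double as a coproduct.

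For the uniqueness of $\bar f$ I would show that $j(Z)$ generates $F_\K(X)\times F_\K(Y)$ as a $*$-submagma. The idempotent identities make $\{(a,e_Y):a\in F_\K(X)\}$ and $\{(e_X,b):b\in F_\K(Y)\}$ the $*$-submagmas generated by $j(X)$ and $j(Y)$ respectively (here one needs $e_Ye_Y=e_Y$, $e_Y^*=e_Y$, and symmetrically for $e_X$), and then $(a,e_Y)\cdot(e_X,b)=(a,b)$ by the unit law and commutativity, so every element lies in $\langle j(Z)\rangle_E$. Thus $(F_\K(X)\times F_\K(Y),j)$ is a free topologized $*$-magma over $Z$, and by the uniqueness of free objects (Proposition~\ref{p:FA-exists}, in its $*$-magma form) there is a unique topological $*$-isomorphism $\phi\colon F_\K(Z)\to F_\K(X)\times F_\K(Y)$ with $\phi\circ\delta_Z=j$.

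Finally I would identify $h$ with $\phi^{-1}$. The maps $F_\K i_{X,Z}$ and $F_\K i_{Y,Z}$ are $*$-homomorphisms, so $h$ is a $*$-homomorphism by the same medial and $*$-computation as above; note that this step is purely algebraic and does not require continuity of the multiplication on $F_\K(Z)$. Using naturality of $\delta$, namely $F_\K i_{X,Z}\circ\delta_X=\delta_Z\circ i_{X,Z}$, one computes $h\circ j=\delta_Z=\phi^{-1}\circ j$; since $h$ and $\phi^{-1}$ are both $*$-homomorphisms out of $F_\K(X)\times F_\K(Y)$ that agree on the generating set $j(Z)$, they coincide. Hence $h=\phi^{-1}$ is a topological $*$-isomorphism. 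The main obstacle worth flagging is exactly that $\K$ consists of topologized (not necessarily topological) $*$-magmas, so the multiplication on $F_\K(Z)$ need not be continuous and one cannot argue continuity of $h$ directly; the device of recognizing $h$ as the inverse of the universal isomorphism $\phi$ circumvents this, and at the same time delivers continuity of $h^{-1}$, which is needed for a topological isomorphism.
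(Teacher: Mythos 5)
Your architecture is genuinely different from the paper's and is attractive: you try to realize $(F_\K(X)\times F_\K(Y),j)$ as a free object over $Z$ and then invoke uniqueness of free objects, whereas the paper builds the continuous inverse $\bar r$ directly from the universal property of $F_\K(Z)$ (applied to the maps $r_X,r_Y$ that collapse the opposite summand to the unit $1_X=x^*x$) and proves surjectivity of $h$ by induction on the sets $E^n[\delta_Z(Z)]$. All your algebraic verifications are correct (the idempotent unit $e_A=a^*a$, the $*$-homomorphism property of $\bar f$ via mediality and commutativity, generation of the product by $j(Z)$). However, the step you advertise as the point of the detour is exactly where the argument breaks. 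The universal property of a free object in $\K$ requires the induced $E$-homomorphism to be \emph{continuous}, and your candidate $\bar f(a,b)=\overline{f_X}(a)\cdot\overline{f_Y}(b)$ is the composition of $\overline{f_X}\times\overline{f_Y}$ with the multiplication of $W$, which in a variety of topologized (rather than paratopological) $*$-magmas need not be continuous. Since $j(Z)$ generates the product, this $\bar f$ is the \emph{unique} algebraic homomorphism with $\bar f\circ j=f$, so no alternative choice can repair the step. Worse, the gap is circular in a precise sense: apply your universal property to $W=F_\K(Z)$ and $f=\delta_Z$; then $\overline{f_X}=F_\K i_{X,Z}$ and $\overline{f_Y}=F_\K i_{Y,Z}$, so the induced map $\bar f$ is, by its very formula, the map $h$ itself. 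Hence verifying the universal property already contains ``$h$ is continuous'' as a special case, and your claim that the device ``circumvents'' the continuity problem is not correct.

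This is not a removable blemish, because with only the stated hypotheses the continuity of $h$ can genuinely fail. Let $\K$ be the variety of \emph{all} topologized $*$-magmas satisfying the six identities. For a space $T$ the free object $F_\K(T)$ is then the free algebraic object endowed with the final topology with respect to $\delta_T$ (a set is open iff its $\delta_T$-preimage is open in $T$); one checks the universal property directly, since any algebraic homomorphism $\bar f$ with $\bar f\circ\delta_T$ continuous is automatically continuous for this topology. Take $X=Y=[0,1]$ and $x_0=y_0=\tfrac12$. The singleton $\{\delta_Z(x_0)\cdot\delta_Z(y_0)\}$ is open in $F_\K(Z)$: its $\delta_Z$-preimage is empty, because a product of two generators is not a generator (look at the homomorphism into the group $(\IZ,+,-)$, which satisfies all six identities, sending every generator to $1$). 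But its $h$-preimage is the singleton $\{(\delta_X(x_0),\delta_Y(y_0))\}$, which is not open since $\{x_0\}$ is not open in $[0,1]$. So here $h$ is a bijective $*$-homomorphism with continuous inverse that is \emph{not} a topological isomorphism. The conclusion is that continuity of the multiplication must be used somewhere: in every variety of paratopological or topological $*$-magmas (which is the only setting in which the paper applies this proposition, and the setting in which its own tacit step ``$h$ is continuous'' is immediate), both your continuity of $\bar f$ and the continuity of $h$ become trivial, and your proof then goes through as written.
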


 \begin{proof} By our assumption, the identities $x^*x=y^*y$, $xy=yx$ and $x(x^*x)=x$ hold in $F_\K(X)$, so we can consider the element $1_X=x^*x$ where $x\in F_\K(X)$ is any element of $F_\K(X)$ and conclude that $1_X\cdot x=x\cdot 1_X=x$ for any $x\in F_\K(X)$. This means that $1_X$ is a two-sided unit of the magma $F_\K(X)$. Taking into account the identities $(xy)^*=(y^*x^*)$ and $(x^*)^*=x$, we conclude that $1_X^*=(x^*x)^*=x^*x^{**}=x^*x=1_X$.

Consider the map $r_X:Z\to F_\K(X)$ such that $r_X|X=\delta_X$ and $r_X(Y)=\{1_X\}$. By the definition of the free $*$-magma $F_\K(Z)$, there exists a unique continuous $*$-homomorphism $\bar r_X:F_\K(Z)\to F_\K(X)$ such that $\bar r_X\circ \delta_Z=r_K$. It follows that $\bar r_X\circ F_\K i_{X,Z}$ is the identity map of $F_\K(X)$ and hence the $*$-homomorphism
 $F_\K i_{X,Z}:F_\K(X)\to F_\K(Z)$ is injective, which allows us identify the $*$-magma $F_\K(X)$ with a $*$-submagma of $F_\K(Z)$.

 By analogy we can define the map $r_Y:Z\to F_\K(Y)$ such that $r_Y|Y=\delta_Y$ and $r_Y(X)=\{1_Y\}$ and consider the unique continuous $*$-homomorphism $\bar r_Y:F_\K(Z)\to F_\K(Y)$ such that $\bar r_Y\circ\delta_Z=r_Y$. The composition $\bar r_Y\circ F_\K i_{Y,Z}$ is the identity map of $F_\K(Y)$. This implies that the $*$-homomorphism $F_\K i_{Y,Z}:F_\K(Y)\to F_\K(Z)$ is injective, which allows us identify the $*$-magma $F_\K(Y)$ with a $*$-submagma of $F_\K(Z)$.

The definition of the maps $\bar r_X$ and $\bar r_Y$ guarantee that the map $$\bar r:F_\K(Z)\to F_\K(X)\times F_\K(Y),\;\;\bar r:a\mapsto (\bar r_X(a),\bar r_Y(a)),$$ is a continuous $*$-homomorphism such that $\bar r\circ h$ is the identity map of $F_\K(X)\times F_\K(Y)$.

Next, we show that the map $h:F_\K(X)\times F_\K(Y)\to F_\K(Z)$ is a $*$-homomorphism.
Given any pairs $(a,b),(c,d)\in F_\K(X)\times F_\K(Y)$, take into account the identities $(xy)^*=y^*x^*$, $xy=yx$ and $(xy)(uv)=(xu)(yv)$ holding in $\K$, and conclude that
$$h((a,b)^*)=h(a^*,b^*)=a^*\cdot b^*=(b\cdot a)^*=(a\cdot b)^*=h(a,b)^*$$ and
$$h((a,b)\cdot(c,d))=h(ac,bd)=(ac)\cdot (bd)=(ab)\cdot (cd)=h(a,b)\cdot h(c,d).$$
So, $h$ is a continuous $*$-homomorphism.

It remains to show that $h$ is surjective. Since $h$ is a $*$-homomorphism, its range $H=h(F_\K(X)\times F_\K(Y))$ is a $*$-submagma of $F_\K(Z)$. Observe that $F_\K(Z)=\bigcup_{n\in\w}E^n[\delta_Z(Z)]$ where $E^0[\delta_Z(Z)]=\delta_Z(Z)$ and $$E^{n+1}[\delta_Z(Z)]=E^n[\delta_Z(Z)]\cup E^n[\delta_Z(Z)]^*\cup E^n[\delta_Z(Z)]\cdot E^n[\delta_Z(Z)]$$for $n\in\w$. By induction on $n\in\w$ we shall show that $E^n[\delta_Z(Z)]\subset H$.
For $n=0$ this follows from the fact that $\delta_Z(Z)=(\delta_X(X)\cdot 1_Y)\cup (1_X\cdot\delta_Y(Y))\subset H$. Assume that for some $n\in\w$ the inclusion $E^n[\delta_Z(Z)]\subset h(F_\K(X)\times F_\K(Y))$ has been proved. Then
$$
E^{n+1}[\delta_Z(Z)]=E^n[\delta_Z(Z)]\cup E^n[\delta_Z(Z)]^*\cup E^n[\delta_Z(Z)]\cdot E^n[\delta_Z(Z)]\subset H\cup H^*\cup H\cdot H\subset H,
$$
which completes the inductive step. Therefore, $$F_\K(Z)=\bigcup_{n\in\w}E^n[\delta_Z(Z)]\subset H=h(F_\K(X)\times F_\K(Y)$$ and hence $h:F_\K(X)\times F_\K(Y)\to F_\K(Z)$ is a bijective $*$-homomorphism with inverse $\bar r$. So, $h$ is a topological isomorphism of the topologized $*$-magmas $F_\K(X)\times F_\K(Y)$ and $F_\K(Z)$.
\end{proof}

\begin{theorem}\label{t:FA-gen2} Let $\K$ be a $d{+}\HM{+}k_\w$-stable variety of topological $*$-magmas such that
 the identities $(x^*)^*=x$, $x(x^*x)=x$, $(x^*x)(x^*x)=x^*x=y^*y$, $(xy)^*=y^*x^*$, $xy=yx$, $(xy)(uv)=(xu)(yv)$ hold in $\K$ and $\K$ contains a non-trivial group.
For a $\mu$-complete Tychonoff $k_\IR$-space $X$ following conditions are equivalent:
\begin{enumerate}
\item[\textup{1)}] The space $X$ is a topological sum $K\oplus D$ of a cosmic $k_\w$-space $K$ and a discrete space $D$.
\item[\textup{2)}] $F_\K(X)$ is a product $K\times D$ of a cosmic $k_\w$-space $K$ and a discrete space $D$;
\item[\textup{3)}] $X$ is an $\bar \aleph_k$-space and $F_\K(X)$ contains no strong $\Fin^\w$-fan;
\item[\textup{4)}] $X$ is an $\aleph_k$-space and $F_\K(X)$ contains no strong $\Fin^\w$-fan and no $\Fin^{\w_1}$-fan.
\item[\textup{5)}] $X$ is an $\aleph_k$-space and $F_\K(X)$ is a $k$-space.
\item[\textup{6)}] $X$ is an $\bar\aleph_k$-space and $F_\K(X)$ is an Ascoli space.
\end{enumerate}
\end{theorem}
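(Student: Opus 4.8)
The plan is to establish the chain $(1)\Ra(2)\Ra(5)\Ra(4)\Ra(1)$ together with $(2)\Ra(6)\Ra(3)\Ra(1)$, which makes all six conditions equivalent. First I would record the standing facts about the functor $F_\K$. Since the signature $E=\{{}^*,\cdot\}$ is a finite discrete space (hence a cosmic $k_\w$-space) and $\K$ is $d{+}\HM{+}k_\w$-stable, Theorems~\ref{t:FKX1}(1,3), \ref{t:bar-i-bijective} and \ref{t:d+HM+kw} guarantee that $F_\K|\Top_{3\frac12}$ is monomorphic, $\II$-regular, strongly bounded and has finite supports, and Corollary~\ref{c:delta-closed} shows that for every Tychonoff space the canonical map $\delta_X$ is a closed topological embedding; in particular $F_\K(X)$ is functionally Hausdorff by Theorem~\ref{t:bar-i-bijective}. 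The non-trivial group $Y\in\K$ has a unit $e$ and an element $a\ne e$ with $ae=ea=a$ and $e^*=e=ee$, so by Lemma~\ref{l:mixing*}(1,4) the powers $Y^2,Y^{\w}\in\K$ are $(x^*y)(z^*s)$-mixing and $x(s^*y_i)^{<\w}$-mixing, and (arguing as in the proof of Theorem~\ref{t:xsy+xy.zs}) $\K$ also contains an $x{\cdot}y$-mixing algebra. Consequently, by Proposition~\ref{p:mixing->TA}(2,5) and Lemma~\ref{l:FK-xy}, for every functionally Hausdorff $X$ the functor-space $F_\K(X)$ is a topological algebra of types $x{\cdot}y$, $(x^*y)(z^*s)$ and $x(s^*y_i)^{<\w}$.

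Next come the routine implications. For $(1)\Ra(2)$ I would apply Proposition~\ref{p:product-abelian} to write $F_\K(X)\cong F_\K(K)\times F_\K(D)$, where $F_\K(K)$ is a cosmic $k_\w$-space by Theorem~\ref{t:bar-i-bijective} and Theorem~\ref{t:kw}, while $F_\K(D)$ is discrete by Proposition~\ref{p:FKX-disc}. For $(2)\Ra(5)$ and $(2)\Ra(6)$, observe that $K\times D=\bigsqcup_{d\in D}K\times\{d\}$ is a topological sum of cosmic $k_\w$-spaces, hence a $k$-space (so Ascoli, by the classical Ascoli theorem) and an $\aleph$-space; since $\delta_X$ is a closed embedding, $X$ is homeomorphic to a closed subspace of this $\aleph$-space and the hereditary $\aleph_k$-property passes to $X$. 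The implication $(5)\Ra(4)$ is immediate from Proposition~\ref{k-no-Cld-fan}, and $(6)\Ra(3)$ holds because an Ascoli space carries no strict $\Cld$-fan (Corollary~\ref{c:A->noFan}) while, in the $T_1$-space $F_\K(X)$, a strong $\Fin^\w$-fan is strict and hence a strict $\Cld^\w$-fan. Finally $(3)\Ra(1)$ is a direct application of Theorem~\ref{t:FA-gen}(1), whose hypotheses (a $\bar\aleph_k$-$k_\IR$-space with no strong $\Fin^\w$-fan) are exactly those furnished by $(3)$ together with the standing assumptions on $X$.

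The substantive step is $(4)\Ra(1)$. Here $X$ is only an $\aleph_k$-space, whereas Theorem~\ref{t:FA-gen}(1) needs a $\bar\aleph_k$-space, so I would first upgrade the network. Since $F_\K(X)$ is a topological algebra of type $x(s^*y_i)^{<\w}$ and $F_\K$ is strongly bounded, Lemma~\ref{l:xyy_w} shows that a strong $D_\w$-cofan in $X$ would force a strong $\Fin^\w$-fan in $F_\K(X)$; as $(4)$ forbids the latter, $X$ has no strong $D_\w$-cofan. Similarly, since $F_\K(X)$ is a topological algebra of types $x{\cdot}y$ and $(x^*y)(z^*s)$, Corollary~\ref{c:xy+xyzs} converts the absence of a $\Fin^{\w_1}$-fan into the absence of an $S^{\w_1}$-semifan, and hence of a $\ddot S^{\w_1}$-fan. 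Applying Proposition~\ref{p:a+s->ba} to the $\mu$-complete Tychonoff $\aleph_k$-space $X$ then shows that $X$ is a $\bar\aleph_k$-space, after which $(3)\Ra(1)$ (equivalently Theorem~\ref{t:FA-gen}(1)) yields $X=K\oplus D$.

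I expect the bookkeeping inside $(4)\Ra(1)$ to be the main obstacle: one must check that the various fan, cofan and semifan nonexistence statements match the hypotheses of Proposition~\ref{p:a+s->ba} exactly, in particular that the "strong" qualifier on the $D_\w$-cofan is the one produced, and that "no $\ddot S^{\w_1}$-fan" really follows from "no $S^{\w_1}$-semifan"; one must also ensure the $\aleph_k\to\bar\aleph_k$ upgrade legitimately precedes the $k_\IR$-version of Theorem~\ref{t:FA-gen}. A secondary point needing attention is the degenerate cases in $(1)\Ra(2)$ (when $K$ or $D$ is empty), where Proposition~\ref{p:product-abelian} does not apply directly and one falls back on Theorem~\ref{t:kw} or Proposition~\ref{p:FKX-disc} alone.
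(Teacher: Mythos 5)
Your proof is correct, and most of it coincides with the paper's argument: (1)$\Ra$(2) via Propositions~\ref{p:product-abelian}, \ref{p:FKX-disc} and Theorem~\ref{t:kw}; (3)$\Ra$(1) via Theorem~\ref{t:FA-gen}(1); (5)$\Ra$(4) and (6)$\Ra$(3) via Proposition~\ref{k-no-Cld-fan} and Corollary~\ref{c:A->noFan}; and your (2)$\Ra$(5),(6) repackages exactly the content (the closed embedding $\delta_X$ from Corollary~\ref{c:delta-closed} plus the $k$-space property of a topological sum of cosmic $k_\w$-spaces) that the paper uses for its direct implications (2)$\Ra$(3,4) and the trivial (1,2)$\Ra$(5,6). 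The one genuinely different step is (4)$\Ra$(1). The paper simply cites Theorem~\ref{t:FA-gen}, but part (2) of that theorem concludes only that $X$ is \emph{$k$-homeomorphic} to a sum $K\oplus D$; to obtain condition (1) literally one must still observe that a Tychonoff $k_\IR$-space whose $k$-modification is Tychonoff coincides with that $k$-modification --- a step the paper leaves implicit. Your route sidesteps this issue: the contrapositives of Lemma~\ref{l:xyy_w} and Corollary~\ref{c:xy+xyzs} exclude strong $D_\w$-cofans and $S^{\w_1}$-semifans in $X$, Proposition~\ref{p:a+s->ba} then upgrades $X$ from $\aleph_k$ to $\bar\aleph_k$, and the already-established implication (3)$\Ra$(1) finishes. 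This is the same machinery hidden inside Theorem~\ref{t:FA-gen}(2) (whose proof runs through Corollary~\ref{c:xy.zs+xsy_w}, invoking the same two statements), but reassembled so that the $k_\IR$ hypothesis is consumed by Theorem~\ref{t:FA-gen}(1), which yields an honest topological sum rather than a $k$-homeomorphic copy of one; that is a genuine, if modest, improvement in rigor. One terminological slip: in (2)$\Ra$(6) you say the hereditary ``$\aleph_k$-property'' passes to $X$, whereas (6) requires $\bar\aleph_k$; but your own argument shows $X$ is (homeomorphic to) a closed subspace of an $\aleph$-space, hence an $\aleph$-space, hence $\bar\aleph_k$, so nothing of substance is missing.
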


\begin{proof} The implication $(1)\Ra(2)$ follows from Propositions~\ref{p:product-abelian}, \ref{p:FKX-disc}, and Theorem~\ref{t:kw}.
The implications $(2)\Ra(3,4)$ follow from Corollary~\ref{c:delta-closed} and Proposition~\ref{k-no-Cld-fan}.
The implications $(3)\Ra(1)$ and $(4)\Ra(1)$ follow from Theorem~\ref{t:FA-gen}. The implication $(1,2)\Ra(5,6)$ are trivial and $(5)\Ra(4)$, $(6)\Ra(3)$ follow from Proposition~\ref{k-no-Cld-fan} and Corollary~\ref{c:A->noFan}, respectively.
\end{proof}

Applying Theorem~\ref{t:FA-gen2} to the variety $\K$ of topological Abelian groups and the functor $\FA=F_\K$ of \index{free!topological Abelian group} free topological Abelian group, we get the following characterization.

\begin{theorem}\label{t:FA} For a $\mu$-complete Tychonoff $k_\IR$-space $X$ the following conditions are equivalent:
\begin{enumerate}
\item[\textup{1)}] The space $X$ is a topological sum $K\oplus D$ of a cosmic $k_\w$-space $K$ and a discrete space $D$.
\item[\textup{2)}] The free Abelian topological group $\FA(X)$ is a product $K\times D$ of a cosmic $k_\w$-space $K$ and a discrete space $D$.
\item[\textup{3)}] $X$ is an $\bar \aleph_k$-space and $\FA(X)$ contains no strong $\Fin^\w$-fan.
\item[\textup{4)}] $X$ is an $\aleph_k$-space and $\FA(X)$ contains no strong $\Fin^\w$-fan and no $\Fin^{\w_1}$-fan.
\item[\textup{5)}] $X$ is an $\aleph_k$-space and $\FA(X)$ is a $k$-space.
\item[\textup{6)}] $X$ is an $\bar\aleph_k$-space and $\FA(X)$ is an Ascoli space.
\end{enumerate}
\end{theorem}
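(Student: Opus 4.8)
The plan is to deduce Theorem~\ref{t:FA} as a direct corollary of the general result Theorem~\ref{t:FA-gen2}, applied to the variety $\K$ of topological Abelian groups and the free-object functor $\FA = F_\K$. The only real work is to verify that this variety satisfies all the hypotheses of Theorem~\ref{t:FA-gen2}; once that is done, the equivalences (1)--(6) transcribe verbatim, since the free topological Abelian group $\FA(X)$ is by definition $F_\K(X)$ for this $\K$.

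First I would check the variety-theoretic hypotheses. The class of topological Abelian groups is visibly closed under Tychonoff products, under taking (closed) subgroups, and under topological isomorphisms, and it contains non-trivial groups (e.g.\ $\IZ$), so it is a variety containing a non-trivial group. For $d$-stability, note that any group with the discrete topology is a topological group, so the discrete modification of a topological Abelian group stays in $\K$. For $\HM$-stability, Lemma~\ref{l:BanHryn} guarantees that $\HM(X)$ is a topological $E$-algebra whenever $X$ is, and the group identities are inherited coordinatewise in $X^{[0,1)}$, so $\HM(X)$ remains a topological Abelian group. For $k_\w$-stability, one invokes the defining closure property: a topological Abelian group that is a $k_\w$-space and admits a continuous bijective homomorphism onto a Hausdorff topological Abelian group in $\K$ again lies in $\K$; this is exactly the standard $k_\w$-stability enjoyed by topological groups. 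Thus $\K$ is $d{+}\HM{+}k_\w$-stable.

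Next I would verify the required identities in the $*$-magma signature, where $x^* := x^{-1}$ is the inversion and $\cdot$ the group multiplication. In any Abelian group one has $(x^{-1})^{-1}=x$; $x^*x = x^{-1}x = e$ is the identity element, so $x(x^*x) = xe = x$ and likewise $(x^*x)(x^*x)=ee=e=x^*x=y^*y$ for all $x,y$; the anti-homomorphism law $(xy)^{-1}=y^{-1}x^{-1}$ holds in any group and equals $x^*y^*$; commutativity $xy=yx$ is the defining Abelian property; and the medial/entropic law $(xy)(uv)=(xu)(yv)$ follows from associativity together with commutativity. Hence every identity listed in the hypothesis of Theorem~\ref{t:FA-gen2} is satisfied, and $\K$ contains a non-trivial group as required.

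With all hypotheses of Theorem~\ref{t:FA-gen2} confirmed, the six equivalent conditions of that theorem specialize to conditions (1)--(6) of Theorem~\ref{t:FA} upon substituting $F_\K(X) = \FA(X)$. I do not anticipate a genuine obstacle here: the proof is a verification rather than a construction, and every nontrivial implication has already been absorbed into the general machinery of Chapter~\ref{ch:free} (in particular the product decomposition of $\FA(X)$ over a topological sum comes from Proposition~\ref{p:product-abelian}, the discreteness of $\FA$ of a discrete space from Proposition~\ref{p:FKX-disc}, and the $k_\w$-structure from Theorem~\ref{t:kw}). The one point deserving a moment's care is the medial identity $(xy)(uv)=(xu)(yv)$, since it is the only hypothesis that genuinely uses commutativity and is easy to overlook; but it holds in every Abelian group, so the application of Theorem~\ref{t:FA-gen2} is legitimate and Theorem~\ref{t:FA} follows.
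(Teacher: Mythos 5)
Your proposal is correct and is exactly the paper's own proof: the paper derives Theorem~\ref{t:FA} by applying Theorem~\ref{t:FA-gen2} to the variety of topological Abelian groups with $x^*=x^{-1}$, leaving the routine verification of the stability hypotheses and the identities (including the medial law $(xy)(uv)=(xu)(yv)$) implicit, which you have simply spelled out. Nothing is missing.
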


\section{Free topological groups}
\begin{theorem}\label{t:FG-gen}
 Let $\K$ be a $d{+}\HM{+}k_\w$-stable variety of paratopological $*$-magmas such that the identities $x(x^*x)=x$, $x^*x=y^*y$ and $xx^*=yy^*$ hold in $\K$ and $\K$ contains a non-Abelian group. Let $X$ be a $\mu$-complete Tychonoff space.
\begin{enumerate}
\item[\textup{1)}] If $F_\K(X)$ contains no strong $\Fin^{\w_1}$-fans and each infinite compact subset of $X$ contains a convergent sequence, then $X$ either $k$-discrete or $\w_1$-bounded.
\item[\textup{2)}] If $F_\K(X)$ contains no $\Fin^{\w_1}$-fans and $X$ is $k^*$-metrizable, then either $X$ is $k$-discrete or $X$ is an $\aleph_0$-space.
\item[\textup{3)}] If $F_\K(X)$ contains no strong $\Fin^{\w}$-fans and $X$ is an $\bar\aleph_k$-space, then $X$ is either $k$-discrete or $X$ is a $\w_1$-bounded $k$-sum of hemicompact spaces; moreover, if $X$ is  a $k_\IR$-space, then $X$ is either discrete or a cosmic $k_\w$-space.
\item[\textup{4)}] If the space $F_\K(X)$ contains no strong $\Fin^{\w}$-fans and no $\Fin^{\w_1}$-fans and $X$ is an $\aleph_k$-space, then $X$ is either $k$-discrete or  hemicompact.
\end{enumerate}
\end{theorem}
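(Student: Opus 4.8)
The plan is to deduce the theorem directly from Theorem~\ref{t:long-mix->X}, whose four conclusions coincide almost verbatim with the four conclusions here; the only genuine work is to replace the hypothesis ``$\K$ contains an $x(y(s^*z)y^*)$-mixing and an $(x(s^*z)x^*)(y(s^*z)y^*)$-mixing paratopological $\dot E^*$-algebra'' by the single hypothesis ``$\K$ contains a non-Abelian group''. A paratopological $*$-magma is precisely a paratopological $\dot E^*$-algebra for the finite (hence cosmic $k_\w$) signature $E=\{\cdot,{}^*\}$ with $\cdot\in E_2$ and ${}^*\in E_1$, so the whole machinery of the section on paratopological $\dot E^*$-algebras applies, and the three identities $x(x^*x)=x$, $x^*x=y^*y$, $xx^*=yy^*$ required by Theorem~\ref{t:long-mix->X} are exactly the ones assumed here (and are automatically valid in any group with $x^*=x^{-1}$).

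First I would manufacture the two mixing algebras. Let $G\in\K$ be a non-Abelian group, viewed as a $*$-semigroup with $x^*=x^{-1}$, and let $e$ be its unit, so that $e^*=e=ee$. Since $G$ is non-Abelian, there are $b,c\in G$ with $bc\ne cb$; both differ from $e$, and $bcb^{-1}\ne c$. Because $e$ is the unit, $e((bc)b^*)=bcb^{-1}\ne c$ and $ec=ce=c\ne e$, so by Lemma~\ref{l:mixing*}(2) the power $G^3$ is $x(y(s^*z)y^*)$-mixing. Likewise $c\,(bcb^{-1})\ne c^2\ne (bcb^{-1})\,c$, both inequalities again reducing to $bc\ne cb$ after cancelling $c$ on the appropriate side, so by Lemma~\ref{l:mixing*}(3) the power $G^3$ is also $(x(s^*z)x^*)(y(s^*z)y^*)$-mixing. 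As $\K$ is a variety it is closed under products, whence $G^3\in\K$ supplies both required mixing algebras simultaneously.

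With the hypotheses of Theorem~\ref{t:long-mix->X} now verified, its statements (1), (2) and (4) yield verbatim the statements (1), (2) and (4) here. For statement (3) I would argue as follows: $X$ is an $\bar\aleph_k$-space, so every compact subset of $X$ is metrizable and hence every infinite compact subset contains a convergent sequence; and since $F_\K(X)$ contains no strong $\Fin^{\w}$-fan it contains no strong $\Fin^{\w_1}$-fan, so Theorem~\ref{t:long-mix->X}(1) applies and shows $X$ is either $k$-discrete or $\w_1$-bounded. Combining this dichotomy with Theorem~\ref{t:long-mix->X}(3) gives that $X$ is either $k$-discrete or a \emph{$\w_1$-bounded} $k$-sum of hemicompact spaces, while the $k_\IR$-refinement (discrete, or a cosmic $k_\w$-space) is inherited directly from Theorem~\ref{t:long-mix->X}(3).

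The proof is essentially a translation exercise, so there is no serious obstacle; the only point demanding a little care is checking that the two mixing conditions of Lemma~\ref{l:mixing*}(2,3) both collapse, in a group, to the single noncommutativity requirement $bc\ne cb$ — one must verify that the distinguished unit $e$ and the inverse operation interact correctly with the bracketed words $((bc)b^*)$ so that all the prescribed inequalities hold and no unwanted collapse occurs. The extra ``$\w_1$-bounded'' clause in statement (3), absent from the conclusion of Theorem~\ref{t:long-mix->X}(3) but implicit in its proof, is recovered here cleanly by the separate invocation of part (1).
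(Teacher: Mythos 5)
Your overall reduction coincides with the paper's own proof, which consists of exactly the two steps you describe: produce the two mixing algebras from the non-Abelian group via Lemma~\ref{l:mixing*}(2,3), and then quote Theorem~\ref{t:long-mix->X}. Your group-theoretic verification is correct and is precisely what the paper leaves implicit: with $x^*=x^{-1}$, unit $e$, and $b,c$ chosen with $bc\ne cb$, all the inequalities required in Lemma~\ref{l:mixing*}(2,3) reduce after cancellation to $bcb^{-1}\ne c$, and $G^3\in\K$ because $\K$ is a variety. Parts (1), (2) and (4) then follow as you say; in (4) note only that Theorem~\ref{t:long-mix->X}(4) asks for ``no $\Fin^{\w}$-fans'' rather than ``no strong $\Fin^{\w}$-fans'', but this is supplied by the assumed absence of $\Fin^{\w_1}$-fans, since any $\Fin^{\w}$-fan padded by empty sets becomes a $\Fin^{\w_1}$-fan.

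The weak point is your recovery of the ``$\w_1$-bounded'' clause in (3). The implication you invoke --- that absence of strong $\Fin^{\w}$-fans in $F_\K(X)$ implies absence of strong $\Fin^{\w_1}$-fans --- runs in the wrong direction. By the padding argument just mentioned (and by the diagram of fan properties in the Introduction), it is ``no strong $\Fin^{\w_1}$-fan'' that implies ``no strong $\Fin^{\w}$-fan''; the converse fails in general, as the paper's own example of the space $\{\infty\}\cup\w_1$ with co-countable neighborhoods of $\infty$ shows: that space contains a strict (hence strong) $\Fin^{\w_1}$-fan but no $\Cld^{\w}$-fans at all. So your appeal to Theorem~\ref{t:long-mix->X}(1) is not licensed by the hypothesis of (3). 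In fairness, the paper's own source of this clause, the proof of Theorem~\ref{t:long1}(3), opens with exactly the same move (``Then $FX$ contains no strong $\Fin^{\w_1}$-fan''), so you have faithfully reconstructed the intended argument --- but the step is a gap there as well as in your write-up. Closing it would require either countable tightness of $F_\K(X)$ (so that Corollary~\ref{c:tight} converts an uncountable fan into a countable one) or a countable-index version of the construction in Lemma~\ref{l:long}; the latter is not available as stated, because that construction rests on Lemma~\ref{l:ad}, which is genuinely about uncountable almost disjoint families (a countable almost disjoint family may be pairwise disjoint).
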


\begin{proof} Taking into account that the variety $\K$ contains a non-Abelian group and applying Lemma~\ref{l:mixing*}(2,3), we conclude that $\K$ contains a paratopological $*$-magma which is $x(y(s^*z)y^*)$-mixing and $(x(s^*z)x^*)(y(s^*z)y^*)$-mixing.  Now we can apply Theorem~\ref{t:long-mix->X} and complete the proof.
\end{proof}

\begin{theorem}\label{t:FG-gen2} Let $\K$ be a $d{+}\HM{+}k_\w$-stable variety of topological $*$-magmas such that the identities $x(x^*x)=x$, $x^*x=y^*y$ and $xx^*=yy^*$ hold in $\K$ and $\K$ contains a non-Abelian group. For a $\mu$-complete Tychonoff $k_\IR$-space $X$  the following conditions are equivalent:
\begin{enumerate}
\item[\textup{1)}] $X$ is either discrete or a cosmic $k_\w$-space;
\item[\textup{2)}] $F_\K(X)$ is either discrete or a cosmic $k_\w$-space;
\item[\textup{3)}] $X$ is a $\bar\aleph_k$-space and $F_\K(X)$ contains no strong $\Fin^{\w}$;
\item[\textup{4)}] $X$ is an $\aleph_k$-space, $F_\K(X)$ contains no strong $\Fin^\w$-fan and no  $\Fin^{\w_1}$-fan;
\item[\textup{5)}] $X$ is an $\aleph_k$-space and $F_\K(X)$ is a $k$-space.
\item[\textup{6)}] $X$ is a $\bar \aleph_k$-space and $F_\K(X)$ is an Ascoli space.
\end{enumerate}
\end{theorem}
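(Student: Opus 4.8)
The statement is the topological-group counterpart of Theorem~\ref{t:gen-eq} and of the paratopological Theorem~\ref{t:para-eq}, and its proof assembles earlier results rather than introducing anything new; the engine is Theorem~\ref{t:FG-gen} (a specialization of Theorem~\ref{t:long-mix->X}). The plan is to establish $(1)\Leftrightarrow(2)$ directly, to deduce each of $(3)$--$(6)$ from $(1)$ (through $(1)\Ra(2)$), and to close the cycle through the two substantive implications $(3)\Ra(1)$ and $(4)\Ra(1)$ furnished by Theorem~\ref{t:FG-gen}, inserting $(5)$ and $(6)$ by means of the fan-implication diagram.

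First I would treat $(1)\Leftrightarrow(2)$. For $(1)\Ra(2)$: if $X$ is discrete, then the $d$-stability of $\K$ and Proposition~\ref{p:FKX-disc} make $F_\K(X)$ discrete; if $X$ is a cosmic $k_\w$-space, then $X$ is Tychonoff, so by Theorem~\ref{t:bar-i-bijective} the space $F_\K(X)$ is (functionally) Hausdorff, and since the signature of $*$-magmas is finite discrete and hence a cosmic $k_\w$-space, Theorem~\ref{t:kw} yields that $F_\K(X)$ is a cosmic $k_\w$-space. For $(2)\Ra(1)$ I would use Corollary~\ref{c:delta-closed}, by which $\delta_X:X\to F_\K(X)$ is a closed topological embedding: a closed subspace of a discrete space is discrete, and a closed subspace of a cosmic $k_\w$-space is again a cosmic $k_\w$-space (intersect a $k_\w$-sequence and a countable network with the subspace). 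This gives $(1)\Leftrightarrow(2)$ and, as a by-product, shows that under $(1)$ the space $X$ is an $\aleph_0$-space, in particular a $\bar\aleph_k$- and an $\aleph_k$-space.

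The intermediate conditions are wedged in as follows. From $(1)\Ra(2)$ the functor-space $F_\K(X)$ is a $k$-space, hence Ascoli and, by Proposition~\ref{k-no-Cld-fan}, containing no $\Fin$-fan; together with $X$ being an $\aleph_0$-space this yields $(1)\Ra(3),(4),(5),(6)$. Conversely, a $k$-space contains no $\Fin$-fan, so $(5)\Ra(4)$; and an Ascoli space contains no strict $\Cld$-fan and therefore no strong $\Fin^\w$-fan (Corollary~\ref{c:A->noFan} combined with the equivalences strong $\Fin$-fan $\Leftrightarrow$ strict $\Fin$-fan $\Ra$ strict $\Cld$-fan, valid since the functor-space is functionally Hausdorff and hence $T_1$), so $(6)\Ra(3)$. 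Finally the substantive implications $(3)\Ra(1)$ and $(4)\Ra(1)$ are read off from Theorem~\ref{t:FG-gen}(3) and (4).

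The genuine content is already carried by Theorem~\ref{t:FG-gen}, so the only thing I must verify here is that $\K$ meets its hypotheses: $\K$ is $d{+}\HM{+}k_\w$-stable, the identities $x(x^*x)=x$, $x^*x=y^*y$, $xx^*=yy^*$ hold, and $\K$ contains a non-Abelian group; the last point is what powers Theorem~\ref{t:FG-gen}, since by Lemma~\ref{l:mixing*}(2,3) a non-Abelian group produces the $x(y(s^*z)y^*)$-mixing and $(x(s^*z)x^*)(y(s^*z)y^*)$-mixing $*$-magmas feeding Theorem~\ref{t:long-mix->X}. I expect the main (and essentially only) obstacle to be bookkeeping: confirming that topological $*$-magmas are in particular the paratopological $*$-magmas demanded by Theorem~\ref{t:FG-gen}, and carrying out the small $k_\IR$-upgrade needed in $(4)\Ra(1)$, where Theorem~\ref{t:FG-gen}(4) only delivers ``$k$-discrete or hemicompact'' and must be sharpened to $(1)$ via the $k_\IR$ hypothesis (a $k$-discrete $k_\IR$-space is discrete, and a hemicompact $\aleph_k$-$k_\IR$-space is a cosmic $k_\w$-space, the latter because $C(X)=C(kX)$ forces $X=kX$ for Tychonoff $k_\IR$-spaces).
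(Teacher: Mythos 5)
Your proposal is correct in substance and follows essentially the same route as the paper. The paper's own proof consists of invoking Lemma~\ref{l:mixing*}(2,3) (the cube of a non-Abelian group in $\K$ gives the two required mixing $*$-magmas) and then citing Theorem~\ref{t:gen-eq}, whose proof is exactly your assembly: $(1)\Ra(2)$ via Proposition~\ref{p:FKX-disc} and Theorems~\ref{t:bar-i-bijective}, \ref{t:kw}; the downward implications via Proposition~\ref{k-no-Cld-fan}; and the hard implications $(3)\Ra(1)$, $(4)\Ra(1)$ via Theorem~\ref{t:long-mix->X}(3,4). Your substitution of Theorem~\ref{t:FG-gen} for Theorem~\ref{t:gen-eq} is immaterial, since Theorem~\ref{t:FG-gen} is itself Lemma~\ref{l:mixing*}(2,3) plus Theorem~\ref{t:long-mix->X}; your direct proof of $(2)\Ra(1)$ via Corollary~\ref{c:delta-closed} is a harmless variant of the paper's $(2)\Ra(3)\Ra(1)$; and your treatment of (5),(6) through Proposition~\ref{k-no-Cld-fan} and Corollary~\ref{c:A->noFan} is what the paper does in the parallel theorems (e.g.\ Theorem~\ref{t:FA-gen2}). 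One small slip: under (1) an uncountable discrete $X$ is \emph{not} an $\aleph_0$-space; what you need, and what is true, is that $X$ is an $\aleph_k$- and $\bar\aleph_k$-space (for discrete $X$ the singletons form a compact-finite closed $k$-network; a cosmic $k_\w$-space is an $\aleph_0$-space, hence both).

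The one point requiring repair is your justification of the hemicompact upgrade in $(4)\Ra(1)$. The principle you invoke --- ``$C(X)=C(kX)$ forces $X=kX$ for Tychonoff $k_\IR$-spaces'' --- is false in general: for uncountable $\lambda$ the power $\IR^\lambda$ is a Tychonoff $k_\IR$-space which is not a $k$-space, yet $C(\IR^\lambda)=C(k\IR^\lambda)$ precisely \emph{because} it is $k_\IR$. The equality of topologies only follows when \emph{both} topologies are completely regular, and $k$-modifications of Tychonoff spaces need not be regular. What saves the argument in your situation is that $kX$ is not an arbitrary $k$-modification: since $X$ is hemicompact and $\aleph_k$, all compact subsets of $X$ are metrizable, so $kX$ is a Hausdorff $k_\w$-space with a $k_\w$-sequence of metrizable compacta, hence normal (being $\sigma$-compact and Lindel\"of), in particular Tychonoff, and cosmic. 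Then $X$ and $kX$ are two Tychonoff topologies each coinciding with the initial topology of the common ring $C(X)=C(kX)$, so $X=kX$ is a cosmic $k_\w$-space. With this patch your step is complete; and it is worth noting that the patch is genuinely needed, since Theorem~\ref{t:FG-gen}(4) (like the paper's own citation of Theorem~\ref{t:long-mix->X}(4) inside Theorem~\ref{t:gen-eq}) only delivers ``$k$-discrete or hemicompact'' --- you correctly made explicit a step the paper leaves implicit.
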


\begin{proof} Applying Lemmas~\ref{l:mixing*}(2,3), we can show that the variety $\K$ contains a topological $*$-magma which is $x(y(s^*z)y^*)$-mixing and $(x(s^*z)x^*)(y(s^*z)y^*)$-mixing.
This makes possible to apply Theorem~\ref{t:gen-eq} and complete the proof.
 \end{proof}

Applying Theorem~\ref{t:FG-gen2} to the variety $\K$ of all topological groups, for the functor $\PG=F_\K$ of \index{free!topological group} free paratopological group, we get the following characterization.

\begin{theorem}\label{t:FG} For a $\mu$-complete Tychonoff $k_\IR$-space $X$ following conditions are equivalent:
\begin{enumerate}
\item[\textup{1)}] $X$ is either discrete or a cosmic $k_\w$-space.
\item[\textup{2)}] $\FG(X)$ is either discrete or a cosmic $k_\w$-space.
\item[\textup{3)}] $X$ is a $\bar\aleph_k$-space and $\FG(X)$ contains no strong $\Fin^{\w}$.
\item[\textup{4)}] $X$ is an $\aleph_k$-space, $\FG(X)$ contains no strong $\Fin^\w$-fan and no  $\Fin^{\w_1}$-fan.
\item[\textup{5)}] $X$ is an $\aleph_k$-space and $\FG(X)$ is a $k$-space.
\item[\textup{5)}] $X$ is a $\bar\aleph_k$-space and $\FG(X)$ is an Ascoli space.
\end{enumerate}
\end{theorem}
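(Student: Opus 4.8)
The plan is to deduce Theorem~\ref{t:FG} from the general characterization in Theorem~\ref{t:FG-gen2} by taking $\K$ to be the variety of all topological groups (viewed as topological $*$-magmas, where $x^*=x^{-1}$ is the inversion operation). First I would verify that this variety fits the hypotheses of Theorem~\ref{t:FG-gen2}. The group identities $x(x^*x)=x$, $x^*x=y^*y$ and $xx^*=yy^*$ all reduce to the defining group axioms: indeed $x^*x=xx^*=e$ is the identity element for every $x$, so all three equalities hold. Next I would check that the variety of topological groups is $d{+}\HM{+}k_\w$-stable. The $d$-stability is clear since the discrete modification of a topological group is again a topological group. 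The $\HM$-stability follows from Lemma~\ref{l:BanHryn} (applied with the group signature), which guarantees that $\HM(X)$ is a topological group whenever $X$ is. The $k_\w$-stability is the assertion that a group topology which is a $k_\w$-topology and admits a continuous bijective homomorphism onto a Hausdorff topological group must itself be a topological group; this is a standard fact about $k_\w$-group topologies (it is exactly what makes Theorem~\ref{t:kw} applicable to free topological groups). Finally, the variety of topological groups obviously contains a non-Abelian group, e.g.\ any finite non-Abelian discrete group such as the symmetric group $S_3$.

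Having confirmed the hypotheses, the theorem follows directly: Theorem~\ref{t:FG-gen2} gives the equivalence of its conditions (1)--(6) for the functor $F_\K=\FG$ of free topological group, and these translate verbatim into conditions (1)--(6) of the present Theorem~\ref{t:FG}. I would spell out this translation, noting that condition (2) of Theorem~\ref{t:FG-gen2} asserts that $\FG(X)$ is either discrete or a cosmic $k_\w$-space, matching our condition (2), and similarly for the remaining conditions involving the absence of strong $\Fin^\w$-fans, $\Fin^{\w_1}$-fans, the $k$-space property, and the Ascoli property.

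The only genuine verification, and hence the main obstacle, is confirming the $k_\w$-stability of the variety of topological groups, since this is the one axiom that is not a formal triviality. The relevant point is that if $G$ is an (abstract) group carrying a $k_\w$-topology generated by a $k_\w$-sequence $(K_n)_{n\in\w}$ of compact sets, and if there is a continuous bijective homomorphism $G\to H$ onto a Hausdorff topological group, then the group multiplication and inversion on $G$ are automatically continuous. This is proved exactly as in the proof of Theorem~\ref{t:kw}: the $k_\w$-topology makes $G\times G$ a $k_\w$-space with $k_\w$-sequence $(K_n\times K_n)_{n\in\w}$ by Lemma~\ref{l:kw-product}, so it suffices to check continuity of multiplication and inversion on each compact piece, where it follows from the Hausdorffness of the target $H$ and a compactness argument. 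Once this is in hand, the application of Theorem~\ref{t:FG-gen2} is immediate and completes the proof.
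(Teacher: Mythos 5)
Your proposal takes exactly the route of the paper: Theorem~\ref{t:FG} is obtained there by applying Theorem~\ref{t:FG-gen2} to the variety $\K$ of all topological groups (regarded as topological $*$-magmas with $x^*=x^{-1}$), and your checks of the three identities, of $d$-stability, of $\HM$-stability via Lemma~\ref{l:BanHryn}, and of the presence of a non-Abelian group in $\K$ are all correct.

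The one place where your argument goes wrong is the verification of $k_\w$-stability, which you single out as the main obstacle. Under the paper's definition, $k_\w$-stability quantifies only over \emph{topological} $E$-algebras $X$, i.e.\ algebras whose multiplication \emph{and} inversion are already continuous: one must check that such an $X$, being a $k_\w$-space and admitting a continuous bijective $E$-homomorphism onto a Hausdorff algebra $Y\in\K$, belongs to $\K$. For the variety of all topological groups this is a triviality: $X$ is algebraically isomorphic to the group $Y$, hence is a group, and its operations are continuous by hypothesis, so $X\in\K$; no compactness argument is needed. The stronger statement you invoke instead --- that an \emph{abstract} group carrying an arbitrary $k_\w$-topology, with a continuous bijective homomorphism onto a Hausdorff topological group, automatically has continuous operations --- is false, and your compactness sketch cannot be completed: continuity of $m|K_n\times K_n$ would require the product set $K_nK_n$ to be contained in some member $K_m$ of the $k_\w$-sequence (so that the inverse of the comparison map is continuous on the image of $K_nK_n$), and nothing guarantees this swallowing property. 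For a counterexample, enumerate $\mathbb{Q}=\{q_k\}_{k\in\IN}$, put $S=\{0\}\cup\{1/k:k\in\IN\}$ and $C_n=\{q_1,\dots,q_n\}\cup S$, and give $\mathbb{Q}$ the $k_\w$-topology $\tau$ generated by the sequence $(C_n)_{n\in\IN}$: every non-zero point becomes $\tau$-isolated, the $\tau$-neighborhoods of $0$ are the sets containing a tail of $S$, and the identity map $(\mathbb{Q},\tau)\to\mathbb{Q}$ is a continuous bijective homomorphism onto a Hausdorff topological group, yet addition is $\tau$-discontinuous at $(0,0)$ because no tail $V$ of $S$ satisfies $V+V\subset S$ (for instance $1/N+1/(N+1)\notin S$). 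This is also why, in the proof of Theorem~\ref{t:kw}, continuity of the operations on the $k_\w$-modification is not taken for granted but is derived from the fact that the specific sequence $\big(A_n^n[\delta_X(B_n)]\big)_{n\in\w}$, being built from the already-continuous operations, does swallow products. Once you replace your automatic-continuity argument by the trivial observation above, the application of Theorem~\ref{t:FG-gen2} is legitimate and your proof coincides with the paper's.
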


\section{Free paratopological Abelian groups}

In this section we shall study free para\-topological Abelian groups $\PA(X)$, which are partial cases of the free paratopological $*$-magmas $F_\K(X)$ in $d{+}\HM{+}k_\w$-superstable varieties $\K$ of paratopological $*$-magmas. We shall need the following fact on $(x^*y)(z^*s)$-supermixing $*$-magmas (see Definition~\ref{d:supermix}).

\begin{lemma}\label{l:supermix*} For a $*$-magma $X$ its cube $X^3$ is $(x^*y)(z^*s)$-supermixing if $X$ contains two elements $e,a$ such that $e^*=e=ee\notin \{e(ea),(ea)e\}$ and the elements $(a^*e)e$ and $e(a^*e)$ do not belong to the submagma generated by the set $\{e,a\}$.
\end{lemma}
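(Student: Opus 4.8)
The plan is to verify directly the two requirements of Definition~\ref{d:supermix}: that the cube $X^3$ is $(x^*y)(z^*s)$-mixing, and that it contains a genuine four-element set $Q=\{x,y,z,s\}$ witnessing the two $C$-hull non-membership relations. Throughout I would exploit two elementary facts: that $\{e\}$ is a $*$-submagma of $X$ (because $e^*=e=ee$), so every $E$-word in copies of $e$ equals $e$; and that each coordinate projection $\pi_i\colon X^3\to X$ is a $*$-homomorphism, hence satisfies $\pi_i(\langle S\rangle_E)\subseteq\langle\pi_i(S)\rangle_E$ and $\pi_i(\langle S\rangle_C)\subseteq\langle\pi_i(S)\rangle_C$ for every $S$. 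I would first record that $a\ne e$, since $a=e$ would give $e(ea)=e=(ea)e$, contradicting the hypothesis.

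For the supermixing part I would take
$$x=(a,e,e),\quad y=(e,e,e),\quad z=(e,a,e),\quad s=(e,e,a),$$
which are four distinct points of $X^3$ (each of $x,z,s$ carries the entry $a$ in a different coordinate, while $y$ carries none). Computing coordinatewise, $(x^*y)(z^*s)=\big((a^*e)e,\,e(a^*e),\,e(ea)\big)$. To see that $(x^*y)(z^*s)\notin\langle\langle Q\setminus\{x\}\rangle_E\cup Q\rangle_C$, I would project to the first coordinate: since $\pi_1(\{y,z,s\})=\{e\}$ and $\pi_1(Q)=\{a,e\}$, the homomorphism property gives $\pi_1\big(\langle\langle\{y,z,s\}\rangle_E\cup Q\rangle_C\big)\subseteq\langle\{e,a\}\rangle_C\subseteq\langle\{e,a\}\rangle_E$, whereas $\pi_1\big((x^*y)(z^*s)\big)=(a^*e)e\notin\langle\{e,a\}\rangle_E$ by hypothesis. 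The symmetric projection to the second coordinate, using $e(a^*e)\notin\langle\{e,a\}\rangle_E$, handles $(x^*y)(z^*s)\notin\langle\langle Q\setminus\{z\}\rangle_E\cup Q\rangle_C$.

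For the mixing requirement I would mirror Lemma~\ref{l:mixing*}(1): taking $x=z=(e,e,e)$, $y=(a,e,e)$, $s=(e,a,e)$ in $X^3$, one has $(x^*y)(z^*s)=((ea)e,e(ea),e)$, and the confinements $\langle\{x,y,z\}\rangle_E\subseteq X\times\{e\}\times\{e\}$ and $\langle\{x,z,s\}\rangle_E\subseteq\{e\}\times X\times\{e\}$ (again because $\{e\}$ is a $*$-submagma) together with $e\notin\{e(ea),(ea)e\}$ show that $(x^*y)(z^*s)$ lies in neither hull; equivalently, this mixing can be imported from the already-proved statement for $X^2$ through the $*$-homomorphisms $\iota\colon X^2\to X^3$, $(u,v)\mapsto(u,v,e)$, and $\pi\colon X^3\to X^2$.

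The main obstacle, and the point needing the most care, is that the two defining relations are indexed by different sub-quadruples: mixing refers to $\langle\{x,z,s\}\rangle_E$ and $\langle\{x,y,z\}\rangle_E$, while supermixing refers to the $C$-hulls built over $\langle\{y,z,s\}\rangle_E\cup Q$ and $\langle\{x,y,s\}\rangle_E\cup Q$. Consequently a single quadruple need not serve both, which is why I keep the mixing witness and the supermixing set $Q$ separate (the definition only asks that $X^3$ be mixing and, separately, that some $Q$ exist). The other delicate point is the interplay of the $C$-hull with the $E$-hull: in the supermixing coordinate arguments it is essential that after projection only the two symbols $e,a$ survive and that the operation $*$ cannot be reapplied inside the $C$-hull, so that the forbidden elements $(a^*e)e$ and $e(a^*e)$ — which by hypothesis escape even the larger $E$-hull $\langle\{e,a\}\rangle_E$ — cannot be reconstructed. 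A minor bookkeeping issue, already present in Lemma~\ref{l:mixing*}(1), is that the mixing witness has $x=z$; since this coincidence is harmless for every later use of mixing, I would follow the same convention here.
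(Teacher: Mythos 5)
Your construction is essentially the paper's: the same quadruple $Q$ up to a permutation of coordinates (the paper takes $z=(e,e,a)$, $s=(e,a,e)$ and works with the first and third coordinates where you work with the first and second), the same coordinatewise computation of $(x^*y)(z^*s)$, the same lifting of the mixing property from $X^2$ (Lemma~\ref{l:mixing*}(1)) to $X^3$, and the same bound on the $C$-hulls by projecting coordinates. However, one step is wrong as written. You assert that $(a^*e)e\notin\langle\{e,a\}\rangle_E$ ``by hypothesis'', and in your closing paragraph you repeat that the forbidden elements ``by hypothesis escape even the larger $E$-hull $\langle\{e,a\}\rangle_E$''. No hypothesis can deliver this: $(a^*e)e$ and $e(a^*e)$ are themselves $E$-words in the letters $a,e$ (they are built from $a,e$ using only $\cdot$ and ${}^*$), so they lie in $\langle\{e,a\}\rangle_E$ in \emph{every} $*$-magma. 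The lemma's hypothesis concerns only the \emph{submagma} generated by $\{e,a\}$, i.e.\ the $C$-hull $\langle\{e,a\}\rangle_C$ where ${}^*$ may not be applied --- and the unavailability of ${}^*$ inside $C$-hulls (which you yourself identify as the essential point) is precisely why the supermixing definition is phrased with $C$-hulls.

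The error is localized and the repair is immediate: in your chain $\pi_1\big(\langle\langle\{y,z,s\}\rangle_E\cup Q\rangle_C\big)\subseteq\langle\{e,a\}\rangle_C\subseteq\langle\{e,a\}\rangle_E$, delete the final (true but fatal) enlargement and compare against $\langle\{e,a\}\rangle_C$ itself: the hypothesis states exactly that $(a^*e)e\notin\langle\{e,a\}\rangle_C$ and $e(a^*e)\notin\langle\{e,a\}\rangle_C$, which is all the projection argument needs; this is also exactly how the paper argues, writing $\lfloor\{e,a\}\rfloor$ for $\langle\{e,a\}\rangle_C$ and checking that the first and last coordinates of $(x^*y)(z^*s)$ avoid it. With that one-line correction (made in both coordinate cases), your proof is complete and coincides with the paper's, including the convention --- shared with the paper's own proof of Lemma~\ref{l:mixing*}(1) --- of allowing $x=z$ in the mixing witness.
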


\begin{proof} By Lemma~\ref{l:mixing*}, the square $X^2$ is $(x^*y)(z^*s)$-mixing and so is the cube $X^3$ of $X$. We claim that the elements $x=(a,e,e)$, $y=(e,e,e)$, $z=(e,e,a)$,  $s=(e,a,e)$ witness that $X^3$ is
$(x^*y)(z^*s)$-supermixing. For a subset $A\subset X^3$ by $\lfloor A\rfloor$ we denote the smallest submagma of $X^2$ containing the set $A$. Let $Q=\{x,y,z,s\}$ and observe that
$\langle Q\setminus\{x\}\rangle_E\subset\{e\}\times X^2$ and hence $\lfloor \langle Q\setminus\{x\}\rangle_E\cup Q\rfloor\subset \lfloor\{e,a\}\rfloor\times X^2$.
By analogy, $\langle Q\setminus\{z\}\rangle_E\subset X^2\times\{e\}$ and $\lfloor \langle Q\setminus\{z\}\rangle_E\cup Q\rfloor\subset X^2\times\lfloor\{e,a\}\rfloor$.
On the other hand,
$$
\begin{aligned}
(x^*y)(z^*s)&=\big((a^*e)(e^*e),(e^*e)(e^*a),(e^*e)(a^*e)\big)=\big((a^*e)e,e(ea),e(a^*e)\big)\notin\\
&\notin \big(\lfloor\{e,a\}\rfloor\times X^2\big)\cup \big(X^2\times\lfloor \{e,a\}\rfloor\big).
\end{aligned}
$$
\end{proof}

Combining Lemma~\ref{l:supermix*} with Lemma~\ref{l:mixing*}(4) and Theorem~\ref{t:dot*-super} we get the following result.

\begin{theorem}\label{t:PA-gen} Let $\K$ be a $d{+}\HM{+}k_\w$-superstable variety of paratopological $*$-magmas such that the identities $x(x^*x)=x$ and $(x^*x)(x^*x)=x^*x=y^*y$ hold in $\K$ and $\K$ contains an infinite cyclic group $Y$.
 If for a $\mu_s$-complete Tychonoff $\aleph_k$-space $X$ the free paratopological $*$-magma $F_\K (X)$ contains no strong $\Fin^\w$-fans and no $\Fin^{\w_1}$-fans, then $X$ is $k$-homeomorphic to a topological sum $K\oplus D$ of a countable $k_\w$-space $K$ and a discrete space $D$.
\end{theorem}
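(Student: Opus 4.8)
The plan is to recognize the theorem as an instance of the general structure Theorem~\ref{t:dot*-super}, applied to the signature $E=\{\cdot,{}^*\}$ of paratopological $*$-magmas (with $\cdot$ the distinguished binary and ${}^*$ the distinguished unary symbol). This signature is a two-point discrete space, hence a countable $k_\w$-space, so the hypothesis of Theorem~\ref{t:dot*-super} on the signature is automatic; the algebraic identities $x(x^*x)=x$ and $(x^*x)(x^*x)=x^*x=y^*y$ are exactly those assumed here and are inherited verbatim. The only genuinely new work is to produce, inside $\K$, the two witnessing algebras required by Theorem~\ref{t:dot*-super}: an $x(s^*y_i)^{<\w}$-mixing and an $(x^*y)(z^*s)$-supermixing paratopological $*$-magma. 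Both will be built as Tychonoff powers of the infinite cyclic group $Y\in\K$, which lie in $\K$ because a variety is closed under products.

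First I would verify the $x(s^*y_i)^{<\w}$-mixing condition for $Y^\w$. Writing $Y$ multiplicatively with unit $e$ and a generator $a$ of infinite order, the pair $(a,e)$ satisfies $ae=ea=a\ne e=ee=e^*$, so Lemma~\ref{l:mixing*}(4) gives at once that $Y^\w$ is $x(s^*y_i)^{<\w}$-mixing. Next I would verify the $(x^*y)(z^*s)$-supermixing condition for $Y^3$ via Lemma~\ref{l:supermix*}, using the same $e,a$. Here $e^*=e=ee$ and $e(ea)=(ea)e=a$, so $\{e(ea),(ea)e\}=\{a\}\not\ni e$; moreover $(a^*e)e=e(a^*e)=a^{-1}$. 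The submagma generated by $\{e,a\}$ under the binary operation alone is the set $\{a^n:n\ge 0\}$ of nonnegative powers, and since $a$ has infinite order, $a^{-1}$ lies outside it. Thus both $(a^*e)e$ and $e(a^*e)$ avoid the submagma generated by $\{e,a\}$, and Lemma~\ref{l:supermix*} yields that $Y^3$ is $(x^*y)(z^*s)$-supermixing.

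With both witnesses in hand, Propositions~\ref{p:mixing->TA}(2,5) and \ref{p:supermix-alg} make $F_\K(X)$ a topological algebra of type $x(s^*y_i)^{<\w}$ and a topological superalgebra of type $(x^*y)(z^*s)$, and Theorem~\ref{t:dot*-super} delivers the desired $k$-homeomorphism $X\cong K\oplus D$, where the countability of the cosmic $k_\w$-summand comes from Corollary~\ref{c:cosmic-countable} (applicable precisely because the supermixing condition is available). The verifications above are routine group computations, so the reduction is short modulo one bookkeeping point.

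The one point demanding care — and the expected main obstacle — is the completeness hypothesis: Theorem~\ref{t:dot*-super} is stated for $\mu$-complete spaces, whereas here only $\mu_s$-completeness is assumed. For $\aleph_k$-spaces these should coincide, so the plan is to insert a lemma showing that a $\mu_s$-complete $\aleph_k$-space is $\mu$-complete. Concretely, a closed bounded subset $B$ is sequentially compact by $\mu_s$-completeness, hence countably compact; since $X$ is an $\aleph_k$-space its compact subsets are metrizable (as used in the proof of Proposition~\ref{p:a+s->ba}), and one must upgrade countable compactness of $B$ to genuine compactness using the compact-countable $k$-network of $X$. Establishing this compactness step for countably compact subsets carrying a compact-countable $k$-network is the delicate part; once it is in place, $\mu_s$-completeness feeds Proposition~\ref{p:supermix-alg} directly while the upgraded $\mu$-completeness feeds the $D_\w$-cofan and $S^{\w_1}$-semifan arguments hidden inside Theorem~\ref{t:dot*-super}, completing the reduction.
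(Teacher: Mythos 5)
Your proposal is correct and is essentially the paper's own proof: the paper derives Theorem~\ref{t:PA-gen} exactly by combining Lemma~\ref{l:mixing*}(4) and Lemma~\ref{l:supermix*} (applied, as you do, to powers of the infinite cyclic group, with the same computations $ae=ea=a\ne e=ee=e^*$ and $(a^*e)e=e(a^*e)=a^{-1}\notin\{a^n:n\ge 0\}$) with Theorem~\ref{t:dot*-super}. The only place you go beyond the paper is the $\mu_s$-versus-$\mu$-completeness mismatch: the paper states Theorem~\ref{t:dot*-super} for $\mu$-complete $\aleph_k$-spaces but Theorem~\ref{t:PA-gen} for $\mu_s$-complete ones, and its one-line proof silently elides the difference, whereas you correctly flag that a bridge is needed. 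Your proposed bridge is sound and is the standard way to close it: a closed bounded set is then sequentially compact, hence countably compact; the trace on it of the compact-countable (hence point-countable) $k$-network of $X$ is a point-countable $k$-network, and countably compact regular spaces with point-countable $k$-networks are compact (Gruenhage--Michael--Tanaka), so a $\mu_s$-complete $\aleph_k$-space is in fact $\mu$-complete and Theorem~\ref{t:dot*-super} applies verbatim.
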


\begin{theorem}\label{t:PA-gen2} Let $\K$ is a $d{+}\HM{+}k_\w$-superstable variety of paratopological $*$-magmas such that the identities $(x^*)^*=x$, $x(x^*x)=x$, $(x^*x)(x^*x)=x^*x=y^*y$, $(xy)^*=y^*x^*$, $xy=yx$, $(xy)(uv)=(xu)(yv)$ hold in $\K$ and $\K$ contains a non-trivial group. For a $\mu$-complete Tychonoff $k_\IR$-space $X$ the following conditions are equivalent:
\begin{enumerate}
\item[\textup{1)}] $X$ is a topological sum $K\oplus D$ of a countable $k_\w$-space $K$ and a discrete space $D$.
\item[\textup{2)}] $F_\K(X)$ is a product $K\times D$ of a countable $k_\w$-space $K$ and a discrete space $D$.
\item[\textup{3)}] $X$ is an $\aleph_k$-space and $F_\K(X)$ contains no strong $\Fin^\w$-fan and no $\Fin^{\w_1}$-fan.
\item[\textup{4)}] $X$ is an $\aleph_k$-space and $F_\K(X)$ is a $k$-space.
\end{enumerate}
\end{theorem}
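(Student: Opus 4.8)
The plan is to prove the cycle of implications $(1)\Ra(2)\Ra(4)\Ra(3)\Ra(1)$, following the template of Theorem~\ref{t:FA-gen2} but exploiting the $k_\w$-\emph{super}stability of $\K$ in place of mere $k_\w$-stability, so that the summand $K$ comes out \emph{countable} rather than merely cosmic. Throughout I would use that $\K$, being $d{+}\HM{+}k_\w$-superstable, is in particular $d{+}\HM{+}k_\w$-stable, and that the signature of $*$-magmas is $E=\{*,\cdot\}$ with continuous subsignature $C=\{\cdot\}$.

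For $(1)\Ra(2)$, suppose $X=K\oplus D$ with $K$ a countable $k_\w$-space and $D$ discrete. Since the six listed identities are exactly the hypotheses of Proposition~\ref{p:product-abelian}, that proposition yields a topological isomorphism $F_\K(X)\cong F_\K(K)\times F_\K(D)$. As $\K$ is $d$-stable, Proposition~\ref{p:FKX-disc} shows $F_\K(D)$ is discrete. Because $K$ is Tychonoff, hence functionally Hausdorff, Theorem~\ref{t:bar-i-bijective} makes $F_\K(K)$ functionally Hausdorff; as $K$ and $E$ are countable, $F_\K(K)$ is a countable Hausdorff paratopological $*$-magma, and Lemma~\ref{l:para-kw} (applied with $C=\{\cdot\}$) identifies it as a countable $k_\w$-space. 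Thus $F_\K(X)$ is the required product of a countable $k_\w$-space and a discrete space.

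For $(2)\Ra(4)$ I note that $F_\K(X)=K\times D=\bigoplus_{d\in D}K$ is a topological sum of $k_\w$-spaces and therefore a $k$-space. To obtain the $\aleph_k$-clause common to $(3)$ and $(4)$, Corollary~\ref{c:delta-closed} gives that $\delta_X$ is a closed embedding of $X$ into $F_\K(X)$, which under $(2)$ is a topological sum of countable $k_\w$-spaces and hence a $\bar\aleph_k$-space; heredity of the $\aleph_k$-property then forces $X$ to be an $\aleph_k$-space. Finally $(4)\Ra(3)$ is immediate from Proposition~\ref{k-no-Cld-fan}, since a $k$-space carries no $\Fin$-fans, in particular no strong $\Fin^\w$-fan and no $\Fin^{\w_1}$-fan.

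The heart of the argument is $(3)\Ra(1)$, and this is where the main obstacle lies. Here $X$ is a $\mu$-complete $\aleph_k$-space, so every compact subset is metrizable and thus sequentially compact, making $X$ $\mu_s$-complete; then Theorem~\ref{t:PA-gen} produces a $k$-homeomorphism of $X$ onto a topological sum $K\oplus D$ of a countable $k_\w$-space and a discrete space, i.e.\ $kX=K\oplus D$. I would upgrade this to a genuine decomposition using the $k_\IR$-property: since $K\oplus D$ is normal, for any $k$-open $U\subset X$ and $x\in U$ a Urysohn function on $kX$ separating $x$ from the closed set $kX\setminus U$ is $k$-continuous on $X$ (as $X$ and $kX$ share the same compact sets), hence continuous by the $k_\IR$-hypothesis, so $U$ is open in $X$; thus $X=kX=K\oplus D$. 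The delicate point is checking the precise hypothesis of Theorem~\ref{t:PA-gen}, whose proof runs through Theorem~\ref{t:dot*-super} and hence needs an $(x^*y)(z^*s)$-supermixing object; by Lemma~\ref{l:supermix*} such an object is manufactured from an \emph{infinite cyclic} group $Y=\langle a\rangle\cong\IZ$ (there $(a^*e)e=e(a^*e)=a^{-1}$ escapes the multiplicative submagma generated by $\{e,a\}$ exactly because $\IZ$ is torsion-free). I therefore expect the crux to be confirming that the hypothesis ``$\K$ contains a non-trivial group'', together with the Abelian identities, supplies such an infinite cyclic group — which is automatic in the intended application, where $\K$ is the full variety of paratopological Abelian groups containing $\IZ$, and in particular gives Theorem~\ref{t:PA}.
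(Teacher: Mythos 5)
Your proposal follows the paper's own route almost verbatim: $(1)\Ra(2)$ via Propositions~\ref{p:product-abelian}, \ref{p:FKX-disc} and Lemma~\ref{l:para-kw}; the $\aleph_k$-clause via the closed embedding $\delta_X$ from Corollary~\ref{c:delta-closed}; the fan clauses via Proposition~\ref{k-no-Cld-fan}; and $(3)\Ra(1)$ via Theorem~\ref{t:PA-gen} (the paper runs the cycle as $(1)\Ra(2)\Ra(3)\Ra(1)$ plus $(1,2)\Ra(4)\Ra(3)$, but the ingredients are identical). The two steps you spell out that the paper leaves implicit are both correct and genuinely needed: the bridge from $\mu$-completeness to $\mu_s$-completeness (compact subsets of an $\aleph_k$-space are metrizable, hence sequentially compact), and the upgrade of the $k$-homeomorphism furnished by Theorem~\ref{t:PA-gen} to an actual equality $X=kX$, using normality of $K\oplus D$ and the $k_\IR$-property of $X$.

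However, the ``delicate point'' you flag at the end is a genuine gap, and it cannot be closed as the hypotheses stand; it is a defect of the statement itself, which the paper's proof inherits by citing Theorem~\ref{t:PA-gen} without comment. Theorem~\ref{t:PA-gen} (through Theorem~\ref{t:dot*-super}) needs an $(x^*y)(z^*s)$-supermixing algebra in $\K$, and Lemma~\ref{l:supermix*} manufactures one only from an element $a$ whose inverse $(a^*e)e$ escapes the multiplicative submagma generated by $\{e,a\}$, i.e.\ an element of \emph{infinite order}; a non-trivial group plus the six Abelian identities does not supply this. Concretely, let $\K$ be the variety of all topological Boolean Abelian groups (exponent $2$, with $x^*=x$); it satisfies the six identities, contains $\IZ/2\IZ$, and is $d{+}\HM{+}k_\w$-superstable (superstability is automatic: any paratopological $*$-magma mapping bijectively and homomorphically onto a Boolean group is itself Boolean with $x^*=x$, so its unary operation is continuous). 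Yet no member of $\K$ is $(x^*y)(z^*s)$-supermixing: in a Boolean group every $\cdot$-submagma containing a point is a subgroup, so $(x^*y)(z^*s)=xyzs$ always lies in $\langle Q\rangle_C$ for $Q=\{x,y,z,s\}$. Worse, the theorem is actually \emph{false} for this $\K$: for $X=[0,1]$ the free object $F_\K(X)$ is functionally Hausdorff by Theorem~\ref{t:bar-i-bijective}, hence a cosmic $k_\w$-space by Theorem~\ref{t:kw}, so conditions (3) and (4) hold, while (1) fails because $[0,1]$ is connected and uncountable. So the hypothesis must be strengthened to ``$\K$ contains an infinite cyclic group'' (as in Theorems~\ref{t:PA-gen} and \ref{t:PG-gen2}); with that correction --- harmless for the intended application, Theorem~\ref{t:PAG} --- your argument, and the paper's, is complete.
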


\begin{proof} The implication $(1)\Ra(2)$ follows from Propositions~\ref{p:product-abelian}, \ref{p:FKX-disc}, and Lemma~\ref{l:para-kw}.
The implications $(2)\Ra(3)$ follow from Corollary~\ref{c:delta-closed} and Proposition~\ref{k-no-Cld-fan}.
The implications $(3)\Ra(1)$ is proved in Theorem~\ref{t:PA-gen}. The implication $(1,2)\Ra(4)$ is trivial and $(4)\Ra(3)$ follows from Proposition~\ref{k-no-Cld-fan}.
\end{proof}

Applying Theorem~\ref{t:PA-gen2} to the variety of $\K$ paratopological Abelian groups, for the functor $\PA=F_\K$ \index{free!paratopological Abelian group} we obtain the following characterization.

\begin{theorem}\label{t:PAG} For a $\mu$-complete Tychonoff $k_\IR$-space $X$ the following conditions are equivalent:
\begin{enumerate}
\item[\textup{1)}] $X$ is a topological sum $K\oplus D$ of a countable $k_\w$-space $K$ and a discrete space $D$.
\item[\textup{2)}] $\PA(X)$ is a product $K\times D$ of a countable $k_\w$-space $K$ and a discrete space $D$.
\item[\textup{3)}] $X$ is an $\aleph_k$-space and $\PA(X)$ contains no strong $\Fin^\w$-fan and no $\Fin^{\w_1}$-fan.
\item[\textup{4)}] $X$ is an $\aleph_k$-space and $\PA(X)$ is a $k$-space.
\end{enumerate}
\end{theorem}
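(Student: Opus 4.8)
The plan is to derive this theorem as a direct application of the general machinery for free paratopological $*$-algebras in $d{+}\HM{+}k_\w$-superstable varieties, exactly parallel to the deduction of Theorem~\ref{t:PA-gen2} but now specialized to the variety $\K$ of paratopological Abelian groups and the functor $\PA = F_\K$. The first step is to verify that $\K$ satisfies the structural hypotheses of Theorem~\ref{t:PA-gen2}. Writing the group operation as $\cdot$ and the inversion as ${}^*$ (so $x^* = x^{-1}$), every paratopological Abelian group satisfies the six identities $(x^*)^*=x$, $x(x^*x)=x$, $(x^*x)(x^*x)=x^*x=y^*y$, $(xy)^*=y^*x^*$, $xy=yx$ and $(xy)(uv)=(xu)(yv)$, since all of these reduce to standard Abelian group laws with $x^*x=y^*y$ being the neutral element $e$. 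The variety $\K$ contains a non-trivial group (for instance $\mathbb Z$ or any infinite cyclic group), so the last hypothesis holds.

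Next I would confirm that $\K$ is $d{+}\HM{+}k_\w$-superstable. This is where I would cite the definitions: the discrete modification of an Abelian paratopological group is again an Abelian (even topological) group, giving $d$-stability; the Hartman-Mycielski construction $\HM(X)$ of a paratopological Abelian group carries a coordinatewise operation inherited from $X^{[0,1)}$ and is again a paratopological Abelian group (this uses Lemma~\ref{l:BanHryn} adapted to the paratopological setting), giving $\HM$-stability; and $k_\w$-superstability holds because a $k_\w$-space admitting a continuous bijective homomorphism onto a Hausdorff paratopological Abelian group is itself a paratopological Abelian group in $\K$. With these facts in hand, Theorem~\ref{t:PA-gen2} applies verbatim to $F_\K = \PA$, and the four conditions of Theorem~\ref{t:PAG} are literally conditions (1)--(4) of Theorem~\ref{t:PA-gen2} with $\PA(X)$ substituted for $F_\K(X)$. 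Thus the proof reduces to the single line: \emph{this theorem follows from Theorem~\ref{t:PA-gen2} applied to the variety of paratopological Abelian groups.}

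The one genuinely substantive point I would want to double-check, and which I expect to be the main obstacle, is that the hypotheses feeding into Theorem~\ref{t:PA-gen2} via Theorem~\ref{t:PA-gen} are actually met. Specifically, Theorem~\ref{t:PA-gen} requires (through Theorem~\ref{t:dot*-super}) that $\K$ contain both an $x(s^*y_i)^{<\w}$-mixing and an $(x^*y)(z^*s)$-supermixing paratopological $*$-magma. Here I would invoke the mixing lemmas: taking the infinite cyclic group $Y=\mathbb Z$ with neutral element $e$ and any $a\neq e$, Lemma~\ref{l:mixing*}(4) shows $Y^\w$ is $x(s^*y_i)^{<\w}$-mixing, and Lemma~\ref{l:supermix*} shows $Y^3$ is $(x^*y)(z^*s)$-supermixing, since in an infinite cyclic group the elements $(a^*e)e = a^{-1}$ and $e(a^*e)=a^{-1}$ together with the non-commutativity checks reduce to verifying that $a^{-1}$ escapes the submonoid generated by $\{e,a\}$ in the relevant coordinate, which is immediate. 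I would verify these supermixing conditions carefully, as this is the only place where the specific algebraic structure of Abelian groups (as opposed to the abstract variety axioms) enters the argument.

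Finally, I would note the bookkeeping needed for the implication $(1)\Rightarrow(2)$, which asserts the product decomposition $\PA(X)\cong K\times D$. This uses Proposition~\ref{p:product-abelian} (the splitting $F_\K(X\oplus Y)\cong F_\K(X)\times F_\K(Y)$, valid precisely because the six Abelian identities hold), together with Proposition~\ref{p:FKX-disc} (so that $\PA(D)$ is discrete for discrete $D$) and Lemma~\ref{l:para-kw} (so that $\PA(K)$ is a countable $k_\w$-space for a countable $k_\w$-space $K$). The remaining implications $(2)\Rightarrow(3)$, $(3)\Rightarrow(1)$, $(1,2)\Rightarrow(4)$ and $(4)\Rightarrow(3)$ follow respectively from Corollary~\ref{c:delta-closed} and Proposition~\ref{k-no-Cld-fan}, from Theorem~\ref{t:PA-gen}, and from the trivial inclusions among the fan-freeness properties combined with Proposition~\ref{k-no-Cld-fan}. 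Since all of these are established in full generality in Theorem~\ref{t:PA-gen2}, the proof of Theorem~\ref{t:PAG} is genuinely a one-step specialization.
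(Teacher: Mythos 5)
Your proposal is correct and follows essentially the same route as the paper, which proves Theorem~\ref{t:PAG} precisely by applying Theorem~\ref{t:PA-gen2} to the variety of paratopological Abelian groups with $\PA=F_\K$. The extra verifications you supply (the six Abelian identities, the $d{+}\HM{+}k_\w$-superstability of the variety, and the mixing/supermixing conditions via Lemmas~\ref{l:mixing*} and \ref{l:supermix*} applied to an infinite cyclic group) are exactly the hypotheses the paper leaves implicit in its one-line deduction, and your checks of them are sound.
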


\section{Free paratopological groups}

\begin{theorem}\label{t:PG-gen}
 Let $\K$ be a $d{+}\HM{+}k_\w$-superstable variety of paratopological $*$-magmas such that the identities $x(x^*x)=x$, $x^*x=y^*y$ and $xx^*=yy^*$ hold in $\K$ and $\K$ contains a non-Abelian group and an infinite cyclic group. Let $X$ be a $\mu$-complete Tychonoff space.
\begin{enumerate}
\item[\textup{1)}] If $X$ is $k^*$-metrizable and $F_\K(X)$ contains no $\Fin^{\w_1}$-fans, then either $X$ is $k$-discrete or $X$ is a countable $\aleph_0$-space.
\item[\textup{2)}] If $X$ is an $\aleph_k$-space and $F_\K(X)$ contains no strong $\Fin^{\w}$-fans and no $\Fin^{\w_1}$-fans, then $X$ is either $k$-discrete or countable and hemicompact.
\end{enumerate}
\end{theorem}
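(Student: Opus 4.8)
The plan is to deduce Theorem~\ref{t:PG-gen} directly from the general Theorem~\ref{t:supermix-long}, whose two conclusions coincide verbatim with the two conclusions to be proved. Thus the whole task reduces to verifying that the variety $\K$ satisfies all the hypotheses of Theorem~\ref{t:supermix-long}. The signature hypothesis is immediate: a paratopological $*$-magma is a $\dot E^*$-algebra of signature $E=E_1\oplus E_2$ with $E_1=\{{}^*\}$ and $E_2=\{\cdot\}$, which is a finite discrete space and hence a countable $k_\w$-space. The $d{+}\HM{+}k_\w$-superstability and the three identities $x(x^*x)=x$, $x^*x=y^*y$, $xx^*=yy^*$ are assumed outright, so only the existence inside $\K$ of the three required kinds of mixing algebras remains to be checked.

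First I would manufacture an $x(y(s^*z)y^*)$-mixing and an $(x(s^*z)x^*)(y(s^*z)y^*)$-mixing $*$-magma from the non-Abelian group $G\in\K$. Since $\K$ is a variety, the cube $G^3$ belongs to $\K$, and I would invoke Lemma~\ref{l:mixing*}(2,3). Writing $e$ for the unit of $G$ and choosing $b,c\in G$ with $bcb^{-1}\ne c$ (possible as $G$ is non-Abelian, and forcing $b,c,e$ pairwise distinct), one has $e^*=e=ee$ and $ec=ce=c\ne e$, while $e\big((bc)b^*\big)=bcb^{-1}\ne c$; this is exactly the hypothesis of Lemma~\ref{l:mixing*}(2). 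The relations $c(bcb^{-1})\ne cc\ne(bcb^{-1})c$ needed for Lemma~\ref{l:mixing*}(3) follow from $bcb^{-1}\ne c$ by cancellation in $G$, so $G^3$ is also $(x(s^*z)x^*)(y(s^*z)y^*)$-mixing.

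The subtler step, and the one I expect to be the main obstacle, is producing an $(x^*y)(z^*s)$-supermixing $*$-magma; this is precisely where the hypothesis that $\K$ contains an \emph{infinite} cyclic group $Y$ is essential (a finite cyclic group would fail). Taking the cube $Y^3\in\K$ and writing $Y=\langle a\rangle$ with unit $e$, I would check the hypotheses of Lemma~\ref{l:supermix*}: here $e^*=e=ee$ and $\{e(ea),(ea)e\}=\{a\}\not\ni e$, while $(a^*e)e=e(a^*e)=a^{-1}$. The crucial point is that $a^{-1}$ does not lie in the submagma of $Y$ generated by $\{e,a\}$; under the binary operation alone this submagma is $\{a^{n}:n\ge 0\}$, and since $Y$ is infinite cyclic the element $a^{-1}$ is genuinely excluded. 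This is exactly the verification that would break down for a finite cyclic group, where $a^{-1}$ equals some nonnegative power of $a$.

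Once all hypotheses are confirmed, I would apply Theorem~\ref{t:supermix-long}(1) to get conclusion (1) and Theorem~\ref{t:supermix-long}(2) to get conclusion (2), completing the proof. The only routine care needed throughout is checking the pairwise distinctness of the test points in the mixing definitions, but this is already incorporated into the cited Lemmas~\ref{l:mixing*} and \ref{l:supermix*}, so no further computation is required beyond the submagma membership argument highlighted above.
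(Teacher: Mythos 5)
Your proposal is correct and follows essentially the same route as the paper: the paper's proof likewise obtains the $x(y(s^*z)y^*)$- and $(x(s^*z)x^*)(y(s^*z)y^*)$-mixing algebras from the non-Abelian group via Lemma~\ref{l:mixing*}(2,3), obtains the $(x^*y)(z^*s)$-supermixing algebra from the infinite cyclic group via Lemma~\ref{l:supermix*}, and then cites Theorem~\ref{t:supermix-long}. The only difference is that you spell out the element-level verifications (the choice of non-commuting $b,c$ and the observation that $a^{-1}$ avoids the $\cdot$-submagma $\{a^n:n\ge 0\}$) which the paper leaves implicit, and these checks are all accurate.
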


\begin{proof} Taking into account that the variety $\K$ contains a non-Abelian group and applying Lemma~\ref{l:mixing*}(2,3), we conclude that $\K$ contains a paratopological $*$-magma which is $x(y(s^*z)y^*)$-mixing and $(x(s^*z)x^*)(y(s^*z)y^*)$-mixing. Since $\K$ contains an infinite cyclic group, we can apply Lemma~\ref{l:supermix*} and conclude that $\K$ contains an $(x^*y)(z^*s)$-supermixing paratopological $*$-magma. Now we can apply Theorem~\ref{t:supermix-long} and complete the proof.
\end{proof}

\begin{theorem}\label{t:PG-gen2} Let $\K$ be a $d{+}\HM{+}k_\w$-superstable variety of paratopological $*$-magmas such that the identities $x(x^*x)=x$, $x^*x=y^*y$ and $xx^*=yy^*$ hold in $\K$ and $\K$ contains a non-Abelian group and an infinite cyclic group. For a $\mu$-complete Tychonoff $k_\IR$-space $X$  the following conditions are equivalent:
\begin{enumerate}
\item[\textup{1)}] $X$ is either discrete or a countable $k_\w$-space;
\item[\textup{2)}] $F_\K(X)$ is either discrete or a countable $k_\w$-space;
\item[\textup{3)}] $X$ is an $\aleph_k$-space, $F_\K(X)$ contains no strong $\Fin^\w$-fan and no  $\Fin^{\w_1}$-fan;
\item[\textup{4)}] $X$ is an $\aleph_k$-space and $F_\K(X)$ is a $k$-space.
\end{enumerate}
\end{theorem}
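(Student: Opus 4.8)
The plan is to deduce Theorem~\ref{t:PG-gen2} from the general Theorem~\ref{t:para-eq} (whose hard implications rest on Theorem~\ref{t:supermix-long}), after observing that paratopological $*$-magmas are exactly the paratopological $\dot E^*$-algebras over the signature $E=E_1\sqcup E_2$ with $E_1=\{{}^*\}$ and $E_2=\{\cdot\}$. This signature is finite and discrete, hence a countable $k_\w$-space, so the standing assumption of Theorem~\ref{t:para-eq} on the signature holds automatically; by hypothesis $\K$ is a $d{+}\HM{+}k_\w$-superstable variety in which the identities $x(x^*x)=x$, $x^*x=y^*y$ and $xx^*=yy^*$ hold. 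Thus the whole apparatus of Section~\ref{s:monad} and Chapter~\ref{ch:free} becomes available for $F_\K$.

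The essential preliminary step is to verify that $\K$ contains the three kinds of mixing algebras required by Theorem~\ref{t:para-eq}. First I would exploit the non-Abelian group $Y\in\K$: taking the identity $e$ and a non-commuting pair $b,c$ (so $bcb^{-1}\ne c$, which in particular forces $c\ne e$), the hypotheses of Lemma~\ref{l:mixing*}(2,3) are met, whence the cube $Y^3$, which lies in $\K$ as $\K$ is closed under products, is simultaneously $x(y(s^*z)y^*)$-mixing and $(x(s^*z)x^*)(y(s^*z)y^*)$-mixing. Next I would use the infinite cyclic group $Z\in\K$ with generator $a$ and identity $e$: here $e^*=e=ee\notin\{e(ea),(ea)e\}=\{a\}$, while $(a^*e)e=e(a^*e)=a^{-1}$ lies outside the multiplicative submagma $\{e,a,a^2,\dots\}$ generated by $\{e,a\}$, so Lemma~\ref{l:supermix*} yields that $Z^3\in\K$ is $(x^*y)(z^*s)$-supermixing. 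With these three witnesses all hypotheses of Theorem~\ref{t:para-eq} are in place.

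Applying Theorem~\ref{t:para-eq} then gives the equivalence of conditions (1) and (2) of our statement with the condition ``$X$ is an $\aleph_k$-space and $F_\K(X)$ contains no strong $\Fin^\w$-fan and no $\Fin^{\w_1}$-fan'', which is precisely condition (3) (note that Theorem~\ref{t:para-eq} carries an extra $\bar\aleph_k$-condition that is simply not needed here). It then remains to splice in condition (4), the $k$-space property of $F_\K(X)$. The implication $(1)\wedge(2)\Rightarrow(4)$ is immediate: a discrete space and a countable $k_\w$-space are $\aleph_k$-spaces and are $k$-spaces, and by (2) the functor-space $F_\K(X)$ is itself discrete or a countable $k_\w$-space, hence a $k$-space. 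Conversely $(4)\Rightarrow(3)$ follows from Proposition~\ref{k-no-Cld-fan}: a $k$-space contains no $\Fin$-fans, so $F_\K(X)$ has neither a strong $\Fin^\w$-fan nor a $\Fin^{\w_1}$-fan, the $\aleph_k$-hypothesis being shared by both conditions. This closes the loop $(1)\Leftrightarrow(2)\Leftrightarrow(3)\Leftrightarrow(4)$.

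I expect the only delicate point to be the verification of the supermixing condition for $Z$, since Definition~\ref{d:supermix} intertwines the full $E$-hull $\langle\cdot\rangle_E$ with the purely multiplicative $C$-hull $\langle\cdot\rangle_C$; however the entire content is already encapsulated in Lemma~\ref{l:supermix*}, so the task reduces to exhibiting the witnessing elements and checking that $a^{-1}$ escapes the multiplicative submagma generated by $\{e,a\}$, which is transparent since $Z\cong\IZ$. No genuinely new obstacle should arise beyond matching the algebraic identities of $\K$ to the hypotheses of the cited lemmas.
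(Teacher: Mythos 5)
Your proposal is correct and takes essentially the same route as the paper: the paper's proof likewise produces the required $x(y(s^*z)y^*)$-mixing and $(x(s^*z)x^*)(y(s^*z)y^*)$-mixing algebras from the non-Abelian group via Lemma~\ref{l:mixing*}(2,3) and the $(x^*y)(z^*s)$-supermixing algebra from the infinite cyclic group via Lemma~\ref{l:supermix*}, and then invokes Theorem~\ref{t:para-eq}, with condition (4) handled exactly as you do (trivially from (1,2), and $(4)\Rightarrow(3)$ by Proposition~\ref{k-no-Cld-fan}). Your write-up simply makes explicit the verifications that the paper compresses into two sentences.
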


\begin{proof} Applying Lemmas~\ref{l:mixing*}(2,3) and \ref{l:supermix*}, we can show that the variety $\K$ contains a paratopological $*$-magma which is $x(y(s^*z)y^*)$-mixing, $(x(s^*z)x^*)(y(s^*z)y^*)$-mixing, and $(x^*y)(z^*s)$-supermixing. This makes possible to apply Theorem~\ref{t:para-eq} and complete the proof.
 \end{proof}

Applying Theorem~\ref{t:PG-gen2} to the variety $\K$ of all paratopological groups, for the functor $\PG=F_\K$ of \index{free!paratopological group} free paratopological group, we get the following characterization.

\begin{theorem}\label{t:PG} For a $\mu$-complete Tychonoff $k_\IR$-space $X$ following conditions are equivalent:
\begin{enumerate}
\item[\textup{1)}] $X$ is either discrete or a countable $k_\w$-space.
\item[\textup{2)}] $\PG(X)$ is either discrete or a countable $k_\w$-space.
\item[\textup{3)}] $X$ is an $\aleph_k$-space, $\PG(X)$ contains no strong $\Fin^\w$-fan and no  $\Fin^{\w_1}$-fan.
\item[\textup{4)}] $X$ is an $\aleph_k$-space and $\PG(X)$ is a $k$-space.
\end{enumerate}
\end{theorem}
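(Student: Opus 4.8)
The plan is to recognize the free paratopological group functor $\PG$ as the free-algebra functor $F_\K$ for the variety $\K$ of all paratopological groups, and then to deduce Theorem~\ref{t:PG} as a direct instance of the general characterization Theorem~\ref{t:PG-gen2}. To make this precise I would regard a paratopological group as a paratopological $*$-magma in the signature with one binary symbol $\cdot$ (interpreted as the group multiplication) and one unary symbol ${}^*$ (interpreted as inversion $x\mapsto x^{-1}$), with the group identities imposed; this signature is finite and hence a countable $k_\w$-space, as required by the standing assumptions governing Theorem~\ref{t:PG-gen2}. Under this reading the only work is to verify that $\K$ satisfies every hypothesis of Theorem~\ref{t:PG-gen2}; once this is done, conditions (1)--(4) of that theorem specialize verbatim to conditions (1)--(4) of Theorem~\ref{t:PG}.

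Verifying the algebraic hypotheses is routine. Interpreting $x^*=x^{-1}$, the identity $x(x^*x)=x$ becomes $x(x^{-1}x)=x$, while $x^*x=y^*y$ and $xx^*=yy^*$ both assert that $x^{-1}x=e=y^{-1}y$ and $xx^{-1}=e=yy^{-1}$; all three hold in every group. The variety $\K$ contains a non-Abelian group (for instance the symmetric group $S_3$ with the discrete topology) and an infinite cyclic group ($\IZ$ with the discrete topology), so the remaining ``test object'' hypotheses of Theorem~\ref{t:PG-gen2} are met.

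The substantive part is to check that $\K$ is $d{+}\HM{+}k_\w$-superstable. The $d$-stability is immediate: the discrete modification of any paratopological group is again a paratopological group, since with the discrete topology every operation is continuous. For $\HM$-stability I would invoke the argument behind Lemma~\ref{l:BanHryn}: the Hartman--Mycielski space $\HM(G)$ inherits a pointwise $*$-magma structure from the power $G^{[0,1)}$, and the continuity estimates for the Hartman--Mycielski topology show that this inherited binary operation is continuous whenever the operation on $G$ is; hence $\HM(G)$ is again a paratopological group and lies in $\K$. For $k_\w$-superstability I would argue directly from the fact that $\K$ is the variety of \emph{all} paratopological groups: if a paratopological $*$-magma $X$ is a $k_\w$-space and admits a continuous bijective $E$-homomorphism $h\colon X\to Y$ onto some $Y\in\K$, then $h$ is an algebraic isomorphism, so $X$ satisfies all the group identities that $Y$ does and is therefore a group; since the multiplication on $X$ is continuous, $X$ is a paratopological group and thus belongs to $\K$.

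The main obstacle, to the extent there is one, is bookkeeping rather than a genuine difficulty: I must make sure the free object $F_\K(X)$ produced by Proposition~\ref{p:FA-exists} for this variety really coincides with the classical free paratopological group $\PG(X)$, i.e.\ that imposing the group identities together with continuity of multiplication (but not of inversion) recovers exactly the usual universal paratopological group. The one place calling for care is the $\HM$-stability step, precisely because paratopological groups are genuinely weaker than topological groups (inversion need not be continuous), so one cannot quote a topological-group version of the Hartman--Mycielski construction and must instead confirm that only continuity of the binary operation is used in establishing $\HM(G)\in\K$. With these verifications in hand, Theorem~\ref{t:PG-gen2} applied to $\K$ yields the equivalence of (1)--(4) and completes the proof.
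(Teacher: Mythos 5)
Your proposal is correct and follows exactly the paper's route: the paper proves Theorem~\ref{t:PG} by the single step of applying Theorem~\ref{t:PG-gen2} to the variety $\K$ of all paratopological groups with $\PG=F_\K$. Your verifications of the identities, the test objects ($S_3$ and $\IZ$), and the $d{+}\HM{+}k_\w$-superstability (including the observation that only continuity of multiplication is needed in the Hartman--Mycielski step) are precisely the routine checks the paper leaves implicit.
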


\section{Free locally convex spaces}

This section is devoted to free locally convex spaces.
By a {\em locally convex space} we understand a Hausdorff locally convex linear topological space over the field $\IR$ of real numbers. It is well-known \cite[3.3.11]{AT} that locally convex spaces (more generally, topological groups) are Tychonoff.

For a topological space $X$ its \index{free!locally convex space}{\em free locally convex space} is defined as a pair $(\Lc(X),\delta_X)$ consisting of a locally convex space $\Lc(X)$ and a continuous map $\delta_X:X\to \Lc(X)$ such that for any continuous map $f:X\to Y$ to a locally convex space $Y$ there exists a unique linear continuous operator $\bar f:\Lc(X)\to Y$ such that $\bar f\circ\delta_X=f$.

The class $\K$ of locally convex linear topological spaces is a variety, which implies that for every topological space $X$ the free locally convex space $(\Lc(X),\delta_X)$ exists and is unique up to an isomorphism. A concrete construction of a free locally convex space looks as follows. Given a Tychonoff space $X$ consider the function space $C_k(X)$ and the double function space $C_p(C_k(X))$, which contains the space of all linear continuous functionals on $C_k(X)$ (endowed with the weak-star topology). Next, consider the canonical map $\delta_X:X\to C_p(C_k(X))$ assigning to each point $x\in X$ the Dirac measure $\delta_X(x):C_k(X)\to \IR$, $\delta_X(x):f\mapsto f(x)$, concentrated at $x$. It is easy to see that the map $\delta_X$ is injective and the image $\delta_X(X)$ is linearly independent in $C_p(C_k(X))$. Consider the linear hull $\Lc(X)$ of the set $\delta_X(X)$ in $C_p(C_k(X))$ and endow the linear space $\Lc(X)$ with the strongest locally convex topology making the map $\delta_X:X\to \Lc(X)$ continuous. It can be shown that the pair $(\Lc(X),\delta_X)$ is a free locally convex space and $\delta_X(X)$ is a closed linearly independent subset of $\Lc(X)$.

We shall need the following known fact (see \cite{Fl84}, \cite{Us83}).

\begin{lemma}\label{l:Lc-k} For a $k$-space $X$ the topology of the space $\Lc(X)$ coincides with the topology inherited from the double function space $C_k(C_k(X))$.
\end{lemma}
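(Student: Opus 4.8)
The plan is to compare the free locally convex topology $\tau_{\mathrm f}$ on $\Lc(X)$ (the strongest locally convex topology making $\delta_X$ continuous) with the topology $\tau_k$ that $\Lc(X)$ inherits as a linear subspace of $C_k(C_k(X))$, and to prove that they coincide by showing each contains the other. Throughout I would use the standard identification of the dual space: by the universal property of $\Lc(X)$, a linear functional $\ell$ on $\Lc(X)$ is $\tau_{\mathrm f}$-continuous if and only if $\ell\circ\delta_X\in C(X)$, so that $\Lc(X)'=C(X)$ under the pairing $\langle g,\sum_i\lambda_i\delta_X(x_i)\rangle=\sum_i\lambda_i g(x_i)$; in particular $g(x)=\langle g,\delta_X(x)\rangle$, and the weak${}^*$ topology $\sigma(C(X),\Lc(X))$ is exactly the topology of pointwise convergence on $X$. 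I would also record that $\tau_k$ is generated by the seminorms $p_K(\mu)=\sup_{f\in K}|\mu(f)|$, where $K$ ranges over compact subsets of $C_k(X)$.

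For the inclusion $\tau_k\subseteq\tau_{\mathrm f}$ I would check that each seminorm $p_K$ is $\tau_{\mathrm f}$-continuous, which, by the defining property of $\tau_{\mathrm f}$, amounts to continuity of $x\mapsto\delta_X(x)$ into $(\Lc(X),p_K)$, i.e. to $p_K(\delta_X(x)-\delta_X(x_0))=\sup_{f\in K}|f(x)-f(x_0)|\to 0$ as $x\to x_0$. This is precisely even continuity of the compact set $K\subset C_k(X)$. Here the hypothesis enters: since $X$ is a $k$-space, it is Ascoli by the classical Ascoli Theorem, so every compact $K\subset C_k(X)$ is evenly continuous, giving joint continuity of the evaluation $K\times X\to\IR$ and hence the required estimate. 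Thus every $p_K$ is $\tau_{\mathrm f}$-continuous and $\tau_k\subseteq\tau_{\mathrm f}$.

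The hard part will be the reverse inclusion $\tau_{\mathrm f}\subseteq\tau_k$, and my plan is to handle it by duality. I would take any $\tau_{\mathrm f}$-continuous seminorm $p$ and set $V=\{\mu:p(\mu)\le 1\}$, an absolutely convex $\tau_{\mathrm f}$-neighborhood of $0$. Its polar $V^\circ\subset\Lc(X)'=C(X)$ is equicontinuous as a family of functionals on $\Lc(X)$ (being the polar of a neighborhood) and $\sigma(C(X),\Lc(X))$-compact by the Alaoglu--Bourbaki theorem, i.e. pointwise compact on $X$. The key computation is that, via the bipolar theorem, $\sup_{g\in V^\circ}|g(x)-g(x_0)|=\sup_{g\in V^\circ}|\langle g,\delta_X(x)-\delta_X(x_0)\rangle|=p(\delta_X(x)-\delta_X(x_0))\to 0$, so $V^\circ$ is equicontinuous as a family of real functions on $X$ in the classical sense. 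I would then invoke the classical theorem that on an equicontinuous family the topology of pointwise convergence agrees with the compact-open topology; together with pointwise compactness this yields that $V^\circ$ is compact in $C_k(X)$. Consequently $p_{V^\circ}$ is a $\tau_k$-continuous seminorm and $\{p_{V^\circ}\le 1\}=(V^\circ)^\circ=V$, so $V$ is a $\tau_k$-neighborhood of $0$; as $V$ ranges over a base at $0$ for $\tau_{\mathrm f}$, this gives $\tau_{\mathrm f}\subseteq\tau_k$.

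The main obstacle is this last inclusion, and within it the delicate point is reconciling the two competing equicontinuity notions: equicontinuity of $V^\circ$ as functionals on $\Lc(X)$, which is automatic for the polar of a neighborhood, versus equicontinuity of $V^\circ$ as functions on $X$, which is what the Ascoli-type argument requires. The bipolar identity $p(\delta_X(x)-\delta_X(x_0))=\sup_{g\in V^\circ}|g(x)-g(x_0)|$ is exactly the bridge between them, and verifying it (with $V$ weakly closed, so that $V^{\circ\circ}=V$ and the gauge of $V$ equals $p$) is the technical heart of the argument. I would also rely on the facts, already recorded in the construction of $\Lc(X)$ preceding the statement, that $\delta_X$ is injective with linearly independent, closed image, so that the set-theoretic inclusion $\Lc(X)\hookrightarrow C_k(C_k(X))$ and the pairing above are unambiguous.
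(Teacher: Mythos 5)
Your proof is correct, but note that the paper never proves Lemma~\ref{l:Lc-k} at all: it is quoted as a known fact with references to Flood \cite{Fl84} and Uspenskii \cite{Us83}, so there is no internal argument to compare against. What you have reconstructed is, in essence, the classical duality proof from those sources: identify $\Lc(X)'$ with $C(X)$ via the universal property, observe that the free topology $\tau_{\mathrm f}$ is the topology of uniform convergence on polars of its neighborhoods, and show that these polars are exactly the equicontinuous, pointwise compact subsets of $C(X)$, which coincide with the compact subsets of $C_k(X)$ when $X$ is a $k$-space. Your decomposition also isolates cleanly where the hypothesis is really used: the inclusion $\tau_{\mathrm f}\subseteq\tau_k$ (your polar argument, via Alaoglu--Bourbaki, the bipolar theorem, and the fact that pointwise and compact-open topologies agree on equicontinuous families) is valid for every Tychonoff $X$, while the $k$-space (hence Ascoli) property is needed only for $\tau_k\subseteq\tau_{\mathrm f}$, i.e.\ for the continuity of $\delta_X:X\to C_k(C_k(X))$, which is equivalent to even continuity of all compact subsets of $C_k(X)$. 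Two steps you gloss over are standard but worth spelling out if this were written up: the passage from even continuity of a compact $K\subset C_k(X)$ (joint continuity of evaluation, in the paper's terminology) to the uniform estimate $\sup_{f\in K}|f(x)-f(x_0)|\to 0$ requires the finite-subcover argument over $K$; and the polar $V^\circ$ lands in $C(X)$ because a linear functional bounded on a $\tau_{\mathrm f}$-neighborhood is automatically $\tau_{\mathrm f}$-continuous. Neither is a gap.
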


For a Tychonoff space $X$ elements $\mu\in \Lc(X)$ can be seen as finitely supported sign-measures on $X$, which can be uniquely written as the linear combination $\mu=\sum_{x\in \supp(x))}\mu(x)\delta_x$ with non-zero coefficients $\mu(x)\in\IR\setminus\{0\}$. The real number $\|\mu\|=\sum_{x\in \supp(\mu)}|\mu(x)|$ is the {\em norm} (or {\em complete variation}) of the sign-measure $\mu$.

It is clear that this construction of a free locally convex space determines a functor $\Lc:\Top_{3\frac12}\to\Top_{3\frac12}$ in the category of Tychonoff spaces. To each continuous map $f:X\to Y$ between Tychonoff spaces this functor assigns the restriction $C_pC_k f|\Lc(X)$ of the linear continuous operator  $C_pC_kf:C_p(C_k(X))\to C_p(C_k(Y))$.

\begin{proposition}\label{p:L-prop} The functor $\Lc:\Top_{3\frac12}\to\Top_{3\frac12}$ of the free locally convex space has the following properties:
\begin{enumerate}
\item[\textup{1)}] $\Lc$ can be completed to a monad $(\Lc,\delta,\mu)$ in the category $\Top_{3\frac12}$, which preserves disjoint supports.
\item[\textup{2)}] $\Lc$ has finite supports.
\item[\textup{3)}] $\Lc$ is monomorphic and preserves closed embeddings of metrizable compacta, but $L$ does not preserve preimages.
\item[\textup{4)}] $\Lc$ is quotient-to-open and hence is compact-to-quotient.
\item[\textup{5)}] $\Lc$ is bounded.
\end{enumerate}
\end{proposition}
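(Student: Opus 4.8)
The plan is to read off items (1)--(3) from the concrete measure-theoretic picture of $\Lc(X)$ and to put the real work into the preservation statements in (3)--(4) and the boundedness in (5). Throughout I would use that every $\mu\in\Lc(X)$ is a finitely supported sign-measure $\mu=\sum_{x\in\supp(\mu)}\mu(x)\delta_x$ with all $\mu(x)\neq 0$, that $\delta_X(x)$ is the Dirac measure at $x$, and that the monad multiplication $\mu_X:\Lc(\Lc X)\to\Lc(X)$ is the unique continuous linear map sending $\delta_\alpha\mapsto\alpha$. Monomorphicity (the first half of (3)) I would prove directly: an injective $f$ carries distinct Diracs to distinct, hence linearly independent, Diracs, so $\Lc f$ is injective on linear spans. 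Item (2) is then immediate, being a special case of Proposition~\ref{p:TEA-fin-supp}. For the monad in (1), existence is the general fact following Proposition~\ref{p:FA-exists} (or left-adjointness to the forgetful functor); to see it preserves disjoint supports I would check the two defining equalities. The equality $\supp(\delta_X(x))=\{x\}$ follows from Proposition~\ref{p:supp-delta} (applicable since $\IR\in\K$ with $|\IR|>1$ and $\Lc$ is monomorphic), and for $\A=\sum_i c_i\delta_{\alpha_i}\in\Lc^2X$ with pairwise disjoint nonempty supports $\supp(\alpha_i)$, linearity gives $\mu_X(\A)=\sum_i\sum_{x\in\supp(\alpha_i)}c_i\alpha_i(x)\delta_x$, where disjointness forbids cancellation, so $\supp(\mu_X(\A))=\bigcup_i\supp(\alpha_i)=\supp^2(\A)$.

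For the rest of (3), preservation of closed embeddings of metrizable compacta I would deduce from the stratifiability of metrizable spaces: by Proposition~\ref{p:Borges} a closed metrizable-compact subspace $X\subseteq Y$ admits a continuous linear extension operator $E:C_k(X)\to C_k(Y)$, and the standard Tietze/Dugundji-type dualization of $E$ furnishes a continuous linear left inverse of $\Lc i_{X,Y}$, which forces $\Lc i_{X,Y}$ to be a closed topological embedding. That $\Lc$ does not preserve preimages I would witness concretely: take $f=r^3_2:3\to 2$ (so $f(0)=0$, $f(1)=f(2)=1$), the nonempty set $A=\{0\}\subseteq 2$, and the element $a=\delta_0+\delta_1-\delta_2\in\Lc(3)$. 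Then $\Lc f(a)=\delta_0\in\Lc(\{0\};2)=\Lc(A;2)$, whereas $\supp(a)=\{0,1,2\}\not\subseteq\{0\}=f^{-1}(A)$, so $a\notin\Lc(f^{-1}(A);3)$; thus $\Lc f^{-1}(\Lc(A;2))\not\subseteq\Lc(f^{-1}(A);3)$, violating the criterion of Proposition~\ref{p:non-preim}. (The same additive bookkeeping shows $\Lc$ does not \emph{strongly} fail to preserve preimages, consistent with the weaker claim made here.)

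For (4), given a quotient map $f:X\to Y$ the map $\Lc f$ is a continuous linear surjection, its image being a linear subspace containing $\delta_Y(Y)$. Put $N=\ker(\Lc f)$, which is closed since $\Lc(Y)$ is Hausdorff, so $\Lc(X)/N$ is a Hausdorff locally convex space. The composition $q\circ\delta_X$, with $q:\Lc(X)\to\Lc(X)/N$ the quotient map, is continuous and constant on the fibres of $f$, hence factors as $g\circ f$; the universal property of $\Lc(Y)$ then produces a continuous linear inverse to the canonical bijection $\Lc(X)/N\to\Lc(Y)$, identifying $\Lc f$ with the linear quotient $q$. A continuous linear surjection which is a topological quotient is automatically open, so $\Lc f$ is open, and ``compact-to-quotient'' follows from the remark after the definition since compact maps are quotient maps. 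For (5) I would invoke the criterion of Proposition~\ref{p:F-bounded}, reducing boundedness to showing that every compact $K\subseteq\Lc(\IR_+)$ has $\supp(K)$ bounded in $\IR_+$. Here I would use the description of the free locally convex topology by the seminorms $p_h(\mu)=\sum_x|\mu(x)|h(x)$ for continuous $h:\IR_+\to[0,\infty)$ (each equals $\sup\{\langle\mu,g\rangle:g\in C(\IR_+),\ |g|\le h\}$ and is continuous). A compact set is bounded, so $\sup_{\mu\in K}p_h(\mu)<\infty$ for each such $h$; if $\supp(K)$ were unbounded, choosing $\mu_n\in K$ and $t_n\in\supp(\mu_n)$ with $t_n\to\infty$ together with a continuous $h$ satisfying $h(t_n)\ge n/|\mu_n(t_n)|$ would give $p_h(\mu_n)\ge n$, a contradiction.

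The genuinely non-routine steps are the preservation of closed embeddings of metrizable compacta in (3) and the boundedness in (5). The former rests on the duality between continuous linear $C_k$-extension operators and embeddings of free locally convex spaces; I would lean on Proposition~\ref{p:Borges} for the extension operator and then argue the dual left inverse. The latter hinges on the seminorm representation of the topology of $\Lc(\IR_+)$: correctly establishing (or citing) that the $p_h$ generate this topology is the main point, after which the unbounded-support contradiction is elementary. By contrast, the quotient-to-open argument in (4) is clean once one observes that $N$ is closed, and items (1), (2) and monomorphicity reduce to direct computations with finitely supported measures.
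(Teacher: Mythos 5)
Your handling of items (1), (2), (4), of monomorphicity, and of the preimage counterexample is correct and essentially identical to the paper's proof (the paper also factors $\Lc f$ through the kernel quotient for (4) and uses the same measure $\delta_X(0)+\delta_X(1)-\delta_X(2)$ for the failure of preimage preservation). However, there are two genuine gaps, both at exactly the places you identified as ``the non-routine steps.''

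First, your argument that $\Lc$ preserves closed embeddings of metrizable compacta does not work as written. Proposition~\ref{p:Borges} supplies only a continuous linear extension operator $E:C_k(X)\to C_k(Y)$; its dual sends a Dirac measure $\delta_y$ to the functional $f\mapsto Ef(y)$ in $C_k(X)^*$, which for a general extension operator need \emph{not} be a finitely supported measure. Hence the dual operator need not map $\Lc(Y)$ into $\Lc(X)$ at all, and you do not obtain a left inverse of $\Lc i_{X,Y}$ between the free spaces. What the paper uses is Dugundji's theorem \cite{Dug51}, whose operator has the additional pointwise property that each $\delta_y\circ E$ is a \emph{finite} convex combination of Dirac measures on $X$; this is precisely what gives $C_kE(\Lc(Y))\subset\Lc(X)$ and turns the dual into a retraction of $\Lc(Y)$ onto $\Lc(X;Y)$, from which the closed embedding follows. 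Moreover, the paper first invokes Lemma~\ref{l:Lc-k} (Flood--Uspenskii: for $k$-spaces the free topology of $\Lc(X)$ coincides with the subspace topology inherited from $C_kC_k(X)$) before dualizing; with only a left inverse valued in $C_kC_k(X)$, as your construction yields, you would need this lemma even to conclude that $\Lc i_{X,Y}$ is a topological embedding, and closedness of the image would still require a separate argument. You cite neither fact.

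Second, the boundedness proof in (5) rests on a false claim: the weighted $\ell_1$-seminorms $p_h(\mu)=\sum_{x}|\mu(x)|h(x)$ are \emph{not} continuous on $\Lc(\IR_+)$ and do not generate its topology. Indeed, for $x\neq x_0$ one has $p_h(\delta_X(x)-\delta_X(x_0))=h(x)+h(x_0)$, which does not tend to $0$ as $x\to x_0$, so $\delta_{\IR_+}$ fails to be continuous with respect to $p_h$ whenever $h\not\equiv 0$; thus $p_h$ is not continuous in the free topology (it is only lower semicontinuous, being a supremum of evaluations, and a lower semicontinuous seminorm need not be bounded on a compact set). Consequently the step ``$K$ compact $\Rightarrow\sup_{\mu\in K}p_h(\mu)<\infty$'' is unjustified and the contradiction collapses. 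This cancellation phenomenon is exactly the difficulty that the paper's Lemma~\ref{l:L-b} is built to overcome: the only continuous functionals available are evaluations $\mu\mapsto\mu(g)$ against fixed continuous $g$, where positive and negative coefficients of $\mu$ can cancel, and the paper defeats cancellation by an inductive construction of an increasing function $g$ whose value at the ``new'' support point dominates the total variation of the remaining part of the measure. A correct shortcut compatible with your reduction via Proposition~\ref{p:F-bounded} would be Banach--Steinhaus: a compact $K\subset\Lc(\IR_+)$ is pointwise bounded on the Fr\'echet space $C_k(\IR_+)$, hence equicontinuous, hence there is a single compact $C\subset\IR_+$ with $\supp(\mu)\subset C$ for all $\mu\in K$; but this is not the argument you gave.
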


\begin{proof}
1. For every Tychonoff space $X$, use the definition of the free locally convex space $\Lc^2(X)=\Lc(\Lc(X))$ to find a unique linear continuous operator $\mu_X:\Lc^2(X)\to \Lc(X)$  such that $\mu_X\circ \delta_{\Lc X}=\id_{\Lc X}$. The linear operators $\mu_X$ are components of the natural transformation $\mu:\Lc^2\to \Lc$, which turn the triple $(\Lc,\delta,\mu)$ into a monad in the category $\Top_{3\frac12}$.

In fact, the operator $\mu_X:\Lc^2(X)\to \Lc(X)$ admits a more concrete description. Given an element $\Lambda\in \Lc^2(X)$, write it as the linear combination $\Lambda=\sum_{\lambda\in\supp(\Lambda)}\Lambda(\lambda){\cdot}\delta_{\Lc X}(\lambda)$. Then write each sign-measure $\lambda\in\supp(\Lambda)$ as a unique linear  combination $\lambda=\sum_{x\in\supp(\lambda)}\lambda(x){\cdot}\delta_X(x)$.
Then $$\mu_X(\Lambda)=\sum_{\lambda\in\supp(\Lambda)}
\Lambda(\lambda)\cdot\sum_{x\in\supp(\lambda)}\lambda(x){\cdot}\delta_X(x).$$
This description implies that $\supp^2(\Lambda)=\bigcup_{\lambda\in\Lambda}\supp(\lambda)$ if the family $\big(\supp(\lambda)\big)_{\lambda\in\Lambda}$ is disjoint. This means that the monad $(\Lc,\delta,\mu)$ preserves disjoint supports.
 \smallskip

2. To see that $\Lc$ has finite supports, take any Tychonoff space $X$ and a linear functional $\lambda\in \Lc(X)$. Since $\Lc(X)$ is a linear hull of the set $\delta_X(X)$, the functional $\lambda$ can be written as a finite linear combination  $\sum_{x\in F}\lambda_x\delta_x$ of
the Dirac measures concentrated as some finite set $F\subset X$. Then $a\in \Lc(F;X)$ and hence $a$ has finite support.
\smallskip

3. The functor $\Lc$ is monomorphic since for every Tychonoff space $X$ the linear space $\Lc(X)$ is algebraically generated by the linearly independent set $\delta_X(X)$. Next we prove that the functor $\Lc$ preserves closed embeddings of metrizable compacta. So, fix a continuous injective map $f:X\to Y$ between metrizable compacta and consider the dual linear operator $C_kf:C_k(Y)\to C_k(X)$, $C_kf:\varphi\mapsto \varphi\circ f$.
By Lemma~\ref{l:Lc-k}, the free locally convex spaces $\Lc(X)$ can be identified with linear hull of the sets $\delta_X(X)$ in the double function space $C_kC_k(X)$. The same is true for the space $Y$. By Dugundji's extension Theorem~\cite{Dug51} (see also \cite[\S II.3]{BP75}), there exists a linear continuous operator $E:C_k(X)\to C_k(Y)$ such that $C_kf\circ E=\id_{C_k(X)}$ and the dual linear operator $C_kE:C_kC_k(Y)\to C_kC_k(X)$, $C_kE:\mu\mapsto\mu\circ E$, assigns to each Dirac measure $\delta_y\in \delta_Y(Y)$ a convex combination of Dirac measures on $X$. This implies that $C_kE(L(Y))\subset \Lc(X)$. On the other hand, applying the (contravariant) functor $C_k$ to the equality $C_kf\circ E=\id_{C_k(X)}$ we get $C_kE\circ C_kC_kf=\id_{C_kC_k(X)}$ and $C_k\circ \Lc f=\id_{\Lc X}$, which implies that the map $\Lc f:\Lc(X)\to \Lc(Y)$ is a closed embedding and $C_kE|\Lc Y$ is a retraction of $\Lc (Y)$ on $\Lc (X)$.

To see that the functor $\Lc$ does not preserve preimages, observe that for the retraction $r^3_2:3\to 2$ the element $\mu=\delta_3(0)+\delta_3(1)-\delta_3(2)\in \Lc(3)\setminus \Lc_2(3)$ with support $\supp(\mu)=\{0,1,2\}=3$ is mapped by the map $\Lc \,r^3_2$ to the measure $\Lc r^3_2(\mu)=\delta_2(0)+\delta_2(1)-\delta_2(1)=\delta_2(0)\subset \Lc_1(3)$.
\smallskip

3. To show that the functor $\Lc$ is quotient-to-open, fix any quotient map $f:X\to Y$ between Tychonoff spaces and consider the linear continuous operator $\Lc f:\Lc (X)\to \Lc(Y)$.
Let $Z=(Lf)^{-1}(0)$ be its kernel, $(\Lc X)/Z$ be the quotient locally convex topological space, and $q:\Lc X\to (\Lc X)/Z$ be the quotient linear operator. It follows that $\Lc f=i\circ q$ for some (unique) continuous bijective operator $i:(\Lc X)/Z\to \Lc Y$.
So, we obtain the commutative diagram:
$$\xymatrix{
X\ar_{f}[dd]\ar^{q\circ\delta_X}[rd]\ar^{\delta_X}[rr]&&\Lc X\ar^{\Lc f}[dd]\ar^{q}[ld]\\
&(\Lc X)/Z\ar^{i}[rd]\\
Y\ar_{i^{-1}{\circ}\delta_Y}[ru]\ar_{\delta_Y}[rr]&&LY
}
$$
Taking into account that the map $f$ is quotient and the map
$(i^{-1}\circ\delta_Y)\circ f=q\circ\delta_X:X\to (\Lc X)/Z$ is continuous, we conclude that the map $i^{-1}\circ\delta_Y:Y\to (\Lc X)/Z$ is continuous and then the linear operator $i^{-1}$ is continuous by the definition of the free locally convex topological space $\Lc Y$. Therefore $i:(TX)/Z\to TY$ is a linear topological isomorphism and the map $\Lc f=i\circ q$ is open.
\smallskip

4. The boundedness of the functor $\Lc$ is proved in Lemma~\ref{l:L-b} below.
\end{proof}

\begin{lemma}\label{l:L-b} For any Tychonoff space $X$ and any bounded subset $K\subset \Lc(X)$ the set $\bigcup_{\mu\in K}\supp(\mu)$ is bounded in $X$ and the set $\{\|\mu\|:\mu\in K\}$ is bounded in $\IR$.
\end{lemma}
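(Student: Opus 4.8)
The plan is to combine two reductions to model spaces with the uniform boundedness principle. Recall that $\Lc(X)$, being a locally convex space, is Tychonoff, so the bounded set $K$ is functionally bounded: for every continuous $\phi:\Lc(X)\to\IR$ the image $\phi(K)$ is bounded in $\IR$. In particular, for each $g\in C(X)$ the universal property of $\Lc(X)$ provides a continuous linear functional $\widehat g:\Lc(X)\to\IR$ with $\widehat g(\mu)=\sum_{s\in\supp(\mu)}\mu(s)g(s)$, so that $\{\widehat g(\mu):\mu\in K\}$ is bounded for every $g\in C(X)$. Dually, each $\mu\in\Lc(X)$ acts as the evaluation functional $\check\mu:g\mapsto\widehat g(\mu)$, which (as $\supp(\mu)$ is finite) is continuous on $C_k(X)$; thus $K$ yields a pointwise bounded family of continuous linear functionals on $C_k(X)$. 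The obstruction to exploiting this directly is that $C_k(X)$ need not be barrelled for general $X$, so the first step is to transport the situation to the model spaces $\IR_+$ and $\II$, where the relevant function spaces are Fréchet, resp.\ Banach.

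For the support statement I would argue by contradiction: if $S:=\bigcup_{\mu\in K}\supp(\mu)$ is unbounded, choose $\mu_n\in K$ and $t_n\in\supp(\mu_n)$ with $\{t_n\}$ unbounded, and set $A=\bigcup_n\supp(\mu_n)$. By Lemma~\ref{l:Runbound} there is a continuous $f_1:X\to\IR_+$ with $f_1|\{t_n\}$ injective and $f_1(\{t_n\})$ unbounded; by Lemma~\ref{l:fH-inj} (applied to $f_1$ and the countable set $A$) there is $g_0:X\to[0,1]$ with $h:=f_1+g_0$ injective on $A$. Then $h:X\to\IR_+$ is continuous, $h|\supp(\mu_n)$ is injective, $h(\{t_n\})$ is still unbounded, and $\nu_n:=\Lc h(\mu_n)=\sum_s\mu_n(s)\delta_{h(s)}$ has $h(t_n)\in\supp(\nu_n)$ since no cancellation occurs. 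Thus $\Lc h(K)$ is a functionally bounded subset of $\Lc(\IR_+)$ with unbounded supports, and it remains to rule this out on $\IR_+$. Here $C_k(\IR_+)$ is a Fréchet space (as $\IR_+$ is a hemicompact $k$-space), hence barrelled, so the pointwise bounded family $\{\check\nu:\nu\in\Lc h(K)\}$ is equicontinuous by the uniform boundedness principle. Equicontinuity yields $M,\varepsilon>0$ with $|\check\nu(g)|\le 1$ whenever $\max_{[0,M]}|g|<\varepsilon$; testing against functions $g$ that vanish on $[0,M]$, agree in sign with $\nu$ on $\supp(\nu)\cap(M,\infty)$, and are then scaled arbitrarily, forces $\supp(\nu)\subseteq[0,M]$ for every $\nu\in\Lc h(K)$ -- contradicting the unboundedness of the supports.

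The norm statement is handled the same way but reduced to the compact model $\II$. Assuming $\sup_{\mu\in K}\|\mu\|=\infty$, pick $\mu_n\in K$ with $\|\mu_n\|\ge n$, let $A=\bigcup_n\supp(\mu_n)$, and use Lemma~\ref{l:fH-inj} to get a continuous $h:X\to[0,1]=\II$ injective on $A$. Then $\nu_n:=\Lc h(\mu_n)=\sum_s\mu_n(s)\delta_{h(s)}$ has pairwise distinct atoms, so $\|\nu_n\|=\|\mu_n\|\ge n$, while $\Lc h(K)$ is functionally bounded in $\Lc(\II)$. Now the evaluation functionals $\check\nu$ live on the Banach space $C(\II)$, where $\|\check\nu\|_{\mathrm{op}}=\sup\{|\sum_s\nu(s)g(s)|:\|g\|_\infty\le1\}=\|\nu\|$, the supremum being attained on the finite support by an Urysohn function. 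Pointwise boundedness together with the classical Banach--Steinhaus theorem give $\sup_n\|\check\nu_n\|_{\mathrm{op}}<\infty$, i.e.\ $\sup_n\|\nu_n\|<\infty$, contradicting $\|\nu_n\|\ge n$.

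The main obstacle is conceptual rather than computational: the coefficients $\mu(s)$ of the measures in $K$ are entirely uncontrolled a priori, so a support point carrying a minuscule coefficient cannot be detected by any single fixed test function, and no naive choice of a continuous seminorm recovers the total variation. The device that overcomes this is the uniform boundedness principle, which upgrades the pointwise boundedness supplied by functional boundedness into equicontinuity; everything else -- the two reductions via Lemmas~\ref{l:Runbound} and~\ref{l:fH-inj}, the identification of the dual of $\Lc(X)$ with $C(X)$, and the norm and support bookkeeping under the injective maps -- is arranged precisely to make the model function spaces barrelled, so that this principle applies.
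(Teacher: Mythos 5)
Your proof is correct, but it takes a genuinely different route from the paper's, most visibly for the support statement. After the same first reduction (Lemma~\ref{l:Runbound}), the paper stays on $X$ and constructs by hand a single continuous function $g\circ f$, with $g:\IR_+\to\IR_+$ increasing and weighted against the coefficients of a chosen sequence $\mu_n\in K$, so that the continuous functional $\mu\mapsto\mu(g\circ f)$ exceeds $n$ at $\mu_n$; this is elementary but needs delicate bookkeeping to make the atom of $\mu_n$ with maximal $f$-value dominate the remaining (possibly much heavier) atoms -- indeed, as printed, the required inequality $z_n|\mu_n(x_n)|>n+\sum_{x\ne x_n}|\mu_n(x)|\,(z_n-1)$ cannot be met when $|\mu_n(x_n)|$ is small relative to the rest, and the construction only works if the increment of $g$ over $[y_n-\delta_n,y_n]$ is allowed to grow with $n$ rather than being fixed at $1$. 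Your route -- pushing $K$ forward to $\Lc(\IR_+)$ by a map injective on all relevant supports (Lemmas~\ref{l:Runbound} and~\ref{l:fH-inj}) and applying the uniform boundedness principle in the barrelled Fr\'echet space $C_k(\IR_+)$ -- sidesteps this issue entirely: equicontinuity traps all supports in a common interval $[0,M]$ no matter how small the coefficients are, which is exactly the obstacle you identified. For the norm statement the two proofs are close in spirit: both are Banach--Steinhaus arguments, but the paper applies it in the dual of the Banach space $C(\beta X)$, where the embedding $X\to\beta X$ is injective for free and hence preserves norms of measures with no extra work, while you reduce to $C(\II)$ via Lemma~\ref{l:fH-inj}; yours avoids invoking the Stone--\v Cech compactification at the cost of an extra injectivity step. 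In sum, the paper's support argument buys elementarity (no barrelledness or uniform boundedness outside the Banach setting), while yours buys uniformity and robustness, settling both halves of the lemma by one principle.
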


\begin{proof} We need to prove that the set $\supp(K)=\bigcup_{\mu\in K}\supp(\mu)$ is bounded in $X$. Since each unbounded subset of $X$ contains a countable unbounded set, we can find a countable subset $K_0\subset K$ whose support $\supp(K_0)$ is not bounded in $X$. By Lemma~\ref{l:Runbound}, there exists a continuous function $f:X\to\IR_+$ such that the restriction $f|\supp(K_0)$ is injective and the set $f(\supp(K_0))$ is unbounded in $\IR_+:=[0,\infty)$.

By induction construct a sequence of non-zero functionals $(\mu_n)_{n\in\w}$ in $K_0$ such that $\max f(\supp(\mu_{n+1}))>\max f(\supp(\mu_n))+1$ for all $n\in\w$. For every $n\in\w$ write the functional $\mu_n$ as the linear combination $\mu_n=\sum_{x\in\supp(\mu_n)}\mu_{n}(x)\delta_x$ of the Dirac measures with non-zero coefficients $\mu_n(x)$. Let $y_n=\max f(\supp(\mu_n))$ and $x_n\in\supp(\mu_n)$ be the (unique) point such that $f(x_n)=y_n$. Since the set $f(\supp(\mu_n))$ is finite, we can choose a positive real number $\delta_n<1$ such that $[y_n-\delta_n,y_n]\cap f(\supp(\mu_n))=\{y_n\}$. For every $n\in\w$ the choice of the measures $\mu_n$ guarantees that $y_n-\delta_n>y_n-1>y_{n-1}$ and hence $[y_n-\delta_n,y_n]\cap\bigcup_{k\le n}\supp(\mu_k)=\{y_n\}$.
By induction choose a sequences of positive real numbers $(z_n)_{n\in\w}$ such that $z_n>2+z_{n-1}>0$ and $$z_n\cdot |\mu_n(x_n)|>n+ \sum_{x\in\supp(\mu_n)\setminus\{x_n\}}|\mu_n(x)|\cdot (z_n-1)$$for every $n\in\IN$.

Now take a continuous increasing function $g:\IR_+\to\IR_+$ such that $g(y_n-\delta_n)=z_n-1$ and $g(y_n)=z_n$ for all $n\in\w$.
By the continuity of the identity operator $\Id:L(X)\to C_p(C_k(X))$, the function $E(g\circ f):\Lc X\to \IR$, $E(g\circ f):\mu\mapsto \mu(g\circ f)$, is continuous.

 The choice of the sequence $(z_n)_{n\in\w}$ guarantees that for every $n\in\w$ we get
$$
\begin{aligned}
|E(g\circ f)(\mu_n)|&=|\mu_n(g\circ f)|=\big|\sum_{x\in \supp(\mu_n)}\mu_n(x)\cdot g\circ f(x)\big|>\\
&>|\mu_n(x_n)|\cdot z_n-\sum_{x\in\supp(\mu_n)\setminus\{x_n\}}
|\mu_n(x)|\cdot g\circ f(x)\ge \\
&\ge |\mu_n(x_n)|\cdot z_n-\sum_{x\in\supp(\mu_n)\setminus\{x_n\}}|\mu_n(x)|\cdot (z_n-1)>n,
\end{aligned}$$
which implies that the set $\{\mu_n\}_{n\in\w}\subset K\subset \Lc X$ is not bounded in $\Lc(X)$, which is a desired contradiction showing the boundedness of the set $\supp(K)$.

To show that the set $\{\|\mu\|:\mu\in K\}$ is bounded in $\IR$, consider the Stone-\v Cech compactification $\beta X$ of the Tychonoff space $X$ and observe that the identity inclusion $i:X\to\beta X$ induces an injective linear continuous operator $\Lc\, i:\Lc(X)\to \Lc(\beta X)\subset C_k(C_k(\beta X)$, which preserves the norms of measures $\mu\in \Lc(X)$. It follows that $C_k(\beta X)$ is a Banach space and  $\Lc\, i(K)$ is a bounded subset of the dual Banach space $C_k(\beta X)^*$ endowed with the weak-star topology. By the Banach-Steinhaus Uniform boundedness principle \cite[3.12]{CFA}, the set $\Lc\, i(K)$ is norm-bounded in the dual Banach space $C_k^*(\beta X)$, which means that the set $\{\|\mu\|:\mu\in \Lc\, i(K)\}=\{\|\mu\|:\mu\in K\}$ is bounded in the real line.
\end{proof}

Now we shall prove that each $\mu$-complete Tychonoff space $X$ identified with $\delta_X(X)$ is $C_k$-embedded in $\Lc(X)$. For this consider the linear operator $E_X:C_k(X)\to C_k(\Lc X)$ assigning to each continuous function $f\in C_k(X)$ the function $\bar f=E_X(f):C_k(\Lc X)\to\IR$, $\bar f:\mu\mapsto \mu(f)$. Observe that $\bar f\circ \delta_X(x)=\delta_x(f)=f(x)$ for every $x\in X$. So, $E_X$ can be considered as an operator extending continuous functions from $X$ to $\Lc X$.

\begin{lemma}\label{l:L-Ck-emb} For any $\mu$-complete Tychonoff space $X$ the linear extension operator $E_X:C_k(X)\to C_k(\Lc X)$ is continuous, which implies that $X$ is $C_k$-embedded into $\Lc(X)$.
\end{lemma}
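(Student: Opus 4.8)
The plan is to establish the continuity of the linear operator $E_X$ directly from the definition of the compact-open topology and then to invoke Proposition~\ref{p:lin->0} to upgrade continuity to $0$-continuity. Since $E_X$ is linear, it suffices to verify continuity at the zero function. A basic neighborhood of zero in $C_k(\Lc X)$ has the form $[K;\varepsilon]=\{g\in C_k(\Lc X):\sup_{\mu\in K}|g(\mu)|<\varepsilon\}$ for a compact set $K\subset\Lc X$ and a real number $\varepsilon>0$, so the goal reduces to producing a neighborhood $U$ of zero in $C_k(X)$ with $E_X(U)\subset[K;\varepsilon]$.

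The key step is to extract two pieces of boundedness information from the compactness of $K$. First I would apply Lemma~\ref{l:L-b} (the boundedness of the functor $\Lc$) to conclude that the set $\supp(K)=\bigcup_{\mu\in K}\supp(\mu)$ is bounded in $X$ and that the set of norms $\{\|\mu\|:\mu\in K\}$ is bounded in $\IR$, say by some $M>0$ (we may always take the bound positive). Here is where the hypothesis on $X$ enters: since $X$ is $\mu$-complete, the closed bounded set $B=\overline{\supp(K)}$ is compact. I then choose $U=\{f\in C_k(X):\sup_{x\in B}|f(x)|<\varepsilon/M\}$, which is a neighborhood of zero in $C_k(X)$.

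The verification is then a single estimate. For any $f\in U$ and $\mu\in K$, writing $\mu=\sum_{x\in\supp(\mu)}\mu(x)\delta_x$ and using $\supp(\mu)\subset B$ yields
$$|E_X(f)(\mu)|=|\mu(f)|\le\sum_{x\in\supp(\mu)}|\mu(x)|\,|f(x)|\le\|\mu\|\cdot\sup_{x\in B}|f(x)|<M\cdot\tfrac{\varepsilon}{M}=\varepsilon,$$
so that $E_X(f)\in[K;\varepsilon]$ and $E_X(U)\subset[K;\varepsilon]$. This shows $E_X$ is continuous at zero and hence continuous.

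Finally, recalling that $\delta_X:X\to\Lc X$ is a topological embedding identifying $X$ with the subspace $\delta_X(X)$, and that $E_X(f)\circ\delta_X=f$ for every $f\in C_k(X)$ (since $E_X(f)(\delta_x)=\delta_x(f)=f(x)$), the operator $E_X$ is a continuous linear extension operator from $C_k(\delta_X(X))$ to $C_k(\Lc X)$. Proposition~\ref{p:lin->0} then converts this continuity into $0$-continuity, which is exactly the assertion that $\delta_X(X)$, and therefore $X$, is $C_k$-embedded in $\Lc X$. I do not expect a genuine obstacle in the calculation itself; the crux of the argument is rather that both conclusions of Lemma~\ref{l:L-b} are needed simultaneously — the norm bound $\|\mu\|\le M$ to make the estimate uniform over $K$, and the support bound together with $\mu$-completeness to produce the compact set $B$ that governs the neighborhood $U$.
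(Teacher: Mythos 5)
Your proof is correct and follows essentially the same route as the paper: reduce to the basic neighborhood $[K;\e]$, invoke Lemma~\ref{l:L-b} for both the support bound and the norm bound, use $\mu$-completeness to get the compact set $\bar B$, take the neighborhood $[\bar B;\e/M]$, and finish with Proposition~\ref{p:lin->0}. The only difference is that you write out the estimate $|\mu(f)|\le\|\mu\|\cdot\sup_{x\in \bar B}|f(x)|<\e$ explicitly, which the paper leaves as ``it can be shown.''
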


\begin{proof}
 To prove that the operator $E$ is continuous, take any
neighborhood of zero in $C_k(\Lc X)$. We lose no generality assuming that this neighborhood is of basic form
$$[K,\e]=\{f\in C_k(\Lc X):\sup_{x\in K}|f(x)|<\e\}$$for some compact set $K\subset \Lc X$ and some $\e>0$. By Lemma~\ref{l:L-b}, the set $B=\bigcup_{\mu\in K}\supp(\mu)$ is bounded in $X$ and the set $\{\|\mu\|:\mu\in K\}$ is bounded in $\IR$. So, we can find a number $n\in\IN$ such that $\{\|\mu\|:\mu\in K\}\subset [0,n]$. By  the $\mu$-completeness of the space $X$, the  set $B$ has compact closure $\bar B$ in $X$. It can be shown that the set $$[\bar B,\e/n]=\{f\in C_k(X):\sup_{x\in\bar B}|f(x)|<\e/n\}$$ is an open neighborhood of zero in the function space $C_k(X)$ such that $E([\bar B,\e/n])\subset [K,\e]$.

By Proposition~\ref{p:lin->0}, the space $X$ is $C_k$-embedded in $C_k(X)$.
\end{proof}

\begin{lemma}\label{l:Lc-sa} For any cosmic $k_\w$-space $X$ the space $\Lc(X)$ is a stratifiable $\aleph_0$-space.
\end{lemma}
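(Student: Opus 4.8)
The plan is to realize $\Lc$ as the functor $F_\K$ of free topological $E$-algebra in the variety $\K$ of locally convex spaces, where the signature $E$ consists of the single binary symbol of addition together with the family of unary symbols of multiplication by scalars $\lambda\in\IR$. Topologized as the disjoint union of $\IR$ (the unary symbols) and a one-point set (the binary symbol), $E$ is homeomorphic to $\IR\oplus\{\mathrm{pt}\}$ and hence is a cosmic $k_\w$-space. Since $\Lc(X)=F_\K(X)$ is by construction a locally convex, and therefore Hausdorff, space, and since $X$ is assumed to be a cosmic $k_\w$-space, Theorem~\ref{t:kw} applies and yields that $\Lc(X)$ is a cosmic $k_\w$-space, provided the variety $\K$ is $k_\w$-stable. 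Granting this, a cosmic $k_\w$-space is hemicompact and cosmic, and by the equivalence established earlier (a hemicompact space is an $\aleph_0$-space if and only if it is cosmic) we conclude at once that $\Lc(X)$ is an $\aleph_0$-space.

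First I would settle the $k_\w$-stability of $\K$, which is the delicate point. Given a topological vector space $Z$ that is a $k_\w$-space and admits a continuous bijective linear map onto a locally convex space $Y\in\K$, one must produce a base of convex neighbourhoods of the origin for the topology of $Z$. The efficient route is to exploit the explicit generating sets from the proof of Theorem~\ref{t:kw}: the candidate $k_\w$-topology on $F_\K(X)$ is generated by the compact sets $A_n^n[\delta_X(K_n)]$, where $(K_n)_{n\in\w}$ is a $k_\w$-sequence for $X$ and $A_n$ is the compact subset of $E$ consisting of the addition symbol together with all scalars $\lambda$ with $|\lambda|\le n$. Replacing each such set by its closed convex symmetric hull, which remains compact because it is again a finite-step image of a compact set under the vector operations, one obtains a $k_\w$-sequence of convex compacta, and Lemma~\ref{l:Lc-k} (the Flood--Uspenskij description of $\Lc(X)$ for $k$-spaces) identifies the resulting $k_\w$-topology with the free locally convex topology. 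This both confirms the form of $k_\w$-stability needed and exhibits the sets $M_n:=A_n^n[\delta_X(K_n)]$ as members of a $k_\w$-sequence for $\Lc(X)$; boundedness of $\Lc$ (Proposition~\ref{p:L-prop}, Lemma~\ref{l:L-b}) guarantees that these sets swallow all bounded subsets.

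For the stratifiability I would use that, since $X$ is cosmic, every compact subset of $X$ is metrizable, so the $k_\w$-sequence $(K_n)$ may be chosen to consist of metrizable compacta. Each generating set $M_n=A_n^n[\delta_X(K_n)]$ is then a Hausdorff continuous image of a finite product of copies of the metrizable compacta $A_n$ and $K_n$, hence is itself a metrizable compactum, and $M_n$ is closed in $M_{n+1}$. Thus $\Lc(X)$ is the inductive limit $\varinjlim M_n$ of an increasing sequence of metrizable compacta along closed embeddings, and the stratifiability of $\Lc(X)$ follows from the known fact that such a $k_\w$-limit of metrizable (hence stratifiable) compacta is stratifiable.

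I expect the main obstacle to lie entirely in the topological-vector-space bookkeeping rather than in the fan machinery, and to have two parts: first, checking that the $k_\w$-modification occurring in Theorem~\ref{t:kw} is genuinely locally convex (equivalently, that $\K$ is $k_\w$-stable), for which the safe resolution is to force the generating compacta to be convex and invoke Lemma~\ref{l:Lc-k}; and second, the passage from ``$k_\w$-limit of metrizable compacta'' to ``stratifiable'', which is the nontrivial classical input. For the latter I would either cite the corresponding result from the literature on $k_\w$-spaces with metrizable compact pieces, or, failing a direct citation, construct a stratification by hand, gluing level by level stratifications of the metrizable compacta $M_n$ compatibly with the closed embeddings $M_n\hookrightarrow M_{n+1}$ and using the $k_\w$-topology to verify the required monotonicity and separation conditions.
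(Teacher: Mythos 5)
Your proof has a fatal gap at its very first step: the variety $\K$ of locally convex spaces is \emph{not} $k_\w$-stable, and the conclusion you draw from Theorem~\ref{t:kw} --- that $\Lc(X)$ is a cosmic $k_\w$-space whenever $X$ is --- is false. The paper itself refutes it: by Theorem~\ref{t:Lc} (Gabriyelyan's characterization), $\Lc(X)$ is a $k$-space only when $X$ is countable and discrete, and Lemma~\ref{l:L-Fan-in-sequence} exhibits a strong $\Fin^\w$-fan in $\Lc(X)$ for $X$ a compact convergent sequence, so for such $X$ the space $\Lc(X)$ is not even a $k$-space (Proposition~\ref{k-no-Cld-fan}). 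Note that Lemma~\ref{l:Lc-sa} is invoked precisely for such spaces $X$ (at the end of the proof of Lemma~\ref{l:L-Fan-in-sequence}), so your argument collapses exactly on the cases where the lemma is needed. The error in your $k_\w$-stability argument is twofold. First, replacing the generating compacta $A_n^n[\delta_X(K_n)]$ by their closed convex symmetric hulls produces a $k_\w$-sequence of convex compacta, but this does not make the generated $k_\w$-topology locally convex: a $k_\w$-open neighbourhood of $0$ is merely a set meeting each compactum in a relatively open set, and nothing forces it to contain a convex neighbourhood of $0$; the $k_\w$-modification of $\Lc(X)$ is a topological vector space but in general fails local convexity, which is exactly why $k_\w$-stability breaks down. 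Second, you misread Lemma~\ref{l:Lc-k}: it states that for a $k$-space $X$ the topology of $\Lc(X)$ coincides with the \emph{subspace} topology inherited from $C_k(C_k(X))$; it does not identify the topology of $\Lc(X)$ with any $k_\w$-topology, and it cannot, since $\Lc(X)$ here is not a $k$-space.

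The correct (and much shorter) route, which is the paper's, uses Lemma~\ref{l:Lc-k} in the opposite direction: since the cosmic $k_\w$-space $X$ is a $k$-space, $\Lc(X)$ carries the topology inherited from the double function space $C_k(C_k(X))$, i.e.\ it embeds topologically into $C_k(C_k(X))$. As $X$ is cosmic and hemicompact, the function space $C_k(X)$ is Polish, and then $C_k(C_k(X))$, being $C_k$ of a Polish space, is a stratifiable $\aleph_0$-space by \cite{GR00} and \cite{Mi66}. Both stratifiability and the $\aleph_0$-space property are hereditary to arbitrary subspaces, so $\Lc(X)$ is a stratifiable $\aleph_0$-space. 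Your second step (stratifiability of $k_\w$-limits of metrizable compacta along closed embeddings) is sound as a classical fact but is never needed, and your first step cannot be repaired, because its conclusion is simply false.
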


\begin{proof} By Lemma~\ref{l:Lc-k}, the free locally convex space $\Lc(X)$ embeds into the double function space $C_k(C_k(X))$. Since $X$ is a cosmic $k_\w$-space, the space $C_k(X)$ is Polish and $C_k(C_k(X))$ is a stratifiable $\aleph_0$-space by \cite{GR00} and \cite{Mi66}. Then the subspace $Lc(X)$ of $C_k(C_k(X))$ is a stratifiable $\aleph_0$-space, too.
\end{proof}

\begin{lemma}\label{l:L-Fan-in-sequence} For any compact convergent sequence $X$ the free locally convex space $\Lc(X)$ contains a strong $\dot S^\w$-fan, a strong $D_\w$-cofan, and a strong $\Fin^\w$-fan.
\end{lemma}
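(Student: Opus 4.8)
The plan is to build the three objects explicitly inside $\Lc(X)$, exploiting the fact that $\Lc(X)$ is a Tychonoff $\aleph$-space so that strongness comes essentially for free. First I would record the ambient structure: the convergent sequence $X=\w{+}1$ is compact metrizable, hence a cosmic $k_\w$-space, so Lemma~\ref{l:Lc-sa} gives that $\Lc(X)$ is a stratifiable $\aleph_0$-space; being a locally convex (hence Tychonoff) $\aleph_0$-space, $\Lc(X)$ is a Tychonoff $\aleph$-space. Consequently Corollary~\ref{c:fan-in-aleph-space} makes every countable fan in $\Lc(X)$ strong, and the remark following the definition of a $D_\w$-cofan makes every $D_\w$-cofan in $\Lc(X)$ strong. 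Thus it suffices to produce a $\dot S^\w$-fan and a $D_\w$-cofan, without worrying about strongness, and then invoke the topological-group result. I would write $X=\{x_\infty\}\cup\{x_{n,m}:n,m\in\IN\}$, where the $x_{n,m}$ are the isolated points of $X$ reindexed by $\IN\times\IN$ so that $x_{n,m}\to x_\infty$ as $m\to\infty$ for each fixed $n$; correspondingly $\delta_{x_{n,m}}\to\delta_{x_\infty}$ in $\Lc(X)$.

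For the $\dot S^\w$-fan I would take $S_n=\{\,n(\delta_{x_{n,m}}-\delta_{x_\infty}):m\in\IN\,\}$. For fixed $n$ the points of $S_n$ converge to $0$, so each $S_n$ is a convergent sequence with $\lim S_n=0$; since every point of $S_n$ has norm $2n$, Lemma~\ref{l:L-b} (bounded, in particular compact, subsets of $\Lc(X)$ have bounded norms) shows that any compact $K$ meets only those $S_n$ with $2n\le\sup_{\mu\in K}\|\mu\|$, so the family is compact-finite; and every neighbourhood of $0$ meets every $S_n$, since $n(\delta_{x_{n,m}}-\delta_{x_\infty})\to0$, so the family is not locally finite at $0$. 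Hence $(S_n)_{n\in\IN}$ is a (strong) $\dot S^\w$-fan.

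For the $D_\w$-cofan the guiding picture is the metric cofan $M$: each spike must be compact-finite yet the whole family must converge to a point, which forces the spikes to accumulate at points lying outside the space. I would set $d_{n,m}=\tfrac1n\sum_{j=1}^m 2^{-j}(\delta_{x_{n,j}}-\delta_{x_\infty})$ and $D_n=\{d_{n,m}:m\in\IN\}$. Using Lemma~\ref{l:Lc-k} to realize the topology of $\Lc(X)$ (here $X=\w{+}1$ is a $k$-space) as that inherited from $C_k(C_k(X))$, I would check that $d_{n,m}\to d_{n,\infty}:=\tfrac1n\sum_{j=1}^\infty 2^{-j}(\delta_{x_{n,j}}-\delta_{x_\infty})$ in $C_k(C_k(X))$: the tail estimate $\bigl|\tfrac1n\sum_{j>m}2^{-j}(f(x_{n,j})-f(x_\infty))\bigr|\le \tfrac{2^{-m+1}}{n}\sup_{f\in\mathcal K}\|f\|_\infty$ gives uniform convergence on every compact $\mathcal K\subset C_k(X)$, such a sup being finite since compacta in $C_k(\w{+}1)$ are norm-bounded. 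As $d_{n,\infty}$ has infinite support it does not belong to $\Lc(X)$; since a compact $K\subset\Lc(X)$ is closed in the Hausdorff space $C_k(C_k(X))$ and $d_{n,m}\to d_{n,\infty}\notin K$, only finitely many $d_{n,m}$ lie in $K$, so $D_n$ is compact-finite (indeed closed discrete in $\Lc(X)$). The same tail estimate gives $D_n\subset\{\mu:\sup_{f\in\mathcal K}|\mu(f)|<\e\}$ once $\tfrac{2}{n}\sup_{f\in\mathcal K}\|f\|_\infty<\e$, so $(D_n)_{n\in\IN}$ converges to $0$; thus it is a (strong) $D_\w$-cofan.

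Finally, $(\Lc(X),+)$ is a topological group containing the strong $D_\w$-cofan $(D_n)$ and the strong $\bar S^\w$-fan $(S_n)$ (a $\dot S^\w$-fan is a $\bar S^\w$-fan), so Corollary~\ref{c:g->fan} yields a strong $\Fin^\w$-fan in $\Lc(X)$, completing the proof. The delicate point, and the one I would treat most carefully, is the $D_\w$-cofan: reconciling compact-finiteness of each $D_n$ (escaping all compacta) with convergence of the family to $0$, which is exactly what the passage to the limit $d_{n,\infty}$ outside $\Lc(X)$ achieves, and which is why summable coefficients $2^{-j}$ (keeping the functional values uniformly small) rather than growing coefficients must be used.
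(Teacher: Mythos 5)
Your proof is correct and follows essentially the same route as the paper's: the $\dot S^\w$-fan built from the measures $n(\delta_{x_{n,m}}-\delta_{x_\infty})$ with compact-finiteness via norm-boundedness of compacta, the $D_\w$-cofan built from partial sums $\tfrac1n\sum_{j\le m}2^{-j}(\delta_{x_{n,j}}-\delta_{x_\infty})$ whose limits escape $\Lc(X)$ inside $C_k(C_k(X))$, strongness for free from Lemma~\ref{l:Lc-sa} and Corollary~\ref{c:fan-in-aleph-space}, and the final appeal to Corollary~\ref{c:g->fan}. Your variants (disjointly supported spikes $x_{n,m}$, subtracting $\delta_{x_\infty}$, and spelling out the tail estimate the paper leaves as ``it can be shown'') are only cosmetic refinements of the same argument.
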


\begin{proof} Let $x$ be the limit point of the sequence $X$ and $(x_n)_{n\in\w}$ be an injective enumeration of the set $X\setminus \{x\}$.

First we construct a $D_\w$-cofan in $\Lc X$.
For every $n,m\in\IN$ consider the measure $\mu_{n,m}=\frac1n\sum_{k=1}^m\frac1{2^k}\delta_{x_k}\in \Lc (X)\subset C_k(C_k(X))$. It is clear that for every $n\in\IN$ the sequence $(\mu_{n,m})_{m\in\IN}$ converges to the measure $\mu_n=\frac1n\sum_{k=1}^\infty\frac1{2^k}\delta_{x_k}\in C_k(C_k(X))\setminus \Lc X$ and hence the set $D_n=\{\mu_n\}_{n\in\w}$
is closed and discrete in $\Lc(X)$.
Using the compactness of $X$ and the local convexity of $\Lc(X)$, it can be shown that the sequence $(D_n)_{n\in\w}$ converges to zero in $\Lc(X)$. Consequently, $(D_n)_{n\in\w}$ is a $D_\w$-cofan in $\Lc(X)$.

Next, we construct a $\dot S^\w$-fan $(S_n)_{n\in\w}$ in $\Lc(X)$. For every $n,m\in\IN$ consider the measure $\nu_{n,m}=n(\delta_{x_m}-\delta_x)\in \Lc(X)\subset C_k(C_k(X))$. It is clear that for every $n\in\IN$
the set $S_n=\{0\}\cup\{\nu_{n,m}\}_{m\in\w}$ is a sequence convergent to zero. We claim that the family $(S_n)_{n\in\w}$ is compact-finite in $\Lc(X)$. Given a compact set $K\subset \Lc(X)$ we need to prove that $K$ meets only finitely many sets $S_n\setminus\{0\}$. Observe that each measure $\eta_{n,m}$ is a functional of norm $2n$ in the dual Banach space $C_k^*(X)$. So, the set $S_n\setminus\{0\}$ is contained in the sphere of radius $2n$ in the dual Banach space $C_k^*(X)$. On the other hand,  the Banach-Steinhaus uniform boundedness principle guarantees that the compact set $K$ is norm-bounded in $C_k^*(X)$ and hence meets only finitely many sets $S_n\setminus\{0\}$. This means that $(S_n)_{n\in\w}$ is a $\dot S^\w$-fan in $\Lc(X)$.

Therefore, the space $\Lc(X)$ contains a $D_\w$-cofan and a $\dot S^\w$-fan. Since $\Lc(X)$ is a topological group, we can apply Corollary~\ref{c:g->fan} and conclude that $\Lc(X)$ contains a  $\Fin^\w$-fan.

By Lemma~\ref{l:Lc-sa}, $\Lc(X)$ is an $\aleph_0$-space and by Corollary~\ref{c:fan-in-aleph-space}, each countable fan in $\Lc(X)$ is strong.
\end{proof}

Lemmas~\ref{l:Lc-sa}, \ref {l:L-Fan-in-sequence} and Theorem~\ref{t:F-noseq} imply:

\begin{corollary} If a Tychonoff space $X$ contains a convergent sequence, then its free locally convex space $\Lc(X)$ contains a strong $\Fin^\w$-fan.
\end{corollary}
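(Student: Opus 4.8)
The plan is to obtain the conclusion as a direct application of Theorem~\ref{t:F-noseq} to the functor $F=\Lc$ of the free locally convex space, regarded as a functor $\Lc:\Top_{3\frac12}\to\Top_{3\frac12}$ on the full hereditary subcategory $\Top_{3\frac12}\subset\Top_{2\frac12}$, which contains all metrizable compacta. Before invoking that theorem I first have to check that $\Lc$ satisfies the standing hypothesis of Section~\ref{s:monads}, namely that it preserves closed embeddings of metrizable compacta; this is exactly Proposition~\ref{p:L-prop}(3). With this in place the machinery of that section is available for the functor $\Lc$.

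Next I would verify the two specific hypotheses of Theorem~\ref{t:F-noseq}. For the first, note that the standard convergent sequence $\w+1=\w\cup\{\w\}$ is itself a compact convergent sequence, so Lemma~\ref{l:L-Fan-in-sequence} applies and shows that the functor-space $\Lc(\w+1)=F(\w+1)$ contains a strong $\Fin^\w$-fan. For the second, observe that the unit interval $\II$ is a compact metrizable space and hence a cosmic $k_\w$-space; therefore Lemma~\ref{l:Lc-sa} yields that $\Lc(\II)=F(\II)$ is a stratifiable $\aleph_0$-space, in particular stratifiable, so the alternative ``stratifiable or an $\aleph$-space'' required by the theorem is satisfied.

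Having checked both conditions, the final step is to apply Theorem~\ref{t:F-noseq} itself. The given Tychonoff space $X$ belongs to $\Top_{3\frac12}$ and, by hypothesis, contains a convergent sequence. Thus all assumptions of the theorem are met, and its (strong) conclusion gives that $\Lc(X)$ contains a strong $\Fin^\w$-fan, as desired.

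There is no genuine obstacle here beyond correctly assembling the already-established ingredients; the only points requiring brief justification are the recognition of $\w+1$ as a compact convergent sequence and of $\II$ as a cosmic $k_\w$-space, together with the verification that $\Lc$ falls under the standing assumptions of Section~\ref{s:monads} via Proposition~\ref{p:L-prop}(3). The one subtlety worth tracking is that Theorem~\ref{t:F-noseq} carries the parenthetical ``strong'' qualifier throughout, so I would be careful to use the \emph{strong} fan produced by Lemma~\ref{l:L-Fan-in-sequence} in $\Lc(\w+1)$, ensuring the output delivered for $\Lc(X)$ is likewise a strong $\Fin^\w$-fan rather than merely a $\Fin^\w$-fan.
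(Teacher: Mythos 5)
Your proposal is correct and follows exactly the paper's own derivation: the paper obtains this corollary by combining Lemma~\ref{l:Lc-sa}, Lemma~\ref{l:L-Fan-in-sequence} and Theorem~\ref{t:F-noseq}, which is precisely your argument. Your explicit verification of the standing hypothesis of Section~\ref{s:monads} via Proposition~\ref{p:L-prop}(3), and your attention to the ``strong'' qualifier, are details the paper leaves implicit but are handled correctly.
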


\begin{lemma}\label{l:L-discrete} If a Tychonoff $\mu$-complete space $X$ contains a strongly compact-finite uncountable set $D\subset X$, then $\Lc(X)$ contains a strong $\Fin^{\w_1}$-fan.
\end{lemma}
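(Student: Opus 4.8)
The plan is to construct the required fan explicitly, as a family of singletons $(\{w_{\alpha,\beta}\})$ indexed by the pairs $\alpha<\beta<\w_1$, mimicking the fact that the locally convex direct sum of uncountably many lines is not a $k$-space. Writing the uncountable strongly compact-finite set as $D=\{x_\alpha:\alpha\in\w_1\}$, I would fix an injection $r\colon\w_1\to\IR$, $\alpha\mapsto r_\alpha$, and set $c_{\alpha,\beta}=|r_\alpha-r_\beta|>0$ together with
$$ w_{\alpha,\beta}=c_{\alpha,\beta}\big(\delta_{x_\alpha}+\delta_{x_\beta}\big)\in\Lc(X)\qquad(\alpha<\beta<\w_1). $$
Since distinct pairs yield distinct two-point supports $\{x_\alpha,x_\beta\}=\supp(w_{\alpha,\beta})$, these are pairwise distinct non-zero measures and the index set of pairs has cardinality $\w_1$. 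The claim is that the family of singletons $(\{w_{\alpha,\beta}\})_{\alpha<\beta}$ is a strong $\Fin^{\w_1}$-fan in $\Lc(X)$.

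The strong compact-finiteness is the routine part and rests on the functorial properties of $\Lc$ recorded in Proposition~\ref{p:L-prop}. Using the strong compact-finiteness of $D$, choose $\IR$-open neighborhoods $U_\alpha\ni x_\alpha$ with $(U_\alpha)_{\alpha\in\w_1}$ compact-finite. By Lemma~\ref{l:supp-Ropen} (applicable because $\Lc$ is monomorphic, bounded, $\II$-regular, with finite supports, and $X$ is functionally Hausdorff) each set $W_{\alpha,\beta}=\{a\in\Lc(X):\supp(a)\cap U_\alpha\ne\emptyset\ne\supp(a)\cap U_\beta\}$ is an $\IR$-open neighborhood of $w_{\alpha,\beta}$, and by Lemma~\ref{l:cont-supp} the family $(W_{\alpha,\beta})$ is compact-finite in $\Lc(X)$, exactly as in the proof of Lemma~\ref{l:xy.zs}. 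Hence the family $(\{w_{\alpha,\beta}\})$ is strongly compact-finite; its plain compact-finiteness can also be seen directly, since any compact $K\subset\Lc(X)$ has support contained in a compact set $B\subset X$ (Lemma~\ref{l:L-b} and the $\mu$-completeness of $X$), and $w_{\alpha,\beta}\in K$ forces $x_\alpha,x_\beta\in B\cap D$, a finite set.

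The heart of the argument, and the step I expect to be the main obstacle, is to show that $(\{w_{\alpha,\beta}\})$ is not locally finite at $0$. Since $\Lc(X)$ is locally convex, it suffices to prove that every basic neighborhood $\{\mu:p(\mu)<\e\}$, with $p$ a continuous seminorm, contains $w_{\alpha,\beta}$ for infinitely many pairs. Subadditivity gives $p(w_{\alpha,\beta})\le c_{\alpha,\beta}\big(\phi(x_\alpha)+\phi(x_\beta)\big)$, where $\phi=p\circ\delta_X\colon X\to\IR_+$ is continuous. The difficulty is that $\phi$ may be unbounded on the (possibly unbounded, even closed discrete) set $D$, so one cannot simply make the coefficients uniformly small in advance. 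The resolution is to localize: the sublevel sets $\Gamma_n=\{\gamma\in\w_1:\phi(x_\gamma)\le n\}$ cover $\w_1$, so some $\Gamma_{n_0}$ is uncountable. Then $\{r_\gamma:\gamma\in\Gamma_{n_0}\}$ is an uncountable subset of $\IR$, hence has a condensation point, which furnishes infinitely many pairs $\alpha<\beta$ in $\Gamma_{n_0}$ with $c_{\alpha,\beta}=|r_\alpha-r_\beta|<\e/(2n_0)$; for each such pair $p(w_{\alpha,\beta})\le c_{\alpha,\beta}\cdot 2n_0<\e$. Thus infinitely many $w_{\alpha,\beta}$ lie in the prescribed neighborhood, so the family is not locally finite at $0$, which together with the previous paragraph shows it is a strong $\Fin^{\w_1}$-fan. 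A pleasant feature of this approach is that it treats the bounded and unbounded possibilities for $D$ uniformly, the sublevel-set trick absorbing the unboundedness of $\phi$.
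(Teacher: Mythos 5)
Your proof is correct, and its strong compact-finiteness half coincides with the paper's (both run through Lemma~\ref{l:supp-Ropen} and Lemma~\ref{l:cont-supp}, with Lemma~\ref{l:L-b} and $\mu$-completeness giving the plain compact-finiteness directly); but your fan and your argument for the failure of local finiteness at zero take a genuinely different route. The paper fixes an almost disjoint family $(A_\alpha)_{\alpha\in\w_1}$ of infinite subsets of $\IN$ and uses the \emph{finite} sets $D_{\alpha,\beta}=\big\{\tfrac1n\big(\delta_X(x_\alpha)+\delta_X(x_\beta)\big):n\in A_\alpha\cap A_\beta\big\}$; non-local finiteness at $0$ is then extracted from the group structure alone: choose $V$ with $V+V\subset U$, attach to each $\alpha$ a threshold $\varphi(\alpha)$ beyond which $\tfrac1n\delta_X(x_\alpha)\in V$, stabilize $\varphi$ on an uncountable set by the Pigeonhole Principle, and invoke Lemma~\ref{l:ad} to get uncountably many pairs whose coefficient set $A_\alpha\cap A_\beta$ escapes $[0,m]$. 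You instead use singletons with pair-dependent coefficients $c_{\alpha,\beta}=|r_\alpha-r_\beta|$ and trade the almost-disjoint bookkeeping for local convexity: a continuous seminorm $p$, an uncountable sublevel set $\Gamma_{n_0}$ of $\phi=p\circ\delta_X$, and a condensation point of $\{r_\gamma:\gamma\in\Gamma_{n_0}\}$ in $\IR$ to make the coefficients small; this is precisely where your proof, unlike the paper's, uses that $\Lc(X)$ is locally convex rather than merely a topological group. What each buys: yours is more self-contained in the locally convex setting (no almost disjoint families, no Lemma~\ref{l:ad}, a fan of singletons rather than finite sets, and the possible unboundedness of $D$ absorbed by the sublevel-set trick), while the paper's is more portable: it is reused verbatim for the free linear topological space in Lemma~\ref{l:V-discrete}, where your seminorm step would have to be reworked, say by taking $V$ balanced with $V+V\subset U$ and replacing $\Gamma_n$ by $\{\gamma\in\w_1:\tfrac1n\delta_X(x_\gamma)\in V\}$, a routine but genuine modification. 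Two small touch-ups: take $n_0\ge 1$ so the bound $\e/(2n_0)$ is meaningful (or note that $\Gamma_0$ forces $p(w_{\alpha,\beta})=0$), and you may record that the condensation point in fact yields uncountably many good pairs, so your family, like the paper's, fails even to be locally countable at $0$.
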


\begin{proof} In the uncountable set $D$ choose an $\w_1$-sequence $(x_\alpha)_{\alpha\in\w_1}$ of pairwise distinct points. Since $D$ is strongly compact-finite, each point $x_\alpha\in D$ has an $\IR$-open neighborhood $U_\alpha\subset X$ such that the family $(U_\alpha)_{\alpha\in\w_1}$ is compact-finite in $X$.

Fix an almost disjoint family $(A_\alpha)_{\alpha\in\w_1}$ of infinite subsets of $\IN$. Consider the set $\Lambda=\{(\alpha,\beta)\in\w_1\times\w_1:\alpha\ne\beta\}$ of cardinality $\w_1$ and for every pair $(\alpha,\beta)\in\Lambda$ consider the finite set
$$D_{\alpha,\beta}=\Big\{\tfrac1n(\delta_X(x_\alpha)+\delta_X(x_\beta)):n\in A_\alpha\cap A_\beta\Big\}$$in $\Lc(X)$. The lower semicontinuity of the support map $\supp:\Lc(X)\to[X]^{<\w}$ (proved in Lemma~\ref{l:supp-Ropen}) implies that the set
$$W_{\alpha,\beta}=\{\lambda\in \Lc X:\supp(\lambda)\cap U_\alpha\ne\emptyset\ne \supp(\lambda)\cap U_\beta\}$$is an $\IR$-open neighborhood of the set $D_{\alpha,\beta}$ in $\Lc(X)$. Lemma~\ref{l:cont-supp} implies that  the family $(W_{\alpha,\beta})_{(\alpha,\beta)\in\Lambda}$ is compact-finite in $\Lc(X)$.

Finally, we check that the family $(D_{\alpha,\beta})_{(\alpha,\beta)\in\Lambda}$ is not locally finite at zero. Indeed, for any neighborhood $U\subset \Lc (X)$ of zero, we can find another neighborhood $V\subset \Lc(X)$ of zero such that $V+V\subset U$. For every $\alpha\in\w_1$ the convergence of the sequence $\big(\frac1n\delta_X(x_\alpha)\big)_{n\in\IN}$ to zero yields a number $\varphi(\alpha)$ such that $\big(\frac1n\delta_X(x_\alpha)\big)_{n\ge\varphi(\alpha)}$.

By the Pigeonhole Principle, for some $m\in\w$ the set $\Omega=\{\alpha\in\w_1:\varphi(\alpha)=m\}$ is uncountable. By Lemma~\ref{l:ad}, the family $\Lambda'=\{(\alpha,\beta)\in\Lambda\cap(\Omega\times\Omega):A_\alpha\cap A_\beta\not\subset[0,m]\}$ is uncountable. Given any pair $(\alpha,\beta)\in\Lambda'$, choose a number $n\in A_\alpha\cap A_\beta$ with $n\ge m=\varphi(\alpha)=\varphi(\beta)$. Then $\frac1n\delta_X(x_{\alpha}),\frac1n\delta_X(x_{\beta})\in V$ and hence $$D_{\alpha,\beta}\ni \tfrac1n(\delta_X(x_\alpha)+\delta_X(x_\beta))\in V+V\subset U$$ and the set $\{(\alpha,\beta)\in\Lambda:D_{\alpha,\beta}\cap U\ne\emptyset\}\supset\Lambda'$ is uncountable. This means that the family $(D_{\alpha,\beta})_{(\alpha,\beta)\in\Lambda}$ is not locally countable and hence not locally finite at zero.
Being a strongly compact-finite and not locally finite, this family is a strong $\Fin^{\w_1}$-fan in $\Lc(X)$.
\end{proof}

In \cite{Gab14} Gabriyelyan proved that a Tychonoff space $X$ is discrete and countable if and only if its free locally convex space $\Lc(X)$ is a $k$-space and asked if the $k$-space property in this characterization can be generalized to the Ascoli property of $\Lc (X)$. In this section we answer this question affirmatively proving the following theorem.

\begin{theorem}\label{t:Lc} For a topological space $X$ the following conditions are equivalent:
\begin{enumerate}
\item[\textup{1)}] $X$ is countable and discrete;
\item[\textup{2)}] $\Lc(X)$ is a cosmic $k_\w$-space;
\item[\textup{3)}] $\Lc(X)$ is sequential;
\item[\textup{4)}] $\Lc(X)$ is a $k$-space;
\item[\textup{5)}] $\Lc(X)$ and $X$ are Ascoli.
\item[\textup{6)}] $\Lc(X)$ is Ascoli and $X$ is $\mu$-complete;
\item[\textup{7)}] $\Lc(X)$ contains no strict $\Cld$-fans and $X$ is $\mu$-complete;
\item[\textup{8)}] $\Lc(X)$ contains no strong $\Cld^\w$-fans, no strong $\Fin^{\w_1}$-fans and $X$ is $\mu$-complete and contains no $\Clop$-fans;
\item[\textup{9)}] $\Lc(X)$ contains no strong $\Cld^\w$-fans and no strong $\Fin^{\w_1}$-fans, and $X$ contains no strong $\Cld^\w$-fans and no  $\Clop$-fans;
\item[\textup{10)}] $\Lc(X)$ contains no strong $\Fin^{\w_1}$-fans, $X$ contains no strong $\Cld^\w$-fans and no $\Clop$-fans, and each infinite compact subset of $X$ contains a convergent sequence.
\end{enumerate}
\end{theorem}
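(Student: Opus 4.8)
The plan is to prove Theorem~\ref{t:Lc} as a cycle of implications built on the structural results for the free locally convex functor $\Lc$ established in Proposition~\ref{p:L-prop} together with the general fan-machinery of Chapter~\ref{ch:functor}. First I would record the properties of $\Lc$ that feed the monadic machinery: by Proposition~\ref{p:L-prop}, the functor $\Lc:\Top_{3\frac12}\to\Top_{3\frac12}$ is monomorphic, has finite supports, is bounded, is quotient-to-open (hence compact-to-quotient), preserves closed embeddings of metrizable compacta, and does \emph{not} preserve preimages. Since $\Lc$ can be completed to a monad $(\Lc,\delta,\mu)$ preserving disjoint supports and $\Lc(\II)$ is Tychonoff (being a locally convex space), the functor is $\II$-regular by Corollary~\ref{c:F-Ireg} or directly by the Tychonoff property. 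These facts let me invoke Proposition~\ref{p:F->structure}(2): for every infinite Tychonoff $X$ the functor-space $\Lc(X)$ is a topological algebra of types $x{\cdot}y$, $x(y^*z)$ and $(x^*y)(z^*s)$, and hence Corollaries~\ref{c:Funct2} and Theorem~\ref{t:Functor2} apply.

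The natural skeleton is $(1)\Ra(2)\Ra(3)\Ra(4)\Ra(5)\Ra(6)\Ra(7)\Ra(8)\Ra(9)\Ra(10)\Ra(1)$, with a few shortcuts. For $(1)\Ra(2)$ I would use that for a countable discrete $X$ the space $\Lc(X)$ is a cosmic $k_\w$-space: Lemma~\ref{l:Lc-sa} gives the $\aleph_0$-property on $k_\w$-pieces, and Theorem~\ref{t:kw} (with the variety of locally convex spaces, which is $k_\w$-stable) gives the $k_\w$-structure; countability of $X$ keeps everything cosmic. The implications $(2)\Ra(3)\Ra(4)\Ra(5)$ are immediate from the standard chain ($k_\w\Ra$ sequential $\Ra$ $k$-space $\Ra$ Ascoli, the last via Ascoli's theorem cited in the introduction) together with the trivial observation that $X$, embedded as the closed set $\delta_X(X)\subset\Lc(X)$ by Corollary~\ref{c:delta-closed}, inherits the Ascoli property. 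For $(5)\Ra(6)$ I note that an Ascoli space $X$ contains no strict $\Cld$-fan (Corollary~\ref{c:A->noFan}); $\mu$-completeness then has to be extracted separately, and this is where I would argue that the Ascoli property of $\Lc(X)$ forces $X$ to contain no unbounded strongly compact-finite set (otherwise Lemma~\ref{l:L-discrete} produces a strong $\Fin^{\w_1}$-fan, contradicting $\non$-Ascoli via Corollary~\ref{c:A->noFan}), and a Tychonoff space in which every closed bounded set is compact is exactly $\mu$-complete. The passages $(6)\Ra(7)$ and $(7)\Ra(8)\Ra(9)\Ra(10)$ are then weakenings recorded through Corollary~\ref{c:A->noFan} and the diagram of fan-implications from the introduction, reading off that absence of strict $\Cld$-fans implies absence of the stronger/countable fan variants.

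The crux — and the step I expect to be the main obstacle — is the closing implication $(10)\Ra(1)$, where the hypotheses are weakest. Here I would combine three strands. First, Lemma~\ref{l:L-discrete} shows that if $\Lc(X)$ has no strong $\Fin^{\w_1}$-fan then $X$ contains no strongly compact-finite uncountable set; since $X$ is $\mu$-complete (derivable as above) this constrains bounded sets. Second, the hypothesis that every infinite compact subset of $X$ contains a convergent sequence, combined with Lemma~\ref{l:L-Fan-in-sequence} and Theorem~\ref{t:F-noseq} (using that $\Lc(\w{+}1)$ contains a strong $\Fin^\w$-fan and $\Lc(\II)$ is stratifiable by Lemma~\ref{l:Lc-sa}), forces $X$ to contain no infinite compact set once we know $\Lc(X)$ has no strong $\Cld^\w$-fan; this is precisely the content of Theorem~\ref{t:discrete2}, whose hypotheses on $F=\Lc$ (preserves closed embeddings of metrizable compacta, compact-to-quotient, $\Lc(\w{+}1)$ has a strong $\Fin^\w$-fan, $\Lc(\II)$ stratifiable) are all verified. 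Thus $X$ is $k$-discrete; together with $X$ containing no $\Clop$-fan and no strong $\Cld^\w$-fan, Theorem~\ref{t:disc-char} yields that $X$ is discrete. Third, countability: a discrete space carrying an uncountable family would again produce an uncountable strongly compact-finite set, so Lemma~\ref{l:L-discrete} forbids uncountability of $X$, giving that $X$ is countable and discrete. The delicate bookkeeping lies in checking that the weak fan-hypotheses in (8)--(10) genuinely suffice to trigger Theorem~\ref{t:discrete2} and Theorem~\ref{t:disc-char} simultaneously, and in extracting $\mu$-completeness in (9)--(10) where it is not assumed; I would handle the latter by using that $X$ embeds closedly in $\Lc(X)$ and that a non-$\mu$-complete $X$ would supply, through Proposition~\ref{p:unbound-str-c-f} and Lemma~\ref{l:L-discrete}, a forbidden fan in $\Lc(X)$.
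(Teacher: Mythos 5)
Your cycle matches the paper's proof at both ends: the block (1)--(4) (which the paper simply cites from Gabriyelyan), and the return implications $(9)\Ra(1)$, $(10)\Ra(1)$ via Theorem~\ref{t:discrete2}, Corollary~\ref{c:F-disc} and Lemmas~\ref{l:L-Fan-in-sequence}, \ref{l:L-discrete}, followed by countability from Lemma~\ref{l:L-discrete} once $X$ is known to be discrete. But two of your middle links are genuinely broken. The worst is $(5)\Ra(6)$, where you extract $\mu$-completeness of $X$ from the Ascoli property of $\Lc(X)$ by arguing that an unbounded strongly compact-finite set would yield, via Lemma~\ref{l:L-discrete}, a strong $\Fin^{\w_1}$-fan in $\Lc(X)$. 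This fails three times over: (i) Lemma~\ref{l:L-discrete} needs an \emph{uncountable} strongly compact-finite set, whereas an unbounded set only yields a countably infinite strictly compact-finite subset (Proposition~\ref{p:unbound-str-c-f}); (ii) Lemma~\ref{l:L-discrete} has $\mu$-completeness of $X$ among its \emph{hypotheses}, so using it to derive $\mu$-completeness is circular; (iii) most seriously, "no unbounded strongly compact-finite sets'' does not imply $\mu$-completeness at all: $\mu$-completeness fails when some closed \emph{bounded} set is non-compact, and such sets contain no unbounded subsets whatsoever (the Ascoli space $\w_1$ has no unbounded subsets and is not $\mu$-complete). Note that the paper never places any weight on this link: it enters (6) directly from $(1,4)$ by Ascoli's theorem, so in your linear chain the implication $(5)\Ra(6)$ is entirely your burden, and your argument for it does not work. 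The same flawed extraction reappears in your treatment of (9) and (10), where it is in fact unnecessary: the correct order (the paper's) is to prove discreteness of $X$ first and only then apply Lemma~\ref{l:L-discrete} to the automatically $\mu$-complete discrete space. Relatedly, Theorem~\ref{t:discrete2} is \emph{not} applicable in case (10), since (10) does not assume the absence of strong $\Cld^\w$-fans in $\Lc(X)$; there one must use Corollary~\ref{c:F-disc} (i.e.\ Theorem~\ref{t:F-noseq} plus Theorem~\ref{t:disc-char}), which only needs the absence of strong $\Fin^\w$-fans in $\Lc(X)$.

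Second, your claim that $(7)\Ra(8)\Ra(9)$ are "weakenings read off the diagram of fan-implications'' reverses the direction of those implications. Every strict fan is a strong fan, so "no strong $\Cld^\w$-fan'' is \emph{stronger} than "no strict $\Cld$-fan''; you cannot deduce the fan conditions of (8) and (9) from (7) formally, and you certainly cannot deduce the conditions on $X$ (no $\Clop$-fans, no strong $\Cld^\w$-fans) from conditions on $\Lc(X)$ without a transfer mechanism. This is precisely why the paper proves $(7)\Ra(8)\Ra(9)$ via Lemma~\ref{l:L-Ck-emb} and Proposition~\ref{p:Ck-embed2} (cf.\ also Proposition~\ref{p:Ck-embed1}): $\mu$-completeness makes $X$ a $C_k$-embedded closed subspace of $\Lc(X)$, so a putative $\Clop$-fan or strong $\Cld^\w$-fan in $X$ lifts to a fan of the same kind in $\Lc(X)$, contradicting the hypothesis on $\Lc(X)$. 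Your proposal omits this $C_k$-embedding idea entirely, and it is the only bridge from fan hypotheses on $\Lc(X)$ to fan conclusions on $X$ available in the paper's framework.
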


\begin{proof} The equivalences $(1)\Leftrightarrow(2)\Leftrightarrow(3)\Leftrightarrow(4)$ were proved by Gabriyelyan in \cite{Gab14} and $(1,4)\Ra(6)$ follows from Ascoli's Theorem. The implication $(6)\Ra(7) $ follows from Corollary~\ref{c:A->noFan} and $(7)\Ra(8)\Ra(9)$ follows from Propositions~\ref{l:L-Ck-emb} and \ref{p:Ck-embed2}. The implication $(9)\Ra(1)$ follows from Theorem~\ref{t:discrete2}, Proposition~\ref{p:L-prop}(4) and Lemmas~\ref{l:L-Fan-in-sequence}, \ref{l:L-discrete}.
The implication $(1,9)\Ra(10)$ is trivial and $(10)\Ra(1)$ follows Corollary~\ref{c:F-disc}, Proposition~\ref{p:L-prop} and Lemmas~\ref{l:L-Fan-in-sequence}, \ref{l:L-discrete}.
\end{proof}

\section{Free linear topological spaces}\label{s:Lin}

In this section we shall study free linear topological spaces.
By a {\em linear topological space} we understand a Hausdorff linear topological space over the field $\IR$ of real numbers (endowed with the standard Euclidean topology).

For a topological space $X$ its \index{free!linear topological space} {\em free linear topological space} is a pair $(\Lin(X),\delta_X)$ consisting of a linear topological space $\Lin(X)$ and a continuous map $\delta_X:X\to \Lin(X)$ such that for any continuous map $f:X\to Y$ to a linear topological  space $Y$ there exists a unique linear continuous operator $\bar f:\Lin(X)\to Y$ such that $\bar f\circ\delta_X=f$.

Linear topological spaces can be considered as topological $E$-algebras of signature $E=E_1\oplus E_2$ where $E_1=\IR$ and $E_2=\{+\}$. It is easy to see that the class of all linear topological spaces is a variety. Lemmas~\ref{l:BanHryn} and \ref{l:kw-product} imply that this variety is $\HM{+}k_\w$-stable.

It follows from general theory that for any Tychonoff space $X$ a free linear topological space $(\Lin(X),\delta_X)$  exists and is unique up to a topological isomorphism.
To obtain a concrete construction of a free linear topological  space, take the free locally convex space $(\Lc(X),\delta_X)$ over $X$ and endow it with the strongest topology $\tau_l$ turning $\Lc(X)$ into a topological vector space and making the canonical map $\delta_X:X\to (\Lc (X),\tau_l)$ continuous. The obtained linear topological vector space $(\Lc(X),\tau_l)$ will be denoted by $\Lin X$ and called \index{free!linear topological space}{\em the free linear topological space} over $X$. Since the map $\delta_X:X\to \Lc(X)$ is injective and $\delta_X(X)$ algebraically generates the space $\Lc(X)$ this construction indeed yields a free topological vector space over $X$.

Observe that for every Tychonoff space $X$ we get the chain of canonical continuous (identity) maps
$$\xymatrix{
X\ar^{\delta_X}[r]&\Lin(X)\ar[r]&\Lc(X)\ar[r]&C_pC_k(X).}$$

The construction of the free topological vector space determines a functor $\Lin:\Top_{3\frac12}\to\Top_{3\frac12}$ on the category of Tychonoff spaces.

\begin{proposition} The functor $\Lin:\Top_{3\frac12}\to\Top_{3\frac12}$ has the following properties:
\begin{enumerate}
\item[\textup{1)}] $\Lin$ can be completed to a monad $(\Lin,\delta,\mu)$ that preserves disjoint supports.
\item[\textup{2)}] $\Lin$ is $\HM$-commuting.
\item[\textup{3)}] $\Lin$ is monomorphic and preserves closed embeddings of metrizable compacta (more generally, closed embeddings of stratifiable spaces).
\item[\textup{4)}] $\Lin$ has finite supports.
\item[\textup{5)}] $\Lin$ is $\II$-regular.
\item[\textup{6)}] $\Lin$ is strongly bounded.
\end{enumerate}
\end{proposition}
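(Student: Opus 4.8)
The plan is to read off most of the six assertions from the general Theorem~\ref{t:FKX1}, applied to the variety $\K$ of (Hausdorff) linear topological spaces over $\IR$, viewed as a variety of topologized $E$-algebras of signature $E=E_1\oplus E_2$ with $E_1=\IR$ and $E_2=\{+\}$. As recorded in Section~\ref{s:Lin}, this variety is $\HM$-stable and $k_\w$-stable, which is exactly the hypothesis needed to invoke Theorem~\ref{t:FKX1}. Two auxiliary facts will be used throughout: first, that a Tychonoff space is functionally Hausdorff, so the clauses of Theorem~\ref{t:FKX1} about functionally Hausdorff spaces apply to all $X\in\Top_{3\frac12}$; and second, that for every $X$ the spaces $\Lin X$ and $\Lc X$ carry the same underlying $E$-algebra (the free real vector space on $X$), with the canonical identity map $\Lin X\to\Lc X$ continuous because the finest vector topology on $\Lc X$ making $\delta_X$ continuous dominates the finest locally convex such topology. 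In particular the support operator $\supp$ is the same for $\Lin$ and for $\Lc$.

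With these preparations, statements~(2)--(5) are essentially direct quotations. Finite supports (statement~(4)) is Theorem~\ref{t:FKX1}(1); the $\HM$-commuting property (statement~(2)) is Theorem~\ref{t:FKX1}(2). For statement~(3), preservation of injective continuous maps between functionally Hausdorff spaces (Theorem~\ref{t:FKX1}(3)) specializes to show $\Lin$ is monomorphic on $\Top_{3\frac12}$, while preservation of closed embeddings of metrizable compacta (and of stratifiable spaces) is Theorem~\ref{t:FKX1}(4). Statement~(5), $\II$-regularity, follows from Theorem~\ref{t:FKX1}(7) once one observes that $\Lin\II$, being a Hausdorff linear topological space and hence a Hausdorff topological group, is Tychonoff. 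For statement~(1) the monad $(\Lin,\delta,\mu)$ is the monad canonically attached to the variety $\K$ (constructed after Proposition~\ref{p:FA-exists}); that it preserves disjoint supports amounts to the equalities $\supp(\delta_X(x))=\{x\}$ and $\supp(\mu_X(\A))=\supp^2(\A)$ for $\A$ with disjoint support. The first follows from Proposition~\ref{p:supp-delta} (the variety contains $\IR$, of cardinality $>1$, and $\Lin$ is monomorphic), and the second from the explicit algebraic formula for $\mu_X$, which is literally the formula computed for $\Lc$ in the proof of Proposition~\ref{p:L-prop}(1), since $\Lin$ and $\Lc$ share their algebraic structure and their support operator.

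It remains to treat boundedness and strong boundedness (statement~(6)). Boundedness is inherited from $\Lc$: a compact $K\subset\Lin X$ is mapped by the continuous identity $\Lin X\to\Lc X$ to a compact subset of $\Lc X$, and since $\Lc$ is bounded (Proposition~\ref{p:L-prop}(5)) the set $\supp(K)$ is bounded in $X$ by Proposition~\ref{p:F-bounded}; as supports agree, the same proposition gives boundedness of $\Lin$. For strong boundedness I would invoke Proposition~\ref{p:F-strong-bound}, which (using that $\Lin$ is monomorphic, bounded, with finite supports, and $\Top_{3\frac12}\subset\Top_{2\frac12}$) reduces the claim to the following assertion: for every compact Hausdorff space $X$, a compact $K\subset\Lin X$ is contained in $\Lin_n(X)$ for some $n\in\IN$. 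I would prove this by contradiction. If the cardinalities $|\supp(a)|$, $a\in K$, were unbounded, choose $a_m\in K$ with $|\supp(a_m)|>m$; since $X$ is Tychonoff and $S=\bigcup_{m}\supp(a_m)$ is countable, there is a continuous $f:X\to\II^\w$ whose restriction to $S$ is injective. The compact metrizable space $\II^\w$ is a cosmic $k_\w$-space and the signature $E=\IR\oplus\{+\}$ is a cosmic $k_\w$-space, so Theorem~\ref{t:kw} applies to $\Lin(\II^\w)$: the compact set $\Lin f(K)$ lies in $A^m[\delta_{\II^\w}(B)]$ for some compact $A\subset E$, compact $B\subset\II^\w$ and $m\in\w$, whence Lemma~\ref{l:Ak} bounds the support cardinalities of all its elements. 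But Proposition~\ref{p:supp=image} and the injectivity of $f|S$ give $|\supp(\Lin f(a_m))|=|\supp(a_m)|>m$, a contradiction.

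The main obstacle is precisely this last point. The other five properties are formal consequences of $\HM$-stability and the $\Lin$--$\Lc$ identification, but strong boundedness genuinely separates $\Lin$ from $\Lc$, which is only bounded. The difficulty is compounded by the fact that the variety of linear topological spaces is not $d$-stable (the discrete modification destroys continuity of scalar multiplication), so the ready-made Theorem~\ref{t:d+HM+kw}, which yields strong boundedness for $d{+}\HM{+}k_\w$-stable varieties, cannot be quoted; the passage to a compact metrizable image $\II^\w$ and the use of the $k_\w$-structure through Theorem~\ref{t:kw} is what carries the argument.
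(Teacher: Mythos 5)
Your proposal is correct and follows essentially the same route as the paper: items (1)--(5) are read off from Theorem~\ref{t:FKX1} together with the algebraic identification of the monad of $\Lin$ with that of $\Lc$, and strong boundedness is obtained by combining the boundedness of $\Lc$ with the $k_\w$-machinery of Theorem~\ref{t:kw} and Lemma~\ref{l:Ak}. The only (inessential) divergence is in item (6): the paper transfers the compact set directly into $\Lin(\beta X)$, where $\beta X$ is compact and hence a $k_\w$-space, whereas you first reduce via Proposition~\ref{p:F-strong-bound} to compact Hausdorff $X$ and then map into $\II^\w$ --- a detour you could even skip, since such an $X$ is itself a $k_\w$-space to which Theorem~\ref{t:kw} applies directly.
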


\begin{proof}
1. For every Tychonoff space $X$, use the definition of the free topological vector space $\Lin^2 X=\Lin(\Lin X)$ to find a unique linear continuous operator $\mu_X:\Lin^2X\to \Lin X$  such that $\mu_X\circ \delta_{\Lin X}=\id_{\Lin X}$. The linear operators $\mu_X$ are components of the natural transformation $\mu:\Lin^2\to \Lin$, which turn the triple $(\Lin,\delta,\mu)$ into a monad in the category $\Tych$. Algebraically, the linear operators $\mu_X$ coincide with the corresponding linear operators for the monas $(\Lc,\delta,\mu)$. Using this coincidence it can be shown that the monad $(\Lin,\delta,\mu)$ preserves disjoint supports.

The statements (1)--(4) follow from Theorem~\ref{t:FKX1} and the $\HM$-stability of the variety of linear topological spaces (which follows from Lemma~\ref{l:BanHryn}).

5. To prove that the functor $\Lin$ is strongly bounded, fix a Tychonoff space $X$ and a compact set $K\subset\Lin X$.  Consider the identity map $\Lin X\to \Lc X$.  Lemma~\ref{l:L-b} implies that $K\subset \Lc(B;X)=\Lin(B;X)$ for some bounded set $B\subset X$.
Next, consider the Stone-\v Cech compactification $\beta X$. The identity embedding $i_{X,\beta X}:X\to\beta X$ induces an injective continuous map $\Lin i_{X,\beta X}:\Lin X\to \Lin(\beta X)$. The $k_\w$-stability of the variety of linear topological spaces allows us to apply Theorem~\ref{t:kw} and Lemma~\ref{l:Ak}, and conclude that $\supp(K)$ is contained $[X]^{\le m}$ for some $m\in\w$.
\end{proof}

The proof of the following lemma literally repeats the proof of Lemma~\ref{l:L-discrete}.

\begin{lemma}\label{l:V-discrete} If a Tychonoff space $X$ contains a strongly compact-finite uncountable subset $D\subset X$, then the free linear topological vector space $\Lin(X)$ contains a strong $\Fin^{\w_1}$-fan.
\end{lemma}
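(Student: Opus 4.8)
The author explicitly tells us how to proceed: the proof of Lemma~\ref{l:V-discrete} \emph{literally repeats} the proof of Lemma~\ref{l:L-discrete}. So the plan is to transport the construction from the free locally convex space $\Lc(X)$ to the free linear topological space $\Lin(X)$, checking that each ingredient used in Lemma~\ref{l:L-discrete} survives the passage from $\Lc$ to $\Lin$. The three ingredients are: (i) the lower semicontinuity of the support map, which holds because $\Lin$ is a monomorphic, $\II$-regular functor with finite supports, so Lemma~\ref{l:supp-Ropen} applies; (ii) the compact-finiteness criterion of Lemma~\ref{l:cont-supp}, which again needs only that $\Lin$ be bounded, $\II$-regular, monomorphic, with finite supports (and $X$ be $\mu$-complete); and (iii) the convergence of the scaled Dirac measures $\frac1n\delta_X(x_\alpha)\to 0$ in $\Lin(X)$, which holds since $\delta_X$ is continuous and $\Lin(X)$ is a topological vector space.

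First I would fix, inside the strongly compact-finite uncountable set $D\subset X$, an $\w_1$-sequence $(x_\alpha)_{\alpha\in\w_1}$ of pairwise distinct points, and for each $\alpha$ choose an $\IR$-open neighborhood $U_\alpha\subset X$ witnessing strong compact-finiteness, so that $(U_\alpha)_{\alpha\in\w_1}$ is compact-finite in $X$. Next I fix an almost disjoint family $(A_\alpha)_{\alpha\in\w_1}$ of infinite subsets of $\IN$ and set $\Lambda=\{(\alpha,\beta):\alpha\ne\beta\}$; for each $(\alpha,\beta)\in\Lambda$ I define the finite set
$$
D_{\alpha,\beta}=\Big\{\tfrac1n\big(\delta_X(x_\alpha)+\delta_X(x_\beta)\big):n\in A_\alpha\cap A_\beta\Big\}\subset\Lin(X).
$$
The neighborhood witnessing strong compact-finiteness of this family is
$$
W_{\alpha,\beta}=\{\lambda\in\Lin(X):\supp(\lambda)\cap U_\alpha\ne\emptyset\ne\supp(\lambda)\cap U_\beta\},
$$
which is $\IR$-open by Lemma~\ref{l:supp-Ropen} and such that $(W_{\alpha,\beta})_{(\alpha,\beta)\in\Lambda}$ is compact-finite by Lemma~\ref{l:cont-supp}. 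Here I must note that the support of an element of $\Lin(X)$ coincides, as a finite subset of $X$, with its support computed in $\Lc(X)$, since the identity map $\Lin(X)\to\Lc(X)$ is a continuous bijective $E$-isomorphism preserving $\delta_X(X)$; this is what lets me reuse the support apparatus verbatim.

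The remaining point is to verify that $(D_{\alpha,\beta})_{(\alpha,\beta)\in\Lambda}$ is not locally finite at $0$. Given a neighborhood $U$ of zero in $\Lin(X)$, I pick a neighborhood $V$ with $V+V\subset U$ (using that $\Lin(X)$ is a topological vector space), then use convergence $\frac1n\delta_X(x_\alpha)\to 0$ to get $\varphi(\alpha)$ with $\frac1n\delta_X(x_\alpha)\in V$ for $n\ge\varphi(\alpha)$. A Pigeonhole argument gives an uncountable $\Omega$ on which $\varphi$ is constant equal to some $m$, and Lemma~\ref{l:ad} produces an uncountable $\Lambda'\subset\Lambda\cap(\Omega\times\Omega)$ with $A_\alpha\cap A_\beta\not\subset[0,m]$; choosing $n\in A_\alpha\cap A_\beta$ with $n\ge m$ places $\frac1n(\delta_X(x_\alpha)+\delta_X(x_\beta))\in V+V\subset U$, so $U$ meets uncountably many $D_{\alpha,\beta}$. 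Being strongly compact-finite yet not locally finite (indeed not locally countable), this family is a strong $\Fin^{\w_1}$-fan. The only genuine obstacle, and the one place where I would be careful rather than copy mechanically, is confirming that all the functor-theoretic hypotheses of Lemmas~\ref{l:supp-Ropen} and~\ref{l:cont-supp} are indeed available for $\Lin$ (boundedness, $\II$-regularity, monomorphy, finite supports) and that the support identification between $\Lin(X)$ and $\Lc(X)$ is legitimate; everything else is a direct transcription of Lemma~\ref{l:L-discrete}.
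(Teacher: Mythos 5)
Your proposal is correct and coincides with the paper's own proof: the paper simply declares that the proof of Lemma~\ref{l:V-discrete} literally repeats that of Lemma~\ref{l:L-discrete}, and your transcription --- including the verification that $\Lin$ is monomorphic, $\II$-regular, bounded and has finite supports so that Lemmas~\ref{l:supp-Ropen} and \ref{l:cont-supp} apply, and that supports in $\Lin(X)$ agree with those computed in $\Lc(X)$ --- is exactly that repetition. The one caveat you rightly flag (and the paper silently inherits) is that Lemma~\ref{l:cont-supp} requires $X$ to be $\mu$-complete, a hypothesis present in Lemma~\ref{l:L-discrete} but absent from the statement of Lemma~\ref{l:V-discrete}; this is harmless where the lemma is applied (Theorem~\ref{t:Lin} assumes $\mu$-completeness), but strictly the stated lemma needs either that hypothesis or a separate argument that supports of compact subsets of $\Lin(X)$ have compact closure.
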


\begin{lemma}\label{l:V-algebra} For every Tychonoff space $X$ the free linear topological space $\Lin(X)$ is a topological algebra of types $x{\cdot}y$, $x(y^*z)$, $(x^*y)(z^*s)$, and $x(s^*y_i)^{<\w}$, .
\end{lemma}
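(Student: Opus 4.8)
Lemma~\ref{l:V-algebra} asserts that the free linear topological space $\Lin(X)$ carries all four algebraic structures of the ``monadic'' type that were isolated in Chapter~\ref{ch:algebra}. The plan is to derive this directly from Proposition~\ref{p:F->structure}, which produces exactly these structures from a monadic functor that does not preserve preimages. Indeed, the preceding proposition shows that $\Lin:\Top_{3\frac12}\to\Top_{3\frac12}$ is a monomorphic functor with finite supports, is $\II$-regular and strongly bounded, and can be completed to a monad $(\Lin,\delta,\mu)$ preserving disjoint supports. So every hypothesis of Proposition~\ref{p:F->structure} is already available, and the only thing I need to verify independently is the two conditions that trigger parts (1) and (2) of that proposition: that $\Lin(n)\ne\Lin_1(n)$ for some $n$, and that $\Lin$ does not preserve preimages.

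First I would check $\Lin(n)\ne\Lin_1(n)$. For a finite discrete space $n$ the free linear topological space $\Lin(n)$ is (algebraically) the span of the linearly independent set $\{\delta_n(0),\dots,\delta_n(n-1)\}$, i.e.\ an $n$-dimensional vector space. For $n=2$ the element $a=\delta_2(0)+\delta_2(1)$ has support $\{0,1\}$ of cardinality $2$, so by Theorem~\ref{t:supp} (via Corollary~\ref{c:Fn}) we get $a\in\Lin(2)\setminus\Lin_1(2)$, whence $\Lin(2)\ne\Lin_1(2)$. This gives the topological algebra structure of type $x{\cdot}y$ by Proposition~\ref{p:F->structure}(1).

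Next I would verify that $\Lin$ does not preserve preimages; this is the only point requiring an actual computation, and it is the analogue of the corresponding verification for $\Lc$ in Proposition~\ref{p:L-prop}(3). Following that model, consider the retraction $r^3_2:3\to 2$ with $r^3_2|2=\id$ and $r^3_2(2)=1$, and the element $\mu=\delta_3(0)+\delta_3(1)-\delta_3(2)\in\Lin(3)$. Since the three Dirac measures are linearly independent, $\supp(\mu)=\{0,1,2\}$ and $\mu\in\Lin(3)\setminus\Lin_2(3)$. Applying the (linear) operator $\Lin\,r^3_2$ gives $\Lin\,r^3_2(\mu)=\delta_2(0)+\delta_2(1)-\delta_2(1)=\delta_2(0)$, whose support is $\{0\}\subset 1=2-1$. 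By Corollary~\ref{c:F-preim} (with $n=2$) this witnesses that $\Lin$ does not preserve preimages. Proposition~\ref{p:F->structure}(2) then yields at once that $\Lin(X)$ is a topological algebra of types $x{\cdot}y$, $x(y^*z)$ and $(x^*y)(z^*s)$ for every infinite $X\in\Top_{3\frac12}$ (and the finite case is trivial, since there are no nontrivial fans).

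It remains to obtain the type $x(s^*y_i)^{<\w}$ structure, which is \emph{not} directly supplied by Proposition~\ref{p:F->structure}. Here I would argue in parallel with the algebraic derivation used for free semigroups and free topologized $E$-algebras: using linearity of the operations on $\Lin(X)$, I would exhibit the requisite family of $(1+n)$-ary $\Lin$-valued operations explicitly and check the three axioms of Definition~\ref{d:xyy_w} by hand, invoking the lower semicontinuity of $\supp$ from Lemma~\ref{l:supp-Ropen} and Proposition~\ref{p:supp=image} to control supports. The main obstacle I anticipate is precisely this last part: verifying condition~(2) of Definition~\ref{d:xyy_w}, namely that for pairwise distinct points off the finite exceptional set the support of the constructed element contains all of $\{x,y_1,\dots,y_n\}$, requires a genuine linear-independence computation in $\Lc(X)\subset C_pC_k(X)$ rather than a formal application of the monadic machinery. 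Once the operations are written down in a form where each coordinate contributes an independent Dirac mass (mimicking the $x(sy_i)^{<\w}$-mixing construction of Lemma~\ref{l:xy+xsy-mixing}), the support computation reduces to linear independence and the continuity axiom~(3) follows from continuity of addition and scalar multiplication in $\Lin(X)$.
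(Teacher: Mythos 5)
Your proposal is correct in substance, but it takes a genuinely different route from the paper's for three of the four types, so let me compare. The paper proves the whole lemma directly: it exhibits the operations $p_2(x,y)=\delta_X(x)+\delta_X(y)$, $p_3(x,y,z)=\delta_X(x)-\delta_X(y)+\delta_X(z)$, $p_4(x,y,z,s)=-\delta_X(x)+\delta_X(y)-\delta_X(z)+\delta_X(s)$ (all with exceptional set $C=\emptyset$) and, for type $x(s^*y_i)^{<\w}$, the operations $p_n:(y_i)_{i=0}^n\mapsto\sum_{i=0}^n(\delta_X(y_i)-\delta_X(s))$ with $C_n=\{s\}$; every axiom then follows from linear independence of the Dirac measures and continuity of the linear operations, so no monadic machinery is invoked at all. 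You instead obtain the types $x{\cdot}y$, $x(y^*z)$ and $(x^*y)(z^*s)$ from Proposition~\ref{p:F->structure}(1,2); your witnesses ($\delta_2(0)+\delta_2(1)$ for $\Lin(2)\ne\Lin_1(2)$, and $\mu=\delta_3(0)+\delta_3(1)-\delta_3(2)$ for the failure to preserve preimages, transplanted from the $\Lc$-computation in Proposition~\ref{p:L-prop}(3)) are correct, and the inference matches the definition of non-preservation of preimages recalled in the section containing Proposition~\ref{p:F->structure}. This saves you from writing three operations, but it carries three caveats you should make explicit: (i) Proposition~\ref{p:F->structure} also assumes the functor is \emph{continuous} (each $a\in\Lin(n)$ must induce a continuous operation $a_X:X^n\to\Lin X$); this is not among the six properties listed in the paper's proposition on $\Lin$, and although it is immediate (such $a_X$ is a fixed linear combination of compositions of $\delta_X$ with coordinate projections), your claim that every hypothesis is already available glosses over it; (ii) the proposition applies only to infinite $X$, and your justification of the finite case (``no nontrivial fans'') is beside the point, since the lemma asserts an algebra structure, not a fan statement --- the finite case is indeed trivial, but because one may take $C=X$ in each definition, or simply because the explicit operations above work for every Tychonoff $X$; (iii) the type $x(s^*y_i)^{<\w}$ is not delivered by the monadic machinery, as you note, and your sketch for it (one independent Dirac mass per coordinate, vanishing when all arguments equal $s$) is exactly the paper's formula, which still has to be written out and checked. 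Since that same one-line linear-combination trick settles all four types simultaneously, the paper's direct route is shorter; yours is more systematic, illustrates how the abstract theory of Chapter~\ref{ch:algebra} specializes, but ends up needing the direct construction anyway.
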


\begin{proof}  The empty set $C=\emptyset$ and the $\Lin$-valued operations $$
\begin{aligned}
&p_2:X^2\to \Lin(X),\quad p_2:(x,y)\mapsto \delta_X(x)+\delta_Y(y),\\
&p_3:X^3\to \Lin(X),\quad p_3:(x,y,z)\mapsto \delta_X(x)-\delta_Y(y)+\delta_X(z),\\
&p_4:X^4\to \Lin(X),\quad p_4:(x,y,z,s)\mapsto-\delta_X(x)+\delta_X(y)-\delta_X(z)+\delta_X(s),
\end{aligned}
$$
witness that the functor-space $\Lin X$ is a topological algebra of types $x{\cdot}y$, $x(y^{*}z)$ and $(x^{*}y)(z^{*}s)$.

To see that $\Lin X$ is a topological algebra of type $x(s^{*}y_i)^{<\w}$ take any point $s\in X$  and for every $n\in\w$ consider the $n$-ary $\Lin$-valued operation $$p_n:X^{1+n}\to \Lin X,\;\;p_n:(y_i)_{i=0}^n\mapsto \sum_{i=0}^n(\delta_X(y_i)-\delta_X(s)).$$
It is easy to check that the operations $p_n$, $n\in\w$, and sets $C_n=\{s\}$, $n\in\w$, witness that the functor-space $\Lin X$ is a topological algebra of type  $x{\cdot}(s^{*}y_i)^{<\w}$.
\end{proof}

Now we are able to prove the main result of this subsection.

\begin{theorem}\label{t:Lin} For a $\mu$-complete Tychonoff $k_\IR$-space $X$ the following conditions are equivalent:
\begin{enumerate}
\item[\textup{1)}] $X$ is a cosmic $k_\w$-space.
\item[\textup{2)}] $\Lin (X)$ is a cosmic $k_\w$-space.
\item[\textup{3)}]  $X$ is an $\aleph_k$-space and $\Lin (X)$ is a $k$-space.
\item[\textup{4)}]  $X$ is a $\bar\aleph_k$-space and $\Lin (X)$ is Ascoli.
\item[\textup{5)}] $X$ is a $\bar\aleph_k$-space and $\Lin (X)$ contains no strong $\Fin^{\w_1}$-fan.
\item[\textup{6)}] $X$ is an $\aleph_k$-space and $\Lin (X)$ contains no $\Fin^{\w_1}$-fan.
\end{enumerate}
\end{theorem}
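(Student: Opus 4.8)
The plan is to prove Theorem~\ref{t:Lin} by the cyclic scheme $(1)\Ra(2)\Ra(3)\Ra(4)\Ra(5)\Ra(1)$ together with $(2)\Ra(6)\Ra(1)$, reducing almost everything to the general functorial machinery already developed for the functor $\Lin$. The key preliminary observation is that $\Lin$ is a monomorphic functor with finite supports, which is bounded, $\II$-regular, strongly bounded, and can be completed to a monad $(\Lin,\delta,\mu)$ preserving disjoint supports (all established in the proposition preceding Lemma~\ref{l:V-discrete}). Moreover, by Lemma~\ref{l:V-algebra}, for every Tychonoff space $X$ the functor-space $\Lin(X)$ is simultaneously a topological algebra of types $x{\cdot}y$, $x(y^*z)$ and $(x^*y)(z^*s)$. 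This means that Theorem~\ref{t:xy+xyz+xy.zs} applies directly to $X$ and the functor $F=\Lin$, and it is this theorem that will drive the implications $(5)\Ra(1)$ and $(6)\Ra(1)$.

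First I would treat $(1)\Ra(2)$: assuming $X$ is a cosmic $k_\w$-space, I would invoke Theorem~\ref{t:bar-i-bijective} to identify $\Lin(X)$ with its Tychonoff reflection (since the variety of linear topological spaces is $d{+}\HM{+}k_\w$-stable) and then Theorem~\ref{t:kw}, whose ``cosmic'' clause yields that $\Lin(X)$ is a cosmic $k_\w$-space. The implications $(2)\Ra(3)$, $(2)\Ra(4)$, $(2)\Ra(6)$ are immediate, since a cosmic $k_\w$-space is a $k$-space (hence Ascoli, hence contains no $\Fin^{\w_1}$-fans by Proposition~\ref{k-no-Cld-fan} and Corollary~\ref{c:A->noFan}), and because every cosmic $k_\w$-space is an $\aleph_k$-space and a $\bar\aleph_k$-space. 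The step $(3)\Ra(4)$ follows from the classical Ascoli Theorem ($k$-spaces are Ascoli), and $(4)\Ra(5)$ from Corollary~\ref{c:A->noFan} (Ascoli spaces contain no strict, hence no strong, $\Fin$-fans).

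The substantive implications are $(5)\Ra(1)$ and $(6)\Ra(1)$. For $(5)\Ra(1)$, I would apply Theorem~\ref{t:xy+xyz+xy.zs}(3d): $X$ is a $\mu$-complete Tychonoff $\bar\aleph_k$-$k_\IR$-space whose functor-space $\Lin(X)$ contains no strong $\Fin^\w$-fan (and a fortiori no strong $\Fin^{\w_1}$-fan), so that theorem gives that $X$ is either a topological sum $K\oplus D$ of a cosmic $k_\w$-space and a discrete space, or metrizable with compact set of non-isolated points. To collapse these alternatives to a single cosmic $k_\w$-space, I would use Lemma~\ref{l:V-discrete}: the absence of a strong $\Fin^{\w_1}$-fan in $\Lin(X)$ forbids any strongly compact-finite uncountable subset of $X$, which kills any uncountable discrete summand and any uncountable strongly compact-finite set of non-isolated points. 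Combined with the $\bar\aleph_k$-structure (Lemma~\ref{l:b-nw} and Proposition~\ref{p:decomp1}) this forces the discrete summand to be countable and the space to be a single cosmic $k_\w$-space. For $(6)\Ra(1)$ the same argument runs via Theorem~\ref{t:xy+xyz+xy.zs}(4a) for the $\aleph_k$-case (no $\Fin^\w$-fan, hence no $\Fin^{\w_1}$-fan), again using Lemma~\ref{l:V-discrete} and the $k_\IR$-property of $X$ to pass from the $k$-modification back to $X$ itself.

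The main obstacle I anticipate is the bookkeeping in collapsing the dichotomy ``$K\oplus D$ versus metrizable-with-compact-$X'$'' to the clean conclusion ``cosmic $k_\w$-space''. The delicate point is that $\Lin(X)$ is a topological algebra of type $(x^*y)(z^*s)$ but I must confirm it is not merely that but that the relevant uncountability-forbidding tool (Lemma~\ref{l:V-discrete}, or equivalently Corollary~\ref{c:cosmic-countable} via a superalgebra structure) genuinely rules out an uncountable discrete part and an uncountable compact set of non-isolated points simultaneously; here one uses that in the metrizable case a compact set of non-isolated points together with the $k_\IR$ and cosmic conditions already forces metrizability plus $\sigma$-compactness, hence a cosmic $k_\w$-space, while any uncountable discrete direction would produce a strongly compact-finite uncountable set and thus the forbidden strong $\Fin^{\w_1}$-fan. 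Verifying that these two applications of Lemma~\ref{l:V-discrete} cover all the alternatives output by Theorem~\ref{t:xy+xyz+xy.zs}, and checking the $k_\IR$-transfer from $kX$ to $X$ in $(6)\Ra(1)$, is the part requiring the most care.
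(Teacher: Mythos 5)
Your overall scheme (trivial implications plus Lemma~\ref{l:V-discrete} to make the discrete part countable) matches the paper, but the engine you chose for the substantive implications $(5)\Ra(1)$ and $(6)\Ra(1)$ is the wrong one, and the patch you propose for its output does not work. Theorem~\ref{t:xy+xyz+xy.zs} only uses the algebra types $x{\cdot}y$, $x(y^*z)$, $(x^*y)(z^*s)$, and therefore it can only deliver a \emph{dichotomy}: either $X$ is a sum $K\oplus D$ of a cosmic $k_\w$-space and a discrete space, or $X$ is metrizable with compact set $X'$ of non-isolated points (resp., in the $\aleph_k$-case, $X$ contains a compact set with $k$-discrete complement). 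Your attempt to collapse the second branch --- ``a compact set of non-isolated points together with the $k_\IR$ and cosmic conditions already forces metrizability plus $\sigma$-compactness, hence a cosmic $k_\w$-space'' --- is false. The metric cofan $M=\{(0,0)\}\cup\{(\frac1n,\frac1{nm}):n,m\in\IN\}$ is countable (hence cosmic and $\sigma$-compact), metrizable, $\mu$-complete, a $\bar\aleph_k$-$k_\IR$-space, and has a single non-isolated point, yet it is \emph{not} a $k_\w$-space, because metrizable $k_\w$-spaces are locally compact and $M$ is not locally compact at $(0,0)$. So your argument leaves spaces like $M$ alive, while the theorem asserts they must be excluded; Lemma~\ref{l:V-discrete} is of no help here since $M$ contains no uncountable (indeed no infinite, beyond trivial reindexing issues) strongly compact-finite closed discrete set of the relevant kind --- the mechanism that actually produces the forbidden fan in $\Lin(M)$ is entirely different.

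That mechanism is the $x(s^*y_i)^{<\w}$-algebra structure on $\Lin(X)$, which Lemma~\ref{l:V-algebra} gives you but which you never use. A metrizable non-locally-compact space (such as $M$) contains a closed copy of the metric cofan, hence a strong $D_\w$-cofan, and Lemma~\ref{l:xyy_w} (using that $\Lin$ is \emph{strongly} bounded and that $\Lin(X)$ has type $x(s^*y_i)^{<\w}$) converts this into a strong $\Fin^\w$-fan in $\Lin(X)$, contradicting $(5)$ (and $(6)$). This is precisely what the paper's proof does, in packaged form: it invokes Corollary~\ref{c:xy.zs+xsy_w} --- whose hypotheses are types $x{\cdot}y$, $x(s^*y_i)^{<\w}$, $(x^*y)(z^*s)$ plus strong boundedness --- which outputs \emph{directly} that $X$ (resp.\ $kX$, and then $X$ itself by the $k_\IR$-property) is a topological sum $C\oplus D$ of a cosmic $k_\w$-space and a discrete space, with no metrizable alternative to dispose of; Lemma~\ref{l:V-discrete} then makes $D$ countable. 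To repair your proof, replace Theorem~\ref{t:xy+xyz+xy.zs} by Corollary~\ref{c:xy.zs+xsy_w} (or supplement your dichotomy with the $D_\w$-cofan argument via Lemma~\ref{l:xyy_w}). A secondary, fixable gloss: in your cyclic scheme the step $(2)\Ra(3)$ needs the fact that $\delta_X(X)$ is a \emph{closed} subspace of $\Lin(X)$, so that $X$ inherits the cosmic $k_\w$ (hence $\aleph_k$ and $\bar\aleph_k$) property from $\Lin(X)$; your text only transfers these properties to $\Lin(X)$, not to $X$.
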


\begin{proof} The equivalence $(1)\Leftrightarrow(2)$ follows from Theorem~\ref{t:kw} and the known fact that $\delta_X:X\to \Lin(X)$ is a closed embedding. The implications $(1,2)\Ra(3)\Ra(6)$ and $(1,2)\Ra(4)\Ra(5)$ are trivial. It remains to prove that $(5)$ and $(6)$ imply $(1)$.

 Assume that $\Lin(X)$ contains no (strong) $\Fin^{\w_1}$-fan and $X$ is an $\aleph_k$-space (resp. $\bar\aleph_k$-space). By Corollary~\ref{c:xy.zs+xsy_w}, $X$ is a topological sum $X=C\oplus D$ of a cosmic $k_\w$-space $C$ and a discrete space $D$. By Lemma~\ref{l:V-discrete}, the discrete space $D$ is countable, which implies that $X=C\oplus D$ is a cosmic $k_\w$-space.
\end{proof}
\newpage

\section{Summary}

In the preceding sections we have considered 12 free functorial constructions of Topological Algebra: that of free topological group $\PG$, free topological Abelian group $\FA$, free paratopological group $\PG$, free paratopological Abelian group $\PA$, free linear topological space $\Lin$, free locally convex space $\Lc$, free topological semilattice $\Sl$, free Lawson semilattice $\Law$, free topological inverse semigroup $\IS$, free topological inverse Abelian semigroup $\IAS$, free topological Clifford semigroup $\CS$, and free topological semigroup $\Sem$. These functorial constructions, linked by corresponding natural transformations, fit into the following diagram.
$$\xymatrix{
\PG\ar[r]\ar[d]&\PA\ar[d]\\
\FG\ar[r]&\FA\ar[r]&\Lin\ar[r]&\Lc\\
\IS\ar[r]\ar[u]&\IAS\ar[r]\ar[u]&\Sl\ar[r]&\Law\\
\Sem\ar[u]\ar[r]&\CS\ar[u]\\
}
$$

In our next diagram we summarize the characterizations proved in  Theorems~\ref{t:Sem}, \ref{t:SL}, \ref{t:Law}, \ref{t:IS+}, \ref{t:FA}, \ref{t:FG}, \ref{t:PAG}, \ref{t:PG}, \ref{t:Lc}, \ref{t:Lin}. In this diagram by $\mathcal D$, $\w \mathcal D$, $\mathcal M$, $k_\w$, and $\w k_\w$ we denote the classes of discrete, countable discrete, metrizable, $k_\w$-spaces, and countable $k_\w$-spaces, respectively. For two classes $\mathcal A,\mathcal B$ of spaces by $\mathcal A\cap \mathcal B$ and $\mathcal A\cup \mathcal B$ we denote their intersection and union, respectively. By $\mathcal A\oplus \mathcal B$ we denote the class of topological sums $A\oplus B$ of spaces $A\in\mathcal A$ and $B\in\mathcal B$, and by $\oplus \A$  the class of topological sums of spaces that belong to the class $\mathcal A$. Given a functor $F$ by $F\mbox{-}k$ we denote the class of Tychonoff $\mu$-complete $\bar\aleph_k$-$k_\IR$-spaces $X$ whose functor-spaces $FX$ are $k$-spaces. In the following diagram for two classes $\A$, $\mathcal B$ of topological spaces an arrow $\A\to\mathcal B$ indicates that $\mathcal A\subset\mathcal B$.
$$\xymatrix{
\mathcal D\cup \w k_\w\ar[r]\ar@{=}[d]&\mathcal D\oplus \w k_\w\ar@{=}[d]\\
\PG\mbox{-}k\ar[r]\ar[d]&\PA\mbox{-}k\ar[d]\\
\FG\mbox{-}k\ar@{=}[d]\ar[r]&\FA\mbox{-}k\ar@{=}[d]&\Lin\mbox{-}k\ar[l]\ar@{=}[d]
&\Lc\mbox{-}k\ar[l]\ar@{=}[d]\\
\mathcal D\cup k_\w\ar[d]\ar[r]&\mathcal \mathcal D\oplus k_\w\ar[d]&k_\w\ar[d]\ar[l]&\w\mathcal D\ar[l]\ar[d]\\
\IS\mbox{-}k\ar[d]\ar@{=}[r]&
\IAS\mbox{-}k\ar@{=}[r]&\Sl\mbox{-}k\ar@{=}[d]&\Law\mbox{-}k\ar@{=}[dd]\\
\Sem\mbox{-}k\ar@{=}[d]&\CS\mbox{-}k\ar@{=}[u]\ar@{=}[r]\ar[l]&\oplus k_\w\\
\mathcal M\cup (\oplus k_\w)&&&\mathcal M\ar[lll]
}
$$

\printindex

\end{document}